\documentclass[10pt,a4paper]{amsart}

\textheight 220mm
\textwidth 156mm
\hoffset -16mm

\usepackage[all]{xy}
\usepackage{amsthm}
\usepackage{amssymb} 
\usepackage{amsmath,amscd}
\usepackage{mathrsfs}
\usepackage{enumerate}

\renewcommand{\theequation}{\arabic{section}-\arabic{equation}}

\newcommand{\agl}[1]{\langle #1 \rangle}

\newcommand{\isagl}[1]{\langle #1 \rangle_{\mathsf{S^{-1}}}}

\newcommand{\eragl}[1]{\langle #1 \rangle_{\mathsf{ER}}}

\newcommand{\Dbmod}[1]{\sfD^{\mrb}( #1 \operatorname{mod})}

\setcounter{tocdepth}{1}

\def\pldeg{{\operatorname{\textup{pl}\!\deg \ } }}

\def\bideg{{\textup{bi}\!\deg \ }}

\def\Euch{{\chi}}
\def\Euvect{{\underline{\chi}}}

\def\brho{{\breve{\varrho}}}

\def\frkm{{\mathfrak{m}}}

\def\We{{\mathcal{W}}}

\def\sfcan{{\mathsf{can}}}

\def\sfind{{\mathsf{ind}}}

\def\sfiso{{\mathsf{iso}}}

\def\turn!{\textup{!`}}

\def\chara{\operatorname{char}}

\def\sfAR{\mathsf{AR}}

\def\op{\textup{op}}

\def\Tr{\operatorname{Tr}}

\def\ind{\mathop{\mathrm{ind}}\nolimits}

\def\DG{\textup{DG}}

\def\Gr{\operatorname{Gr}}

\def\mrb{\mathrm{b}}

\def\mre{\mathrm{e}}

\def\Spec{\operatorname{Spec}}
\def\maxSpec{\operatorname{MaxSpec}}

\def\Tor{\operatorname{Tor}}

\def\lvvee{\vartriangleleft} 
\def\rvvee{\vartriangleright}

\def\kk{{\mathbf k}}

\def\CC{{\Bbb C}}

\def\NN{{\Bbb N}}

\def\PP{{\Bbb P}}
\def\QQ{{\Bbb Q}}

\def\ZZ{{\Bbb Z}}

\def\cC{{\mathcal C}}

\def\cG{{\mathcal G}}
\def\cH{{\mathcal H}}
\def\cI{{\mathcal I}}

\def\cL{{\mathcal L}}
\def\cM{{\mathcal M}}
\def\cN{{\mathcal N}}

\def\cP{{\mathcal P}}

\def\cR{{\mathcal R}}

\def\cT{{\mathcal T}}
\def\cU{{\mathcal U}}

\def\vars{{\mathfrak s}}

\def\vart{{\mathfrak t}}

\def\sfa{{\mathsf{a}}}
\def\sfb{{\mathsf{b}}}
\def\sfc{{\mathsf{c}}}
\def\sfd{{\mathsf{d}}}
\def\sfe{{\mathsf{e}}}

\def\sfg{{\mathsf{g}}}
\def\sfh{{\mathsf{h}}}

\def\sfl{{\mathsf{l}}}
\def\sfm{{\mathsf{m}}}

\def\sfr{{\mathsf{r}}}

\def\sfC{{\mathsf{C}}}
\def\sfD{{\mathsf{D}}}

\def\sfG{{\mathsf{G}}}
\def\sfH{{\mathsf{H}}}

\def\sfJ{{\mathsf{J}}}
\def\sfK{{\mathsf{K}}}
\def\sfL{{\mathsf{L}}}
\def\sfM{{\mathsf{M}}}
\def\sfN{{\mathsf{N}}}

\def\sfO{{\mathsf{O}}}
\def\sfP{{\mathsf{P}}}

\def\sfS{{\mathsf{S}}}
\def\sfT{{\mathsf{T}}}
\def\sfU{{\mathsf{U}}}
\def\sfV{{\mathsf{V}}}

\def\tuD{{\textup{D}}}

\def\tuH{{\textup{H}}}
\def\tuHH{{\textup{HH}}}

\def\tuZ{{\textup{Z}}}

\def\id{\operatorname{id}}
\def\op{\operatorname{op}}

\def\pr{\operatorname{\textup{pr}}}

\def\mod{\operatorname{mod}}
\def\Mod{\operatorname{Mod}}

\def\Ker{\mathop{\mathrm{Ker}}\nolimits}

\def\proj{\operatorname{proj}}

\def\add{\operatorname{add}}

\def\Coker{\operatorname{Cok}}

\def\Hom{\operatorname{Hom}}

\def\cpxHom{\operatorname{Hom}^{\bullet}}

\def\End{\operatorname{End}}
\def\ResEnd{\operatorname{ResEnd}}

\def\Ext{\operatorname{Ext}}

\def\gldim{\operatorname{gldim}}

\newcommand{\RHom}{\operatorname{\Bbb{R}Hom}}

\newcommand{\lotimes}{\otimes^{\Bbb{L}}}

\newcommand{\cone}{\operatorname{\mathsf{cn}}}

\def\RHom{\operatorname{\mathbb{R}Hom}}

\newtheorem{lemma}{Lemma}[section]
\newtheorem{proposition}[lemma]{Proposition}
\newtheorem{theorem}[lemma]{Theorem}
\newtheorem{corollary}[lemma]{Corollary}

\newtheorem{remark}[lemma]{Remark}
\newtheorem{problem}[lemma]{Problem}

\newtheorem{example}[lemma]{Example}
 
\newtheorem{claim}[lemma]{Claim}

\newtheorem{observation}[lemma]{Observation}

\newtheorem{definition}[lemma]{Definition}

\newtheorem{question}[lemma]{Question}

\theoremstyle{remark}

\def\vvee{\Bbb{V}}

\def\eepsilon{\tilde{\epsilon}}

\def\oomega{\tilde{\omega}}

\def\varppi{\tilde{\varpi}}
\def\xxi{\tilde{\xi}}
\def\eeta{\tilde{\eta}}
\def\zzeta{\tilde{\zeta}}

\def\ppi{\tilde{\pi}}
\def\rrho{\tilde{\varrho}}

\def\ttheta{\tilde{\theta}}

\def\SL{\operatorname{SL}}

\def\Pa{A}

\def\PPa{\widetilde{A}}
\def\Qa{B}

\def\YM{\Lambda}

\def\YYM{\widetilde{\Lambda}}

\def\rad{\operatorname{rad}}
\def\mlap{\operatorname{lap}}

\def\irr{\operatorname{irr}}

\def\PPi{\widetilde{\Pi}}

\title[Quiver Heisenberg algebras]
{
Quiver Heisenberg algebras 
: a cubic analogue of preprojective algebras
}

\date{}

\author{Martin Herschend and  Hiroyuki Minamoto}

\date{}

\keywords{Quiver Heisenberg algebras, Central extension of preprojective algebras,  approximations with respect to the powers of the radical functor}

\subjclass[2010]{Primary: 16G20, Secondary: 16G70, 18G80}

\address{Department of Mathematics, Uppsala University, Uppsala, Sweden}
\email{martin.herschend@math.uu.se}

\address{Department of Mathematics Osaka Metropolitan University, Sakai City, Japan}
\email{minamoto@omu.ac.jp}

\begin{document}

\begin{abstract}
In this paper we study a certain class of central extensions of  preprojective algebras of quivers 
under the name quiver Heisenberg algebras (QHA).  
There are several classes of algebras introduced before by different researchers from different view points, which have the QHA as a special case. While these have mainly been studied in characteristic zero, we also study the case of positive characteristic.
Our results show that the QHA is closely related to the representation theory of the corresponding path algebra 
in a similar way to the preprojective algebra.
 
Among other things, one of our main results is that 
the QHA provides an exact sequence of bimodules over the path algebra of a quiver, which can be called the universal Auslander-Reiten sequence. 
Moreover, we show that the QHA provides minimal left and right approximations with respect to the powers of the radical functor. 
Consequently, we obtain  a description of the QHA as a module over the path algebra, 
which in the Dynkin case, 
gives a categorification (as well as a generalization to the positive characteristic case) of the dimension formula by Etingof-Rains. 
\end{abstract}

\maketitle

\tableofcontents

\section{Introduction} 

\subsection{Introduction}
The notion of a quiver, an alias for an oriented graph was introduced to representation theory by Gabriel \cite{Gabriel}. 
A representation of a quiver attaches to each vertex a vector space and to each arrow a linear map.

From a quiver $Q$ and a field $\kk$, an algebra $\kk Q$ called the \emph{path algebra} of  $Q$, is constructed. 
It is a kind of free algebra construction and hence a lot of algebras are obtained as residue algebras of the path algebras. 
Another important feature is that a representation of $Q$ is the same as a module over $\kk Q$ and 
that consequently, the category of representations of $Q$ is equivalent to the module category of $\kk Q$. 

The study of these equivalent categories, which is at the heart of quiver representation theory, has uncovered rich structures in them and lead to  deep connections to many other important subjects.

The path algebra $\kk Q$ of $Q$ is finite dimensional precisely when $Q$ is finite and acyclic. 
Finite dimensional path algebras are one of the central objects of study in representation theory of finite dimensional algebras and the structure of their modules categories have been extensively investigated. 
Auslander-Reiten theory which provides the module category of a finite dimensional algebra with an orderly structure, serves as a principal tool to investigate the category of representations of a finite acyclic quiver. One of the reasons path algebras are so important is that they are prototypical among hereditary algebras. 

From a quiver $Q$, another important  algebra $\Pi( Q)$ called  the \emph{preprojective  algebra} of  $Q$, is constructed. 
It was first introduced by Gelfand-Ponomarev \cite{Gelfand-Ponomarev}. 
Soon after that Dlab-Rignel \cite{Dlab-Ringel: the preprojective algebra} gave 
 a description by generators and relations  that is currently accepted as the definition of it   
(as well as a generalization  in the context of  modulated graphs). 
A way to introduce the preprojective algebra from Auslander-Reiten theory of $\kk Q$, 
was found by Baer-Geigle-Lenzing \cite{BGL}  
and was confirmed by Crawley-Boevey \cite{Crawley-Boevey: preprojective algebras} and Ringel \cite{Ringel: preprojective algebras}.

The preprojective algebra $\Pi(Q)$ of $Q$ is also one of the central objects of interest in representation theory of algebras and has been extensively studied. 
Moreover the path algebras $\kk Q$ and the preprojective algebras $\Pi(Q)$ have been shown to have wide range of applications:
 cluster algebras and related combinatorics, McKay correspondence, 
canonical basis, Nakajima quiver varieties, Kashiwara-Saito's realization of the crystal basis.

In this paper, we construct an algebra ${}^{v}\!\YM(Q)$ from a quiver, which we call the \emph{quiver Heisenberg algebra}.  
As is explained below, this algebra turns out to be isomorphic to a special case of algebras previously introduced by several other researchers. 
However, making use of our definition, 
we prove that quiver Heisenberg algebra is closely related to representation theory of the path algebra, as is the case with the preprojective algebra. 
 
To explain our results in detail, we first need to recall the preprojective algebras $\Pi(Q)$ and their relationship to 
Auslander-Reiten theory of $\kk Q$.

\subsection{Preprojective algebras and Auslander-Reiten theory of path algebras}

We recall definitions and basic facts about the preprojective algebra $\Pi(Q)$ of a quiver $Q$. 

Let $Q$ be a finite acyclic quiver and $A = \kk Q$ its path algebra. 
We denote by  $\overline{Q}$ the double of $Q$. Namely, $\overline{Q}$ is obtained from $Q$ by formally adding 
an opposite arrow $\alpha^{*}: j \to i$ for each arrow $\alpha: i \to j$ of the original quiver $Q$. 
\[
\begin{xymatrix}{ & & \\ \boxed{ \ Q \ } &   i \ar[rr]^{\alpha} &&j }\end{xymatrix} \ \ \ \  \begin{xymatrix}@R=15pt@C=5pt{
\ar@{~}[ddd] \\  \\  \\ \\ 
}\end{xymatrix}  \ \ \ \ \ \ 
\begin{xymatrix}{ & \\ 
i \ar@/^10pt/[rr]^{\alpha} &&
 j  \ar@/^10pt/[ll]^{\alpha^{*}} &\boxed{ \ \overline{Q} \ } 
}\end{xymatrix} 
\]

Recall that for a vertex $i \in Q_{0}$, the \emph{mesh relation  $\rho_{i}$ at $i$} is the element of $\kk \overline{Q}$ 
that is given by 
\begin{equation}\label{mesh relation}
\rho_{i} := \sum_{\alpha \in Q_{1}: t(\alpha) = i} \alpha\alpha^{*} - \sum_{\alpha \in Q_{1}: h(\alpha) = i} \alpha^{*} \alpha. 
\end{equation}
The total sum  $\rho:= \sum_{ i \in Q_{0}} \rho_{i}$ is also  called  the \emph{mesh relation}.

The \emph{preprojective algebra} is defined to be the path of $\overline{Q}$ with mesh relations: 
\[
\Pi = \Pi(Q) =\frac{ \  \kk \overline{Q} \ }{ \   (\rho) \  } = \frac{ \kk \overline{Q} }{ (\rho_{i} \mid i \in Q_{0}) }.  
\]
We equip $\overline{Q}$ with a grading  which we call the $*$-grading
 \begin{equation}\label{202111191709}
 \deg^{*} \alpha := 0, \ \deg^{*} \alpha^{*} :=1 \textup{ for } \alpha \in Q_{1}. 
  \end{equation}
The mesh relations $\rho_{i}$ are homogeneous of degree $1$ and consequently $\Pi(Q)$ is a $*$-graded algebra. 
We denote the $*$-degree $n$ part of $\Pi(Q)$ by $\Pi(Q)_{n}$. 

The $*$-degree $0$ part $\Pi(Q)$ coincides with the path algebra $\Pa$ and
we may regard $\Pi(Q)_{n}$ as a bimodule over $\Pa$.

An important fact due to Baer-Geigle-Lenzing \cite{BGL} (see also \cite{Crawley-Boevey: preprojective algebras, Ringel: preprojective algebras}) 
is that there is an isomorphism 
$\Pi(Q)_{1} \cong  \Ext_{\Pa}^{1}(\tuD(\Pa), \Pa)$
of bimodules over $\Pa$ and  that the endofunctor $\Pi_{1} \otimes_{\Pa}-$ of the module category 
$\Pa\mod$ is the inverse of the Auslander-Reiten translation $\tau^{-1}_{1}$
\begin{equation}\label{202111191333}
\tau_{1}^{-1} = \Pi(Q)_{1} \otimes_{\Pa}- .
\end{equation}
Moreover, the above isomorphism of bimodules extends to  an  isomorphism below of graded algebras  
\begin{equation}\label{preprojective algebra isomorphism}
\Pi(Q) \cong \sfT_{\Pa} \Ext_{\Pa}^{1} (\tuD(\Pa), \Pa)
\end{equation}
where the right hand side is the tensor algebra  over $\Pa$ and the grading is given by the tensor degree.  
Thus  it can be said that the preprojective algebra $\Pi(Q)$ originates from Auslander-Reiten theory of the module category $\Pa \mod$.

One nice consequence is the following description of $\Pi(Q)$ as a module over $\Pa$. 
Let $\cP(Q) = \add \{\tau_{1}^{-i}A \mid i \ge 0 \}\subset A\mod$ be the category of the preprojective modules.

\begin{theorem}[{Gelfand-Ponomarev \cite{Gelfand-Ponomarev}, Dlab-Ringel \cite{Dlab-Ringel: the preprojective algebra}}]\label{202011062314}
The following assertions hold. 
\begin{enumerate}[(1)]
\item 
Let $i \in Q_{0}$ be a vertex. 
We set $P_{i} := \Pa e_{i}$. 
We have the following isomorphism of $\Pa$-modules: 
\[
\Pi(Q) e_{i} \cong \bigoplus_{n \geq 0} \tau_{1}^{-n} P_{i}. 
\] 

\item 
We have the following isomorphism of $\Pa$-modules: 
\[
\Pi(Q) \cong \bigoplus_{N  \in \ind \cP(Q) } N.  
\]
\end{enumerate}
\end{theorem}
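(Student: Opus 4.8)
The plan is to reduce everything to the graded-algebra isomorphism \eqref{preprojective algebra isomorphism} together with the identification \eqref{202111191333} of $\tau_{1}^{-1}$, and then to read off (2) from (1) using the classical shape of the preprojective component.

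First I would observe that, since \eqref{preprojective algebra isomorphism} identifies $\Pi(Q)$ with a tensor algebra over $\Pa$ compatibly with the gradings (tensor degree versus $*$-degree), the multiplication maps of $\Pi(Q)$ yield isomorphisms of $\Pa$-bimodules $\Pi(Q)_{1}^{\otimes_{\Pa}n}\xrightarrow{\ \sim\ }\Pi(Q)_{n}$ for all $n\geq 0$. Feeding this into \eqref{202111191333} and using associativity of $\otimes_{\Pa}$, the $n$-fold iterate of $\tau_{1}^{-1}=\Pi(Q)_{1}\otimes_{\Pa}-$ satisfies
\[
\tau_{1}^{-n}\;=\;\underbrace{\Pi(Q)_{1}\otimes_{\Pa}\cdots\otimes_{\Pa}\Pi(Q)_{1}}_{n}\otimes_{\Pa}-\;\cong\;\Pi(Q)_{n}\otimes_{\Pa}-
\]
as endofunctors of $\Pa\mod$. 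Evaluating at $P_{i}=\Pa e_{i}$ gives $\tau_{1}^{-n}P_{i}\cong\Pi(Q)_{n}\otimes_{\Pa}\Pa e_{i}\cong\Pi(Q)_{n}e_{i}$ as left $\Pa$-modules. Because $\Pi(Q)$ is $*$-graded with $\Pi(Q)_{0}=\Pa$ and each $e_{i}$ is homogeneous of $*$-degree $0$, right multiplication by $e_{i}$ preserves the $*$-grading, so $\Pi(Q)e_{i}=\bigoplus_{n\geq 0}\Pi(Q)_{n}e_{i}$ as left $\Pa$-modules; summing the previous isomorphisms over $n$ yields (1).

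For (2) I would decompose $\Pi(Q)=\bigoplus_{i\in Q_{0}}\Pi(Q)e_{i}$, apply (1), and use that $\tau_{1}^{-n}$ commutes with (finite) direct sums, being a tensor functor, to get
\[
\Pi(Q)\;\cong\;\bigoplus_{i\in Q_{0}}\bigoplus_{n\geq 0}\tau_{1}^{-n}P_{i}\;\cong\;\bigoplus_{n\geq 0}\tau_{1}^{-n}\Pa
\]
as left $\Pa$-modules. The remaining task is to match $\bigoplus_{n\geq 0}\tau_{1}^{-n}\Pa=\bigoplus_{n\geq 0,\,i\in Q_{0}}\tau_{1}^{-n}P_{i}$ with $\bigoplus_{N\in\ind\cP(Q)}N$. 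For this I would invoke the standard structure of the preprojective component over a finite-dimensional hereditary algebra: each $\tau_{1}^{-n}P_{i}$ is indecomposable or zero, the nonzero ones are pairwise non-isomorphic as $(n,i)$ varies, and every indecomposable object of $\cP(Q)=\add\{\tau_{1}^{-i}\Pa\mid i\geq 0\}$ occurs among them (necessarily exactly once). Deleting the zero summands then turns $\bigoplus_{n,i}\tau_{1}^{-n}P_{i}$ into $\bigoplus_{N\in\ind\cP(Q)}N$.

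The genuine content here is imported rather than proved: the isomorphism \eqref{preprojective algebra isomorphism} (which is exactly what lets one treat $\tau_{1}^{-n}$ as literally ``tensor with the bimodule $\Pi(Q)_{n}$''), and the multiplicity-one/exhaustion statement for the preprojective component. The one point I would take care to verify by hand is that the concrete functor $\Pi(Q)_{1}\otimes_{\Pa}-$ annihilates the injective preprojective modules, so that the family $(\tau_{1}^{-n}P_{i})$ does not over-count; this follows from \eqref{202111191333} and the hereditary hypothesis. Granting these inputs the argument is the formal bookkeeping above, so the main obstacle is locating clean statements of those two facts in the generality needed here — in particular over a field of positive characteristic.
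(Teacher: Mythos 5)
Your argument is correct and is essentially the route the paper itself indicates: it presents this classical theorem (cited to Gelfand--Ponomarev and Dlab--Ringel) as a direct consequence of the tensor-algebra description \eqref{preprojective algebra isomorphism} and the identification \eqref{202111191333}, exactly as you do, with the remaining input being the standard structure (indecomposability, multiplicity one, exhaustion) of the preprojective component, which holds over an arbitrary field. Your side remark that $\Pi(Q)_{1}\otimes_{\Pa}-$ kills injectives is also fine, since $\Pi(Q)_{1}\otimes_{\Pa}I\cong\Ext^{1}_{\Pa}(\tuD(\Pa),I)=0$ for $I$ injective.
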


In case $Q$ is Dynkin, it is well-known that $\cP(Q) $ coincides with the whole module category $\Pa \mod$. 
Therefore we obtain the following corollary, in which $\ind Q$ denotes the set of isomorphism class of indecomposable $\Pa$-modules. 

\begin{corollary} 
If $Q$ is a Dynkin quiver, 
then we have the following isomorphism of $\Pa$-modules.
\[
\Pi(Q) \cong \bigoplus_{N  \in \ind Q } N.
\]
\end{corollary}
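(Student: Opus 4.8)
The plan is to read off this corollary directly from Theorem~\ref{202011062314}(2) together with the classical observation that over a Dynkin path algebra every indecomposable module is preprojective.

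First I would recall why $\cP(Q)=\Pa\mod$, equivalently $\ind\cP(Q)=\ind Q$, when $Q$ is Dynkin. The algebra $\Pa=\kk Q$ is hereditary and, by Gabriel's theorem, representation finite, so its Auslander--Reiten quiver is a finite connected translation quiver without oriented cycles. In such a quiver every vertex is reached from a projective vertex by finitely many applications of $\tau_{1}^{-1}$; translated into module theory this says that for each $N\in\ind Q$ there exist a vertex $i\in Q_{0}$ and an integer $n\ge 0$ with $N\cong\tau_{1}^{-n}P_{i}$, i.e.\ $N$ lies in $\cP(Q)=\add\{\tau_{1}^{-i}A\mid i\ge 0\}$. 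Hence $\ind\cP(Q)=\ind Q$. (One may instead argue on the root lattice: by Gabriel's theorem $\ind Q$ is in bijection with the positive roots of the underlying diagram, and since the Coxeter transformation has finite order the preprojective component already accounts for all of them.)

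It then remains only to substitute. Theorem~\ref{202011062314}(2) provides an isomorphism of $\Pa$-modules
\[
\Pi(Q)\;\cong\;\bigoplus_{N\in\ind\cP(Q)}N,
\]
and replacing $\ind\cP(Q)$ by $\ind Q$ via the previous paragraph gives exactly $\Pi(Q)\cong\bigoplus_{N\in\ind Q}N$. There is no genuine obstacle in this argument: all the substance is contained in Theorem~\ref{202011062314}, and the only point to be careful about is the (standard) fact that the preprojective component exhausts the module category in the Dynkin case, which one should either cite or justify via the finiteness and acyclicity of the Auslander--Reiten quiver as above.
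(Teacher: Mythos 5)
Your argument is correct and is exactly the paper's route: the corollary is deduced by combining Theorem~\ref{202011062314}(2) with the well-known fact that for Dynkin $Q$ the preprojective modules exhaust $\Pa\mod$, so $\ind\cP(Q)=\ind Q$. Your extra justification of that standard fact is fine but not needed beyond a citation.
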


\subsubsection{}\label{202111261159} 

Depending on whether $Q$ is Dynkin or non-Dynkin, properties of $\Pi(Q)$ change. 
But in both cases, $\Pi(Q)$ has salient  properties.

\begin{theorem}\label{202111191530}
For a Dynkin quiver $Q$, the following assertions hold. 
\begin{enumerate}[(1)]

\item 
The preprojective algebra $\Pi(Q)$ is a finite dimensional Frobenius algebra of dimension 
\[
\dim \Pi(Q) = \sum_{N \in \ind Q} \dim N =\frac{rh(h+1)}{6}
\]
where $h$ is the Coxeter number of $Q$ and $r := \# Q_{0}$. 
(The second equality is given by Etingof-Rains \cite{Etingof-Rains}.)

\item $\Pi(Q)$ is stably $2$-Calabi-Yau (i.e., the stable category $\underline{\mod} \Pi(Q)$ is a $2$-Calabi-Yau triangulated category).

\end{enumerate} 
\end{theorem}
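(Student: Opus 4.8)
The plan is to prove Theorem~\ref{202111191530} by leveraging the structural description of $\Pi(Q)$ as an $\Pa$-module from Theorem~\ref{202011062314} together with the tensor-algebra identification \eqref{preprojective algebra isomorphism}. For part~(1), the finite-dimensionality is immediate from the Dynkin hypothesis: since $\cP(Q) = \Pa\mod$ has only finitely many indecomposables, Theorem~\ref{202011062314}(2) exhibits $\Pi(Q) \cong \bigoplus_{N \in \ind Q} N$ as a finite direct sum of finite-dimensional modules, so $\dim\Pi(Q) = \sum_{N \in \ind Q}\dim N$. The second equality $\sum_{N}\dim N = rh(h+1)/6$ is a purely combinatorial statement about root systems: write each indecomposable as $M(\beta)$ for a positive root $\beta$ via Gabriel's theorem, so that $\dim M(\beta) = \sum_{i}(\beta, \varpi_i) = \langle \beta, \rho^\vee\rangle$-type height, and then sum $\sum_{\beta \in \Phi^+}\operatorname{ht}(\beta)$ using the standard identity $\sum_{\beta\in\Phi^+}\beta = h\rho$ (with $\rho$ the half-sum of positive roots, $\#\Phi^+ = rh/2$), which gives $\sum_\beta \operatorname{ht}(\beta) = \langle h\rho,\rho^\vee\rangle$, and this evaluates to $rh(h+1)/6$ by the Freudenthal–de Vries strange formula or a direct case check. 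I would cite Etingof--Rains \cite{Etingof-Rains} for this and sketch only the root-system bookkeeping.

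For the Frobenius property, I would use the graded-algebra isomorphism \eqref{preprojective algebra isomorphism} and the fact that in the Dynkin case the grading is finite with top degree related to the Coxeter number. The key input is the Nakayama-twisted self-duality of $\Pi(Q)$: one constructs a non-degenerate bilinear form $\Pi(Q)\times\Pi(Q)\to\kk$ by composing multiplication with projection onto the top-degree component $\Pi(Q)_{m}$ (where $m$ is the largest $n$ with $\Pi(Q)_n\neq 0$), and checks non-degeneracy using that $\Pi(Q)_m \cong \tuD(\Pa)$ as a bimodule twisted by the Nakayama automorphism and that the pairing $\Pi(Q)_n \otimes_\Pa \Pi(Q)_{m-n}\to \Pi(Q)_m$ is a perfect pairing — this last point follows from \eqref{202111191333}, since $\tau_1$-orbits on preprojectives in the Dynkin case are finite and the socle/top structure matches up. Alternatively, and more cleanly, one invokes that $\Pi(Q)$ in the Dynkin case is known to be self-injective by Gelfand--Ponomarev / Dlab--Ringel, and identifies the Nakayama automorphism. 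I expect the main technical obstacle here to be verifying the perfectness of the degree pairing without circularity — one must use that $\Pa$ is hereditary and that $\tau_1^{-1}$ acts as an auto-equivalence on the relevant subcategory, so that the "last" translate in each orbit is injective.

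For part~(2), stable $2$-Calabi-Yau-ness, the strategy is to produce a functorial isomorphism $\underline{\Hom}_{\Pi}(X, Y) \cong \tuD\,\underline{\Hom}_{\Pi}(Y, \Omega^2 X)$ for $X, Y \in \underline{\mod}\,\Pi(Q)$, i.e.\ that $\Omega^{-2}$ is a Serre functor on the stable category. The cleanest route is to use that $\Pi = \Pi(Q)$ has a symmetric (or, more precisely, Nakayama-twisted) bimodule resolution of length two coming from the defining presentation: there is an exact sequence of $\Pi$-bimodules
\[
0 \to \Pi \otimes_\Pa \Pi \to \Pi \otimes_\Pa \overline{Q}_1 \otimes_\Pa \Pi \to \Pi\otimes_\Pa\Pi \to \Pi \to 0,
\]
where $\overline{Q}_1$ is the arrow bimodule of the double quiver, and the outer terms are identified via the fact that $\Pa$ is hereditary. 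One then shows this resolution is self-dual up to a shift by $2$ and a twist by the Nakayama automorphism $\nu$, so that $\RHom_{\Pi^e}(\Pi, \Pi^e) \cong {}_1\Pi_\nu[-2]$; combined with the Frobenius property (which makes $\nu$ inner-modulo-stable-equivalence on the stable category), this yields the stable $2$-CY property by the standard argument (e.g.\ as in the preprojective-algebra literature). The hard part of this step is establishing the bimodule resolution and its self-duality with the correct identification of the boundary maps — this requires the heredity of $\Pa$ in an essential way and is where the Dynkin hypothesis enters through finite-dimensionality ensuring the stable category is well-behaved. I would organize the proof so that the bimodule resolution is extracted once and then used for both the Frobenius identification of $\nu$ and the Calabi-Yau computation.
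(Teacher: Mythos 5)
You should first note that the paper does not prove this theorem at all: it is quoted in the introduction as classical background, with only the second (numerical) equality attributed to Etingof--Rains \cite{Etingof-Rains}, so there is no proof in the paper to compare your argument against; I therefore assess your sketch on its own. For part (1) your reduction is sound: in Dynkin type $\cP(Q)=\Pa\mod$, so Theorem~\ref{202011062314}(2) gives $\dim\Pi(Q)=\sum_{N\in\ind Q}\dim N$, and deferring the identity $\sum_{N}\dim N=rh(h+1)/6$ to \cite{Etingof-Rains} matches what the paper itself does. One slip in your root-system bookkeeping: the standard identity is $\sum_{\beta\in\Phi^{+}}\beta=2\rho$, not $h\rho$, so the sum of heights is $2(\rho,\rho)$, which equals $rh(h+1)/6$ by the strange formula; your intermediate expression $\langle h\rho,\rho^{\vee}\rangle$ overcounts, though the final value you claim is correct. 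Your fallback for the Frobenius property (basic $+$ self-injective $\Rightarrow$ Frobenius, citing the classical literature) is fine; your ``perfect pairing into the top degree'' route would need genuine work, but since you only sketch it and offer the citation as the clean alternative, this is acceptable.

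Part (2) contains a genuine gap. The four-term sequence you display (whose tensor products should in any case be taken over $\kk Q_{0}$, not over $\Pa$) is a projective bimodule resolution of $\Pi$ only when $Q$ is \emph{non}-Dynkin; in Dynkin type it fails to be exact at the leftmost term, and the conclusion you draw from it, $\RHom_{\Pi^{\mre}}(\Pi,\Pi^{\mre})\cong {}_{1}\Pi_{\nu}[-2]$, is precisely the statement that $\Pi$ is a $2$-Calabi--Yau \emph{algebra}. That is the non-Dynkin phenomenon the paper records as Theorem~\ref{202111261158}, and it is false in the Dynkin case: there $\Pi$ is finite dimensional self-injective of infinite global dimension, hence not homologically smooth, so no such bimodule duality can hold. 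The correct mechanism in Dynkin type is different: the three projective terms of your complex are exact at the two right-hand places, and the kernel of the left-hand map is not zero but a twist ${}_{1}\Pi_{\mu}$ of $\Pi$ by an algebra automorphism (twisted $3$-periodicity of $\Pi$ as a bimodule); one must then identify $\mu$ with the inverse Nakayama automorphism up to inner automorphisms, which gives $\Omega^{-3}\simeq\nu$ on $\underline{\mod}\,\Pi(Q)$ and hence that the Serre functor $\nu\circ\Omega$ of the stable category is $\Omega^{-2}=\Sigma^{2}$, i.e.\ stable $2$-Calabi--Yau. As written, your argument proves a statement that contradicts the Dynkin/non-Dynkin dichotomy, so this step would have to be replaced by the periodicity argument (or by a citation to the literature, in the spirit of how the paper treats this theorem).
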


\begin{theorem}\label{202111261158}
Let $Q$ be a non-Dynkin quiver. 
Then the preprojective algebra $\Pi(Q)$ is an infinite dimensional  $2$-Calabi-Yau algebra. 
\end{theorem}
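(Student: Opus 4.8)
The statement asserts two things: that $\Pi(Q)$ is infinite-dimensional over $\kk$, and that it is $2$-Calabi--Yau in the bimodule sense, i.e.\ homologically smooth with $\RHom_{\Pi^{\mathrm{e}}}(\Pi,\Pi^{\mathrm{e}})\simeq\Pi[-2]$ in the derived category of bimodules, where $\Pi^{\mathrm{e}}=\Pi\otimes_{\kk}\Pi^{\mathrm{op}}$. Both facts are classical; here is the plan. Infinite-dimensionality is immediate from Theorem~\ref{202011062314}: since $\Pi(Q)e_i\cong\bigoplus_{n\geq 0}\tau_1^{-n}P_i$, it is enough to recall the standard Auslander--Reiten theoretic fact that for a non-Dynkin quiver each $\tau_1^{-n}P_i$ is a nonzero indecomposable and the modules $\tau_1^{-n}P_i$ are pairwise non-isomorphic (equivalently, the preprojective component of the Auslander--Reiten quiver of $\kk Q$ is infinite, $\kk Q$ being representation-infinite by Gabriel's theorem, so that $[\tau_1^{-n}P_i]=c^{-n}[P_i]$ is a positive root for every $n$, $c$ denoting the Coxeter transformation). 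Thus $\Pi(Q)e_i$ is an infinite direct sum of nonzero modules.

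For the Calabi--Yau property I would first record the standard ``Koszul'' bimodule complex of $\Pi$. Write $S=\kk Q_0$, $V=\bigoplus_{\beta\in\overline Q_1}\kk\beta$, and let $R=\bigoplus_{i\in Q_0}\kk\rho_i$ be the $S$-subbimodule spanned by the mesh relations~\eqref{mesh relation}, so that $R\cong S$ as $S$-bimodule via $\rho_i\leftrightarrow e_i$. Consider the complex of $\Pi$-bimodules
\[
0 \longrightarrow \Pi\otimes_S R\otimes_S\Pi \xrightarrow{\ d_2\ } \Pi\otimes_S V\otimes_S\Pi \xrightarrow{\ d_1\ } \Pi\otimes_S\Pi \xrightarrow{\ \mu\ } \Pi \longrightarrow 0,
\]
in which $\mu$ is multiplication, $d_1(x\otimes\beta\otimes y)=x\beta\otimes y-x\otimes\beta y$, and $d_2(x\otimes\rho_i\otimes y)=\sum_{\beta\in\overline Q_1}x\,u_\beta\otimes\beta\otimes v_\beta\,y$, where $\sum_\beta u_\beta\otimes\beta\otimes v_\beta$ is the image of $\rho_i$ under the noncommutative differential $\beta_1\cdots\beta_k\mapsto\sum_{\ell}\beta_1\cdots\beta_{\ell-1}\otimes\beta_\ell\otimes\beta_{\ell+1}\cdots\beta_k$. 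One verifies $\mu d_1=0$ and $d_1 d_2=0$ directly from~\eqref{mesh relation}, and exactness at the three rightmost terms is the standard initial segment of the bimodule resolution of a quiver algebra modulo a minimal set of relations. Since $S$ is separable, each $\Pi\otimes_S M\otimes_S\Pi$ is a finitely generated projective $\Pi$-bimodule; hence, once $d_2$ is shown to be injective, $\Pi$ is homologically smooth with $\pd_{\Pi^{\mathrm{e}}}\Pi=2$. The Calabi--Yau condition then follows by applying $\Hom_{\Pi^{\mathrm{e}}}(-,\Pi^{\mathrm{e}})$ to this resolution: separability of $S$ gives $(\Pi\otimes_S M\otimes_S\Pi)^{\vee}\cong\Pi\otimes_S M^{\vee}\otimes_S\Pi$, and the involution $\beta\leftrightarrow\beta^{*}$ on $\overline Q_1$ together with the sign pattern in~\eqref{mesh relation} yields canonical identifications $V^{\vee}\cong V$ and $R^{\vee}\cong S\cong R$ under which the dualized complex is isomorphic to the original resolution, reindexed cohomologically, whose only cohomology is $\Pi$ in degree $2$; therefore $\RHom_{\Pi^{\mathrm{e}}}(\Pi,\Pi^{\mathrm{e}})\simeq\Pi[-2]$.

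The one genuinely nontrivial step, and the place where non-Dynkinness is indispensable, is the injectivity of $d_2$ --- equivalently, the finiteness $\pd_{\Pi^{\mathrm{e}}}\Pi=2$ of the bimodule resolution. Here I would argue via the $*$-grading~\eqref{202111191709}: all terms are graded and the differentials homogeneous, so exactness may be checked in each degree, and by~\eqref{preprojective algebra isomorphism} the matrix Hilbert series $H(t)=\sum_{n\geq 0}[\Pi_n]t^n$ (with $[\Pi_n]_{ji}=\dim_\kk e_j\Pi_n e_i$) is constrained so that exactness of the complex becomes equivalent to the functional equation $\bigl(I-\mathsf a+(I-\mathsf a^{\mathrm T})t\bigr)H(t)=I$, $\mathsf a$ being the adjacency matrix of $Q$. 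This is consistent with $H(t)$ being a genuine power series precisely when the Coxeter transformation has no eigenvalue that is a root of unity, i.e.\ precisely when $Q$ is not Dynkin; for such $Q$ a dimension count along the already-exact right part of the complex then forces $d_2$ injective. (In the Dynkin case $\Pi(Q)$ is finite-dimensional of infinite global dimension by Theorem~\ref{202111191530}, so $d_2$ fails to be injective and the resolution does not terminate --- which explains the contrast.) This finiteness is classical; alternatively, the entire Calabi--Yau statement follows from Keller's theorem that for non-Dynkin $Q$ the algebra $\Pi(Q)$ coincides with the $2$-Calabi--Yau completion of the hereditary algebra $\kk Q$. The main obstacle is thus entirely concentrated in establishing that length-$2$ bimodule resolution; everything else is routine bookkeeping.
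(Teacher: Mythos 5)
The paper itself does not prove this statement: Theorem~\ref{202111261158} is recalled as classical background, and the route the paper implicitly endorses (and actually carries out for the QHA analogue, Theorem~\ref{202111261227}, via Theorem~\ref{202112122233} and Proposition~\ref{202207111600}) is exactly your fallback: the derived preprojective algebra $\sfT_{\Pa}(\Pa^{\vvee}[1])$ is $2$-Calabi--Yau (Keller's Calabi--Yau completion), and for non-Dynkin $Q$ it is concentrated in cohomological degree $0$, so the canonical map to $\Pi(Q)$ is a quasi-isomorphism. Your treatment of infinite-dimensionality via Theorem~\ref{202011062314} is fine, and your main sketch --- the self-dual length-two bimodule complex $\Pi\otimes_S R\otimes_S\Pi\to\Pi\otimes_S V\otimes_S\Pi\to\Pi\otimes_S\Pi\to\Pi$ together with $V^{\vee}\cong V$, $R^{\vee}\cong S$ --- is the other standard classical proof and is correct in outline.

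The one step you single out as essential, injectivity of $d_2$, is however not justified by the argument you give. First, the dichotomy you invoke is false: non-Dynkin quivers can very well have Coxeter eigenvalues that are roots of unity (for extended Dynkin quivers \emph{all} eigenvalues lie on the unit circle; for the Kronecker quiver the Coxeter matrix is unipotent), so ``no root-of-unity eigenvalue $\Leftrightarrow$ not Dynkin'' cannot be the criterion. Second, the functional equation $\bigl(I-\mathsf a+(I-\mathsf a^{\mathrm T})t\bigr)H(t)=I$ is only the Euler-characteristic \emph{consequence} of exactness; since $I-\mathsf a$ is invertible for every acyclic quiver, the formal inverse is a genuine power series in all cases (including Dynkin, where $d_2$ fails to be injective), so ``$H(t)$ being a power series'' discriminates nothing, and the identity for the actual Hilbert series of $\Pi(Q)$ still has to be proved. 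The honest input is precisely where non-Dynkinness enters: using $\Pi_n\cong\Hom_{\Pa}(\Pa,\tau_1^{-n}\Pa)$ from \eqref{preprojective algebra isomorphism}, no $\tau_1^{-n}P_j$ is injective, so the AR-sequence ending at $\tau_1^{-(n+1)}P_j$ exists for every $n$, and applying the exact functor $\Hom_{\Pa}(P_i,-)$ to it yields exactly the degree-wise Euler identity, hence $d_2$ injective. Alternatively, your citation of Keller's theorem (equivalently, $H^{0}$-concentration of the derived preprojective algebra) closes the gap completely and is the argument closest in spirit to this paper; as written, though, your own justification of the key step does not stand.
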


\subsection{The  quiver Heisenberg algebras}

Now we introduce the quiver Heisenberg algebra (QHA) ${}^{v}\!\YM(Q)$ of a quiver $Q$ which is the main object of this paper. 
This  algebra is defined from a quiver $Q$ with explicit relations and 
has a parameter $v \in \kk^{\times} Q_{0}$ (a collection  of elements of $\kk^{\times} $ indexed by the vertices $i \in Q_{0}$).  
We remark that using the isomorphism given in Lemma \ref{202011242056} below, we can define the algebra ${}^{v}\!\YM(Q)$ for 
any element $v \in \kk Q_{0}$. 

We call an element $v$ of  $\kk Q_{0}$ (resp. $\kk^{\times} Q_{0}$)  \emph{weight} (resp. \emph{sincere weight}).

\begin{definition}\label{quiver Heisenberg algebras}
Let $v\in \kk^{\times} Q_{0}$ be a sincere weight.

\begin{enumerate}[(1)]
\item For $i \in Q_{0}$, we set 
${}^{v}\!\varrho_{i} := v_{i}^{-1} \rho_{i}$ and  
 ${}^{v}\!\varrho := \sum_{i \in Q_{0}} {}^{v}\!\varrho_{i}$. 
They are collectively  called  the \emph{weighted mesh relations}.

\item 
For $a \in \overline{Q}_{1}$, the \emph{quiver Heisenberg relation} ${}^{v}\!\eta_{a}$ is defined to be 
the commutator of $a$ with ${}^{v}\!\varrho$. Namely, 
for  an arrow  $a \in Q_{1}$ with $i =h(a), j =t(a)$, we set
\[
{}^{v}\!\eta_{a} := [a, {}^{v}\!\varrho] = a {}^{v}\!\varrho_{i} - {}^{v}\!\varrho_{j} a
= v_{i}^{-1} a \rho_{i} - v_{j}^{-1}\rho_{j} a.  
\]
Explicitly, 
for an arrow  $\alpha \in Q_{1}$ with $i =h(\alpha), j =t(\alpha)$, we have 
\[
\begin{split}
{}^{v}\!\eta_{\alpha} &= 
\sum_{\beta: t(\beta) = i} v_{i}^{-1}\alpha \beta\beta^{*}
- \sum_{\beta : h(\beta) = i} v_{i}^{-1} \alpha \beta^{*}\beta
- \sum_{\beta: t(\beta) = j} v_{j}^{-1} \beta\beta^{*}\alpha 
+\sum_{\beta: h(\beta) = j} v_{j}^{-1}\beta^{*}\beta \alpha, \\ 
{}^{v}\! \eta_{\alpha^{*}} &= 
\sum_{\beta: t(\beta) = j} v_{j}^{-1}\alpha^{*} \beta\beta^{*}
- \sum_{\beta : h(\beta) = j} v_{j}^{-1} \alpha^{*} \beta^{*}\beta
- \sum_{\beta : t(\beta) = i}  v_{i}^{-1}\beta\beta^{*}\alpha^{*} 
+\sum_{\beta : h(\beta) = i} v_{i}^{-1} \beta^{*}\beta \alpha^{*}. 
\end{split}
\]

\item 
We define the \emph{quiver Heisenberg algebra (QHA)} ${}^{v}\!\YM = {}^{v}\!\YM(Q)$ to be 
the path algebra of the double quiver $\overline{Q}$ with the quiver Heisenberg relations: 
\[
{}^{v}\!\YM :={}^{v}\!\YM(Q) := \frac{  \kk \overline{Q}}{ \ ({}^{v}\!\eta_{a} | a \in \overline{Q}_{1}) \ }. 
\]
\end{enumerate}
\end{definition}

\subsubsection{Remark about the naming }

The authors originally studied the case that $v_{i} = 1$ for all $i \in Q_{0}$. 
In this case, if   $Q= \ \circlearrowleft$ is  a loop, 
then $\Pi(\circlearrowleft )$ is isomorphic to the polynomial algebra $S=\kk[x,y]$ in two variables 
and  ${}^{v}\!\YM(\circlearrowleft )$ is isomorphic to the usual Heisenberg algebra $H$ in variables $x,y$:
\[
{}^{v}\!\YM(  \circlearrowleft  ) \cong H := \frac{ \ \kk \langle x, y \rangle \ }{ ([x,[x,y]], \ [y,[x,y]])  }.
\]
We remark that in the sequel throughout the paper $Q$  denotes a finite \emph{acyclic} quiver.

Let  $Q$ be an extended Dynkin quiver.  
A fundamental fact in algebraic McKay correspondence is that  $\Pi(Q)$ is Morita equivalent  to the skew group algebra $S* G$ 
where $G$ is a finite subgroup of $\SL(2;\kk)$ corresponding to $Q$. 
If we assume that  $v_{i} = 1$ for all $i \in Q_{0}$, then  
${}^{v}\!\YM(Q)$ can be shown to be Morita equivalent to the skew group algebra $H* G$. 
Hence we gave the algebra ${}^{v}\!\YM(Q)$ the name ``quiver Heisenberg algebra''.
In a sense, we consider $\Pi(Q)$ as a quiver version of the polynomial algebra $S= \kk[x,y]$ in two variables and 
${}^{v}\!\YM(Q)$ as a quiver version of the usual Heisenberg algebra $H$. 
In this comparison, we are looking at arrows $\alpha$ of the original quiver $Q$ as the variable $x$ and 
the opposite arrows $\alpha^{*}$ as the variable $y$. 
Since $S$ and $H$ are basic and important examples of Artin-Schelter (AS) algebras \cite{Artin-Schelter} in two variables, 
it might be worth pursing  quiver versions of other AS-regular algebras in two variables. 

\subsection{Related algebras and preceding results}

In this section we explain that the algebras ${}^{v}\!\YM(Q)$ form a very special case of other classes of algebras which were introduced before by several researchers from different view points.

For this purpose, we describe the quiver Heisenberg algebra as a quotient of the path algebra $\kk[z]\overline{Q}$ with polynomial coefficients.

\subsubsection{The quiver Heisenberg algebra as a path algebra with polynomial coefficients}

\begin{lemma}\label{202011242056}
Let $v \in \kk^{\times} Q_{0}$ be a sincere weight.
We have the following isomorphism of algebras:
\[
{}^{v}\!\YM(Q) \cong \frac{\kk[z]\overline{Q}}{( \rho_{i} -v_{i}z e_{i} \mid i \in Q_{0} )}.
\]
\end{lemma}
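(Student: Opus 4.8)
The plan is to exhibit mutually inverse algebra homomorphisms between ${}^{v}\!\YM(Q)$ and the algebra $R := \kk[z]\overline{Q}/(\rho_{i} - v_{i}z e_{i} \mid i \in Q_{0})$ (with $z$ a central variable), each of which restricts to the identity on the path algebra $\kk\overline{Q}$. The whole point is the simple observation that, on one side, the element ${}^{v}\!\varrho$ is forced to be central, and on the other side the central variable $z$ is forced to coincide with ${}^{v}\!\varrho$; once this is seen the two presentations match up.

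First I would check that the weighted mesh relation ${}^{v}\!\varrho = \sum_{i \in Q_{0}} {}^{v}\!\varrho_{i}$ is a central element of ${}^{v}\!\YM(Q)$. Since $\rho_{i} = e_{i}\rho_{i}e_{i}$ (this is where one uses the head/tail and composition conventions for $\overline{Q}$), one has $e_{i}\,{}^{v}\!\varrho = {}^{v}\!\varrho_{i} = {}^{v}\!\varrho\, e_{i}$ for each $i$, so ${}^{v}\!\varrho$ commutes with every idempotent $e_{i}$; and by the defining relations ${}^{v}\!\eta_{a} = [a, {}^{v}\!\varrho] = 0$ it commutes with every arrow $a \in \overline{Q}_{1}$. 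As the $e_{i}$ and the arrows generate ${}^{v}\!\YM(Q)$, this gives centrality of ${}^{v}\!\varrho$. Consequently there is a well-defined algebra homomorphism $\phi : \kk[z]\overline{Q} \to {}^{v}\!\YM(Q)$ that is the identity on $\kk\overline{Q}$ and sends $z \mapsto {}^{v}\!\varrho$. Since $\phi(\rho_{i} - v_{i}z e_{i}) = \rho_{i} - v_{i}\,{}^{v}\!\varrho_{i} = \rho_{i} - \rho_{i} = 0$, the map $\phi$ descends to an algebra homomorphism $\bar\phi : R \to {}^{v}\!\YM(Q)$.

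Next I would argue in the opposite direction. In $R$ the defining relations give $z e_{i} = v_{i}^{-1}\rho_{i}$ for every $i$, and summing over $i \in Q_{0}$ while using $z = z\sum_{i}e_{i}$ yields the identity $z = \sum_{i}v_{i}^{-1}\rho_{i} = {}^{v}\!\varrho$ in $R$; in particular $R$ is generated already by the images of the arrows and idempotents. Hence, for each arrow $a \in \overline{Q}_{1}$, the image of the quiver Heisenberg relation ${}^{v}\!\eta_{a} = [a, {}^{v}\!\varrho]$ in $R$ equals $[a, z] = 0$, because $z$ is central. Therefore the identity on $\kk\overline{Q}$ extends to an algebra homomorphism $\kk\overline{Q} \to R$ that annihilates all the relations ${}^{v}\!\eta_{a}$, and so induces a homomorphism $\bar\psi : {}^{v}\!\YM(Q) \to R$.

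Finally I would verify that $\bar\phi$ and $\bar\psi$ are mutually inverse by checking this on generators. Both composites fix $\kk\overline{Q}$ pointwise by construction, so $\bar\phi\bar\psi = \id$ because ${}^{v}\!\YM(Q)$ is generated by $\kk\overline{Q}$; and $\bar\psi\bar\phi = \id$ because $R$ is generated by $\kk\overline{Q}$ together with $z$, with $z \mapsto {}^{v}\!\varrho \mapsto z$ in $R$ (using the identity $z = {}^{v}\!\varrho$ established above). I do not anticipate any real obstacle: the argument is pure presentation bookkeeping, and the only step requiring a moment's care is confirming $\rho_{i} = e_{i}\rho_{i}e_{i}$ (hence that ${}^{v}\!\varrho$ commutes with the idempotents), which is immediate from the definition of the mesh relations once the arrow conventions are fixed.
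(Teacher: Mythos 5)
Your proposal is correct and follows essentially the same route as the paper: both construct mutually inverse algebra homomorphisms, using centrality of ${}^{v}\!\varrho$ in ${}^{v}\!\YM(Q)$ to define the map out of $\kk[z]\overline{Q}$ (sending $z\mapsto{}^{v}\!\varrho$), and the identification $z={}^{v}\!\varrho$ in the quotient $\kk[z]\overline{Q}/(\rho_{i}-v_{i}ze_{i})$ to kill the relations ${}^{v}\!\eta_{a}=[a,{}^{v}\!\varrho]$ in the other direction. You merely spell out the verifications (centrality via $\rho_{i}=e_{i}\rho_{i}e_{i}$ and the relations, and the check that the composites are the identity on generators) that the paper leaves as "easy to see."
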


\begin{proof}
For simplicity, we set the algebra in the right hand side to be   ${}^{v}\!\YM'(Q)$.  
Let $f'': \kk\overline{Q} \to {}^{v}\!\YM(Q)$ be the canonical surjection and  $f': \kk[z] \to {}^{v}\!\YM(Q)$ the homomorphism of algebras which sends $z$ to ${}^{v}\!\varrho$.
Since ${}^{v}\!\varrho$ is central in ${}^{v}\!\YM(Q)$, the linear map $\kk[z] \overline{Q} = \kk[z] \otimes_{\kk} \kk \overline{Q} \to {}^{v}\!\YM(Q)$, which sends $p \otimes a$ to $f'(p)f''(a)$ is a homomorphism of algebras, which induces a homomorphism $f: {}^{v}\!\YM'(Q) \to {}^{v}\!\YM(Q)$. 

On the other hand, the canonical homomorphism $\kk \overline{Q} \to \kk [z] \overline{Q} \to {}^{v}\!\YM'(Q)$ of algebras 
induces a homomorphism $g: {}^{v}\!\YM(Q) \to {}^{v}\!\YM'(Q)$ of algebras. 
It is easy to see that the maps $f$ and $g$ are inverse to each other. 
\end{proof}

\begin{remark}
For a weight $v \in \kk Q_{0}$ which is not sincere, we interpret the symbol ${}^{v}\!\YM(Q)$ as the algebra in the right hand side of the above lemma.
\end{remark}

\subsubsection{Related algebras}

In view of Lemma \ref{202011242056}, 
it is clear that the algebra ${}^{v}\!\YM(Q)$ is a special case of the \emph{central extension of the preprojective algebras}
introduced by Etingof-Rains \cite{Etingof-Rains}, which is defined  to be 
\[
\Pi(Q)_{\lambda, \mu} := \frac{\kk[z] \overline{Q}}{ ( \rho_{i}- (\lambda_{i} z + \mu_{i})e_{i}\mid i \in Q_{0})}
\]
where  
$\lambda_{i}, \mu_{i} \in \kk$ for each $i \in Q_{0}$. 

Replacing the non-homogeneous linear polynomials $\lambda_{i}z + \mu_{i}$ with 
general polynomials $P_{i}(z)$,  
 we obtain the  \emph{$N=1$-quiver algebra} by Cachazo-Katz-Vafa \cite{CKV}, which is given as 
 \[
\Pi(Q)_{P} := \frac{ \kk [z] \overline{Q}}{ (\rho_{i}- P_{i}(z) e_{i}\mid i \in Q_{0}) } 
\]
where $P_{i}(z) \in \kk[z]$ for each $i \in Q_{0}$. 

Finally, in their influential work 
Crawley-Boevey-Holland \cite{Crawley-Boevey-Holland} introduced
the  \emph{deformation family of the preprojective algebra} 
which is defined to be 
\[
\Pi(Q)_{\bullet} := \frac{\kk [x_{1}, \ldots, x_{r}]\overline{Q}}{ (\rho_{i} -x_{i}e_{i} \mid i \in Q_{0}) } 
\] 
where $r = \# Q_{0}$. 
We may regard $\Pi(Q)_{\bullet}$ as a family of algebras over the $r$-dimensional affine space $\kk^{r}$
and  the algebra $\Pi(Q)_{P}$ is obtained from $\Pi(Q)_{\bullet}$ as the pull-back 
by the polynomial map $\kk \to \kk^{r}, \ z \mapsto (P_{1}(z), \ldots, P_{r}(z) )$. 
Thus in particular, the QHA ${}^{v}\!\YM(Q)$ is obtained as the restriction of $\Pi(Q)_{\bullet}$ to the line $\langle v \rangle \subset \kk^{r}$ 
connecting $v\in \kk^{r}$ and the origin. 
  
We note that in the previous studies of these algebras, the case $\chara \kk = 0$ was mainly considered.

\subsubsection{Preceding results}

By specializing the results obtained for the algebras $\Pi(Q)_{\lambda, \mu}$ of Dynkin type, 
we can deduce some results about ${}^{v}\!\YM(Q)$. 
To state them, we need to introduce one condition on weights. 

A weight $v \in \kk Q_{0}$ is called \emph{regular} if $ \sum_{i \in Q_{0}} v_{i}  \dim (e_{i} M) \neq 0$ for any indecomposable $\Pa$-module $M$ 
(see Definition \ref{202111192031} where the dimension vector is denoted by $\Euvect$).
We note that a regular weight is sincere. 

In the case $Q$ is  Dynkin and $\chara \kk = 0$, the vector space $\kk Q_{0}$ may be identified with the Cartan subalgebra $\mathfrak{h}$ of the semi-simple Lie algebra $\mathfrak{g}$ corresponding to $Q$. 
By Gabriel's theorem the dimension vectors of indecomposable $\Pa$-modules are precisely the roots of $\mathfrak{g}$, so
the regularity given here coincides with that are used by Etingof-Rains.

\begin{theorem}[{(1) Etingof-Rains \cite{Etingof-Rains}, (2) Etingof-Latour-Rains \cite{ELR}, (3) Eu-Schedler \cite{Eu-Schedler} }]\label{202111191454}

Assume that $\chara \kk =0$. 
Let $Q$ be a Dynkin quiver and $v \in \kk Q_{0}$ be a regular weight. 
Then the following assertions hold. 
\begin{enumerate}[(1)]

\item The algebra  ${}^{v}\!\YM(Q)$ is a finite dimensional  Frobenius algebra   
of dimension 
\[
\dim {}^{v}\!\YM(Q) = \sum_{N \in  \ind Q} (\dim N)^{2} =\frac{rh^{2}(h+1)}{12} 
\]
where $h$ is the Coxeter number of $Q$ and $r := \# Q_{0}$. 

\item If $v \in \kk Q_{0}$ is generic, then ${}^{v}\!\YM(Q)$ is symmetric. 

\item 
The algebra ${}^{v}\!\YM(Q)$ is stably $3$-Calabi-Yau. 

\end{enumerate}

\end{theorem}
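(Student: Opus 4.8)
The plan is to obtain Theorem~\ref{202111191454} entirely by specialization, since all three assertions are the Dynkin, characteristic-zero cases of results already established for the Etingof--Rains central extension and the Crawley--Boevey--Holland deformation family \cite{Crawley-Boevey-Holland}. First I would record the identification: by Lemma~\ref{202011242056} we have ${}^{v}\!\YM(Q) \cong \kk[z]\overline{Q}/(\rho_i - v_i z e_i \mid i \in Q_0) = \Pi(Q)_{v,0}$, i.e. the central extension with leading coefficients $\lambda = v$ and constant terms $\mu = 0$. All that then remains is to match hypotheses: I must check that a regular weight $v$ (Definition~\ref{202111192031}) lies in the parameter locus over which the cited theorems apply. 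Since $\chara\kk = 0$ and $Q$ is Dynkin, identify $\kk Q_0$ with the Cartan subalgebra $\mathfrak h$ of the simple Lie algebra $\mathfrak g$ attached to $Q$; by Gabriel's theorem \cite{Gabriel} the dimension vectors $\Euvect(M)$ of the indecomposable $\Pa$-modules are exactly the positive roots of $\mathfrak g$, so the regularity condition $\sum_i v_i \dim(e_i M) \neq 0$ for all indecomposable $M$ says precisely that $v$ avoids every root hyperplane, which is the genericity condition used in \cite{Etingof-Rains}. As the central extension only constrains the leading direction (a specialization $z = c$ yields the deformed preprojective algebra $\Pi^{\lambda c + \mu}(Q)$, whose behaviour is governed by whether $\lambda c + \mu$ hits a root hyperplane), taking $\mu = 0$ stays inside the admissible locus whenever $\lambda = v$ is regular.

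For assertion~(1) I would then invoke \cite{Etingof-Rains} directly: for a regular weight in the Dynkin case the central element $z = {}^{v}\!\varrho$ is nilpotent (heuristically, the character quotient at $z = c \neq 0$ is $\Pi^{cv}(Q)$, which vanishes because $cv$ meets no root hyperplane, forcing $c = 0$), hence $\Pi(Q)_{v,0}$ is finite dimensional, and Etingof--Rains show it is in fact a Frobenius algebra of dimension $\sum_{N \in \ind Q}(\dim N)^2$. The closed form $\tfrac{rh^2(h+1)}{12}$ is the elementary evaluation of $\sum_{N}(\dim N)^2$ over the root system, carried out in \cite{Etingof-Rains} just as the companion identity $\sum_{N}\dim N = \tfrac{rh(h+1)}{6}$ of Theorem~\ref{202111191530} is; note the two formulas differ by the factor $h/2$, which matches the ``length'' of the nilpotent $z$-tower over the $z=0$ quotient $\Pi(Q)$.

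Assertion~(2) follows from the Etingof--Latour--Rains theorem \cite{ELR} that $\Pi(Q)_{\lambda,\mu}$ is symmetric for generic parameters, applied to $(\lambda,\mu) = (v,0)$ with $v$ generic, the genericity being translated via $\kk Q_0 \cong \mathfrak h$ exactly as above. Assertion~(3) follows from the Eu--Schedler computation \cite{Eu-Schedler} of the relevant (co)homology and Nakayama data showing that $\underline{\mod}\,\Pi(Q)_{\lambda,\mu}$ is a $3$-Calabi--Yau triangulated category; once the two algebras are identified no further translation is needed.

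The main obstacle is not a single hard estimate but the hypothesis bookkeeping: the genericity conditions in \cite{Etingof-Rains, ELR, Eu-Schedler} are formulated for the full two-parameter family $(\lambda,\mu)$, or for the $r$-dimensional family $\Pi(Q)_\bullet$, and I must verify that restricting to the punctured line $\{(tv,0) : t \in \kk\}$ cut out by a regular (resp. generic) weight does not drop into a degenerate sublocus -- in particular that $\mu = 0$ is harmless, which is where the ``only the leading direction matters'' observation does the real work. I would also stress that $\chara\kk = 0$ is essential at every step, since Gabriel's parametrisation of indecomposables by roots, the Lie-theoretic notion of genericity, and the deformation-theoretic arguments of the cited papers all fail in positive characteristic; this is precisely the gap that the characteristic-free methods developed in the body of the paper are designed to fill.
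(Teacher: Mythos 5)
Your proposal follows essentially the same route as the paper: the paper does not reprove this theorem but obtains it exactly as you do, by identifying ${}^{v}\!\YM(Q)$ with the Etingof--Rains central extension $\Pi(Q)_{\lambda,\mu}$ at $(\lambda,\mu)=(v,0)$ via Lemma \ref{202011242056} and matching the regularity hypothesis with the genericity of Etingof--Rains through Gabriel's theorem and the identification $\kk Q_{0}\cong\mathfrak{h}$ in characteristic zero, then citing \cite{Etingof-Rains}, \cite{ELR} and \cite{Eu-Schedler} for the three assertions. Your additional remarks (nilpotency of $z$, the $h/2$ factor relating the two dimension formulas, the bookkeeping about $\mu=0$) are consistent heuristics but are not needed beyond what the citations already supply.
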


Comparing these results with  the results of the preprojective algebra $\Pi(Q)$ of Dynkin type given in Theorem \ref{202111191530}, 
it is maybe too optimistic but, 
we expect that the algebra ${}^{v}\!\YM(Q)$ may have nice analogous properties with that of the preprojective algebras.
Our results prove  that this is indeed the case.

\subsection{Our results $1/2$: the quiver Heisenberg algebras and Auslander-Reiten theory of the path algebras}\label{subsection: QHA and AR-theory}

We start explaining our result.

\subsubsection{} 
Recall that the quiver Heisenberg relations ${}^{v}\!\eta_{a} = [a, {}^{v}\!\varrho]$ are commutators of ${}^{v}\!\varrho$ with the generators $a \in \overline{Q}_{1}$ of the algebra $\kk \overline{Q}$.
It follows that  ${}^{v}\!\varrho $ becomes a central element of ${}^{v}\!\YM$. 
It is easy to see that ${}^{v}\!\YM/({}^{v}\!\varrho) = \Pi$. 
Putting these observations differently, 
we have a canonical surjective homomorphism ${}^{v}\!\pi: {}^{v}\!\YM \to \Pi$ of algebras 
and  an exact sequence 
\begin{equation}\label{202009212139}
 {}^{v}\!\YM \xrightarrow{ \ {}^{v}\!\varrho \ }  {}^{v}\!\YM \xrightarrow{ \ {}^{v}\!\pi \ } \Pi \to 0
\end{equation}
of ${}^{v}\!\YM$-bimodules 
where the first arrow is the multiplication by ${}^{v}\!\varrho$.

The quiver Heisenberg relations are homogeneous with respect to the $*$-grading \eqref{202111191709}:
 $\deg^{*} {}^{v}\!\eta_{\alpha} = 1, \ \deg^{*} {}^{v}\!\eta_{\alpha^{*}} = 2$ for $\alpha \in Q_{1}$. 
Therefore, ${}^{v}\!\YM$ is a $*$-graded algebra and the map ${}^{v}\!\pi$ preserves the $*$-grading.
 Since $\deg^{*} {}^{v}\!\varrho =1$, we get that by taking the $*$-grading into account, the exact sequence 
 \eqref{202009212139} becomes 
\begin{equation}\label{202009212139gr}
 {}^{v}\!\YM(-1) \xrightarrow{ \ {}^{v}\!\varrho \ }  {}^{v}\!\YM \xrightarrow{ \ {}^{v}\!\pi \ } \Pi \to 0
\end{equation}
 where $(-1)$ denotes the $*$-degree shift by $-1$, i.e., $({}^{v}\!\YM(-1))_{n} = {}^{v}\!\YM_{n -1}$. 
Looking at the $*$-degree $1$ part of this exact sequence 
we obtain an exact sequence of  $\Pa$-bimodules
\begin{equation}\label{universal AR-sequence}
\Pa \xrightarrow{ \ {}^{v}\!\varrho \ } {}^{v}\!\YM_{1} \xrightarrow{ \ {}^{v}\!\pi_{1} \ } \Pi_{1}  \to 0.
\end{equation}
Let $M$ be an indecomposable $\Pa$-module. 
By \eqref{202111191333},  taking the tensor product $- \otimes_{\Pa} M$ with the above exact sequence, 
we obtain an exact sequence of $\Pa$-modules of the following form 
\begin{equation}\label{202111181637}
M \xrightarrow{ \ {}^{v}\!\varrho_{M} \ } {}^{v}\!\YM_{1} \otimes_{\Pa} M \xrightarrow{ \ {}^{v}\!\pi_{1, M} \  } \tau^{-1}_{1}M  \to 0
\end{equation}
where we set ${}^{v}\!\varrho_{M} := {}^{v}\!\varrho \otimes_{\Pa} M$ and ${}^{v}\! \pi_{1, M} := {}^{v}\!\pi_{1} \otimes_{\Pa} M$. 
This exact sequence looks like an Auslander-Reiten sequence starting from $M$. 
The next theorem says that this is the case under certain conditions.

\begin{theorem}[Universal Auslander-Reiten sequence]\label{universal AR-sequence theorem}
Assume that the weight  $v\in \kk Q_{0}$ is regular.  
Let $M$ be an indecomposable non-injective $\Pa$-module. 
Then the morphism ${}^{v}\!\varrho_{M} $ is injective and the exact sequence \eqref{202111181637} 
is an AR-sequence starting from $M$. 
\[
0\to M \xrightarrow{ \ {}^{v}\!\varrho_{M} \ } {}^{v}\!\YM_{1} \otimes_{\Pa} M \xrightarrow{ \ {}^{v}\!\pi_{1, M} \  } \tau^{-1}_{1}M  \to 0.
\]
\end{theorem}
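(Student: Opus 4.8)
The plan is to verify the defining property of an Auslander--Reiten sequence: that \eqref{202111181637} is a short exact sequence, that it is non-split, and that every non-isomorphism $N \to \tau_1^{-1}M$ factors through ${}^{v}\!\pi_{1,M}$ (equivalently, by duality, the left-hand map ${}^{v}\!\varrho_M$ is left almost split). Since AR-sequences are unique, it suffices to check that \eqref{202111181637} is \emph{some} non-split exact sequence with the correct end terms whose middle term has the right dimension, or else to check the almost-split property directly. I would aim for the former when possible, as it reduces the problem to a dimension count plus non-splitness.

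The first step is to establish exactness on the left, i.e.\ injectivity of ${}^{v}\!\varrho_M \colon M \to {}^{v}\!\YM_1 \otimes_{\Pa} M$. Here I expect regularity of $v$ to enter decisively. From \eqref{universal AR-sequence} we have the right-exact sequence of $\Pa$-bimodules $\Pa \xrightarrow{{}^{v}\!\varrho} {}^{v}\!\YM_1 \xrightarrow{{}^{v}\!\pi_1} \Pi_1 \to 0$; tensoring with $M$ and using \eqref{202111191333} gives the right-exact \eqref{202111181637}. Left-exactness amounts to showing the kernel of ${}^{v}\!\varrho_M$ vanishes. The natural approach is to identify this kernel with something governed by the failure of $\rho_i - v_i z e_i$ to act injectively, and then use that ${}^{v}\!\varrho$ acts on $M$ via the eigenvalue-type quantity $\sum_i v_i \dim(e_i M)$ — this is exactly where $v$ regular (so this sum is nonzero for the indecomposable $M$) makes the relevant operator invertible. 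Concretely I would look at the two-sided bar-type resolution of $\Pi$ (or of $\Pa$) over ${}^{v}\!\YM$ coming from \eqref{202009212139gr}, reduce mod the appropriate ideal, and track the $*$-degree $1$ part; the central element ${}^{v}\!\varrho$ acting on the indecomposable $M$ should be shown to be a nonzero scalar multiple of the identity in the appropriate sense, forcing injectivity. I expect this to be the main obstacle: making precise how the scalar $\sum_i v_i\dim(e_iM)$ controls injectivity of ${}^{v}\!\varrho_M$, rather than merely of multiplication by ${}^{v}\!\varrho$ on the whole algebra.

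Granting left-exactness, the next step is non-splitness. If \eqref{202111181637} split, then $\tau_1^{-1}M$ would be a direct summand of ${}^{v}\!\YM_1 \otimes_{\Pa} M$ and the identity of $\tau_1^{-1}M$ would factor through ${}^{v}\!\pi_{1,M}$, giving a section $s\colon \tau_1^{-1}M \to {}^{v}\!\YM_1 \otimes_{\Pa} M$. I would derive a contradiction by applying a suitable functor — most naturally $\Hom_\Pa(\Pa,-)$ composed with the bimodule structure, or by composing with the connecting data of \eqref{202009212139gr} one $*$-degree up — to show that a section would force ${}^{v}\!\varrho$ to act as a split monomorphism on a shifted piece, contradicting that ${}^{v}\!\YM$ is a genuine (non-trivial) central extension, i.e.\ that ${}^{v}\!\varrho$ is a non-zero-divisor but not a unit; this again uses $M$ indecomposable via Fitting's lemma applied to $\End_\Pa(M)$ being local.

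Finally, to upgrade to the almost-split property I would invoke the standard criterion: a non-split short exact sequence $0 \to M \to E \to \tau_1^{-1}M \to 0$ with $M$ indecomposable and $\End(\tau_1^{-1}M)$ local is automatically an AR-sequence provided, say, the middle term has the same length as the AR-sequence middle term, or provided one checks that any $g\colon M \to X$ that is not a split mono factors through ${}^{v}\!\varrho_M$. For the dimension route I would compute $\dim_{\kk}({}^{v}\!\YM_1 \otimes_{\Pa} M) = \dim_\kk M + \dim_\kk \tau_1^{-1}M$ from the (now short) exact sequence \eqref{202111181637}, and compare with the known middle term of the AR-sequence ending in $\tau_1^{-1}M$ — by the defining exactness of the AR-sequence these agree, so uniqueness of the AR-sequence (Theorem of Auslander--Reiten) identifies \eqref{202111181637} with it. Throughout I would use that $M$ non-injective guarantees $\tau_1^{-1}M \neq 0$ and that the AR-sequence starting at $M$ exists, so there is something to match against. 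The delicate point remains the first step; the rest is formal given the machinery \eqref{202111191333}, \eqref{universal AR-sequence}, and the centrality of ${}^{v}\!\varrho$ already established in the excerpt.
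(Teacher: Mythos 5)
Your plan founders at the step that actually carries the content of the theorem: upgrading a non-split exact sequence to an almost split one. The ``dimension route'' you propose is vacuous, because \emph{every} extension $0\to M\to E\to \tau_1^{-1}M\to 0$, split or not, has $\dim_\kk E=\dim_\kk M+\dim_\kk\tau_1^{-1}M$, so comparing the length of the middle term with that of the AR-sequence distinguishes nothing; and uniqueness of AR-sequences only says that an almost split sequence with these end terms is isomorphic to the AR-sequence, not that a non-split one is automatically almost split. Since $Q$ may be wild, $\End_\Pa(M)$ can have a nontrivial radical, and then $\Ext^1_\Pa(\tau_1^{-1}M,M)\cong \tuD\End_\Pa(M)$ has dimension $>1$, so non-split extensions between exactly these end terms which are \emph{not} almost split really occur. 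What must be shown is that the class of \eqref{202111181637}, viewed as a linear form on $\End_\Pa(M)$ under AR/Serre duality, vanishes on $\rad\End_\Pa(M)$ and is nonzero on $\id_M$. This is precisely what the paper supplies via the trace formula (Theorem \ref{trace formula}): the Serre-duality pairing of $f\in\End_\Pa(M)$ with the coconnecting morphism ${}^{v}\!\ttheta_M$ equals the weighted trace ${}^{v}\!\Tr(f)$, which is ${}^{v}\!\Euch(M)\neq 0$ for $f=\id_M$ (this is where regularity enters) and $0$ for radical, hence nilpotent, $f$; Happel's criterion (Theorem \ref{Happel's criterion}) then makes ${}^{v}\!\sfAR_M$ an AR-triangle in $\Dbmod{\Pa}$, and the module statement is read off from it. Your proposal contains no substitute for this computation; ``check that any $g$ factors through'' is announced but never argued.

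The two auxiliary steps also fail as described. For injectivity of ${}^{v}\!\varrho_M$ you want ``${}^{v}\!\varrho$ to act on the indecomposable $M$ as a nonzero scalar,'' but ${}^{v}\!\varrho_M\colon M\to {}^{v}\!\YM_1\otimes_\Pa M$ is a morphism between non-isomorphic modules, not an endomorphism of $M$, so there is no eigenvalue in play; note also that injectivity genuinely fails when $M$ is injective, so any direct argument must use non-injectivity, which yours never does. In the paper injectivity is not proved head-on at all: it is a consequence of the sequence being almost split (the left map of an AR-sequence is mono). For non-splitness you invoke that ${}^{v}\!\varrho$ is a non-zero-divisor in ${}^{v}\!\YM$, but this is false exactly in the Dynkin case: there ${}^{v}\!\YM$ is finite dimensional with ${}^{v}\!\varrho$ in the radical, and the kernel of right multiplication by ${}^{v}\!\varrho$ is $\tuD(\Pi)\neq 0$ (Corollary \ref{202006221820}). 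So, as it stands, the proposal establishes none of the three properties it sets out to verify, and the missing ingredient in each case is the duality/trace computation that the paper's proof is built on.
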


In view of this theorem, we may call the exact sequence \eqref{universal AR-sequence} the \emph{universal Auslander-Reiten sequence}. 

\subsubsection{}

The QHA ${}^{v}\!\YM$ is generated by   ${}^{v}\!\YM_{0} = \Pa$ and ${}^{v}\!\YM_{1}$. 
Hence the multiplication map ${}^{v}\!\zeta_{2}: {}^{v}\! \YM_{1} \otimes_{\Pa} {}^{v}\!\YM_{1} \to {}^{v}\!\YM_{2}$ 
is surjective.  
We will show that there is  a morphism 
${}^{v}\!\eta^{*}_{2}: \Pi_{1} \to {}^{v}\!\YM_{1} \otimes_{\Pa} {}^{v}\!\YM_{1}$ 
of bimodules over $\Pa$ 
whose image coincides with the kernel of ${}^{v}\!\zeta_{2}$. 
In other words, we have an exact sequence 
\[
\Pi_{1} \xrightarrow{ \ {}^{v}\!\eta^{*}_{2} \ } {}^{v}\!\YM_{1} \otimes_{\Pa} {}^{v}\!\YM_{1} \xrightarrow{ \ {}^{v}\!\zeta_{2} \ } {}^{v}\!\YM_{2} \to 0
\]
of bimodules over $\Pa$. 
A rough explanation of the first map ${}^{v}\!\eta^{*}_{2}$ is that 
the $\Pa$-bimodule $\Pi_{1}$ is generated by the arrows $\alpha^{*} (\alpha \in Q_{1})$ 
and the map ${}^{v}\!\eta^{*}_{2}$ sends $\alpha^{*}$ to ${}^{v}\!\eta_{\alpha^{*}}$ regarded as  elements of ${}^{v}\!\YM_{1} \otimes_{\Pa} {}^{v}\!\YM_{1}$. 

The map ${}^{v}\!\eta^{*}_{2}$ also has an AR-theoretic meaning.
Let $M$ be an indecomposable non-injective $\Pa$-module and 
${}^{v}\!\YM_{1} \otimes_{\Pa} M =\bigoplus_{i =1}^{r} N_{i}$ an indecomposable decomposition.   
By Theorem \ref{universal AR-sequence theorem},  an AR-sequence starting from $M$ is of the following form.
\[
\begin{xymatrix}@R=0.5mm@C=5mm{
 &M \ar[rr] && {}^{v}\!\YM_{1} \otimes_{\Pa} M  \ar[rr] && \tau_{1}^{-1}(M) \hspace{40pt} & &&\\
\ar@{-}[rrrrrrr]&&&&&&&&\\ 
&&& N_{1}\ar[ddrr]&& & &&   \\
&&& N_{2}\ar[drr] &&&&&\\
&M  \ar[uurr] \ar[urr]
\ar[ddrr] & & \vdots && \tau^{-1}_{1}( M )&  && \\ 
&&&&& &&&\\
&&& N_{r}\ar[uurr]&&  &&& \\
}\end{xymatrix}
\hspace{70pt}
\]

Observe that there is an irreducible morphism $N_{i} \to \tau^{-1}_{1}M$ for each $i=1,2,\ldots, r$. 
If we assume that $N_{1}, \ldots, N_{r}$ are not injective, 
then by Theorem \ref{universal AR-sequence theorem}, 
the module ${}^{v}\!\YM_{1} \otimes_{\Pa} {}^{v}\!\YM_{1} \otimes_{\Pa} M$ is the middle term of the direct sum of AR-sequences starting from $N_{i}$.
\[
0 \to {}^{v}\! \YM_{1} \otimes_{\Pa} M 
\xrightarrow{ {}^{v}\!\varrho_{ {}^{v}\!\YM_{1} \otimes M}}   
{}^{v}\! \YM_{1} \otimes_{\Pa} {}^{v}\! \YM_{1} \otimes_{\Pa} M 
\xrightarrow{{}^{v}\!\pi_{1, {}^{v}\!\YM_{1}\otimes M} } \tau^{-1}_{1}({}^{v}\!\YM_{1} \otimes_{\Pa} M) \to 0
\] 
The point is that the module 
${}^{v}\!\YM_{1} \otimes_{\Pa} {}^{v}\!\YM_{1} \otimes_{\Pa} M$ contains  $\tau^{-1}_{1}(M)$ as a direct summand. 
The next theorem says that under a certain assumption, 
the morphism 
\[
{}^{v}\! \eta^{*}_{2, M}= {}^{v}\!\eta^{*}_{2}\otimes_{\Pa} M:  \tau^{-1}_{1}(M) \to {}^{v}\!\YM_{1} \otimes_{\Pa} {}^{v}\!\YM_{1} \otimes_{\Pa} M
\] 
provides this direct summand. 
\[
\begin{xymatrix}@R=0.5mm@C=5mm{ &&&&&&\Pi_{1} \otimes_{\Pa} M \ar@/_10pt/[ddl]_-{{}^{v}\!\eta^{*}_{2, M} } & \\  
&&&&&&& \\
&M  && {}^{v}\!\YM_{1} \otimes_{\Pa} M  \ar[rr]
&& {}^{v}\!\YM_{1} \otimes_{\Pa}{}^{v}\!\YM_{1}  \otimes_{ \Pa} M \ar[rr] \ &&
\tau^{-1}_{1}({}^{v}\!\YM_{1} \otimes_{\Pa} M )  \ \ \ \ \ 
 &\\
\ar@{-}[rrrrrrr]&&&&&&&&\\ 
&&& N_{1}\ar[ddrr] \ar[drr] \ar[rr] && L_{1}& &&   \\
&&& N_{2}\ar[drr] \ar[urr] && L_{2} &\tau^{-1}_{1}(M) \ar@/_10pt/[dl]_{\cong}& &\\
&M  
& & \vdots && \tau^{-1}_{1}( M )&  &  & \\ 
&&&&& &&&\\
&&& N_{r}\ar[uurr]\ar[rr]&& L_{s} &&& \\
}\end{xymatrix}
\]
The precise statement is the following.

\begin{theorem}\label{202111181854}
Let $v \in \kk Q_{0}$ be a regular weight. 
Let $M \in \Pa \mod$ be a non-injective indecomposable module 
such that ${}^{v}\!\YM_{1} \otimes_{\Pa} M$ does not contain an injective module as a direct summand. 
Assume that $ \sum_{i \in Q_{0}} v_{i}\dim (e_{i} {}^{v}\!\YM_{1} \otimes_{\Pa} M) \neq 0$.

Then 
there exists a morphism $\xi_{2, M}: {}^{v}\!\YM_{1} \otimes_{\Pa} {}^{v}\!\YM_{1} \otimes_{ \Pa} M \to \tau_{1}^{-1}(M)$ 
which satisfies   the following equation. 

\begin{enumerate}[(1)] 

\item 
$
\xi_{2, M} {}^{v}\!\eta^{*}_{2, M}  = x \id_{\tau_{1}^{-1} (M)}$ 
where we set 
\[
x := -\frac{ \sum_{i \in Q_{0}} v_{i}\dim (e_{i} {}^{v}\!\YM_{1} \otimes_{\Pa} M)}{ \sum_{i\in Q_{0}} v_{i}\dim(e_{i} M)}.
\]

\item $\xi_{2, M}  {}^{v}\!\rho_{ \YM_{1}\otimes M}= {}^{v}\!\pi_{1, M}$. 

\end{enumerate}

Namely we have the following commutative  diagram. 
\[
\begin{xymatrix}@C=40pt{ 
& \tau_{1}^{-1} (M) \ar[d]^{{}^{v}\!\eta^{*}_{2,M} }  
\ar@/^20pt/[ddr]_{\cong}^{  x\id} 
& 
\\ 
{}^{v}\!\YM_{1}\otimes_{\Pa} M \ar@/_20pt/[rrd]_{{}^{v}\!\pi_{1, M} } \ar[r]^-{{}^{v}\!\rho_{ {}^{v}\!\YM_{1} \otimes M}}
 &
 {}^{v}\!\YM_{1} \otimes_{\Pa} {}^{v}\!\YM_{1} \otimes_{\Pa} M \ar[dr]^{\xi_{2, M}}
 & \\
&& \tau_{1}^{-1}(M),
}
\end{xymatrix}\]

\end{theorem}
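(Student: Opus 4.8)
The plan is to produce the map $\xi_{2,M}$ by combining two ingredients that are already available: the universal Auslander--Reiten sequence of Theorem~\ref{universal AR-sequence theorem} applied to $M$, and the same sequence applied to the decomposition $\YM_1\otimes_\Pa M = \bigoplus_i N_i$. Since $M$ is non-injective and no $N_i$ is injective, Theorem~\ref{universal AR-sequence theorem} gives two short exact sequences: one with middle term $\YM_1\otimes_\Pa M$ ending in $\tau_1^{-1}M$, and one with middle term $\YM_1\otimes_\Pa\YM_1\otimes_\Pa M$ ending in $\tau_1^{-1}(\YM_1\otimes_\Pa M) = \bigoplus_i \tau_1^{-1}N_i$. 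The target $\tau_1^{-1}M$ appears as a direct summand of the latter module, via the morphism ${}^v\!\eta^*_{2,M}$ whose construction (as the tensor of the bimodule map ${}^v\!\eta^*_2$ with $M$) is described just before the statement. So the real content is to produce a retraction of ${}^v\!\eta^*_{2,M}$ up to the explicit scalar $x$ and to match it against ${}^v\!\pi_{1,M}$.

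The first step I would carry out is to recognize that the composite $\xi_{2,M}{}^v\!\eta^*_{2,M}$ is necessarily a scalar: it is an endomorphism of the indecomposable module $\tau_1^{-1}M$ (which is indecomposable since $M$ is, as $\tau_1^{-1}$ is an equivalence of the relevant subcategories), so $\End_\Pa(\tau_1^{-1}M)$ is local; if the composite is not nilpotent it is a unit, and one then rescales. To see it is not nilpotent, and in fact to compute the scalar, I would use a dimension/Euler-form argument. This is where the hypothesis $\sum_i v_i\dim(e_i\,\YM_1\otimes_\Pa M)\neq 0$ enters: the scalar $x$ is manufactured precisely as the ratio of the "$v$-weighted dimensions" $\sum_i v_i \dim(e_i\,\YM_1\otimes_\Pa M)$ over $\sum_i v_i\dim(e_i M)$, which strongly suggests that the underlying computation is the effect of the central element ${}^v\!\varrho$ acting on the graded pieces, combined with the relation $\rho_i = v_i z e_i$ from Lemma~\ref{202011242056}. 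Concretely, multiplication by ${}^v\!\varrho$ on $\YM_1\otimes_\Pa\YM_1\otimes_\Pa M$ can be computed in two ways — pushing it to the left factor versus the right — and the discrepancy, read off degree by degree, produces exactly the weighted-dimension ratio. I would set up this bookkeeping using the $*$-grading and the exact sequences \eqref{202009212139gr}, \eqref{universal AR-sequence}.

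Having fixed $\xi_{2,M}$ so that (1) holds, part (2) should follow by a diagram chase. The map ${}^v\!\rho_{\YM_1\otimes M}$ is the inclusion of $\YM_1\otimes_\Pa M$ into $\YM_1\otimes_\Pa\YM_1\otimes_\Pa M$ coming from multiplication by ${}^v\!\varrho$, and ${}^v\!\pi_{1,M}$ is the projection to $\tau_1^{-1}M$ in the universal AR-sequence for $M$. I would show $\xi_{2,M}\circ{}^v\!\rho_{\YM_1\otimes M}$ and ${}^v\!\pi_{1,M}$ agree by checking they have the same kernel — both should be surjections onto $\tau_1^{-1}M$ with kernel the image of $M$ under ${}^v\!\varrho_M$ — or alternatively by precomposing with ${}^v\!\varrho_M$ and using that the composite $\YM_1\otimes_\Pa M \to \YM_1\otimes_\Pa\YM_1\otimes_\Pa M$ of the two multiplications-by-${}^v\!\varrho$ agrees with ${}^v\!\varrho$ acting on the whole, together with the centrality of ${}^v\!\varrho$; a uniqueness statement for maps out of the cokernel then pins down $\xi_{2,M}\,{}^v\!\rho_{\YM_1\otimes M}$ up to something killed on the relevant summand.

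The main obstacle I expect is the scalar computation in part (1): identifying the endomorphism $\xi_{2,M}{}^v\!\eta^*_{2,M}$ of $\tau_1^{-1}M$ with the specific rational number $x$, rather than merely showing it is some nonzero scalar. This requires an honest computation of how the central element ${}^v\!\varrho$ interacts with the bimodule maps ${}^v\!\eta^*_2$ and ${}^v\!\pi_1$ at the level of the $*$-graded pieces ${}^v\!\YM_1$, ${}^v\!\YM_2$, $\Pi_1$, and keeping careful track of which vertex-idempotents weight each contribution — essentially a trace computation on the path algebra, controlled by the relations $\rho_i = v_i z e_i$. Everything else (indecomposability, localness of endomorphism rings, the existence of the two AR-sequences) is a direct appeal to Theorem~\ref{universal AR-sequence theorem} and standard Auslander--Reiten theory.
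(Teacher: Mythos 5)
There is a genuine gap, and it sits exactly where you yourself predicted the difficulty: identifying $\xi_{2,M}\,{}^{v}\!\eta^{*}_{2,M}$ with the scalar $x\,\id$ \emph{on the nose}, and doing so \emph{simultaneously} with (2). Your bookkeeping idea --- comparing the two ways of multiplying by ${}^{v}\!\varrho$ on ${}^{v}\!\YM_{1}\otimes_{\Pa}{}^{v}\!\YM_{1}\otimes_{\Pa}M$ --- only yields the commutator identity ${}^{v}\!\eta^{*}_{2}\,{}^{v}\!\pi_{1}={}_{\YM_{1}}{}^{v}\!\rho-{}^{v}\!\rho_{\YM_{1}}$ (Lemma \ref{202001111736}); this relates the three maps but contains no weighted dimensions, so no degree-by-degree reading of it can produce the ratio $x$. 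In the paper the ratio enters through Serre duality: the trace formula $\isagl{f,{}^{v}\!\ttheta_{M}}={}^{v}\!\Tr(f)$ (Theorem \ref{trace formula}) combined with Happel's criterion gives Proposition \ref{202012171832}, namely that ${}^{v}\!\ppi_{1,M}\circ\ttheta_{2,M}$ is AR-coconnecting with coefficient exactly ${}^{v}\!\Euch(\YM_{1}\otimes_{\Pa}M)/{}^{v}\!\Euch(M)$, and that is the only place the numerator of $x$ is ever computed. Without such an input, the localness argument you invoke gives at best that $\xi_{2,M}\,{}^{v}\!\eta^{*}_{2,M}$ is a unit, i.e.\ $x\,\id$ plus a radical term; that weaker statement is Corollary \ref{202102221805} in the paper, and its proof presupposes the theorem.

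The second gap is structural: your plan to first arrange (1) by rescaling and then verify (2) cannot work in that order. If $\xi\,{}^{v}\!\eta^{*}_{2,M}=u$ is merely a unit, replacing $\xi$ by $x\,u^{-1}\xi$ does force (1), but it changes $\xi\,{}^{v}\!\rho_{\YM_{1}\otimes M}$ by the automorphism $x\,u^{-1}$, so (2) is lost unless you already knew $u=x\,\id$ exactly. Conversely, (2) determines $\xi_{2,M}$ only up to morphisms factoring through the cokernel $\Pi_{1}\otimes_{\Pa}\YM_{1}\otimes_{\Pa}M$ of ${}^{v}\!\rho_{\YM_{1}\otimes M}$, and choosing the representative in that coset for which (1) holds exactly is the real content. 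The paper achieves this by producing two homotopy Cartesian squares over the same direct sum of AR-triangles and comparing them via Lemma \ref{2020071917401}; that comparison requires the vanishing $\Hom_{\Pa}(\YYM_{1}\lotimes_{\Pa}M,\PPi_{2}\lotimes_{\Pa}M[-1])=0$ (Lemma \ref{202012171908}), which fails for the $A_{2}$ quiver and forces a separate argument there --- a case distinction absent from your outline. Finally, your fallback for (2), that two surjections onto $\tau_{1}^{-1}(M)$ with the same kernel must coincide, is false as stated: they can differ by an automorphism of $\tau_{1}^{-1}(M)$, and that automorphism is precisely the normalization you are trying to pin down.
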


We note that this theorem is an immediate consequence of Theorem  \ref{2020071920551}.

\subsubsection{} 
The following theorem shows that the QHA  ${}^{v}\!\YM(Q)$ originates from AR-theory, in a similar way to the preprojective algebra $\Pi$.

\begin{theorem}
We have an isomorphism of graded algebras 
\[
{}^{v}\!\YM(Q) \cong \frac{ \ \sfT_{\Pa} {}^{v}\!\Lambda_{1} \ }{ ( {}^{v}\!\eta^{*}_{2}(\Pi_{1}) ) }. 
\]
\end{theorem}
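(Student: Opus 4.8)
The plan is to exhibit ${}^{v}\!\YM(Q)$ as a presentation of the tensor algebra $\sfT_{\Pa}{}^{v}\!\Lambda_{1}$ by showing that the two-sided ideal generated by ${}^{v}\!\eta^{*}_{2}(\Pi_{1}) \subset {}^{v}\!\Lambda_{1} \otimes_{\Pa} {}^{v}\!\Lambda_{1}$ is precisely the ideal of relations. First I would recall that ${}^{v}\!\YM$ is generated over $\Pa = {}^{v}\!\YM_{0}$ by ${}^{v}\!\YM_{1}$, so the multiplication induces a surjective graded algebra homomorphism $\mu: \sfT_{\Pa}{}^{v}\!\Lambda_{1} \twoheadrightarrow {}^{v}\!\YM(Q)$. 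It then suffices to identify $\ker\mu$ with the two-sided ideal generated in tensor degree $2$ by ${}^{v}\!\eta^{*}_{2}(\Pi_{1})$. The containment $({}^{v}\!\eta^{*}_{2}(\Pi_{1})) \subseteq \ker\mu$ is immediate from the defining property of ${}^{v}\!\eta^{*}_{2}$ (its image is the kernel of ${}^{v}\!\zeta_{2}: {}^{v}\!\YM_{1}\otimes_{\Pa}{}^{v}\!\YM_{1}\to{}^{v}\!\YM_{2}$), so the content is the reverse inclusion, i.e.\ that these degree-$2$ relations generate everything in higher degrees.

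The key step is a degree-by-degree argument. Write $R := ({}^{v}\!\eta^{*}_{2}(\Pi_{1}))$ and $\overline{T} := \sfT_{\Pa}{}^{v}\!\Lambda_{1}/R$; then $\mu$ factors as $\overline{T} \twoheadrightarrow {}^{v}\!\YM$, and I want this to be an isomorphism. In tensor degree $n$, $\overline{T}_n$ is the cokernel of $\sum_{i+j = n-2} ({}^{v}\!\Lambda_1)^{\otimes i}\otimes {}^{v}\!\eta^{*}_2(\Pi_1)\otimes ({}^{v}\!\Lambda_1)^{\otimes j} \to ({}^{v}\!\Lambda_1)^{\otimes n}$. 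I would prove by induction on $n$ that $\overline{T}_n \to {}^{v}\!\YM_n$ is an isomorphism. For $n=0,1$ this is the identity, and for $n=2$ it is exactly the exact sequence $\Pi_1 \xrightarrow{{}^{v}\!\eta^{*}_2} {}^{v}\!\YM_1\otimes_{\Pa}{}^{v}\!\YM_1 \xrightarrow{{}^{v}\!\zeta_2} {}^{v}\!\YM_2 \to 0$ established earlier. For the inductive step I would use associativity: $\overline{T}_n$ is a quotient of $\overline{T}_{n-1}\otimes_{\Pa}{}^{v}\!\Lambda_1$, and one reduces the relations in the two leftmost tensor slots using the $n=2$ case together with the already-known identification in degree $n-1$; the crucial point is that moving a relation ${}^{v}\!\eta^{*}_2(\Pi_1)$ from interior slots to the first two slots is controlled because the relations defining ${}^{v}\!\YM$ are generated by the ${}^{v}\!\eta_a$, which are themselves (via ${}^{v}\!\eta^{*}_2$) the degree-$2$ generators of $R$.

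The cleanest way to organize this is probably to compare the two algebras directly at the level of presentations by generators and relations, rather than counting dimensions. Concretely, ${}^{v}\!\YM(Q) = \kk\overline{Q}/({}^{v}\!\eta_a \mid a \in \overline{Q}_1)$, while $\sfT_{\Pa}{}^{v}\!\Lambda_1$ is itself a quotient $\kk\overline{Q}/J$ for the ideal $J$ generated by the relations defining $\Pi_1 \cong {}^{v}\!\Lambda_1$ in degree $1$ (these are $*$-degree-$1$ relations among paths of length $2$); one then has to check that modulo $J$, the degree-$2$ elements ${}^{v}\!\eta^{*}_2(\Pi_1)$ generate the same ideal as the ${}^{v}\!\eta_a$. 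Here one uses the explicit description in the excerpt: ${}^{v}\!\eta^{*}_2$ sends $\alpha^{*}$ to ${}^{v}\!\eta_{\alpha^{*}}$ and (by the analogous argument for arrows of $Q$) the generators $\alpha$ of $\Pi_1$-type contributions give the ${}^{v}\!\eta_\alpha$, so the two generating sets of relations in $\kk\overline{Q}$ agree up to the relations already imposed in $\sfT_{\Pa}{}^{v}\!\Lambda_1$.

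The main obstacle I anticipate is the bookkeeping in the inductive step — specifically, verifying that the ideal $R$ in the tensor algebra, which is generated by relations sitting in a fixed pair of adjacent tensor slots, captures all the higher relations of ${}^{v}\!\YM$, which a priori come from the two-sided ideal $({}^{v}\!\eta_a)$ in $\kk\overline{Q}$ with the relations inserted at arbitrary positions. The resolution should be that $\Pi$ being the degree-$0$-in-$z$ quotient and the exactness of \eqref{universal AR-sequence} give enough rigidity: the associativity constraint forces any relation inserted in an interior slot to be rewritable, modulo $R$, as one in the leftmost two slots. I would make this precise by showing the natural surjection $\overline{T}_{n-1}\otimes_{\Pa}{}^{v}\!\Lambda_1 \twoheadrightarrow \overline{T}_n$ has the same kernel as $\,{}^{v}\!\YM_{n-1}\otimes_{\Pa}{}^{v}\!\YM_1 \twoheadrightarrow {}^{v}\!\YM_n$, which reduces (via the inductive hypothesis in degree $n-1$) to a statement purely about the degree-$2$ relation, i.e.\ to the already-established exact sequence. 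A careful treatment of the base cases $n \le 2$ and this kernel comparison should complete the proof.
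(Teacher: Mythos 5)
Your skeleton (a surjection $\mu\colon \sfT_{\Pa}{}^{v}\!\YM_{1}\twoheadrightarrow{}^{v}\!\YM$ from generation in degrees $0,1$, the easy inclusion $({}^{v}\!\eta^{*}_{2}(\Pi_{1}))\subseteq\ker\mu$, and then a presentation comparison inside $\kk\overline{Q}\cong\sfT_{\Pa}(\Pa V^{*}\Pa)$) is the right idea, but the decisive step is carried out with a false identification. You write that $\sfT_{\Pa}{}^{v}\!\Lambda_{1}=\kk\overline{Q}/J$ where $J$ is generated by ``the relations defining $\Pi_{1}\cong{}^{v}\!\Lambda_{1}$'' (length-$2$, $*$-degree-$1$ relations, i.e.\ the mesh relations $\rho_{i}$). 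But $\Pi_{1}\not\cong{}^{v}\!\Lambda_{1}$: by Theorem \ref{universal AR-sequence theorem}, for a non-injective indecomposable $M$ one has the exact sequence $0\to M\to{}^{v}\!\YM_{1}\otimes_{\Pa}M\to\tau_{1}^{-1}M\to 0$, so $\dim({}^{v}\!\YM_{1}\otimes_{\Pa}M)>\dim(\Pi_{1}\otimes_{\Pa}M)$; there is only a surjection ${}^{v}\!\YM_{1}\twoheadrightarrow\Pi_{1}$ with kernel the image of $\Pa$. With your $J=(\rho_{i})$ the quotient $\kk\overline{Q}/J$ is $\sfT_{\Pa}\Pi_{1}$, not $\sfT_{\Pa}{}^{v}\!\YM_{1}$, and the check you propose then proves nothing: since ${}^{v}\!\varrho$ maps to $0$ in $\Pi_{1}$, the elements ${}^{v}\!\eta^{*}_{2}(\alpha^{*})=\alpha^{*}\otimes{}^{v}\!\varrho-{}^{v}\!\varrho\otimes\alpha^{*}$ (and likewise the ${}^{v}\!\eta_{a}$, which lie in $(\rho_{i})$) all die modulo this $J$, so the quotient stays $\sfT_{\Pa}\Pi_{1}$, which is not ${}^{v}\!\YM$; equivalently $(\rho_{i})\not\subseteq({}^{v}\!\eta_{a})$, so the two presentations cannot be compared this way. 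The correct $J$ is the ideal generated by the $*$-degree-$1$ Heisenberg relations ${}^{v}\!\eta_{\alpha}$, $\alpha\in Q_{1}$ (cubic paths): these present ${}^{v}\!\YM_{1}=\Pa V^{*}\Pa/\Pa\{{}^{v}\!\eta_{\alpha}\}\Pa$, so $\kk\overline{Q}/({}^{v}\!\eta_{\alpha})=\sfT_{\Pa}{}^{v}\!\YM_{1}$, and then $\ker\mu$ is generated by the classes of the remaining generators ${}^{v}\!\eta_{\alpha^{*}}$, which are exactly ${}^{v}\!\eta^{*}_{2}(\alpha^{*})$ and generate ${}^{v}\!\eta^{*}_{2}(\Pi_{1})$ as a bimodule because $\Pi_{1}$ is generated by the $\alpha^{*}$. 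Your text never places the degree-$1$ relations ${}^{v}\!\eta_{\alpha}$ correctly — you call them degree-$2$ generators of $R$ obtained ``via ${}^{v}\!\eta^{*}_{2}$ from the generators $\alpha$ of $\Pi_{1}$'', but $\Pi_{1}$ has no such generators and the ${}^{v}\!\eta_{\alpha}$ are not in the image of ${}^{v}\!\eta^{*}_{2}$; they are precisely the relations already used to form ${}^{v}\!\YM_{1}$.

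The inductive fallback of your second paragraph does not close this gap either. Knowing $\ker{}^{v}\!\zeta_{2}=\im{}^{v}\!\eta^{*}_{2}$ together with the identification in degree $n-1$ does not formally imply that $\ker({}^{v}\!\YM_{1}\otimes_{\Pa}{}^{v}\!\YM_{n-1}\to{}^{v}\!\YM_{n})$ is the image of the degree-$2$ relation placed in the leftmost two slots: a graded algebra generated in degree $1$ is not determined by its degree-$2$ relations (already $\kk[x]/(x^{3})$ versus its quadratic cover $\kk[x]$), so the sentence ``reduces \dots\ to the already-established exact sequence'' is exactly the statement that must be proved. This missing input is what the paper supplies structurally: the identification of ${}^{v}\!\YYM_{n}$ with the cone of ${}^{v}\!\eeta^{*}_{n}=({}_{\YYM_{1}}\zzeta_{n-1})(\eeta^{*}_{2,\YYM_{n-2}})$ (the triangles $\sfV_{n}$, and Lemma \ref{202008091343ns} in the non-derived setting) yields the right-exact sequences $\Pi_{1}\otimes_{\Pa}{}^{v}\!\YM_{n-2}\to{}^{v}\!\YM_{1}\otimes_{\Pa}{}^{v}\!\YM_{n-1}\to{}^{v}\!\YM_{n}\to 0$ in every degree. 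Either quote that input, or run the corrected presentation comparison above, which gives the theorem directly.
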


We observe  that, as an algebra over $\Pa$, the algebra ${}^{v}\!\YM$ is generated by 
the bimodule ${}^{v}\!\YM_{1}$ that provides the middle terms of AR-sequences.  
Moreover, the relations  come only from  the embedding  ${}^{v}\!\eta^{*}_{2}$
of $\tau^{-1}_{1}M$ in the middle terms of the AR-sequences starting from the middle terms of the AR-sequence starting from $M$.

\subsubsection{}

Next we establish an analogous result with Theorem \ref{202011062314}, which describes  the structure of ${}^{v}\!\YM(Q)$ as a module of the path algebra of $Q$.

\begin{theorem}\label{description of QHA as kQ-modules}
Assume that the weight $v \in \kk Q_{0}$ is regular. 
Then  the following assertions hold. 
\begin{enumerate}[(1)]
\item Let $i \in Q_{0}$. 
We have the following isomorphism of $\Pa$-modules: 
\[
{}^{v}\!\YM(Q)e_{i} \cong \bigoplus_{ N \in \ind \cP(Q)} N^{\oplus \dim e_{i}N}.
\]
\item 
We have the following isomorphism of $\Pa$-modules: 
\[
{}^{v}\!\YM(Q) \cong \bigoplus_{ N \in \ind \cP(Q)} N^{\oplus \dim N}.
\]
\end{enumerate}
\end{theorem}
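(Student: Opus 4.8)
The plan is to deduce the module structure of ${}^{v}\!\YM(Q)$ from the universal Auslander--Reiten sequence (Theorem \ref{universal AR-sequence theorem}) together with an inductive analysis of the $*$-graded pieces ${}^{v}\!\YM_{n}$, imitating the way Theorem \ref{202011062314} is obtained from \eqref{202111191333} for the preprojective algebra. The key point is that, as a $\Pa$-module, ${}^{v}\!\YM(Q)e_{i}=\bigoplus_{n\ge 0}{}^{v}\!\YM_{n}e_{i}$, so it suffices to identify each ${}^{v}\!\YM_{n}e_{i}$ (equivalently the bimodule ${}^{v}\!\YM_{n}$) up to isomorphism of $\Pa$-modules. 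First I would record that, since ${}^{v}\!\YM$ is generated over $\Pa={}^{v}\!\YM_{0}$ by ${}^{v}\!\YM_{1}$, the multiplication maps give surjections ${}^{v}\!\YM_{1}\otimes_{\Pa}{}^{v}\!\YM_{n}\to{}^{v}\!\YM_{n+1}$, and by the exact sequence preceding the tensor-algebra theorem the kernel is controlled by ${}^{v}\!\eta^{*}_{2}$; combined with the short exact sequence \eqref{202111181637} applied to $M={}^{v}\!\YM_{n}\otimes_{\Pa}-$, this lets one set up a recursion relating ${}^{v}\!\YM_{n+1}$ to ${}^{v}\!\YM_{n}$ and $\tau_1^{-1}({}^{v}\!\YM_{n})$.

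The cleanest route, which I would pursue, is an induction on $n$ proving simultaneously that ${}^{v}\!\YM_{n}e_i\cong\bigoplus_{N\in\ind\cP(Q)}N^{\oplus c^{(n)}_{i,N}}$ for suitable multiplicities and that all the relevant direct summands appearing are non-injective preprojective modules, so that Theorem \ref{universal AR-sequence theorem} and Theorem \ref{202111181854} genuinely apply at each stage. For the base cases: ${}^{v}\!\YM_0 e_i=P_i$ and ${}^{v}\!\YM_1\otimes_{\Pa}P_i$ is, by Theorem \ref{universal AR-sequence theorem}, the middle term of the AR-sequence starting at $P_i$ (using that $P_i$ is non-injective when $Q$ is non-Dynkin, and the analogous statement with injectives excluded otherwise), hence a sum of indecomposable preprojectives. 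For the inductive step, tensoring \eqref{202111181637} with $M={}^{v}\!\YM_{n}\otimes_{\Pa}P_i$ and using that the summands of ${}^{v}\!\YM_{n}\otimes_{\Pa}P_i$ are preprojective and non-injective, one gets ${}^{v}\!\YM_{n+1}\otimes_{\Pa}P_i$ as an extension of $\tau_1^{-1}({}^{v}\!\YM_{n}\otimes_{\Pa}P_i)$ by ${}^{v}\!\YM_{n}\otimes_{\Pa}P_i$; Theorem \ref{202111181854} shows the AR-theoretic copies of $\tau_1^{-1}$ of each summand of ${}^{v}\!\YM_{n}\otimes P_i$ split off, so the extension is in fact split and one obtains a recursion $c^{(n+1)}_{i,N}$ in terms of $c^{(n)}_{i,-}$ and AR-translation. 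Summing over $n$, telescoping, and using that every indecomposable preprojective $N$ equals $\tau_1^{-m}P_j$ for a unique $(m,j)$, the multiplicity of a fixed $N=\tau_1^{-m}P_j\in\ind\cP(Q)$ in ${}^{v}\!\YM(Q)e_i$ should collapse to $\dim e_iN$; part (2) then follows by summing over $i$, since $\dim N=\sum_i\dim e_iN$. An alternative, possibly slicker, bookkeeping device is to work in the split Grothendieck group $\Ksp$ (or with dimension vectors) where the splitting statements of Theorems \ref{universal AR-sequence theorem} and \ref{202111181854} translate into a clean identity $[{}^{v}\!\YM_{n+1}\otimes P_i]=[{}^{v}\!\YM_n\otimes P_i]+\tau_1^{-1}[{}^{v}\!\YM_n\otimes P_i]-(\text{correction})$, solve the recursion there, and then lift back to an actual module isomorphism using that each graded piece is a preprojective module with known dimension vector (a preprojective module over a hereditary algebra is determined up to isomorphism by its dimension vector, so the module statement follows from the dimension-vector statement once one knows the summands are preprojective).

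I would also handle the Dynkin versus non-Dynkin cases uniformly by phrasing everything in terms of $\cP(Q)$: in the Dynkin case the recursion terminates (AR-translation eventually hits injectives / the zero module), which is exactly the mechanism making ${}^{v}\!\YM(Q)$ finite dimensional, and the multiplicity count still yields $N^{\oplus\dim e_iN}$ for $N\in\ind\cP(Q)=\ind Q$; in the non-Dynkin case the sum is genuinely infinite but the same multiplicity formula holds pointwise. The one technical wrinkle to address carefully is that the hypotheses of Theorems \ref{universal AR-sequence theorem} and \ref{202111181854} require the relevant modules to be non-injective and, for \ref{202111181854}, the weighted-dimension condition $\sum_i v_i\dim(e_i{}^{v}\!\YM_1\otimes M)\neq 0$; regularity of $v$ is what guarantees $\sum_i v_i\dim(e_iM)\neq 0$ for all indecomposable $M$, and one needs to check that this propagates to the $\Pa$-modules ${}^{v}\!\YM_n\otimes P_i$ that occur — i.e.\ that no summand along the way is injective and that the weighted dimensions stay nonzero.

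\textbf{Main obstacle.} The hard part will be establishing that the extensions built from \eqref{202111181637} actually \emph{split} at every stage of the induction, i.e.\ extracting a clean module isomorphism rather than just an equality of dimension vectors. Theorem \ref{202111181854} provides the splitting of the $\tau_1^{-1}$-copies one level at a time, but organizing these splittings coherently across all $n$ — and verifying that the injectivity/regularity hypotheses are preserved under $-\otimes_{\Pa}{}^{v}\!\YM_1$ so that the theorem keeps applying — is the delicate bookkeeping that the proof must nail down. If any summand ${}^{v}\!\YM_n\otimes P_i$ were to acquire an injective (in the Dynkin case: non-preprojective) direct summand, the argument would break, so a lemma ensuring that ${}^{v}\!\YM_n\otimes_{\Pa}P_i$ is always preprojective — plausibly provable directly from the tensor-algebra description ${}^{v}\!\YM\cong\sfT_{\Pa}{}^{v}\!\YM_1/({}^{v}\!\eta^{*}_2(\Pi_1))$ and the fact that ${}^{v}\!\YM_1\otimes_{\Pa}-$ sends preprojectives to preprojectives (as it is built from $\tau_1^{-1}$ and AR-sequences) — will be the load-bearing step.
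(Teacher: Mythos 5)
Your strategy — an induction on the $*$-degree using the universal AR-sequence \eqref{202111181637} and a splitting of ${}^{v}\!\eta^{*}$ — is in the right spirit, but it has genuine gaps, and it is not the paper's route (the paper deduces the theorem in a few lines from Theorem \ref{202111110809}, i.e. from the derived-category $\rad^{n}$-approximation theorems \ref{right approximation theorem} and \ref{pre left approximation theorem} together with Theorem \ref{20211111751}, plus Proposition \ref{formula 1}(4) for the Dynkin vanishing). The most serious problem is your load-bearing use of Theorem \ref{202111181854}: its extra hypothesis $\sum_{i}v_{i}\dim(e_{i}\,{}^{v}\!\YM_{1}\otimes_{\Pa}M)\neq 0$ is \emph{not} implied by regularity. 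For the $A_{3}$ quiver the weight $v=(1,1,-3)$ is regular but satisfies $v_{1}+2v_{2}+v_{3}=0$, and then ${}^{v}\!\Euch({}^{v}\!\YM_{1}\otimes_{\Pa}P_{2})=0$ and ${}^{v}\!\varrho^{2}e_{2}=0$ (Proposition \ref{202111201702}, Example \ref{202103051509}); at that stage your induction has no retraction to quote, although the theorem you are proving still holds for such $v$. The splitting you actually need — that ${}^{v}\!\eta^{*}_{n,P_{i}}$ is a split monomorphism for \emph{every} $n$ — does hold under regularity, but the paper proves it by a different mechanism (the right $\rad$-fitting condition plus Lemma \ref{202105171508}, i.e. Theorem \ref{right approximation theorem}(2) and Corollary \ref{202007181611}), not via Theorem \ref{202111181854}. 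Relatedly, your recursion misidentifies the objects: tensoring \eqref{202111181637} with ${}^{v}\!\YM_{n}e_{i}$ has ${}^{v}\!\YM_{1}\otimes_{\Pa}{}^{v}\!\YM_{n}e_{i}$ as middle term, not ${}^{v}\!\YM_{n+1}e_{i}$; the latter is the cokernel of ${}^{v}\!\eta^{*}_{n+1}$ (not of a single application of ${}^{v}\!\eta^{*}_{2}$), and keeping that map split along the induction is itself the nontrivial point.

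The two concluding steps are also asserted rather than proved. The ``collapse'' $\sum_{n}c^{(n)}_{i,N}=\dim e_{i}N$ needs (a) the identification of ${}^{v}\!\YM_{n}e_{i}$ with the minimal left $\rad^{n}$-approximation object of $P_{i}$, and (b) the identity $\dim\Hom_{\Pa}(P_{i},N)=\sum_{n}\dim\irr^{n}(P_{i},N)$, which in the paper comes from Theorem \ref{202008172145} together with generalized standardness of the preprojective component (Theorems \ref{pre left approximation theorem} and \ref{20211111751}); your recursion alone does not deliver either without ladder-theoretic input. In the Dynkin case you must additionally prove ${}^{v}\!\YM_{n}e_{i}=0$ for $n\geq h-1$, which the paper obtains from the derived QHA (Proposition \ref{202110241503}, Corollary \ref{vanishing corollary}, Proposition \ref{formula 1}(4)); your ungraded induction stops exactly where injective summands appear and gives no control over these higher degrees. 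Finally, the fallback bookkeeping is based on a false claim: a module over a hereditary algebra is not determined by its dimension vector, even among preprojectives — for $A_{2}$ the modules $S_{1}\oplus S_{2}$ and the indecomposable of dimension vector $(1,1)$ agree on dimension vectors, and for the Kronecker quiver $P_{1}\oplus\tau_{1}^{-1}P_{1}$ and $P_{2}^{\oplus 2}$ do as well (the split Grothendieck group version of your bookkeeping is fine, dimension vectors are not). Repairing all of this essentially amounts to reconstructing the paper's $\rad^{n}$-approximation machinery, which is where the real work of the proof lives.
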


As a consequence we obtain the following result for Dynkin quivers. 

\begin{corollary}\label{202111191825}
Let $Q$ be a Dynkin quiver and $v \in \kk Q_{0}$ a regular weight. 
We have the following isomorphism of $\kk Q$-modules: 
\[
{}^{v}\!\YM(Q) \cong \bigoplus_{ N  \in \ind Q} N^{\oplus \dim N}.
\]
\end{corollary}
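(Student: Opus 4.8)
The plan is to deduce Corollary~\ref{202111191825} as a direct specialization of Theorem~\ref{description of QHA as kQ-modules}. The only extra input needed is the classical fact, recalled just after Theorem~\ref{202011062314}, that when $Q$ is Dynkin the category $\cP(Q)$ of preprojective modules coincides with all of $\Pa\mod$; equivalently, the set $\ind\cP(Q)$ of isomorphism classes of indecomposable preprojective modules equals the set $\ind Q$ of all isomorphism classes of indecomposable $\kk Q$-modules. This holds because over a representation-finite hereditary algebra the Auslander--Reiten quiver is connected and finite, and $\tau_1$ acts without fixed points on the non-projective/non-injective part, so every indecomposable is of the form $\tau_1^{-n}P_i$ for some projective $P_i$ and some $n\ge 0$.

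Granting this, I would argue as follows. First I would invoke Theorem~\ref{description of QHA as kQ-modules}(2), which applies since the hypothesis there is exactly that $v$ be a regular weight and that is what we assume here; it gives an isomorphism of $\Pa$-modules
\[
{}^{v}\!\YM(Q)\;\cong\;\bigoplus_{N\in\ind\cP(Q)} N^{\oplus\dim N}.
\]
Then I would substitute the identification $\ind\cP(Q)=\ind Q$ valid in the Dynkin case, which turns the index set of the direct sum into $\ind Q$ and leaves each multiplicity $\dim N$ unchanged, yielding
\[
{}^{v}\!\YM(Q)\;\cong\;\bigoplus_{N\in\ind Q} N^{\oplus\dim N}
\]
as $\kk Q$-modules, which is precisely the claimed isomorphism.

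I do not anticipate a genuine obstacle: the substance of the statement lies entirely in Theorem~\ref{description of QHA as kQ-modules}, which is assumed, and in the standard representation-finiteness of Dynkin path algebras together with Gabriel's theorem. The only point requiring a sentence of care is to make explicit that "$\cP(Q)=\Pa\mod$ in the Dynkin case" — i.e., that there are no preinjective or regular indecomposables missing from the preprojective component — and to note that both sides of the isomorphism are finite direct sums so no convergence issue arises. This is exactly parallel to how the Corollary following Theorem~\ref{202011062314} is deduced from that theorem for the preprojective algebra, and the present corollary is the QHA analogue, with $\dim N$ in place of $1$ as the multiplicity of $N$.
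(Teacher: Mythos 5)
Your proposal is correct and is essentially the paper's own argument: the corollary is stated as an immediate consequence of Theorem~\ref{description of QHA as kQ-modules}(2), specialized via the standard fact (already used for the analogous corollary after Theorem~\ref{202011062314}) that $\cP(Q)=\Pa\mod$, i.e.\ $\ind\cP(Q)=\ind Q$, when $Q$ is Dynkin. Nothing further is needed.
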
 

Comparing dimensions, we obtain the dimension formula by Etingof-Rains  for ${}^{v}\!\YM(Q)$ of Dynkin type 
even in the case $\chara \kk > 0$. 

\begin{corollary}\label{202111191828}
Let $Q$ be a Dynkin quiver and $v \in \kk Q_{0}$ a regular weight. 
Then the following equality holds:
\[
\dim {}^{v}\!\YM(Q) = \sum_{ N \in \ind Q} (\dim N)^{2}=\frac{rh^{2}(h +1)}{12}.
\]
\end{corollary}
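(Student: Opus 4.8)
The plan is to derive Corollary~\ref{202111191828} as a direct computation from Corollary~\ref{202111191825}, which in turn follows from Theorem~\ref{description of QHA as kQ-modules}(2) once we observe that for a Dynkin quiver the preprojective component $\cP(Q)$ exhausts $\Pa\mod$. So the first step is purely formal: since $\kk Q$ is representation-finite and hereditary of Dynkin type, every indecomposable module is preprojective, hence $\ind\cP(Q) = \ind Q$, and Corollary~\ref{202111191825} gives
\[
{}^{v}\!\YM(Q) \cong \bigoplus_{N \in \ind Q} N^{\oplus \dim N}
\]
as $\kk Q$-modules. Taking $\kk$-dimensions of both sides immediately yields
\[
\dim {}^{v}\!\YM(Q) = \sum_{N \in \ind Q} (\dim N)(\dim N) = \sum_{N \in \ind Q} (\dim N)^{2}.
\]

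The remaining content is the closed-form evaluation $\sum_{N \in \ind Q}(\dim N)^{2} = rh^{2}(h+1)/12$, where $r = \#Q_{0}$ and $h$ is the Coxeter number. Here I would simply cite Etingof--Rains \cite{Etingof-Rains}, where this numerical identity is established; it is a statement about root systems and depends only on the underlying Dynkin diagram, not on the characteristic of $\kk$ (the dimension vectors of indecomposables are the positive roots by Gabriel's theorem, and this bijection is characteristic-independent). Alternatively one can check it directly by running over the four infinite families and three exceptional types, using the known multiset of heights of positive roots; but invoking the existing formula is cleaner. The key point worth emphasizing in the writeup is that the left-hand side $\dim{}^{v}\!\YM(Q)$ is a priori defined in all characteristics via the presentation in Definition~\ref{quiver Heisenberg algebras} (or Lemma~\ref{202011242056}), and it is our Corollary~\ref{202111191825} --- valid for any regular weight over any field --- that forces this dimension to equal the characteristic-free combinatorial quantity $\sum_{N}(\dim N)^{2}$, thereby extending the Etingof--Rains computation (originally proved only for $\chara\kk = 0$) to positive characteristic.

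There is essentially no obstacle in this final step: it is a one-line consequence of the previously stated corollary together with a cited root-system identity. If anything, the only point requiring a word of care is the hypothesis that a regular weight exists over $\kk$ --- but regularity is an open condition cutting out the complement of finitely many hyperplanes $\{\sum_i v_i \dim(e_i M) = 0\}$ (one per indecomposable $M$), so over an infinite field a regular weight always exists, and for finite fields one passes to a finite extension, which does not affect the dimension count since dimension is stable under field extension. Thus the proof reduces to: (i) $\ind\cP(Q) = \ind Q$ in the Dynkin case; (ii) apply Corollary~\ref{202111191825} and take dimensions; (iii) quote the Etingof--Rains root-system formula for $\sum_{N}(\dim N)^{2}$.
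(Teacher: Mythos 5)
Your proof is correct and follows the paper's own route: the paper likewise obtains the statement by comparing dimensions in the isomorphism of Corollary \ref{202111191825} and quoting the Etingof--Rains root-system identity $\sum_{N\in\ind Q}(\dim N)^{2}=rh^{2}(h+1)/12$ for the closed form. Your extra remark on the existence of regular weights is harmless but unnecessary, since the corollary's hypothesis already supplies a regular $v$.
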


We note that Theorem \ref{202109131544} proves that ${}^{v}\!\YM(Q)$ of Dynkin type is finite dimensional if and only if $v$ is regular. 

We also prove the following result for non-Dynkin quivers, which corresponds to Theorem \ref{202111261158} about $\Pi(Q)$.

\begin{theorem}[{Theorem \ref{202111261228}}]\label{202111261227}
Let $Q$ be a non-Dynkin quiver and $v\in \kk Q_{0}$ a sincere weight. 
Then the QHA ${}^{v}\!\YM(Q)$ is $3$-Calabi-Yau. 
\end{theorem}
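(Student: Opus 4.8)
The plan is to show that the QHA $\YM = {}^{v}\!\YM(Q)$ for a non-Dynkin quiver with sincere weight is a $3$-Calabi-Yau algebra by producing an explicit self-dual projective bimodule resolution of $\YM$ of length $3$. The natural candidate comes from packaging the defining data of $\YM$: it is the path algebra of $\overline{Q}$ modulo the ideal generated by the $2r_{1}$ elements ${}^{v}\!\eta_{a}$ ($a \in \overline{Q}_{1}$, writing $r_{1} = \# Q_{1}$), where these relations are themselves (up to the central element and the $*$-grading) built from the $r = \# Q_{0}$ weighted mesh relations. So I expect a resolution of the form
\[
0 \to \YM \otimes_{\Pa} \Pa \otimes_{\Pa} \YM \xrightarrow{ \ d_{3} \ } \YM \otimes_{\Pa} V^{*} \otimes_{\Pa} \YM \xrightarrow{ \ d_{2} \ } \YM \otimes_{\Pa} V \otimes_{\Pa} \YM \xrightarrow{ \ d_{1} \ } \YM \otimes_{\Pa} \YM \xrightarrow{ \ \mathrm{mult} \ } \YM \to 0,
\]
where $V = e\,\kk\overline{Q}_{1}\,e$ is the arrow bimodule of $\overline{Q}$ (here $e = \sum e_{i}$, so $\Pa = \kk Q_{0}$-ish as a semisimple base) and $V^{*}$ is its $\Pa$-dual, with $d_{1}$ the standard "difference of two multiplications by an arrow" map, $d_{2}$ encoding the relations ${}^{v}\!\eta_{a}$, and $d_{3}$ encoding the single "syzygy among the relations" which is the hallmark of a $3$-CY presentation (this is the cubic analogue of the linear syzygy $\rho \mapsto \sum [\alpha, \rho]$ that makes $\Pi(Q)$ Koszul $2$-CY; here the relations have mixed $*$-degrees $1$ and $2$, so the syzygy is the one making the algebra "cubic AS-regular-like").

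Concretely, the key steps are as follows. First, I would identify $\YM$ as a quotient of the tensor algebra $\sfT_{\Pa} V$ and write down the bimodule map $d_{2}$ sending the basis element dual to $a$ to $\partial({}^{v}\!\eta_{a}) \in \YM \otimes_{\Pa} V \otimes_{\Pa} \YM$, where $\partial$ is the Fox/cyclic derivative that splits a path as $\sum (\text{prefix}) \otimes (\text{letter}) \otimes (\text{suffix})$; exactness at the $\YM \otimes V \otimes \YM$ spot amounts to saying the ${}^{v}\!\eta_{a}$ generate all relations, which can be deduced from the description already available — e.g.\ from Lemma \ref{202011242056} presenting $\YM$ via $\kk[z]\overline{Q}$, or from the superpotential structure. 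Second, I would exhibit the superpotential: since the relations are commutators ${}^{v}\!\eta_{a} = [a, {}^{v}\!\varrho]$ with ${}^{v}\!\varrho = \sum_{i} v_{i}^{-1}(\sum_{t(\alpha)=i}\alpha\alpha^{*} - \sum_{h(\alpha)=i}\alpha^{*}\alpha)$, the algebra $\YM$ is a Jacobi algebra of the cubic-plus-(lower?) potential $W = \sum_{\alpha \in Q_{1}} \bigl( v_{h(\alpha)}^{-1} - v_{t(\alpha)}^{-1}\bigr)$-weighted cyclic words of length three in $\alpha, \alpha^{*}$ — precisely $W \sim \sum_{a}\partial_a$-compatible cubic term built from $\varrho$ — and then invoke the standard fact (Bocklandt, Ginzburg) that the Jacobi algebra of a potential on a quiver, when of finite global dimension $3$, is $3$-Calabi-Yau, with the self-dual length-$3$ resolution being automatic. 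Third, I would verify the hypothesis that $\gldim \YM = 3$ (equivalently that the complex above is exact, i.e.\ that the potential is a "good" one); this is where the non-Dynkin hypothesis and sincerity of $v$ enter — for Dynkin quivers $\YM$ is finite-dimensional Frobenius (so has infinite global dimension but is only stably $3$-CY), while for non-Dynkin quivers one expects genuine finite global dimension, paralleling how $\Pi(Q)$ is $2$-CY exactly in the non-Dynkin case (Theorem \ref{202111261158}).

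The main obstacle I anticipate is precisely the exactness/finite-global-dimension verification in step three, in two respects. One: the relations ${}^{v}\!\eta_{a}$ are not homogeneous of a single degree in $\overline{Q}$ (the $*$-grading gives degrees $1$ and $2$), so the potential $W$ is not purely cubic unless one works in the $\kk[z]\overline{Q}$ presentation where $z$ restores homogeneity; I would likely run the whole argument for the homogenized algebra $\widetilde{\YM}$ (with $z$ a central degree-one variable adjoined, relations $\rho_{i} - v_{i} z e_{i}$) which is a genuine cubic Jacobi/AS-regular algebra of dimension $3$, prove it is $3$-CY there, and then descend to $\YM$ by a central-element argument — but checking that the descent preserves the CY property (rather than, say, only the Gorenstein property) requires care, using that $z$ acts as a non-zero-divisor, which itself needs the structure results from Section on $\YM$ as a $\Pa$-module. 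Two: establishing exactness of the length-$3$ complex at the outermost nontrivial spot — equivalently that there are no further syzygies — is the genuinely computational heart, and I expect the paper handles it either by a direct (if tedious) chase using a $\kk$-basis of $\YM$ coming from the earlier module-structure theorems (Theorem \ref{description of QHA as kQ-modules}), or by appealing to the general machinery for potentials with "enough" relations (the relevant Bocklandt-type criterion). I would budget most of the effort there and treat the superpotential bookkeeping and the descent from $\widetilde{\YM}$ as the routine parts, citing the referenced Theorem \ref{202111261228} for the full details.
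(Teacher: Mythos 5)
Your overall strategy --- view ${}^{v}\!\YM$ as the Jacobi algebra of a potential built from ${}^{v}\!\varrho$ and exhibit a self-dual length-$3$ bimodule complex --- is the same circle of ideas the paper lives in (Section \ref{202102071454}: ${}^{v}\!\YM \cong \cP(\overline{Q},W)$ with $W=-\tfrac12\,{}^{v}\!\varrho\rho$, which is a \emph{quartic} potential with cubic relations, already homogeneous for the path-length grading, so no homogenization is needed). The genuine gap is at exactly the step you yourself call the heart: proving exactness of the length-$3$ complex, equivalently that the Ginzburg-type dg-algebra ${}^{v}\!\YYM$ has cohomology concentrated in degree $0$. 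Neither of your two proposed tools carries this under the stated hypotheses: Theorem \ref{description of QHA as kQ-modules} (and hence any $\kk$-basis extracted from it, as well as your plan to get regularity of $z$ ``from the structure results on ${}^{v}\!\YM$ as an $A$-module'') requires a \emph{regular} weight, while the theorem assumes only a \emph{sincere} one; and ``the relevant Bocklandt-type criterion'' is precisely the exactness statement you need, not a result that delivers it. In particular there is no citable fact of the shape ``a Jacobi algebra of global dimension $3$ is $3$-CY'': the Ginzburg/Keller--Van den Bergh results say the Ginzburg dg-algebra is $3$-CY, and the Jacobi algebra inherits this exactly when the dg-algebra is concentrated in degree $0$ --- the missing step. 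Your plan also silently loses characteristic $2$, where the potential $-\tfrac12\,{}^{v}\!\varrho\rho$ does not exist; the paper handles this by a characteristic-free argument.

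What the paper actually does, and what your proposal lacks, is a reduction of the exactness question to the preprojective algebra. Theorem \ref{exact triangle U} (proved by a nontrivial explicit homotopy computation, with the two-sided version Theorem \ref{exact triangle U2}) gives the exact triangle ${}^{v}\!\YYM(-1)\xrightarrow{\ {}^{v}\!\varrho\ }{}^{v}\!\YYM\to\PPi\to{}^{v}\!\YYM(-1)[1]$; since $Q$ is non-Dynkin, $\PPi_{n}$ is concentrated in cohomological degree $0$, so induction on the $*$-degree shows ${}^{v}\!\YYM$ is concentrated in degree $0$ (Proposition \ref{202207111600}), i.e.\ ${}^{v}\!\YYM\to{}^{v}\!\YM$ is a quasi-isomorphism; the $3$-CY property then follows from Theorem \ref{202112122233}, where ${}^{v}\!\YYM$ is shown to be $3$-CY by an explicit self-dual projectively cofibrant resolution valid in every characteristic. (At the non-derived level the same mechanism appears as the regularity of $z$, proved via flatness of the Crawley-Boevey--Holland family, Theorem \ref{2022071015211}, yielding $0\to{}^{v}\!\YM(-1)\xrightarrow{z}{}^{v}\!\YM\to\Pi\to 0$.) This is the ingredient that makes the non-Dynkin hypothesis do its work --- through concentration in degree $0$ of $\PPi$ --- rather than through a direct global-dimension or basis computation, and without it your argument does not close.
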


In subsequent work \cite{Minamoto Smith}, we give a generalization of the Calabi-Yau completion by Keller \cite{Keller: Calabi-Yau completion} 
(see also Section \ref{202111261238}).

\subsection{Our results $2/2$: the derived  quiver Heisenberg algebras and $\rad^{n}$-approximation theory of the path algebras}\label{subsection: derived QHA and rad^n-approximation theory}

Until now, we have dealt with ordinary algebras  and worked in the categories of ordinary modules. 
But to prove our results, 
we need to introduce the dg-algebras ${}^{v}\!\YYM(Q)$ which we call the \emph{derived quiver Heisenberg algebras (derived QHA)} and  work with derived categories.
The derived QHA ${}^{v}\!\YYM = {}^{v}\!\YYM(Q)$ is a dg-algebra explicitly constructed from a quiver $Q$ (see Definition \ref{202111201445}).
In addition to the cohomological grading, 
it also acquires a $*$-grading and the $0$-th cohomology algebra $\tuH^{0}({}^{v}\!\YYM(Q))$ is canonically isomorphic to ${}^{v}\!\YM(Q)$ 
 as $*$-graded algebras. 

A crucial role is played by 
\emph{approximations with respect to the $n$-th power  $\rad^{n}$ of the radical functor $\rad$} which we call $\rad^{n}$-approximations for short.  
Recall that an ideal $\cI$ of a $\kk$-linear category $\sfC$ is a $\kk$-linear sub bifunctor of the bifunctor $\Hom_{\sfC}(-,+)$. A left or right approximation of an object $M \in \sfC$ with respect to an ideal $\cI$ is, roughly speaking, a morphism 
belonging to $\cI$ starting from or ending at $M$ that is as close as possible. 
It is well-known for experts that $\rad$-approximation theory in the module categories is nothing but Auslander-Reiten theory. 
Theory of
$\rad^{n}$-approximations  in the module categories was initiated by Igusa-Todorov \cite{Igusa-Todorov}. They investigated constructions of $\rad^{n}$-approximations by using the notion of \emph{ladder} which was introduced by them in the same paper. 
Later Iyama \cite{Iyama: ladder}  established a criterion for  existence of ladders in  
$\tau$-categories which is an abstraction of modules category with AR-translations $\tau_{1}^{\pm 1}$ introduced by him.

The universal Auslander-Reiten sequence (Theorem \ref{universal AR-sequence theorem}) tells us that 
the $*$-degree $1$ part ${}^{v}\!\YM_{1}$ of the QHA provides left and right minimal $\rad$-approximations in the category $\Pa\mod$ of $\Pa$-modules. 
Our main results can be roughly summarized that
the $*$-degree $n$ part ${}^{v}\!\YYM_{n}$ of the derived QHA provides minimal left and right  $\rad^{n}$-approximations in the derived category $\Dbmod{\Pa}$ and moreover  ${}^{n}\YYM$ provides a special type of a left ladder. 

\subsubsection{} 

We set $\PPi_{1} := \RHom_{\Pa}(\tuD(\Pa), \Pa)[1]$ considered as a complex of bimodules over $\Pa$.
Recall that the endofunctor $\PPi_{1} \lotimes_{\Pa}- $ of $\Dbmod{\Pa}$ coincides 
the inverse of the Auslander-Reiten translation  $\nu_{1}^{-1}$ of $\Dbmod{\Pa}$. 
\[
\nu_{1}^{-1} \cong \PPi_{1}\lotimes_{\Pa}-.  
\]
The derived preprojective algebra $\PPi = \PPi(Q)$ of $Q$ is, by definition, the tensor algebra of the complex $\PPi_{1}$ over $\Pa$ 
and its $0$-th cohomology algebra $\tuH^{0}(\PPi)$ is isomorphic to the preprojective algebra $\Pi$ 
as $*$-graded algebras where we equip $\PPi=\sfT_{\Pa} \PPi_{1}$ with  $*$-grading given by the tensor degree.

There exists a $*$-graded dg-algebra morphism ${}^{v}\!\ppi: {}^{v}\!\YYM \to \PPi$ 
whose $0$-th cohomology morphism coincides with the canonical morphism ${}^{v}\!\pi: {}^{v}\!\YM \to \Pi$. 
Moreover,  this morphism is part of an exact triangle 
\[
{}^{v}\sfU: {}^{v}\!\YYM(-1) \xrightarrow{ \ {}^{v}\!\rrho \ } {}^{v}\!\YYM \xrightarrow{ \ {}^{v}\!\ppi \ } \PPi \to {}^{v}\!\YYM(-1)[1]
\]
where ${}^{v}\!\rrho$ denotes the right multiplication by ${}^{v}\!\varrho \in {}^{v}\!\YYM$. 
Taking the $*$-degree $1$ part of this exact triangle we obtain an exact triangle below which we call ${}^{v}\!\sfAR$ 
\[
{}^{v}\!\sfAR: \Pa \xrightarrow{ \ {}^{v}\!\rrho \ } {}^{v}\!\YYM_{1} \xrightarrow{ \ {}^{v}\!\ppi_{1} \ } \PPi_{1} \to A[1].  
\]
As in the case of the universal AR-sequence,
taking tensor product ${}^{v}\!\sfAR_{M} := {}^{v}\!\sfAR \lotimes_{\Pa} M$ with an indecomposable object 
$M \in \ind \Dbmod{\Pa}$, gives an exact triangle looking like an AR-triangle starting from $M$. 

\begin{theorem}[{Universal Auslander-Reiten triangle (Theorem \ref{universal Auslander-Reiten triangle})}]\label{202111201538}

\

Let $M\in \ind \Dbmod{\Pa}$. 
Assume that the weight $v \in \kk Q_{0}$ is regular.
Then 
the exact triangle ${}^{v}\!\sfAR_{M}$ is an Auslander-Reiten triangle 
starting from $M$. 
\[
{}^{v}\!\sfAR_{M}: M \xrightarrow{ \ {}^{v}\!\rrho_{M} \ } {}^{v}\!\YYM_{1}\lotimes_{\Pa} M  \xrightarrow{ \ {}^{v}\!\ppi_{1,M} \ } \nu_{1}^{-1}M \xrightarrow{} M[1].  
\]
\end{theorem}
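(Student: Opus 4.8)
The plan is to deduce the result from the defining exact triangle ${}^{v}\sfU$ together with the characterization of Auslander-Reiten triangles in $\Dbmod{\Pa}$. First I would recall that since $\Pa = \kk Q$ is hereditary and finite-dimensional, $\Dbmod{\Pa}$ has Serre functor $\nu_{1} = \tuD\RHom_{\Pa}(-,\Pa)$, hence has Auslander-Reiten triangles, and for each indecomposable $M$ the AR-triangle ending at $\nu_{1}^{-1}M$ is unique up to isomorphism. Thus it suffices to show that ${}^{v}\!\sfAR_{M}$ is an AR-triangle, and by the dual characterization this amounts to two things: (a) the third morphism $\nu_{1}^{-1}M \to M[1]$ is nonzero (equivalently, the triangle is not split), and (b) the morphism ${}^{v}\!\ppi_{1,M}\colon {}^{v}\!\YYM_{1}\lotimes_{\Pa} M \to \nu_{1}^{-1}M$ is right almost split, i.e. it is not a retraction and every non-retraction $N \to \nu_{1}^{-1}M$ with $N$ indecomposable factors through it. Equivalently, I would verify that ${}^{v}\!\rrho_{M}\colon M \to {}^{v}\!\YYM_{1}\lotimes_{\Pa} M$ is left almost split.

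The key step is to translate the almost-split condition into a statement about the bimodule ${}^{v}\!\YYM_{1}$ and the map ${}^{v}\!\rrho$. Applying $\RHom_{\Dbmod\Pa}(-, M)$ to the triangle ${}^{v}\!\sfAR_{M}$ and using that $\PPi_{1}\lotimes_\Pa- \cong \nu_1^{-1}$, one sees that ${}^{v}\!\rrho_M$ is left almost split precisely when the induced map $\Hom(\nu_1^{-1}M, \nu_1^{-1}M) \to \Hom({}^{v}\!\YYM_1\lotimes_\Pa M, \nu_1^{-1}M)$ hits exactly the non-retractions, together with non-splitness. By Serre duality $\Hom(\nu_1^{-1}M,M[1]) \cong \tuD\Hom(M,M) \neq 0$, and the connecting map of ${}^{v}\!\sfAR_M$ is, up to the tensor-categorical formalism, the image of $\id_A$ under $A \to \RHom_\Pa(M,M)$ followed by the canonical class; the regularity hypothesis on $v$ is exactly what guarantees this class is nonzero for every indecomposable $M$ — this is where I expect to invoke, or re-derive in the derived setting, the computation of the relevant trace/Euler-form pairing $\sum_i v_i \dim(e_i M)$ being nonzero, paralleling the proof of Theorem \ref{universal AR-sequence theorem}. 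Concretely I would show the connecting morphism corresponds under $\Hom(\nu_1^{-1}M, M[1]) \cong \tuD\End(M)$ to the linear functional $\phi \mapsto \sum_i v_i \dim(e_i M)\cdot(\text{scalar part of }\phi)$ when $M$ is indecomposable (so $\End M$ is local), which is nonzero by regularity.

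For the almost-split property itself, given an indecomposable $N$ and a morphism $g\colon N \to \nu_1^{-1}M$ that is not a retraction, I must factor $g$ through ${}^{v}\!\ppi_{1,M}$. Rotating ${}^{v}\!\sfAR_M$, this is equivalent to showing the composite $N \xrightarrow{g} \nu_1^{-1}M \to M[1]$ is zero. Now $\Hom(N, M[1]) \cong \tuD\Hom(M,N) = \tuD\Ext^1_\Pa$-type space, and the composite is the image of $g$ under the pairing with the connecting class; since $g$ is not a retraction it lies in the radical of $\Hom(N,\nu_1^{-1}M)$, and I would argue — using that the connecting class is a generator of the socle of the appropriate $\End$-module, a consequence of $\Pa$ being hereditary so that $\nu_1^{-1}M$ has local endomorphism ring when $M$ is indecomposable — that radical morphisms pair to zero. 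This is the standard mechanism by which a nonzero socle element of $\tuD\End(\nu_1^{-1}M)$ produces an AR-triangle; the content specific to this paper is only the identification of the connecting morphism of ${}^{v}\!\sfAR_M$ with such a socle generator, which rests on regularity.

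The main obstacle will be making the identification of the connecting morphism precise: one must track through the construction of ${}^{v}\!\YYM$ and the triangle ${}^{v}\sfU$ to see that taking $*$-degree $1$ and then $\lotimes_\Pa M$ really does produce the class $\sum_i v_i\dim(e_i M)$ in $\tuD\End(M) \cong \Hom(\nu_1^{-1}M, M[1])$, rather than just some nonzero class for generic $v$. I expect this to follow by reducing to the module-level statement already handled in Theorem \ref{universal AR-sequence theorem} via the truncation $\tuH^0$, using that $M$ can be taken, after a shift, to be an honest $\Pa$-module (every indecomposable in $\Dbmod\Pa$ is a shift of a $\kk Q$-module since $\Pa$ is hereditary), so that ${}^{v}\!\sfAR_M$ becomes the shift of the exact sequence \eqref{202111181637} and the previously proved Theorem \ref{universal AR-sequence theorem} applies directly; the only new input is that an AR-sequence of modules gives an AR-triangle in the derived category, which is standard. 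So in fact the cleanest route is: reduce to $M$ a module, invoke Theorem \ref{universal AR-sequence theorem}, and promote the AR-sequence to an AR-triangle — the hard analytic work having been done already, and the present theorem being essentially a derived repackaging.
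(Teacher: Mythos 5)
Your ``cleanest route'' has a genuine gap, and it is the route you ultimately commit to. Reducing to $M$ an honest module and invoking Theorem \ref{universal AR-sequence theorem} is circular with respect to the paper's logic: in this paper the module-level universal AR-sequence is \emph{deduced} from the present derived-level theorem (the derived statement is proved first, in Section \ref{section: the universal Auslander-Reiten triangle}, and the introduction says explicitly that Theorem \ref{universal AR-sequence theorem} is an immediate consequence of it). So you would need an independent module-level proof, which is exactly the work being asked for. Moreover, even granting Theorem \ref{universal AR-sequence theorem}, your reduction does not cover all indecomposables of $\Dbmod{\Pa}$: that theorem assumes $M$ is non-injective, and for $M$ a shift of an indecomposable injective $I_{i}$ there is no AR-sequence in $\Pa\mod$ to promote (the AR-triangle starting from $I_{i}$ ends at $\nu_{1}^{-1}(I_{i})\cong P_{i}[1]$ and its middle term is not the module-level minimal left almost split target), so the injective case forces precisely the derived-level argument you are trying to bypass.

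What survives of your direct sketch is essentially the paper's proof, but its content-bearing step is left undone. The paper proves the trace formula (Theorem \ref{trace formula}): for every $M\in\ind\Dbmod{\Pa}$ and $f\in\End_{\Pa}(M)$ one has $\isagl{f,\,{}^{v}\!\ttheta_{M}}={}^{v}\!\Tr(f)$, established by an explicit computation with the bimodule resolution $\PPa$ and the maps $\tilde{e}_{i}$. With this in hand the argument is exactly as in your middle paragraphs: $\isagl{\id_{M},{}^{v}\!\ttheta_{M}}={}^{v}\!\Euch(M)\neq 0$ by regularity, $\isagl{f,{}^{v}\!\ttheta_{M}}={}^{v}\!\Tr(f)=0$ for $f\in\rad\End_{\Pa}(M)$ because such $f$ is nilpotent, and Happel's criterion (Theorem \ref{Happel's criterion}) then shows that ${}^{v}\!\sfAR_{M}$ is an AR-triangle; your ``socle generator pairs radical morphisms to zero'' mechanism is just a restatement of that criterion. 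Saying you ``expect to invoke, or re-derive'' the identification of the connecting class with the weighted-trace functional does not discharge it, and discharging it via Theorem \ref{universal AR-sequence theorem} is, as above, circular. So the proposal identifies the right mechanism but omits the one step that carries the proof.
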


Note that we first prove this theorem and then Theorem \ref{universal AR-sequence theorem} is obtained as an immediate consequence. 
Also note that this theorem is proved in Section \ref{section: the universal Auslander-Reiten triangle} before the introduction of the derived QHA and later in Section \ref{section: the derived quiver Heisenberg algebras} 
the exact triangle ${}^{v}\!\sfAR$ is obtained from ${}^{v}\!\YYM$ and $\PPi$.

The above theorem says that the $*$-degree $1$ parts of ${}^{v}\!\YYM$ and $\PPi$ provide 
minimal left and right $\rad$-approximations in $\Dbmod{\Pa}$. 

\subsubsection{}

We collect cohomological features of derived quiver Heisenberg algebras. 

Recall that the derived preprojective algebra $\PPi(Q)$ is $2$-Calabi-Yau algebra (of Gorenstein parameter $1$).
Compared to this we have the following result for the derived quiver Heisenberg algebras.
\begin{theorem}[{Theorem \ref{202112122233}}]
The derived quiver Heisenberg algbebra ${}^{v}\!\YYM$ is a $3$-Calabi-Yau 
algebra (of Gorenstein parameter $2$). 
\end{theorem}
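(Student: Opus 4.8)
The plan is to verify the two conditions in the definition of a $3$-Calabi--Yau dg-algebra of Gorenstein parameter $2$: homological smoothness of ${}^{v}\!\YYM$, and an isomorphism
\[
\RHom_{({}^{v}\!\YYM)^{\mathrm{e}}}\bigl({}^{v}\!\YYM,({}^{v}\!\YYM)^{\mathrm{e}}\bigr)\ \simeq\ {}^{v}\!\YYM(2)[-3]
\]
in $\sfD(({}^{v}\!\YYM)^{\mathrm{e}})$. The two facts I would feed in are: the derived preprojective algebra $\PPi$ is $2$-Calabi--Yau of Gorenstein parameter $1$ (recalled above), and the exact triangle
\[
{}^{v}\sfU\colon\ {}^{v}\!\YYM(-1)\xrightarrow{\ {}^{v}\!\rrho\ }{}^{v}\!\YYM\xrightarrow{\ {}^{v}\!\ppi\ }\PPi\to{}^{v}\!\YYM(-1)[1]
\]
of ${}^{v}\!\YYM$-bimodules, in which ${}^{v}\!\rrho$ is multiplication by the central element ${}^{v}\!\varrho$, of $*$-degree $1$.

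First I would exploit the fact that ${}^{v}\!\YYM$ is an algebra over the central polynomial subalgebra $R:=\kk[{}^{v}\!\varrho]$. The triangle ${}^{v}\sfU$ is exactly what one obtains by applying ${}^{v}\!\YYM\lotimes_{R}-$ to the Koszul resolution $R(-1)\xrightarrow{{}^{v}\!\varrho}R$ of $\kk$; it therefore tells us that ${}^{v}\!\varrho$ is a non-zero-divisor on ${}^{v}\!\YYM$ and that $\PPi\simeq{}^{v}\!\YYM\lotimes_{R}\kk$. In particular ${}^{v}\!\YYM$ is flat --- in fact graded free --- over $R$. Writing $\Sigma:={}^{v}\!\YYM\otimes_{R}({}^{v}\!\YYM)^{\op}$ for the enveloping algebra relative to $R$, the same base change gives $\Sigma\lotimes_{R}\kk\simeq\PPi^{\mathrm{e}}$ and ${}^{v}\!\YYM\lotimes_{R}\kk\simeq\PPi$ as $\Sigma$-modules. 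Since $\PPi$ is perfect over $\PPi^{\mathrm{e}}$ (being $2$-Calabi--Yau), a standard lifting argument for perfect complexes along the non-zero-divisor quotient $\Sigma\to\PPi^{\mathrm{e}}$ shows that ${}^{v}\!\YYM$ is perfect over $\Sigma$, i.e.\ homologically smooth over $R$; together with regularity of $R$ this gives homological smoothness of ${}^{v}\!\YYM$. (Alternatively, smoothness can be read off directly from Definition \ref{202111201445}: after forgetting the differential, ${}^{v}\!\YYM$ is a tensor algebra over the hereditary algebra $\Pa=\kk Q$ on a perfect bimodule.)

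For the inverse dualizing complex I would argue in three steps. (a) The complex $\RHom_{\Sigma}({}^{v}\!\YYM,\Sigma)$ is perfect over $\Sigma$, and its derived reduction along $R\to\kk$ is, by base change, $\RHom_{\PPi^{\mathrm{e}}}(\PPi,\PPi^{\mathrm{e}})\simeq\PPi(1)[-2]$, using the Calabi--Yau property of $\PPi$. (b) Because ${}^{v}\!\varrho$ has positive $*$-degree, a graded Nakayama / derived-completeness argument lifts this fibrewise identification to $\RHom_{\Sigma}({}^{v}\!\YYM,\Sigma)\simeq{}^{v}\!\YYM(1)[-2]$ already in $\sfD(\Sigma)$. (c) Finally $\Sigma=({}^{v}\!\YYM)^{\mathrm{e}}/(w)$, where $w:={}^{v}\!\varrho\otimes1-1\otimes{}^{v}\!\varrho$ is a central non-zero-divisor of $*$-degree $1$ in $({}^{v}\!\YYM)^{\mathrm{e}}$ annihilating the bimodule ${}^{v}\!\YYM$; the change-of-rings isomorphism $\RHom_{({}^{v}\!\YYM)^{\mathrm{e}}}(-,({}^{v}\!\YYM)^{\mathrm{e}})\simeq\RHom_{\Sigma}(-,\Sigma)(1)[-1]$ on $\Sigma$-modules then yields
\[
\RHom_{({}^{v}\!\YYM)^{\mathrm{e}}}\bigl({}^{v}\!\YYM,({}^{v}\!\YYM)^{\mathrm{e}}\bigr)\ \simeq\ {}^{v}\!\YYM(1)[-2](1)[-1]\ =\ {}^{v}\!\YYM(2)[-3],
\]
which together with the smoothness established above proves the theorem.

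The main obstacle I expect is making the passage from the central fibre $\PPi$ back to ${}^{v}\!\YYM$ fully rigorous in the dg setting: one must justify carefully the base-change statements (flatness of ${}^{v}\!\YYM$ and of $\Sigma$ over $R$, compatibility of $\lotimes_{R}\kk$ with the $\RHom$'s in play, the reflexivity of perfect complexes used in step (b)) and the graded-Nakayama step upgrading the isomorphism over the fibre to one over $R$. Bookkeeping of the grading must also be carried out honestly: the two shifts by $(1)$ --- one reflecting that $\kk[{}^{v}\!\varrho]$ is itself $1$-Calabi--Yau of Gorenstein parameter $1$, the other coming from the regular central element $w$ --- are precisely what raises the Calabi--Yau dimension from $2$ to $3$ and the Gorenstein parameter from $1$ to $2$. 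As a fallback one could instead exhibit directly, from the presentation in Definition \ref{202111201445}, a self-dual length-$3$ bimodule resolution of ${}^{v}\!\YYM$ over $({}^{v}\!\YYM)^{\mathrm{e}}$, parallel to the treatment of the ordinary QHA; on that route the key difficulty becomes the verification of self-duality --- that under $\RHom_{({}^{v}\!\YYM)^{\mathrm{e}}}(-,({}^{v}\!\YYM)^{\mathrm{e}})$ the resolution is sent to itself up to the twist $(2)[-3]$, with the term of arrows and the term of cubic relations interchanged.
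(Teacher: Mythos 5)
Your main route has a genuine gap at its very first step: you treat ${}^{v}\!\varrho$ as a \emph{strictly} central element of the dg-algebra ${}^{v}\!\YYM$, so that $R=\kk[{}^{v}\!\varrho]$ is a central subalgebra, $\Sigma={}^{v}\!\YYM\otimes_{R}{}^{v}\!\YYM^{\op}$ is an algebra, and $w={}^{v}\!\varrho\otimes 1-1\otimes{}^{v}\!\varrho$ is a central non-zero-divisor in ${}^{v}\!\YYM^{\mre}$ with ${}^{v}\!\YYM^{\mre}/(w)=\Sigma$. But ${}^{v}\!\varrho$ is \emph{not} central in ${}^{v}\!\YYM$: one has $[\alpha,{}^{v}\!\varrho]={}^{v}\!\eta_{\alpha}=d(\alpha^{\circledast})\neq 0$, and the element is only central up to the homotopy $\sfH$ (Lemma \ref{homotopy proposition}), i.e.\ central in $\tuH({}^{v}\!\YYM)$ but not in ${}^{v}\!\YYM$ itself. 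Consequently none of $R$, $\Sigma$, $w$ exist as you use them, and the triangle ${}^{v}\sfU$ of Theorem \ref{exact triangle U} is a priori a triangle in $\sfD({}^{v}\!\YYM\Gr)$ of \emph{one-sided} modules only (right multiplication by ${}^{v}\!\varrho$ does not commute with the right ${}^{v}\!\YYM$-action); promoting it to a bimodule triangle already requires the $A_{\infty}$-argument of Section \ref{a-infinity}, where the lift $\widehat{\sfr_{\varrho}}$ acquires a nontrivial higher component built from $\sfH$ (Theorem \ref{202106071837}). So your Koszul dévissage along $\kk[{}^{v}\!\varrho]$ — flatness over $R$, base change $\Sigma\lotimes_{R}\kk\simeq\PPi^{\mre}$, the change-of-rings shift $(1)[-1]$ for the regular element $w$, and the graded-Nakayama lift in step (b) — would all have to be re-founded in an $A_{\infty}$ (or suitably strictified) setting; that is a missing idea, not bookkeeping. (In the non-Dynkin case, where ${}^{v}\!\YYM\simeq{}^{v}\!\YM$ and the mesh element is genuinely central, a version of your argument is plausible, but the theorem is stated for all quivers, including Dynkin ones where ${}^{v}\!\YYM$ has nonzero negative cohomology.)

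Your ``fallback'' is in fact the paper's actual proof: one writes down an explicit length-$3$ bimodule complex with terms ${}^{v}\!\YYM\otimes_{\Pa}\PPi_{1}\otimes_{\Pa}{}^{v}\!\YYM(-2)$, ${}^{v}\!\YYM\otimes_{\Pa}{}^{v}\!\YYM_{1}\otimes_{\Pa}{}^{v}\!\YYM(-1)$, ${}^{v}\!\YYM\otimes_{\Pa}\PPa\otimes_{\Pa}{}^{v}\!\YYM$, with explicit differentials $Y,Z$ and a homotopy $\sfG$, checks by hand that ${}^{v}\!\YYM^{\mre}$-duality interchanges $Y$ and $Z$ and fixes $\sfG$ up to the twist $(-2)[1]$ (Lemma \ref{202112131642}), so the totalization $Q$ satisfies $Q^{\vee}(-2)[3]\cong Q$, and finally proves $Q\simeq{}^{v}\!\YYM$ by a filtration argument whose graded pieces are the Koszul-type resolution of $\Pa$ over $\Pa\otimes{}^{v}\!\YYM^{\op}$ from Section \ref{2021121160751}; smoothness comes for free since $Q$ is a finite complex of finitely generated projectives. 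So the difficulty you flagged on that route — verifying self-duality, with the arrow term and the relation term interchanged — is exactly the content of the paper's computation, and if you pursue it you should also not forget the final quasi-isomorphism $Q\simeq{}^{v}\!\YYM$, which is where the Koszul-type complex enters.
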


There is a canonical DG-algebra homomorphism $\PPi(Q) \to \Pi(Q)$ which induces an isomorphism 
$\tuH^{0}(\PPi(Q)) \cong \Pi(Q)$. 
It is well known that 
if $Q$ is non-Dynkin, then  $\PPi(Q)$ is concentrated in $0$-th cohomological degree and the above map is  quasi-isomorphism 

Similarly, there is a canonical DG-algebra homomorphism ${}^{v}\!\YYM(Q) \to {}^{v}\!\YM(Q)$ 
which induces an isomorphism $\tuH^{0}({}^{v}\!\YYM(Q)) \cong {}^{v}\!\YM(Q)$. 
We have the following result for non-Dynkin quivers.

\begin{theorem}[{Proposition \ref{202207111600}, Theorem \ref{202111261228}}]
Assume that $Q$ be a non-Dynkin quiver. 
Let $v \in \kk^{\times}Q_{0}$ be a sincere weight. 
Then the derived QHA ${}^{v}\!\YYM(Q)$ is concentrated in the $0$-th cohomological grading and 
 the canonical morphism ${}^{v}\!\YYM(Q) \to {}^{v}\!\YM(Q)$ is a quasi-isomorphism. 
Consequently, the QHA ${}^{v}\!\YM(Q)$ is $3$-Calabi-Yau. 
\end{theorem}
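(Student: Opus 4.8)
The statement has three parts that feed into one another: (i) for a non-Dynkin quiver $Q$ and sincere weight $v$, the derived QHA ${}^{v}\!\YYM(Q)$ is concentrated in cohomological degree $0$; (ii) consequently the canonical map ${}^{v}\!\YYM(Q) \to {}^{v}\!\YM(Q)$ is a quasi-isomorphism; (iii) combining this with the $3$-Calabi-Yau property of ${}^{v}\!\YYM(Q)$ (already recorded in Theorem \ref{202112122233}), one deduces that ${}^{v}\!\YM(Q)$ itself is $3$-Calabi-Yau. The real content is (i), and the strategy is to leverage the universal Auslander-Reiten triangle ${}^{v}\!\sfAR$ together with the structure of ${}^{v}\!\YYM$ as a tensor-type construction built from the $*$-degree $1$ piece ${}^{v}\!\YYM_{1}$.

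\textbf{Step 1: reduce to vanishing of higher cohomology of ${}^{v}\!\YYM_{1}$ and its iterated tensor powers.} Since ${}^{v}\!\YYM$ carries the $*$-grading and is generated over $\Pa$ in $*$-degrees $0$ and $1$ (the $0$-th cohomology being ${}^{v}\!\YM$ by the construction in Definition \ref{202111201445}), it suffices to control the cohomology of each graded piece ${}^{v}\!\YYM_{n}$. The exact triangle ${}^{v}\sfU$ relates ${}^{v}\!\YYM(-1)$, ${}^{v}\!\YYM$, and $\PPi$; taking $*$-degree $n$ parts gives a triangle linking ${}^{v}\!\YYM_{n-1}$, ${}^{v}\!\YYM_{n}$, and $\PPi_{n}$. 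For $Q$ non-Dynkin, $\PPi(Q)$ is already known to be concentrated in cohomological degree $0$ (recalled in the excerpt just before the statement), so each $\PPi_{n}$ has cohomology only in degree $0$. Thus by induction on $n$, using the long exact cohomology sequence of the degree-$n$ part of ${}^{v}\sfU$, the cohomology of ${}^{v}\!\YYM_{n}$ in nonzero degrees is controlled by that of ${}^{v}\!\YYM_{n-1}$. The base case $n=0$ is ${}^{v}\!\YYM_{0} = \Pa$, concentrated in degree $0$.

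\textbf{Step 2: run the induction using the approximation description of ${}^{v}\!\YYM_{n}$.} The inductive step needs that the connecting map $\PPi_{n} \to {}^{v}\!\YYM_{n-1}[1]$ — equivalently the relevant piece of $\tuH^{\bullet}$ — does not create new cohomology. Here one uses that ${}^{v}\!\YYM_{n}$ is, via the iterated universal AR-triangle, assembled from $\rad^{n}$-approximation data in $\Dbmod{\Pa}$; concretely ${}^{v}\!\YYM_{1}$ is quasi-isomorphic to the complex $\RHom_{\Pa}(\tuD(\Pa),\Pa)$-based middle term, which for non-Dynkin $Q$ is a genuine module (no $\Ext^{>0}$ obstructions among preprojectives in the non-Dynkin case — there are no nonzero morphisms from injectives that fail to lift, and $\nu_1^{-1}$ has no higher derived parts on the preprojective component). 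More precisely, for non-Dynkin $Q$ the functor $\nu_1^{-1} \cong \PPi_1 \lotimes_\Pa -$ restricted to projectives is exact and lands in $\Pa\mod$, so ${}^{v}\!\YYM_1 \simeq {}^{v}\!\YM_1$ as complexes of bimodules; iterating, ${}^{v}\!\YYM_n$ is quasi-isomorphic to the $n$-fold tensor construction over $\Pa$, which is again a module because $\Pa$ is hereditary and the tensor factors are (by induction) modules with no higher Tor against each other over the hereditary algebra $\Pa$ — the relevant $\Tor^{>0}_\Pa$ vanishes since one factor is projective as a one-sided $\Pa$-module. This gives $\tuH^{\neq 0}({}^{v}\!\YYM_n) = 0$ for all $n$, hence ${}^{v}\!\YYM(Q)$ is concentrated in cohomological degree $0$, establishing (i).

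\textbf{Step 3: conclude (ii) and (iii).} Part (ii) is now immediate: a DG-algebra concentrated in degree $0$ maps quasi-isomorphically onto its $\tuH^0$, which is ${}^{v}\!\YM(Q)$ by the construction. For (iii), the $3$-Calabi-Yau property is a derived-categorical statement invariant under quasi-isomorphism of DG-algebras, so the $3$-Calabi-Yau property of ${}^{v}\!\YYM(Q)$ from Theorem \ref{202112122233} transfers directly to ${}^{v}\!\YM(Q)$. \textbf{The main obstacle} is Step 2: one must verify carefully that no higher cohomology is introduced at any stage of the induction, i.e. that the iterated tensor/approximation construction defining ${}^{v}\!\YYM_n$ genuinely stays in degree $0$ for non-Dynkin $Q$. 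This rests on the vanishing of the relevant $\Ext$- and $\Tor$-groups over the hereditary algebra $\Pa$ for preprojective modules in the non-Dynkin case — the point being that in the non-Dynkin situation the preprojective component is "$\Ext$-free" in the directions that matter, whereas in the Dynkin case this fails and ${}^{v}\!\YYM$ genuinely has higher cohomology.
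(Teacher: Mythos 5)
Your Step 1 is exactly the paper's proof of Proposition \ref{202207111600}: take the $*$-degree $n$ part of the exact triangle $\sfU$ of Theorem \ref{exact triangle U}, namely ${}^{v}\!\YYM_{n-1}\xrightarrow{{}^{v}\!\varrho}{}^{v}\!\YYM_{n}\xrightarrow{{}^{v}\!\ppi_{n}}\PPi_{n}\to{}^{v}\!\YYM_{n-1}[1]$, and induct on $n$ using that $\PPi_{n}$ is concentrated in cohomological degree $0$ for non-Dynkin $Q$; the deduction of the quasi-isomorphism and of the $3$-Calabi-Yau property of ${}^{v}\!\YM$ from Theorem \ref{202112122233} (your Step 3) is also exactly what the paper does. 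So the route is the same, and it is already complete at the end of Step 1: in a triangle $X\to Y\to Z\to X[1]$ with $X$ and $Z$ concentrated in degree $0$, the long exact sequence sandwiches $\tuH^{i}(Y)$ between $\tuH^{i}(X)=0$ and $\tuH^{i}(Z)=0$ for every $i\neq 0$, so the connecting morphism can never "create new cohomology" in the middle term. The "main obstacle" you set up in Step 2 is therefore not an obstacle at all.

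Moreover, the material you introduce to deal with it is wrong as stated and should be deleted rather than repaired. First, ${}^{v}\!\YYM_{n}$ is not the $n$-fold (derived) tensor power of ${}^{v}\!\YYM_{1}$ over $\Pa$; by Section \ref{202106020843} it is the cone of ${}^{v}\!\eeta^{*}_{n}\colon\PPi_{1}\otimes_{\Pa}{}^{v}\!\YYM_{n-2}\to{}^{v}\!\YYM_{1}\otimes_{\Pa}{}^{v}\!\YYM_{n-1}$, so the inductive control has to go through the triangle $\sfU$ (or $\sfV_{n}$), not through tensor powers. Second, the Tor-vanishing you invoke ("one factor is projective as a one-sided $\Pa$-module") is unjustified: $\Pi_{1}e_{i}\cong\tau_{1}^{-1}P_{i}$ and the one-sided modules ${}^{v}\!\YM_{1}$, $\Pi_{1}$ are not projective in general; the fact you actually need — that the derived tensor powers $\PPi_{n}=\PPi_{1}^{\lotimes n}$ stay in degree $0$ for non-Dynkin $Q$ — is precisely the known input "$\PPi(Q)$ is concentrated in degree $0$" that you already cited in Step 1. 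Since these claims sit in a step that the argument does not need, your proof is correct after excising Step 2, and it coincides with the paper's.
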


In the case where $Q$ is Dynkin, it is also well-known that $\Pi(Q)$ is a finite dimensional Frobenius algebra.

As was  mentioned  in Corollary \ref{202111191825}, if $v\in \kk Q_{0}$ is regular, then $\dim {}^{v}\!\YM(Q) < \infty$. 
We prove the converse holds true. Namely we have 

\begin{theorem}[{Theorem \ref{202109131544}}]
Let $Q$ be a Dynkin quiver. 
Then ${}^{v}\!\YM(Q)$ is of finite dimension if and only if $v$ is regular. 
\end{theorem}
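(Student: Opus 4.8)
The plan is to prove the two implications separately; the reverse implication is immediate from results already at hand, so the real content is the forward implication, which I will prove in its contrapositive form: if $v$ is \emph{not} regular then $\dim_{\kk}{}^{v}\!\YM(Q)=\infty$.

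\textbf{The easy direction.} Suppose $v\in\kk Q_{0}$ is regular. By Corollary \ref{202111191825} there is an isomorphism of $\kk Q$-modules ${}^{v}\!\YM(Q)\cong\bigoplus_{N\in\ind Q}N^{\oplus\dim N}$. Since $Q$ is Dynkin, $\ind Q$ is a finite set and each $N\in\ind Q$ is finite-dimensional, so the right-hand side, hence ${}^{v}\!\YM(Q)$, is finite-dimensional. (Equivalently one may quote Corollary \ref{202111191828} directly.)

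\textbf{Reduction of the hard direction.} Recall that ${}^{v}\!\varrho$ is central in ${}^{v}\!\YM(Q)$, is homogeneous of $*$-degree $1$, and that ${}^{v}\!\YM(Q)/({}^{v}\!\varrho)=\Pi(Q)$. Since $Q$ is Dynkin, $\Pi(Q)$ is finite-dimensional by Theorem \ref{202111191530}. Lifting a $\kk$-basis of $\Pi(Q)$ and inducting on the $*$-degree (graded Nakayama), one sees that ${}^{v}\!\YM(Q)$ is a finitely generated module over its central polynomial subalgebra $\kk[{}^{v}\!\varrho]$. A finitely generated $\kk[z]$-module is finite-dimensional over $\kk$ precisely when $z$ acts nilpotently on it, so
\[
\dim_{\kk}{}^{v}\!\YM(Q)<\infty\ \Longleftrightarrow\ {}^{v}\!\varrho\ \text{is a nilpotent element of}\ {}^{v}\!\YM(Q).
\]
Thus it suffices to show: if $v$ is not regular, then ${}^{v}\!\varrho$ is not nilpotent. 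For that it is enough to produce, for some $c\in\kk^{\times}$, a nonzero ${}^{v}\!\YM(Q)$-module on which ${}^{v}\!\varrho$ acts as multiplication by $c$; no power of ${}^{v}\!\varrho$ can annihilate such a module.

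\textbf{Producing the module.} Assume $v$ is not regular. By Gabriel's theorem there is an indecomposable $\Pa$-module whose dimension vector $\alpha$ is a positive root with $\sum_{i\in Q_{0}}v_{i}\alpha_{i}=0$; I would choose such an $\alpha$ minimal in the coordinatewise order on $\NN Q_{0}$, so that $\alpha$ is not a sum of two or more positive roots each orthogonal to $v$. Fix $c\in\kk^{\times}$ and consider the specialization of ${}^{v}\!\YM(Q)$ at ${}^{v}\!\varrho=c$, which by Lemma \ref{202011242056} is the deformed preprojective algebra $\kk\overline{Q}/(\rho_{i}-cv_{i}e_{i}\mid i\in Q_{0})$, a quotient of ${}^{v}\!\YM(Q)$. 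Since $cv\cdot\alpha=0$ and $\alpha$ admits no nontrivial decomposition into positive roots orthogonal to $cv$ (for $c\neq0$ this is the same as being orthogonal to $v$), the Crawley-Boevey--Holland criterion \cite{Crawley-Boevey-Holland} yields a (simple) module $S$ of dimension vector $\alpha$ over this algebra. Pulling $S$ back along ${}^{v}\!\YM(Q)\twoheadrightarrow\kk\overline{Q}/(\rho_{i}-cv_{i}e_{i})$ gives a nonzero ${}^{v}\!\YM(Q)$-module on which ${}^{v}\!\varrho$ acts as $c\neq0$, which finishes the argument.

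\textbf{Main obstacle.} The delicate step is the last one: constructing $S$, i.e.\ solving the deformed mesh relations on a representation of dimension vector $\alpha$, and doing so over a field of arbitrary characteristic, since the Crawley-Boevey--Holland results are stated in characteristic zero. One should either check that their reflection-functor construction of these simple modules is characteristic-free, or replace it by a self-contained argument inside the present framework --- for instance, analysing the universal Auslander--Reiten sequence \eqref{universal AR-sequence} and the accompanying $\rad^{n}$-approximation machinery to show that for an indecomposable $M$ with $\sum_{i}v_{i}\dim(e_{i}M)=0$ the associated ladder never stabilises, forcing ${}^{v}\!\YM(Q)_{n}\neq0$ for all $n$.
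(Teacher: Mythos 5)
Your overall architecture is sound and genuinely different from the paper's: the easy direction is the same (quote Corollary \ref{202111191828}), and your reduction of the hard direction is clean --- since ${}^{v}\!\varrho$ is central and homogeneous of $*$-degree $1$, finite-dimensionality of ${}^{v}\!\YM(Q)$ forces ${}^{v}\!\varrho^{\,n}\in{}^{v}\!\YM(Q)_{n}=0$ for $n\gg0$, so it suffices to exhibit a nonzero ${}^{v}\!\YM(Q)$-module on which ${}^{v}\!\varrho$ acts by a nonzero scalar, and specializing $z=c$ in Lemma \ref{202011242056} correctly identifies such modules with modules over the deformed preprojective algebra $\kk\overline{Q}/(\rho_{i}-cv_{i}e_{i})$. (Two small points of hygiene: your blanket statement that a finitely generated $\kk[z]$-module is finite-dimensional iff $z$ acts nilpotently is false as stated --- $\kk[z]/(z-1)$ --- and in the direction you actually need it is the $*$-grading, not finite generation over $\kk[{}^{v}\!\varrho]$, that does the work; also the case of a non-sincere $v$ is trivial in your setup, since the vertex simple $S_{i}$ with $v_{i}=0$ already gives a nonzero module of $\Pi^{cv}$, or one can argue as the paper does.)

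The genuine gap is the step you yourself flag: the existence, over the ground field of the theorem, of a nonzero representation of $\kk\overline{Q}/(\rho_{i}-cv_{i}e_{i})$ with dimension vector a minimal positive root $\alpha$ orthogonal to $v$. You import this from Crawley-Boevey--Holland, whose simple-module classification and reflection-functor arguments are carried out over $\CC$, whereas the theorem is stated over an arbitrary field $\kk$ and the stated purpose of the paper is precisely to cover positive characteristic. (The hypothesis of algebraic closure is harmless --- it is enough to know $\Pi^{cv}\otimes_{\kk}\bar\kk\neq0$, since the deformed preprojective algebra commutes with base change --- but the characteristic-zero hypothesis is not something you may wave away; ``one should either check that their construction is characteristic-free, or replace it'' is a plan, not a proof.) This is exactly the point where the paper takes a different, self-contained route: for $v$ non-sincere it shows directly, by a degree count in the presentation of Lemma \ref{202109131603}, that the loop $r_{i}$ is not nilpotent; for $v$ sincere but not regular it picks an indecomposable $M$ with ${}^{v}\!\Euch(M)=0$, uses Theorem \ref{semi-universal Auslander-Reiten triangle}(2) to split the triangle ${}^{v}\!\sfAR_{M}$, and deduces by induction (together with $\Hom(\nu_{1}^{-n}M,M)=0$ in the Dynkin case) that $M$ is a direct summand of ${}^{v}\!\YYM_{n}\lotimes_{\Pa}M$ for every $n$, whence ${}^{v}\!\YM(Q)_{n}\neq0$ for all $n$. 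That argument is characteristic-free and needs only $\End_{\Pa}(M)\cong\kk$, which holds for Dynkin quivers over any field; to complete your proposal you would either have to supply an analogous characteristic-free existence argument for the deformed preprojective algebra (e.g.\ verify that the CBH reflection functors and the admissible-reflection combinatorics for a minimal root work over any field), or fall back on the splitting argument, which is essentially the paper's proof.
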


As was mentioned in Theorem \ref{202111191454}(2),
it was proved by Etingof-Latour-Rains \cite{ELR} 
that if $\chara \kk = 0$ and $v \in \kk Q_{0}$ is generic, then  the QHA ${}^{v}\!\YM(Q)$ is symmetric. 
In our subsequent work, 
we prove the algebra is symmetric  
for a regular weight $v \in \kk Q_{0}$ and an arbitrary base field $\kk$. 

\begin{theorem}[{\cite{Herschend-Minamoto: tilting theory of QHA}}]\label{2021113009471}
Let $Q$ be a Dynkin quiver. Assume that  the weight $v \in \kk Q_{0}$ is regular. 
Then ${}^{v}\!\YM(Q)$ is a symmetric algebra.
\end{theorem}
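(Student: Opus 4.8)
The plan is to show that the Nakayama automorphism of $\Lambda:={}^{v}\!\YM(Q)$ is inner; equivalently that $\Lambda\cong D(\Lambda)$ as $\Lambda$-bimodules, i.e.\ that $\Lambda$ is symmetric. First recall what is already available: for $Q$ Dynkin and $v$ regular, $\Lambda$ is finite-dimensional (Theorem \ref{202109131544}), with $\dim\Lambda$ given by the Etingof--Rains formula (Corollary \ref{202111191828}); moreover, since the derived QHA ${}^{v}\!\YYM={}^{v}\!\YYM(Q)$ is $3$-Calabi--Yau of Gorenstein parameter $2$ (Theorem \ref{202112122233}) and its $0$-th cohomology $\tuH^{0}({}^{v}\!\YYM)=\Lambda$ is finite-dimensional, $\Lambda$ is a finite-dimensional Frobenius algebra, with a $*$-homogeneous Nakayama automorphism $\Nakaut$ (homogeneous because $\Lambda$ carries the $*$-grading with $\Lambda_{0}=\kk Q$ and is generated in $*$-degrees $0$ and $1$). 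In characteristic zero and for generic $v$ symmetry is already known (Theorem \ref{202111191454}(2)); the point is to remove the genericity hypothesis and the restriction on the characteristic.

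The first step is to transfer the computation to ${}^{v}\!\YYM$. Homological smoothness together with the $3$-Calabi--Yau property of Gorenstein parameter $2$ yields a $*$-graded bimodule quasi-isomorphism
\[
\RHom_{({}^{v}\!\YYM)^{e}}\bigl({}^{v}\!\YYM,\,{}^{v}\!\YYM\otimes_{\kk}{}^{v}\!\YYM\bigr)\ \simeq\ {}^{v}\!\YYM[-3](-2).
\]
Applying $\tuH^{0}$ and using the finite-dimensionality of $\Lambda$, this descends to a description of the Nakayama bimodule $D(\Lambda)\cong{}_{1}\Lambda_{\Nakaut}$ of $\Lambda$ together with a formula for $\Nakaut$ in terms of the Calabi--Yau datum and the $*$-grading shift $(-2)$. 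This is the exact analogue of the way in which the $2$-Calabi--Yau dg-algebra $\PPi(Q)$ of Gorenstein parameter $1$ controls the self-injective structure of $\Pi(Q)$; the difference is precisely the shift $(-2)$ versus $(-1)$, which is what will force $\Nakaut$ to be inner rather than the graph automorphism.

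The second step is to read off $\Nakaut$. On $\Lambda_{0}=\kk Q$ it restricts to an automorphism $\Nakaut_{0}$ which permutes the vertices of $Q$ by some permutation $\pi$. The crucial point is that $\pi=\id$: under Serre duality on $\Dbmod{\Pa}$, where the Serre functor is $\nu_{1}$ (namely $\tau_{1}[1]$), the vertex-permutation obstruction that for $\PPi$ (Gorenstein parameter $1$) produces the graph automorphism $-w_{0}$ on $\Pi(Q)$ here enters through the shift $(-2)$ and so is applied an even number of times, hence cancels. Since $Q$ is a tree, any automorphism of $\kk Q$ fixing the vertices is an arrow-rescaling and therefore inner already inside $\kk Q\subseteq\Lambda$; replacing $\Nakaut$ by an inner conjugate we may assume $\Nakaut_{0}=\id$. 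What is then left is the $\kk Q$-bimodule automorphism $\Nakaut_{1}$ of $\Lambda_{1}={}^{v}\!\YM_{1}$, which must be compatible with the universal Auslander--Reiten sequence $\Pa\xrightarrow{{}^{v}\!\varrho}{}^{v}\!\YM_{1}\to\Pi_{1}\to0$ of \eqref{universal AR-sequence}. Regularity of $v$ makes the pertinent bimodule Hom-spaces one-dimensional on each indecomposable summand, so $\Nakaut_{1}$ acts by scalars on summands; using the self-duality from Step 1 together with the relations of Definition \ref{quiver Heisenberg algebras} one checks that $\Nakaut_{1}$ is realised by conjugation by a unit of $\Lambda$, so that after one more inner adjustment $\Nakaut=\id$ and hence $\Lambda\cong D(\Lambda)$ as bimodules.

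This is essentially the content of \cite{Herschend-Minamoto: tilting theory of QHA}, where the argument is organised differently: one constructs an explicit tilting complex over $\Lambda={}^{v}\!\YM(Q)$ whose endomorphism algebra is visibly symmetric, and invokes Rickard's theorem that being symmetric is invariant under derived equivalence --- the availability of such a tilting complex is itself a consequence of the $3$-Calabi--Yau structure of ${}^{v}\!\YYM$ and of the $\rad^{n}$-approximation theory developed in the present paper. I expect the main obstacle to be Step 2: tracking the $*$-degree shift $(-2)$ and the twists coming from Serre duality precisely enough to prove that the vertex permutation --- and then the whole Nakayama automorphism --- is trivial up to inner automorphisms; concretely, this amounts to producing the self-dual projective bimodule data for ${}^{v}\!\YM(Q)$ from the presentation of Definition \ref{quiver Heisenberg algebras} and computing its outer term. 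Throughout, regularity of $v$ is indispensable, both to ensure $\Lambda$ is finite-dimensional and to keep the relevant Hom-spaces small enough to conclude that $\Nakaut$ is inner.
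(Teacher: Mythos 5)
You should first note that this theorem is not proved in the present paper at all: it is quoted from the subsequent work \cite{Herschend-Minamoto: tilting theory of QHA}, and the paper only records what the ingredients of that proof are, namely the computation $\tuH({}^{v}\!\YYM(Q))\cong{}^{v}\!\YM(Q)[u]$ of Theorem \ref{cohomology algebra structure theorem} (with $u$ central, of cohomological degree $-2$ and $*$-degree $h$) and the $2$-hereditary algebra ${}^{v}\!B(Q)$ of Section \ref{202209192000}. So there is no in-paper proof to match, and your route is necessarily your own; the question is whether it stands, and as written it has a genuine gap at its very first step. You claim that the $3$-Calabi--Yau property of ${}^{v}\!\YYM$ (Gorenstein parameter $2$) ``descends'' by applying $\tuH^{0}$ to a description of $\tuD(\Lambda)$ as a twisted bimodule ${}_{1}\Lambda_{\Nakaut}$, and even that $\Lambda=\tuH^{0}({}^{v}\!\YYM)$ is Frobenius for this reason. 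Neither implication is automatic. In the Dynkin case ${}^{v}\!\YYM$ is \emph{not} quasi-isomorphic to $\Lambda$: its cohomology is $\Lambda[u]$, nonzero in all even negative degrees, so truncating the bimodule duality $\RHom_{({}^{v}\!\YYM)^{\mre}}({}^{v}\!\YYM,({}^{v}\!\YYM)^{\mre})\simeq{}^{v}\!\YYM[-3](-2)$ in cohomological degree $0$ does not yield a statement about $\tuD(\Lambda)$; extracting duality on $\Lambda$ from the Calabi--Yau structure is exactly the hard point, and it is precisely why the cohomology-algebra theorem (together with facts such as ${}^{v}\!\YYM_{h-2}\cong\tuD(\Pa)$, Corollary \ref{vanishing corollary}) is flagged as the key input. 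The preprojective case already shows that ``CY dg-algebra with finite-dimensional $\tuH^{0}$'' does not force $\tuH^{0}$ to be symmetric: $\PPi(Q)$ is $2$-CY, yet $\Pi(Q)$ for Dynkin $Q$ is only self-injective, with a generally nontrivial Nakayama permutation. Note also that the paper deduces Corollary \ref{202006221820} \emph{from} the symmetry theorem, not the other way around.

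The second step has the same character: the assertion that the vertex permutation is trivial ``because the $*$-shift $(-2)$ is applied an even number of times'' is a heuristic, not an argument --- the Gorenstein parameter being $2$ rather than $1$ does not by itself compute the Nakayama permutation, and no mechanism is given that turns the parity of the shift into a statement about $\Nakaut_{0}$. Likewise, the final claim that $\Nakaut_{1}$ is realised by conjugation by a unit ``using the self-duality from Step 1 together with the relations'' is exactly the delicate computation that in characteristic zero forced Etingof--Latour--Rains (Theorem \ref{202111191454}(2)) to assume $v$ generic; removing genericity and the characteristic-zero hypothesis is the entire content of the theorem, and your proposal leaves it unargued. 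The part of your argument that does hold up --- arrow rescalings of $\kk Q$ for a tree are inner --- only helps once $\Nakaut_{0}$ is known to fix the vertices and $\Nakaut_{1}$ is known to be a rescaling compatible with an inner twist, neither of which is established. So the proposal, as it stands, does not prove the statement; to repair it along your lines you would need, at minimum, the full computation of $\tuH({}^{v}\!\YYM)$ as a bimodule over $\Lambda$ (i.e.\ Theorem \ref{cohomology algebra structure theorem} and its path-graded refinement) as the bridge between the Calabi--Yau structure and $\tuD(\Lambda)$, which is exactly the route the authors indicate for \cite{Herschend-Minamoto: tilting theory of QHA}.
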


A key of the proof  of this theorem is the following description of the cohomology algebra $\tuH({}^{v}\!\YYM(Q))$ of 
the derived QHA ${}^{v}\!\YYM(Q)$. 

\begin{theorem}[{Theorem \ref{cohomology algebra structure theorem}}]\label{202209191949}
Let $Q$ be a Dynkin quiver and assume that the weight  $v \in \kk Q_{0}$ is regular.

We identify $\tuH^{0}({}^{v}\!\YYM(Q)) $ with $ {}^{v}\!\YM(Q)$. 
Then the cohomology algebra $\tuH({}^{v}\!\YYM(Q))$ has a central element $u \in \tuH^{-2}({}^{v}\!\YYM(Q)_{h})$ of 
cohomological degree $-2$ and of $*$-degree $h$ 
which induces an isomorphism 
\[
\tuH({}^{v}\!\YYM(Q)) \cong {}^{v}\!\YM(Q)[u]
\]
of algebras with cohomological degrees and $*$-gradings 
where the right hand side denotes the polynomial algebra in a single variable $u$. 
\end{theorem}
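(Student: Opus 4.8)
The plan is to bootstrap from the structural input already available: the description of $\Dbmod{\Pa}$ as a fractionally Calabi-Yau category and the fact (Theorem \ref{description of QHA as kQ-modules}, Corollary \ref{202111191825}) that in the Dynkin case $\bigoplus_{N\in\ind Q} N^{\oplus \dim N}$ accounts for all of ${}^{v}\!\YM(Q)$ so that the $*$-graded pieces ${}^{v}\!\YYM(Q)_{n}$ eventually \emph{repeat} with period the Coxeter number $h$. Concretely, since $\nu_{1}^{-h}$ acts on $\Dbmod{\Pa}$ as a shift (for $Q$ Dynkin, $\nu_{1}$ has finite order up to shift — this is the fractional Calabi-Yau property of the path algebra, and is what forces finite-dimensionality of ${}^{v}\!\YM(Q)$ in Theorem \ref{202109131544}), the universal Auslander-Reiten triangle ${}^{v}\!\sfAR$ of Theorem \ref{202111201538} and its iterates should produce, after $h$ steps, an identification of ${}^{v}\!\YYM_{h}$ with a shifted copy of $\Pa$, i.e.\ a class $u\in \tuH^{-2}({}^{v}\!\YYM(Q)_{h})$. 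First I would make this precise: iterate ${}^{v}\!\sfAR_{M}$ to build ${}^{v}\!\YYM_{n}\lotimes_{\Pa} M$ as the $\rad^{n}$-approximation data, identify ${}^{v}\!\YYM_{h}\lotimes_{\Pa}\Pa$ cohomologically, and read off a distinguished cohomology class; then check it is a cycle representing a genuine element $u$ of cohomological degree $-2$ and $*$-degree $h$.

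Next I would prove that $u$ is central in $\tuH({}^{v}\!\YYM(Q))$. The natural route is that $u$ comes from a map of $\Pa$-bimodules $\Pa[2] \to {}^{v}\!\YYM_{h}$ that is compatible with the $\sfT_{\Pa}$-algebra structure; concretely one should check $u\cdot x = x\cdot u$ for $x$ ranging over the generators ${}^{v}\!\YYM_{0}=\Pa$ and ${}^{v}\!\YYM_{1}$, which suffices since ${}^{v}\!\YYM$ is generated in $*$-degrees $0$ and $1$. For $\Pa$ this is automatic from bimodule-linearity; for ${}^{v}\!\YYM_{1}$ one uses that multiplication by $u$ is, up to the identification $\nu_1^{-h}\cong [\,\text{shift}\,]$, the composite of $h$ copies of the degree-raising map in ${}^{v}\!\sfAR$, and these commute with any single application of ${}^{v}\!\rrho$ and ${}^{v}\!\ppi$ by functoriality of $-\lotimes_{\Pa}M$. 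I would phrase this using that ${}^{v}\!\YYM$ is $3$-Calabi-Yau (the theorem quoted just before) so that $\tuH({}^{v}\!\YYM)$ carries a nondegenerate graded pairing, which pins down $u$ up to scalar and makes centrality easier to verify on a generating set.

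Then I would establish that the map ${}^{v}\!\YM(Q)[u]\to \tuH({}^{v}\!\YYM(Q))$, $u\mapsto u$, is an isomorphism of bigraded algebras. Since both sides are bigraded by cohomological degree and $*$-degree, it suffices to compare graded dimensions in each bidegree. On the left, $\dim({}^{v}\!\YM(Q)[u])$ in $*$-degree $n$ and cohomological degree $-2k$ is $\dim {}^{v}\!\YM(Q)_{n-kh}$. On the right, I would compute $\tuH^{\bullet}({}^{v}\!\YYM_{n})$ using the iterated triangle ${}^{v}\!\sfAR$: each application contributes a copy of $\PPi_1\lotimes\cdots$, and when $Q$ is Dynkin $\PPi$ has cohomology in two degrees related by the Serre functor, so a Koszul-type/periodicity bookkeeping gives $\tuH^{-2k}({}^{v}\!\YYM_n)\cong {}^{v}\!\YM(Q)_{n-kh}$ compatibly with the algebra structure. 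The surjectivity is clear from the construction of $u$ and the fact that $\tuH^0({}^{v}\!\YYM)={}^{v}\!\YM(Q)$ generates in cohomological degree $0$; injectivity then follows by the dimension count (both total dimensions being finite, using Corollary \ref{202111191828}).

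The main obstacle I expect is the bookkeeping in the last step — proving that the iterated universal Auslander-Reiten triangle genuinely degenerates with exact period $h$ at the level of the whole algebra ${}^{v}\!\YYM$ (not just on individual modules $M$), and that the resulting periodicity isomorphism is \emph{multiplicative}, i.e.\ really is "multiplication by a single central class $u$" rather than merely a dimension coincidence. Controlling the differentials in ${}^{v}\!\YYM$ well enough to see that no higher cohomology is created, and that the class $u$ is not a zero-divisor, is where the real work lies; here I would lean on the $3$-Calabi-Yau duality and the explicit dg-structure of ${}^{v}\!\YYM(Q)$ from Definition \ref{202111201445}, together with the finite-dimensionality of ${}^{v}\!\YM(Q)$, to rule out pathologies.
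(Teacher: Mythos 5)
Your skeleton matches the paper's: produce $u$ from the periodicity forced by $\nu_{1}^{-h}\cong[2]$, check centrality on generators, and conclude by periodicity. But the two steps you treat lightly are exactly where the content lies, and your suggested arguments for them do not close. For centrality, checking commutation with ${}^{v}\!\YM_{0}=\Pa$ and ${}^{v}\!\YM_{1}$ does suffice, and commutation with $\Pa$ is indeed free because ${}^{v}\!\YYM_{h}\cong \Pa[2]$ is an isomorphism in $\sfD(\Pa^{\mre})$. The problem is ${}^{v}\!\YM_{1}$: functoriality of $-\lotimes_{\Pa}M$ only tells you that $\sfl_{u}$ and $\sfr_{u}$ are both bimodule isomorphisms ${}^{v}\!\YM_{m}\to\tuH^{-2}({}^{v}\!\YYM_{m+h})$; it does not tell you they agree. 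A priori $u$ could conjugate ${}^{v}\!\YM$ by a nontrivial algebra automorphism fixing $\Pa$ and $u$. The paper's proof kills exactly this possibility: it sets $\phi:=\sfl_{u}^{-1}\sfr_{u}$, notes $\phi$ is an algebra automorphism restricting to a bimodule automorphism of ${}^{v}\!\YM_{1}$, uses that $Q$ has no multiple arrows to write $\phi(\alpha^{*})=c_{\alpha}\alpha^{*}$, uses that ${}^{v}\!\varrho$ is central in $\tuH({}^{v}\!\YYM)$ (Lemma \ref{homotopy proposition}) to get $\phi({}^{v}\!\varrho)={}^{v}\!\varrho$, and then invokes the linear independence of the elements $v_{t(\alpha)}^{-1}\alpha\alpha^{*}-v_{h(\alpha)}^{-1}\alpha^{*}\alpha$ in ${}^{v}\!\YM$ (Lemma \ref{202006221721}) to force $c_{\alpha}=1$. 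None of these ingredients appear in your plan, and the appeal to the $3$-Calabi-Yau pairing does not substitute for them: the pairing pins down $u$ up to scalar but says nothing about whether left and right multiplication by $u$ coincide on ${}^{v}\!\YM_{1}$.

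The second gap is the one you yourself flag as "the real work": the exact period-$h$ degeneration and its multiplicativity. The paper gets this not by a bidegree dimension count but by proving ${}^{v}\!\YYM_{h-1}=0$ and ${}^{v}\!\YYM_{h-2}\cong\tuD(\Pa)$ (Corollary \ref{vanishing corollary}, resting on the $\rad^{n}$-approximation theorems for regular $v$), feeding this into the triangle $\sfU_{h}$ to see that ${}^{v}\!\ppi_{h}:{}^{v}\!\YYM_{h}\to\PPi_{h}\cong\Pa[2]$ is a quasi-isomorphism of bimodules, and then using the triangles $\sfV'_{m,h}$ to see that the multiplication ${}^{v}\!\YYM_{m}\lotimes_{\Pa}{}^{v}\!\YYM_{h}\to{}^{v}\!\YYM_{m+h}$ is itself an isomorphism; it is this last point that makes $\sfr_{u}^{m}:{}^{v}\!\YM\to\tuH^{-2m}({}^{v}\!\YYM)$ an isomorphism on the nose, with no counting needed, and simultaneously shows $u$ is not a zero divisor. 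Your proposal defers precisely these facts (vanishing of ${}^{v}\!\YYM_{h-1}$, multiplicativity of the periodicity), so as it stands it is a correct outline of the strategy but not a proof; to complete it along your lines you would still need to supply the vanishing statement and the centrality mechanism above, at which point you would have reproduced the paper's argument.
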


Another key ingredient for the proof is the algebra
${}^{v}\!B(Q)$, which we give an explanation of in Section \ref{202209192000}. 

\subsubsection{}

Assume that $\chara \kk  \neq 2$. In section \ref{202102071454} we explain that the derived quiver Heisenberg algebra ${}^{v}\!\YYM(Q)$ is isomorphic to the Ginzburg dg-algebra $\cG\left(\overline{Q}, W \right)$, of the double quiver $\overline{Q}$ with potential 
\[
W: = -\frac{1}{2} {}^{v}\!\varrho \rho.
\]
Moreover, the ordinary quiver Heisenberg algebra ${}^{v}\!\YM(Q)$ is isomorphic to the Jacobi algebra $\cP\left (\overline{Q}, W \right)$.

If $Q$ is Dynkin quiver and the weight $v \in \kk Q_{0}$ is regular, then Theorem~\ref{2021113009471} says that $\cP\left (\overline{Q}, W \right)$ is symmetric. In particular, $\left (\overline{Q}, W \right)$ is a selfinjective quiver with potential, in the sense of \cite{HI}. As explained in \cite[Section 6.3]{JKM}, this means that $\cP\left (\overline{Q}, W \right)$ is the endomorphism algebra of a $2\ZZ$-cluster tilting object in the associated Amiot cluster category $\mathcal{C}\left (\overline{Q}, W \right)$. Moreover, by \cite[Theorem 6.3.1]{JKM}, any algebraic $\Hom$-finite $\kk$-linear triangulated category, which is Krull-Schmidt and has a $2\ZZ$-cluster tilting object with endomorphism algebra isomorphic to the quiver Heisenberg algebra ${}^{v}\!\YM(Q)$ must be equivalent to $\mathcal{C}\left (\overline{Q}, W \right)$.

\subsubsection{}

Next we explain that 
for $n \geq 2$ 
the $*$-degree $n$ part of ${}^{v}\!\YYM$ and $\PPi$ provides 
minimal left and right $\rad^{n}$-approximations in $\Dbmod{\Pa}$. 
To state the results, we need to introduce a subset $N_{Q} \subset \NN$ depending on $Q$. 
We define subsets $N_{Q} \subset \NN$ to be 
\[
\begin{split}
&N_{Q} := 
\begin{cases}
\{n \in \NN \mid 0 \leq n \leq h -2\} & 
( Q \textup{ is a Dynkin quiver  with the Coxeter number } h )\\
\NN & ( Q \textup{ is a non-Dynkin quiver}). 
\end{cases}\\
\end{split}
\]

\begin{theorem}[{$\rad^{n}$-approximation theorem (Theorem \ref{right approximation theorem}, Theorem \ref{pre left approximation theorem})}]\label{202111201606}

\ 

Let $M \in \Dbmod{\Pa}$ and $n \in N_{Q}$. 
Assume that the weight $v \in \kk Q_{0}$ is regular. 
Then the following statements hold. 

\begin{enumerate}[(1)]

\item The morphism \[
{}^{v}\!\ppi_{n, M} := {}^{v}\!\ppi_{n} \lotimes_{\Pa} M: {}^{v}\!\YYM_{n} \lotimes_{\Pa} M \to \PPi_{n} \lotimes_{\Pa} M
\]
is a minimal right $\rad^{n}$-approximation of $\PPi_{n} \lotimes_{\Pa} M$.

\item 
The object ${}^{v}\!\YYM_{n} \lotimes_{\Pa} M$ provides a minimal left $\rad^{n}$-approximation of $M$. 

More precisely, 
there exists a minimal left $\rad^{n}$-approximation  $\beta^{(n)}:  M \to {}^{v}\!\YYM_{n} \lotimes_{\Pa} M$ of $M$.
\end{enumerate}
\end{theorem}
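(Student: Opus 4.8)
The plan is to build on Theorem \ref{202111201538} (the universal Auslander–Reiten triangle), which identifies the $*$-degree $1$ data as minimal $\rad$-approximations, and then bootstrap to higher $n$ by induction along the exact triangle ${}^{v}\!\sfU$. First I would make precise what it means for ${}^{v}\!\YYM_n\lotimes_\Pa M$ to ``provide'' a left $\rad^n$-approximation: using the $*$-graded dg-algebra structure, the composite of right-multiplications by ${}^{v}\!\varrho$ realizes ${}^{v}\!\YYM_n$ as an iterated extension, and at the level of bimodule complexes ${}^{v}\!\YYM_n$ decomposes (as a complex, not canonically as an object) into pieces ${}^{v}\!\YYM_1^{\lotimes k}$. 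Concretely, from ${}^{v}\!\sfAR$ one gets for each $k$ a triangle $\PPi_k\lotimes_\Pa(-)\,[-1]\to {}^{v}\!\YYM_k\lotimes_\Pa(-)\to {}^{v}\!\YYM_{k-1}\lotimes_\Pa(-)\to$, obtained by taking the $*$-degree $k$ part of ${}^{v}\!\sfU$ tensored with $M$. The inductive hypothesis is that ${}^{v}\!\YYM_{k}\lotimes_\Pa M$ carries a minimal left $\rad^{k}$-approximation $\beta^{(k)}\colon M\to {}^{v}\!\YYM_k\lotimes_\Pa M$, and likewise that ${}^{v}\!\ppi_{k,M}$ is a minimal right $\rad^{k}$-approximation of $\PPi_k\lotimes_\Pa M$.

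For the inductive step on part (2): given $\beta^{(k)}\colon M\to {}^{v}\!\YYM_k\lotimes_\Pa M$ in $\rad^k$, I would post-compose with the morphism ${}^{v}\!\rrho\colon {}^{v}\!\YYM_k\lotimes_\Pa M\to {}^{v}\!\YYM_{k+1}\lotimes_\Pa M$ coming from ${}^{v}\!\sfU$ in $*$-degree $k{+}1$; since this latter morphism is (the $M$-tensor of) the first map in ${}^{v}\!\sfAR$, which by Theorem \ref{202111201538} applied to the indecomposable summands of ${}^{v}\!\YYM_k\lotimes_\Pa M$ is a minimal left $\rad$-approximation, the composite $\beta^{(k+1)} := {}^{v}\!\rrho\circ\beta^{(k)}$ lies in $\rad^{k+1}$. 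The content is then to show $\beta^{(k+1)}$ is itself a minimal left $\rad^{k+1}$-approximation: one checks the factorization property by taking an arbitrary $g\in\rad^{k+1}(M,X)$, writing $g = g'\circ\beta'$ with $\beta'\in\rad(M,-)$ a left $\rad$-approximation and $g'\in\rad^k$, using minimality of $\rad$-approximations (the universal AR triangle) to factor $\beta'$ through ${}^{v}\!\YYM_1\lotimes_\Pa M$, and then the inductive hypothesis to factor the $\rad^k$-part through ${}^{v}\!\YYM_k\lotimes_\Pa-$; minimality follows because $\beta^{(k+1)}$ is a composite of radical-minimal maps and by the standard fact that a composite of left-minimal approximations whose domains have no common ``non-moving'' summands remains left minimal, which here is ensured by the regularity of $v$. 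Part (1) is dual: ${}^{v}\!\ppi_{n,M}$ being a right $\rad^n$-approximation follows by the same induction using the third map of ${}^{v}\!\sfU$ and the fact (Theorem \ref{202111201538}) that ${}^{v}\!\ppi_{1,M}$ is a minimal right $\rad$-approximation; minimality uses that $\PPi_n\lotimes_\Pa M = \nu_1^{-n}M$ is indecomposable when $M$ is.

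The role of the condition $n\in N_Q$, and the main obstacle, is precisely where the induction can break: in the Dynkin case with Coxeter number $h$, the functor $\nu_1^{-k}$ eventually sends indecomposables to injective (equivalently zero in the relevant stable sense) objects, so the universal AR triangle of Theorem \ref{202111201538} — which requires the relevant modules to be non-injective — fails once $k$ reaches $h-1$; thus the bootstrap argument is only valid for $n\le h-2$, and in the non-Dynkin case $\nu_1^{-k}M$ is never injective so $N_Q=\NN$. The hard part of the proof will be verifying \emph{minimality} (as opposed to the mere approximation property) at each inductive stage: this requires controlling the ``stable'' part of ${}^{v}\!\YYM_n\lotimes_\Pa M$ and showing no summand splits off trivially, for which I expect one needs the full strength of the regularity hypothesis together with a careful analysis — likely via the ladder structure mentioned in the introduction — of how the middle terms of iterated AR triangles assemble. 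I would isolate this as the technical heart and treat the approximation property itself as comparatively routine bookkeeping with the triangles extracted from ${}^{v}\!\sfU$.
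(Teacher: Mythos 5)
Your proposal for part (2) rests on a construction that the paper shows \emph{fails} under the stated hypotheses. You define $\beta^{(k+1)}:={}^{v}\!\brho_{k+1,M}\circ\beta^{(k)}$ and claim it is a minimal left $\rad^{k+1}$-approximation; but composing with the multiplication ${}^{v}\!\brho_{k+1,M}$ is exactly Problem \ref{introduction:left approximation problem}, and regularity of $v$ is not sufficient for it. Concretely, for the $A_{3}$-quiver with the regular weight $v=(1,1,-3)^{t}$ and $M=P_{2}$, Proposition \ref{202111201702} shows ${}^{v}\!\varrho_{M}^{2}=0$, so already at $k=1$ your composite is the zero map, while the minimal left $\rad^{2}$-approximation has nonzero target ${}^{v}\!\YYM_{2}e_{2}\cong I_{2}$. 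Two points in your argument break down. First, ${}^{v}\!\brho_{k+1,M}$ is \emph{not} the $M$-tensor of the first map of ${}^{v}\!\sfAR$ applied to ${}^{v}\!\YYM_{k}\lotimes_{\Pa}M$: that map is ${}^{v}\!\rrho_{{}^{v}\!\YYM_{k}\lotimes M}$ with target ${}^{v}\!\YYM_{1}\lotimes_{\Pa}{}^{v}\!\YYM_{k}\lotimes_{\Pa}M$, and ${}^{v}\!\brho_{k+1,M}={}^{v}\!\zzeta_{k+1,M}\circ{}^{v}\!\rrho_{{}^{v}\!\YYM_{k}\lotimes M}$ is only a direct summand of it (it satisfies the left $\rad$-fitting condition, Definition \ref{202007141926}), not a minimal left $\rad$-approximation itself. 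Second, there is no ``standard fact'' that a composite of left minimal approximations is left minimal -- the example above gives a zero composite. The correct criterion is Corollary \ref{2021050917551}: the composite is a minimal left $\rad^{k+1}$-approximation if and only if $(-\lambda'_{k},{}^{v}\!\brho_{k+1,M})^{t}$ is a minimal left $\rad$-approximation of ${}^{v}\!\YYM_{k}\lotimes_{\Pa}M$, and verifying this for the explicit multiplication maps is precisely what requires hypotheses beyond regularity (property (II)${}_{M,\lambda}$, the eigenvector and $V_{m}(\lambda)\neq 0$ conditions of Theorem \ref{202104091259}). The paper therefore proves (2) by a completely different route: one first shows, by induction comparing the AR-triangle for ${}^{v}\!\YYM_{n-1}\lotimes_{\Pa}M$ with the dual AR-triangle for $M^{\lvvee}\lotimes_{\Pa}{}^{v}\!\YYM_{n-1}$ over $\Pa^{\op}$, that ${}^{v}\!\YYM_{n}\lotimes_{\Pa}M\cong\RHom_{\Pa^{\op}}(M^{\lvvee}\lotimes_{\Pa}{}^{v}\!\YYM_{n},\PPi_{n})$, and then obtains $\beta^{(n)}$ as the $\RHom_{\Pa^{\op}}(-,\PPi_{n})$-dual of the minimal right $\rad^{n}$-approximation ${}_{M^{\lvvee}}{}^{v}\!\ppi_{n}$; existence is proved without ever exhibiting $\beta^{(n)}$ as a composite of multiplications, and indeed no explicit formula for $\beta^{(n)}$ is claimed.

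For part (1) your outline (induction along the triangles extracted from ${}^{v}\!\sfU$) does match the paper's strategy, but it omits the step that carries the whole induction: in the paper one compares the triangle $\sfV_{n}$ with the direct sum of AR-triangles starting from ${}^{v}\!\YYM_{n-1}\lotimes_{\Pa}M$ via the diagram of Corollary \ref{202008091343}, and the decisive point is that ${}^{v}\!\eeta^{*}_{n,M}$ is a \emph{split monomorphism} (equivalently ${}^{v}\!\zzeta_{n,M}$ is a split epimorphism). This is deduced from the right $\rad$-fitting condition for ${}_{\PPi_{1}}{}^{v}\!\brho_{n-1,M}$, which must itself be carried along the induction using Theorem \ref{kQ approximations theorem} (this is also where $n\in N_{Q}$ enters) together with Lemma \ref{202105171508}; only then does the right version of Lemma \ref{202012191919} give that ${}^{v}\!\ppi_{n,M}$ is a minimal right $\rad^{n}$-approximation. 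Your appeal to indecomposability of $\PPi_{n}\lotimes_{\Pa}M$ does not deliver right minimality (a right approximation with indecomposable target can still have superfluous summands in its source); minimality comes from the AR-triangle structure itself. Without the split-monomorphism step and the bookkeeping of the fitting conditions, the induction for (1) does not close, and, as explained above, part (2) needs the duality argument rather than the composite construction.
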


On the other hand, 
as a straightforward generalization of the well-known description of 
the middle term of an AR-triangle,  
we have a description of the object which provides a minimal left $\rad^{n}$-approximation 
(Theorem \ref{202008172145}). 
Combining this and the above $\rad^{n}$-approximation theorem, we obtain 

\begin{theorem}[{Theorem \ref{202111110809}}]\label{202111201641}
Let $M \in \ind \Dbmod{\Pa}$ and $\cC_{M} \subset \Dbmod{\Pa}$ be the full subcategory that consists of objects belonging to the 
same component as $M$ in the AR-quiver. 
Assume that the weight  $v \in \kk Q_{0}$ is regular 
and that the following condition does not  hold: $Q$ is wild and $M$ is a shift of a regular module. 
Then we have an isomorphism 
\[
\bigoplus_{n \in N_{Q}} {}^{v}\!\YYM_{n} \lotimes_{\Pa} M  \cong \bigoplus_{N \in \ind \cC_{M} } N^{\oplus \dim\Hom(M, N)}
\]
in $\sfD(\Pa)$. 
\end{theorem}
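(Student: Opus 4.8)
The strategy is to combine the two main structural inputs that precede this statement: the $\rad^n$-approximation theorem (Theorem~\ref{202111201606}), which identifies ${}^{v}\!\YYM_n \lotimes_{\Pa} M$ as (the source of) a minimal left $\rad^n$-approximation of $M$ for each $n \in N_Q$, and the description (referenced as Theorem~\ref{202008172145}) of the object providing a minimal left $\rad^n$-approximation in terms of paths of irreducible morphisms in the AR-quiver. Concretely, I would first recall that when $M$ lies in a component $\cC_M$ of the AR-quiver of $\Dbmod{\Pa}$ that behaves like a stable translation quiver of the expected shape (this is where the hypothesis ``$Q$ is not wild, or $M$ is not a shift of a regular module'' enters), the multiplicity of an indecomposable $N \in \ind \cC_M$ in a minimal left $\rad^n$-approximation of $M$ equals the number of paths of length $n$ from $M$ to $N$ in the AR-quiver, counted with the appropriate multiplicities coming from the mesh relations — equivalently, the $(M,N)$-entry of the $n$-th power of the (generalized) adjacency matrix of $\cC_M$.

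Next I would sum over $n$. Summing the multiplicity formula over all $n \in N_Q$ gives that the multiplicity of $N$ in $\bigoplus_{n \in N_Q} {}^{v}\!\YYM_n \lotimes_{\Pa} M$ is the total number of paths (of any admissible length) from $M$ to $N$ in the AR-quiver of $\cC_M$. The key identity to invoke here is the standard fact that, in a (suitable) translation quiver, the total number of such paths from $M$ to $N$ equals $\dim_{\kk}\Hom_{\Dbmod{\Pa}}(M, N)$ — this is precisely the statement that $\Hom$-spaces in the derived category of a hereditary algebra are spanned by compositions of irreducible maps, with the mesh relations accounting exactly for the linear dependencies. For the Dynkin case the bound $n \leq h-2$ in the definition of $N_Q$ is exactly what is needed so that the sum is neither too short (missing paths) nor too long (the AR-quiver is finite and paths of length $\geq h-1$ between indecomposables vanish); for the non-Dynkin tame case one uses the structure of the tubes and the preprojective/preinjective components; and the excluded wild-regular case is precisely the situation where the $\Hom$-spaces are not controlled by finite path counts in this way.

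Assembling: once both sides have the same multiplicity $\dim_{\kk}\Hom_{\Dbmod{\Pa}}(M, N)$ at every indecomposable $N \in \ind \cC_M$, and once one checks that no indecomposable outside $\cC_M$ appears on the left-hand side (immediate, since ${}^{v}\!\YYM_n \lotimes_{\Pa} M$ is built from iterated applications of ${}^{v}\!\YYM_1 \lotimes_{\Pa} -$, which by Theorem~\ref{202111201538} is the middle term of an AR-triangle and hence stays in $\cC_M$), the Krull--Schmidt property of $\Dbmod{\Pa}$ yields the claimed isomorphism. The main obstacle I anticipate is the bookkeeping in the translation-quiver combinatorics: verifying that the ``number of paths of length $n$ with mesh-multiplicities'' genuinely matches the $n$-th layer ${}^{v}\!\YYM_n \lotimes_{\Pa} M$ for \emph{every} $n \in N_Q$ simultaneously — i.e. that Theorem~\ref{202008172145} and Theorem~\ref{202111201606} are compatible layer-by-layer — and handling the boundary effects at the edge of $N_Q$ (the Coxeter-number cutoff in the Dynkin case, and injective/projective objects in the component in general, where the AR-triangle degenerates). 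Once that compatibility is in hand, the identity $\sum_{n} (\text{path count})_n = \dim \Hom(M,N)$ is the essential classical fact that finishes the argument.
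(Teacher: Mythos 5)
Your skeleton is the paper's: Theorem \ref{pre left approximation theorem} identifies ${}^{v}\!\YYM_{n}\lotimes_{\Pa}M$ with the minimal left $\rad^{n}$-approximation object of $M$ and gives its indecomposable multiplicities, and the statement follows by summing these over $n\in N_{Q}$ (this is exactly how Theorem \ref{20211111751} is combined with it). The place where your justification deviates, and as written does not hold, is the combinatorial input for the summation. Theorem \ref{202008172145} does not express the multiplicity of $N$ in the $n$-th approximation as a path count: it says this multiplicity equals $\dim_{\ResEnd(N)}\irr^{n}(M,N)$, the dimension of the radical layer $\rad^{n}(M,N)/\rad^{n+1}(M,N)$. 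In particular your ``equivalently, the $(M,N)$-entry of the $n$-th power of the adjacency matrix of $\cC_{M}$'' is false: powers of the adjacency matrix count all paths of length $n$ and ignore the mesh relations, so they overcount the radical layers (already for $A_{2}$ the unique $2$-path $M\to N\to\nu_{1}^{-1}M$ composes to a radical-square-zero map; the correct recursion is the ladder one, in which $\nu_{1}^{-1}L_{n-1}$ is split off from the middle terms of the AR-triangles of $L_{n}$). Likewise, ``Hom-spaces in the derived category of a hereditary algebra are spanned by compositions of irreducible maps'' is not true in general --- it fails between components and for regular modules over wild quivers, which is precisely the excluded case --- and even restricted to the allowed components it amounts to standardness, which is more than the proof needs and which you neither prove nor cite precisely.

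What the paper actually uses at this step is weaker and cleaner: under the stated hypothesis the component $\cC_{M}$ is \emph{generalized standard} (by the cited results of Simson--Skowro\'nski this holds for preprojective and preinjective components, and for the regular components when $Q$ is extended Dynkin), i.e.\ $\rad^{n}(M,N)=0$ for $n\gg 0$ whenever $N\in\ind\cC_{M}$. Hence the radical filtration of $\Hom(M,N)$ is finite and $\dim\Hom(M,N)=\sum_{n\geq 0}\dim\irr^{n}(M,N)$; combining this with the multiplicity formula of Theorem \ref{202008172145} and with Theorem \ref{pre left approximation theorem} yields the isomorphism by Krull--Schmidt. Your remaining worries are harmless: summands outside $\cC_{M}$ cannot occur, as you note, and the Dynkin cutoff in $N_{Q}$ causes no boundary problem since the approximation objects vanish for $n\geq h-1$. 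If you replace your path-counting claims by the radical-filtration statement above, your argument becomes the paper's proof.
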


\subsubsection{}

In Theorem \ref{202111201606}(1) it is proved that the morphism ${}^{v}\!\ppi_{n, M}: {}^{v}\!\YYM_{n} \lotimes_{\Pa} M \to \PPi\lotimes_{\Pa} M$ 
is a minimal right $\rad^{n}$-approximation of $\PPi_{n} \lotimes_{\Pa} M$. 
Compared to this, (2) of the same theorem only establishes the existence of 
a minimal left $\rad^{n}$-approximation morphism $\beta^{(n)}: M \to {}^{v}\!\YYM_{n} \lotimes_{\Pa} M$. 
Since the multiplication ${}^{v}\!\varrho_{M} : M \to {}^{v}\!\YYM_{1} \lotimes_{\Pa} M$ by the weighted mesh relation 
is a minimal left $\rad$-approximation of $M$ by Theorem \ref{202111201538}, 
it is natural to consider the following problem.  

\begin{problem}\label{introduction:left approximation problem}
Let $n \in N_{Q}$ and $M \in \ind\Dbmod{\Pa}$. 
Assume that $v \in \kk Q_{0}$ is regular. 
Then,  
is the multiplication ${}^{v}\!\varrho_{M}^{n}: M \to {}^{v}\!\YYM_{n} \lotimes_{\Pa} M$ a minimal left $\rad^{n}$-approximation?
\end{problem}

It turns out the answer is no in general. 
As is shown below in Proposition \ref{202111201702}, even in the case $n =2$, 
we need to put more conditions on the weight $v$ than regularity.
Theorem \ref{202103021651} provides a linear combination $f = \sum_{i \in Q_{0}} f_{i} v_{i}$ of the coordinates of $v$ 
depending on $M$ 
such that  the condition $f \neq 0$ is sufficient for the answer to be yes in case  $n=2$. 
We are not able to show that it is a necessary condition. 
Also we have not succeeded to obtain  explicit equations of the coordinates of $v$ that guarantees the answer is yes for $n \geq 3$. 

We are able to give a partial solution to  the case $\chara \kk = 0$. 
Namely, we prove that the answer is yes for a generic weight $v \in \kk Q_{0}$.

\begin{theorem}[{Theorem \ref{202105241547}}]\label{202206221845}
Assume $\chara \kk = 0$. 
Let $Q$ be a  quiver, 
$M \in \ind \Dbmod{\Pa}$ and $n\in N_{Q}$.  

Then for a generic weight $v \in \kk Q_{0}$ 
the morphism 
\[
{}^{v}\!\varrho_{M}^{n}: M \to {}^{v}\! \YYM_{n} \lotimes_{\Pa} M
\]
is a minimal left $\rad^{n}$-approximation. 
\end{theorem}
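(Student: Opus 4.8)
The plan is to prove the statement by upgrading Theorem~\ref{202103021651} (the $n=2$ case with the explicit linear form $f$) to all $n \in N_Q$ via a genericity/dimension-counting argument, and then showing that in characteristic zero a single nonvanishing polynomial condition in the coordinates of $v$ suffices. First I would recall the general framework: by Theorem~\ref{202111201606}(2) there \emph{exists} some minimal left $\rad^n$-approximation $\beta^{(n)}\colon M \to {}^{v}\!\YYM_n \lotimes_{\Pa} M$, and the object ${}^{v}\!\YYM_n \lotimes_{\Pa} M$ is already the correct target by Theorem~\ref{202111201641}. So the only question is whether the \emph{specific} morphism ${}^{v}\!\varrho_M^n$ (multiplication by the $n$-th power of the weighted mesh relation) is itself minimal left $\rad^n$, equivalently whether the composite of ${}^{v}\!\varrho_M^n$ with a splitting onto each indecomposable summand of ${}^{v}\!\YYM_n \lotimes_{\Pa} M$ is nonzero in the appropriate $\rad^n/\rad^{n+1}$ quotient. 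I would phrase this as the nonvanishing of finitely many ``coordinate'' components of ${}^{v}\!\varrho_M^n$ with respect to a fixed left minimal $\rad^n$-approximation.

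The key observation is that these coordinate components are \emph{polynomial} in the entries of $v$. Concretely, ${}^{v}\!\varrho = \sum_{i} v_i^{-1}\rho_i$, but after clearing denominators (working with $\rho_i - v_i z e_i$ as in Lemma~\ref{202011242056} and tracking $*$-degrees), the map ${}^{v}\!\varrho_M^n$ and all the structure maps of ${}^{v}\!\YYM_n$ depend polynomially on $v \in \kk^{Q_0}$. Thus the locus of $v$ for which ${}^{v}\!\varrho_M^n$ fails to be a minimal left $\rad^n$-approximation is a Zariski-closed subset $Z_{M,n} \subset \kk^{Q_0}$ (the union, over indecomposable summands, of the vanishing loci of the corresponding coordinate polynomials, intersected with the open regular locus). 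To conclude ``for generic $v$'', it then suffices to exhibit \emph{one} weight $v_0$ for which ${}^{v_0}\!\varrho_M^n$ is a minimal left $\rad^n$-approximation, since a nonempty Zariski-open set in $\kk^{Q_0}$ is dense when $\kk$ is infinite (which holds since $\chara \kk = 0$). Equivalently, I would show the coordinate polynomials are not identically zero.

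To produce such a witness $v_0$, or rather to show the relevant polynomials are not identically zero, I would use the ladder structure provided by ${}^{n}\!\YYM$ (the ``special type of left ladder'' mentioned after the $\rad^n$-approximation theorem, Theorem~\ref{202111201606}). The point is that a minimal left $\rad^n$-approximation can be built as an $n$-fold composite of minimal left $\rad$-approximations \emph{up to} passing to summands and killing the injective/already-approximated parts at each stage, and the $\rad$-approximation at each stage is exactly multiplication by ${}^{v}\!\varrho$ on the appropriate object by Theorem~\ref{202111201538}. So ${}^{v}\!\varrho_M^n$ is the honest $n$-fold composite, and its failure to be minimal left $\rad^n$ is measured by the difference between the full composite and its ``essential part''. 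In characteristic zero, using the case $n=2$ from Theorem~\ref{202103021651} as the base case and inducting, I would argue that at each stage the new obstruction polynomial is, up to the already-established nonvanishing ones, a nonzero multiple of a further linear (or low-degree) form in $v$ coming from the dimension-theoretic quantity $\sum_{i} v_i \dim(e_i\,{}^{v}\!\YYM_{n-1}\lotimes_{\Pa} M)$ analogous to the quantity $x$ appearing in Theorem~\ref{202111181854}; crucially this quantity is a genuinely nonzero polynomial in $v$ because its leading behavior is controlled by the dimension vectors of the preprojective summands, which are nonzero. Taking the product of these finitely many nonzero polynomials over all stages and all summands gives a nonzero polynomial whose nonvanishing guarantees the conclusion.

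\textbf{The main obstacle} will be the inductive step controlling the obstruction at stage $n$: one must show that when ${}^{v}\!\varrho_M^{n-1}$ is already a minimal left $\rad^{n-1}$-approximation, the extra factor needed to pass to stage $n$ does not vanish identically in $v$. The difficulty is that the indecomposable decomposition of ${}^{v}\!\YYM_{n-1}\lotimes_{\Pa} M$ itself varies with $v$ (even its isomorphism type can jump on proper closed subsets), so one cannot naively fix a splitting and track polynomial dependence globally. I expect the resolution is to work over the generic point — i.e., base-change to the function field $\kk(v)$ where $v_i$ are indeterminates — where the decomposition is ``as generic as possible'', establish the nonvanishing there using the characteristic-zero hypothesis and the dimension formula (Corollary~\ref{202111191828} in the Dynkin case, and the $3$-Calabi-Yau/infinite-dimensional structure in the non-Dynkin case to control which $n$ occur, i.e.\ $n \in N_Q$), and then spread out to a Zariski-open subset of $\kk^{Q_0}$. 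Handling the non-Dynkin wild case requires the extra care already flagged in Theorem~\ref{202111201641} (excluding shifts of regular modules is not needed here since the statement of Theorem~\ref{202206221845} has no such exclusion — one should check that the generic-$v$ statement holds even for regular $M$, presumably because for fixed $M$ only finitely many $n$ contribute nontrivially, or because one can argue directly that ${}^{v}\!\YYM_n \lotimes_{\Pa} M = 0$ for large $n$ in the relevant range, sidestepping the issue).
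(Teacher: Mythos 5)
Your high-level skeleton (show the good locus is Zariski open, then exhibit a single witness weight) does match the paper's strategy, but the heart of the argument is missing, and the part you propose in its place is precisely what the authors say they could not do. The paper's witness does not come from an induction in which ``the new obstruction polynomial at stage $n$ is a further linear form in $v$'' analogous to the quantity $x$ of Theorem \ref{202111181854}: the authors state explicitly that they have no such explicit equations for $n\ge 3$. Instead, the sufficient criterion (Theorem \ref{202104091259}, built on Proposition \ref{2021040913541}) requires the weight to have property (II)$_{M,\lambda}$, i.e.\ the ratio ${}^{v}\!\Euch(\PPi_1\lotimes_{\Pa} N)/{}^{v}\!\Euch(N)$ must equal one fixed scalar $\lambda\neq -1$ for \emph{every} indecomposable $N$ in the AR-component of $M$, together with $V_m(\lambda)=1+\lambda+\cdots+\lambda^{m-1}\neq 0$ for $m\le n$. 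This uniformity is what makes the inductive construction of the splittings $\omega_{n-1,M}$, $\xi_{n,M}$ work: the composite $\xi_{n,M}\eeta^{*}_{n,M}$ is the single scalar $-V_n(\lambda)/V_{n-1}(\lambda)$ in $\ResEnd$, and without a component-wide constant $\lambda$ the radical-level bookkeeping across the varying summands of ${}^{v}\!\YYM_{n-1}\lotimes_{\Pa}M$ breaks down. Such weights are then produced concretely: regular (or semi-regular) eigenvectors of $\Psi=\Phi^{-t}$ with eigenvalue an $h$-th root of unity (Dynkin), $\lambda=1$ (extended Dynkin), or a non-root-of-unity (wild), via Dlab--Ringel and Takane, plus a separate construction (Proposition \ref{202105241610}, using linear independence of the dimension vectors of quasi-simple regular modules) for regular $M$ over extended Dynkin quivers, where no regular eigenvector exists. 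Characteristic zero enters only to guarantee $V_m(\lambda)\neq 0$ in the required range. None of this is supplied or replaced by a working substitute in your proposal, so the induction step you flag as ``the main obstacle'' is a genuine gap, not a technicality.

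Two further points. First, your claim that the failure locus is Zariski closed in all of $\kk Q_0$ is stronger than what is known: the paper only proves that the good locus is open \emph{inside} the property-(I) locus $\cI_{M,n}$ (Theorem \ref{generic theorem}), and even this requires a nontrivial deformation argument (Lemma \ref{202103141418}) exploiting that on $\cI_{M,n}$ the right $\rad^n$-approximation property pins down the isomorphism class of ${}^{v}\!\YYM_n\lotimes_{\Pa}M$ independently of $v$; your suggested fix via base change to $\kk(v)$ is plausible in spirit but is not carried out, and it is exactly where the jumping of decompositions must be controlled. Second, your closing remark that for non-Dynkin $Q$ one could sidestep regular $M$ because ``only finitely many $n$ contribute'' or ${}^{v}\!\YYM_n\lotimes_{\Pa}M=0$ for large $n$ is false: for non-Dynkin quivers $N_Q=\NN$ and these objects are nonzero for all $n$; the regular case is handled in the paper by the property-(II) constructions just described, not by vanishing.
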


In the case $Q$ is Dynkin, since $\Pa$ is of finite representation type, we immediately deduce the following corollary. 

\begin{corollary}[{Theorem \ref{202105241546}}]
Assume $\chara \kk = 0$. 
Let $Q$ be a Dynkin quiver with the Coxeter number $h$. 
Then for a generic weight $v \in \kk Q_{0}$
the following assertion holds: 

For $M \in \ind\Dbmod{\Pa}$ and $n = 1,2, \ldots, h -2$, 
the morphism 
\[
{}^{v}\!\varrho_{M}^{n}: M \to {}^{v}\! \YYM_{n} \lotimes_{\Pa} M
\]
is a minimal left $\rad^{n}$-approximation. 
\end{corollary}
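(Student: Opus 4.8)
The plan is to deduce the corollary from Theorem~\ref{202206221845} by a finiteness argument that exploits two special features of the Dynkin case: the path algebra $\Pa = \kk Q$ is of finite representation type, and the relevant index set $N_Q$ is the \emph{finite} set $\{0,1,\dots,h-2\}$, so that only finitely many instances of Theorem~\ref{202206221845} have to be combined. The point is that Theorem~\ref{202206221845} produces, for each fixed indecomposable $M$ and each fixed $n$, a generic condition on $v$; the corollary asks for a \emph{single} generic $v$ that works uniformly in $M$ and $n$, and the swap of quantifiers is legitimate precisely because, in the Dynkin case, there are only finitely many conditions to impose.

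First I would carry out two reductions. Since $Q$ is Dynkin, $\Pa$ is representation-finite, so there are only finitely many indecomposable $\Pa$-modules $M_{1},\dots,M_{s}$ up to isomorphism; and since $\Pa$ is hereditary, every object of $\ind\Dbmod{\Pa}$ is isomorphic to a shift $M_{k}[j]$ with $1\le k\le s$ and $j\in\ZZ$. Next I would note that for every $j$ there is a natural identification ${}^{v}\!\YYM_{n}\lotimes_{\Pa}(M_{k}[j])\cong({}^{v}\!\YYM_{n}\lotimes_{\Pa}M_{k})[j]$ under which ${}^{v}\!\varrho^{n}_{M_{k}[j]}$ becomes $({}^{v}\!\varrho^{n}_{M_{k}})[j]$. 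Because the shift $[j]$ is an autoequivalence of $\Dbmod{\Pa}$, it preserves the ideal $\rad^{n}$ as well as the properties of being a left $\rad^{n}$-approximation and of being left minimal; hence ${}^{v}\!\varrho^{n}_{M_{k}[j]}$ is a minimal left $\rad^{n}$-approximation if and only if ${}^{v}\!\varrho^{n}_{M_{k}}$ is. (The value $n=0$ is in any case trivial, as ${}^{v}\!\varrho^{0}_{M}=\id_{M}$.) Therefore the conclusion of the corollary holds for a given $v$ as soon as ${}^{v}\!\varrho^{n}_{M_{k}}$ is a minimal left $\rad^{n}$-approximation for all $k\in\{1,\dots,s\}$ and all $n\in\{1,\dots,h-2\}$, a finite list of requirements on $v$.

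Then I would assemble these finitely many generic conditions. By Theorem~\ref{202206221845}, for each of the finitely many pairs $(M_{k},n)$ with $1\le k\le s$ and $1\le n\le h-2$ there is a nonempty Zariski-open subset $U_{k,n}\subseteq\kk Q_{0}\cong\kk^{r}$ such that ${}^{v}\!\varrho^{n}_{M_{k}}$ is a minimal left $\rad^{n}$-approximation for every $v\in U_{k,n}$. Put $U:=\bigcap_{k,n}U_{k,n}$. As $\chara\kk=0$, the field $\kk$ is infinite, so a finite intersection of nonempty Zariski-open subsets of the affine space $\kk^{r}$ is again nonempty (equivalently: the product of the finitely many defining polynomials is a nonzero polynomial, hence does not vanish identically on $\kk^{r}$). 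Any $v\in U$ is then a generic weight for which, by the reductions above, ${}^{v}\!\varrho^{n}_{M}$ is a minimal left $\rad^{n}$-approximation for all $M\in\ind\Dbmod{\Pa}$ and all $n=1,\dots,h-2$, which is the assertion.

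I do not expect a genuine obstacle in this corollary: essentially everything is formal once Theorem~\ref{202206221845} is in hand, and the only points that need care are that ``generic'' in the cited theorem is a bona fide Zariski-open (equivalently, nonvanishing-of-a-polynomial) condition — so that a finite intersection stays nonempty over the infinite field $\kk$ — and the shift-invariance of $\rad^{n}$ and of minimal left approximations used in the reduction to the finitely many $M_{k}$. The real work, namely producing the generic condition for a single $(M,n)$, is already contained in Theorem~\ref{202206221845}; this corollary merely globalizes it over the finitely many indecomposables available in the Dynkin case.
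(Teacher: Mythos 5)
Your proposal is correct and is essentially the argument the paper intends: the paper deduces the corollary from Theorem~\ref{202105241547} in one line ("since $\Pa$ is of finite representation type"), and your write-up simply makes explicit the same finiteness argument — reduction to the finitely many indecomposables up to shift, shift-invariance of minimal left $\rad^{n}$-approximations, and the nonempty intersection of the finitely many nonempty Zariski-open loci guaranteed by Theorem~\ref{generic theorem} over the infinite field $\kk$.
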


In Section \ref{section: check the AN case} 
we study the $A_{N}$-quiver case and prove that  
for a generic weight Problem \ref{introduction:left approximation problem} has the answer yes provided that 
the base field $\kk$ has a primitive $(N+1)$-th root of unity.

The strategy to prove these results is the following. 
First we establish a criterion on the weight $v\in \kk Q_{0}$ for which Problem \ref{introduction:left approximation problem} has the answer yes. Next we show this criterion is a kind of open condition on $v\in \kk Q_{0}$.
Finally we check there is a weight $v$ that satisfies the criterion  in each case. 

Let $\Phi$ be the Coxeter matrix of $Q$ and $\Psi:= \Phi^{-t}$. 
Roughly speaking, the criterion is that a weight $v$ \emph{behaves like} an eigenvector of $\Psi$ with a good eigenvalue $\lambda$.  
We verify that this condition guarantees that Problem \ref{introduction:left approximation problem} has the answer yes, 
by showing that the derived QHA provides a special type of a left ladder.  
In the final  Section \ref{section: the universal ladder}, 
we prove that if a weight $v$ \emph{is} an eigenvector of $\Psi$ with a good eigenvalue $\lambda$, 
then ${}^{v}\!\YYM(Q)$ provides a diagram of complexes of $\Pa$-bimodules that can be called a \emph{universal left ladder}.

\subsection{Subsequent work} 

\subsubsection{The Hilbert series of preprojective algebras of Dynkin type}

The Hilbert series of preprojective algebras of Dynkin type 
was first computed in \cite{MOV} in characteristic zero, 
using module categories over representations of quantum $\mathrm{SL}(2)$. 
In \cite{Minamoto Hilbert}, the second author establishes a formula in arbitrary characteristic 
using the Hilbert series of differential graded algebras with Adams grading. 
In that proof, the Hilbert series of QHAs of Dynkin type is computed. Then the argument is completed using the exact sequence relating 
the preprojective algebra and the QHA, as given in Proposition \ref{202006221820} and Remark \ref{202603182140}.

\subsubsection{The algebra ${}^{v}\!\Qa(Q)$}\label{202209192000}

In subsequent work \cite{Herschend-Minamoto: tilting theory of QHA},  
we introduce   a finite dimensional algebra ${}^{v}\! \Qa(Q)$ 
for a quiver $Q$, 
which behaves like a $2$-dimensional version of the path algebra $\Pa = \kk Q$ of $Q$. 
The algebra ${}^{v}\!\Qa(Q)$ is to the QHA ${}^{v}\!\YM(Q)$ what $\kk Q$ is to the preprojective algebra $\Pi(Q)$.

The definition of ${}^{v}\!\Qa(Q)$ is  the following upper triangular matrix algebra: 
\[
{}^{v}\!B(Q) 
:= \begin{pmatrix} {}^{v}\!\YM(Q)_{0} & {}^{v}\!\YM(Q)_{1} \\ 0 & {}^{v}\!\YM(Q)_{0}  \end{pmatrix}
= \begin{pmatrix} \kk Q & {}^{v}\!\YM(Q)_{1} \\ 0 & \kk Q \end{pmatrix}. 
\]
We prove that ${}^{v}\!B(Q)$ is $2$-hereditary in the sense of \cite{HIO}. 
Moreover ${}^{v}\!B(Q)$ is $2$-representation finite (resp. infinite) if and only if $Q$ is Dynkin (resp. non-Dynkin). 
This result is an analogue of Gabriel's theorem that asserts that 
the path algebra $A = \kk Q$ is representation finite (resp. infinite) 
if and only if $Q$ is Dynkin (resp. non-Dynkin). 
Moreover we prove that in many other aspects,  
the algebra ${}^{v}\!B(Q)$ behaves like the path algebra $\Pa = \kk Q$. 

A relationship betweem the QHA ${}^{v}\!\YM(Q)$ and the algebra ${}^{v}\!\Qa(Q)$ is the following: 
the $3$-preprojective algebra $\Pi_{3}({}^{v}\!\Qa(Q))$ is 
isomorphic to $2$-nd quasi-Veronese algebra ${}^{v}\!\YM(Q)^{[2]}$ with respect to $*$-grading. 
Thus it can be said that QHA ${}^{v}\!\YM(Q)$ originates from ($1$-dimensional) AR-theory of $\Pa\mod$ and lives in $2$-dimensional AR-theory of ${}^{v}\!\Qa(Q)\mod$.

\subsubsection{Relationships to compound Du Val singularities}

Assume that $\chara \kk =0$. 
Let $Q$ be an extended Dynkin quiver. 
As we mentioned before, 
$\Pi(Q)$ is Morita equivalent to the skew group algebra $S *G$ where 
$G < \SL(2;\kk)$ is the corresponding finite group and $S=\kk[x, y]$. 
Moreover the algebra $e_{0} \Pi(Q) e_{0}$  is isomorphic to the fixed subalgebra $S^{G}$, which is the coordinate algebra of 
the Kleinian singularity (= Du Val singularity) of corresponding Dynkin type. 
Thus the deformation family $\Pi(Q)_{\bullet}$ by Crawley-Boevey-Holland gives non-commutative deformation of the Kleinian singularity. 

Recall that a compound Du Val (cDV) singularity is, 
by definition, a three dimensional singularity whose hyperplane section is Du Val singularity. 
Since the algebra $\Pi(Q)_{P}$ is obtained as the pull-back of $\Pi(Q)_{\bullet}$ to a one-dimensional space $\kk$ along a polynomial map $P: \kk \to \kk^{r}$, it is related to a cDV-singularity.

In their study of (commutative and non-commutative) algebraic geometry of cDV-singularity, Donovan-Wemyss introduced
the notion of \emph{contraction algebras} \cite{Donovan-Wemyss}. 
Now cDV-singularity and contraction algebras have been  studied extensively by many researchers 
and have grown as a fascinating subject. 
In subsequent work, we discuss a relationship between the QHA and cDV-singularity. 
We prove that QHA of Dynkin type are contraction algebras and as a consequence we see that 
the dimension formula by Etingof-Rains (Theorem \ref{202111191454}(1)) 
is an analogue of Toda's dimension formula of contraction algebras \cite{Toda}. 
Moreover, we see that one of our main result Corollary \ref{202111191825}  gives a categorification of these formulas.

\subsection{Example: $A_{3}$-quiver}\label{202103221959}

As a guide to understand the QHA for Dynkin quivers, consider the following example.
Let $Q$ be a directed $A_{3}$-quiver. 
\[
Q: \begin{xymatrix}{ 
1 \ar[rr]^{\alpha} && 2\ar[rr]^{\beta} && 3}\end{xymatrix}, 
\ \ 
\overline{Q}: 
\begin{xymatrix}{ 
1 \ar@<2pt>[rr]^{\alpha} && 2\ar@<2pt>[rr]^{\beta} \ar@<2pt>[ll]^{\alpha^{*}} && 3 \ar@<2pt>[ll]^{\beta^{*}} }\end{xymatrix}. 
\]
The mesh relations are 
\[
\rho_{1} = \alpha\alpha^{*}, \ \rho_{2} = -\alpha^{*} \alpha + \beta \beta^{*}, \ \rho_{3} = -\beta^{*}\beta.
\]

Let $v = (v_{1}, v_{2}, v_{3} )^{t} \in \kk^{\times} Q_{0}$ be a sincere weight. 
The quiver Heisenberg relations are 
\[
\begin{split}
&{}^{v}\!\eta_{\alpha} = [\alpha, {}^{v}\!\varrho]
 =v_{2}^{-1} \alpha \rho_{2} - v^{-1}_{1} \rho_{1} \alpha 
 = - (v_{2}^{-1} + v_{1}^{-1})\alpha\alpha^{*} \alpha 
 + v_{2}^{-1}\alpha\beta\beta^{*},\\ 
& {}^{v}\!\eta_{\beta} = -(v^{-1}_{2} + v_{3}^{-1}) \beta \beta^{*} \beta +  v_{2}^{-1}\alpha^{*} \alpha \beta, \\
&{}^{v}\!\eta_{\alpha^{*}} =  (v_{1}^{-1} + v_{2}^{-1}) \alpha^{*} \alpha \alpha^{*} - v_{2}^{-1} \beta\beta^{*} \alpha^{*}, \\
& {}^{v}\!\eta_{\beta^{*}} = (v_{2}^{-1} +v_{3}^{-1})  \beta^{*}\beta\beta^{*} - v_{2}^{-1} \beta^{*} \alpha^{*} \alpha. 
\end{split}
\]
The weight $v \in \kk Q_{0}$ is regular 
if and only if it satisfies the following
\[
v_{1} \neq 0, v_{2} \neq 0, v_{3} \neq 0, v_{1} +v_{2} \neq 0, v_{2} + v_{3} \neq 0, v_{1} +v_{2} + v_{3} \neq 0. 
\]

If $v \in \kk Q_{0}$ is regular, the composition series of indecomposable projective ${}^{v}\!\YM$-modules are given by,  
\[
{}^{v}\!\YM e_{1}= 
\begin{smallmatrix}
1 & & \\
  &2 & \\
  1 && 3 \\
  &2& \\
  1&&
\end{smallmatrix}, \ 
{}^{v}\!\YM e_{2}= 
\begin{smallmatrix}
 & 2 & \\
 1 &  & 3 \\
   &  \hspace{-5pt} \ \ 2^{\oplus 2} \hspace{-5pt} &  \\
 1 & & 3 \\
  & 2&
\end{smallmatrix}, \ 
{}^{v}\!\YM e_{3}= 
\begin{smallmatrix}
 & & 3 \\
  &2 & \\
  1 && 3 \\
  &2& \\
  && 3
\end{smallmatrix}.
\]
Theorem \ref{description of QHA as kQ-modules}(1) says that 
these composition series of  the indecomposable projective modules ${}^{v}\!\YM e_{1}, {}^{v}\!\YM e_{2}, {}^{v}\!\YM e_{3}$ can be read off from the Auslander-Reiten quiver of $\Pa \mod$. 

We explain this by taking  ${}^{v}\!\YM e_{2}$ as an example.  
The slant lines in the composition series express the layers induced by the $*$-grading. 
We can see these layers correspond to the columns  in the Auslander-Reiten quiver. 
\[
\begin{xymatrix}@R=10pt{\\ \\ 
{}^{v}\!\YM e_{2}= 
}\end{xymatrix}
{\small
\begin{xymatrix}@R=2pt@C=5pt{\\
&  2 & \ar@{-}[ddll] \\
1 & &  3 \\ 
& 2^{\oplus 2} & \ar@{-}[ddll] \\
1 & & 3 \\ 
 & 2& 
}\end{xymatrix}} 
\ \ \ \
\small{\begin{xymatrix}@R=15pt@C=5pt{
\ar@{~}[ddddd] \\  \\  \\  \\ \\ \\  
}\end{xymatrix}} \ \ \ \
\begin{xymatrix}@C=10pt@R=10pt{
& \ar@{-}[ddd] && \ar@{-}[ddd] & & \ar@{-}[ddd] &&
 \ar@{-}[ddd]
&  \\
&&&& {\begin{smallmatrix} 3 \\ 2 \\ 1 \end{smallmatrix}} \ar[drr] &&&& \\
&&  {\begin{smallmatrix} 2 \\ 1 \end{smallmatrix}} \ar[drr]\ar[urr] &&&& {\begin{smallmatrix}3 \\ 2 \end{smallmatrix}} \ar[drr] && 
\\
\times \ar[urr] &&&& 2 \ar[urr] &&&& \times 
}\end{xymatrix}
\]

The $*$-degree $1$ part 
$\YM_{1}e_{2}=\begin{smallmatrix}
 &  & \\
  &  &3  \\
   &  \hspace{-5pt} \ \ 2^{\oplus 2} \hspace{-5pt} &  \\
 1& & 
\end{smallmatrix}$ 
is isomorphic  as a $\kk Q$-module, 
to 
the direct sum $M$ of $ P_{3} = \begin{smallmatrix} 1\\2\\3 \end{smallmatrix}$ and $S_{2} = 2$, 
which constitute the middle column. 
Observe that $P_{3} \oplus S_{2}$ is the middle term of the Auslander-Reiten sequence starting from $P_{2}$. 
\[
0 \to P_{2} \xrightarrow{f}  P_{3} \oplus S_{2} \to \tau_{1}^{-1} P_{2} \to 0. 
\]
Theorem \ref{universal AR-sequence theorem}  says that the minimal left almost split morphism $f: P_{2} \to P_{3} \oplus S_{2}$ is given by 
the multiplication by the mesh relation ${}^{v}\!\varrho$. Namely, we have the following commutative diagram
\[
\begin{xymatrix}@C=60pt{
P_{2} \ar@{=}[d] \ar[r]^{f}  & P_{3} \oplus S_{2} \ar[d]^{\cong} \\ 
{}^{v}\!\YM_{0} e_{2} \ar[r]_{{}^{v}\!\varrho} & {}^{v}\!\YM_{1} e_{2}.
}\end{xymatrix} 
\] 

Next we look at the $*$-degree $2$ part. 
It follows from Theorem \ref{202111201606} that $\YM_{2} e_{2}$ provides a minimal left $\rad^{2}$-approximation of $P_{2}= {}^{v}\!\YM_{0} e_{2}$. 
On the other hand, 
a morphism  $g: P_{2} \to I_{2}$  which sends the top of $P_{2}$ isomorphically to the socle of $I_{2}$ is a minimal left $\rad^{2}$-approximation 
of $P_{2}$ (see Theorem \ref{kQ approximations theorem add}(2)). 
Therefore we conclude that ${}^{v}\!\YM_{2} e_{2}$ is isomorphic to $I_{2}$ as $\Pa$-modules.

However even if the weight $v$ is regular,  
the multiplication by the square ${}^{v}\!\varrho^{2}$  does not always give a minimal left $\rad^{2}$-approximation. 
Indeed, 
in this case we can determine the locus of $v$ where 
the morphism 
${}^{v}\!\varrho^{2}: P_{2} \to {}^{v}\!\YM_{2} e_{2}$ is a minimal left $\rad^{2}$-approximation. 

\begin{proposition}\label{202111201702}
Assume that the weight $v\in \kk Q_{0}$ is regular.
Then,  
the morphism 
${}^{v}\!\varrho^{2}: {}^{v}\!\YM_{0} e_{2}  \to {}^{v}\!\YM_{2} e_{2}$ is a minimal left $\rad^{2}$-approximation 
if and only if 
$v_{1} + 2v_{2} + v_{3} \neq 0$. 
\[
\begin{xymatrix}@C=60pt{
P_{2} \ar@{=}[d] \ar[r]^{g}  & I_{2} \ar[d]^{\cong} \\ 
{}^{v}\!\YM_{0} e_{2} \ar[r]_{{}^{v}\!\varrho^{2}} & {}^{v}\!\YM_{2} e_{2}.
}\end{xymatrix} 
\] 
\end{proposition}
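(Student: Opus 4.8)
The plan is to reduce the statement to the non-vanishing of a single element of the finite dimensional algebra ${}^{v}\!\YM(Q)$ and then to settle that by a short rewriting computation with the four quiver Heisenberg relations of the $A_{3}$ quiver.

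First I would collect the facts already recorded just before the proposition. Since the Coxeter number of the $A_{3}$-quiver is $4$, we have $2 \in N_{Q}$, so Theorem~\ref{202111201606}(2) gives that ${}^{v}\!\YM_{2}e_{2}$ provides a minimal left $\rad^{2}$-approximation of ${}^{v}\!\YM_{0}e_{2} = P_{2}$, and consequently ${}^{v}\!\YM_{2}e_{2} \cong I_{2}$ by uniqueness of minimal left approximations together with Theorem~\ref{kQ approximations theorem add}(2). Next I would note that $\dim_{\kk}\Hom_{\Pa}(P_{2}, {}^{v}\!\YM_{2}e_{2}) = \dim_{\kk}\Hom_{\Pa}(P_{2}, I_{2}) = 1$ (read off the Auslander--Reiten quiver of $A_{3}$ drawn above, or use Theorem~\ref{202111201641}). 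Hence this one-dimensional $\Hom$ space contains a minimal left $\rad^{2}$-approximation; any nonzero scalar multiple of a minimal left $\rad^{2}$-approximation with the same target is again one, while the zero morphism is not. Therefore ${}^{v}\!\varrho^{2}\colon P_{2} \to {}^{v}\!\YM_{2}e_{2}$ is a minimal left $\rad^{2}$-approximation if and only if it is nonzero. Finally, since $P_{2} = \Pa e_{2}$ is generated by $e_{2}$ and ${}^{v}\!\varrho^{2}$ carries $e_{2}$ to $e_{2}\,{}^{v}\!\varrho^{2}\,e_{2} = {}^{v}\!\varrho_{2}^{2} = v_{2}^{-2}\rho_{2}^{2}$, this is equivalent to $\rho_{2}^{2} \neq 0$ in ${}^{v}\!\YM(Q)$.

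It then remains to determine when $\rho_{2}^{2} = (\beta\beta^{*} - \alpha^{*}\alpha)^{2}$ vanishes. The double quiver $\overline{Q}$ of $A_{3}$ is bipartite, and a direct check shows that every closed path at the vertex $2$ has $*$-degree equal to half its length; hence $e_{2}\,{}^{v}\!\YM_{2}e_{2}$ is spanned by the images of the four closed paths of length $4$ at $2$, namely $\alpha^{*}\alpha\alpha^{*}\alpha$, $\alpha^{*}\alpha\beta\beta^{*}$, $\beta\beta^{*}\alpha^{*}\alpha$ and $\beta\beta^{*}\beta\beta^{*}$. Using ${}^{v}\!\eta_{\alpha^{*}} = {}^{v}\!\eta_{\beta} = {}^{v}\!\eta_{\beta^{*}} = 0$ to rewrite $\alpha^{*}\alpha\alpha^{*}$, $\alpha^{*}\alpha\beta$ and $\beta^{*}\alpha^{*}\alpha$ respectively, each of the first three paths becomes a scalar multiple of $\beta\beta^{*}\beta\beta^{*}$ whose coefficient is a product of powers of the $v_{i}$ and of the quantities $v_{1}+v_{2}$ and $v_{2}+v_{3}$, all of which are nonzero since $v$ is regular. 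Collecting the four terms yields an identity $\rho_{2}^{2} = c\,\beta\beta^{*}\beta\beta^{*}$ in ${}^{v}\!\YM(Q)$ in which the scalar $c$ is, up to a nonzero factor built from $v_{1},v_{2},v_{3},v_{1}+v_{2},v_{2}+v_{3}$, equal to $v_{1} + 2v_{2} + v_{3}$. If $v_{1} + 2v_{2} + v_{3} = 0$ then $c = 0$ and $\rho_{2}^{2} = 0$. Conversely, if $v_{1} + 2v_{2} + v_{3} \neq 0$ but $\rho_{2}^{2} = 0$, then $\beta\beta^{*}\beta\beta^{*} = 0$; as the remaining three length-$4$ closed paths at $2$ are nonzero scalar multiples of it, all four vanish, so $e_{2}\,{}^{v}\!\YM_{2}e_{2} = 0$, contradicting ${}^{v}\!\YM_{2}e_{2} \cong I_{2}$ (whose socle is $S_{2}$). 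Thus $\rho_{2}^{2} \neq 0$ if and only if $v_{1} + 2v_{2} + v_{3} \neq 0$, which combined with the reduction in the previous paragraph proves the proposition.

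The only genuinely computational step is the rewriting of the four length-$4$ paths modulo the relations and the bookkeeping of the scalars, which is elementary for $A_{3}$. The point that requires care is the passage from ``minimal left $\rad^{2}$-approximation'' to ``nonzero morphism'' in the first paragraph, which rests on knowing that $\Hom_{\Pa}(P_{2}, {}^{v}\!\YM_{2}e_{2})$ is one-dimensional and on the existence part of Theorem~\ref{202111201606}(2); the same pattern, with the target of the candidate approximation identified via Theorem~\ref{202111201606}(2), is the template behind the finer $\rad^{n}$-approximation questions discussed elsewhere in the paper.
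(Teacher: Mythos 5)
Your proof is correct, and its computational core coincides with the paper's: rewriting the four length-$4$ closed paths at vertex $2$ modulo the quiver Heisenberg relations to obtain $\rho_{2}^{2}$ as a scalar multiple of $\beta\beta^{*}\beta\beta^{*}$, the scalar being a nonzero multiple of $v_{1}+2v_{2}+v_{3}$, is exactly the calculation the paper carries out for the ``only if'' direction (where it concludes that ${}^{v}\!\varrho^{2}$ is the zero map when $v_{1}+2v_{2}+v_{3}=0$). Where you genuinely differ is the ``if'' direction: the paper disposes of it by citing the general sufficiency criterion of Theorem \ref{202103021651} (with $M=P_{2}$, for which ${}^{v}\!\Euch({}^{v}\!\YYM_{1}\lotimes_{\Pa}P_{2})=v_{1}+2v_{2}+v_{3}$), whereas you avoid that theorem and reduce both implications to the single question of whether $\rho_{2}^{2}\neq 0$, using that ${}^{v}\!\YM_{2}e_{2}\cong I_{2}$ (Theorem \ref{202111201606}(2) together with Theorem \ref{kQ approximations theorem add}(2), as in the discussion preceding the proposition), that $\Hom_{\Pa}(P_{2},I_{2})$ is one-dimensional, and that a nonzero scalar multiple of a minimal left $\rad^{2}$-approximation is again one, while the zero map is not. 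Your route then requires the extra converse step that $\beta\beta^{*}\beta\beta^{*}\neq 0$ whenever $v_{1}+2v_{2}+v_{3}\neq 0$, which you supply correctly: every $*$-degree $2$ cycle at vertex $2$ has length $4$, the other three such paths are multiples of $\beta\beta^{*}\beta\beta^{*}$ with coefficients nonzero by regularity, and $e_{2}\,{}^{v}\!\YM_{2}e_{2}\cong e_{2}I_{2}\neq 0$. The trade-off is clear: the paper's argument is shorter and exhibits the proposition as an instance of the general $\rad^{2}$-criterion coming from the machinery of Section \ref{section: the middle term of the middle terms}, while yours is more self-contained at the level of this example (still relying on the approximation theorem only to identify ${}^{v}\!\YM_{2}e_{2}$ with $I_{2}$) and makes the if-and-only-if statement fall out of one explicit scalar identity.
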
 

\begin{proof}
The ``if'' part follows Theorem \ref{202103021651}. 
Conversely, 
by direct calculation below, we  show that if $v_{1} + 2v_{2} + v_{3} = 0$, then $({}^{v}\!\varrho_{2})^{2}=0$  
(see also  Example \ref{202103051509}). 
Since ${}^{v}\!\varrho$ is central in ${}^{v}\!\YM$, it follows that the multiplication map ${}^{v}\!\varrho^{2} : {}^{v}\!\YM_{0} e_{0} \to  {}^{v}\!\YM_{2} e_{2}$ 
is zero. 

The quiver Heisenberg relations turn to 
\[
\begin{split}
&v_{1} \alpha\beta\beta^{*} = (v_{1} +v_{2}) \alpha \alpha^{*} \alpha, \ v_{3} \alpha^{*} \alpha\beta =(v_{2} + v_{3}) \beta\beta^{*} \beta, \\ 
&v_{1} \beta\beta^{*} \alpha^{*} = (v_{1} +v_{2}) \alpha^{*}\alpha \alpha^{*},\  v_{3} \beta^{*} \alpha^{*} \alpha = (v_{2} +v_{3}) \beta^{*} \beta \beta^{*}. 
\end{split}
\]
Using these equations, we have 
\[
\begin{split}
&\beta\beta^{*}\alpha^{*} \alpha = \frac{v_{2} + v_{3} }{v_{3}} \beta\beta^{*} \beta \beta^{*}, 
\ \alpha^{*} \alpha\beta\beta^{*} = \frac{v_{2} + v_{3} }{v_{3}} \beta\beta^{*} \beta\beta^{*}, \\
&\alpha^{*} \alpha\alpha^{*} \alpha = \frac{v_{1}}{v_{1} +v_{2}} \alpha^{*} \alpha\beta\beta^{*} = \frac{v_{1}( v_{2} + v_{3}) }{v_{3}(v_{1} +v_{2})} \beta\beta^{*} \beta\beta^{*}. 
\end{split}
\]
Consequently, 
\[
\begin{split} 
{}^{v}\!\varrho_{2}^{2} & = v_{2}^{-2} (\alpha^{*} \alpha\alpha^{*} \alpha -\alpha^{*} \alpha\beta\beta^{*} -\beta\beta^{*}\alpha^{*} \alpha + \beta\beta^{*} \beta \beta^{*})\\ 
&= v_{2}^{-2} \left( \frac{v_{1}( v_{2} + v_{3}) }{v_{3}(v_{1} +v_{2})} - 2\frac{v_{2} + v_{3} }{v_{3}}  +1 \right) \beta\beta^{*} \beta \beta^{*}\\
& = -\frac{v_{1} + 2v_{2} + v_{3}}{v_{2}v_{3}(v_{1} + v_{2})}\beta\beta^{*} \beta \beta^{*} = 0.
\end{split}
\]
\end{proof}

\subsection{$2$-Kronecker quiver }

As a guide to understand the QHA for non-Dynkin quivers, more specifically tame quivers, consider the following example.
Let $Q: 1 \rightrightarrows 2$ be the $2$-Kronecker quiver with arrows $\alpha, \beta$. 
\[
\begin{xymatrix}{
Q: 1 \ar@<3pt>[r]^{\alpha} \ar@<-3pt>[r]_{\beta} & 2
}\end{xymatrix}, \ \ \ \ \ \ \ \
\overline{Q} : 
\begin{xymatrix}@C=40pt{
1 \ar@/^10pt/@<8pt>[r]^{\alpha} \ar@/^10pt/@<2pt>[r]_{\beta} & 
2 \ar@/^10pt/@<8pt>[l]^{\alpha^{*}} \ar@/^10pt/@<2pt>[l]_{\beta^{*}}.
}\end{xymatrix}
\]

Then the mesh relations are 
\[
\rho_{1} = \alpha \alpha^{*} + \beta \beta^{*}, \ \rho_{2} =- \alpha^{*} \alpha - \beta^{*} \beta. 
\]
The weight  $v=(v_{1}, v_{2})^{t} \in \kk Q_{0}$ is regular 
if and only if
\[
n v_{1} +(n -1)v_{2} \neq 0 \ (\forall n \in \ZZ), \ \ v_{1} +v_{2} \neq 0. 
\]
The quiver Heisenberg relations are 
\[
\begin{split}
{}^{v}\!\eta_{\alpha} & = v_{2}^{-1}\alpha \rho_{2} -  v_{1}^{-1}\rho_{1} \alpha 
= - (v^{-1}_{2}\alpha \beta^{*} \beta + (v_{1}^{-1} +v_{2}^{-1}) \alpha \alpha^{*} \alpha + v_{1}^{-1} \beta \beta^{*} \alpha),\\
{}^{v}\!\eta_{\beta} & =  v_{2}^{-1} \beta \rho_{2} - v_{1}^{-1} \rho_{1} \beta 
= -(v_{2}^{-1}  \beta \alpha^{*} \alpha + (v_{1}^{-1} + v_{2}^{-1} )\beta\beta^{*}\beta +v_{1}^{-1}  \alpha \alpha^{*} \beta),\\
{}^{v}\!\eta_{\alpha^{*}} & =v_{1}^{-1}  \alpha^{*} \rho_{1} - v_{2}^{-1}  \rho_{2} \alpha^{*}
 = v_{1}^{-1}  \alpha^{*} \beta \beta^{*} + (v_{1}^{-1}  +v_{2}^{-1} )\alpha^{*} \alpha \alpha^{*} + v_{2}^{-1}  \beta^{*} \beta \alpha^{*},\\
{}^{v}\!\eta_{\beta^{*}} & = v_{1}^{-1} \beta^{*} \rho_{1} -v_{2}^{-1}  \rho_{2} \beta^{*} 
=  v_{1}^{-1} \beta^{*} \alpha  \alpha^{*} + (v_{1}^{-1} + v_{2}^{-1}) \beta^{*}\beta \beta^{*} + v_{2}^{-1}  \alpha^{*} \alpha \beta^{*}. \\
\end{split}
\]

Assume that the weight $v$ is regular. Then, by Theorem \ref{description of QHA as kQ-modules}, for $m \geq 0$ we have isomorphisms of $\Pa$-modules 
\begin{equation}\label{202111302005}
\begin{split}
\YM_{2m} e_{1 } & \cong  (\tau^{-m }_{1} P_{1})^{ \oplus 2m +1}, \ \  
\YM_{2m +1 } e_{1 }  \cong  (\tau^{-m }_{1} P_{2})^{ \oplus 2m +2}, \\
\YM_{2m} e_{2} &\cong  (\tau^{-m }_{1} P_{2})^{ \oplus 2m +1}, \ \ 
\YM_{2m +1 } e_{2 } \cong  (\tau^{-m-1 }_{1} P_{1})^{ \oplus 2m +2}.
\end{split}
\end{equation}

\subsubsection{Regular modules}

Assume for simplicity $\kk$ is algebraically closed. 
Then the regular components $\{ \cT_{\lambda} \}_{ \lambda \in \PP^{1}}$ are classified by points of $\PP^{1} = \kk \sqcup \{ \infty\}$. 
For $\lambda \in \kk \subset \PP^{1}$, the indecomposable modules of $\cT_{\lambda}$ are of the form 
$
R^{(m)}_{\lambda}: k[x]/(x^{m+1}) \rightrightarrows k[x]/(x^{m+1})
$ where the actions of $\alpha$ and $\beta$ are the multiplications by $x+\lambda$ and $1$ for some $m \geq 0$. 
In  the case $\lambda = \infty$, 
then the indecomposable modules of $\cT_{\lambda}$ are of the form 
$
R^{(m)}_{\lambda}: k[x]/(x^{m+1}) \rightrightarrows k[x]/(x^{m+1})
$ where the actions of $\alpha$ and $\beta$ are the multiplications by $1$ and $x$ for some $m \geq 0$. 

Applying Theorem \ref{202111201606}  to  $M = R^{(m)}_{\lambda}$ where  $n \geq 0$ and $\lambda \in \PP^{1}$, 
we obtain the following isomorphism of $\Pa$-modules for $n \geq 0$ 
\[
\YM_{n} \otimes_{\Pa} R^{(m)}_{\lambda} \cong
\begin{cases}
 \bigoplus_{k =  0}^{n} R^{(m -n + 2k)}_{\lambda} & ( n \leq m), \\
\bigoplus_{k =0}^{m} R^{(n -m+ 2k)}_{\lambda} & (n > m). 
\end{cases} 
\]

\subsubsection{The case $v= (1,1)^{t}$}

Recall that
for a $\ZZ$-graded algebra $R =\bigoplus_{n \in \ZZ} R_{n}$, its second quasi-Veronese algebra $R^{[2]}$ is defined to be 
\[
R^{[2]} := \bigoplus_{ n \in \ZZ} \begin{pmatrix} R_{ 2n} & R_{2n +1} \\ R_{2n -1} & R_{2n} \end{pmatrix}
\]
with the matrix multiplication. 

Let $T:= \kk \agl{x, y}$ be the non-commutative polynomial algebra in $x,y$  with the grading $\deg x : = 1, \deg y:= 1$. 
Then we have an isomorphism $T^{[2]} \to \kk \overline{Q}$ of graded algebras given by 
\[
\left( \begin{smallmatrix} 1 _{T} & 0 \\ 0 & 0 \end{smallmatrix} \right)\mapsto e_{1}, 
\left( \begin{smallmatrix} 0 & 0 \\ 0 & 1 _{T}  \end{smallmatrix} \right) \mapsto e_{2}, 
\left( \begin{smallmatrix} 0 & x \\ 0 & 0 \end{smallmatrix} \right) \mapsto \alpha, 
\left( \begin{smallmatrix} 0 & y \\ 0 & 0 \end{smallmatrix} \right) \mapsto \beta, 
\left( \begin{smallmatrix}0 & 0 \\ x & 0 \end{smallmatrix} \right) \mapsto -\beta^{*}, 
\left( \begin{smallmatrix}0 & 0 \\ y & 0 \end{smallmatrix} \right) \mapsto \alpha^{*}.   
\] 
Let $S := \kk [x,y] = T/([x,y])$ be the commutative polynomial algebra in $x, y$.  
Then the above isomorphism descents to isomorphisms $S^{[2]} \to \Pi(Q)$ of graded algebras.
Using this isomorphism,  we obtain the following description of the preprojective modules. 
\[
\tau^{-n}_{1} P_{1} \cong \Pi(Q)_{n} e_{1} \cong \begin{pmatrix} S_{ 2n} \\ S_{ 2n -1} \end{pmatrix}, 
\tau^{-n}_{1} P_{2} \cong \Pi(Q)_{n} e_{2} \cong \begin{pmatrix} S_{ 2n +1} \\ S_{ 2n} \end{pmatrix}.
\]

Assume that $v= (1,1)^{t}$. 
 Let  $H := T/([x,[x,y]], [y,[x,y]])$ be the Heisenberg algebra in $x, y$. 
Then the above isomorphism descents to isomorphisms  $H^{[2]} \to {}^{v}\!\YM(Q)$ 
of graded algebras. 
Using this isomorphism,  we obtain the following isomorphisms of $\Pa$-modules. 
\[
\YM_{n} e_{1} \cong \begin{pmatrix} H_{ 2n} \\ H_{ 2n -1} \end{pmatrix}, 
\YM_{n} e_{2} \cong \begin{pmatrix} H_{ 2n +1} \\ H_{ 2n} \end{pmatrix}.
\]

Using \eqref{202111302005} we obtain the following isomorphism of $\Pa$-modules.  
\[
\begin{split}
\begin{pmatrix} H_{ 2n} \\ H_{ 2n -1} \end{pmatrix} \cong \YM_{n} e_{1}   \cong \begin{pmatrix} S_{n} \\ S_{n -1} \end{pmatrix}^{\oplus n +1},
\begin{pmatrix} H_{ 2n +1 } \\ H_{ 2n} \end{pmatrix} \cong \YM_{n} e_{2}  \cong \begin{pmatrix} S_{ n +1 } \\ S_{n } \end{pmatrix}^{\oplus n +1}.
\end{split}
\]

\subsection{$\hat{A}_{N}$-quiver}

Let $N \geq 2$ and  $Q$ be an $\hat{A}_{N}$-quiver of the following orientation: 
\[
\begin{xymatrix}{ 
&0 \ar[dl] \ar[drrrrr] \\
1 \ar[r] & 2 \ar[r] & &  \cdots & \ar[r] & N -1 \ar[r] & N. 
}\end{xymatrix}\]
Using this labeling, 
we identify the set $Q_{0}$ of vertices with a subset of $\ZZ$. 

Using 
Theorem \ref{202111201606}(2)  we can obtain an indecomposable decomposition of ${}^{v}\!\YM(Q)_{n} e_{i}$ for $n \geq 0$ and $i\in Q_{0}$ 
in the following way. 
Applying the knitting algorithm to $\ZZ Q$ (or the universal covering of it) and Theorem \ref{202008172145}, 
we can compute an indecomposable decomposition of $\rad^{n}$-approximation object of $\Pa e_{i}$ 
which is isomorphic to ${}^{v}\!\YM(Q)_{n} e_{i}$. 
To write down the results, 
 we define an $A$-module $M_{(a,b)}$ for $(a,b) \in \NN^{2}$, to be 
\[
M_{(a,b)} := \tau_{1}^{-q-b}P_{r}
\]
where $q\in \ZZ, r\in \{ 0,1, \ldots, N\}$ are determined from the equation 
\[
a-2b= q(N+1)+r.
\] 
Then, the isomorphism of $\Pa$-modules given in Theorem \ref{description of QHA as kQ-modules} is the following 
\[
\YM(Q)_{n}e_{i} \cong \bigoplus_{b= 0}^{n} M_{(i +n, b)}. 
\]

\subsection{Question: a representation theoretic understanding of the quiver Heisenberg relation }

To end the introduction we propose one question. 

Until  now we have seen that the quiver Heisenberg algebra ${}^{v}\!\YM(Q)$ can be looked as a three dimensional version 
of the preprojetive algebra $\Pi(Q)$.  

The reason why the defining relation of the preprojective algebra $\Pi(Q)$ is 
the mesh relation $\rho$  may be understood from the fact that it comes from Auslander-Reiten quiver of the preprojective component $\cP(Q)$. 
\[
\begin{xymatrix}@R=1.5mm@C=5mm{
&&&&& &\ar@{-}[ddddd] &\\
\textup{\fbox{$\mod \kk Q$}}&& N_{1}\ar[ddrr]^{\alpha_{1}^{*}} && & &&  \textup{ \fbox{ \ \ $\Pi(Q)$ \ \ } } \\
&& N_{2}\ar[drr]_{\alpha_{2}^{*}} &&&&&\\
M  \ar[uurr]^{\alpha_{1} } \ar[urr]_{\alpha_{2}} \ar[drr]^{\alpha_{r -1}} 
\ar[ddrr]_{\alpha_{r}} & & \vdots && \tau^{-1}_{1} M &  \ar@{==>}[rr]&& \rho = \sum_{\alpha \in Q_{1}} \alpha \alpha^{*} - \alpha^{*} \alpha = 0 \\ 
&& N_{r-1}\ar[urr]^{\alpha_{r-1}^{*}} &&
&&&& &&&\\
&& N_{r}\ar[uurr]_{\alpha_{r}^{*}}  && \textup{(AR-seq.)}  &&& \textup{(the mesh rel.)}\\
}\end{xymatrix}
\]
It is reasonable to ask if there is a categorical understanding of the quiver Heisenberg relations ${}^{v}\!\eta_{\alpha}, \ {}^{v}\!\eta_{\alpha^{*}}$?
\[
\begin{xymatrix}@R=1.5mm@C=5mm{
&&& && \ar@{-}[ddd]& \\
\textup{\fbox{$\mod \kk Q$}}&& & & & &  \textup{ \fbox{ \ \ ${}^{v}\!\YM(Q)$ \ \ } } \\
&& ??? & \ar@{==>}[rrr] &&&  \ \ \ \ \ \ {}^{v}\!\eta_{\alpha}=0, \ {}^{v}\!\eta_{\alpha^{*}} = 0 \\
&&&& && \textup{(the quiver Heisenberg  rel.)}\\
}\end{xymatrix}
\]

\subsection{Organization of the paper}\label{subsection: organization}

Dependence of sections (except appendixes)  is given in the diagram below:
\[
\begin{xymatrix}@C=30pt@R=1pt{
\fbox{ \S 2,3 } \ar@{=>}[dr]  && \fbox{ \S 4 $\sim$ 7 } \ar@{=>}[dl] \\
&\fbox{ \S 8 } \ar@{=>}[dl]\ar@{=>}[dr] & \\
\fbox{ \S 9 } && \fbox{ \S 10  $\sim $ 15 } 
}\end{xymatrix} 
\]

Section \ref{section: rad^{n} approximation} collects basics of approximations by the functors $\rad^{n}$.   
Section \ref{section: kQ approximations} studies $\rad^{n}$-approximation theory of the path algebra $\Pa$ of a quiver $Q$. 

Section \ref{section: dg-modules} prepares notations of dg-algebras and dg-modules, 
recalls the octahedral axiom and provides a technical lemma. 
Section \ref{section: the universal Auslander-Reiten triangle} establishes universal Auslander-Reiten triangles for $\Pa$. 
A key point in the proof is the trace formula (Theorem \ref{trace formula}) which says that the pairing of Serre duality computes the weighted trace of an endomorphism of an object $M$ of the derived category.  
Section \ref{section: the derived quiver Heisenberg algebras} introduces derived quiver Heisenberg algebras ${}^{v}\!\YYM(Q)$ and verify their basic properties. 
It turns out that the morphisms ${}^{v}\!\ttheta, {}^{v}\!\rrho$ and ${}^{v}\!\ppi$ which appear in the universal AR triangles are obtained from morphisms involving derived preprojective algebras and derived  quiver Heisenberg algebras. 
Section \ref{202207111353} deals with QHA of non-Dynkin type for arbitrary weights $v \in \kk Q_{0}$. 

Section \ref{section: right rad-n approximation} 
proves relationships between the derived quiver Heisenberg algebras and $\rad^{n}$-approximations in $\Dbmod{\Pa}$. 
Theorem \ref{pre left approximation theorem} proves that if the weight $v\in \kk Q_{0}$ is regular, then ${}^{v}\!\YYM(Q)_{n} \lotimes_{\Pa}M$ 
provides a minimal left $\rad^{n}$-approximation of $M \in \Dbmod{\Pa}$. 
As a consequence we obtain a description of ${}^{v}\!\YYM(Q) \lotimes_{\Pa} M$ for $M\in \Dbmod{\Pa}$ as an object of $\Dbmod{\Pa}$. 

Section \ref{section: QHA Dynkin case} studies the (derived)  quiver Heisenberg algebra of Dynkin type 
and the cohomology algebra of the derived quiver Heisenberg algebras. 

The main result of 
Section \ref{section: the middle term of the middle terms}  is  Theorem \ref{2020071920551} (Theorem \ref{202111181854}). 
As a corollary, we obtain a condition on $v$ and $M$ for which the multiplication  by  the square  ${}^{v}\!\varrho^{2}$ of the weighted mesh relation 
gives a minimal left $\rad^{2}$-approximation of $M$. 
 
In Section \ref{section: minimal left rad^n-approximations} to \ref{section: check the AN case} we investigate  
when multiplication by the $n$-th power  ${}^{v}\!\varrho_{M}^{n}$ of the mesh relation gives a minimal left $\rad^{n}$-approximation of an object $M \in \Dbmod{\Pa}$.  
In Section \ref{section: minimal left rad^n-approximations} we establish a sufficient condition on the weight $v\in \kk^{\times} Q_{0}$ for this to be true. In Section \ref{section: deformation} we prove that the locus of $v\in \kk^{\times } Q_{0}$ such that the multiplication by ${}^{v}\!\varrho_{M}^n$ is a minimal left $\rad^{n}$-approximation contains a (possibly empty) Zariski open set. In Section \ref{section: check the case char k is 0} we check the condition of Section \ref{section: minimal left rad^n-approximations} in the case $\chara \kk =0$. 
Consequently, in this case we conclude  that for a generic weight $v \in \kk Q_{0}$, 
the multiplication by ${}^{v}\!\varrho_{M}^{n}$ is a minimal left $\rad^{n}$-approximation. 
In Section \ref{section: check the AN case} we prove that  the condition of Section \ref{section: minimal left rad^n-approximations} is satisfied if $Q=A_{N}$, provided that 
 the  base field has a primitive $(N+1)$-th root of unity. 
 
 Section \ref{section: the universal ladder} 
 studies the bimodule structure of the derived quiver Heisenberg algebras and shows that if the weight $v \in \kk Q_{0}$ is 
 an eigenvector of the transpose $\Psi:= \Phi^{t}$  of the Coxeter matrix with some extra conditions, then the derived quiver Heisenberg algebras 
 provides a diagram of bimodule complexes that can be called universal left ladder. 
 
In Section \ref{section: homotopy Cartesian square} we recall the notion of homotopy Cartesian squares.  
In Section \ref{section: Happel's criterion} we fix notation about Serre functors and reviews Happel's criterion for   a co-connecting morphism of an  Auslander-Reiten triangle. 
In Section \ref{section: natural isomorphisms} we establish several natural isomorphisms of complexes.

\subsection{Notations and Conventions}

Throughout this paper,  the symbol $\kk$ denotes a field.

 ``(dg-)algebra" means (dg-)$\kk$-algebra. 
The symbol $\tuD$ denotes the $\kk$-dual functor $\tuD := \Hom_{\kk}(-,\kk)$. 

\subsubsection{}
Let $R$ be an algebra. 
Unless otherwise stated, the word  ``$R$-modules" means  left $R$-modules.  
We denote the opposite algebra  by $R^{\op}$. 
We identify right $R$-modules with (left) $R^{\op}$-modules.

For an $R$-module $M$, we set $\ResEnd_{R}(M) := \End_{R}(M)/\rad \End_{R}(M)$.

An  $R$-$R$-bimodule $D$ is always assumed to be $\kk$-central, 
i.e., $ad = da $ for $d \in D, \ a \in \kk$. 
Therefore we may identify  $R$-$R$-bimodules  
with  modules over the enveloping algebra $R^{\mre}:= R \otimes_{\kk}  R^{\op}$. 

Using  the isomorphism $\sfc: R^{\mre} \xrightarrow{\cong }  (R^{\mre})^{\op}, \ \sfc(a\otimes b) = b\otimes a$,  
we identify right $R^{\mre}$-modules with left $R^{\mre}$-modules.
In other words, we identify the category $R^{\mre}\Mod$ of left $R^{\mre}$-modules with
the category $\Mod R^{\mre}$ of right $R^{\mre}$-modules via the restriction functor $\sfc_{*}$ along $\sfc$
\[
\sfc_{*}: \Mod R^{\mre} \cong (R^{\mre})^{\op}\Mod  \xrightarrow{\cong}  R^{\mre}\Mod. 
\]

We denote $R^{\mre}$-duality by $(-)^{\vee} := \sfc_{*} \Hom_{R^{\mre}}(-, R^{\mre})$. 
Since we are identifying $\Mod R^{\mre}$ with $R^{\mre} \Mod$, 
 we may  denote $(-)^{\vee} = \Hom_{R^{\mre}}(- , R^{\mre})$
\[
(-)^{\vee} := \sfc_{*} \Hom_{R^{\mre}}(-, R^{\mre}): R^{\mre}\Mod \xrightarrow{ \ \Hom_{R^{\mre}}(-, R^{\mre}) \  }
 \Mod R^{\mre} \xrightarrow{ \sfc_{*} } R^{\mre}\Mod. 
\]
 
We denote by $\sfC(R), \sfC_{\DG}(R)$ 
the category of complexes of $R$-modules (i.e., dg-$R$-modules) and cochain morphisms 
and the dg-category of dg-$R$-modules. 
The symbols $\sfK(R)$ and $\sfD(R)$ denote the homotopy category and the derived category respectively. 
The shift functor of the homotopy category $\sfK(R)$ and  that of the derived category $\sfD(R)$ triangulated category are  denoted by $[1]$. 
The shift functor of an abstract  triangulated category $\sfD$ is denoted by $\Sigma$.

We denote the $R$-duality  $\RHom_{R}(M, R)$ of $M \in \sfD(R) $ by $M^{\lvvee}$. 
We denote the $R$-duality  $\RHom_{R^{\op}}(N, R)$ of $N \in \sfD(R^{\op}) $ by $N^{\rvvee}$. 
Abusing notations, we set  $X^{\lvvee} := \RHom_{R}(X, R), X^{\rvvee} := \RHom_{R^{\op}}(X, R)$ for $X \in \sfD(R^{\mre})$. 
We denote the derived functor of $(-)^{\vee}$ by $(-)^{\vvee} := \RHom_{R^{\mre}}(-, R^{\mre})$.

\bigskip 
We use  the same terminology and notation for a dg-algebra $R$ and dg-modules over $R$ 
as for $R$ an algebra and for $R$-modules.

\subsubsection{}

Let $\tilde{\phi}: X \to Y$ be a morphism in $\sfD(R^{\mre})$. 
For $M \in \sfD(R)$, we use the following  abbreviation
\[
\tilde{\phi}_{M}:= \tilde{\phi} \lotimes_{R} M: X \lotimes_{R} M \to Y \lotimes_{R} M.
\]
Similarly if $M \in \sfD(R)$ and $N \in \sfD(R^{\op})$ are given,  
we set ${}_{N} \tilde{\phi}:= N \lotimes_{R} \tilde{\phi}$ and $ {}_{N} \tilde{\phi}_{M} := N \lotimes_{R} \tilde{\phi} \lotimes_{R} M$. 

We point out that if $\tilde{\phi}: X \to Y$ is a morphism in $\sfD(R^{\mre})$ 
and $f: M \to M'$ is a morphism in $\sfD(R)$, then we have 
the equality $({}_{Y} f) (\tilde{\phi}_{M}) = (\tilde{\phi}_{M'}) ( {}_{X} f)$. 
\begin{equation}\label{202111281249}
\begin{xymatrix}{ 
X \lotimes_{R} M \ar[r]^{\tilde{\phi}_{M}} \ar[d]_{{}_{X} f} & Y \lotimes_{R} M \ar[d]^{{}_{Y} f} \\
X \lotimes_{R} M' \ar[r]^{\tilde{\phi}_{M'}}  & Y \lotimes_{R} M' 
}\end{xymatrix}
\end{equation}

\subsubsection{The path algebras of a  quiver $Q$ }

Unless otherwise stated, a quiver $Q$ is finite acyclic and connected. 

Let $Q=(Q_{0}, Q_{1}, h, t)$ be a quiver. 
For an arrow $\alpha \in Q_{1}$, we denote by $t(\alpha), h(\alpha)$ its tail and head respectively: $t(\alpha) \xrightarrow{\alpha} h(\alpha)$. 
The composition $i \xrightarrow{ \alpha } j \xrightarrow{ \beta} k $  of  two arrows is denoted as $ \alpha\beta$. 

We set $\Pa := \kk Q$. 
For a vertex $i\in Q_{0}$, we denote by $P_{i} := \Pa e_{i}$, $I_{i} := \tuD(e_{i}\Pa)$ and $S_{i}$,
the indecomposable projective module
the indecomposable  injective module 
and the simple module  corresponding to $i$.  
%The multiplication by an  arrow $\alpha: i \to j$ from right induces  a homomorphism $\sfr_{\alpha}: P_{i} \to P_{j}$  of left $\Pa$-modules.
%Thus the full subquiver $[\Pa \proj]$ of the Auslander-Reiten quiver $\Gamma(\Pa \mod)$ 
%formed by indecomposable projective modules is 
%isomorphic to $Q$. 
%\[
%Q \cong [\Pa \proj ] \subset \Gamma (\Pa \mod)
%\]

\section*{Acknowledgment}

The authors thank Osamu Iyama for his numerous comments and suggestions on the subjects of the paper.  
The authors thank Takahide Adachi for pointing out a short proof of Lemma \ref{202103301920}. 
The  authors thank Yuya Mizuno and Kota Yamaura for  discussions on path algebras and preprojective algebras. 

The authors thank 
 Aaron Chan and Rene Marczinzik 
 for informing to them their observation about dimension of the quiver Heisenberg algebra of Dynkin type, 
 which leads to Theorem \ref{202109131544}.

The  authors thank Osaka Prefecture University (OPU), 
since the research began when the first author visited the second author belonging to OPU
by using  Special Guest Professor Program run by OPU.

At the beginning stage of  this research, the authors used QPA to calculate the dimensions of quiver Heisenberg algebras of Dynkin quivers 
and got convinced of their results. 
They are grateful to  the QPA-team, QPA - Quivers, path algebras and representations, Version 1.31; 2018.

The second author was partially supported by JSPS KAKENHI Grant Number JP21K03210.

\section{$\rad^{n}$-approximations in $R \mod$ and in $\Dbmod{R}$}\label{section: rad^{n} approximation}

It is well-known to experts that $\rad$-approximation theory in the module categories is nothing but Auslander-Reiten theory. 
$\rad^{n}$-approximation theory in the module categories was initiated by Igusa-Todorov \cite{Igusa-Todorov}. They investigated constructions of $\rad^{n}$-approximations by using the notion of \emph{ladder} which was introduced by them in the same paper. 
Later Iyama \cite{Iyama: ladder}  established a criterion for  existence of ladders in  
$\tau$-categories which is an abstraction of modules category with AR-translations $\tau_{1}^{\pm 1}$ introduced by him. 

In this section we collect basic properties of $\rad^{n}$-approximations in the module category $R\mod$ 
and in the derived category $\Dbmod{R}$  of a finite dimensional algebra $R$.  
Although many results in this section are given in \cite{Igusa-Todorov, Iyama: ladder}, 
we reproduce them for the convenience of the readers.

\subsection{$\rad^{n}$-approximations in a $\kk$-linear category $\sfD$.}

Let $\sfD$ be a $\Hom$-finite $\kk$-linear category that is Krull-Schmidt. In particular, indecomposable objects have local endomorphism algebras. 
\subsubsection{The radical $\rad$}

Recall that
for $X, Y \in \sfD$, the \emph{radical} $\rad(X, Y)$ is defined to be 
a subspace of $\Hom_{\sfD}(X,Y)$ consisting of all elements $f$ that satisfy the following property: 
for any $Z\in \ind \sfD$ and any morphisms $s:Z  \to X, t:Y \to Z$, 
the composition $tfs: Z \to Z$ is not an isomorphism.  
 The radicals $\rad(X,Y)$ form an $\kk$-linear additive subfunctor 
of $\Hom_{\sfD}$ (see e.g. \cite[Proposition V 7.1]{ARS}) . 

For $n \geq 2$, we define $\rad^{n}(X, Y)$ to be the subspace of $\Hom_{\sfD}(X,Y)$ consisting of the elements $f$ which are obtained as 
$n$-times compositions of morphisms in $\rad$.

\subsubsection{$\rad^{n}$-approximations}

\begin{definition}
Let $n \geq 1$. 
\begin{enumerate}[(1)] 

\item 
 A morphism $f: X \to Y$ is called  a \emph{left $\rad^{n}$-approximation} 
if $f$ belongs to $\rad^{n}(X, Y)$ and 
any morphism $g: X \to Z $ belonging to $\rad^{n}(X, Z)$ factors through $f$, i.e., 
there exists $h :X \to Z$ such that $g= hf$. 

\[
\begin{xymatrix}@C=60pt{
X \ar[r]^{f} \ar[dr]_{g} & Y\ar@{-->}[d]^{h} \\
& Z
}\end{xymatrix}
\]

\item A morphism $f: X \to Y$ is called  a \emph{minimal left $\rad^{n}$-approximation}
if it is both left minimal and a left $\rad^{n}$-approximation.

\end{enumerate}

\end{definition}

We define (minimal) right $\rad^{n}$-approximations $Y \to X$ of $X$ in the dual way.
For simplicity we only  discuss minimal left $\rad^{n}$-approximations. 
We note that  right versions of  all the results of this section hold true.  

\begin{remark}
Fu et al \cite{FGHT} studied approximation theory with respect to an ideal $\cI(-, +) \subset \Hom(-, +)$ 
in general context. 
In their terminology, left and right $\rad^{n}$-approximations are called $\rad^{n}$-preenvelopes and $\rad^{n}$-precovers respectively.
\end{remark}

We leave it to the readers to prove the following lemma. 
\begin{lemma}\label{202105171427}

Assume that $X$ is indecomposable. 
Then $f: X\to Y$ is a minimal left almost split morphism  if and only if it is a minimal left $\rad$-approximation. 

\end{lemma}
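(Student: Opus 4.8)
The plan is to unwind the definitions on both sides and check the two implications, using the standard dictionary between minimal left almost split morphisms and radical maps. Recall that a morphism $f\colon X\to Y$ with $X$ indecomposable is \emph{minimal left almost split} if (i) $f$ is not a split monomorphism, (ii) every morphism $X\to Z$ that is not a split monomorphism factors through $f$, and (iii) $f$ is left minimal. Since $X$ is indecomposable, its endomorphism algebra is local, so for any object $Z$ a morphism $X\to Z$ fails to be a split monomorphism precisely when it lies in $\rad(X,Z)$: indeed, if $g\colon X\to Z$ is split mono with retraction $t$, then $tg=\id_X$ is an isomorphism, witnessing $g\notin\rad(X,Z)$; conversely, if $g\notin\rad(X,Z)$ then, by definition of the radical, there are $s\colon W\to X$, $t\colon Z\to W$ with $W$ indecomposable and $tgs$ an isomorphism, and taking $W=X$, $s=\id_X$ (which is legitimate once one reduces to indecomposable $W$ via a Krull--Schmidt decomposition of $Z$ — this is the place where one invokes that $\sfD$ is Krull--Schmidt and $\Hom$-finite) one gets that $g$ is a split monomorphism. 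Actually the cleanest route is: for $X$ indecomposable, $g\in\rad(X,Z)\iff g$ is not a section; this is \cite[Proposition V.7.1]{ARS} or can be extracted from the cited lemma's hypotheses, and I would simply quote it.

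With that dictionary in hand, the forward implication is immediate: if $f$ is minimal left almost split, then by (i) $f$ is not a section, hence $f\in\rad(X,Y)=\rad^1(X,Y)$; and given any $g\in\rad(X,Z)$, i.e.\ any non-section $g$, property (ii) gives a factorization $g=hf$; so $f$ is a left $\rad$-approximation, and by (iii) it is left minimal, hence a minimal left $\rad$-approximation. For the converse, suppose $f\colon X\to Y$ is a minimal left $\rad$-approximation. Then $f\in\rad(X,Y)$, so $f$ is not a section, giving (i). Any non-section $g\colon X\to Z$ lies in $\rad(X,Z)$ by the dictionary, so by the approximation property it factors through $f$, giving (ii). And left minimality of $f$ is (iii). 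Hence $f$ is minimal left almost split.

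The only subtlety — and the ``main obstacle,'' though it is a mild one — is the equivalence ``$X$ indecomposable $\implies$ ($g\in\rad(X,Z)$ iff $g$ is not a split mono)'', since one direction requires passing through a Krull--Schmidt decomposition $Z\cong\bigoplus Z_k$ and noting that $g$ is a section iff some component $X\to Z_k$ is an isomorphism iff some component is not in $\rad(X,Z_k)$ (using that $\End X$ is local, so a non-radical endomorphism of $X$ is invertible). I would state this as a one-line sub-claim with a reference to \cite[Ch.~V]{ARS}. Everything else is a formal rewriting of definitions, which is exactly why the authors ``leave it to the readers.''
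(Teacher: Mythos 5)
Your argument is correct, and it is exactly the routine verification the authors intend when they ``leave it to the readers'': once one knows that, for $X$ indecomposable, $\rad(X,Z)$ consists precisely of the non-sections out of $X$ (which you justify properly at the end via Krull--Schmidt and the locality of $\End X$, even though the mid-proof phrase ``taking $W=X$, $s=\id_X$ \dots via a Krull--Schmidt decomposition of $Z$'' is a slightly imprecise way of saying that a split monomorphism into the indecomposable $X$ is an isomorphism), both notions reduce to the same three conditions and the equivalence is a formal rewriting. No gap; this matches the (omitted) proof the paper expects.
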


\subsection{$\rad^{n}$-approximations in $R\mod$.} 

Let $R$ be a finite dimensional algebra. 
By Auslander-Reiten theory, an indecomposable module $M \in \ind R$ has minimal left $\rad$-approximation and minimal right $\rad$-approximation. 
Our starting point is the following observation.

\begin{lemma}\label{202007132105}

For $M \in R\mod $ and $n \geq 1$, 
a minimal left $\rad^{n}$-approximation $M \to N$ exists, and is unique up to composition by an isomorphism from $N$. 

\end{lemma}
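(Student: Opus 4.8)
The plan is to reduce the statement to standard facts about existence of minimal left approximations relative to a functorially finite subcategory, or to construct the approximation directly by a "finite-presentation" argument since $R$ is finite dimensional. First I would recall that $R\mod$ is a Hom-finite Krull--Schmidt $\kk$-linear category and that $\rad^{n}(-,+)$ is an additive subbifunctor of $\Hom_{R}(-,+)$ contained in $\rad(-,+)$. The key observation is that for a fixed $M\in R\mod$, the functor $\rad^{n}(M,-)\colon R\mod\to\kk\operatorname{mod}$ is a subfunctor of the representable functor $\Hom_{R}(M,-)$, hence is a finitely generated (in fact coherent) functor: indeed $\rad^{n}(M,-)$ is generated by $\rad^{n}(M,X)$ for $X$ ranging over the finitely many indecomposables appearing as summands of $M$, $\tau^{-1}M$, and the like; more simply, since $R$ is finite dimensional, $\rad^{n}(M,N)$ is a finite-dimensional space for each $N$ and there are only finitely many indecomposables $N$ with $\rad^{n}(M,N)\neq 0$ up to the relevant combinatorial bound — but the cleanest route is to note $\rad^{n}(M,-)$ is the image of a natural transformation between representables.

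Concretely, I would argue as follows. Let $\{g_{1},\dots,g_{m}\}$ be a finite $\kk$-basis of the union $\bigcup_{N}\rad^{n}(M,N)$ in the following sense: since $\rad = \rad_{R\mod}$ is nilpotent modulo the arrow ideal is not literally nilpotent, one instead uses that $\rad^{n}(M,-)$ as a subfunctor of $\Hom_{R}(M,-)$ is finitely generated because $\Hom_{R}(M,-)$ is Noetherian (as $M$ is finitely presented over the finite-dimensional, hence Noetherian, algebra $R$, the functor $\Hom_{R}(M,-)$ is coherent and the lattice of its subfunctors satisfies ACC). Thus $\rad^{n}(M,-)$ has a finite set of generators $g_{1}\colon M\to N_{1},\dots,g_{m}\colon M\to N_{m}$, and then $g := (g_{1},\dots,g_{m})^{t}\colon M\to N_{1}\oplus\cdots\oplus N_{m}$ is a left $\rad^{n}$-approximation: any $g'\colon M\to Z$ in $\rad^{n}$ is a sum $\sum_{j}h_{j}g_{j}$ for suitable $h_{j}\colon N_{j}\to Z$ (the $h_{j}$ need not themselves lie in the radical, which is fine), i.e. $g'$ factors through $g$. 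Finally, passing from a left $\rad^{n}$-approximation to a minimal one is the standard argument: decompose $g = g'' \oplus 0$ with respect to a decomposition $N = N'\oplus N''$ where $g''\colon M\to N'$ is left minimal; left minimality plus the approximation property survive this, and any two minimal left $\rad^{n}$-approximations are isomorphic via an isomorphism of their targets commuting with the maps from $M$, by the usual cancellation argument using that $\End$ of an indecomposable is local.

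The step I expect to be the main obstacle — or at least the one needing the most care — is the finite-generation claim for the subfunctor $\rad^{n}(M,-)\subseteq\Hom_{R}(M,-)$, i.e. justifying that only finitely many $N$ and finitely many maps are needed. The honest justification is that $\Hom_{R}(M,-)$ is a Noetherian object in the category of coherent functors on $R\mod$ when $R$ is finite dimensional (equivalently, $R\mod$ has a kind of Artin--Wedderburn finiteness that makes subfunctors of representables finitely generated), so its subfunctor $\rad^{n}(M,-)$ is finitely generated; alternatively one can invoke Auslander--Reiten theory directly — this is presumably the route the paper takes, citing Igusa--Todorov \cite{Igusa-Todorov} and Iyama \cite{Iyama: ladder} — building the approximation inductively from minimal left $\rad$-approximations (minimal left almost split maps) via the ladder construction, since a minimal left $\rad^{n}$-approximation can be obtained by composing/splicing $n$ successive $\rad$-approximations and taking an appropriate summand. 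I would present the functorial argument as the main line and remark that the ladder construction of Igusa--Todorov gives an explicit alternative; uniqueness up to isomorphism of the target is then the routine left-minimality cancellation, which I would only sketch.
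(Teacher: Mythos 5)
Your main line has a genuine gap at precisely the step you single out. Finite generation of the subfunctor $\rad^{n}(M,-)\subseteq\Hom_{R}(M,-)$ is literally the statement that $M$ admits a left $\rad^{n}$-approximation, so it cannot be extracted from a soft finiteness property of the representable without begging the question; and the justification you propose is false in general. The functor $\Hom_{R}(M,-)$ is coherent, but it is not a Noetherian object of the functor category for an arbitrary finite dimensional algebra: local Noetherianness of the functor category on $R\mod$ forces pure semisimplicity, hence finite representation type, so for a representation-infinite algebra (the case actually needed in this paper, e.g. $R=\kk Q$ with $Q$ non-Dynkin) some representable has a subfunctor that is not finitely generated, and the ACC on subfunctors you invoke is unavailable. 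Your preliminary remark that only finitely many indecomposables $N$ satisfy $\rad^{n}(M,N)\neq 0$ is also wrong: already $\rad(R,N)\cong\rad N\neq 0$ for every non-semisimple $N$, and over the Kronecker algebra there are infinitely many indecomposables $N$ with $\rad^{n}(R,N)\neq 0$ for every $n$ (compose irreducible maps along the preprojective component). Note that even for $n=1$, finite generation of $\rad(M,-)$ amounts to the existence of minimal left almost split morphisms, i.e. the Auslander--Reiten existence theorem, which is not a formal consequence of any Noetherian argument.

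The route you mention only as an aside is the one that works, and it is essentially the paper's proof: for $n=1$ take the direct sum of the minimal left $\rad$-approximations (minimal left almost split morphisms) of the indecomposable summands of $M$; assuming a minimal left $\rad^{n-1}$-approximation $f\colon M\to N$ exists, choose a minimal left $\rad$-approximation $g\colon N\to K$ and observe that $gf$ is a left $\rad^{n}$-approximation, because any $h\in\rad^{n}(M,Z)$ can be written $h=uf$ with $u\in\rad(N,Z)$ (factor the initial composite of $n-1$ radical factors through $f$ and use that $\rad$ is an ideal), and $u$ then factors through $g$; finally pass to the left minimal direct summand of $gf$. Your minimalization and uniqueness steps are correct and routine. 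So the proposal becomes a proof only after you delete the functor-theoretic justification and promote the inductive, AR-theoretic construction from a remark to the argument itself.
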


\begin{proof}
We use an induction on $n\geq 1$. 
Let 
 $M = \bigoplus_{i =1}^{r} M_{i}$ be an indecomposable decomposition. 
Let $f_{i}:M_{i} \to N_{i}$ be  a minimal left $\rad$-approximation. 
Then it is clear that the direct sum $f:= \bigoplus_{i=1}^{r} f_{i}: M \to \bigoplus_{i=1}^{r} N_{i}$ is a  minimal $\rad$-approximation.

We assume that the statement holds for $n-1$. 
Let $f: M \to N$ be a minimal left $\rad^{n-1}$-approximation. 
We take a minimal left $\rad$-approximation $g: N \to K$. 
Then the composition $fg: M \to K$ is a left $\rad^{n}$-approximation. 
Taking a minimal part of it, we obtain a minimal left $\rad^{n}$-approximation. 

Uniqueness follows from minimality.
\end{proof}

\subsubsection{A description of $\rad^{n}$-approximations}

We give  $\rad^{n}$-versions of well-know description of approximations. 

For $n \geq 1$, we set 
\[
\irr^{n}(M,N):=\frac{ \rad^{n}(M,N)}{\rad^{n +1}(M,N) }
\]

\begin{theorem}\label{202008172145}
Let $n\geq 1$. 

For a morphism $f: M \to N$ the following statements are equivalent. 

\begin{enumerate}[(1)] 
\item $f$ is a minimal left $\rad^{n}$-approximation of $M$. 

\item Any indecomposable object $K$ that has a morphism $g: M \to K$ belonging to $\rad^{n}(M, K) \setminus \rad^{n +1}(M, K)$ 
is a direct summand of $N$. 

Let $N = \bigoplus_{i = 1}^{r} K_{i}^{d_{i}}$ be an indecomposable decomposition where the $K_{i}$'s are pairwise non-isomorphic. 
We exhibit  $f= ( f_{1}, \ldots, f_{r})^{t}$ according to the decomposition 
where 
$f_{i}: M \to K^{d_{i}}_{i}$. 
We exhibit $f_{i}= (f_{i1} , \ldots, f_{i d_{i}})^{t}  :M \to  K^{d_{i}}_{i} $ where $f_{ij}: M \to  K_{i}$. 

Then for each $i = 1,2, \ldots, r$, the set $\{f_{i1}, \ldots, f_{id_{i}}\}$ becomes  a basis of $\irr^{n}(M, K_{i})$ over 
$\ResEnd(K_{i})$. 

\end{enumerate}

\end{theorem}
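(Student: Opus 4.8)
The plan is to reduce everything to the tautological reformulation that $(1)$ holds precisely when $f$ is a left $\rad^{n}$-approximation which is in addition left minimal, so that the real content is to translate ``left minimal'' into the basis statement of $(2)$. As preliminaries I would record: for $K\in\ind\sfD$ the ring $\ResEnd(K)$ is a division ring ($\End_{\sfD}(K)$ being local), and $\irr^{n}(M,K)=\rad^{n}(M,K)/\rad^{n+1}(M,K)$ is a finite-dimensional left $\ResEnd(K)$-module via $\overline{\phi}\cdot\overline{g}:=\overline{\phi g}$, so that ``basis'' is meaningful and $\dim_{\ResEnd(K)}\irr^{n}(M,K)$ is well defined; that $\rad^{\bullet}$ is a two-sided ideal of $\sfD$, hence $\rad^{a}\circ\rad^{b}\subseteq\rad^{a+b}$; and that $\Hom_{\sfD}(K,K')=\rad(K,K')$ for non-isomorphic indecomposables. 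The engine of the proof is the following auxiliary lemma: \emph{a left $\rad^{n}$-approximation $f\colon M\to N$ is left minimal if and only if no component $M\xrightarrow{f}N\twoheadrightarrow K$ onto an indecomposable direct summand $K$ of $N$ lies in $\rad^{n+1}(M,K)$.}

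For $(1)\Rightarrow(2)$: let $f$ be a minimal left $\rad^{n}$-approximation and $N=\bigoplus_{i}K_{i}^{d_{i}}$. Given any $g\in\rad^{n}(M,K)$ with $K$ indecomposable, factor $g=hf$ and expand $h=(h_{i})$ with $h_{i}\colon K_{i}^{d_{i}}\to K$; every matrix entry of $h_{i}$ with $K_{i}\not\cong K$, and every non-invertible entry with $K_{i}\cong K$, lies in $\rad$, so composing it with the relevant $f_{ik}\in\rad^{n}$ lands in $\rad^{n+1}(M,K)$. Reading this modulo $\rad^{n+1}(M,K)$ shows $\overline{g}$ is a $\ResEnd(K)$-linear combination of the $\overline{f}_{ik}$ with $K_{i}\cong K$; taking $g\notin\rad^{n+1}$ forces $K$ to be a summand of $N$, and taking $g$ arbitrary with $K=K_{i}$ shows $\{\overline{f}_{i1},\dots,\overline{f}_{id_{i}}\}$ spans $\irr^{n}(M,K_{i})$. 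For independence: if some spanning set $\{\overline{f}_{ik}\}_{k}$ were $\ResEnd(K_{i})$-dependent, solve for one generator, lift the coefficients, and apply the resulting base-change automorphism of the summand $K_{i}^{d_{i}}$ (this preserves being a minimal left $\rad^{n}$-approximation); then one component $M\to K_{i}$ of $f$ lies in $\rad^{n+1}$, contradicting left minimality by the auxiliary lemma. Hence the $\overline{f}_{ik}$ form a basis.

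The auxiliary lemma: ``$\Leftarrow$'' is immediate from the definition of left minimal. For ``$\Rightarrow$'' I would show that a summand $K$ with a ``small'' component $\pr_{K}\circ f\in\rad^{n+1}(M,K)$ can be discarded: writing $\pr_{K}\circ f=\sum_{\alpha}v_{\alpha}u_{\alpha}$ with $u_{\alpha}\in\rad^{n}(M,X_{\alpha})$ and $v_{\alpha}\in\rad(X_{\alpha},K)$ (possible since $\rad^{n+1}(M,K)=\sum_{X}\rad(X,K)\circ\rad^{n}(M,X)$), re-route each $u_{\alpha}=w_{\alpha}f$ through the approximation $f$, so that $\pr_{K}\circ f=(\sum_{\alpha}v_{\alpha}w_{\alpha})\circ f$ with $\sum_{\alpha}v_{\alpha}w_{\alpha}\in\rad(N,K)$; then $\psi:=\id_{N}-\iota(\pr_{K}-\sum_{\alpha}v_{\alpha}w_{\alpha})$ satisfies $\psi f=f$ while $\psi$ is not an automorphism (it carries the summand $K$ into $N$ through a non-invertible endomorphism of $K$), contradicting left minimality. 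Finally, for $(2)\Rightarrow(1)$: the $f_{ik}$ represent classes in $\irr^{n}(M,K_{i})$ hence lie in $\rad^{n}$, so $f\in\rad^{n}(M,N)$; let $f_{0}\colon M\to N_{0}$ be the minimal left $\rad^{n}$-approximation (Lemma \ref{202007132105}), which satisfies $(2)$ by what was just proved, so $N_{0}\cong N$; since $f\in\rad^{n}$ and $f_{0}$ is an approximation, $f=hf_{0}$, and comparing components modulo $\rad^{n+1}$ the two bases force the isotypic diagonal blocks of $h$ to be invertible over the division rings $\ResEnd(K_{i})$, so $h$ is invertible modulo $\rad\End_{\sfD}(N_{0})$ and hence (semiperfectness) an automorphism; thus $f=hf_{0}$ is again a minimal left $\rad^{n}$-approximation.

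The main obstacle is the ``$\Rightarrow$'' direction of the auxiliary lemma, that is, the re-routing argument: the identity $\rad^{n+1}=\rad\circ\rad^{n}$ holds only as a sum over auxiliary objects $X$, so the re-routing must be carried out term by term while keeping precise track of which power of $\rad$ each intermediate morphism lies in; this bookkeeping is exactly what the ladder construction of Igusa--Todorov \cite{Igusa-Todorov} formalizes and what Iyama \cite{Iyama: ladder} abstracts in $\tau$-categories. Everything else is a routine interplay of the ideal property of $\rad^{\bullet}$, the factorization property of approximations, and linear algebra over the division rings $\ResEnd(K_{i})$.
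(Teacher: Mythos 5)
Your proof is correct and follows exactly the route the paper intends: the paper gives no proof of this theorem, explicitly leaving it to the reader as the $\rad^{n}/\rad^{n+1}$ analogue of the classical $n=1$ case in \cite[Chapter V]{ARS}, and your argument (factor any $g\in\rad^{n}$ through the approximation, reduce modulo $\rad^{n+1}$, do linear algebra over the division rings $\ResEnd(K_{i})$, and characterize left minimality by the ability to discard a component lying in $\rad^{n+1}$) is precisely that adaptation, including the correct comparison $f=hf_{0}$ with the minimal approximation and lifting of units modulo $\rad\End(N_{0})$ for $(2)\Rightarrow(1)$. Two cosmetic points: when you assert that $\psi=\id_{N}-\iota(\pr_{K}-\sum_{\alpha}v_{\alpha}w_{\alpha})$ is not an automorphism, the clean justification is that $\psi$ is triangular with diagonal $(\phi\iota,\id_{N'})$ where $\phi\iota\in\rad\End(K)$, and invertibility of $\psi$ would make $\phi\iota$ left invertible, hence invertible in the local ring $\End(K)$, a contradiction (a non-invertible diagonal entry alone would not suffice without the triangular shape); and the closing appeal to the Igusa--Todorov ladder machinery is unnecessary, since your re-routing argument is already complete as written.
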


We leave it to the reader to prove Theorem \ref{202008172145}, 
since it is done in the same of way of the classical $n = 1$ case,
which can be found in, for example,  \cite[Chapter V]{ARS}.

\subsubsection{}

We use the following lemma in the proof of Proposition \ref{202012171832}.

\begin{lemma}\label{202105252216}
Let $n \geq 1$. Given  a minimal left $\rad^{n}$-approximation $f: M \to N$ and  
a morphism $ g: N \to L$ belonging to $\Hom_{R}(N ,L) \setminus \rad(N, L)$, 
then the composition $gf$ belongs to $\rad^{n}(M, L ) \setminus \rad^{n +1}(M, L)$. 
\end{lemma}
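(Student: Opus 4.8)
The inclusion $gf\in\rad^{n}(M,L)$ is immediate, since $f\in\rad^{n}(M,N)$ and $\rad^{n}$ is stable under post-composition by arbitrary morphisms. So the real content is that $gf\notin\rad^{n+1}(M,L)$, and the plan is to exhibit a nonzero class of $\,gf$ in a suitable radical quotient.

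First I would exploit $g\notin\rad(N,L)$ via the Krull--Schmidt structure: decomposing $N$ and $L$ into indecomposables, $g\notin\rad(N,L)$ means some matrix component of $g$ is an isomorphism, i.e.\ there are an indecomposable direct summand $K$ of $N$ with inclusion $\iota\colon K\to N$ and an indecomposable direct summand $L_{0}$ of $L$ with projection $p\colon L\to L_{0}$ such that $\phi:=pg\iota\colon K\to L_{0}$ is an isomorphism. Since $h\mapsto\phi^{-1}ph$ sends $\rad^{m}(M,L)$ into $\rad^{m}(M,K)$ for every $m$, it suffices to show that $\phi^{-1}pgf\colon M\to K$ does not lie in $\rad^{n+1}(M,K)$.

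Next I would expand $f$ along the decomposition $N=\bigoplus_{i}K_{i}^{d_{i}}$ (with $K_{i}$ pairwise non-isomorphic and $K=K_{i_{0}}$). Write $f=(f_{il})$ with $f_{il}\colon M\to K_{i}$ the component into the $l$-th copy of $K_{i}$, and let $\iota_{il}\colon K_{i}\to N$ be the corresponding inclusion, so that $\iota=\iota_{i_{0}j_{0}}$ for some $j_{0}$. Then $\phi^{-1}pgf=\sum_{i,l}(\phi^{-1}pg\iota_{il})f_{il}$. The term for $(i_{0},j_{0})$ is $\phi^{-1}\phi f_{i_{0}j_{0}}=f_{i_{0}j_{0}}$; any term with $i\neq i_{0}$ has $\phi^{-1}pg\iota_{il}\in\rad(K_{i},K)$ (as $K_{i}\not\cong K$), hence lies in $\rad^{n+1}(M,K)$ because $f_{il}\in\rad^{n}(M,K_{i})$; and the terms with $i=i_{0}$, $l\neq j_{0}$ contribute $\overline{\phi^{-1}pg\iota_{i_{0}l}}\cdot\overline{f_{i_{0}l}}$ in $\irr^{n}(M,K)=\rad^{n}(M,K)/\rad^{n+1}(M,K)$, regarded as a left module over the division ring $\ResEnd(K)$ (this uses $\rad(K,K)\cdot\rad^{n}(M,K)\subseteq\rad^{n+1}(M,K)$). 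Thus, modulo $\rad^{n+1}(M,K)$, the class of $\phi^{-1}pgf$ is a $\ResEnd(K)$-linear combination of $\overline{f_{i_{0}1}},\dots,\overline{f_{i_{0}d_{i_{0}}}}$ in which the coefficient of $\overline{f_{i_{0}j_{0}}}$ equals $\overline{\id_{K}}\neq0$. By Theorem \ref{202008172145}(2) these vectors form an $\ResEnd(K)$-basis of $\irr^{n}(M,K)$, so the class is nonzero; hence $\phi^{-1}pgf\notin\rad^{n+1}(M,K)$, and therefore $gf\notin\rad^{n+1}(M,L)$.

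The steps that merely need to be recorded are that post-composition by an arbitrary morphism preserves each $\rad^{m}$ (so post-composition by an isomorphism induces a bijection on each $\rad^{m}$), and that $\irr^{n}(M,K)$ is genuinely a left $\ResEnd(K)$-module. The one place that requires care — and the main obstacle — is handling the remaining $d_{i_{0}}-1$ copies of $K$ inside $N$: their contributions to $\phi^{-1}pgf$ need not vanish modulo $\rad^{n+1}(M,K)$, and it is precisely the basis property from Theorem \ref{202008172145}(2) that prevents them from cancelling the surviving summand $f_{i_{0}j_{0}}$.
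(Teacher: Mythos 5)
Your proof is correct and follows essentially the same route as the paper: isolate an indecomposable summand $K$ of $N$ on which the relevant component of $g$ is not radical, expand $gf$ along the indecomposable decomposition of $N$, and use the basis property of $\irr^{n}(M,K)$ over $\ResEnd(K)$ from Theorem \ref{202008172145}(2) to see the class survives modulo $\rad^{n+1}$. Your normalization by $\phi^{-1}$ (so the distinguished coefficient is $\overline{\id_{K}}$) is just a cosmetic variant of the paper's formulation via the split epimorphism $h=(h_{1},\ldots,h_{d_{i}})$ with not all $\overline{h_{p}}$ zero, and your explicit handling of the summands $K_{j}$ with $j\neq i_{0}$ only spells out a step the paper leaves implicit.
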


\begin{proof}
We use the notation of Theorem \ref{202008172145}(2). 
Since 
$ g \in \Hom_{R}(N ,L) \setminus \rad(N, L)$, there exists  $i=1,2, \ldots, r$  
 such that  there exists a decomposition $L = K_{i} \oplus L'$ and 
the composition  $h: K_{i}^{\oplus d_{i}} \hookrightarrow N \xrightarrow{g}  L \to K_{i}$, 
where the first arrow is the  canonical embedding and the third is the canonical projection, 
is a split epimorphism. 
 If we  write $h = (h_{1}, h_{2}, \ldots,h_{d_{i}}): K^{\oplus d_{i}}_{i} \to K_{i}$ with $h_{p} \in \End_{R}(K_{i})$, 
then  not all the components $h_{1}, \ldots, h_{d_{i}}$ become zero in $\ResEnd_{R}(K_{i})$. 
It follows from Theorem \ref{202008172145}(2) that the sum $\sum_{p=1}^{d_{i}}h_{p} f_{i,p}$ belongs to 
$\rad^{n}(M, K_{i}) \setminus \rad^{n+1}(M, K_{i})$.  
Thus we conclude $gf \in \rad^{n}(M, L ) \setminus \rad^{n +1}(M, L)$. 
\end{proof}

\subsection{$\rad^{n}$-approximations in $R\mod$ and ladders}

Let $M \in R \mod$. If $M$ is indecomposable and injective, then there is a minimal left almost split morphism $M \to L$, which is surjective. If $M$ is indecomposable and not injective, then there is an AR-sequence 
\[
0 \to M \to L \to \tau_{1}^{-1} M \to 0.
\]
Note that in both these situations we obtain a right exact sequence
\[
M \to L \to \tau_{1}^{-1} M \to 0,
\]
where the first morphism is minimal left almost split. By taking direct sums the same holds for any $M \in R \mod$. We refer to this as a direct sum of AR-sequences starting from $M$.

We note that by Lemma \ref{202105171427}, the first morphism $M \to L$ is a minimal left $\rad$-approximation of $M$ 
in $R\mod$. 
By convention, in the case where $M = 0$, 
we refer the sequence $0 \to 0 \to 0 \to 0$ as a direct sum of AR-sequences. 

We note that the morphism $L \to \tau_{1}^{-1} M$ belongs to $\rad$. We also note 
that  the morphism $M \to L$ is injective if and only if $M$ does not have an indecomposable  injective module  as its direct summand.

To discuss more properties of $\rad^{n}$-approximations, 
we introduce the following terminology.

\begin{definition}\label{202007141926}

A morphism $f: M\to N$ said to satisfy the \emph{left $\rad$-fitting} condition, 
if it is a direct summand of a minimal left $\rad$-approximation $g: M \to L$ of $M$. 
This means that there is an isomorphism $L \cong N \oplus N'$ 
under which $g$ corresponds to $(f, f')^{t}$ for some $f': M \to N'$. 
In other words, there exists a split epimorphism $s: L \to N$ such that $sg =f$. 

\end{definition}

We note that in the case where $M = 0$, a minimal left $\rad$-approximation is the morphism $ 0 \to 0$. 
Thus, a morphism $0 \to N$ satisfies the left $\rad$-fitting condition if and only if $N = 0$. 

In a dual way we define the \emph{right $\rad$-fitting condition}.

The main result of this section is the following theorem that gives a way to construct a minimal left $\rad^{n}$-approximation.

\begin{theorem}\label{202105171618}

Let $M \in R\mod$. 
For $n \geq 1$, we denote a minimal left $\rad^{n}$-approximation by $\lambda_{n}: M \to L_{n}$. 
By convention we set $L_{0} := M$. 

Then the following assertions hold. 

\begin{enumerate}[(1)]

\item The cokernel $\Coker \lambda_{n}$ is isomorphic to $\tau_{1}^{-1} L_{n -1}$. 

\item The cokernel morphism $ \lambda'_{n}: L_{n } \to \tau_{1}^{-1} L_{n -1}$ of $\lambda_{n}$  satisfies the left and the right $\rad$-fitting conditions. 

\item Let $\breve{\lambda}: L_{n } \to L$ be a morphism. 
Then the composition $\breve{\lambda} \lambda_{n}: M \to L$ is a minimal left $\rad^{n + 1}$-approximation 
if and only if the morphism $(-\lambda'_{n}, \breve{\lambda})^{t}: L_{n } \to \tau_{1}^{-1} L_{n-1} \oplus L$ is a minimal left $\rad$-approximation. 

Note that the latter condition means that 
we have 
a direct sum of AR-sequences
\[
L_{n } \xrightarrow{ (-\lambda'_{n}, \breve{\lambda})^{t} }  \tau_{1}^{-1} L_{n-1} \oplus L \to \tau_{1}^{-1} L_{n } \to 0.
\]
\end{enumerate}
\end{theorem}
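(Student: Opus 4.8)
The proof of Theorem \ref{202105171618} proceeds by establishing the three assertions in order, with each one feeding into the next. First I would record the basic factorization that underlies everything: since $\lambda_n: M \to L_n$ is a minimal left $\rad^n$-approximation, by Lemma \ref{202007132105} it can be obtained (up to isomorphism) by composing a minimal left $\rad^{n-1}$-approximation $\lambda_{n-1}: M \to L_{n-1}$ with a minimal left $\rad$-approximation $\mu: L_{n-1} \to L_n'$ and then passing to a minimal part; but in fact $\mu$ itself is already left minimal (a minimal left $\rad$-approximation of $L_{n-1}$), so $L_n = L_n'$ and $\lambda_n = \mu \lambda_{n-1}$ up to isomorphism. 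By Lemma \ref{202105171427} applied to each indecomposable summand of $L_{n-1}$, the morphism $\mu$ is precisely the first morphism in a direct sum of AR-sequences starting from $L_{n-1}$, so we have a right exact sequence $L_{n-1} \xrightarrow{\lambda'_{n-1}=\mu} L_n \to \tau_1^{-1} L_{n-1} \to 0$ (using the notation that $\lambda'_{n-1}$ is the structure map, though here I mean the map $L_{n-1}\to L_n$; I would fix consistent notation in the write-up).

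\textbf{Assertion (1).} From the identification $\lambda_n = \mu\lambda_{n-1}$ I would argue that $\Coker\lambda_n \cong \Coker\mu$. Indeed $\lambda_{n-1}: M\to L_{n-1}$ is a minimal left $\rad^{n-1}$-approximation and therefore has a certain surjectivity property on the relevant Hom-quotients; more directly, since $\mu$ is a minimal left $\rad$-approximation of $L_{n-1}$ and $\lambda_{n-1}$ factors every $\rad^{n-1}$-morphism out of $M$, the image of $\mu\lambda_{n-1}$ equals the image of $\mu$ (one checks that $\lambda_{n-1}$ is, so to speak, ``surjective modulo nothing that matters'' for the cokernel computation — more carefully, any element of $L_{n-1}$ can be hit after enlarging, but the cleanest route is: $\Coker(\mu\lambda_{n-1})$ receives a surjection from $\Coker\mu$, and by a dimension/minimality count using Theorem \ref{202008172145} these agree). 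Then $\Coker\mu \cong \tau_1^{-1}L_{n-1}$ by the structure of the direct sum of AR-sequences recalled just before Definition \ref{202007141926}. This gives (1), with the cokernel map $\lambda'_n: L_n \to \tau_1^{-1}L_{n-1}$.

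\textbf{Assertions (2) and (3).} For (2), the map $\lambda'_n$ is the second morphism $L_n \to \tau_1^{-1}L_{n-1}$ in the direct sum of AR-sequences starting from $L_{n-1}$; the right $\rad$-fitting condition for $\lambda'_n$ is exactly the statement that this map is (a summand of) a minimal right $\rad$-approximation of $\tau_1^{-1}L_{n-1}$, which is the dual of Lemma \ref{202105171427}, and the left $\rad$-fitting condition is the corresponding dual statement obtained by viewing $\lambda'_n$ as part of the AR-sequence structure starting from $L_n$ — here one uses that $L_n$ being a minimal left $\rad^n$-approximation object, the minimal left $\rad$-approximation of $L_n$ has $\tau_1^{-1}L_n$ as cokernel and $L_n \to \tau_1^{-1}L_{n-1}$ is visibly a summand of it (this needs the observation that the component maps match up, via Theorem \ref{202008172145}). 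For (3), given $\breve\lambda: L_n \to L$, I would show the equivalence by two implications. If $(-\lambda'_n,\breve\lambda)^t: L_n \to \tau_1^{-1}L_{n-1}\oplus L$ is a minimal left $\rad$-approximation, then since $\tau_1^{-1}L_{n-1}$ already appears (it is $\Coker\lambda_n$), the ``new'' information in $L$ exactly captures all $\rad$-morphisms out of $L_n$ not already accounted for, and composing with $\lambda_n$ — which accounts for all $\rad^n$-morphisms out of $M$ into objects reachable in $n$ steps, by Lemma \ref{202105252216} — yields a minimal left $\rad^{n+1}$-approximation; the converse runs the same argument backwards. The key lemma here is Lemma \ref{202105252216}, which guarantees that composing with $\lambda_n$ does not collapse the $\rad$-degree, so minimality is preserved. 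The final displayed direct-sum-of-AR-sequences formula then follows by applying (1) and (2) one level up, i.e.\ to $L_n$ in place of $L_{n-1}$.

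\textbf{Main obstacle.} The routine parts are the cokernel identifications and the dualizations. The genuinely delicate step is (3), specifically bookkeeping that the map $(-\lambda'_n,\breve\lambda)^t$ being a minimal left $\rad$-approximation is \emph{equivalent} to — not merely implied by — $\breve\lambda\lambda_n$ being a minimal left $\rad^{n+1}$-approximation. One direction (building up) is straightforward from Lemma \ref{202105252216}; the subtle direction is showing that if $\breve\lambda\lambda_n$ is a minimal left $\rad^{n+1}$-approximation, then no summand of $L$ is redundant relative to $\tau_1^{-1}L_{n-1}$ and no irreducible map is missed — this requires a careful comparison of the bases furnished by Theorem \ref{202008172145} at levels $n$ and $n+1$, tracking how $\irr^{n+1}(M,-)$ is built from $\irr^n(M,L_n)$ and $\irr(L_n,-)$, and using left minimality of $\lambda_n$ to rule out cancellation against the $\tau_1^{-1}L_{n-1}$ component. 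That combinatorial-homological comparison is where the real work lies.
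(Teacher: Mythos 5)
There is a genuine gap, and it sits at the point you dismiss as routine. Your starting factorization is already incorrect: if $\mu\colon L_{n-1}\to K$ denotes the full minimal left $\rad$-approximation of $L_{n-1}$, then $K\cong \tau_{1}^{-1}L_{n-2}\oplus L_{n}$ and the component of $\mu$ into $\tau_{1}^{-1}L_{n-2}$ is (up to automorphism) $\lambda'_{n-1}$, which kills $\lambda_{n-1}$ since $\lambda'_{n-1}$ is its cokernel morphism. Hence $\mu\lambda_{n-1}=(0,\breve{\lambda}\lambda_{n-1})^{t}$ is \emph{not} left minimal whenever $\tau_{1}^{-1}L_{n-2}\neq 0$, so $L_{n}\neq K$ and $\lambda_{n}\neq\mu\lambda_{n-1}$; only the complementary component $\breve{\lambda}\lambda_{n-1}$ survives. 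For the same reason your computation of the cokernel in (1) via ``$\Coker(\mu\lambda_{n-1})\cong\Coker\mu$'' does not work (and in any case $\operatorname{im}(\mu\lambda_{n-1})=\operatorname{im}\mu$ is false in general). The correct identification $\Coker\lambda_{n}\cong\tau_{1}^{-1}L_{n-1}$ is obtained in the paper as part of Lemma \ref{2021050918010}(1), via the diagram argument of Lemma \ref{2020121919190}, and it needs the $\rad$-fitting hypothesis as input rather than producing it afterwards.

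The deeper problem is in (2): you assert that $\lambda'_{n}$ is ``visibly a summand'' of the minimal left $\rad$-approximation of $L_{n}$, with the component matching handled by Theorem \ref{202008172145}. This left $\rad$-fitting property is precisely the nontrivial content of the theorem and cannot be read off formally: the paper itself points out that the analogous statement fails in $\Dbmod{R}$ (the cone morphism of a minimal left $\rad^{n}$-approximation need not satisfy the left $\rad$-fitting condition there), so no argument using only the approximation formalism can establish it. The paper obtains it from the existence of left ladders, i.e.\ from \cite[Theorem 2.15]{Igusa-Todorov} and \cite[Theorem 3.3]{Iyama: ladder}, which is a substantive result about module categories (strict $\tau$-categories); once the ladder is in place, assertions (1)--(3) follow by induction from the bookkeeping Lemma \ref{2021050918010}, whose proof is elementary. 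In other words, the step you flag as the ``main obstacle'' (the equivalence in (3)) is the easy part, and the step you treat as an observation (left $\rad$-fitting of $\lambda'_{n}$) is the one requiring a real theorem, for which your proposal offers no argument.
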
 

First we provide the following lemma. 

\begin{lemma}\label{2020121919190}
Let $n \geq 1$, $M \in R\mod$ and $\lambda_{n}: M \to L_{n}$ be a minimal left $\rad^{n}$-approximation. 
We take $L_{n} \overset{f}{\to} N \overset{g}{\to} \tau_{1}^{-1}L_{n}  \to 0 $ to be a direct sum of AR-sequences starting from $L_{n}$. 
We consider  a morphism $\lambda: M \to L$. 
Assume that  there exists a split monomorphism $s: L \to N$ that fits the commutative diagram 
\begin{equation}\label{2020121919340}
\begin{xymatrix}{
M \ar[r]^{\lambda} \ar[d]_{\lambda_{n }} & L \ar[r]^{\rho} \ar[d]^{s} & \tau^{-1}_{1} L_{n}  \ar@{=}[d]  \ar[r]& 0\\
L_{n} \ar[r]_{f} & N \ar[r]_{g} & \tau_{1}^{-1} L_{n} \ar[r]& 0
}\end{xymatrix} 
\end{equation}
where the upper row is exact.
Then $\lambda$ is a minimal left $\rad^{n+1}$-approximation. 
\end{lemma}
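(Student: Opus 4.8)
The plan is to verify the two defining properties of a minimal left $\rad^{n+1}$-approximation separately: first that $\lambda$ lies in $\rad^{n+1}(M,L)$ and is a $\rad^{n+1}$-approximation, and then that it is left minimal. For the approximation property, the key observation is that $f$ is a minimal left $\rad$-approximation of $L_n$ (by Lemma \ref{202105171427}, since $f$ is minimal left almost split when restricted to each indecomposable summand, so the direct sum is a minimal left $\rad$-approximation). Composing, $f\lambda_n$ is a left $\rad^{n+1}$-approximation of $M$. The commutative square $s\lambda=f\lambda_n$ then shows $\lambda=s^{-1}_{\textup{split}}(f\lambda_n)$ up to the splitting of $s$; more precisely, using a split retraction $r:N\to L$ with $rs=\id_L$, we get $\lambda=rf\lambda_n$, so $\lambda$ factors through the $\rad^{n+1}$-approximation $f\lambda_n$ via $r$. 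Since $r$ applied to anything in $\rad^{n+1}$ stays in $\rad^{n+1}$ and $f\in\rad$ while $\lambda_n\in\rad^n$, we get $\lambda\in\rad^{n+1}(M,L)$; and any $g'\colon M\to Z$ in $\rad^{n+1}(M,Z)$ factors through $f\lambda_n$, hence through $\lambda$ provided $\lambda$ itself admits a ``section'' recovering $f\lambda_n$ — here I would instead argue directly: since $s$ is a split mono, $\lambda$ is a direct summand of $f\lambda_n$ as a morphism out of $M$, and a direct summand of a (minimal) left $\rad^{n+1}$-approximation that still ``sees'' all the relevant indecomposable targets is again a left $\rad^{n+1}$-approximation. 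The cleanest route is via Theorem \ref{202008172145}(2): I would show that the indecomposable summands of $L$ together with the components of $\lambda$ satisfy the basis condition in $\irr^{n+1}$.

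Concretely, the main step is to identify, using the second row of \eqref{2020121919340}, the $\irr^{n+1}$-classes. By Theorem \ref{202105171618}(1) applied with $L_{n}$ in place of $M$ (or directly from the AR-sequence structure), $N$ is the target of the minimal left $\rad$-approximation $f$ of $L_n=L_{n}$, hence by Theorem \ref{202008172145}(2) the components of $f$ give bases of $\irr^{1}(L_n,K)$ over $\ResEnd(K)$ for each indecomposable summand $K$ of $N$. Now Lemma \ref{202105252216} says that for any indecomposable summand $K$ of $N$ with canonical projection $p\colon N\to K$ not in the radical, the composite $p f\lambda_n=p s\lambda$ lies in $\rad^{n+1}(M,K)\setminus\rad^{n+2}(M,K)$. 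Since $s$ is a split monomorphism, the summands of $N$ split as those ``coming from'' $L$ and a complement; for a summand $K$ of $L$ the composite $ps\lambda$ is (up to an automorphism of $K$) a component of $\lambda$, and for a summand $K$ in the complement the corresponding component of $\lambda$ must nonetheless generate the same class because of the commuting square together with exactness of the top row forcing the complement to be absorbed — this is where I would use that $L\to\tau_1^{-1}L_n$ equals $g$ restricted along $s$, so the cokernel of $\lambda$ is $\tau_1^{-1}L_n$, matching Theorem \ref{202105171618}(1)'s requirement for a minimal left $\rad^{n+1}$-approximation.

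For left minimality: suppose $\phi\in\End(L)$ satisfies $\phi\lambda=\lambda$. Pushing through the square, $s\phi\lambda=s\lambda=f\lambda_n$. I would like to conclude $s\phi=\psi s$ for some $\psi\in\End(N)$ with $\psi f=f$; then left minimality of $f$ (which holds since $f$ is minimal left almost split on each summand, hence $\psi$ is an automorphism) plus the fact that $s$ is a split mono forces $\phi$ to be an automorphism. The existence of such a $\psi$ is exactly where the split monomorphism hypothesis on $s$ does its work: writing $N\cong L\oplus L''$ compatibly with $s$, the morphism $\phi\oplus\id_{L''}$ on $N$ satisfies $(\phi\oplus\id)f=f$ — this uses that $f$'s component into $L''$ factors through $\rad$ in a way compatible with $\phi\lambda=\lambda$; checking this compatibility is the only delicate bookkeeping.

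\medskip

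\noindent\textbf{Expected main obstacle.} The genuinely delicate point is the left-minimality argument: transferring an endomorphism $\phi$ of $L$ fixing $\lambda$ to an endomorphism of $N$ fixing $f$. Establishing the compatibility $(\phi\oplus\id_{L''})f=f$ requires carefully tracking how the AR-sequence $L_{n}\xrightarrow{f}N\xrightarrow{g}\tau_1^{-1}L_{n}\to0$ restricts and how the split complement $L''$ interacts with $\lambda_n$; a clean way to avoid hand computation is to invoke Theorem \ref{202008172145}(2) again, deducing minimality of $\lambda$ from the statement that the components of $\lambda$ form $\ResEnd$-bases of the relevant $\irr^{n+1}$ spaces (no repeated summands with redundant components), which is forced by the corresponding property of $f$ together with the split monomorphism $s$. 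I expect the write-up to proceed by first proving $\lambda$ is a (not necessarily minimal) left $\rad^{n+1}$-approximation via the retraction $r$, and then promoting to minimality by comparing indecomposable-summand multiplicities of $L$ against those dictated by Theorem \ref{202008172145}(2).
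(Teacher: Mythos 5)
Your first half is essentially fine, though more complicated than it needs to be: from $s\lambda=f\lambda_{n}$ and a retraction $r$ of $s$ you get $\lambda=rf\lambda_{n}\in\rad^{n+1}$, and since $f\lambda_{n}$ is a left $\rad^{n+1}$-approximation (composite of a minimal left $\rad^{n}$-approximation with a minimal left $\rad$-approximation, as in the proof of Lemma \ref{202007132105}), any $g'\in\rad^{n+1}(M,Z)$ factors as $g'=h f\lambda_{n}=(hs)\lambda$. That already gives the approximation property; no ``section recovering $f\lambda_{n}$'', no ``direct summand of an approximation'' argument, and no appeal to Theorem \ref{202008172145} is needed (and the summand argument as stated is false in general: a direct summand of a left approximation need not be one).

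The genuine gap is in your minimality argument. Given $\phi\in\End(L)$ with $\phi\lambda=\lambda$, you want $\psi=\phi\oplus\id_{L''}$ to satisfy $\psi f=f$. Writing $f=(f_{1},f_{2})^{t}$ in the decomposition $N\cong L\oplus L''$, the square $s\lambda=f\lambda_{n}$ only gives $f_{1}\lambda_{n}=\lambda$ and $f_{2}\lambda_{n}=0$; hence $\phi\lambda=\lambda$ yields $\phi f_{1}\lambda_{n}=f_{1}\lambda_{n}$, not $\phi f_{1}=f_{1}$, so the compatibility you flag as ``delicate bookkeeping'' does not follow and this route breaks down. Your fallback via Theorem \ref{202008172145}(2) is only a plan: you would have to show that the components of $\lambda$ form $\ResEnd$-bases of $\irr^{n+1}(M,K)$, i.e.\ spanning and independence, which is not sketched and amounts to reproving the statement by another method. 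The paper's proof avoids all of this with one observation you never use: in the modified diagram the cokernel morphism of $\lambda$ is $\rho=gs$, and $g$ belongs to $\rad$ (it is the epimorphism in a direct sum of AR-sequences), so $\rho\in\rad$; a morphism whose cokernel morphism is radical is left minimal, since a nonzero summand $L_{0}\subset L$ with vanishing $\lambda$-component would force $\rho$ to restrict to a split epimorphism onto $L_{0}$, contradicting $\rho\in\rad$. Incorporating that single step closes the gap.
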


\begin{proof}
There exists an isomorphism $ N \cong L \oplus L'$ induced from the split monomorphism $s: L \to  N$. 
Modifying  
the diagram \eqref{2020121919340} by this isomorphism, we obtain 
\[
\begin{xymatrix}{
M \ar[r]^{\lambda} \ar[d]_{\lambda_{n } } & L \ar[r]^{\rho} \ar[d]^{\tiny \begin{pmatrix} \id \\0 \end{pmatrix} } & \tau_{1}^{-1} L_{n} \ar@{=}[d]  \ar[r]& 0\\
L_{n} \ar[r] & L \oplus L' \ar[r]_{ (\rho, *) } &  \tau_{1}^{-1} L_{n} \ar[r]& 0
}\end{xymatrix} 
\]
It follows that $\lambda$ is a left $\rad^{n}$-approximation of $M$. It also follows that  $\rho$ belongs to $\rad$ and hence 
 $\lambda$ is a left minimal morphism. 
 \end{proof}

\begin{lemma}\label{2021050918010}
Let $n \geq 1$ be a positive integer, $M \in R\mod$ 
and $\lambda_{n}: M \to L_{n}$ a minimal left $\rad^{n}$-approximation, which fits an exact sequence  
$M \xrightarrow{ \lambda_{n}} L_{n} \xrightarrow{ \lambda'_{n} } C_{n} \to 0$. 

Assume that  $\lambda'_{n}: L_{n} \to C_{n}$ satisfies  the left $\rad$-fitting condition.
Let $\breve{\lambda}: L_{n} \to L$ 
be a morphism such that the morphism $f: =(-\lambda'_{n }, \breve{\lambda})^{t}: L_{n} \to C_{n } \oplus L$ 
is a minimal left $\rad$-approximation. 
We write a direct sum of AR-sequence obtained from $f$ as below:  
\[
L_{n } \xrightarrow{ (-\lambda'_{n}, \breve{\lambda})^{t}} C_{n} \oplus L \xrightarrow{(\alpha, \beta)}  \tau_{1}^{-1} L_{n} \to 0.  
\]

Then the following holds. 

\begin{enumerate}[(1)] 
\item
The composition $
\breve{\lambda} \lambda_{n } : M \to L
$ 
is a minimal left $\rad^{n +1}$-approximation 
and  the morphism $\beta: L \to \tau_{1}^{-1} L_{n} $ is a cokernel morphism of $\breve{\lambda}\lambda_{n }$.

\item 
If $\breve{\lambda'}: L_{n} \to L'$ is  a morphism such that 
the composition $\breve{\lambda'} \lambda_{n}: M \to L'$ is a minimal $\rad^{n+1}$-approximation, 
then the morphism 
\[
\begin{pmatrix} -\lambda'_{n} \\  \breve{\lambda'}\end{pmatrix} : L_{n } \to C_{n }  \oplus L' 
\]
is a minimal left $\rad$-approximation of $L_{n}$. 
\end{enumerate}

\end{lemma}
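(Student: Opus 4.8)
The plan is to derive both parts from Lemma~\ref{2020121919190} and from the uniqueness of minimal left $\rad^{n+1}$-approximations (Lemma~\ref{202007132105}). For part~(1), the exact sequence $M\xrightarrow{\lambda_{n}}L_{n}\xrightarrow{\lambda'_{n}}C_{n}\to0$ gives $\lambda'_{n}\lambda_{n}=0$, so for the canonical split monomorphism $s\colon L\to C_{n}\oplus L$ we have $s\,(\breve{\lambda}\lambda_{n})=(-\lambda'_{n},\breve{\lambda})^{t}\lambda_{n}$ and $(\alpha,\beta)\,s=\beta$. Hence, provided the sequence
\[
M\xrightarrow{ \ \breve{\lambda}\lambda_{n} \ }L\xrightarrow{ \ \beta \ }\tau_{1}^{-1}L_{n}\to0
\]
is exact, the diagram whose top row is this sequence, whose bottom row is the given direct sum of AR-sequences, and whose vertical maps are $\lambda_{n}$, $s$ and $\id_{\tau_{1}^{-1}L_{n}}$ satisfies the hypotheses of Lemma~\ref{2020121919190}; that lemma then shows that $\breve{\lambda}\lambda_{n}$ is a minimal left $\rad^{n+1}$-approximation, and $\beta$ is a cokernel morphism of it by construction. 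Thus part~(1) reduces to the exactness of the displayed sequence.

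To prove that exactness, I would argue as follows. Since $(-\lambda'_{n},\breve{\lambda})^{t}$ is the first morphism of a direct sum of AR-sequences, $(\alpha,\beta)$ is its cokernel (the only thing to notice is that injective direct summands of $L_{n}$ cause no trouble, because the cokernel of a minimal left almost split morphism out of an injective module is zero). Restricting $(-\lambda'_{n},\breve{\lambda})^{t}$ to $\im\lambda_{n}=\Ker\lambda'_{n}$, it takes values in $L=\im s$ and there coincides with $\breve{\lambda}|_{\im\lambda_{n}}$; this yields a commutative ladder with exact rows
\[
0\to\im\lambda_{n}\to L_{n}\xrightarrow{\lambda'_{n}}C_{n}\to0,\qquad 0\to L\xrightarrow{s}C_{n}\oplus L\xrightarrow{\pi_{C_{n}}}C_{n}\to0,
\]
the bottom one split and the right-hand vertical map $-\id_{C_{n}}$. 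The snake lemma then produces an isomorphism $\Coker(\breve{\lambda}|_{\im\lambda_{n}})\xrightarrow{\sim}\Coker\!\left((-\lambda'_{n},\breve{\lambda})^{t}\right)=\tau_{1}^{-1}L_{n}$ that is induced by $(\alpha,\beta)\circ s=\beta$. Since $\Coker(\breve{\lambda}|_{\im\lambda_{n}})=L/\im(\breve{\lambda}\lambda_{n})$ and $\beta\,\breve{\lambda}\lambda_{n}=\alpha\,\lambda'_{n}\lambda_{n}=0$, this exactly says that $\beta$ is a cokernel morphism of $\breve{\lambda}\lambda_{n}$, so the displayed sequence is exact. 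The delicate point here is identifying the connecting isomorphism of the snake lemma as $\beta$ itself, rather than merely obtaining an abstract isomorphism $\Coker(\breve{\lambda}\lambda_{n})\cong\tau_{1}^{-1}L_{n}$; this is the main obstacle of the proof.

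For part~(2), the left $\rad$-fitting hypothesis on $\lambda'_{n}$ together with the uniqueness of minimal left $\rad$-approximations provides a morphism $\breve{\lambda}\colon L_{n}\to L$ such that $(-\lambda'_{n},\breve{\lambda})^{t}\colon L_{n}\to C_{n}\oplus L$ is a minimal left $\rad$-approximation of $L_{n}$ (after composing with $\operatorname{diag}(-\id_{C_{n}},\id_{L})$ if needed to fix a sign). By part~(1), $\breve{\lambda}\lambda_{n}\colon M\to L$ is a minimal left $\rad^{n+1}$-approximation, and $\breve{\lambda'}\lambda_{n}$ is one by hypothesis, so Lemma~\ref{202007132105} gives an isomorphism $\psi\colon L\xrightarrow{\sim}L'$ with $\psi\,\breve{\lambda}\lambda_{n}=\breve{\lambda'}\lambda_{n}$. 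Then $(\psi\breve{\lambda}-\breve{\lambda'})\lambda_{n}=0$, so $\psi\breve{\lambda}-\breve{\lambda'}=h\,\lambda'_{n}$ for some $h\colon C_{n}\to L'$, since $\lambda'_{n}$ is the cokernel of $\lambda_{n}$. Therefore
\[
\begin{pmatrix}-\lambda'_{n}\\ \breve{\lambda'}\end{pmatrix}=\begin{pmatrix}\id_{C_{n}}&0\\ h&\psi\end{pmatrix}\begin{pmatrix}-\lambda'_{n}\\ \breve{\lambda}\end{pmatrix},
\]
and as the square matrix is an isomorphism $C_{n}\oplus L\to C_{n}\oplus L'$, the morphism $(-\lambda'_{n},\breve{\lambda'})^{t}$ is the composite of a minimal left $\rad$-approximation with an isomorphism, hence itself a minimal left $\rad$-approximation of $L_{n}$.
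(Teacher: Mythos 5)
Your proof is correct and follows essentially the same route as the paper: part (1) reduces to the diagram of Lemma \ref{2020121919190} with the split monomorphism $(0,\id)^{t}$, and part (2) uses uniqueness of minimal approximations plus factoring through the cokernel $\lambda'_{n}$ to produce the triangular isomorphism $\begin{pmatrix}\id&0\\ h&\psi\end{pmatrix}$, exactly as in the paper's $\epsilon=\begin{pmatrix}\id&0\\ \delta&\gamma\end{pmatrix}$. Your snake-lemma verification that $\beta$ induces an isomorphism $\Coker(\breve{\lambda}\lambda_{n})\cong\tau_{1}^{-1}L_{n}$ simply makes explicit the step the paper dismisses as ``easy to check''.
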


\begin{proof}
(1) 
We have the following commutative diagram both rows of which are exact. 
%\begin{equation}\label{202105111137}
\[
\begin{xymatrix}@C=60pt{
 M \ar[r]^{\breve{\lambda}\lambda_{n} } \ar[d]_{\lambda_{n}} 
& L \ar[r] \ar[d]^{( 0,\id )^{t}} & \Coker \breve{\lambda} \lambda_{n}  \ar@{-->}[d]^{\ind} \ar[r]& 0 \\
 L_{n}  \ar[r]_{( - \lambda'_{n} , \breve{\lambda} )^{t}} & C_{n} \oplus L \ar[r]_{(\alpha,\beta)} & \tau_{1}^{-1} L_{n} \ar[r]& 0.  
}\end{xymatrix}
\]
%\end{equation}
It is easy to check that the induced morphism $\ind$ is an isomorphism. 
Thus by Lemma \ref{2020121919190} the composition $\breve{\lambda}\lambda_{n}$ is a minimal left $\rad^{n +1}$-approximation. 
Moreover, it also  follows  that the morphism $\beta$ is a cokernel morphism of $\breve{\lambda}\lambda_{n}$.

(2) 
Let $\breve{\lambda'}: L_{n } \to L'$ be a morphism such that the composition 
$\breve{\lambda'}\lambda_{n }: M \to L'$ is a minimal left $\rad^{n+1}$-approximation. 
Then there exists an isomorphism $\gamma: L \to L'$ such that $\breve{\lambda'} \lambda_{n } = \gamma \breve{\lambda} \lambda_{n}$. 
Since $\lambda'_{n}$ is a cokernel  morphism of $\lambda_{n }$, there exists a morphism $\delta: C_{n} \to L'$ such that $\breve{\lambda'} = -\delta \lambda'_{n } + \gamma \breve{\lambda}$.  
It follows that the morphism $\epsilon: = \begin{pmatrix} \id & 0 \\ \delta& \gamma \end{pmatrix}: C_{n} \oplus L \to C_{n} \oplus L'$ 
is an isomorphism 
such that $(-\lambda'_{n}, \breve{\lambda'})^{t} = \epsilon (- \lambda'_{n }, \breve{\lambda})^{t}$. 
Thus we conclude that the morphism $(-\lambda'_{n}, \breve{\lambda'})^{t}$ is a minimal left $\rad$-approximation of $L_{n}$. 
\end{proof}

\begin{proof}[Proof of Theorem \ref{202105171618}]

Let $\lambda_{1}: M \to L_{1}$ be a minimal left $\rad$-approximation of $M$. 
Since it is a direct sum of left almost split morphisms, its cokernel $\Coker \lambda_{1}$ is isomorphic to $\tau_{1}^{-1}M$. 
It is clear that the cokernel morphism $\lambda'_{1}: L_{1} \to \tau_{1}^{-1} M$ satisfies the right $\rad$-fitting condition. 

We use the theory of ladders which was introduced by Igusa-Todorov \cite{Igusa-Todorov} in the case where  the base field $\kk$ is algebraically closed or $R$ is of finite representation type. 
We also refer  a generalization  due to  Iyama \cite{Iyama: ladder}. 
By \cite[Theorem 2.15]{Igusa-Todorov}, \cite[Theorem 3.3]{Iyama: ladder}, the morphism $\lambda'_{1}$ extends to 
a left ladder
\[
\begin{xymatrix}{ 
L_{1} \ar[r]^{\breve{\lambda}_{1}} \ar[d]_{\lambda'_{1}} &
L_{2} \ar[r]^{\breve{\lambda}_{1} }\ar[d]_{\lambda'_{2}} &
\cdots & 
L_{n} \ar[r]^{\breve{\lambda}_{n} }\ar[d]_{\lambda'_{n}} & 
L_{n +1} \ar[r]^{\breve{\lambda}_{n+1}} \ar[d]_{\lambda'_{n +1}} & \cdots \\ 
\tau_{1}^{-1} L_{0} \ar[r]_{\tilde{\lambda}_{1}} &
\tau_{1}^{-1} L_{1} \ar[r]_{\tilde{\lambda}_{2}} &
\cdots&
\tau_{1}^{-1} L_{n-1} \ar[r]_{\tilde{\lambda}_{n}} &\tau_{1}^{-1} L_{n} \ar[r]_{\tilde{\lambda}_{n+1}} &\cdots 
}\end{xymatrix} 
\]
More precisely, we have  commutative squares that yield 
 direct sums of AR-sequences 
\[
L_{n} \xrightarrow{( -\lambda'_{n}, \breve{\lambda}_{n})^{t}} \tau_{1}^{-1} L_{n -1} \oplus L_{n +1}
 \xrightarrow{ (\tilde{\lambda}_{n}, \lambda'_{n +1} )} \tau_{1}^{-1} L_{n } \to 0. 
\]
This in particular tells us that $\lambda'_{1}$ satisfies the left $\rad$-fitting condition. 

It follows from Lemma \ref{2021050918010} that 
the composition $\breve{\lambda}_{n -1} \breve{\lambda}_{n -2} \cdots \breve{\lambda}_{1} \lambda_{1} : M \to L_{n}$ 
is a minimal left $\rad^{n}$-approximation. 
It is straightforward to verify the statements by induction on $n\geq 1$ using Lemma \ref{2021050918010}. 
\end{proof}

\subsection{$\rad^{n}$-approximation in $\Dbmod{R}$} 

To discuss $\rad^{n}$-approximations in the derived category $\Dbmod{R}$, we need to assume that 
$R$ has finite global dimension. 
We set $\nu_{1}:= \nu \circ[-1]$, where $\nu := \tuD(R) \lotimes_{R}-$ is the Nakayama functor of $\Dbmod{R}$.
By Happel \cite{Happel Book}, $\Dbmod{R}$ has AR-triangles. 
For $M \in \Dbmod{R}$, by taking the direct sum of AR-triangles starting from indecomposable direct summands of $M$, we obtain
\[
M \to L \to \nu_{1}^{-1} M \to M[1]
\]
which we refer to as a direct sum of AR-triangles starting from $M$. 
We note that by Lemma \ref{202105171427}, the first morphism $M \to L$ is a minimal left $\rad$-approximation of $M$ 
in $\Dbmod{R}$. 
 
It is clear that an analogue of  Lemma \ref{202007132105} holds. 
Namely, for $n \geq 1$, an object $M \in \Dbmod{R}$ has a minimal left $\rad^{n}$-approximation.
We can also establish an analogue statement with Theorem \ref{202008172145}. 
However we remark that the complete analogue of Theorem \ref{202105171618} does not hold. 
The cone morphism $\lambda'_{n} : L_{n} \to C_{n}$ of a minimal left $\rad^{n}$-approximation does not necessarily satisfy 
the left $\rad$-fitting condition.

\begin{example} 
Let $Q$ be a Dynkin quiver with the Coxeter number $h$. We set $R:= \kk Q$. 
We show in Theorem \ref{kQ approximations theorem add} 
that if $n \geq h -1$, then a minimal left $\rad^{n}$-approximation is 
given by the zero morphism $\lambda_{n}: M \to 0$. 
Thus if $M \neq 0$, the cone morphism $\lambda'_{n}: 0 \to M[1]$ does not 
satisfy the left $\rad$-fitting condition. 
\end{example} 

In terms of Iyama's theory of $\tau$-categories, 
the difference comes from the fact that $R\mod$ is a strict $\tau$-category but $\Dbmod{R}$ is a $\tau$-category which is not strict.  

\subsubsection{} 

We use the following construction of minimal left $\rad^{n+1}$-approximations
from minimal left $\rad^{n}$-approximations, 
which is an analogue of Lemma \ref{2020121919190}.  
Since the proof is similar, we omit it.

\begin{lemma}\label{202012191919}
Let $n \geq 2$, $M \in \Dbmod{R}$ and $\lambda_{n}: M \to L_{n}$ be a minimal left $\rad^{n}$-approximation. 
Assume that $L_{n} \neq 0$. 
We take $L_{n} \overset{f}{\to} N \overset{g}{\to} \nu^{-1}_{1}(L_{n}) \to L_n[1]$ to be a direct sum of AR-triangles starting from $L_{n}$. 
Then a morphism $\lambda: M \to L$ is a minimal left $\rad^{n+1}$-approximation 
if there exists a split monomorphism $s: L \to N$ that fits the following commutative diagram 
\begin{equation}\label{202012191934}
\begin{xymatrix}{
M \ar[r]^{\lambda} \ar[d]_{\lambda_{n}} & L \ar[r]^{\rho} \ar[d]^{s} & \nu^{-1}_{1}(L_{n}) \ar@{=}[d]  \ar[r]& M[1] \ar[d]^{\lambda_{n}[1]}\\
L_{n} \ar[r]_{f} & N \ar[r]_{g} & \nu^{-1}_{1}(L_{n})  \ar[r]& L_{n}[1]
}\end{xymatrix} 
\end{equation}
whose upper row is an exact triangle.
\end{lemma}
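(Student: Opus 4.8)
The plan is to adapt the proof of Lemma~\ref{2020121919190} almost verbatim; the only structural change is that the bottom row of \eqref{202012191934} is now an exact triangle rather than a right exact sequence, so that only the verification of left minimality requires a different (triangulated) argument.

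First I would record what is needed about the AR-data. Since $L_{n} \xrightarrow{f} N \xrightarrow{g} \nu_{1}^{-1}(L_{n}) \to L_{n}[1]$ is a direct sum of AR-triangles starting from $L_{n}$, the morphism $f$ is a direct sum of minimal left almost split morphisms, hence by Lemma~\ref{202105171427} (together with the $n=1$ case of the analogue of Lemma~\ref{202007132105} for $\Dbmod{R}$) it is a minimal left $\rad$-approximation of $L_{n}$; in particular $f \in \rad(L_{n}, N)$. Dually $g$ is a direct sum of minimal right almost split morphisms, so $g \in \rad(N, \nu_{1}^{-1}(L_{n}))$. From the commutative diagram \eqref{202012191934} I then read off the identities $s\lambda = f\lambda_{n}$ (leftmost square) and $\rho = g s$ (middle square).

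The core step is to show that $f\lambda_{n}\colon M \to N$ is a left $\rad^{n+1}$-approximation of $M$. It lies in $\rad^{n+1}(M,N)$ because $f \in \rad(L_{n},N)$, $\lambda_{n} \in \rad^{n}(M,L_{n})$, and the subspaces $\rad^{k}$ form a two-sided ideal. For the universal property, take any $g' \colon M \to Z$ in $\rad^{n+1}(M,Z)$ and write it as a finite sum of composites $M \xrightarrow{u_{j}} Y_{j} \xrightarrow{v_{j}} Z$ with $u_{j} \in \rad^{n}(M,Y_{j})$ and $v_{j} \in \rad(Y_{j},Z)$. Since $\lambda_{n}$ is a left $\rad^{n}$-approximation, each $u_{j}$ factors as $u_{j} = w_{j}\lambda_{n}$ for some $w_{j}\colon L_{n}\to Y_{j}$, so $g' = e\lambda_{n}$ where $e := \sum_{j} v_{j} w_{j} \in \rad(L_{n},Z)$, using again that $\rad$ is an ideal. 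As $f$ is a left $\rad$-approximation of $L_{n}$, $e = e' f$ for some $e'\colon N \to Z$, whence $g' = e'(f\lambda_{n})$ factors through $f\lambda_{n}$. Now, since $f\lambda_{n} = s\lambda$ and $s$ is a split monomorphism with a retraction $r$, this simultaneously gives $\lambda = r s\lambda = r f\lambda_{n} \in \rad^{n+1}(M,L)$ and that $\lambda$ itself is a left $\rad^{n+1}$-approximation: for $g'$ as above, $g' = e' s\lambda = (e's)\lambda$.

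Finally I would check that $\lambda$ is left minimal. Using $\rho = g s \in \rad(L, \nu_{1}^{-1}(L_{n}))$ and the exact triangle $M \xrightarrow{\lambda} L \xrightarrow{\rho} \nu_{1}^{-1}(L_{n}) \to M[1]$ in the upper row: if $\phi\colon L\to L$ satisfies $\phi\lambda = \lambda$, then $(\phi - \id)\lambda = 0$, so $\phi - \id$ factors through $\rho$, hence $\phi - \id \in \rad\,\End_{\Dbmod{R}}(L)$ and $\phi$ is an isomorphism. Combined with the previous step, this shows that $\lambda$ is a minimal left $\rad^{n+1}$-approximation. I expect this last paragraph to be the only delicate point: in the abelian situation of Lemma~\ref{2020121919190} left minimality is read off from a genuine cokernel sequence, whereas here one must instead invoke the triangulated structure to obtain the factorization of $\phi-\id$ through $\rho$ — which is precisely why the hypotheses $L_{n}\neq 0$ and "upper row an exact triangle" are imposed.
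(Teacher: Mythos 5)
Your proof is correct and takes essentially the same route as the paper, which omits the argument precisely because it is the direct adaptation of Lemma~\ref{2020121919190}: you show the composite $f\lambda_{n}$ is a left $\rad^{n+1}$-approximation, use the split monomorphism $s$ (via $s\lambda=f\lambda_{n}$ and a retraction) to transfer this to $\lambda$, and get left minimality from $\rho=gs\in\rad$ together with the exact triangle in the upper row. (Only your closing aside is slightly off: the hypothesis $L_{n}\neq 0$ is imposed so that a direct sum of AR-triangles starting from $L_{n}$ exists at all, not specifically for the minimality step.)
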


An analogue of Lemma \ref{2021050918010} also holds.

\begin{lemma}\label{202105091801}
Let $n \geq 1$ be a positive integer, $M \in \Dbmod{R}$ 
and $\lambda_{n}: M \to L_{n}$ a minimal left $\rad^{n }$-approximation, which fits an exact triangle 
$M \xrightarrow{ \lambda_{n}} L_{n} \xrightarrow{ \lambda'_{n} } C \xrightarrow{\lambda''_{n}} M[1]$. 

Assume that  $\lambda'_{n}: L_{n} \to C_{n}$ satisfies  the left $\rad$-fitting condition.
Let $\breve{\lambda}: L_{n} \to L$ 
be a morphism such that the morphism $f: =(-\lambda'_{n}, \breve{\lambda})^{t}: L_{n} \to C_{n} \oplus L$ 
is a minimal left $\rad$-approximation. 
We write the direct sum of AR-triangles obtained from $f$ as below:  
\[
L_{n} \xrightarrow{ (-\lambda'_{n}, \breve{\lambda})^{t}} C_{n} \oplus L \xrightarrow{(\alpha, \beta)} C \to L_{n}[1].  
\]

Then the following holds. 

\begin{enumerate}[(1)] 
\item
The composition $
\breve{\lambda} \lambda_{n} : M \to L
$ 
is a minimal left $\rad^{n+1}$-approximation 
and  the morphism $\beta: L \to C$ is a cone morphism of $\breve{\lambda}\lambda_{n}$.

\item 
If $\breve{\lambda'}: L_{n} \to L'$ is  a morphism such that 
the composition $\breve{\lambda'} \lambda_{n}: M \to L'$ is a minimal $\rad^{n+1}$-approximation, 
then the morphism 
\[
\begin{pmatrix} -\lambda'_{n} \\  \breve{\lambda'}\end{pmatrix} : L_{n} \to C_{n}  \oplus L' 
\]
is a minimal left $\rad$-approximation of $L_{n}$. 
\end{enumerate}

\end{lemma}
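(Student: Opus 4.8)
The plan is to transport the proof of Lemma~\ref{2021050918010} to the triangulated category $\Dbmod{R}$, replacing exact sequences by exact triangles, cokernels by cones, $\tau_{1}^{-1}$ by $\nu_{1}^{-1}$, and the input Lemma~\ref{2020121919190} by its triangulated counterpart Lemma~\ref{202012191919}. The case $L_{n}=0$ is trivial: then $\rad^{n}(M,-)=0$, so $\breve{\lambda}$, $L$ and all objects in sight vanish and both assertions are vacuous; so from now on assume $L_{n}\neq 0$.

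For part~(1), first I would record that the square
\[
\begin{xymatrix}@C=55pt{
M \ar[r]^-{\breve{\lambda}\lambda_{n}} \ar[d]_{\lambda_{n}} & L \ar[d]^{(0,\id)^{t}} \\
L_{n} \ar[r]_-{(-\lambda'_{n},\breve{\lambda})^{t}} & C_{n}\oplus L
}\end{xymatrix}
\]
commutes, since $\lambda'_{n}\lambda_{n}=0$ by the exact triangle $M\xrightarrow{\lambda_{n}}L_{n}\xrightarrow{\lambda'_{n}}C_{n}\xrightarrow{\lambda''_{n}}M[1]$. Next I would check that this square is homotopy Cartesian (Section~\ref{section: homotopy Cartesian square}): conjugating the middle term of its total complex $M\to L\oplus L_{n}\to C_{n}\oplus L\to M[1]$ by the shear isomorphism $(l,x)\mapsto(l+\breve{\lambda}(x),x)$ of $L\oplus L_{n}$ followed by the transposition $L\oplus L_{n}\cong L_{n}\oplus L$ turns the first two maps into $\binom{-\lambda_{n}}{0}$ and $(-\lambda'_{n})\oplus\id_{L}$, exhibiting the total complex as the direct sum of the exact triangle on $M\xrightarrow{-\lambda_{n}}L_{n}\xrightarrow{-\lambda'_{n}}C_{n}$ and the contractible triangle $0\to L\xrightarrow{\id}L\to 0$, hence as an exact triangle. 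Being homotopy Cartesian, the square induces an isomorphism between the cones of its horizontal morphisms, compatible with the maps out of $L$ and out of $C_{n}\oplus L$; identifying $\cone\big((-\lambda'_{n},\breve{\lambda})^{t}\big)$ with $\nu_{1}^{-1}(L_{n})$ via the given direct sum of AR-triangles $L_{n}\xrightarrow{(-\lambda'_{n},\breve{\lambda})^{t}}C_{n}\oplus L\xrightarrow{(\alpha,\beta)}\nu_{1}^{-1}(L_{n})\to L_{n}[1]$ and using $(\alpha,\beta)\circ(0,\id)^{t}=\beta$, this produces an exact triangle $M\xrightarrow{\breve{\lambda}\lambda_{n}}L\xrightarrow{\beta}\nu_{1}^{-1}(L_{n})\to M[1]$. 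Feeding this exact triangle, the split monomorphism $s=(0,\id)^{t}\colon L\to C_{n}\oplus L$, and the above AR-triangle into Lemma~\ref{202012191919} shows that $\breve{\lambda}\lambda_{n}$ is a minimal left $\rad^{n+1}$-approximation and that $\beta$ is a cone morphism of it, which is~(1).

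For part~(2), the argument should run word for word as in the proof of Lemma~\ref{2021050918010}(2), using only the formal long exact sequences of a triangulated category. Since $\breve{\lambda}\lambda_{n}$ and $\breve{\lambda'}\lambda_{n}$ are both minimal left $\rad^{n+1}$-approximations of $M$, the $\Dbmod{R}$-analogue of Lemma~\ref{202007132105} gives an isomorphism $\gamma\colon L\to L'$ with $\breve{\lambda'}\lambda_{n}=\gamma\breve{\lambda}\lambda_{n}$; then $(\breve{\lambda'}-\gamma\breve{\lambda})\lambda_{n}=0$, so applying $\Hom_{\Dbmod{R}}(-,L')$ to the exact triangle of $\lambda_{n}$ produces $\delta\colon C_{n}\to L'$ with $\breve{\lambda'}=-\delta\lambda'_{n}+\gamma\breve{\lambda}$. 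Hence $\epsilon=\begin{pmatrix}\id&0\\ \delta&\gamma\end{pmatrix}\colon C_{n}\oplus L\to C_{n}\oplus L'$ is an isomorphism with $(-\lambda'_{n},\breve{\lambda'})^{t}=\epsilon\,(-\lambda'_{n},\breve{\lambda})^{t}$, so $(-\lambda'_{n},\breve{\lambda'})^{t}$ is a minimal left $\rad$-approximation of $L_{n}$.

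The step I expect to be the main obstacle is the production of the exact triangle $M\xrightarrow{\breve{\lambda}\lambda_{n}}L\xrightarrow{\beta}\nu_{1}^{-1}(L_{n})\to M[1]$ with the correct first map: in $R\mod$ this reduced to the elementary isomorphism of cokernels in \eqref{202105111137}, but here one must extract it from the homotopy Cartesian square / octahedral axiom, and some care is needed to verify that the comparison of cones really is an isomorphism compatible with $\beta$ rather than merely a morphism of triangles. Once that is in place, everything downstream — the appeal to Lemma~\ref{202012191919}, the identification of $\beta$ as a cone morphism, and all of part~(2) — is purely formal.
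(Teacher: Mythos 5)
Your proposal is correct and follows essentially the same route as the paper: the paper also folds the square into a triangle isomorphic to the direct sum of the triangle of $\lambda_{n}$ and a contractible one, concludes it is homotopy Cartesian, identifies $\beta$ as a cone morphism of $\breve{\lambda}\lambda_{n}$, and then applies Lemma \ref{202012191919}, with part (2) argued exactly as in Lemma \ref{2021050918010}(2). The cone-comparison step you flag as the main obstacle is precisely what the paper disposes of by citing \cite[Lemma 1.4.4]{Neeman}, which is the statement you invoke about homotopy Cartesian squares.
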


\begin{proof}
(1) 
Observe that there is the following commutative diagram whose vertical arrows are isomorphisms 
\[
\begin{xymatrix}@C=60pt{ 
M \ar@{=}[d] \ar[r]^{( -\lambda_{n}, \breve{\lambda}\lambda_{n})^{t}} &
 L_{n} \oplus L \ar[d]^{\tiny \begin{pmatrix} -\id & 0 \\ \breve{\lambda} & \id \end{pmatrix} }
  \ar[r]^{\tiny \begin{pmatrix} \lambda'_{n}  & 0 \\   \breve{\lambda}& \id \end{pmatrix} }&
  C_{n } \oplus L \ar[r]^{(-\lambda''_{n }, 0)} \ar[d]^{\tiny \begin{pmatrix} -\id  & 0 \\   0& \id \end{pmatrix} } & M[1] \ar@{=}[d] \\ 
M \ar[r]_{( \lambda_{n }, 0)^{t}} &
 L_{n} \oplus L \ar[r]_{\tiny \begin{pmatrix} \lambda'_{n}  & 0 \\   0& \id \end{pmatrix} }&
  C_{n} \oplus L \ar[r]_{(\lambda''_{n}, 0)}  & M[1] 
  }\end{xymatrix}
  \] 
  Since the bottom row is an exact triangle, so is the upper row. 
It follows that the square below is a homotopy Cartesian square (see Section \ref{section: homotopy Cartesian square}) 
\[
\begin{xymatrix}@C=60pt{
M \ar[r]^{\breve{\lambda}\lambda_{n} } \ar[d]_{\lambda_{n}} & L   \ar[d]^{( 0,\id )^{t}}  \\
L_{n }  \ar[r]_{( -\lambda'_{n} , \breve{\lambda} )^{t}} & C_{n} \oplus L. 
}\end{xymatrix}
\]

By  \cite[Lemma 1.4.4]{Neeman}, the morphism $\beta: N \to K$ is a cone morphism of the composition $\breve{\lambda}\lambda$. 
Therefore we have the following diagram 
%\begin{equation}\label{202105111137}
\[
\begin{xymatrix}@C=60pt{
C [-1] \ar@{=}[d] \ar[r] & M \ar[r]^{\breve{\lambda}\lambda_{n} } \ar[d]_{\lambda_{n}} & L \ar[r]^{\beta} \ar[d]^{( 0,\id )^{t}} & C \ar@{=}[d] \\
C[-1] \ar[r] & L_{n}  \ar[r]_{( -\lambda'_{n} , \breve{\lambda} )^{t}} & C \oplus L \ar[r]_{(\alpha,\beta)} & C,
}\end{xymatrix}
\]
%\end{equation}
both rows of which are exact.
It follows from Lemma \ref{202012191919} that 
the composition $\breve{\lambda}\lambda_{n}$ is a minimal left $\rad^{n+1 }$-approximation of $M$.

(2) is proved in the same way as Lemma \ref{2021050918010}(2).
\end{proof}

As a corollary we deduce the following statement.

\begin{corollary}\label{2021050917551}
Let $n \geq 1$ be a positive integer, $M \in \Dbmod{R}$ and 
$\lambda_{n}: M \to L_{n}$ be a minimal  left $\rad^{n}$-approximation, which fits into an exact triangle
\[
M \xrightarrow{ \lambda_{n}} L_{n} \xrightarrow{ \lambda'_{n}} C_{n} \xrightarrow{ \lambda''_{n}} M[1]. 
\]
Assume that the morphism $\lambda'_{n}$ satisfies the left $\rad$-fitting condition. 
Then for a morphism  $\breve{\lambda}; L_{n} \to L$ the following conditions are equivalent. 
\begin{enumerate}[(1)] 

\item 
The composition $\breve{\lambda}\lambda_{n}: M \to L$ is a minimal left $\rad^{n}$-approximation 

\item The morphism $(-\lambda'_{n}, \breve{\lambda})^{t}: L_{n} \to C_{n} \oplus L$ is a minimal left $\rad$-approximation.

\item 
There exists the following commutative diagram 
\[
\begin{xymatrix}{ 
& M \ar@{=}[r] \ar[d]_{\lambda_{n}} & M \ar[d]^{\breve{\lambda} \lambda_{ n}}& \\
K[-1] \ar@{=}[d] \ar[r] & L_{n} \ar[d]_{\lambda'_{n}} \ar[r]^{\breve{\lambda}} &
L \ar[r] \ar[d]^{\beta}  & K\ar@{=}[d] \\
K [-1] \ar[r] & C_{n} \ar[d]_{\lambda''_{n}} \ar[r]_{\alpha} & C \ar[r] \ar[d] & K \\ 
& M [1] \ar@{=}[r]  & M  & 
}\end{xymatrix} 
\]
where the middle columns and the middle rows are exact and 
the middle square is homotopy Cartesian 
that is folded to a direct sum of AR-triangles sating from $L_{n}$ 
\[
L_{n} \xrightarrow{ (-\lambda'_{n}, \breve{\lambda})^{t}} C_{n} \oplus L \xrightarrow{(\alpha, \beta)} C \to L_{n}[1].  
\]

\end{enumerate}

\end{corollary}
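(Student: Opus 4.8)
The plan is to deduce the whole corollary formally from Lemma~\ref{202105091801}; the left $\rad$-fitting hypothesis on $\lambda'_{n}$ is exactly what is needed to put oneself in a position where part~(2) of that lemma applies. The first step I would take is to manufacture a companion morphism: since $\Dbmod{R}$ admits minimal left $\rad$-approximations (the $\Dbmod{R}$-analogue of Lemma~\ref{202007132105}, noted above) and $\lambda'_{n}$ satisfies the left $\rad$-fitting condition, there is a minimal left $\rad$-approximation of $L_{n}$ of the form $(-\lambda'_{n},\breve{\mu})^{t}\colon L_{n}\to C_{n}\oplus L^{\circ}$ for a suitable $\breve{\mu}\colon L_{n}\to L^{\circ}$ (the sign being harmless, as $\mathrm{diag}(-\id,\id)$ is an automorphism of $C_{n}\oplus L^{\circ}$). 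By Lemma~\ref{202105091801}(1), $\breve{\mu}\lambda_{n}\colon M\to L^{\circ}$ is then automatically a minimal left $\rad^{n+1}$-approximation of $M$, and this $\breve{\mu}$ is the bridge between conditions (1) and (2).

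Next I would prove the equivalences. The implication $(2)\Rightarrow(1)$ is just the first assertion of Lemma~\ref{202105091801}(1). For $(1)\Rightarrow(2)$ I would invoke Lemma~\ref{202105091801}(2), taking that lemma's $\breve{\lambda}$ to be $\breve{\mu}$ and its $\breve{\lambda'}$ to be our $\breve{\lambda}$: the standing hypothesis of the lemma (existence of a morphism making $(-\lambda'_{n},-)^{t}$ a minimal left $\rad$-approximation) is met by $\breve{\mu}$, and by~(1) the composition $\breve{\lambda}\lambda_{n}$ is a minimal left $\rad^{n+1}$-approximation, so the conclusion of~(2) of the lemma is precisely that $(-\lambda'_{n},\breve{\lambda})^{t}$ is a minimal left $\rad$-approximation, i.e.\ condition~(2). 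For $(2)\Leftrightarrow(3)$ I would form the octahedral $3\times 3$ diagram attached to the composable pair $M\xrightarrow{\lambda_{n}}L_{n}\xrightarrow{\breve{\lambda}}L$, choosing the cone triangle over $\lambda_{n}$ to be the prescribed triangle $M\to L_{n}\xrightarrow{\lambda'_{n}}C_{n}\xrightarrow{\lambda''_{n}}M[1]$ (possible since triangles on a fixed morphism are unique up to isomorphism). This produces exactly the diagram displayed in~(3), with $C:=\mathrm{cone}(\breve{\lambda}\lambda_{n})$ and $K:=\mathrm{cone}(\breve{\lambda})$, all rows and columns exact and the middle square homotopy Cartesian by \cite[Lemma~1.4.4]{Neeman}; hence it folds to a triangle $L_{n}\xrightarrow{(-\lambda'_{n},\breve{\lambda})^{t}}C_{n}\oplus L\xrightarrow{(\alpha,\beta)}C\to L_{n}[1]$. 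Statement~(3) then consists of the single extra assertion that this triangle is a direct sum of AR-triangles starting from $L_{n}$, which by the definition of a minimal left $\rad$-approximation together with Lemma~\ref{202105171427} is equivalent to $(-\lambda'_{n},\breve{\lambda})^{t}$ being a minimal left $\rad$-approximation, i.e.\ to~(2); both $(2)\Rightarrow(3)$ and $(3)\Rightarrow(2)$ read off immediately from this.

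The only load-bearing step is $(1)\Rightarrow(2)$, and its entire difficulty is concentrated in producing the companion morphism $\breve{\mu}$ in the first paragraph: this is impossible in general without the left $\rad$-fitting hypothesis, as the Example preceding the corollary shows — for $Q$ Dynkin and $n\ge h-1$ one has $\lambda_{n}\colon M\to 0$, whose cone morphism $0\to M[1]$ fails the fitting condition, and there the equivalence genuinely breaks. Everything else is bookkeeping with the octahedron and requires no input beyond Lemma~\ref{202105091801}.
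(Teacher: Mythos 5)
Your proposal is correct and follows essentially the route the paper intends: the corollary is deduced from Lemma~\ref{202105091801}, with the left $\rad$-fitting hypothesis used exactly as you do to produce the auxiliary morphism $\breve{\mu}$ so that both parts of that lemma apply, and with $(2)\Leftrightarrow(3)$ read off from the octahedron/Neeman construction already underlying the lemma's proof together with Lemma~\ref{202105171427}. Note only that in condition (1) of the statement ``$\rad^{n}$'' should be read as ``$\rad^{n+1}$'', as your argument (consistently with Lemma~\ref{202105091801} and the later applications) does.
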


Consequently, 
we obtain an analogue of Theorem \ref{202105171618} for $\Dbmod{R}$. 
We remark that compared to the previous theorem, we need to assume the left $\rad$-fitting conditions 
in the next theorem.

\begin{theorem}\label{202105181644} 

Let $M \in \Dbmod{R}$.  
For $n \geq 1$, we denote a minimal left $\rad^{n}$-approximation by $\lambda_{n}: M \to L_{n}$. 
By convention we set $L_{0} := M$ and $\lambda_{0}: = \id_{M}$. 

Assume there exists $n$ such that the cone morphism $\lambda'_{m}: L_{m} \to C_{m}$ satisfies the left $\rad$-fitting conditions 
for $m = 0,1 \cdots, n -1$.
Then the cone  $C_{m} $ of $\lambda_{m}$ is isomorphic to $\nu_{1}^{-1} L_{m -1}$ for $m= 1,2, \ldots, n$.  
Moreover, the cone morphism $\lambda'_{n}: L_{n} \to C_{n}$ satisfies the right $\rad$-fitting condition. 
\end{theorem}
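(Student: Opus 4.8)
The plan is to prove Theorem \ref{202105181644} by induction on $m$, mirroring the argument for Theorem \ref{202105171618} but using the derived-category analogues (Lemma \ref{202012191919}, Lemma \ref{202105091801}, Corollary \ref{2021050917551}) in place of the abelian ones, while keeping careful track of the extra left $\rad$-fitting hypotheses that are forced on us because $\Dbmod{R}$ is a $\tau$-category that is not strict. The base case $m=1$ is the statement that a minimal left $\rad$-approximation $\lambda_1\colon M\to L_1$ is (up to direct sums) a minimal left almost split morphism, so its cone is $\nu_1^{-1}M = \nu_1^{-1}L_0$; this is immediate from the definition of a direct sum of AR-triangles starting from $M$ together with Lemma \ref{202105171427}.

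For the inductive step, I would assume that for some $m$ with $1\le m\le n-1$ we have already constructed minimal left $\rad^{m}$-approximations $\lambda_m\colon M\to L_m$ with cone $C_m\cong \nu_1^{-1}L_{m-1}$, and that the cone morphism $\lambda'_m\colon L_m\to C_m$ satisfies the left $\rad$-fitting condition (the hypothesis of the theorem). Form a direct sum of AR-triangles starting from $L_m$,
\[
L_m \xrightarrow{ (-\lambda'_m,\breve{\lambda})^{t}} C_m\oplus L_{m+1} \xrightarrow{(\alpha,\beta)} \nu_1^{-1}L_m \to L_m[1],
\]
which is possible precisely because $\lambda'_m$ satisfies the left $\rad$-fitting condition: the map $\lambda'_m$ is then a direct summand of a minimal left $\rad$-approximation of $L_m$, and the complementary summand $\breve{\lambda}\colon L_m\to L_{m+1}$ together with the natural connecting data assembles into that AR-triangle. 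Here we also use $L_m\neq 0$; if some $L_m = 0$ the subsequent approximations are the zero morphisms $M\to 0$ and there is nothing to prove. Now Lemma \ref{202105091801}(1) applies: the composite $\breve{\lambda}\lambda_m\colon M\to L_{m+1}$ is a minimal left $\rad^{m+1}$-approximation and $\beta\colon L_{m+1}\to \nu_1^{-1}L_m$ is a cone morphism of it. By uniqueness of minimal left $\rad^{m+1}$-approximations (the derived analogue of Lemma \ref{202007132105}), $L_{m+1}$ is isomorphic to our constructed object and the cone $C_{m+1}$ is isomorphic to $\nu_1^{-1}L_m$, as required. We continue this up to $m=n$, which uses the left $\rad$-fitting hypothesis on $\lambda'_{n-1}$ (the last one assumed) to carry out the step producing $\lambda_n$ and $C_n\cong \nu_1^{-1}L_{n-1}$.

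It remains to show that $\lambda'_n\colon L_n\to C_n$ satisfies the \emph{right} $\rad$-fitting condition. For this I would use Corollary \ref{2021050917551}, or rather its construction: since $\lambda'_{n-1}$ satisfies the left $\rad$-fitting condition, $\breve{\lambda}\lambda_{n-1}=\lambda_n$ is a minimal left $\rad^{n}$-approximation, and by the equivalence (1)$\Leftrightarrow$(3) in Corollary \ref{2021050917551} we obtain the commutative diagram there whose middle square is homotopy Cartesian and folds to a direct sum of AR-triangles starting from $L_{n-1}$,
\[
L_{n-1} \xrightarrow{(-\lambda'_{n-1},\breve{\lambda})^{t}} C_{n-1}\oplus L_n \xrightarrow{(\alpha,\beta)} \nu_1^{-1}L_{n-1}\cong C_n \to L_{n-1}[1].
\]
The map $\beta\colon L_n\to \nu_1^{-1}L_{n-1}$ is identified in Lemma \ref{202105091801}(1) with the cone morphism $\lambda'_n$, and by construction it is a component of the map $(\alpha,\beta)$ out of $C_{n-1}\oplus L_n$, which is a minimal right $\rad$-approximation of $\nu_1^{-1}L_{n-1}$ (being the second map of an AR-triangle, hence minimal right almost split on each indecomposable summand, and a direct sum thereof). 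Thus $\lambda'_n=\beta$ is a direct summand of a minimal right $\rad$-approximation of $C_n$, which is exactly the right $\rad$-fitting condition.

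The main obstacle I anticipate is the bookkeeping around \emph{which} fitting conditions are available at \emph{which} stage: unlike the strict case, one cannot conclude that the cone morphism $\lambda'_m$ satisfies the left $\rad$-fitting condition for free (the Dynkin example with $\lambda_n\colon M\to 0$ shows it genuinely fails eventually), so every invocation of Lemma \ref{202105091801} and Corollary \ref{2021050917551} must be justified by the standing hypothesis rather than by something one proves along the way. A secondary subtlety is the edge case $L_m=0$: one must check that the statement ``$C_m\cong\nu_1^{-1}L_{m-1}$'' and ``$\lambda'_n$ satisfies the right $\rad$-fitting condition'' are interpreted correctly (via the convention $0\to 0\to 0\to 0$) when some approximation object vanishes, so that the induction does not break down at that point.
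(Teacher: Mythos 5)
Your proposal is correct and follows essentially the route the paper intends: Theorem \ref{202105181644} is stated there as a direct consequence of Lemma \ref{202105091801} and Corollary \ref{2021050917551}, and the inductive argument you write out (AR-triangle at each stage via the assumed left $\rad$-fitting condition, then uniqueness of minimal approximations and of cones, then the right $\rad$-fitting of $\lambda'_n$ as a summand of the minimal right $\rad$-approximation $(\alpha,\beta)$ of $\nu_1^{-1}L_{n-1}$) is exactly the intended analogue of the proof of Theorem \ref{202105171618}. Only your edge-case remark could be tightened: by the paper's convention a morphism $0\to N$ satisfies the left $\rad$-fitting condition only if $N=0$, so under the hypothesis one automatically has $L_m\neq 0$ for $m\le n-1$ (when $M\neq 0$), while $L_n=0$ causes no harm since $0\to C_n$ trivially satisfies the right $\rad$-fitting condition.
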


\subsubsection{} 

We use the following easy observation later.

\begin{lemma}\label{202105171508} 
Let $f: M \to N$ be a morphism that satisfies left $\rad$-fitting condition 
and $g: M \to L$ a minimal left $\rad$-approximation. 
If a  morphism $t: L \to N$ satisfies the equality $tg = f$, then it is a split epimorphism. 
\end{lemma}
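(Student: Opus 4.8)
\textbf{Plan for the proof of Lemma \ref{202105171508}.}
The statement is that if $f: M \to N$ satisfies the left $\rad$-fitting condition and $g: M \to L$ is a minimal left $\rad$-approximation, then any $t: L \to N$ with $tg = f$ is a split epimorphism. The plan is to unravel the definition of the left $\rad$-fitting condition and use the universal property of a minimal left $\rad$-approximation. By Definition \ref{202007141926}, since $f$ satisfies the left $\rad$-fitting condition, there is a minimal left $\rad$-approximation $g': M \to L'$ of $M$ and a split epimorphism $s: L' \to N$ with $s g' = f$. But minimal left $\rad$-approximations of $M$ are unique up to isomorphism (by the uniqueness part of Lemma \ref{202007132105}, or simply by the left minimality built into the definition), so there is an isomorphism $\varphi: L \to L'$ with $\varphi g = g'$. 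Hence $s \varphi g = s g' = f = t g$.

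The key step is then to deduce $s\varphi = t$ from the equality $s\varphi g = tg$. This is where the left minimality of $g$ enters: since $g$ is a left $\rad$-approximation and $f = tg$ belongs to $\rad(M,N)$, both $s\varphi$ and $t$ are morphisms $L \to N$ equalizing after precomposition with $g$, i.e. $(s\varphi - t) g = 0$. A morphism $h: L \to N$ with $hg = 0$ need not be zero in general; but here I would instead argue directly that $t$ itself, viewed through the factorization, must be a split epimorphism. More precisely: write $s\varphi = t + (s\varphi - t)$ where $(s\varphi - t)g = 0$. The cleanest route is to observe that $g$ being a minimal left $\rad$-approximation means $g$ is \emph{left minimal} among all left $\rad$-approximations, and combine this with the fact that $s\varphi$ is a split epimorphism. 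Concretely, $s\varphi$ split epi gives a section $j: N \to L$ with $(s\varphi) j = \id_N$; I would then show $tj$ is an automorphism of $N$ using that $N$ is a summand of $L$ in a compatible way, whence $t$ is split epi.

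The honest shortest argument is: replacing $g$ by the isomorphic $g'$ we may assume from the start that $L \cong N \oplus N'$ and $g = (f, f')^t$ under this decomposition, with the projection $L \to N$ being a left $\rad$-approximation's component. Now $t: L \to N$ with $tg = f$ means, writing $t = (t_1, t_2)$ with $t_1: N \to N$, $t_2: N' \to N$, that $t_1 f + t_2 f' = f$, i.e. $(t_1 - \id_N) f = - t_2 f'$. Here I would invoke that the canonical projection $p = (\id_N, 0): L \to N$ satisfies $pg = f$ and is a minimal left $\rad$-approximation component — so $t$ and $p$ are "the same up to the endomorphism" — and use the Krull--Schmidt / left minimality of $g$ to force $t_1$ to be an isomorphism modulo radical, hence an isomorphism, hence $t = (t_1, t_2)$ is split epi since already its first component is invertible.

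\textbf{Main obstacle.} The delicate point is passing from the equation $tg = f = pg$ (with $p$ the split projection) to the conclusion that $t$ is split epi, since $g$ need not be an epimorphism and there can be nonzero $h: L \to N$ with $hg = 0$. The resolution I expect to use is left minimality of $g$: given $hg = 0$, the endomorphism $\id_L - (\text{a suitable lift of } h)$ stays an automorphism by minimality, which constrains $h$ enough that $t$, sharing its "essential part" with the split epimorphism $s\varphi$, must itself be split epi. Getting this bookkeeping exactly right — tracking which component of $t$ is forced to be invertible — is the only non-formal part; everything else is an immediate application of Definition \ref{202007141926} and Lemma \ref{202007132105}.
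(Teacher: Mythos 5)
Your reduction is set up correctly — transporting the split epimorphism from Definition \ref{202007141926} along the uniqueness of minimal left $\rad$-approximations (Lemma \ref{202007132105}) so that one may assume there is a split epimorphism $s\colon L\to N$ with $sg=f$, and then observing $(t-s)g=0$ — and this is exactly how the paper's proof also begins. But your write-up stops at the one step that carries all the content: you never prove that a morphism $h\colon L\to N$ with $hg=0$ is harmless, i.e. that $t=s+h$ is still a split epimorphism. The route you describe most concretely (decompose $L\cong N\oplus N'$, write $t=(t_1,t_2)$, and claim Krull--Schmidt/left minimality ``forces $t_1$ to be an isomorphism modulo radical'') is asserted, not proved: from $(t_1-\id_N)f=-t_2f'$ alone nothing follows unless you know that morphisms killed by precomposition with $g$ lie in $\rad(L,N)$. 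Supplying that fact is precisely what the paper does, and it uses the triangulated structure: since $g$ is a direct sum of minimal left almost split morphisms, a cone morphism of $g$ is $h_0\colon L\to\nu_1^{-1}M$ coming from a direct sum of AR-triangles, and $h_0$ is radical; then $(t-s)g=0$ gives $t-s=rh_0\in\rad(L,N)$, and a radical perturbation of a split epimorphism is again a split epimorphism. None of this is in your proposal, so as written there is a genuine gap at the decisive step, and you acknowledge as much (``the only non-formal part'').

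That said, your other gesture — ``given $hg=0$, the endomorphism $\id_L$ plus a suitable lift of $h$ stays an automorphism by minimality'' — closes the gap in one line, and yields a proof that is more elementary than the paper's (no cone, no radical, valid in any additive category): put $h:=t-s$ and let $j\colon N\to L$ be a section of $s$, so $sj=\id_N$. Then $\phi:=\id_L+jh$ satisfies $\phi g=g+jhg=g$, hence $\phi$ is an automorphism of $L$ by left minimality of $g$; moreover $s\phi=s+sjh=s+h=t$, so $t(\phi^{-1}j)=sj=\id_N$ and $t$ is a split epimorphism. With this paragraph added your argument is complete and even bypasses the paper's use of AR-triangles; without it, the proposal does not yet prove Lemma \ref{202105171508}.
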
 

\begin{proof}
Let $s: L \to N$ be a split epimorphism such that $sg = f$. Then 
we have  $(t- s) g= 0$. 

Let $h: L \to \nu_{1}^{-1} M$ a cone morphism of $g$. 
It follows that there exists a morphism $r: \nu_{1}^{-1} M \to  N$ such that 
$ t= s+ rh$. Since $rh$ belongs to $\rad$, we conclude that $t$ is a split epimorphism. 
\end{proof}

\section{$\rad^{n}$-approximation theory of   $\Dbmod{\kk Q}$}\label{section: kQ approximations}

In this section, we study $\rad^{n}$-approximations of $M \in \Dbmod{\kk Q}$, the derived category of the path algebra $\kk Q$ of a quiver $Q$. 

First we introduce certain subsets $N_{Q}, N_{Q}^{\geq i}$ of $\NN$.

\begin{definition}\label{definition: NQ}
We define the subset $N_{Q} \subset \NN$ to be 
\[
N_{Q} := 
\begin{cases}
\{n \in \NN \mid 0 \leq n \leq h -2\} & 
( Q \textup{ is a Dynkin quiver  with the Coxeter number } h )\\
\NN & ( Q \textup{ is a non-Dynkin quiver}). 
\end{cases}\\
\]
For $i \in \NN$, we set $N_{Q}^{\geq i} := N_{Q} \cap \{ n \in \NN \mid n \geq i\}$. 
\end{definition}

We give the table of the Coxeter nunbers:
\begin{center}
\begin{tabular}{c|c|c|c|c|c}
& $A_{n}$  & $D_{n}$  & $E_{6} $ & $E_{7}$ & $E_{8}$ \\ \hline
$h$ & $n +1$ & $2 n- 2$ & $12$ & $18$ & $30$
\end{tabular}
\end{center}

The main goal of this section is to prove the following two theorems. 
We set $\Pa := \kk Q$. 

\begin{theorem}\label{kQ approximations theorem} 

Let $M$ be a non-zero object of $\Dbmod{\Pa}$. 

For  $n \in \NN$ we let $\lambda_{n}: M \to L_{n}$ be a minimal left $\rad^{n}$-approximation in $\Dbmod{\Pa}$ 
and $\lambda'_{n}: L_{n} \to C_{n}$ a cone morphism of $\lambda_{n}$. 
Thus, for $n \geq 1$,  we have the following exact triangles 
\[
M \xrightarrow{ \lambda_{n} } L_{n} \xrightarrow{ \lambda'_{n} } C_{n} \to M[1]
\]
Then for all $n \in N_{Q}$, we have $L_{n}\neq 0$.
Moreover, for all $n \in N_{Q}^{\geq 1}$, the morphisms $\lambda'_{n}$ satisfy the left and the right $\rad$-fitting condition.
\end{theorem}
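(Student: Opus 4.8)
The plan is to reduce to the case $M$ indecomposable and then run a strong induction on $n\in N_{Q}$ with the ladder machinery of Section~\ref{section: rad^{n} approximation}. For the reduction, note that a minimal left $\rad^{n}$-approximation of a direct sum is the direct sum of those of the summands, the cone morphism of such a direct sum is the direct sum of the cone morphisms, and the left and right $\rad$-fitting conditions are visibly stable under direct sums; so we may assume $M$ indecomposable, and by applying a shift we may assume $M$ is an indecomposable $\kk Q$-module. The case $n=0$ is trivial: $L_{0}=M\neq 0$, and $\lambda'_{0}\colon M\to 0$ satisfies the left $\rad$-fitting condition vacuously.

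For the inductive step ($n\geq 1$), assume the assertions for all $m\in N_{Q}$ with $m<n$; in particular $L_{m}\neq 0$, $C_{m}\cong\nu_{1}^{-1}L_{m-1}$, and $\lambda'_{m}$ satisfies the left $\rad$-fitting condition for such $m$ (and $\lambda'_{0}\colon M\to 0$ does so vacuously). Theorem~\ref{202105181644}, whose hypothesis is exactly the left $\rad$-fitting of $\lambda'_{0},\dots,\lambda'_{n-1}$, then gives $C_{n}\cong\nu_{1}^{-1}L_{n-1}$ together with the right $\rad$-fitting condition for $\lambda'_{n}$. It remains to prove $L_{n}\neq 0$ and the left $\rad$-fitting of $\lambda'_{n}$.

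Suppose $L_{n}=0$. Then $C_{n}=\cone(\lambda_{n})=M[1]$, so $\nu_{1}^{-1}L_{n-1}\cong M[1]$, i.e.\ $L_{n-1}\cong\nu M$; since $L_{n-1}$ is the target of a minimal left $\rad^{n-1}$-approximation of $M$, Theorem~\ref{202008172145} then forces $\rad^{n-1}(M,\nu M)\setminus\rad^{n}(M,\nu M)\neq\emptyset$. If $Q$ is non-Dynkin, this is absurd: $\nu M=\tau M[1]$ lies in a different AR-component of $\Dbmod{\kk Q}$ than $M$ (AR-components are $\tau$-stable and, since $Q$ is representation-infinite, the shift $[1]$ carries each AR-component to a different one), whence $\Hom(M,\nu M)=\rad^{\infty}(M,\nu M)$ and $\rad^{n-1}(M,\nu M)=\rad^{n}(M,\nu M)$. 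If $Q$ is Dynkin, then $\End(M)=\kk$, so $\Hom(M,\nu M)\cong\tuD\End(M)$ is one-dimensional and occupies a single layer $\rad^{k}\setminus\rad^{k+1}$; hence $n-1=k=\operatorname{dist}(M,\nu M)$, which equals $h-2$ (a standard fact about the mesh category $\ZZ\Delta$, equivalent to the periodicity $\tau^{h}\cong[-2]$), so $n=h-1\notin N_{Q}$, a contradiction. In either case $L_{n}\neq 0$.

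Finally, for the left $\rad$-fitting of $\lambda'_{n}$: using the left $\rad$-fitting of $\lambda'_{n-1}$ choose $\breve{\lambda}_{n-1}\colon L_{n-1}\to L_{n}$ with $(-\lambda'_{n-1},\breve{\lambda}_{n-1})^{t}\colon L_{n-1}\to C_{n-1}\oplus L_{n}$ a minimal left $\rad$-approximation; by Lemma~\ref{202105091801} the composite $\breve{\lambda}_{n-1}\lambda_{n-1}$ is a minimal left $\rad^{n}$-approximation (hence coincides with $\lambda_{n}$ up to isomorphism) and $\lambda'_{n}$ is identified with the component $\beta\colon L_{n}\to C_{n}$ of the second morphism $(\alpha,\beta)\colon C_{n-1}\oplus L_{n}\to C_{n}=\nu_{1}^{-1}L_{n-1}$ of a direct sum of AR-triangles starting from $L_{n-1}$. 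Because $\lambda_{n}\in\rad^{n}$ and $L_{n}\neq 0$, the morphism $\lambda'_{n}=\beta$ is nonzero; being the $L_{n}$-component of a minimal right almost split morphism onto $C_{n}$, its indecomposable components are irreducible morphisms belonging to bases of the relevant spaces $\irr(-,-)$, and since these same bases appear in the minimal left almost split morphisms of their sources, $C_{n}$ is a direct summand of the target of the minimal left $\rad$-approximation of $L_{n}$ with $\beta$ the matching component. Thus $\lambda'_{n}$ satisfies the left $\rad$-fitting condition, and (with Corollary~\ref{2021050917551} realizing the next ladder rung as a direct sum of AR-triangles) the induction is complete. I expect the genuine obstacle to lie in the Dynkin-type input $\operatorname{dist}(M,\nu M)=h-2$ (equivalently, that the nilpotency index of the radical of $\Dbmod{\kk Q}$ is $h-1$, uniformly over indecomposables), with the non-Dynkin case reduced to the standard description of the AR-components of $\Dbmod{\kk Q}$; the rest is formal manipulation with the ladder machinery.
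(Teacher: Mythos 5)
Your reduction to indecomposable $M$, the inductive use of Theorem \ref{202105181644} to get $C_{n}\cong\nu_{1}^{-1}L_{n-1}$ and the right $\rad$-fitting condition, and your contradiction argument for $L_{n}\neq 0$ are all workable (the non-Dynkin branch of the latter, via ``no paths between different AR-components, hence $\Hom(M,\nu M)=\rad^{\infty}$'', is a genuinely different and reasonable route from the paper's passage through $\Pa\mod$; just phrase it as $\nu M=(\nu_{1}M)[1]$ with the derived translate so that projective $M$ is covered). The genuine gap is the very last step: the left $\rad$-fitting condition for $\lambda'_{n}$. You argue that since $\lambda'_{n}=\beta$ is a nonzero component of the minimal right almost split morphism $(\alpha,\beta)\colon C_{n-1}\oplus L_{n}\to C_{n}$, its components lie in bases of the spaces $\irr(-,-)$ and therefore $C_{n}$ splits off the target of the minimal left $\rad$-approximation of $L_{n}$. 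This inference is not valid: minimality of the right approximation controls, for each indecomposable summand $Y$ of $C_{n}$, the components into that copy of $Y$ coming from the \emph{whole} domain $C_{n-1}\oplus L_{n}$ (a ``column'' condition), and says nothing about the components emanating from a fixed indecomposable summand $X$ of $L_{n}$ (the ``row'' condition that left fitting requires). In particular nothing in your argument prevents an indecomposable $Y$ from occurring in $C_{n}$ with multiplicity larger than $\sum_{i}a_{i}\dim\irr(X_{i},Y)$ (where $L_{n}=\bigoplus_i X_{i}^{a_{i}}$), in which case $C_{n}$ cannot be a direct summand of the target of the minimal left $\rad$-approximation of $L_{n}$ and left fitting fails even though $\beta\neq 0$.

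That this implication is not formal is exactly the point of the paper's warning in Section \ref{section: rad^{n} approximation} that cone morphisms of minimal left $\rad^{n}$-approximations in $\Dbmod{R}$ need \emph{not} satisfy the left fitting condition, and it is where the paper's proof does its real work: in the non-Dynkin case it passes to the module category, where $\lambda_{n}$ is a monomorphism of preprojective (or regular) modules, the left fitting is supplied by the Igusa--Todorov/Iyama ladder theorem (Theorem \ref{202105171618}), and is transported to $\Dbmod{\Pa}$ by Lemma \ref{202105190925}; in the Dynkin case it uses the slice function $p$, the vanishing $\rad^{2}(L_{n},C_{n})=0$ of Corollary \ref{202105082233}, and, crucially, the inequality $n+1\neq h$ to rule out a summand of $C_{n}$ splitting off into $M[1]$ via the connecting morphism. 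Your left-fitting step invokes neither input -- in particular it never uses $n\leq h-2$ beyond $L_{n}\neq 0$ -- so the central difficulty of the theorem is skipped; and since Theorem \ref{202105181644} requires the left fitting of all earlier cone morphisms, this gap undermines the right-fitting and $C_{n}\cong\nu_{1}^{-1}L_{n-1}$ conclusions at every subsequent stage of your induction as well.
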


\begin{theorem}\label{kQ approximations theorem add}
Let $Q$ be a Dynkin quiver with the Coxeter number $h$. Then the following assertions hold. 
\begin{enumerate}[(1)] 
\item For $n \in \NN$, we have $L_{n} \neq 0$ if and only if $n \in N_{Q}$. 

\item $L_{h -2} \cong \nu(M)$.
\end{enumerate}
\end{theorem}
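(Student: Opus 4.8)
The plan is to reduce to the case where $M$ is an indecomposable projective $\Pa$-module, and then combine a nilpotency bound for $\rad(\Pa\mod)$ with Serre duality. First I would reduce to $M$ indecomposable, using that minimal left $\rad^{n}$-approximations and $\nu$ are additive (Lemma~\ref{202007132105} and its analogue for $\Dbmod{\Pa}$). Both assertions are invariant under autoequivalences $\Phi$ of $\Dbmod{\Pa}$: such a $\Phi$ sends a minimal left $\rad^{n}$-approximation of $M$ to one of $\Phi M$ (it preserves $\rad^{n}$ and isomorphisms) and commutes with the Serre functor $\nu$ up to isomorphism, so ``$L_{h-2}(M)\cong\nu M$'' is inherited. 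Since $\Pa$ is hereditary, every indecomposable of $\Dbmod{\Pa}$ is $N[k]$ for an indecomposable $\Pa$-module $N$, and since $Q$ is Dynkin the category $\Pa\mod$ is its own preprojective component, so $N\cong\tau_{1}^{-m}P_{i}$ for some $m\geq0$ and $i\in Q_{0}$. Applying the autoequivalence $[-k]\circ\tau_{1}^{m}$ we may assume $M=P_{i}$.

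With $M=P_{i}$, note that every indecomposable receiving a nonzero map from $P_{i}$ is a module: for $K=N[k]$ one has $\Hom_{\Dbmod{\Pa}}(P_{i},K)=\Ext_{\Pa}^{k}(P_{i},N)=0$ for $k\neq0$. Moreover a chain of radical maps joining two $\Pa$-modules stays among modules (it cannot leave cohomological degree $0$ and return, because $\Ext_{\Pa}^{<0}=0$), so $\rad^{n}_{\Dbmod{\Pa}}(N,N')=\rad^{n}_{\Pa\mod}(N,N')$ for $\Pa$-modules $N,N'$, and hence the minimal left $\rad^{n}$-approximations of $P_{i}$ in $\Dbmod{\Pa}$ and in $\Pa\mod$ coincide, all summands of $L_{n}(P_{i})$ being modules. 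Now I would invoke the standard fact that $\rad^{h-1}(\Pa\mod)=0$ for a Dynkin quiver with Coxeter number $h$ (it follows from the mesh structure of $\ZZ Q$; cf.\ \cite{ARS}). Thus $\rad^{h-1}(P_{i},-)=0$, so $L_{n}(P_{i})=0$ for all $n\geq h-1$; together with Theorem~\ref{kQ approximations theorem} this proves (1).

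For (2) I would use the Serre pairing $\Hom(P_{i},K)\times\Hom(K,\nu P_{i})\to\kk$, $\langle f,g\rangle=$ (trace of $gf$), which is compatible with the radical filtration: $\langle\rad^{a}(P_{i},K),\rad^{b}(K,\nu P_{i})\rangle=0$ once $a+b\geq h-1$, since then $gf\in\rad^{h-1}(P_{i},\nu P_{i})=0$ (here $\nu P_{i}=I_{i}$ is a module, so this is a case of the preceding paragraph). Hence a nonzero $f\in\rad^{h-2}(P_{i},K)$ can pair nontrivially only with some $g\notin\rad(K,\nu P_{i})$, and since $K$ and $\nu P_{i}$ are indecomposable this forces $K\cong\nu P_{i}$; so $\rad^{h-2}(P_{i},K)=0$ for $K\not\cong\nu P_{i}$. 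Writing $L_{h-2}(P_{i})=\bigoplus_{K}K^{\oplus\dim_{\ResEnd(K)}\irr^{h-2}(P_{i},K)}$ (the analogue of Theorem~\ref{202008172145} for $\Dbmod{\Pa}$), only $K\cong\nu P_{i}$ contributes, and it does contribute because $L_{h-2}(P_{i})\neq0$ by Theorem~\ref{kQ approximations theorem}. Since $\Hom_{\Pa}(P_{i},I_{i})=e_{i}I_{i}\cong\kk$ and $\ResEnd_{\Pa}(I_{i})=\kk$ ($Q$ being acyclic), the space $\irr^{h-2}(P_{i},\nu P_{i})$ is one-dimensional, so $L_{h-2}(P_{i})\cong\nu P_{i}=\nu M$; undoing the reduction proves (2).

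The main obstacle I anticipate is the external input $\rad^{h-1}(\Pa\mod)=0$ — the fact that knitting in $\Pa\mod$ terminates after $h-1$ steps — and, secondarily, carefully establishing the compatibility of the radical filtration with Serre duality used in the last paragraph; the remaining steps are formal bookkeeping.
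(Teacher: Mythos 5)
Your argument is correct, but it takes a different route from the paper's. The paper works throughout with the place function $p$ on the AR-quiver of $\Dbmod{\Pa}$ (Lemmas \ref{202105082234}, \ref{basics of place number preliminary 2}, \ref{basics of place number Dynkin case}, resting on the Miyachi--Yekutieli relation $\nu_1^{-h}\cong[2]$): for $M=P_{i_0}$, any indecomposable $N$ with $\Hom(P_{i_0},N)\neq 0$ is a module containing $S_{i_0}$ as a composition factor, hence maps nontrivially to $I_{i_0}$ and satisfies $p(N)\le p(I_{i_0})=h-2$; this simultaneously kills $L_n$ for $n\ge h-1$ and shows every summand $K$ of $L_{h-2}$ has $p(K)=h-2$ with $\Hom(K,I_{i_0})\neq0$, forcing $K\cong I_{i_0}$, with multiplicity one since $\Hom(P_{i_0},I_{i_0})\cong\kk$. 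You instead import the sharp nilpotency bound $\rad^{h-1}(\Pa\mod)=0$ as an external fact and, for (2), use nondegeneracy of the Serre pairing $\Hom(P_i,K)\otimes\Hom(K,\nu P_i)\to\kk$ (which factors through composition) to force a non-radical morphism $K\to\nu P_i$, hence $K\cong\nu P_i$ --- a clean alternative to the hammock-endpoint argument. Two remarks. First, your reduction to the module category is fine because $\Hom_{\Dbmod{\Pa}}(P_i,N[k])=0$ for $k\neq0$ and radical chains with nonzero composite starting at $P_i$ stay in cohomological degree $0$, as you say. Second, the one place to be careful is the external input: the precise bound $\rad^{h-1}(\Pa\mod)=0$ with $h$ the Coxeter number is exactly what the paper's place-function machinery proves (it is not stated in this sharp form in \cite{ARS}), so to avoid a hidden circularity you should either prove it (it follows in two lines from Lemmas \ref{basics of place number preliminary 2} and \ref{basics of place number Dynkin case} plus an autoequivalence reduction, since $p$ strictly increases along nonzero radical maps between nonisomorphic indecomposables) or give a genuine reference; with that supplied, both parts of your proof are complete, and the trade-off is that the paper's version is self-contained within its section while yours isolates the combinatorial input as a single classical fact and replaces the identification of $L_{h-2}$ by a Serre-duality argument.
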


We note that since $\Dbmod{\Pa}^{\op}$ is equivalent to $\Dbmod{\kk Q^{\op}}$ 
and the Coxeter number $h_{Q^{\op}}$ of the opposite quiver $Q^{\op}$ coincides with that of $Q$, 
right versions of Theorem \ref{kQ approximations theorem} and Theorem \ref{kQ approximations theorem add} also hold true.

\begin{question}
Theorem \ref{kQ approximations theorem} says that for the path algebra $\Pa$ of a quiver $Q$ there exists $\ell$ a natural number or $\infty$ 
such that for each $M \in \ind \Dbmod{\Pa}$, an $\rad^{n}$-approximation object $L_{n}$ of $M$ is non-zero if and only if $n < \ell$. 

We do not know that the same statement holds true for any finite dimensional algebra $R$ of finite global dimension.
\end{question}

\subsection{Proof of Theorem \ref{kQ approximations theorem} and Theorem \ref{kQ approximations theorem add}}

We note that $L_{0} \cong  M$ and $\lambda_{0}$ is an isomorphism. 
Thus in the proof we may assume that $n \geq 1$. 

\subsubsection{An observation}

We identify $\Pa \mod$ with the essential image of the canonical embedding functor $\Pa\mod \hookrightarrow \Dbmod{\Pa}$. 
Let $M$ be an indecomposable  $\Pa$-module which is not injective (resp. projective).  
Then by \cite[I.4.7]{Happel Book}, 
a morphism $M \to L$ ( resp. $ R \to M$) is a minimal  left (resp. right ) $\rad$-approximation in $\Pa \mod$ 
if and only if it has the same property in $\Dbmod{\Pa}$.
In the case where  $M$ is indecomposable injective (resp. projective), 
a minimal left (resp. right) $\rad$-approximation $ M \to L$ (resp. $R \to M$) in $\Pa\mod$ 
is completed to 
a minimal left (resp. right) $\rad$-approximation $ M \to L\oplus L' $ (resp. $R\oplus R' \to M$) in $\Dbmod{\Pa}$. 
Thus we deduce the following lemma.

\begin{lemma}\label{202105190925} 
A morphism $f: M \to N$ in $\Pa \mod$ satisfies 
the left and the right $\rad$-fitting conditions in $\Pa\mod$ 
if and only if it satisfies the same conditions in $\Dbmod{\Pa}$. 
\end{lemma}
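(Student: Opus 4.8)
The plan is to reduce to the case of the left $\rad$-fitting condition: since $\Dbmod{\Pa}^{\op}$ is triangle equivalent to $\Dbmod{\kk Q^{\op}}$ and $Q^{\op}$ is again a finite acyclic quiver, the right $\rad$-fitting condition for $Q$ is the left $\rad$-fitting condition for $Q^{\op}$, and combining the two equivalences will give the stated conjunction. So fix minimal left $\rad$-approximations $g_{\Pa}\colon M\to L_{\Pa}$ in $\Pa\mod$ and $g_{\cD}\colon M\to L_{\cD}$ in $\Dbmod{\Pa}$. Writing $M_{\mathsf{inj}}$ for the direct sum of the indecomposable injective direct summands of $M$, the observations preceding the statement (together with Lemma \ref{202105171427}, Lemma \ref{202007132105} and its analogue in $\Dbmod{\Pa}$) show that, up to isomorphism of its target, $g_{\cD}$ has the form $(g_{\Pa},h)^{t}\colon M\to L_{\Pa}\oplus L'$, where $L'$ depends only on $M_{\mathsf{inj}}$, vanishes when $M$ has no injective summand, and is a direct summand of the middle term $E$ of the direct sum of Auslander--Reiten triangles $M_{\mathsf{inj}}\to E\to \nu_1^{-1}M_{\mathsf{inj}}\to M_{\mathsf{inj}}[1]$ in $\Dbmod{\Pa}$.

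The crux is the vanishing $\Hom_{\Dbmod{\Pa}}(L',N)=0$ for every $\Pa$-module $N$, and this is the step I expect to require the most care. Here is how I would argue it. Since $M_{\mathsf{inj}}$ is injective it is a direct summand of $\tuD(\Pa)=\nu(\Pa)$, so $\nu^{-1}M_{\mathsf{inj}}$ is a direct summand of $\Pa$, i.e.\ a projective module concentrated in cohomological degree $0$; hence $\nu_1^{-1}M_{\mathsf{inj}}=(\nu^{-1}M_{\mathsf{inj}})[1]$ sits in degree $-1$ and $\Hom_{\Dbmod{\Pa}}(\nu_1^{-1}M_{\mathsf{inj}},N)=\Ext^{-1}_{\Pa}(\nu^{-1}M_{\mathsf{inj}},N)=0$. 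Applying $\Hom_{\Dbmod{\Pa}}(-,N)$ to the triangle above then shows that precomposition with the approximation $a\colon M_{\mathsf{inj}}\to E$ embeds $\Hom_{\Dbmod{\Pa}}(E,N)$ into $\Hom_{\Dbmod{\Pa}}(M_{\mathsf{inj}},N)=\Hom_{\Pa\mod}(M_{\mathsf{inj}},N)$. Decompose this embedding according to $E\cong L^{(\mathsf{inj})}_{\Pa}\oplus L'$, where $M_{\mathsf{inj}}\to L^{(\mathsf{inj})}_{\Pa}$ is (by the observations) the minimal left $\rad$-approximation in $\Pa\mod$; given $\phi\colon L'\to N$, its image is $\phi\circ a_{L'}$ where $a_{L'}\colon M_{\mathsf{inj}}\to L'$ is the $L'$-component of $a$. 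As $a$ lies in $\rad_{\Dbmod{\Pa}}$ and $\rad$ is an ideal, $\phi\circ a_{L'}\in\rad_{\Dbmod{\Pa}}(M_{\mathsf{inj}},N)$; since $\rad_{\Dbmod{\Pa}}(X,Y)\subseteq\rad_{\Pa\mod}(X,Y)$ for modules $X,Y$, it lies in $\rad_{\Pa\mod}(M_{\mathsf{inj}},N)$, hence factors through the minimal left $\rad$-approximation $M_{\mathsf{inj}}\to L^{(\mathsf{inj})}_{\Pa}$; feeding this factorization back into the injectivity of the embedding forces $\phi=0$.

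Granting $\Hom_{\Dbmod{\Pa}}(L',N)=0$, both implications are formal. If $s\colon L_{\Pa}\to N$ is a split epimorphism with $sg_{\Pa}=f$, then $(s,0)\colon L_{\Pa}\oplus L'\to N$ is a split epimorphism with $(s,0)g_{\cD}=sg_{\Pa}=f$, so $f$ satisfies the left $\rad$-fitting condition in $\Dbmod{\Pa}$. Conversely, if $s=(s_{1},s_{2})\colon L_{\Pa}\oplus L'\to N$ is a split epimorphism with $sg_{\cD}=f$, then $s_{2}\in\Hom_{\Dbmod{\Pa}}(L',N)=0$, so $s=(s_{1},0)$; a section of $s$ then restricts to a section of $s_{1}$, so $s_{1}$ is a split epimorphism, and $s_{1}g_{\Pa}=sg_{\cD}=f$, so $f$ satisfies the left $\rad$-fitting condition in $\Pa\mod$. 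Applying all of this to $Q^{\op}$ handles the right $\rad$-fitting condition, and taking the conjunction yields the claim.
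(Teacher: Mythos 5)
Your proof is correct and is essentially the paper's own route: the lemma is deduced there directly from the observation preceding it (module-category minimal left/right $\rad$-approximations complete to derived-category ones by adding a summand $L'$ coming from the injective, resp.\ projective, summands), and your argument is exactly that deduction made explicit, via the vanishing $\Hom_{\Dbmod{\Pa}}(L',N)=0$ for $N\in\Pa\mod$ and the duality with $\kk Q^{\op}$ for the right-hand condition. The vanishing step you single out is handled correctly; it could alternatively be seen from the fact that $L'$ is a shift of a module into cohomological degree $-1$, but your triangle-plus-radical-factorization argument avoids needing that explicit description.
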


\subsubsection{The non-Dynkin case} 
We assume that $Q$ is non-Dynkin. 
Since an indecomposable object of $\Dbmod{\Pa}$ is a shift of an indecomposable $\Pa$-module, 
we may assume that $M$ is an indecomposable $\Pa$-module. 
In the case where $M$ is a preinjective module, 
applying $\nu_{1}$ and $[-1]$ (if it is necessary), 
we may reduce this case to the case where  $M $ is a prepeojective module. 
Thus, we may assume that $M$ is either a regular module or a preprojective module. 
We note 
that for those classes of modules, the functors $\nu_{1}^{-1} $ and $\tau_{1}^{-1}$ coincide. 

We denote by $\cP$ and $\cR$  the classes of preprojective and  regular modules respectively.
To deal with the cases $M \in \ind \cP$ and $M \in \ind \cR$ at the same time, 
we denote the class to which  $M$ belongs by $\cC \in \{\cP, \cR\}$.

We prove the statement by induction on $n \geq 1$. 
First we deal with the case $n =1$. 
Let $0 \to M \xrightarrow{\lambda_{1}} L_{1} \xrightarrow{ \lambda'_{1}} \tau_{1}^{-1} M \to 0$ 
be the AR-sequence starting from $M$. 
Then by Theorem \ref{202105171618} the morphism $\lambda'_{1}$ satisfies both the right  and the left $\rad$-fitting condition $\Pa\mod$. 

By \cite[I.4.7]{Happel Book}, 
the above AR-sequence become the AR-triangle 
$M \xrightarrow{\lambda_{1}} L_{1} \xrightarrow{ \lambda'_{1}} \nu_{1}^{-1} M \to M[1]$. 
Thus $\lambda_{1}: M \to L_{1}$ is a minimal left $\rad$-approximation. 
It is clear $L_{1}$ is a non-zero $\Pa$-module belonging to $\cC$. 
It follows from Lemma \ref{202105190925} that $\lambda'_{1}$ satisfies the left and the right $\rad$-fitting conditions.

Next we deal with the case $n \geq 2$. 
Assume that the case $n -1$ is proved. 
Since it is established in the induction procedure as shown below, we may also assume that $L_{n -1}$ belongs to $\cC$. 
Let $f: L_{n -1} \to L$ be a minimal $\rad$-approximation. We note that $f$ is a monomorphism in $\Pa\mod$. 
Then 
since a minimal left $\rad^{n}$-approximation $\lambda_{n} : M \to L_{n}$ is a minimal part of the composition $f \lambda_{n -1}$, 
$L_{n}$ belongs to $\cC$ and $\lambda_{n}: M \to L_{n}$ is a monomorphism in $\Pa\mod$. 
It follows that $L_{n} \neq 0$ and that the cokernel $\Coker \lambda_{n}$ in $\Pa\mod$ coincides with a cone $C_{n}$ of $\lambda_{n}$ in $\Dbmod{\Pa}$. 
Finally combining Theorem \ref{202105171618} and Lemma \ref{202105190925} 
we conclude that $\lambda'_{n}$ satisfies the left and the right $\rad$-fitting conditions.

\subsubsection{The Dynkin case}\label{202105252310}
We assume that $Q$ is Dynkin.
Since an indecomposable object of $\Dbmod{\Pa}$ is a shift of an indecomposable $\Pa$-module, 
we may assume that $M$ is an indecomposable $\Pa$-modules. 
Applying $\nu_{1}^{\pm 1}$ (if it is necessary), 
we may assume that $M $ is an indecomposable projective module $M = P_{i_{0}}$ corresponds to a vertex $i_{0} \in Q_{0}$. 

Since the underlying graph $|Q|$ of $Q$ is a tree, $Q$ is catenary, which means that 
for any pair $i, j \in Q_{0}$ of vertices, any paths from $i $ to $j$ have the same length. 
Thus there exists a unique map $p : Q_{0} \to \ZZ$ which satisfies the following conditions: 
\begin{enumerate}[(1)] 
\item if there exists an arrow $i \to j$ in $Q$ 
then $p(j) = p(i) +1$,
\item $p(i_{0}) = 0$
\end{enumerate}
We extend the map $p$ to $p: \ZZ(Q)_{0} \to \ZZ$ by $p(i, m) := p(i) + 2m$. 

Recall that $ \add \{ \nu_{1}^{-m} P_{i} \mid m \in \ZZ, i \in Q_{0}\} = \sfD^{\mrb}(\Pa \mod)$. 
We may regard $p$ as a map $\ind \sfD^{\mrb}(\Pa \mod) \to \ZZ$ by using a canonical  bijection 
$\ind \sfD^{\mrb}(\Pa \mod) \cong \ZZ (Q)_0$. 
Explicitly, for an indecomposable object $M = \nu^{-m}_{1} P_{i} \in \ind \cU_{\Pa}$, we set    
\[
p(M) := p(i,m) = p(i) + 2m.  
\]
We set 
\[
\cU_{n}:= \add \{ M \in \ind \sfD^{\mrb}(\Pa \mod) \mid p(M) = n\}. 
\]
We note that $\sfD^{\mrb}(\Pa \mod) = \bigvee_{n \in \ZZ} \cU_{n}$. 

From the shape of the AR-quiver, we deduce the following lemmas. 

\begin{lemma}\label{202012141917}
The following assertions hold. 
\begin{enumerate}
\item 
An object $N \in \sfD^{\mrb}(\Pa \mod)$ belongs to $\cU_{n}$ if and only if $\nu_{1}^{-1}(N)$ belongs to $\cU_{n +2}$.

\item 
Let $N \in \ind \cU_{n}$ and $N \to N' \to \nu_{1}^{-1}(N) \to N[1]$ be an AR-triangle. 
Then the middle term $N'$ belongs to $\cU_{n +1}$. 
\end{enumerate}
\end{lemma}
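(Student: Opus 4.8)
The plan is to read both statements off from the shape of the Auslander--Reiten quiver of $\sfD^{\mrb}(\Pa \mod)$, which for $Q$ Dynkin is $\ZZ Q$, together with one numerical observation: \emph{the map $p$ increases by exactly $1$ along every arrow of $\ZZ Q$}. So the first step is to record this observation. The vertices of $\ZZ Q$ are the pairs $(i,m)\in Q_{0}\times\ZZ$ corresponding to the indecomposables $\nu_{1}^{-m}P_{i}$, the inverse AR-translation $\nu_{1}^{-1}$ acts by $(i,m)\mapsto(i,m+1)$, and for each arrow $i\to j$ of $Q$ there are exactly two arrows $(i,m)\to(j,m)$ and $(j,m)\to(i,m+1)$ of $\ZZ Q$ (and no others). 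Since $p(i,m)=p(i)+2m$ and $p(j)=p(i)+1$ whenever $i\to j$ in $Q$, along $(i,m)\to(j,m)$ the value of $p$ changes from $p(i)+2m$ to $(p(i)+1)+2m$, and along $(j,m)\to(i,m+1)$ it changes from $(p(i)+1)+2m$ to $p(i)+2(m+1)$; in both cases the increment is $1$.

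For assertion $(1)$, if $N=\nu_{1}^{-m}P_{i}$ is indecomposable then $\nu_{1}^{-1}(N)=\nu_{1}^{-(m+1)}P_{i}$, so $p(\nu_{1}^{-1}(N))=p(i)+2(m+1)=p(N)+2$; hence $p(N)=n$ if and only if $p(\nu_{1}^{-1}(N))=n+2$, which is the claim for indecomposable $N$. For a general object I would pass to an indecomposable decomposition and use that $\nu_{1}^{-1}$ is an autoequivalence, so it commutes with finite direct sums, together with the fact that each $\cU_{k}$ is, by construction, closed under direct sums and direct summands.

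For assertion $(2)$, since $N$ is indecomposable, the general theory of AR-triangles in a $\Hom$-finite Krull--Schmidt triangulated category identifies the middle term $N'$ of the AR-triangle $N\to N'\to\nu_{1}^{-1}(N)\to N[1]$ with the direct sum of the targets of the irreducible morphisms out of $N$, i.e. with the direct sum of the immediate successors of $N$ in $\ZZ Q$, each occurring with multiplicity one because $Q$, being a Dynkin diagram, is a tree without multiple edges (if $N$ has no successor in $\ZZ Q$, which happens only for $Q=A_{1}$, then $N'=0\in\cU_{n+1}$ trivially). By the numerical observation each such successor lies in $\cU_{p(N)+1}=\cU_{n+1}$, and since $\cU_{n+1}$ is closed under direct sums we conclude $N'\in\cU_{n+1}$.

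I do not anticipate a genuine obstacle: once the description of $\ZZ Q$ and the ``$+1$ along arrows'' property of $p$ are in place, everything is immediate, and both parts reduce to bookkeeping with the function $p$. The only point deserving a word of care is the identification of the middle term of the AR-triangle with the direct sum of the immediate successors (with multiplicity one) in the AR-quiver, but this is standard for $\sfD^{\mrb}$ of a hereditary algebra of Dynkin type.
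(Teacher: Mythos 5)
Your proof is correct and follows exactly the route the paper intends: the paper simply asserts these facts "from the shape of the AR-quiver", i.e. from the identification of the AR-quiver of $\sfD^{\mrb}(\Pa\mod)$ with $\ZZ Q$ and the fact that $p$ increases by $1$ along arrows and by $2$ under $\nu_{1}^{-1}$, which is precisely your bookkeeping. Your write-up merely supplies the details (mesh/middle-term identification, multiplicity one, the $A_{1}$ degenerate case) that the paper leaves implicit.
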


\begin{lemma}\label{202105082234}
Let $n \in \ZZ, \ m \in \NN$ and $M \in \cU_{n}, \ N \in \cU_{n +m}$. 
Then any  morphism $f: M \to N$ belongs to $\rad^{m}$. 
\end{lemma}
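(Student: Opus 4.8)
The plan is to translate the claim into a statement about the AR-quiver of $\Dbmod{\Pa}$ and argue by induction on $m \geq 0$. For $m = 0$ there is nothing to prove (any morphism lies in $\rad^{0} = \Hom$). For $m = 1$, suppose $M \in \cU_{n}$ and $N \in \cU_{n+1}$ are indecomposable and $f \colon M \to N$ is a nonzero morphism; I want $f \in \rad$. If $f$ were not in $\rad$, then since $M, N$ are indecomposable, $f$ would have to be an isomorphism, forcing $p(M) = p(N)$, i.e.\ $n = n+1$, a contradiction. (By additivity of $\rad$ and of the functor $p$-grading it then suffices to treat indecomposables, so the general $m=1$ case follows.) So the real content is the inductive step.

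For the inductive step, assume the statement holds for $m-1$ with $m \geq 2$, and let $M \in \cU_{n}$, $N \in \cU_{n+m}$ be indecomposable with a nonzero $f \colon M \to N$. Since $M$ is indecomposable, by Happel's theorem $\Dbmod{\Pa}$ has AR-triangles, so there is an AR-triangle $M \xrightarrow{u} M' \xrightarrow{} \nu_1^{-1}(M) \to M[1]$, and by Lemma \ref{202012141917}(2) the middle term $M'$ lies in $\cU_{n+1}$. Because $f \in \rad(M,N)$ (by the $m=1$ case, as $m \geq 1$) and $N$ is indecomposable with $N \not\cong M$, the map $f$ is a non-isomorphism into an indecomposable, hence by the defining property of a minimal left almost split morphism (equivalently, minimal left $\rad$-approximation, Lemma \ref{202105171427}) $f$ factors as $f = g u$ for some $g \colon M' \to N$. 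Now $M' \in \cU_{n+1}$ and $N \in \cU_{(n+1)+(m-1)}$, so by the induction hypothesis $g \in \rad^{m-1}(M', N)$; since $u \in \rad(M, M')$, we get $f = gu \in \rad^{m}(M,N)$, as desired. Passing from indecomposables to arbitrary objects is routine by writing $M$ and $N$ as finite direct sums of indecomposables, decomposing $f$ into its matrix of components, and using that each $\cU_{k}$ is closed under direct sums and that $\rad^{m}$ is an additive subfunctor.

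The main obstacle — and the step that needs the hypotheses of the section — is justifying that a non-isomorphism $f \colon M \to N$ out of an indecomposable $M$ really does factor through the minimal left almost split morphism $u$. This uses that $M$ is indecomposable (so that $u$ exists and is minimal left almost split), together with the Krull--Schmidt and $\Hom$-finiteness hypotheses on $\Dbmod{\Pa}$ which hold since $Q$ is Dynkin and $\Pa = \kk Q$ has finite global dimension; everything else is bookkeeping with the $\ZZ$-grading supplied by the function $p$ on $\ind \Dbmod{\Pa} \cong \ZZ(Q)_0$ and with Lemma \ref{202012141917}. One should also note that the argument implicitly uses that $p(M') = p(M) + 1$ uniformly, i.e.\ that all indecomposable summands of the middle term of the AR-triangle starting at $M$ sit in the single graded piece $\cU_{n+1}$, which is exactly the content of Lemma \ref{202012141917}(2).
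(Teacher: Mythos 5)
Your argument is correct: the paper gives no written proof of this lemma (it is merely asserted to follow ``from the shape of the AR-quiver''), and your induction---factoring a radical morphism out of an indecomposable through the minimal left almost split map, whose target lies in $\cU_{n+1}$ by Lemma \ref{202012141917}(2), and composing with the induction hypothesis---is exactly the standard way to make that assertion precise. The only cosmetic slip is citing ``the $m=1$ case'' to get $f\in\rad(M,N)$ when $N\in\cU_{n+m}$ with $m\geq 2$; what you actually need, and already have, is that $M\not\cong N$ are indecomposable (their $p$-values differ), so every morphism $M\to N$ is radical and hence factors through the minimal left $\rad$-approximation.
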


\begin{lemma}\label{basics of place number preliminary 2}
Let $M ,N\in \ind \cU_{\Pa}$ such that $M \not\cong N$. 
If  $\Hom_{\Pa} (N, M) \neq 0$, then $p(N) < p(M)$. 
\end{lemma}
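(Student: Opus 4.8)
The plan is to reduce the assertion to the combinatorics of the Auslander--Reiten quiver of $\Dbmod{\Pa}$, which for $\Pa=\kk Q$ with $Q$ Dynkin is the translation quiver $\ZZ Q$, together with the fact that $\Pa$ is representation-directed. Recall that the arrows of $\ZZ Q$ are, for each arrow $i\to j$ of $Q$ and each $m\in\ZZ$, an arrow $(i,m)\to(j,m)$ and an arrow $(j,m)\to(i,m+1)$, where the vertex $(i,m)$ is identified with the indecomposable object $\nu_{1}^{-m}P_{i}$. The first elementary observation is that every arrow of $\ZZ Q$ raises the value of $p$ by exactly $1$: indeed $p(j,m)-p(i,m)=p(j)-p(i)=1$ and $p(i,m+1)-p(j,m)=(p(i)+2(m+1))-(p(j)+2m)=p(i)-p(j)+2=1$, using $p(j)=p(i)+1$. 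Hence if there is a path of length $k\ge 0$ in $\ZZ Q$ from $X$ to $Y$, then $p(Y)=p(X)+k$.

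Now let $f\in\Hom_{\Pa}(N,M)$ be nonzero with $N\not\cong M$. Since $N$ and $M$ are non-isomorphic indecomposables, $f$ lies in the radical $\rad(N,M)$. As $\Pa$ is hereditary and representation finite of Dynkin type, $\Dbmod{\Pa}$ is the additive closure of the mesh category of $\ZZ Q$; in particular $\rad^{\infty}(N,M)=0$, so there is a (unique) integer $k\ge 1$ with $f\in\rad^{k}(N,M)\setminus\rad^{k+1}(N,M)$. By the classical description of the layers of the radical filtration of an Auslander--Reiten quiver (see e.g.\ \cite[Chapter V]{ARS}), the image of $f$ in $\rad^{k}(N,M)/\rad^{k+1}(N,M)$ is a combination of composites of $k$ irreducible morphisms along paths of length $k$ from $N$ to $M$ in $\ZZ Q$; its non-vanishing guarantees that at least one such path exists. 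Combining this with the previous paragraph, $p(M)=p(N)+k>p(N)$, which is the assertion.

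The only point requiring care is the second step, namely that a nonzero class in $\rad^{k}/\rad^{k+1}$ between indecomposables is detected by an honest path of length $k$ in the Auslander--Reiten quiver. This, together with the vanishing $\rad^{\infty}=0$ in the piecewise-hereditary Dynkin situation, is classical and is exactly what is meant by deducing the statement ``from the shape of the Auslander--Reiten quiver''; I would invoke it rather than reprove it. I would also note that the same circle of ideas yields the complementary estimate of Lemma \ref{202105082234}, so the two lemmas may be presented in parallel.
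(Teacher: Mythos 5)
Your proof is correct and follows essentially the same route as the paper: the paper's argument is precisely that a nonzero morphism $N\to M$ yields a directed path in the AR-quiver of $\Dbmod{\Pa}$, which must have positive length since $M\not\cong N$, while each arrow raises $p$ by one. Your additional justification via the radical filtration and $\rad^{\infty}=0$ simply makes explicit the standard fact the paper invokes implicitly, so no changes are needed.
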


\begin{proof}
Assume $\Hom_{\Pa}(N ,M) \neq 0$. 
Then there exists a direct path from $N$ to $M$ in the AR-quiver of $\Dbmod{\Pa}$. 
From the assumption $M \not\cong N$, we conclude that $p(N) < p(M)$. 
\end{proof}

\begin{corollary}\label{202105082233}
Let $n \in \ZZ$ and $M \in \cU_{n}, \ N \in \cU_{n +1}$. 
Then 
$\Hom_{\Pa}(M,N) = \rad(M, N)$ and 
$\rad^{2}(M, N) =0$. 
\end{corollary}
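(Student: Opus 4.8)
The plan is to read off both assertions from the place-number lemmas just proved, together with the classical fact that indecomposable objects of $\Dbmod{\Pa}$ over a Dynkin path algebra have trivial radical endomorphism ring. For the equality $\Hom_{\Pa}(M,N)=\rad(M,N)$, the inclusion $\rad(M,N)\subseteq\Hom_{\Pa}(M,N)$ is automatic, so only the reverse inclusion needs an argument, and this is immediate: applying Lemma \ref{202105082234} with $m=1$ to $M\in\cU_{n}$, $N\in\cU_{n+1}$ shows that every morphism $M\to N$ already lies in $\rad^{1}(M,N)=\rad(M,N)$. Hence $\Hom_{\Pa}(M,N)=\rad(M,N)$.

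For $\rad^{2}(M,N)=0$, first note that, since $\rad$ and $\rad^{2}$ are additive subbifunctors of $\Hom_{\Pa}$, we may assume $M\in\ind\cU_{n}$ and $N\in\ind\cU_{n+1}$. Recall that for $Q$ Dynkin every indecomposable $X\in\Dbmod{\Pa}$ is a brick, so $\rad(X,X)=\rad\End_{\Dbmod{\Pa}}(X)=0$; equivalently, the AR-quiver $\ZZ Q$ has no oriented cycles. Combined with Lemma \ref{basics of place number preliminary 2}, this shows that whenever $\rad(X,Y)\neq0$ for indecomposables $X,Y$ one has $X\not\cong Y$ and hence $p(X)<p(Y)$. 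Now write an arbitrary element of $\rad^{2}(M,N)$ as a finite sum of compositions $M\xrightarrow{f}X\xrightarrow{g}N$ with $X$ indecomposable and $f,g\in\rad$; if such a composition were nonzero then $f\neq0$ and $g\neq0$, forcing $p(M)<p(X)<p(N)$, i.e.\ $n<p(X)<n+1$ with $p(X)\in\ZZ$, which is absurd. Therefore every such composition vanishes and $\rad^{2}(M,N)=0$.

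The one non-formal ingredient is the brick property of indecomposables over a Dynkin path algebra (equivalently, $\rad\End(X)=0$, or the absence of oriented cycles in $\ZZ Q$), which is precisely what excludes the degenerate two-step factorizations through a copy of $M$ or of $N$; this is standard and holds over an arbitrary field (e.g.\ via Gabriel's theorem, or because the place-number function $p$ increases strictly by one along every arrow of $\ZZ Q$). I expect no further difficulty: once this fact is in hand, everything is the short case analysis above. A slightly more structural alternative would be to use Lemma \ref{202012141917}(2) (and (1)) to conclude that every arrow of the AR-quiver raises $p$ by exactly one, so that any path of irreducible maps from $M$ to $N$ has length $p(N)-p(M)=1$, and then invoke $\rad^{\infty}=0$ in $\Dbmod{\Pa}$ to get $\rad^{\ell}(M,N)=0$ for $\ell\geq2$; but the direct argument is shorter and self-contained given the lemmas already proved.
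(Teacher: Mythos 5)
Your proof is correct and follows exactly the route the paper intends: the paper leaves this corollary unproved as an immediate consequence of the place-number lemmas, and your argument is precisely that deduction — Lemma \ref{202105082234} with $m=1$ for $\Hom_{\Pa}(M,N)=\rad(M,N)$, and Lemma \ref{basics of place number preliminary 2} together with the standard fact that indecomposables over a Dynkin path algebra have no nonzero radical endomorphisms (part of what the paper subsumes under ``the shape of the AR-quiver'') to kill $\rad^{2}(M,N)$. No gaps.
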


The above lemmas and corollary hold for all catenary quivers.
For the following lemma, we need to use the assumption that 
$Q$ is a Dynkin quiver with the Coxeter number $h$.

\begin{lemma}\label{basics of place number Dynkin case}
Let $M, N\in    \ind \sfD^{\mrb}( \Pa \mod )$. 
Then the following assertions hold. 
\begin{enumerate}[(1)]

\item 
For a triangulated autoequivalence $F$ of $\sfD^{\mrb}( \Pa\mod )$, 
we have 
\[
p(FM) - p(FN) = p(M) -p(N). 
\]

\item
\[
p(M[1]) = p(M) + h. 
\]
\item 
\[
p(\nu (M)) = p(M)+h-2.
\]
In particular, for $i \in Q_{0}$ we have 
\[
p(I_{i}) = p(P_{i}) + h -2. 
\]
\end{enumerate}
\end{lemma}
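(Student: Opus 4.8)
The plan is to exploit the fact that for a Dynkin quiver $Q$, the derived category $\sfD^{\mrb}(\Pa\mod)$ has an AR-quiver of the form $\ZZ Q$, so that indecomposable objects are parametrized by $\ZZ Q_0$, and $p$ is simply the ``column coordinate'' of this parametrization. For part (1), I would first observe that a triangulated autoequivalence $F$ induces an automorphism of the AR-quiver $\ZZ Q$; since the AR-quiver is connected and the difference $p(X)-p(Y)$ for $X,Y$ adjacent by an irreducible map is $\pm 1$ (this is immediate from the definition of $p$ extended to $\ZZ Q_0$ by $p(i,m)=p(i)+2m$ together with the mesh structure), the quantity $p(X)-p(Y)$ equals (up to sign) the distance along the quiver, which is preserved by any quiver automorphism. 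More precisely, $F$ permutes the $\cU_n$'s by a shift $\cU_n\mapsto\cU_{n+c_F}$ for some constant $c_F\in\ZZ$ depending only on $F$ (because $F$ must commute appropriately with $\nu_1$ up to the $\ZZ$-grading and preserve irreducibility); hence $p(FM)-p(FN)=(p(M)+c_F)-(p(N)+c_F)=p(M)-p(N)$. The cleanest way to pin down that $F$ acts by a global shift on the column index is to use that $F$ commutes with the AR-translation $\tau$ (equivalently with $\nu$) since $\tau$ is intrinsic, and that by Lemma \ref{202012141917}(1) applied in both directions, $\nu_1$ shifts $p$ by exactly $2$; combined with $F$ preserving the arrow structure of $\ZZ Q$, this forces the column-shift behavior.

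For part (2), I would apply part (1) with $F=[1]$, which gives that $p(M[1])-p(M)$ is a constant $h'$ independent of $M$; it remains to identify $h'$ with the Coxeter number $h$. This is where I would use the standard fact that on $\sfD^{\mrb}(\Pa\mod)$ for $Q$ Dynkin, the Serre functor is $\SS=\nu=\tuD(\Pa)\lotimes_\Pa-$, that $\nu\cong\tau[1]$, and that $\tau$ corresponds on $\ZZ Q_0$ to the shift $(i,m)\mapsto(i,m-1)$, i.e. $p(\tau M)=p(M)-2$. Then $p(M[1])=p(\nu\tau^{-1}M)$; alternatively, one uses the well-known periodicity: $[1]$ acts on $\ZZ Q_0$ as $\tau^{-h/2}\circ(\text{automorphism of }Q)$ in the $A,D,E$ cases (more uniformly, $\nu^{\mathrm{gld}}$-type identities), from which $p(M[1])-p(M)=h$ drops out by a direct bookkeeping with the table of Coxeter numbers. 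I expect part (2) to be the main obstacle: identifying the constant shift with $h$ exactly requires invoking the precise combinatorial description of the shift functor on the AR-quiver of a Dynkin derived category (a classical result of Happel), and one must be careful that the ``automorphism of $Q$'' twist built into $[1]$ for $A_n$ ($n\ge2$), $D_n$ ($n$ odd), $E_6$ does not affect the column coordinate $p$ — but indeed such diagram automorphisms fix columns setwise and only permute within a column, so $p$ is unchanged.

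For part (3), since $\nu=\SS$ and $[1]=\SS\circ\tau^{-1}$ (equivalently $\nu=\tau[1]$, so $\tau=\nu[-1]$), I would compute $p(\nu M)=p(\tau M[1])=p(\tau M)+h$ by part (2), and then $p(\tau M)=p(M)-2$ by Lemma \ref{202012141917}(1) read in reverse (since $\nu_1^{-1}=\tau_1^{-1}$ raises $p$ by $2$, its inverse $\tau$ lowers $p$ by $2$). This yields $p(\nu M)=p(M)+h-2$. The special case $p(I_i)=p(P_i)+h-2$ then follows by taking $M=P_i$ and using $\nu(P_i)\cong I_i$, which is the standard Nakayama-functor identity $\tuD(\Pa)\otimes_\Pa\Pa e_i\cong\tuD(e_i\Pa)=I_i$. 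Throughout, the only nontrivial external input is Happel's description of the triangulated structure of $\sfD^{\mrb}(\Pa\mod)$ for $Q$ Dynkin together with the shape $\ZZ Q$ of its AR-quiver, which is already being used implicitly in this section (e.g. in Lemmas \ref{202012141917} and \ref{202105082234}).
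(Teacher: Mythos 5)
Your overall skeleton coincides with the paper's: part (1) is used to show that $[1]$ shifts $p$ by a constant $h'$, the constant is then identified with $h$ by a classical periodicity result, and (3) follows from $\nu=\nu_{1}[1]$ together with the fact that $\nu_{1}^{-1}$ raises $p$ by $2$. Your argument for (1) (an autoequivalence induces an automorphism of the AR-quiver $\ZZ Q$, every arrow raises $p$ by exactly $1$, and connectedness forces $p\circ F-p$ to be constant) is a perfectly good variant of the paper's proof, which instead takes a path from $M$ to $\nu_{1}^{-m}N$ of length $l$, notes $p(M)-p(N)=l-2m$, and transports that path by $F$ using $F\nu_{1}^{-m}\cong\nu_{1}^{-m}F$. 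Part (3) is identical to the paper.

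The gap is in your identification of the constant in (2). You propose to use the description of $[1]$ on $\ZZ Q_{0}$ as ``$\tau^{-h/2}$ composed with a diagram automorphism''. This is ill-defined when $h$ is odd (type $A_{n}$ with $n$ even has $h=n+1$ odd), and the auxiliary claim that the diagram-automorphism twist ``fixes columns setwise'' is not true as stated: for the linear $A_{3}$ quiver with base vertex $i_{0}=1$ one has $p(1)=0$ and $p(3)=2$, so the flip exchanges vertices lying in different columns, and whether the induced automorphism of $\ZZ Q$ contributes to the $p$-shift requires a separate argument. The paper sidesteps both issues by squaring: from (1) one gets $p(M[2])=p(M)+2h'$, while Theorem \ref{Miyachi-Yekutieli lemma} ($\nu_{1}^{-h}\cong[2]$) together with Lemma \ref{202012141917}(1) gives $p(M[2])=p(\nu_{1}^{-h}M)=p(M)+2h$, whence $h'=h$ with no case analysis and no appeal to the finer glide-reflection description of $[1]$. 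So your plan for (2) needs this repair (or an honest verification of the glide-reflection formula type by type); with that substitution the proof is correct and essentially the paper's.
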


To prove this lemma, we need to use the following fundamental result. 
\begin{theorem}[{\cite[p359, Table 1]{Miyachi-Yekutieli}}]\label{Miyachi-Yekutieli lemma}
We have the following natural  isomorphism of autoequivalences of $\sfD^{\mrb}( \Pa\mod )$ 
\[
\nu^{-h}_{1}  \cong [2]. 
\]
\end{theorem}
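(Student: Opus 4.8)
This is the statement recorded in \cite{Miyachi-Yekutieli} as part of the determination of the derived Picard group of a hereditary algebra, so in the body of the paper it is simply cited; here I indicate the idea of a proof. Write $\nu$ for the Serre functor $\tuD(\Pa)\lotimes_{\Pa}-$ of $\sfD := \Dbmod{\Pa}$; since $\nu_{1}=\nu[-1]$ and $[1]$ commutes with $\nu$, the assertion $\nu_{1}^{-h}\cong[2]$ is equivalent to the fractional Calabi--Yau relation $\nu^{h}\cong[h-2]$. The plan is to compare the two triangulated autoequivalences $\nu_{1}^{-h}$ and $[2]$ through their actions on the Auslander--Reiten quiver of $\sfD$, which for $Q$ Dynkin is the translation quiver $\ZZ Q$ by Happel \cite{Happel Book}.

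The first step is a rigidity input: for $Q$ Dynkin the category $\sfD$ is \emph{standard}, i.e.\ it is $\kk$-linearly equivalent to the mesh category of $\ZZ Q$ (Happel \cite{Happel Book}), and since the underlying graph of $Q$ is a tree, $\ZZ Q$ is simply connected; it follows that a $\kk$-linear autoequivalence of the mesh category is determined up to isomorphism by the automorphism of $\ZZ Q$ it induces. Hence it is enough to check that $\nu_{1}^{-h}$ and $[2]$ induce the same arrow- and translation-preserving automorphism of $\ZZ Q$. Under the bijection $\ind\sfD\cong(\ZZ Q)_{0}$, $\ \nu_{1}^{-m}P_{i}\leftrightarrow(i,m)$, used to define the place number $p$, the functor $\nu_{1}^{-1}$ acts as $(i,m)\mapsto(i,m+1)$, i.e.\ it sends $\cU_{n}$ to $\cU_{n+2}$ (Lemma~\ref{202012141917}); thus $\nu_{1}^{-h}$ acts as ``translation by $h$ periods'', shifting $p$ by $2h$.

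For the action of $[2]$ I would use the classical fact that the module category $\Pa\mod$ occupies exactly $h$ consecutive columns $\cU_{n}$ of $\sfD=\bigvee_{n}\cU_{n}$, together with the fact that the copies $(\Pa\mod)[n]$, $n\in\ZZ$, tile $\sfD$; these force $[1]$ to shift $p$ by exactly $h$, hence $[2]$ shifts $p$ by $2h$, just as $\nu_{1}^{-h}$ does. Consequently $\delta:=\big([2]\text{-action}\big)\circ\big(\nu_{1}^{-h}\text{-action}\big)^{-1}$ is an automorphism of $\ZZ Q$ that commutes with the translation and fixes each column $\cU_{n}$ setwise; since $\ZZ Q$ is the mesh quiver of a tree, such a $\delta$ is induced by a graph automorphism of $Q$. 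Finally, on $\mathrm{K}_{0}(\sfD)$ --- the root lattice of $Q$ --- the shift $[1]$ acts as $-\operatorname{id}$, so $[2]$ acts trivially, while $\nu_{1}=\tau$ acts as the Coxeter transformation, whose order is the Coxeter number $h$, so $\nu_{1}^{-h}$ also acts trivially; hence $\delta$ acts trivially on the root lattice. Since a nontrivial graph automorphism of a Dynkin quiver acts nontrivially on the root lattice, $\delta=\operatorname{id}$, and therefore $\nu_{1}^{-h}\cong[2]$.

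The main obstacles are the two structural inputs: the standardness of $\sfD$ (which rests on covering theory for the tree $Q$) and the column-count ``$\Pa\mod$ spans exactly $h$ columns of $\ZZ Q$''. Both are classical but not entirely formal, and the second is what encodes the actual numerology; once these are granted, the $\mathrm{K}_{0}$-bookkeeping above finishes the comparison. Alternatively --- since the statement enters here only as an external input --- one may simply cite \cite{Miyachi-Yekutieli}, where it is read off type-by-type from the explicit form of the Coxeter transformations.
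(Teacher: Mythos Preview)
You correctly identify that the paper does not prove this statement: it is simply cited from \cite{Miyachi-Yekutieli} as an external input, so on that level your proposal matches the paper exactly.

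Your sketch of an independent argument has the right architecture (standardness of $\sfD$, then compare the induced automorphisms of $\ZZ Q$, then kill the residual graph automorphism via $K_0$), but the step that carries the numerology is stated incorrectly. The ``classical fact'' that $\Pa\mod$ occupies exactly $h$ consecutive columns $\cU_n$ is false for general orientations. For $Q=A_3$ with linear orientation $1\to 2\to 3$ and $i_0=1$, one has $p(P_1)=0$, $p(P_2)=1$, $p(P_3)=2$, while $p(I_3)=p(\nu_1^{-2}P_1)=4$; so $\Pa\mod$ spans columns $0$ through $4$, i.e.\ five columns, whereas $h=4$. Correspondingly, $(\Pa\mod)[1]$ spans columns $4$ through $8$, overlapping $\Pa\mod$ in column $4$ (which contains both $S_3$ and $P_1[1]$). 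The shifts tile $\ind\sfD$ as a \emph{set of indecomposables}, not as a union of columns, so the inference ``tiling $\Rightarrow$ $[1]$ shifts $p$ by $h$'' does not go through as written.

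The step is repairable by a counting argument rather than a tiling-by-columns argument. By Lemma~\ref{basics of place number Dynkin case}(1) (whose proof does not use the theorem), $[1]$ shifts $p$ by some constant $h'$. Since every indecomposable of $\sfD$ is a unique shift of a module, any block of $h'$ consecutive columns contains exactly $|\ind(\Pa\mod)|=rh/2$ indecomposables. On the other hand, $\cU_n$ has $|\{i\in Q_0: p(i)\equiv n\bmod 2\}|$ elements, and summing over $h'$ consecutive columns yields $h'r/2$ (the odd-$h'$ case forces the two parity classes to have equal size, and the same count holds). Equating gives $h'=h$, after which your $K_0$ argument finishes the comparison. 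So the idea survives, but the sentence you flagged as ``not entirely formal'' is in fact the place where the argument, as written, breaks.
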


\begin{proof}[Proof of Lemma \ref{basics of place number Dynkin case}]
(1) Since $Q$ is Dynkin, there is a path in the AR-quiver of $\sfD^{\mrb}( \Pa\mod )$ from $M$ to $\nu_{1}^{-m}(N)$ for some $m \ge 0$. Let $l$ be the length of such a path. Then $p(M) - p(N) = l -2m$. Since there is a path of the same length $l$ from $FM$ to $F(\nu_{1}^{-m}N) \cong \nu_{1}^{-m}(FN)$ we get $p(FM) - p(FN) = l -2m = p(M) - p(N)$.

(2) By (1), there exists an integer $h'$ such that $p(M[1]) = p(M) + h'$ 
for all $M \in \ind \sfD^{\mrb}( \Pa\mod )$. 
Consequently, $p(M[2]) = p(M[1]) + h' = p(M) + 2h'$. 
On the other hand, $p(M[2]) = p(M)+ 2h$ by 
Theorem \ref{Miyachi-Yekutieli lemma}. 
Thus, we conclude $h' = h$.

(3) 
By (2), $p(\nu(M)) = p(\nu_1(M)[1]) = p(\nu_1(M))+h = p(M)+h-2$.

\end{proof}

\begin{proof}[Proof of Theorem \ref{kQ approximations theorem}]

Let $M = P_{i_{0}}$. 
  By Lemma \ref{basics of place number Dynkin case},  $I_{i_{0}} \in \ind \cU_{h -2}$. There is a non-zero morphism $M \to I_{i_{0}}$, which by  Lemma  \ref{202105082234}
belongs to $\rad^{h -2}$. 
It follows that  $L_{n} \neq 0$ for $1 \leq n \leq h-2$. 

By induction on $n \in N_{Q}^{\geq 1}$, we prove  the statement  that $\lambda'_{n}$ satisfies the left and the right $\rad$-fitting condition 
and additionally the statement that $L_{n} \in \cU_{n }$ and $C_{n} \in \cU_{n+1}$.

We deal with the case $n =1$. 
The exact triangle $M \to L_{1} \to C_{1} \to M[1]$ is nothing but 
the AR-triangle $M \to L_{1} \to \nu_{1}^{-1}M \to M[1]$ starting from $M$.
Thus it is clear that $\lambda'_{1}: L_{1} \to C_{1}$ satisfies the right $\rad$-fitting condition. 
It follows from Lemma \ref{202012141917} that $L_{1} \in \cU_{ 1} $  and $C_{1} \in \cU_{2}$. 
We can prove that  $\lambda'_{1}$ satisfies the left $\rad$-fitting condition as in the general case shown below. 

We deal with the case $n \geq 2$. 
Assume that the statements are proved in the case $n -1$. 
Then by Theorem \ref{202105181644}, $\lambda'_{n}$ satisfies the right $\rad$-fitting condition and  $C_{n} \cong \nu_{1}^{-1} L_{n -1}$. 
Thus it only remains to show that $\lambda'_{n}$ satisfies the left $\rad$-fitting condition. 

We claim that $L_{n} \in \cU_{n}$. 
Indeed, it follows from Lemma \ref{202012141917} and induction hypothesis that $L_{n-1} \in \cU_{n -1}$. 
Let $f: L_{n -1} \to L$ be a minimal left $\rad$-approximation. 
By Lemma \ref{202012141917} that $L \in \cU_{n }$.
Since  $L_{n}$ is a direct summand of $L$, it also belongs to $\cU_{n}$. 

Assume towards contradiction, that $\lambda'_{n}$ does not satisfies the left $\rad$-fitting condition. 
Let $f: L_{n} \to L'$ be a minimal left $\rad$-approximation of $L_{n}$.  
\[
\begin{xymatrix}{
M \ar[r]^{\lambda_{n}} & L_{n} \ar[r]^{\lambda'_{n}} \ar[dr]_{f'} & C_{n} \ar[r]  & M[1] \\
&& L'\ar@{-->}[u]_{g} &
}\end{xymatrix}
\]

Since $\lambda'_{n}$ belongs to $\rad$, there exists a morphism  $g: L' \to C_{n}$ such that $\lambda'_{n} =gf$.  
By assumption, $g$ is not a split-epimorphism. 
We take an indecomposable decomposition $C_{n} = \bigoplus_{i =1}^{a} K_{i}$ and 
decompose $g$ as 
$g= (g_{1}, g_{2}, \ldots, g_{a})^{t}: L' \to  \bigoplus_{i =1}^{a} K_{i} $. 
Then there exists $j = 1,2, \ldots, a$ such that $g_{j} $ is not a split-epimorphism. 
Since $K_{j}$ is indecomposable, we have $g_{j} \in \rad(L, K_{i})$. 
It follows that $g_{j} f' \in \rad^{2}(L_{n }, C_{n})$. 
However, 
by Corollary \ref{202105082233}, $\rad^{2}(L_{n}, C_{n}) =0$. 
Therefore $g_{j} f' =0$. 

It follows from the description
$\lambda'_{n} = gf' =  (g_{1}f', g_{2}f', \ldots, g_{a}f')^{t}: L_{n} \to C_{n}$ 
that $K_{j}$ is embedded in $M[1]$ as a direct summand via 
the connecting morphism $ C_{n} \to M[1] $.  
However $K_{i} \in \ind \cU_{n +1}$ and $M[1] \in \ind \cU_{h}$. 
Therefore, we have $ n+1 = h$ a contradiction. 
This shows that $\lambda'_{n}$ satisfies the left $\rad$-fitting condition. 
\end{proof}

\begin{proof}[Proof of Theorem \ref{kQ approximations theorem add}]

We may assume $M = P_{i_{0}}$. 
Let $N \in \ind \Dbmod{\Pa}$ be such that $\Hom_{\Pa}(M, N) \neq 0$. 
Then $N$ belongs to $\Pa \mod$ and has the simple module $S_{i_{0}}$ as a composition factor. 
Therefore, $\Hom_{\Pa}(N, I_{i_{0}}) \neq 0$. 
It follows from Lemma \ref{basics of place number preliminary 2}  and Lemma \ref{basics of place number Dynkin case} that 
$p(N) \leq p(I_{i_{0}}) =  h-2$. 
Thus  $L_{n} = 0$ for $n \geq h -1$.

It also follows from Lemma \ref{basics of place number preliminary 2} that $L_{h -2}$ is a direct sum of $I_{i_{0}}$. 
Since $\Hom_{\Pa}(M, I_{i_{0}}) \cong \kk$, 
$L_{h -2}$ contains a single copy of $I_{i_{0}}$ as a direct summand. 
Thus $L_{h -2}   = I_{i_{0}} = \nu(M)$. 
This finishes proof of Theorem \ref{kQ approximations theorem add}. 
\end{proof}

\subsection{A description of the direct sum of left $\rad^{n}$-approximation objects}

Let $M$ be an indecomposable object of $\Dbmod{\Pa}$ and 
$L_{n}$ a minimal left $\rad^{n}$-approximation object of $M$. 
In this section we give a description of the total sum $\bigoplus_{n \geq 0} L_{n}$. 

\begin{theorem}\label{20211111751} 
Let $M \in \ind \Dbmod{\Pa}$ and $\cC_{M} \subset \Dbmod{\Pa}$ be the full subcategory that consists of objects belonging to the 
same component as $M$ in the AR-quiver. 
For $n\geq 0$, we denote by $\lambda_{n}: M \to L_{n}$ a minimal left $\rad^{n}$-approximation of $M$.
Assume that the following condition does not  hold: $Q$ is wild and $M$ is a chomological degree shift of a regular module. 

Then we have an isomorphism 
\[
\bigoplus_{n \geq 0} L_{n} \cong \bigoplus_{N \in \ind \cC_{M} } N^{\oplus \dim\Hom(M, N)}
\]
in $\sfD(\Pa)$. 
\end{theorem}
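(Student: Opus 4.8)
The plan is to use the $\rad^n$-approximation theory developed in Sections \ref{section: rad^{n} approximation} and \ref{section: kQ approximations} to compute the multiplicities of each indecomposable $N \in \ind \cC_M$ in the total sum $\bigoplus_{n\ge 0} L_n$, and to match them against $\dim\Hom(M,N)$. The key structural fact, from Theorem \ref{202008172145}(2), is that for a minimal left $\rad^n$-approximation $\lambda_n : M \to L_n$, an indecomposable $N$ occurs in $L_n$ with multiplicity equal to $\dim_{\ResEnd(N)}\irr^n(M,N)$, the dimension of $\rad^n(M,N)/\rad^{n+1}(M,N)$ as a module over the (skew-field, by Krull--Schmidt plus algebraic closure or the general $\ResEnd$ argument) $\ResEnd(N)$. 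Summing over $n$, the multiplicity of $N$ in $\bigoplus_{n\ge 0} L_n$ is $\sum_{n\ge 0} \dim_{\ResEnd(N)} \irr^n(M,N)$. Since $\Hom(M,N) \supseteq \rad(M,N) \supseteq \rad^2(M,N) \supseteq \cdots$ is a filtration, and for $M \not\cong N$ we have $\Hom(M,N) = \rad(M,N)$ while for $M \cong N$ the quotient $\Hom(M,M)/\rad(M,M) = \ResEnd(M)$ contributes the $n=0$ term, the sum $\sum_{n \ge 0}\dim\irr^n(M,N)$ telescopes to $\dim_{\ResEnd(N)}\Hom(M,N)$ provided the filtration is \emph{exhaustive}, i.e. $\bigcap_{n} \rad^n(M,N) = 0$. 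This last point is exactly where the hypotheses on $Q$ and $M$ enter.

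So the heart of the argument is: \emph{show that $\rad^n(M,N) = 0$ for $n \gg 0$, for every $N$ in the component $\cC_M$}, under the stated exclusion that $Q$ is not wild with $M$ a shift of a regular module. First I would reduce to $N$ indecomposable and, using Lemma \ref{202105082234}-type reasoning, bound the ``place number'' distance. For $Q$ Dynkin this is immediate from Theorem \ref{kQ approximations theorem add}: $L_n = 0$ for $n \ge h-1$, so the sum is finite and one just checks the count column-by-column in the (finite) AR-quiver; here $\cC_M$ is the whole derived category and $\sum_n \dim\irr^n(M,N) = \dim\Hom(M,N)$ because the finite filtration of the finite-dimensional space $\Hom(M,N)$ by $\rad^n$ is exhaustive. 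For $Q$ affine (tame), one uses that within a tube $\cT_\lambda$ or within the preprojective/preinjective component, morphism spaces between fixed objects vanish once the ``distance'' in the AR-quiver is large enough (the knitting is eventually periodic in tubes, but $\Hom(M,N)$ for fixed $M,N$ still has $\rad^n$-filtration exhausting to $0$ because any morphism factoring through $\rad^n$ for all $n$ would have to factor through infinitely many irreducible maps and hence be zero by a support/length argument). For $Q$ wild with $M$ preprojective or preinjective, the same place-number argument works because $p(N)$ is bounded below by $p(M)$ on the relevant component and morphisms $M\to N$ live in $\rad^{p(N)-p(M)}$; and the excluded case is precisely the one where this fails (wild regular components have no such monotone function).

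The main obstacle I expect is the tame regular case and, more generally, proving $\bigcap_n \rad^n(M,N) = 0$ in the non-finite-representation-type situations --- i.e. that the $\rad^n$-filtration of $\Hom(M,N)$ is separated. One clean route: invoke the harmless direction already in hand, namely that $L_n \ne 0$ for all $n \in N_Q$ but the \emph{structure} of $L_n$ stabilizes, combined with Theorem \ref{20211111751}'s companion results --- in fact the cleanest is to note that $\bigoplus_{n} L_n$ and $\bigoplus_N N^{\oplus \dim\Hom(M,N)}$ both have, for each fixed $N$, finite multiplicity (on the left by Theorem \ref{202008172145} and finiteness of $\irr^n$; on the right by Hom-finiteness), and then to build an explicit isomorphism by assembling the components $f_{ij}$ of the $\lambda_n$ into a single map and checking it is split mono with the right cokernel using the telescoping of $\irr^n$. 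I would then carry out the verification $\sum_{n\ge 0}\dim\irr^n(M,N) = \dim\Hom(M,N)$ by the telescoping identity $\dim\rad^n(M,N) - \dim\rad^{n+1}(M,N) = \dim\irr^n(M,N)$ summed over $n\ge 1$, plus the $n=0$ term, which gives $\dim\Hom(M,N) - \lim_n \dim\rad^n(M,N)$; the exclusion hypothesis guarantees $\lim_n \dim\rad^n(M,N) = 0$, completing the proof.
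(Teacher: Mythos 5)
Your skeleton is the same as the paper's: compute the multiplicity of each $N\in\ind\cC_{M}$ in $\bigoplus_{n}L_{n}$ as $\sum_{n\ge 0}\dim\irr^{n}(M,N)$ via Theorem \ref{202008172145}, and observe that this telescopes to $\dim\Hom(M,N)$ exactly when the radical filtration of $\Hom(M,N)$ is eventually zero; you also correctly identify that vanishing as the crux. The gap is in how you establish it outside the Dynkin case. For the tame regular case your entire justification is that a morphism lying in every $\rad^{n}$ ``would have to factor through infinitely many irreducible maps and hence be zero by a support/length argument''; this is not an argument. In a stable tube there are arbitrarily long paths of irreducible maps between two fixed indecomposables, and the statement that all sufficiently high radical powers $\rad^{n}(M,N)$ vanish (after first checking that radical factorizations cannot leave the tube, which uses the $\Hom$-vanishing between the separating classes) is precisely the nontrivial assertion that the tube is a generalized standard component. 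The paper does not reprove this: it quotes Simson--Skowro\'nski (X.3.2, X.4.5, XI.2.8) that preprojective and preinjective components, and the regular components of an extended Dynkin quiver, are generalized standard -- and the failure of this for wild regular components is exactly why the excluded case is excluded.

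Your wild transjective case has the same defect in milder form: the place function $p$ is defined in the paper only for Dynkin quivers (its construction uses that $|Q|$ is a tree, hence catenary), so ``morphisms $M\to N$ live in $\rad^{p(N)-p(M)}$'' is not available verbatim for a general wild quiver; one must either rerun a bounded-path-length argument on the $\ZZ Q$ component together with $\Hom(\mathcal{R}\cup\mathcal{I},\mathcal{P})=0$ to rule out factorizations leaving the component, or simply cite generalized standardness of preprojective/preinjective components, as the paper does. Your closing paragraph (``assemble the $f_{ij}$ into an explicit isomorphism \dots the exclusion hypothesis guarantees $\lim_{n}\dim\rad^{n}(M,N)=0$'') circles back to asserting the key fact rather than proving it. So: same route as the paper, but the essential input $\rad^{n}(M,N)=0$ for $n\gg 0$ is asserted rather than proved or properly cited in the non-Dynkin cases.
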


\begin{proof}
We may assume that $M$ is an indecomposable $\Pa$-module. 

Recall (e.g., \cite[X Definition 3.1]{Simson-Skowronski: II})
 that a connected component $\cC$ of the AR-quiver of a finite dimensional algebra $R$ is called \emph{generalized standard} 
if for any pairs $(X, Y)$ of indecomposable objects in $\cC$, we have $\rad^{n}(X, Y) = 0$ for $n \gg 0$. 

By \cite[X. Proposition 3.2]{Simson-Skowronski: II} a preprojective component and a preinjective component are generalized standard. 
By \cite[X. Theorem 4.5, XI. Theorem 2.8]{Simson-Skowronski: II}, 
if $Q$ is extended Dynkin and $M$ is a regular module, then the component $\cC_{M}$ is generalized standard.  
Therefore 
it follows from the assumption on $Q$ and $M$
that, for  $N \in\ind\cC_{M}$, we have $\dim \Hom(M,N) = \sum_{n\geq 0} \dim \irr^{n}(M,N)$. 
Thus by Theorem \ref{202008172145} we deduce the desired conclusion. 
\end{proof}

\section{dg-modules over a dg-algebra}\label{section: dg-modules}
In this Section \ref{section: dg-modules} we prepare and fix  notion and terminology  of dg-modules over dg-algebras. 
For  basics of dg-algebras and  dg-modules, we refer \cite{Keller: DG-categories, Keller: ICM}.

\subsection{Dg-modules over a dg-algebra}

\subsubsection{}
Let $R$ be a dg-algebra. 
We denote by $|x|$ the cohomological degree of a homogeneous element $x \in R$. 

Commutators $[-,+]$ are taken in the cohomological graded sense.  
Namely for homogeneous elements $x, y\in R$, we set $[x,y] := xy - (-1)^{|x||y|}yx$. 
Commutators satisfy 
\[
\begin{split}
[x, y]  &= (-1)^{|x||y| +1} [y ,x]  \ \textup{(skew commutativity) and }\\
[x, [y,z] ] &= [[x,y] ,z] + (-1)^{|x||y|}[y, [x,z] ] \ \textup{(graded Leibniz rule}).
\end{split}
\]

A homogeneous morphism $D: R\to R$ is said to be a  \emph{derivation} if it satisfies 
$D(xy) =D(x) y + (-1)^{|D||x|}xD(y)$. 
Note that we have $D[x,y] = [D(x), y]+ ( -1)^{|D||x|}[x,D(y)]$. 
We also note that if $D, D'$ are derivations on $R$, then the commutator $[D,D']_{\Hom}: = DD' -(-1)^{|D||D'|}D'D$ taken in $\cpxHom_{\kk}(R, R)$ is a derivation on $R$.

Let $x \in R$ be a homogeneous element. 
We denote by $\sfl_{x}$ the left multiplication morphism $\sfl_{x}: R \to R, \ \sfl_{x}(y):= xy$ 
and by
 $\sfr_{x}$ the right multiplication morphism $\sfr_{x}: R \to R, \ \sfr_{x}(y) := (-1)^{|x||y|}yx$. 
We set  $\sfb_{x}:= \sfl_{x} - \sfr_{x}$. In other words, the morphism $\sfb_{x} $ is defined to be $\sfb_{x} : R \to R, \ \sfb_{x}(y) := [x,y]$. 
We note that $\sfb_{x}$ is a derivation of degree $|x|$.

\subsubsection{Morphisms and cochain maps of dg-modules}

Let $X, Y$ be dg-$R$-modules. 
A \emph{morphism} $f: X \to Y$ means a homogeneous morphism. 
A \emph{cochain map} $f: X \to Y$ is a homogeneous map which compatible with the differentials.

We denote by $\uparrow$ the canonical morphisms $X \to X[-1], X[1] \to X$ induced from the identity map of underlying 
ungraded module, and
by $\downarrow$ 
the canonical morphisms $X \to X[1], X[-1] \to X$ induced from the identity map of underlying 
ungraded module. 
For example, for a homogeneous  morphism $f: X \to Y$ of $\sfC_{\DG}(R)$,  
we have $f[1] = ( -1)^{|f|}\downarrow f \uparrow$ as morphisms from $X[1]$ to $Y[1]$. 
Note in particular that if we denote by $d_{X}$ the differential of $X$, then $d_{X[1]} = d_{X} [1] = - \downarrow d_{X} \uparrow$. 
We also note that 
for a cochain map $f: X \to Y$ of dg-$R$-modules, 
the cone $\cone(f)$ is a dg-module whose underlying graded module is $Y\oplus X[1]$ and 
the differential is given by $d_{\cone(f)} := \begin{pmatrix} d_{Y} & f\uparrow \\ 0  & d_{X[1]} \end{pmatrix}$. 
In other words, the cone $\cone(f)$ and the co-cone $\cone(f)[-1]$ of $f$ are given as below.
\[
\cone (f) := \left( Y \oplus X[1], \begin{pmatrix} d_{Y} & f\uparrow \\ 0  & d_{X[1]} \end{pmatrix} \right), \ 
\cone (f)[-1] = \left( Y[-1] \oplus X, \begin{pmatrix} d_{Y[-1]} & - \uparrow f\\ 0  & d_{X} \end{pmatrix} \right). 
\]
Recall that the cone $\cone(f)$  fits into the exact triangle below  in $\sfK(R)$ 
\[
Y \xrightarrow{ i_{1}^{f} } \cone(f) \xrightarrow{ p_{2}^{f} } X[1] \xrightarrow{ -f[1]} Y[1].
\]
where $i^{f}_{1}: = {\small \begin{pmatrix} \id_{Y} \\0 \end{pmatrix}}$ is the canonical inclusion and 
$p^{f}_{2} :=( 0, \id_{X})$ is the canonical projection.

Let $a \in \kk\setminus \{0 \}$. 
Then  we may identify the cone $\cone(af)$ with $\cone(f)$ via the isomorphism $ {\tiny \begin{pmatrix} \id_{Y} & 0 \\ 0 & a \id_{X[1]} \end{pmatrix} } : \cone(af) \to \cone(f)$, 
which  provides  the following isomorphism of exact triangles: 
\begin{equation}\label{202102222229}
\begin{xymatrix}@C=60pt{ 
X \ar[r]^{af} \ar@{=}[d] &Y \ar[r]^{i^{af}_{1}} \ar@{=}[d]  &\cone(af) \ar[d]^{{\tiny \begin{pmatrix} \id_{Y} & 0 \\ 0 & a \id_{X[1]} \end{pmatrix} }} \ar[r]^{p_{2}^{af}} & X[1]  \ar@{=}[d] \\
X \ar[r]_{af} & Y \ar[r]_{i^{f}_{1}}  &\cone(f) \ar[r]_{ a^{-1} p_{2}^{f}} & X[1].
}\end{xymatrix}
\end{equation}

\subsubsection{Homotopies}

Let $f, g: X \to Y$ be morphisms in $\sfC(R)$. 
A homotopy $H$ from $f$ to $g$ is a morphism $H: X \to Y$ of degree $-1$ of $\sfC_{\DG}(R)$ such that 
$f- g = d_{Y}H + Hd_{X}$. We often exhibit the situation as below. 
\[
\begin{xymatrix}{
X \ar@/^1pc/[rr]^{f} \ar@/_1pc/[rr]_{g} \ar@{}[rr]|{\Downarrow H} &&Y
}\end{xymatrix}
\]
It is straightforward to check that a homotopy $H$ from $f$ to $g$ gives an isomorphism 
$\begin{pmatrix} \id_{Y} & H\uparrow \\ 0 & \id_{X[1]} \end{pmatrix} : \cone(f) \to \cone(g)$ in $\sfC(R)$.

Let $f: X \to Y$ be a morphism in $\sfC(R)$. 
Then the morphism $\sfh_{f}=(\downarrow, 0) : \cone(f)[-1] \to Y$ is a homotopy from $f ( -p_{2}^{f})$ to $0$. 
The morphism $\sfg_{f}=(0, \downarrow)^{t} : X \to  \cone(f)$ is a homotopy from $i_{1}^{f} f$ to $0$. 
\begin{equation}\label{202007312045}
\begin{xymatrix}@C=10mm@R=4mm{
&&&\\
\cone(f)[-1] \ar[r]^-{- p_{2}^{f} } \ar@/_2pc/[rr]_{0}   &
X\ar[r]^{f} \ar@{=>}[d]^{ \sfh_{f}  }\ar@/^2pc/[rr]^{0}   
& Y \ar[r]^{i_{1}^{f} } \ar@{=>}[u]^{ \sfg_{f}  }  &\cone(f) \\
&& 
}\end{xymatrix}
\end{equation}

Let $f: X \to Y, \ g : Y \to Z$ be cochain maps and $H$ be a homotpy from $gf$ to $0$. 
Then we have  the induced cochain maps $q_{f,g,H}:=(g, H\hspace{-4pt}\uparrow): \cone(f) \to Z$ and 
$j_{f,g,H}:={\tiny \begin{pmatrix} \uparrow H \\ -f \end{pmatrix} }: X \to \cone(g)[-1]$ 
that fit into the following commutative diagram
\begin{equation}\label{202012082037}
\begin{xymatrix}@R=4mm{
X \ar[drr]_{f}  \ar[rrrr]^{0}  \ar[dd]_{j_{f,g,H} =\tiny \begin{pmatrix} \uparrow \hspace{-2pt}H  \\ -f \end{pmatrix} }
& & && Z\\
&& Y \ar@{=>}[u]^{H} \ar[urr]_{g} \ar[drr]_{i_{1}^{f} }\\
\cone(g)[-1] \ar[urr]_{-p^{g}_{2}[-1]} &&&& \cone(f)  \ar[uu]_{(g, H\uparrow)= q_{f,g,H}}
}\end{xymatrix}
\end{equation}

We leave  the verification of the following lemma to the readers. 

\begin{lemma}\label{202008021445} 
Let $f: X \to Y, g: Y \to Z, h: Z \to W$ be morphisms in $\sfC(R)$ 
and $H: X \to Z$ a homotopy from $gf $ to $0$. 

Then, $h H$ is a homotopy from $hgf$ to $0$ 
and we have an equality $q_{f,hg, hH} = h q_{f, g, H}$ of morphisms 
from $\cone(f) \to  W$. 
\end{lemma}

\subsection{The octahedral axiom}\label{section: octahedral} 

The results of this section is used in Section \ref{section: the derived quiver Heisenberg algebras}. 
We recall the construction of the diagram that appears in the proof that the homotopy category $\sfK(R)$ 
satisfies the octahedral axiom.

\begin{lemma}\label{octahedral axiom}
Let $f: X \to Y, g: Y \to Z$ and $h: X \to Z$ be cochain maps between dg-$R$-modules  
and $H: X\to Z$  a homotopy from $gf$ to $h$.

Then the following statements hold.

\begin{enumerate}[(1)]
\item 
  The following diagram is  commutative in $\sfK(R)$ 
\begin{equation}\label{20191128}
\begin{xymatrix}{
&& X \ar@{=}[rr] \ar[d]_{f}  &&  X \ar[d]^{h}  && && \\
\cone(g)[-1] \ar[rr]^{-p^{g}_{2}[1]} \ar@{=}[d] &&
Y \ar@{=>}[urr]^{H} \ar[d]_{i^{f}_{1}} \ar[rr]^{g} && 
Z \ar[d]_{i^{h}_{1}} \ar[rr]^{i^{g}_{1}} && \cone(g) \ar@{=}[d] 
\\
\cone(g)[-1] \ar[rr]_{\Phi} &&\cone(f) \ar[rr]_{\Psi} \ar[d]_{p^{f}_{2}} && \cone(h) \ar[d]^{p^{h}_{2}} \ar[rr]_{\Upsilon} &&
\cone(g)  \\ 
&& X[1] \ar@{=}[rr] && X[1] && && 
}\end{xymatrix}
\end{equation}
where $\Phi = \begin{pmatrix} 0 & - \id_{Y} \\ 0 & 0\end{pmatrix}, \ \Psi = \begin{pmatrix} g & H\uparrow \\ 0 & \id_{X[1]} \end{pmatrix}$ and $
\Upsilon= \begin{pmatrix} \id_{Z} & - H\uparrow \\ 0 & f[1] \end{pmatrix}$.

\item 
The third horizontal line becomes an exact triangle in $\sfK(R)$.

\item 
\begin{enumerate}[(i)]
\item 
The morphism $K := \begin{pmatrix}  \downarrow & 0 \\ 0& 0 \end{pmatrix}: \cone(g)[-1] \to \cone(h)$ 
of degree $-1$ is a homotopy from $\Psi\Phi$ to $0$.

\item  $L :=  \begin{pmatrix} 0 & 0 \\ \downarrow & 0\end{pmatrix} :\cone(f) \to \cone(g)$ 
is a homotopy from $\Upsilon\Psi$ to $0$.

\item 
The morphism $M := \begin{pmatrix}  0 & 0 \\ 0 & -\downarrow \end{pmatrix}: \cone(h) \to \cone(f)[1]$ of degree $-1$ is 
a homotopy from $-(\Phi[1])\Upsilon$ to $0$. 

\end{enumerate}
\item

The following statements hold.

\begin{enumerate}

\item 
The induced  morphisms $q_{\Phi,\Psi, K} = (\Psi, K\hspace{-3pt}\uparrow): \cone(\Phi) \to \cone (h)$ and 
$- j_{\Upsilon, -\Phi[1], M} = \begin{pmatrix} - \uparrow M \\ \Upsilon \end{pmatrix}: \cone(h) \to  \cone ( -\Phi[1])[-1] = \cone (\Phi)$ 
are homotopy inverse to each other.

\item 

The induced morphisms 
$q_{\Psi,\Upsilon, L}= (\Upsilon, L \hspace{-3pt} \uparrow ): \cone(\Psi) \to \cone(g)$ 
and 
$j_{\Phi,\Psi,K} [1]= \begin{pmatrix}( \uparrow K) [1] \\ -\Phi[1] \end{pmatrix}: \cone(g) \to \cone(\Psi) $ 
are homotopy inverse to each other.

\item

The induced morphisms 
$q_{\Upsilon, -\Phi[1], M}= (-\Phi[1], M \hspace{-3pt} \uparrow ): \cone(\Upsilon) \to \cone(f)[1]$ 
and 
$j_{\Psi,\Upsilon,L} [1]= \begin{pmatrix} (\uparrow L) [1] \\ -\Psi[1] \end{pmatrix}: \cone(f)[1] \to \cone(\Upsilon)$ 
are homotopy inverse to each other.

\end{enumerate}

\item 
We have $\sfh_{\Upsilon} j_{\Psi,\Upsilon, L} = L$ as morphisms from $\cone(f) \to \cone(g)$ of degree $-1$ 
where $\sfh_{\Upsilon} : \cone(\Upsilon)[-1] \to \cone(g)$ is the morphism of \eqref{202007312045}.

\end{enumerate} 

\end{lemma}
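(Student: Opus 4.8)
Throughout, I would work in the explicit cone models recalled in Section~\ref{section: dg-modules}: $\cone(f)$ has underlying graded module $Y\oplus X[1]$ with the triangular differential whose off-diagonal entry is $f\uparrow$, and likewise for $\cone(g),\cone(h)$ and for the iterated cones $\cone(\Phi),\cone(\Psi),\cone(\Upsilon)$. The computations use only $\uparrow\downarrow=\id$, $\downarrow\uparrow=\id$, $d_{X[1]}=-\downarrow d_X\uparrow$ and $f[1]=(-1)^{|f|}\downarrow f\uparrow$, the building blocks $\sfh_{(-)},\sfg_{(-)},q_{-,-,-},j_{-,-,-}$ with their properties from \eqref{202007312045}, \eqref{202012082037} and Lemma~\ref{202008021445}, and the one nonformal ingredient, the defining identity $gf-h=d_ZH+Hd_X$. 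The plan is to prove (1) and (3) by direct block computation, then (4), and to deduce (2) from (4)(a); (5) will be immediate.

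For (1): the substantive claim is that $\Phi,\Psi,\Upsilon$ are cochain maps. For $\Psi=\left(\begin{smallmatrix} g & H\uparrow\\ 0 & \id\end{smallmatrix}\right)$ one computes $d_{\cone(h)}\Psi-\Psi\,d_{\cone(f)}$ entrywise; the only possibly nonzero entry, in position $(1,2)$, equals $(d_ZH+h-gf+Hd_X)\uparrow$, which vanishes by the homotopy identity, and the checks for $\Upsilon$ and for $\Phi$ (the latter needing no homotopy identity) are of the same $2\times 2$ shape. As for the diagram \eqref{20191128}: the square carrying the $2$-cell $H$ commutes in $\sfK(R)$ by hypothesis (it is the relation $gf=h$ in $\sfK(R)$), while every other square commutes already in $\sfC(R)$, each after substituting the block forms of $i_1^{(-)},p_2^{(-)},\Phi,\Psi,\Upsilon$ — a short sign-chase in each case.

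For (3): each of $K,L,M$ is verified to be the stated homotopy by one block computation, e.g.\ $d_{\cone(h)}K+K\,d_{\cone(g)[-1]}=\Psi\Phi$, using $d_{X[1]}=-\downarrow d_X\uparrow$ to move differentials past the maps $\downarrow$, and similarly $d_{\cone(g)}L+L\,d_{\cone(f)}=\Upsilon\Psi$ and $d_{\cone(f)[1]}M+M\,d_{\cone(h)}=-(\Phi[1])\Upsilon$. With (3) available, part (4) is the real work: for each of the three pairs one must show both composites are homotopic to the corresponding identity. I would write each composite out as an explicit block matrix, using $q_{f,g,H}\circ i_1^f=g$ and the companion identity for $j$ from \eqref{202012082037}, and Lemma~\ref{202008021445} to rewrite composites $h\,q_{f,g,H}$ as $q_{f,hg,hH}$; the resulting matrix is then recognizably $\id$ plus a coboundary, with the contracting homotopy visible by inspection. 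The main obstacle is the sign bookkeeping — tracking signs from the shifts $[\pm1]$, from $f[1]=(-1)^{|f|}\downarrow f\uparrow$, and from the Koszul rule, kept consistent across all three homotopy-equivalence verifications; fixing all conventions once at the start is what makes this tractable.

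Finally, (2) follows formally from (4)(a): the standard exact triangle $\cone(g)[-1]\xrightarrow{\Phi}\cone(f)\xrightarrow{i_1^{\Phi}}\cone(\Phi)\xrightarrow{p_2^{\Phi}}\cone(g)[-1][1]$ in $\sfK(R)$ transports along the homotopy equivalence $q_{\Phi,\Psi,K}\colon\cone(\Phi)\xrightarrow{\ \sim\ }\cone(h)$; the identity $q_{\Phi,\Psi,K}\circ i_1^{\Phi}=\Psi$ and the dual one for $j$ show that the transported maps are $\Psi$ and, under the canonical identification $\cone(g)[-1][1]\cong\cone(g)$, exactly $\Upsilon$, so the third horizontal line of \eqref{20191128} is an exact triangle (alternatively, exactness is a case of the octahedral axiom for $\sfK(R)$ and only the identification of the maps needs checking). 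Part (5) is then immediate from the definitions: unwinding, $j_{\Psi,\Upsilon,L}=\left(\begin{smallmatrix}\uparrow L\\ -\Psi\end{smallmatrix}\right)\colon\cone(f)\to\cone(\Upsilon)[-1]$, whose target splits as $\cone(g)[-1]\oplus\cone(h)$, and $\sfh_{\Upsilon}=(\downarrow,0)\colon\cone(g)[-1]\oplus\cone(h)\to\cone(g)$, so $\sfh_{\Upsilon}\,j_{\Psi,\Upsilon,L}=\downarrow\uparrow L=L$.
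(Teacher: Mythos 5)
Your proposal is correct, and for parts (1), (3), (4) and (5) it follows essentially the same route as the paper: direct block computations in the explicit cone models, with (4) done by writing out $q$ and $j$ as matrices, checking $qj=\id$, and exhibiting an explicit contracting homotopy for $\id-jq$ (the sign bookkeeping you flag is indeed the only real work there, and the paper does exactly this). The one genuine difference is part (2). The paper proves exactness of the row $(\Phi,\Psi,\Upsilon)$ \emph{before} and independently of (4): it quotes the special case $h=gf$, $H=0$ from \cite[p.318]{Zimmermann} and transfers it to the general case by the cochain isomorphism $\begin{pmatrix}\id_{Z} & H\uparrow\\ 0 & \id_{X[1]}\end{pmatrix}\colon \cone(gf)\to\cone(h)$. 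You instead derive (2) \emph{from} (4)(a), transporting the standard triangle $\cone(g)[-1]\xrightarrow{\Phi}\cone(f)\xrightarrow{i_1^{\Phi}}\cone(\Phi)\xrightarrow{p_2^{\Phi}}\cone(g)$ along the homotopy equivalence $q_{\Phi,\Psi,K}$, using $q_{\Phi,\Psi,K}\,i_1^{\Phi}=\Psi$ and $p_2^{\Phi}\circ\bigl(-j_{\Upsilon,-\Phi[1],M}\bigr)=\Upsilon$, together with $\Upsilon\, q_{\Phi,\Psi,K}\simeq p_2^{\Phi}$ from the homotopy inverse relation. This is valid and non-circular, since your (4) rests only on (3) and direct computation; it buys you a self-contained argument with no external citation, at the cost of making (2) depend on the heaviest part of the lemma, whereas the paper's reduction keeps (2) short and independent of (4). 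Your parenthetical alternative, invoking the octahedral axiom of $\sfK(R)$ for exactness, should be dropped or at least not relied on: the whole point of this lemma is to produce the octahedron data explicitly, so that route is circular in spirit even if formally admissible for bare exactness. Finally, note that your (5) is exactly the intended one-line unwinding (the paper does not even spell it out), and in (2) you are entitled to the identification $\cone(-\Phi[1])[-1]=\cone(\Phi)$ since the statement of (4)(a) asserts it.
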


\begin{proof}
(1) can be checked by direct calculation. 

(2) 
The case where $h = fg$ and $H = 0$ is proved in  \cite[p.318]{Zimmermann}.
Modifying this case by the cochain isomorphism 
${\tiny \begin{pmatrix} \id_{Z} & H \hspace{-3pt}\uparrow \\ 0 & \id_{X[1]} \end{pmatrix}}: \cone(fg) \to \cone(h)$, 
we verify the  statement for the general case.

(3) is checked by direct calculation. 

(4) 

(a) 
Since $ q= \begin{pmatrix} g & H\hspace{-3pt} \uparrow & \id_{Z} & 0 \\ 0 & \id_{X[1]} & 0 &0 \end{pmatrix}$ 
and $j = \begin{pmatrix} 0 & 0 \\ 0 & \id_{X[1]} \\ \id_{Z} & - H\hspace{-3pt} \uparrow \\ 0 & f[1] \end{pmatrix}$, 
it is straight forward to check $qj = \id_{\cone(h)}$. 
We can check that the morphism 
$\begin{pmatrix} 0  & 0& 0 & 0 \\ 0 & 0 & 0 & 0 \\ 0 & 0& 0 & 0 \\ -\downarrow  & 0& 0& 0 \end{pmatrix} $ 
is a homotopy from $\id_{\cone(\Phi)}$ to $qj$.

We have 
\[
\id_{\cone(\Phi)} - jq
= 
\begin{pmatrix} \id_{Y} & 0& 0 & 0 \\ 0 & \id_{X[1]} & 0 & 0 \\ 0 & 0& \id_{Z} & 0 \\ 0 & 0& 0& \id_{Y[1]} \end{pmatrix} 
- 
\begin{pmatrix} 0  & 0& 0 & 0 \\ 0 & \id_{X[1]} & 0 & 0 \\ g & 0& \id_{Z} & 0 \\ 0 & f[1]& 0& 0 \end{pmatrix} 
=
\begin{pmatrix} \id_{Y}   & 0& 0 & 0 \\ 0 & 0& 0 & 0 \\ - g & 0& 0 & 0 \\ 0 & -f[1]& 0& \id_{Y[1]} \end{pmatrix}.
\]
On the other hand, 
\[
\begin{split}
&\begin{pmatrix} d_{Y}  & f\uparrow & 0 & -\uparrow  \\ 0 & d_{X[1]} & 0 & 0 \\ 0 & 0& d_{Z} & g\uparrow \\ 0  & 0& 0& d_{Y[1]} \end{pmatrix} 
\begin{pmatrix} 0  & 0& 0 & 0 \\ 0 & 0 & 0 & 0 \\ 0 & 0& 0 & 0 \\ -\downarrow  & 0& 0& 0 \end{pmatrix} 
+
\begin{pmatrix} 0  & 0& 0 & 0 \\ 0 & 0 & 0 & 0 \\ 0 & 0& 0 & 0 \\ -\downarrow  & 0& 0& 0 \end{pmatrix} 
\begin{pmatrix} d_{Y}  & f\uparrow & 0 & -\uparrow  \\ 0 & d_{X[1]} & 0 & 0 \\ 0 & 0& d_{Z} & g\uparrow \\ 0  & 0& 0& d_{Y[1]} \end{pmatrix} 
\\ 
&= 
\begin{pmatrix} \id_{Y}  & 0& 0 & 0 \\ 0 & 0 & 0 & 0 \\ -g & 0& 0 & 0 \\ -d_{Y[1]}\downarrow  & 0& 0& 0 \end{pmatrix} 
+ 
\begin{pmatrix} 0  & 0& 0 & 0 \\ 0 & 0 & 0 & 0 \\ 0 & 0& 0 & 0 \\ -\downarrow  d_{Y} & - \downarrow f \uparrow & 0& \id_{Y[1]} \end{pmatrix} \\
& 
=
\begin{pmatrix} \id_{Y}   & 0& 0 & 0 \\ 0 & 0& 0 & 0 \\ - g & 0& 0 & 0 \\ 0 & -f[1]& 0& \id_{Y[1]} \end{pmatrix} 
\end{split}
\]

(b) 
\[
q:=q_{\Psi,\Upsilon,L} = \begin{pmatrix} \id_{Z} & - H \uparrow & 0& 0 \\ 0 & f[1]& \id_{Y[1]} & 0 \end{pmatrix}, 
\ 
j:=j_{\Phi,\Psi, K}[ 1] =
\begin{pmatrix} \id_{Z} & 0 \\ 0 & 0 \\ 0 & \id_{Y[1]} \\ 0& 0 \end{pmatrix}.  
\]
It is clear that $qj = \id_{\cone(g)}$. 
We can check that 
the morphism
$\begin{pmatrix} 0  & 0& 0 & 0 \\ 0 & 0 & 0 & 0 \\ 0 & 0& 0 & 0 \\ 0 & \downarrow & 0& 0 \end{pmatrix} 
$ 
gives a homotopy from $\id_{\cone(\Psi)}$ to $jq$.

We have 
\[
\id_{\cone(\Psi)} - jq
= 
\begin{pmatrix} 0   &  H \uparrow & 0 & 0 \\ 0 & \id_{X[1]} & 0 & 0 \\ 0 & -f[1] & 0 & 0 \\ 0 & 0& 0& \id_{X[2]} \end{pmatrix} 
\]

\[
\begin{split}
&\begin{pmatrix} d_{Z}  & h\uparrow & g \uparrow & H \uparrow \uparrow  \\
 0 & d_{X[1]} & 0 & \uparrow  \\
 0 & 0& d_{Y[1]} & -\downarrow f \uparrow \uparrow \\ 0  & 0& 0& d_{X[2]} \end{pmatrix} 
\begin{pmatrix} 0  & 0& 0 & 0 \\ 0 & 0 & 0 & 0 \\ 0 & 0& 0 & 0 \\ 0 & \downarrow  & 0& 0 \end{pmatrix} 
+
\begin{pmatrix} 0  & 0& 0 & 0 \\ 0 & 0 & 0 & 0 \\ 0 & 0& 0 & 0 \\ 0 & \downarrow  & 0& 0 \end{pmatrix} 
\begin{pmatrix} d_{Z}  & h\uparrow & g \uparrow & H \uparrow \uparrow  \\
 0 & d_{X[1]} & 0 & \uparrow  \\
 0 & 0& d_{Y[1]} & -\downarrow f \uparrow \uparrow \\ 0  & 0& 0& d_{X[2]} \end{pmatrix} 
\\ 
&= 
\begin{pmatrix} 0  & H\uparrow & 0 & 0 \\ 
0 & \id_{X[1]} & 0 & 0 \\
0&  -\downarrow f \uparrow & 0& 0  \\
0&  d_{X[2]}\downarrow  & 0& 0& 0 \end{pmatrix} 
+ 
\begin{pmatrix} 0  & 0& 0 & 0 \\ 0 & 0 & 0 & 0 \\ 
0 & 0& 0 & 0  \\ 0&  \downarrow  d_{X[1] } &  0& \id_{X[2]} \end{pmatrix} \\
& 
=
\begin{pmatrix} 0   &  H \uparrow & 0 & 0 \\ 0 & \id_{X[1]} & 0 & 0 \\ 0 & -f[1] & 0 & 0 \\ 0 & 0& 0& \id_{X[2]} \end{pmatrix} 
\end{split}\]

(c) 
\[
q := q_{\Upsilon, -\Phi[1], M} = \begin{pmatrix} 0 & \id_{Y} & 0& 0 \\ 0 & 0& 0& -\id_{X[2]} \end{pmatrix}, 
\ 
j:= j_{\Psi,\Upsilon,L}[1] \begin{pmatrix} 0 & 0 \\ \id_{Y[1]} & 0 \\ -g[1] & -(H\uparrow)[1] \\ 0 &-\id_{X[2]} \end{pmatrix}.
\]
It is clear that $qj = \id_{\cone(f)[1]}$. 

We can check that 
the morphism
$\begin{pmatrix} 0  & 0& 0 & 0 \\ 0 & 0 & 0 & 0 \\ \downarrow & 0& 0 & 0 \\ 0 & 0 & 0& 0 \end{pmatrix} 
$ 
gives a homotopy from $\id_{\cone(\Upsilon)}$ to $jq$. We have 
\[
\id_{\cone(\Upsilon)} - jq
=
\begin{pmatrix} \id_{Z}   & 0& 0 & 0 \\ 0 & 0& 0 & 0 \\ 0 & g[1]& \id_{Z[1]} & -(H\uparrow)[1] \\
 0 & 0& 0& 0 \end{pmatrix} 
= 
\begin{pmatrix} \id_{Z}   & 0& 0 & 0 \\ 0 & 0& 0 & 0 \\ 
0 & \downarrow g\uparrow & \id_{Z[1]} & -\downarrow H\uparrow\uparrow \\
 0 & 0& 0& 0 \end{pmatrix} 
\]
Next we calculate

\[
\begin{split}
&\begin{pmatrix} d_{Z}  & g\uparrow & \uparrow & - H \uparrow \uparrow  \\
 0 & d_{Y[1]} & 0 & f[1]\uparrow  \\ 
0 & 0& d_{Z[1]} & -\downarrow h\uparrow \uparrow \\
 0  & 0& 0& d_{X[1]} \end{pmatrix} 
\begin{pmatrix} 0  & 0& 0 & 0 \\ 0 & 0 & 0 & 0 \\ \downarrow & 0& 0 & 0 \\ 0 & 0& 0& 0 \end{pmatrix} 
+
\begin{pmatrix} 0  & 0& 0 & 0 \\ 0 & 0 & 0 & 0 \\ \downarrow & 0& 0 & 0 \\ 0 & 0& 0& 0 \end{pmatrix} 
\begin{pmatrix} d_{Z}  & g\uparrow & \uparrow & - H \uparrow \uparrow  \\
 0 & d_{Y[1]} & 0 & f[1]\uparrow  \\ 
0 & 0& d_{Z[1]} & -\downarrow h\uparrow \uparrow \\
 0  & 0& 0& d_{X[1]} \end{pmatrix} \\ 
&= 
\begin{pmatrix} 
\id_{Z}  & 0& 0 & 0 \\ 0 & 0 & 0 & 0 \\ d_{Z[1]}\downarrow & 0& 0 & 0 
\\ 0& 0& 0& 0 \end{pmatrix} 
+ 
\begin{pmatrix} 0  & 0& 0 & 0 \\ 0 & 0 & 0 & 0 \\ \downarrow d_{Z} & \downarrow g \uparrow & \id_{Z[1]} & - \downarrow H \uparrow \uparrow  \\ 0& 0& 0 & 0 \end{pmatrix} \\
&=
\begin{pmatrix} \id_{Z}   & 0& 0 & 0 \\ 0 & 0& 0 & 0 \\ 
0 & \downarrow g\uparrow & \id_{Z[1]} & -\downarrow H\uparrow\uparrow \\
 0 & 0& 0& 0 \end{pmatrix} .
\end{split}
\]
\end{proof}

\subsubsection{A preparation for the proof of Theorem \ref{exact triangle U2}}\label{202008041341}

We provide a technical lemma that is used in the proof of Theorem \ref{exact triangle U2}. 

We continue to use the notations of Lemma \ref{octahedral axiom}. 
We remark that the morphism $\Phi: \cone(g)[-1] \to \cone(f)$ 
and its cone $\cone (\Phi)$ only depends on $f,g$ and does not depend on $h, H$. 
We denote $\Phi$ by $\Phi_{f,g}$ in the case where we need to refer $f, g$. 
Similarly,  we denote  $K: \cone(g)[-1] \to \cone(h)$ by $K_{f,g,h}$.

We denote the morphism $q_{\Phi_{f,g}, \Psi, K_{f,g,h}}: \cone(\Phi_{f,g}) \to \cone(h)$ by $\dot{q}_{f,g, h, H}$.
According to the decompositions 
$\cone(\Phi_{f,g}) = Y \oplus X[1] \oplus Z \oplus Y[1], \ 
\cone(h) = Z \oplus X[1]$, 
this cochain map is exhibited as 
\[
\dot{q}_{f,g,h, H}  = 
 \begin{pmatrix} g & H\hspace{-3pt} \uparrow & \id_{Z} & 0 \\ 0 & \id_{X[1]} & 0 &0 \end{pmatrix}: 
 \cone(\Phi_{f,g}) \to \cone(h).
\]

\begin{definition}\label{202008021600}
Let $f: X \to  Y, g: Y \to Z, k: Z \to W$ be cochain maps in $\sfC(R)$ and $H: X \to W$ a
 homotopy from $kgf$ to $0$. 
 
 We define $\acute{q}_{f,g,k, H}: \cone(\Phi_{f,g}) \to W$ to be the composition 
 \[
 \acute{q}_{f,g,k, H} : \cone(\Phi_{f,g}) \xrightarrow{ q_{f,g,0}} \cone(gf) \xrightarrow{ q_{gf,k ,H}} W. 
 \]
 \end{definition}
 According decomposition $\cone(\Phi_{f,g}) = Y \oplus X[1] \oplus Z \oplus Y[1]$ 
 we have 
 \[
 \acute{q}_{f,g,k,H} = (kg, H\uparrow, k,0 ) : \cone(\Phi_{f,g}) \to W. 
 \]

\begin{lemma}\label{202008021437}
Let $l:X \to X', f: X' \to Y, g: Y \to Z, h: X \to Z$ and $k: Z \to W$ be cochain maps between dg-$R$-modules, 
$H_{1}: X\to Z$  a homotopy from $gfl$ to $h$, 
$H_{2}: X \to W$ a homotopy from $kh$ to $0$ 
and $H_{3}: X' \to W$ be a homotopy from $kgf$ to $0$. 
\[
\begin{xymatrix}{
X  \ar[d]_{l}\ar@{=}[rrrr] &&&& X \ar[d]^{h} \ar@/^40pt/[dd]^{0} &&\\
X' \ar@{=>}[urrrr]^{H_{1}}  \ar[rr]_{f} \ar@/_20pt/[drrrr]_{0} && Y\ar[rr]_{g} & &Z \ar[d]^{k} \ar@{=>}[dll]^{H_{3}} \ar@{=>}[r]^{H_{2}}&&\\
&&&& W&&
}\end{xymatrix} 
\]

Assume we have $H_{3}l = kH_{1} + H_{2}$. 
For simplicity we set $H := H_{3}l$. 

Then $H$ is a homotopy from $kgfl$ to $0$ 
and 
the following diagram is commutative 
\[
\begin{xymatrix}@C=60pt{
\cone(\Phi_{f,g} )  \ar[d]_{\acute{q}_{f,g,k, H_{3}}} 
& 
 \cone(\Phi_{fl,g}) \ar[d]_{\acute{q}_{fl,g,k, H} } \ar[l]_{\sfind} \ar[r]^{\dot{q}_{fl, g,h, H_{1}} }
 & 
\cone(h) \ar[d]^{ q_{h,k, H_{2} } }
  \\
W & W \ar@{=}[r] \ar@{=}[l] & W
}\end{xymatrix}
\]
where $\sfind$ are the morphism induced from $l$.
\end{lemma}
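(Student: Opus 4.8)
The plan is to dispose of the homotopy assertion at once and then verify commutativity of the two squares of the displayed diagram by writing every arrow as an explicit matrix with respect to the standard direct-sum decompositions of the cones. For the homotopy assertion: since $H_{3}$ is a homotopy from $kgf$ to $0$ we have $kgf = d_{W}H_{3} + H_{3}d_{X'}$, and precomposing with the cochain map $l$ and using $d_{X'}l = l d_{X}$ gives $kgfl = d_{W}(H_{3}l) + (H_{3}l)d_{X}$, so $H = H_{3}l$ is a homotopy from $kgfl$ to $0$. This is consistent with the hypothesis $H_{3}l = kH_{1}+H_{2}$: because $k$ is a cochain map, $kH_{1}$ is a homotopy from $kgfl$ to $kh$, and $H_{2}$ is a homotopy from $kh$ to $0$, so $kH_{1}+H_{2}$ is again a homotopy from $kgfl$ to $0$.

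Next I would pin down $\sfind$. The morphisms $\Phi_{fl,g}$ and $\Phi_{f,g}$ are both given on $\cone(g)[-1]$ by the same matrix $\begin{pmatrix} 0 & -\id_{Y} \\ 0 & 0\end{pmatrix}$, and $l$ determines a morphism of triangles from $\Phi_{fl,g}$ to $\Phi_{f,g}$ which is the identity on $\cone(g)[-1]$ and the canonical map $\cone(fl)\to\cone(f)$, namely $\begin{pmatrix}\id_{Y} & 0 \\ 0 & l[1]\end{pmatrix}$, in the middle. Consequently the induced map on cones is the diagonal cochain map
\[
\sfind = \begin{pmatrix}\id_{Y} & 0 & 0 & 0\\ 0 & l[1] & 0 & 0\\ 0 & 0 & \id_{Z} & 0\\ 0 & 0 & 0 & \id_{Y[1]}\end{pmatrix},
\]
regarded as a map $\cone(\Phi_{fl,g}) \to \cone(\Phi_{f,g})$ for the decompositions $\cone(\Phi_{fl,g}) = Y\oplus X[1]\oplus Z\oplus Y[1]$ and $\cone(\Phi_{f,g}) = Y\oplus X'[1]\oplus Z\oplus Y[1]$; it is a cochain map precisely because $l$ is.

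With this in hand, I would specialise the matrix formulas recorded after Definition \ref{202008021600} and in \eqref{202012082037}, taking $fl$ in place of $f$: thus $\acute{q}_{f,g,k,H_{3}} = (kg,\, H_{3}\uparrow,\, k,\, 0)$, $\acute{q}_{fl,g,k,H} = (kg,\, H\uparrow,\, k,\, 0)$, $\dot{q}_{fl,g,h,H_{1}} = \bigl(\begin{smallmatrix} g & H_{1}\uparrow & \id_{Z} & 0\\ 0 & \id_{X[1]} & 0 & 0\end{smallmatrix}\bigr)$ and $q_{h,k,H_{2}} = (k,\, H_{2}\uparrow)$. For the left square,
\begin{align*}
\acute{q}_{f,g,k,H_{3}}\circ\sfind
&= \bigl(kg,\ (H_{3}\uparrow)\, l[1],\ k,\ 0\bigr)\\
&= \bigl(kg,\ (H_{3}l)\uparrow,\ k,\ 0\bigr) = \acute{q}_{fl,g,k,H},
\end{align*}
where the middle equality uses $l[1] = {\downarrow}\, l\, {\uparrow}$ together with ${\uparrow}{\downarrow} = \id$. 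For the right square,
\begin{align*}
q_{h,k,H_{2}}\circ\dot{q}_{fl,g,h,H_{1}}
&= \bigl(kg,\ kH_{1}\uparrow + H_{2}\uparrow,\ k,\ 0\bigr)\\
&= \bigl(kg,\ (kH_{1}+H_{2})\uparrow,\ k,\ 0\bigr)\\
&= \bigl(kg,\ (H_{3}l)\uparrow,\ k,\ 0\bigr) = \acute{q}_{fl,g,k,H},
\end{align*}
the third equality being exactly the hypothesis $H_{3}l = kH_{1}+H_{2}$. Since every map in the diagram is a cochain map, these two identities give commutativity already in $\sfC(R)$.

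The argument is entirely bookkeeping, and the one place that needs care is the sign/shift conventions of Section \ref{section: dg-modules}: checking that $\sfind$ really has the stated diagonal form (so that $l$ being a cochain map is what makes $\sfind$ commute with the differentials on $\cone(\Phi_{fl,g})$ and $\cone(\Phi_{f,g})$), and that $(H_{3}\uparrow)\, l[1]$ equals $(H_{3}l)\uparrow$ on the nose rather than up to sign. Once these are settled the two matrix computations are immediate, and the homotopy claim is the direct computation given first.
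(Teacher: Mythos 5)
Your proposal is correct and follows the same route as the paper: the paper's proof simply records the matrix descriptions of $\acute{q}_{f,g,k,H_{3}}$, $\acute{q}_{fl,g,k,H}$, $\dot{q}_{fl,g,h,H_{1}}$, $q_{h,k,H_{2}}$ and $\sfind$ with respect to the direct-sum decompositions of the cones and declares the commutativity straightforward, which is exactly the computation you carry out (including the observation that $(H_{3}\uparrow)l[1]=(H_{3}l)\uparrow$ since $l$ has degree $0$, and that the right square reduces to the hypothesis $H_{3}l=kH_{1}+H_{2}$). Your explicit verification of the homotopy claim and of $\sfind$ being a cochain map fills in details the paper leaves to the reader.
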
 

We note that if $l:X \to X'$ is a quasi-isomorphism, then so is  the induced morphisms $\sfind$. 

\begin{proof}
It is straightforward to check the desired commutativity by using the descriptions of the involved morphisms given below:
\[
\begin{split}
\acute{q}_{f,g,k, H_{3}} & = (kg, H_{3}\uparrow, k,0), \ 
\acute{q}_{fl,g,k, H} = (kg, H \uparrow, k,0),  \ 
q_{h,k, H_{2} } = (k, H_{2}\uparrow), \\
\dot{q}_{fl, g,h, H_{1}} & = \begin{pmatrix} g & H_{1}\uparrow & \id_{Z} & 0 \\ 0 & \id_{X[1]} & 0 & 0 \end{pmatrix}, \ 
\sfind = \begin{pmatrix} \id_{Y} & 0 & 0& 0 \\ 0 & l[1] & 0 & 0 \\ 0 & 0 & \id_{Z} & 0 \\ 0 & 0 & 0 & \id_{Y[1]} \end{pmatrix}.
\end{split}
\]
\end{proof}

\section{Universal Auslander-Reiten triangle for $\kk Q$}\label{section: the universal Auslander-Reiten triangle}

This section \ref{section: the universal Auslander-Reiten triangle} has two aims. 
The first is to fix notation and conventions for path algebras of quivers used throughout the paper. 
The second is to establish universal Auslander-Reiten triangles for path algebras 
and to investigate their basic properties. 
We note that universal Auslander-Reiten triangles have been established for smooth proper dg-algebras in \cite{Minamoto Mukai} via a formal argument using the dg-Morita $2$-category of dg-algebras. 
For the reader's convenience, we give direct proofs here.

\subsection{Path algebras }

\subsubsection{The path algebra of a quiver as a tensor algebra}

Let $Q=(Q_{0}, Q_{1}, h, t)$ be a quiver. 
We recall the construction of the path algebra $A =  \kk Q$ as the tensor algebra 
$\sfT_{\kk Q_{0}}\kk Q_{1}$.

First, we set $\Pa_{0} := \kk Q_{0} = \prod_{i \in Q_{0}} \kk e_{i}$, i.e., 
the direct product of $\kk$ indexed by the set $Q_{0}$ of vertices. 

From now on, by convention, we omit $\otimes_{\Pa_{0}}$ and 
write $MN= M \otimes_{\Pa_{0} } N$ for a right $\Pa_{0}$-module $M$ and a left $\Pa_{0}$-module $N$.

For an arrow $\alpha \in Q_{1}$, 
we denote by $\kk \alpha$ an $\Pa_{0}$-$\Pa_{0}$-bimodule  of $\kk$-dimension $1$ with a $\kk$-basis $\alpha$ 
whose bimodule structure is given by the formulas 
\[
e_{i} \alpha := 
\begin{cases}
\alpha & i = t(\alpha), \\
0 & i \neq t(\alpha), 
\end{cases} \ \ 
 \alpha e_{i} := 
\begin{cases}
\alpha & i = h(\alpha), \\
0 & i \neq h(\alpha). 
\end{cases}
\]
We set $V: = \kk Q_{1} := \bigoplus_{  \alpha \in Q_{1} } \kk \alpha$. 
Then we may identify  the path algebra $\kk Q$ with the tensor algebra $\sfT_{\Pa_{0}} V$ of $V$ over $\Pa_{0}$. 
\[
\kk Q = \sfT_{\Pa_{0}} V = \Pa_{0} \oplus V \oplus VV \oplus VVV \oplus \cdots. 
\]

\subsubsection{The inverse of the dualizing complex}

Recall that the $\Pa^{\mre}$-module $\Pa$ has the following projective resolution 
\[
0 \to \Pa V \Pa  \xrightarrow{\hat{\mu}} \Pa \Pa \xrightarrow{\mu} \Pa \to 0,  
\]
where 
$
\hat{\mu}(p\otimes \alpha \otimes  q) := p\alpha \otimes q -p \otimes\alpha  q, \ 
\mu(p \otimes q) := pq. 
$
We define the complex $\widetilde{\Pa}$ of $\Pa$-$\Pa$-bimodules to be 
the cone $\cone (\hat{\mu})$ of $\hat{\mu}$. 
In the usual way of  exhibiting a complex, $\PPa$ is shown as 
\[
\cdots \to 0 \to \Pa V \Pa \xrightarrow{ \hat{\mu}} \Pa\Pa \to 0 \to \cdots
\]
where $\Pa \Pa$ is placed in the $0$-th cohomological degree. 
But in the sequel, we  exhibit  the complex $\widetilde{\Pa}$ in the following form  
\[
\widetilde{\Pa} := 
\left( \Pa \Pa \oplus  (\Pa V \Pa)[1], \begin{pmatrix} 0 & \hat{\mu}\uparrow \\ 0 & 0 \end{pmatrix} \right).
\]
The morphism $\mu: \Pa \Pa \to \Pa$ induces a quasi-isomorphism $\PPa \to \Pa $ in $\sfC(\Pa^{\mre})$, which we denote by the same symbol by abusing notation, i.e.,
\[
\mu:= (\mu, 0): \PPa \to \Pa. 
\]

We set the $\Pa^{\mre}$-duality to be $(-)^{\vee} := \Hom_{\Pa^{\mre}}(-, \Pa^{\mre})$. 
Note that by our convention, it is an endofunctor of $\Pa^{\mre}\Mod$. 
Since $\PPa$ is a projective resolution of $\Pa$ over $\Pa^{\mre}$, 
the complex $\PPa^{\vee}$ represents  the inverse $\RHom_{\Pa^{\mre}}(\Pa, \Pa^{\mre})$ of the dualizing complex 
\cite{Keller:Calabi-Yau, Keller: Calabi-Yau completion}.

To compute $\PPa^{\vee}$, we introduce 
 a morphism $\hat{\rho} : \Pa\Pa \to \Pa V^{*} \Pa$ 
of $\Pa$-$\Pa$-bimodules. 
Firstly, we set $V^{*} := \tuD(V)$ and let $\{\alpha^{*}\}_{\alpha \in Q_{1}}$ be the dual basis of the basis $\{\alpha \in Q_{1}\}$ 
of the arrow module $V$
 of $Q$. 
Secondly, we note  that there is an isomorphism  
$\Pa\Pa = \Pa \otimes_{\Pa_{0}} \Pa_{0} \otimes_{\Pa_{0}} \Pa \cong  \bigoplus_{i \in Q_{0}} \Pa e_{i} \Pa$ of 
 $\Pa$-$\Pa$-bimodules. 
Therefore to define the morphism $\hat{\rho}$ it is enough to specify  $\hat{\rho}(e_{i}) \in  \Pa V^{*} \Pa$ 
for each $i \in Q_{0}$. 
We define of $\hat{\rho}$ by    
\[ 
\hat{\rho} (e_{i}) :=   \sum_{\alpha: t(\alpha) = i} \alpha \otimes \alpha^{*} \otimes e_{i} 
- \sum_{\alpha: h(\alpha) =i } e_{i} \otimes \alpha^{*} \otimes \alpha. 
\]

\begin{lemma}[{\cite[Lemma 2.3 and the subsequent remark]{Grant-Iyama}}]\label{Grant-Iyama lemma}
Let $U$ be a $\Pa_{0}$-$\Pa_{0}$-bimodule. 
Then the map 
\[F_{U}: \Pa \tuD(U) \Pa \to ( \Pa U \Pa)^{\vee}, F(x \otimes \phi \otimes y) ( z \otimes u \otimes w) 
:= \phi(u) zy \otimes xw 
\]
is an isomorphism of $\Pa$-$\Pa$-bimodules. 
\end{lemma}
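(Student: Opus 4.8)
The plan is to reduce to a single ``matrix coefficient'' of $U$ and then recognize $F_{U}$ as the evaluation isomorphism for a finitely generated projective module over the enveloping algebra.

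First I would note that all three assignments $U \mapsto \Pa\,\tuD(U)\,\Pa$, $U \mapsto (\Pa U \Pa)^{\vee}$ and $U \mapsto F_{U}$ are additive and natural in $U$. Since $\Pa_{0} = \kk Q_{0}$ is semisimple, $U$ splits as $U = \bigoplus_{i,j \in Q_{0}} U_{ij}$ with $U_{ij} = e_{i}Ue_{j}$, and each $U_{ij}$ is a direct sum of copies of the one-dimensional $\Pa_{0}$-bimodule ${}_{i}\kk_{j}$ on which $e_{i}$ (resp.\ $e_{j}$) acts as the identity on the left (resp.\ right). Because $Q$ is finite acyclic, $\Pa$ and hence $\Pa^{\mre}$ are finite dimensional, so $\Pa\otimes_{\kk}-$, $\Hom_{\Pa^{\mre}}(-,\Pa^{\mre})$ and $\tuD$ all commute with the (possibly infinite) (co)products involved. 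Hence it is enough to prove the statement for $U = {}_{i}\kk_{j}$; along the way I would also check directly from the formula that $F_{U}$ is well defined (balanced over $\Pa_{0}$ in all three tensor slots, and with $\Pa^{\mre}$-linear target functionals) and is a morphism of $\Pa$-bimodules, both of which are immediate from the defining formula.

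For $U = {}_{i}\kk_{j}$ one has $\Pa U \Pa \cong \Pa e_{i}\otimes_{\kk}e_{j}\Pa \cong \Pa^{\mre}(e_{i}\otimes e_{j})$, a finitely generated projective $\Pa^{\mre}$-module with idempotent generator $e_{i}\otimes e_{j}$. Evaluation at this generator is then the standard isomorphism $(\Pa U \Pa)^{\vee} = \Hom_{\Pa^{\mre}}(\Pa^{\mre}(e_{i}\otimes e_{j}),\Pa^{\mre}) \xrightarrow{\ \sim\ } (e_{i}\otimes e_{j})\Pa^{\mre} = e_{i}\Pa\otimes_{\kk}\Pa e_{j}$ of right $\Pa^{\mre}$-modules; applying $\sfc_{*}$ and swapping the two tensor factors identifies the target with the $\Pa$-bimodule $\Pa e_{j}\otimes_{\kk}e_{i}\Pa$ carrying its standard structure. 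On the other hand, using $e_{k}\tuD(U)e_{l} = \tuD(e_{l}Ue_{k})$ one sees $\tuD({}_{i}\kk_{j}) = {}_{j}\kk_{i}$, hence $\Pa\,\tuD(U)\,\Pa \cong \Pa e_{j}\otimes_{\kk}e_{i}\Pa$ as well. Feeding $x\otimes\phi\otimes y$ (with $\phi$ the basic functional) and $z\otimes u\otimes w$ through these identifications, the formula $F_{U}(x\otimes\phi\otimes y)(z\otimes u\otimes w) = \phi(u)\,zy\otimes xw$ turns into the tensor-factor swap $x\otimes y \mapsto y\otimes x$; in particular $F_{U}$ is bijective, so it is an isomorphism of $\Pa$-bimodules.

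The step I expect to require the most care is the identification in the previous paragraph: one must keep straight the left/right conventions for $\Pa^{\mre}$-modules, the opposite-algebra twist inside $\Pa^{\mre} = \Pa\otimes_{\kk}\Pa^{\op}$, and the effect of the restriction functor $\sfc_{*}$, in order to be sure that the ``swapped'' bimodule produced by the evaluation isomorphism really coincides with the ``standard'' bimodule $\Pa e_{j}\otimes_{\kk}e_{i}\Pa$ appearing inside $\Pa\,\tuD(U)\,\Pa$. Everything else — additivity and naturality in $U$, the (co)product commutations (which is where finite dimensionality of $\Pa$ enters), and well-definedness and $\Pa$-bilinearity of $F_{U}$ — is routine. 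This is, in essence, the argument of \cite[Lemma 2.3 and the subsequent remark]{Grant-Iyama}.
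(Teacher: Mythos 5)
Your argument is correct, and there is nothing in the paper to compare it against: the paper states this lemma as a quotation of \cite[Lemma 2.3 and the subsequent remark]{Grant-Iyama} and gives no proof, so your reduction to $U={}_{i}\kk_{j}$, the identification $\Pa U\Pa\cong \Pa^{\mre}(e_{i}\otimes e_{j})$, and the evaluation isomorphism for this projective $\Pa^{\mre}$-module is exactly the standard route (and the one taken in the cited source). Your bookkeeping in the key step also checks out: with the paper's conventions, $(\Pa U\Pa)^{\vee}\cong \sfc_{*}\bigl((e_{i}\otimes e_{j})\Pa^{\mre}\bigr)$ is the space $e_{i}\Pa\otimes_{\kk}\Pa e_{j}$ with $a\cdot(p\otimes q)\cdot b=pb\otimes aq$, i.e.\ $\Pa e_{j}\otimes_{\kk}e_{i}\Pa$ after the swap, and evaluating $F_{U}(x\otimes\phi\otimes y)$ at the generator $e_{i}\otimes u_{0}\otimes e_{j}$ gives $e_{i}y\otimes xe_{j}$, matching $x\otimes\phi\otimes y\mapsto xe_{j}\otimes e_{i}y$ on the source.

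One sentence should be restated more carefully: $\tuD$ and $\Hom_{\Pa^{\mre}}(-,\Pa^{\mre})$ do not commute with coproducts; they turn a coproduct decomposition of $U$ into a product decomposition. What your reduction actually needs is that both functors do this compatibly (naturality applied to the inclusions/projections) and that $\Pa\otimes_{\Pa_{0}}-\otimes_{\Pa_{0}}\Pa$ commutes with the resulting \emph{products}, which holds because $\Pa$ is finite dimensional (finitely presented over $\Pa_{0}$); this is exactly where acyclicity/finiteness of $Q$ enters. For the paper's applications ($U=\Pa_{0}$ and $U=V$) the bimodule $U$ is finite dimensional and the issue disappears entirely.
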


In particular we have the following  identifications 
\begin{equation}\label{202006121900}
(\Pa\Pa)^{\vee} \cong \Pa\Pa, \ (\Pa V \Pa)^{\vee} \cong \Pa V^{*} \Pa.
\end{equation}
Moreover, as $(\Pa^{\mre})^{\vee} \cong \Pa^{\mre}$, we may identify the $\Pa^{\mre}$-dual $m^{\vee}$ of the canonical map $m: \Pa^{\mre} \to \Pa\Pa$ 
with the embedding 
\begin{equation}\label{202012091717} 
\Pa\Pa = \bigoplus_{ i \in Q_{0}} \Pa e_{i} \otimes_{\kk} e_{i} \Pa \to \bigoplus_{ i,j \in Q_{0} } \Pa e_{i} \otimes_{\kk} e_{j} \Pa = A^{\mre}. 
\end{equation}

The following lemma gives a description of the complex $\PPa^{\vee}$.

\begin{lemma}\label{202006121909}
The identifications \eqref{202006121900} 
provides the following isomorphism of complexes 
\[
  \PPa^{\vee} \cong 
 \left( \Pa V^{*} \Pa[-1] \oplus \Pa\Pa, \begin{pmatrix} 0 & \uparrow  \hat{\rho}\\  0 & 0 \end{pmatrix} \right). 
\]
\end{lemma}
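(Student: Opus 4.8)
The plan is to compute $\widetilde{A}^{\vee}$ by dualizing the explicit two-term presentation of $\widetilde{A}$ termwise, using Lemma~\ref{Grant-Iyama lemma}. Recall that $\widetilde{A} = \cone(\hat\mu)$ was written as $\left(A\,A \oplus (A\,V\,A)[1], \begin{pmatrix} 0 & \hat\mu\uparrow \\ 0 & 0\end{pmatrix}\right)$. Since $\widetilde{A}$ is a bounded complex of finitely generated projective $A^{\mre}$-modules (indeed $A\,A = A\otimes_\kk A \cdot$ and $A\,V\,A$ are summands of sums of $A^{\mre}$), the functor $(-)^{\vee} = \Hom_{A^{\mre}}(-,A^{\mre})$ applied to $\widetilde{A}$ computes a complex whose underlying graded module in cohomological degree $k$ is the $A^{\mre}$-dual of the degree $-k$ term, and whose differential is the $A^{\mre}$-dual of the original differential, with the appropriate sign and shift bookkeeping coming from the identities $X^{\vee}[n] \cong (X[-n])^{\vee}$ for the shift functor.

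First I would apply Lemma~\ref{Grant-Iyama lemma} with $U = A_0$ and with $U = V$ to obtain the term-by-term identifications $(A\,A)^{\vee} \cong A\,A$ and $(A\,V\,A)^{\vee} \cong A\,V^{*}\,A$, which are exactly \eqref{202006121900}. Under these, the $0$-th term $A\,A$ of $\widetilde A$ dualizes to $A\,A$ sitting in cohomological degree $0$, and the degree $-1$ term $(A\,V\,A)[1]$ dualizes to $(A\,V^{*}\,A)[-1]$. So the underlying graded bimodule of $\widetilde A^{\vee}$ is $A\,V^{*}\,A[-1] \oplus A\,A$, matching the claimed shape. It remains to identify the differential: it must be the $A^{\mre}$-dual of $\hat\mu\uparrow$ transported through the isomorphisms $F_{A_0}$ and $F_V$. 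I would compute $\hat\mu^{\vee}: (A\,A)^{\vee} \to (A\,V\,A)^{\vee}$ directly on the generators $e_i \in A\,A \cong \bigoplus_i A e_i A$, evaluating $F_{A_0}(e_i)^{\vee}$ against the generators of $A\,V\,A$ using the formula for $F_U$ in the lemma; this is a bookkeeping computation that should yield precisely $\hat\rho$ as defined just before the statement, i.e. $\hat\rho(e_i) = \sum_{\alpha: t(\alpha)=i}\alpha\otimes\alpha^{*}\otimes e_i - \sum_{\alpha: h(\alpha)=i} e_i\otimes\alpha^{*}\otimes\alpha$. The minus sign and the two summation ranges are exactly what one gets from dualizing the two terms $p\alpha\otimes q$ and $p\otimes\alpha q$ of $\hat\mu(p\otimes\alpha\otimes q)$ and tracking which tensor factor the dual basis element $\alpha^{*}$ pairs against.

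The main obstacle — really the only subtle point — is getting the signs and the placement of the $\uparrow$ arrow consistent with the paper's sign conventions for shifts (the conventions fixed in Section~\ref{section: dg-modules}, e.g. $d_{X[1]} = -\downarrow d_X\uparrow$ and $f[1] = (-1)^{|f|}\downarrow f\uparrow$). Concretely, one must check that dualizing the differential $\begin{pmatrix} 0 & \hat\mu\uparrow \\ 0 & 0\end{pmatrix}$ of $\widetilde A$ produces $\begin{pmatrix} 0 & \uparrow\hat\rho \\ 0 & 0\end{pmatrix}$ rather than $\begin{pmatrix} 0 & \hat\rho\uparrow \\ 0 & 0\end{pmatrix}$ or something with an extra sign; this hinges on how $(-)^{\vee}$ interacts with $[1]$ versus $[-1]$ and with the canonical maps $\uparrow,\downarrow$. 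I would resolve this by writing out $\widetilde A$ as a genuine two-term complex $\cdots \to 0 \to A\,V\,A \xrightarrow{\hat\mu} A\,A \to 0 \to \cdots$ with $A\,A$ in degree $0$, dualizing to get $0 \to A\,A \xrightarrow{\hat\mu^{\vee}} A\,V\,A \to 0$ with $A\,A$ now in degree $0$ and $A\,V\,A$ in degree $1$, then transporting through $F_{A_0}$, $F_V$ and re-expressing in the paper's preferred ``folded'' notation $\left(AVA[-1]\oplus AA, \begin{pmatrix} 0 & \uparrow\hat\rho \\ 0 & 0\end{pmatrix}\right)$, checking at each stage that no stray sign is introduced. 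Everything else is a direct application of Lemma~\ref{Grant-Iyama lemma} together with the observation that $(A^{\mre})^{\vee}\cong A^{\mre}$, so that $(-)^{\vee}$ genuinely represents $\RHom_{A^{\mre}}(-,A^{\mre})$ on the projective complex $\widetilde A$.
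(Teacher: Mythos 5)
Your overall strategy is the same as the paper's: dualize the two-term complex $\PPa$ termwise via Lemma~\ref{Grant-Iyama lemma} (with $U=\Pa_{0}$ and $U=V$), identify $(\Pa\Pa)^{\vee}\cong\Pa\Pa$ and $(\Pa V\Pa)^{\vee}\cong\Pa V^{*}\Pa$, and then determine the induced differential. Up to that point your outline agrees with the paper, and the only real content of the lemma is the sign bookkeeping you flag as delicate.

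That is exactly where your write-up, as stated, is not correct: it contains two compensating sign errors rather than a correct computation. First, the differential of $\PPa^{\vee}=\cpxHom_{\Pa^{\mre}}(\PPa,\Pa^{\mre})$ in degree $0$ is not plain precomposition with $\hat{\mu}$: since $\Pa^{\mre}$ carries the zero differential, the Hom-complex convention gives $d^{0}(f)=d_{\Pa^{\mre}}\circ f-f\circ d_{\PPa}^{-1}=-f\circ\hat{\mu}$, so your intermediate complex ``$0\to \Pa\Pa\xrightarrow{\hat{\mu}^{\vee}}\Pa V\Pa\to 0$'' omits a Koszul sign. Second, your claim that the plain dual $f\mapsto f\circ\hat{\mu}$, transported through $F_{\Pa_{0}}$ and $F_{V}$, ``yields precisely $\hat{\rho}$'' is also off by a sign: writing $f=F_{\Pa_{0}}(x\otimes e_{i}^{*}\otimes y)$ one finds $(f\circ\hat{\mu})(p\otimes\beta\otimes q)=\delta_{i,h(\beta)}\,p\beta y\otimes xq-\delta_{i,t(\beta)}\,py\otimes x\beta q$, which equals $F_{V}\bigl(-\hat{\rho}(x\otimes e_{i}\otimes y)\bigr)(p\otimes\beta\otimes q)$; this is precisely the computation the paper records as ``$f\circ\hat{\mu}$ corresponds to $-\hat{\rho}(f)$ via $F_{V}$.'' The two minus signs cancel, which is why the lemma holds with $+\hat{\rho}$ under the stated identifications and why your final answer happens to come out right; but to make the argument correct you must keep both signs explicit (Hom-complex differential $=-f\circ\hat{\mu}$, and $f\circ\hat{\mu}\leftrightarrow-\hat{\rho}(f)$) instead of asserting that the naive dual of $\hat{\mu}$ is $\hat{\rho}$.
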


\begin{proof}
It is easy to show that 
for $f \in \Hom_{\Pa^{\mre}}(\Pa\Pa, \Pa^{\mre})$, 
the composition $f \circ \hat{\mu}: \Pa V\Pa \to \Pa^{\mre}$ corresponds to $- \hat{\rho}(f)$ 
via the isomorphism  $F_{V}$  given in Lemma \ref{Grant-Iyama lemma}. 

Since the $0$-th differential $d_{\Hom}^{0}$ of the complex $\cpxHom_{\Pa^{\mre}}(\PPa, \Pa^{\mre})$ 
is $d_{\Hom}^{0}(f) = d_{\Pa^{\mre}}^{0} \circ f - f\circ d_{\PPa}^{-1} =- f\circ \hat{\mu}$, the assertion. follows. 
\end{proof}

Since $\PPa$ is a projective resolution of $\Pa$ over $\Pa^{\mre}$, the complex $\PPa^{\vee}$ over $\Pa^{\mre}$ is a representative of 
$\Pa^{\vvee} = \RHom_{\Pa}(\Pa, \Pa^{\mre})$. 
We set $\PPi_{1} := \PPa^{\vee}[1]$, since  it is the $*$-degree $1$ part of the derived preprojective algebra $\PPi$ (see Section \ref{section: the derived quiver Heisenberg algebras}). 
Note that the functor $\PPi_{1} \lotimes_{\Pa}- : \Dbmod{\Pa} \to \Dbmod{\Pa}$ is the inverse $\nu_{1}^{-1}$ of the $-1$-shifted 
Nakayama functor $\nu_{1} ( -) := \tuD(\Pa)[-1] \lotimes_{\Pa} -$.

\subsection{Weighted trace and weighted Euler characteristic}

\subsubsection{Trace of an endomorphism of a   complex over $\kk$}

Let $V \in \sfC^{\mrb}(\kk)$ and $f: V \to V$, we define the trace $\Tr(f)$ to be 
\[
\Tr(f) := \sum_{i \in \ZZ} ( -1)^{i}\Tr(f^{i})
\]
where $\Tr(f^{i})$ in the right hand side is the trace of the linear map $f^{i}: V^{i} \to V^{i}$. 
We note that the Euler characteristic  $\Euch(V) := \sum_{i \in \ZZ} ( -1)^{i}\dim V^{i}$ of $V$ is obtained as 
$\Euch (V) = \Tr(\id_{V} )$.

\subsubsection{Weighted trace and  weighted Euler characteristic}

Let $A = \kk Q$ and  $ v= (v_{i}) \in \kk Q_{0}$. 
Let $M \in \sfC^{\mrb}(A)$ and $f: M \to M$. 
Observe that a  vertex $i \in Q_{0}$ induces a morphism  $e_{i}f: e_{i} M \to e_{i} M$ in $\sfC^{\mrb}(\kk)$. 
We define the \emph{weighted trace} ${}^{v}\!\Tr(f)$ of $f$ to be the weighted sum of traces of $e_{i} f$. 
\[
{}^{v}\!\Tr(f) := \sum_{i \in Q_{0}} v_{i} \Tr(e_{i} f).  
\]
We define the \emph{weighted Euler characteristic} ${}^{v}\!\Euch(M)$ to be $ {}^{v}\!\Euch(M) := \sum_{i \in Q_{0} } v_{i} \Euch(e_{i}M)$ 
so that we have 
 ${}^{v}\!\Tr(\id_{M} ) = {}^{v}\!\Euch(M)$.

We denote the dimension vector of $M$ by $\Euvect(M)$.
\[ 
\Euvect(M) := (\Euch(e_{1} M), \Euch(e_{2}M), \ldots, \Euch(e_{r}M) )^{t}  \in \ZZ Q_{0}.
\]
Then the weighted Euler characteristic ${}^{v}\!\Euch(M)$ is obtained as 
\begin{equation}\label{202102051939}
{}^{v}\!\Euch(M)  = v^{t}\Euvect(M),
\end{equation}
i.e., the product of the row vector $v^{t}$ and the column vector $\Euvect(M)$. 

It is straightforward to check that these notions descend to objects $M \in \Dbmod{A}$ and endomorphisms $f: M \to M$. 
We leave the verification of the following lemma to the reader. 
\begin{lemma}\label{202012091517}
\begin{enumerate}[(1)]
\item 
${}^{v}\!\Tr(f) = \sum_{ i \in \ZZ}(-1)^{i}\ {}^{v}\!\Tr(\tuH^{i}(f))$. 

\item 
${}^{v}\!\Tr(\id_{M}) = \sum_{i \in \ZZ} (-1)^{i} \ {}^{v}\!\Euch(\tuH^{i}(M))$. 
\item If $f$ is nilpotent in $\sfD(\Pa)$, then ${}^{v}\!\Tr(f) = 0$. 
\end{enumerate}
\end{lemma}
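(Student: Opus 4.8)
\textbf{Proof proposal for Lemma \ref{202012091517}.}

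The plan is to reduce everything to the case of a bounded complex of $\kk$-vector spaces, where the corresponding statements are classical, by using the fact that the weighted trace is built vertex-by-vertex from the ordinary trace on the complexes $e_i M$. First I would record the key additive/multiplicative behaviour of the ordinary trace on $\sfC^{\mrb}(\kk)$: for a homotopy equivalence (in particular a quasi-isomorphism) $f \simeq g$ of complexes of vector spaces one has $\Tr(f)=\Tr(g)$, and more specifically, if $N$ is acyclic then any endomorphism of $N$ has trace $0$ — these are the standard facts underlying the Hopf/Lefschetz trace formula. Consequently, for $M \in \sfC^{\mrb}(\kk)$ and $f\colon M\to M$ a cochain map, $\Tr(f)=\sum_i (-1)^i \Tr(\tuH^i(f))$, since $f$ is cochain-homotopic to its "homology decomposition''; this in particular shows that the ordinary trace descends to a well-defined map on $\Hom_{\sfD(\kk)}(M,M)$ and that the identity gives $\sum_i (-1)^i \dim \tuH^i(M)$.

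Next I would apply this one vertex at a time. For $M\in\Dbmod{\Pa}$ and $f\colon M\to M$, each idempotent $e_i$ induces $e_i f\colon e_i M\to e_i M$ in $\sfD(\kk)$, and by definition ${}^{v}\!\Tr(f)=\sum_{i\in Q_0} v_i\,\Tr(e_i f)$. Since $\tuH^j(e_i M)=e_i\,\tuH^j(M)$ and $\tuH^j(e_i f)=e_i\,\tuH^j(f)$, applying the vector-space statement to each $e_i f$ and taking the $v_i$-weighted sum yields (1), and specializing $f=\id_M$ (using ${}^{v}\!\Tr(\id_M)={}^{v}\!\Euch(M)$ and $\Euch(e_i\tuH^j(M))$ being the coordinates of $\Euvect(\tuH^j(M))$) yields (2). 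For (3), if $f$ is nilpotent in $\sfD(\Pa)$ then $e_i f$ is nilpotent in $\sfD(\kk)$ for each $i$; a nilpotent endomorphism of a bounded complex of vector spaces has, on each cohomology group, a nilpotent hence trace-zero action, so $\Tr(e_i f)=\sum_j(-1)^j\Tr(\tuH^j(e_i f))=0$ by part (1), and therefore ${}^{v}\!\Tr(f)=0$.

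The only genuinely delicate point is the assertion that the ordinary trace is a well-defined invariant on $\Hom_{\sfD(\kk)}(M,M)$ and equals the alternating sum of the traces on cohomology — i.e. the identity $\Tr(f)=\sum_i(-1)^i\Tr(\tuH^i(f))$ at the level of complexes of vector spaces. I would handle this either by invoking the standard Lefschetz/Hopf trace lemma (every bounded complex of finite-dimensional vector spaces is isomorphic in $\sfK^{\mrb}(\kk)$ to its cohomology, and $\Tr$ is a homotopy invariant because $\Tr(d_M H + H d_M)=0$ for any degree $-1$ map $H$), or, if one prefers to stay self-contained, by induction on the length of $M$ via the stupid truncation triangles and additivity of trace along split short exact sequences of complexes. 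Everything else is a routine bookkeeping of the $v_i$-weighted sums, which I would not spell out in detail.
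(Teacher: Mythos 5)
Your proof is correct. The paper gives no argument for this lemma (it is explicitly left to the reader), and your vertex-wise reduction via the exact functors $e_i(-)$ to the classical Hopf/Lefschetz trace formula for bounded complexes of finite-dimensional vector spaces — homotopy invariance of the trace, hence $\Tr(f)=\sum_i(-1)^i\Tr(\tuH^i(f))$, which also settles well-definedness on $\sfD(\kk)$ — is precisely the standard verification intended; part (3) then follows since the functor $\tuH^i$ takes a nilpotent endomorphism in $\sfD(\Pa)$ to a nilpotent, hence trace-zero, linear map.
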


\subsubsection{}\label{202105081902} 
Recall that we may identify the Grothendieck group $K_{0}(A)$ with $\ZZ Q_{0}$ via the map $[M] \mapsto \Euvect(M)$. 
Let  $F$ be an autoequivalence  of $\Dbmod{A}$. 
We denote by $\underline{F}: \ZZ Q_{0} \to \ZZ Q_{0}$ the automorphism  induced from $F$.  
In other words, $\underline{F}$ is the square matrix that satisfies 
\[
\underline{F}\Euvect(M) = \Euvect(F(M))  
\]
for all $M \in \Dbmod{A}$.
We note the following equation that 
follows from \eqref{202102051939}. 
\begin{equation}\label{202102061837}
{}^{v}\!\Euch(F(M)) = {}^{\underline{F}^{t}(v)}\Euch(M). 
\end{equation}

\subsubsection{The Cartan matrix and the Coxeter matrix}

We recall that  the Cartan matrix $C$  is a matrix defined by 
\[
C := (\Euch(e_{i} \Pa e_{j}) )_{i,j}.
\]
Then the Coxeter matrix $\Phi$ (for left modules) is defined by 
\[
\Phi := - C^{t} C^{-1}. 
\]
Note that in the notation of  Section \ref{202105081902}, we have 
$\Phi = \underline{\nu_{1}}$.
In other words, we have 
\[
\Euvect(\nu_{1}(M)) = \Phi \Euvect(M)
\]
for $M \in \Dbmod{\Pa}$.
Therefore setting $\Psi := \Phi^{-t}$, we have 
\begin{equation}\label{202111101530}
{}^{v}\!\Euch(\PPi_{1} \lotimes_{\Pa} M ) =\Euvect(\nu_{1}^{-1}(M)) = {}^{\Psi(v)} \Euch(M). 
\end{equation}

\subsection{Trace formula}\label{Section:trace formula}

Let $i \in Q_{0}$. 
We define a morphism $\hat{e}_{i}: AA \to AA$ of $A^{\mre}$-modules by the formula 
\[
\hat{e}_{i}(e_{j} ) := \delta_{i,j} e_{i}
\]
for $j \in Q_{0}$. 
We define  a morphism 
$\tilde{e}_{i} :  \PPa^{\vee}\to   \PPa$ of complexes of $\Pa$-$\Pa$-bimodules to be 
\begin{equation}\label{2020081418071}
\tilde{e}_{i}: \small
 \PPa^{\vee}  = \left( \Pa V^{*} \Pa[-1] \oplus \Pa \Pa,  \begin{pmatrix} 0 & \uparrow \hat{\rho} \\ 0 & 0\end{pmatrix} \right)
\xrightarrow{ \tiny \begin{pmatrix} 0 & \hat{e}_{i} \\ 0 & 0 \end{pmatrix}}
\left( \Pa \Pa \oplus \Pa V\Pa [1], \begin{pmatrix} 0 & \hat{\mu} \uparrow \\ 0 & 0\end{pmatrix} \right) =
 \PPa. 
\end{equation}

Recall that  $\PPa^{\vee} \cong  \Pa^{\vvee}$  in $\sfD(\Pa^{\mre})$ and 
there are canonical isomorphisms
\[
\Hom_{\Pa^{\mre}}(\Pa^{\vvee}, \Pa) \cong \tuHH_{0}(\Pa) \cong \Pa/[\Pa, \Pa] \cong \kk Q_{0}.
\]
Note that $e_i \in \kk Q_{0}$ in the right hand side  corresponds to $\tilde{e}_{i}$ in the left hand side.  
Thus the set $\{ \tilde{e}_{i}\}_{i \in Q_{0}}$ forms a basis of $\Hom_{\Pa^{\mre}}(\Pa^{\vvee}, \Pa)$. 

\begin{definition}\label{202102061701} 
For $v \in \kk Q_{0}$, we denote by ${}^{v}\!\ttheta$ the corresponding  element of $\Hom_{\Pa^{\mre}}(\Pa^{\vvee}, \Pa)$. 
In other words, 
for $v \in \kk Q_{0}$, we set 
\[
{}^{v}\!\ttheta := \sum_{ i \in Q_{0}} v_{i} \tilde{e}_{i}. 
\]
\end{definition}

Since $(A^{\vvee})^{\vvee} \cong A$ in $\sfD(\Pa^{\mre})$, 
the $A^{\mre}$-duality $(-)^{\vvee}$ induces an endomorphism  
$(-)^{\vvee}: \Hom_{\Pa^{\mre}}(A^{\vvee}, A) \to \Hom_{\Pa^{\mre}}(\Pa^{\vvee}, \Pa)$.
By a general result  due to Van den Bergh \cite[Proposition 14.1]{Van den Bergh: Calabi-Yau}, this is the identity map (which in our case can be proved by an easy  computation). 
Thus  we have 
\begin{equation}\label{202105111259}
({}^{v}\!\ttheta)^{\vvee} = {}^{v}\!\ttheta. 
\end{equation}

Recall from \cite{Keller:Calabi-Yau} that the functor $\Pa^{\vvee} \lotimes -: \Dbmod{\Pa} \to \Dbmod{\Pa}$ is the inverse of a Serre functor. 
Let 
$\isagl{-,+}: \Hom_{\Pa}(M, N)  \otimes\Hom_{\Pa}(\Pa^{\vvee} \lotimes_{\Pa} N, M) \to \kk$ be the pairing of Serre duality.

Let $M \in \Dbmod{\Pa}$. We recall our convention that  
\[{}^{v}\!\ttheta_{M} = {}^{v}\!\ttheta \lotimes_{\Pa} M: \PPa^{\vvee} \lotimes_{\Pa} M \to M.\]
The following theorem gives a formula that computes the pairing $\isagl{f, {}^{v}\!\ttheta_{M}}$ for $f \in \End_{\Pa}(M)$.

\begin{theorem}\label{trace formula}
Let $M \in \Dbmod{\Pa}$ and $f \in \End_{\Pa}(M)$. 
Then the following equality holds
\[
\isagl{f, {}^{v}\!\ttheta_{M}}  = {}^{v}\!\Tr(f). 
\]
\end{theorem}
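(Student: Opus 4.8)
The strategy is to reduce the statement to the level of the Hochschild homology class and then use the compatibility of the Serre-duality pairing with this reduction. Recall that $\Pa^{\vvee}\lotimes_{\Pa}-$ is the inverse of a Serre functor on $\Dbmod{\Pa}$, so for $M\in\Dbmod{\Pa}$ the pairing $\isagl{-,+}$ is non-degenerate on $\Hom_{\Pa}(M,M)\otimes\Hom_{\Pa}(\Pa^{\vvee}\lotimes_{\Pa}M,M)$. The element ${}^{v}\!\ttheta_M={}^{v}\!\ttheta\lotimes_{\Pa}M$ is a morphism $\Pa^{\vvee}\lotimes_{\Pa}M\to M$, i.e.\ an element of the second factor; the right-hand side of the codomain. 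First I would observe that both sides of the claimed equality are $\kk$-linear in $f$ and additive in $M$ under direct sums, and (by Lemma \ref{202012091517}(1) and the analogous reduction of the Serre pairing to cohomology) it suffices to prove the formula when $M$ is an honest $\Pa$-module and $f$ is a scalar endomorphism on each $e_iM$; more precisely, one reduces to checking it on a basis adapted to the filtration of $M$.

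Second, the key computation is to identify, for a fixed vertex $i\in Q_0$, the scalar $\isagl{f,(\tilde e_i)_M}$ with $\Tr(e_i f)$. For this I would use the standard description of the Serre-duality pairing in terms of the Yoneda/trace pairing: under the canonical isomorphisms $\Hom_{\Pa^{\mre}}(\Pa^{\vvee},\Pa)\cong \tuHH_0(\Pa)\cong \kk Q_0$, the element $\tilde e_i$ corresponds to the idempotent class $e_i$, and the pairing $\isagl{f,(\tilde e_i)_M}$ becomes the composite
\[
\Hom_{\Pa}(M,M)\xrightarrow{\ e_i\cdot-\ }\Hom_{\Pa}(e_iM,e_iM)\xrightarrow{\ \Tr\ }\kk,
\]
where the first map is restriction to the $i$-th idempotent summand (regarded over $\kk$) and the second is the $\kk$-linear trace of a complex. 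Summing over $i$ with weights $v_i$ then yields $\sum_{i}v_i\Tr(e_if)={}^{v}\!\Tr(f)$ by definition of the weighted trace. Concretely, this rests on the fact that the evaluation/coevaluation maps realizing $\Pa^{\vvee}\lotimes_{\Pa}-$ as the inverse Serre functor are, on the Hochschild-degree-zero part, given precisely by the idempotent projections $\Pa\Pa\xrightarrow{\hat e_i}\Pa\Pa$ that enter the definition \eqref{2020081418071} of $\tilde e_i$.

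Third, to make the previous paragraph rigorous one has to track through the explicit projective resolution $\PPa$ of $\Pa$ over $\Pa^{\mre}$ and the identifications of Lemma \ref{Grant-Iyama lemma} and Lemma \ref{202006121909}, so that $(\tilde e_i)_M\colon \Pa^{\vvee}\lotimes_{\Pa}M\to M$ is a genuine chain-level morphism, and then evaluate the Serre pairing via the standard cup-product/trace formula: for $f\colon M\to M$ and $\phi\colon \Pa^{\vvee}\lotimes_{\Pa}M\to M$ one has $\isagl{f,\phi}$ equal to the image of $f\circ\phi$ (or rather of the class it determines in $\Hom(\Pa^{\vvee}\lotimes_{\Pa}M,M)$) under the trace map $\Hom_{\Pa}(\Pa^{\vvee}\lotimes_{\Pa}M,M)\to\kk$ coming from the counit of the (inverse) Serre adjunction. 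The diagonal/trace argument then collapses this to $\Tr(e_if)$. The main obstacle is exactly this bookkeeping: pinning down the counit of the Serre adjunction at the chain level in terms of $\hat e_i$, $\hat\mu$, $\hat\rho$ and the duality isomorphism $F_U$, and checking signs and the placement of shifts so that ``$\isagl{f,(\tilde e_i)_M}=\Tr(e_if)$'' holds on the nose rather than up to a sign or a Euler-characteristic correction. Once that identity is established for each $i$, taking the $v_i$-weighted sum and invoking Lemma \ref{202012091517}(1) to pass between the module and derived settings finishes the proof.
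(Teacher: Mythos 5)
Your overall plan coincides with the paper's: reduce by linearity to proving $\isagl{f,\tilde e_{i,M}}=\Tr(e_i f)$ for each vertex $i$, and then evaluate the Serre pairing at the chain level using the resolution $\PPa$ and the identifications of Lemma \ref{Grant-Iyama lemma} and Lemma \ref{202006121909}. However, there is a genuine gap: the step you describe as the ``key computation'' is asserted rather than proved. Saying that under $\Hom_{\Pa^{\mre}}(\Pa^{\vvee},\Pa)\cong\tuHH_0(\Pa)\cong\kk Q_0$ the pairing $\isagl{f,(\tilde e_i)_M}$ ``becomes'' restriction to $e_iM$ followed by the trace of a complex is exactly the content of the theorem; the claim that the (co)unit of the Serre adjunction is, ``on the Hochschild-degree-zero part, given precisely by the idempotent projections $\hat e_i$'' is the statement one must verify, not a known fact one can quote. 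The paper does this by writing out the two quasi-isomorphisms $F\colon\cpxHom_{\Pa}(P,P)\to\tuD(P)\otimes_{\Pa}\PPa^{\vee}\otimes_{\Pa}P$ (through $\cpxHom_{\Pa^{\mre}}(\PPa,\cpxHom_{\kk}(P,P))$ and the map $\mu\colon\PPa\to\Pa$) and $G=\Phi_{\Pa^{\vvee}\lotimes M,M}$ explicitly, expanding $f$ in a homogeneous basis of $P$, computing $F(f)=\sum(-1)^n f^{(j,j,n)}_{st}\,\tilde\phi^{*(j,n)}_t\otimes e_j\otimes\tilde\phi^{(j,n)}_s$, and then evaluating $G(F(f))(\mu_P\tilde e_{i,M})=\sum_{s,n}(-1)^n f^{(i,i,n)}_{ss}=\Tr(e_if)$. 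Your third paragraph correctly identifies this as the main obstacle (including the signs and the restriction to diagonal components coming from the factorization of $\mu$ through the $\Pa\Pa$ summand), but it is left unexecuted, so the proof is not complete.

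A secondary issue is your opening reduction: it is neither justified nor needed. You cannot reduce to $f$ being ``scalar on each $e_iM$'' or to checking the identity ``on a basis adapted to a filtration of $M$'' without already knowing how the pairing interacts with such a decomposition --- the trace picks out diagonal entries of $f$, and the vanishing of the pairing on off-diagonal contributions is part of what the explicit computation establishes. Likewise, passing to cohomology on the left-hand side of the pairing via Lemma \ref{202012091517}(1) is not available a priori, since the pairing is defined at the level of the derived category, not of cohomology modules. The paper avoids all of this by working directly with an arbitrary $M\in\Dbmod{\Pa}$, a projective resolution $P$, and an arbitrary $f\in\Hom_{\Pa}^0(P,P)$.
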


\begin{remark}
In subsequent work \cite{Minamoto Mukai},  
we will prove Theorem \ref{trace formula} for smooth proper dg-algebras.
\end{remark}

\begin{proof}
By linearity, it is enough to show that 
$\isagl{f,  \tilde{e}_{i,M}} = \Tr(e_{i} f)$ for each $i \in Q_{0}$.

We recall the isomorphisms in $\sfD(\kk)$ which proves Serre duality from \cite{Keller:Calabi-Yau} (see also Section \ref{section: natural isomorphisms}). 
\[
\RHom_{\Pa}(M,N) 
\xrightarrow{ \cong \  F }\tuD(M) \lotimes_{\Pa} \Pa^{\vvee} \lotimes_{\Pa} N
\xrightarrow{ \cong \ G} \tuD \RHom_{\Pa}( \Pa^{\vvee} \lotimes_{\Pa}N, M). 
\]

First
we explain the construction of $G$. 
Let $L,M \in \sfC(\Pa)$. 
We define a cochain map 
$\tilde{\Phi}_{L,M}: \tuD(M) \otimes_{\Pa} L \to \tuD\cpxHom_{\Pa}(L,M)$ by the formula 
\[
\tilde{\Phi}_{L,M}(\phi \otimes l) (f) := (-1)^{|l||f|}\phi(f(l)) 
\] 
for homogeneous elements $l, \phi, f$ of $L, \tuD(M)$ and $\cpxHom_{\Pa}(L,M)$. 
This map is natural in $L,M$ and descents to a morphism $\Phi_{L,M}:  \tuD(M) \lotimes_{\Pa} L \to \tuD \RHom_{\Pa}(L,M)$ 
in $\sfD(\kk)$. 
The morphism $G$ is set to be $G := \Phi_{\Pa^{\vvee} \lotimes N, M}$.

Next, 
we explain the construction of $F$ in the case $N = M$.  
Let $P \in \sfC^{\mrb}( \Pa \proj)$ be a complex of projective $\Pa$-module quasi-isomorphic to $M$.
Then $F$ is given by the following quasi-isomorphism in $\sfC(\kk)$ which is denoted by the same symbol
\[
\begin{split} 
F: \cpxHom_{\Pa}(P,P) 
& \xrightarrow{ \cong } \cpxHom_{\Pa^{\mre}}(\Pa, \cpxHom_{\kk}(P,P) ) \\
&\xrightarrow{\sim} \cpxHom_{\Pa^{\mre}}( \PPa, \cpxHom_{\kk}(P,P) ) \cong 
\tuD(P) \otimes_{\Pa} \PPa^{\vee} \otimes_{\Pa} P
\end{split}
\]
where the second morphism is induced from $\mu: \PPa \to \Pa$.

To provide an explicit formula of $F$, first we fix a way to exhibit elements of $\Hom^{0}_{\Pa}(P,P)$. 
For this we recall that $\Hom_{\Pa}^{0}(P, P)$ is a subspace $\Hom_{\kk}^{0}(P, P)$ of $\kk$-linear morphisms of degree $0$. 
The underlying cohomologically graded module of $P$ is 
$\bigoplus_{n \in \ZZ} P^{n}[-n]$ and 
hence $\Hom_{\kk}^{0}(P, P) =\prod_{n \in \ZZ} ( P^{n}[-n]) \otimes_{\kk} (\tuD(P^{n})[n])$.
For $j \in Q_{0} $ and $ n \in \ZZ$, 
we fix a $\kk$-basis  $\{\phi_{s}^{(j,n)}\}_{ s= 1}^{d_{j,n}}$ of $e_{j} P^{n}$ and denote by $\{\phi_{s}^{*(j,n)}\}_{s=1}^{d_{j,n}}$ 
the dual $\kk$-basis. 
For simplicity we set $\tilde{\phi}_{s}^{(j,n)} := 
\phi_{s}^{(j, n)}[-n]$ and $\tilde{\phi}_{t}^{*(j, n)} := \phi_{t}^{*(j,n)}[n]$. 
Then 
regarding $\Hom_{\Pa}^{0}(P, P)$ as a subspace of $\Hom_{\kk}^{0}(P, P)$, 
we may write  an element  $f$ of $\Hom_{\Pa}^{0}(P,P)$ as 
\begin{equation}\label{202012091744}
f = \sum_{s,t,n,j,k} f_{st}^{(j,k, n)} \tilde{\phi}_{s}^{(j, n)}\otimes \tilde{\phi}_{t}^{*(k, n)}
\end{equation}
for some $f_{s,t}^{(j,k, n)} \in \kk$.

Recall that as a cohomologically graded modules 
$\PPa = \Pa\Pa \oplus \Pa V\Pa [1]$ and the morphism $\mu: \PPa \to \Pa$ factors through the morphism 
$\Pa\Pa \to \Pa$. 
It follows that the image of the induced morphism $$\cpxHom_{\Pa^{\mre}}(\Pa, \cpxHom_{\kk}(P, P)) \to 
\cpxHom_{\Pa^{\mre}}(\PPa, \cpxHom_{\kk}(P, P))$$
is contained in the direct summand $\cpxHom_{\Pa^{\mre}}(\Pa\Pa, \cpxHom_{\kk}(P, P))$. 
Let $\tilde{m}: \Pa^{\mre} \to \PPa$ be the morphism in $\sfC(A^{\mre})$ induced from the canonical map $m: \Pa^{\mre} \to \Pa\Pa$. 
Then $\tilde{m}$ and $\mu$ induce the following commutative diagram, 
the left column of which is $F$.
\[
\begin{xymatrix}{ 
\cpxHom_{\Pa}(P,P) \ \  \ar@{=}[r] \ar[dd]_{\simeq}  & 
\cpxHom_{\Pa}(P,P) \ \  \ar@{^(->}[r] \ar[d]^{\cong} & \cpxHom_{\kk}(P, P) \ar[dd]^{\cong}  \\ 
& \cpxHom_{\Pa^{\mre}}(\Pa, \cpxHom_{\kk}(P,P)) \ar@{^(->}[d] & \\
\cpxHom_{\Pa^{\mre}}(\PPa, \cpxHom_{\kk}(P,P))  \ar[r]  \ar[d]_{\cong} & 
\cpxHom_{\Pa^{\mre}}(\Pa\Pa, \cpxHom_{\kk}(P,P)) \ar@{^(->}[r] \ar[d]_{\cong} & 
\cpxHom_{\Pa^{\mre}}(\Pa^{\mre}, \cpxHom_{\kk}(P,P)) \ar[d]^{\cong}  \\
\tuD(P) \otimes_{\Pa} \PPa^{\vee} \otimes_{\Pa} P  \ar@{->>}[r]  &
\tuD(P) \otimes_{\Pa} (\Pa\Pa)^{\vee} \otimes_{\Pa} P \ar@{^(->}[r] &  
 \tuD(P) \otimes_{\Pa} (\Pa^{\mre})^{\vee} \otimes_{\Pa} P 
}\end{xymatrix}
\] 
Using this  diagram and \eqref{202012091717}, we see that if $f$ is of the form of \eqref{202012091744}, then $F(f)$ is 
an element of $\tuD(P) \otimes_{\Pa} (\Pa\Pa)^{\vee} \otimes_{\Pa} P \subset \tuD(P) \otimes_{\Pa} \PPa^{\vee} \otimes_{\Pa} P$ 
given as 
\begin{equation}\label{202006151617}
F(f) =\sum_{s,t,n,j} ( -1)^{n} f_{st}^{(j,j,n)} \tilde{\phi}_{t}^{*(j,n)}  \otimes e_{j} \otimes \tilde{\phi}_{s}^{(j, n)}
\end{equation}
where 
we use the identification
\[
\tuD(P) \otimes_{\Pa} (\Pa\Pa)^{\vee} \otimes_{\Pa} P \cong 
\tuD(P) \otimes_{\Pa} \Pa\Pa \otimes_{\Pa} P \cong 
\tuD(P) \otimes_{\Pa_{0}}\Pa_{0} \otimes_{\Pa_{0}} P.
\]

Let $\mu_{P}:  \PPa  \otimes_{\Pa}  P  \to P$ be the canonical quasi-isomorphism, which induces the identification 
$A \lotimes_{A} P \cong P$.  
Now we can check that the desired result holds as follows 
\[
\begin{split}
\isagl{f,   \hat{e}_{i, M}} 
& = G ( F(f) ) (\mu_{P}\tilde{e}_{i, M})\\
& =  \sum_{s,t, n,j,k} (-1)^{n} f_{s,t}^{(j,j, n)} 
\tilde{\phi}_{t}^{*(j,n)}  \left(  (\mu_{P} \hat{e}_{i ,M} )( e_{j} \otimes \tilde{\phi}_{s}^{(j, n)}) \right) \\ 
& = \sum_{s,t,n,j }  (-1)^{n} f_{s,t}^{(i,i,n)}\tilde{\phi}_{t}^{*(i,n)} \left( \tilde{\phi}_{s}^{(i, n)}\right)\\ 
&= \sum_{s,n}  (-1)^{n} f_{s,s}^{(i,i,n)}  \\
  & = \Tr(e_{i} f).
\end{split}
\]
\end{proof}

Let $i\in Q_{0}$ be a vertex.
Applying Theorem \ref{trace formula} to the simple module $S_{i}$ 
and the identity map $\id_{S_{i}}$, we obtain  
\[
\isagl{ \id_{S_{i}}, {}^{v}\!\ttheta_{S_{i}} } = {}^{v}\!\Euch(S_{i}) = v_{i}. 
\]
We point out the following consequence.

\begin{corollary}\label{202102061807}
Let $\phi \in \Hom_{\Pa^{\mre}}(\Pa^{\vvee}, \Pa)$. 
Then we have 
\[
\phi = \sum_{i \in Q_{0}} \isagl{ \id_{S_{i}}, \phi_{S_{i}} } \tilde{e}_{i}.
\]
\end{corollary}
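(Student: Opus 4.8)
The statement to prove is Corollary \ref{202102061807}: for any $\phi \in \Hom_{\Pa^{\mre}}(\Pa^{\vvee}, \Pa)$, we have $\phi = \sum_{i \in Q_0} \isagl{\id_{S_i}, \phi_{S_i}} \tilde{e}_i$.

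The plan is to exploit the fact that $\{\tilde{e}_i\}_{i \in Q_0}$ is a $\kk$-basis of $\Hom_{\Pa^{\mre}}(\Pa^{\vvee}, \Pa)$, as recalled just before Definition \ref{202102061701} via the identifications $\Hom_{\Pa^{\mre}}(\Pa^{\vvee}, \Pa) \cong \tuHH_0(\Pa) \cong \Pa/[\Pa,\Pa] \cong \kk Q_0$. So any $\phi$ can be written uniquely as $\phi = \sum_{j \in Q_0} v_j \tilde{e}_j$ for some scalars $v_j \in \kk$; in the notation of Definition \ref{202102061701} this is precisely $\phi = {}^{v}\!\ttheta$ for the weight $v = (v_j)$. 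It therefore suffices to show that $v_i = \isagl{\id_{S_i}, \phi_{S_i}}$ for each $i$.

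First I would apply Theorem \ref{trace formula} (the trace formula) to $M = S_i$, the simple $\Pa$-module at vertex $i$, and $f = \id_{S_i}$. This gives $\isagl{\id_{S_i}, {}^{v}\!\ttheta_{S_i}} = {}^{v}\!\Tr(\id_{S_i}) = {}^{v}\!\Euch(S_i)$, using ${}^{v}\!\Tr(\id_M) = {}^{v}\!\Euch(M)$. Then I would compute ${}^{v}\!\Euch(S_i) = \sum_{k \in Q_0} v_k \Euch(e_k S_i) = v_i$, since $e_k S_i$ is $\kk$ if $k = i$ and $0$ otherwise (so $\Euch(e_k S_i) = \delta_{k,i}$). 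Combining, $\isagl{\id_{S_i}, \phi_{S_i}} = \isagl{\id_{S_i}, {}^{v}\!\ttheta_{S_i}} = v_i$, which is exactly the desired coefficient. Substituting back yields $\phi = \sum_{i \in Q_0} v_i \tilde{e}_i = \sum_{i \in Q_0} \isagl{\id_{S_i}, \phi_{S_i}} \tilde{e}_i$.

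There is no real obstacle here — this corollary is a direct packaging of the trace formula applied to simple modules, exactly as the paragraph preceding the corollary statement already indicates. The only point requiring a line of care is the translation between the abstract hom-space $\Hom_{\Pa^{\mre}}(\Pa^{\vvee}, \Pa)$ and the notation ${}^{v}\!\ttheta$: one must note that every element of this hom-space is of the form ${}^{v}\!\ttheta$ for a unique $v \in \kk Q_0$ because $\{\tilde{e}_i\}$ is a basis, so that writing $\phi_{S_i} = {}^{v}\!\ttheta_{S_i}$ is legitimate and Theorem \ref{trace formula} applies verbatim. Everything else is the two-line computation above.
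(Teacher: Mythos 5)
Your proof is correct and follows exactly the paper's intended argument: write $\phi={}^{v}\!\ttheta$ in the basis $\{\tilde{e}_{i}\}$ and recover the coefficients via the trace formula applied to the simple modules, using $\isagl{\id_{S_{i}},{}^{v}\!\ttheta_{S_{i}}}={}^{v}\!\Euch(S_{i})=v_{i}$, which is precisely the computation the paper records immediately before the corollary. Nothing further is needed.
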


\subsection{The universal Auslander-Reiten triangle}\label{subsection The universal Auslander-Reiten triangle}

Let $v \in \kk Q_{0}$. 
We set  ${}^{v}\!\YYM_{1} := \cone({}^{v}\!\ttheta)$, 
since, as is explained in Section \ref{section: the derived quiver Heisenberg algebras}, 
it is the $*$-degree $1$ part of the derived quiver Heisenberg  algebra ${}^{v}\!\YYM$ 
provided that $v_{i} \neq 0$ for all $i \in Q_{0}$. . 
 
We set  notation  for the exact triangle ${}^{v}\!\sfAR$ obtained from the morphism ${}^{v}\!\ttheta: A^{\vvee} \to A$ 
as below.  

\[
{}^{v}\!\sfAR: \Pa \xrightarrow{ \ {}^{v}\!\rrho \ }  {}^{v}\!\YYM_{1} \xrightarrow{{}^{v}\!\ppi_{1}} \PPi_{1}
\xrightarrow{-{}^{v}\!\ttheta[1]} \Pa[1].     
\]

A morphism is called AR-coconnecting if it is the co-connecting morphism for an AR-triangle.
Combining Theorem \ref{trace formula} and the Happel criterion for AR-coconnecting morphisms (reviewed in Section \ref{subsection: Happel's criterion}), 
we can immediately 
prove that if ${}^{v}\!\Euch(M) \neq 0$, then  ${}^{v}\!\theta_{M}$ is AR-coconnecting and the triangle ${}^{v}\!\sfAR_{M} := {}^{v}\!\sfAR\lotimes M$ is an AR-triangle. 
The precise statement is the following. 

\begin{theorem}\label{semi-universal Auslander-Reiten triangle}
Let $v \in \kk Q_{0}$ and $M \in \sfD^{\mrb}( \Pa \mod )$ an indecomposable object. 
We denote by ${}^{v}\!\sfAR_{M}$ the following exact triangle in $\sfD(\Pa)$. 
\[
{}^{v}\!\sfAR_{M} :
 M  \xrightarrow{ \ {}^{v}\!\rrho_{M} \ } {}^{v}\!\YYM_{1} \lotimes_{\Pa} M \xrightarrow{ \ {}^{v}\!\ppi_{1, M} \ } \PPi_{1} \lotimes_{\Pa} M \xrightarrow{ -{}^{v}\!\ttheta_{M}[1]} M[1]. 
\]
Then the following statements hold. 

\begin{enumerate}[(1)] 

\item If ${}^{v}\!\Euch(M) \neq 0$ in $\kk$, then 
the morphism ${}^{v}\!\ttheta_{M}$ is AR-coconnecting to $M$ and 
 the exact triangle ${}^{v}\!\sfAR_{M}$ is an Auslander-Reiten triangle starting from $M$.

\item 
Assume moreover that $\dim \ResEnd_{\Pa}(M) = 1$. 
Then
 the exact triangle ${}^{v}\!\sfAR_{M}$ is an Auslander-Reiten triangle starting from $M$ 
 if and only if ${}^{v}\!\Euch (M) \neq 0$ in $\kk$. 
 In the case ${}^{v}\!\Euch(M) =0$, the exact triangle ${}^{v}\!\sfAR_{M}$ splits.

\end{enumerate}

\end{theorem}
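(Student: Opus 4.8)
The plan is to use the criterion due to Happel (recalled in Section~\ref{subsection: Happel's criterion}) characterising AR-coconnecting morphisms, together with the trace formula (Theorem~\ref{trace formula}). Recall that for $M \in \sfD^{\mrb}(\Pa\mod)$ indecomposable, the functor $\PPi_1 \lotimes_\Pa - = \nu_1^{-1}$ is the inverse of a Serre functor, so an Auslander-Reiten triangle ending at $M[1]$ exists and has the shape $M \to E \to \nu_1^{-1}M \to M[1]$. Happel's criterion says that a morphism $\psi\colon \nu_1^{-1}M \to M[1]$ is the co-connecting morphism of an AR-triangle (equivalently, is nonzero and kills $\rad$) precisely when its image under the Serre-duality pairing against $\id_M$ — more precisely, the scalar $\isagl{\id_M, \psi'}$ for the corresponding element $\psi' \in \Hom_\Pa(\Pa^{\vvee}\lotimes_\Pa M, M)$ — is nonzero. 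Under the identifications made, the co-connecting morphism of ${}^{v}\!\sfAR_M$ corresponds to ${}^{v}\!\ttheta_M$, and Theorem~\ref{trace formula} computes $\isagl{\id_M, {}^{v}\!\ttheta_M} = {}^{v}\!\Tr(\id_M) = {}^{v}\!\Euch(M)$.

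For part~(1): assume ${}^{v}\!\Euch(M) \neq 0$ in $\kk$. Then by the trace formula the pairing $\isagl{\id_M, {}^{v}\!\ttheta_M}$ is nonzero, so by Happel's criterion ${}^{v}\!\ttheta_M$ is AR-coconnecting to $M$; rotating the triangle ${}^{v}\!\sfAR_M$ one step shows it is an Auslander-Reiten triangle starting from $M$. I would spell out the compatibility between the co-connecting morphism $-{}^{v}\!\ttheta_M[1]$ appearing in ${}^{v}\!\sfAR_M$ and the morphism to which Happel's criterion is applied — this is just bookkeeping with shifts and the sign, using that $\Pa^{\vvee}\lotimes_\Pa - $ is the inverse Serre functor and that $(-)^{\vvee}$ fixes ${}^{v}\!\ttheta$ by \eqref{202105111259}.

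For part~(2): suppose in addition $\dim \ResEnd_\Pa(M) = 1$, i.e.\ $\End_\Pa(M)/\rad = \kk$. The ``if'' direction is part~(1). For ``only if'', I would argue contrapositively: if ${}^{v}\!\Euch(M) = 0$, then by the trace formula $\isagl{\id_M, {}^{v}\!\ttheta_M} = 0$. Under the condition $\dim\ResEnd_\Pa(M)=1$, the space $\Hom_{\sfD(\Pa)}(\nu_1^{-1}M, M[1])$ is one-dimensional and the AR-coconnecting morphisms are exactly its nonzero elements; the Serre pairing $\isagl{\id_M, -}$ is a nonzero linear functional on this one-dimensional space, hence it vanishes on ${}^{v}\!\ttheta_M$ iff ${}^{v}\!\ttheta_M = 0$. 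Thus ${}^{v}\!\ttheta_M = 0$, which forces the triangle ${}^{v}\!\sfAR_M$ to split (the connecting map being zero), so it is not an AR-triangle; this simultaneously gives the last sentence of the statement. The main obstacle I anticipate is making the identification ``co-connecting morphism of ${}^{v}\!\sfAR_M \leftrightarrow {}^{v}\!\ttheta_M$ under Serre duality'' completely precise — tracking the shift $[1]$, the sign, and the duality $(-)^{\vvee}$ so that the scalar coming out of Happel's criterion is literally ${}^{v}\!\Euch(M)$ and not some nonzero multiple of it — but since only the vanishing/non-vanishing of this scalar matters for both parts, even a version with an unidentified nonzero constant suffices.
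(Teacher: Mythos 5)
Your overall route (trace formula plus Happel's criterion) is the same as the paper's, but as written there is a genuine gap, and it occurs in both parts for the same reason: you state Happel's criterion incorrectly. The criterion (Theorem \ref{Happel's criterion}) is \emph{not} that $s\colon \sfS^{-1}(M)\to M$ is AR-coconnecting precisely when $\isagl{\id_{M},s}\neq 0$; the sufficiency direction requires \emph{two} conditions, namely $\isagl{\id_{M},s}\neq 0$ \emph{and} $\isagl{f,s}=0$ for all $f\in\rad\End_{\Pa}(M)$. In part (1) you only verify the first condition for ${}^{v}\!\ttheta_{M}$ and then invoke the criterion, so a step is missing. The missing step is exactly what the paper supplies: since $M$ is indecomposable, every $f\in\rad\End_{\Pa}(M)$ is nilpotent, so by Lemma \ref{202012091517}(3) together with Theorem \ref{trace formula} one gets $\isagl{f,{}^{v}\!\ttheta_{M}}={}^{v}\!\Tr(f)=0$; only with this in hand does Happel's criterion apply. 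The repair is easy and uses tools you already cite, but without it the argument for (1) does not go through (a functional on $\End_{\Pa}(M)$ that is nonzero on $\id_{M}$ need not vanish on the radical, and the corresponding morphism need not be AR-coconnecting).

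The same omission makes your argument for the ``only if'' direction of (2) incorrect as stated: you claim that under $\dim\ResEnd_{\Pa}(M)=1$ the space $\Hom_{\sfD(\Pa)}(\nu_{1}^{-1}M,M[1])$ is one-dimensional with the AR-coconnecting morphisms as its nonzero elements. By Serre duality this space is isomorphic to $\tuD\End_{\Pa}(M)$, so its dimension is $\dim_{\kk}\End_{\Pa}(M)$, which can be strictly larger than $1$ even when $\ResEnd_{\Pa}(M)\cong\kk$ (for instance the regular Kronecker modules $R^{(m)}_{\lambda}$ of Section on the $2$-Kronecker quiver have $\End\cong\kk[x]/(x^{m+1})$). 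Consequently the vanishing of the single scalar $\isagl{\id_{M},{}^{v}\!\ttheta_{M}}$ does not by itself force ${}^{v}\!\ttheta_{M}=0$. The correct argument, which is the paper's Lemma \ref{202102071323}(2), again uses both pairings: since $\isagl{f,{}^{v}\!\ttheta_{M}}=0$ for all radical $f$ (trace formula and nilpotence) and $\End_{\Pa}(M)=\kk\,\id_{M}\oplus\rad\End_{\Pa}(M)$, the additional vanishing $\isagl{\id_{M},{}^{v}\!\ttheta_{M}}={}^{v}\!\Euch(M)=0$ and nondegeneracy of the Serre pairing give ${}^{v}\!\ttheta_{M}=0$, hence the triangle splits. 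So your plan is salvageable, but you must state and use the full two-condition form of Happel's criterion and verify the radical-vanishing via the trace formula; your bookkeeping worry about shifts and signs is comparatively harmless, since indeed only nonvanishing of the scalar matters.
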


\begin{proof}

Let $M \in \sfD^{\mrb}( \Pa \mod )$ be an indecomposable object.  
From  Lemma \ref{202012091517} and  Theorem \ref{trace formula}, we have
 \[
\isagl{\id_{M}, \ttheta_{M}}  ={}^{v}\!\Euch(M), \ 
 \isagl{f, \ttheta_{M}} = 0 \textup{ for } f \in \rad\End_{\Pa}(M). 
\] 

(1) Assume that ${}^{v}\!\Euch(M) \neq 0$ in $\kk$. 
It follows from Happel's criterion (Theorem \ref{Happel's criterion}) 
that  ${}^{v}\!\sfAR_{M}$ is an Auslander-Reiten triangle.

(2) follows from Lemma \ref{202102071323}. 
\end{proof}

\subsubsection{Regularity of weights}

We recall from the introduction  the definitions of a sincere weight and  a regular weight 
and 
we add one more notion for weights.

\begin{definition}\label{202111192031}
\begin{enumerate}[(1)]
\item 
A weight $v \in \kk Q_{0}$ is called \emph{sincere} if $v_{i} \neq 0$ for all $i \in Q_{0}$. 

\item 
A weight  $v \in \kk Q_{0}$  is called \emph{regular} 
if we have ${}^{v}\!\Euch(M) \neq 0$ in $\kk$ for all 
$M \in \ind \kk Q$. 

\item 
A weight  $v \in \kk Q_{0}$  is called \emph{semi-regular}  
if we have ${}^{v}\!\Euch(M) \neq 0$ in $\kk$ for all indecomposable 
prepeojective modules $M$ and indecomposable preinjective modules $M$.  
\end{enumerate}
\end{definition}

\begin{remark}
Assume that the base field $\kk$ is algebraically closed and of characteristic $0$. 
In the case $Q$ is  Dynkin, 
the vector space $\kk Q_{0}$ may be identified with the Cartan subalgebra $\mathfrak{h}$ of the semi-simple Lie algebra $\mathfrak{g}$ corresponding to $Q$. 
By Gabriel's theorem \cite{Gabriel} 
that says that the dimension vectors of indecomposable $\Pa$-modules are precisely the positive roots of $\mathfrak{g}$, 
the regularity defined above   coincides with usual notion used about an element  of the Cartan subalgebra $\mathfrak{h}$. 
\end{remark}

Let $\cU_{\Pa} := \add \{ \nu_{1}^{-p}P_{i} \mid p \in \ZZ, i \in Q_{0}\}$. 
Then a  weight  $v \in \kk Q_{0}$ is semi-regular 
 if and only if ${}^{v}\!\Euch(M) \neq 0$ for all $M \in \ind\cU_{\Pa}$. 
We note that a regular weight is sincere. 

We give examples of regular weight  $v \in \kk Q_{0}$.
\begin{example}\label{202102071418} 

\begin{enumerate}[(1)] 
\item Assume that $\QQ \subset \kk$. If $v \in \kk^{\times} Q_{0}$ is strictly positive, 
i.e., $v_{i}\in \QQ_{>0}$ for all $i \in Q_{0}$, then it is regular.

\item 
Let $\PP$ be the prime field contained in $\kk$. 
Assume that $\dim_{\PP} \kk \geq r $. 
 Taking linearly independent element $v_{1}, v_{2}, \ldots, v_{r} \in \kk$ over $\PP$, 
 we obtain a regular weight $v \in \kk Q_{0}$. 
 \end{enumerate}
 \end{example}

Recall that for an indecomposable object  $M \in \ind \Dbmod{A}$
there exist $M' \in \ind \kk Q$ and  a unique integer $p \in \ZZ $ such that  $M \cong M'[p]$ 
and 
hence \[
{}^{v}\!\Euch(M)= ( -1)^{p}{}^{v}\!\Euch(M'). 
\]
Thus if $v \in \kk Q_{0}$ is regular, then  ${}^{v}\!\Euch(M) \neq 0$ for all $M \in \ind \Dbmod{A}$.

\subsubsection{Universal Auslander-Reiten triangle} 

From Theorem \ref{semi-universal Auslander-Reiten triangle}, we conclude the following result.

\begin{theorem}[Universal Auslander-Reiten triangle]\label{universal Auslander-Reiten triangle}
Assume that $v \in \kk Q_{0}$ is regular  (resp. semi-regular). 
Let $M $ be an object of $\sfD^{\mrb}( \Pa \mod )$ (resp. $\cU_{\Pa}[\ZZ]$). 
Then the exact triangle ${}^{v}\!\sfAR_{M}$ is a direct sum of  Auslander-Reiten triangles. 
\[
{}^{v}\!\sfAR_{M} :
 M  \xrightarrow{ \ {}^{v}\!\rrho_{M} \ } {}^{v}\!\YYM_{1} \lotimes_{\Pa} M \xrightarrow{ \ {}^{v}\!\ppi_{1, M} \ } \PPi_{1} \lotimes_{\Pa} M \xrightarrow{ -{}^{v}\!\ttheta_{M}[1]} M[1]. 
\]
In other words, 
the morphism ${}^{v}\!\rrho_{M}: M \to {}^{v}\!\YYM_{1} \lotimes_{A} M$ is a minimal left $\rad$-approximation  of $M$ 
and 
the morphism ${}^{v}\!\ppi_{1, M}: {}^{v}\!\YYM_{1} \lotimes_{A} M \to \PPi_{1} \lotimes_{A} M$ 
is a minimal right $\rad$-approximation of $\PPi_{1} \lotimes_{A} M$. 
\end{theorem}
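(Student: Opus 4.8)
The plan is to deduce Theorem \ref{universal Auslander-Reiten triangle} from Theorem \ref{semi-universal Auslander-Reiten triangle} by reducing to the indecomposable case and then invoking regularity (resp.\ semi-regularity) together with the remark, just above the statement, that $\,{}^{v}\!\Euch(M[p]) = (-1)^{p}\,{}^{v}\!\Euch(M')$ for $M = M'[p]$ with $M' \in \ind \kk Q$. First I would observe that the functor ${}^{v}\!\sfAR \lotimes_{\Pa} -$ is additive and sends exact triangles of the source to exact triangles, and that ${}^{v}\!\sfAR_{M \oplus M'} \cong {}^{v}\!\sfAR_{M} \oplus {}^{v}\!\sfAR_{M'}$; since $\sfD^{\mrb}(\Pa\mod)$ (resp.\ $\cU_{\Pa}[\ZZ]$) is Krull--Schmidt, it therefore suffices to prove the claim when $M$ is indecomposable, in which case ``a direct sum of Auslander--Reiten triangles'' means precisely ``an Auslander--Reiten triangle''.

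Next, for $M$ indecomposable in $\sfD^{\mrb}(\Pa\mod)$ (resp.\ in $\cU_{\Pa}[\ZZ]$), I would write $M = M'[p]$ with $M' \in \ind \kk Q$ (resp.\ $M' \in \ind \cU_{\Pa}$) and $p \in \ZZ$; the classification of indecomposables of the derived category of a hereditary algebra guarantees such a description. By the displayed formula ${}^{v}\!\Euch(M) = (-1)^{p}\,{}^{v}\!\Euch(M')$, and since $v$ is regular (resp.\ semi-regular) we have ${}^{v}\!\Euch(M') \neq 0$ in $\kk$ by Definition \ref{202111192031}(2) (resp.\ by the characterization that $v$ is semi-regular iff ${}^{v}\!\Euch(N) \neq 0$ for all $N \in \ind \cU_{\Pa}$, noting $M' \in \ind \cU_{\Pa}$ in the semi-regular case). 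Hence ${}^{v}\!\Euch(M) \neq 0$ in $\kk$. Applying Theorem \ref{semi-universal Auslander-Reiten triangle}(1) to the indecomposable object $M$ then shows that ${}^{v}\!\ttheta_{M}$ is AR-coconnecting and that ${}^{v}\!\sfAR_{M}$ is an Auslander--Reiten triangle starting from $M$.

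Finally, for the reformulation: in an Auslander--Reiten triangle $M \xrightarrow{f} E \xrightarrow{g} \tau^{-1}M \to M[1]$ in a Hom-finite Krull--Schmidt triangulated category with AR-triangles, the first morphism $f$ is a minimal left almost split morphism, hence by Lemma \ref{202105171427} a minimal left $\rad$-approximation of $M$, and dually $g$ is a minimal right almost split morphism, hence a minimal right $\rad$-approximation of $\tau^{-1}M$. Since ${}^{v}\!\YYM_{1} = \cone({}^{v}\!\ttheta)$ and the triangle identifies $\PPi_{1} \lotimes_{\Pa} M \cong \nu_{1}^{-1}M$ (the cone fitting the AR-triangle), applying this to ${}^{v}\!\sfAR_{M}$ gives that ${}^{v}\!\rrho_{M}$ is a minimal left $\rad$-approximation of $M$ and ${}^{v}\!\ppi_{1,M}$ is a minimal right $\rad$-approximation of $\PPi_{1} \lotimes_{\Pa} M$; taking direct sums over the indecomposable summands of a general $M$ yields the stated conclusion. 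I do not expect a genuine obstacle here: the only point requiring a little care is making sure the indecomposable-summand reduction is legitimate, i.e.\ that ${}^{v}\!\sfAR \lotimes_{\Pa} -$ commutes with finite direct sums (clear, since $-\lotimes_{\Pa} -$ is additive) and that one is allowed to pass between $\cU_{\Pa}[\ZZ]$ and its indecomposables in the semi-regular case — which is exactly why the definition of semi-regular was phrased via $\ind \cU_{\Pa}$.
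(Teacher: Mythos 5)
Your proposal is correct and follows essentially the same route as the paper: the paper deduces Theorem \ref{universal Auslander-Reiten triangle} directly from Theorem \ref{semi-universal Auslander-Reiten triangle}, using the remark preceding the statement that regularity (resp.\ semi-regularity via $\ind\cU_{\Pa}$) together with ${}^{v}\!\Euch(M'[p])=(-1)^{p}\,{}^{v}\!\Euch(M')$ gives ${}^{v}\!\Euch(M)\neq 0$ for every relevant indecomposable, and then summing over indecomposable summands. The only difference is that you spell out the Krull--Schmidt reduction and the passage from AR-triangles to minimal left/right $\rad$-approximations (Lemma \ref{202105171427} and its dual), which the paper leaves implicit.
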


\subsubsection{} 

We point out the following corollary. 

\begin{corollary}\label{universal Auslander-Reiten triangle corollary} 
Assume that $v \in \kk Q_{0}$ is semi-regular. 
Then the object ${}^{v}\!\YYM_{1} \in \sfD(\Pa^{\mre})$ is concentrated in the $0$-th cohomological degree. 

\end{corollary}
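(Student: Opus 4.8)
\textbf{Proof proposal for Corollary \ref{universal Auslander-Reiten triangle corollary}.}

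The plan is to deduce the concentration of ${}^{v}\!\YYM_{1} = \cone({}^{v}\!\ttheta)$ in cohomological degree $0$ from the Universal Auslander--Reiten triangle (Theorem \ref{universal Auslander-Reiten triangle}) applied with $M$ ranging over the indecomposable projective modules $P_{i}$, $i \in Q_{0}$, together with the observation that $\bigoplus_{i} P_{i} = \Pa$ as a left $\Pa$-module, hence as a generator detecting the underlying complex structure. First I would recall that ${}^{v}\!\YYM_{1}$ is a complex of $\Pa$-bimodules represented by the two-term complex $\cone({}^{v}\!\ttheta)$, where ${}^{v}\!\ttheta \colon \PPa^{\vee} \to \PPa$ sits in cohomological degrees $-1$ and $0$; explicitly, using Lemma \ref{202006121909} and \eqref{2020081418071}, ${}^{v}\!\YYM_{1}$ has underlying graded bimodule concentrated in degrees $-1, 0, 1$ a priori, and one must rule out the degrees $-1$ and $1$ parts of its cohomology after restricting to left $\Pa$-modules.

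The key step is the following: for each $i \in Q_{0}$, apply Theorem \ref{universal Auslander-Reiten triangle} (the semi-regular case, and $P_{i} \in \cU_{\Pa}$) to $M = P_{i}$, obtaining that
\[
{}^{v}\!\sfAR_{P_{i}} \colon P_{i} \xrightarrow{{}^{v}\!\rrho_{P_{i}}} {}^{v}\!\YYM_{1} \lotimes_{\Pa} P_{i} \xrightarrow{{}^{v}\!\ppi_{1,P_{i}}} \PPi_{1}\lotimes_{\Pa} P_{i} \to P_{i}[1]
\]
is a direct sum of AR-triangles starting from $P_{i}$. Since $P_{i}$ is an indecomposable $\Pa$-module, $\PPi_{1}\lotimes_{\Pa} P_{i} = \nu_{1}^{-1}(P_{i})$ is also an indecomposable $\Pa$-module (it is $\tau_{1}^{-1}P_{i}$ if $P_{i}$ is non-injective, or the appropriate preinjective object in general, up to the usual conventions), concentrated in degree $0$; and an AR-triangle starting from a module has module middle term, or at worst (in the injective case in $\Dbmod{\Pa}$) a middle term whose cohomology may acquire a shift. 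The cleanest route is: $P_{i}$ lies in degree $0$ and $\PPi_{1}\lotimes_{\Pa} P_{i}$ lies in degree $0$; the long exact cohomology sequence of ${}^{v}\!\sfAR_{P_{i}}$ then forces $\tuH^{k}({}^{v}\!\YYM_{1}\lotimes_{\Pa} P_{i}) = 0$ for all $k \neq 0$, using that the connecting map $\tuH^{k}(\PPi_{1}\lotimes P_{i}) \to \tuH^{k+1}(P_{i})$ is an isomorphism outside degree $0$ (both groups vanish there). Since $\Pa = \bigoplus_{i} P_{i}$ and ${}^{v}\!\YYM_{1}\lotimes_{\Pa}\Pa \simeq {}^{v}\!\YYM_{1}$ as complexes of left $\Pa$-modules (and ${}^{v}\!\YYM_{1}$ has projective-flat components, so the derived tensor is the naive one), we conclude $\tuH^{k}({}^{v}\!\YYM_{1}) = 0$ for $k \neq 0$ as a complex of left $\Pa$-modules, which by the bimodule structure gives the claim.

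I expect the main obstacle to be the bookkeeping in the injective/non-preprojective boundary cases of the AR-triangle: when $P_{i}$ happens to be injective (possible when $Q$ has a source that is also a sink only in degenerate cases, but in general the issue is that for a non-Dynkin quiver one must handle preinjective $P_i$ which cannot occur, so $P_i$ is preprojective and $\tau_1^{-1}P_i$ is again a module), one must check that ${}^{v}\!\YYM_{1}\lotimes_{\Pa}P_{i}$ still has cohomology only in degree $0$. In fact for $P_i$ this is automatic since $P_i$ is preprojective hence non-injective, so $\nu_1^{-1}P_i = \tau_1^{-1}P_i$ is an honest module and the AR-triangle is the image of the AR-sequence $0 \to P_i \to L \to \tau_1^{-1}P_i \to 0$ under the embedding $\Pa\mod \hookrightarrow \Dbmod{\Pa}$ (cf.\ Lemma \ref{202105190925} and \cite[I.4.7]{Happel Book}); thus ${}^{v}\!\YYM_1 \lotimes_{\Pa} P_i \simeq L$ is a module. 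The remaining care is simply to note that ${}^{v}\!\YYM_1$ being concentrated in degree $0$ as a complex of left $\Pa$-modules already forces it to be concentrated in degree $0$ as a complex of bimodules, since the forgetful functor $\Dbmod{\Pa^{\mre}} \to \Dbmod{\Pa}$ detects cohomology. This finishes the argument.
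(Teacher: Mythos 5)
Your reduction to the objects ${}^{v}\!\YYM_{1}e_{i} \cong {}^{v}\!\YYM_{1}\lotimes_{\Pa}P_{i}$ and the closing remark that the forgetful functor to $\sfD(\Pa)$ detects cohomology are fine, but the heart of your argument has a genuine gap. Your long-exact-sequence step rests on the claim that $\PPi_{1}\lotimes_{\Pa}P_{i}=\nu_{1}^{-1}(P_{i})$ is a module concentrated in cohomological degree $0$, which you justify by asserting that ``$P_{i}$ is preprojective hence non-injective''. That implication is false: a projective module can perfectly well be injective, and this happens precisely in the Dynkin case, which the corollary certainly covers (semi-regular, indeed regular, weights exist there; in the paper's own $A_{3}$ example of Section \ref{202103221959} the module $\begin{smallmatrix}1\\2\\3\end{smallmatrix}$ is projective--injective). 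If $P_{i}=I_{j}$ is injective, then $\nu_{1}^{-1}(P_{i})=\nu^{-1}(P_{i})[1]=P_{j}[1]$ sits in cohomological degree $-1$, so your long exact sequence only yields $0\to \tuH^{-1}({}^{v}\!\YYM_{1}e_{i})\to P_{j}\to P_{i}\to \tuH^{0}({}^{v}\!\YYM_{1}e_{i})\to 0$, and the vanishing of $\tuH^{-1}$ now requires injectivity of the connecting morphism, which you have not established; the ``both outer terms vanish'' argument is simply unavailable in this case. (The statement is still true there, but it needs an argument: for a hereditary algebra a projective--injective $P_{i}=I_{j}$ forces $j$ to be a sink by acyclicity of $Q$, so $P_{j}$ is simple and any non-zero map $P_{j}\to P_{i}$ is injective; non-vanishing of the connecting map follows from $\isagl{\id_{P_{i}},{}^{v}\!\ttheta_{P_{i}}}={}^{v}\!\Euch(P_{i})\neq 0$. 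None of this is in your proposal.)

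Note that the paper's own proof uses the same input (Theorem \ref{universal Auslander-Reiten triangle} applied to $M=P_{i}$) but avoids the case distinction entirely: since ${}^{v}\!\rrho_{P_{i}}\colon P_{i}\to{}^{v}\!\YYM_{1}e_{i}$ is a \emph{minimal} left $\rad$-approximation, every indecomposable direct summand $N$ of ${}^{v}\!\YYM_{1}e_{i}$ receives a non-zero morphism from $P_{i}$; as $N$ is a shift of an indecomposable module and $\Hom_{\sfD(\Pa)}(P_{i},N)=\tuH^{0}(e_{i}N)$, this forces $N$ to be concentrated in degree $0$, with no long exact sequence and no separate treatment of injective $P_{i}$. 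If you want to keep your cohomological route, you must either supply the injectivity of the connecting map in the projective--injective case as sketched above, or replace the final step by this minimality argument.
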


\begin{proof}
It is enough to show that the object ${}^{v}\!\YYM_{1} e_{i} \cong {}^{v}\!\YYM_{1} \lotimes_{\Pa} P_{i}$ of $\sfD(\Pa)$ is concentrated in the $0$-th cohomological degree for $i \in Q_{0}$. 

Let $M \in \ind \Dbmod{\Pa}$. Recall that  $M$ is concentrated in a single cohomological degree and $\Hom_{\Pa}(P_{i}, M) = e_{i} M$. 
It follows that if $\Hom_{\Pa}(P_{i}, M) \neq 0$, then $M$ is concentrated in the $0$-the cohomological degree. 
Since the morphism  ${}^{v}\!\rrho_{P_{i}} : P_{i} \to {}^{v}\!\YYM_{1}e_{i}$ is minimal by Theorem \ref{universal Auslander-Reiten triangle}, 
we conclude that ${}^{v}\!\YYM_{1}e_{i}$ is concentrated in the $0$-th cohomological degree. 
\end{proof}

\subsection{The action of an exact autoequivalence $F$ on ${}^{v}\!\ttheta$}

Let  $\phi: A^{\vvee} \to A$ be a morphism in $\Dbmod{A^{\mre}}$. 
We denote  the induced natural transformation  by the same symbol 
\[
\phi: \sfS^{-1}= \Pa^{\vvee} \lotimes _{\Pa} - \xrightarrow{ \ \phi \lotimes -\ }  \Pa \lotimes_{\Pa} - = \id_{\sfD}
\] 
 where $\id_{\sfD}$ is the identity functor of $\Dbmod{\Pa}$. 

Let $T \in \Dbmod{\Pa}$ be a two-sided tilting complex over $\Pa$ and 
 $F:= T \lotimes_{\Pa}-$  the associated  exact autoequivalence of $\Dbmod{\Pa}$. 
 We note that there is a natural isomorphism $\gamma_{F}: \sfS^{-1} F \to F\sfS^{-1}$ induced from an 
 isomorphism $\gamma_{T}: \Pa^{\vvee} \lotimes_{\Pa} T \to T \lotimes_{\Pa} \Pa^{\vvee}$ of $\Dbmod{\Pa^{\mre}}$ 
 (see Lemma \ref{202106091106}).
 
We define  a natural transformation $F \cdot \phi: \sfS^{-1} \to \id$ to be the following composition 
\[
F\cdot \phi : \sfS^{-1}\xrightarrow{\cong} FF^{-1} \sfS^{-1}  \xrightarrow{ F((\gamma_{F^{-1}})^{-1}) } F\sfS^{-1} F^{-1} \xrightarrow{ \ F \phi F^{-1} \ } F F^{-1}\xrightarrow{\cong} \id_{\sfD}.
\] 
It follows from the above remark  
that the natural transformation $F\cdot \phi$ is induced from 
a morphism $F\cdot \phi: \Pa^{\vvee} \to \Pa$ of $\Dbmod{\Pa^{\mre}}$, which we denote by the same symbol. 

We obtain the following lemma immediately from the definition of $F\cdot \phi$. 

\begin{lemma}\label{202101131703}
Let $T \in \Dbmod{\Pa}$ be a two-sided tilting complex over $\Pa$ and 
 $F:= T \lotimes_{\Pa}-$  the associated  exact autoequivalence of $\Dbmod{\Pa}$. 
Then we have 
\[
F( ( F^{-1}\cdot\phi)_{M}) = \phi_{F(M)} \gamma_{F, M}. 
\]
In other words, the following diagram is commutative
\[
\begin{xymatrix}@C=60pt{
\sfS^{-1}F(M) \ar[r]^{\phi_{F(M)}}  \ar[d]_{\gamma_{F,M}} 
& F(M) \\
F\sfS^{-1}(M) \ar[ur]_{ \  F( (F^{-1} \cdot \phi)_{M}) } &.
}\end{xymatrix}
\]
\end{lemma}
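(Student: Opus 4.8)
The plan is to verify the stated commutative triangle directly from the definition of $F\cdot\phi$, read with $F$ replaced by $F^{-1}$. First I would record that, by that definition, $(F^{-1}\cdot\phi)_{M}$ is the composite
\[
\sfS^{-1}M \xrightarrow{\ \cong\ } F^{-1}F\sfS^{-1}M \xrightarrow{\ F^{-1}((\gamma_{F})_{M}^{-1})\ } F^{-1}\sfS^{-1}FM \xrightarrow{\ F^{-1}(\phi_{FM})\ } F^{-1}FM \xrightarrow{\ \cong\ } M,
\]
where the only ingredients are $\gamma_{F}$ itself (substituting $F^{-1}$ brings in $\gamma_{(F^{-1})^{-1}}=\gamma_{F}$, \emph{not} some new isomorphism), the morphism $\phi$, and the fixed coherence isomorphisms $F^{-1}F\cong\id_{\sfD}$, $FF^{-1}\cong\id_{\sfD}$ attached to the equivalence $F$. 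Applying $F$ gives $F((F^{-1}\cdot\phi)_{M})\colon F\sfS^{-1}M\to FM$, and the claim to be proved is that precomposing it with $\gamma_{F,M}\colon \sfS^{-1}FM\to F\sfS^{-1}M$ yields $\phi_{FM}$, which is exactly commutativity of the displayed triangle.

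The computation is a short diagram chase. Whiskering the displayed composite by $F$ turns the two canonical isomorphisms into instances of the natural isomorphism $FF^{-1}(-)\cong(-)$; by naturality of the latter one may slide the leading factor past $FF^{-1}((\gamma_{F})_{M}^{-1})$, converting it into $(\gamma_{F})_{M}^{-1}$ followed by the coherence isomorphism. Precomposing with $\gamma_{F,M}=(\gamma_{F})_{M}$ then cancels $(\gamma_{F})_{M}^{-1}(\gamma_{F})_{M}=\id$. Sliding the coherence isomorphism once more past $FF^{-1}(\phi_{FM})$ and using the triangle identity for $F$ (the isomorphism $FF^{-1}\cong\id$ composed with its inverse $\id\cong FF^{-1}$ is the identity) leaves precisely $\phi_{FM}$. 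As a sanity check, when $T=\Pa$ one has $F=\id$, $\gamma_{F}=\id$ and $F^{-1}\cdot\phi=\phi$, so the identity is trivial.

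Alternatively — and perhaps more faithful to the bimodule framework of the paper — I would carry out the same chase one level down, on morphisms of complexes of $\Pa$-bimodules: here $\phi\colon\Pa^{\vvee}\to\Pa$ in $\Dbmod{\Pa^{\mre}}$, and $\gamma_{F}$ is induced by the isomorphism $\gamma_{T}\colon\Pa^{\vvee}\lotimes_{\Pa}T\to T\lotimes_{\Pa}\Pa^{\vvee}$ of $\Dbmod{\Pa^{\mre}}$ furnished by Lemma~\ref{202106091106}; the asserted equality then becomes an equality of morphisms $T\lotimes_{\Pa}\Pa^{\vvee}\lotimes_{\Pa}M\to T\lotimes_{\Pa}M$ which reduces, after cancelling $\gamma_{T}$ against $\gamma_{T}^{-1}$, to the definition of $\phi_{FM}$. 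I do not anticipate a genuine obstacle — this is why the statement is ``immediate from the definition''; the only point needing care is consistent use of the chosen coherence isomorphisms of the adjoint equivalence $F\dashv F^{-1}$, and it is worth noting explicitly that no compatibility relation between $\gamma_{F}$ and $\gamma_{F^{-1}}$ is required, since only $\gamma_{(F^{-1})^{-1}}=\gamma_{F}$ enters the argument.
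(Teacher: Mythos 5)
Your proof is correct and matches the paper, which offers no argument beyond "this follows immediately from the definition of $F\cdot\phi$": your diagram chase (substituting $F^{-1}$ into the definition so that only $\gamma_{(F^{-1})^{-1}}=\gamma_{F}$ appears, applying $F$, and cancelling via naturality and the coherence of the adjoint equivalence) simply makes that immediate unwinding explicit. Your closing remark that no compatibility between $\gamma_{F}$ and $\gamma_{F^{-1}}$ is needed is also accurate.
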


The next theorem provides a way to compute $F\cdot ({}^{v}\!\ttheta)$.

\begin{theorem}\label{202101081603}
Let $T \in \Dbmod{\Pa}$ be a two-sided tilting complex over $\Pa$ and 
 $F:= T \lotimes_{\Pa}-$  the associated  exact autoequivalence of $\Dbmod{\Pa}$. 
Then, for $v \in \kk Q_{0}$, 
we have the following equality in $\Hom_{A^{\mre}}(A^{\vvee}, A)$ 
\[
F^{-1}\cdot ( {}^{v}\!\ttheta) = {}^{\underline{F}^{t}(v)} \ttheta. 
\]
\end{theorem}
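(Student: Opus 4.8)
The plan is to reduce the claimed identity $F^{-1}\cdot({}^{v}\!\ttheta) = {}^{\underline{F}^{t}(v)}\ttheta$ to a computation of Serre-duality pairings, using Corollary \ref{202102061807} as the organizing principle. Since $\Hom_{\Pa^{\mre}}(\Pa^{\vvee},\Pa)$ has basis $\{\tilde e_i\}_{i\in Q_0}$, Corollary \ref{202102061807} tells us that any $\phi\in\Hom_{\Pa^{\mre}}(\Pa^{\vvee},\Pa)$ is recovered from the scalars $\isagl{\id_{S_i},\phi_{S_i}}$, namely $\phi=\sum_{i\in Q_0}\isagl{\id_{S_i},\phi_{S_i}}\tilde e_i$. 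So it suffices to show that for each vertex $i\in Q_0$,
\[
\isagl{\id_{S_i},\,(F^{-1}\cdot{}^{v}\!\ttheta)_{S_i}} = {}^{\underline{F}^{t}(v)}\Euch(S_i) = (\underline{F}^{t}(v))_i,
\]
where the second equality is Theorem \ref{trace formula} applied to $S_i$ and $\id_{S_i}$.

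First I would compute the left-hand side using Lemma \ref{202101131703}, which gives the commutative triangle relating $(F^{-1}\cdot{}^{v}\!\ttheta)_{M}$ to ${}^{v}\!\ttheta_{F(M)}$ via the natural isomorphism $\gamma_{F,M}\colon \sfS^{-1}F(M)\to F\sfS^{-1}(M)$. Concretely $F\big((F^{-1}\cdot{}^{v}\!\ttheta)_{M}\big) = {}^{v}\!\ttheta_{F(M)}\circ\gamma_{F,M}$. Since $F$ is an autoequivalence and the Serre-duality pairing $\isagl{-,+}$ is natural with respect to autoequivalences (it is defined from the Serre functor, which $F$ commutes with up to the canonical isomorphism), applying $F$ to the endomorphism $\id_{S_i}$ and transporting the pairing, I get
\[
\isagl{\id_{S_i},\,(F^{-1}\cdot{}^{v}\!\ttheta)_{S_i}} = \isagl{\id_{F(S_i)},\,{}^{v}\!\ttheta_{F(S_i)}}.
\]
Now Theorem \ref{trace formula} applies directly on the right: $\isagl{\id_{F(S_i)},{}^{v}\!\ttheta_{F(S_i)}} = {}^{v}\!\Tr(\id_{F(S_i)}) = {}^{v}\!\Euch(F(S_i))$. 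Finally, equation \eqref{202102061837}, ${}^{v}\!\Euch(F(M)) = {}^{\underline{F}^{t}(v)}\Euch(M)$, yields ${}^{v}\!\Euch(F(S_i)) = {}^{\underline{F}^{t}(v)}\Euch(S_i) = (\underline{F}^{t}(v))_i$, which is exactly what we needed. Then Corollary \ref{202102061807} assembles these vertex-wise scalars into the asserted bimodule-map identity.

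The main obstacle I anticipate is the naturality step: carefully justifying that $\isagl{\id_{S_i},(F^{-1}\cdot{}^{v}\!\ttheta)_{S_i}} = \isagl{\id_{F(S_i)},{}^{v}\!\ttheta_{F(S_i)}}$, i.e., that the Serre pairing transported through $F$ and the twisting built into the definition of $F\cdot\phi$ (via $\gamma_F$ and the isomorphism $\gamma_T\colon\Pa^{\vvee}\lotimes T\to T\lotimes\Pa^{\vvee}$ of Lemma \ref{202106091106}) match up correctly, with no stray sign or scalar discrepancy. This requires unwinding the definition of $F\cdot\phi$ as the composite $\sfS^{-1}\xrightarrow{\cong}FF^{-1}\sfS^{-1}\to F\sfS^{-1}F^{-1}\xrightarrow{F\phi F^{-1}}FF^{-1}\xrightarrow{\cong}\id$, evaluating at $S_i$, and checking compatibility with the pairing's definition from $\RHom$. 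Once the bookkeeping in Lemma \ref{202101131703} is in hand this is essentially formal, but it is the step where care is needed; everything after it is a direct appeal to Theorem \ref{trace formula}, equation \eqref{202102061837}, and Corollary \ref{202102061807}.
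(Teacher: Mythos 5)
Your proposal is correct and follows essentially the same route as the paper: reduce to the simple modules via Corollary \ref{202102061807}, combine Lemma \ref{202101131703} with the compatibility of the Serre pairing under $F$ to get $\isagl{\id_{S_i},(F^{-1}\cdot{}^{v}\!\ttheta)_{S_i}}=\isagl{\id_{F(S_i)},{}^{v}\!\ttheta_{F(S_i)}}$, then apply Theorem \ref{trace formula} and equation \eqref{202102061837}. The ``naturality step'' you flag as the main obstacle is in fact the defining property of $\gamma_F$ recorded in Section \ref{Section: Serre functor} (namely $\isagl{f,g}=\isagl{F(f),F(g)\gamma_{F,X}}$), so no additional verification is needed beyond the bookkeeping already contained in Lemma \ref{202101131703}.
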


 \begin{proof}
 For $M \in \Dbmod{\Pa}$ and $f\in \End_{\Pa}(M)$, 
 we have 
 \[
 \isagl{f, (F^{-1}\cdot {}^{v}\!\ttheta )_{M}}  = \isagl{F(f), F(F^{-1}\cdot ({}^{v}\!\ttheta ))\gamma_{F} }
  = \isagl{F(f), ({}^{v}\!\ttheta)_{F(M)}}.
 \]
Therefore, for $i \in Q_{0}$, we have 
\[
\begin{split}
\isagl{ \id_{S_{i}}, (F^{-1}\cdot {}^{v}\!\ttheta)_{ S_{i} } } 
 = \isagl{ \id_{F(S_{i}) } , {}^{v}\!\ttheta_{F(S_{i})}}  = {}^{v}\!\Euch(F(S_{i}))
=   {}^{\underline{F}^{t}(v)} \Euch(S_{i}) = (\underline{F}^{t} (v))_{i}
\end{split}
\]
where for the second equality we use Theorem \ref{trace formula} 
and for the third the equation \eqref{202102061837}. 
Thus we deduce the desired equality from Corollary \ref{202102061807}. 
\end{proof}

\begin{proposition}\label{202101131644}
Let $v \in \kk Q_{0}$ and $T \in \Dbmod{\Pa}$ be a two-sided tilting complex over $\Pa$. 
We denote by 
 $F:= T \lotimes_{\Pa}-$  the associated  exact autoequivalence of $\Dbmod{\Pa}$. 
Then for $M \in \ind \Dbmod{\Pa}$ such that $\dim \ResEnd_{\Pa}(M) =1$ and ${}^{v}\!\Euch(M) \neq 0$, the following holds. 

\begin{enumerate}[(1)] 
\item 
We have the following equality in $\Hom_{\Pa}(\sfS^{-1}(M), M)$ 
\[
(F^{-1}\cdot{}^{v}\!\ttheta)_{M} = \frac{{}^{v}\!\Euch (F(M))}{{}^{v}\!\Euch (M)} {}^{v}\!\ttheta_{M}.
\]

\item We have the following equality in $\Hom_{A}(F\sfS^{-1}(M), F(M))$ 
\[
\frac{{}^{v}\!\Euch (F(M))}{{}^{v}\!\Euch (M)} F( {}^{v}\!\ttheta_{M}) \gamma_{F} = {}^{v}\!\ttheta_{F(M)}. 
\]
In other words the following diagram is commutative: 
\[
\begin{xymatrix}@C=60pt{
\sfS^{-1}F(M) \ar[r]^{{}^{v}\!\ttheta_{F(M)}}  \ar[d]_{\gamma_{F,M}} & 
F(M) \\
F\sfS^{-1}(M) \ar[ur]_{x F({}^{v}\!\ttheta_{M}) }
& 
}\end{xymatrix}
\]
where \[x: =\frac{{}^{v}\!\Euch (F(M))}{{}^{v}\!\Euch (M)}. 
\]
\end{enumerate}
\end{proposition}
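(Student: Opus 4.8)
The plan is to prove (1) by testing both sides of the asserted identity against $\End_{\Pa}(M)$ through the Serre pairing, and then to obtain (2) for free by applying the autoequivalence $F$ and invoking Lemma~\ref{202101131703}. Concretely, for (1) I would observe that both $(F^{-1}\cdot{}^{v}\!\ttheta)_{M}$ and ${}^{v}\!\ttheta_{M}$ are morphisms $\sfS^{-1}(M)\to M$ in $\sfD(\Pa)$, and that the pairing $\isagl{-,+}:\End_{\Pa}(M)\otimes_{\kk}\Hom_{\Pa}(\sfS^{-1}(M),M)\to\kk$ of Serre duality is perfect. Hence it suffices to check that $\isagl{f,(F^{-1}\cdot{}^{v}\!\ttheta)_{M}}=\tfrac{{}^{v}\!\Euch(F(M))}{{}^{v}\!\Euch(M)}\isagl{f,{}^{v}\!\ttheta_{M}}$ for every $f\in\End_{\Pa}(M)$, the division being legitimate since ${}^{v}\!\Euch(M)\neq 0$.

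To evaluate these two pairings I would proceed as follows. By the trace formula (Theorem~\ref{trace formula}), $\isagl{f,{}^{v}\!\ttheta_{M}}={}^{v}\!\Tr(f)$; and by Theorem~\ref{202101081603} we have $F^{-1}\cdot{}^{v}\!\ttheta={}^{\underline{F}^{t}(v)}\ttheta$, so $(F^{-1}\cdot{}^{v}\!\ttheta)_{M}=({}^{\underline{F}^{t}(v)}\ttheta)_{M}$ and the trace formula again gives $\isagl{f,(F^{-1}\cdot{}^{v}\!\ttheta)_{M}}={}^{\underline{F}^{t}(v)}\Tr(f)$. The key reduction is that the weighted trace descends along $\End_{\Pa}(M)\twoheadrightarrow\ResEnd_{\Pa}(M)$: since $M$ is indecomposable, $\End_{\Pa}(M)$ is finite-dimensional local, so every element of $\rad\End_{\Pa}(M)$ is nilpotent, hence annihilated by any weighted trace by Lemma~\ref{202012091517}(3); and since $\dim\ResEnd_{\Pa}(M)=1$ we may write $f=\lambda(f)\id_{M}+g$ with $\lambda(f)\in\kk$ and $g\in\rad\End_{\Pa}(M)$. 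Therefore ${}^{w}\!\Tr(f)=\lambda(f)\,{}^{w}\!\Euch(M)$ for \emph{every} weight $w\in\kk Q_{0}$. Applying this with $w=v$ and with $w=\underline{F}^{t}(v)$, and using ${}^{\underline{F}^{t}(v)}\Euch(M)={}^{v}\!\Euch(F(M))$ from \eqref{202102061837}, yields
\[
\isagl{f,(F^{-1}\cdot{}^{v}\!\ttheta)_{M}}=\lambda(f)\,{}^{v}\!\Euch(F(M))=\frac{{}^{v}\!\Euch(F(M))}{{}^{v}\!\Euch(M)}\,\lambda(f)\,{}^{v}\!\Euch(M)=\frac{{}^{v}\!\Euch(F(M))}{{}^{v}\!\Euch(M)}\,\isagl{f,{}^{v}\!\ttheta_{M}},
\]
which is exactly what is needed for (1).

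Finally, (2) will follow by applying $F$ to the identity in (1): on one side $F\big((F^{-1}\cdot{}^{v}\!\ttheta)_{M}\big)=\tfrac{{}^{v}\!\Euch(F(M))}{{}^{v}\!\Euch(M)}F({}^{v}\!\ttheta_{M})$, while Lemma~\ref{202101131703} gives $F\big((F^{-1}\cdot{}^{v}\!\ttheta)_{M}\big)={}^{v}\!\ttheta_{F(M)}\gamma_{F,M}$; comparing the two produces the claimed equality $x\,F({}^{v}\!\ttheta_{M})\gamma_{F}={}^{v}\!\ttheta_{F(M)}$ with $x=\tfrac{{}^{v}\!\Euch(F(M))}{{}^{v}\!\Euch(M)}$, and hence the commutative triangle. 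The one step that genuinely uses the hypotheses — and the only place where any care is required — is the reduction that ${}^{v}\!\Tr$ factors through $\ResEnd_{\Pa}(M)\cong\kk$; beyond that, everything is formal bookkeeping with Theorems~\ref{trace formula} and \ref{202101081603} and Lemma~\ref{202101131703}. Note in particular that one should \emph{not} expect $\Hom_{\Pa}(\sfS^{-1}(M),M)$ to be one-dimensional, so the comparison really has to be made through the perfect pairing rather than by a naive "both are scalar multiples of a single morphism'' argument.
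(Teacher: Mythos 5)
Your proposal is correct and follows essentially the same route as the paper: Theorem \ref{202101081603} identifies $(F^{-1}\cdot{}^{v}\!\ttheta)_{M}$ with ${}^{\underline{F}^{t}(v)}\ttheta_{M}$, the trace formula together with \eqref{202102061837} and the vanishing of weighted traces on the (nilpotent) radical evaluates the Serre pairings, and nondegeneracy of the pairing on the local algebra $\End_{\Pa}(M)$ forces the stated proportionality — the paper packages this last step as Lemma \ref{202102071323}, which is exactly your perfect-pairing argument in the case $\dim\ResEnd_{\Pa}(M)=1$. Part (2) is obtained in the paper precisely as you do it, by applying $F$ to (1) and invoking Lemma \ref{202101131703}.
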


\begin{proof}
(1) 
We have the following equation 
\[
\begin{split}
\isagl{ \id_{M},  ( F^{-1}\cdot {}^{v}\!\ttheta)_{M} } = \isagl{\id_{M}, {}^{\underline{F}^{t}(v)}\ttheta_{M}} = {}^{\underline{F}^{t}(v)} \Euch(M) = {}^{v}\!\Euch(F(M)).
\end{split}
\]
where for the first equality we use Theorem \ref{202101081603} 
and for the third the equation \eqref{202102061837}. 
We deduce the desired equation by using Lemma \ref{202102071323}. 

(2) follows from (1) together with Lemma \ref{202101131703}. 
\end{proof}

From uniqueness of AR-triangles, we obtain the following corollary. 

\begin{corollary}\label{202102071627}
Let $v \in \kk Q_{0}$ and $F$ be an exact autoequivalence of $\Dbmod{A}$. 
Then for $M \in \ind \Dbmod{\Pa}$ such that $\dim\ResEnd_{\Pa}(M) =1$ and  ${}^{v}\!\Euch(M) \neq 0,  {}^{v}\!\Euch(F(M)) \neq 0$, 
there exists an isomorphism ${}^{v}\!\sfa_{F, M}: {}^{v}\!\YYM_{1} \lotimes_{A} F(M) \to F({}^{v}\!\YYM_{1} \lotimes_{A} M)$ 
that gives the following isomorphism of exact triangles:
\[
\begin{xymatrix}@C=60pt{
\sfS^{-1}F(M) \ar[r]^{{}^{v}\!\ttheta_{F(M)}}  \ar[d]_{\gamma_{F}} & 
F(M) \ar[r]^{{}^{v}\!\rrho_{F(M)}} \ar@{=}[d]  &
{}^{v}\!\YYM_{1} \lotimes_{A}F(M) \ar[d]^{{}^{v}\!\sfa_{F, M} } \ar[r]^{{}^{v}\!\ppi_{1, F(M)} } &
\sfS^{-1}F(M) [1] \ar[d]^{\gamma_{F}[1]}    \\
F\sfS^{-1}(M) \ar[r]_{x F({}^{v}\!\ttheta_{M}) } & 
F(M) \ar[r]_{F({}^{v}\!\rrho_{M})}  &
F({}^{v}\!\YYM_{1} \lotimes_{A} M) \ar[r]_{x^{-1}F({}^{v}\!\ppi_{1, M}) } &
F\sfS^{-1}(M) [1]   \\
}\end{xymatrix}
\]
where \[
x: =\frac{{}^{v}\!\Euch (F(M))}{{}^{v}\!\Euch (M)}.
\] 
\end{corollary}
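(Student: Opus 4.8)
Since $\Pa=\kk Q$ has finite global dimension, Rickard's theorem lets me assume $F = T\lotimes_{\Pa}-$ for a two-sided tilting complex $T$, so that Proposition \ref{202101131644} and the natural isomorphism $\gamma_{F}\colon \sfS^{-1}F\to F\sfS^{-1}$ of Lemma \ref{202101131703} are available. Because $\ResEnd_{\Pa}(F(M))\cong\ResEnd_{\Pa}(M)$ is one-dimensional, $F(M)$ is indecomposable, and the hypotheses ${}^{v}\!\Euch(M)\neq 0$, ${}^{v}\!\Euch(F(M))\neq 0$ make $x := {}^{v}\!\Euch(F(M))/{}^{v}\!\Euch(M)$ a well-defined element of $\kk^{\times}$; this is exactly what is needed to rescale cones below.

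\textbf{Identifying the two triangles.} I would first observe that the top row of the asserted diagram is the standard cone triangle of ${}^{v}\!\ttheta_{F(M)}$: since ${}^{v}\!\YYM_{1}=\cone({}^{v}\!\ttheta)$ and derived tensoring commutes with cones, ${}^{v}\!\YYM_{1}\lotimes_{\Pa}F(M)\cong\cone({}^{v}\!\ttheta_{F(M)})$, with ${}^{v}\!\rrho_{F(M)}$ and ${}^{v}\!\ppi_{1,F(M)}$ the canonical inclusion and projection, so $\sfS^{-1}F(M)\xrightarrow{{}^{v}\!\ttheta_{F(M)}}F(M)\xrightarrow{{}^{v}\!\rrho_{F(M)}}{}^{v}\!\YYM_{1}\lotimes_{\Pa}F(M)\xrightarrow{{}^{v}\!\ppi_{1,F(M)}}\sfS^{-1}F(M)[1]$ is distinguished. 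For the bottom row, applying the triangulated functor $F$ to the cone triangle of ${}^{v}\!\ttheta_{M}$ gives the distinguished triangle $F\sfS^{-1}(M)\xrightarrow{F({}^{v}\!\ttheta_{M})}F(M)\xrightarrow{F({}^{v}\!\rrho_{M})}F({}^{v}\!\YYM_{1}\lotimes_{\Pa}M)\xrightarrow{F({}^{v}\!\ppi_{1,M})}F\sfS^{-1}(M)[1]$, and then rescaling along \eqref{202102222229} with $a=x$ (legitimate since $x\neq 0$) converts it into $F\sfS^{-1}(M)\xrightarrow{x F({}^{v}\!\ttheta_{M})}F(M)\xrightarrow{F({}^{v}\!\rrho_{M})}F({}^{v}\!\YYM_{1}\lotimes_{\Pa}M)\xrightarrow{x^{-1}F({}^{v}\!\ppi_{1,M})}F\sfS^{-1}(M)[1]$, which is precisely the bottom row. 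This is the one step requiring genuine care: the scalars must be tracked exactly as in \eqref{202102222229} so that the first morphism picks up the factor $x$ and the third the factor $x^{-1}$.

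\textbf{Building ${}^{v}\!\sfa_{F,M}$.} The key input is now Proposition \ref{202101131644}(2), which states exactly that the leftmost square of the diagram commutes, i.e. ${}^{v}\!\ttheta_{F(M)} = \bigl(x F({}^{v}\!\ttheta_{M})\bigr)\circ\gamma_{F,M}$, with vertical maps $\gamma_{F,M}$ and $\id_{F(M)}$, both isomorphisms. Applying the triangulated fill-in axiom (TR3) to this commutative square and the two distinguished triangles above yields a morphism ${}^{v}\!\sfa_{F,M}\colon{}^{v}\!\YYM_{1}\lotimes_{\Pa}F(M)\to F({}^{v}\!\YYM_{1}\lotimes_{\Pa}M)$ completing a morphism of triangles; unwinding, this gives ${}^{v}\!\sfa_{F,M}\circ{}^{v}\!\rrho_{F(M)}=F({}^{v}\!\rrho_{M})$ and $(\gamma_{F,M}[1])\circ{}^{v}\!\ppi_{1,F(M)}=\bigl(x^{-1}F({}^{v}\!\ppi_{1,M})\bigr)\circ{}^{v}\!\sfa_{F,M}$, which is exactly the commutativity claimed. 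Finally, since the two outer vertical morphisms $\gamma_{F,M}$ and $\id_{F(M)}$ are isomorphisms, the standard lemma that a morphism of distinguished triangles with two invertible components has invertible third component forces ${}^{v}\!\sfa_{F,M}$ to be an isomorphism. Equivalently, this argument is just the uniqueness of Auslander--Reiten triangles invoked in the paper: both rows are AR-triangles starting from $F(M)$ by Theorem \ref{semi-universal Auslander-Reiten triangle}(1), and Proposition \ref{202101131644}(2) singles out the isomorphism between them that restricts to $\id_{F(M)}$ and to $\gamma_{F,M}$. The only real obstacle is the bookkeeping of the scalars $x,x^{-1}$ and of the identifications ${}^{v}\!\YYM_{1}\lotimes F(M)\cong\cone({}^{v}\!\ttheta_{F(M)})$ and $F(\cone({}^{v}\!\ttheta_{M}))\cong\cone(F({}^{v}\!\ttheta_{M}))$; everything beyond that is formal.
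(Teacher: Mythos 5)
Your argument is correct and is essentially the paper's own: both rows are the (rescaled) cone triangles of ${}^{v}\!\ttheta_{F(M)}$ and $F({}^{v}\!\ttheta_{M})$, Proposition \ref{202101131644}(2) gives commutativity of the left square, and the fill-in axiom together with the two-out-of-three lemma (equivalently, uniqueness of AR-triangles, which is what the paper invokes) produces the isomorphism ${}^{v}\!\sfa_{F,M}$; your use of \eqref{202102222229} to track the scalars $x, x^{-1}$ is exactly the intended bookkeeping. Your reduction to $F = T\lotimes_{\Pa}-$ matches the paper's implicit reliance on Proposition \ref{202101131644}, which is stated for standard equivalences, so nothing further is needed.
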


We remark that the morphism ${}^{v}\!\sfa_{F, M}$ is not uniquely determined, but it is determined modulo $\rad$.

  \subsection{Right modules and dualities}\label{Remarks on the right versions}

  Applying the same argument for $A^{\op}$ we can obtain  right versions of above results. 
 In this subsection, we discuss relationships between the previous results and their right versions 
 through dualities. 
 For this purpose first we fix notations and terminologies for right modules.

\subsubsection{The weighted Euler characteristic for right modules}

 Let $N \in \Dbmod{\Pa^{\op}}$.  
  We set \[
  \Euvect(N) : = (\Euch(N e_{1} ), \ldots, \Euch(N e_{r}))\]
   and regard it as a row vector. 
    The weighted Euler characteristic for   $v \in \kk Q_{0}$ is defined by the formula 
  \[
  {}^{v}\!\Euch(N) := \Euvect(N) v = \sum_{i \in Q_{0} }v_{i} \Euch(N e_{i}). 
  \]

  \subsubsection{The properties (I) and  (I') for right modules } 
 
The $\kk$-duality $\tuD(-)$ descents to the transpose of dimension vectors i.e, $\Euvect(\tuD(M)) = \Euvect(M)^{t}$. 
It follows that  ${}^{v}\!\Euch(\tuD(M)) = {}^{v}\!\Euch(M)$. Since the functor $\tuD(-): \Pa\mod \to \Pa^{\op}\mod$ gives a contravariant equivalence of categories 
which sends the preprojective modules to the preinjective modules and the preinjective modules to the preprojective modules, we obtain the following lemma.

\begin{lemma}\label{202103021753}
\begin{enumerate}[(1)]
\item 
An element  $v \in \kk Q_{0}$  is regular (for left modules)
if and only if it satisfies regularity for right modules, 
i.e., 
we have ${}^{v}\!\Euch(N) \neq 0$ in $\kk$ for all 
$N \in \ind \kk Q^{\op}$. 
\item 
An element  $v \in \kk Q_{0}$ is semi-regular (for left modules) 
if and only if it satisfies semi-regularity  for right modules, 
i.e., 
if we have ${}^{v}\!\Euch(N) \neq 0$ in $\kk$ for all 
prepeojective  $\Pa^{\op}$-modules $N$ and preinjective $\Pa^{\op}$ modules $N$.  
\end{enumerate}
\end{lemma}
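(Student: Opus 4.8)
The plan is to deduce both equivalences from a single compatibility between the $\kk$-duality functor $\tuD = \Hom_{\kk}(-,\kk)$ and the weighted Euler characteristic. Recall that $\tuD$ is a contravariant equivalence $\Pa\mod \to \Pa^{\op}\mod$, hence a contravariant equivalence $\Dbmod{\Pa} \to \Dbmod{\Pa^{\op}}$. The first step is to record the elementary fact that for $M \in \Pa\mod$ and $i \in Q_{0}$ one has $(\tuD M)e_{i} = \Hom_{\kk}(e_{i}M,\kk)$, so $\dim (\tuD M)e_{i} = \dim e_{i}M$; passing to alternating sums over cohomology this extends to $M \in \Dbmod{\Pa}$, which is exactly the statement that the row vector $\Euvect(\tuD M)$ has the same entries as the column vector $\Euvect(M)$. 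Comparing the two conventions ${}^{v}\!\Euch(M) = v^{t}\Euvect(M)$ for left modules and ${}^{v}\!\Euch(N) = \Euvect(N)\,v$ for right modules, this gives at once ${}^{v}\!\Euch(\tuD M) = {}^{v}\!\Euch(M)$ for every $M \in \Dbmod{\Pa}$.

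The second step is to recall the structural behaviour of $\tuD$ on indecomposables: it induces a bijection $\ind \kk Q \to \ind \kk Q^{\op}$ on isomorphism classes, and, since it sends indecomposable projectives to indecomposable injectives and is compatible with the Auslander--Reiten translations, it interchanges the preprojective component and the preinjective component. In particular $\tuD$ restricts to a bijection between the set of indecomposable preprojective or preinjective $\Pa$-modules and the set of indecomposable preprojective or preinjective $\Pa^{\op}$-modules.

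With these two ingredients the lemma is essentially immediate. For (1): by definition $v$ is regular (for left modules) precisely when ${}^{v}\!\Euch(M)\neq 0$ in $\kk$ for all $M \in \ind \kk Q$; transporting along the bijection $M \mapsto \tuD M$ and using ${}^{v}\!\Euch(\tuD M) = {}^{v}\!\Euch(M)$, this is equivalent to ${}^{v}\!\Euch(N)\neq 0$ in $\kk$ for all $N \in \ind \kk Q^{\op}$, which is exactly regularity for right modules. Part (2) is the same argument, with $\ind \kk Q$ and $\ind \kk Q^{\op}$ replaced by the respective sets of indecomposable preprojective and preinjective modules, again using that $\tuD$ restricts to a bijection between these two sets (equivalently, via $\cU_{\Pa}$ and $\cU_{\Pa^{\op}}$, using that $\tuD$ intertwines $\nu_{1}$ on the two sides up to a shift).

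I do not expect a genuine obstacle here; the only points that need a little care are the bookkeeping between the column dimension vector used for left modules and the row dimension vector used for right modules—so that the identity $\Euvect(\tuD M) = \Euvect(M)^{t}$ is applied consistently with the two sign-free definitions of ${}^{v}\!\Euch$—and the (already invoked) standard fact that $\tuD$ exchanges the preprojective and preinjective components, which I would justify by $\tuD$ sending projectives to injectives together with its compatibility with the AR-translations.
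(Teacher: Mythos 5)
Your proposal is correct and is essentially identical to the paper's argument: the paper also observes that $\tuD$ transposes dimension vectors, hence preserves ${}^{v}\!\Euch$, and that it is a contravariant equivalence exchanging the preprojective and preinjective modules, from which both parts follow. No issues.
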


  \subsubsection{The Coxeter matrix $\Phi_{\textup{right}}$ for right modules}
 We remark that the Coxeter matrix $\Phi_{\textup{right}}$ for right modules is different from that for left modules which we wrote $\Phi$. 
 Later we deal with left modules and right modules at the same time, for this we need to know  relationship between $\Phi$ and $\Phi_{\textup{right}}$.

  Let $C$ be the Cartan matrix of $\Pa = \kk Q$. 
  Then the Coxeter matrix for right modules is given as 
   $\Phi_{\textup{right}}: = - C^{-1} C^{t}$. Then, we have the equality below for all $N \in \Dbmod{\Pa^{\op}}$. 
  \[
  \Euvect(\nu_{1}(N)) = \Euvect(N) \Phi_{\textup{right}}. 
  \]
Observe that we have $\Psi= \Phi^{-t} = - C^{-1}C^{t} = \Phi_{\textup{right}}$. 
Consequently we have
\[
\begin{split}
&\Euvect(N \lotimes_{\Pa} \PPi_{1} ) = \Euvect(N) \Phi_{\textup{right}}^{-1} =\Euvect(N)\Psi^{-1}, \\  {}^{v}& \Euvect(N \lotimes_{\Pa} \PPi_{1}) = {}^{\Psi^{-1}(v)}\Euvect(N).
\end{split}
\]

In particular we deduce the following assertion. 
\begin{lemma}\label{202103021744}
Assume that $v \in \kk Q_{0}$ is an eigenvector of $\Psi$ with the eigenvalue $\lambda$. 
Then for $N \in \Dbmod{\Pa^{\op}}$, we have 
\[
{}^{v}\!\Euch(N \lotimes_{\Pa} \PPi_{1}) = \frac{1}{\lambda} {}^{v}\!\Euch(N). 
\]
\end{lemma}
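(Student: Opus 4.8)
The plan is to deduce the identity directly from the second formula displayed just before the statement, namely
\[
{}^{v}\!\Euch(N \lotimes_{\Pa} \PPi_{1}) = {}^{\Psi^{-1}(v)}\Euch(N),
\]
combined with the hypothesis that $v$ is an eigenvector of $\Psi$. First I would observe that $\Psi = \Phi^{-t}$ is invertible, being built from the Coxeter matrix, so the eigenvalue $\lambda$ attached to the eigenvector $v$ is necessarily nonzero and $\lambda^{-1} \in \kk$ is well-defined. From $\Psi v = \lambda v$ we then get $\Psi^{-1}(v) = \lambda^{-1} v$.

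Next I would use that the assignment $w \mapsto {}^{w}\!\Euch(N)$ is $\kk$-linear in the weight $w \in \kk Q_{0}$: for right modules it is given by ${}^{w}\!\Euch(N) = \Euvect(N)\, w$, the product of the row vector $\Euvect(N)$ with the column vector $w$. Hence
\[
{}^{\Psi^{-1}(v)}\Euch(N) = \Euvect(N)\,(\lambda^{-1} v) = \lambda^{-1}\,\Euvect(N)\, v = \frac{1}{\lambda}\,{}^{v}\!\Euch(N).
\]
Substituting this into the displayed formula gives ${}^{v}\!\Euch(N \lotimes_{\Pa} \PPi_{1}) = \frac{1}{\lambda}\,{}^{v}\!\Euch(N)$, which is the assertion.

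There is no genuine obstacle here: the proof is a short computation chaining the previously recorded identity ${}^{v}\!\Euch(N \lotimes_{\Pa} \PPi_{1}) = {}^{\Psi^{-1}(v)}\Euch(N)$ with the relation $\Psi^{-1}(v) = \lambda^{-1} v$. The only points worth making explicit are the non-vanishing of $\lambda$ (immediate from invertibility of the Coxeter matrix) and the linearity of the weighted Euler characteristic in the weight variable, both of which are built into the definitions.
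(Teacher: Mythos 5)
Your argument is correct and is exactly the deduction the paper intends: the lemma is stated as an immediate consequence of the displayed identity ${}^{v}\!\Euch(N \lotimes_{\Pa} \PPi_{1}) = {}^{\Psi^{-1}(v)}\Euch(N)$, and you complete it precisely as the authors do, via $\Psi^{-1}(v)=\lambda^{-1}v$ (with $\lambda\neq 0$ since $\Psi$ is invertible) and linearity of the weighted Euler characteristic in the weight.
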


  \subsubsection{}

  We give  relationships of the weighted Euler characteristic ${}^{v}\!\Euch(M)$ for $M \in \Dbmod{\Pa}$ and that of $\Pa$-dual 
  $M^{\lvvee} := \RHom_{\Pa}(M, \Pa) $ of $M$, which is an object of $\Dbmod{\Pa^{\op}}$. 
 
 We have an isomorphism $M^{\lvvee} \cong \tuD(\nu_{1}(M))[-1]$. 
  Consequently we have 
\begin{equation}\label{202104021807} 
\begin{split} 
&\Euvect(M^{\lvvee} ) = (-\Phi \Euvect(M))^{t}, \ \ 
 {}^{v}\!\Euch(M^{\lvvee}) =- {}^{v}\!\Euch(\nu_{1}(M)) = - {}^{\Psi^{-1}(v)}\Euch(M). 
  \end{split}
  \end{equation}

\subsubsection{Duality}

\begin{proposition}\label{202103051610}
Let $N \in \ind \Dbmod{\Pa^{\op}}$. 
Assume that ${}^{v}\!\Euch(N^{\rvvee} )\neq 0, {}^{v}\!\Euch( \tuD(\Pa) [-1] \lotimes_{\Pa} N^{\rvvee} ) \neq 0$. 
Then,  
there exists an isomorphism 
\[
{}^{v}\!\sfb_{N}: \RHom_{A^{\op}}(N \lotimes_{A} {}^{v}\!\YYM_{1}, \PPi_{1}) \to {}^{v}\!\YYM_{1} \lotimes_{A} N^{\rvvee}
\] 
that completes the following commutative diagram 
\begin{equation}\label{202103051621}
\begin{xymatrix}{
 (N[1], \PPi_{1}) \ar[r] \ar[d]_{\cong}  &
  (N\lotimes_{A} \PPi_{1}, \PPi_{1}) \ar[d]_{\cong} \ar[r] &
  (N\lotimes_{A} {}^{v}\!\YYM_{1}, \PPi_{1}) \ar[d]_{\cong}^{{}^{v}\!\sfb_{N}} \ar[r] &
  (N[1], \PPi_{1})[1] \ar[d]_{\cong} \\
A^{\vvee} \lotimes_{A} N^{\rvvee} 
\ar[r]_{  -y({}^{v}\!\ttheta_{N^{\rvvee} })} 
&  N^{\rvvee} \ar[r]_{{}^{v}\!\varrho_{N^{\rvvee}}} 
& {}^{v}\!\YYM_{1} \lotimes_{A} N^{\rvvee} \ar[r]_{-y^{-1}{}^{v}\!\ppi_{1, N^{\rvvee}}}
& \PPi_{1} \lotimes_{A} N^{\rvvee}
}\end{xymatrix} 
\end{equation}
where we use the  abbreviation  $(-,+) =\RHom_{\Pa^{\op}}(-, +)$, 
the top row is the exact triangle $\RHom( {}_{N}{}^{v}\!\sfAR, \PPi_{1})$ 
and we set
\[
y := \frac{{}^{v}\!\Euch(\tuD(A)[-1] \lotimes_{A} N^{\rvvee})}{{}^{v}\!\Euch(N^{\rvvee})}.
\]
\end{proposition}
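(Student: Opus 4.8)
The plan is to deduce Proposition~\ref{202103051610} from the left-module theory already established, via $\Pa$-duality, following essentially the same pattern as Corollary~\ref{202102071627} but transported through the contravariant equivalence $(-)^{\rvvee} = \RHom_{\Pa^{\op}}(-,\Pa) : \Dbmod{\Pa^{\op}} \to \Dbmod{\Pa}$. Write $M := N^{\rvvee} \in \ind\Dbmod{\Pa}$, so that $N \cong M^{\lvvee}$. The running hypotheses say ${}^{v}\!\Euch(M) \neq 0$ and, using the isomorphism $\tuD(\Pa)[-1]\lotimes_{\Pa} M \cong \nu_1(M)$, also ${}^{v}\!\Euch(\nu_1(M)) \neq 0$. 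Since $M$ is indecomposable in $\Dbmod{\Pa}$, it is a shift of an indecomposable $\kk Q$-module, hence $\dim\ResEnd_{\Pa}(M)=1$; therefore Theorem~\ref{semi-universal Auslander-Reiten triangle} applies both to $M$ and to $\nu_1(M)$, and Theorem~\ref{universal Auslander-Reiten triangle}/Corollary~\ref{202102071627} are available.

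First I would identify the top row of \eqref{202103051621}. Applying the exact (contravariant) functor $\RHom_{\Pa^{\op}}(-,\PPi_1)$ to the triangle ${}_{N}{}^{v}\!\sfAR = N \lotimes_{\Pa} {}^{v}\!\sfAR$, which reads
\[
N \xrightarrow{\ {}_{N}{}^{v}\!\rrho\ } N\lotimes_{\Pa}{}^{v}\!\YYM_1 \xrightarrow{\ {}_{N}{}^{v}\!\ppi_1\ } N\lotimes_{\Pa}\PPi_1 \xrightarrow{} N[1],
\]
produces an exact triangle of $\Pa$-modules whose three nontrivial terms are $\RHom_{\Pa^{\op}}(N\lotimes_{\Pa}\PPi_1,\PPi_1)$, $\RHom_{\Pa^{\op}}(N\lotimes_{\Pa}{}^{v}\!\YYM_1,\PPi_1)$ and $\RHom_{\Pa^{\op}}(N,\PPi_1)$. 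Now I use the standard adjunction/duality identifications: $\RHom_{\Pa^{\op}}(N\lotimes_{\Pa}X,\PPi_1) \cong \RHom_{\Pa^{\op}}(N,\PPi_1)\lotimes$ — more precisely, since $\PPi_1 = \Pa^{\vvee}[1] = \PPa^{\vee}[1]$ is a perfect complex of bimodules representing $\sfS^{-1}$, one has natural isomorphisms $\RHom_{\Pa^{\op}}(N\lotimes_{\Pa}\PPi_1,\PPi_1)\cong N^{\rvvee}$ and $\RHom_{\Pa^{\op}}(N,\PPi_1) \cong \PPi_1\lotimes_{\Pa} N^{\rvvee}$ (and $\RHom_{\Pa^{\op}}(N[1],\PPi_1) \cong (\PPi_1\lotimes_{\Pa}N^{\rvvee})[-1] = \Pa^{\vvee}\lotimes_{\Pa}N^{\rvvee}$). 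These are the isomorphisms labelled in the left column of \eqref{202103051621}, and they should be assembled from the natural isomorphisms of complexes promised in Section~\ref{section: natural isomorphisms} together with the Calabi–Yau duality $(\Pa^{\vvee})^{\vvee}\cong\Pa$. This turns the dualized triangle into an exact triangle with terms $N^{\rvvee} = M$, $\RHom_{\Pa^{\op}}(N\lotimes_{\Pa}{}^{v}\!\YYM_1,\PPi_1)$, $\PPi_1\lotimes_{\Pa}M$, and connecting map back to $\Pa^{\vvee}\lotimes_{\Pa}M$.

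Next I would recognize this dualized triangle, up to the scalar $y$, as the universal AR triangle ${}^{v}\!\sfAR_{M}$ for $M=N^{\rvvee}$. The map $N\lotimes_{\Pa}{}^{v}\!\ppi_1$, after dualizing and applying the identifications, becomes a morphism $M \to \RHom_{\Pa^{\op}}(N\lotimes_{\Pa}{}^{v}\!\YYM_1,\PPi_1)$; the claim is that it agrees, up to an invertible scalar and a factor through ${}^{v}\!\sfb_N$, with ${}^{v}\!\varrho_{M} = {}^{v}\!\rrho_M : M \to {}^{v}\!\YYM_1\lotimes_{\Pa}M$. Likewise the dual of $N\lotimes_{\Pa}{}^{v}\!\ppi_1$ — i.e. the second map in the dualized triangle — becomes (a scalar times) ${}^{v}\!\ppi_{1,M}$, and the dual of the connecting map becomes (a scalar times) ${}^{v}\!\ttheta_M$. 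The value of the scalar is pinned down exactly as in Proposition~\ref{202101131644}: duality sends $\sfS^{-1} = \Pa^{\vvee}\lotimes_{\Pa}-$ to its quasi-inverse (a Serre functor), and under such an exchange the morphism ${}^{v}\!\ttheta$ transforms according to the Coxeter action exactly as in Theorem~\ref{202101081603}, producing the ratio of weighted Euler characteristics. Concretely, using \eqref{202104021807} one computes ${}^{v}\!\Euch(M^{\lvvee}) = -\,{}^{\Psi^{-1}(v)}\Euch(M)$ and, by Lemma~\ref{202103021744}-type bookkeeping via $\Psi = \Phi_{\textup{right}}$, that the relevant scalar is precisely
\[
y = \frac{{}^{v}\!\Euch(\tuD(\Pa)[-1]\lotimes_{\Pa} N^{\rvvee})}{{}^{v}\!\Euch(N^{\rvvee})} = \frac{{}^{v}\!\Euch(\nu_1(M))}{{}^{v}\!\Euch(M)},
\]
which is nonzero by hypothesis, so the triangles are genuinely isomorphic. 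Finally, since both triangles have the same outer terms and the same (up to the isomorphisms and the scalar $y$) outer morphisms, the axiom TR3 for triangulated categories produces a fill-in isomorphism on the middle terms; this is the desired ${}^{v}\!\sfb_N : \RHom_{\Pa^{\op}}(N\lotimes_{\Pa}{}^{v}\!\YYM_1,\PPi_1) \xrightarrow{\ \cong\ } {}^{v}\!\YYM_1\lotimes_{\Pa}N^{\rvvee}$ making \eqref{202103051621} commute (with the signs $-y$ and $-y^{-1}$ as displayed, coming from the sign of the co-connecting morphism $-{}^{v}\!\ttheta_M[1]$ in the definition of ${}^{v}\!\sfAR$ and from the usual sign in $\RHom(-,+)$ of a triangle). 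As in Corollary~\ref{202102071627}, ${}^{v}\!\sfb_N$ is not unique but is determined modulo $\rad$, which is all that is needed.

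The main obstacle I expect is bookkeeping rather than conceptual: getting all the natural isomorphisms of complexes in the left column of \eqref{202103051621} to be genuinely natural and mutually compatible (so that the squares commute on the nose, not just up to unspecified automorphism), and tracking the signs introduced by the shift $[1]$ in $\PPi_1 = \Pa^{\vvee}[1]$, by the contravariance of $\RHom_{\Pa^{\op}}(-,\PPi_1)$, and by the co-connecting morphism $-{}^{v}\!\ttheta[1]$. This is exactly the kind of computation that Section~\ref{section: natural isomorphisms} is designed to handle, so I would lean on those lemmas and on the trace formula (Theorem~\ref{trace formula}) — which computes the Serre pairing $\isagl{f,{}^{v}\!\ttheta_M}$ as ${}^{v}\!\Tr(f)$ — to fix the scalar unambiguously, much as Theorem~\ref{202101081603} and Proposition~\ref{202101131644} do in the left-module case.
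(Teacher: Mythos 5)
Your proposal is correct and follows essentially the same route as the paper: dualize ${}_{N}{}^{v}\!\sfAR$ against $\PPi_{1}$, identify the outer terms via the natural isomorphisms of Section \ref{section: natural isomorphisms} (equivalently $\RHom_{\Pa^{\op}}(-,\PPi_1)\cong\PPi_1\lotimes_{\Pa}(-)^{\rvvee}$), identify the dualized co-connecting morphism with $-y\,{}^{v}\!\ttheta_{N^{\rvvee}}$ via Proposition \ref{202101131644} applied to $\nu_1=\tuD(\Pa)[-1]\lotimes_{\Pa}-$, and then obtain ${}^{v}\!\sfb_{N}$ by the fill-in axiom. The only ingredient you leave implicit is the self-duality $({}^{v}\!\ttheta)^{\vvee}={}^{v}\!\ttheta$ of \eqref{202105111259}, which the paper uses explicitly in its chain of squares before invoking Proposition \ref{202101131644}.
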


\begin{proof}
First we claim that we have the following commutative diagram 
\begin{equation}\label{202105231814}
\begin{xymatrix}@C=70pt{
(N[1])^{\rvvee} \ar[d]_{\cong}\ar[r]^{(-{}_{N}\!\! {}^{v}\! \ttheta [1])^{\rvvee}} 
&  (N \lotimes \PPi_{1})^{\rvvee}\ar[d]_{\cong}\\ 
\tuD(A)[-1] \lotimes_{A}  A^{\vvee} \lotimes_{A} N^{\rvvee} 
\ar[r]^{  -y({}_{\tuD(A)[-1]}\!\!{}^{v}\!\ttheta_{N^{\rvvee} })} 
& \tuD(A) [-1]  \lotimes_{A} N^{\rvvee} 
}\end{xymatrix}
\end{equation} 
Indeed we obtain this diagram in the following way:  
\[
\begin{xymatrix}@C=70pt{
(N[1])^{\rvvee} \ar[d]_{\cong}\ar[r]^{(-{}_{N}\!\!{}^{v}\! \ttheta [1])^{\rvvee}} 
&  (N \lotimes \PPi_{1})^{\rvvee}\ar[d]_{\cong}\\ 
(A[1])^{\vvee} \lotimes_{A} \tuD(A) \lotimes_{A} N^{\rvvee} 
\ar[d]_{\cong}\ar[r]^{ (-{}^{v}\!\ttheta [1])^{\vvee}_{\tuD(A) \lotimes N^{\rvvee} }} 
&  (A^{\vvee} [1])^{\vvee} \lotimes_{A} \tuD(A) \lotimes_{A} N^{\rvvee} \ar[d]_{\cong}\\ 
A^{\vvee} \lotimes_{A} \tuD(A)[-1] \lotimes_{A} N^{\rvvee} 
\ar[d]_{\cong}\ar[r]^{ (-{}^{v}\!\ttheta)^{\vvee}_{\tuD(A)[-1]  \lotimes N^{\rvvee} }} 
&  (A^{\vvee} )^{\vvee} \lotimes_{A} \tuD(A) [-1]  \lotimes_{A} N^{\rvvee} \ar[d]_{\cong}\\ 
A^{\vvee} \lotimes_{A} \tuD(A)[-1] \lotimes_{A} N^{\rvvee} 
\ar[d]_{\cong}^{\gamma_{\tuD(A)[-1]} } \ar[r]^{ -{}^{v}\!\ttheta_{\tuD(A)[-1]  \lotimes N^{\rvvee} }} 
& \tuD(A) [-1]  \lotimes_{A} N^{\rvvee} \ar@{=}[d]\\ 
\tuD(A)[-1] \lotimes_{A}  A^{\vvee} \lotimes_{A} N^{\rvvee} 
\ar[r]^{  -y({}_{\tuD(A)[-1]}\!\!{}^{v}\!\ttheta_{N^{\rvvee} })} 
& \tuD(A) [-1]  \lotimes_{A} N^{\rvvee} 
}\end{xymatrix}
\]
where for the commutativity of the first square 
we use a  canonical isomorphism   
\[
(L \lotimes_{A} X)^{\rvvee} 
\cong X^{\rvvee} \lotimes_{A} L^{\rvvee} \cong X^{\vvee} \lotimes_{A} \tuD(A) \lotimes L^{\rvvee}. 
\]
which is natural in $L \in \sfD(A^{\op})$ and $X \in \sfD(A^{\mre})$ 
(see Section \ref{section: natural isomorphisms}). 
For the third square we use \eqref{202105111259}.  
Finally the commutativity of the fourth  square follows from Proposition \ref{202101131644}.

We apply $\PPi_{1}\lotimes_{\Pa} - $ to \eqref{202105231814} and use
  canonical isomorphisms 
\[
\RHom_{A^{\op}}(- ,\PPi_{1} ) \cong \PPi_{1} \lotimes_{A} (-)^{\rvvee}, \ 
\PPi_{1} \lotimes_{\Pa} \tuD(A) [-1] \cong \Pa,  
\]
then we obtain a commutative square that appeared in the  diagram \eqref{202103051621}.  
\end{proof}

We remark that $\sfb_{N}$ is not uniquely determined, but it is unique modulo $\rad$. 
We point out that $\sfb_{N}$ has functoriality modulo $\rad$.

\subsection{Bimodules version}

In this subsection, we establish isomorphisms over $\Pa^{\mre}$ that involve ${}^{v}\!\YYM_{1}$. 
The results of this sections are  used in Section \ref{section: the universal ladder} and the reader can postpone them until then. 

\subsubsection{} 
Combining Lemma \ref{202101131703} and Theorem \ref{202101081603} we deduce the following result.

\begin{proposition}\label{2021010816031}
Let $v \in \kk Q_{0}$ and $T$ be a two-sided tilting complex over $\Pa$. 
Then, 
there exists an isomorphism 
\[
{}^{v}\!\sfc_{ T}:  {}^{v}\!\YYM_{1} \lotimes_{\Pa} T \to T  \lotimes_{\Pa} {}^{\underline{T}^{t}(v)}\!\YYM_{1}
\]
in $\sfD(\Pa^{\mre})$ that gives the following isomorphism of exact triangles:
\[
\begin{xymatrix}@C=60pt{
\Pa^{\vvee} \lotimes_{\Pa} T \ar[r]^{{}^{v}\!\ttheta_{T} }  \ar[d]_{\gamma_{T}} 
& 
T   \ar[r]^{{}^{v}\!\rrho_{T}} \ar@{=}[d]  &
{}^{v}\!\YYM_{1} \lotimes_{A}T \ar[d]^{{}^{v}\!\sfc_{T} } \ar[r]^{{}^{v}\!\ppi_{1, T} } &
\PPi_{1} \lotimes_{\Pa} T  \ar[d]^{\tilde{\gamma}_{T}}  \\
T \lotimes_{\Pa} \Pa^{\vvee} \ar[r]_{ {}_{T}  {}^{\underline{T}^{t}(v)} \ttheta } & 
T  \ar[r]_{  {}_{T} ({}^{\underline{T}^{t}(v)} \rrho) }  &
T \lotimes_{\Pa}  {}^{\underline{T}^{t}(v)} \!\YYM_{1} \ar[r]_{ {}_{T} ({}^{\underline{T}^{t}(v)}\ppi_{1}) } &
T \lotimes_{\Pa} \PPi_{1}.   \\
}\end{xymatrix}
\]
where the right vertical morphism $\tilde{\gamma}_{T}$ is the canonical isomorphism 
interchanging $T$ and $\PPi_{1} = \Sigma \Pa^{\vvee}$. 
Here we  use $\Sigma$ to denote the shift functor instead of $[1]$. 
Moreover, $\tilde{\gamma}_{T}$ is given by the composition of the following canonical morphisms:
\[
\tilde{\gamma}_{T} : 
\PPi_{1} \lotimes_{\Pa} T 
= \Sigma \Pa^{\vvee} \lotimes_{\Pa} T 
\xrightarrow{\ \Sigma(\gamma_{T})\ } 
\Sigma T \lotimes_{\Pa} \Pa^{\vvee} 
\xrightarrow{\ (\sigma^{-1}_{T})_{\Pa^{\vvee}}\ } 
T \lotimes_{\Pa} \Sigma \Pa^{\vvee} 
= T \lotimes_{\Pa} \PPi_{1}.
\]
where $\sigma_{T}$ denotes the canonical isomorphism 
$T \lotimes_{\Pa} \Sigma \Pa \to \Sigma T$ 
(see Section~\ref{section: Happel's criterion}).
\end{proposition}

In the case where the weight $v \in \kk Q_{0}$ is an eigenvector of $\underline{T}^{t}$ we obtain the following corollary 
by using the isomorphism \eqref{202102222229}.

\begin{corollary}\label{2021010816032}
Let $v \in \kk Q_{0}$ and $T$ be a two-sided tilting complex over $\Pa$. 
Assume that $v$ is an eigenvector of  $\underline{T}^{t}$ with the eigenvalue $\lambda$. 
Then, 
there exists an isomorphism 
\[{}^{v}\!\sfc'_{T}: {}^{v}\!\YYM_{1} \lotimes_{\Pa} T \to T \lotimes_{\Pa} {}^{v}\!\YYM_{1}
\] in $\sfD(\Pa^{\mre})$ 
that gives the following isomorphism of exact triangles:
\[
\begin{xymatrix}@C=60pt{
\Pa^{\vvee} \lotimes_{\Pa} T \ar[r]^{{}^{v}\!\ttheta_{T} }  \ar[d]_{\gamma_{T}} 
& 
T   \ar[r]^{{}^{v}\!\rrho_{T}} \ar@{=}[d]  &
{}^{v}\!\YYM_{1} \lotimes_{A}T \ar[d]^{{}^{v}\!\sfc'_{T} } \ar[r]^{{}^{v}\!\ppi_{1, T} } &
\PPi_{1} \lotimes_{\Pa} T  \ar[d]^{\tilde{\gamma}_{T}}   \\
T \lotimes_{\Pa} \Pa^{\vvee} \ar[r]_{  \lambda {}_{T}  {}^{v}\!\ttheta } & 
T  \ar[r]_{  {}_{T} {}^{v}\!\rrho }  &
T \lotimes_{\Pa}  {}^{v}\!\YYM_{1} \ar[r]_{ \lambda^{-1} {}_{T}{}^{v}\!\ppi_{1}} &
T \lotimes_{\Pa} \PPi_{1}   \\
}\end{xymatrix}
\]
\end{corollary}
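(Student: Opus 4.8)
The plan is to deduce Corollary \ref{2021010816032} directly from Proposition \ref{2021010816031} by exploiting the eigenvector hypothesis. Since $v$ is an eigenvector of $\underline{T}^{t}$ with eigenvalue $\lambda$, we have $\underline{T}^{t}(v) = \lambda v$, so the codomain $T \lotimes_{\Pa} {}^{\underline{T}^{t}(v)}\!\YYM_{1}$ appearing in Proposition \ref{2021010816031} is literally $T \lotimes_{\Pa} {}^{\lambda v}\!\YYM_{1}$. Thus the first step is to reduce the statement to producing a canonical identification between ${}^{\lambda v}\!\YYM_{1}$ and ${}^{v}\!\YYM_{1}$ that is compatible with the structure morphisms ${}^{\lambda v}\!\ttheta$, ${}^{\lambda v}\!\rrho$, ${}^{\lambda v}\!\ppi_{1}$ up to the scalar $\lambda$.

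Next I would invoke the defining relation ${}^{v}\!\ttheta = \sum_{i} v_{i}\tilde e_{i}$ from Definition \ref{202102061701}, which is $\kk$-linear in $v$; hence ${}^{\lambda v}\!\ttheta = \lambda\, {}^{v}\!\ttheta$ as morphisms $\Pa^{\vvee} \to \Pa$ in $\Dbmod{\Pa^{\mre}}$. Since ${}^{v}\!\YYM_{1} := \cone({}^{v}\!\ttheta)$ and ${}^{\lambda v}\!\YYM_{1} := \cone({}^{\lambda v}\!\ttheta) = \cone(\lambda\, {}^{v}\!\ttheta)$, the isomorphism of exact triangles \eqref{202102222229} (with $f = {}^{v}\!\ttheta$ and $a = \lambda$) gives a canonical isomorphism $\cone(\lambda\, {}^{v}\!\ttheta) \xrightarrow{\cong} \cone({}^{v}\!\ttheta)$ fitting into a commutative diagram in which $i^{\lambda f}_{1}$ is carried to $i^{f}_{1}$ and $p^{\lambda f}_{2}$ is carried to $\lambda^{-1} p^{f}_{2}$. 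Translating back, this means ${}^{\lambda v}\!\rrho$ corresponds to ${}^{v}\!\rrho$ and ${}^{\lambda v}\!\ppi_{1}$ corresponds to $\lambda^{-1}\, {}^{v}\!\ppi_{1}$ under this identification of cones (keeping in mind that $\rrho$ is the inclusion of the target of $\ttheta$ into its cone, up to the shift conventions). Tensoring this diagram over $\Pa$ on the left with $T$ (which is exact, as $T$ is a two-sided tilting complex) gives an isomorphism of exact triangles relating $T \lotimes_{\Pa} {}^{\lambda v}\!\YYM_{1}$ and $T \lotimes_{\Pa} {}^{v}\!\YYM_{1}$.

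The final step is to compose: define ${}^{v}\!\sfc'_{T}$ as the composite of ${}^{v}\!\sfc_{T} \colon {}^{v}\!\YYM_{1}\lotimes_{\Pa} T \to T\lotimes_{\Pa}{}^{\lambda v}\!\YYM_{1}$ from Proposition \ref{2021010816031} with the canonical isomorphism $T\lotimes_{\Pa}{}^{\lambda v}\!\YYM_{1}\to T\lotimes_{\Pa}{}^{v}\!\YYM_{1}$ just constructed. Chasing the two commutative diagrams of exact triangles and using ${}_{T}({}^{\lambda v}\!\ttheta) = \lambda\, {}_{T}({}^{v}\!\ttheta)$, ${}_{T}({}^{\lambda v}\!\rrho) \leftrightarrow {}_{T}({}^{v}\!\rrho)$, and ${}_{T}({}^{\lambda v}\!\ppi_{1}) \leftrightarrow \lambda^{-1}\, {}_{T}({}^{v}\!\ppi_{1})$ yields exactly the asserted diagram, with the bottom row having $\lambda\, {}_{T}{}^{v}\!\ttheta$, ${}_{T}{}^{v}\!\rrho$, and $\lambda^{-1}\,{}_{T}{}^{v}\!\ppi_{1}$ as claimed. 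The main point requiring care — though it is bookkeeping rather than a genuine obstacle — is tracking exactly where the scalar $\lambda$ and its inverse land among the three structure morphisms; the isomorphism \eqref{202102222229} is precisely the tool that dictates this, placing $\lambda$ on the $\ttheta$ side and $\lambda^{-1}$ on the $\ppi_{1}$ side while leaving $\rrho$ untouched, which is consistent with the composability constraint that the product of the three scalar factors around the triangle be $1$.
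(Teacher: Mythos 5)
Your proposal is correct and coincides with the paper's own (briefly indicated) argument: the corollary is deduced from Proposition \ref{2021010816031} by noting that $\underline{T}^{t}(v)=\lambda v$ gives ${}^{\lambda v}\!\ttheta=\lambda\,{}^{v}\!\ttheta$ and then applying the rescaling isomorphism \eqref{202102222229} to identify $\cone(\lambda\,{}^{v}\!\ttheta)$ with $\cone({}^{v}\!\ttheta)$, which places the scalars exactly as you describe ($\lambda$ on $\ttheta$, nothing on $\rrho$, $\lambda^{-1}$ on $\ppi_{1}$). Your write-up just fills in the bookkeeping the paper leaves implicit, so there is nothing to correct.
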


\subsubsection{}

We collect the case $T = \PPi_{1}$ of Corollary \ref{2021010816032}, since it plays a key role in Section \ref{section: the universal ladder}.

Let $\Phi $ be the Coxeter matrix of $\kk Q$ for left modules. 
For simplicity we set $\Psi := \Phi^{-t}= \underline{\PPi_{1}}^{t}$.

\begin{corollary}\label{20210108160321}
Assume that $v \in \kk Q_{0}$ is an eigenvector of  $\Psi$ with the eigenvalue $\lambda$. 
Then, 
there exists an isomorphism 
\[{}^{v}\!\sfc'_{\PPi_{1}}: {}^{v}\!\YYM_{1} \lotimes_{\Pa} \PPi_{1} \to \PPi_{1} \lotimes_{\Pa} {}^{v}\!\YYM_{1}
\] in $\sfD(\Pa^{\mre})$ 
that gives the following isomorphism of exact triangles:
\begin{equation}\label{202306211412}
\begin{xymatrix}@C=60pt{
\Pa^{\vvee} \lotimes_{\Pa} \PPi_{1} \ar[r]^{{}^{v}\!\ttheta_{\PPi_{1}} }  \ar[d]_{\gamma_{\PPi_{1}}} 
& 
\PPi_{1}   \ar[r]^{{}^{v}\!\rrho_{\PPi_{1}}} \ar@{=}[d]  &
{}^{v}\!\YYM_{1} \lotimes_{A}\PPi_{1} \ar[d]^{{}^{v}\!\sfc'_{\PPi_{1}} } \ar[r]^{{}^{v}\!\ppi_{1, \PPi_{1}} } &
\PPi_{1} \lotimes_{\Pa} \PPi_{1}  \ar@{=}[d]   \\
\PPi_{1} \lotimes_{\Pa} \Pa^{\vvee} \ar[r]_{  \lambda {}_{\PPi_{1}}  {}^{v}\!\ttheta } & 
\PPi_{1}  \ar[r]_{  {}_{\PPi_{1}} {}^{v}\!\rrho }  &
\PPi_{1} \lotimes_{\Pa}  {}^{v}\!\YYM_{1} \ar[r]_{ \lambda^{-1} {}_{\PPi_{1}}{}^{v}\!\ppi_{1}} &
\PPi_{1} \lotimes_{\Pa} \PPi_{1}   \\
}\end{xymatrix}
\end{equation}
\end{corollary}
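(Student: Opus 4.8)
The plan is to deduce this from Corollary \ref{2021010816032} by specializing to the two-sided tilting complex $T=\PPi_{1}$. Before invoking that corollary, I would check its two hypotheses for this choice of $T$. First, $\PPi_{1}$ is indeed a two-sided tilting complex over $\Pa$: by \cite{Keller:Calabi-Yau} the endofunctor $\PPi_{1}\lotimes_{\Pa}-$ of $\Dbmod{\Pa}$ is the autoequivalence $\nu_{1}^{-1}$, so $\PPi_{1}$ is invertible in $\sfD(\Pa^{\mre})$. Second, the eigenvector condition transfers correctly: since $\underline{\nu_{1}}=\Phi$ we have $\underline{\PPi_{1}}=\underline{\nu_{1}^{-1}}=\Phi^{-1}$, hence $\underline{\PPi_{1}}^{t}=\Phi^{-t}=\Psi$, which is exactly the identification recorded in the statement. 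Thus the assumption that $v$ is an eigenvector of $\Psi$ with eigenvalue $\lambda$ is precisely the hypothesis of Corollary \ref{2021010816032} that $v$ be an eigenvector of $\underline{T}^{t}$ with eigenvalue $\lambda$.

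Having checked this, I would simply substitute $T=\PPi_{1}$ into the commutative diagram furnished by Corollary \ref{2021010816032}. This yields an isomorphism ${}^{v}\!\sfc'_{\PPi_{1}}\colon{}^{v}\!\YYM_{1}\lotimes_{\Pa}\PPi_{1}\to\PPi_{1}\lotimes_{\Pa}{}^{v}\!\YYM_{1}$ in $\sfD(\Pa^{\mre})$, together with a morphism of exact triangles whose top row is ${}^{v}\!\sfAR\lotimes_{\Pa}\PPi_{1}$, whose bottom row is $\PPi_{1}\lotimes_{\Pa}{}^{v}\!\sfAR$ carrying the $\lambda$-twist on the maps, and whose left-hand vertical arrow is $\gamma_{\PPi_{1}}$. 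The remaining point is the right-hand end: the bottom-right object $T\lotimes_{\Pa}\PPi_{1}$ and the top-right object $\PPi_{1}\lotimes_{\Pa}T$ both become $\PPi_{1}\lotimes_{\Pa}\PPi_{1}$, and one must verify that the right-hand vertical isomorphism of Corollary \ref{2021010816032} reduces to the identity of $\PPi_{1}\lotimes_{\Pa}\PPi_{1}$ under this identification; this then produces exactly the diagram \eqref{202306211412}.

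The hard part — really the only nontrivial part — is this last bookkeeping at the right-hand end, i.e.\ tracking how the coherence isomorphism $\gamma_{T}$ that appears implicitly in the right-hand column of the diagram of Corollary \ref{2021010816032} specializes when $T=\PPi_{1}$. Since that column is obtained by applying $-\lotimes_{\Pa}\PPi_{1}$ to the identity of $\PPi_{1}$, up to the canonical shift identifications coming from $\PPi_{1}=\Pa^{\vvee}[1]$, I expect it to become the identity with no residual twist; and if a residual twist did appear one could absorb it into the choice of ${}^{v}\!\sfc'_{\PPi_{1}}$, which is in any case only determined modulo $\rad$. Everything else is a direct substitution into a previously established statement.
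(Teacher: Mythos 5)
Your reduction to Corollary \ref{2021010816032} with $T=\PPi_{1}$, including the check that $\underline{\PPi_{1}}^{t}=\Psi$, is exactly the paper's route, and you correctly isolate the one nontrivial point: that the right-hand vertical arrow becomes the identity of $\PPi_{1}\lotimes_{\Pa}\PPi_{1}$. But at precisely that point you stop proving and start hoping. The right-hand column is not an adjustable piece of data: in the four-term display the last object is the shift of the first, so the right-hand vertical is forced to be $\gamma_{\PPi_{1}}[1]$ transported through the canonical identifications $(\Pa^{\vvee}\lotimes_{\Pa}\PPi_{1})[1]\cong\PPi_{1}\lotimes_{\Pa}\PPi_{1}$ and $(\PPi_{1}\lotimes_{\Pa}\Pa^{\vvee})[1]\cong\PPi_{1}\lotimes_{\Pa}\PPi_{1}$; the second of these involves moving the shift past the functor $\PPi_{1}\lotimes_{\Pa}-$, i.e.\ its exactness isomorphism $\sigma$. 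Whether the resulting composite is the identity is a genuine question about the coherence data of the Serre functor, since $\sigma_{\Sigma}=-\id$ while $\gamma_{\sfS^{-1}}=\id$ (van den Bergh), so a sign could a priori survive. This is exactly what the case $n=2$ of Lemma \ref{202102171838}(1) computes: $\mathsf{comp}^{(2)}=\id$, the two signs cancelling. The paper's proof of the corollary consists of citing precisely this; your proposal omits the computation.

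Moreover, your fallback --- absorbing a residual twist into the choice of ${}^{v}\!\sfc'_{\PPi_{1}}$, ``which is in any case only determined modulo $\rad$'' --- does not work. The right-hand vertical is determined by the left-hand vertical $\gamma_{\PPi_{1}}$ together with the identifications above; it does not change when you change the fill-in ${}^{v}\!\sfc'_{\PPi_{1}}$. If instead you tried to rescale ${}^{v}\!\sfc'_{\PPi_{1}}$ to repair the last square, you would destroy the commutativity of the square ${}^{v}\!\sfc'_{\PPi_{1}}\circ{}^{v}\!\rrho_{\PPi_{1}}={}_{\PPi_{1}}{}^{v}\!\rrho$, whose middle vertical is required to be the identity of $\PPi_{1}$. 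Finally, a hypothetical discrepancy such as $-\id$ (or $\lambda^{\pm 1}\id$) is an invertible, not a radical, perturbation, so the ``modulo $\rad$'' ambiguity cannot absorb it either. The missing ingredient of your argument is the $n=2$ case of Lemma \ref{202102171838}(1).
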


\begin{proof}
The only non-trivial point is to show that $\tilde{\gamma}_{\PPi_{1}} = \id$, 
which follows from Lemma \ref{202102171838}(1) with $n = 2$.
\end{proof}

\subsubsection{The right duality}

Recall that
we set $\Psi := \Phi^{-t}= \underline{\nu_{1}}^{-t}$,
and 
that  $\nu_{1} =\tuD(A)[-1]\lotimes_{\Pa} -$.

\begin{lemma}\label{202102230750} 
There exists an isomorphism 
\[{}^{v}\!\sfd: ({}^{v}\!\YYM_{1})^{\rvvee} \to \tuD(A)[-1]\lotimes_{\Pa} ({}^{\Psi^{-1}(v)} \!\YYM_{1}) \]
in $\sfD(\Pa^{\mre})$  
that completes the  following commutative diagram
\begin{equation}\label{202105232003}
\begin{xymatrix}@C=40pt{
(A[1])^{\rvvee} \ar[d]_{\cong}\ar[r]^{(-{}^{v}\!\ttheta [1])^{\rvvee}} &
\PPi_{1}^{\rvvee}\ar[d]_{\cong}  \ar[r]^{ ({}^{v}\!\ppi_{1})^{\rvvee} } & 
({}^{v}\!\YYM_{1})^{\rvvee} \ar[d]^{{}^{v}\!\sfd} \ar[r]^{ ({}^{v}\!\rrho)^{\rvvee}} & 
A^{\rvvee} \ar[d]^{\cong} \\ 
\tuD(A)[-1] \lotimes_{A}  A^{\vvee}  
\ar[r]_-{  -{}_{\tuD(A)[-1] }{}^{\Psi^{-1}(v)}\ttheta } 
& \tuD(A) [-1]   \ar[r]_-{{}_{\tuD(A)[-1]} {}^{\Psi^{-1}(v)} \rrho} & 
\tuD(A)[-1] \lotimes_{\Pa} ({}^{\Psi^{-1}(v)} \!\YYM_{1} )  \ar[r]_-{- {}_{\tuD(A)[-1] } {}^{\Psi^{-1}(v)}\ppi_{1} }
 & 
 \tuD(A) [-1] \lotimes_{\Pa} \PPi_{1}.
}\end{xymatrix} 
\end{equation}
\end{lemma}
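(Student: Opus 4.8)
The plan is to obtain ${}^{v}\!\sfd$ by right--dualising the triangle ${}^{v}\!\sfAR$ and recognising the outcome, through Proposition \ref{2021010816031}, as a tensor translate of ${}^{v}\!\sfAR$ itself. Set $T := \tuD(\Pa)[-1]$. Since $\Pa = \kk Q$ has finite global dimension, $T$ is a two-sided tilting complex over $\Pa$ (it induces the autoequivalence $\nu_{1} = \tuD(\Pa)[-1]\lotimes_{\Pa}-$), and $\underline{T} = \underline{\nu_{1}} = \Phi$, so $\underline{T}^{t} = \Phi^{t} = \Psi^{-1}$.

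First I would apply the contravariant triangulated duality $(-)^{\rvvee} = \RHom_{\Pa^{\op}}(-,\Pa)$ to the exact triangle ${}^{v}\!\sfAR$; this produces the top row of \eqref{202105232003} as an exact triangle in $\sfD(\Pa^{\mre})$. Next, using the natural isomorphisms of Section \ref{section: natural isomorphisms} — namely $\Pa^{\rvvee}\cong\Pa$, the identification $X^{\rvvee}\cong X^{\vvee}\lotimes_{\Pa}\tuD(\Pa)$ for $X$ perfect over $\Pa^{\mre}$ (which in particular gives $(\Pa^{\vvee})^{\rvvee}\cong\tuD(\Pa)$, equivalently $\PPi_{1}^{\rvvee}\cong\tuD(\Pa)[-1]$, accounting for the second vertical arrow of \eqref{202105232003}), and the compatibility of $(-)^{\vvee}$ with cones, $(\cone f)^{\vvee}\cong\cone(f^{\vvee})[-1]$ — together with the self--duality ${}^{v}\!\ttheta^{\vvee} = {}^{v}\!\ttheta$ recorded in \eqref{202105111259}, one identifies $({}^{v}\!\YYM_{1})^{\vvee}$ with ${}^{v}\!\YYM_{1}[-1]$ and hence $({}^{v}\!\YYM_{1})^{\rvvee}$ with ${}^{v}\!\YYM_{1}\lotimes_{\Pa}T$, in a way that upgrades to an isomorphism between the top row of \eqref{202105232003} and the triangle obtained from ${}^{v}\!\sfAR$ by applying $-\lotimes_{\Pa}T$ (suitably rotated, so that $\Pa^{\vvee}\lotimes_{\Pa}T$ lies on the left). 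This step is the bimodule analogue, carried out over $\Pa^{\mre}$ rather than after tensoring with a right module, of the chain of canonical isomorphisms used in the proof of Proposition \ref{202103051610}.

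It then remains to apply Proposition \ref{2021010816031} to the two-sided tilting complex $T = \tuD(\Pa)[-1]$: it supplies an isomorphism ${}^{v}\!\sfc_{T}\colon {}^{v}\!\YYM_{1}\lotimes_{\Pa}T\to T\lotimes_{\Pa}{}^{\underline{T}^{t}(v)}\!\YYM_{1} = \tuD(\Pa)[-1]\lotimes_{\Pa}{}^{\Psi^{-1}(v)}\!\YYM_{1}$ which intertwines the triangle obtained from ${}^{v}\!\sfAR$ by $-\lotimes_{\Pa}T$ with the triangle obtained from ${}^{\Psi^{-1}(v)}\!\sfAR$ by $T\lotimes_{\Pa}-$, that is, with the bottom row of \eqref{202105232003}. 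Composing the isomorphism of triangles constructed in the previous step with this one produces the desired ${}^{v}\!\sfd$ together with the commutativity of \eqref{202105232003}. As with ${}^{v}\!\sfb_{N}$ and ${}^{v}\!\sfc_{T}$, the morphism ${}^{v}\!\sfd$ is determined only modulo $\rad$, and it is functorial modulo $\rad$.

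The hard part is not conceptual but combinatorial: one must track the signs introduced by dualising the rotated triangle ${}^{v}\!\sfAR$, by the cone identity $(\cone f)^{\vvee}\cong\cone(f^{\vvee})[-1]$, by the various shift and $\gamma$--coherence isomorphisms, and by Proposition \ref{2021010816031}, and verify that they combine to exactly the signs $-,\,+,\,-$ displayed on the bottom row of \eqref{202105232003}. As in the proof of Proposition \ref{202103051610} this is routine, but it has to be done with care.
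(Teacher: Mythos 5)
Your proposal is correct and follows essentially the paper's own route: the paper likewise dualises the triangle ${}^{v}\!\sfAR$, identifies the terms via the natural isomorphism $X^{\rvvee}\cong X^{\vvee}\lotimes_{\Pa}\tuD(\Pa)$ together with the self-duality $({}^{v}\!\ttheta)^{\vvee}={}^{v}\!\ttheta$ from \eqref{202105111259}, and then uses Theorem \ref{202101081603} (of which Proposition \ref{2021010816031}, your intermediary, is a direct corollary) to move $\tuD(\Pa)[-1]$ to the left at the cost of replacing $v$ by $\Psi^{-1}(v)$. The only difference is bookkeeping: the paper verifies just the leftmost square of co-connecting morphisms and then completes to a morphism (hence isomorphism) of triangles, whereas you transport the entire triangle through ${}^{v}\!\sfc_{T}$; both leave the same routine sign-tracking to the reader.
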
 

\begin{proof}
We  can verify commutativity of  the left most  square of \eqref{202105232003} in the following way  
\[ 
\begin{xymatrix}@C=50pt{
(A[1])^{\rvvee} \ar[d]_{\cong}\ar[r]^{(-{}^{v}\!\ttheta [1])^{\rvvee}} 
&  \PPi_{1}^{\rvvee}\ar[d]_{\cong}\\ 
(A[1])^{\vvee} \lotimes_{A} \tuD(A) 
\ar[d]_{\cong}\ar[r]^{ (-{}^{v}\!\ttheta [1])^{\vvee}_{\tuD(A) }} 
&  (A^{\vvee} [1])^{\vvee} \lotimes_{A} \tuD(A) \ar[d]_{\cong}\\ 
A^{\vvee} \lotimes_{A} \tuD(A)[-1]  
\ar[d]_{\cong}\ar[r]^{ (-{}^{v}\!\ttheta)^{\vvee}_{\tuD(A)[-1]   }} 
&  (A^{\vvee} )^{\vvee} \lotimes_{A} \tuD(A) [-1]   \ar[d]_{\cong}\\ 
A^{\vvee} \lotimes_{A} \tuD(A)[-1] 
\ar[d]_{\cong}^{\gamma_{\tuD(A)[-1]} } \ar[r]^{ -{}^{v}\!\ttheta_{\tuD(A)[-1]   }} 
& \tuD(A) [-1] \ar@{=}[d]\\ 
\tuD(A)[-1] \lotimes_{A}  A^{\vvee}  
\ar[r]^{  - {}_{\tuD(A)[-1] }{}^{\Psi^{-1}(v)}\ttheta } 
& \tuD(A) [-1]   
}\end{xymatrix}
\]
where for the commutativity of the first square  we use  a canonical isomorphism
\[
X^{\rvvee} 
\cong X^{\vvee} \lotimes_{A} \tuD(A)
\]
which is natural  for $X \in \Dbmod{\Pa^{\mre}}$  (see Section \ref{section: natural isomorphisms}). 
For the third square we use \eqref{202105111259}.  
Finally the commutativity of the fourth  square follows from Theorem \ref{202101081603}.
\end{proof}

Since $\RHom_{\Pa^{\op}}(-, \PPi_{1}) \cong \PPi_{1} \lotimes_{\Pa} (-)^{\rvvee}$, 
applying $\PPi_{1} \lotimes_{\Pa} -$ to the diagram of Lemma \ref{202102230750} 
we deduce the following lemma. 

\begin{lemma}\label{2021022307501} 

There exists an isomorphism 
\[
{}^{v}\!\sfe: \RHom_{\Pa^{\op}} ( {}^{v}\!\YYM_{1}, \PPi_{1} ) \to {}^{\Psi^{-1}(v)} \!\YYM_{1} 
\]
in $\sfD(\Pa^{\mre})$  
that completes the  following commutative diagram
\[
\begin{xymatrix}@C=60pt{
(A[1], \PPi_{1}) \ar[r]^{(- {}^{v}\!\ttheta[1], \PPi_{1} ) }\ar[d]_{\cong}  &
 (\PPi_{1}, \PPi_{1}) \ar[d]_{\cong} \ar[r]^{({}^{v}\!\ppi_{1}, \PPi_{1} ) } &
 ({}^{v}\!\YYM_{1}, \PPi_{1}) \ar[d]_{\cong}^{{}^{v}\!\sfe} \ar[r]^{ ({}^{v}\!\rrho, \PPi_{1} ) } &
 (A, \PPi_{1}) \ar[d]_{\cong} \\
A^{\vvee} 
\ar[r]_{  -{}^{\Psi^{-1}(v)}\!\ttheta} 
& A  \ar[r]_{{}^{\Psi^{-1}(v)}\!\rrho } 
& {}^{\Psi^{-1}(v)} \!\YYM_{1} \ar[r]_{ -{}^{\Psi^{-1}(v)}\!\ppi_{1} }
& \PPi_{1} .
}\end{xymatrix} 
\]
where we use the  abbreviation  $(-,+) =\RHom_{\Pa^{\op}}(-, +)$.

\end{lemma}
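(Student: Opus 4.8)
The statement follows by applying the triangulated functor $\PPi_{1}\lotimes_{\Pa}-$ to the commutative diagram of Lemma \ref{202102230750} and identifying the resulting terms. We use two natural isomorphisms already available: first, the natural isomorphism $\RHom_{\Pa^{\op}}(-,\PPi_{1})\cong \PPi_{1}\lotimes_{\Pa}(-)^{\rvvee}$ of functors on perfect complexes over $\Pa^{\op}$, employed in the proof of Proposition \ref{202103051610} and established in Section \ref{section: natural isomorphisms}; second, the isomorphism $\PPi_{1}\lotimes_{\Pa}\tuD(\Pa)[-1]\cong \Pa$ in $\sfD(\Pa^{\mre})$, which holds because $\PPi_{1}\lotimes_{\Pa}-$ and $\tuD(\Pa)[-1]\lotimes_{\Pa}-$ are the mutually inverse autoequivalences $\nu_{1}^{-1}$ and $\nu_{1}$ of $\Dbmod{\Pa}$. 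Note that ${}^{v}\!\YYM_{1}=\cone({}^{v}\!\ttheta)$ is perfect over $\Pa^{\mre}$ since $\Pa=\kk Q$ is homologically smooth, so the first isomorphism applies to all objects occurring in the top row of Lemma \ref{202102230750}.

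Applying $\PPi_{1}\lotimes_{\Pa}-$ to that diagram, each term $\PPi_{1}\lotimes_{\Pa}X^{\rvvee}$ of the top row becomes $\RHom_{\Pa^{\op}}(X,\PPi_{1})$ via the first natural isomorphism, which recovers the top row $\RHom_{\Pa^{\op}}(\Pa[1],\PPi_{1})\to\RHom_{\Pa^{\op}}(\PPi_{1},\PPi_{1})\to\RHom_{\Pa^{\op}}({}^{v}\!\YYM_{1},\PPi_{1})\to\RHom_{\Pa^{\op}}(\Pa,\PPi_{1})$ of the present lemma; each term $\PPi_{1}\lotimes_{\Pa}\tuD(\Pa)[-1]\lotimes_{\Pa}Y$ of the bottom row becomes $Y$ via the second isomorphism, and under this identification the morphisms of the form ${}_{\tuD(\Pa)[-1]}(-)$ become the corresponding morphisms ${}^{\Psi^{-1}(v)}(-)$, recovering the bottom row $\Pa^{\vvee}\xrightarrow{-{}^{\Psi^{-1}(v)}\!\ttheta}\Pa\xrightarrow{{}^{\Psi^{-1}(v)}\!\rrho}{}^{\Psi^{-1}(v)}\!\YYM_{1}\xrightarrow{-{}^{\Psi^{-1}(v)}\!\ppi_{1}}\PPi_{1}$. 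The leftmost, second and rightmost vertical isomorphisms of Lemma \ref{202102230750}, composed with these identifications, become the canonical isomorphisms $\RHom_{\Pa^{\op}}(\Pa[1],\PPi_{1})\cong \Pa^{\vvee}$, $\RHom_{\Pa^{\op}}(\PPi_{1},\PPi_{1})\cong \Pa$ and $\RHom_{\Pa^{\op}}(\Pa,\PPi_{1})\cong \PPi_{1}$; this is a direct check using the natural isomorphism $X^{\rvvee}\cong X^{\vvee}\lotimes_{\Pa}\tuD(\Pa)$, the isomorphism $(X^{\vvee})^{\vvee}\cong X$ from Section \ref{section: natural isomorphisms}, and the equality $({}^{v}\!\ttheta)^{\vvee}={}^{v}\!\ttheta$ of \eqref{202105111259}. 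Finally $\PPi_{1}\lotimes_{\Pa}-$ carries the exact triangles of Lemma \ref{202102230750} to exact triangles and preserves commutativity, so setting ${}^{v}\!\sfe$ to be the morphism induced by $\PPi_{1}\lotimes_{\Pa}{}^{v}\!\sfd$ under the above identifications yields the asserted isomorphism fitting into the displayed commutative diagram. \qedhere

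The argument is entirely bookkeeping; the only delicate point is the identification of the three outer vertical maps with the claimed canonical isomorphisms, which is forced by the naturality of the isomorphisms of Section \ref{section: natural isomorphisms} together with \eqref{202105111259}, and requires no new input.
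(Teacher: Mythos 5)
Your proof is correct and is exactly the paper's argument: the paper deduces this lemma by applying $\PPi_{1}\lotimes_{\Pa}-$ to the diagram of Lemma \ref{202102230750} and using the isomorphism $\RHom_{\Pa^{\op}}(-,\PPi_{1})\cong\PPi_{1}\lotimes_{\Pa}(-)^{\rvvee}$ together with $\PPi_{1}\lotimes_{\Pa}\tuD(\Pa)[-1]\cong\Pa$. Your additional bookkeeping on the outer vertical identifications only spells out what the paper leaves implicit.
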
 

Using \eqref{202102222229}, we deduce the following corollary. 
We remark that  for the later quotation, in the statement  we use    $\Psi$.

\begin{corollary}\label{2021022307502} 
Assume that $v$ is an eigenvector of $\Psi$ with the eigenvalue $\lambda$. 
Then, there exists an isomorphism 
\[
{}^{v}\!\sfe': \RHom_{\Pa^{\op}} ( {}^{v}\!\YYM_{1}, \PPi_{1} ) \to {}^{v}\!\YYM_{1} 
\]
in $\sfD(\Pa^{\mre})$  
that completes the  following commutative diagram
\[
\begin{xymatrix}@C=60pt{
(A[1], \PPi_{1}) \ar[r]^{(- {}^{v}\!\ttheta[1], \PPi_{1} ) }\ar[d]_{\cong}  &
 (\PPi_{1}, \PPi_{1}) \ar[d]_{\cong} \ar[r]^{({}^{v}\!\ppi_{1}, \PPi_{1} ) } &
 ({}^{v}\!\YYM_{1}, \PPi_{1}) \ar[d]_{\cong}^{{}^{v}\!\sfe'} \ar[r]^{ ({}^{v}\!\rrho, \PPi_{1} ) } &
 (A, \PPi_{1}) \ar[d]_{\cong} \\
A^{\vvee} 
\ar[r]_{  -\lambda^{-1} {}^{v}\!\ttheta} 
& A  \ar[r]_{{}^{v}\!\rrho } 
& {}^{v}\!\YYM_{1} \ar[r]_{ -\lambda {}^{v}\!\ppi_{1} }
& \PPi_{1} .
}\end{xymatrix} 
\]
where we use the  abbreviation  $(-,+) =\RHom_{\Pa^{\op}}(-, +)$.

\end{corollary}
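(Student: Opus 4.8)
The plan is to obtain this corollary from Lemma~\ref{2021022307501} by specializing the general weight $\Psi^{-1}(v)$ appearing there to the eigenvector case and then rescaling a mapping cone, exactly parallel to the way Corollary~\ref{2021010816032} was deduced from Proposition~\ref{2021010816031}.

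First I would note that the hypothesis ``$v$ is an eigenvector of $\Psi$ with eigenvalue $\lambda$'' says precisely $\Psi^{-1}(v)=\lambda^{-1}v$. Feeding this into Lemma~\ref{2021022307501} produces an isomorphism ${}^{v}\!\sfe\colon \RHom_{\Pa^{\op}}({}^{v}\!\YYM_{1},\PPi_{1})\xrightarrow{\ \sim\ }{}^{\lambda^{-1}v}\!\YYM_{1}$ sitting in a commutative diagram whose top row is $\RHom_{\Pa^{\op}}(-,\PPi_{1})$ applied to the exact triangle ${}^{v}\!\sfAR$ (hence does not involve $\lambda$ at all) and whose bottom row is
\[
\Pa^{\vvee}\xrightarrow{-{}^{\lambda^{-1}v}\!\ttheta}\Pa\xrightarrow{{}^{\lambda^{-1}v}\!\rrho}{}^{\lambda^{-1}v}\!\YYM_{1}\xrightarrow{-{}^{\lambda^{-1}v}\!\ppi_{1}}\PPi_{1}.
\]

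Next I would use the linearity of $w\mapsto{}^{w}\!\ttheta$ recorded in Definition~\ref{202102061701}: ${}^{\lambda^{-1}v}\!\ttheta=\lambda^{-1}\,{}^{v}\!\ttheta$. Since ${}^{w}\!\YYM_{1}:=\cone({}^{w}\!\ttheta)$ and ${}^{w}\!\rrho$, ${}^{w}\!\ppi_{1}$ are the structural inclusion of $\Pa$ and projection onto $\Pa^{\vvee}[1]=\PPi_{1}$ attached to this cone (maps which, on underlying graded modules, are independent of $w$), this identifies ${}^{\lambda^{-1}v}\!\YYM_{1}$ with $\cone(\lambda^{-1}\,{}^{v}\!\ttheta)$. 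Now I would invoke the isomorphism of exact triangles \eqref{202102222229} with $f={}^{v}\!\ttheta$ and $a=\lambda^{-1}$: it supplies an isomorphism $\theta_{0}\colon\cone(\lambda^{-1}\,{}^{v}\!\ttheta)\xrightarrow{\ \sim\ }\cone({}^{v}\!\ttheta)={}^{v}\!\YYM_{1}$ — the block map which is the identity on $\Pa$ and multiplication by $\lambda^{-1}$ on $\Pa^{\vvee}[1]$ — under which, after transport, ${}^{\lambda^{-1}v}\!\rrho$ becomes ${}^{v}\!\rrho$ (as $\theta_{0}\circ{}^{\lambda^{-1}v}\!\rrho={}^{v}\!\rrho$) and ${}^{\lambda^{-1}v}\!\ppi_{1}$ becomes $\lambda\,{}^{v}\!\ppi_{1}$ (as ${}^{\lambda^{-1}v}\!\ppi_{1}\circ\theta_{0}^{-1}=\lambda\,{}^{v}\!\ppi_{1}$). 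Setting ${}^{v}\!\sfe':=\theta_{0}\circ{}^{v}\!\sfe$ and transporting the bottom row of the Lemma~\ref{2021022307501} diagram through $\theta_{0}$ keeps all three squares commutative — post-composing the middle vertical by $\theta_{0}$ is exactly compensated by those two changes, and the leftmost square is literally unchanged because $-{}^{\lambda^{-1}v}\!\ttheta=-\lambda^{-1}\,{}^{v}\!\ttheta$ — and rewrites the bottom row as
\[
\Pa^{\vvee}\xrightarrow{-\lambda^{-1}\,{}^{v}\!\ttheta}\Pa\xrightarrow{{}^{v}\!\rrho}{}^{v}\!\YYM_{1}\xrightarrow{-\lambda\,{}^{v}\!\ppi_{1}}\PPi_{1},
\]
which is precisely the diagram asserted by the corollary.

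I do not expect a genuine obstacle here; the argument is pure bookkeeping with the rescaling isomorphism. The one point that needs care is keeping the two independent sources of scalars straight — the minus signs already present in the bottom row of Lemma~\ref{2021022307501}, coming from the contravariance of $\RHom_{\Pa^{\op}}(-,\PPi_{1})$, versus the factor $a^{-1}=\lambda$ produced by \eqref{202102222229} — so that one lands on $-\lambda^{-1}$ and $-\lambda$ rather than on some other combination of signs. I would also remark, in accordance with the comment following the statement, that ${}^{v}\!\sfe'$ inherits from ${}^{v}\!\sfe$ the property of being determined only modulo $\rad$.
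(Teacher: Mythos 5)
Your proposal is correct and is essentially the paper's own argument: the paper deduces the corollary from Lemma \ref{2021022307501} by specializing $\Psi^{-1}(v)=\lambda^{-1}v$ and invoking the rescaling isomorphism \eqref{202102222229}, which is exactly the bookkeeping you carry out (with the scalars $-\lambda^{-1}$ and $-\lambda$ landing in the right places). The only difference is that you spell out explicitly the cone-rescaling $\theta_{0}$ and the linearity ${}^{\lambda^{-1}v}\!\ttheta=\lambda^{-1}\,{}^{v}\!\ttheta$, which the paper leaves implicit.
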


\section{The derived quiver Heisenberg algebras}\label{section: the derived quiver Heisenberg algebras}

In this section \ref{section: the derived quiver Heisenberg algebras}, 
we introduce the derived quiver Heisenberg algebra ${}^{v}\!\YYM(Q)$ of a quiver $Q$ and establish its basic properties. 
It may be worth mentioning that  all the results of this section hold for a finite quiver $Q$ which is not necessarily acyclic.

\subsection{The derived preprojective algebras}\label{derived preprojective algebra}
Before introducing the derived quiver Heisenberg algebra,  
we recall the construction of the derived preprojective algebra $\PPi(Q)$ of $Q$.

\subsubsection{The preprojective algebra of a quiver } 

Let $Q$ be a finite quiver and $\overline{Q}$ its double quiver. 
We may  identify the arrow module $\kk \overline{Q}_{1}$ with $V \oplus V^{*}$ 
and 
the path algebra $\kk \overline{Q}$ of $\overline{Q}$ with the tensor algebra  $\sfT_{\Pa_{0}} (V \oplus V^{*})$, 
which is isomorphic to 
$\sfT_{\Pa}(\Pa V^{*} \Pa)$ (see e.g, \cite[Lemma 2.1]{Grant-Iyama}). 
\begin{equation}\label{202006121845}
\kk \overline{Q} = \sfT_{\Pa_{0}}(V \oplus V^{*} ) \cong \sfT_{\Pa}(\Pa V^{*} \Pa) 
= \Pa \oplus \Pa V^{*} \Pa \oplus \Pa V^{*} \Pa V^{*} \Pa \oplus \cdots. 
\end{equation}
Under this isomorphism, the mesh relation $\rho_{i}$ \eqref{mesh relation} may be identified with the element below of $\Pa V^{*} \Pa$ 
\begin{equation}\label{201911262}
\rho_{i} = \sum_{\alpha: t(\alpha) = i} \alpha \otimes \alpha^{*} \otimes e_{i} 
- \sum_{\alpha: h(\alpha) =i } e_{i} \otimes \alpha^{*} \otimes \alpha. 
\end{equation}
Therefore,  we obtain the following isomorphism by which we identify these two algebras in the sequel. 
\[
\Pi(Q) \cong  \sfT_{A} (AV^{*} A)/(\rho). 
\]

\subsubsection{The derived preprojective algebras}

The derived preprojective algebra $\PPi=\PPi(Q)$ of $Q$ is defined to be 
the tensor dg-algebra $\sfT_{\Pa}( \PPa^{\vee}[1])$ of $ \PPa^{\vee}[1]$ over $\Pa$, 
that is 
a dg-algebra whose underlying algebra is
the tensor algebra $\sfT_{\Pa}( \PPa^{\vee}[1])$ of $ \PPa^{\vee}[1]$ over $\Pa$ 
and 
the differential is that of induced from the differential of the complex $ \PPa^{\vee}[1]$. 

For $i \in Q_{0}$, we denote by $s_{i}$ the element of the generator of $\Pa \Pa[1] = \Pa ( \Pa_{0}[1]) \Pa$ 
corresponding to $e_{i} \in \Pa_{0}$. 
We set $S := \Pa_{0}[1] = \bigoplus_{ i \in Q_{0}} \kk s_{i}$ and $s := \sum_{ i \in Q_{0}} s_{i}$.

We give  an explicit description of $\PPi$. 
The underlying algebra is 
the free algebra over $\Pa_{0}$ generated by $\alpha, \alpha^{*}$ and $s_{i}$. 
In other words, it is the tensor algebra 
$\sfT_{\Pa_{0}}(V \oplus V^{*} \oplus S) \cong \sfT_{\Pa}(\Pa V^{*} \Pa \oplus \Pa S \Pa)$. 
The differential is given by 
\[
d (\alpha ) := 0, d(\alpha^{*} ) := 0, d(s_{i}) := - \rho_{i}. 
\]
The values of $d$ for general homogeneous elements are determined from the Leibniz rule 
$d(xy) = d(x) y + ( -1)^{|x|}x d(y)$.  
Observe that the canonical morphism $\PPi \to \Pi$ induces an isomorphism  
\[
\tuH^{0}(\PPi) \xrightarrow{\cong } \Pi.
\]

We equip  $\PPi$ with an extra  grading, which we call the $*$-\emph{grading}, 
in the following way: 
\[
\deg^{*} e_{i} : =0, \ \deg^{*} s_{i} := 1 \ (i \in Q_{0}), \ 
\deg^{*} \alpha := 0, \  \deg^{*} \alpha^{*} := 1 \  ( \alpha \in Q_{1}).
\]

We give the table of the cohomological degrees and the $*$-degrees of the generators:

\centerline{
\begin{tabular}{c|c|c|c|c}
& $e_{i}$ & $\alpha$ & $\alpha^{*}$ & $s_{i}$ \\ \hline
$\textup{ch}\deg $& $0$ & $0$  & $0$ & $-1$ \\ \hline
$\deg^{*}$ & $0$ & $0$ & $1$ & $1$ 
\end{tabular}
}

\[
\begin{xymatrix}{ 
i \ar@(ld, lu)^{s_{i}}  \ar@/^10pt/[rr]^{\alpha} &&
 j  \ar@(rd,ru)_{s_{j}} \ar@/^10pt/[ll]^{\alpha^{*}} 
}\end{xymatrix}
\]

It is clear that the differential $d$ preserves the $*$-grading and 
$\PPi$ is a $*$-graded dg-algebra. 
It can be checked that the $*$-degree on $\PPi$ coincides with the tensor degree on $\PPi = \sfT_{\Pa}( \PPa^{\vee}[1])$. 
Hence 
if we denote  the $*$-degree $n$ part of $\PPi$ by $\PPi_{n}$, 
then $\PPi_{n} =   \PPa^{\vee}[1] \otimes_{\Pa} \cdots \otimes_{\Pa}  \PPa^{\vee}[1]$ ($n$-times). 
Thus in particular 
\begin{equation}\label{202006122059}
\PPi_{1} = \PPa^{\vee}[1] \cong  \left( \Pa V^{*} \Pa \oplus \Pa \Pa [1], \begin{pmatrix} 0 & -\hat{\rho}\uparrow  \\ 0 & 0 \end{pmatrix}\right)
\end{equation}
where for the second isomorphism we use Lemma \ref{202006121909}.

\subsection{The derived quiver Heisenberg algebras}

\subsubsection{The derived quiver Heisenberg algebras}

We define the \emph{derived quiver Heisenberg algebra}.   
\begin{definition}[The derived quiver Heisenberg algebras]\label{202111201445}
Let $Q$ be a finite quiver and $v \in \kk^{\times} Q_{0}$. 
We set
 \[
 {}^{v}\!\varrho := \sum_{i \in Q_{0}} v_{i}^{-1}\rho_{i}, 
 {}^{v}\!\eta_{a} := [a, {}^{v}\!\varrho]
 \] for $a \in \overline{Q}$.

We define the \emph{derived quiver Heisenberg algebra} $ {}^{v}\!\YYM(Q)$ in the following way. 

We set $A_{0} := \kk Q_{0}, V := \kk Q_{1},  V^{*} := \tuD(V)$ and $A := \kk Q$. 
Let $V^{\circ} := V^{*}[1],$  $V^{\circledast }:= V[1]$ and $T := \kk Q_{0}[2]$. 
The underlying cohomological graded algebra of ${}^{v}\!\YYM(Q)$ is defined to be 
\begin{equation}\label{20191127}
\sfT_{\Pa_{0}}(V \oplus V^{*} \oplus V^{\circ} \oplus V^{\circledast} \oplus T)  
\cong \sfT_{\Pa}(\Pa V^{*} \Pa \oplus \Pa V^{\circ} \Pa \oplus \Pa V^{\circledast} \Pa \oplus 
\Pa T \Pa ).
\end{equation}

The differential $d$ is  defined in the following way. 
We denote by $\alpha^{\circ}, \alpha^{\circledast}$ the elements of $V^{\circ}, V^{\circledast}$ corresponding to $\alpha \in Q_{1}$. 
We denote by $t_{i}$ the element of $T$ corresponding to $i \in Q_{0}$. 
We set $t := \sum_{ i \in Q_{0}} t_{i}$. 
Then the underlying cohomological graded algebra \eqref{20191127}
 is freely generated by $\alpha, \alpha^{*}, \alpha^{\circ}, \alpha^{\circledast}, t_{i}$ 
for $\alpha \in Q_{1}$ and $i\in Q_{0}$. 
The values of $d$ for these generators are defined by the formulas:
\begin{equation}\label{the differentials of derived QHA}
\begin{split}
&d(\alpha) := 0, d(\alpha^{*}) := 0,  d(\alpha^{\circ}) :  = - {}^{v}\!\eta_{\alpha^{*}},
 d(\alpha^{\circledast}) :  =  {}^{v}\!\eta_{\alpha}, \\
& d(t_{i}) := \sum_{\alpha \in Q_{1} } e_{i}[\alpha, \alpha^{\circ}]e_{i} + \sum_{\alpha \in Q_{1}}  e_{i}[\alpha^{*}, \alpha^{\circledast}] e_{i} \\
& \ \ \ \ \ \ \ = \sum_{\alpha: t(\alpha) = i} \alpha \alpha^{\circ}-  \sum_{\alpha: h(\alpha) = i} \alpha^{\circ} \alpha +
\sum_{\alpha: h(\alpha) = i} \alpha^{*} \alpha^{\circledast}-  \sum_{\alpha: t(\alpha) = i} \alpha^{\circledast} \alpha^{*}.
\end{split}
\end{equation}
The values of $d$ for general homogeneous elements are determined from the Leibniz rule 
$d(xy) = d(x) y + ( -1)^{|x|}x d(y)$. 
\end{definition} 

From now until the end of this section, we fix an element $v=(v_{i}) \in \kk^{\times}  Q_{0}$ 
and use abbreviation such as $\YYM = {}^{v}\!\YYM(Q), \varrho = {}^{v}\!\varrho$ and $\eta_{a} = {}^{v}\!\eta_{a}$. 
Moreover in the sequel, similarly we omit $v$ and use similar abbreviations.

We equip  $\YYM$ with the  $*$-grading  in the following way.
\[
\begin{split} 
\deg^{*} V := 0, \deg^{*} V^{*} = 1, \deg^{*} V^{\circledast} := 1,  \deg^{*} V^{\circ} := 2,  \deg^{*} T:= 2. 
\end{split}
\]

We give the table of the cohomological degrees and the $*$-degrees of the generators:

\centerline{
\begin{tabular}{c|c|c|c|c|c|c}
& $e_{i}$ & $\alpha$ & $\alpha^{*}$ &  $\alpha^{\circledast}$ &$\alpha^{\circ} $ & $t_{i}$ \\ \hline
$\textup{ch}\deg $& $0$ & $0$  & $0$ & $-1$  & $-1$ & $-2$ \\ \hline
$\deg^{*}$ & $0$ & $0$ & $1$ & $1$ & $2$ &  $2$ 
\end{tabular}}
\[
\begin{xymatrix}{ 
i \ar@(ld, lu)^{t_{i}} \ar@/_20pt/[rr]_{\alpha^{\circledast}} \ar@/^10pt/[rr]_{\alpha} && j  \ar@(rd,ru)_{t_{j}} \ar@/^10pt/[ll]_{\alpha^{*}} 
\ar@/_20pt/[ll]_{\alpha^{\circ}}
}\end{xymatrix}
\]

We denote by $\YYM_{n}$ the component of $\YYM$ having  $*$-degree $n$.
For example, $\YYM_{0} = \Pa = \kk Q$. 
It is straightforward to check  that the differential $d$ preserves the $*$-degree and $\YYM$ is a $*$-graded dg-algebra. 
Therefore $\YYM_{n}$ has a canonical structure of complex of bimodules over $\Pa$.

The following observation plays an important  role. 

\begin{observation}\label{202006122126}
 The $*$-degree $1$ part $\YYM_{1}$ and the differential $d_{\YYM_{1}}$ are   of the form 
 \[
 \YYM_{1} =\left( \Pa V^{*} \Pa \oplus  \Pa V^{\circledast}\Pa,  \begin{pmatrix} 0 & \hat{\eta}' \\ 0 & 0 \end{pmatrix} \right) 
\] 
where $\hat{\eta}'$ is the $\Pa^{\mre}$-homomorphism of cohomological degree $1$ 
such that $\hat{\eta}'(\alpha^{\circledast}) = \eta_{\alpha}$. 
\end{observation}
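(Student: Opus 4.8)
The statement to be proven, Observation \ref{202006122126}, describes the $*$-degree $1$ part $\YYM_1$ of the derived quiver Heisenberg algebra as a two-term complex of bimodules with underlying graded module $\Pa V^{*}\Pa \oplus \Pa V^{\circledast}\Pa$ and a specified differential. The plan is to extract this from the general description of $\YYM = {}^{v}\!\YYM(Q)$ given in Definition \ref{202111201445}.

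First I would determine which generators of the free algebra $\sfT_{\Pa_0}(V\oplus V^{*}\oplus V^{\circ}\oplus V^{\circledast}\oplus T)$ contribute monomials of $*$-degree exactly $1$. Reading off the table of $*$-degrees: $V$ has $*$-degree $0$, both $V^{*}$ and $V^{\circledast}$ have $*$-degree $1$, while $V^{\circ}$ and $T$ have $*$-degree $2$. Since a monomial's $*$-degree is the sum of the $*$-degrees of its letters, a monomial of total $*$-degree $1$ must use exactly one letter from $V^{*}\cup V^{\circledast}$ and any number of letters from $V$ (which contribute $0$), and no letters from $V^{\circ}$ or $T$. Grouping the $V$-letters into the path algebra $\Pa = \sfT_{\Pa_0}V$, this shows $\YYM_1 = \Pa V^{*}\Pa \oplus \Pa V^{\circledast}\Pa$ as a graded $\Pa$-bimodule, which gives the underlying module of the claimed complex. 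For the cohomological grading: $\alpha^{*}$ has cohomological degree $0$ so $\Pa V^{*}\Pa$ sits in cohomological degree $0$, while $\alpha^{\circledast}$ has cohomological degree $-1$ so $\Pa V^{\circledast}\Pa$ sits in cohomological degree $1$ — matching the convention used for writing such complexes as in $\widetilde{\Pa}$ and $\PPi_1$ earlier (e.g.\ \eqref{202006122059}), where a module $M[1]$ is displayed in the lower-right corner of the matrix differential.

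Next I would compute the differential. By the Leibniz rule, $d$ maps the $*$-degree $1$ part to itself (since $d$ preserves $*$-degree). On $\Pa V^{*}\Pa$ the differential vanishes: $d(\alpha) = 0$ and $d(\alpha^{*}) = 0$, so $d$ kills any monomial of the form $p\,\alpha^{*}\,q$ with $p,q$ paths in $Q$. On $\Pa V^{\circledast}\Pa$, the Leibniz rule gives $d(p\,\alpha^{\circledast}\,q) = p\,d(\alpha^{\circledast})\,q = p\,({}^{v}\!\eta_{\alpha})\,q$, using $d(\alpha^{\circledast}) = {}^{v}\!\eta_{\alpha}$ and $d(p) = d(q) = 0$. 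By the explicit formula for ${}^{v}\!\eta_{\alpha}$ in Definition \ref{quiver Heisenberg algebras}, each ${}^{v}\!\eta_{\alpha}$ is a $\kk$-linear combination of paths of the form $\gamma\,\beta^{*}\,\delta$ (one starred arrow, rest unstarred), hence lies in $\Pa V^{*}\Pa$. Therefore $d$ restricted to $\YYM_1$ is precisely the $\Pa^{\mre}$-homomorphism of cohomological degree $1$ sending $\alpha^{\circledast}\mapsto {}^{v}\!\eta_{\alpha}$ and vanishing on $\Pa V^{*}\Pa$; calling this map $\hat{\eta}'$ and recording it in matrix form with respect to the decomposition $\YYM_1 = \Pa V^{*}\Pa \oplus \Pa V^{\circledast}\Pa$ yields exactly $\begin{pmatrix} 0 & \hat{\eta}' \\ 0 & 0 \end{pmatrix}$.

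The only subtlety — and the "main obstacle," though it is minor — is bookkeeping: making sure the map $\hat{\eta}'$ is genuinely well-defined as a bimodule homomorphism $\Pa V^{\circledast}\Pa \to \Pa V^{*}\Pa$ of the correct degree, which amounts to checking compatibility of $d(\alpha^{\circledast}) = {}^{v}\!\eta_{\alpha}$ with the idempotents $e_i$ (i.e.\ that ${}^{v}\!\eta_{\alpha}$ lies in $e_{t(\alpha)}\,\Pa V^{*}\Pa\, e_{h(\alpha)}$, which is immediate from the explicit formula), and confirming $\hat\eta'$ squares to zero trivially since its target has no further differential. There is nothing deep here; the statement is essentially a direct unwinding of the definition, and I would present it as such.
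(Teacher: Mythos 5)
Your proposal is correct and is essentially the argument the paper has in mind: the paper states this as an Observation immediately after Definition \ref{202111201445} without proof, precisely because it is the direct unwinding of the $*$-degrees of the generators and the Leibniz rule that you carry out. One small slip: since $\alpha^{\circledast}$ has cohomological degree $-1$ (i.e.\ $V^{\circledast}=V[1]$), the summand $\Pa V^{\circledast}\Pa$ sits in cohomological degree $-1$, not $+1$ as you wrote; it is the differential $\hat{\eta}'$ that has cohomological degree $+1$, mapping the degree $-1$ part to the degree $0$ part $\Pa V^{*}\Pa$, consistent with the conventions of \eqref{202006122059}. This does not affect the rest of your argument.
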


We denote by $\sfC(\YYM \Gr), \sfK(\YYM \Gr) $ and $\sfD(\YYM \Gr)$ 
the category of $*$-graded dg-$\YYM$-modules, its homotopy category and its derived category. 
We write $(n)$ to denote $*$-degree shift by $n \in \ZZ$. 

The following lemma is clear from the definition.

\begin{lemma}\label{202006121937}
The canonical map $\YYM \to \YM $ 
gives an isomorphism $\tuH^{0}(\YYM(Q)) \cong \YM(Q)$ 
of algebras. 
\end{lemma}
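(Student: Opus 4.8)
\textbf{Proof plan for Lemma \ref{202006121937}.}
The statement is that the canonical surjection of algebras $\YYM(Q)\to\YM(Q)$ induces an isomorphism $\tuH^{0}(\YYM(Q))\cong\YM(Q)$. The plan is to compare the two algebras degree-by-degree with respect to the $*$-grading, since the canonical map clearly respects it; so it suffices to prove that for each $n$ the induced map $\tuH^{0}(\YYM_{n})\to\YM_{n}$ is an isomorphism, and actually it is enough to observe that $\YYM$ is concentrated in nonpositive cohomological degrees (all generators have $\mathrm{ch}\deg\le 0$), so that $\tuH^{0}$ is simply the cokernel of the differential $d^{-1}\colon\YYM^{-1}\to\YYM^{0}$, i.e.\ $\tuH^{0}(\YYM)=\YYM^{0}/\,\mathrm{im}\,d^{-1}$. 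Thus what must be checked is that the two-sided ideal of $\YYM^{0}=\kk\overline{Q}$ generated by the image of $d^{-1}$ is exactly the ideal $({}^{v}\!\eta_{a}\mid a\in\overline{Q}_{1})$ defining $\YM(Q)$.

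First I would identify $\YYM^{0}$ as an algebra. By construction the underlying graded algebra is freely generated over $\Pa_{0}$ by $\alpha,\alpha^{*}$ (degree $0$), $\alpha^{\circledast},\alpha^{\circ}$ (degree $-1$) and $t_{i}$ (degree $-2$); hence the cohomological degree $0$ component is the subalgebra generated by the $\alpha$'s and $\alpha^{*}$'s, which is precisely $\kk\overline{Q}=\sfT_{\Pa_{0}}(V\oplus V^{*})$. Next I would describe $\YYM^{-1}$: a $\kk$-basis of the degree $-1$ part is given by words in $\alpha,\alpha^{*},\alpha^{\circledast},\alpha^{\circ},t_{i}$ of total cohomological degree $-1$, i.e.\ exactly one factor equal to some $\alpha^{\circledast}$ or $\alpha^{\circ}$ and all other factors of degree $0$. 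Writing such an element as $p\,a^{\bullet}\,q$ with $p,q\in\kk\overline{Q}$ and $a^{\bullet}\in\{\alpha^{\circledast},\alpha^{\circ}\}$ and applying the Leibniz rule together with $d(\alpha)=d(\alpha^{*})=0$, one gets $d(p\,\alpha^{\circledast}\,q)=\pm\,p\,{}^{v}\!\eta_{\alpha}\,q$ and $d(p\,\alpha^{\circ}\,q)=\mp\,p\,{}^{v}\!\eta_{\alpha^{*}}\,q$ (signs from \eqref{the differentials of derived QHA}). Consequently $\mathrm{im}\,d^{-1}$ is spanned by all $p\,{}^{v}\!\eta_{a}\,q$ with $a\in\overline{Q}_{1}$ and $p,q\in\kk\overline{Q}$, which is by definition the two-sided ideal $({}^{v}\!\eta_{a}\mid a\in\overline{Q}_{1})$. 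Therefore $\tuH^{0}(\YYM)=\kk\overline{Q}/({}^{v}\!\eta_{a}\mid a)=\YM(Q)$, and since every morphism in sight is $\Pa_{0}$-linear and multiplicative this is an isomorphism of algebras, compatible with the canonical map $\YYM\to\YM$. Finally I would note the argument preserves the $*$-grading, so the isomorphism is one of $*$-graded algebras.

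The only mild subtlety — and the step I would be most careful about — is making sure the cokernel description of $\tuH^{0}$ is legitimate, i.e.\ that there are no cohomological degree $+1$ terms: this follows because all generators have nonpositive cohomological degree, so $\YYM^{n}=0$ for $n>0$ and $d^{0}\colon\YYM^{0}\to\YYM^{1}=0$ is zero; hence $\tuH^{0}=\YYM^{0}/\,\mathrm{im}\,d^{-1}$ with no further relations. Everything else is a routine bookkeeping check of the Leibniz rule on the explicit generators, exactly as recorded in Observation \ref{202006122126} for the $*$-degree $1$ piece, and I would simply say that the verification is straightforward and leave the detailed sign chase to the reader.
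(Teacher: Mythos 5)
Your argument is correct and is precisely the verification that the paper leaves implicit: the paper simply declares the lemma "clear from the definition," and your degree-by-degree bookkeeping (no positive cohomological degrees, $\YYM^{0}=\kk\overline{Q}$, $\mathrm{im}\,d^{-1}$ equal to the ideal generated by the ${}^{v}\!\eta_{a}$ via the Leibniz rule) is exactly what makes that claim clear.
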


\subsubsection{The derived quiver Heisenberg algebras as Ginzburg dg-algebras}\label{202102071454} 
In this section \ref{202102071454}, we assume that $\chara \kk \neq 2$.

We set 
\[
W: = -\frac{1}{2} \varrho \rho = -\frac{1}{2} \sum_{i \in Q_{0}} v_{i}^{-1} \rho_{i}^{2}. 
\]
By a straightforward calculation (or using  cyclic Leibniz rule \cite[Lemma 3.8]{DWZ}), we can check  that 
\[
\partial_{\alpha}( W) =  -\eta_{\alpha^{*}}, \ \ \ \partial_{\alpha^{*}} (W)=  \eta_{\alpha}.
\]
Therefore, 
the quiver Heisenberg algebra $\YM(Q)$ is 
 the  Jacobi algebra of the double quiver $\overline{Q}$ with the potential $W$.
\[
\YM(Q) = \cP\left (\overline{Q}, W \right).\]

\begin{remark}
In \cite[p.604]{QVB}, 
another quiver with potential $(\hat{Q}, W')$ such that $\cP(\hat{Q}, W') = \YM(Q)$ is given. 

In subsequent work we prove that the derived quiver Heisenberg algebra and the Ginzburg dg-algebras $\cG(\hat{Q}, W')$ of this quiver with potential, are quasi-isomorphic to each other. 
In this sense, a point of this paper is that the derived quiver Heisenberg algebra has a smaller number of generators than that of $\cG(\hat{Q}, W')$. 
\end{remark}

It is straight forward to check that the derived quiver Heisenberg algebra $\YYM(Q)$ is isomorphic to 
the Ginzburg dg-algebra $\cG\left(\overline{Q}, W \right)$. 
\[
\YYM(Q)  = \cG\left(\overline{Q}, W \right).
\]
The point here is that  although the potential $W =- \frac{1}{2} \varrho\rho$ contains the fraction $\frac{1}{2}$, 
the differentials of $\cG\left(\overline{Q}, W \right)$ do not. 
Therefore,  the definition of the differentials even works for the case $\chara \kk =2$.

By Ginzburg, Keller and Van den Bergh \cite{Ginzburg, Keller: Calabi-Yau completion}, 
the Ginzburg dg-algebras for quivers with potentials  are $3$-Calabi-Yau. 
Hence, as a special case, we have 

\begin{proposition}\label{202111261255}
Assume that $\chara \kk \neq 2$. 
Then the derived quiver Heisenberg algebra $\YYM$ is $3$-Calabi-Yau. 
\end{proposition}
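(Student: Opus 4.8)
\textbf{Plan of proof for Proposition \ref{202111261255}.}
The statement is an immediate application of the established fact that Ginzburg dg-algebras of quivers with potential are $3$-Calabi-Yau, so the bulk of the work is the identification ${}^{v}\!\YYM(Q) \cong \cG\left(\overline{Q}, W\right)$ under the hypothesis $\chara \kk \neq 2$, which has already been explained in Section \ref{202102071454}. The plan is therefore to recall that identification precisely and then invoke the theorem of Ginzburg, Keller and Van den Bergh \cite{Ginzburg, Keller: Calabi-Yau completion}.

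First I would recall the defining data of the Ginzburg dg-algebra $\cG\left(\overline{Q}, W\right)$ attached to the quiver $\overline{Q}$ and the potential $W = -\frac{1}{2}{}^{v}\!\varrho \rho = -\frac{1}{2}\sum_{i \in Q_{0}} v_{i}^{-1} \rho_{i}^{2}$: its generators in cohomological degrees $0$, $-1$, $-2$ (the arrows of $\overline{Q}$, one reversed arrow $a^{\vee}$ for each arrow $a \in \overline{Q}_{1}$, and one loop $t_{i}$ per vertex), with differential sending $a^{\vee}$ to the cyclic derivative $\partial_{a}(W)$ and $t_{i}$ to $e_{i}\bigl(\sum_{a \in \overline{Q}_{1}}[a, a^{\vee}]\bigr)e_{i}$. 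Then I would match this against Definition \ref{202111201445}: the degree $-1$ generators $\alpha^{\circledast}, \alpha^{\circ}$ correspond to $\alpha^{\vee}, (\alpha^{*})^{\vee}$, and the key computation $\partial_{\alpha}(W) = -{}^{v}\!\eta_{\alpha^{*}}$, $\partial_{\alpha^{*}}(W) = {}^{v}\!\eta_{\alpha}$ (noted in Section \ref{202102071454}, verifiable by the cyclic Leibniz rule \cite[Lemma 3.8]{DWZ}) shows the differentials on the degree $-1$ generators agree up to sign with \eqref{the differentials of derived QHA}. The differential on $t_{i}$ matches directly by expanding $\sum_{a \in \overline{Q}_{1}}[a,a^{\vee}]$ into the two sums over $Q_{1}$. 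I would emphasize the point already made in the excerpt: although $W$ carries the coefficient $\frac{1}{2}$, the cyclic derivatives (hence the differentials) do not, so the formulas of Definition \ref{202111201445} make literal sense and the isomorphism $\YYM(Q) \cong \cG\left(\overline{Q}, W\right)$ is exact for $\chara \kk \neq 2$.

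Having established this isomorphism of dg-algebras, the conclusion is immediate: by \cite{Ginzburg, Keller: Calabi-Yau completion}, the Ginzburg dg-algebra of any finite quiver with potential is (homologically smooth and) $3$-Calabi-Yau as a bimodule, hence so is ${}^{v}\!\YYM(Q)$. The only genuine subtlety — and thus the main (minor) obstacle — is bookkeeping: making sure the sign conventions in the definition of $\cG$ used here coincide with those in the cited references, and confirming that the $3$-Calabi-Yau property as stated in \cite{Keller: Calabi-Yau completion} (with its conventions on smoothness and the shift) is exactly what is claimed in the proposition. Since the proposition is stated without a Gorenstein parameter (that refinement appears in the later Theorem referenced as \ref{202112122233}), no further argument about the $*$-grading is needed here, and the proof is essentially a one-line citation once the identification with a Ginzburg dg-algebra is in place.
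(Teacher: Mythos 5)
Your proposal is correct and follows essentially the same route as the paper: Section \ref{202102071454} identifies ${}^{v}\!\YYM(Q)$ with the Ginzburg dg-algebra $\cG\left(\overline{Q}, W\right)$ for $W=-\frac{1}{2}{}^{v}\!\varrho\rho$ via the cyclic-derivative computation, and the proposition is then deduced by citing the $3$-Calabi-Yau property of Ginzburg dg-algebras from Ginzburg and Keller--Van den Bergh. Your remarks on the sign/convention bookkeeping and on the coefficient $\frac{1}{2}$ not appearing in the differentials match the paper's own discussion, so nothing further is needed.
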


Later in Theorem \ref{202112122233}, 
we prove that $\YYM$ is $3$-Calabi-Yau even in the case $\chara \kk = 2$.

\subsubsection{The  morphism $\tilde{\pi}$}

We introduce the elements  $\vars_{i} := v_{i}^{-1} s_{i}$  and
 $\vars := \sum_{i \in Q_{0}} \vars_{i} =\sum_{i \in Q_{0}} v_{i}^{-1} s_{i}$ of $\PPi$. 
Note that $d(\vars) = -\varrho$. 
We also introduce the elements  $\vart_{i} := v_{i}^{-1} t_{i}$  and $\vart := \sum_{i \in Q_{0}}\vart_{i} =\sum_{i \in Q_{0}} v_{i}^{-1} t_{i}$ of $\YYM$.

We define a morphism $\tilde{\pi}: \YYM \to \PPi$ of algebras 
 over $\Pa_{0}$. 
On the generators, $\tilde{\pi}$ is defined by the formula 
\[
\begin{split}
&\tilde{\pi}(\alpha) := \alpha, \tilde{\pi}(\alpha^{*}) := \alpha^{*}, \\
&
\tilde{\pi}(\alpha^{\circledast}) := -[\alpha, \vars] =- v_{h(\alpha)}^{-1} \alpha s_{h(\alpha)} + v_{t(\alpha)}^{-1}s_{t(\alpha)} \alpha,\\
&\tilde{\pi}(\alpha^{\circ}) := [\alpha^{*},\vars] =  v_{t(\alpha)}^{-1} \alpha^{*} s_{t(\alpha)} - v_{h(\alpha)}^{-1} s_{h(\alpha)} \alpha^{*},\\
&\tilde{\pi}(\vart_{i}): = -\vars^{2}_{i} = - v_{i}^{-2}s_{i}^{2}.
\end{split}
\]
Since $\YYM$ is freely generated by these generators, the above formulas defines a morphism $\ppi: \YYM \to \PPi$ of algebras. 

Observe that $\tilde{\pi}$ preserves cohomological degree and $*$-degree. 
The  $*$-degree $0$ part $\tilde{\pi}_{0}: \YYM_{0} \to \PPi_{0}$ is just the identity map of $\Pa = \YYM_{0} = \PPi_{0}$.

\begin{lemma}\label{202102041200}
The morphism $\ppi$ is compatible with the differentials and hence is 
a morphism of $*$-graded  dg-algebras. 
\end{lemma}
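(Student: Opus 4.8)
The statement to prove is that the algebra homomorphism $\ppi\colon\YYM\to\PPi$ defined on generators in Definition~\ref{202111201445} and in the paragraph above commutes with the differentials. Since $\ppi$ is a homomorphism of graded algebras and both differentials are derivations satisfying the graded Leibniz rule, it suffices to verify $\ppi(d_{\YYM}(x)) = d_{\PPi}(\ppi(x))$ on the algebra generators $\alpha$, $\alpha^{*}$, $\alpha^{\circledast}$, $\alpha^{\circ}$, $t_{i}$; the general case then follows by induction on the word length using that $d\ppi$ and $\ppi d$ agree on $\Pa_0$ and are both $(\ppi,\ppi)$-derivations into the $\PPi$-bimodule $\PPi$. (This reduction I would state explicitly but not belabour.)

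First I would dispose of the easy generators: $d_{\YYM}(\alpha)=d_{\YYM}(\alpha^{*})=0$ and $d_{\PPi}(\ppi(\alpha))=d_{\PPi}(\alpha)=0$, $d_{\PPi}(\ppi(\alpha^{*}))=d_{\PPi}(\alpha^{*})=0$, so these hold trivially. Next, for $\alpha^{\circledast}$ one must check $\ppi(d_{\YYM}\alpha^{\circledast}) = \ppi({}^{v}\!\eta_{\alpha}) = {}^{v}\!\eta_{\alpha}$ (viewed inside $\PPi$, since $\ppi$ fixes $\alpha,\alpha^{*}$) equals $d_{\PPi}(\ppi(\alpha^{\circledast})) = d_{\PPi}(-[\alpha,\vars]) = -[\alpha, d_{\PPi}\vars] = [\alpha, \varrho] = {}^{v}\!\eta_{\alpha}$, using $d_{\PPi}(\vars) = -\varrho$ and that $d_{\PPi}$ is a derivation with $d_{\PPi}\alpha=0$, together with $d_{\PPi}[\alpha,\vars] = [d_{\PPi}\alpha,\vars] - [\alpha, d_{\PPi}\vars]$ (sign bookkeeping: $|\alpha|=0$). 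Symmetrically, $\ppi(d_{\YYM}\alpha^{\circ}) = \ppi(-{}^{v}\!\eta_{\alpha^{*}}) = -[\alpha^{*},\varrho]$ must equal $d_{\PPi}(\ppi(\alpha^{\circ})) = d_{\PPi}([\alpha^{*},\vars]) = -[\alpha^{*}, d_{\PPi}\vars] = [\alpha^{*},\varrho]$; wait — one must be careful that ${}^{v}\!\eta_{\alpha^{*}} = [\alpha^{*},{}^{v}\!\varrho]$ by definition, so $-{}^{v}\!\eta_{\alpha^{*}} = -[\alpha^{*},\varrho]$ and $d_{\PPi}([\alpha^{*},\vars]) = [\alpha^{*}, -\varrho] = -[\alpha^{*},\varrho]$: these match. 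Here $|\alpha^{*}|=0$ so no signs intrude.

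The real work is the generator $t_{i}$, where $\ppi(d_{\YYM}t_{i}) = \ppi\!\big(\sum_{\alpha}e_{i}[\alpha,\alpha^{\circ}]e_{i} + \sum_{\alpha}e_{i}[\alpha^{*},\alpha^{\circledast}]e_{i}\big)$ must be shown to equal $d_{\PPi}(\ppi(\vart_i\cdot v_i)) = d_{\PPi}(-v_i\vars_i^2) = -v_i\, d_{\PPi}(\vars_i^2)$, equivalently (dividing by $v_i$) $\ppi(d_{\YYM}\vart_i) = -d_{\PPi}(\vars_i^2) = -(d_{\PPi}\vars_i)\vars_i - \vars_i(d_{\PPi}\vars_i)$, and $d_{\PPi}(\vars_i) = v_i^{-1}d_{\PPi}s_i = -v_i^{-1}\rho_i = -{}^{v}\!\varrho_i$, so the right side is ${}^{v}\!\varrho_i\vars_i + \vars_i\,{}^{v}\!\varrho_i$. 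On the left, substituting $\ppi(\alpha^{\circ}) = [\alpha^{*},\vars]$, $\ppi(\alpha^{\circledast}) = -[\alpha,\vars]$ and expanding $[\alpha,[\alpha^{*},\vars]]$ and $[\alpha^{*},-[\alpha,\vars]] = -[\alpha^{*},[\alpha,\vars]]$ via the graded Jacobi identity, one gets after cancellation a sum involving $[[\alpha,\alpha^{*}],\vars]$ terms localized at $e_i$; since $\sum_{\alpha: t(\alpha)=i}\alpha\alpha^{*} - \sum_{\alpha: h(\alpha)=i}\alpha^{*}\alpha = \rho_i$ (the mesh relation, noting $\alpha,\alpha^{*}$ commute in $\PPi$ only up to the relation), the bracket terms collapse precisely to $[\rho_i,\vars]$-type contributions that reorganize into ${}^{v}\!\varrho_i\vars_i + \vars_i\,{}^{v}\!\varrho_i$. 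I expect this computation — matching the doubly-summed commutator expression against $d_{\PPi}(\vars_i^2)$, with careful attention to which arrows have head versus tail at $i$ and to the idempotents $e_i$ on both sides — to be the main obstacle; the cohomological signs are mild here since $|\alpha^{\circ}| = |\alpha^{\circledast}| = -1$ but the potentially sign-carrying factors ($\alpha,\alpha^{*}$, of degree $0$) keep the Leibniz and Jacobi signs trivial, so the difficulty is purely combinatorial bookkeeping of the quiver data rather than sign chasing. I would organize it by first rewriting $d_{\YYM}t_i$ using $[\alpha,\alpha^{\circ}] = \alpha\alpha^{\circ} - (-1)^{0\cdot(-1)}\alpha^{\circ}\alpha = \alpha\alpha^{\circ}-\alpha^{\circ}\alpha$, applying $\ppi$, then using the graded Jacobi identity $[x,[y,z]] = [[x,y],z] + (-1)^{|x||y|}[y,[x,z]]$ to pull $\vars$ to the outside, and finally recognizing the coefficient of $\vars$ (resp. the terms with $\vars$ on the left) as $\pm\rho_i$ scaled appropriately.
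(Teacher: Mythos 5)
Your overall strategy---reduce to the generators $\alpha,\alpha^{*},\alpha^{\circledast},\alpha^{\circ},t_{i}$, check $\ppi d=d\ppi$ there via the Leibniz rule, and for $t_{i}$ collapse the double brackets with the graded Jacobi identity---is exactly the paper's proof, and your treatment of $\alpha,\alpha^{*},\alpha^{\circledast},\alpha^{\circ}$ lands in the right place. But the verification on $t_{i}$, which you yourself identify as the real work, is carried out with a wrong sign rule, and the identity you claim to check is false. Since $\vars_{i}$ has cohomological degree $-1$, the graded Leibniz rule gives $d(\vars_{i}^{2})=d(\vars_{i})\vars_{i}+(-1)^{|\vars_{i}|}\vars_{i}d(\vars_{i})=d(\vars_{i})\vars_{i}-\vars_{i}d(\vars_{i})$, hence $d\ppi(\vart_{i})=-d(\vars_{i}^{2})={}^{v}\!\varrho_{i}\vars_{i}-\vars_{i}\,{}^{v}\!\varrho_{i}=e_{i}[{}^{v}\!\varrho,\vars]e_{i}$, the graded \emph{commutator}---not the anticommutator ${}^{v}\!\varrho_{i}\vars_{i}+\vars_{i}\,{}^{v}\!\varrho_{i}$ you wrote, where you dropped the Koszul sign coming from the odd factor $\vars_{i}$. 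On the other side, the Jacobi collapse gives $\ppi d(\vart_{i})=v_{i}^{-1}\sum_{\alpha}e_{i}[[\alpha,\alpha^{*}],\vars]e_{i}=e_{i}[{}^{v}\!\varrho,\vars]e_{i}$, and since ${}^{v}\!\varrho$ has degree $0$ and $\vars$ degree $-1$ this bracket is ${}^{v}\!\varrho\,\vars-\vars\,{}^{v}\!\varrho$ (the sign is $(-1)^{0\cdot(-1)}=+1$; an anticommutator appears only for two odd elements), so your claim that the $[\rho_{i},\vars]$-type terms ``reorganize into ${}^{v}\!\varrho_{i}\vars_{i}+\vars_{i}\,{}^{v}\!\varrho_{i}$'' is not what the bracket means. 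The equality you assert on $t_{i}$ therefore only appears to close because of two compensating sign mistakes; in the free algebra $\PPi=\sfT_{\Pa_{0}}(V\oplus V^{*}\oplus S)$ the commutator and anticommutator of ${}^{v}\!\varrho_{i}$ with $\vars_{i}$ genuinely differ (away from characteristic $2$), so the check as written fails. Your remark that ``the cohomological signs are mild here \dots trivial'' is precisely where the error enters: the non-trivial sign comes from the odd degree of $\vars_{i}$ itself, not from $\alpha,\alpha^{*}$.

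The same confusion surfaces, harmlessly, in your parenthetical ``$d_{\PPi}[\alpha,\vars]=[d_{\PPi}\alpha,\vars]-[\alpha,d_{\PPi}\vars]$'': since $|\alpha|=0$ the correct sign is $+$, which is in fact what your displayed chain uses, and your self-correction for $\alpha^{\circ}$ likewise ends on the right rule. Once you apply $d(xy)=d(x)y+(-1)^{|x|}x\,d(y)$ consistently, the $t_{i}$ computation goes through exactly as in the paper: both sides equal $e_{i}[{}^{v}\!\varrho,\vars]e_{i}={}^{v}\!\varrho_{i}\vars_{i}-\vars_{i}\,{}^{v}\!\varrho_{i}$, and nothing else in your outline needs to change.
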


\begin{proof}
It is enough to check that the equation $\ppi d= d \ppi $ on the generators. 

\[
\ppi d( \alpha) = 0 = d \ppi (\alpha), \ppi d( \alpha^{*}) = 0 = d \ppi (\alpha^{*}).
\]

\[
\begin{split}
&d \ppi(\alpha^{\circledast}) =-  d([\alpha, \vars]) =- [d(\alpha), \vars] - [\alpha, d(\vars)] = 
[\alpha, \varrho],  \\
& \ppi d(\alpha^{\circledast}) = \ppi(\eta_{\alpha}) = \ppi([\alpha, \varrho]) = [\alpha, \varrho]. 
\end{split}
\]

\[
\begin{split}
&d \ppi(\alpha^{\circ}) = d([\alpha^{*}, \vars]) = [d(\alpha^{*}), \vars] + [\alpha^{*}, d(\vars)] = -  [\alpha^{*}, \varrho],  \\
& \ppi d(\alpha^{\circ}) = - \ppi(\eta_{\alpha^{*}}) = - \ppi([\alpha^{*}, \varrho]) = - [\alpha^{*}, \varrho].
\end{split}
\]

\[
\begin{split}
& d \ppi(\vart_{i}) = - d(\vars_{i}^{2} ) = -d(\vars_{i})\vars_{i} + \vars_{i}d(\vars_{i}) = \varrho_{i} \vars_{i} - \vars_{i} \varrho_{i} = e_{i}[\rho, \vars]e_{i},\\
& \ppi d(\vart_{i}) =v^{-1}_{i} \ppi\left(  \sum_{\alpha \in Q_{1} } e_{i}[\alpha, \alpha^{\circ}]e_{i} + \sum_{\alpha \in Q_{1}}  e_{i}[\alpha^{*}, \alpha^{\circledast}] e_{i} \right)\\ 
& \ \ \ \ \ 
=  v^{-1}_{i} \sum_{\alpha \in Q_{1} } e_{i}[\alpha, [\alpha^{*},\vars] ]e_{i} -  \sum_{\alpha \in Q_{1}}  e_{i}[\alpha^{*}, [\alpha,\vars]] e_{i} \\
& \ \ \ \ \ 
= v^{-1}_{i}  \sum_{\alpha \in Q_{1} } e_{i}[[\alpha, \alpha^{*}] , \vars]e_{i}
= e_{i}[\varrho, \vars]e_{i}.
\end{split}
\]
\end{proof}

It is clear that under the isomorphisms $\tuH^{0} (\YYM) \cong \YM, \tuH^{0}(\PPi) \cong \Pi$, 
the $0$-th cohomology morphism $\tuH^{0}(\ppi): \tuH^{0}(\YYM) \to \tuH^{0}(\PPi) $ corresponds to 
the canonical projection $\pi: \YM \to \Pi$ of \eqref{202009212139}.
 
\subsubsection{The exact triangle $\sfU$} 

In Theorem \ref{exact triangle U} below,  
we give an exact triangle $\sfU: \YYM(-1)  \xrightarrow{ \sfr_{ \varrho } } \YYM \xrightarrow{ \ppi } \PPi \to \YYM(-1)[1]$  
the $0$-th cohomology group of which coincides with 
the canonical exact sequence  $\YM(-1)  \xrightarrow{ \sfr_{ \varrho } } \YM \xrightarrow{ \ppi } \Pi \to 0 $.  
To state the theorem, first we need to prove the following lemma.

\begin{lemma}\label{homotopy 20191203}
We have $\ppi \sfr_{\varrho} = -d\sfr_{\vars} \ppi -\sfr_{\vars}\ppi d$. 
In other words, the morphism $\ppi\sfr_{\varrho}: \YYM \to \PPi$ is homotopic to $0$ via the homotopy 
$-\sfr_{\vars}\ppi$. 
\[
\begin{xymatrix}@C=10mm@R=4mm{
\YYM(-1)  \ar[r]^{\sfr_{\varrho}} \ar@/_2pc/[rr]_{0}   &\YYM \ar[r]^{\ppi} \ar@{=>}[d]^{-\sfr_{\vars}\ppi} & \PPi\\
&& 
}\end{xymatrix}
\]
\end{lemma}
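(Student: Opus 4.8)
The plan is to verify the identity $\ppi \sfr_{\varrho} = -d\sfr_{\vars}\ppi - \sfr_{\vars}\ppi d$ directly, exploiting that all four maps in sight are determined by their action on generators together with (graded) Leibniz-type compatibilities. First I would observe that $\ppi$ is a morphism of dg-algebras by Lemma \ref{202102041200}, so $\ppi d = d\ppi$; hence the right-hand side can be rewritten as $-d\sfr_{\vars}\ppi - \sfr_{\vars}d\ppi$. Next, since $\vars$ has cohomological degree $-1$ and $d(\vars) = -\varrho$ in $\PPi$, the graded Leibniz rule for right multiplication gives, for a homogeneous element $x \in \PPi$,
\[
d(\sfr_{\vars}(x)) = d\bigl((-1)^{|x|} x\vars\bigr) = (-1)^{|x|} d(x)\vars + (-1)^{|x|}(-1)^{|x|} x\, d(\vars) = -\sfr_{\vars}(d(x)) - x\varrho.
\]
Therefore $d\sfr_{\vars} + \sfr_{\vars} d = -\sfr_{\varrho}$ as operators on $\PPi$ (here $\sfr_{\varrho}$ denotes right multiplication by $\varrho$, which has cohomological degree $0$). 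Composing on the right with $\ppi$ yields $(d\sfr_{\vars} + \sfr_{\vars}d)\ppi = -\sfr_{\varrho}\ppi$, i.e. $-d\sfr_{\vars}\ppi - \sfr_{\vars}d\ppi = \sfr_{\varrho}\ppi$.

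It then remains to check that $\ppi\sfr_{\varrho} = \sfr_{\varrho}\ppi$, that is, that $\ppi$ intertwines right multiplication by ${}^{v}\!\varrho \in \YYM$ with right multiplication by ${}^{v}\!\varrho \in \PPi$. Since $\ppi$ is an algebra homomorphism, for any $x \in \YYM$ we have $\ppi(x\cdot \varrho) = \ppi(x)\ppi(\varrho)$, so it suffices to verify $\ppi(\varrho) = \varrho$ where on the left $\varrho = \sum_i v_i^{-1}\rho_i \in \YYM$ and on the right $\varrho = \sum_i v_i^{-1}\rho_i \in \PPi$; but the $\rho_i$ are expressed purely in terms of the degree-$0$ generators $\alpha, \alpha^{*}$, which $\ppi$ fixes, so this is immediate. (Care with signs: $\varrho$ sits in cohomological degree $0$, so $\sfr_{\varrho}$ is just $x \mapsto x\varrho$ with no sign, and the $*$-degree bookkeeping — $\varrho$ has $*$-degree $1$, hence the shift $\YYM(-1)$ in the source — is purely formal and does not affect the computation.)

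The only genuinely delicate point is getting the sign in the graded Leibniz computation above exactly right, in particular the sign conventions for $\sfr_{x}$ (the paper defines $\sfr_{x}(y) = (-1)^{|x||y|}yx$) and for the differential of a product; everything else is bookkeeping. Concretely, with $|\vars| = -1$ one has $\sfr_{\vars}(y) = (-1)^{|y|}y\vars$, and the claimed homotopy formula follows once one confirms that $d\circ\sfr_{\vars} + \sfr_{\vars}\circ d = -\sfr_{d(\vars)} = \sfr_{\varrho}$, which is the standard fact that right multiplication by a Hochschild/dg element of odd degree is a homotopy between right multiplication by its boundary and $0$. I would present the argument in the order: (i) reduce the RHS using $\ppi d = d\ppi$; (ii) establish $d\sfr_{\vars} + \sfr_{\vars}d = -\sfr_{\varrho}$ on $\PPi$ from $d(\vars) = -\varrho$ and the Leibniz rule; (iii) note $\ppi\sfr_{\varrho} = \sfr_{\varrho}\ppi$ because $\ppi$ is an algebra map fixing $\varrho$; (iv) combine. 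The hard part is really just not dropping a sign in step (ii).
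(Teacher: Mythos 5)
Your proof is correct and takes essentially the same route as the paper: the paper's proof is exactly the one-line graded Leibniz computation of $d(\sfr_{\vars}\ppi(x))$ using $\ppi d = d\ppi$ (Lemma \ref{202102041200}), $d(\vars) = -\varrho$, and the sign convention for $\sfr$, which is precisely your steps (i)--(iv) merged into a single calculation (with $\ppi\sfr_{\varrho} = \sfr_{\varrho}\ppi$ used implicitly). Your reorganization, first proving $d\sfr_{\vars} + \sfr_{\vars}d = -\sfr_{\varrho}$ on $\PPi$ and then precomposing with $\ppi$, is only a cosmetic repackaging, and your sign bookkeeping is right.
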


\begin{proof}
Let $x \in \YYM$ be a homogeneous element. 
Then, using Lemma \ref{202102041200}, we deduce the following equation
\[
\begin{split}
d (\sfr_{\vars}\ppi(x)) &= ( -1)^{|x|}d(\ppi(x)\vars) = ( -1)^{|x|} (d\ppi(x)) \vars +\ppi(x) d(\vars) \\
&=(-1)^{|x|} \ppi(dx)\vars -  \ppi(x) \varrho\\ 
&=-\sfr_{\vars} \ppi(dx) -  \sfr_{\varrho} \ppi(x).
\end{split}
\]
\end{proof}

By this lemma,  we have the following cochain map of dg-$\YYM$-modules 
\[
q_{\sfr_{\varrho}, \ppi, -\sfr_{\vars}\ppi } := ( \ppi, - \sfr_{\vars} \ppi \uparrow): \cone( \sfr_{\varrho}) \to \PPi  
\]
where we use the notation in \eqref{202012082037}. 

\begin{theorem}\label{exact triangle U}
The map $q_{\sfr_{\varrho}, \ppi, -\sfr_{\vars}\ppi }$ is a quasi-isomorphism. 
Therefore we have 
an exact triangle  in $\sfD(\YYM\Gr)$
\[
\sfU: \YYM( -1) \xrightarrow{ \sfr_{\varrho}} \YYM \xrightarrow{ \ppi } \PPi \to \YYM( -1)[1]. 
\]
\end{theorem}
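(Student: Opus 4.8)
The map $q := q_{\sfr_{\varrho}, \ppi, -\sfr_{\vars}\ppi}\colon \cone(\sfr_{\varrho}) \to \PPi$ is a cochain map of dg-$\YYM$-modules by Lemma \ref{homotopy 20191203}; once we know it is a quasi-isomorphism the asserted exact triangle follows from the standard triangle $\YYM(-1)\xrightarrow{\sfr_{\varrho}}\YYM\xrightarrow{i_1}\cone(\sfr_{\varrho})\to\YYM(-1)[1]$ in $\sfK(\YYM\Gr)$ by transporting along $q$. So the whole content is: \emph{$q$ is a quasi-isomorphism}. Since everything is compatible with the $*$-grading, it suffices to check this in each $*$-degree $n\geq 0$ separately, i.e. to show that
\[
q_n\colon \left(\cone(\sfr_{\varrho})\right)_n = \YYM_{n-1}\oplus \YYM_n \longrightarrow \PPi_n
\]
(with $\YYM_{-1}:=0$) is a quasi-isomorphism of complexes of $\Pa$-bimodules. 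In $*$-degree $0$ this is the identity $\Pa\to\Pa$, so assume $n\geq 1$.

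\textbf{Key steps.} First I would record explicit presentations of the three $*$-graded pieces as complexes of $\Pa$-bimodules. From Observation \ref{202006122126}, $\YYM_1 = (\Pa V^*\Pa \oplus \Pa V^{\circledast}\Pa,\ \hat\eta'\!\uparrow)$ concentrated in cohomological degrees $0$ and $-1$; more generally $\YYM_n$ is built from the generators $\alpha,\alpha^*,\alpha^{\circledast},\alpha^\circ,t_i$ of total $*$-degree $n$, and $\PPi_n = \PPi_1^{\otimes_\Pa n} = (\PPa^\vee[1])^{\otimes n}$. The cleanest route is to reduce to the case $n=1$ and then bootstrap: the exact triangle $\sfU$ restricted to $*$-degree $1$ reads $\Pa \xrightarrow{\sfr_\varrho} \YYM_1 \xrightarrow{\ppi_1} \PPi_1 \to \Pa[1]$, and here $\ppi_1$ sends $\alpha^{\circledast}\mapsto -[\alpha,\vars]$, $\alpha^*\mapsto\alpha^*$, matching the presentation \eqref{202006122059} of $\PPi_1 = (\Pa V^*\Pa \oplus \Pa\Pa[1],\ -\hat\rho\!\uparrow)$. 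The quasi-isomorphism in $*$-degree $1$ amounts to checking that the total complex of the evident double complex $\Pa \to \YYM_1 \to \PPi_1$ is acyclic, which is a direct bimodule computation using that $d(\vars)=-\varrho$ and $d(\vart_i)=e_i[\rho,\vars]e_i$; equivalently one identifies $\cone(\sfr_\varrho)_1$ with a complex quasi-isomorphic to $\PPi_1$ via $q_1 = (\ppi_1, -\sfr_\vars\ppi_1\!\uparrow)$ directly.

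\textbf{Bootstrapping to general $n$.} For $n\geq 2$ I would argue by induction using multiplicativity. Since $\YYM$ is the tensor algebra $\sfT_\Pa(\YYM_1)$ (as a graded algebra, before taking the differential into account one must be careful — actually $\YYM$ is freely generated over $\Pa_0$ by $\alpha,\alpha^*,\alpha^{\circledast},\alpha^\circ,t_i$, so $\YYM_n$ is \emph{not} literally $\YYM_1^{\otimes_\Pa n}$), the better strategy is: observe that $\varrho\in\YYM_1$ is central and apply the octahedral-axiom machinery of Section \ref{section: octahedral}. Concretely, $\sfU$ should be obtained as the $*$-graded cone of $\sfr_\varrho$ on the \emph{whole} dg-algebra at once, and the statement ``$q$ is a quasi-isomorphism'' globally is cleaner than degree-by-degree: one shows $\cone(\sfr_\varrho)$ has cohomology $\tuH^\bullet = \YYM/(\varrho)$ in each cohomological degree, then identifies $\YYM/(\varrho)$ with $\PPi$ up to quasi-isomorphism via $\ppi$. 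Here the central element $\varrho$ being a non-zero-divisor on $\YYM$ in the appropriate derived sense is exactly what makes $\cone(\sfr_\varrho)\simeq \YYM/(\varrho)$, and then one checks the induced map $\YYM/(\varrho) \to \PPi$ coming from $\ppi$ (note $\ppi(\varrho) = \varrho - d(\ldots)$, so $\ppi$ does factor through the quotient up to homotopy, which is precisely the content of Lemma \ref{homotopy 20191203}) is a quasi-isomorphism. That last identification is what needs the explicit formulas for $\ppi$ on $\alpha^{\circledast},\alpha^\circ,\vart_i$ together with $d(s_i)=-\rho_i$.

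\textbf{Main obstacle.} The hard part is establishing that $\sfr_\varrho\colon \YYM(-1)\to\YYM$ is ``injective enough'' — i.e. that the cone computes $\YYM \lotimes_{\kk[\varrho]} \kk$, or equivalently that multiplication by $\varrho$ has vanishing higher Tor — so that $\cone(\sfr_\varrho)$ genuinely represents $\PPi$ and not some larger object. I expect this to require either a direct verification that the relevant Koszul-type complex $\YYM(-1)\xrightarrow{\sfr_\varrho}\YYM$ has cohomology concentrated appropriately, using the explicit combinatorics of the generators of $\YYM$, or a semi-orthogonality/dimension argument. The cleanest way to sidestep it is to work $*$-degree by $*$-degree as above and verify acyclicity of the finite double complex $\YYM_{n-1}\xrightarrow{\sfr_\varrho}\YYM_n\xrightarrow{q_n}\PPi_n$ by a contracting homotopy built from $\vars$ and $\vart$; constructing that homotopy explicitly, or invoking Lemma \ref{202008021445} and Lemma \ref{202008021437} to splice the $n=1$ case up the tensor tower, is the real work. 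Once acyclicity is in hand, the exact triangle $\sfU$ in $\sfD(\YYM\Gr)$ is immediate.
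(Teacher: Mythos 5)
Your reduction to $*$-degrees and your treatment of degree $1$ agree with the paper (the degree-$1$ statement is Lemma \ref{pre universal Auslander-Reiten triangle lemma}), but for $n\ge 2$ you only name the difficulty and defer it: the step you describe as ``the real work'' --- constructing the contracting homotopy, or splicing the $n=1$ case up the tower via Lemma \ref{202008021445} and Lemma \ref{202008021437} --- is precisely the content of the theorem, so as written there is a genuine gap. What is missing is the inductive handle. Since $\YYM_{n}$ is not $\YYM_{1}^{\otimes_{\Pa}n}$, the paper first identifies $\YYM_{n}$ with the cone of $\eeta^{*}_{n}\colon \PPi_{1}\otimes_{\Pa}\YYM_{n-2}\to\YYM_{1}\otimes_{\Pa}\YYM_{n-1}$ (the triangles $\sfV_{n}$ of Section \ref{202106020843}), and it inducts not on $\cone(\sfr_{\varrho})$ but on the cone of the two-sided morphism $\brho=\zzeta\,\rrho_{\YYM}$ (Theorem \ref{exact triangle U2}): the right $\YYM$-module structure is what makes $\PPi_{n}\cong\PPi_{1}\otimes_{\Pa}\PPi_{n-1}$ and the inductively known ${}_{\PPi_{1}}q_{n-1}$ enter. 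The induction step then requires the explicit homotopies $\sfN$ (Lemma \ref{homotopic lemma 5}) and $\sfK$ (Lemma \ref{202008021653}) together with the compatibility $\sfK({}_{\PPi_{1}}\mu_{\YYM_{n-2}})={}_{\PPi_{1}}\sfM+({}_{\PPi_{1}}\ppi_{n-1})\sfN$ (Lemma \ref{202008021717}); only with this identity can Lemma \ref{202008021437} be applied to obtain the comparison diagram \eqref{202008021755} reducing $q'_{n}$ to ${}_{\PPi_{1}}q_{n-1}$. Finally the one-sided statement is recovered from the two-sided one via the homotopy $\sfO$ and the quasi-isomorphism $\mu_{\YYM(-1)}$ (Lemma \ref{homotopic lemma 3}). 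None of these ingredients appear in your outline, and a contracting homotopy for $\YYM_{n-1}\to\YYM_{n}\to\PPi_{n}$ ``built from $\vars$ and $\vart$'' cannot simply be asserted: producing it amounts to the computations above.

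Your alternative route through centrality is also not available. In the dg-algebra $\YYM$ the element $\varrho$ is \emph{not} central; it is central only up to the homotopy $\sfH$, i.e.\ in $\tuH(\YYM)$ (Lemma \ref{homotopy proposition}), so ``$\cone(\sfr_{\varrho})\simeq\YYM/(\varrho)$'' is not a dg-quotient you can form naively, and identifying the cohomology of the cone with $\Pi$ would require knowing that $\sfr_{\varrho}$ acts injectively on $\tuH^{<0}(\YYM)$. That regularity is not known at this stage: the concentration of $\YYM$ in cohomological degree $0$ for non-Dynkin $Q$ (Proposition \ref{202207111600}) and the description $\tuH(\YYM)\cong\YM[u]$ in the Dynkin case (Theorem \ref{cohomology algebra structure theorem}) are both consequences of the triangle $\sfU$, so arguing this way would be circular.
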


A proof is given in Section \ref{subsection: U}. 
In Section \ref{a-infinity}, we show that there exists an exact triangle \[
\widehat{\sfU}: \YYM( -1) \xrightarrow{ \widehat{\sfr_{\varrho}} } \YYM \xrightarrow{ \ppi } \PPi \to \YYM( -1)[1]. 
\]
in $\sfD(\YYM^{\mre}\Gr)$ 
which is sent to $\sfU$  by the forgetful functor $\sfD(\YYM^{\mre}\Gr) \to \sfD(\YYM\Gr)$.

\subsubsection{The homotopy $\sfH$}

We define a derivation $\sfH$ of $\YYM$ over $\Pa_{0}$ of cohomological degree $-1$ to be 
\[
\sfH(\alpha) := \alpha^{\circledast}, 
\sfH(\alpha^{*}) := - \alpha^{\circ},
\sfH(\alpha^{\circ}) := - [\alpha^{*}, \vart], 
\sfH(\alpha^{\circledast}) := [ \alpha,\vart ], 
\sfH(\vart_{i}) := 0
\]
for all $\alpha \in Q_{1}$ and $i \in Q_{0}$. 
The values of $\sfH$ for general homogeneous elements are determined from the Leibniz rule 
$\sfH(xy) = \sfH(x) y + ( -1)^{|x|} x \sfH(y)$. 

We note  that the map $\sfH: \YYM \to \YYM$ increase  the $*$-degree by $1$. 
\[
\sfH :\YYM_{n -1} \to \YYM_{n}. 
\]

The weighted mesh relation $\varrho$ is not a central element of $\YYM$, but 
in the next lemma we see  that it may be said to be  cohomologically central.  

\begin{lemma}\label{homotopy proposition}
The morphism $\sfH$ is a homotopy from 
the right multiplication map  $\sfr_{\varrho}$ to  
the left multiplication map $\sfl_{\varrho}$. 

In particular the weighted mesh relation $\varrho$ is central in the cohomology algebra $\tuH(\YYM)$. 
\end{lemma}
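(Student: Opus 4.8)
The statement to prove is Lemma~\ref{homotopy proposition}: the derivation $\sfH$ of cohomological degree $-1$ is a homotopy from the right multiplication map $\sfr_{\varrho}$ to the left multiplication map $\sfl_{\varrho}$, i.e.\ $\sfr_{\varrho} - \sfl_{\varrho} = d\sfH + \sfH d$ as maps $\YYM \to \YYM$, and consequently $\varrho$ becomes central in $\tuH(\YYM)$. The plan is a direct verification. First I would observe that $\sfr_{\varrho} - \sfl_{\varrho} = -\sfb_{\varrho}$, the (inner) derivation $x \mapsto -[\varrho, x]$ of cohomological degree $0$; since $\varrho$ has even cohomological degree this is honestly the bracket with $\varrho$ with no sign subtleties. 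Next I would note that $d\sfH + \sfH d = [d, \sfH]_{\Hom}$ is again a derivation of $\YYM$ of cohomological degree $0$ (this is the remark recorded in the ``dg-modules'' section: the graded commutator in $\cpxHom_\kk(R,R)$ of two derivations is a derivation). So both sides of the claimed identity are derivations of $\YYM$ over $\Pa_0$, and $\YYM$ is freely generated over $\Pa_0$ by $\alpha, \alpha^{*}, \alpha^{\circ}, \alpha^{\circledast}, t_i$ (equivalently $\vart_i$); hence it suffices to check the equality on each of these generators.

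So the core of the proof is five case checks. For $\alpha \in Q_1$: $d\sfH(\alpha) = d(\alpha^{\circledast}) = \eta_\alpha = [\alpha, \varrho]$ and $\sfH d(\alpha) = \sfH(0) = 0$, so $(d\sfH + \sfH d)(\alpha) = [\alpha, \varrho] = -[\varrho, \alpha] = (\sfr_\varrho - \sfl_\varrho)(\alpha)$. For $\alpha^{*}$: $d\sfH(\alpha^{*}) = d(-\alpha^{\circ}) = \eta_{\alpha^{*}} = [\alpha^{*}, \varrho]$ and $\sfH d(\alpha^{*}) = 0$, giving the right answer with the same sign bookkeeping. For $\vart_i$ (hence $t_i$): $\sfH(\vart_i) = 0$, so $d\sfH(\vart_i) = 0$; and $\sfH d(\vart_i) = \sfH\bigl(v_i^{-1}\sum_\alpha e_i[\alpha,\alpha^{\circ}]e_i + v_i^{-1}\sum_\alpha e_i[\alpha^{*},\alpha^{\circledast}]e_i\bigr)$, which I expand using the Leibniz rule and the defining values $\sfH(\alpha) = \alpha^{\circledast}$, $\sfH(\alpha^{*}) = -\alpha^{\circ}$, $\sfH(\alpha^{\circ}) = -[\alpha^{*},\vart]$, $\sfH(\alpha^{\circledast}) = [\alpha,\vart]$; after collecting terms (the ones not involving $\vart$ should cancel against each other, by a computation parallel to the check $\ppi d(\vart_i) = e_i[\varrho,\vars]e_i$ in Lemma~\ref{202102041200}) this should reduce to $v_i^{-1}\sum_\alpha e_i[[\alpha,\alpha^{*}], \vart]e_i = e_i[\varrho, \vart]e_i = \varrho\vart_i - \vart_i\varrho = -[\varrho,\vart_i]$, as desired. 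The remaining two cases, $\alpha^{\circ}$ and $\alpha^{\circledast}$, are the most laborious: for $\alpha^{\circledast}$ one has $d\sfH(\alpha^{\circledast}) = d[\alpha,\vart] = [d\alpha, \vart] - [\alpha, d\vart] = -[\alpha, d\vart]$ (using $|\alpha|=0$) and $\sfH d(\alpha^{\circledast}) = \sfH(\eta_\alpha) = \sfH[\alpha,\varrho]$, and one must expand both and show the sum equals $-[\varrho, \alpha^{\circledast}]$; here the identity $d\vart = \varrho\vart - \vart\varrho$ type relations, the graded Jacobi identity $[x,[y,z]] = [[x,y],z] + (-1)^{|x||y|}[y,[x,z]]$, and the fact that $d(\varrho) = 0$ in $\YYM$ (since $\varrho \in \Pa V^{*}\Pa$ is a sum of products of the closed generators $\alpha,\alpha^{*}$) are the tools. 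The case $\alpha^{\circ}$ is entirely analogous with the roles of $\alpha \leftrightarrow \alpha^{*}$ and the appropriate sign changes.

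Once the homotopy identity $\sfr_\varrho - \sfl_\varrho = d\sfH + \sfH d$ is established, the second assertion is immediate: passing to cohomology, $d\sfH + \sfH d$ induces the zero map on $\tuH(\YYM)$, so $\sfr_\varrho$ and $\sfl_\varrho$ induce the same endomorphism of $\tuH(\YYM)$; that is, $[\bar\varrho, \xi] = 0$ for every $\xi \in \tuH(\YYM)$, i.e.\ the class $\bar\varrho$ of $\varrho$ is central in $\tuH(\YYM)$.

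\textbf{Expected main obstacle.} I expect the only real difficulty to be purely computational: the two generator checks on $\alpha^{\circ}$ and $\alpha^{\circledast}$ require careful expansion of nested graded commutators $[\alpha,\vart]$ and $\sfH[\alpha,\varrho]$ together with the graded Leibniz and Jacobi identities, with several sign factors arising because $\vart$, $\alpha^{\circ}$, $\alpha^{\circledast}$ carry odd or even cohomological degree and $\sfH$ has degree $-1$. There is no conceptual obstruction — the reduction to generators via the ``derivation minus derivation is a derivation'' observation tames it — but bookkeeping the signs correctly (especially that $d\vart_i$ and the auxiliary relation $d(t_i)$ contribute the terms that ultimately reassemble into $[\varrho,\vart]$) is where care is needed. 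A useful sanity check throughout is that the $0$-th cohomology of the identity must reduce to the known statement that $\varrho$ is central in $\YM = \tuH^{0}(\YYM)$, which is built into the definition of $\YM$.
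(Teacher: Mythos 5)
Your strategy is exactly the paper's: both sides of the desired identity $\sfr_{\varrho}-\sfl_{\varrho}=d\sfH+\sfH d$ are degree-zero derivations of $\YYM$ over $\Pa_{0}$, so it suffices to check on the generators $\alpha,\alpha^{*},\alpha^{\circ},\alpha^{\circledast},t_{i}$, and the checks on $\alpha,\alpha^{*}$ and the passage to centrality in $\tuH(\YYM)$ are as you describe. However, the sketch of the two hard cases rests on an auxiliary relation that is false. You write that for $\alpha^{\circledast}$ one uses ``$d\vart=\varrho\vart-\vart\varrho$ type relations''; but by definition $d(\vart_{i})=\varrho^{\circ}_{i}+\varrho^{\circledast}_{i}$, which is not a commutator with $\varrho$. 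The identity actually needed is $\sfH(\varrho)+d(\vart)=0$ (equivalently $\sfH(\rho_{i})=-d(t_{i})$), which follows by applying the Leibniz rule for $\sfH$ to $\rho_{i}=\sum_{\alpha}e_{i}[\alpha,\alpha^{*}]e_{i}$ and comparing with the defining formula for $d(t_{i})$; the paper isolates precisely this claim. With it the check closes: $d\sfH(\alpha^{\circledast})+\sfH d(\alpha^{\circledast})=[\alpha,d\vart]+[\alpha^{\circledast},\varrho]+[\alpha,\sfH(\varrho)]=[\alpha^{\circledast},\varrho]$, and similarly for $\alpha^{\circ}$. Without identifying $\sfH(\varrho)=-d(\vart)$, the expansion you outline does not terminate in the claimed answer.

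There are also sign slips that should be repaired, since as written the computations would not come out. First, $d[\alpha,\vart]=[d\alpha,\vart]+(-1)^{|\alpha|}[\alpha,d\vart]=+[\alpha,d\vart]$ because $|\alpha|=0$; your ``$=-[\alpha,d\vart]$'' has the wrong sign. Second, in the $t_{i}$ check the Jacobi rearrangement of $\sfH d(t_{i})$ produces $e_{i}\bigl(\sum_{\alpha}[\vart,[\alpha,\alpha^{*}]]\bigr)e_{i}=[t_{i},\varrho]$, which is the correct target $(\sfr_{\varrho}-\sfl_{\varrho})(t_{i})$; your chain instead passes through $\sum_{\alpha}e_{i}[[\alpha,\alpha^{*}],\vart]e_{i}$ (wrong order, hence wrong sign) and then asserts $\varrho\vart_{i}-\vart_{i}\varrho=-[\varrho,\vart_{i}]$ (another sign error), so you land on the right answer only because two errors cancel. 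None of this changes the viability of the approach — it is the paper's proof — but the $\alpha^{\circ},\alpha^{\circledast},t_{i}$ cases need the corrected signs and the genuine key identity $\sfH(\varrho)+d(\vart)=0$ to go through.
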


\begin{proof}
We have to show  
\[
d\sfH(x) +\sfH d(x) =-\sfb_{\varrho}(x) = [x, \varrho]  
\]
for $x \in \YYM$. 
In other words, 
we have to prove the following equality  of morphisms $\YYM \to \YYM$
\begin{equation}\label{homotopy equation}
[d, \sfH]_{\Hom} = -\sfb_{\varrho}
\end{equation}
where $[-,+]_{\Hom}$  is the commutator inside the $\Hom$-complex  $\Hom_{\Pa_{0}}^{\bullet} (\YYM, \YYM)$.

Since the both sides of the equation \eqref{homotopy equation} 
are derivations of degree $0$, 
it is enough to check that 
the equality holds on the generators $\alpha, \alpha^{*}, \alpha^{\circ}, \alpha^{\circledast}, t_{i}$. 

The  equation  \eqref{homotopy equation} on $\alpha$ 
is checked as below.    
\[
[d, \sfH]_{\Hom}(\alpha) = d\sfH(\alpha) = d \alpha^{\circledast} = \eta_{\alpha}=[\alpha, \rho].
\]
A similar calculation works for $\alpha^{*}$.

We claim $\sfH (\rho_{i}) + d(t_{i})= 0$. 
\[
\begin{split}
\sfH(\rho_{i})
& = \sfH \sum_{\alpha \in Q_{1}}e_{i}[\alpha, \alpha^{*}]e_{i}
=  \sum_{\alpha \in Q_{1}}e_{i}[\sfH(\alpha), \alpha^{*}]e_{i} + e_{i}[\alpha, \sfH(\alpha^{*})]e_{i} 
=  \sum_{\alpha \in Q_{1}}e_{i}[\alpha^{\circledast}, \alpha^{*}]e_{i} + e_{i}[\alpha, -\alpha^{\circ}] e_{i}\\
& =  \sum_{\alpha \in Q_{1}} - e_{i}[\alpha^{*}, \alpha^{\circledast}]e_{i} -  e_{i}[\alpha, \alpha^{\circ}]e_{i} 
 =- d( t_{i}). 
\end{split}
\]
 
Using the equation  $\sfH(\varrho) + d(\vart) = 0$, which follows from the claim, 
we check \eqref{homotopy equation} on $\alpha^{\circ}$
\[
\begin{split}
[d, \sfH]_{\Hom}(\alpha^{\circ}) 
& = - d[\alpha^{*}, \vart] -\sfH[\alpha^{*}, \varrho] 
 = - [d(\alpha^{*}), \vart] - [\alpha^{*}, d(\vart)] -[\sfH (\alpha^{*}), \varrho]- [\alpha^{*}, \sfH(\varrho)]
= [\alpha^{\circ}, \varrho].
\end{split}
\]
A similar calculation works for $\alpha^{\circledast}$.

Finally, we check the equation \eqref{homotopy equation} on $t_{i}$. 
\[
\begin{split}
[d, \sfH]_{\Hom}(t_{i}) 
& = \sfH d (t_{i} ) = \sfH\left(  \sum_{\alpha \in Q_{1}}e_{i}[ \alpha, \alpha^{\circ}]e_{i} +  e_{i} [\alpha^{*}, \alpha^{\circledast}]e_{i}\right)
 \\ & 
  = \sum_{\alpha \in Q_{1}} 
e_{i} [\alpha^{\circledast}, \alpha^{\circ}] e_{i}- e_{i}[\alpha, [\alpha^{*}, \vart]]e_{i} 
-e_{i}[ \alpha^{\circ}, \alpha^{\circledast}]e_{i} +e_{i} [\alpha^{*}, [\alpha, \vart]] e_{i}
 \\&
 =e_{i} \left(  \sum_{\alpha \in Q_{1}} [[\vart, \alpha], \alpha^{*}] +  [\alpha, [\vart, \alpha^{*}]] \right) e_{i}
\\&
 = e_{i}\left(  \sum_{ \alpha \in Q_{1}}  [\vart, [\alpha, \alpha^{*}] ]\right) e_{i} =[t_{i}, \varrho].
\end{split}
\]
\end{proof}

\begin{lemma}\label{homotopy lemma s}
We have $\ppi \sfH = \sfb_{\vars} \ppi$. 
\end{lemma}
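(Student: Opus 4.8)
The plan is to verify the identity $\ppi\sfH = \sfb_{\vars}\ppi$ as morphisms $\YYM \to \PPi$ of $\Pa_0$-modules by checking it on the algebra generators $\alpha, \alpha^*, \alpha^\circ, \alpha^\circledast, t_i$ for $\alpha \in Q_1$, $i \in Q_0$. The key structural observation is that both $\ppi\sfH$ and $\sfb_{\vars}\ppi$ are twisted derivations in a suitable sense: $\sfH$ is a derivation of $\YYM$ over $\Pa_0$ of cohomological degree $-1$ (by definition), and $\ppi$ is an algebra homomorphism, so $\ppi\sfH$ is a $(\ppi,\ppi)$-derivation of degree $-1$ from $\YYM$ to $\PPi$; similarly $\sfb_{\vars} = \sfl_{\vars} - \sfr_{\vars}$ is a derivation of $\PPi$ of degree $-1$ (as noted in the dg-module generalities, $\sfb_x$ is a derivation of degree $|x|$, and $|\vars| = |s_i| = -1$), so $\sfb_{\vars}\ppi$ is also a $(\ppi,\ppi)$-derivation of degree $-1$. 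Since two $(\ppi,\ppi)$-derivations that agree on a generating set agree everywhere, it suffices to check equality on generators. I would state this reduction explicitly at the start of the proof.

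Next I would carry out the generator checks, which are short computations using only the definitions of $\sfH$ (in the paragraph before Lemma \ref{homotopy proposition}) and of $\ppi$ (in the paragraph introducing $\ppi$). For $\alpha$: $\ppi\sfH(\alpha) = \ppi(\alpha^\circledast) = -[\alpha,\vars]$, while $\sfb_{\vars}\ppi(\alpha) = \sfb_{\vars}(\alpha) = [\vars,\alpha] = -[\alpha,\vars]$ using skew-commutativity of the graded commutator (here $|\alpha| = 0$ and $|\vars| = -1$, so $[\vars,\alpha] = -[\alpha,\vars]$). For $\alpha^*$: $\ppi\sfH(\alpha^*) = \ppi(-\alpha^\circ) = -[\alpha^*,\vars] = [\vars,\alpha^*] = \sfb_{\vars}\ppi(\alpha^*)$, again by skew-commutativity. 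For $\alpha^\circledast$: $\ppi\sfH(\alpha^\circledast) = \ppi([\alpha,\vart])$, which needs the fact that $\ppi(\vart_i) = -\vars_i^2$, hence $\ppi(\vart) = -\vars^2$, giving $\ppi([\alpha,\vart]) = [\alpha, \ppi(\vart)] = -[\alpha,\vars^2]$; on the other side $\sfb_{\vars}\ppi(\alpha^\circledast) = \sfb_{\vars}(-[\alpha,\vars]) = -[\vars,[\alpha,\vars]]$, and one uses the graded Jacobi identity together with $[\vars,\vars] = 2\vars^2$ (since $|\vars|$ odd forces $[\vars,\vars] = \vars\vars - (-1)^{(-1)(-1)}\vars\vars = 2\vars^2$) to match $-[\alpha,\vars^2]$. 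A parallel computation handles $\alpha^\circ$. Finally for $t_i$: $\ppi\sfH(t_i) = \ppi\sfH(v_i\vart_i) = v_i\ppi(\sfH(\vart_i)) = 0$ since $\sfH(\vart_i) = v_i^{-1}\sfH(t_i) = 0$ (as $\sfH(t_i)$ — note $\sfH(\vart_i) = 0$ is the given value, and $t_i = v_i \vart_i$), while $\sfb_{\vars}\ppi(t_i) = \sfb_{\vars}(-\vars_i^2) = -[\vars, \vars_i^2]$; this vanishes because $\vars = \sum_j \vars_j$, the idempotents $e_j$ are central-orthogonal in the relevant sense, and $\vars_i \vars_j = 0$ for $i \neq j$ (as $\vars_i = v_i^{-1}s_i$ with $s_i = e_i s_i e_i$), so $[\vars,\vars_i^2] = [\vars_i,\vars_i^2] = \vars_i^3 - \vars_i^3 = 0$ — more carefully, since $|\vars_i|$ is odd, $[\vars_i,\vars_i^2] = \vars_i\vars_i^2 - (-1)^{(-1)\cdot(-2)}\vars_i^2\vars_i = \vars_i^3 - \vars_i^3 = 0$.

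The main obstacle I anticipate is bookkeeping the signs in the graded-commutator manipulations for the generators $\alpha^\circ$ and $\alpha^\circledast$, where a degree $-1$ element is bracketed with another degree $-1$ element (e.g. $[\vars, [\alpha,\vars]]$), and one must invoke the graded Jacobi identity (the "graded Leibniz rule" $[x,[y,z]] = [[x,y],z] + (-1)^{|x||y|}[y,[x,z]]$ from the dg-algebra preliminaries) with the correct parity $(-1)^{|x||y|}$. Everything else is essentially formal once the derivation reduction is in place. I would therefore write the proof as: (1) the reduction to generators via the derivation property, with a one-line justification; (2) the five generator computations, grouped so that $\alpha/\alpha^*$ are done together and $\alpha^\circ/\alpha^\circledast$ together, each displayed in an aligned computation; (3) the $t_i$ case. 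One could also streamline by noting $\ppi(\vart) = -\vars^2$ and $d(\vars) = -\varrho$ up front so these facts are available for all cases. I expect the final write-up to be roughly the same length as the proof of Lemma \ref{homotopy proposition}.
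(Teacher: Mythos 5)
Your strategy coincides with the paper's: both $\ppi\sfH$ and $\sfb_{\vars}\ppi$ are derivations along $\ppi$ of degree $-1$, i.e. $F(xy)=F(x)\ppi(y)+(-1)^{|x|}\ppi(x)F(y)$, so it suffices to check the identity on the generators $\alpha,\alpha^{*},\alpha^{\circ},\alpha^{\circledast},t_{i}$; your computations for $\alpha$, $\alpha^{*}$ and $t_{i}$ are correct and are essentially the ones in the paper.

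The step that does not go through as written is the case of $\alpha^{\circledast}$ (and, by the same token, $\alpha^{\circ}$). You propose to obtain $[\vars,[\alpha,\vars]]=[\alpha,\vars^{2}]$ from the graded Jacobi identity together with $[\vars,\vars]=2\vars^{2}$. Any such use of Jacobi with $\vars$ occupying two slots only yields the identity multiplied by $2$: for instance $[\vars,[\alpha,\vars]]=[[\vars,\alpha],\vars]+[\alpha,[\vars,\vars]]$ combined with $[[\vars,\alpha],\vars]=-[\vars,[\alpha,\vars]]$ gives $2[\vars,[\alpha,\vars]]=2[\alpha,\vars^{2}]$, which is vacuous when $\chara\kk=2$. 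The lemma carries no characteristic hypothesis, and the paper needs it in characteristic $2$: it feeds into Theorem \ref{exact triangle U} and Theorem \ref{exact triangle U2}, which underlie the characteristic-free statements such as Theorem \ref{202112122233}. The paper sidesteps the issue by expanding the bracket directly, $[\vars,[\vars,\alpha]]=\vars[\vars,\alpha]+[\vars,\alpha]\vars=\vars^{2}\alpha-\alpha\vars^{2}=[\vars^{2},\alpha]=-[\alpha,\vars^{2}]$, which involves no cancellation of $2$; replacing your Jacobi argument by this direct expansion (and the analogous one for $\alpha^{\circ}$) makes your proof complete and identical in substance to the paper's.
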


\begin{proof}
By direct computation we can check that 
 the equation holds on the generators $\alpha, \alpha^{*}, \alpha^{\circ}, 
\alpha^{\circledast}, \vart_{i}$. 
\[
\begin{split}
\ppi\sfH(\alpha) &= \ppi(\alpha^{\circledast}) =\sfb_{\vars}(\alpha) = \sfb_{\vars}\ppi(\alpha), \\
\ppi\sfH(\alpha^{*})& = -\ppi(\alpha^{\circ}) =\sfb_{\vars}(\alpha^{*}) = \sfb_{\vars}\ppi(\alpha^{*}), \\
\ppi\sfH(\alpha^{\circledast}) &= \ppi[\alpha, \vart] = -[ \alpha, \vars^{2} ] = [\vars^{2}, \alpha]  \\
& = \vars[\vars, \alpha] + [\vars, \alpha]\vars = [\vars, [\vars, \alpha] ] =\sfb_{\vars}\ppi(\alpha^{\circledast}) \\ 
\ppi\sfH(\alpha^{\circ}) & = -\ppi[\alpha^{*}, \vart] = [ \alpha^{*}, \vars^{2}] = - [\vars^{2}, \alpha^{*}] \\ 
&= -(\vars[\vars, \alpha^{*}] + [\vars, \alpha^{*}]\vars) = -[\vars, [\vars, \alpha^{*}] ] =\sfb_{\vars}\ppi(\alpha^{\circ}) \\ 
\ppi \sfH(\vart_{i}) & = 0 = \sfb_{\vars}\ppi(\vart_{i}).
\end{split}
\]

Observe that  the both sides of the equation are derivation along $\ppi$ of degree $-1$,  
i.e., the morphisms $F = \ppi\sfH, \sfb_{\vars}\ppi$ satisfy
$F(xy) = F(x) \ppi(y) +(-1)^{|x|}\ppi(x) F(y)$. 
It follows from  the above observation the equation holds for any $x \in \YYM$. 
\end{proof}

\subsection{The exact triangle $\mathsf{AR}$}\label{The exact triangle AR}

We recall from Section \ref{Section:trace formula}:
\[
\hat{v} := \sum_{ i \in Q_{0} } v_{i} \hat{e}_{i}, \ \ \ttheta := {}^{v}\!\ttheta = \sum_{i \in Q_{0}} v_{i} \tilde{e}_{i}.
\]
Explicitly, 
\begin{equation}\label{202008141807}
\ttheta: \small
 \PPa^{\vee}  = \left( \Pa V^{*} \Pa[-1] \oplus \Pa \Pa,  \begin{pmatrix} 0 & \uparrow \hat{\rho} \\ 0 & 0\end{pmatrix} \right)
\xrightarrow{ \tiny \begin{pmatrix} 0 & \hat{v} \\ 0 & 0 \end{pmatrix}}
\left( \Pa \Pa \oplus \Pa V\Pa [1], \begin{pmatrix} 0 & \hat{\mu} \uparrow \\ 0 & 0\end{pmatrix} \right) =
 \PPa. 
\end{equation}

Since now we are assuming  $v_{i} \neq 0 $ for all $i \in Q_{0}$, 
the morphism $\hat{v}: \Pa\Pa \to \Pa \Pa$ is an isomorphism. 
We set $\hat{\eta} := \hat{\rho} \hat{v}^{-1} \hat{\mu}:\Pa V\Pa \to \Pa V^{*} \Pa$. 
Then it is straightforward check that 
 for an arrow $\alpha \in Q_{1}$ we have 
\[
\begin{split}
\hat{\eta}( 1 \otimes \alpha\otimes 1) &= 
\sum_{\beta \in Q_{1}, h(\beta) = t(\alpha)} v_{h(\alpha)}^{-1}\alpha \beta \otimes \beta^{*} \otimes 1
- \sum_{\beta\in Q_{1},  t(\beta) = t(\alpha)} v_{h(\alpha)}^{-1} \alpha \otimes  \beta^{*}\otimes \beta \\
&
- \sum_{\beta\in Q_{1}, h(\beta) = h(\alpha)}  v_{t(\alpha)}^{-1} \beta \otimes \beta^{*} \otimes \alpha 
+\sum_{\beta\in Q_{1}, t(\beta) = h(\alpha)} v_{t(\alpha)}^{-1} 1 \otimes \beta^{*}\otimes \beta \alpha,
\end{split}\]
where in each term $\beta^{*}$ in the middle is belonging to $V^{*}$. 
Therefore, if we regard $\Pa V^{*} \Pa \subset \kk \overline{Q}$ via the isomorphism \eqref{202006121845}, 
then $\hat{\eta}( 1 \otimes \alpha \otimes 1) = \eta_{\alpha}$. 
We note that here  the symbol $\otimes$  denotes  tensor products over $\Pa_{0}$ which are suppressed in the sequel.

Comparing this observation  with
Observation \ref{202006122126}, 
we come to the following lemma. 

\begin{lemma}
The $*$-degree $1$ part $\YYM_{1}$ is isomorphic to  the 
cone $\cone(\hat{\eta})$. 
Moreover, the $*$-degree $1$ part $\tilde{\pi}_{1}: \YYM_{1} \to \PPi_{1}$ 
is identified with  
\[
\tilde{\pi}_{1}: \small
\YYM_{1} \cong \left( \Pa V^{*} \Pa \oplus \Pa V \Pa [1], 
\begin{pmatrix} 0 & \hat{\eta}\uparrow \\ 0 & 0\end{pmatrix} \right)
\xrightarrow{ \tiny \begin{pmatrix} \id & 0 \\ 0 & - \hat{v}^{-1}\hat{\mu} \end{pmatrix}}
\left( \Pa V^{*} \Pa \oplus \Pa\Pa [1], \begin{pmatrix} 0 & -\hat{\rho}\uparrow \\ 0 & 0\end{pmatrix} \right) =
\PPi_{1}.
\]
\end{lemma}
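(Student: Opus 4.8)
The statement identifies the $*$-degree $1$ part $\YYM_{1}$, together with the truncation morphism $\ppi_{1}\colon\YYM_{1}\to\PPi_{1}$, with explicit cones built from the maps $\hat\eta$, $\hat\rho$, $\hat\mu$ and the isomorphism $\hat v$. The plan is to read off both assertions directly from the definitions, using Observation \ref{202006122126} and the explicit formulas \eqref{202006122059}, \eqref{202008141807} already established in the excerpt. First I would record that, by Observation \ref{202006122126}, the complex $\YYM_{1}$ of $\Pa$-bimodules has underlying graded bimodule $\Pa V^{*}\Pa\oplus\Pa V^{\circledast}\Pa$ with differential $\left(\begin{smallmatrix}0&\hat\eta'\\0&0\end{smallmatrix}\right)$, where $\hat\eta'(\alpha^{\circledast})=\eta_{\alpha}$. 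Now identify $\Pa V^{\circledast}\Pa = \Pa V\Pa[1]$ (this is just the definition $V^{\circledast}=V[1]$), so the underlying graded bimodule is $\Pa V^{*}\Pa\oplus\Pa V\Pa[1]$ and the differential is $\hat\eta\uparrow$, once I check that under this identification $\hat\eta'$ becomes the map $\hat\eta\colon\Pa V\Pa\to\Pa V^{*}\Pa$ defined in the paragraph above the lemma. That is precisely the content of the computation displayed just before the lemma statement: expanding $\hat\eta=\hat\rho\hat v^{-1}\hat\mu$ on the generator $1\otimes\alpha\otimes1$ gives the four-term sum equal to $\eta_{\alpha}$ (regarding $\Pa V^{*}\Pa\subset\kk\overline Q$ via \eqref{202006121845}). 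Hence $\YYM_{1}\cong\cone(\hat\eta) = \left(\Pa V^{*}\Pa\oplus\Pa V\Pa[1],\ \left(\begin{smallmatrix}0&\hat\eta\uparrow\\0&0\end{smallmatrix}\right)\right)$ as complexes of $\Pa$-bimodules; this is the first assertion.

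For the second assertion I would compute $\ppi_{1}$ on the two graded summands of $\YYM_{1}$. On the summand $\Pa V^{*}\Pa$ (the image of the arrows $\alpha^{*}$, which have $*$-degree $1$ and cohomological degree $0$), $\tilde\pi$ fixes $\alpha^{*}$, so $\ppi_{1}$ restricts to the identity $\Pa V^{*}\Pa\to\Pa V^{*}\Pa$, matching the top-left entry of the claimed matrix. On the summand $\Pa V\Pa[1] = \Pa V^{\circledast}\Pa$ (the image of the $\alpha^{\circledast}$, cohomological degree $-1$), the definition of $\tilde\pi$ gives $\tilde\pi(\alpha^{\circledast}) = -[\alpha,\vars]$; comparing with the description \eqref{202006122059} of $\PPi_{1}=\Pa V^{*}\Pa\oplus\Pa\Pa[1]$, the element $-[\alpha,\vars]\in\Pa S\Pa=\Pa\Pa[1]$ (recall $\vars=\sum v_i^{-1}s_i$) is exactly $-\hat v^{-1}\hat\mu$ applied to $1\otimes\alpha\otimes1$: indeed $\hat\mu(1\otimes\alpha\otimes1)=\alpha\otimes1-1\otimes\alpha$ lands in $\Pa\Pa$, and $\hat v^{-1}$ rescales $e_i$ to $v_i^{-1}e_i$, yielding $v_{t(\alpha)}^{-1}s_{t(\alpha)}\alpha - v_{h(\alpha)}^{-1}\alpha s_{h(\alpha)} = -[\alpha,\vars]$. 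So $\ppi_{1}$ is the matrix $\left(\begin{smallmatrix}\id&0\\0&-\hat v^{-1}\hat\mu\end{smallmatrix}\right)$. Finally I would check this matrix is a cochain map between the two displayed complexes, i.e.\ that it intertwines the differentials $\hat\eta\uparrow$ and $-\hat\rho\uparrow$; this reduces to the identity $-\hat\rho\circ(-\hat v^{-1}\hat\mu) = \hat\eta$, which is the definition $\hat\eta=\hat\rho\hat v^{-1}\hat\mu$ — so it holds on the nose.

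\textbf{Main obstacle.} There is no deep obstacle; the proof is bookkeeping. The one place needing genuine care is the sign-and-shift accounting: verifying that the generators $\alpha^{\circledast}$ of cohomological degree $-1$ and $*$-degree $1$ correspond precisely to the shifted summand $\Pa V\Pa[1]$ with the correct placement of the $\uparrow$ in the differential, and that $\tilde\pi(\alpha^{\circledast})=-[\alpha,\vars]$ matches $-\hat v^{-1}\hat\mu$ with the right sign once one passes through the identification \eqref{202006122059} and the convention $d(\vars)=-\varrho$. I would carry out this sign check explicitly on a single arrow $\alpha$, conclude by $\Pa^{\mre}$-linearity, and then the commutativity of the square is immediate from $\hat\eta=\hat\rho\hat v^{-1}\hat\mu$.
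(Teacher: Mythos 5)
Your proposal is correct and follows essentially the same route as the paper: the paper obtains this lemma precisely by comparing the explicit computation $\hat{\eta}(1\otimes\alpha\otimes 1)=\eta_{\alpha}$ with Observation \ref{202006122126} and the definition of $\tilde{\pi}$ on generators, leaving the remaining bookkeeping (the identification $V^{\circledast}=V[1]$, the check that $\tilde{\pi}(\alpha^{\circledast})=-[\alpha,\vars]$ matches $-\hat{v}^{-1}\hat{\mu}$, and the cochain-map compatibility reducing to $\hat{\eta}=\hat{\rho}\hat{v}^{-1}\hat{\mu}$) implicit, exactly the verifications you carry out.
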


 We define a morphism $\rrho:  \PPa \to \YYM_{1}$ to be 
 \[
\rrho:  \PPa = \left( \Pa \Pa \oplus \Pa V \Pa [1], \begin{pmatrix} 0 & \hat{\mu}\uparrow \\ 0 & 0\end{pmatrix} \right)
\xrightarrow{ \tiny \begin{pmatrix} \rrho\hat{v}^{-1}  & 0 \\ 0 & \id \end{pmatrix}}
\left( \Pa V^{*} \Pa \oplus \Pa V\Pa [1], \begin{pmatrix} 0 & \hat{\eta}\uparrow \\ 0 & 0\end{pmatrix} \right) =
\YYM_{1}. 
\]
 The next lemma says that these morphisms $\rrho, \ppi_{1}$ are representatives of the morphisms in  the exact triangle ${}^{v}\!\sfAR$  
 given in Section \ref{subsection The universal Auslander-Reiten triangle} denoted by the same symbols.

\begin{lemma}\label{pre universal Auslander-Reiten triangle lemma}
The following diagram gives an exact triangle in the homotopy category $\sfK(\Pa^{\mre})$ 
\[
\mathsf{AR}:  \PPa \xrightarrow{ \ \ \rrho \ \ } \YYM_{1} \xrightarrow{ \ \ \tilde{\pi}_{1} \ \ } \PPi_{1} \xrightarrow{ -\ttheta[1] }  \PPa[1]. 
\]
Moreover, 
if we define 
a morphism 
$\sfL:  \PPa \to \PPi_{1}$ of degree $-1$ in $\sfC_{\DG}(\Pa^{\mre})$ as follows 
\[
\sfL(e_{i}) := -\vars_{i}, \ \sfL(\downarrow \alpha) : = 0, 
\]
for $i \in Q_{0}, \alpha \in Q_{1}$, 
then the following statements hold
\begin{enumerate}[(1)]
\item 
It is a homotopy from $\ppi_{1} \rrho$ to $0$ 
\item 
It induces the homotopy equivalences 
$q_{\rrho, \ppi_{1}, \sfL} =(\ppi_{1}, \sfL\hspace{-3pt}\uparrow): \cone(\rrho) \to \PPi_{1}$ and  
$j_{\rrho, \ppi_{1}, \sfL} =( \uparrow\hspace{-2pt}\sfL,  -\rrho)^{t}:  \PPa \to \cone(\ppi_{1})[-1]$. 

\item 
We have an equality $\sfL = \sfh_{\ppi_{1}} j_{\rrho, \ppi_{1}, \sfL }$ of morphisms $\PPa \to \PPi_{1}$ of degree $-1$ 
where $\sfh_{\ppi_{1}}$ is the canonical homotopy of \eqref{202007312045}. 
\end{enumerate} 
\end{lemma}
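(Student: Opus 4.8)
The statement to prove is Lemma~\ref{pre universal Auslander-Reiten triangle lemma}. The plan is to verify the four assertions essentially by unwinding the explicit matrix descriptions of $\rrho$, $\tilde\pi_1$ and $\ttheta$ established in the preceding lemmas, together with the general nonsense about cones from Section~\ref{section: dg-modules}. First I would observe that, once the cochain-map property of $\sfL$ and the homotopy identity $d_{\PPi_1}\sfL + \sfL d_{\PPa} = \ppi_1\rrho$ are checked, assertion~(1) is immediate; then assertions~(2) and~(3) follow formally from the construction of $q_{f,g,H}$, $j_{f,g,H}$ and $\sfh_{g}$ recorded in \eqref{202012082037} and \eqref{202007312045}, by the same argument that gives the cone of a composable pair a preferred decomposition. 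Finally, the existence of the exact triangle $\mathsf{AR}$ is then a reinterpretation: $\YYM_1 \cong \cone(\rrho)$ up to the sign-rescaling isomorphism \eqref{202102222229}, so the canonical triangle $\PPa \xrightarrow{\rrho}\cone(\rrho)\to\PPa[1]$ transports to the desired triangle, with the connecting morphism identified with $-\ttheta[1]$ by using that $\ppi_1\rrho \simeq 0$ and that $\tilde\pi_1$ is (up to isomorphism of complexes) the projection $\cone(\rrho)\to \PPa[1]$ precomposed with $\ttheta$'s structure — more precisely, comparing with the triangle $\PPa^{\vee}\xrightarrow{\ttheta}\PPa\to\cone(\ttheta)$ where $\cone(\ttheta)={}^{v}\!\YYM_1$ by Section~\ref{subsection The universal Auslander-Reiten triangle}.

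The concrete computations are the following. For (1), using the matrix forms
\[
\rrho = \begin{pmatrix} \sfr_{\hat v^{-1}} & 0 \\ 0 & \id \end{pmatrix},\qquad
\tilde\pi_1 = \begin{pmatrix} \id & 0 \\ 0 & -\hat v^{-1}\hat\mu \end{pmatrix},
\]
I would compute $\ppi_1\rrho = \begin{pmatrix} \sfr_{\hat v^{-1}} & 0 \\ 0 & -\hat v^{-1}\hat\mu \end{pmatrix}: \PPa \to \PPi_1$ and check that $d\sfL + \sfL d$ equals this, where $d$ on $\PPa$ is the off-diagonal $\hat\mu\!\uparrow$ and $d$ on $\PPi_1$ is $-\hat\rho\!\uparrow$; the point is that $\sfL(e_i) = -\vars_i = -v_i^{-1}s_i$ and $d(\vars_i) = -v_i^{-1}\rho_i$, so the $\PPa\Pa$-component produces exactly $\sfr_{\hat v^{-1}}$ after identifying $\hat\rho$ with the mesh relations via \eqref{202006122059}, and the $V$-component vanishes because $\sfL(\downarrow\!\alpha)=0$ and $d(\alpha)=0$. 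For (2), the formulas for $q_{\rrho,\ppi_1,\sfL}$ and $j_{\rrho,\ppi_1,\sfL}$ are dictated by \eqref{202012082037}; that they are mutually inverse homotopy equivalences is then a standard fact (a cone of a composable pair with a chosen nullhomotopy is homotopy equivalent to the cone of the first map over the target of the second), which I would either cite from the octahedral-axiom machinery of Lemma~\ref{octahedral axiom} or check directly on the $2\times 2$ block matrices. For (3), the identity $\sfL = \sfh_{\ppi_1} j_{\rrho,\ppi_1,\sfL}$ is a one-line block-matrix computation: $\sfh_{\ppi_1} = (\downarrow, 0)$ by \eqref{202007312045} and $j_{\rrho,\ppi_1,\sfL} = (\uparrow\!\sfL, -\rrho)^t$, so the composite picks out the $\uparrow\!\sfL$ entry and reduces to $\sfL$ after cancelling $\downarrow\uparrow$.

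The main obstacle I expect is bookkeeping of the shift conventions and signs — in particular keeping straight the roles of $\uparrow$ and $\downarrow$, the sign $(-1)^{|f|}$ in $f[1] = (-1)^{|f|}\!\downarrow\! f\!\uparrow$, and the precise identification $\PPi_1 = \PPa^{\vee}[1]$ with its differential $-\hat\rho\!\uparrow$ as opposed to $\hat\rho\!\uparrow$. There is genuine risk of an overall sign discrepancy between $\ppi_1\rrho$ and $d\sfL+\sfL d$, which would then have to be absorbed into the definition of $\sfL$ or the rescaling isomorphism \eqref{202102222229}; resolving this cleanly is where most of the care goes. A secondary subtlety is confirming that the connecting morphism of the triangle $\mathsf{AR}$ is literally $-\ttheta[1]$ and not some unit multiple of it — this I would pin down by comparing with the already-constructed triangle ${}^{v}\!\sfAR$ of Section~\ref{subsection The universal Auslander-Reiten triangle}, using that $\cone(\ttheta) = {}^{v}\!\YYM_1$ there and that the two descriptions of $\tilde\pi_1$ and $\rrho$ agree by the lemma immediately preceding this one.
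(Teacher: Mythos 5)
Your treatment of assertions (1) and (3) is essentially sound: (1) is a generator-by-generator check (though your claim that ``the $V$-component vanishes'' is not right — on $\downarrow\!\alpha$ both sides are nonzero: $(\sfL d)(\downarrow\!\alpha)=\sfL(\hat{\mu}(\alpha))=-\alpha\vars_{h(\alpha)}+\vars_{t(\alpha)}\alpha$, and this must be matched against the $(2,2)$-entry $-\hat{v}^{-1}\hat{\mu}$ of $\ppi_{1}\rrho$, i.e.\ against $\ppi_{1}(\alpha^{\circledast})=-[\alpha,\vars]$), and (3) is the one-line computation you describe. The genuine gaps are in assertion (2) and in the identification of the triangle's co-connecting morphism with $-\ttheta[1]$. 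First, your ``standard fact'' is false in general: for an arbitrary composable pair $f,g$ with a nullhomotopy $H$ of $gf$, the induced map $q_{f,g,H}\colon\cone(f)\to Z$ need not be a homotopy equivalence; that $q_{\rrho,\ppi_{1},\sfL}$ and $j_{\rrho,\ppi_{1},\sfL}$ are equivalences is real content and must be proved (also note the misstatement ``$\YYM_{1}\cong\cone(\rrho)$'' — it is $\PPi_{1}$ that is equivalent to $\cone(\rrho)$). Second, your route to the connecting morphism is circular: Section \ref{subsection The universal Auslander-Reiten triangle} merely \emph{names} ${}^{v}\!\sfAR$ as the triangle obtained from $\ttheta$, with $\YYM_{1}$ there meaning the four-term complex $\cone(\ttheta)=\PPa\oplus\PPa^{\vee}[1]$, which is only homotopy equivalent — not equal — to the two-term model $\YYM_{1}\cong\cone(\hat{\eta})$ appearing in the present lemma. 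The lemma preceding this one does not identify $\rrho,\tilde{\pi}_{1}$ with the structural maps of $\cone(\ttheta)$; producing such an identification, compatibly with $\ttheta$, is exactly what this lemma is asserting, so it cannot be imported from Section \ref{subsection The universal Auslander-Reiten triangle}.

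The paper closes all of this in one stroke: apply Lemma \ref{octahedral axiom} with $f=\hat{\mu}$, $g=\hat{\rho}\hat{v}^{-1}$, $h=\hat{\eta}=gf$ and $H=0$. Then $\cone(f)=\PPa$, $\cone(h)=\YYM_{1}$, and $\cone(g)\cong\PPi_{1}$ via the twist $\bigl(\begin{smallmatrix}\id&0\\0&-\hat{v}^{-1}\end{smallmatrix}\bigr)$; the third row of \eqref{20191128} becomes the triangle $\mathsf{AR}$, and under the twist its connecting morphism $-\Phi[1]$ is literally $-\ttheta[1]$, while the explicit homotopy $L=\bigl(\begin{smallmatrix}0&0\\\downarrow&0\end{smallmatrix}\bigr)$ twisted by $-\hat{v}^{-1}$ gives $\sfL$ (whence $\sfL(e_{i})=-\vars_{i}$), and parts (4)(b),(c) and (5) of that lemma yield your assertions (2) and (3). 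If you wish to keep your direct-computation route, you must either redo this octahedral bookkeeping yourself or exhibit an explicit homotopy inverse of $q_{\rrho,\ppi_{1},\sfL}$ and compute $p_{2}^{\rrho}$ composed with it to exhibit $-\ttheta[1]$; appealing to ${}^{v}\!\sfAR$ does not supply this step.
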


\begin{proof}
Applying  Lemma \ref{octahedral axiom} for $f= \hat{\mu}, g = \hat{\rho}, h= \hat{\eta}$ and $H = 0$, we obtain an exact triangle 
$\cone(\hat{\mu}) \xrightarrow{\Psi} \cone( \hat{\eta} ) \xrightarrow{\Upsilon} \cone( \hat{\rho}) \xrightarrow{ -\Phi[1]} $. 
Modifying this sequence by the isomorphism ${\tiny \begin{pmatrix} \id & 0 \\ 0 & -\hat{v}^{-1} \end{pmatrix}}: \cone ( \hat{\rho}\hat{v}^{-1}) 
\xrightarrow{\cong}  \cone(- \hat{\rho}) = \PPi_{1}$, 
we obtain the desired exact triangle.

\begin{equation}\label{202007311609}
\begin{xymatrix}{
&&\Pa V\Pa  \ar@{=}[rr] \ar[d]_{\hat{\mu} }  &&  \Pa V\Pa \ar[d]^{\hat{\eta} }  && && \\
\cone(\hat{\rho}\hat{v}^{-1} )[-1] \ar[rr] \ar@{=}[d] &&
\Pa\Pa \ar@{=>}[urr]^{0} \ar[d] \ar[rr]^{\hat{\rho}\hat{v}^{-1} } && 
\Pa V^{*} \Pa \ar[d]  \ar[rr] && \cone(\hat{\rho} \hat{v}^{-1}) \ar@{=}[d] 
\\
\cone(\hat{\rho}\hat{v}^{-1} )[-1] 
\ar[d]_{\tiny \begin{pmatrix} \id & 0\\ 0 & -\hat{v}^{-1} \end{pmatrix} }^{\cong}
\ar[rr]_{\tiny \begin{pmatrix} 0 & - \id \\ 0 & 0 \end{pmatrix} }
 &&\cone(\hat{\mu} ) \ar[rr]_{\tiny \begin{pmatrix} \hat{\rho}\hat{v}^{-1} & 0 \\ 0 & \id \end{pmatrix} } 
 \ar@{=}[d]  
 && \cone(\hat{\eta})  \ar@{=}[d] \ar[rr]_{\tiny \begin{pmatrix} \id  & 0 \\ 0 &\hat{\mu}[1] \end{pmatrix} }
  &&
\cone(\hat{\rho}\hat{v}^{-1}) 
\ar[d]^{\tiny \begin{pmatrix} \id & 0\\ 0 & -\hat{v}^{-1} \end{pmatrix} }_{\cong}  \\ 
\PPi_{1}[-1]  \ar[rr]_{\ttheta} && \PPa \ar[rr]_{\rrho} && \YYM_{1} \ar[rr]_{\ppi_{1}} && \PPi_{1}&& 
}\end{xymatrix}
\end{equation}

In a similar way, we obtain the desired homotopy from Lemma \ref{octahedral axiom}. 
\end{proof}

\subsection{Exact triangles $\sfV_{n}$ and the morphisms $\eeta^{*}, \zzeta$}\label{202106020843}

\subsubsection{The morphism $\zzeta$} 

We define a morphism $\zzeta: \YYM_{1} \otimes_{\Pa} \YYM \to \YYM$ to be the multiplication map 
\[
\zzeta: \YYM_{1} \otimes_{\Pa} \YYM \to \YYM, \ \zzeta(x \otimes y) := xy. 
\]
We denote the $*$-graded version  by the same symbol $\zzeta: \YYM_{1} \otimes_{\Pa} \YYM( -1) \to \YYM$. 
We denote the $*$-degree $n$-component by $\zzeta_{n}: \YYM_{1} \otimes_{\Pa} \YYM_{n -1}  \to \YYM_{n }$.

\subsubsection{The morphism $\eeta_{2}^{*}$}

Next we look at the $*$-degree $2$ part $\YYM_{2}$. 
For this we introduce several notations. 

For $i \in Q_{0}$, 
we define elements $\varrho^{\circ}_{i}, \varrho^{\circledast}_{i}$ of $\YYM_{2}$ to be 
 \[
\varrho^{\circ}_{i} := v_{i}^{-1} \sum_{\alpha \in Q_{0}} e_{i} [\alpha, \alpha^{\circ}]e_{i}, \ \   
\varrho^{\circledast}_{i} := v_{i}^{-1} \sum_{\alpha \in Q_{0}} e_{i} [\alpha^{*}, \alpha^{\circledast}] e_{i}. 
\]
Observe that  we can  regard $[\alpha^{*}, \alpha^{\circledast}] = \alpha^{*} \alpha^{\circledast} - \alpha^{\circledast}\alpha^{*}$ 
as an element of $\Pa V^{*}\Pa V^{\circledast} \Pa \oplus \Pa V^{\circledast} \Pa V^{*} \Pa$ 
which is the cohomological degree $-1$ part of $\YYM_{1} \otimes_{\Pa} \YYM_{1}$.

We set $\varrho^{\circ} := \sum_{i \in Q_{0}} \varrho^{\circ}_{i}$ 
and $\varrho^{\circledast} := \sum_{i \in Q_{0}} \varrho^{\circledast}_{i}$ 
so that $d(\vart) = \varrho^{\circ} + \varrho^{\circledast}$. 

We define  morphisms 
\[
\hat{\varrho}^{\circ}: \Pa T \Pa \to \Pa V^{\circ} \Pa, \ 
 \hat{\varrho}^{\circledast}: \Pa T \Pa \to \YYM_{1} \otimes_{\Pa} \YYM_{1}, \ 
\hat{\eta}^{\circ}: \Pa V^{\circ} \Pa \to \YYM_{1} \otimes_{\Pa} \YYM_{1}\]
  of cohomological degree $1$ in $\sfC_{\DG}(\Pa^{\mre})$  
  by  the formulas 
  \[
  \hat{\varrho}^{\circ}(\vart_{i}) := \varrho^{\circ}_{i}, \ 
\hat{\varrho}^{\circledast}(\vart_{i}) := \rho^{\circledast}_{i}, \ 
\hat{\eta}^{\circ}(\alpha^{\circ}) := \eta_{\alpha^{*}}.
\] 
  Then we obtain the following description of $\YYM_{2}$ 
  \[
  \YYM_{2} = \left( ( \YYM_{1} \otimes_{\Pa} \YYM_{1})  \oplus \Pa V^{\circ} \Pa \oplus \Pa T \Pa, 
  {\small \begin{pmatrix} d_{\YYM_{1} \otimes \YYM_{1}} & - \hat{\eta}^{\circ} & \hat{\varrho}^{\circledast} \\ 0 & 0& \hat{\varrho}^{\circ} \\ 
  0 & 0& 0 \end{pmatrix}}\right). 
  \]
Observe that 
the subcomplex $\PPi_{1}^{\circ}[1] :=\left( \Pa V^{\circ} \Pa \oplus \Pa T \Pa,
 \begin{pmatrix} 0 & \hat{\varrho}^{\circ} \\ 0 & 0 \end{pmatrix}\right)$
 is isomorphic to  $\PPi_{1}[1]$. 
  We fix an identification as follows 
  \begin{equation}\label{202102041839}
  \mathsf{idn}:  \PPi_{1}[1] \xrightarrow{ \cong} \PPi^{\circ}_{1}[1],  \alpha^{*} \mapsto - \alpha^{\circ}, \ \vars_{i} \mapsto - \vart_{i}.
  \end{equation}

We define a morphism $\eeta^{*}_{2}: \PPi_{1} \to \YYM_{1} \otimes_{\Pa} \YYM_{1}$ in $\sfC(\Pa^{\mre})$ 
by the formulas 
  \[
\eeta^{*}_{2}(\alpha^{*}) := \eta_{\alpha^{*}}, \eeta^{*}_{2}(\vars) := - \varrho^{\circledast}.
\]
Then we may identify  $\YYM_{2}$ with  the cone of $\eeta^{*}_{2}$  
  \[
  \YYM_{2} \cong \cone(\eeta^{*}_{2}) = 
  \left( (\YYM_{1} \otimes_{\Pa} \YYM_{1}) \oplus \PPi_{1}[1], 
  \begin{pmatrix} d_{\YYM_{1} \otimes_{\Pa} \YYM_{1}} & \eeta^{*}_{2}\uparrow  \\ 0 & d_{\PPi[1]} \end{pmatrix} \right).
  \]
  Observe that the $*$-degree $2$-component  $\zzeta_{2} : \YYM_{1} \otimes_{\Pa} \YYM_{1} \to \YYM_{2}$ 
  is the cone morphism $i_{1}^{\eeta^{*}_{2}}$ of $\eeta^{*}_{2}$. 
   Therefore we obtain the  sequence below in $\sfC(\Pa^{\mre})$ 
    that gives an exact triangle in $\sfK(\Pa^{\mre})$ 
\[
\begin{xymatrix}@C=10mm@R=4mm{
\sfV_{2} :&
 \PPi_{1}  \ar[r]^{\eeta^{*}_{n}} \ar@/_2pc/[rr]_{0} &
\YYM_{1} \otimes_{\Pa} \YYM_{1} \ar[r]^{\zzeta_{2}}  \ar@{=>}[d]_{\sfg_{\eeta^{*}_{2}}}&
\YYM_{2}  &\\ 
&&&
}\end{xymatrix}
\] 
where $\sfg_{\eeta^{*}_{2}}= (0, \downarrow)^{t}$ as given in \eqref{202007312045}.

\subsubsection{The exact triangle $\sfV_{n}$}

Let $ n \geq 3$.  
We set  $\eeta^{*}_{n} := ({}_{\YYM_{1} }\zzeta_{ n-1}) (\eeta^{*}_{2,\YYM_{ n-2} })$ in $\sfC(\Pa^{\mre})$.
\[
\eeta^{*}_{n}:  \PPi_{1} \otimes_{A} \YYM_{ n-2} \xrightarrow{ \eeta^{*}_{2, \YYM_{ n-2} } }
 \YYM_{1} \otimes_{A} \YYM_{1} \otimes_{A} \YYM_{n -1} 
\xrightarrow{ {}_{\YYM_{1} }\zzeta_{ n-1}  } 
\YYM_{1} \otimes_{A} \YYM_{n}.
\]
Observe that as cohomological graded modules, we have 
\[
\begin{split}
\YYM_{n } &
= \left( \YYM_{1}  \otimes_{\Pa} \YYM_{n-1} \right)  \oplus 
\left( \Pa V^{\circ} \Pa \otimes_{\Pa} \YYM_{ n-2} \right)  \oplus 
\left( \Pa T \Pa \otimes_{\Pa} \YYM_{ n-2}\right)\\
& = \left( \YYM_{1}  \otimes_{\Pa} \YYM_{n-1} \right)  \oplus 
\left( \PPi^{\circ}_{1} \otimes_{\Pa} \YYM_{ n-2}[1] \right)
\end{split}
\]
Using the identification \eqref{202102041839}, we obtain 
\[
\YYM_{n} =
\left( \left( \YYM_{1}  \otimes_{\Pa} \YYM_{n-1} \right) \oplus 
\left(\PPi_{1}\otimes_{\Pa} \YYM_{ n-2}[1]\right) , \begin{pmatrix} d & \eeta_{n}^{*}\uparrow\\ 
0 & d
\end{pmatrix}\right).
\]
Thus we may identify $\YYM_{n}$ with the cone $\cone(\eeta^{*}_{n})$ 
and the cone morphism $i_{1}^{\eeta^{*}_{n}}$ with $\zzeta_{n}$. 
We  obtain  a diagram in $\sfC_{\DG}(\Pa^{\mre})$ below 
which becomes an exact triangle in $\sfK(\Pa^{\mre})$ 
\[
\begin{xymatrix}@C=10mm@R=4mm{
\sfV_{n} :&
 \PPi_{1} \otimes_{\Pa} \YYM_{n -2} \ar[r]^{\eeta^{*}_{n}} \ar@/_2pc/[rr]_{0} &
\YYM_{1} \otimes_{\Pa} \YYM_{n -1} \ar[r]^{\zzeta_{n}}  \ar@{=>}[d]_{\sfg_{\eeta^{*}_{n}}}&
\YYM_{n} &\\ 
&&&
}\end{xymatrix}
\] 
where $\sfg_{\eeta^{*}_{n}}= (0, \downarrow)^{t}$ as given in \eqref{202007312045}.

By convention, we set $\zzeta_{1} := \id_{\YYM_{1}}$ and $\zzeta_{n -1}^{*} := 0, \eeta^{*}_{n } := 0$ for $n \leq 1$.
Taking the direct sums of $\zzeta_{n}$ and $\eeta^{*}_{n}$, 
we obtain the  diagram below in $\sfC_{\DG}(\Pa \otimes \YYM^{op}\Gr)$ below 
which becomes an exact triangle in $\sfK(\Pa \otimes \YYM^{\op}\Gr)$ 
\begin{equation}\label{20191202}
\begin{xymatrix}@C=10mm@R=4mm{
\sfV :&
 \PPi_{1} \otimes_{\Pa} \YYM(-2) \ar[r]^{\eeta^{*}} \ar@/_2pc/[rr]_{0} &
\YYM_{1} \otimes_{\Pa} \YYM(-1) \ar[r]^{\zzeta}  \ar@{=>}[d]_{\sfg_{\eeta^{*}}}&
\YYM_{\geq 1}. &\\ 
&&&
}\end{xymatrix}\end{equation}

\subsubsection{Koszul resolution}\label{2021121160751}

Let $\iota: \YYM_{\geq 1} \to \YYM$ be a canonical injection and $\epsilon: \YYM \to \Pa$ be a canonical projection. 
Then, we have an exact sequence $0 \to \YYM_{\geq 1}\xrightarrow{ \iota} \YYM \xrightarrow{ \epsilon } \Pa \to 0$. 
We define an object $P \in \sfC_{\DG}(\Pa\otimes \YYM^{\op} \Gr)$ to be 
\[
P := 
\left( \YYM \oplus  ( \YYM_{1} \otimes_{\Pa} \YYM(-1)[1] )\oplus ( \PPi_{1} \otimes_{\Pa}\YYM(-2)[2]), 
\begin{pmatrix} d & \iota \zzeta\uparrow &\iota \sfg \uparrow^{2} \\ 
0 & d &- \downarrow \eeta^{*}\uparrow^{2}\\
0& 0& d
\end{pmatrix}
\right).
\]
Combining the above exact sequence  and the diagram \eqref{20191202}, we see that the morphism
$
\epsilon_{P}:=(\epsilon, 0,0): P \to \Pa
$
is a quasi-isomorphism in $\sfC_{\DG}(\Pa \otimes \YYM^{\op} \Gr)$.

\subsubsection{Remark on Koszul duality}\label{202111261238}

The above observations show that 
the derived quiver Heisenberg algebra $\YYM$ has the following description: 
\[
\YYM = \left( \sfT_{\Pa}(\YYM_{1} \oplus \PPi_{1}[1]), d_{\sfT} + \delta\right)
\]
where $d_{\sfT}$ is the differential of the dg-tensor algebra $\sfT_{\Pa}(\YYM_{1} \oplus \PPi_{1}[1])$ 
and 
$\delta$ is a morphism of cohomological degree $1$ induced from $\eeta^{*}_{2}$. 
This description says that $\YYM$ is the \emph{Kosuzl dual} of the graded dg-coring 
$C = \Pa \oplus \YYM_{1}[1] \oplus \PPi_{1}[2]$ over $\Pa$ 
whose comultiplication is essentially $\eeta^{*}_{2}$. 

In view of Koszul duality, 
the quasi-isomorphism $\epsilon_{P} : P \to \Pa$ may be regarded as  the Koszul resolution of $\Pa$ 
\[
\begin{xymatrix}{
\PPi_{1} \otimes_{\Pa} \YYM( -2)  \ar[rr]^{\eeta^{*}} && 
\YYM_{1} \otimes_{\Pa} \YYM( -1) \ar[rr] \ar[dr]^{\zzeta} &&
\YYM \ar[rr]^{\epsilon}  && \Pa \ar[dlll]^{[1]} \\
&&& \YYM_{\geq 1}  \ar[ulll]^{[1]} \ar[ur]^{\iota}
}\end{xymatrix}.
\]

This point will be studied in a subsequent work \cite{Minamoto Smith} and in particular give a construction of Calabi-Yau algebras which generalize Keller's Calabi-Yau completion \cite{Keller: Calabi-Yau completion}.

\subsubsection{An explicit formula of $\eeta^{*}$}\label{202105211751}
We give  an explicit formula of $\eeta^{*}: \PPi \lotimes_{\Pa} \YYM \to \YYM_{1} \lotimes_{\Pa} \YYM$ 
which uses  the following  identifications of cohomological graded modules over $\Pa^{\mre}$
\begin{equation}\label{202105211917}
\PPi_{1} \otimes_{\Pa} \YYM \cong   \Pa V^{*}  \YYM \oplus \Pa S  \YYM, \ \ 
\YYM_{1} \otimes_{\Pa} \YYM \cong   \Pa V^{*}\YYM \oplus \Pa V^{\circ}\YYM.
\end{equation} 
Although the formula is obtained by a straightforward calculation, we explain it in detail,  since  similar explicit formulas for other morphisms play key roles in the sequel.

In  the identification of $\PPi_{1} \otimes_{\Pa} \YYM$ with $\Pa V^{*}  \YYM \oplus \Pa S  \YYM$, 
an element of $\PPi_{1} \otimes_{\Pa} \YYM$ is given by a sum of elements of the forms 
$p\alpha^{*} x, p\vars_{i} x$ where $p \in \Pa, x \in \YYM, \alpha \in Q_{1}, i \in Q_{0}$. 
Thus in particular, $\PPi_{1} \otimes_{\Pa} \YYM$ is generated over $\Pa^{\mre}$ 
by elements of the forms $\alpha^{*}x, \vars_{i} x$ for $ x \in \YYM, \alpha \in Q_{1}, i \in Q_{0}$. 
The values of the morphism $\eeta^{*}$ over $\Pa^{\mre}$ on these generators are given as below
\[
\eeta^{*}(\alpha^{*}  x) = [\alpha^{*}, \varrho]  x = \alpha^{*}\varrho   x - \varrho \alpha^{*} x , 
\ \eeta^{*}(\vars_{i}  x) = -\varrho_{i}^{\circledast} x 
\]
where we use the identification $\YYM_{1} \otimes_{\Pa} \YYM \cong   \Pa V^{*}\YYM \oplus \Pa V^{\circ}\YYM$ 
to write the right hand sides. More precisely, the right hand sides should be interpreted as follows.
For the first term $\alpha^{*}\varrho  x = \alpha^{*} (\varrho x)$ of the first equation, 
we   regard  the first factor $\alpha^{*}$ as an element of $V^{*}$
and the second factor  $\varrho  x$ as an element $\YYM$. 
Then using the identification  $\Pa V^{*}  \YYM \subset \YYM_{1} \otimes_{\Pa} \YYM$, 
we regard $\alpha^{*} \varrho x$ as an element of $\YYM_{1} \otimes_{\Pa} \YYM$.

As for the second  term $\varrho \alpha^{*}  x= \varrho (\alpha^{*} x)$ of the first equation, 
we   regard  the first factor $\varrho $ as an element of $\YYM_{1}$
and the second factor $\alpha^{*}  x$ as an element $\YYM$. 
Then we write $\varrho \otimes \alpha^{*} x$ as $\varrho \alpha^{*}x$.  
In a similar way, we regard $\varrho_{i}^{\circledast} p x$ as an element of $\YYM_{1} \otimes_{\Pa} \YYM$. 

We note that under the identification \eqref{202105211917}, the element  $\varrho \alpha^{*}  x $  
corresponds to the element  
\[
\sum_{i \in Q_{0}} v_{i}^{-1}\left(\sum_{\beta: t(\beta) = i } \beta\beta^{*} \alpha^{*}x 
-\sum_{\beta: h(\beta) = i} \beta^{*} \beta \alpha^{*} x\right) 
\]
where the first term $\beta\beta^{*} \alpha^{*}x$ is regarded  as an element of $\Pa V^{*} \YYM$ by viewing 
$\beta\beta^{*} \in \Pa V^{*}$ and $\alpha^{*} x \in \YYM$ 
and the second term $\beta^{*} \beta\alpha x$ 
is regarded  as an element of $\Pa V^{*} \YYM$ by viewing 
$\beta^{*} \in \Pa V^{*}$ and $\beta \alpha^{*} x \in \YYM$. 
In a similar way, we can write the  element of $\Pa V^{\circledast} \YYM$ which corresponds to  $\varrho_{i}^{\circledast} p x$.

\subsubsection{The homotopy $\sfK$}

Since $\ppi: \YYM \to \PPi$ is a dg-algebra homomorphism, 
we see that  the  square in the following diagram is commutative. 
\[
\begin{xymatrix}@C=30pt{
\PPi_{1} \otimes_{A} \YYM (-2) \ar@/_20pt/[drr]_{0} \ar[rr]^{\eeta^{*}} && 
\YYM_{1} \otimes_{A} \YYM ( -1) \ar[rr]^-{\zzeta} \ar[d]^{\ppi_{1} \otimes \ppi(-1)} \ar@{=>}[dl]_{\sfK}&&
\YYM_{ \geq 1} \ar[d]^{\ppi_{\geq 1}} \\ 
&& \PPi_{1} \otimes_{A} \PPi ( -1) \ar@{=}[rr] && \PPi_{\geq 1}
}\end{xymatrix}
\]

Decomposing the morphism $\ppi_{\geq 1}$ according to the decomposition 
$\YYM_{\geq 1} = \YYM_{1} \otimes_{A} \YYM(-1) \oplus \PPi_{1} \otimes_{A} \YYM( -2)[1]$ of underlying cohomological graded modules, 
we deduce the following lemma.

\begin{lemma}\label{202008021653} 
We define a morphism $\sfK: \PPi_{1} \otimes_{\Pa} \YYM( -2) \to \PPi_{\geq 1}$ in $\sfC(\Pa\otimes \YYM^{\op}\Gr^{*})$ 
of cohomological degree $-1$ by the formula 
\[
\sfK(\alpha^{*} p x) := [\vars, \alpha^{*}] p \ppi(x), \ \sfK(\vars_{i}  p x) := \vars_{i}^{2}p \ppi(x). 
\]
for $i \in Q_{0}, \alpha \in Q_{1}, p \in \Pa,   x \in \YYM$ 
where we use the identification $\PPi_{1}\otimes_{\Pa} \YYM \cong \Pa V^{*} \YYM \oplus \Pa S \YYM$ 
to exhibit elements in the domain of $\sfK$.

Then it is a homotopy from $(\ppi_{1} \otimes \ppi(-1)) \eeta^{*}$ to $0$ 
and we have $\ppi_{\geq 1} = q_{\eeta^{*}, \ppi_{1} \otimes \ppi( -1),  \sfK}$ 
as morphisms from $\YYM_{\geq 1} = \cone(\eeta^{*}) $ to $\PPi_{ \geq 1}$. 
\end{lemma}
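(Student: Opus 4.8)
The plan is to obtain $\sfK$ and both assertions of the lemma by decomposing the cochain map $\ppi_{\geq 1}$ along the cone structure of $\YYM_{\geq 1}$, as announced in the sentence preceding the lemma, rather than by checking the homotopy identity $d\sfK+\sfK d=(\ppi_{1}\otimes\ppi(-1))\eeta^{*}$ by hand. The structural input is that $\PPi=\sfT_{\Pa}\PPi_{1}$ is a tensor dg-algebra, so the multiplication induces a canonical isomorphism $\PPi_{1}\otimes_{\Pa}\PPi(-1)\xrightarrow{\cong}\PPi_{\geq 1}$ of complexes of $\Pa$-$\PPi$-bimodules; since $\ppi$ is a morphism of dg-algebras (Lemma \ref{202102041200}) and $\zzeta$ is the multiplication of $\YYM$, this yields the commutative square displayed just before the lemma, i.e. $\ppi_{\geq 1}\circ\zzeta$ is $\ppi_{1}\otimes\ppi(-1)$ followed by that isomorphism.

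First I would use the presentation $\YYM_{\geq 1}=\cone(\eeta^{*})$, whose underlying cohomological graded $\Pa$-$\YYM$-bimodule is $\YYM_{1}\otimes_{\Pa}\YYM(-1)\oplus\PPi_{1}\otimes_{\Pa}\YYM(-2)[1]$ and in which $\zzeta=i_{1}^{\eeta^{*}}$ is the inclusion of the first summand, and write $\ppi_{\geq 1}=(g_{1},g_{2})$ in two blocks along this decomposition. The commutative square shows immediately that $g_{1}=\ppi_{1}\otimes\ppi(-1)$ (transported along $\PPi_{1}\otimes_{\Pa}\PPi(-1)\cong\PPi_{\geq 1}$). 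For the second block I would compute $\ppi$ on the generators of $\PPi_{1}\otimes_{\Pa}\YYM(-2)[1]$ using the defining formulas for $\ppi$ and multiplicativity: under the identification \eqref{202102041839} the shifted generators $\alpha^{*}$ and $\vars_{i}$ correspond to $-\alpha^{\circ}$ and $-\vart_{i}$, and $\ppi(\alpha^{\circ})=[\alpha^{*},\vars]$, $\ppi(\vart_{i})=-\vars_{i}^{2}$, so $\ppi(-\alpha^{\circ}px)=[\vars,\alpha^{*}]\,p\,\ppi(x)$ and $\ppi(-\vart_{i}px)=\vars_{i}^{2}\,p\,\ppi(x)$, where one uses that the right $\YYM$-action on $\PPi_{\geq 1}$ is restriction of scalars along $\ppi$ and that $\ppi$ is the identity on $\Pa$. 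Matching this with the bookkeeping shift $\uparrow$ shows $g_{2}=\sfK\uparrow$ with $\sfK$ the map of the statement; in particular $\sfK$, being a restriction of the well-defined cochain map $\ppi_{\geq 1}$ composed with canonical identifications, is a well-defined morphism in $\sfC(\Pa\otimes\YYM^{\op}\Gr^{*})$ of cohomological degree $-1$.

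It then remains to read off the two claims. By construction $\ppi_{\geq 1}=(\ppi_{1}\otimes\ppi(-1),\ \sfK\uparrow)=q_{\eeta^{*},\,\ppi_{1}\otimes\ppi(-1),\,\sfK}$ in the notation of \eqref{202012082037}; and since $\ppi_{\geq 1}$ is a cochain map while $\cone(\eeta^{*})$ carries the standard cone differential (with off-diagonal term $\eeta^{*}\uparrow$), comparing second components of $\ppi_{\geq 1}\circ d_{\cone(\eeta^{*})}$ and $d_{\PPi_{\geq 1}}\circ\ppi_{\geq 1}$ forces $d\sfK+\sfK d=(\ppi_{1}\otimes\ppi(-1))\eeta^{*}$, i.e. $\sfK$ is a homotopy from $(\ppi_{1}\otimes\ppi(-1))\eeta^{*}$ to $0$. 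That is precisely the general fact behind \eqref{202012082037}, read in reverse, and together with $\ppi_{\geq 1}=q_{\eeta^{*},\,\ppi_{1}\otimes\ppi(-1),\,\sfK}$ this is the full content of the lemma.

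I expect the only real difficulty to be organizational: carefully reconciling the two descriptions of the underlying graded bimodule of $\YYM_{\geq 1}$ — the cone description above versus the generator-type description $\Pa V^{*}\YYM\oplus\Pa V^{\circledast}\YYM\oplus\Pa V^{\circ}\YYM\oplus\Pa T\YYM$ — and keeping track of the signs coming from $\uparrow$, $\downarrow$, the identification \eqref{202102041839}, and the graded commutators $[\vars,\alpha^{*}]=-[\alpha^{*},\vars]$. As a cross-check, or as a self-contained alternative, one can instead verify $d\sfK+\sfK d=(\ppi_{1}\otimes\ppi(-1))\eeta^{*}$ directly on the $\Pa^{\mre}$-generators $\alpha^{*}x$ and $\vars_{i}x$ of $\PPi_{1}\otimes_{\Pa}\YYM$, using the explicit formula for $\eeta^{*}$ from Section \ref{202105211751} and the differentials of $\YYM$; this is routine but considerably longer.
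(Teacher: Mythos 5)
Your proposal is correct and is essentially the paper's own argument: the paper "proves" the lemma exactly by the remark preceding it, namely that $\ppi$ being a dg-algebra morphism gives the commutative square, and decomposing the cochain map $\ppi_{\geq 1}$ along the cone decomposition $\YYM_{\geq 1}=\YYM_{1}\otimes_{\Pa}\YYM(-1)\oplus\PPi_{1}\otimes_{\Pa}\YYM(-2)[1]$ yields both $\ppi_{\geq 1}=q_{\eeta^{*},\,\ppi_{1}\otimes\ppi(-1),\,\sfK}$ and the homotopy identity. Your generator computation via \eqref{202102041839} and $\ppi(\alpha^{\circ})=[\alpha^{*},\vars]$, $\ppi(\vart_{i})=-\vars_{i}^{2}$ correctly recovers the stated formula for $\sfK$, so nothing is missing.
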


\subsection{The exact triangle $\sfU$}\label{subsection: U}

The aim of this section \ref{subsection: U} is to prove Theorem \ref{exact triangle U} 
and Theorem \ref{exact triangle U2} below.

\subsubsection{}

Taking  the tensor product $\sfAR \otimes_{A} \YYM$, we obtain the upper row of the following diagram. 
It is clear that the right square is commutative. 
We set  $\brho: = \zzeta \rrho_{\YYM}$ to make the left square commutative.  
\begin{equation}\label{202007312226}
\begin{xymatrix}@C=60pt@R=6mm{
&&\\
\PPa \otimes_{\Pa}  \YYM  \ar@/^30pt/[rr]^{0}  \ar[r]^{\rrho_{\YYM} } \ar@{=}[d] & 
\YYM_{1} \otimes_{\Pa} \YYM \ar@{=>}[u]_{\sfL_{\YYM}} \ar[r]^{\ppi_{1, \YYM} } \ar[d]_{\zzeta} & 
\PPi_{1} \otimes_{\Pa} \YYM \ar[d]^{{}_{\PPi_{1} }\ppi} \\ 
\PPa \otimes_{\Pa} \YYM  \ar[r]_{\brho} &\YYM \ar[r]_{\ppi} & \PPi  
}\end{xymatrix}
\end{equation}

We set 
$\sfM := ({}_{\PPi_{1}} \pi)(  \sfL_{ \YYM}): \PPa \otimes \YYM \to \PPi$. 
It  is a homotopy from $\ppi \brho $ to $0$ in $\sfC_{\DG}(\Pa\otimes \YYM^{\op})$ by Lemma \ref{202008021445}. 
 \[
\begin{xymatrix}@C=10mm@R=4mm{
\PPa \otimes_{\Pa} \YYM   \ar[r]^{\brho } \ar@/_2pc/[rr]_{0}   &\YYM \ar[r]^{\ppi} \ar@{=>}[d]^{\sfM} & \PPi\\
&&. 
}\end{xymatrix}
\]

We  give explicit formulas for these morphisms. 
For this we use the identification $\PPa \otimes_{\Pa} \YYM \cong \Pa \Pa_{0} \YYM \oplus \Pa (V[1] )\YYM$. 
Observe that  $\PPa \otimes_{\Pa} \YYM$ is generated over $\Pa^{\mre}$ by elements 
of the forms $e_{i} x, \downarrow \alpha x$ where $i \in Q_{0}, \alpha \in Q_{1} $ and $x \in \YYM$. 
On these generators, the morphisms $\brho$ and $\sfM$ take the following values
 \[
 \brho(e_{i} x) = \varrho_{i} x, \ \brho(\downarrow \alpha x)= \alpha^{\circ} x, \ \sfM(e_{i}x ) = -\vars_{i} \ppi(x),  \ \sfM(\downarrow \alpha x) = 0. 
 \]
 
We remark that the  module $\Pa \Pa_{0} \YYM$ that appeared in the above identification has the simpler expression  $\Pa \YYM$. 
However if we use the latter, then the expression $px \ (p \in \Pa, x \in \YYM)$ of an element of $\Pa\YYM$  
could have two meanings that are  $ e_{t(p)} \otimes_{\Pa_{0}} px$ and $p \otimes_{\Pa_{0}} x$. 
To avoid this confusion, we insert $\Pa_{0}$ in between $\Pa$ and $\YYM$. 
  
For example, if we apply the differential $d$ to a homogeneous element of $\PPa \otimes_{\Pa} \YYM$ which has the form  
$\downarrow \alpha x$,  
then  the result is written as $d(\downarrow \alpha x) = \alpha e_{h(\alpha)} x - e_{t(\alpha)} \alpha x - \downarrow\alpha d(x)$.

 The  main result of Section \ref{subsection: U} is the following, which is a two-sided version of Theorem \ref{exact triangle U}.
 
\begin{theorem}\label{exact triangle U2}

The induced morphism
$q :=q_{\brho, \ppi_{1}, \sfM}: \cone(\brho) \to \PPi$ is a quasi-isomorphism in $\sfC(\Pa \otimes \YYM^{\op})$. 

Consequently, 
we have an exact triangle of the following form in $\sfD(\Pa \otimes \YYM^{\op}\Gr)$ 
\begin{equation}\label{202008141923}
\YYM(-1) \xrightarrow{\brho} \YYM \xrightarrow{\ppi} \PPi  \to \YYM(-1)[1].
\end{equation}

\end{theorem}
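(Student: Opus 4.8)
The goal is to show that $q = q_{\brho,\ppi_1,\sfM}\colon \cone(\brho)\to \PPi$ is a quasi-isomorphism, so that the standard triangle $\YYM_{\geq 1}[-1]\to \YYM(-1)\xrightarrow{\brho}\YYM\to\YYM_{\geq 1}$ combined with the identification $\cone(\brho)\simeq\PPi$ yields the exact triangle \eqref{202008141923}. The natural strategy is to apply the octahedral machinery of Lemma \ref{octahedral axiom} / Lemma \ref{202008021437} to the morphisms already in play, reducing $q$ to a quasi-isomorphism we have already established, namely Theorem \ref{exact triangle U} (whose proof is referenced to this very section) — or, better, to prove Theorem \ref{exact triangle U} and Theorem \ref{exact triangle U2} simultaneously. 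Concretely: $\brho = \zzeta\circ\rrho_{\YYM}$ by definition, and we have the Koszul-type exact triangle $\sfV\colon \PPi_1\otimes_\Pa\YYM(-2)\xrightarrow{\eeta^*}\YYM_1\otimes_\Pa\YYM(-1)\xrightarrow{\zzeta}\YYM_{\geq 1}$ from \eqref{20191202}, the triangle $\mathsf{AR}$ tensored with $\YYM$ giving $\PPa\otimes_\Pa\YYM\xrightarrow{\rrho_{\YYM}}\YYM_1\otimes_\Pa\YYM\xrightarrow{\ppi_{1,\YYM}}\PPi_1\otimes_\Pa\YYM$, and the commuting diagram \eqref{202007312226}. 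So $\cone(\brho)$ fits into an octahedron built on $\rrho_{\YYM}$ and $\zzeta$, with the third side $\cone(\rrho_{\YYM})\simeq\PPi_1\otimes_\Pa\YYM$ (via Lemma \ref{pre universal Auslander-Reiten triangle lemma}).

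\textbf{Key steps, in order.} First I would record the explicit cochain descriptions already assembled in Section \ref{subsection: U}: the identification $\PPa\otimes_\Pa\YYM\cong \Pa\Pa_0\YYM\oplus\Pa(V[1])\YYM$, the formulas $\brho(e_i x)=\varrho_i x$, $\brho(\downarrow\!\alpha x)=\alpha^\circ x$, $\sfM(e_i x)=-\vars_i\ppi(x)$, $\sfM(\downarrow\!\alpha x)=0$, and the formula for $\eeta^*$ from Section \ref{202105211751}. Second, I would set up the octahedron: apply Lemma \ref{octahedral axiom} to $f=\rrho_{\YYM}$, $g=\zzeta$, $h=\brho$ with the homotopy $H$ witnessing $\zzeta\circ\rrho_{\YYM}=\brho$ (here $H=0$ since $\brho$ is defined to be exactly this composite). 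This produces an exact triangle $\cone(\rrho_{\YYM})\xrightarrow{\Psi}\cone(\brho)\xrightarrow{\Upsilon}\cone(\zzeta)\to$ in $\sfK(\Pa\otimes\YYM^{\op})$. Third, identify $\cone(\rrho_{\YYM})\simeq\PPi_1\otimes_\Pa\YYM$ via the homotopy equivalence $q_{\rrho,\ppi_1,\sfL}$ of Lemma \ref{pre universal Auslander-Reiten triangle lemma} (tensored with $\YYM$), and identify $\cone(\zzeta)\simeq \PPi_1\otimes_\Pa\YYM(-2)[1]$ via $\sfg_{\eeta^*}$ from the triangle $\sfV$. Fourth, transport these identifications through the octahedron so that the triangle $\cone(\rrho_{\YYM})\to\cone(\brho)\to\cone(\zzeta)$ becomes $\PPi_1\otimes_\Pa\YYM\xrightarrow{{}_{\PPi_1}\ppi}\PPi\xrightarrow{}\PPi_1\otimes_\Pa\YYM(-2)[1]$, i.e. exactly the triangle $\PPi_1\otimes_\Pa\YYM\to\PPi\to\PPi_{\geq 2}[\,?\,]$ coming from the $*$-graded structure of $\PPi$ (recall $\PPi=\sfT_\Pa\PPi_1$, so $\PPi_{\geq 1}=\PPi_1\otimes_\Pa\PPi$ and there is a canonical triangle relating $\PPi_1\otimes_\Pa\YYM$, $\PPi$, and a shift). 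The content is that the middle vertex of the octahedron, $\cone(\brho)$, maps to $\PPi$ by $q$ compatibly, and $q$ is forced to be a quasi-isomorphism because the two outer vertices are identified correctly — this is where Lemma \ref{202008021437} is needed, to check that the induced map $\cone(\Phi_{f,g})\to\cone(h)$ really is the map $q_{\brho,\ppi_1,\sfM}$ and not merely homotopic through an uncontrolled homotopy; the homotopies $\sfL$, $\sfM$, $\sfK$ have been chosen precisely so that the compatibility relation $H_3 l = k H_1 + H_2$ of Lemma \ref{202008021437} holds on the nose. Finally, once $q$ is a quasi-isomorphism of complexes of $\Pa$-$\YYM^{\op}$-bimodules, the triangle \eqref{202008141923} in $\sfD(\Pa\otimes\YYM^{\op}\Gr)$ follows from the defining triangle $\YYM_{\geq 1}[-1]\to\YYM(-1)\xrightarrow{\brho}\YYM\to$ and $\cone(\brho)\simeq\PPi$; the $*$-graded refinement is automatic since every morphism in sight preserves the $*$-grading, and restricting scalars along $\Pa^{\mre}\to\Pa\otimes\YYM^{\op}$ (or rather forgetting the left $\Pa$-action and keeping the $\YYM$-action) recovers Theorem \ref{exact triangle U}.

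\textbf{The main obstacle.} The genuine difficulty is not the abstract octahedron but verifying that all the homotopies are \emph{strictly} compatible so that Lemma \ref{202008021437} applies with $H = H_3 l$ exactly, rather than only up to a further homotopy. In other words, one must check the cochain-level identity $\sfM = \text{(something built from }\sfK, \sfL)$ — precisely the hypothesis $H_3 l = kH_1 + H_2$ — by a direct computation on the generators $e_i x$ and $\downarrow\!\alpha x$ of $\PPa\otimes_\Pa\YYM$, using the explicit formulas for $\sfL$ (from Lemma \ref{pre universal Auslander-Reiten triangle lemma}: $\sfL(e_i)=-\vars_i$, $\sfL(\downarrow\!\alpha)=0$), for $\sfK$ (from Lemma \ref{202008021653}), for $\eeta^*$ (Section \ref{202105211751}), and the definition $\sfM = ({}_{\PPi_1}\pi)(\sfL_{\YYM})$. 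This is the step I expect to be fiddly: it amounts to chasing signs and the placement of $\uparrow,\downarrow$ through several cones, and to confirming that the map $q_{\brho,\ppi_1,\sfM}$ coincides with the composite $\cone(\brho)\xrightarrow{\acute q}\PPi_1\otimes_\Pa\PPi\oplus(\text{stuff})\xrightarrow{\sim}\PPi$ arising from the octahedron. Everything else — exactness of the rows, the homotopy-equivalence statements — is quotation of Lemmas \ref{octahedral axiom}, \ref{202008021437}, \ref{pre universal Auslander-Reiten triangle lemma}, \ref{202008021653} and the triangles $\mathsf{AR}$, $\sfV$, $\sfU$ (for Theorem \ref{exact triangle U}).
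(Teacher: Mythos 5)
Your setup (the octahedron on $\brho=\zzeta\circ\rrho_{\YYM}$, the identifications $\cone(\rrho_{\YYM})\simeq\PPi_{1}\otimes_{\Pa}\YYM(-1)$ via Lemma \ref{pre universal Auslander-Reiten triangle lemma} and $\cone(\zzeta)$ via the triangle $\sfV$, and the insistence on checking the strict homotopy compatibility needed for Lemma \ref{202008021437}) assembles exactly the right ingredients, and the "main obstacle" you single out is indeed the paper's Lemma \ref{202008021717}. But the final step of your plan has a genuine gap: the comparison triangle you invoke, ``$\PPi_{1}\otimes_{\Pa}\YYM\to\PPi\to\PPi_{1}\otimes_{\Pa}\YYM(-2)[1]$, coming from the $*$-graded structure of $\PPi$,'' does not exist a priori. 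What the graded structure of $\PPi=\sfT_{\Pa}\PPi_{1}$ gives you is the triangle built from $\PPi_{1}\otimes_{\Pa}\PPi(-1)\to\PPi\to\Pa$; to put $\YYM$ in place of $\PPi$ in the first slot you must already know that ${}_{\PPi_{1}}\ppi\colon\PPi_{1}\otimes_{\Pa}\YYM\to\PPi_{1}\otimes_{\Pa}\PPi$ has cone $\PPi_{1}\otimes_{\Pa}\cone(\ppi)\simeq\PPi_{1}\otimes_{\Pa}\YYM(-1)[1]$, which is (up to tensoring with $\PPi_{1}$) precisely the statement being proven. The same circularity hides in your identification of $\cone(\zzeta)$ with a shift of $\PPi_{\geq 2}$. (A smaller slip: for $\zzeta$ viewed as a map into $\YYM$ rather than $\YYM_{\geq1}$, its cone is an extension of $\Pa$ by $\PPi_{1}\otimes_{\Pa}\YYM(-2)[1]$, not that object itself.) So ``$q$ is forced to be a quasi-isomorphism because the two outer vertices are identified correctly'' does not go through: one of the two outer identifications is equivalent to the theorem.

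The missing idea is to exploit the $*$-grading and argue by induction on the degree $n$, which is how the paper breaks the circle. In $*$-degree $n$ the problematic outer term becomes $\PPi_{1}\otimes_{\Pa}\cone(\brho_{n-1})$, which is identified with $\PPi_{n}=\PPi_{1}\otimes_{\Pa}\PPi_{n-1}$ by ${}_{\PPi_{1}}q_{n-1}$, a quasi-isomorphism by the inductive hypothesis (base cases $n=0,1$ being trivial resp.\ Lemma \ref{pre universal Auslander-Reiten triangle lemma}). To make the degree-$n$ octahedron compatible on the nose with this inductive comparison, the paper also replaces $\brho$ by the homotopic morphism $\brho'=\zzeta\,p_{\YYM}$ built from the co-cone of $\ppi_{1,\YYM}$ (Lemma \ref{202007312238}), and introduces one more explicit homotopy $\sfN$ (Lemma \ref{homotopic lemma 5}) relating $\ppi_{1,\YYM_{n-1}}\eeta^{*}_{n}$ to ${}_{\PPi_{1}}\brho_{n-1}$; the strict compatibility you anticipated is then the identity $\sfK({}_{\PPi_{1}}\mu_{\YYM_{n-2}})={}_{\PPi_{1}}\sfM+({}_{\PPi_{1}}\ppi_{n-1})\sfN$ of Lemma \ref{202008021717}, after which Lemma \ref{202008021437} yields the commutative diagram \eqref{202008021755} and the induction closes. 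Without this degree-by-degree structure (or some substitute for it), your global argument cannot conclude.
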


Before proceeding  a proof, we discuss its consequence.  
Let $n \geq 1$. 
We note that looking $*$-degree $n$ part of the exact triangle \eqref{202008141923} 
we obtain the following exact triangle in $\sfD(\Pa^{\mre})$,
\begin{equation}\label{202008141924}
\YYM_{n -1} \xrightarrow{ \brho_{n} } \YYM_{n } \xrightarrow{ \ppi_{n} } \PPi_{n } \to \YYM_{n -1}[1].
\end{equation}
Thus taking the tensor product with $M \in \Dbmod{\Pa}$, we obtain the following  exact triangle in $\Dbmod{\Pa}$
\begin{equation}\label{2020081419241}
\YYM_{n -1} \lotimes_{\Pa} M \xrightarrow{ \brho_{n, M } } \YYM_{n } \lotimes_{\Pa} M \xrightarrow{ \ppi_{n, M} } \PPi_{n } \lotimes_{\Pa} M \to \YYM_{n -1} \lotimes_{\Pa} M[1].   
\end{equation}
Since the weighted Euler characteristic is additive for an exact triangle, we deduce the following corollary.

\begin{corollary}\label{202102211730}
Let $u \in \kk Q, \ M \in \Dbmod{\Pa}, \ n \geq 0$. Then we have the following equality 
\[
{}^{u}\! \Euch(\YYM_{n} \lotimes_{\Pa} M) = \sum_{i = 0}^{n} {}^{u}\!\Euch(\PPi_{i} \lotimes_{\Pa} M).
\]

In particular in the case where $u $ is an eigenvector of $\Psi = \Phi^{-t}$ (the inverse of the transpose of  the Coxeter matrix) with the eigenvalue $\lambda$, 
we have
\[
\frac{ {}^{u}\! \Euch(\YYM_{n} \lotimes_{\Pa} M) }{ {}^{u}\! \Euch(M)} =  \sum_{i = 0}^{n} \lambda^{i}.
\]
\end{corollary}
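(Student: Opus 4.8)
The plan is to deduce Corollary~\ref{202102211730} from the exact triangle \eqref{202008141923} by passing to $*$-degree $n$ parts and tensoring with $M$, then applying additivity of the weighted Euler characteristic. First I would recall that, as observed in the paragraph preceding the corollary, taking the $*$-degree $n$ component of \eqref{202008141923} yields the exact triangle \eqref{202008141924} in $\sfD(\Pa^{\mre})$, and then applying $-\lotimes_{\Pa} M$ gives the exact triangle \eqref{2020081419241} in $\Dbmod{\Pa}$:
\[
\YYM_{n-1}\lotimes_{\Pa} M \xrightarrow{\ \brho_{n,M}\ } \YYM_{n}\lotimes_{\Pa} M \xrightarrow{\ \ppi_{n,M}\ } \PPi_{n}\lotimes_{\Pa} M \to \YYM_{n-1}\lotimes_{\Pa} M[1].
\]
Since $u$-weighted Euler characteristic is additive on exact triangles (this follows from Lemma~\ref{202012091517}(1), as ${}^{u}\!\Euch$ is an alternating sum of $\dim e_i \tuH^{i}(-)$ and the long exact cohomology sequence of a triangle splits into short exact pieces), we obtain
\[
{}^{u}\!\Euch(\YYM_{n}\lotimes_{\Pa} M) = {}^{u}\!\Euch(\YYM_{n-1}\lotimes_{\Pa} M) + {}^{u}\!\Euch(\PPi_{n}\lotimes_{\Pa} M).
\]
An easy induction on $n\geq 0$, with base case $n=0$ where $\YYM_{0}=\Pa$ so that ${}^{u}\!\Euch(\YYM_{0}\lotimes_{\Pa} M) = {}^{u}\!\Euch(M) = {}^{u}\!\Euch(\PPi_{0}\lotimes_{\Pa} M)$, then gives the first formula ${}^{u}\!\Euch(\YYM_{n}\lotimes_{\Pa} M) = \sum_{i=0}^{n} {}^{u}\!\Euch(\PPi_{i}\lotimes_{\Pa} M)$.

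For the second formula, I would use \eqref{202111101530}, which states ${}^{v}\!\Euch(\PPi_{1}\lotimes_{\Pa} M) = {}^{\Psi(v)}\!\Euch(M)$, together with the fact that $\PPi_{i}\lotimes_{\Pa}- \cong (\PPi_{1}\lotimes_{\Pa}-)^{i}$, i.e. the functor $\PPi_{i}\lotimes_{\Pa}-$ is the $i$-th power of $\nu_{1}^{-1}$ (this is part of the definition of the derived preprojective algebra as the tensor algebra of $\PPi_{1}$). Iterating \eqref{202111101530} gives ${}^{u}\!\Euch(\PPi_{i}\lotimes_{\Pa} M) = {}^{\Psi^{i}(u)}\!\Euch(M)$. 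When $u$ is an eigenvector of $\Psi$ with eigenvalue $\lambda$, we have $\Psi^{i}(u) = \lambda^{i} u$, so ${}^{\Psi^{i}(u)}\!\Euch(M) = \lambda^{i}\,{}^{u}\!\Euch(M)$ by $\kk$-linearity of the weighted Euler characteristic in the weight variable (clear from the definition ${}^{u}\!\Euch(M) = \sum_i u_i \Euch(e_i M)$). Summing over $i = 0,\dots,n$ and dividing by ${}^{u}\!\Euch(M)$ yields $\frac{{}^{u}\!\Euch(\YYM_{n}\lotimes_{\Pa} M)}{{}^{u}\!\Euch(M)} = \sum_{i=0}^{n}\lambda^{i}$.

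There is essentially no serious obstacle here: the corollary is a formal consequence of Theorem~\ref{exact triangle U2} (the existence of the exact triangle \eqref{202008141923}) combined with additivity of Euler characteristics and the already-established identity \eqref{202111101530}. The only point requiring a line of care is the passage from the triangle in $\sfD(\Pa\otimes\YYM^{\op}\Gr)$ to the componentwise triangles \eqref{202008141924} in $\sfD(\Pa^{\mre})$ — but this is immediate because all the morphisms $\brho$, $\ppi$ in the triangle preserve the $*$-grading and the $*$-degree $n$ part is an exact functor on $*$-graded objects. I would also note explicitly that the division by ${}^{u}\!\Euch(M)$ in the second statement is only meaningful when ${}^{u}\!\Euch(M)\neq 0$, which is the implicit hypothesis (for instance if $u$ is regular and $M$ indecomposable, or more simply it is understood as an identity in $\kk$ once ${}^{u}\!\Euch(M)$ is invertible).
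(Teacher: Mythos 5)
Your proposal is correct and follows essentially the same route as the paper: the paper deduces the corollary directly from the exact triangle \eqref{2020081419241} (obtained from Theorem \ref{exact triangle U2} by taking $*$-degree $n$ parts and tensoring with $M$) via additivity of the weighted Euler characteristic, and the eigenvector case is the same iteration of \eqref{202111101530}. Your remarks on exactness of taking $*$-degree components and on the implicit hypothesis ${}^{u}\!\Euch(M)\neq 0$ are fine but add nothing beyond the paper's argument.
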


\subsubsection{The morphism $\brho'$}

To prove Theorem \ref{exact triangle U}, first we need to introduce  
a morphism $\brho'  : \PPa \otimes_{\Pa} \YYM \to \YYM$ related to the morphism $\brho$.

We consider the co-cone $\cone(\ppi_{1}) \otimes_{A}\YYM[ -1]$ of $\ppi_{1, \YYM}$. 
We set $p := - p_{2}^{\ppi_{1}}$ and  $\brho' := \zzeta  p_{\YYM} : \PPa \otimes_{\Pa} \YYM \to \YYM$. 
\begin{equation}\label{2020073122261}
\begin{xymatrix}@C=60pt@R=6mm{
&&\\
\cone(\ppi_{1}) \otimes_{\Pa}  \YYM [-1]  \ar@/^30pt/[rr]^{0} \ar[r]^{p_{\YYM} } \ar@{=}[d] & 
\YYM_{1} \otimes_{\Pa} \YYM \ar@{=>}[u]_{(\sfh_{\pi_{1}})_{\YYM}} \ar[r]^{\ppi_{1, \YYM} } \ar[d]_{\zzeta} & 
\PPi_{1} \otimes_{\Pa} \YYM \ar[d]^{{}_{\PPi_{1} }\ppi} \\ 
\cone(\ppi_{1}) \otimes_{\Pa} \YYM [-1] \ar[r]_{\brho'} &\YYM \ar[r]_{\ppi} & \PPi  
}\end{xymatrix}
\end{equation}
Then setting 
$\sfM' = {}_{\PPi_{1} } \pi (\sfh_{\ppi_{1}})_{ \YYM}$,   
we obtain the following diagram in $\sfC_{\DG}(\Pa \otimes\YYM^{\op})$ by Lemma \ref{202008021445}.
 \[
\begin{xymatrix}@C=10mm@R=4mm{
\cone(\ppi_{1}) \otimes_{\Pa} \YYM [-1]  \ar[r]^-{\brho' } \ar@/_2pc/[rr]_{0}   &\YYM \ar[r]^{\ppi} \ar@{=>}[d]^{\sfM'} & \PPi.\\
&& 
}\end{xymatrix}
\]

\begin{lemma}\label{202007312238}
The  morphism
$q :=q_{\brho, \ppi_{1}, \sfM}: \cone(\brho) \to \PPi$ is a quasi-isomorphism 
if and only if so is 
the induced morphism
$q' :=q_{\brho', \ppi_{1}, \sfM}: \cone(\brho') \to \PPi$. 
\end{lemma}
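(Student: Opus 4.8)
The plan is to build an explicit homotopy equivalence $\Theta : \cone(\brho) \to \cone(\brho')$ with $q = q' \circ \Theta$; since a homotopy equivalence is a quasi-isomorphism and quasi-isomorphisms are stable under composition and two-out-of-three, the equivalence of the two quasi-isomorphism assertions then follows formally. So the whole point is to identify $\cone(\brho)$ and $\cone(\brho')$ compatibly with their maps to $\PPi$.

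The starting observation is that $\brho$ and $\brho'$ differ only in the source object of the cone: both factor through $\zzeta : \YYM_{1} \otimes_{\Pa} \YYM \to \YYM$, namely $\brho = \zzeta\, \rrho_{\YYM}$ and $\brho' = \zzeta\, p_{\YYM}$, while the middle object $\YYM$, the map $\ppi : \YYM \to \PPi$ and the target $\PPi$ are common to the two diagrams \eqref{202007312226} and \eqref{2020073122261}. By Lemma \ref{pre universal Auslander-Reiten triangle lemma}(2) the cochain map $j := j_{\rrho,\ppi_{1},\sfL} : \PPa \to \cone(\ppi_{1})[-1]$ is a homotopy equivalence, and the commutativity recorded in \eqref{202012082037} gives $(-p^{\ppi_{1}}_{2}[-1])\, j = \rrho$, i.e.\ $p\, j = \rrho$. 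Applying the additive functor $- \otimes_{\Pa} \YYM$, which preserves cones, homotopies and homotopy equivalences, produces a homotopy equivalence $j_{\YYM} : \PPa \otimes_{\Pa} \YYM \to \cone(\ppi_{1}) \otimes_{\Pa} \YYM[-1]$ with $p_{\YYM}\, j_{\YYM} = \rrho_{\YYM}$, hence $\brho'\, j_{\YYM} = \zzeta\, p_{\YYM}\, j_{\YYM} = \zzeta\, \rrho_{\YYM} = \brho$ on the nose in $\sfC_{\DG}(\Pa \otimes \YYM^{\op})$. I would then let $\Theta : \cone(\brho) \to \cone(\brho')$ be the induced morphism of cones, which is the identity on the common summand $\YYM$ and $j_{\YYM}[1]$ on the other summand. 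Viewed in $\sfK(\Pa \otimes \YYM^{\op})$, it is part of a morphism of exact triangles whose two remaining components are $\id_{\YYM}$ and $j_{\YYM}[1]$, both isomorphisms, so $\Theta$ is an isomorphism there, i.e.\ a homotopy equivalence.

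It then remains only to check $q' \circ \Theta = q$. Writing $q = (\ppi, \sfM\uparrow)$ and $q' = (\ppi, \sfM'\uparrow)$ in the notation of \eqref{202012082037}, a direct computation with the cone formulas (using $\uparrow\! j_{\YYM}[1] = j_{\YYM}\!\uparrow$) reduces this to the single identity $\sfM'\, j_{\YYM} = \sfM$. Here Lemma \ref{pre universal Auslander-Reiten triangle lemma}(3) enters: it gives $\sfL = \sfh_{\ppi_{1}}\, j$, hence $\sfL_{\YYM} = (\sfh_{\ppi_{1}})_{\YYM}\, j_{\YYM}$ after tensoring; post-composing with ${}_{\PPi_{1}}\ppi$ and recalling $\sfM = ({}_{\PPi_{1}}\ppi)(\sfL_{\YYM})$ and $\sfM' = ({}_{\PPi_{1}}\ppi)((\sfh_{\ppi_{1}})_{\YYM})$ (both well-defined homotopies by Lemma \ref{202008021445}) yields $\sfM = ({}_{\PPi_{1}}\ppi)(\sfh_{\ppi_{1}})_{\YYM}\, j_{\YYM} = \sfM'\, j_{\YYM}$. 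Thus $q = q' \circ \Theta$ with $\Theta$ a homotopy equivalence, and $q$ is a quasi-isomorphism exactly when $q'$ is.

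I expect the only genuine work to be bookkeeping with the shift morphisms $\uparrow$, $\downarrow$ and the signs from Section \ref{section: dg-modules}: one must confirm that the two identities $p\, j = \rrho$ and $\sfL = \sfh_{\ppi_{1}}\, j$ of Lemma \ref{pre universal Auslander-Reiten triangle lemma} survive both the application of $- \otimes_{\Pa} \YYM$ and the post-compositions with $\zzeta$ and ${}_{\PPi_{1}}\ppi$, and that the cone-morphism identity ``$q_{f',g,H'}$ precomposed with the morphism of cones induced by $u$ equals $q_{f,g,H}$'' holds literally, not merely up to homotopy, whenever $f'u = f$ and $H'u = H$. Everything else is formal, being just the five-lemma for triangulated categories applied to the morphism of triangles built in the second paragraph.
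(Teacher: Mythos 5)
Your proposal is correct and follows essentially the same route as the paper: both use $j=j_{\rrho,\ppi_{1},\sfL}$ from Lemma \ref{pre universal Auslander-Reiten triangle lemma}, tensor with $\YYM$ to get $\brho' j_{\YYM}=\brho$, take the induced homotopy equivalence of cones (your $\Theta$ is the paper's $\sfJ$), and use the identity $\sfL=\sfh_{\ppi_{1}}j$ to obtain $\sfM=\sfM' j_{\YYM}$ and hence $q=q'\circ\Theta$. The sign/shift bookkeeping you flag is exactly the routine verification the paper leaves implicit, so there is no gap.
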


\begin{proof}

We set $j: = j_{\rrho, \ppi_{1}, \sfL}$. 
First we claim  $\brho'  j_{\YYM} = \brho$. 
Indeed, we have $pj =\rrho$ 
and hence $p_{\YYM} j_{\YYM} = \rrho_{\YYM}$. 
\begin{equation}\label{2020073122263}
\begin{xymatrix}@C=60pt{
\PPa \otimes_{\Pa} \YYM \ar[dr]^{\rrho_{\YYM} } \ar[d]_{j_{ \YYM} }& & \\ 
\cone(\ppi_{1}) \otimes_{\Pa} \YYM[-1] \ar[r]_{p_{\YYM}} & 
\YYM_{1} \otimes_{\Pa} \YYM 
}\end{xymatrix}
\end{equation}
Thus we have $\brho' j_{\YYM} = \zzeta p_{\YYM}j_{\YYM} = \zzeta  \rrho_{\YYM} = \rrho$.

From the claim, we obtain the homotopy equivalence 
$\sfJ := \begin{pmatrix} \id_{\YYM} & 0 \\ 0 & j_{ \YYM }\end{pmatrix} : \cone(\brho) \to \cone(\brho')$. 
It follows from the equation $\sfh_{\ppi_{1}} j = \sfL$ that 
$q_{\brho', \ppi_{1}, \sfM'} \sfJ =  q_{\brho, \ppi_{1}, \sfM}$. 
\begin{equation}\label{202007312234}
\begin{xymatrix}@C=40pt{ 
\cone( \brho) \ar[r]^{q_{\brho, \ppi_{1}, \sfM} } \ar[d]_{\sfJ}^{\wr}  & \PPi \\
\cone( \brho') \ar[ur]_{q_{\brho', \ppi_{1}, \sfM'} } 
}\end{xymatrix} 
\end{equation}
Therefore we conclude that $q=q_{\brho, \ppi_{1}, \sfM}$ is quasi-isomorphism 
if and only if so is $q' = q_{\brho', \ppi_{1}, \sfM'}$. 
\end{proof}

\subsubsection{The homotopy $\sfN$} 

\begin{lemma}\label{homotopic lemma 5}

We define a morphism $\sfN: \PPi_{1} \otimes_{\Pa} \PPa \otimes_{\Pa}  \YYM_{ n-2} 
 \to \PPi_{1} \otimes_{\Pa} \YYM_{n -1}$ in $\sfC_{\DG}(\Pa^{\mre} \Gr)$ of cohomological  degree $-1$
by the formulas
\[
\begin{split}
&\sfN( \alpha^{*} p e_{i}  x  ) :=\vars \alpha^{*} px - \alpha^{*} \sfH(p)x, \ 
\sfN(\vars p e_{i} x ) :=  \vars \sfH(p)x, \\
&\sfN(\alpha^{*} p \downarrow \beta x) := 0, \ 
\sfN(\vars p \downarrow \beta x) := 0 
\end{split} 
\]
for $i \in Q_{0}, \alpha, \beta  \in Q_{1}, p \in \Pa$ and $x  \in \YYM_{ n-2}$ 
to which we use the identifications 
\begin{equation}\label{202105211909}
\begin{split}
\PPi_{1} \otimes_{\Pa} \PPa \otimes_{\Pa}  \YYM_{ n-2} 
& \cong \Pa V^{*} \Pa\Pa_{0} \YYM \oplus \Pa S \Pa \Pa_{0} \YYM \oplus  \Pa V^{*} \Pa(V[1]) \YYM \oplus \Pa S \Pa(V[1]) \YYM, \\
 \PPi_{1} \otimes_{\Pa} \YYM_{n -1} &\cong \Pa V^{*} \YYM \oplus \Pa S \YYM. 
 \end{split}
\end{equation}

Then $\sfN$ is a homotopy from $(\ppi_{1,\YYM_{n -1}} ) \eeta^{*}_{n}({}_{\PPi_{1}} \mu_{\YYM_{n-2}} ) $ to ${}_{\PPi_{1}}\brho_{n -1}$.
\begin{equation}\label{202007302239}
\begin{xymatrix}{
\PPi_{1} \otimes_{\Pa} \PPa \otimes_{\Pa} \YYM_{n -2} 
\ar[d]_{{}_{\PPi_{1}}\mu_{\YYM_{ n-2}}  }\ar@{=}[rrrr] &&&& 
\PPi_{1} \otimes_{\Pa} \PPa \otimes_{\Pa} \YYM_{n -2} 
 \ar[d]^{ {}_{\PPi_{1}}\brho_{n -1}  }
 \\
\PPi_{1} \otimes_{\Pa} \YYM_{n -2}
 \ar@{=>}[urrrr]^{\sfN}  
\ar[rr]_{\eeta^{*}_{n}} && \YYM_{1} \otimes_{\Pa} \YYM_{n - 1}
\ar[rr]_{\ppi_{1, \YYM_{ n-1}} } & &
\PPi_{1} \otimes_{\Pa} \YYM_{ n-1}  \\
}\end{xymatrix} 
\end{equation}

Thus, we have the following commutative diagram in $\sfD(\Pa^{\mre})$
\[
\begin{xymatrix}@C=60pt{
 \PPi_{1}\lotimes_{\Pa} \YYM_{n -2}\ar[d]_{\eeta^{*}_{n}} \ar@{=}[r] & \PPi_{1}\lotimes_{\Pa} \YYM_{n -2}  \ar[d]^{{}_{\PPi_{1} }\brho_{n -1} } & \\
\YYM_{1} \lotimes_{\Pa} \YYM_{n -1} \ar[r]_-{\ppi_{1, \YYM_{n -1} }} & \PPi_{1} \lotimes_{\Pa} \YYM_{ n-1}.
}\end{xymatrix}
\]

\end{lemma}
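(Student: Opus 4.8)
\textbf{Proof plan for Lemma \ref{homotopic lemma 5}.} The statement is entirely computational: we must exhibit a specific cochain-homotopy $\sfN$ of degree $-1$ between two prescribed cochain maps and check the homotopy identity $(\ppi_{1,\YYM_{n-1}})\,\eeta^{*}_{n}\,({}_{\PPi_{1}}\mu_{\YYM_{n-2}}) - {}_{\PPi_{1}}\brho_{n-1} = d\sfN + \sfN d$ on the dg-bimodule $\PPi_{1}\otimes_{\Pa}\PPa\otimes_{\Pa}\YYM_{n-2}$. Once that identity is verified, the last assertion (the commutative square in $\sfD(\Pa^{\mre})$) follows formally: since $\mu: \PPa\to\Pa$ is a quasi-isomorphism, ${}_{\PPi_{1}}\mu_{\YYM_{n-2}}$ is a quasi-isomorphism, so in the derived category we may invert it, and the homotopy commutativity of the square \eqref{202007302239} descends to genuine commutativity in $\sfD(\Pa^{\mre})$. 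So the whole content is the verification of \eqref{202007302239}.

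\textbf{Strategy.} I would proceed as follows. First, fix the identifications \eqref{202105211909} of the underlying cohomological graded $\Pa^{\mre}$-modules and record the differentials on each summand explicitly: the differential of $\PPi_{1}\otimes_{\Pa}\PPa\otimes_{\Pa}\YYM_{n-2}$ combines $\hat\rho$ (from $\PPi_{1}$, cf. \eqref{202006122059}), $\hat\mu$ (from $\PPa$), and $d_{\YYM}$; the differential of $\PPi_{1}\otimes_{\Pa}\YYM_{n-1}$ combines $\hat\rho$ and $d_{\YYM}$. Second, because both source and target are generated over $\Pa^{\mre}$ by the four families of elements $\alpha^{*}pe_{i}x$, $\vars_{i}pe_{i}x$, $\alpha^{*}p\!\downarrow\!\beta x$, $\vars_{i}p\!\downarrow\!\beta x$ (with $p\in\Pa$, $x\in\YYM_{n-2}$), and since both sides of the homotopy identity are morphisms of $\Pa^{\mre}$-modules twisted appropriately, it suffices to check the identity on these generators. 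Third, on the two summands containing $\downarrow\!\beta$ the homotopy $\sfN$ vanishes; there one must check directly that $(\ppi_{1})\eeta^{*}_{n}({}_{\PPi_{1}}\mu)$ and ${}_{\PPi_{1}}\brho_{n-1}$ differ only by the image of the $\sfN$-terms coming from $d$ applied to the first two families — i.e. that the boundary contributions match up after $\mu$ collapses $\downarrow\!\alpha$ to the commutator $\alpha e_{h(\alpha)} - e_{t(\alpha)}\alpha$. Fourth, on the families $\alpha^{*}pe_{i}x$ and $\vars_{i}pe_{i}x$ one plugs in the explicit formula for $\eeta^{*}$ from Section \ref{202105211751} (namely $\eeta^{*}(\alpha^{*}x) = \alpha^{*}\varrho x - \varrho\alpha^{*}x$ and $\eeta^{*}(\vars_{i}x) = -\varrho^{\circledast}_{i}x$), the formula for $\brho$ from just before the lemma ($\brho(e_{i}x) = \varrho_{i}x$, $\brho(\downarrow\!\alpha x) = \alpha^{\circ}x$), the formula for $\ppi_{1}$ on $\YYM_{1}$, and the homotopy $\sfH$ from Lemma \ref{homotopy proposition}. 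The key algebraic input is the defining property $d\sfH + \sfH d = \sfb_{\varrho}$ of $\sfH$ (Lemma \ref{homotopy proposition}), together with $\ppi\sfH = \sfb_{\vars}\ppi$ (Lemma \ref{homotopy lemma s}) and $\sfH(\varrho) + d(\vart) = 0$; these are exactly what make the terms $\vars\alpha^{*}px - \alpha^{*}\sfH(p)x$ and $\vars\sfH(p)x$ in the definition of $\sfN$ produce the right boundary.

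\textbf{Where the difficulty lies.} The main obstacle is purely bookkeeping: one must carry the three-fold tensor product, keep track of Koszul signs for morphisms of degree $-1$ acting across the $[1]$-shifts hidden in $\PPi_{1}$ and $\PPa$, and remember that $\sfH$ is a derivation of degree $-1$ so that $\sfH(px) = \sfH(p)x + (-1)^{|p|}p\sfH(x) = \sfH(p)x$ since $p\in\Pa$ is in degree $0$. The cleanest organization is to treat the case $n=2$ first (where $\YYM_{n-2} = \Pa$, so $x$ can be taken to be an idempotent and the formulas simplify dramatically — this is essentially the content of an explicit identity among $\vars$, $\varrho$, $\varrho^{\circledast}$), and then reduce general $n$ to $n=2$ by right-multiplying by an arbitrary $x\in\YYM_{n-2}$, using that all the maps in sight are right $\YYM$-linear and that the identity to be checked is right-$\YYM$-linear in the argument $x$. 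With that reduction the general case follows from the $n=2$ case plus the Leibniz rules for $d$ and $\sfH$; no genuinely new computation is needed beyond $n=2$. I would present the $n=2$ computation in full and indicate the reduction for general $n$, leaving the remaining sign-checks to the reader as is done elsewhere in the paper.
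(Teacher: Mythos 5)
Your plan is correct and is in substance the same proof as the paper's: the paper verifies the identity $(\ppi_{1,\YYM_{n-1}})\,\eeta^{*}_{n}\,({}_{\PPi_{1}}\mu_{\YYM_{n-2}}) - {}_{\PPi_{1}}\brho_{n-1} = d\sfN + \sfN d$ by evaluating both sides on exactly the four generator families you list, using the explicit formulas for $\eeta^{*}$, $\brho$, $\ppi_{1}$, the derivation property of $\sfH$ (e.g.\ $\sfH(p\beta)=\sfH(p)\beta+p\beta^{\circledast}$), and the homotopy identity of Lemma \ref{homotopy proposition}; the passage to the commutative square in $\sfD(\Pa^{\mre})$ is then formal, as you say, because ${}_{\PPi_{1}}\mu_{\YYM_{n-2}}$ is a quasi-isomorphism. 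The only genuine organizational difference is your reduction to $n=2$: the paper instead runs the computation for general $n$ with an arbitrary $x\in\YYM_{n-2}$, and the terms involving $d(x)$ visibly cancel inside each generator check. Your shortcut is legitimate, but its justification is precisely that cancellation made systematic: after assembling over the $*$-grading, the two cochain maps being compared and $\sfN$ are right $\YYM$-linear, and the graded commutator $d\sfN+\sfN d$ of a right-linear map of degree $-1$ is again right-linear, so it suffices to check the identity on generators with trailing idempotent and then right-multiply by $x$. If you present it this way, state that linearity explicitly; it is the step doing the work in "no genuinely new computation is needed beyond $n=2$".

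Two small corrections so the bookkeeping comes out right when you actually execute the computation (which your write-up defers, and which is the entire content of the lemma): Lemma \ref{homotopy proposition} gives $d\sfH(x)+\sfH d(x) = -\sfb_{\varrho}(x) = [x,\varrho]$, not $\sfb_{\varrho}$ as you wrote; and your claim $\sfH(px)=\sfH(p)x$ is not a consequence of $|p|=0$ (degree zero only fixes the Koszul sign), but of the fact that in your reduction the trailing element is an idempotent and $\sfH$ is $\Pa_{0}$-linear, so $\sfH(e_{i})=0$. With those fixed, your outline reproduces the paper's argument.
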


\begin{proof}
We check the equation 
$(\ppi_{1,\YYM_{n -1}} ) \eeta^{*}_{n}({}_{\PPi_{1}} \mu_{\YYM_{n-2}} ) - {}_{\PPi_{1}}\brho_{n -1}= d\sfN + \sfN d$ 
on the generators by direct calculation. 
For simplicity, we denote the left hand side and the right hand side of the equation by $L$ and $R$. 
We point out a key equation $\sfH(p) - \sfH(\beta p) =-  \sfH(\beta) p = - \beta^{\circledast}p$ 
for $p \in \Pa, \beta\in Q_{1}$. Also note that in the $\stackrel{(1)}{=}, \stackrel{(2)}{=}$ below we use Lemma \ref{homotopy lemma s}. 

\begin{align*}
%\[
%\begin{split}
L (\alpha^{*} p e_{i} x) 
&=  \ppi_{1,  \YYM_{n -1}}  \eeta^{*}_{n}(\alpha^{*}px) - \alpha^{*}p \varrho x \\
&=  \ppi_{1, \YYM_{n -1}}  ( [\alpha^{*}, \varrho] px ) - \alpha^{*}p \varrho x \\
&= \alpha^{*} \varrho p x - \varrho \alpha^{*}p x - \alpha^{*}p \varrho x  
\displaybreak[1] \\ 
R (\alpha^{*} p e_{i} x) 
&= d( \vars\alpha^{*} px - \alpha^{*} \sfH(p) x ) + \sfN ( \alpha^{*} p e_{i} d(x) ) \\
& =- \varrho \alpha^{*} px - \vars\alpha^{*} p d(x) - \alpha^{*} d(\sfH(p)) x + \alpha^{*}\sfH(p) d(x) 
 + \vars \alpha^{*} p d(x) - \alpha^{*}\sfH(p)d(x)  \\
& = - \varrho \alpha^{*} px  - \alpha^{*} d(\sfH(p)) x  \\
& \stackrel{(1)}{=} - \varrho \alpha^{*} px  + \alpha^{*} [\varrho, p] x  \\
&= \alpha^{*} \varrho p x - \varrho \alpha^{*}p x - \alpha^{*}p \varrho x.  
\displaybreak[1]\\
%\end{split}
%\]
%\[
%\begin{split}
L(\vars_{j} pe_{i}x) 
& = (\ppi_{1, \YYM_{n -1}}) \eeta^{*}_{n}(\vars_{j} px) -\vars_{j} p\varrho x\\ 
& = (\ppi_{1,\YYM_{n-1}}) (- \varrho^{\circledast}_{j} px) - \vars_{j} p\varrho x\\ 
& = (\ppi_{1,\YYM_{n-1}}) \left(v_{j}^{-1} \sum_{\beta} \beta^{\circledast}\beta^{*} px-  v_{j}^{-1} \sum_{\beta} \beta^{*}\beta^{\circledast} px\right) -\vars_{j} p\varrho x\\ 
& = v_{j}^{-1} \sum_{\beta} [\vars,\beta] \beta^{*} px- v_{j}^{-1} \sum_{\beta} \beta^{*}\beta^{\circledast} px -\vars_{j} p\varrho x
\displaybreak[1]\\ 
%\end{split}
%\]
%\[
%\begin{split}
R(\vars_{j} pe_{i} x) 
&= d (\vars \sfH(p) x) + \sfN(-\varrho_{j} pe_{i}x - \vars_{j} pe_{i} d(x)) 
\\ 
& =- \varrho_{j}  \sfH(p)x - \vars_{j} d\sfH(p) x  + \vars_{j} \sfH(p)d(x) 
+ \sfN\left(v_{j}^{-1}  \sum_{\beta}\beta^{*} \beta pe_{i}x - v_{j}^{-1} \sum_{\beta} \beta\beta^{*}pe_{i}x  - \vars_{j} pe_{i} d(x)\right) 
\displaybreak[1]\\
& = v_{j}^{-1}  \sum_{\beta} \beta^{*} \beta\sfH(p)x  - v_{j}^{-1}  \sum_{\beta} \beta \beta^{*}   \sfH(p)x - \vars_{j} d\sfH(p) x   + \vars_{j} \sfH(p)d(x)  \\ 
&\ \ \ \ + 
v_{j}^{-1}  \sum_{\beta}\left( \vars_{j}  \beta^{*} \beta px- \beta^{*}\sfH(\beta p) x \right) 
- v_{j}^{-1}  \sum_{\beta} \left( \beta \vars_{j}  \beta^{*}px - \beta \beta^{*} \sfH(p) x\right)   - \vars_{j} \sfH(p) d(x) 
\displaybreak[1]\\
& = v_{j}^{-1}  \sum_{\beta} \left( \beta^{*} \beta\sfH(p)x - \beta^{*}\sfH(\beta p) x\right)   - \vars_{j} d\sfH(p) x +
v_{j}^{-1}  \sum_{\beta} \vars_{j}  \beta^{*} \beta px -v_{j}^{-1}  \sum_{\beta} \beta \vars_{j}  \beta^{*}px   
\displaybreak[1]\\
& \stackrel{(2)}{=} - v_{j}^{-1} \sum_{\beta}   \beta^{*} \beta^{\circledast} px  +  \vars_{j} [\varrho ,p] x +
v_{j}^{-1}  \vars_{j} \left( \sum_{\beta} \beta\beta^{*}\right)   px - \vars_{j}  \varrho px  - v_{j}^{-1}  \sum_{\beta} \beta \vars_{j}  \beta^{*}px  
\displaybreak[1]\\
& = - v_{j}^{-1}  \sum_{\beta}   \beta^{*} \beta^{\circledast}p x  -  \vars_{j} p\varrho x +
v_{j}^{-1}  \vars_{j} \left( \sum_{\beta} \beta\beta^{*}\right)   px   -  v_{j}^{-1} \sum_{\beta} \beta \vars_{j}  \beta^{*}px 
\displaybreak[1]\\
& = v_{j}^{-1}  \sum_{\beta} [\vars_{j},\beta] \beta^{*} px- v_{j}^{-1} \sum_{\beta} \beta^{*}\beta^{\circledast} px -\vars_{j} p\varrho x
\displaybreak[1]\\ 
%\end{split}
%\]
%\[
%\begin{split}
L(\alpha^{*} p \downarrow \beta x) & = - \alpha^{*} p \beta^{\circledast} x 
\displaybreak[1]\\ 
R(\alpha^{*} p \downarrow \beta x) & 
= \sfN(\alpha^{*} p \beta e_{t(\beta)} x - \alpha^{*} p e_{h(\beta)} \beta x - \alpha^{*} \downarrow \beta dx)  \\ 
&= \vars  \alpha^{*} p x- \alpha^{*} \sfH(p\beta)x  - \vars  \alpha^{*} p \beta x + \alpha^{*}\sfH(p)\beta x   \\ 
& = -\alpha^{*}p\beta^{\circledast} x
\displaybreak[1] \\ 
%\end{split}
%\]
%\[
%\begin{split}
L(\vars  p \downarrow \beta x) & = - \vars  p \beta^{\circledast} x 
\displaybreak[1]\\ 
R(\vars  p \downarrow \beta x)  
& =  \sfN(-\varrho p \downarrow \beta x - \vars p \beta e_{t(\beta)} x + \vars  p e_{h(\beta)} \beta x + \vars  \downarrow \beta dx)\\ 
& = -\vars \sfH(p\beta)x + \vars  \sfH(p) \beta x\\ 
& = - \vars  p \beta^{\circledast} x.
%\end{split}
%\]  
\end{align*}
\end{proof}

Now we obtain the following diagram that appeared in Lemma \ref{202008021437}. 
\begin{equation}\label{202008021750}
\begin{xymatrix}{
\PPi_{1} \otimes_{\Pa} \PPa \otimes_{\Pa} \YYM_{n -2} 
\ar[d]_{{}_{\PPi_{1}}\mu_{\YYM_{n -2}} }\ar@{=}[rrrr] &&&& 
\PPi_{1} \otimes_{\Pa} \PPa \otimes_{\Pa} \YYM_{n -2} 
 \ar[d]^{ {}_{\PPi_{1}}\brho } \ar@/^70pt/[dd]^{0} &&\\
\PPi_{1} \otimes_{\Pa} \YYM_{n -2}
 \ar@{=>}[urrrr]^{\sfN}  
\ar[rr]_{\eeta^{*}}
 \ar@/_20pt/[drrr]_{0} && \YYM_{1} \otimes_{\Pa} \YYM_{n - 1}
\ar[rr]_{\ppi_{1, \YYM_{ n-1}} } & &
\PPi_{1} \otimes_{\Pa} \YYM_{ n-1}  \ar[d]^{{}_{\PPi_{1} }\ppi_{n -1} } \ar@{=>}[dll]^{\sfK} \ar@{=>}[r]^{{}_{\PPi_{1}}\sfM}&&\\
&&& \PPi_{n} \ar@{=}[r]^-{\sim} & \PPi_{1} \otimes_{\Pa} \PPi_{ n-1}&&
}\end{xymatrix} 
\end{equation}
A key step is to prove that the homotopies in the diagram satisfy the condition of Lemma \ref{202008021437}.

\begin{lemma}\label{202008021717}
We have the following equality. 
\[
\sfK({}_{\PPi_{1}}\mu_{\YYM_{n -2}} ) =  {}_{\PPi_{1}} \sfM  + ({}_{\PPi_{1} }\ppi_{n-1}) \sfN 
\]
\end{lemma}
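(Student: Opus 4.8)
The plan is to verify the claimed identity $\sfK({}_{\PPi_{1}}\mu_{\YYM_{n-2}}) = {}_{\PPi_{1}}\sfM + ({}_{\PPi_{1}}\ppi_{n-1})\sfN$ by a direct computation on a convenient set of $\Pa^{\mre}$-generators, since all three morphisms involved ($\sfK$, $\sfM$, $\sfN$) have been given by explicit formulas on generators in Lemma \ref{202008021653}, in Section \ref{subsection: U} (the formula $\sfM(e_i x) = -\vars_i\ppi(x)$, $\sfM(\downarrow\alpha x) = 0$), and in Lemma \ref{homotopic lemma 5} respectively. Concretely, I would use the identification \eqref{202105211909} and note that $\PPi_{1}\otimes_{\Pa}\PPa\otimes_{\Pa}\YYM_{n-2}$ is generated over $\Pa^{\mre}$ by the four families of elements $\alpha^{*}pe_{i}x$, $\vars_{j}pe_{i}x$, $\alpha^{*}p{\downarrow}\beta x$, $\vars_{j}p{\downarrow}\beta x$ with $\alpha,\beta\in Q_{1}$, $i,j\in Q_{0}$, $p\in\Pa$, $x\in\YYM_{n-2}$. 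Since all of $\sfK$, $\sfM$, $\sfN$, $\mu$, $\ppi$ are maps of $\Pa^{\mre}$-modules (or at worst derivations along dg-algebra maps), it suffices to check the equality on these generators, and actually, by left/right $\Pa$-linearity and the fact that $\sfM,\sfN,\sfK$ are built from derivations and the homotopies $\sfH$, it is enough to treat the generators $\alpha^{*}e_{i}x$, $\vars_{j}e_{i}x$, $\alpha^{*}{\downarrow}\beta x$, $\vars_{j}{\downarrow}\beta x$ and then track the effect of inserting an arbitrary $p\in\Pa$ through the Leibniz rules.

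First I would dispose of the two generators lying in the $\Pa(V[1])\YYM$ part: there $\mu$ sends ${\downarrow}\beta x$ to $\beta e_{h(\beta)}x - e_{t(\beta)}\beta x$ (an element of the $\Pa\Pa_0\YYM$ part), so $\sfK({}_{\PPi_1}\mu)$ splits into two terms of the already-handled type; on the other hand $\sfM({\downarrow}\alpha x)=0$ by definition and $\sfN$ vanishes on the $\Pa V^*\Pa(V[1])\YYM$ and $\Pa S\Pa(V[1])\YYM$ summands by definition, so the right-hand side also reduces to the value of $({}_{\PPi_1}\ppi_{n-1})\sfN$ on the image, which I would match against $\sfK\circ\mu$ using the explicit formula $\sfN(\alpha^{*}p{\downarrow}\beta x)=0$ combined with how $\sfN$ acts after $\mu$ straightens the bracket — the bookkeeping here is a telescoping cancellation governed by the key identity $\sfH(p)-\sfH(\beta p)=-\beta^{\circledast}p$ already used in the proof of Lemma \ref{homotopic lemma 5}. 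Then I would handle the two generators in the $\Pa\Pa_0\YYM$ part: here $\mu(e_i x)=e_i x$ (i.e. $\mu$ is essentially the identity on this summand), so the left-hand side is just $\sfK(\alpha^{*}e_i x)=[\vars,\alpha^{*}]\ppi(x)$ respectively $\sfK(\vars_j e_i x)=\vars_j^2\ppi(x)$, while on the right-hand side $\sfM(e_i x)=-\vars_i\ppi(x)$ lands in $\PPi_{n}$ and $\sfN$ on these generators equals $\vars\alpha^{*}x-\alpha^{*}\sfH(0)x = \vars\alpha^{*}x$ (since the $p=e_i$ case makes $\sfH(p)=0$) respectively $\vars\sfH(e_i)x = 0$; applying ${}_{\PPi_1}\ppi_{n-1}$ and using Lemma \ref{homotopy lemma s} ($\ppi\sfH=\sfb_{\vars}\ppi$) then converts everything into an identity purely about graded commutators with $\vars$ in $\PPi$, which I expect to reduce to $[\vars,\alpha^{*}]=\vars\alpha^{*}-\alpha^{*}\vars$ and $\vars_j^2 = \vars_j\cdot\vars_j$ modulo the relation $d(\vart)=\ldots$ — i.e. a bare algebra identity.

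Reintroducing an arbitrary $p\in\Pa$ is the step where the Leibniz/derivation structure must be used carefully: $\sfN$ is only a map of complexes (degree $-1$), built to satisfy $\sfN(\alpha^{*}pe_ix)=\vars\alpha^{*}px - \alpha^{*}\sfH(p)x$, so the extra $-\alpha^{*}\sfH(p)x$ term must be seen to cancel against the corresponding contribution of $\sfK$ after $\mu$ has acted — here $\mu$ applied to $p\,e_i\,x$ (thinking of $p$ as living in the middle $\PPa$ factor) produces, via $\hat\mu$, a combination that feeds exactly the $\Pa V^*$-part arguments of $\sfK$; the matching is again controlled by $\ppi\sfH=\sfb_{\vars}\ppi$ and by $d(\vars)=-\varrho$. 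The main obstacle I anticipate is precisely this last bookkeeping: getting the signs and the placement of factors right across the three identifications \eqref{202105211909}, \eqref{202105211917}, and $\PPa\otimes_{\Pa}\YYM\cong\Pa\Pa_0\YYM\oplus\Pa(V[1])\YYM$, because each of $\sfK$, $\sfM$, $\sfN$ is written relative to a slightly different decomposition and the composite $\sfK\circ({}_{\PPi_1}\mu_{\YYM_{n-2}})$ requires re-expressing an element of $\cone(\hat\mu)\otimes$-type data in the $\cone(\hat\eta)\otimes$-type basis. Once the dictionary between these bases is fixed, the verification on all four families of generators is a routine (if lengthy) application of the graded Leibniz rule, Lemma \ref{homotopy lemma s}, and the defining relations $d(\vars)=-\varrho$, $d(\vart)=\varrho^{\circ}+\varrho^{\circledast}$, with no conceptual surprises.
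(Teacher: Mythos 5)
Your global strategy coincides with the paper's: both verify the identity by evaluating each side on the $\Pa^{\mre}$-generators $\alpha^{*}pe_{i}x$, $\vars_{j}pe_{i}x$, $\alpha^{*}p{\downarrow}\beta x$, $\vars_{j}p{\downarrow}\beta x$ of $\PPi_{1}\otimes_{\Pa}\PPa\otimes_{\Pa}\YYM_{n-2}$ under the identification \eqref{202105211909}, with Lemma \ref{homotopy lemma s} as the key input. However, your execution rests on a misreading of the map being precomposed. The morphism ${}_{\PPi_{1}}\mu_{\YYM_{n-2}}$ is induced by $\mu=(\mu,0)\colon\PPa\to\Pa$, which is multiplication on the summand $\Pa\Pa$ and \emph{zero} on the shifted summand $\Pa V\Pa[1]$; the formula you attribute to it, ${\downarrow}\beta x\mapsto \beta e_{h(\beta)}x-e_{t(\beta)}\beta x$, is (part of) the differential of $\PPa\otimes_{\Pa}\YYM$ built from $\hat{\mu}$, not the map $\mu$ itself. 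Consequently, on the generators $\alpha^{*}p{\downarrow}\beta x$ and $\vars_{j}p{\downarrow}\beta x$ the left-hand side vanishes outright, as does the right-hand side (both $\sfM$ and $\sfN$ are zero there by definition); there is no ``telescoping cancellation'' to organize, and the identity $\sfH(p)-\sfH(\beta p)=-\beta^{\circledast}p$ you invoke belongs to the verification that $\sfN$ is a homotopy (Lemma \ref{homotopic lemma 5}) and plays no role here. Note also that the statement is a strict equality of degree $-1$ morphisms, so $d(\vars)=-\varrho$ and the formula for $d(\vart)$ never enter.

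The same misreading misplaces the essential computation. On the generators $\alpha^{*}pe_{i}x$ and $\vars_{j}pe_{i}x$ the left-hand side is immediate, since $\mu$ merely absorbs $p$: it equals $[\vars,\alpha^{*}]p\ppi(x)$, respectively $\vars_{j}^{2}p\ppi(x)$, with no contribution ``via $\hat{\mu}$ feeding the $\Pa V^{*}$-arguments of $\sfK$''. All the content sits on the right-hand side, where the $\sfM$-term $-\alpha^{*}p\vars\,\ppi(x)$ (respectively $+\vars_{j}p\vars\,\ppi(x)$ — the sign comes from the Koszul rule when ${}_{\PPi_{1}}\sfM$ passes the odd element $\vars_{j}$, a point your outline does not address) must combine with the $\sfN$-terms $\vars\alpha^{*}p\,\ppi(x)-\alpha^{*}\ppi(\sfH(p))\ppi(x)$ (respectively $\vars\,\ppi(\sfH(p))\ppi(x)$), and the collapse is exactly $\ppi\sfH(p)=[\vars,p]$ from Lemma \ref{homotopy lemma s}. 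For the same reason your proposed reduction to $p=e_{i}$ followed by ``inserting $p$ through Leibniz rules'' is not a valid reduction: none of $\sfK$, $\sfM$, $\sfN$ is a derivation in the middle variable, the case $p=e_{i}$ (where $\sfH(p)=0$) is nearly vacuous, and the general-$p$ computation is precisely where the lemma's content lies. With the map $\mu$ corrected and the generator-by-generator check carried out for arbitrary $p$ as above, the argument does close, and then it is the paper's proof.
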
 

\begin{proof}
For simplicity, we denote the left hand side and the right hand side of the equation by $L$ and $R$. 
We check $L = R$ on  generators of $\PPi_{1} \otimes_{\Pa} \PPa \otimes_{\Pa} \YYM_{n -2}$ by direct calculation. 
We use the identification \eqref{202105211909} to exhibit  generators of $\PPi_{1} \otimes_{\Pa} \PPa \otimes_{\Pa}  \YYM_{n -2}$.  

First let $i \in Q_{0}, \alpha \in Q_{1}, p \in \Pa$ and $x \in \YYM_{ n-2}$. 
Then 
we have 
\[
\begin{split} 
L(\alpha^{*}p e_{i} x ) &= [\vars, \alpha^{*}] p\pi(x),\\ 
R(\alpha^{*} pe_{i} x)
 & = - \alpha^{*} p\vars  \ppi(x) + \vars \alpha^{*} p \ppi(x) - \alpha^{*} \ppi ( \sfH(p))\ppi(x)\\ 
 & \stackrel{(1)}{=} - \alpha^{*} p\vars  \ppi(x) + \vars \alpha^{*} p \ppi(x) - \alpha^{*} [\vars , p]\ppi(x)\\ 
& = \vars  \alpha^{*} p \ppi(x) - \alpha^{*}\vars p \ppi(x) \\
& = [\vars , \alpha^{*}]p \ppi(x). \\
L(\vars p e_{i} x) &=\vars ^{2} p \ppi(x)\\
R(\vars  pe_{i} x) &  \stackrel{(2)}{=} \vars p \vars \ppi(x) + \vars  \ppi\sfH(p) \ppi(x) \\
& \stackrel{(3)}{=} \vars  p \vars  \ppi(x) + \vars [\vars ,p] \ppi(x) \\
& = \vars ^{2} \ppi(x)
\end{split}
\]
where for $\stackrel{(1)}{=}, \stackrel{(3)}{=}$ we use Lemma \ref{homotopy lemma s}. 
For  the first term of  $\stackrel{(2)}{=}$ we use Koszul sign rule as below. 
\[
({}_{\PPi_{1}}\sfM )( \vars pe_{i} x) = - \vars p \sfM(e_{i} x) = -\vars p( -\vars \ppi(x)) = \vars p \vars \ppi(x). 
\]
 
 We can easily check that $L$ and $R$ both vanish 
 in the generators of the forms $\alpha^{*} p \downarrow \beta x, s p \downarrow \beta x$. 
\end{proof}

We use the  notations of Section \ref{202008041341}. 
By definition, we have the equality $\Phi_{\eeta^{*}, \ppi_{1} \otimes \YYM } = \brho'$ of morphisms from $\cone(\ppi_{1}) \otimes_{A} \YYM[-1]$ 
to $\YYM$. 

The next step is to identify the morphism $ \acute{q}_{\eeta^{*}, \ppi_{1}\otimes \YYM, \PPi_{1} \otimes \ppi, \sfK}:
\cone(\Phi_{\eeta^{*}, \ppi_{1} \otimes \YYM }) \to \PPi$ with 
$q': \cone(\brho') \to \PPi$. 

\begin{lemma}\label{202008041355} 
 We have \[
 \acute{q}_{\eeta^{*}, \ppi_{1}\otimes \YYM, \PPi_{1} \otimes \ppi, \sfK} = q'. 
 \]
 \end{lemma}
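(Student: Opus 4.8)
The statement to prove, Lemma~\ref{202008041355}, asserts the equality of two cochain maps
$\cone(\Phi_{\eeta^{*}, \ppi_{1}\otimes \YYM}) \to \PPi$, namely $\acute{q}_{\eeta^{*}, \ppi_{1}\otimes \YYM, \PPi_{1}\otimes\ppi, \sfK}$ and $q'= q_{\brho', \ppi_{1}, \sfM'}$. Both are honest cochain maps between the same explicit complexes (no derived-category ambiguity is involved), so the plan is to compute the components of each side against the decomposition of $\cone(\Phi_{\eeta^{*}, \ppi_{1}\otimes\YYM})$ as a graded module and verify they agree componentwise. First I would unwind the definition of $\acute{q}$: by Definition~\ref{202008021600} it is the composition
\[
\cone(\Phi_{\eeta^{*}, \ppi_{1}\otimes\YYM}) \xrightarrow{\ q_{\eeta^{*}, \ppi_{1}\otimes\YYM, 0}\ } \cone\bigl((\ppi_{1}\otimes\YYM)\circ\eeta^{*}\bigr) \xrightarrow{\ q_{(\ppi_{1}\otimes\YYM)\eeta^{*},\, \PPi_{1}\otimes\ppi,\, \sfK}\ } \PPi ,
\]
and by the explicit formula recorded just after Definition~\ref{202008021600} it equals $(\,(\PPi_{1}\otimes\ppi)(\ppi_{1}\otimes\YYM),\ \sfK\!\uparrow,\ \PPi_{1}\otimes\ppi,\ 0\,)$ on the four summands $\YYM_{1}\otimes\YYM(-1) \oplus \PPi_{1}\otimes\YYM(-2)[1] \oplus \PPi_{1}\otimes\PPi(-1) \oplus \YYM_{1}\otimes\YYM(-1)[1]$ of $\cone(\Phi_{\eeta^{*},\ppi_{1}\otimes\YYM})= Y \oplus X[1] \oplus Z \oplus Y[1]$ with $X = \PPi_{1}\otimes\YYM(-2)$, $Y = \YYM_{1}\otimes\YYM(-1)$, $Z=\PPi_{1}\otimes\PPi(-1)$.

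Second I would unwind $q' = q_{\brho', \ppi_{1}, \sfM'}= (\ppi_{1},\ \sfM'\!\uparrow) \colon \cone(\brho') \to \PPi$ and identify $\cone(\brho')$ with $\cone(\Phi_{\eeta^{*},\ppi_{1}\otimes\YYM})$. The key structural input here is the observation made right before Lemma~\ref{202008041355}, that $\Phi_{\eeta^{*},\ppi_{1}\otimes\YYM} = \brho'$ as morphisms $\cone(\ppi_{1})\otimes_{\Pa}\YYM[-1] \to \YYM$, which identifies the two cones on the nose. Under this identification $\sfM' = {}_{\PPi_{1}}\pi\circ(\sfh_{\ppi_{1}})_{\YYM}$; using the explicit form of $\sfh_{\ppi_{1}} = (\downarrow, 0)$ from \eqref{202007312045} and of $p = -p_{2}^{\ppi_{1}}$ one writes $\sfM'$ out on the summands of $\cone(\ppi_{1})\otimes_{\Pa}\YYM[-1]$. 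Then it remains to match: the $Y$-component of both sides is $\ppi\circ\zzeta = (\PPi_{1}\otimes\ppi)(\ppi_{1}\otimes\YYM)$ up to the already-fixed identifications; the $Z$-component of $\acute q$ is $\PPi_{1}\otimes\ppi$, matching the $\PPi_{1}$-summand of $\PPi_{\ge1}$; the $X[1]$-component is where the homotopy $\sfK$ enters and must be compared with the relevant piece of $\sfM'\uparrow$; and the $Y[1]$-component vanishes on both sides. This is essentially a bookkeeping exercise: expand both composites against the four direct summands and the differentials given in Observation~\ref{202006122126} and Section~\ref{202106020843}.

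Third, the nontrivial matching — the $X[1] = \PPi_{1}\otimes\YYM(-2)[1]$ component — is exactly what Lemma~\ref{202008021717} supplies, via the equality $\sfK({}_{\PPi_{1}}\mu_{\YYM_{n-2}}) = {}_{\PPi_{1}}\sfM + ({}_{\PPi_{1}}\ppi_{n-1})\sfN$; and Lemma~\ref{homotopic lemma 5} provides the compatibility of $\sfN$ with $\eeta^{*}$ that makes the hypotheses of Lemma~\ref{202008021437} hold. So the cleanest route is: verify the hypothesis $H_{3}l = kH_{1}+H_{2}$ of Lemma~\ref{202008021437} in the present setting (this is precisely Lemma~\ref{202008021717}, with $l = {}_{\PPi_{1}}\mu_{\YYM}$, $k = \PPi_{1}\otimes\ppi$, $H_{1}$ corresponding to $\sfN$ via Lemma~\ref{homotopic lemma 5}, $H_{2} = \sfK$, $H_{3} = {}_{\PPi_{1}}\sfM$), and then read off the claimed equality $\acute q = q'$ as (a piece of) the commutative diagram in the conclusion of Lemma~\ref{202008021437} — the identification $\cone(\Phi_{fl,g}) \cong \cone(\Phi_{\eeta^{*},\ppi_{1}\otimes\YYM})$ and the fact that the induced $\sfind$ is a quasi-isomorphism having already been arranged. \textbf{The main obstacle} I expect is not conceptual but notational: keeping straight the many Koszul signs and the several competing identifications of the graded module underlying $\cone(\Phi)$ (as $Y\oplus X[1]\oplus Z\oplus Y[1]$, versus as a sub/quotient built from $\PPa\otimes_{\Pa}\YYM$, $V^{*}$, $S$, $V^{\circ}$, $T$), so that the output of Lemma~\ref{202008021437} literally reads as the displayed equation of Lemma~\ref{202008041355}. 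Once the dictionary is fixed, the proof is a one-line invocation of Lemma~\ref{202008021437} together with Lemmas~\ref{homotopic lemma 5} and~\ref{202008021717}.
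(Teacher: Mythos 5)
Your first two paragraphs are on the right track and in fact describe the paper's own argument: both maps are explicit cochain maps, $\acute{q}_{\eeta^{*},\ppi_{1}\otimes\YYM,\PPi_{1}\otimes\ppi,\sfK}=\bigl(({}_{\PPi_{1}}\ppi)(\ppi_{1,\YYM}),\ \sfK\!\uparrow,\ {}_{\PPi_{1}}\ppi,\ 0\bigr)$ by the formula after Definition \ref{202008021600}, while $q'=(\ppi,\sfM'\!\uparrow)$ with $\sfM'\!\uparrow=({}_{\PPi_{1}}\ppi,0)$, so everything reduces to four componentwise identities. Two slips in your bookkeeping, though: $Z$ is $\PPi_{1}\otimes_{\Pa}\YYM(-1)$, not $\PPi_{1}\otimes\PPi(-1)$ (that is the codomain $W=\PPi_{\geq 1}$); and the $X[1]$-summand $\PPi_{1}\otimes_{\Pa}\YYM(-2)[1]$ sits inside the \emph{first} summand $\YYM$ of $\cone(\brho')$ (via $\YYM_{\geq 1}=\cone(\eeta^{*})$), so under $q'$ it is hit by the component $\ppi$, not by $\sfM'\!\uparrow$.

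The genuine gap is your third paragraph. The matching on the $X[1]$-component is not supplied by Lemma \ref{202008021717}, and the equality $\acute{q}=q'$ cannot be ``read off'' the conclusion of Lemma \ref{202008021437}: that conclusion only relates $\acute{q}_{f,g,k,H_{3}}$, $\acute{q}_{fl,g,k,H}$, $\dot{q}$ and $q_{h,k,H_{2}}$, and $q'$ does not appear in it at all. In the paper's logic, Lemmas \ref{homotopic lemma 5}, \ref{202008021717} and \ref{202008021437} are applied \emph{after} the present lemma, and it is precisely the identification $\acute{q}=q'$ that converts their output into the diagram \eqref{202008021755} about $\cone(\brho'_{n})$ and $q'_{n}$; invoking them here is circular. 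The ingredient you actually need for the $X[1]$-component is Lemma \ref{202008021653}: under the identification $\YYM_{\geq 1}=\cone(\eeta^{*})$ one has $\ppi_{\geq 1}=q_{\eeta^{*},\,\ppi_{1}\otimes\ppi(-1),\,\sfK}=(\ppi_{1}\otimes\ppi,\ \sfK\!\uparrow)$, i.e.\ $\sfK\!\uparrow$ is literally the restriction of the first component $\ppi$ of $q'$ to the summand $\PPi_{1}\otimes_{\Pa}\YYM(-2)[1]$. With that input your componentwise plan closes immediately: the $Y$- and $X[1]$-components of $q'$ are $(\ppi_{1}\otimes\ppi,\sfK\!\uparrow)$ by Lemma \ref{202008021653}, the $Z$- and $Y[1]$-components are $({}_{\PPi_{1}}\ppi,0)$ from $\sfM'=({}_{\PPi_{1}}\ppi)(\sfh_{\ppi_{1}})_{\YYM}$ and $\sfh_{\ppi_{1}}=(\downarrow,0)$, and these four entries coincide with the formula for $\acute{q}$ — which is exactly how the paper proves the lemma.
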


\begin{proof}
Recall that 
\[
q' = (\ppi, \sfM'\uparrow): \cone(\brho') = \YYM \oplus ( \cone(\ppi_{1} ) \otimes_{\Pa} \YYM) \to \PPi. 
\]
Since,
by definition $\sfM' = ({}_{\PPi_{1} } \ppi) (\sfh_{\ppi_{1}})_{ \YYM}$
and \[
(\sfh_{\ppi_{1}})_{ \YYM} = (\downarrow, 0) : \cone(\ppi_{1}) \otimes \YYM [-1] = \
 (\PPi_{1} \otimes_{\Pa} \YYM [-1])\oplus ( \YYM_{1} \otimes_{\Pa} \YYM) 
 \to 
 \PPi_{1} \otimes \YYM, 
 \]
we obtain the following  description of the second component $\sfM'\uparrow: \cone(\ppi_{1}) \otimes_{\Pa} \YYM \to \PPi$. 
\[
\sfM' \uparrow =({}_{\PPi_{1} }\ppi, 0) : 
\cone(\ppi_{1} ) \otimes_{\Pa} \YYM = (\PPi_{1} \otimes_{\Pa} \YYM )\oplus ( \YYM_{1} \otimes_{\Pa} \YYM[1]) 
\to  \PPi_{1} \otimes_{\Pa} \PPi = \PPi_{\geq 1}. 
\]

Consequently, we come to the desired equation.
\[
\begin{split}
q' &
= (\ppi, \sfM'\uparrow) = (\ppi_{1} \otimes \ppi, \sfK\uparrow, {}_{\PPi_{1} }\ppi, 0) \\
&= 
( ({}_{\PPi_{1}} \ppi) (\ppi_{1, \YYM}), \sfK\uparrow, {}_{\PPi_{1} } \ppi, 0)\\
&= \acute{q}_{\eeta^{*}, \ppi_{1} \otimes \YYM, \PPi_{1} \otimes\ppi, \sfK}.
\end{split}
\]

\end{proof}

Thanks to Lemma \ref{202008021717}, we can apply Lemma \ref{202008021437} 
to the diagram \eqref{202008021750}.  
By Lemma \ref{202008041355} we obtain the following commutative diagram
\begin{equation}\label{202008021755} 
\begin{xymatrix}{
\cone(\brho'_{n} ) \ar[dd]_{q'_{n}} & \cone(\Phi_{\eeta^{*}, \ppi_{1}\otimes \YYM_{n -1}} )
\ar[l]_-{\sfind} \ar[r]^-{\dot{q}}  & \cone(\PPi_{1} \otimes \brho_{n -1} ) \ar[d]^{\sfiso} \\ 
&& \PPi_{1} \otimes \cone(\brho_{n-1} ) \ar[d]^{{}_{\PPi_{1} } q_{n -1}} \\
\PPi_{n} \ar@{=}[rr] && \PPi_{1} \otimes_{\Pa} \PPi_{ n-1} 
}\end{xymatrix} 
\end{equation} 
where $\dot{q}$ is a homotopy equivalence and $\sfiso$ denotes a canonical isomorphism. 
The morphism $\sfind$ is induced from a quasi-isomorphism ${}_{\PPi_{1}} \mu_{\YYM_{n-2}}: \PPi_{1} \otimes_{\Pa} \PPa \otimes_{\Pa} \YYM_{ n-2} 
\to \PPi_{1} \otimes_{\Pa} \YYM_{n -2}$. 
Therefore it  is a quasi-isomorphism.

\subsubsection{Proof of Theorem \ref{exact triangle U2}}

We prove that $q_{n}$ and $q'_{n}$ 
are quasi-isomorphisms by induction on $n \geq 0$. 

Since $\brho_{0} =0, \brho'_{0} = 0, \ppi_{0} = \id_{\Pa}, \sfM_{0} = 0$, we see that $q_{0} = q'_{0} = \id_{\Pa}$ and 
hence they are quasi-isomorphisms.

The morphism $q_{1}$ and $q'_{1}$ are quasi-isomorphism by Lemma \ref{pre universal Auslander-Reiten triangle lemma}. 

We deal with the case $n \geq 2$. 
We assume $q_{n-1}$ is a quasi-isomorphism. 
Then,  
 all the morphisms in  the diagram \eqref{202008021755} are quasi-isomorphisms 
except $q'_{n}$.  
Therefore,   
$q'_{n}$ is also a quasi-isomorphism. 
Thanks to Lemma \ref{202007312238} that $q_{n}$ is also a quasi-isomorphism. 
\qed

\subsubsection{}

The commutative diagram given below plays a key role later.

\begin{corollary}\label{202008091343}
For $n \geq 2$, we have the following commutative diagram in $\sfD(\Pa^{\mre})$. 
\[\begin{xymatrix}@C=60pt{
 &
\PPi_{1} \lotimes_{\Pa} \YYM_{n -2} \ar@{=}[r] \ar[d]_{\eeta^{*}_{n}} 
 &
 \PPi_{1} \lotimes_{\Pa} \YYM_{n -2}  \ar[d]^{ {}_{\PPi_{1}} \brho_{n -1}} 
 &
\\
 \YYM_{n -1}   \ar@{=}[d]  \ar[r]^{ \rrho_{\YYM_{n -1} } } 
 &
\YYM_{1} \lotimes_{\Pa} \YYM_{n -1}  \ar[r]^{\ppi_{1, \YYM_{n -1} }  } \ar[d]_{\zzeta_{n} } 
&
\PPi_{1} \lotimes_{\Pa} \YYM_{n- 1} \ar[d]^{ {}_{ \PPi_{1} }\ppi_{n -1}} \ar[r]^{- \ttheta[1]_{ \YYM_{n -1}}}  
& \YYM_{n -1} [1] \ar@{=}[d]\\ 
\YYM_{n-1}   \ar[r]_{\brho_{n}}   &
\YYM_{n} \ar[r]_{\ppi_{n}}  \ar[d]
 & \PPi_{n} \ar[r] \ar[d]  & \YYM_{ n-1}[1]\\
 & \PPi_{1} \lotimes_{\Pa} \YYM_{n -2}[1] \ar@{=}[r] & \PPi_{1} \lotimes_{\Pa} \YYM_{n -2}[1] &
}\end{xymatrix}
\]
where two middle rows and two middle columns are exact triangles. 
\end{corollary}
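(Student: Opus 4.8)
\textbf{Proof proposal for Corollary \ref{202008091343}.}

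The plan is to produce the entire diagram as a genuine diagram of exact triangles in $\sfD(\Pa^{\mre})$ by first constructing it on the level of complexes (in $\sfC(\Pa^{\mre})$ or $\sfK(\Pa^{\mre})$) and then passing to the derived category. First I would recall from Section \ref{202106020843} that the $*$-degree $n$ part $\YYM_{n}$ has been identified with the cone $\cone(\eeta^{*}_{n})$ of the cochain map $\eeta^{*}_{n}: \PPi_{1}\otimes_{\Pa}\YYM_{n-2}\to \YYM_{1}\otimes_{\Pa}\YYM_{n-1}$, with the cone inclusion $i_{1}^{\eeta^{*}_{n}}$ being exactly $\zzeta_{n}$; this furnishes, after rotation, the exact triangle $\sfV_{n}: \PPi_{1}\otimes_{\Pa}\YYM_{n-2}\xrightarrow{\eeta^{*}_{n}}\YYM_{1}\otimes_{\Pa}\YYM_{n-1}\xrightarrow{\zzeta_{n}}\YYM_{n}\to\PPi_{1}\otimes_{\Pa}\YYM_{n-2}[1]$. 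That is the top horizontal exact triangle (up to the rotation placing $\YYM_{n-1}$-terms at the ends). The middle horizontal exact triangle is simply the $*$-degree $n$ instance \eqref{202008141924} of Theorem \ref{exact triangle U2}, namely $\YYM_{n-1}\xrightarrow{\brho_{n}}\YYM_{n}\xrightarrow{\ppi_{n}}\PPi_{n}\to\YYM_{n-1}[1]$. The middle vertical exact triangle is the $\YYM_{n-1}$-tensored universal Auslander-Reiten triangle, i.e. $\sfAR\lotimes_{\Pa}\YYM_{n-1}$ from Lemma \ref{pre universal Auslander-Reiten triangle lemma} (or Section \ref{subsection The universal Auslander-Reiten triangle}): $\PPa\lotimes_{\Pa}\YYM_{n-1}\xrightarrow{\rrho_{\YYM_{n-1}}}\YYM_{1}\lotimes_{\Pa}\YYM_{n-1}\xrightarrow{\ppi_{1,\YYM_{n-1}}}\PPi_{1}\lotimes_{\Pa}\YYM_{n-1}\xrightarrow{-\ttheta[1]_{\YYM_{n-1}}}\YYM_{n-1}[1]$. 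Finally the right-hand vertical exact triangle is $\sfU$ tensored appropriately, i.e. the $*$-degree shift giving $\PPi_{1}\lotimes_{\Pa}\YYM_{n-2}\xrightarrow{{}_{\PPi_{1}}\brho_{n-1}}\PPi_{1}\lotimes_{\Pa}\YYM_{n-1}\xrightarrow{{}_{\PPi_{1}}\ppi_{n-1}}\PPi_{n}\to\PPi_{1}\lotimes_{\Pa}\YYM_{n-2}[1]$, obtained by applying $\PPi_{1}\lotimes_{\Pa}-$ to the exact triangle \eqref{202008141924} in degree $n-1$ and using $\PPi_{1}\lotimes_{\Pa}\PPi_{n-1}\cong\PPi_{n}$.

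Having assembled the four exact triangles, the remaining task is commutativity of the five small squares (and the appropriate anticommutativity/sign conventions on the connecting morphisms). The three squares not involving $\eeta^{*}_{n}$ are essentially formal: the square built from $\zzeta_{n}$ and $\ppi_{n}$ versus $\ppi_{1,\YYM_{n-1}}$ and ${}_{\PPi_{1}}\ppi_{n-1}$ commutes because $\ppi$ is a morphism of $*$-graded dg-algebras (Lemma \ref{202102041200}), so it intertwines multiplication $\zzeta$ with the multiplication on $\PPi$; this was essentially recorded in the diagram \eqref{202007312226} and in Lemma \ref{202008021653}. The square built from $\brho_{n}$, $\zzeta_{n}$, $\rrho_{\YYM_{n-1}}$ commutes by the very definition $\brho=\zzeta\rrho_{\YYM}$ (left square of \eqref{202007312226}), read off in $*$-degree $n$. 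The square involving the two copies of $\PPi_{1}\lotimes_{\Pa}\YYM_{n-2}$ at the top and the connecting maps on the right is where the work lies: this is precisely the assertion, already isolated as the displayed commutative square at the end of Lemma \ref{homotopic lemma 5}, that $\ppi_{1,\YYM_{n-1}}\circ\eeta^{*}_{n}$ equals ${}_{\PPi_{1}}\brho_{n-1}$ up to the identification $\PPi_{1}\lotimes_{\Pa}\YYM_{n-2}\cong \PPi_{1}\lotimes_{\Pa}\YYM_{n-2}$ in $\sfD(\Pa^{\mre})$; I would simply quote Lemma \ref{homotopic lemma 5}, whose homotopy $\sfN$ does exactly this. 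Thus the content of the corollary is almost entirely a bookkeeping reassembly of Lemma \ref{pre universal Auslander-Reiten triangle lemma}, Theorem \ref{exact triangle U2}, the definition of $\sfV_{n}$, and Lemma \ref{homotopic lemma 5}.

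Concretely, the steps in order would be: (1) write down the four exact triangles as above, citing $\sfV_{n}$, Theorem \ref{exact triangle U2}, $\sfAR_{\YYM_{n-1}}$, and $\PPi_{1}\lotimes_{\Pa}(\text{degree }n-1 \text{ of }\sfU)$; (2) check the lower-left square from $\brho=\zzeta\rrho$; (3) check the lower-right / middle-right squares from $\ppi$ being a dg-algebra map, i.e. from \eqref{202007312226} and Lemma \ref{202008021653}; (4) check the top square from Lemma \ref{homotopic lemma 5}; (5) verify the connecting morphisms along the boundary agree (using that the connecting map of $\sfV_{n}$, being the rotation of a cone triangle, is $\pm\id$ on $\YYM_{n-1}[1]$ in the relevant component, and that the connecting map of $\sfAR_{\YYM_{n-1}}$ is $-\ttheta[1]_{\YYM_{n-1}}$ as displayed); and (6) invoke the $(3\times 3)$-lemma / octahedral-type patching to conclude that the two middle rows and two middle columns are exact triangles fitting into a single commutative diagram — this last patching being guaranteed because three of the four triangles are literally cone triangles and the fourth is obtained by an exact functor from a cone triangle.

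The main obstacle I anticipate is purely sign-and-identification hygiene rather than anything conceptual: one must be careful that the identification $\YYM_{n}\cong\cone(\eeta^{*}_{n})$ (via the isomorphism $\mathsf{idn}$ of \eqref{202102041839} and the recursive description of $\YYM_{n}$ in Section \ref{202106020843}) is compatible with the identification of $\YYM_{n}$ as the middle term of \eqref{202008141924}, and that the connecting morphism of $\sfV_{n}$ composed with $\ppi_{1,\YYM_{n-1}}$ (or with the vertical maps) matches ${}_{\PPi_{1}}\brho_{n-1}$ and ${}_{\PPi_{1}}\ppi_{n-1}$ with the correct signs. All of these compatibilities have, however, effectively been proved already: the homotopy $\sfN$ of Lemma \ref{homotopic lemma 5} handles the $\eeta^{*}_{n}$ versus ${}_{\PPi_{1}}\brho_{n-1}$ comparison, and the proof of Theorem \ref{exact triangle U2} (via Lemma \ref{202008021717}, Lemma \ref{202008041355} and the commutative diagram \eqref{202008021755}) already arranges the compatibility of $\zzeta_{n}$, $\brho_{n}$, $\ppi_{n}$ with the cone structures. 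So the corollary follows by collecting these.
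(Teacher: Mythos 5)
Your proposal is correct and follows essentially the same route as the paper: the paper likewise assembles $\sfV_{n}$, the $*$-degree $n$ part of Theorem \ref{exact triangle U2}, $\sfAR\lotimes_{\Pa}\YYM_{n-1}$ and $\PPi_{1}\lotimes_{\Pa}(\text{degree } n-1 \text{ of } \sfU)$, and reduces everything to showing that $(\id,\ \ppi_{1,\YYM_{n-1}},\ \ppi_{n})$ is a morphism of exact triangles between the two middle columns, which it checks at the cochain level through $\cone\bigl((\eeta^{*}_{n})({}_{\PPi_{1}}\mu_{\YYM_{n-2}})\bigr)$ using exactly the inputs you cite (Lemma \ref{homotopic lemma 5}, Lemma \ref{202008021653}, Lemma \ref{202008021717}). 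The only cosmetic difference is that where you gesture at a $(3\times 3)$-type patching, the paper instead writes the explicit cone-level diagram so that the prescribed third map $\ppi_{n}$ (not merely some completion) is seen to be compatible — precisely the sign-and-identification point your final paragraph correctly isolates.
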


\begin{proof}
It only remains to show that the two middle columns give a morphism of exact triangles. 
Recall that we are identifying $\YYM_{n}$ with $ \cone(\eeta^{*}_{n})$ and $\zzeta_{n}$ with a canonical morphism $i_{1}^{\eeta^{*}_{n}}$. 
For simplicity we set $\phi :=  (\eeta^{*}_{n})({}_{\PPi_{1}} \mu_{\YYM_{n-2}})$, 
then we have that following commutative diagram in $\sfC(\Pa)$
\[\begin{xymatrix}@C=40pt{
\PPi_{1} \otimes_{\Pa} \YYM_{n -2} \ar[d]_{\eeta^{*}_{n}}
 &
\PPi_{1} \otimes_{\Pa} \PPa \otimes_{\Pa}  \YYM_{n -2} \ar[l]_{{}_{\PPi_{1}} \mu_{\YYM_{n-2}} } \ar@{=}[r] 
\ar[d]_{ \phi } 
 &
 \PPi_{1} \otimes_{\Pa} \PPa \otimes_{\Pa} \YYM_{n -2}  \ar[d]^{ {}_{\PPi_{1}} \brho_{n -1} } \ar@{=}[r]
 &  \PPi_{1} \otimes_{\Pa} \PPa \otimes_{\Pa} \YYM_{n -2} \ar[d]^{ {}_{\PPi_{1}} \brho_{n -1} }
\\
 \YYM_{1} \otimes_{\Pa} \YYM_{n -1}   \ar@{=}[r]  \ar[d]_{ \zzeta_{n}  } 
 &
\YYM_{1} \otimes_{\Pa} \YYM_{n -1}  \ar[r]_{\ppi_{1, \YYM_{n -1} }  } \ar[d]_{i_{1}^{\phi}} \ar@{=>}[ur]^{\sfN} 
&
\PPi_{1} \otimes_{\Pa} \YYM_{n- 1} 
\ar[d]^{i_{1}^{{}_{\PPi_{1} } \brho_{n-1} }}   \ar@{=}[r]
& \PPi_{1} \otimes_{\Pa} \YYM_{n- 1} \ar[d]^{ {}_{ \PPi_{1} }\ppi_{n -1}} 
\\ 
\YYM_{n}  &
\cone(\phi)  \ar[l]^{\Psi_{1}}  \ar[r]_{\Psi_{2}}
 & \cone({}_{\PPi_{1}} \brho_{n -1} ) \ar[r]_{q}  & \PPi_{n} 
}\end{xymatrix}
\]
where we set 
\[
\Psi_{1} = \begin{pmatrix} \id & 0 \\ 0 & {}_{\PPi_{1}}\mu_{\YYM_{n -2}} \end{pmatrix}, \quad
\Psi_{2} = \begin{pmatrix} \ppi_{1, \YYM_{n -1}} & \sfN\uparrow \\ 0 & \id \end{pmatrix}, \quad
q = ({}_{\PPi_{1}} \ppi_{n -1}. \sfM\uparrow)
\]
Observe that all columns extend to exact triangles and morphisms between them induce morphisms between exact triangles. 
Moreover the morphisms from the second column to the first column are quasi-isomorphisms. 
Thus it is enough to verify the equality $\ppi_{n} = q \Psi_{2} \Psi_{1}^{-1}$ in $\sfD(\Pa)$. 
The last equality follows from the equality $\ppi_{n} \Psi_{1} = q \Psi_{2}$ 
that can be checked from Lemma \ref{202008021653} and Lemma \ref{202008021717}.
\end{proof}

\subsubsection{Proof of Theorem \ref{exact triangle U}}

We prove Theorem \ref{exact triangle U} by reducing it to Theorem \ref{exact triangle U2}.

\begin{lemma}\label{homotopic lemma 3}
Define a morphism $\sfO:  \PPa \otimes_{\Pa} \YYM( -1) \to \YYM$ in $\sfC_{\DG}(\Pa \Gr)$ of cohomological  degree $-1$
by the formula 
\[
\sfO(e_{i} x) := -e_{i} \sfH(x), \ \sfO(\downarrow \alpha x ) := 0
\]
for $i \in Q_{0}, \alpha \in Q_{1}$ and $x \in \YYM$ to which we use the identification 
$\PPa \otimes_{\Pa} \YYM \cong \Pa \Pa_{0} \YYM \oplus \Pa (V[1]) \YYM$. 

Then the following statements hold. 

\begin{enumerate}[(1)] 
\item 

$\sfO$ is a homotopy from 
$\brho $ to $\sfr_{\varrho} ( \mu\otimes \YYM(-1))$.

\item 
We have the following equality 
\[
\sfM = \ppi \sfO - \sfr_{\vars} \ppi(\mu_{ \YYM( -1)})
\]
of morphisms from $\PPa \otimes_{\Pa} \YYM(-1)$ to $\PPi$ of degree $-1$. 
\end{enumerate} 
\end{lemma}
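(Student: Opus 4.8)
The plan is to verify both statements by direct computation on generators, using the derivation structure of the maps involved, exactly in the style of the preceding lemmas (e.g. Lemma \ref{homotopic lemma 5} and Lemma \ref{homotopy proposition}). Recall that $\PPa \otimes_{\Pa} \YYM$ is generated over $\Pa^{\mre}$ by elements of the forms $e_{i} x$ and $\downarrow \alpha x$ with $i \in Q_{0}$, $\alpha \in Q_{1}$, $x \in \YYM$, and that the differential acts by $d(e_{i}x) = e_{i}d(x)$ and $d(\downarrow \alpha x) = \alpha e_{h(\alpha)}x - e_{t(\alpha)}\alpha x - \downarrow \alpha d(x)$ (the formula recorded just before the statement). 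So to prove (1) I would check the equality $\brho - \sfr_{\varrho}(\mu \otimes \YYM(-1)) = d\sfO + \sfO d$ separately on $e_{i}x$ and on $\downarrow \alpha x$; to prove (2) I would check $\sfM = \ppi\sfO - \sfr_{\vars}\ppi(\mu_{\YYM(-1)})$ on the same two families of generators.

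For step (1), the explicit values $\brho(e_{i}x) = \varrho_{i}x$, $\brho(\downarrow \alpha x) = \alpha^{\circ}x$, $(\mu \otimes \YYM(-1))(e_{i}x) = e_{i}x$, $(\mu \otimes \YYM(-1))(\downarrow \alpha x) = 0$ reduce the $e_{i}x$-case to the identity $\varrho_{i}x - x\varrho_{i}e_{i} = d(-e_{i}\sfH(x)) + \sfO(e_{i}d(x)) = -\varrho_{i}\sfH(x) - (-1)^{|x|}e_{i}(d\sfH(x)) + (-1)^{|x|}e_{i}\sfH(d(x))$, where I would invoke Lemma \ref{homotopy proposition}, i.e. $[d,\sfH]_{\Hom} = -\sfb_{\varrho}$, so that $d\sfH(x) + \sfH d(x) = [x,\varrho]$; matching signs (Koszul rule) then closes the case. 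The $\downarrow \alpha x$-case should come out to $\alpha^{\circ}x = \sfO(\alpha e_{h(\alpha)}x - e_{t(\alpha)}\alpha x - \downarrow \alpha d(x)) = -e_{t(\alpha)}\sfH(\alpha x) + e_{h(\alpha)}\sfH(x)\cdot(\text{corrected by }\alpha)$, using the key identity $\sfH(\alpha p) - \sfH(\alpha)p = \alpha\sfH(p)$, i.e. $\sfH(\alpha x) = \alpha^{\circledast}x + \alpha\sfH(x)$; I would be slightly careful here because $\sfO(\downarrow \alpha x) = 0$ but $\sfH(\alpha)= \alpha^{\circledast}$ rather than $\alpha^{\circ}$, so the case needs $\sfH$ applied after the differential has split $\downarrow\alpha x$ into elements of $\Pa_{0}\YYM$-type; the outcome is $\alpha^{\circledast}\cdot(\ldots)$ which should simplify. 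Actually, checking this carefully, the correct reading is: the differential sends $\downarrow \alpha x$ to elements $\alpha e_{h(\alpha)} x$ and $e_{t(\alpha)} \alpha x$ of $\Pa_0 \YYM$-type, on which $\sfO$ acts by $-\sfH$, giving $-\sfH(\alpha x) + \alpha\sfH(x)\cdot$(sign) $= -\alpha^{\circledast}x$; but we want $\alpha^{\circ}x$, so I would need to track the full chain and likely also use $d(\alpha^{\circledast}) = \eta_\alpha$ versus the occurrence of $\alpha^\circ$; this bookkeeping is the one genuinely fiddly spot.

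For step (2), the formula $\sfM(e_{i}x) = -\vars_{i}\ppi(x)$ and $\sfM(\downarrow \alpha x) = 0$ must be matched against $\ppi\sfO - \sfr_{\vars}\ppi\mu$. On $e_{i}x$ this reads $-\vars_{i}\ppi(x) = \ppi(-e_{i}\sfH(x)) - (-1)^{|x|}\ppi(x)\vars_{i}$, i.e. $-\vars_{i}\ppi(x) = -e_{i}\ppi\sfH(x) - (-1)^{|x|}\ppi(x)\vars_{i}$; here I invoke Lemma \ref{homotopy lemma s}, $\ppi\sfH = \sfb_{\vars}\ppi$, which gives $\ppi\sfH(x) = [\vars, \ppi(x)] = \vars\ppi(x) - (-1)^{|x|}\ppi(x)\vars$, so $-e_{i}\ppi\sfH(x) = -e_{i}\vars\ppi(x) + (-1)^{|x|}e_{i}\ppi(x)\vars = -\vars_{i}\ppi(x) + (-1)^{|x|}\ppi(x)\vars_{i}$ (using $e_i \vars = \vars_i e_i \cdots$, idempotent bookkeeping), and the $(-1)^{|x|}\ppi(x)\vars_{i}$ terms cancel against the $\sfr_{\vars}$-term. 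On $\downarrow \alpha x$, both sides vanish since $\sfO(\downarrow \alpha x) = 0$ and $\mu(\downarrow \alpha x) = 0$.

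The main obstacle, as flagged, is getting all the Koszul signs and idempotent-insertion conventions consistent between the two identifications $\PPa \otimes_\Pa \YYM \cong \Pa\Pa_0\YYM \oplus \Pa(V[1])\YYM$ and the generator-wise action of $\brho$, $\sfM$, $\sfO$, $d$ — particularly the $\downarrow\alpha x$ case of (1), where the mismatch between $\sfH(\alpha) = \alpha^{\circledast}$ and the target value $\alpha^{\circ}x$ means one has to push the computation through the differential rather than applying $\sfO$ naively. Everything else is a mechanical bookkeeping exercise of the same type already carried out in Lemmas \ref{homotopy lemma s}, \ref{homotopy proposition}, and \ref{homotopic lemma 5}, and I would present it as such, showing the two generator families for each of (1) and (2) and citing those lemmas for the key commutator identities.
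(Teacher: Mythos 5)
Your strategy is the same as the paper's: verify both identities on the two generator families $e_{i}x$ and $\downarrow\alpha\, x$, invoking Lemma \ref{homotopy proposition} (i.e.\ $d\sfH(x)+\sfH d(x)=[x,\varrho]$) for the $e_{i}x$ case of (1) and Lemma \ref{homotopy lemma s} (i.e.\ $\ppi\sfH=\sfb_{\vars}\ppi$) for (2). Your treatment of (2) and of the $e_{i}x$ case of (1) agrees with the paper's computation. However, the one case you explicitly leave open, the $\downarrow\alpha\,x$ case of (1), is a genuine gap as written, and the fix is not the one you suggest. The value $\brho(\downarrow\alpha\, x)=\alpha^{\circ}x$ that you take from the display preceding the lemma is a slip: since $\brho=\zzeta\,\rrho_{\YYM(-1)}$ and $\rrho$ acts as the identity on the summand $\Pa V\Pa[1]$, which inside $\YYM_{1}$ is $\Pa V^{\circledast}\Pa$, one has $\brho(\downarrow\alpha\, x)=\alpha^{\circledast}x$. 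This is also forced by $*$-degrees: $\alpha^{\circ}$ has $*$-degree $2$, so $\alpha^{\circ}x$ cannot lie in the correct $*$-graded component of the target.

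With the correct value the case closes exactly like the other one, with no detour through $d(\alpha^{\circledast})=\eeta_{\alpha}$ and no further ``pushing through the differential'': since $\sfO(\downarrow\alpha\,x)=0$ and $\mu(\downarrow\alpha)=0$, one only needs
\[
(d\sfO+\sfO d)(\downarrow\alpha\, x)
=\sfO\bigl(\alpha e_{h(\alpha)}x - e_{t(\alpha)}\alpha x - \downarrow\alpha\, dx\bigr)
=-\alpha\sfH(x)+\sfH(\alpha x)
=\sfH(\alpha)x=\alpha^{\circledast}x,
\]
by the Leibniz rule for $\sfH$ (note your sign here: the result is $+\alpha^{\circledast}x$, not $-\alpha^{\circledast}x$), and this matches the left-hand side $\brho(\downarrow\alpha\, x)-\sfr_{\varrho}(\mu_{\YYM(-1)})(\downarrow\alpha\, x)=\alpha^{\circledast}x-0$. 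Once this is inserted, your argument coincides with the paper's proof.
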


Note that
we have the following diagram
\begin{equation}\label{202102051520}
\begin{xymatrix}@C=50pt@R=10pt{
&&\\
 \PPa \otimes_{\Pa} \YYM(-1)
 \ar@/^30pt/[rr]^{0}
 \ar[dd]_{\mu_{\YYM(-1)}} 
\ar[r]^{\brho} & 
\YYM \ar@{=}[dd] \ar[r]^{\ppi} \ar@{=>}[u]_{\sfM} \ar@{=>}[ddl]_{\sfO} 
& \PPi \ar@{=}[dd]\\
&&\\
\YYM(-1) \ar@/_30pt/[rr]_{0}
\ar[r]^{\sfr_{\varrho}} & \YYM \ar[r]_{\ppi} \ar@{=>}[d]^{- \sfr_{\vars} \ppi} & \PPi \\ 
&& .
}\end{xymatrix}
\end{equation}

\begin{proof}
(1) 
It is enough to show that the equation 
$ \zzeta (\rrho_{\YYM(-1)})  - \sfr_{\varrho} (\mu_{\YYM(-1)} ) =  d \sfO + \sfO d$. 
holds on the generators $e_{i} x$ and $\downarrow \alpha  x$. 
We check this by direct calculation. 
For simplicity, we denote by $L$ and $R$ the left hand side and the right had side of the equation. 
\[
\begin{split} 
L(e_{i}  x)  & = \varrho_{i} x -  e_{i} x \varrho, \\ 
R(e_{i}  x)  & = - e_{i} ((d \sfH + \sfH d)(x)) = -e_{i} (x \varrho - \varrho x) = \varrho_{i}x - e_{i} x \varrho, \\ 
L(\downarrow \alpha   x) & = \alpha^{\circledast} x,  \\ 
R(\downarrow \alpha  x) & = \sfO d( \downarrow \alpha x) = 
\sfO(\alpha e_{h(\alpha)}  x - e_{t(\alpha)}\alpha x- \downarrow \alpha  dx) \\
& = - \alpha \sfH(x) + \sfH(\alpha x)   = \sfH(\alpha) x = \alpha^{\circledast} x. 
\end{split}
\]

(2) 
We also check the equation holds on the generators $e_{i} x$ and $\downarrow \alpha  x$ by direct calculation. 
\[
\begin{split} 
(\ppi \sfO - \sfr_{\vars} \ppi(\mu_{ \YYM( -1) })) ( e_{i} x)
&= - e_{i} \ppi \sfH(x) - \sfr_{\vars}\ppi(x) \\ 
&= -[\vars, \ppi(x)]- \sfr_{\vars} \ppi(x) \\
& = - \vars\ppi(x) = \sfM(x), \\ 
(\ppi \sfO - \sfr_{\vars} \ppi(\mu_{\YYM( -1)} ))(\downarrow \alpha  x) & = 0 = \sfM(\downarrow\alpha   x)
\end{split}
\]
\end{proof}

We proceed a proof of Theorem \ref{exact triangle U}. 

\begin{proof}[Proof of Theorem \ref{exact triangle U}] 
By Lemma \ref{homotopic lemma 3}, the  diagram \eqref{202102051520} induces the following commutative diagram 
\[
\begin{xymatrix}@C=50pt@R=20pt{ 
\cone(\brho) \ar[r]^{q_{\brho, \ppi, \sfM}} \ar[d]_{\sfind} & \PPi \ar@{=}[d] \\ 
\cone(\sfr_{\rho}) \ar[r]_{q_{\sfr_{\varrho}, \ppi, - \sfr_{\vars} \ppi}} & \PPi 
}\end{xymatrix}
\]
where $\sfind$ is the morphism induced from $\mu_{ \YYM( -1)}$. 
By Theorem \ref{exact triangle U2}, the upper arrow $q_{\brho, \ppi, \sfM}$ is a quasi-isomorphism. 
Since $\mu_{\YYM( -1)}$ is a quasi-isomorphism, so is the induced morphism $\sfind$. 
Thus we conclude that $q_{\sfr_{\varrho}, \ppi,-\sfr_{\vars}\ppi}$ is a quasi-isomorphism. 
\end{proof}

\subsection{Lemmas}

We collect two lemmas  for the later quotations

\subsubsection{}

It follows from Lemma \ref{homotopy proposition} that the composite $\zzeta_{2}({}_{\YYM_{1}}\rrho- \rrho_{\YYM_{1}})$ becomes   a zero morphism
$\YYM_{1} \to \YYM_{2} $ in $\sfD(\Pa_{0}^{\mre})$. 
Since $\eeta^{*}_{2}$ is a co-cone morphism of $\zzeta_{2}$, 
there exists a morphism $f: \YYM_{1} \to \PPi_{1}$ such that ${}_{\YYM_{1}}\rrho- \rrho_{\YYM_{1}} = \eeta^{*}_{2} f$. 
The next lemma says that we can take $f$ to be $\ppi_{1}$.

\begin{lemma}\label{202001111736}
We define a morphism $\sfP: \PPa \otimes_{\Pa} \YYM_{1} \otimes_{\Pa} \PPa \to \YYM_{1}\otimes_{\Pa} \YYM_{1}$ of degree $-1$ in $\sfC_{\DG}(\Pa^{\mre})$ by the formula 
\[
\begin{split} 
\sfP(e_{i} p\alpha^{*} q e_{j}) & := \sfH(p) \alpha^{*} q + p \alpha^{*} \sfH(q), \
\sfP(e_{i} p\alpha^{\circledast} q e_{j} )  := \sfH(p) \alpha^{\circledast} q -p \alpha^{\circledast} \sfH(q), \\ 
\sfP(\downarrow \beta p\alpha^{*} q e_{j} ) & :=0,\  \sfP(\downarrow \beta p\alpha^{\circledast} q e_{j} ) :=0, 
\sfP(e_{j} p\alpha^{*} q \downarrow \gamma  )  :=0,\  \sfP(e_{i}p\alpha^{\circledast} q \downarrow \gamma )  :=0, \\
\sfP(\downarrow \beta p\alpha^{*} q \downarrow \gamma  ) & :=0,\  \sfP(\downarrow \beta p\alpha^{\circledast} q \downarrow \gamma ) :=0 
\end{split}
\]
for $i,j \in Q_{0}, \ \alpha,\beta, \gamma \in Q_{1}, \ p,q \in \Pa$ 
to which we use the identifications 
\[
\begin{split} 
\PPa \otimes_{\Pa} \YYM_{1} \otimes_{\Pa} \PPa  &\cong 
\Pa\Pa_{0} \Pa V^{*} \Pa\Pa_{0} \Pa \oplus \cdots \oplus \Pa(V[1]) \Pa V^{\circ} \Pa(V[1]) \Pa,\\
\YYM_{1}\otimes_{\Pa} \YYM_{1}  &\cong 
 \Pa V^{*} \Pa V^{*} \Pa \oplus \cdots \oplus \Pa V^{\circ } \Pa V^{\circ } \Pa\end{split}\]
 induced from 
$\PPa \cong \Pa\Pa \oplus \Pa (V[1])\Pa, \YYM_{1} \cong \Pa V^{*} \Pa \oplus \Pa V^{\circledast} \Pa$.
Then $\sfP$ is a homotopy from $\mu\otimes \YYM_{1} \otimes \rrho-\rrho \otimes \YYM_{1} \otimes \mu$ to 
$\eta^{*}_{2} \ppi_{1} (\mu \otimes \YYM_{1} \otimes \mu)$. 

\[
\begin{xymatrix}@C=50pt{
\YYM_{1} \ar[rr]^-{\ppi_{1}} && \PPi_{1} \ar[d]^{\eta^{*}_{2}} \\
\PPa \otimes_{\Pa} \YYM_{1} \otimes_{\Pa} \PPa \ar[u]^{\mu\otimes \YYM_{1} \otimes \mu}
\ar[rr]_-{\mu\otimes \YYM_{1} \otimes \rrho-\rrho \otimes \YYM_{1} \otimes \mu }&
\ar@{=>}[u]_{\sfP} & 
 \YYM_{1}\otimes_{\Pa} \YYM_{1}
}\end{xymatrix}\]

Therefore, we have the following commutative diagram in $\sfD(\Pa^{\mre})$. 
\[
\begin{xymatrix}@C=50pt{
 & \PPi_{1} \ar[d]^{\eta^{*}_{2}} \\
\YYM_{1} \ar[ur]^-{\ppi_{1}} 
\ar[r]_-{ {}_{\YYM_{1}}\rrho- \rrho_{\YYM_{1}} } & \YYM_{1}\otimes_{\Pa} \YYM_{1}
}\end{xymatrix}\]

\end{lemma}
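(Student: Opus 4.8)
The statement of Lemma \ref{202001111736} is really two assertions: (i) that the explicitly-defined map $\sfP$ of degree $-1$ is a homotopy from $\mu\otimes \YYM_{1}\otimes\rrho - \rrho\otimes\YYM_{1}\otimes\mu$ to $\eta^{*}_{2}\ppi_{1}(\mu\otimes\YYM_{1}\otimes\mu)$, and (ii) the resulting diagram in $\sfD(\Pa^{\mre})$ commutes. Assertion (ii) is a formal consequence of (i): the map $\mu\otimes\YYM_{1}\otimes\mu: \PPa\otimes_{\Pa}\YYM_{1}\otimes_{\Pa}\PPa \to \YYM_{1}$ is a quasi-isomorphism (since $\mu:\PPa\to\Pa$ is a projective resolution in $\sfC(\Pa^{\mre})$ and $\YYM_{1}$ is degreewise projective over $\Pa^{\mre}$, so tensoring preserves quasi-isomorphisms), so inverting it in $\sfD(\Pa^{\mre})$ and using (i) immediately gives ${}_{\YYM_{1}}\rrho - \rrho_{\YYM_{1}} = \eta^{*}_{2}\ppi_{1}$ in $\sfD(\Pa^{\mre})$. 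Hence the entire content of the proof is the verification of (i), i.e.\ the cochain identity
\[
\mu\otimes \YYM_{1}\otimes\rrho - \rrho\otimes\YYM_{1}\otimes\mu - \eta^{*}_{2}\ppi_{1}(\mu\otimes\YYM_{1}\otimes\mu) = d\sfP + \sfP d
\]
of morphisms $\PPa\otimes_{\Pa}\YYM_{1}\otimes_{\Pa}\PPa \to \YYM_{1}\otimes_{\Pa}\YYM_{1}$ of degree $0$.

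\textbf{Carrying out (i).} Both sides of this identity are $\Pa^{\mre}$-linear, so it suffices to check it on a generating set of $\PPa\otimes_{\Pa}\YYM_{1}\otimes_{\Pa}\PPa$ over $\Pa^{\mre}$. Using the identifications $\PPa\cong \Pa\Pa_{0}\Pa\oplus\Pa(V[1])\Pa$ and $\YYM_{1}\cong \Pa V^{*}\Pa\oplus\Pa V^{\circledast}\Pa$ from Observation \ref{202006122126} and Section \ref{derived preprojective algebra}, the module $\PPa\otimes_{\Pa}\YYM_{1}\otimes_{\Pa}\PPa$ is generated over $\Pa^{\mre}$ by the eight families of elements listed in the statement — of the form $x\,p\,y\,q\,z$ with $x,z\in\{e_{i},\ \downarrow\beta\}$ and $y\in\{\alpha^{*},\alpha^{\circledast}\}$ and $p,q\in\Pa$. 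On each of these I would compute $d\sfP + \sfP d$ and compare it term by term with the left-hand side. The left-hand side is itself built from concrete data: $\ppi_{1}$ sends $\alpha^{*}\mapsto\alpha^{*}$ and $\alpha^{\circledast}\mapsto -[\alpha,\vars]$ (after the identification of $\YYM_{1}$ with $\cone(\hat\eta)$ and $\PPi_{1}$ with $\cone(-\hat\rho)$, cf.\ the lemma preceding the construction of $\rrho$); $\eta^{*}_{2}$ sends $\alpha^{*}\mapsto\eta_{\alpha^{*}}$ and $\vars\mapsto-\varrho^{\circledast}$; $\rrho$ is right multiplication-type and $\mu$ is the augmentation. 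The key auxiliary facts that make the terms collapse are: $d(\vars) = -\varrho$, the homotopy relation $d\sfH + \sfH d = \sfb_{\varrho}$ of Lemma \ref{homotopy proposition}, the identity $\sfH(p) - \sfH(\beta p) = -\beta^{\circledast}p$ for $p\in\Pa$, $\beta\in Q_{1}$, and the compatibility $d(\downarrow\beta\,w) = \beta e_{h(\beta)}w - e_{t(\beta)}\beta w - \downarrow\beta\,dw$ of the differential on $\PPa$ with the bimodule structure. These are exactly the same ingredients used in the proofs of Lemma \ref{homotopic lemma 5}, Lemma \ref{202008021717} and Lemma \ref{homotopic lemma 3}, so the computation is of the same flavour and length as those.

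\textbf{Where the difficulty lies.} Conceptually there is no obstacle: the lemma is a bookkeeping identity that encodes the (already-known, cohomological-level) statement that $\varrho$ is central — indeed $\mu\otimes\YYM_{1}\otimes\rrho$ and $\rrho\otimes\YYM_{1}\otimes\mu$ represent left and right multiplication of $\YYM_{1}$ by $\varrho$, and $\eta^{*}_{2}\ppi_{1}$ is the explicit correction one-form witnessing that these two agree up to the relations defining $\YYM$. The real work is purely computational: one must organise the eight cases so that the cancellations (especially in the $\alpha^{\circledast}$-generator cases, where $\ppi_{1}(\alpha^{\circledast}) = -[\alpha,\vars]$ produces a commutator and $\eta^{*}_{2}$ unravels it back into the $\eta$-relations) are transparent, and keep the Koszul signs straight throughout — with generators of cohomological degrees $0$ ($\alpha^{*}$), $-1$ ($\alpha^{\circledast}$, $\downarrow\beta$) appearing, sign errors are the principal hazard. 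I would first do the four cases where at least one of $x,z$ is $\downarrow\beta$ (these are the easy ones, where both sides vanish or reduce to a single $\sfH$-term exactly as in Lemma \ref{202008021717}), then the two cases $e_{i}p\alpha^{*}qe_{j}$ and $e_{i}p\alpha^{\circledast}qe_{j}$, which carry all the substance and where the central-element identities of Lemma \ref{homotopy proposition} and Lemma \ref{homotopy lemma s} are invoked.
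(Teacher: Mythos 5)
Your proposal is correct and follows the route the paper intends: the paper explicitly leaves this lemma as a straightforward calculation, and your plan — reduce to the cochain identity $d\sfP+\sfP d$ checked on the eight $\Pa^{\mre}$-generators using $d(\vars)=-\varrho$, the homotopy of Lemma \ref{homotopy proposition}, and $\sfH(p)-\sfH(\beta p)=-\beta^{\circledast}p$, then pass to $\sfD(\Pa^{\mre})$ by inverting the quasi-isomorphism $\mu\otimes\YYM_{1}\otimes\mu$ — is exactly that verification together with the standard formal step. (Only a cosmetic slip: Lemma \ref{homotopy proposition} gives $d\sfH+\sfH d=\sfr_{\varrho}-\sfl_{\varrho}=-\sfb_{\varrho}$, not $\sfb_{\varrho}$.)
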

Since it is verified by straightforward calculation, we leave the proof to the readers. 

\subsubsection{}

We may construct  a right version $\eeta^{*\textup{right}}_{n}: \YYM_{n -2} \lotimes_{\Pa} \PPi_{1} \to \YYM_{n-1} \lotimes_{\Pa} \YYM_{1}$ of the morphism $\eeta^{*}_{n}$ 
as below 
\[
\eeta^{*\textup{right}}_{n}: \YYM_{n -2} \lotimes_{\Pa} \PPi_{1} \xrightarrow{{}_{\YYM_{n-2}}\eeta^{*}_{2} }
\YYM_{n-2} \lotimes_{\Pa} \YYM_{1} \lotimes_{\Pa} \YYM_{1} 
\xrightarrow{ \ \ \ \ \ } 
\YYM_{n-1} \lotimes_{\Pa} \YYM_{1}
\]
where the second morphism is  the multiplication morphism.
The following is a right version of Lemma \ref{homotopic lemma 5}.

\begin{lemma}\label{homotopic lemma 5 right}

We have the following commutative diagram in $\sfD(\Pa^{\mre})$
\[
\begin{xymatrix}@C=60pt{
\YYM_{n -2} \lotimes_{\Pa}  \PPi_{1}\ar[d]
_{\eeta^{*, \textup{right} }_{n} } \ar@{=}[r] 
&
\YYM_{n -2} \lotimes_{\Pa}  \PPi_{1} \ar[d]^{ -\brho_{n -1, \PPi_{1} } } & \\
\YYM_{n-1} \lotimes_{\Pa} \YYM_{1} \ar[r]_-{{}_{\YYM_{n-1}}\ppi_{1 }} &
\YYM_{n -1} \lotimes_{\Pa} \PPi_{1} .
}\end{xymatrix}
\]

We use the case $n =2$ later. 
In this case, we have   
$\eeta^{*, \textup{right} }_{2} = \eeta^{*}_{2}$ 
and 
hence  the equality ${}_{\YYM_{1}}\ppi_{1}\eeta^{*}_{2} = -\brho_{1, \PPi_{1}}$.
\end{lemma}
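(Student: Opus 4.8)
The statement to prove is Lemma~\ref{homotopic lemma 5 right}, the right-hand version of Lemma~\ref{homotopic lemma 5}. The plan is to mirror, in reversed order, the argument that produced the left-hand version, so that essentially no new ideas are needed beyond keeping track of Koszul signs. Recall that $\ppi : \YYM \to \PPi$ is a $*$-graded dg-algebra homomorphism (Lemma~\ref{202102041200}), so it commutes with the multiplication morphisms on both sides. Applying $\ppi$ to the right version $\zzeta^{\textup{right}}$ of the multiplication $\zzeta$ yields a commutative square, and the right-hand analogue of the exact triangle $\sfV_{n}$ exhibits $\YYM_{n-1}\lotimes_{\Pa}\YYM_{1}$ (via $\eeta^{*,\textup{right}}_{n}$) as a mapping cone. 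The morphism $\eeta^{*,\textup{right}}_{n}$ is, by construction, $({}_{\YYM_{n-2}}\eeta^{*}_{2})$ post-composed with the multiplication $\YYM_{n-2}\lotimes_{\Pa}\YYM_{1}\lotimes_{\Pa}\YYM_{1}\to\YYM_{n-1}\lotimes_{\Pa}\YYM_{1}$, exactly the transpose of the definition of $\eeta^{*}_{n}$.

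First I would set up, in $\sfC_{\DG}(\YYM\otimes\Pa^{\op}\,\Gr)$ this time, the right-hand analogue of the Koszul-type data of Section~\ref{202106020843}: a right multiplication $\zzeta^{\textup{right}}_{n}:\YYM_{n-1}\lotimes_{\Pa}\YYM_{1}\to\YYM_{n}$, the cone identification $\YYM_{n}\cong\cone(\eeta^{*,\textup{right}}_{n})$, and the homotopy that is the right version of the map $\sfK$ from Lemma~\ref{202008021653}. Then I would apply $\ppi$ componentwise; because $\ppi$ is a dg-algebra map, decomposing $\ppi_{n-1}$ (or rather its restriction to the appropriate graded piece) along the direct-sum decomposition $\YYM_{n-1}=\YYM_{n-2}\lotimes_{\Pa}\YYM_{1}\ \oplus\ (\PPi_{1}\lotimes_{\Pa}\YYM_{n-3}[1])$ of underlying cohomologically graded modules produces precisely a nullhomotopy of $({}_{\YYM_{n-1}}\ppi_{1})\,\eeta^{*,\textup{right}}_{n}$ whose cone component equals $-\brho_{n-1,\PPi_{1}}$ — this is the content of the square to be proved, read in $\sfD(\Pa^{\mre})$. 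In other words, the commutative diagram of the lemma is obtained by rotating the triangle $\sfV_{n}^{\textup{right}}$ and the triangle \eqref{202008141924} against each other, exactly as in the proof of Lemma~\ref{homotopic lemma 5} via the octahedral-type diagrams of Section~\ref{section: octahedral} and Lemma~\ref{202008021437}.

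For the case $n=2$, which is the only one used later, a direct check is quicker than invoking the general machinery, and I would present it that way. Here $\YYM_{0}=\Pa$, so $\eeta^{*,\textup{right}}_{2}$ reduces to ${}_{\Pa}\eeta^{*}_{2}=\eeta^{*}_{2}$ itself, and the right multiplication $\YYM_{1}\lotimes_{\Pa}\YYM_{1}\to\YYM_{2}$ coincides with $\zzeta_{2}$; thus $\eeta^{*,\textup{right}}_{2}=\eeta^{*}_{2}$. The diagram then asserts that ${}_{\YYM_{1}}\ppi_{1}\,\eeta^{*}_{2}=-\brho_{1,\PPi_{1}}$ in $\sfD(\Pa^{\mre})$. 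I would exhibit an explicit homotopy, the right-hand counterpart of the homotopy $\sfN$ of Lemma~\ref{homotopic lemma 5} specialized to $n=2$, using the homotopy $\sfH$ of Lemma~\ref{homotopy proposition} and the identity $\ppi\sfH=\sfb_{\vars}\ppi$ of Lemma~\ref{homotopy lemma s}; the verification on the generators $\alpha^{*}$, $\vars_{i}$ (and their images under $\downarrow$) is the same kind of bookkeeping with Leibniz rule and Koszul signs as in the proof of Lemma~\ref{homotopic lemma 5}, and I would leave it to the reader.

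The main obstacle is purely the sign and side bookkeeping: the homotopy $\sfH$ acts on the \emph{left} by construction (it is a derivation with $\sfH(\alpha)=\alpha^{\circledast}$ etc.), whereas the right version needs a derivation-along-$\ppi$ controlling right multiplication, so one must either re-derive the analogue of Lemma~\ref{homotopic lemma 5} with $\YYM$ replaced by $\YYM^{\op}$ and $Q$ by $Q^{\op}$, or track carefully the extra signs $(-1)^{|x|}$ that appear when Koszul sign rule is applied on the right-hand factor. Once the bookkeeping is organized (I would do it by passing to $\YYM(Q)^{\op}\cong{}^{v}\!\YYM(Q^{\op})$ and quoting Lemma~\ref{homotopic lemma 5} for $Q^{\op}$, which is legitimate since all constructions of Section~\ref{section: the derived quiver Heisenberg algebras} are natural in $Q$), the result is immediate; the minus sign on $\brho_{n-1,\PPi_{1}}$ comes from the orientation reversal in the exact triangle, exactly as $\eeta^{*}$ carries no sign but $\brho$ does in \eqref{202007302239}.
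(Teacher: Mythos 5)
Your final route---transferring the left version (Lemma~\ref{homotopic lemma 5}) through the path-reversal anti-isomorphism ${}^{v}\!\YYM(Q^{\op})\to{}^{v}\!\YYM(Q)$---is exactly the proof the paper gives, so the proposal is correct and follows essentially the same approach (the direct generator-by-generator check you sketch for $n=2$ is the alternative the paper mentions but does not carry out). One small correction: the minus sign on $\brho_{n-1,\PPi_{1}}$ does not come from an ``orientation reversal in the exact triangle'' but from the fact that path reversal sends $\rho_{i}\mapsto-\rho_{i}$ and $s_{i}\mapsto-s_{i}$, while $\eeta^{*}_{Q^{\op}}$ and $\ppi_{Q^{\op}}$ transfer to $\eeta^{*,\textup{right}}$ and $\ppi$ without sign; this is the one computation the transfer argument genuinely requires.
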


\begin{proof} 
We can verify this lemma in a similar way of Lemma \ref{homotopic lemma 5} a left version of it. 
Here we take a different way. We show that we can deduce this lemma from 
Lemma \ref{homotopic lemma 5} for the opposite quiver $Q^{\op}$. 

We may  identify $(\kk Q)^{\op} \cong \kk (Q^{\op})$ and $\kk Q \cong (\kk (Q^{\op}))^{\op}$. 
We introduce an isomorphism 
$f: \Pa^{\mre} = \kk Q \otimes (\kk Q)^{\op} \to \kk ( Q^{\op}) \otimes (\kk ( Q^{\op}))^{\op} = \kk (Q^{\op})^{\mre}$ 
to be $f(p\otimes q)= q \otimes p$. 
 Let $f^{*}: \sfC_{\DG}(\kk (Q^{\op})^{\mre}) \to \sfC_{\DG}(\Pa^{\mre})$ be the induced isomorphism. 
Then we have $f^{*}(X \otimes_{\kk(Q^{\op})} Y) \cong Y \otimes_{\kk Q} X$.
The anti-algebra isomorphism  $\phi: \kk (Q^{\op}) \to \kk Q, \phi(\alpha_{1} \alpha_{2}\cdots, \alpha_{n}) 
:= \alpha_{n} \cdots \alpha_{2} \alpha_{1}$ induces an isomorphism $f^{*}\kk (Q^{\op}) \cong \kk Q$. 
Similar anti-$*$-graded-dg-algebra isomorphisms $\phi: \PPi(Q^{\op}) \to \PPi(Q), \phi: \YYM(Q^{\op}) \to \YYM(Q)$ denoted by the same symbol induces isomorphisms $f^{*}\PPi(Q^{\op}) \to \PPi(Q), f^{*} \YYM(Q^{\op}) \to \YYM(Q)$. 
We note that $\phi(\rho) = -\rho$ and $\phi(s_{i}) = -s_{i}$. 
We can check that under these isomorphisms $\phi$, the morphisms 
$f^{*}(\eeta^{*}_{Q^{\op}})$ and $f^{*}(\ppi_{Q^{\op}})$ correspond to $\eeta^{*, \textup{right}}$ and $\ppi$. 
Thus, from Lemma \ref{homotopic lemma 5} for $Q^{\op}$, we can deduce the desired commutativity. 
\end{proof}

\subsection{$3$-Calabi-Yau property}

\subsubsection{Calabi-Yau algebras}

Let $n, l\in \ZZ$. 
Recall that a DG-algebra $R = (\bigoplus_{i,j } R_{i}^{j}, d_{R})$ with an additional grading 
is called $n$-Calabi-Yau algebras of Gorenstein parameter $l$ 
if it is smooth and there exists an isomorphism 
$R^{\vvee}[n](-l) \cong R$ in $\sfD(R^{\mre})$.

\subsubsection{}
The aim of this theorem is to prove the following theorem. 
Note that the case $\chara \kk \neq 2$ is already shown in Proposition \ref{202112122233}. 
We give an alternative proof which works for arbitrary characteristic.

\begin{theorem}\label{202112122233}
The derived quiver Heisenberg algbebra $\YYM$ is a $3$-Calabi-Yau 
algebra of Gorenstein parameter $2$. 
\end{theorem}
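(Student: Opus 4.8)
The goal is to show that the derived quiver Heisenberg algebra $\YYM = {}^{v}\!\YYM(Q)$ is a $3$-Calabi-Yau algebra of Gorenstein parameter $2$, i.e.\ that $\YYM$ is homologically smooth and that there is an isomorphism $\YYM^{\vvee}[3](-2) \cong \YYM$ in $\sfD(\YYM^{\mre}\Gr)$. The strategy is to exploit the explicit bimodule resolution of $\Pa$ over $\YYM$ packaged in the Koszul resolution $\epsilon_{P}: P \trivfibright \Pa$ constructed in Section \ref{2021121160751}, together with the exact triangles $\sfU$ (Theorem \ref{exact triangle U}, \ref{exact triangle U2}) and $\sfV$ \eqref{20191202} and their two-sided refinements, to build a finite projective bimodule resolution of $\YYM$ itself.

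\textbf{Step 1: homological smoothness.} First I would assemble a length-$3$ resolution of $\YYM$ by finitely generated projective $\YYM^{\mre}$-modules. The complex $P$ of Section \ref{2021121160751} has the form of a total complex of
\[
\PPi_{1} \otimes_{\Pa} \YYM(-2) \xrightarrow{\eeta^{*}} \YYM_{1} \otimes_{\Pa} \YYM(-1) \xrightarrow{\iota\zzeta} \YYM,
\]
and since $\Pa = \kk Q$ is hereditary the bimodules $\PPa$, $\PPi_{1}$ ($=\PPa^{\vee}[1]$) and $\YYM_{1} = \cone(\hat\eta)$ are all perfect over $\Pa^{\mre}$; tensoring up and using the bar-type resolution of $\YYM$ over $\YYM^{\mre}$ along the $\Pa^{\mre}$-resolutions (equivalently, replacing each $\YYM_{n}\otimes_{\Pa}(-)$ factor by the two-step resolution $\PPa\otimes_{\Pa}(-)$), one obtains a complex of finitely generated projective $\YYM^{\mre}$-modules quasi-isomorphic to $\YYM$, bounded because the $\Pa^{\mre}$-resolutions have length $1$ and there are three "slots''. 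This gives homological smoothness. Concretely I expect the resolution to read
\[
\YYM\,T\,\YYM(-2)[2] \to \YYM\,V^{\circ}\,\YYM(-2)[1] \oplus \YYM\,V^{\circledast}\,\YYM(-1)[1] \to \YYM\,V^{*}\,\YYM(-1) \oplus \YYM\,V\,\YYM \to \YYM\,\YYM \to \YYM,
\]
matching the generators $t_{i},\alpha^{\circ},\alpha^{\circledast},\alpha^{*},\alpha,e_{i}$; this is exactly the Ginzburg-algebra resolution and in characteristic $\ne 2$ it is Proposition \ref{202111261255}.

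\textbf{Step 2: self-duality of the resolution.} The key point, and the main obstacle, is to show this resolution $\mathsf{Q}_{\bullet}$ of $\YYM$ is isomorphic (up to the shift $[3](-2)$) to its $\YYM^{\mre}$-dual $\mathsf{Q}_{\bullet}^{\vvee}$. The symmetry is already visible in the generator table: the pairs $(e_{i}\leftrightarrow t_{i})$, $(\alpha\leftrightarrow\alpha^{\circ})$, $(\alpha^{*}\leftrightarrow\alpha^{\circledast})$ have cohomological degrees summing to $-2$ and $*$-degrees summing to $2$, which is precisely the numerical shadow of $[3](-2)$ after the homological shift by the resolution length $3$. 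I would produce the duality isomorphism by the standard route for Ginzburg/Jacobi dg-algebras: the differential of $\mathsf{Q}_{\bullet}$ is built from the potential $W = -\tfrac12{}^{v}\!\varrho\rho$ via its cyclic derivatives $\partial_{\alpha}W = -\eta_{\alpha^{*}}$, $\partial_{\alpha^{*}}W = \eta_{\alpha}$ (Section \ref{202102071454}), and the second-order cyclic derivatives $\partial^{2}W$ supply the symmetric pairing between the degree-$1$ and degree-$2$ slots; the top map $d(t_{i}) = \sum e_{i}[\alpha,\alpha^{\circ}]e_{i} + \sum e_{i}[\alpha^{*},\alpha^{\circledast}]e_{i}$ is dual to the map into $\YYM\YYM$, and $\hat\eta$ is visibly self-dual because ${}^{v}\!\ttheta^{\vvee} = {}^{v}\!\ttheta$ \eqref{202105111259} and because $\hat\eta = \hat\rho\hat v^{-1}\hat\mu$ with $\hat\mu^{\vee} \leftrightarrow \hat\rho$ (the Grant--Iyama duality, Lemma \ref{Grant-Iyama lemma}, gives $(\PPa)^{\vee}$ in terms of $\hat\rho$ exactly as in Lemma \ref{202006121909}). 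Carefully tracking signs through Lemma \ref{Grant-Iyama lemma} and \eqref{202012091717} to see the middle differential is $(-1)$-symmetric, and confirming the identifications $\YYM_{1}\cong\cone(\hat\eta)$ and $\YYM_{2}\cong\cone(\eeta^{*}_{2})$ are compatible with duality, is the technical heart; this is where the $\chara\kk = 2$ case needs the honest computation rather than appeal to Ginzburg--Keller--Van den Bergh, precisely because the potential carries a $\tfrac12$ but the differentials (hence the resolution and its duality) do not.

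\textbf{Step 3: conclude.} Given the chain isomorphism $\mathsf{Q}_{\bullet}^{\vvee} \cong \mathsf{Q}_{\bullet}[-3](2)$ compatible with the augmentations, passing to $\sfD(\YYM^{\mre}\Gr)$ yields $\YYM^{\vvee} \cong \RHom_{\YYM^{\mre}}(\YYM,\YYM^{\mre}) \cong \YYM[-3](2)$, i.e.\ $\YYM^{\vvee}[3](-2)\cong\YYM$, which is the definition of $3$-Calabi-Yau of Gorenstein parameter $2$. As a sanity check one can verify the numerology against Corollary \ref{202102211730}: if $u$ is an eigenvector of $\Psi$ with eigenvalue $\lambda$ then ${}^{u}\!\Euch(\YYM_{n}\lotimes_{\Pa}M)/{}^{u}\!\Euch(M) = \sum_{i=0}^{n}\lambda^{i}$, and the Calabi-Yau duality forces the expected compatibility of $*$-degree $n$ with $*$-degree involving the Gorenstein parameter. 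I would also remark that this argument is uniform in $\chara\kk$ and thus supersedes Proposition \ref{202111261255}, which only covers $\chara\kk\ne 2$.
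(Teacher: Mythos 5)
Your plan is essentially the paper's proof: the paper builds exactly this Ginzburg-type complex $Q$, organized as the totalization of $\YYM\otimes_{\Pa}\PPi_{1}\otimes_{\Pa}\YYM(-2)\to\YYM\otimes_{\Pa}\YYM_{1}\otimes_{\Pa}\YYM(-1)\to\YYM\otimes_{\Pa}\PPa\otimes_{\Pa}\YYM$, obtains the self-duality $Q^{\vee}(-2)[3]\cong Q$ from the identifications $\PPa^{\vee}[1]\cong\PPi_{1}$, $\YYM_{1}^{\vee}[1]\cong\YYM_{1}$, $\PPi_{1}^{\vee}[1]\cong\PPa$ together with an explicit (characteristic-free) check that the maps $Y,Z$ and the homotopy $\sfG$ are interchanged, respectively fixed, by duality, and concludes exactly as in your Step 3. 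The one substantive point you defer that the paper spells out is the quasi-isomorphism $Q\xrightarrow{\sim}\YYM$, which is proved by filtering by the $*$-degree of the outer left tensor factor so that the graded pieces reduce to $\YYM_{i}\otimes_{\Pa}P$ with $P\to\Pa$ the Koszul resolution of Section \ref{2021121160751}.
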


\subsubsection{A preparation}

Let $M=\bigoplus_{i\in\ZZ} M^{i}$ be a cohomologically graded $\Pa_{0}$-$\Pa_{0}$-bimodule. 
Then by a DG-version of Lemma \ref{Grant-Iyama lemma}, 
the map 
\[
\begin{split}
F_{M}: & \YYM \tuD(M) \YYM \to ( \YYM M\YYM)^{\vee}, 
F_{M}(x \otimes f \otimes y) ( z \otimes m \otimes w) 
:= (-1)^{\epsilon }f(m) zy \otimes xw \\
\epsilon & := |x|(|f|+|y| + |z|+|m|) + |f|(|y|+|z|) +|y||z|
\end{split}
\]
is an isomorphism of cohomologically graded  $\YYM^{\mre}$-bimodules.

\begin{lemma}\label{202112131737}
Let $M=\bigoplus_{i \in \ZZ} M^{i}, N= \bigoplus_{i \in \ZZ} N^{i}$ be cohomologically graded $\Pa_{0}^{\mre}$-modules 
and $\{m_{i}\}, \{n_{i}\}$ homogeneous basis of $M$ and $N$. 
We identify $(\YYM M \YYM)^{\vee}, (\YYM N \YYM)^{\vee}$ with $\YYM \tuD(M) \YYM, \YYM\tuD(N)\YYM$ 
via $F_{M}, F_{N}$.  
If a homogeneous morphism 
$\phi: \YYM M \YYM \to \YYM N \YYM$ is given by  
\[
\phi(m_{j} ) = \sum_{i} a_{ij} n_{i} b_{ij}
\]
for some $a_{ij}, b_{ij} \in \YYM$, 
then the $\YYM^{\mre}$-dual $\phi^{\vee}: (\YYM N \YYM)^{\vee} \to (\YYM M \YYM)^{\vee}$ of $\phi$ is given by 
\[
\phi^{\vee}(n_{i}^{\vee} ) = \sum_{j} ( -1)^{|\phi||n_{i}| + |a_{ij}|(|m_{j}| +|n_{i}| + |b_{ij}| ) } b_{ij}m_{j}^{\vee} a_{ij}
\]
where  $\{ m_{j}^{\vee}\}, \{ n_{i}^{\vee}\}$ denote the dual basis of $\tuD(M)$ and $\tuD(N)$.
\end{lemma}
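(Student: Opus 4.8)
The plan is to prove Lemma \ref{202112131737} by a direct computation with the duality isomorphisms $F_M$, $F_N$ of the preceding paragraph, tracking signs carefully via the Koszul rule. Concretely, for a homogeneous element $n_i^\vee \in \tuD(N)$ I would first identify $F_N(n_i^\vee)$, interpreted inside $(\YYM N \YYM)^\vee$ as the functional $1\otimes n_i^\vee \otimes 1$; that is, the map sending $z\otimes n_k \otimes w$ to $(-1)^{\epsilon}\delta_{ik}\,zw\otimes 1 = (-1)^{\epsilon}\, wz$ for the appropriate sign $\epsilon$ coming from the formula for $F_N$ with $x=y=1$. Then $\phi^\vee(F_N(n_i^\vee))$ is by definition the functional $m_j \mapsto F_N(n_i^\vee)(\phi(m_j)) = F_N(n_i^\vee)\big(\sum_k a_{kj} n_k b_{kj}\big)$.

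Next I would expand $F_N(n_i^\vee)\big(\sum_k a_{kj} n_k b_{kj}\big)$ using bilinearity and the explicit formula for $F_N$ (with $x=y=1$, $z=a_{kj}$, $m=n_k$, $w=b_{kj}$). Only the term $k=i$ survives, and the value is $(-1)^{\epsilon'} b_{ij}a_{ij}$ where $\epsilon'$ is the sign predicted by the $\epsilon$ in the definition of $F_N$ specialized to these arguments, namely $\epsilon' = |n_i^\vee|(|b_{ij}|) + \cdots$ — I would write out $\epsilon'$ explicitly. The claim is then that the resulting functional on $\YYM M \YYM$ coincides, under $F_M$, with $\sum_j (-1)^{|\phi||n_i|+|a_{ij}|(|m_j|+|n_i|+|b_{ij}|)} b_{ij} m_j^\vee a_{ij}$. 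To verify this I would apply $F_M$ to the proposed right-hand side (this time with $x=b_{ij}$, $f=m_j^\vee$, $y=a_{ij}$, evaluating on a general $z\otimes m_l\otimes w$), expand its sign $\epsilon_M$, and match the two functionals term by term; since both are determined by their values on basis elements $z\otimes m_l\otimes w$ with $z,w$ ranging over a basis of $\YYM$, it suffices to check the scalar coefficients and signs agree.

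The only real work is bookkeeping of the Koszul signs: one must reconcile $|n_i^\vee| = -|n_i|$, the degree $|\phi|$ of the morphism, the shift in degrees of $m_j^\vee$ versus $m_j$, and the various $\epsilon$ exponents appearing in $F_M$ and $F_N$. I would organize this by writing $|n_i^\vee| = -|n_i|$ and $|m_j^\vee| = -|m_j|$ throughout, observing that $\phi$ homogeneous forces $|a_{ij}| + |m_j| + |b_{ij}| = |n_i| + |\phi|$ for every $j$ (for each fixed $i$ with $a_{ij}\neq 0$), and using this relation to simplify the exponents into the stated form. The main obstacle is therefore purely a careful sign chase; there is no conceptual difficulty, and the statement is exactly the graded analogue of the classical transpose-of-a-matrix rule for bimodule duality, so the correct target sign is essentially forced. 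I would present the proof as: "By $\YYM^{\mre}$-bilinearity of both sides it suffices to evaluate on basis elements; applying the definitions of $F_M$, $F_N$ and collecting signs via the Koszul rule together with the homogeneity relation $|a_{ij}|+|m_j|+|b_{ij}| = |n_i|+|\phi|$ yields the asserted formula," with the sign computation spelled out in one displayed chain of equalities.
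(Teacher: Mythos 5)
The paper itself gives no proof of this lemma (it is stated as a routine verification, like the lemma following it), and your direct computation through $F_{M}$, $F_{N}$ is exactly the intended argument; the approach is sound, and the structural part comes out correctly: evaluating $F_{M}\bigl(b_{ij}\otimes m_{j}^{\vee}\otimes a_{ij}\bigr)$ on $1\otimes m_{j}\otimes 1$ gives (up to sign) $a_{ij}\otimes b_{ij}\in\YYM^{\mre}$, the same element produced by $F_{N}(n_{i}^{\vee})$ applied to $\phi(m_{j})$, which is why the positions of $a_{ij}$ and $b_{ij}$ get swapped in the dual.

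That said, your sketch has slips that would derail the sign chase, which is the entire content of the statement, and the chase itself is only announced, not performed. First, the functionals take values in $\YYM^{\mre}$, not in $\YYM$: with $x=y=1$ the defining formula gives $F_{N}(1\otimes n_{i}^{\vee}\otimes 1)(z\otimes n_{k}\otimes w)=(-1)^{\epsilon}\delta_{ik}\,z\otimes w$, not ``$zw\otimes 1=wz$''; likewise the surviving term in $F_{N}(n_{i}^{\vee})(\phi(m_{j}))$ is a sign times $a_{ij}\otimes b_{ij}$, not the product $b_{ij}a_{ij}$. Second, your homogeneity relation is reversed: $\phi(m_{j})=\sum_{i}a_{ij}n_{i}b_{ij}$ forces $|a_{ij}|+|n_{i}|+|b_{ij}|=|m_{j}|+|\phi|$, not $|a_{ij}|+|m_{j}|+|b_{ij}|=|n_{i}|+|\phi|$; if you feed the wrong relation into the exponent simplification you will not land on the stated sign. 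Third, $\phi^{\vee}(g)$ is not simply $g\circ\phi$: with the Koszul convention $\phi^{\vee}(g)=(-1)^{|\phi||g|}\,g\circ\phi$, and since $|n_{i}^{\vee}|=-|n_{i}|$ this is precisely the source of the factor $(-1)^{|\phi||n_{i}|}$ in the asserted formula — your second paragraph omits it, and only a vague promise to ``reconcile $|\phi|$'' stands in for it. Since the lemma is exactly a sign identity, the proof is complete only once the displayed chain of equalities is written out and the exponents are seen to match; with the three corrections above, your computation does go through.
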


\subsubsection{Proof of Theorem \ref{202112122233}}

We prove the theorem by constructing a projectively cofibrant resolution $Q \xrightarrow{\sim} \YYM$ of $\YYM$ in 
$\sfC(\YYM^{\mre}\Gr)$ such that $Q$ is perfect and  $Q^{\vee}[3](-2) \cong Q$. 

Recall that  
$\PPi_{1} \cong \Pa V^{*} \Pa \oplus \Pa S \Pa, \ \YYM_{1} \cong \Pa V^{*} \Pa \oplus \Pa V^{\circledast} \Pa, 
\PPa \cong \Pa \Pa \oplus \Pa V[1] \Pa$ 
as cohomological graded modules over $\Pa^{\mre}$. 
It is straightforward to check  that the canonical isomorphisms $\tuD(S) =\Pa_{0}, \tuD(V^{*}) = V, \tuD(V^{*}) =V^{\circledast}$ 
induces  isomorphisms 
\begin{equation}\label{202112161145}
\PPi_{1}^{\vee}[1] \xrightarrow{\cong} \PPa,  \ \YYM^{\vee}_{1}[1] \xrightarrow{\cong}  \YYM_{1}, \ 
\PPa^{\vee}[1] \xrightarrow{\cong} \PPi_{1}   
\end{equation}
(where $(-)^{\vee}$ denotes $\Pa^{\mre}$-duality) in $\sfC(\Pa^{\mre})$.

We set $Q_{(2)}:= \YYM\otimes_{\Pa} \PPi_{1} \otimes_{\Pa} \YYM(-2), 
Q_{(1)} := \YYM \otimes_{\Pa} \YYM_{1} \otimes_{\Pa} \YYM(-1), \ 
Q_{(0)} := \YYM \otimes_{\Pa} \PPa \otimes_{\Pa} \YYM$.  
As cohomological graded modules over $\YYM^{\mre}$, we have 
$Q_{(2)}  \cong \YYM V^{*} \YYM \oplus \YYM S \YYM, 
\ Q_{(1)}\cong \YYM V^{*} \YYM \oplus \YYM V^{\circledast} \YYM, 
Q_{(0)} \cong \YYM  \YYM \oplus \YYM V[1] \YYM$.

We define a morphism $Y: Q_{(2)} \to Q_{(1)}$ in $\sfC(\YYM^{\mre}\Gr)$ 
 in the following way. For $\alpha \in Q_{1}$ and $i \in Q_{0}$, we set 
\[
\begin{split}
Y(\alpha^{*} )&
 := \alpha^{*} \otimes \varrho \otimes 1  - \varrho \otimes \alpha^{*} \otimes 1 
 + 1\otimes \alpha^{*} \otimes \varrho - 1 \otimes \varrho \otimes \alpha^{*},\\
 Y(\vars_{i})& := v_{i}^{-1} \sum_{\beta: t(\beta) = i} \beta^{\circledast} \otimes \beta^{*} \otimes 1 + 1 \otimes \beta^{\circledast} \otimes \beta^{*}- v^{-1}_{i} \sum_{\beta: h(\beta) = i} \beta^{*} \otimes \beta^{\circledast} \otimes 1 + 1\otimes \beta^{*} \otimes \beta^{\circledast}
 \end{split} 
\]
where $\otimes$ denotes $\otimes_{\Pa}$.

For $i \in Q_{0}$, 
we define an element $\varrho'_{i} \in Q_{(0)}$ of cohomological degree $-1$ 
to be
\[
\varrho'_{i} := v^{-1}_{i}\sum_{\beta: t(\beta) = i} 1 \otimes \downarrow \beta \otimes \beta^{*}  - v^{-1}_{i} 
\sum_{\beta: h(\beta) = i} \beta^{*} \otimes \downarrow \beta \otimes 1. 
\]
We define a morphism $Z: Q_{(1)} \to Q_{(0)}$ in $\sfC(\YYM^{\mre}\Gr)$ 
 in the following way. For $\alpha \in Q_{1}$ and $i \in Q_{0}$, we set 
\[
\begin{split}
Z(\alpha^{*})  &:= \alpha^{*} \otimes 1 \otimes 1 - 1 \otimes 1 \otimes \alpha^{*} \\
Z(\alpha^{\circledast})  & := 
\varrho' \alpha - \alpha \varrho' + \varrho \otimes \downarrow \alpha \otimes 1 - 1 \otimes \downarrow \alpha \otimes \varrho 
+ \alpha^{\circledast} \otimes 1 \otimes 1 - 1 \otimes 1 \otimes \alpha^{\circledast}
\end{split}
\]
where $\otimes$ denotes $\otimes_{\Pa}$.

We define a morphism $\sfG: Q_{(2) } \to Q_{(0)} $ 
in $\sfC_{\DG}(\YYM^{\mre}\Gr)$ of cohomological degree $-1$ in the following way. 
For $\alpha \in Q_{1}$ and $i \in Q_{0}$, we set 
\[
\begin{split}
\sfG(\alpha^{*}) & := -\alpha^{\circ} \otimes 1 \otimes 1 + 1 \otimes 1 \otimes \alpha^{\circ}- \alpha^{*}\varrho' + \varrho' \alpha^{*}\\
\sfG(\vars_{i} ) & := -\vart_{i} \otimes 1\otimes 1 + 1 \otimes 1\otimes \vart_{i}+ v^{-1}_{i}\sum_{\beta: t(\beta) = i} 1 \otimes \downarrow \beta \otimes \beta^{\circ}  +v^{-1}_{i} 
\sum_{\beta: h(\beta) = i} \beta^{\circ} \otimes \downarrow \beta \otimes 1.  
\end{split}
\]
where $\otimes$ denotes $\otimes_{\Pa}$.

The isomorphisms \eqref{202112161145}
induces isomorphisms 
\begin{equation}\label{202112152209}
Q_{(0)}^{\vee} (-2)[1] \cong Q_{2}, \ Q_{(1)}^{\vee} (-2)[1] \cong Q_{1}, 
\ Q_{(2)}^{\vee}(-2)[1] \cong Q_{0}
\end{equation}
(where $(-)^{\vee}$ denotes $\YYM^{\mre}$-duality) in $\sfC(\YYM^{\mre}\Gr)$.

We leave it to prove the following lemma to the readers.

\begin{lemma}\label{202112131642}

\begin{enumerate}[(1)] 
\item $\sfG$ is a homotopy from $ZY $ to $0$. 

\item The morphism  $Y^{\vee}(-2)[1]: Q_{(1)}^{\vee} (-2)[1]  
\to Q_{(2)}^{\vee} (-2)[1] 
$ corresponds to 
$Z$ under the isomorphisms \eqref{202112152209}. 

\item The morphism  $Z^{\vee}(-2)[1]: Q_{(0)}^{\vee} (-2)[1]  
\to Q_{(1)}^{\vee} (-2)[1] 
$ corresponds to 
$Y$ under the isomorphisms \eqref{202112152209}.

\item
The morphism  $\sfG^{\vee}(-2)[1]:
Q_{(0)}^{\vee}(-2)[1] \to Q_{(2)}^{\vee}(-2)[1]$ corresponds to 
$\sfG$ under the isomorphisms \eqref{202112152209}. 
\end{enumerate}
\end{lemma}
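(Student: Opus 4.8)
\textbf{Plan of proof of Lemma \ref{202112131642}.}
This lemma consists of four statements, each a verification on the free generators of the complexes involved. The plan is to check each part by a direct but organized computation, using the Leibniz rule and the explicit formulas defining the derived QHA (Definition \ref{202111201445}, equation \eqref{the differentials of derived QHA}), together with the sign bookkeeping of Lemma \ref{202112131737}. Since $Q_{(2)}, Q_{(1)}, Q_{(0)}$ are free as graded $\YYM^{\mre}$-modules on the generators $\alpha^{*},\vars_{i}$; $\alpha^{*},\alpha^{\circledast}$; and (the images of) the generators of $\PPa$ respectively, it suffices in each case to evaluate both sides of the claimed identity on these generators. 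The first thing I would do is record the differentials of $Q_{(2)}, Q_{(1)}, Q_{(0)}$ explicitly in terms of the differentials of $\PPi_1$, $\YYM_1$, $\PPa$ and of $\YYM$ itself (these come from the identifications $\PPi_1\cong \Pa V^* \Pa\oplus \Pa S\Pa$, etc.), so that I can compute $ZY$, $d\sfG$, $\sfG d$ and the duals without ambiguity about Koszul signs.

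\textbf{Part (1).} To see that $\sfG$ is a homotopy from $ZY$ to $0$, I would evaluate $ZY(\alpha^{*})$, $ZY(\vars_i)$, $(d\sfG + \sfG d)(\alpha^{*})$ and $(d\sfG+\sfG d)(\vars_i)$ and match them. Here $d$ on $Q_{(2)}$ involves the induced differential of $\PPi_1$, whose only nonzero piece sends $\vars_i\mapsto -\rho_i$ (after the shift, $s_i\mapsto -\rho_i$); $d$ on $Q_{(1)}$ involves the differential of $\YYM_1$, which sends $\alpha^{\circledast}\mapsto \eta_\alpha$; and $d$ on $Q_{(0)}$ involves the differential of $\PPa$, sending $\downarrow\alpha \mapsto \alpha e_{h(\alpha)} - e_{t(\alpha)}\alpha$ (the $\hat\mu$ term). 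The key algebraic inputs will be $d(\vart_i) = \varrho^{\circ}_i + \varrho^{\circledast}_i$, $\sfH(\varrho) + d(\vart) = 0$ from Lemma \ref{homotopy proposition}, and the relations $\eta_a = [a,\varrho]$. I expect the matching to be of the same flavor as the computations in the proofs of Lemma \ref{homotopic lemma 5} and Lemma \ref{202001111736}, i.e., long but mechanical: the outer-$\YYM^{\mre}$ terms (those of the form $x\otimes 1\otimes 1$ and $1\otimes 1\otimes x$) telescope against the differential, and the inner terms reproduce the commutators $[\alpha^{*},\varrho]$ appearing in $\hat\eta$.

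\textbf{Parts (2)--(4).} These are dual-morphism identifications. By Lemma \ref{202112131737}, once a morphism $\phi: \YYM M\YYM \to \YYM N\YYM$ is written $\phi(m_j) = \sum_i a_{ij}\, n_i\, b_{ij}$, its $\YYM^{\mre}$-dual is read off as $\phi^{\vee}(n_i^{\vee}) = \sum_j (\pm)\, b_{ij}\, m_j^{\vee}\, a_{ij}$ with an explicit sign. So for (2) I would take the formula for $Y$ (on $\alpha^{*}$ and $\vars_i$), apply Lemma \ref{202112131737} with $M = V^*\oplus S$, $N = V^*\oplus V^{\circledast}$, and the canonical dualities $\tuD(V^*) = V^{\circledast}$, $\tuD(S) = \Pa_0$, $\tuD(V^*) = V$ of \eqref{202112161145}, and check that the result, transported through the isomorphisms \eqref{202112152209}, is exactly the stated formula for $Z$. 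The point to watch is that the shifts by $(-2)[1]$ and the degree of $Y$ (which is $0$ in cohomological degree but $+1$ in $*$-degree) contribute to the sign $(-1)^{|\phi||n_i| + |a_{ij}|(|m_j|+|n_i|+|b_{ij}|)}$ of Lemma \ref{202112131737}; I would tabulate the cohomological degrees ($e_i:0$, $\alpha:0$, $\alpha^{*}:0$, $\alpha^{\circledast}:-1$, $\alpha^{\circ}:-1$, $t_i:-2$) from the table in Definition \ref{202111201445} and feed them in. Part (3) is identical in structure with the roles of $Y$ and $Z$ swapped (dualizing $Z$ gives $Y$), and part (4) dualizes $\sfG$, a degree $-1$ morphism, to itself; the self-duality here is forced once the signs are arranged, and is the homotopy-level shadow of the $3$-Calabi--Yau symmetry that the lemma is being set up to produce.

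\textbf{Main obstacle.} The genuine difficulty is not conceptual but the sign/degree bookkeeping: getting every Koszul sign right in $ZY$, $d\sfG$, $\sfG d$, and in the four applications of Lemma \ref{202112131737}, especially because $Q_{(2)}, Q_{(1)}, Q_{(0)}$ carry both a cohomological grading and a $*$-grading and are dualized with simultaneous shifts $(-2)[1]$. I would mitigate this by fixing once and for all a sign convention for the identifications $Q^{\vee}_{(k)}(-2)[1]\cong Q_{(2-k)}$ (via \eqref{202112161145} and the $F_M$ of Lemma \ref{202112131737}), and then checking all four parts against that single convention; the matching of the two homotopy terms in (1) and the self-duality in (4) are the places where an inconsistent convention would surface as a spurious sign, so those serve as internal consistency checks.
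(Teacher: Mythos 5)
Your plan matches the paper exactly: the paper offers no written proof of this lemma (it is explicitly left to the reader as a direct verification), and the intended argument is precisely the one you describe --- evaluating $ZY$ and $d\sfG+\sfG d$, respectively the $\YYM^{\mre}$-duals, on the middle-factor generators $\alpha^{*},\vars_{i},\alpha^{\circledast}$ (and the generators of $\PPa$), using the differential formulas \eqref{the differentials of derived QHA}, the identifications \eqref{202112161145} and \eqref{202112152209}, and the sign formula of Lemma \ref{202112131737}, with a single fixed convention for the isomorphisms $Q_{(k)}^{\vee}(-2)[1]\cong Q_{(2-k)}$. What remains is only to carry out the routine (if lengthy) computations you outline; there is no conceptual gap.
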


In other   words, 
we have the following diagram in $\sfC(\YYM^{\mre}Gr)$ 
\begin{equation}\label{20200731222611}
\begin{xymatrix}@C=60pt@R=6mm{
&&\\
Q_{(0)}^{\vee}(-2)[1]\ar@/^30pt/[rr]^{0} \ar[r]^{Z^{\vee}(-2)[1]} \ar[d]_{\cong} & 
Q_{(1)}^{\vee}(-2)[1] \ar@{=>}[u]_{\sfG^{\vee}(-2)[1]} 
\ar[r]^{Y^{\vee}(-2)[1]} \ar[d]_{\cong} 
& 
Q_{(0)}^{\vee}(-2)[1] \ar[d]^{\cong} 
\\
Q_{(2)} \ar[r]_{Y} \ar@/_30pt/[rr]_{0} & Q_{(1) }\ar[r]_{Z} \ar@{=>}[d]^{\sfG} & Q_{(0)}  \\
&&
}\end{xymatrix}
\end{equation}

Let $Q$ be the totalization of the lower half part of the above diagram \eqref{20200731222611}. 
Namely, we set 
\[
Q:= \left( 
Q_{(0)}\oplus Q_{(1)}[1] \oplus Q_{(2)}[2], 
\begin{pmatrix} 
d & Z\uparrow & G\uparrow^{2} \\
0 & d & -\downarrow Y \uparrow^{2} \\ 
0 & 0 & d 
\end{pmatrix} 
\right).
\] 
It follows from Lemma \ref{202112131642} that the isomorphism isomorphisms \eqref{202112152209} 
induces an isomorphism $Q^{\vee}(-2)[3] \cong Q$ in $\sfC(\YYM^{\mre})$. 
It is clear that $Q$ is projectively cofibrant in $\sfC(\YYM^{\mre})$. 
Therefore, to prove Theorem \ref{202112122233}, it is enough to show that $Q$ is quasi-isomorphic to $\YYM$ in $\sfC(\YYM^{\mre})$.

We set $Q'_{(0)} :=\YYM \YYM= \YYM \otimes_{\Pa} \Pa\otimes_{\Pa} \YYM$ and 
denote by $\mu': Q_{(0)} \to Q'_{(0)}$ the morphism induced from $\mu: \PPa \to \Pa$.  
We set $Z' := \mu' Z, \ \sfG' := \mu' \sfG$ 
and $Q'$ to be the totalization of the diagram below.
\[
\begin{xymatrix}@C=60pt@R=6mm{
Q_{(2)} \ar[r]_{Y} \ar@/_30pt/[rr]_{0} & Q_{(1) }\ar[r]_{Z'} \ar@{=>}[d]^{\sfG'} & Q'_{(0)}  \\
&&
}\end{xymatrix}
\]
Since $\mu$ is a quasi-isomorphism of projectively cofibrant DG-modules over $\Pa^{\mre}$, 
$\mu'$ is a quasi-isomorphism over $\YYM^{\mre}$ 
and hence  so is the morphism $Q \to Q'$ induced from $\mu'$. 

Let $\epsilon: \YYM \YYM \to \YYM, \ x\otimes y \mapsto xy$ be the multiplication map. 
Observe that $\epsilon Z' = 0, \epsilon \sfG = 0$ and hence that $\epsilon$ induces  
a  morphism $\epsilon_{Q'}:=(\epsilon, 0,0): Q' \to \YYM$ in $\sfC(\YYM^{\mre})$. 
We finish the proof by showing $\epsilon_{Q'}$ is a quasi-isomorphism. 

We define increasing  filtrations on $Q'$ and $\YYM$ as follows. 
For $i \in \ZZ$, we set 
\[
F_{i}(Q_{(2)}) = \YYM_{\geq -i} \otimes_{\Pa} \PPi_{1}\otimes_{\Pa} \YYM, 
F_{i}(Q_{(1)}) = \YYM_{\geq -i} \otimes_{\Pa} \YYM_{1}\otimes_{\Pa} \YYM, 
F_{i}(Q'_{(0)}) = \YYM_{\geq -i}  \YYM, 
\]
and $F_{i}(Q')$ denotes the induced filtration. 
We set $F_{i}(\YYM) = \YYM_{\geq -i}$ for $i \in \ZZ$. 
Then $\epsilon_{Q'}$ preserves filtrations. 
For $i > 0$, the graded quotients $G_{i}(Q'), G_{i}(\YYM)$ are zero. 
It is clear that $G_{0}(\YYM) \cong \Pa$ as  objects of $\sfC(\Pa\otimes\YYM^{\mre}\Gr)$. 
On the other hand,  $0$-th graded quotient $G_{0}(Q')$ is isomorphic to the object $P$ of $\sfC(\Pa\otimes \YYM^{\mre}\Gr)$ defined in 
Section \ref{2021121160751} and that the induced morphism $G_{0}(\epsilon_{Q'})$ corresponds to the quasi-isomorphism
 $\epsilon_{P}: P \to \Pa$. 
Observes that  for $i > 0$, we have an isomorphisms $G_{-i}(Q') \cong \YYM_{i} \otimes_{\Pa} P$ and $G_{-i}(\YYM) \cong \YYM_{i}$ 
 in $\sfC(\Pa\otimes \YYM^{\mre}\Gr)$ 
 and that the induced morphism $G_{-i}(\epsilon_{Q'})$ corresponds to $\YYM_{i} \otimes_{\Pa} \epsilon_{P}$. 
 Since $\YYM_{i}$ is projectively cofibrant, $\YYM_{i} \otimes_{\Pa} \epsilon_{P}$ is a quasi-isomorphism and so is $G_{-i}(\epsilon_{Q'})$. 
 Finally, since the filtration is exhausted and bounded in each $*$-degree, 
 we conclude that the morphism $\epsilon_{Q'}: Q'\to \YYM$ is 
 a quasi-isomorphism. \qed

\subsection{The exact triangle $\widehat{\sfU}$}\label{a-infinity}

\def\wsv{{\widehat{\sfr_{\varrho}}}}

The aim of this section is to show that there exists an exact triangle
 \[
\widehat{\sfU}: \YYM( -1) \xrightarrow{ \widehat{\sfr_{\varrho}} } \YYM \xrightarrow{ \ppi } \PPi \to
\]
in $\sfD(\YYM^{\mre}\Gr)$ which is sent to $\sfU$  by the forgetful functor $\sfD(\YYM^{\mre}\Gr) \to \sfD(\YYM\Gr)$. 
The reader can postpone this section until Section \ref{section: QHA Dynkin case}.

The problem here is that the morphism $\sfr_{\varrho}: \YYM( -1) \to \YYM$ dose not commute with the 
right action of $\YYM$ on $\YYM$ 
and hence it is not a morphism of dg-modules over $\YYM^{\mre}$. 
To overcome this problem, we use theory of A${}_{\infty}$-algebras for which we refer \cite{Hasegawa}.

For simplicity we set $R := \YYM^{\mre}$. 
We denote the category of $*$-graded A${}_{\infty}$-$R$-modules by $\sfC_{A_{\infty}}(R\Gr)$. 
Recall that we regard a  $*$-graded dg-$R$-module $M$ as a $*$-graded A${}_{\infty}$-$R$-modules 
by setting higher multiplication $\{ m^{M}_{n}\}_{n \geq 1}$ as 
\[
m^{M}_{n} := 
\begin{cases} 
d \ (\textup{the differential of $M$}) & ( n=1), \\
\textup{the left action map}: R\otimes M \to M & (n =2), \\
0 & (n \geq 3). 
\end{cases} 
\]
This assignment yields    a functor $\sfcan: \sfC(R\Gr) \to \sfC_{A_{\infty}}(R\Gr)$, 
which induces an equivalence $\sfD(R\Gr) \xrightarrow{\cong} \sfD_{A_{\infty}}(R\Gr)$ of derived categories. 
Thus we may identify the derived category $\sfD_{A_{\infty}}(R\Gr)$ of $*$-graded A${}_{\infty}$-$R$-modules 
with the derived category $\sfD(R\Gr)$ of $*$-graded dg-$R$-modules.

A point here is that the functor $\sfcan$ is faithful, but  not full.
In other words, 
even for $*$-graded dg-$R$-modules $M$ and $N$, 
there exists a morphism $g:=\{g_{n}\}_{n \geq 1} : M \to N$ of $*$-graded A${}_{\infty}$-$R$-modules 
which does not come from morphisms of $*$-graded dg-$R$-modules, i.e., $g_{n} \neq 0$ for some $n \geq 2$. 

We define a morphism $\wsv=\{ f_{n}\}_{n \geq 1} : \YYM(-1) \to \YYM$ of $*$-graded A${}_{\infty}$-$R$-modules 
whose $1$-st component  $f_{1}$ equals to $\sfr_{\varrho}$ in the following way. 
So, first we set $f_{1}: = \sfr_{\varrho}$. 
Next, we define a morphism $f_{2}: R \otimes \YYM(-1) \to \YYM$ of $\sfC_{\DG}(\kk\Gr)$ of cohomological degree $-1$ to be 
\[
f_{2}((r \otimes s) \otimes x) := ( -1)^{|r|+ (|s| + 1)|x|} rx\sfH(s)
\]
for $ r \otimes s \in R$ and $x \in \YYM$. 
Finally, for $n \geq 3$, we set $f_{n} := 0$.

\begin{lemma}\label{202106071504}
The collection $\widehat{\sfr_{\varrho}} := \{f_{n}\}_{n \geq 1}$ is a morphism of $*$-graded A${}_{\infty}$-$R$-modules.
\[
\widehat{\sfr_{\varrho}}: \YYM(-1) \to \YYM.
\]
\end{lemma}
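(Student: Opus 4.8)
The statement to prove is Lemma \ref{202106071504}: that $\widehat{\sfr_{\varrho}} = \{f_n\}_{n\ge 1}$ with $f_1 = \sfr_\varrho$, the explicit $f_2$ given, and $f_n = 0$ for $n\ge 3$, is a morphism of $*$-graded A${}_\infty$-$R$-modules $\YYM(-1)\to\YYM$, where $R = \YYM^{\mre}$. Since both source and target are honest dg-$R$-modules (viewed as A${}_\infty$-modules via $\sfcan$, so $m_n = 0$ for $n\ge 3$ on each of them), the A${}_\infty$-morphism axioms for $\{f_n\}$ collapse to a short list of identities. Writing them out for a morphism of modules over an A${}_\infty$-algebra with $m_{\ge 3}^R = 0$, the only nontrivial relations are: (i) in arity $1$, $m_1^{\YYM} f_1 = f_1 m_1^{\YYM(-1)}$, i.e. $\sfr_\varrho$ is a cochain map — immediate since $\varrho$ is a cocycle (indeed $d(\vars) = -\varrho$ shows $d\varrho = 0$, and $\sfr_\varrho$ commutes with $d$ up to sign which vanishes because $|\varrho|$ is even); (ii) in arity $2$, the relation saying $f_2$ is a homotopy intertwining the two $R$-actions twisted by $f_1$, precisely
\[
m_1^{\YYM} f_2 + f_2(m_1^R \otimes 1) + f_2(1\otimes m_1^{\YYM(-1)}) = m_2^{\YYM}(1\otimes f_1) - f_1 m_2^{\YYM(-1)} - m_2^{\YYM}(f_{?}\otimes \cdots)\,;
\]
(iii) in arity $3$, the relation involving $m_2^R$, $m_2^{\YYM}$, $f_1$ and $f_2$, which with $f_3 = 0$ becomes a finite identity; (iv) arities $\ge 4$: all terms vanish because $m_{\ge 3} = 0$ on everything and $f_{\ge 3} = 0$, so these are automatic. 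So the real content is checking (ii) and (iii).

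\textbf{Key steps.} First I would fix the sign conventions for A${}_\infty$-modules from \cite{Hasegawa} (the reference the paper cites) and write the arity-$2$ and arity-$3$ axioms as explicit equalities of maps $R\otimes\YYM\to\YYM$ and $R\otimes R\otimes\YYM\to\YYM$. Then, for arity $2$: unwinding the definition, the axiom says that for $r = r'\otimes r''\in R = \YYM\otimes\YYM^{\op}$ and $x\in\YYM$, the map $f_2$ measures the failure of $f_1 = \sfr_\varrho$ to be $R$-linear, and one must verify $d\circ f_2 \pm f_2\circ d = $ (the left-action of $r$ composed with $\sfr_\varrho$) $-$ ($\sfr_\varrho$ composed with the left-action of $r$ on the shifted module). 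Concretely, for the right factor $r''$ this is the statement that $\sfr_\varrho$ fails to commute with right multiplication, and the discrepancy $x\varrho s - xs\varrho = [x\cdot,\ ]$-type term is exactly $\pm(d\sfH + \sfH d)$ applied appropriately, by Lemma \ref{homotopy proposition} (which says $\sfH$ is a homotopy from $\sfr_\varrho$ to $\sfl_\varrho$, i.e. $d\sfH + \sfH d = -\sfb_\varrho = [\ -,\varrho]$). This is the crux: the existence of the homotopy $\sfH$ from Lemma \ref{homotopy proposition} is precisely what makes $f_2$ work, and I would carry out the sign bookkeeping to match $f_2((r'\otimes r'')\otimes x) = (-1)^{|r'| + (|r''|+1)|x|} r' x \sfH(r'')$ against the required identity. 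For arity $3$, with $f_3 = 0$ the axiom reads $f_2(m_2^R\otimes 1) = m_2^{\YYM}(1\otimes f_2) - f_2(1\otimes m_2^{\YYM})$-type expression (again up to signs); substituting the definition of $f_2$, both sides become explicit polynomials in left/right multiplications and $\sfH$, and the identity follows from $\sfH$ being a \emph{derivation} of $\YYM$ (stated in Definition of $\sfH$: $\sfH(xy) = \sfH(x)y + (-1)^{|x|}x\sfH(y)$) — this is what lets the single $\sfH$ in $f_2$ distribute correctly across a product of two $R$-inputs. Finally I would note arities $\ge 4$ vanish termwise.

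\textbf{Main obstacle.} The genuine difficulty is not conceptual but the sign bookkeeping: A${}_\infty$-module morphism axioms carry Koszul signs depending on degrees of all inputs and on the arity, and the shift $(-1)$ is a $*$-degree shift, not a cohomological one, so it should be sign-neutral for the cohomological Koszul rule but must be tracked for the $*$-grading (the claim is that $\widehat{\sfr_\varrho}$ is a morphism of $*$-graded A${}_\infty$-modules, and $\sfH$ raises $*$-degree by $1$, matching the shift). I would therefore be careful to: (a) verify $\widehat{\sfr_\varrho}$ preserves $*$-degree — each $f_n$ lands in the right $*$-component because $\sfH:\YYM_{n-1}\to\YYM_n$; (b) get the cohomological signs in $f_2$ consistent with the chosen A${}_\infty$ conventions so that (ii) holds \emph{on the nose}. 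Everything else (arity $1$, arities $\ge 4$, and the structural inputs: $d\varrho = 0$, Lemma \ref{homotopy proposition}, the derivation property of $\sfH$) is routine. Once Lemma \ref{202106071504} is established, the exact triangle $\widehat{\sfU}$ is obtained by taking the A${}_\infty$-mapping cone of $\widehat{\sfr_\varrho}$ and invoking Theorem \ref{exact triangle U} together with faithfulness of $\sfcan$, which identifies this cone's image in $\sfD(\YYM\Gr)$ with $\PPi$ via $q_{\sfr_\varrho,\ppi,-\sfr_{\vars}\ppi}$.
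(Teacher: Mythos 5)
Your proposal is correct and follows essentially the same route as the paper: the A${}_\infty$-morphism axioms reduce to the arity-$1$ relation (that $f_1=\sfr_\varrho$ is a cochain map), the arity-$2$ relation verified via the homotopy identity $d\sfH+\sfH d=[-,\varrho]$ of Lemma \ref{homotopy proposition}, and the relation with two $R$-inputs verified using the derivation property of $\sfH$, with all higher terms vanishing since $f_{\ge 3}=0$ and $m_{\ge 3}=0$; the remaining work is exactly the Koszul sign bookkeeping you flag, which the paper carries out by direct computation.
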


\begin{proof}
Since almost all the relevant morphisms are zero, 
we only have to check the following three equations 
\[
\begin{split}
f_{1}m^{\YYM}_{1} - m^{\YYM}_{1}f_{1}  &= 0,\\
f_{1} m^{\YYM}_{2} -m^{\YYM}_{2}({}_{R} f_{1}) &=  m^{\YYM}_{1} f_{2}+f_{2} (\sfd_{\YYM}) + f_{2}({}_{R}m^{\YYM}_{1}),\\ 
f_{2}({}_{R} m^{\YYM}_{2}) - f_{2} \sfm_{\YYM} &= -m^{\YYM}_{2} ({}_{R}f_{2})  
\end{split}
\]
where  $\sfd: R \to R$ is the differential and $\sfm: R\otimes R \to R$ is the multiplication map. 

The first one is just says that $f_{1} = \sfr_{\varrho}$ is a cochain map. 
Thus it is already verified. 

We set the left hand side and the right hand side of the second equation by $L$ and $R$. 
Then 
\[
\begin{split}
L((r\otimes s) \otimes x) & = (-1)^{|s||x|}rxs \varrho - (-1)^{|s||x|}rx\varrho s= (-1)^{|s||x|} rx[s, \varrho], \\
R((r\otimes s) \otimes x) 
& = d(( -1)^{|r|+ (|s| + 1)|x|} rx\sfH(s)) +\\
& \ \ \ \ \ \ \  f_{2}\left((dr \otimes s) \otimes x + (-1)^{|r|}(r \otimes ds) \otimes x +(-1)^{|r|+ |s|}(r \otimes s ) \otimes x \right)\\
&  = (-1)^{|s||x|}rx(d\sfH(s) +\sfH d(s))\\
& =(-1)^{|s||x|} rx[s, \varrho]
\end{split}
\]
This shows $L=R$.

We set the left hand side and the right hand side of the third equation by $L$ and $R$. 
Then 
\[
\begin{split}
L((r\otimes s) \otimes (t \otimes u) \otimes x) 
& = (-1)^{|x||u|} f_{2}((r\otimes s) \otimes txu) -(-1)^{|s||t|+|u||s|} f_{2}((rt \otimes us ) \otimes x) \\
& = (-1)^{|x||u|+|r| + (|t|+ |x| + |u|)(|s| +1) } r txu\sfH(s)  \\
& \ \ \ \ \ \ \  -(-1)^{|s||t|+ |u||s| + |r| + |t| + |x|(|s| +|u| + 1) } rt  x\sfH(us) \\
& = (-1)^{|s||t|+ |u||s| + |r| + |t| + |x|(|s| +|u| + 1) }\left(( -1)^{|u|} r txu\sfH(s) - rt  x\sfH(us) \right) \\ 
& = (-1)^{|s||t|+ |u||s| + |r| + |t| + |x|(|s| +|u| + 1) +1} rt  x\sfH(u)s,  \\ 
R((r\otimes s) \otimes (t \otimes u) \otimes x) 
& =-(-1)^{|t| + (|u|+1)|x|+|r|+|s| } m_{2} \left( (r\otimes s) t x \sfH(u)\right) \\
& =(-1)^{|t| + (|u|+1)|x| +|r| + |s| +1+(|t| + |x| + |u| -1)|s| } rt x \sfH(u)s 
\end{split}
\]We check that the exponents of $(-1)$ of both equations coincide and verify $L =R$. 
\end{proof}

By the definition of composition of morphisms of A${}_{\infty}$-modules, 
the composition 
$\ppi \widehat{\sfr_{\varrho}} :\YYM \to \PPi$ is the collection  $\ppi \widehat{\sfr_{\varrho}} =\{ \ppi f_{n}\}_{n \geq 1}$.

Next  we construct a homotopy $\cH$ from $\ppi\wsv$ the composition  to 0. 
We define morphisms $\cH_{n}: R^{\otimes n -1} \otimes \YYM \to \YYM$ in $\sfC_{\DG}(\kk\Gr)$ of cohomological  degree $ -n$ 
to be 
\[
\cH_{n}: =
\begin{cases} 
- \sfr_{\vars}\ppi & ( n = 1), \\
0 & ( n \geq 2).
\end{cases}
\]

\begin{lemma}\label{202106071527}
The collection $\cH := \{ \cH_{n} \}_{n \geq 1}$ 
is a homotopy from $\ppi \wsv$ to $0$ of morphisms of A${}_{\infty}$-$R$-modules.
\end{lemma}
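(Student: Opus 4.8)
\textbf{Plan of proof for Lemma~\ref{202106071527}.}
The statement asserts that the collection $\cH = \{\cH_n\}_{n\ge 1}$ with $\cH_1 = -\sfr_{\vars}\ppi$ and $\cH_n = 0$ for $n\ge 2$ defines an A${}_\infty$-homotopy from $\ppi\wsv$ to $0$. Recall that a homotopy $\cH$ between morphisms $g,h\colon M\to N$ of A${}_\infty$-$R$-modules (here $g = \ppi\wsv$, $h = 0$, $M = \YYM(-1)$, $N = \PPi$) is governed by the defining equations of A${}_\infty$-homotopy: for each $n\ge 1$ one must have an identity expressing $g_n - h_n$ as an alternating sum of composites built from the $\cH_i$'s, the higher multiplications $m_j^M$, $m_j^N$, and the components of $g$ and $h$. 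Since $h = 0$, $M$ and $N$ are genuine dg-modules (so $m^M_j = m^N_j = 0$ for $j\ge 3$), $\wsv$ has only $f_1 = \sfr_{\varrho}$ and $f_2$ nonzero, $\ppi$ is a strict dg-algebra morphism (one component only), and $\cH$ has only $\cH_1$ nonzero, almost all terms in these identities vanish. So the plan is to write out the A${}_\infty$-homotopy equations in degrees $n = 1, 2, 3$ and check that the surviving terms balance; for $n\ge 4$ every term involves either a vanishing higher multiplication, $\cH_{\ge 2}$, $f_{\ge 3}$, or $h$, so the identity holds trivially.

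First I would record the relevant pieces: $m^{\PPi}_1 = d_{\PPi}$, $m^{\PPi}_2$ the left $R$-action on $\PPi$ (via $\ppi$ on the $\YYM$-side), and similarly on $\YYM$; the components of $\ppi\wsv$ are $(\ppi\wsv)_1 = \ppi\,\sfr_{\varrho} = \sfr_{\varrho}\ppi$ (up to the obvious identification, using that $\ppi$ is a morphism of algebras and $\varrho\mapsto\varrho$ under $\ppi$) and $(\ppi\wsv)_2 = \ppi f_2$, with $(\ppi\wsv)_n = 0$ for $n\ge 3$. The $n=1$ homotopy equation reads $(\ppi\wsv)_1 = m^{\PPi}_1\cH_1 + \cH_1 m^{\YYM(-1)}_1$, i.e. $\sfr_{\varrho}\ppi = -d_{\PPi}\sfr_{\vars}\ppi - \sfr_{\vars}\ppi\, d_{\YYM}$. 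This is precisely Lemma~\ref{homotopy 20191203} (the statement $\ppi\sfr_{\varrho} = -d\sfr_{\vars}\ppi - \sfr_{\vars}\ppi d$), so that degree is already done. The $n=2$ equation reads, schematically, $(\ppi\wsv)_2 = m^{\PPi}_1\cH_2 + \cH_2(\text{terms}) + m^{\PPi}_2({}_R\cH_1) - \cH_1 m^{\YYM(-1)}_2 + (\text{terms involving }\cH_1\text{ and }f_1)$; since $\cH_2 = 0$ this collapses to an identity expressing $\ppi f_2$ in terms of $\cH_1 = -\sfr_{\vars}\ppi$, $f_1 = \sfr_{\varrho}$, and the module actions. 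Expanding the action of $R = \YYM^{\mre}$ on an element $(r\otimes s)\otimes x$ and using the explicit formula $f_2((r\otimes s)\otimes x) = (-1)^{|r| + (|s|+1)|x|} r x \sfH(s)$ together with Lemma~\ref{homotopy lemma s} ($\ppi\sfH = \sfb_{\vars}\ppi$), this becomes a direct sign-bookkeeping verification of the same flavour as the computations in the proof of Lemma~\ref{202106071504}. Finally the $n=3$ equation involves $f_2$ composed with the module multiplication on one side and $\cH_1$ composed with $f_2$ (or with the $R$-action) on the other; again it is a finite sign computation using only the already-established formulas for $f_2$, $\sfH$, and $\ppi\sfH$.

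The main obstacle, as usual with A${}_\infty$-bookkeeping, is getting the Koszul signs exactly right: one must be careful that the sign conventions for A${}_\infty$-homotopies (as in \cite{Hasegawa}) are applied consistently with the conventions already fixed in Definition for $\wsv$ and in Lemma~\ref{202106071504}, since an off-by-a-sign there would make the equations fail. I would handle this by first writing the A${}_\infty$-homotopy relation in the precise sign convention of the reference, then substituting the explicit formulas, and checking that the exponents of $(-1)$ on the two sides of each surviving identity agree modulo $2$ — exactly the style of argument carried out at the end of the proof of Lemma~\ref{202106071504}. Once these three identities are verified, the exact triangle $\widehat{\sfU}\colon \YYM(-1)\xrightarrow{\wsv}\YYM\xrightarrow{\ppi}\PPi\to$ in $\sfD(R\Gr)$ follows by the same cone construction as in Theorem~\ref{exact triangle U}: the homotopy $\cH$ produces a morphism $\cone(\wsv)\to\PPi$ of A${}_\infty$-$R$-modules whose underlying morphism of $\YYM$-modules is the quasi-isomorphism $q_{\sfr_{\varrho},\ppi,-\sfr_{\vars}\ppi}$ of Theorem~\ref{exact triangle U}, hence is itself a quasi-isomorphism, and applying the equivalence $\sfD(R\Gr)\cong\sfD_{A_\infty}(R\Gr)$ and the forgetful functor to $\sfD(\YYM\Gr)$ gives that $\widehat{\sfU}$ lifts $\sfU$.
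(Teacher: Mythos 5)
Your plan is essentially the paper's proof: with $\cH_{\ge 2}=0$, with $\YYM(-1)$ and $\PPi$ honest dg-modules, and with $\ppi\,\widehat{\sfr_{\varrho}}$ having only two nonzero components, everything reduces to finitely many homotopy identities, of which the first is exactly Lemma~\ref{homotopy 20191203} and the remaining one is a sign computation using the explicit formula for $f_{2}$ and Lemma~\ref{homotopy lemma s}, in the style of Lemma~\ref{202106071504}. So the key ingredients are all correctly identified.

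One correction to how you describe the identities to be checked. A homotopy between morphisms of A${}_{\infty}$-$R$-modules is an element of the morphism complex whose differential, applied to $\cH$, must equal $\ppi\,\widehat{\sfr_{\varrho}}-0$; this differential is \emph{linear} in $\cH$, and its arity-$n$ component is built only from the $\cH_{j}$, the module multiplications $m^{\YYM(-1)}_{j}$, $m^{\PPi}_{j}$, and the multiplications of $R=\YYM^{\mre}$ --- the components $f_{1},f_{2}$ of $\widehat{\sfr_{\varrho}}$ never enter that side (you may be thinking of homotopies between A${}_{\infty}$-\emph{algebra} morphisms, where they do). Consequently, since $\cH_{j}=0$ for $j\ge 2$ and all multiplications $m_{j}$ with $j\ge 3$ vanish, there are exactly two nontrivial identities: $\ppi f_{1}=m^{\PPi}_{1}\cH_{1}+\cH_{1}m^{\YYM}_{1}$ (Lemma~\ref{homotopy 20191203}) and $\ppi f_{2}=-m^{\PPi}_{2}({}_{R}\cH_{1})+\cH_{1}m^{\YYM}_{2}$, which contains no $f_{1}$-term. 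In arity $3$ every term of the homotopy operator vanishes and $(\ppi\,\widehat{\sfr_{\varrho}})_{3}=0$, so the relation is vacuous; there is no computation ``involving $f_{2}$ composed with the module multiplication and $\cH_{1}$ composed with $f_{2}$''. As written, you would be attempting to verify identities with spurious $f_{1}$, $f_{2}$ terms that are not the homotopy condition and need not hold; following your own plan of first writing the relations in the reference's conventions fixes this, and then the only substantive work is the paper's arity-$2$ check on elements $(r\otimes s)\otimes x$ using $f_{2}((r\otimes s)\otimes x)=(-1)^{|r|+(|s|+1)|x|}rx\sfH(s)$, $\cH_{1}=-\sfr_{\vars}\ppi$ and $\ppi\sfH=\sfb_{\vars}\ppi$. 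Your closing paragraph on the cone and the triangle $\widehat{\sfU}$ is the content of Theorem~\ref{202106071837}, not of this lemma, but it agrees with the paper's argument there.
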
  

\begin{proof}
Since almost all the relevant morphisms are zero, 
we only have to check the following two equations 
\[
\begin{split}
\ppi f_{1}&= m^{\PPi}_{1}\cH_{1} + \cH_{1} m^{\YYM}_{1}, \\
\ppi f_{2}  &=  -m^{\PPi}_{2} ({}_{R} \cH_{1}) + \cH_{1} m^{\YYM}_{2}.\\  
\end{split}
\]
The first one is already verified in Lemma \ref{homotopy 20191203}. 

We set the left hand side and the right hand side of the second equation by $L$ and $R$. 
Then, we have 
\[
\begin{split}
L((r\otimes s) \otimes x) &= 
(-1)^{|r| +(|s| +1)|x|} \ppi(r) \ppi(x) \ppi(\sfH(s)) \\
& \stackrel{(*)}{=} (-1)^{|r| +(|s| +1)|x|} \ppi(rx) [\vars, \ppi(s)]\\
R((r\otimes s) \otimes x) 
& = -(-1)^{|r|+ |s|}m^{\PPi}_{2}((r\otimes s) \otimes \cH_{1}(x) ) + (-1)^{|s||x|} \cH_{1}(r xs)\\
& = (-1)^{|r|+ |s|}m^{\PPi}_{2}((r\otimes s)\otimes  \sfr_{\vars}\ppi(x)) + (-1)^{|s||x|+1} \sfr_{\vars}\ppi(r xs)\\
& = (-1)^{|r|+|s| +|x| +(|x|+1)|s|}\ppi(r)\ppi(x)\vars  \ppi(s) + (-1)^{|s||x| +1 + |r| + |x| + |s|} \ppi(rxs) \vars\\
& = (-1)^{|r|+|x|+|s||x|}\left( \ppi(rx)\vars \ppi(s) - ( -1)^{|s|}\ppi(rx) \ppi(s) \vars\right)\\
& =  (-1)^{|r| +(|s| +1)|x|} \ppi(rx) [\vars, \ppi(s)]
 \end{split}
 \]
where for $\stackrel{(*)}{=}$ we use Lemma \ref{homotopy lemma s}. 
This shows $L =R$.  
\end{proof}

Let $C := \cone(\wsv)$ be the cone of $\wsv=\{ f_{n}\}_{n \geq 1} : \YYM \to \YYM$. 
Namely, it is a $*$-graded A${}_{\infty}$-$R$-module (which is actually a $*$-graded dg-$R$-module) whose underlying cohomological graded object 
is $\YYM \oplus \YYM[1]$ with the higher multiplications $\{m^{C}_{n} \}_{n \geq 1}$ given as below
\[
m^{C}_{1} = \begin{pmatrix} d_{\YYM} & f_{1} \uparrow\\ 0 & d_{\YYM[1]} \end{pmatrix}, \ 
m^{C}_{2} = \begin{pmatrix} m_{2}  &- f_{2} \uparrow \\ 0 & m_{2} \end{pmatrix}, \
m^{C}_{n } = 0  \ \ \ (n \geq 3).
\]

The homotopy $\cH$ induces a morphism $\hat{q}=\{ \hat{q}_{n}\}_{n \geq 1} : C\to \PPi$ of A${}_{\infty}$-$R$-modules, 
which is defined by 
\[
\hat{q}_{1} := (\ppi, \cH_{1} \uparrow), \ \hat{q}_{n} = 0  \ \ \ (n \geq 2). 
\]

\begin{theorem}\label{202106071837}
The morphism $\hat{q}: C \to \PPi$ is a quasi-isomorphism of A${}_{\infty}$-$\YYM^{\mre}$-modules. 
Hence we have an exact triangle 
\[
\widehat{\sfU}: \YYM( -1) \xrightarrow{ \widehat{\sfr_{\varrho}} } \YYM \xrightarrow{ \ppi } \PPi \to \YYM( -1)[1]. 
\]
in $\sfD(\YYM^{\mre}\Gr)$ 
which is sent to the exact triangle $\sfU$ by the forgetful functor $\sfD(\YYM^{\mre}\Gr) \to \sfD(\YYM\Gr)$. 
\end{theorem}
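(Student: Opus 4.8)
\textbf{Proof plan for Theorem \ref{202106071837}.}
The plan is to verify that $\hat q=\{\hat q_n\}_{n\ge 1}$ is a quasi-isomorphism of $*$-graded $A_\infty$-$R$-modules, and then transport the resulting exact triangle back to $\sfD(\YYM^{\mre}\Gr)$ along the equivalence $\sfD(R\Gr)\xrightarrow{\cong}\sfD_{A_\infty}(R\Gr)$. First I would record that $\hat q$ is genuinely a morphism of $A_\infty$-modules: this is the content of Lemma \ref{202106071527}, since a homotopy $\cH$ from $\ppi\wsv$ to $0$ is exactly the data needed to produce the cochain-type map $\hat q_1=(\ppi,\cH_1\!\uparrow)$ out of the cone $C=\cone(\wsv)$, with all higher components $\hat q_n$ ($n\ge 2$) vanishing because $\cH_n=0$ for $n\ge 2$. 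Thus the only substantive point is that $\hat q$ induces an isomorphism on cohomology.

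To see this, I would pass through the forgetful functor $\sfD(\YYM^{\mre}\Gr)\to\sfD(\YYM\Gr)$ (equivalently, forget down to $\sfD(\kk\Gr)$), under which $\hat q$ becomes $q_{\sfr_\varrho,\ppi,-\sfr_{\vars}\ppi}\colon\cone(\sfr_\varrho)\to\PPi$: indeed, forgetting the $A_\infty$-structure kills $f_2$ and $\cH_1$-contributions beyond the cochain level in the underlying complex, so $C$ becomes the ordinary cone $\cone(\sfr_\varrho)$ and $\hat q_1$ becomes $(\ppi,-\sfr_{\vars}\ppi\!\uparrow)$. By Theorem \ref{exact triangle U}, that map \emph{is} a quasi-isomorphism. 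Since a morphism of $A_\infty$-$R$-modules is a quasi-isomorphism if and only if its underlying map of complexes is (cohomology is computed via $m_1$ only, which is unchanged by the forgetful functor), we conclude $\hat q$ is a quasi-isomorphism in $\sfC_{A_\infty}(\YYM^{\mre}\Gr)$.

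It then follows formally that in $\sfD_{A_\infty}(\YYM^{\mre}\Gr)$, hence in $\sfD(\YYM^{\mre}\Gr)$ under the identification recalled just before Lemma \ref{202106071504}, we have an exact triangle
\[
\widehat{\sfU}\colon \YYM(-1)\xrightarrow{\ \widehat{\sfr_\varrho}\ }\YYM\xrightarrow{\ \ppi\ }C\simeq\PPi\to\YYM(-1)[1],
\]
namely the standard triangle $\YYM(-1)\to\YYM\to\cone(\widehat{\sfr_\varrho})\to$, with $\cone(\widehat{\sfr_\varrho})=C$ replaced by $\PPi$ via $\hat q$. Finally, applying the forgetful functor $\sfD(\YYM^{\mre}\Gr)\to\sfD(\YYM\Gr)$ sends $\widehat{\sfr_\varrho}$ to $\sfr_\varrho$ and $\widehat{\sfU}$ to the triangle whose mapping cone is $\cone(\sfr_\varrho)\xrightarrow{\sim}\PPi$, which is exactly $\sfU$ of Theorem \ref{exact triangle U}; this gives the last assertion.

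\textbf{Main obstacle.} The only real work is checking that $\widehat{\sfr_\varrho}$ and $\cH$ satisfy the $A_\infty$-morphism and $A_\infty$-homotopy relations, i.e.\ Lemmas \ref{202106071504} and \ref{202106071527}; these are the sign-bookkeeping computations already carried out there, resting on the identity $d\sfH(s)+\sfH d(s)=[s,\varrho]$ (Lemma \ref{homotopy proposition}) and $\ppi\sfH=\sfb_{\vars}\ppi$ (Lemma \ref{homotopy lemma s}). Granting those, the quasi-isomorphism statement is a soft reduction to Theorem \ref{exact triangle U} via the faithful exact functor $\sfcan$, so no further delicate argument is needed.
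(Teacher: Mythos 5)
Your proposal is correct and follows essentially the same route as the paper: quasi-isomorphisms of $A_\infty$-modules are detected on the first component, and $\hat q_1 = q_{\sfr_\varrho,\ppi,-\sfr_{\vars}\ppi}$ is a quasi-isomorphism by Theorem \ref{exact triangle U}, while the restriction functor along $\YYM\to\YYM^{\mre}$, $r\mapsto r\otimes 1$, kills the higher component $f_2$ (since $\sfH$ vanishes on idempotents) and so sends $\widehat{\sfr_\varrho}$ to $\sfr_\varrho$ and $\widehat{\sfU}$ to $\sfU$. The only difference is cosmetic phrasing of the quasi-isomorphism criterion via the forgetful functor rather than directly via the definition.
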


\begin{proof}
Recall that a morphism $g=\{ g_{n}\}_{n \geq 1}: M \to N$ of A${}_{\infty}$-modules is called a \emph{quasi-isomorphism} 
if the $1$-st component $g_{1}$  is a quasi-isomorphism of complexes. 
Since  $\hat{q}_{1} = q_{\sfr_{\varrho}, \ppi, - \sfr_{\vars} \ppi}$  is a quasi-isomorphism by Theorem \ref{exact triangle U}, 
the morphism $\hat{q}: C \to \PPi$ is a quasi-isomorphism. 

 The forgetful functor $\sfD(\YYM^{\mre}\Gr) \to \sfD(\YYM\Gr)$ is the restriction functor along the morphism 
 $a: \YYM \to \YYM^{\mre}, r \mapsto r \otimes 1$. 
 It follows that the morphism $\wsv: \YYM( -1) \to \YYM$ of $\sfC_{A_{\infty}}(\YYM^{\mre})$ 
 is sent to $\sfr_{\varrho}: \YYM( -1) \to \YYM$. 
 Thus we conclude that the exact triangle $\widehat{\sfU}$ is sent to $\sfU$ by the forgetful functor $\sfD(\YYM^{\mre}\Gr) \to \sfD(\YYM\Gr)$. 
\end{proof}

\section{QHA of non-Dynkin type  for non-sincere weight}\label{202207111353}

In this section, we study the  QHA ${}^{v}\!\YM$ of a non-Dynkin quiver $Q$ for arbitrary weight $v \in \kk Q_{0}$. 
So throughout this section $Q$ denotes a non-Dynkin quiver.

As we point out in Proposition \ref{202207111600} below, 
 the derived QHA ${}^{v}\!\YYM$ defined only for a sincere weight $v$,  
 is quasi-isomorphic to the (non-derived) QHA ${}^{v}\!\YM$. 
The aim of this section is to show that almost all results about ${}^{v}\!\YM \simeq {}^{v}\!\YYM$ given in the previous section, holds true for ${}^{v}\!\YM$ even in the case where the weight $v$ is not sincere.

\subsection{Preparations}

\subsubsection{}
First we point out that the derived QHA and the non-derived QHA are quasi-isomorphic.

\begin{proposition}\label{202207111600}
Let $v \in \kk^{\times}Q_{0}$ be a sincere weight. 
Then the derived QHA ${}^{v}\!\YYM$ is concentrated in the $0$-th cohomological grading and 
consequently the canonical morphism ${}^{v}\!\YYM \to {}^{v}\!\YM$ is a quasi-isomorphism. 
\end{proposition}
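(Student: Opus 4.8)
The plan is to show that for a non-Dynkin quiver $Q$ and a sincere weight $v\in\kk^{\times}Q_{0}$, the derived QHA ${}^{v}\!\YYM$ has cohomology concentrated in degree $0$; the second assertion (that ${}^{v}\!\YYM\to{}^{v}\!\YM$ is a quasi-isomorphism) then follows immediately from Lemma \ref{202006121937}, which already identifies $\tuH^{0}({}^{v}\!\YYM)\cong{}^{v}\!\YM$. First I would reduce the claim to the $*$-degree pieces: since ${}^{v}\!\YYM=\bigoplus_{n\geq 0}{}^{v}\!\YYM_{n}$ is a $*$-graded dg-algebra and each ${}^{v}\!\YYM_{n}$ is a complex of $\Pa$-bimodules, it suffices to prove that every ${}^{v}\!\YYM_{n}$ is concentrated in cohomological degree $0$. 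For $n=0$ this is clear since ${}^{v}\!\YYM_{0}=\Pa$. For $n=1$ this is precisely Corollary \ref{universal Auslander-Reiten triangle corollary}: for a non-Dynkin $Q$, every weight is semi-regular (indeed, for a non-Dynkin quiver, since ${}^{v}\!\Euch$ is multiplicative up to sign along exact triangles and the preprojective/preinjective modules have positive-rank dimension vectors with sincere $v$... this needs to be checked, but a sincere weight is certainly semi-regular when $Q$ is non-Dynkin because the relevant Euler characteristics are never zero for a sincere positive-type weight — more carefully, one uses that the dimension vectors of indecomposable preprojective and preinjective modules over a non-Dynkin quiver are sincere, or at least that ${}^{v}\!\Euch(M)\ne 0$), so ${}^{v}\!\YYM_{1}\in\sfD(\Pa^{\mre})$ is concentrated in degree $0$.

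For the inductive step, I would use the exact triangle $\sfV_{n}$ from Section \ref{202106020843}, namely
\[
\PPi_{1}\otimes_{\Pa}{}^{v}\!\YYM_{n-2}\xrightarrow{\ \eeta^{*}_{n}\ }{}^{v}\!\YYM_{1}\otimes_{\Pa}{}^{v}\!\YYM_{n-1}\xrightarrow{\ \zzeta_{n}\ }{}^{v}\!\YYM_{n}\to{}^{v}\!\YYM_{n-2}[1],
\]
combined with the exact triangle \eqref{202008141924}
\[
{}^{v}\!\YYM_{n-1}\xrightarrow{\ \brho_{n}\ }{}^{v}\!\YYM_{n}\xrightarrow{\ \ppi_{n}\ }\PPi_{n}\to{}^{v}\!\YYM_{n-1}[1].
\]
Since $Q$ is non-Dynkin, $\PPi$ is concentrated in cohomological degree $0$ (the derived preprojective algebra of a non-Dynkin quiver is quasi-isomorphic to $\Pi$, as recalled in the introduction), so each $\PPi_{n}$ lives in degree $0$; moreover $\PPi_{1}$, being concentrated in degree $0$, is flat over $\Pa$ on both sides (it is the projective bimodule resolution shift $\PPa^{\vee}[1]$, which for non-Dynkin $Q$ has zero cohomology outside degree $0$, hence $\PPi_{n}\cong\PPi_{1}^{\otimes_{\Pa}n}$ is an honest bimodule flat on both sides). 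Inducting on $n$, with ${}^{v}\!\YYM_{n-1}$ and ${}^{v}\!\YYM_{n-2}$ concentrated in degree $0$, the triangle \eqref{202008141924} forces the cohomology of ${}^{v}\!\YYM_{n}$ into degrees $\{-1,0\}$. To kill $\tuH^{-1}$, I would use the first triangle: ${}^{v}\!\YYM_{1}$ is concentrated in degree $0$ but need not be flat, so I would instead argue via the long exact cohomology sequence of $\sfV_{n}$ together with the known vanishing, or more robustly, use Corollary \ref{202008091343}, whose commutative diagram relates the two triangles and pins down $\ppi_{n}$; comparing the two long exact sequences shows that the potential degree $-1$ class in ${}^{v}\!\YYM_{n}$ would have to come from $\tuH^{-1}$ of ${}^{v}\!\YYM_{1}\otimes_{\Pa}{}^{v}\!\YYM_{n-1}$, i.e.\ from $\Tor_{1}^{\Pa}$, and then map compatibly — so I would show this Tor-class already dies because ${}^{v}\!\YYM_{n-1}$ has a finite projective resolution over $\Pa$ of length bounded by $\gldim\Pa=1$, making the only relevant Tor vanish after one step, or alternatively invoke the description ${}^{v}\!\YYM_{n}\cong\cone(\eeta^{*}_{n})$ directly.

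The main obstacle I anticipate is handling the non-flatness of ${}^{v}\!\YYM_{1}$ as a bimodule and making the inductive cohomology bookkeeping rigorous: unlike $\PPi_{1}$, the bimodule ${}^{v}\!\YYM_{1}$ need not be projective on both sides, so I cannot simply say $\tuH$ commutes with the tensor products appearing in $\sfV_{n}$. The cleanest fix is probably to work one-sidedly: since $\Pa=\kk Q$ is hereditary, every object of $\sfD(\Pa)$ is a direct sum of shifted modules, and ${}^{v}\!\YYM_{1}e_{i}\cong{}^{v}\!\YYM_{1}\otimes_{\Pa}P_{i}$ is concentrated in degree $0$ by the $n=1$ case; then one shows inductively that ${}^{v}\!\YYM_{n}e_{i}$ is concentrated in degree $0$ for every vertex $i$ by feeding the triangles \eqref{2020081419241} and $\sfV_{n}\lotimes_{\Pa}P_{i}$ with $M=P_{i}$ into the argument, using that over a hereditary algebra $\lotimes_{\Pa}P_{i}$ is exact, so the derived tensor products become ordinary ones and the long exact sequences collapse the cohomology into degree $0$. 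Since ${}^{v}\!\YYM_{n}=\bigoplus_{i}{}^{v}\!\YYM_{n}e_{i}$ as a left $\Pa$-module, this gives cohomological concentration of ${}^{v}\!\YYM_{n}$ and hence of ${}^{v}\!\YYM$, completing the proof; the quasi-isomorphism ${}^{v}\!\YYM\to{}^{v}\!\YM$ is then just Lemma \ref{202006121937}.
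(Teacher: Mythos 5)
Your core strategy --- induct on the $*$-degree using the exact triangle ${}^{v}\!\YYM_{n-1}\to{}^{v}\!\YYM_{n}\to\PPi_{n}\to{}^{v}\!\YYM_{n-1}[1]$ of \eqref{202008141924} together with the fact that $\PPi_{n}$ is concentrated in cohomological degree $0$ because $Q$ is non-Dynkin --- is exactly the paper's proof. However, your base case rests on a genuine error: a sincere weight on a non-Dynkin quiver is \emph{not} automatically semi-regular, so Corollary \ref{universal Auslander-Reiten triangle corollary} is not available under the hypotheses of the proposition. Semi-regularity requires ${}^{v}\!\Euch(N)\neq 0$ for every indecomposable preprojective or preinjective module $N$, and sincerity does not imply this: already for the $2$-Kronecker quiver the indecomposable projective $P_{2}$ has dimension vector $(2,1)$, so the sincere weight $v=(1,-2)$ gives ${}^{v}\!\Euch(P_{2})=0$; the paper's own description of regularity for the Kronecker quiver ($nv_{1}+(n-1)v_{2}\neq 0$ for all $n\in\ZZ$ and $v_{1}+v_{2}\neq 0$) shows these are genuinely stronger conditions than sincerity. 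Your attempted justification via sincerity of the dimension vectors does not help, since a weighted sum with weights of mixed signs (or in positive characteristic) can vanish --- and if sincere did imply semi-regular, much of Section \ref{202207111353} would be pointless.

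The repair is immediate and renders your detours unnecessary. Theorem \ref{exact triangle U} holds for any sincere weight with no regularity hypothesis, and its $*$-degree $1$ part is $\Pa\to{}^{v}\!\YYM_{1}\to\PPi_{1}\to\Pa[1]$; since $\Pa$ and $\PPi_{1}$ are concentrated in degree $0$ (the latter because $Q$ is non-Dynkin), the long exact cohomology sequence places ${}^{v}\!\YYM_{1}$ in degree $0$. This is just the case $n=1$ of the same induction, so no separate base case is needed. Likewise, in the inductive step the long exact sequence of \eqref{202008141924}, with both outer terms ${}^{v}\!\YYM_{n-1}$ and $\PPi_{n}$ concentrated in degree $0$, yields $\tuH^{i}({}^{v}\!\YYM_{n})=0$ for all $i\neq 0$ outright --- not merely concentration in degrees $\{-1,0\}$ --- so the excursion through $\sfV_{n}$, the $\Tor$ bookkeeping, the flatness of $\PPi_{1}$, and the final passage to the summands ${}^{v}\!\YYM_{n}e_{i}$ can all be dropped (tensoring with $P_{i}$ is harmless but changes nothing, as cohomological concentration is detected on the underlying complexes). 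With these corrections your argument coincides with the paper's.
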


\begin{proof}
By Theorem \ref{exact triangle U}, there exists an exact triangle 
${}^{v}\!\YYM_{n} \xrightarrow{ {}^{v}\!\varrho} {}^{v}\!\YYM_{n+1} \xrightarrow{ {}^{v}\ppi } \PPi_{n+1} \to$ 
for $n \geq 0$. 
Since $Q$ is non-Dynkin, $\PPi_{n}$ is concentrated in $0$-th cohomological degree. 
Thus by induction on $n$, we can show that ${}^{v}\!\YYM_{n}$ is concentrated in $0$-th cohomological degree. 
\end{proof}

Thanks to  Theorem \ref{202112122233}, we deduce the following theorem.  

\begin{theorem}\label{202111261228}
Let $Q$ be a non-Dynkin quiver and $v\in \kk^{\times} Q_{0}$ is sincere. 
Then the QHA ${}^{v}\!\YM(Q)$ is $3$-Calabi-Yau. 
\end{theorem}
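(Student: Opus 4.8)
The plan is to transport the $3$-Calabi-Yau property from the derived quiver Heisenberg algebra to the ordinary one along a quasi-isomorphism. Since $Q$ is non-Dynkin and $v$ is sincere, Proposition~\ref{202207111600} tells us that the derived QHA ${}^{v}\!\YYM(Q)$ is concentrated in cohomological degree~$0$, so the canonical morphism of $*$-graded dg-algebras $\phi\colon {}^{v}\!\YYM(Q)\to{}^{v}\!\YM(Q)$ of Lemma~\ref{202006121937} is a quasi-isomorphism. On the other hand, by Theorem~\ref{202112122233} the algebra ${}^{v}\!\YYM(Q)$ is homologically smooth and carries an isomorphism ${}^{v}\!\YYM(Q)^{\vvee}[3](-2)\cong{}^{v}\!\YYM(Q)$ in $\sfD({}^{v}\!\YYM(Q)^{\mre}\Gr)$, i.e.\ it is $3$-Calabi-Yau of Gorenstein parameter~$2$. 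It therefore remains only to observe that both homological smoothness and the defining bimodule isomorphism pass along a quasi-isomorphism of $*$-graded dg-algebras.

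To carry this out I would set $R:={}^{v}\!\YYM(Q)$ and $R':={}^{v}\!\YM(Q)$. The map $\phi^{\mre}:=\phi\otimes_{\kk}\phi^{\op}\colon R^{\mre}\to R'^{\mre}$ is again a quasi-isomorphism (we work over a field, so all the relevant complexes are termwise flat), whence restriction of scalars yields a triangle equivalence $\phi^{*}\colon\sfD(R'^{\mre}\Gr)\xrightarrow{\ \sim\ }\sfD(R^{\mre}\Gr)$ commuting with $[1]$ and with the $*$-degree shifts, and sending $R'$ to $R$ and $R'^{\mre}$ to $R^{\mre}$. Since $R$ is homologically smooth it is perfect in $\sfD(R^{\mre}\Gr)$; as $\phi^{*}$ is a triangle equivalence with $\phi^{*}R'\cong R$, the object $R'$ is perfect in $\sfD(R'^{\mre}\Gr)$, i.e.\ $R'$ is homologically smooth. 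Moreover $\phi^{*}$ intertwines the bimodule dualities: for a perfect $N\in\sfD(R'^{\mre}\Gr)$ one has
\[
\phi^{*}\RHom_{R'^{\mre}}(N,R'^{\mre})\;\cong\;\RHom_{R^{\mre}}(\phi^{*}N,\phi^{*}R'^{\mre})\;\cong\;\RHom_{R^{\mre}}(\phi^{*}N,R^{\mre}),
\]
using that $\phi^{*}$ commutes with $\RHom$ out of a perfect complex and that $\phi^{*}R'^{\mre}\cong R^{\mre}$ in $\sfD(R^{\mre}\Gr)$. Hence $\phi^{*}(R'^{\vvee})\cong R^{\vvee}$, compatibly with shifts and $*$-shifts.

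Applying the equivalence $\phi^{*}$ to the isomorphism $R^{\vvee}[3](-2)\cong R$ of Theorem~\ref{202112122233} and feeding in the compatibilities just listed produces an isomorphism $R'^{\vvee}[3](-2)\cong R'$ in $\sfD(R'^{\mre}\Gr)$; together with homological smoothness of $R'$ this is precisely the assertion that ${}^{v}\!\YM(Q)$ is $3$-Calabi-Yau (of Gorenstein parameter~$2$). I do not expect a genuine obstacle here, as the substantive work is already done: the vanishing of the higher cohomology of ${}^{v}\!\YYM(Q)$ in Proposition~\ref{202207111600} — which itself rests on the exact triangle $\sfU$ of Theorem~\ref{exact triangle U} and on $\PPi(Q)$ being concentrated in degree~$0$ for non-Dynkin $Q$ — together with the $3$-Calabi-Yau property of the derived algebra in Theorem~\ref{202112122233}. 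The only point to check is the transport-of-structure lemma above, and that is routine; it can also be cited from the literature on Calabi-Yau dg-algebras.
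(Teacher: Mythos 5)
Your proof is correct and follows essentially the same route as the paper: combine Proposition~\ref{202207111600} (the canonical map ${}^{v}\!\YYM(Q)\to{}^{v}\!\YM(Q)$ is a quasi-isomorphism for non-Dynkin $Q$ and sincere $v$) with Theorem~\ref{202112122233} (the derived QHA is $3$-Calabi-Yau). The only difference is that you spell out the routine transport of homological smoothness and of the bimodule duality isomorphism along the quasi-isomorphism, a step the paper leaves implicit, and your treatment of it is sound.
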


\subsubsection{Flatness of the deformation family $\Pi_{\bullet}$ of the preprojective algebras over $\kk Q_{0}$}

Recall that the deformation family of the preprojective algebras is defined to be 
\[
\Pi_{\bullet} :=\Pi(Q)_{\bullet} := \frac{\kk [x_{1}, \ldots, x_{r}]\overline{Q}}{ (\rho_{i} -x_{i}e_{i} \mid i \in Q_{0}) } 
\] 
where $r = \# Q_{0}$. 
Let $R:=\kk [x_{1}, \dots, x_{r}]$  be the coordinate ring of $\kk Q_{0}$. 
Then the deformation family $\Pi_{\bullet}$  is an algebra over $R$. 

We give a proof of  the following well-known result.

\begin{theorem}\label{2022071015211}
The algebra $\Pi_{\bullet}$ is flat over $R$. 
\end{theorem}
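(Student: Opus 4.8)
The plan is to establish flatness of $\Pi_{\bullet}$ over $R = \kk[x_1,\dots,x_r]$ by exhibiting an explicit $R$-module basis (or at least a freeness statement) for $\Pi_{\bullet}$, following the standard filtration argument due to Crawley-Boevey--Holland. First I would equip $\kk\overline{Q}$ with the $*$-grading, inducing an ascending filtration $F_{\le n}$ on the path algebra $\kk[x_1,\dots,x_r]\overline{Q}$ by total $*$-degree of the $\overline{Q}$-part (the $x_i$ having $*$-degree $0$), and pass to the associated filtration on $\Pi_{\bullet}$. Since the relations $\rho_i - x_i e_i$ have $*$-leading term $\rho_i$ (the $x_i e_i$ term lives in $*$-degree $0$), the associated graded algebra $\operatorname{gr} \Pi_{\bullet}$ is a quotient of $R \otimes_{\kk} \Pi(Q) = \Pi(Q)[x_1,\dots,x_r]$, namely there is a surjection $R \otimes_{\kk} \Pi(Q) \twoheadrightarrow \operatorname{gr}\Pi_{\bullet}$.

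The key step is then to show this surjection is an isomorphism, equivalently that $\Pi_{\bullet}$ is a \emph{free} $R$-module with the same ``size'' in each $*$-degree as $\Pi(Q)$. I would do this by producing a one-sided inverse: realize $\Pi_{\bullet}$ as an $R$-submodule of something manifestly free, or argue by a PBW-type normal form. Concretely, choose a $\kk$-basis of $\Pi(Q)$ consisting of images of paths in $\overline{Q}$ (a set $B$ of paths); I would show the set $\{ x^{\mathbf{a}} b \mid \mathbf{a}\in\NN^r,\ b\in B\}$ spans $\Pi_{\bullet}$ over $\kk$ (clear from the relations, which let one rewrite any monomial $x^{\mathbf a}\cdot(\text{path})$ into this form using $\rho_i = x_i e_i$ to reduce high-$*$-degree path factors), and then show $\kk$-linear independence. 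Linear independence is where the filtration argument pays off: a nontrivial $\kk$-linear relation among the $x^{\mathbf a} b$ would, after taking $*$-leading terms, produce a nontrivial relation in $R \otimes_{\kk} \Pi(Q)$ among the $x^{\mathbf a}\otimes b$ — but those are linearly independent by choice of $B$ and freeness of $R\otimes_\kk\Pi(Q)$ over $\kk$. Hence $\{x^{\mathbf a} b\}$ is a $\kk$-basis, so $\{b \mid b \in B\}$ is an $R$-basis of $\Pi_{\bullet}$, which is therefore free, hence flat, over $R$.

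The main obstacle I expect is the spanning/rewriting step made rigorous: one must check that the reduction procedure (using $\rho_i - x_i e_i = 0$ to trade each occurrence of the mesh element for a central scalar $x_i$) actually terminates and produces a well-defined normal form, i.e.\ that no circularity arises and that the $*$-filtration genuinely decreases. This is essentially the statement that $\rho_i$, viewed in $\Pi(Q)_1$, generates a two-sided ideal whose quotient is $*$-degree $0$ — which is just the definition of $\Pi(Q)$ — combined with centrality of $\rho := \sum_i \rho_i$ modulo the ideal; one has to be slightly careful because individual $\rho_i$ are not central, only $\rho$ is, so the reduction must be organized idempotent-by-idempotent. Alternatively, and perhaps more cleanly, I would invoke the known result that $\Pi(Q)$ (and each $*$-graded piece $\Pi(Q)_n$) is finite-dimensional in the Dynkin case and has known Hilbert series in general, and combine the surjection $R\otimes_\kk \Pi(Q)_n \twoheadrightarrow \operatorname{gr}_n \Pi_{\bullet}$ with a lower bound on $\dim_\kk$ of the specialization $\Pi_{\bullet}\otimes_R \kk_{v}$ at a regular weight $v$ (available from the representation-theoretic results already in the paper) to force equality of dimensions in each $*$-degree, yielding freeness. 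I would present whichever of these two routes is shortest; the filtration/normal-form route is self-contained, the dimension-counting route leans on earlier theorems.
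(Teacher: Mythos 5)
There is a genuine gap, and it sits exactly at the step you flagged as routine. Your strategy reduces flatness to showing that the surjection $R\otimes_{\kk}\Pi(Q)\twoheadrightarrow \operatorname{gr}\Pi_{\bullet}$ (associated graded for your $*$-filtration) is an isomorphism, i.e.\ to $\kk$-linear independence of the elements $x^{\mathbf a}b$. But your independence argument is circular: given a nontrivial relation $\sum c_{\mathbf a,b}\,x^{\mathbf a}b=0$ in $\Pi_{\bullet}$, passing to leading terms produces a relation in $\operatorname{gr}\Pi_{\bullet}$, \emph{not} in $R\otimes_{\kk}\Pi(Q)$; to transport it to $R\otimes_{\kk}\Pi(Q)$ and invoke freeness there you would need the injectivity of the very map you are trying to prove. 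Equivalently, if you lift to $R\overline{Q}$, the ideal of leading terms of $(\rho_i-x_ie_i\mid i)$ contains, but need not a priori equal, the ideal $(\rho_i\mid i)$: leading terms of expressions $\sum_j p_j(\rho_{i_j}-x_{i_j}e_{i_j})q_j$ can cancel and leave lower-degree elements whose leading terms are new. This is precisely a PBW problem, and it is the entire content of the theorem; the spanning/termination issue you worried about is the easy half. The paper does not prove this PBW statement either: it reduces to $\kk$ algebraically closed, filters by path length, identifies the fiber $\Pi_{\bullet}\otimes_R\kappa(\lambda)$ with the deformed preprojective algebra $\Pi_{\lambda}$, imports from Crawley-Boevey--Kimura (Lemma 2.3 of \cite{Crawley-Boevey-Kimura}) that the associated graded of $\Pi_{\lambda}$ is $\Pi$ for \emph{every} $\lambda$, and then applies the constant-fiber-dimension criterion (Lemma \ref{202103141428}) to each finitely generated piece $\Pi_{\bullet,\le n}$.

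Your fallback dimension-counting route does not close the gap either. The flatness criterion needs the fiber dimension of $\Pi_{\bullet,\le n}$ (or of $\operatorname{gr}_n\Pi_{\bullet}$) to be constant at \emph{all} closed points of $\kk Q_0$; a lower bound at one regular weight $v$, even combined with the upper bound coming from your surjection and upper semicontinuity, only pins down the dimension at that point and cannot rule out smaller fibers elsewhere. Moreover the representation-theoretic results in the paper concern ${}^{v}\!\YM(Q)$, which is the pullback of $\Pi_{\bullet}$ along the line through $v$, not the individual fibers $\Pi_{\lambda}$; the needed inequality $\dim\Pi_{\lambda,\le n}\ge\dim\Pi_{\le n}$ for every $\lambda$ is exactly the nontrivial input that the paper outsources to \cite{Crawley-Boevey-Kimura}. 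To repair your proof you would have to either cite such a PBW-type result for the deformed/central-extension family, or prove it honestly (e.g.\ by a diamond-lemma normal-form argument or by constructing a faithful action on an explicitly free module), rather than deduce it from the leading-term manipulation as written.
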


\begin{proof}
We may assume that $\kk$ is algebraically closed. 
In this proof, we equip $\overline{Q}$ with the path length grading and $R$ is regraded as in $0$-th degree with respect to this grading. 
Then the grading gives a filtration $\{\Pi_{\bullet, \leq n} \mid n \geq 0 \}$ on $\Pi_{\bullet}$.
Since it is exhaustive, it is enough to show that $\Pi_{\bullet, \leq n}$ is flat over $R$ for any $n \geq 0$.

For $\lambda=(\lambda_{i})_{i} \in  \kk Q_{0}$, 
we denote by  $\Pi_{\lambda} := \kk \overline{Q}/(\rho_{i} -\lambda_{i} \mid i \in Q_{0})$ the deformed preprojective algebra. 
Then we have $\Pi_{\bullet}\otimes_{R} \kappa(\lambda)  \cong \Pi_{\lambda}$ 
where $\kappa(\lambda) = R/(x_{i} -\lambda_{i} \mid i \in Q)$ is the residue field and we identify it with $\kk$.  
The grading also gives a filtration  $\{ \Pi_{\lambda, \leq n} \mid n \geq 0\}$ on $\Pi_{\lambda}$ and 
the above isomorphism compatible with filtrations. 
Namely we have  $\Pi_{\bullet, \leq n }\otimes_{R} \kappa(\lambda)  \cong \Pi_{\lambda, \leq n}$ for $n \geq 0$. 

By \cite[Lemma 2.3]{Crawley-Boevey-Kimura}, the $n$-th graded quotient $\Pi_{\lambda, \leq n}/\Pi_{\lambda, \leq n -1}$ 
is isomorphic to the $n$-th degree part $\Pi_{n}$ of $\Pi$ with respect to the path length grading. 
It follows that  $\dim_{\kk} \Pi_{\lambda, \leq n}$ is independent on $\lambda$ 
and hence $\Pi_{\lambda, \leq n}$ is flat over $R$ by Lemma \ref{202103141428}.  
\end{proof}

\subsection{}

\subsubsection{}

We recall from Lemma \ref{202011242056} that 
if a weight $v \in \kk^{\times} Q_{0}$ is  sincere, then 
\[
{}^{v}\!\YM(Q) \cong \frac{\kk[z]\overline{Q}}{( \rho_{i} -v_{i}z e_{i} \mid i \in Q_{0} )}.
\]
We identify these two algebras. 
In the case $v \in \kk Q_{0}$ is  not necessarily sincere, we set 
\begin{equation}\label{202404181706}
{}^{v}\!\YM(Q) := \frac{\kk[z]\overline{Q}}{( \rho_{i} -v_{i}z e_{i} \mid i \in Q_{0} )}.\end{equation}
We regard this algebra as $*$-graded algebra by setting 
$\deg^{*} \alpha := 0, \ \deg^{*} \alpha^{*} :=1 \textup{ for } \alpha \in Q_{1}$ and $\deg^{*} z: =1$. 
Recall that in the case where  $v$ is sincere, the element $z \in {}^{v}\!\YM$ coincides with the weighted mesh relation ${}^{v}\!\varrho$ 
under the above identification. 
Abusing the notation, even in the case $v$ is not sincere, we denote by ${}^{v}\!\varrho: {}^{v}\!\YM \to {}^{v}\!\YM$ 
the multiplication by $z \in {}^{v}\!\YM$.
\[
{}^{v}\!\varrho: {}^{v}\!\YM \to {}^{v}\!\YM, \ x \mapsto xz.
\]

\begin{lemma}\label{202207111415}
The element $z \in {}^{v}\!\YM$ is regular, i.e., the multiplication map ${}^{v}\!\varrho$ by $z$ is injective. 
\end{lemma}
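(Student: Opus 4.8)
The statement to be proved is Lemma~\ref{202207111415}: for a non-Dynkin quiver $Q$ and an arbitrary weight $v\in\kk Q_{0}$ (not necessarily sincere), the element $z\in{}^{v}\!\YM(Q)$ is regular, i.e.\ the multiplication map ${}^{v}\!\varrho\colon {}^{v}\!\YM\to{}^{v}\!\YM$, $x\mapsto xz$, is injective. The natural strategy is to realize ${}^{v}\!\YM(Q)$ as a fibre of the Crawley--Boevey--Holland deformation family $\Pi_{\bullet}=\Pi(Q)_{\bullet}$ over $R:=\kk[x_{1},\dots,x_{r}]$ and to deduce regularity of $z$ from flatness of $\Pi_{\bullet}$ over $R$, which is exactly Theorem~\ref{2022071015211}. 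Concretely, the line $\langle v\rangle\subset\kk^{r}$ through $v$ and the origin corresponds to the surjection $R\to\kk[z]$, $x_{i}\mapsto v_{i}z$, and by definition ${}^{v}\!\YM(Q)=\Pi_{\bullet}\otimes_{R}\kk[z]$ via this map (compare the discussion after Lemma~\ref{202011242056} and the display \eqref{202404181706}).

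\textbf{Key steps.} First I would record that $z\in\kk[z]$ is a regular element (obvious, $\kk[z]$ is a domain), and that $\kk[z]$, being a flat $R$-module along the inclusion $\kk[z]\hookrightarrow R$... more precisely along $R\twoheadrightarrow\kk[z]$ we want: since $\Pi_{\bullet}$ is flat over $R$ by Theorem~\ref{2022071015211}, the base-changed algebra ${}^{v}\!\YM=\Pi_{\bullet}\otimes_{R}\kk[z]$ is flat over $\kk[z]$ (flatness is preserved under base change). Second, multiplication by $z$ on a flat $\kk[z]$-module is injective: indeed $0\to\kk[z]\xrightarrow{\,z\,}\kk[z]$ is exact, and tensoring the flat module ${}^{v}\!\YM$ over $\kk[z]$ with it gives that $0\to{}^{v}\!\YM\xrightarrow{\,{}^{v}\!\varrho\,}{}^{v}\!\YM$ is exact, i.e.\ ${}^{v}\!\varrho$ is injective. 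This is the whole argument; it is short because all the work is hidden in Theorem~\ref{2022071015211}.

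\textbf{The one point that needs care} is checking that the identification ${}^{v}\!\YM(Q)\cong\Pi_{\bullet}\otimes_{R}\kk[z]$ really holds as stated in \eqref{202404181706}, including when $v$ is not sincere: one must verify that the ideal $(\rho_{i}-x_{i}e_{i}\mid i)$ of $\kk[x_{1},\dots,x_{r}]\overline Q$ pushes forward under $x_{i}\mapsto v_{i}z$ to the ideal $(\rho_{i}-v_{i}ze_{i}\mid i)$ of $\kk[z]\overline Q$, so that the pull-back computed on generators and relations agrees with the right-hand side of \eqref{202404181706}. This is immediate from right-exactness of base change: $\Pi_{\bullet}\otimes_{R}\kk[z]=\bigl(\kk[x_{1},\dots,x_{r}]\overline Q\bigr)\otimes_{R}\kk[z]\big/\bigl(\text{image of the }\rho_{i}-x_{i}e_{i}\bigr)=\kk[z]\overline Q/(\rho_{i}-v_{i}ze_{i}\mid i)$, which is precisely the defining presentation \eqref{202404181706} of ${}^{v}\!\YM(Q)$. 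So the main obstacle is not really an obstacle: once Theorem~\ref{2022071015211} is in hand, the lemma follows by a two-line homological argument, and the only thing to be slightly careful about is the bookkeeping of the base change along $R\twoheadrightarrow\kk[z]$.

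\textbf{Remark.} If one wanted a proof that does not invoke Theorem~\ref{2022071015211}, one could instead argue via the path-length filtration on ${}^{v}\!\YM(Q)$: the associated graded is a quotient of $\kk[z]\otimes_{\kk}\Pi(Q)$ and by \cite[Lemma~2.3]{Crawley-Boevey-Kimura} the graded pieces are free $\kk[z]$-modules of rank $\dim_{\kk}\Pi(Q)_{n}$, whence ${}^{v}\!\YM(Q)$ is $\kk[z]$-free and in particular $z$ is a non-zerodivisor. But since Theorem~\ref{2022071015211} has already been proved just above, the flatness route is cleanest and I would present that one.
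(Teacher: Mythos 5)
Your proof is correct and follows essentially the same route as the paper: the paper also identifies ${}^{v}\!\YM$ with $\Pi_{\bullet}\otimes_{R}{}_{\psi_{v}}\kk[z]$ for $\psi_{v}(x_{i})=v_{i}z$ and deduces flatness over $\kk[z]$ (hence injectivity of multiplication by $z$) from Theorem \ref{2022071015211}. Your extra verification of the presentation under base change, and the remark on the filtration argument, only make explicit what the paper leaves implicit.
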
 

\begin{proof}
Let $v \in \kk Q_0$. 
We define a homomorphism $\psi_{v}: R \to \kk[z]$ of algebras to be $\psi_{v}(x_{i}):= v_{i} z \ \ (i \in Q_{0})$. 
Then  ${}^{v}\!\YM$ is isomorphic to $\Pi_{\bullet} \otimes_{R} {}_{\psi_{v}}\kk[z]$ as algebras over $\kk[z]$ and it is flat over $\kk[z]$ by Theorem \ref{2022071015211}. 
Thus in particular the multiplication by $z$ is injective. 
\end{proof}

Let ${}^{v}\!\pi: {}^{v}\!\YM \to \Pi$ be the canonical projection. 
As is the same with the sincere case, the kernel $\Ker {}^{v}\!\pi$ is generated by $z$ and we have the following exact sequence 
\begin{equation}\label{202009212139grns}
 0\to {}^{v}\!\YM(-1) \xrightarrow{ \  {}^{v}\!\varrho  \ }  {}^{v}\!\YM \xrightarrow{ \ {}^{v}\!\pi \ } \Pi \to 0
\end{equation}
of graded ${}^{v}\!\YM$-modules. 
Looking at the $*$-degree $1$ part of this exact sequence, 
we obtain an exact sequence of  $\Pa$-bimodules
\begin{equation}\label{202207131440}
0 \to {}^{v}\!\YM_{n -1}  
\xrightarrow{ \ {}^{v}\!\varrho \ } {}^{v}\!\YM_{n} \xrightarrow{ \ {}^{v}\!\pi_{n} \ } \Pi_{n}  \to 0.
\end{equation}
In particular the $*$-degree $1$-parti is of the following form:
\begin{equation}\label{universal AR-sequence ns}
0 \to \Pa \xrightarrow{ \ {}^{v}\!\varrho \ } {}^{v}\!\YM_{1} \xrightarrow{ \ {}^{v}\!\pi_{1} \ } \Pi_{1}  \to 0.
\end{equation}
Since we are assuming $Q$ is non-Dynkin, we have $\Pi_{1}\cong \PPi_{1} \cong \Pa^{\vvee}[1]$.
We denote the Yoneda class of this exact sequence by ${}^{v}\!\theta' \in \Ext_{\Pa^{\mre}}^{1}(\Pi_{1}, \Pa)\cong \Hom_{\Pa^{\mre}}(\Pa^{\vvee}, \Pa)$. 

On the other hand, 
we introduced the element ${}^{v}\!\ttheta \in \Hom_{\Pa^{\mre}}(\Pa^{\vvee}, \Pa)$ in Definition \ref{202102061701}.
In the case $v$ is sincere, we have the equality  ${}^{v}\!\ttheta = {}^{v}\!\theta'$. 
In the next lemma, we prove this equality holds even for non-sincere weight. 

\begin{lemma}\label{2022007111441}
We have the following equality in $\Hom_{\Pa^{\mre}}(\Pa^{\vvee}, \Pa)$: 
\[
{}^{v}\!\ttheta ={}^{v}\!\theta'.
\]
\end{lemma}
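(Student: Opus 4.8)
The statement asserts that two elements of $\Hom_{\Pa^{\mre}}(\Pa^{\vvee}, \Pa)$ coincide: the element ${}^{v}\!\ttheta = \sum_{i\in Q_{0}} v_{i}\tilde{e}_{i}$ from Definition \ref{202102061701}, and the Yoneda class ${}^{v}\!\theta'$ of the short exact sequence \eqref{universal AR-sequence ns}. Since both sides are linear in $v$, it suffices to treat the case $v = e_{i}$ for a single vertex $i\in Q_{0}$ (here $e_{i}$ denotes the weight supported at $i$ with value $1$); then ${}^{v}\!\ttheta = \tilde{e}_{i}$ and we must show ${}^{e_{i}}\!\theta' = \tilde{e}_{i}$. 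The difficulty is that $e_{i}$ is \emph{not} sincere (it vanishes at all other vertices), so the construction of ${}^{v}\!\YM$ via the polynomial-coefficient path algebra \eqref{202404181706} does not pass through the more convenient presentations used when $v$ is sincere; we cannot directly invoke the identity ${}^{v}\!\ttheta = {}^{v}\!\theta'$ already known in the sincere case.

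The strategy is a deformation/specialization argument using the flatness of $\Pi_{\bullet}$ over $R = \kk[x_{1},\ldots,x_{r}]$ established in Theorem \ref{2022071015211}. First I would record that the $*$-degree $\leq 1$ truncation of the exact sequence \eqref{202009212139grns} for $\Pi_{\bullet}$ itself gives a short exact sequence of $R\otimes_{\kk}\Pa^{\mre}$-modules
\[
0 \to \Pa \otimes_{\kk} R \xrightarrow{\ \varrho_{\bullet}\ } (\Pi_{\bullet})_{1} \xrightarrow{\ \pi_{\bullet,1}\ } \Pi_{1}\otimes_{\kk} R \to 0,
\]
and that for any weight $v\in\kk Q_{0}$, the algebra ${}^{v}\!\YM$ is obtained from $\Pi_{\bullet}$ by base change along the ring map $\psi_{v}\colon R\to\kk[z]$, $x_{i}\mapsto v_{i}z$ (as in the proof of Lemma \ref{202207111415}), and further along $\kk[z]\to\kk[z]/(z)=\kk$ one recovers $\Pi$. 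By Theorem \ref{2022071015211} (flatness), base-changing the displayed sequence along $\psi_{v}$ and then taking $z=0$ stays exact and yields precisely \eqref{universal AR-sequence ns}. Hence the Yoneda class ${}^{v}\!\theta'$ is the pullback, under the composite $R\to\kk[z]\to\kk$ corresponding to $v$, of the ``universal'' Yoneda class $\theta_{\bullet}\in\Ext^{1}_{R\otimes\Pa^{\mre}}(\Pi_{1}\otimes R,\ \Pa\otimes R)\cong R\otimes_{\kk}\Hom_{\Pa^{\mre}}(\Pa^{\vvee},\Pa)$. The map $v\mapsto{}^{v}\!\theta'$ is therefore $\kk$-linear in $v$ — which already re-justifies the reduction to $v=e_{i}$ — and is in fact the evaluation of one fixed element $\theta_{\bullet}$, written $\theta_{\bullet}=\sum_{i} x_{i}\otimes \omega_{i}$ for some $\omega_{i}\in\Hom_{\Pa^{\mre}}(\Pa^{\vvee},\Pa)$, at $x_{i}\mapsto v_{i}$, so ${}^{v}\!\theta' = \sum_{i} v_{i}\,\omega_{i}$.

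It then remains to identify $\omega_{i} = \tilde{e}_{i}$. For this I would specialize to any \emph{sincere} weight $v$: there the already-established identity (used implicitly throughout Section \ref{section: the derived quiver Heisenberg algebras}, via the identification of ${}^{v}\!\YYM_{1}=\cone({}^{v}\!\ttheta)$ with the cone of $\hat\eta$ and Lemma \ref{pre universal Auslander-Reiten triangle lemma}) gives $\sum_{i} v_{i}\omega_{i} = {}^{v}\!\theta' = {}^{v}\!\ttheta = \sum_{i} v_{i}\tilde{e}_{i}$. Since this holds for all sincere $v$ and the set of sincere weights is Zariski-dense in $\kk Q_{0}$ (it is the complement of finitely many coordinate hyperplanes, nonempty as $\kk$ is infinite — or, if $\kk$ is finite, one argues directly on coefficients over a function field or passes to the universal class $\theta_{\bullet}$ and compares coefficients of each $x_{i}$), the linear-in-$v$ identity $\sum_{i} v_{i}\omega_{i}=\sum_{i}v_{i}\tilde{e}_{i}$ forces $\omega_{i}=\tilde{e}_{i}$ for every $i$. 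Combining, ${}^{v}\!\theta' = \sum_{i} v_{i}\tilde{e}_{i} = {}^{v}\!\ttheta$ for all $v\in\kk Q_{0}$, which is the claim.

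\textbf{Main obstacle.} The delicate point is making the ``specialize the universal class'' step rigorous when $\kk$ is finite, where Zariski density of sincere weights over the \emph{closed} points fails; the clean fix is to work with the genuinely universal Yoneda class $\theta_{\bullet}$ over $R$ (whose existence and the compatibility of its formation with base change rest exactly on the flatness Theorem \ref{2022071015211}) and to compare the coefficients $\omega_{i}$ of the $x_{i}$ on both sides, each coefficient being pinned down by a single convenient sincere specialization over a suitable extension. One should also double-check the base-change/exactness bookkeeping: that $(\Pi_{\bullet})_{1}$ and $\Pa\otimes R$ are $R$-flat (so $\mathrm{Tor}^{R}_{1}$ vanishes and exactness is preserved under $-\otimes_{R}\kappa$), which again follows from Theorem \ref{2022071015211}, and that the $*$-degree-$1$ component of the base-changed sequence is identified correctly with \eqref{universal AR-sequence ns} under the canonical isomorphism $\Pi_{1}\cong\Pa^{\vvee}[1]$ valid since $Q$ is non-Dynkin.
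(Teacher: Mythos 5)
Your global strategy --- spread the extensions \eqref{universal AR-sequence ns} into a family over the whole weight space using flatness of $\Pi_{\bullet}$ (Theorem \ref{2022071015211}), observe that the resulting Yoneda class is linear in the weight, and then pin it down from the sincere case, where ${}^{v}\!\ttheta={}^{v}\!\theta'$ holds by construction --- is the paper's strategy. The genuine gap is the object you build it on: the claimed short exact sequence of $R\otimes_{\kk}\Pa^{\mre}$-modules
\[
0 \to \Pa \otimes_{\kk} R \xrightarrow{\ \varrho_{\bullet}\ } (\Pi_{\bullet})_{1} \xrightarrow{\ \pi_{\bullet,1}\ } \Pi_{1}\otimes_{\kk} R \to 0
\]
does not exist, because \eqref{202009212139grns} has no analogue over $\Pi_{\bullet}$ itself. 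Two things go wrong. First, a grading problem: for the relations $\rho_{i}-x_{i}e_{i}$ to be $*$-homogeneous you are forced to put $\deg^{*}x_{i}=1$; with that grading $(\Pi_{\bullet})_{1}$ is a finite-dimensional $\Pa$-bimodule carrying no $R$-action at all, and a direct computation with the presentation shows it is an extension of $\Pi_{1}$ by $\Pa^{\oplus r}$ (the $r$ copies coming from $x_{1},\dots,x_{r}$), not of $\Pi_{1}\otimes R$ by $\Pa\otimes R$; if instead you set $\deg^{*}x_{i}=0$, the relations are inhomogeneous and ``$(\Pi_{\bullet})_{1}$'' is not even defined. Second, a centrality problem: the map in \eqref{202009212139grns} is multiplication by the central degree-one element $z$, and it is a map of bimodules only because $z$ is central; in $\Pi_{\bullet}$ the only candidate $\rho=\sum_{i}x_{i}e_{i}$ satisfies $[a,\rho]=(x_{h(a)}-x_{t(a)})a$, which is nonzero for every arrow $a$ since $\Pi_{\bullet}$ is $R$-torsion free by Theorem \ref{2022071015211}. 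So there is no bimodule map $\varrho_{\bullet}$, no universal class $\theta_{\bullet}=\sum_{i}x_{i}\otimes\omega_{i}$ in $\Ext^{1}_{\Pa^{\mre}}(\Pi_{1},\Pa)\otimes_{\kk}R$, and hence no linearity statement for $v\mapsto{}^{v}\!\theta'$ to feed into your density/coefficient-comparison step; the fallback in your last paragraph relies on the same nonexistent object.

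The paper repairs exactly this point, and once it is repaired the rest of your outline goes through. Instead of working over $R$, pull $\Pi_{\bullet}$ back along $R\to T=S[z]$, $x_{i}\mapsto y_{i}z$, where $S=\kk[y_{1},\dots,y_{r}]$ is a second copy of the coordinate ring of the weight space with $\deg^{*}y_{i}=0$ and $\deg^{*}z=1$. The algebra ${}^{S}\!\YM=T\overline{Q}/(\rho_{i}-y_{i}ze_{i})$ is $*$-graded, flat over $T$ by Theorem \ref{2022071015211} (so the genuinely central element $z$ is regular), and its $*$-degree-one part is an $S$-linear family of extensions of $\Pi_{1}\otimes_{\kk}S$ by $SQ$, i.e.\ a class ${}^{S}\!\theta'\in\Ext^{1}_{\Pa^{\mre}}(\Pi_{1},\Pa)\otimes_{\kk}S$ whose specialization at every $v$ is ${}^{v}\!\theta'$ --- this is where the flatness you invoke is actually used. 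Comparing it with ${}^{S}\!\ttheta=\sum_{i}\tilde{e}_{i}\otimes y_{i}$ over the sincere locus (invert the $y_{i}$), where the identity is known, and using that $S$ injects into that localization, gives ${}^{S}\!\theta'={}^{S}\!\ttheta$ and hence the lemma. Your worry about finite base fields is handled more simply than you suggest: the statement is an equality of two vectors in a $\kk$-vector space, so one may extend scalars to an algebraic closure at the outset, as the paper does.
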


Proof is given in Section \ref{202207111421}.
We point out the following corollary. 

\begin{corollary}\label{202207112325}
For $n \geq 0$, the $*$-degree $n$-part ${}^{v}\!\YM_{n}$ is 
preprojective as  an $\Pa$-module (resp. as an $\Pa^{\op}$-module). 
\end{corollary}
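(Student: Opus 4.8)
\textbf{Proof plan for Corollary \ref{202207112325}.}
The statement is that for a non-Dynkin quiver $Q$ and any weight $v \in \kk Q_{0}$, each bimodule ${}^{v}\!\YM_{n}$ is preprojective when viewed as a left $\Pa$-module, and also when viewed as a right $\Pa$-module. The plan is to argue by induction on $n \geq 0$, using the exact sequence \eqref{202207131440} together with the inductive description of $\Pi_{n}$ coming from the derived preprojective algebra. The base case $n = 0$ is immediate: ${}^{v}\!\YM_{0} = \Pa$ is projective, hence preprojective, both as a left and as a right module.

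For the inductive step I would invoke Lemma \ref{2022007111441}, which identifies the Yoneda class ${}^{v}\!\theta'$ of the sequence \eqref{universal AR-sequence ns} with ${}^{v}\!\ttheta$; more precisely, tensoring \eqref{202207131440} on the right by ${}^{v}\!\YM_{n-1}$ is not quite what I want — instead I would observe that \eqref{202207131440} exhibits ${}^{v}\!\YM_{n}$ as an extension
\[
0 \to {}^{v}\!\YM_{n-1} \xrightarrow{\ {}^{v}\!\varrho\ } {}^{v}\!\YM_{n} \xrightarrow{\ {}^{v}\!\pi_{n}\ } \Pi_{n} \to 0
\]
of $\Pa$-bimodules. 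Since $Q$ is non-Dynkin, $\Pi \cong \tuH^{0}(\PPi)$ is concentrated in cohomological degree $0$ and $\Pi_{n} \cong \PPi_{1}^{\otimes n} = (\Pa^{\vvee}[1])^{\otimes n}$, so as a left $\Pa$-module $\Pi_{n} \cong \nu_{1}^{-n}(\Pa)$, which is a direct sum of preprojective indecomposables (the modules $\tau_{1}^{-n}P_{i}$, using that $\nu_{1}^{-1}$ and $\tau_{1}^{-1}$ agree on preprojectives in the non-Dynkin case). By the inductive hypothesis ${}^{v}\!\YM_{n-1}$ is preprojective as a left $\Pa$-module. Now the category $\cP(Q)$ of preprojective modules is closed under extensions in $\Pa\mod$ — indeed, $\Ext^{1}_{\Pa}(\tau_{1}^{-a}P_{j}, \tau_{1}^{-b}P_{i})$ can be computed via the Coxeter functor and vanishes once the second index is large enough, but more simply one uses that $\cP(Q)$ is closed under extensions because any extension of a module with no preinjective/regular summands by another such module again has no preinjective or regular summands (a preprojective component being a "directing" component, there are no maps into it from outside). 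Hence ${}^{v}\!\YM_{n}$, being an extension of a preprojective module by a preprojective module, is preprojective as a left $\Pa$-module. The right-module statement follows by the same argument applied to the opposite quiver $Q^{\op}$, which is again non-Dynkin with the same Coxeter number, using that ${}^{v}\!\YM(Q)^{\op} \cong {}^{v}\!\YM(Q^{\op})$ via the anti-isomorphism $\phi$ (cf. the proof of Lemma \ref{homotopic lemma 5 right}) and that $\tuD$ and the duality $(-)^{\rvvee}$ interchange the relevant classes as in Lemma \ref{202103021753}; alternatively one checks directly that the right-module version of \eqref{202207131440} has the analogous form.

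The step I expect to require the most care is establishing that $\cP(Q)$ is closed under extensions (equivalently, that ${}^{v}\!\YM_{n}$ has no preinjective or regular direct summand), since this is the only place where the non-Dynkin hypothesis and the structure of AR-components really enter; one clean way is to note that the preprojective component is the unique component containing the projectives, that $\cP(Q) = \{M : \Hom_{\Pa}(I, M) = 0 \text{ for all preinjective } I, \ \Hom_{\Pa}(R, M) = 0 \text{ for all regular } R\}$ (using directedness), and that this "left orthogonal" description is automatically closed under extensions. Everything else is routine bookkeeping with the exact sequence \eqref{202207131440} and the identifications $\Pi_{n} \cong \nu_{1}^{-n}(\Pa)$ already available in the excerpt.
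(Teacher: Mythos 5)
Your proposal is correct and follows essentially the same route as the paper: induct on $n$ using the exact sequence $0 \to {}^{v}\!\YM_{n-1} \to {}^{v}\!\YM_{n} \to \Pi_{n} \to 0$, note that $\Pi_{n}$ is preprojective in the non-Dynkin case, and conclude via closure of preprojective modules under extensions. The only difference is that you spell out details the paper leaves implicit (why $\cP(Q)$ is extension-closed, and the reduction of the right-module statement to $Q^{\op}$), which is fine.
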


\begin{proof}
For $n \geq 0$, we have an exact sequence 
$ 0 \to {}^{v}\!\YM_{n} \to {}^{v}\!\YM_{n +1} \to \Pi_{n+1} \to 0$. 
Since the class of preprojective modules is closed under extension, 
inductively we can show that ${}^{v}\!\YM_{n}$ is preprojective as an $\Pa$-module. 
\end{proof}

\subsubsection{}

We define a morphism ${}^{v}\!\zeta: {}^{v}\!\YM_{1} \otimes_{\Pa} {}^{v}\!\YM \to {}^{v}\!\YM$ to be the multiplication map 
\[
{}^{v}\!\zeta: {}^{v}\!\YM_{1} \otimes_{\Pa} {}^{v}\!\YM \to {}^{v}\!\YM, \ \zzeta(x \otimes y) := xy. 
\]
We denote the $*$-graded version  by the same symbol ${}^{v}\!\zeta: {}^{v}\!\YM_{1} \otimes_{\Pa} {}^{v}\!\YM( -1) \to {}^{v}\!\YM$. 
We denote the $*$-degree $n$-component by ${}^{v}\!\zeta_{n}: {}^{v}\!\YM_{1} \otimes_{\Pa} {}^{v}\!\YM_{n -1}  \to {}^{v}\!\YM_{n }$. 
Since ${}^{v}\!\YM$ is generated by $*$-degree $0,1$-part, the map ${}^{v}\!\zeta$ is surjective.

Observe that the homomorphism $\eta^{*}_{2}: AV^{*} A \to {}^{v}\!\YM \otimes_{\Pa} {}^{v}\!\YM$ of $A^{\mre}$-modules 
given by ${}^{v}\!\eta^{*}_{2}(\alpha^{*}) := \alpha^{*} \otimes z - z\otimes \alpha^{*}$ 
induces a homomorphism ${}^{v}\!\eta^{*}_{2}: \Pi_{1} \to {}^{v}\!\YM \otimes_{\Pa} {}^{v}\!\YM$ of $\Pa^{\mre}$ denoted by the same symbol.

We set  ${}^{v}\!\eta^{*}_{n} := ({}_{{}^{v}\!\YM_{1} }{}^{v}\!\zeta_{ n-1}) ({}^{v}\!\eta^{*}_{2, {}^{v}\!\YM_{ n-2} })$ in $\Pa^{\mre}\Mod$.
\[
{}^{v}\!\eta^{*}_{n}:  \Pi_{1} \otimes_{A} {}^{v}\!\YM_{ n-2} \xrightarrow{ {}^{v}\!\eta^{*}_{2, {}^{v}\!\YM_{ n-2} } }
 {}^{v}\!\YM_{1} \otimes_{A} {}^{v}\!\YM_{1} \otimes_{A} {}^{v}\!\YM_{n -2} 
\xrightarrow{ {}_{{}^{v}\!\YM_{1} }{}^{v}\!\zeta_{ n-1}  } 
{}^{v}\!\YM_{1} \otimes_{A} {}^{v}\!\YM_{n-1}.
\]
Combining ${}^{v}\!\eta_{n}$ for $n \geq 2$, we obtain a morphism ${}^{v}\!\eta: \Pi_{1}\otimes_{\Pa}{}^{v}\!\YM \to {}^{v}\!\YM_{1} \otimes_{A} {}^{v}\!\YM$ in $\Pa \otimes \YM^{\op}\Mod$.

We note that these morphisms ${}^{v}\!\zeta, \ {}^{v}\!\eta^{*}$ coincide with ${}^{v}\zzeta, \ {}^{v}\eeta^{*}$ in the case where the weight $v$ is sincere.

\begin{lemma}\label{202008091343ns}
For $n \geq 2$, we have the following commutative diagram where  rows and  columns are exact sequences in $\Pa^{\mre}\mod$: 
\[\begin{xymatrix}@C=60pt{
&& 0 \ar[d] & 0 \ar[d] & \\
 &&
\Pi_{1} \otimes_{\Pa} {}^{v}\!\YM_{n -2} \ar@{=}[r] \ar[d]_{{}^{v}\!\eta^{*}_{n}} 
 &
 \Pi_{1} \otimes_{\Pa} {}^{v}\!\YM_{n -2}  \ar[d]^{ {}_{\Pi_{1}} {}^{v}\!\varrho_{n -1}} 
 &
\\
0\ar[r] 
& {}^{v}\!\YM_{n -1}   \ar@{=}[d]  \ar[r]^{ {}^{v}\!\varrho_{{}^{v}\!\YM_{n -1} } } 
 &
{}^{v}\!\YM_{1} \otimes_{\Pa} {}^{v}\!\YM_{n -1}  \ar[r]^{{}^{v}\!\pi_{1, {}^{v}\!\YM_{n -1} }  } \ar[d]_{{}^{v}\!\zeta_{n} } 
&
\Pi_{1} \otimes_{\Pa} {}^{v}\!\YM_{n- 1} \ar[d]^{ {}_{ \Pi_{1} }{}^{v}\!\pi_{n -1}} \ar[r]  
& 0\\ 
0 \ar[r] &{}^{v}\!\YM_{n-1}   \ar[r]_{{}^{v}\!\varrho_{n}}   &
{}^{v}\!\YM_{n} \ar[r]_{{}^{v}\!\pi_{n}}  \ar[d]
 & \Pi_{n} \ar[r] \ar[d]  & 0\\
 & & 0 & 0 &
}\end{xymatrix}
\]
Note that by Corollary \ref{202207112325}, 
the tensor products $\otimes_{\Pa}$ in the above diagram compute the derived tensor products $\lotimes_{\Pa}$. 
\end{lemma}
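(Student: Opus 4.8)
\textbf{Plan of proof for Lemma \ref{202008091343ns}.}
The plan is to build the commutative diagram one cell at a time, starting from the two exact rows whose existence is already recorded, namely \eqref{202207131440} for $n-1$ and for $n$, tensored appropriately over $\Pa$. First I would observe that by Corollary \ref{202207112325} the bimodules ${}^{v}\!\YM_{m}$ are preprojective, hence have projective dimension $\le 1$ as left (and right) $\Pa$-modules; since $Q$ is non-Dynkin, $\Pi_{m}\cong\PPi_{m}$ is also of projective dimension $\le 1$ on each side, so the short exact sequences \eqref{202207131440} remain exact after applying $-\otimes_{\Pa}{}^{v}\!\YM_{n-2}$, $\Pi_{1}\otimes_{\Pa}-$, etc.; this is what justifies the final sentence of the statement about $\otimes_{\Pa}$ computing $\lotimes_{\Pa}$. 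Concretely: the bottom row is \eqref{202207131440} itself; the middle row is \eqref{universal AR-sequence ns}, i.e. the sequence \eqref{202207131440} for $n=1$, tensored on the right over $\Pa$ with ${}^{v}\!\YM_{n-1}$ (exact by the preprojectivity just noted); the top vertical short exact sequence on the right is \eqref{202207131440} for $n-1$ tensored on the left with $\Pi_{1}$; and the top vertical short exact sequence on the left is \eqref{202207131440} for $n-1$ tensored on the left with $\Pi_{1}$ as well — wait, more precisely the left column reads $0\to\Pi_1\otimes_\Pa{}^{v}\!\YM_{n-2}\xrightarrow{{}^{v}\!\eta^*_n}{}^{v}\!\YM_1\otimes_\Pa{}^{v}\!\YM_{n-1}\xrightarrow{{}^{v}\!\zeta_n}{}^{v}\!\YM_n\to 0$, which I must prove to be exact, and the right column is the sequence \eqref{202207131440} for $n-1$ tensored on the left by $\Pi_1$ (again exact by preprojectivity).

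The substantive work is the exactness of the left column together with the commutativity of the two squares it forms with the middle and bottom rows. For the commutativity, I would check directly from the definitions: the square ${}^{v}\!\zeta_n\circ{}^{v}\!\varrho_{{}^{v}\!\YM_{n-1}}={}^{v}\!\varrho_n\circ(\text{id})$ is immediate since ${}^{v}\!\zeta$ is just multiplication and $z\in{}^{v}\!\YM$ is central, so multiplying by $z$ and then by an element of ${}^{v}\!\YM_1$ agrees with multiplying by $z$ inside ${}^{v}\!\YM_n$; and the square $({}_{\Pi_1}{}^{v}\!\pi_{n-1})\circ{}^{v}\!\eta^*_n=({}_{\Pi_1}{}^{v}\!\varrho_{n-1})$ follows because ${}^{v}\!\pi$ is an algebra homomorphism killing $z$, so $\pi$ applied to $\alpha^*\otimes z-z\otimes\alpha^*$ followed by multiplication records exactly the class of the extension by $z$ — this is essentially the computation that makes ${}^{v}\!\eta^*$ the ``comultiplication'' for the Koszul dual, and in the sincere case it is Corollary \ref{202008091343} read in cohomological degree zero. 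For the exactness of the left column I would argue by a diagonal/diagram chase in the completed square: surjectivity of ${}^{v}\!\zeta_n$ is the remark that ${}^{v}\!\YM$ is generated in $*$-degrees $0,1$; injectivity of ${}^{v}\!\eta^*_n$ and exactness in the middle I would get by applying the snake/$3\times3$ lemma, using that the middle row, bottom row, and the right vertical column are all exact, so the left vertical column is forced to be a complex with trivial homology, hence exact.

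The cleanest route, and the one I would actually carry out, is to deduce the whole diagram from the sincere case by a deformation/flatness argument rather than re-doing the diagram chase by hand. Namely, the algebra ${}^{v}\!\YM$ in \eqref{202404181706} is the specialization $\Pi_{\bullet}\otimes_{R}{}_{\psi_v}\kk$ along $x_i\mapsto v_i$ of the Crawley-Boevey--Holland family, which is flat over $R=\kk[x_1,\dots,x_r]$ by Theorem \ref{2022071015211}; moreover the auxiliary one-parameter family $\Pi_\bullet\otimes_R\kk[z]$ along $x_i\mapsto v_iz$ is flat over $\kk[z]$ by Lemma \ref{202207111415}, and its generic fibre has sincere weight. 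Over the generic fibre the diagram is the degree-zero cohomology of Corollary \ref{202008091343} (using Proposition \ref{202207111600} that ${}^{v}\!\YYM\simeq{}^{v}\!\YM$ for sincere $v$). All the maps ${}^{v}\!\varrho$, ${}^{v}\!\pi$, ${}^{v}\!\zeta$, ${}^{v}\!\eta^*$ are defined integrally over $\kk[z]$, each ${}^{v}\!\YM_m$ is a flat (indeed free) $\kk[z]$-module by the same filtration argument as in Theorem \ref{2022071015211} (the path-length graded pieces being the $\Pi$-pieces, independent of $z$), hence exactness of a complex of such modules can be tested after base change, and the needed rows/columns are exact at the generic point; specializing $z\mapsto 0$-away, i.e. along $z\mapsto 1$, one lands on ${}^{v}\!\YM$ for the given possibly non-sincere $v$. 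The main obstacle I anticipate is precisely this flatness bookkeeping — checking that ${}^{v}\!\eta^*_n$ really is the specialization of ${}^{v}\!\eta^*_n$ from the sincere setting and that no torsion over $\kk[z]$ sneaks in — but once Corollary \ref{202207112325} and Theorem \ref{2022071015211} are in hand this is routine, and if the flatness argument proves awkward the direct $3\times3$-lemma chase described in the previous paragraph is a complete fallback.
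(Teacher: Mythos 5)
Your ``fallback'' route is in substance the paper's own proof: the bottom row is the exact sequence \eqref{202207131440} (regularity of $z$, Lemma \ref{202207111415}), the middle row is \eqref{universal AR-sequence ns} tensored with ${}^{v}\!\YM_{n-1}$ and the right column is \eqref{202207131440} for $n-1$ tensored with $\Pi_{1}$, both exact because the relevant $\Tor_{>0}^{\Pa}$ vanish by preprojectivity (Corollary \ref{202207112325}); commutativity of the three squares is a direct check from the definitions of ${}^{v}\!\zeta$, ${}^{v}\!\eta^{*}$, ${}^{v}\!\pi$ and centrality of $z$; and the column through ${}^{v}\!\eta^{*}_{n}$, ${}^{v}\!\zeta_{n}$ is then exact by the snake/$3\times 3$ lemma. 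Had you led with this, the proposal would simply coincide with the paper.

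The route you say you would \emph{actually} carry out, however, has a genuine gap. First, the one-parameter family $\Pi_{\bullet}\otimes_{R}\kk[z]$ along $x_{i}\mapsto v_{i}z$ keeps the weight $v$ fixed: its fibres never become sincere if $v$ is not, so varying $z$ cannot interpolate between the sincere and non-sincere cases; to see a sincere generic fibre you must work over $S=\kk[y_{1},\dots,y_{r}]$ (or $T=S[z]$), as the paper does in the proof of Lemma \ref{2022007111441}. Second, and more seriously, even over such a base the principle you invoke --- ``exactness of a complex of flat (free) modules can be tested after base change'' --- is false: the complex $0\to \kk[y]\xrightarrow{\,y\,}\kk[y]\to 0$ is exact at the generic point but not at $y=0$. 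Generic exactness of a complex of flat $S$-modules only tells you the homology is $S$-torsion; to get exactness at a special (non-sincere) point you would need flatness of the cokernels/homologies over $S$, i.e.\ a fibrewise (graded) dimension count at \emph{all} points, which amounts to redoing the work. The specialization argument of Section \ref{202207111421} is not a precedent here: it proves an \emph{equality of two elements} of a module that injects into its localization, which does specialize, whereas exactness of a specialized complex does not. So if the deformation argument is the one you intend to write, it fails as stated; your direct $3\times 3$/Tor-vanishing argument is the one that should be carried out.
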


\begin{proof}
It is straightforward to check the commutativity of the three squares of the diagram.
The bottom row is exact by Lemma \ref{202207111415}. 
Since $\Tor_{>0}^{A}(\Pi_{1}, {}^{v}\!\YM_{n -1}) = 0 $ by Corollary \ref{202207112325},  
it follows from Lemma \ref{202207111415} that the top row is exact. 
Similarly, since $\Tor_{>0}^{A}(\Pi_{1}, \Pi_{n -1}) = 0 $,   
it follows from Lemma \ref{202207111415} that the right column is exact. 
Now by Snake Lemma we see that the  left column is exact. 
\end{proof}

\subsubsection{Equations}

By a straight forward computation we can check the following  versions of Lemma \ref{202001111736} and Lemma \ref{homotopic lemma 5 right}.

\begin{lemma}\label{202207131538}
We have the following equalities 
\[ 
\begin{split}
({}^{v}\!\eta^{*}_{2})({}^{v}\pi_{1}) & ={}_{{}^{v}\!\YM_{1}}{}^{v}\!\varrho- {}^{v}\!\varrho_{{}^{v}\!\YM_{1}}\\
({}_{{}^{v}\!\YM_{1}}{}^{v}\pi_{1})({}^{v}\!\eta^{*}_{2})  &= - {}^{v}\!\varrho
\end{split}
\]
In other words, the following diagram is  commutative in $\Pa^{\mre}\mod$. 
\[
\begin{xymatrix}@C=50pt{
 & \Pi_{1} \ar[d]^{{}^{v}\!\eta^{*}_{2}} \ar[dr]^{-{}^{v}\!\varrho_{\Pi_{1}}} \\
{}^{v}\!\YM_{1} \ar[ur]^-{{}^{v}\!\pi_{1}} 
\ar[r]_-{ {}_{{}^{v}\!\YM_{1}}{}^{v}\!\varrho- {}^{v}\!\varrho_{{}^{v}\!\YM_{1}} } & {}^{v}\!\YM_{1}\otimes_{\Pa} {}^{v}\!\YM_{1} 
\ar[r]_{{}_{{}^{v}\!\YM_{1}}{}^{v}\pi_{1} } 
& {}^{v}\!\YM_{1} \otimes_{\Pa} \Pi_{1}
}\end{xymatrix}\]
\end{lemma}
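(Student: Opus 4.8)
\textbf{Proof plan for Lemma \ref{202207131538}.}

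The statement asserts two identities between morphisms of $\Pa^{\mre}$-modules, and the cleanest route is to verify both directly on generators, exploiting the explicit definitions given just above in the excerpt. Recall that ${}^{v}\!\YM = \kk[z]\overline{Q}/(\rho_i - v_i z e_i)$ with $z$ in $*$-degree $1$, and that ${}^{v}\!\varrho$ is (by definition here) the right multiplication by $z$. The $\Pa^{\mre}$-bimodule ${}^{v}\!\YM_1$ is generated by the arrows $\alpha^*\ (\alpha\in Q_1)$ together with $z$ itself (this follows from the path-length filtration on $\kk\overline{Q}$, or directly from the description of $\kk\overline{Q}$ as $\sfT_\Pa(\Pa V^* \Pa)$), while $\Pi_1 = {}^{v}\!\YM_1/(z\cdot e_i)$ is generated over $\Pa^{\mre}$ by the $\alpha^*$. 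The map ${}^{v}\!\pi_1$ sends $\alpha^*\mapsto\alpha^*$ and $z e_i\mapsto 0$, and ${}^{v}\!\eta^*_2$ sends $\alpha^*\mapsto \alpha^*\otimes z - z\otimes\alpha^*$.

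For the first identity $({}^{v}\!\eta^*_2)({}^{v}\!\pi_1) = {}_{{}^{v}\!\YM_1}{}^{v}\!\varrho - {}^{v}\!\varrho_{{}^{v}\!\YM_1}$: evaluate both sides on each generator of ${}^{v}\!\YM_1$. On $\alpha^*$ the left side gives ${}^{v}\!\eta^*_2(\alpha^*) = \alpha^*\otimes z - z\otimes\alpha^*$, and the right side gives $\alpha^*\otimes z - z\otimes\alpha^*$ as well, since ${}_{{}^{v}\!\YM_1}{}^{v}\!\varrho$ multiplies the right tensor factor by $z$ and ${}^{v}\!\varrho_{{}^{v}\!\YM_1}$ multiplies the left factor by $z$ (using that $z$ is central in ${}^{v}\!\YM$, so $z\otimes\alpha^* = \alpha^*\otimes z$ would fail — rather the point is the two terms land in the two summands of ${}^{v}\!\YM_1\otimes_\Pa{}^{v}\!\YM_1$ and match by inspection). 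On the generator $z e_i$ the left side vanishes because ${}^{v}\!\pi_1(z e_i)=0$, while the right side gives $z e_i\otimes z - z\otimes z e_i = 0$ by centrality of $z$; hence equality holds on all generators. For the second identity $({}_{{}^{v}\!\YM_1}{}^{v}\!\pi_1)({}^{v}\!\eta^*_2) = -{}^{v}\!\varrho$: evaluate on the generator $\alpha^*$ of $\Pi_1$, getting $({}_{{}^{v}\!\YM_1}{}^{v}\!\pi_1)(\alpha^*\otimes z - z\otimes\alpha^*) = \alpha^*\otimes 0 - z\otimes\alpha^* = -z\otimes\alpha^*$, which under the multiplication/identification $\Pa\otimes\Pi_1$-target is $-z\alpha^* = -{}^{v}\!\varrho(\alpha^*)$ (here ${}^{v}\!\varrho_{\Pi_1}$ on the right-hand diagram edge is left multiplication by the central $z$, equivalently right multiplication; the factor $-1$ is exactly the sign in ${}^{v}\!\eta^*_2$). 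Both sides are $\Pa^{\mre}$-linear, so agreement on generators suffices.

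The only genuine subtlety — and the place to be careful rather than the place of difficulty — is bookkeeping: making sure the two terms $\alpha^*\otimes z$ and $z\otimes\alpha^*$ are correctly placed in the two direct summands of ${}^{v}\!\YM_1\otimes_\Pa{}^{v}\!\YM_1 \cong \Pa V^*{}^{v}\!\YM_1 \oplus \ldots$ so that the claimed equality of morphisms (not merely of their values after further identification) is literally true, and checking $\Pa^{\mre}$-bilinearity of all four maps so that verification on a generating set is legitimate. Since the excerpt explicitly says ``By a straightforward computation we can check'', the intended proof is exactly this generator check, and I would simply carry it out, noting the centrality of $z$ at each step and appealing to Lemma \ref{202207111415} only insofar as it is needed to know ${}^{v}\!\varrho$ is injective (not actually required for these two identities, which are purely formal). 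The commutative triangle displayed in the statement is then an immediate repackaging of the two identities together with the defining relation ${}^{v}\!\varrho_{\Pi_1} = {}_{\Pi_1}{}^{v}\!\varrho$, again by centrality of $z$.
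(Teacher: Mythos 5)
Your proof is correct and is essentially the paper's own argument: the paper disposes of this lemma as exactly the straightforward generator check you outline (it is the non-DG counterpart of Lemma \ref{202001111736} and Lemma \ref{homotopic lemma 5 right}), i.e.\ evaluating the $\Pa^{\mre}$-linear maps on the bimodule generators $\alpha^{*}$ and $ze_{i}$ of ${}^{v}\!\YM_{1}$, resp.\ $\alpha^{*}$ of $\Pi_{1}$, with centrality of $z$ giving $ze_{i}\otimes z - z\otimes ze_{i}=0$. Only your side remarks are slightly off (no direct-sum decomposition of ${}^{v}\!\YM_{1}\otimes_{\Pa}{}^{v}\!\YM_{1}$ is needed, and in the second identity the codomain is ${}^{v}\!\YM_{1}\otimes_{\Pa}\Pi_{1}$, where $-z\otimes\alpha^{*}$ is literally $-{}^{v}\!\varrho_{\Pi_{1}}(\alpha^{*})$), and these do not affect the verification.
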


\subsection{Proof of Lemma \ref{2022007111441}}\label{202207111421}
We may assume that $\kk$ is algebraically closed.

Let $S:= \kk [y_{1}, \dots, y_{r}]$ be (a copy of) the  coordinate ring of $\kk Q_{0}$.   
Finally we set $T:=S[z]$. 
Note that there is a canonical injection  $S \hookrightarrow T$ and a canonical sujection $T \to S$ that sends $z$ to $0$. 
Let $f: R \to T$ be the algebra homomorphism given by $f(x_{i}) := y_{i} z$ for all $i \in Q_{0}$. 

We set 
\[
{}^{S}\!\YM := \Pi_{\bullet} \otimes_{R}{}_{f}T =\frac{T\overline{Q}}{(\rho_{i} - y_{i}ze_{i} \mid i \in Q_{0})}
\]
We regard ${}^{S}\!\YM$ as a $*$-graded algebra by setting $\deg^{*} y_{i} =0, \ \deg^{*} z :=1$ for $i \in Q_{0}$. 
Note that the $*$-degree $0$-part $({}^{S}\!\YM)_{0}$ coincides with the path algebra $SQ$ of $Q$ with the coefficients in $S$. 

Observe that the canonical homomorphism $T\overline{Q} \to S\overline{Q}$ of algebras 
 induces a surjective homomorphism ${}^{S}\!\pi: {}^{S}\!\YM  \to  \Pi \otimes_{\kk} S$ of algebras 
 whose kernel is generated by $z$.  
Since ${}^{S}\!\YM$ is flat over $T$, the multiplication by $z$ is injective. 
In other words, there is an exact sequence of ${}^{S}\!\YM$-modules 
\begin{equation}\label{202207101845}
0\to {}^{S}\!\YM \xrightarrow{  \   \ z  \ \   } {}^{S}\!\YM \xrightarrow{ \ \ {}^{S}\!\pi \ \ } \Pi \otimes_{\kk} S \to 0.
\end{equation}

We set $SQ^{\mre_{/S}} := SQ \otimes_{S} (SQ)^{\op}$. 
Looking at the $*$-degree $0$ part of \eqref{202207101845}, we obtain an exact sequence of $SQ^{\mre_{/S}}$-modules: 
\begin{equation}\label{2022071018451}
0\to SQ \xrightarrow{  \   \ z  \ \   } {}^{S}\!\YM_{1} \xrightarrow{ \ \ {}^{S}\!\pi \ \ } \Pi_{1} \otimes_{\kk} S \to 0.
\end{equation} 
We denote by  ${}^{S}\!\theta' \in \Ext_{SQ^{\mre_{/S}}}^{1}(\Pi_{1} \otimes_{\kk} S, SQ)$  the corresponding Yoneda class. 

Note that we have $\Ext_{SQ^{\mre_{/S}}}^{1}(\Pi_{1} \otimes_{\kk} S, SQ) \cong \Ext_{\Pa^{\mre}}^{1}(\Pi_{1}, \Pa)\otimes_{\kk} S$, since $SQ \cong \Pa \otimes_{\kk} S$.
We  define the element ${}^{S}\!\ttheta$ of $\Ext_{\Pa^{\mre}}^{1}(\Pi_{1}, \Pa)\otimes_{\kk} S$ to be  
\[
{}^{S}\!\ttheta := \sum_{i\in Q_{0}}\tilde{e}_{i} \otimes y_{i}
\]
where we regard  the elements $\tilde{e}_{i} \in \Hom_{\Pa^{\mre}}(\Pa^{\vvee}, \Pa)$  given  in Section \ref{Section:trace formula}   
as  elements of $\Ext_{\Pa^{\mre}}^{1}(\Pi_{1}, \Pa)$.

Let  $v \in \kk Q_{0}$. 
We may identify the residue field $\kappa(v) :=S/(y_{i} - v_{i} \mid i\in Q_{0})$ with $\kk$. 
We extends the canonical projection $S \to \kappa(v)=\kk$ to $g_{v}: T=S[z] \to \kk[z]$.
We have an isomorphism 
${}^{S}\!\YM \otimes_{S} \kappa(v) \cong {}^{v}\!\YM$ of algebras. 
Moreover taking $- \otimes_{T}{}_{g_{v}} \kk[z]$ to the exact sequence 
\eqref{202207101845} we obtain the exact sequence \eqref{202009212139grns}. 
It follows that ${}^{S}\!\theta' \otimes_{S} \kappa(v) ={}^{v}\!\theta'$ in 
$\Ext_{SQ}^{1}(\Pi_{1} \otimes_{\kk} S, SQ) \otimes_{S} \kappa(v) \cong \Ext_{\Pa}^{1}(\Pi_{1}, \Pa)$. 

On the other hand, it is clear that ${}^{S}\!\ttheta \otimes_{S} \kappa(v) ={}^{v}\!\ttheta$ in $\Ext_{\Pa^{\mre}}^{1}(\Pi_{1}, \Pa)$.

Let $U:= \kk[y_{i}, y_{i}^{-1} \mid i \in Q_{0}]$ be the coordinate ring of the space  $\kk^{\times} Q_{0}$ of sincere weights. 
By construction, if $v$ is sincere, then we have ${}^{v}\!\ttheta = {}^{v}\!\theta'$. 
It follows that ${}^{S}\!\ttheta \otimes_{S} U = {}^{S} \!\theta' \otimes_{S} U$ in $\Ext_{\Pa^{\mre}}^{1}(\Pi_{1}, \Pa) \otimes_{\kk} U$. 
Since the localization $S \to U$ is injective, we conclude that ${}^{S}\!\ttheta  = {}^{S} \!\theta'$ as desired. 
\qed

\section{Minimal right and left   $\rad^{n}$-approximations}\label{section: right rad-n approximation}

It is convenient to set 
\[
\We_{Q} := 
\begin{cases} \kk^{\times }Q_{0} & (Q: \textup{ Dynkin}), \\ 
\kk Q_{0} & (Q: \textup{ non-Dynkin}). \\ 
\end{cases}
\]
From now until the end of Section \ref{section: minimal left rad^n-approximations}, we fix an element $v=(v_{i}) \in \We_{Q}$ 
and omit $v$ from most of our notation. 

\subsection{Minimal right $\rad^{n}$-approximations}
It follows from 
Theorem \ref{universal Auslander-Reiten triangle}  that for $M \in \sfD^{\mrb}(\Pa \mod)$, 
the morphism $\ppi_{1, M}  : \YYM_{1} \lotimes_{\Pa} M \to \PPi_{1} \lotimes_{\Pa} M$ is a minimal right  $\rad$-approximation. 
The aim of this subsection is to prove a higher version of this statement. 

For this purpose, we need to introduce the following condition.  

\subsubsection{The property (I)${}_{M, n}$}

\begin{definition}
Let $M$ be an indecomposable  object of  $\Dbmod{\Pa}$ and $ n \geq 1$. 
We say that $v \in \We_{Q}$ has the property (I)${}_{M, n}$ if 
we have 
$
{{}^{v}\!\Euch(N)} \neq 0$  
for any $N \in \ind\add \{ \YYM_{m } \lotimes_{\Pa} M \mid 0 \leq m \leq n-1 \}$. 
\end{definition}
 
 We note that the locus of $v \in \We_{Q}$ that has the property (I)${}_{M, n}$ is determined by  finite number of linear equations.

 \subsubsection{} 

Let $M \in \Dbmod{\Pa}$. 
Applying $- \lotimes_{\Pa} M$ to the exact triangle \eqref{202008141924} (or \eqref{202207131440}),  
we obtain the following exact triangle in $\Dbmod{\Pa}$. 
\[
\YYM_{n -1} \lotimes_{\Pa} M \xrightarrow{\brho_{n, M}} \YYM_{n } \lotimes_{\Pa} M  \xrightarrow{ \ppi_{n, M}} \PPi_{n} \lotimes_{\Pa} M 
\xrightarrow{- \ttheta_{n, M}[1]}
\YYM_{n -1} \lotimes_{\Pa} M[1]
\] 
where we set  $\ttheta_{n, M}: \PPi_{n}[-1] \lotimes_{\Pa} M \to \YYM_{n -1} \lotimes_{\Pa} M$ 
to be the connecting morphism of the above exact triangle. 
 We remark that in this notation 
 the AR-coconnecting morphism $\ttheta_{M}$ of \eqref{202008141807} is denoted by $\ttheta_{1, M}$. 

To state the next theorem, we use the subset $N_{Q} \subset \NN$ given in Definition \ref{definition: NQ}.

\begin{theorem}\label{right approximation theorem}
Let $Q$ be a finite acyclic quiver,  $M \in\ind  \sfD^{\mrb}(\Pa\mod)$ and $n \in N_{Q}^{\geq 1}$. 
Assume that $v \in \We_{Q}$ has the property (I)${}_{M, n}$. 
Then, 
the following statements hold. 
\begin{enumerate}[(1)] 
\item 
The morphism $\ppi_{n, M}: \YYM_{n} \lotimes_{\Pa} M \to \PPi_{n} \lotimes_{\Pa} M$ 
is a  minimal right $\rad^{n}$-approximation of $M$.

\item 
The morphism $\eeta^{*}_{n, M}$ is a split-monomorphism.

\item $\YYM_{n} \lotimes_{\Pa} M \neq 0$.

\item 
The morphism $\brho_{n, M} : \YYM_{n -1} \lotimes_{\Pa} M \to \YYM_{n } \lotimes_{\Pa} M $ satisfies 
the left  and the right $\rad$-fitting condition. 
\end{enumerate}
\end{theorem}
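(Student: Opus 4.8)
The statement we want to prove is Theorem \ref{right approximation theorem}. My approach is to run an induction on $n \in N_Q^{\geq 1}$, using the exact triangles $\sfV_n$ (equivalently $\sfV$) which express $\YYM_n$ as a cone, together with the universal Auslander--Reiten triangle (Theorem \ref{universal Auslander-Reiten triangle}, Theorem \ref{semi-universal Auslander-Reiten triangle}) and the machinery of minimal $\rad^n$-approximations in $\Dbmod{\Pa}$ from Section \ref{section: rad^{n} approximation}. The base case $n = 1$ is essentially Theorem \ref{universal Auslander-Reiten triangle}: the property $(\mathrm{I})_{M,1}$ says exactly that ${}^{v}\!\Euch(M) \neq 0$, so ${}^{v}\!\sfAR_M$ is a direct sum of AR-triangles, whence $\ppi_{1,M}$ is a minimal right $\rad$-approximation, and $\brho_{1,M} = {}^{v}\!\rrho_M$ is a minimal left $\rad$-approximation; (2),(3),(4) for $n=1$ then follow from the concrete description of the AR-triangle (the morphism $\eeta^*_{2,M}$ appearing via Lemma \ref{202001111736}, and nonvanishing of $\YYM_1 \lotimes_\Pa M$).

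\textbf{Key steps.}
First, I would invoke Corollary \ref{202008091343} (or its non-Dynkin analogue Lemma \ref{202008091343ns}): for $n \geq 2$ we have the morphism of exact triangles relating $\sfV_n$ (the triangle $\PPi_1 \lotimes_\Pa \YYM_{n-2} \xrightarrow{\eeta^*_n} \YYM_1 \lotimes_\Pa \YYM_{n-1} \xrightarrow{\zzeta_n} \YYM_n$) to the triangle $\YYM_{n-1}\xrightarrow{\brho_n}\YYM_n\xrightarrow{\ppi_n}\PPi_n$. Tensoring with $M$, the inductive hypothesis gives that $\YYM_{n-1}\lotimes_\Pa M$ is a minimal left $\rad^{n-1}$-approximation object of $M$ and that its cone morphism $\brho_{n-1,M}$ (being, up to the iso $\PPi_{n-1}\cong\nu_1^{-1}$-type identification, the cone morphism $\lambda'_{n-1}$) satisfies the left $\rad$-fitting condition. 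Second, I would apply the universal AR-triangle to the object $L := \YYM_{n-1}\lotimes_\Pa M$: property $(\mathrm{I})_{M,n}$ guarantees ${}^{v}\!\Euch(N)\neq 0$ for every indecomposable summand $N$ of $L$, so ${}^{v}\!\sfAR_L$ is a direct sum of AR-triangles, i.e. $\brho_{n,L} = {}^{v}\!\rrho_L \colon L \to \YYM_1\lotimes_\Pa L = \YYM_1\lotimes_\Pa\YYM_{n-1}\lotimes_\Pa M$ is a minimal left $\rad$-approximation, and $\ppi_{1,L}$ is a minimal right $\rad$-approximation. Third, I would feed this into Corollary \ref{2021050917551} / Lemma \ref{202105091801}: with $\breve\lambda$ the composite $\YYM_{n-1}\lotimes_\Pa M \xrightarrow{\brho_{n,L}\cdot(\text{something})}\YYM_n\lotimes_\Pa M$, the fact that $(-\lambda'_{n-1,M},\breve\lambda)^t$ is a minimal left $\rad$-approximation (which is what the diagram of Corollary \ref{202008091343} tensored with $M$ provides, using $\eeta^*_n$ as the $-\lambda'$ component) yields that the composite $M \to \YYM_n\lotimes_\Pa M$ is a minimal left $\rad^n$-approximation, its cone morphism is $\ppi_{n,M}$, and (4) the cone morphism satisfies the $\rad$-fitting conditions. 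Dualizing via $\Pa^{\op}\cong\kk Q^{\op}$ and Lemma \ref{homotopic lemma 5 right} gives that $\ppi_{n,M}$ is in fact a minimal right $\rad^n$-approximation, proving (1). For (2): the split-monomorphism property of $\eeta^*_{n,M}$ follows because in the direct-sum-of-AR-triangles $\YYM_1\lotimes_\Pa\YYM_{n-1}\lotimes_\Pa M \to \ldots$ the object $\PPi_1\lotimes_\Pa\YYM_{n-2}\lotimes_\Pa M = \nu_1^{-1}$-translate of a summand appears inside via $\eeta^*_n$ (this is the content already isolated in Lemma \ref{homotopic lemma 5}, giving $\ppi_{1,\YYM_{n-1}}\eeta^*_n = -\brho_{n-1,\PPi_1}$-type relations), combined with semiregularity of $v$ on the relevant objects. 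Finally (3) is immediate from (2), since a split mono out of a nonzero object $\PPi_1\lotimes_\Pa\YYM_{n-2}\lotimes_\Pa M$ (nonzero for $n-2 \in N_Q$, using the inductive nonvanishing and that $\PPi_1\lotimes_\Pa -$ is $\nu_1^{-1}$, an equivalence) forces $\YYM_n\lotimes_\Pa M\neq 0$; for the Dynkin edge case $n=h-1$ one checks separately via Theorem \ref{kQ approximations theorem add} that the range $N_Q^{\geq 1}$ stops exactly where $\YYM_{n}\lotimes_\Pa M$ could vanish, so no contradiction arises.

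\textbf{Main obstacle.}
The hard part will be verifying that the ``composite $M \to \YYM_n\lotimes_\Pa M$'' produced by the abstract ladder-lemmas (Lemma \ref{202105091801}) is genuinely the morphism $\brho_{n,M}$ arising from the triangle $\sfV$ — i.e. identifying the abstractly-constructed minimal $\rad^n$-approximation with the concrete map coming from multiplication/the derived QHA structure, and simultaneously checking that the cone morphism coming out of Corollary \ref{2021050917551} agrees with $\ppi_{n,M}$. This is precisely where one needs Corollary \ref{202008091343}'s full commutative diagram (the compatibility of $\zzeta_n$, $\eeta^*_n$, $\ppi_{1,\YYM_{n-1}}$ and $\ppi_n$) tensored with $M$, plus the left-$\rad$-fitting input on $\brho_{n-1,M}$ which is only available because $N_Q$ was defined to exclude the degrees where fitting fails (cf. the Example after Theorem \ref{202105181644}). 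Keeping track of which identifications $\PPi_m\lotimes_\Pa(-)\cong\nu_1^{-1}$-translates are being used — and that property $(\mathrm{I})_{M,n}$ transports correctly along these equivalences — is the bookkeeping that makes the induction go through; everything else is an application of the already-established lemmas.
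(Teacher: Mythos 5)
Your scaffolding (induction on $n$, the diagram of Corollary \ref{202008091343} tensored with $M$, the universal AR-triangle applied to $\YYM_{n-1}\lotimes_{\Pa}M$ under (I)${}_{M,n}$, and a split-mono argument for $\eeta^{*}_{n,M}$ via Lemma \ref{homotopic lemma 5}) is the right frame, but the step you use to reach (1) fails. You route the induction through minimal \emph{left} $\rad$-approximations of $M$: you take as inductive input that $M\to\YYM_{n-1}\lotimes_{\Pa}M$ is a minimal left $\rad^{n-1}$-approximation, and you conclude via Lemma \ref{202105091801}/Corollary \ref{2021050917551} that the composite $M\to\YYM_{n}\lotimes_{\Pa}M$ is a minimal left $\rad^{n}$-approximation with cone morphism $\ppi_{n,M}$, then ``dualize'' to get (1). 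None of this is available. The theorem being proved (hence the induction hypothesis) contains no left-approximation statement; the left statement is Theorem \ref{pre left approximation theorem}, it requires the strictly stronger property (I')${}_{M,n}$, and its proof uses the present theorem applied to $M^{\lvvee}$ over $\Pa^{\op}$, so importing it here is circular. Worse, the assertion that the multiplication map ${}^{v}\!\brho^{n}_{M}$ is a minimal left $\rad^{n}$-approximation under (I)${}_{M,n}$ (even for regular $v$) is false in general: this is exactly Problem \ref{introduction:left approximation problem}, and Proposition \ref{202111201702}/Example \ref{202103051509} give an $A_{3}$ counterexample with $n=2$ and $v$ regular; making it work needs the extra conditions of Theorem \ref{202103021651} or Theorem \ref{202104091259}. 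Moreover, when that left approximation does exist its cone is $\PPi_{1}\lotimes_{\Pa}\YYM_{n-1}\lotimes_{\Pa}M$, not $\PPi_{n}\lotimes_{\Pa}M$, so $\ppi_{n,M}$ is not its cone morphism, and passing from a left statement about $M$ to the right statement (1) by ``dualizing via $\Pa^{\op}$'' is not a legitimate move. Your argument for (3) is also faulty as logic: a split monomorphism out of a nonzero object can be an isomorphism, so it does not by itself force $\YYM_{n}\lotimes_{\Pa}M\neq 0$ (indeed $\eeta^{*}_{h-1,M}$ is an isomorphism in the Dynkin case, where $\YYM_{h-1}\lotimes_{\Pa}M=0$).

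The argument should stay entirely on the right side, where only (I)${}_{M,n}$ is needed. From statement (4) for $n-1$, the morphism ${}_{\PPi_{1}}\brho_{n-1,M}$ satisfies the right $\rad$-fitting condition (apply the autoequivalence $\PPi_{1}\lotimes_{\Pa}-\cong\nu_{1}^{-1}$). Property (I)${}_{M,n}$ makes the row $\YYM_{n-1}\lotimes_{\Pa}M\to\YYM_{1}\lotimes_{\Pa}\YYM_{n-1}\lotimes_{\Pa}M\to\PPi_{1}\lotimes_{\Pa}\YYM_{n-1}\lotimes_{\Pa}M$ a direct sum of AR-triangles, so $\ppi_{1,\YYM_{n-1}\lotimes M}$ is a minimal right $\rad$-approximation; since ${}_{\PPi_{1}}\brho_{n-1,M}$ factors through it via $\eeta^{*}_{n,M}$ (Lemma \ref{homotopic lemma 5}), the dual of Lemma \ref{202105171508} gives that $\eeta^{*}_{n,M}$ is a split monomorphism, i.e. (2), hence $\zzeta_{n,M}$ is a split epimorphism. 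The right version of Lemma \ref{202012191919}, applied to the minimal right $\rad^{n-1}$-approximation ${}_{\PPi_{1}}\ppi_{n-1,M}$ of $\PPi_{n}\lotimes_{\Pa}M$ together with this splitting and the commutativity $\ppi_{n,M}\circ\zzeta_{n,M}=({}_{\PPi_{1}}\ppi_{n-1,M})\circ\ppi_{1,\YYM_{n-1}\lotimes M}$, then yields (1). Finally, since $\PPi_{n}\lotimes_{\Pa}M\cong\nu_{1}^{-n}(M)$ is a nonzero indecomposable and $\brho_{n,M}$ is the co-cone morphism of the minimal right approximation $\ppi_{n,M}$, statements (3) and (4) follow from the right version of Theorem \ref{kQ approximations theorem} for $n\in N_{Q}^{\geq 1}$; no separate edge-case check is needed.
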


\begin{remark}

It is clear that $v \in \kk Q_{0}$ is regular  
if and only if it has property (I)${}_{M, n}$ for all $M \in \ind\Dbmod{\Pa}$ and $n \geq 1$.  
Therefore, if $v\in \kk Q_{0}$ is regular,  
then the conclusion of Theorem \ref{right approximation theorem} hold for 
all objects $M \in \Dbmod{\Pa}$. 

\end{remark}

\begin{proof}
We use induction on $n$. 
The case $n=1$ follows from Theorem \ref{universal Auslander-Reiten triangle}. 

We deal with the case $n \geq 2$. 
We assume that the case $n -1$ is already verified. 

Applying $- \lotimes_{\Pa} M$ to the diagram of Corollary \ref{202008091343} or Lemma \ref{202008091343ns}, 
we obtain the following commutative diagram. 
\begin{equation}\label{202008141848}
\begin{xymatrix}@C=40pt{
 &
\PPi_{1} \lotimes_{\Pa} \YYM_{n -2} \lotimes_{\Pa} M 
\ar@{=}[r] \ar[d]_{\eeta^{*}_{n, M}} 
 &
 \PPi_{1} \lotimes_{\Pa} \YYM_{n -2}   \lotimes_{\Pa} M 
 \ar[d]^-{ {}_{\PPi_{1} }\brho_{n-1, M}} 
 &
\\
 \YYM_{n -1}\lotimes_{\Pa} M    \ar@{=}[d]  \ar[r]^{ \rrho_{\YYM_{n-1} \lotimes M} } 
 &
\YYM_{1} \lotimes_{\Pa} \YYM_{n -1}  \lotimes_{\Pa} M 
\ar[r]^{\ppi_{1, \YYM_{n -1} \lotimes M}  } \ar[d]_{\zzeta_{n, M} } 
&
\PPi_{1} \lotimes_{\Pa} \YYM_{n- 1} \lotimes_{\Pa} M\ar[d]^{{}_{\PPi_{1}}\ppi_{n -1, M}} \ar[r]^{- \ttheta_{1,  \YYM_{n -1} \lotimes M} [1] }  
& \YYM_{n -1} \lotimes_{\Pa} M [1] \ar@{=}[d]\\ 
\YYM_{n-1} \lotimes_{\Pa} M   \ar[r]_{\brho_{n, M}}   &
\YYM_{n} \lotimes_{\Pa} M \ar[r]_{\ppi_{n, M}}  \ar[d]
 & \PPi_{n}\lotimes_{\Pa} M  \ar[r]_{-\ttheta_{n, M}[1]} \ar[d]  & \YYM_{ n-1}\lotimes_{\Pa} M[1]\\
 & \PPi_{1} \lotimes_{\Pa} \YYM_{n -2}\lotimes_{\Pa} M[1] \ar@{=}[r] & \PPi_{1} \lotimes_{\Pa} \YYM_{n -2}\lotimes_{\Pa} M[1] &
}\end{xymatrix}
\end{equation}
where two middle rows and two middle columns are exact triangles.
It follows from statement (4) for $n-1$  that the morphism ${}_{\PPi_{1}} \brho_{n -1, M}$ at the top of the third column  satisfies the right $\rad$-fitting condition. 
By Lemma \ref{202105171508}, the morphism $\eeta^{*}_{n, M}$ is a split monomorphism. 
This proves statement (2) for $n$. 
It follows that $\zzeta_{n, M}$ is a split epimorphism. 
Since the second row is a direct sum of Auslander-Reiten triangles, 
by the right version of Lemma \ref{202012191919}, we conclude that $\ppi_{n, M}$ is a minimal right $\rad^{n}$-approximation. 
Thanks to a right version of Theorem \ref{kQ approximations theorem}, we conclude that statements (3) and (4) for $n$ hold. 
\end{proof}

\subsubsection{}

The following assertion is a consequence of Theorem  \ref{right approximation theorem}.  

\begin{corollary}\label{202007181611} 
Let $M \in \ind \sfD^{\mrb}(\Pa\mod)$ and $n \in N_{Q}^{\geq 2}$. 
Assume that $v \in \We_{Q}$ has property ${(I)}_{M, n}$. 
Then the following statements hold. 

\begin{enumerate}[(1)] 
\item There exists an isomorphism 
\[
\YYM_{1} \lotimes_{\Pa} \YYM_{n-1} \lotimes_{\Pa} M \cong (\YYM_{n } \lotimes_{\Pa} M) \oplus ( \PPi_{1} \lotimes_{\Pa} \YYM_{n-2} \lotimes_{\Pa}M ) 
\]
under which the morphisms 
\[
\begin{split}
&\eeta^{*}_{n, M}: \PPi_{1} \lotimes_{\Pa} \YYM_{n -2} \lotimes_{\Pa} M \to \YYM_{1} \lotimes_{\Pa} \YYM_{n-1} \lotimes_{\Pa} M, \\
&\zzeta_{n, M} :\YYM_{1} \lotimes_{\Pa} \YYM_{n -1} \lotimes_{\Pa} M  \to \YYM_{n } \lotimes_{\Pa} M. 
\end{split}
\] 
correspond to 
the canonical injection and the canonical projection. 

\item 
We have a direct sum of Auslander-Reiten triangle starting from $\YYM_{n-1} \lotimes_{\Pa} M$ which is of the following form. 
\begin{equation}\label{202007192007}
\YYM_{ n-1} \lotimes_{\Pa} M \to 
(\YYM_{n } \lotimes_{\Pa} M) \oplus ( \PPi_{1} \lotimes_{\Pa} \YYM_{n-2} \lotimes_{\Pa}M ) 
  \to  \PPi_{1} \lotimes_{\Pa} \YYM_{n -1} \lotimes_{\Pa} M \to
  \YYM_{ n-1} \lotimes_{\Pa} M[1].  
\end{equation}
where, the first morphism is of the forms 
\[
\begin{pmatrix} 
\rho_{n , M} \\ -\alpha_{n, M} \end{pmatrix}: 
\YYM_{ n-1} \lotimes_{\Pa} M \to 
(\YYM_{n } \lotimes_{\Pa} M) \oplus ( \PPi_{1} \lotimes_{\Pa} \YYM_{n-2} \lotimes_{\Pa}M ) 
\]
for some morphism $\alpha_{n, M}$,   
and the second is 
\[
(\beta_{n, M}, {}_{\PPi_{1} }\rho_{n -1, M} ) : (\YYM_{n } \lotimes_{\Pa} M) \oplus ( \PPi_{1} \lotimes_{\Pa} \YYM_{n-2} \lotimes_{\Pa}M ) 
  \to  \PPi_{1} \lotimes_{\Pa} \YYM_{n -1} \lotimes_{\Pa} M 
\]
for some $\beta_{n ,M}$ such that $({}_{\PPi_{1}}\ppi_{n-1, M} ) \beta_{n, M} = \ppi_{n}$.

\item 
The morphisms $\alpha_{n, M}$ and $\beta_{n, M}$ given in (2)  provide  a morphism of exact triangles of the following form
\[
\begin{xymatrix}@C=40pt{
\YYM_{n -1} \lotimes_{\Pa} M \ar[d]_{\alpha_{n ,M} }  \ar[r]^{\brho_{n ,M} } & \YYM_{n} \lotimes_{\Pa} M \ar[d]^{\beta_{n, M}} \ar[r]^{\ppi_{n}} & 
\PPi_{n}\lotimes_{\Pa} M  \ar@{=}[d] \ar[r] & \YYM_{n -1} [1] \ar[d]^{\alpha_{n, M} [1]} \\
\PPi_{1} \lotimes_{\Pa} \YYM_{n -2} \lotimes_{ \Pa} M \ar[r]_{{}_{\PPi_{1} }\brho_{n -1 ,M} } & 
\PPi_{1} \lotimes_{\Pa} \YYM_{n -1} \lotimes_{ \Pa} M \ar[r]_-{ {}_{\PPi_{1}} \ppi_{n -1, M} } & 
\PPi_{n } \lotimes_{\Pa} M  \ar[r] & \PPi_{1} \lotimes_{\Pa} \YYM_{n -2}[1]  
}\end{xymatrix}
\]
\end{enumerate} 
\end{corollary}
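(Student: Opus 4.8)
The statement to be proved is Corollary~\ref{202007181611}, which I regard as an essentially formal consequence of Theorem~\ref{right approximation theorem} together with the commutative diagram \eqref{202008141848} (equivalently Corollary~\ref{202008091343} in the sincere case, Lemma~\ref{202008091343ns} in the non-sincere non-Dynkin case). The plan is to extract part~(2) of Theorem~\ref{right approximation theorem} — that $\eeta^{*}_{n, M}$ is a split monomorphism under property $(I)_{M,n}$ — and use it to split the middle column of \eqref{202008141848}; everything else then falls out of rotating exact triangles and chasing the diagram.

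First I would prove~(1). By Theorem~\ref{right approximation theorem}(2), the morphism $\eeta^{*}_{n, M}\colon \PPi_{1} \lotimes_{\Pa} \YYM_{n -2} \lotimes_{\Pa} M \to \YYM_{1} \lotimes_{\Pa} \YYM_{n -1} \lotimes_{\Pa} M$ is a split monomorphism. Since in the middle column of \eqref{202008141848} we have the exact triangle
\[
\PPi_{1} \lotimes_{\Pa} \YYM_{n -2} \lotimes_{\Pa} M \xrightarrow{ \eeta^{*}_{n, M} } \YYM_{1} \lotimes_{\Pa} \YYM_{n -1} \lotimes_{\Pa} M \xrightarrow{ \zzeta_{n, M} } \YYM_{n} \lotimes_{\Pa} M \to \PPi_{1} \lotimes_{\Pa} \YYM_{n -2} \lotimes_{\Pa} M[1],
\]
a split monomorphism in a triangulated category forces the triangle to split: the cokernel $\YYM_{n} \lotimes_{\Pa} M$ is a complement of the image of $\eeta^{*}_{n, M}$, so one gets an isomorphism $\YYM_{1} \lotimes_{\Pa} \YYM_{n -1} \lotimes_{\Pa} M \cong (\YYM_{n} \lotimes_{\Pa} M) \oplus (\PPi_{1} \lotimes_{\Pa} \YYM_{n -2} \lotimes_{\Pa} M)$ identifying $\eeta^{*}_{n, M}$ and $\zzeta_{n, M}$ with the canonical inclusion and projection. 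For~(2), recall from \eqref{202008141848} that the second row is the exact triangle $\YYM_{n -1}\lotimes_{\Pa} M \xrightarrow{\rrho} \YYM_{1} \lotimes_{\Pa} \YYM_{n -1}\lotimes_{\Pa} M \xrightarrow{\ppi_{1}} \PPi_{1} \lotimes_{\Pa} \YYM_{n -1}\lotimes_{\Pa} M \to$, which by Theorem~\ref{universal Auslander-Reiten triangle} (applied to $\YYM_{n-1}\lotimes_{\Pa}M$, using property $(I)_{M,n}$ to guarantee regularity on its indecomposable summands) is a direct sum of AR-triangles starting from $\YYM_{n -1}\lotimes_{\Pa} M$. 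Transporting this triangle along the isomorphism of~(1) gives \eqref{202007192007}; the components of the first and second morphisms are then read off by composing $\rrho$ and $\ppi_{1}$ with the two projections/inclusions of~(1), which by commutativity of \eqref{202008141848} are exactly $\begin{pmatrix}\rho_{n,M}\\-\alpha_{n,M}\end{pmatrix}$ and $(\beta_{n,M},\ {}_{\PPi_{1}}\rho_{n-1,M})$ with the stated relation $({}_{\PPi_{1}}\ppi_{n-1,M})\beta_{n,M}=\ppi_{n}$ coming from the commutativity of the lower-right square of \eqref{202008141848}.

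For~(3), I would invoke the general principle that a homotopy Cartesian (bicartesian) square — here the square formed by $\rrho_{\YYM_{n-1}\lotimes M}$, $\zzeta_{n,M}$, $\brho_{n,M}$ and ${}_{\PPi_{1}}\brho_{n-1,M}$ in the upper-left of \eqref{202008141848} — together with its completion gives a $3\times 3$ diagram of exact triangles; the morphism of triangles in~(3) is precisely the statement that the upper-left square of \eqref{202008141848}, after incorporating the splitting of~(1), fits into such a $3\times 3$ pattern. Concretely, from the diagram \eqref{202008141848} the maps $\brho_{n,M}$ and $\ppi_{n}$ form the top row of the second exact triangle there, while $\alpha_{n,M}$ (the second component of the first morphism in \eqref{202007192007}) and $\beta_{n,M}$ are exactly the vertical maps making the square with ${}_{\PPi_{1}}\brho_{n-1,M}$ and ${}_{\PPi_{1}}\ppi_{n-1,M}$ commute; the third vertical is $\id_{\PPi_{n}\lotimes_{\Pa}M}$ by construction, and compatibility with connecting morphisms follows from \eqref{202008141848} since all four connecting morphisms there were defined as the connecting morphisms of the displayed triangles.

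\textbf{Main obstacle.} The only genuine content is Theorem~\ref{right approximation theorem}(2), which is already available; beyond that the work is diagram-chasing. The subtle point I would be most careful about is bookkeeping: making sure the ``property $(I)_{M,n}$'' hypothesis is strong enough to apply Theorem~\ref{universal Auslander-Reiten triangle} not just to $M$ but to $\YYM_{n-1}\lotimes_{\Pa}M$ (i.e.\ that ${}^{v}\!\Euch(N)\neq 0$ for every indecomposable summand $N$ of $\YYM_{m}\lotimes_{\Pa}M$ with $0\le m\le n-1$, which is exactly what $(I)_{M,n}$ asserts), and checking that the isomorphism of~(1) is compatible with \emph{all} the maps in \eqref{202008141848} rather than just the two named ones — in particular that ${}_{\PPi_{1}}\ppi_{n-1,M}$ is unaffected, which is what yields the identity third vertical in~(3). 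A secondary point to watch is uniqueness/naturality: $\alpha_{n,M}$ and $\beta_{n,M}$ are not canonical, only well-defined modulo $\rad$, so the statement must be read as ``there exist such morphisms,'' exactly as phrased.
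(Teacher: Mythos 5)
Your proof is correct and follows essentially the paper's route: (1) and (2) are exactly the splitting/transport arguments the paper leaves to the reader (the split monomorphism $\eeta^{*}_{n,M}$ from Theorem~\ref{right approximation theorem}(2) splits the middle column of \eqref{202008141848}, and the second row is a direct sum of AR-triangles since property (I)${}_{M,n}$ covers all indecomposable summands of $\YYM_{n-1}\lotimes_{\Pa}M$), while (3) is the paper's own argument: the square with maps $\brho_{n,M}$, $\alpha_{n,M}$, $\beta_{n,M}$, ${}_{\PPi_{1}}\brho_{n-1,M}$ is homotopy Cartesian because its folding is the exact triangle \eqref{202007192007}, and the dual of Neeman's completion lemma together with the relation $({}_{\PPi_{1}}\ppi_{n-1,M})\beta_{n,M}=\ppi_{n,M}$ yields the desired morphism of triangles with identity third vertical. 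The only slip is cosmetic: the four morphisms you first name ($\rrho_{\YYM_{n-1}\lotimes M}$, $\zzeta_{n,M}$, $\brho_{n,M}$, ${}_{\PPi_{1}}\brho_{n-1,M}$) do not actually form a square, but your subsequent description makes clear you intend the correct one just described.
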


\begin{proof}
(1) and (2) are left to the readers. 

(3) The exact triangle \eqref{202007192007} gives that the left square of the following diagram is 
a homotopy Cartesian square (see Section \ref{section: homotopy Cartesian square}) 
\[
\begin{xymatrix}@C=40pt{
\YYM_{n -1} \lotimes_{\Pa} M \ar[d]_{\alpha_{n ,M} }  \ar[r]^{\brho_{n ,M} } & 
\YYM_{n} \lotimes_{\Pa} M \ar[d]^{\beta_{n, M}} \ar@{-->}[r] & 
\PPi_{n}\lotimes_{\Pa} M  \ar@{=}[d] \ar@{-->}[r] & \YYM_{n -1} [1] \ar[d]^{\alpha_{n, M} [1]} \\
\PPi_{1} \lotimes_{\Pa} \YYM_{n -2} \lotimes_{ \Pa} M \ar[r]_{ {}_{\PPi_{1}  }\brho_{n ,M} } & 
\PPi_{1} \lotimes_{\Pa} \YYM_{n -1} \lotimes_{ \Pa} M \ar[r]_{ {}_{\PPi_{1} }\ppi_{n -1, M} } & 
\PPi_{n } \lotimes_{\Pa} M  \ar[r] & \PPi_{1} \lotimes_{\Pa} \YYM_{n -2}[1]  
}\end{xymatrix}
\]
Therefore by a dual version of \cite[Lemma 1.4.4]{Neeman}, 
there exist dotted arrows that make the top row exact. 
Since $({}_{ \PPi_{1}} \ppi_{n-1, M} ) \beta_{n, M} = \ppi_{n}$, the first dotted arrow is $\ppi_{n, M}$. 
Thus we obtain the desired diagram. 
\end{proof}

\subsubsection{}

For later quotation,
we collect the following lemma that follows from  the above commutative diagram \eqref{202008141848}. 
 
 \begin{lemma}\label{202012211305}
 There exists the following commutative diagram
 \[
 \begin{xymatrix}@C=50pt{
 \PPi_{1} \lotimes_{\Pa} \YYM_{ n-1} \lotimes_{\Pa} M[-1] 
 \ar[d]_{ {}_{\PPi_{1}}\ppi_{n -1, M}[-1]}  \ar[r]^-{\ttheta_{1, \YYM_{n -1} \lotimes M}} & \YYM_{n -1}\lotimes_{\Pa} M\ar@{=}[d] \\
 \PPi_{n } \lotimes_{\Pa} M[-1]  \ar[r]_{\ttheta_{n, M}} & \YYM_{n -1} \lotimes_{\Pa} M.  
}\end{xymatrix} 
 \] 
 
 In other words, we have
 the following equality of morphisms $\PPi_{1} \lotimes_{\Pa} \YYM_{ n-1} \lotimes_{\Pa} M \to \YYM_{n-1}\lotimes_{\Pa} M$: 
\[
\ttheta_{n, M} ({}_{\PPi_{1} }\ppi_{n -1, M}[-1] )= \ttheta_{1, \YYM_{n -1} \lotimes M}.
\]
\end{lemma}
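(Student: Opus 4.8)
\textbf{Proof plan for Lemma \ref{202012211305}.} The statement asserts that the connecting morphism $\ttheta_{n,M}$ of the $*$-degree $n$ piece of the exact triangle $\sfU$ (tensored with $M$) factors through $\ttheta_{1,\YYM_{n-1}\lotimes M}$ via the morphism ${}_{\PPi_{1}}\ppi_{n-1,M}$. The plan is to extract this commutativity directly from the large commutative diagram \eqref{202008141848} appearing in the proof of Theorem \ref{right approximation theorem}, which in turn comes from applying $-\lotimes_{\Pa}M$ to the diagram of Corollary \ref{202008091343} (the Dynkin/general case) or Lemma \ref{202008091343ns} (the non-Dynkin case). No hypothesis on $v$ beyond $v\in\We_Q$ is needed, since the morphism of exact triangles in Corollary \ref{202008091343} is established unconditionally; only the two middle rows and two middle columns being exact triangles is used.

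First I would recall the structure of diagram \eqref{202008141848}: its second row is the exact triangle
\[
\YYM_{1}\lotimes_{\Pa}\YYM_{n-1}\lotimes_{\Pa}M
\xrightarrow{\ppi_{1,\YYM_{n-1}\lotimes M}}
\PPi_{1}\lotimes_{\Pa}\YYM_{n-1}\lotimes_{\Pa}M
\xrightarrow{-\ttheta_{1,\YYM_{n-1}\lotimes M}[1]}
\YYM_{n-1}\lotimes_{\Pa}M[1],
\]
(more precisely its rotation, $\rrho_{\YYM_{n-1}\lotimes M}$ being the first map), and its third row is the exact triangle defining $\ttheta_{n,M}$, namely
\[
\YYM_{n-1}\lotimes_{\Pa}M\xrightarrow{\brho_{n,M}}\YYM_{n}\lotimes_{\Pa}M\xrightarrow{\ppi_{n,M}}\PPi_{n}\lotimes_{\Pa}M\xrightarrow{-\ttheta_{n,M}[1]}\YYM_{n-1}\lotimes_{\Pa}M[1].
\]
The vertical map from the third column of the second row to the third column of the third row is ${}_{\PPi_{1}}\ppi_{n-1,M}\colon \PPi_{1}\lotimes_{\Pa}\YYM_{n-1}\lotimes_{\Pa}M\to\PPi_{n}\lotimes_{\Pa}M$, and the vertical map on the last column (the $[1]$-shifted $\YYM_{n-1}\lotimes_{\Pa}M$ terms) is the identity. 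Commutativity of the square formed by these two rows and these two vertical maps — which is part of the assertion that \eqref{202008141848} is a commutative diagram — gives exactly
\[
(-\ttheta_{n,M}[1])\circ({}_{\PPi_{1}}\ppi_{n-1,M})=(\mathrm{id})\circ(-\ttheta_{1,\YYM_{n-1}\lotimes M}[1]),
\]
and after desuspending by $[-1]$ and cancelling signs this is the claimed equality $\ttheta_{n,M}\,({}_{\PPi_{1}}\ppi_{n-1,M}[-1])=\ttheta_{1,\YYM_{n-1}\lotimes M}$, together with the stated commutative square.

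The only real content, then, is to check that the relevant square of \eqref{202008141848} genuinely commutes — i.e. that in Corollary \ref{202008091343} the co-connecting morphism of the rotated second row maps to the co-connecting morphism $\ttheta_{n}$ of the third row under ${}_{\PPi_{1}}\ppi_{n-1}$. This is built into Corollary \ref{202008091343}'s claim that the displayed diagram (with its two middle rows and two middle columns being exact triangles, and morphisms between them being morphisms of exact triangles) commutes; the co-connecting maps agree because a morphism of exact triangles commutes with the connecting morphisms by definition. I would simply tensor that diagram with $M$ over $\Pa$ and read off the relevant square, noting that $-\lotimes_{\Pa}M$ is triangulated so exact triangles and morphisms of exact triangles are preserved. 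I do not expect any genuine obstacle here: the lemma is purely a matter of unwinding which square of an already-established commutative diagram one needs, and the main care is bookkeeping with the sign conventions in the co-connecting morphisms (the $-\ttheta[1]$ in each triangle), which cancel consistently on both sides.
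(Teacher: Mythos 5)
Your proof is correct and is essentially the paper's own argument: the paper states this lemma as an immediate consequence of the commutative diagram \eqref{202008141848} (obtained from Corollary \ref{202008091343}, resp.\ Lemma \ref{202008091343ns}, by applying $-\lotimes_{\Pa}M$), and, as you do, one simply reads off the rightmost square between its two middle rows and desuspends, the signs $-\ttheta[1]$ in the two co-connecting morphisms cancelling. The one point to phrase carefully is that the commutativity of that co-connecting square is part of the assertion of Corollary \ref{202008091343} (it is not automatic from the other squares), which you correctly invoke.
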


\subsection{Left $\rad^{n}$-approximation}

Next we discuss left $\rad^{n}$-approximations. 
For this we introduce the following conditions. 

\subsubsection{The property (I')${}_{M, n}$}

\begin{definition}
Let $M$ be an indecomposable  object of  $\Dbmod{\Pa}$ and $ n \geq 1$. 
We say that $v \in \We_{Q}$ has the property (I')${}^{}_{M, n}$ if 
it has the property (I)${}_{M ,n}$ and its right version, i.e.,  
 we have 
$
{{}^{v}\!\Euch(N)} \neq 0$  
for any $N \in \ind\add \{ M^{\lvvee} \lotimes_{\Pa} \YYM_{m } \mid 0 \leq m \leq n-1 \}$. 
\end{definition}
 
 We note that if $v \in \We_{Q}$ is regular (resp. semi-regular), then 
 it has the property (I')${}_{M, n}$ for all $M \in \ind \Dbmod{\Pa}$ (resp. $M \in \ind \cU_{\Pa} [\ZZ]$). 
 
 \subsubsection{Left $\rad^{n}$-approximation theorem} 
\begin{theorem}\label{pre left approximation theorem}
Let $M \in \ind \Dbmod{\Pa}$ and $n \in N_{Q}$. 
Assume that the weight $v \in \We_{Q}$ has the property (I')${}_{M, n}$. 

\begin{enumerate}[(1)] 
\item

We have an isomorphism below in $\sfD^{\mrb}(\Pa \mod)$ 
\[
\YYM_{n}\lotimes_{\Pa} M \cong \RHom_{\Pa^{\op}} ( M^{\lvvee} \lotimes_{\Pa} \YYM_{n}, \PPi_{n}). 
\]

\item 

There exists a minimal left $\rad^{n}$-approximation  $\beta_{M}^{(n)}: M \to \YYM_{n} \lotimes_{\Pa} M$. 

\item 
There exists the following  isomorphism  in $\Dbmod{\Pa}$
\[
\YYM_{n} \lotimes_{ \Pa} M \cong \bigoplus_{ N \in \ind \Dbmod{\Pa}} N^{d_{N}}
\]
where 
\[
d_{N} := \dim_{\ResEnd(N)} \irr^{n}(M,N).
\]
\end{enumerate}
\end{theorem}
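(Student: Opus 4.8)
The plan is to deduce all three statements from the already-established right-approximation results (Theorem~\ref{right approximation theorem}) together with the duality machinery of Section~\ref{Remarks on the right versions}, especially Proposition~\ref{202103051610} and its notation. The starting observation is that for $M \in \ind \Dbmod{\Pa}$ the $\Pa$-dual $M^{\lvvee} = \RHom_{\Pa}(M,\Pa)$ is again indecomposable in $\Dbmod{\Pa^{\op}}$, with $(M^{\lvvee})^{\rvvee} \cong M$, and the weighted Euler characteristics of the relevant right $\Pa^{\op}$-modules are governed by \eqref{202104021807}. Under the hypothesis that $v$ has the property (I')${}_{M,n}$, the right version of the property (I)${}_{M,n}$ for $M^{\lvvee}$ holds, so Theorem~\ref{right approximation theorem}, applied over $\Pa^{\op}$ to the object $M^{\lvvee}$, gives that $M^{\lvvee} \lotimes_{\Pa} \ppi_{n} : M^{\lvvee} \lotimes_{\Pa} \YYM_{n} \to M^{\lvvee} \lotimes_{\Pa} \PPi_{n}$ is a minimal right $\rad^{n}$-approximation of $M^{\lvvee}$ in $\Dbmod{\Pa^{\op}}$ (equivalently $M^{\lvvee} \lotimes_{\Pa} \eeta^{*,\mathrm{right}}_{n}$ is split mono, etc.).

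For part (1), first I would establish the natural isomorphism $\RHom_{\Pa^{\op}}(N \lotimes_{\Pa} \YYM_{n}, \PPi_{n}) \cong \YYM_{n} \lotimes_{\Pa} N^{\rvvee}$ for $N \in \Dbmod{\Pa^{\op}}$. The case $n = 1$ is exactly the isomorphism ${}^{v}\!\sfb_{N}$ of Proposition~\ref{202103051610} (up to a scalar $y$, which is harmless), and the case $n \geq 2$ follows by iterating: using the cone description $\YYM_{n} = \cone(\eeta^{*}_{n})$ of Section~\ref{202106020843} together with $\RHom_{\Pa^{\op}}(-,\PPi_{n})$ and the factorization $\PPi_{n} \cong \PPi_{1} \lotimes_{\Pa} \PPi_{n-1}$, one applies $\RHom_{\Pa^{\op}}(-,\PPi_{1})\cong \PPi_{1}\lotimes_{\Pa}(-)^{\rvvee}$ (the identity used in the proof of Proposition~\ref{202103051610}) and induction on $n$. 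Then taking $N = M^{\lvvee}$ and using $(M^{\lvvee})^{\rvvee}\cong M$ yields the stated isomorphism $\YYM_{n}\lotimes_{\Pa} M \cong \RHom_{\Pa^{\op}}(M^{\lvvee}\lotimes_{\Pa}\YYM_{n},\PPi_{n})$. One must track that the property (I')${}_{M,n}$ is precisely what guarantees all the intermediate objects sit in the degenerate locus where these iterated duality isomorphisms are available (this is where the right-hand half of (I') enters).

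For part (2), the idea is that applying $\RHom_{\Pa^{\op}}(-,\PPi_{n})$ to a minimal right $\rad^{n}$-approximation of $M^{\lvvee}$ produces a minimal left $\rad^{n}$-approximation of its dual. Concretely, the contravariant functor $\RHom_{\Pa^{\op}}(-,\PPi_{n})$ (or rather the composite of $\RHom_{\Pa^{\op}}(-,\Pa)$ with a Nakayama twist) is an anti-equivalence sending $\ind\Dbmod{\Pa^{\op}}$ to $\ind\Dbmod{\Pa}$ and $\rad \leftrightarrow \rad$, hence $\rad^{n}\leftrightarrow\rad^{n}$; it converts the minimal right $\rad^{n}$-approximation $M^{\lvvee}\lotimes_{\Pa}\ppi_{n}$ of $M^{\lvvee}$ (Theorem~\ref{right approximation theorem}(1) over $\Pa^{\op}$) into a minimal left $\rad^{n}$-approximation morphism $\beta_{M}^{(n)}: M \to \RHom_{\Pa^{\op}}(M^{\lvvee}\lotimes_{\Pa}\YYM_{n},\PPi_{n}) \cong \YYM_{n}\lotimes_{\Pa}M$, where the last identification is part (1). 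Part (3) then follows formally from part (2) combined with the general structure theorem for objects admitting a minimal left $\rad^{n}$-approximation, Theorem~\ref{202008172145}: that theorem describes the target $L_{n}$ of a minimal left $\rad^{n}$-approximation $M \to L_{n}$ as $\bigoplus_{N} N^{\oplus d_{N}}$ with $d_{N} = \dim_{\ResEnd(N)}\irr^{n}(M,N)$, and by part (2) we may take $L_{n} = \YYM_{n}\lotimes_{\Pa}M$.

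I expect the main obstacle to be part (1) — specifically, verifying that the iterated duality isomorphism $\RHom_{\Pa^{\op}}(N\lotimes_{\Pa}\YYM_{n},\PPi_{n})\cong\YYM_{n}\lotimes_{\Pa}N^{\rvvee}$ is compatible with the cone/tensor structure so the induction goes through, and checking that the relevant scalars and the non-vanishing conditions (I')${}_{M,n}$ line up at each stage (since Proposition~\ref{202103051610} requires nonvanishing of ${}^{v}\!\Euch$ on the intermediate modules). Once the duality isomorphism of part (1) is in hand, parts (2) and (3) are essentially formal consequences of results already proved (Theorem~\ref{right approximation theorem}, Theorem~\ref{202008172145}) and should require only routine verification that $\RHom_{\Pa^{\op}}(-,\PPi_{n})$ interacts correctly with radicals and with left/right $\rad^{n}$-approximations.
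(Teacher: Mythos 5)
Your overall architecture — dualize the right-module approximation results, then deduce (2) from (1) by applying $\RHom_{\Pa^{\op}}(-,\PPi_{n})$ to the minimal right $\rad^{n}$-approximation ${}_{M^{\lvvee}}\ppi_{n}$, and (3) from (2) via Theorem \ref{202008172145} — is exactly the paper's for parts (2) and (3). The gap is in your proposed proof of (1). You want a \emph{natural} isomorphism $\RHom_{\Pa^{\op}}(N\lotimes_{\Pa}\YYM_{n},\PPi_{n})\cong\YYM_{n}\lotimes_{\Pa}N^{\rvvee}$ obtained by applying duality to the cone description $\YYM_{n}=\cone(\eeta^{*}_{n})$ and inducting. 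Two things break. First, a triangle is not determined by two of its vertices: after dualizing the defining triangle of $\YYM_{n}$ and identifying the outer terms by induction, you still must match the connecting morphism with the one in the triangle $\sfV_{n}\lotimes_{\Pa}M$, and that is precisely the non-formal content you have not supplied. Second, no weight-preserving natural identification can exist in general: at the bimodule level $\RHom_{\Pa^{\op}}({}^{v}\!\YYM_{1},\PPi_{1})\cong{}^{\Psi^{-1}(v)}\!\YYM_{1}$ (Lemma \ref{2021022307501}), so each application of your duality twists the weight by $\Psi^{-1}$ unless $v$ is an eigenvector of $\Psi$; correspondingly, the isomorphism ${}^{v}\!\sfb_{N}$ of Proposition \ref{202103051610} exists only under nonvanishing hypotheses, involves scalars depending on $N$, and is unique (and functorial) only modulo $\rad$ — compare the remark after Theorem \ref{2020071920551} showing that a bimodule-level comparison map cannot exist for generic $v$. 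So the "iterate the $n=1$ duality through the cone" step is not just a routine verification; as stated it fails.

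What the paper does instead, and what your argument is missing, is to avoid matching cones altogether: by the right versions of Theorem \ref{right approximation theorem} and Corollary \ref{202007181611}(1), the object $M^{\lvvee}\lotimes_{\Pa}\YYM_{n}$ appears, together with the known summand $M^{\lvvee}\lotimes_{\Pa}\YYM_{n-2}\lotimes_{\Pa}\PPi_{1}$, as the middle term of a direct sum of AR-triangles starting from $M^{\lvvee}\lotimes_{\Pa}\YYM_{n-1}$. Applying the duality $\RHom_{\Pa^{\op}}(-,\PPi_{n})$ (which sends AR-triangles to AR-triangles) and using the induction hypothesis for $n-1$ and $n-2$ to identify the end terms, one obtains a direct sum of AR-triangles starting from $\YYM_{n-1}\lotimes_{\Pa}M$; comparing its middle term with that of the triangle \eqref{202007192007}, i.e. $(\YYM_{n}\lotimes_{\Pa}M)\oplus(\PPi_{1}\lotimes_{\Pa}\YYM_{n-2}\lotimes_{\Pa}M)$, and invoking uniqueness of AR-triangle middle terms together with the Krull--Schmidt property to cancel the common summand, yields $\RHom_{\Pa^{\op}}(M^{\lvvee}\lotimes_{\Pa}\YYM_{n},\PPi_{n})\cong\YYM_{n}\lotimes_{\Pa}M$. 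This uniqueness-of-AR-triangles argument is the idea your sketch needs; once (1) is proved this way, your (2) and (3) go through as you describe.
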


\begin{remark}
In the case $Q$ is Dynkin, by Theorem \ref{right approximation theorem}(4) we have  $\YYM_{h -1} \lotimes_{\Pa} M=0$. 
On the other hand, by Theorem \ref{kQ approximations theorem add}, the zero morphism 
$M \to 0$ is a minimal left $\rad^{h-1}$-approximation. 
It follows that the statement (2) of above theorem also  holds for $n = h -1$. 
\end{remark}

\begin{proof}

(1)
We prove the statement for $n \geq 0$ by induction on $n$. 
The case $n = 0$ is clear and the case $n =1$ follows from Proposition \ref{202103051610}. 

We deal with the case $n \geq 2$ by assuming the cases $n-1$ and $n-2$ are proved. 
Applying  the right versions of  Theorem \ref{right approximation theorem} and Corollary \ref{202007181611}(1) to $Q^{\op}$ and  the object $M^{\lvvee} \in \Dbmod{\Pa^{\op}}$, 
we see that $M^{\lvvee} \lotimes_{\Pa} \YYM_{n}$ fits the following 
direct sum of Auslander-Reiten triangle 
\[
M^{\lvvee} \lotimes_{\Pa} \YYM_{n -1} \to (M^{\lvvee} \lotimes_{\Pa} \YYM_{n}) \oplus (M^{\lvvee} \lotimes_{\Pa} \YYM_{ n-2} \lotimes_{\Pa} \PPi_{1} ) 
\to 
M^{\lvvee} \lotimes_{\Pa} \YYM_{n -1} \lotimes_{\Pa} \PPi_{1} \to
M^{\lvvee} \lotimes_{\Pa} \YYM_{n -1}[1].  
\]
Applying  $\RHom_{\Pa^{\op}}(-, \PPi_{n})$ to the above exact triangle and using the induction hypothesis, 
we obtain a direct sum of  Auslander-Reinten triangle
\begin{equation}\label{202104141734}
\YYM_{ n-1} \lotimes_{\Pa} M \to 
\RHom_{\Pa^{\op}} (  M^{\lvvee} \lotimes_{\Pa} \YM_{n}, \PPi_{n}) \oplus ( \PPi_{1} \lotimes_{\Pa} \YYM_{n-2} \lotimes_{\Pa}M ) 
  \to  \PPi_{1} \lotimes_{\Pa} \YYM_{n -1} \lotimes_{\Pa} M \to
\YYM_{ n-1} \lotimes_{\Pa} M[1]  .  
\end{equation}
Thus by uniqueness of the middle term of an Auslander-Reiten triangle and the Krull-Schmit property of $\sfD^{\mrb}(\Pa\mod)$, 
we deduce the following  isomorphism by comparing \eqref{202007192007} with \eqref{202104141734}.
\[
\RHom_{\Pa^{\op}}(  M^{*} \lotimes_{\Pa} \YM_{n}, \PPi_{n}) \cong \YYM_{n} \lotimes_{\Pa} M.
\]

(2) 
By the right version of Theorem \ref{right approximation theorem}, 
the morphism $
{}_{M^{\lvvee}} \ppi_{n}: M^{\lvvee} \lotimes_{\Pa} \YYM_{n} \to M^{\lvvee} \lotimes_{\Pa} \PPi_{n} $ 
is a minimal right $\rad^{n}$-approximation of $M^{\lvvee} \lotimes_{\Pa} \PPi_{n}$ in $\sfD^{\mrb}(\Pa^{\op} \mod)$. 
Thanks to (1),  applying $\RHom_{\Pa^{\op}}(-, \PPi_{n})$ to it, we obtain a minimal left  $\rad^{n}$-approximation
\[
\begin{split}
M \cong \RHom_{\Pa}&(M^{\lvvee}\lotimes_{\Pa}\PPi_{n}, \PPi_{n} ) \\ 
&\xrightarrow{ \ \RHom_{\Pa^{\op}}( {}_{M^{\lvvee}} \ppi_{n}, \PPi_{n}) \ } 
\RHom_{\Pa}(M^{\lvvee}\lotimes_{\Pa}\YYM_{n}, \PPi_{n} )\cong \YYM_{n} \lotimes_{\Pa} M. 
\end{split}
\]

(3) follows from (2) and a derived category version of Theorem \ref{202008172145}.
\end{proof}

Combining this result with  Theorem \ref{20211111751}, 
we come to the following conclusion.

\begin{theorem}\label{202111110809} 
Let $M \in \ind \Dbmod{\Pa}$ and $\cC_{M} \subset \Dbmod{\Pa}$ the full subcategory that consists of objects belonging to the 
same components with $M$ in the AR-quiver. 
Assume that the weight $v \in \We_{Q}$ has the property (I')${}_{M, n}$ for any $n \in N_{Q}$ 
and that the following condition does not  hold: $Q$ is wild and $M$ is a shift of a regular module. 

Then we have an isomorphism 
\[
\bigoplus_{n \in N_{Q}}  \YYM_{n}  \lotimes_{\Pa} M  \cong \bigoplus_{N \in \ind \cC_{M} } N^{\oplus \dim\Hom(M, N)}
\]
in $\sfD(\Pa)$. 
\end{theorem}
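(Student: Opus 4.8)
The plan is to combine the two ingredients that the excerpt has already assembled: Theorem~\ref{pre left approximation theorem}, which controls each individual summand ${}^{v}\!\YYM_n \lotimes_{\Pa} M$ via $\rad^n$-approximation theory, and Theorem~\ref{20211111751} (the description of $\bigoplus_{n\ge 0} L_n$ for minimal left $\rad^n$-approximations $L_n$ of $M$ in $\sfD^{\mrb}(\Pa\mod)$). First I would reduce to the case where $M$ is an indecomposable $\Pa$-module (applying $\nu_1^{\pm 1}$ and shifts, exactly as in the proofs of Theorem~\ref{kQ approximations theorem} and Theorem~\ref{right approximation theorem}), noting that $\cC_M$ and the weighted Euler characteristics transform compatibly so that the regularity hypothesis is preserved; the excluded case "($Q$ wild and $M$ a shift of a regular module)" is stable under this reduction.

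Next, for each $n \in N_Q$ I would invoke Theorem~\ref{pre left approximation theorem}(2): since $v$ has property (I')${}_{M,n}$ for all $n \in N_Q$, there is a minimal left $\rad^n$-approximation $\beta^{(n)}_M : M \to {}^{v}\!\YYM_n \lotimes_{\Pa} M$. For $n \notin N_Q$ — i.e.\ in the Dynkin case when $n \ge h-1$ — I would use Theorem~\ref{kQ approximations theorem add}(1) together with Theorem~\ref{right approximation theorem}(3)/(4): for $n \ge h-1$ the object ${}^{v}\!\YYM_n \lotimes_{\Pa} M$ vanishes (because $\Pi_n$ eventually vanishes and the exact triangles \eqref{2020081419241} then force ${}^{v}\!\YYM_n \lotimes_{\Pa} M = 0$), and simultaneously the minimal left $\rad^n$-approximation $L_n$ of $M$ is $0$. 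Hence $\bigoplus_{n \in N_Q} {}^{v}\!\YYM_n \lotimes_{\Pa} M \cong \bigoplus_{n \ge 0} L_n$, where $L_n$ denotes a minimal left $\rad^n$-approximation object of $M$ in $\Dbmod{\Pa}$, since by Lemma~\ref{202007132105}'s derived analogue each $L_n$ is unique up to isomorphism and we may take $L_n = {}^{v}\!\YYM_n \lotimes_{\Pa} M$ when $n \in N_Q$ and $L_n = 0$ otherwise.

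Finally I would apply Theorem~\ref{20211111751} to $M$: under the stated hypothesis on $Q$ and $M$ (which is exactly the hypothesis appearing there), one has
\[
\bigoplus_{n \geq 0} L_{n} \cong \bigoplus_{N \in \ind \cC_{M}} N^{\oplus \dim \Hom(M,N)}
\]
in $\sfD(\Pa)$. Chaining the two isomorphisms yields the claimed formula. I would also record the consistency check that Theorem~\ref{20211111751} is stated for minimal left $\rad^n$-approximation objects in $\Dbmod{\Pa}$ (not merely in $\Pa\mod$), so no extra work is needed to pass between the two categories; the observation in Section~\ref{202105252310}, Lemma~\ref{202105190925}, handles that.

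The main obstacle is the bookkeeping around the Dynkin range $N_Q = \{0,\dots,h-2\}$: I must make sure that the truncation "$n \in N_Q$" on the left-hand side of the statement matches precisely the vanishing of $L_n$ for $n \ge h-1$, and that the remaining summands $L_0,\dots,L_{h-2}$ are genuinely the ${}^{v}\!\YYM_n \lotimes_{\Pa} M$ with the \emph{minimality} built in — this is where property (I')${}_{M,n}$ (hence regularity) is essential, since without it ${}^{v}\!\YYM_n \lotimes_{\Pa} M$ need not be the minimal approximation object (cf.\ the discussion following Theorem~\ref{202111201606} and Proposition~\ref{202111201702}). Everything else is a direct assembly of cited results.
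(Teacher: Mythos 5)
Your proposal is essentially the paper's own proof: the paper derives this theorem by simply combining Theorem~\ref{pre left approximation theorem}(2) (which, under property (I')${}_{M,n}$, identifies ${}^{v}\!\YYM_{n}\lotimes_{\Pa}M$ with the minimal left $\rad^{n}$-approximation object $L_{n}$ of $M$) with Theorem~\ref{20211111751}, exactly as you do, and your chain of isomorphisms $\bigoplus_{n\in N_{Q}}\YYM_{n}\lotimes_{\Pa}M\cong\bigoplus_{n\ge 0}L_{n}\cong\bigoplus_{N\in\ind\cC_{M}}N^{\oplus\dim\Hom(M,N)}$ is correct. Two inessential blemishes: first, your parenthetical claim that ${}^{v}\!\YYM_{n}\lotimes_{\Pa}M$ vanishes for \emph{all} $n\ge h-1$ is false for $n\ge h$ (by Proposition~\ref{formula 1}(3) one has $\YYM_{h}\cong\Pa[2]$, so $\YYM_{h}\lotimes_{\Pa}M\cong M[2]\neq 0$; the triangle \eqref{2020081419241} involves $\PPi_{n}\lotimes_{\Pa}M=\nu_{1}^{-n}(M)$, which never vanishes, not the eventually vanishing $\Pi_{n}$) — but this claim is never needed, since the left-hand sum only ranges over $n\in N_{Q}$ and the only fact required for $n\notin N_{Q}$ is $L_{n}=0$, which you correctly cite from Theorem~\ref{kQ approximations theorem add}(1). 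Second, the opening reduction to $M$ an indecomposable module is superfluous (both cited theorems are stated for arbitrary $M\in\ind\Dbmod{\Pa}$) and, taken literally, would require verifying that property (I')${}_{M,n}$ for the fixed weight $v$ is preserved under $\nu_{1}^{\pm 1}$, which is not obvious; simply dropping that step makes the argument cleanest.
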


\subsubsection{Proof of Theorem \ref{description of QHA as kQ-modules}}\label{202111261251}

We give a proof of Theorem \ref{description of QHA as kQ-modules} stated in the introduction. 

(1) Let $i \in Q_{0}$ be a vertex and $P_{i} := \Pa e_{i}$. 
Since $\Hom_{\Pa}(P_{i}, N) = \tuH^{0}(e_{i} N)$
 and $\YYM_{n} \lotimes_{\Pa} P_{i} = \YYM e_{i}$, 
 it follows from Theorem \ref{202111110809} that 
 $\bigoplus_{n \in N_{Q}} \YYM_{n} e_{i} \cong \bigoplus_{ N \in \ind \cP(Q)} N^{\dim e_{i} N}$. 
 In particular, this shows that the left hand side is concentrated in cohomological degree $0$.
 
 Since $\YM = \tuH^{0}(\YYM)$, 
 it only remains to show that for $n \in \NN \setminus N_{Q}$ we have $\tuH^{0}(\YYM_{n} e_{i}) =0$. 
  The case where $Q$ is non-Dynkin, it is trivial since $N_{Q} =\NN$. 
The case where $Q$ is Dynkin follows from Proposition \ref{formula 1}(4) below. 

(2) is an immediate consequence of (1). \qed

\subsubsection{The Dynkin case}

We give descriptions of $\YYM_{h -2}, \YYM_{h -1}$ for a Dynkin quiver $Q$ with the Coxeter number $h$.

\begin{proposition}\label{202110241503}
Assume that  $Q$ is Dynkin with the Coxeter number $h$. 
Let $M \in \ind\Dbmod{\Pa}$. 
Assume that $v \in \kk^{\times}Q_{0}$ has the property (I)${}_{M, h-2}$. 
Then, 
the morphism $\ppi_{ h-1, M} : \YYM_{h -1} \lotimes_{\Pa} M \to \PPi_{ h-1} \lotimes_{\Pa} M$ is a minimal right $\rad^{h -1}$-approximation. 
Consequently $\YYM_{h -1} \lotimes_{\Pa} M = 0$. 
\end{proposition}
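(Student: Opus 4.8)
The statement is Proposition \ref{202110241503}: if $Q$ is Dynkin with Coxeter number $h$ and $v$ has property (I)${}_{M,h-2}$, then $\ppi_{h-1,M}\colon \YYM_{h-1}\lotimes_\Pa M\to \PPi_{h-1}\lotimes_\Pa M$ is a minimal right $\rad^{h-1}$-approximation, and consequently $\YYM_{h-1}\lotimes_\Pa M=0$. The idea is to run the inductive machinery of Theorem \ref{right approximation theorem} one step further than its stated range (which only covers $n\in N_Q^{\geq 1}$, i.e. $n\leq h-2$), and then invoke Theorem \ref{kQ approximations theorem add} to identify the target as zero. First I would set $n=h-1$ and recall from Theorem \ref{right approximation theorem}, applied at $n=h-2$ (which is allowed since $h-2\in N_Q$ and $v$ has property (I)${}_{M,h-2}$), that $\ppi_{h-2,M}$ is a minimal right $\rad^{h-2}$-approximation, that $\eeta^*_{h-2,M}$ is split mono, that $\YYM_{h-2}\lotimes_\Pa M\neq 0$, and crucially that $\brho_{h-2,M}$ satisfies the left and right $\rad$-fitting conditions.

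\textbf{The step-by-step argument.} I would take the commutative diagram \eqref{202008141848} with $n=h-1$, obtained by applying $-\lotimes_\Pa M$ to the diagram of Corollary \ref{202008091343} (or Lemma \ref{202008091343ns}). In that diagram, the top of the third column is the morphism ${}_{\PPi_1}\brho_{h-2,M}$, which satisfies the right $\rad$-fitting condition by the $n=h-2$ case of Theorem \ref{right approximation theorem}(4). By Lemma \ref{202105171508}, since the map $\eeta^*_{h-1,M}\colon \PPi_1\lotimes_\Pa \YYM_{h-3}\lotimes_\Pa M\to \YYM_1\lotimes_\Pa \YYM_{h-2}\lotimes_\Pa M$ factors the $\rad$-fitting morphism through the minimal left $\rad$-approximation $\rrho_{\YYM_{h-2}\lotimes M}$, it is a split monomorphism; hence $\zzeta_{h-1,M}$ is a split epimorphism. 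The middle row of \eqref{202008141848} is a direct sum of Auslander–Reiten triangles (here I use property (I)${}_{M,h-2}$, equivalently that ${}^{v}\!\Euch(N)\neq 0$ for $N\in\ind\add\{\YYM_m\lotimes_\Pa M\mid 0\leq m\leq h-2\}$, via Theorem \ref{universal Auslander-Reiten triangle} applied to $\YYM_{h-2}\lotimes_\Pa M$). Then the right version of Lemma \ref{202012191919} (the construction of minimal right $\rad^{n+1}$-approximations from minimal right $\rad^n$-approximations, dual to Lemma \ref{202012191919}) applies: given the minimal right $\rad^{h-2}$-approximation $\ppi_{h-2,M}$ from the $n=h-2$ case and the split epi $\zzeta_{h-1,M}$ fitting into the Auslander–Reiten triangle diagram, we conclude that $\ppi_{h-1,M}$ is a minimal right $\rad^{h-1}$-approximation of $\PPi_{h-1}\lotimes_\Pa M$. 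This is exactly the $n=h-1$ case of the argument already written in the proof of Theorem \ref{right approximation theorem}; the only point is that nothing in that inductive step used $n\leq h-2$ except the invocation of Theorem \ref{kQ approximations theorem} for the $\rad$-fitting conclusions (statements (3),(4)), which we do not need here.

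\textbf{Concluding and the main obstacle.} Once $\ppi_{h-1,M}$ is known to be a minimal right $\rad^{h-1}$-approximation of $\PPi_{h-1}\lotimes_\Pa M$, I would argue $\YYM_{h-1}\lotimes_\Pa M=0$ as follows. By Theorem \ref{kQ approximations theorem add}(1), a minimal right $\rad^{h-1}$-approximation object of any object in $\Dbmod{\Pa}$ is zero (the right version of that theorem holds, as noted in the excerpt, since $h_{Q^{\op}}=h$); more precisely, $\PPi_{h-1}\lotimes_\Pa M=\nu_1^{-(h-1)}M\lotimes$-something — rather, I should phrase it via the target directly: $\PPi_{h-1}\lotimes_\Pa M\cong \nu_1^{-(h-1)}M$, and a minimal right $\rad^{h-1}$-approximation of an object equals zero whenever no object admits a nonzero map into it lying in $\rad^{h-1}$; by the right version of Theorem \ref{kQ approximations theorem add}(1) this holds for all objects since $h-1\notin N_Q$. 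Therefore $\YYM_{h-1}\lotimes_\Pa M$, being such an approximation object, is zero. The main subtlety — and the step I would be most careful about — is making rigorous that the inductive step of Theorem \ref{right approximation theorem} genuinely extends to $n=h-1$: one must check that the right version of Lemma \ref{202012191919} only requires the $\rad$-fitting property of ${}_{\PPi_1}\brho_{h-2,M}$ (available from the $n=h-2$ case) and the Auslander–Reiten triangle structure of the middle row (available from property (I)${}_{M,h-2}$), and does \emph{not} circularly require statements (3),(4) at level $h-1$. I expect this to go through cleanly, but it is the place where the proof must be written out with attention, and also one should double-check the claimed identification $\PPi_{h-1}\lotimes_\Pa M\cong \nu_1^{-(h-1)}M$ together with Lemma \ref{basics of place number Dynkin case}(2) to see that indeed no object maps into it within $\rad^{h-1}$ when $Q$ is Dynkin.
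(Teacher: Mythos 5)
Your proposal is correct and follows essentially the same route as the paper: the paper likewise observes that the inductive step in the proof of Theorem \ref{right approximation theorem} uses only the right $\rad$-fitting condition of ${}_{\PPi_{1}}\brho_{h-2,M}$ (available from the $n=h-2$ case) together with the AR-triangle row, so it applies verbatim at $n=h-1$, and then deduces $\YYM_{h-1}\lotimes_{\Pa}M=0$ from (the right version of) Theorem \ref{kQ approximations theorem add}. One small slip in your recap: the split monomorphy of $\eeta^{*}_{h-1,M}$ comes from the dual of Lemma \ref{202105171508}, using that ${}_{\PPi_{1}}\brho_{h-2,M}$ factors through the minimal \emph{right} $\rad$-approximation $\ppi_{1,\YYM_{h-2}\lotimes M}$, not through $\rrho_{\YYM_{h-2}\lotimes M}$.
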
 

\begin{proof} 
By Theorem \ref{right approximation theorem},   
the morphism ${}_{\PPi_{1}} \brho_{h -2, M}$ satisfies the right $\rad$-fitting condition. 
Thus, we can apply the above argument to the case $n = h-1$ and conclude that $\ppi_{h-1, M}$ is a minimal right $\rad^{h-1}$-approximation. 
The second statement is an immediate consequence of Theorem \ref{kQ approximations theorem add}. 
\end{proof}

\begin{corollary}\label{vanishing corollary}
Let $Q$ be a Dynkin quiver with the Coxeter number $h$. 
Assume that $v \in \kk Q_{0}$ is regular. 
Then $\YYM_{h -2} \cong \tuD(\Pa)$ and $\YYM_{h -1} = 0$ in $\sfD(\Pa)$. 
\end{corollary}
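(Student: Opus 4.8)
\textbf{Proof plan for Corollary \ref{vanishing corollary}.}
The statement is the assertion that, for a Dynkin quiver $Q$ with Coxeter number $h$ and a regular weight $v\in\kk Q_{0}$, one has ${}^{v}\!\YYM_{h-2}\cong\tuD(\Pa)$ and ${}^{v}\!\YYM_{h-1}=0$ in $\sfD(\Pa)$ (as objects, i.e.\ as left $\Pa$-modules; the refined bimodule statement is not what is being claimed here). The plan is to deduce both isomorphisms from the $\rad^{n}$-approximation results already established in this section, applied to the family of projective objects $P_{i}=\Pa e_{i}$, $i\in Q_{0}$, together with the $\kk Q$-side computation of $\rad^{n}$-approximations in Section \ref{section: kQ approximations}. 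Throughout I would use that a regular weight has the property (I')${}_{M,n}$ for every $M\in\ind\Dbmod{\Pa}$ and every $n\geq 1$, so that Theorem \ref{right approximation theorem}, Corollary \ref{202007181611} and Theorem \ref{pre left approximation theorem} all apply unconditionally.

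First I would handle ${}^{v}\!\YYM_{h-1}=0$. It suffices to show ${}^{v}\!\YYM_{h-1}e_{i}\cong {}^{v}\!\YYM_{h-1}\lotimes_{\Pa}P_{i}=0$ for each $i\in Q_{0}$, since ${}^{v}\!\YYM_{h-1}=\bigoplus_{i}{}^{v}\!\YYM_{h-1}e_{i}$ as a left $\Pa$-module. Apply Proposition \ref{202110241503} with $M=P_{i}$: regularity gives property (I)${}_{P_{i},h-2}$, so $\ppi_{h-1,P_{i}}\colon{}^{v}\!\YYM_{h-1}\lotimes_{\Pa}P_{i}\to\PPi_{h-1}\lotimes_{\Pa}P_{i}$ is a minimal right $\rad^{h-1}$-approximation; but by Theorem \ref{kQ approximations theorem add}(1) (its right version, valid since $h_{Q^{\op}}=h_{Q}$) every object of $\Dbmod{\Pa}$ already has zero minimal $\rad^{h-1}$-approximation, so the source ${}^{v}\!\YYM_{h-1}\lotimes_{\Pa}P_{i}$ must vanish. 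Summing over $i$ gives ${}^{v}\!\YYM_{h-1}=0$.

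Next, for ${}^{v}\!\YYM_{h-2}\cong\tuD(\Pa)$ I would again reduce to the projectives: $\tuD(\Pa)=\bigoplus_{i}I_{i}$ with $I_{i}=\tuD(e_{i}\Pa)$, so it is enough to prove ${}^{v}\!\YYM_{h-2}e_{i}\cong I_{i}$ for each $i$. By Theorem \ref{pre left approximation theorem}(2) there is a minimal left $\rad^{h-2}$-approximation $\beta^{(h-2)}\colon P_{i}\to{}^{v}\!\YYM_{h-2}\lotimes_{\Pa}P_{i}={}^{v}\!\YYM_{h-2}e_{i}$; on the other hand Theorem \ref{kQ approximations theorem add}(2) identifies a minimal left $\rad^{h-2}$-approximation of $P_{i}$ in $\Dbmod{\Pa}$ with $P_{i}\to\nu(P_{i})\cong I_{i}$. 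By uniqueness of minimal left $\rad^{n}$-approximations (the analogue of Lemma \ref{202007132105} in $\Dbmod{\Pa}$, valid since $\Pa$ has finite global dimension) the two targets agree up to isomorphism, whence ${}^{v}\!\YYM_{h-2}e_{i}\cong I_{i}$. Taking the direct sum over $i\in Q_{0}$ yields ${}^{v}\!\YYM_{h-2}\cong\bigoplus_{i}I_{i}=\tuD(\Pa)$ in $\sfD(\Pa)$. (Alternatively one could read ${}^{v}\!\YYM_{h-2}$ off Theorem \ref{202111110809} applied to $M=P_{i}$, using $\irr^{h-2}(P_{i},N)\neq 0$ only for $N=I_{i}$ with multiplicity $\dim\Hom(P_{i},I_{i})=1$; this gives the same answer and doubles as a sanity check.)

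The only genuinely delicate point is the bookkeeping of \emph{which} minimal left $\rad^{h-2}$-approximation one uses: Theorem \ref{pre left approximation theorem} produces ${}^{v}\!\YYM_{h-2}e_{i}$ as \emph{a} left $\rad^{h-2}$-approximation object of $P_{i}$, and one must invoke uniqueness up to isomorphism to match it with $I_{i}$, so I would make sure the hypotheses of the $\Dbmod{\Pa}$-version of Lemma \ref{202007132105} are cited correctly (finite global dimension of $\Pa$, which holds as $Q$ is acyclic). Everything else is a routine assembly of already-proved statements; no new calculation is needed.
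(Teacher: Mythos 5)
Your proof is correct and follows the same route as the paper: the paper deduces ${}^{v}\!\YYM_{h-2}\cong\tuD(\Pa)$ from Theorem \ref{kQ approximations theorem add} together with Theorem \ref{pre left approximation theorem}, and ${}^{v}\!\YYM_{h-1}=0$ from Proposition \ref{202110241503}, exactly as you do. Your reduction to the projectives $P_{i}$ and the appeal to uniqueness of minimal $\rad^{n}$-approximations just spell out the details the paper leaves implicit.
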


\begin{proof}
The first statement follows from Theorem \ref{kQ approximations theorem add} and Theorem \ref{pre left approximation theorem}. 
The second follows from Proposition \ref{202110241503}. 
\end{proof}

\subsection{When is the multiplication $\brho_{M}^{n}: M \to \YM_{n} \lotimes_{\Pa} M$ a minimal left $\rad^{n}$-approximation?}\label{202112011709}

Here we restate Problem \ref{introduction:left approximation problem} to fix notations. 

The minimal right $\rad^{n}$-approximation morphism $\ppi_{n, M}: \YYM_{n} \lotimes_{\Pa} M \to \PPi_{n} \lotimes_{\Pa} M$ is explicitly given. 
In contrast 
a minimal left $\rad^{n}$-approximation $\beta_{M}^{(n)}: M \to \YYM_{n} \lotimes_{\Pa} M$ is only known to exists 
and we do not have an explicit description at this moment. 

In an optimistic expectation, a natural candidate for a minimal left $\rad^{n}$-approximation $\beta_{M}^{(n)}: M \to \YYM_{n} \lotimes_{\Pa} M$ 
is  the $n$-th power $\brho^{n}_{M}$ of $\varrho$ 
\[
\brho^{n}_{M}: M \xrightarrow{ \rrho_{M}} \YYM_{1} \lotimes_{\Pa} M \xrightarrow{ \brho_{2, M}} \YYM_{2} \lotimes_{\Pa} M \xrightarrow{} \cdots 
\xrightarrow{} \YYM_{n -1} \lotimes_{\Pa} M \xrightarrow{ \brho_{n, M}} \YYM_{n} \lotimes_{\Pa} M .
\]
Indeed, this is the case when $n =1$ by Theorem \ref{semi-universal Auslander-Reiten triangle}.
However as is shown in Example \ref{202103221959} and Example \ref{202103051509}, this expectation is not fulfilled even for $n =2$.

\section{QHA of Dynkin type}\label{section: QHA Dynkin case}

In this section we investigate basic properties of (derived) quiver Heisenberg algebras of Dynkin type. 
So throughout this section, $Q$ denotes a Dynkin quiver. 

\subsection{Quiver Heisenberg algebras of Dynkin type}

\subsubsection{Finiteness of  dimension in the Dynkin case}

By Corollary \ref{202111191825} if the weight $v$ is regular, then   the algebra $\YM$ is  finite dimensional. 
We prove the converse.

 A. Chan and R. Marczinzik first   observed that at a special  value of $v$, the algebra $ \YM$ becomes infinite dimensional. 
The authors thank them for sharing  their observation.

\begin{theorem}\label{202109131544}
Let $Q$ be a Dynkin quiver. 
Then $\YM$ is of finite dimension if and only if $v$ is regular. 
\end{theorem}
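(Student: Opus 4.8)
The proof splits into the two implications. The direction ``$v$ regular $\Rightarrow \dim_\kk {}^{v}\!\YM(Q)<\infty$'' is already contained in Corollary \ref{202111191825}, which for Dynkin $Q$ identifies ${}^{v}\!\YM(Q)\cong\bigoplus_{N\in\ind Q}N^{\oplus\dim N}$ as a $\kk Q$-module, a finite direct sum of finite dimensional modules. So the content is the converse.

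For the converse I would first reduce to a statement about the individual $*$-graded pieces. The algebra ${}^{v}\!\YM(Q)$ is $\NN$-graded by the $*$-degree, it is generated in $*$-degrees $0$ and $1$, and from the right exact sequences ${}^{v}\!\YM_{n-1}\xrightarrow{{}^{v}\!\varrho}{}^{v}\!\YM_{n}\to\Pi_{n}\to 0$ (the $*$-degree $n$ part of \eqref{202009212139gr}) together with $\dim\Pi_{n}<\infty$ (as $\Pi$ is finite dimensional in the Dynkin case) one obtains by induction that each ${}^{v}\!\YM_{n}$ is finite dimensional and that ${}^{v}\!\YM_{n}=0$ forces ${}^{v}\!\YM_{n+1}=0$. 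Hence $\dim_\kk{}^{v}\!\YM(Q)=\infty$ if and only if ${}^{v}\!\YM_{n}\neq 0$ for every $n\geq 0$. If $v$ is not sincere, say $v_{i_0}=0$, this is immediate: using the presentation ${}^{v}\!\YM(Q)=\kk[z]\overline{Q}/(\rho_{i}-v_{i}ze_{i})$ one sees that the quotient of ${}^{v}\!\YM(Q)$ by the two-sided ideal generated by all arrows of $\overline{Q}$ is $\prod_{i\in Q_{0}}\kk[z]/(v_{i}z)$, which contains the infinite dimensional factor $\kk[z]$ attached to $i_0$. Thus one is reduced to a $v$ that is sincere but not regular; then there is an indecomposable $M\in\ind\kk Q$ with ${}^{v}\!\Euch(M)=0$, and since $Q$ is Dynkin $\End_{\kk Q}(M)=\kk$, so $\dim\ResEnd_{\Pa}(M)=1$.

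The heart of the argument is the assertion that $M$ is a direct summand of ${}^{v}\!\YYM_{n}\lotimes_{\Pa}M$ in $\sfD(\Pa)$ for every $n\geq 0$. Granting this, since ${}^{v}\!\YYM_{n}$ lives in non-positive cohomological degrees one has $\tuH^{0}({}^{v}\!\YYM_{n}\lotimes_{\Pa}M)={}^{v}\!\YM_{n}\otimes_{\Pa}M$, which therefore has $M\neq 0$ as a direct summand; hence ${}^{v}\!\YM_{n}\neq 0$ for all $n$ and we are done by the preceding paragraph. The base case $n\leq 1$ is immediate: because ${}^{v}\!\Euch(M)=0$ and $\dim\ResEnd_{\Pa}(M)=1$, Theorem \ref{semi-universal Auslander-Reiten triangle}(2) says that ${}^{v}\!\sfAR_{M}$ splits, i.e.\ ${}^{v}\!\ttheta_{M}=0$ and ${}^{v}\!\YYM_{1}\lotimes_{\Pa}M\cong M\oplus\nu_{1}^{-1}M$. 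For the inductive step one would feed in the octahedral diagram of Corollary \ref{202008091343} tensored with $M$, in particular the identity $\ttheta_{n,M}\circ({}_{\PPi_{1}}\ppi_{n-1,M}[-1])=\ttheta_{1,{}^{v}\!\YYM_{n-1}\lotimes M}$ of Lemma \ref{202012211305}: writing ${}^{v}\!\YYM_{n-1}\lotimes_{\Pa}M\cong M\oplus N_{n-1}$ with $M$ a genuine summand, additivity of ${}^{v}\!\ttheta$ gives $\ttheta_{1,{}^{v}\!\YYM_{n-1}\lotimes M}={}^{v}\!\ttheta_{M}\oplus{}^{v}\!\ttheta_{N_{n-1}}=0\oplus{}^{v}\!\ttheta_{N_{n-1}}$, so composing with the retraction ${}^{v}\!\YYM_{n-1}\lotimes_{\Pa}M\twoheadrightarrow M$ annihilates it, and one tries to conclude that the $M$-component of the connecting morphism $\ttheta_{n,M}$ vanishes, so that the $M$-summand persists into ${}^{v}\!\YYM_{n}\lotimes_{\Pa}M=\cone(\eeta^{*}_{n,M})$.

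The main obstacle is exactly this last step. It works transparently for $n=2$, where ${}^{v}\!\ppi_{1,M}$ is a split epimorphism (because $\ttheta_{1,M}=0$) and can simply be cancelled; but for $n\geq 3$ the morphism ${}^{v}\!\ppi_{n-1,M}$ need not be split epic, so the vanishing of $\ttheta_{n,M}\circ({}_{\PPi_{1}}\ppi_{n-1,M}[-1])$ on the $M$-component does not immediately force that component itself to vanish. Closing the induction therefore requires either a strengthened hypothesis, keeping track of how $\ttheta_{n,M}$ factors through $N_{n-1}$ and of its interaction with the maps $\eeta^{*}_{n,M}$ and the ${}^{v}\!\YYM^{\mre}$-bimodule structure, or a different organisation of the argument — for instance analysing $\bigoplus_{n\geq 0}{}^{v}\!\YYM_{n}\lotimes_{\Pa}M$ globally through the Koszul-type resolution of $\Pa$ over ${}^{v}\!\YYM$ built in Section \ref{2021121160751}. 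This is where the real work of the ``only if'' direction lies.
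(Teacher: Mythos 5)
Your ``if'' direction, the reduction to showing $\YM_{n}\neq 0$ for all $n\geq 0$, the non-sincere case (quotienting by the ideal generated by the arrows is a clean, even simpler, alternative to the paper's degree argument with the loop presentation of Lemma \ref{202109131603}), and the base case $n\leq 1$ via Theorem \ref{semi-universal Auslander-Reiten triangle}(2) are all correct. But the inductive step you sketch for $n\geq 3$ --- trying to show that the $M$-component of the connecting morphism $\ttheta_{n,M}$ vanishes by means of Lemma \ref{202012211305} --- does not close, as you yourself concede, and that step is precisely the heart of the ``only if'' direction; so the proposal has a genuine gap.

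The missing idea is a Hom-vanishing statement, not any control of $\ttheta_{n,M}$. Granting inductively that $M$ is a direct summand of $\YYM_{n-1}\lotimes_{\Pa}M$, the case $n=1$ shows $M$ is a direct summand of $\YYM_{1}\lotimes_{\Pa}\YYM_{n-1}\lotimes_{\Pa}M$. Now use the exact triangle $\sfV_{n,M}$, namely $\PPi_{1}\lotimes_{\Pa}\YYM_{n-2}\lotimes_{\Pa}M\xrightarrow{\eeta^{*}_{n,M}}\YYM_{1}\lotimes_{\Pa}\YYM_{n-1}\lotimes_{\Pa}M\xrightarrow{\zzeta_{n,M}}\YYM_{n}\lotimes_{\Pa}M\to$. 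If one knows that $\Hom_{\Dbmod{\Pa}}(\PPi_{1}\lotimes_{\Pa}\YYM_{n-2}\lotimes_{\Pa}M,\,M)=0$, then any retraction of the middle term onto the summand $M$ kills $\eeta^{*}_{n,M}$ and therefore factors through $\zzeta_{n,M}$; composing with $\zzeta_{n,M}\circ(\text{inclusion of }M)$ yields the identity of $M$, so the summand $M$ survives in $\YYM_{n}\lotimes_{\Pa}M$, and then $\YM_{n}\otimes_{\Pa}M=\tuH^{0}(\YYM_{n}\lotimes_{\Pa}M)\neq 0$ as you wanted. The vanishing itself is proved by a secondary induction: since $Q$ is Dynkin, $\Hom_{\Dbmod{\Pa}}(\PPi_{k}\lotimes_{\Pa}M,M)=\Hom_{\Dbmod{\Pa}}(\nu_{1}^{-k}M,M)=0$ for all $k\geq 1$, and the exact triangles $\PPi_{1}\lotimes_{\Pa}\YYM_{k-1}\lotimes_{\Pa}M\to\PPi_{1}\lotimes_{\Pa}\YYM_{k}\lotimes_{\Pa}M\to\PPi_{k+1}\lotimes_{\Pa}M\to$ supplied by Theorem \ref{exact triangle U2} propagate the vanishing of $\Hom(-,M)$ from $k-1$ to $k$. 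This is how the paper closes the induction; your alternative suggestion of a global analysis through the Koszul-type resolution is not carried out there and would have to be developed from scratch.
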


We need to use another description of $\YM$, verification of which is left to the readers.

\begin{lemma}\label{202109131603}
Let $\overline{Q}^{\circlearrowleft}$ be a quiver obtained from $\overline{Q}$ 
by adding a loop $r_{i}$ to each vertex $i$.
For $i \in Q_{0}$ and $a \in \overline{Q}_{1}$,  
we set  $\eta'_{i} := \rho_{i} - v_{i}r_{i}, \  \zeta'_{a} = r_{t(a)}a- ar_{h(a)}$. 
\[
\begin{xymatrix}{ 
i \ar@(ld, lu)^{r_{i}}  \ar@/^10pt/[rr]^{\alpha} &&
 j  \ar@(rd,ru)_{r_{j}} \ar@/^10pt/[ll]^{\alpha^{*}} 
}\end{xymatrix}, \ \ 
\begin{tabular}{c|c|c|c|c}
& $e_{i}$ & $\alpha$ & $\alpha^{*}$ & $r_{i}$ \\ \hline
$\deg' $& $0$ & $1$  & $1$ & $2$ \\
\end{tabular}
\]
Then, the algebra $ \YM' := \kk \overline{Q}^{\circlearrowleft}/(\eta'_{i}, \zeta'_{a} \mid i \in Q_{0}, a \in \overline{Q}_{1})$ 
is isomorphic to $\YM$.

If moreover we equip  $\overline{Q}^{\circlearrowleft}$ with the grading $\deg'$ given in the table above, 
then 
$\deg'\eta'_{i} =2, \ \deg' \zeta'_{a} = 3$ and 
the graded algebra $\YM'$ is isomorphic to $\YM$ with the path length grading.
\end{lemma}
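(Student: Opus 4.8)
The plan is to produce an explicit pair of mutually inverse algebra homomorphisms between ${}^{v}\!\YM = \kk\overline{Q}/({}^{v}\!\eta_{a} \mid a \in \overline{Q}_{1})$ and ${}^{v}\!\YM'$. First I would define $\phi \colon \kk\overline{Q}^{\circlearrowleft} \to {}^{v}\!\YM$ on generators by $\phi(e_{i}) := e_{i}$, $\phi(\alpha) := \alpha$, $\phi(\alpha^{*}) := \alpha^{*}$ for $\alpha \in Q_{1}$, and $\phi(r_{i}) := v_{i}^{-1}\rho_{i} = {}^{v}\!\varrho_{i}$; in the other direction, define $\psi \colon \kk\overline{Q} \to {}^{v}\!\YM'$ by $\psi(e_{i}) := e_{i}$, $\psi(\alpha) := \alpha$, $\psi(\alpha^{*}) := \alpha^{*}$. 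The entire content of the first assertion is then that $\phi$ annihilates the relations $\eta'_{i}, \zeta'_{a}$ and $\psi$ annihilates the relations ${}^{v}\!\eta_{a}$, after which one checks the induced maps $\bar\phi, \bar\psi$ are mutually inverse by evaluating on generators.

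For $\phi$: one has $\phi(\eta'_{i}) = \rho_{i} - v_{i}\cdot v_{i}^{-1}\rho_{i} = 0$, and $\phi(\zeta'_{a}) = {}^{v}\!\varrho_{t(a)}\,a - a\,{}^{v}\!\varrho_{h(a)}$. Since ${}^{v}\!\varrho = \sum_{i}{}^{v}\!\varrho_{i}$ is central in ${}^{v}\!\YM$ and $a = e_{t(a)}ae_{h(a)}$, the idempotent bookkeeping $e_{h(a)}{}^{v}\!\varrho_{i} = \delta_{i,h(a)}{}^{v}\!\varrho_{h(a)}$ gives $a\,{}^{v}\!\varrho = a\,{}^{v}\!\varrho_{h(a)}$ and ${}^{v}\!\varrho\,a = {}^{v}\!\varrho_{t(a)}\,a$, so $\phi(\zeta'_{a}) = -[a, {}^{v}\!\varrho] = 0$. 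For $\psi$: in ${}^{v}\!\YM'$ the relations $\zeta'_{a}$ read $r_{t(a)}a = a\,r_{h(a)}$, which together with $r_{i}e_{j} = \delta_{ij}r_{i} = e_{j}r_{i}$ shows that $r := \sum_{i}r_{i}$ commutes with every algebra generator, hence is central; and the relations $\eta'_{i}$ give $\rho_{i} = v_{i}r_{i}$, whence $\psi({}^{v}\!\varrho) = \sum_{i}v_{i}^{-1}\rho_{i} = \sum_{i}r_{i} = r$ in ${}^{v}\!\YM'$. Therefore $\psi({}^{v}\!\eta_{a}) = [\psi(a), r] = 0$. Finally $\bar\phi\bar\psi$ fixes $e_{i}, \alpha, \alpha^{*}$, so equals $\mathrm{id}_{{}^{v}\!\YM}$, while $\bar\psi\bar\phi$ fixes $e_{i}, \alpha, \alpha^{*}$ and sends $r_{i}$ to $v_{i}^{-1}\psi(\rho_{i}) = v_{i}^{-1}\cdot v_{i}r_{i} = r_{i}$, so equals $\mathrm{id}_{{}^{v}\!\YM'}$. (Alternatively one can factor through the presentation of Lemma~\ref{202011242056}: the relations $\{\zeta'_{a}\}$ alone identify $\kk\overline{Q}^{\circlearrowleft}/(\zeta'_{a} \mid a)$ with $\kk[z]\overline{Q}$ via $r_{i} \leftrightarrow z e_{i}$, under which $\eta'_{i}$ corresponds to $\rho_{i} - v_{i}z e_{i}$, and then Lemma~\ref{202011242056} applies.)

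For the graded statement I would first note that the table assigns a $\deg'$-degree to all arrows of $\overline{Q}^{\circlearrowleft}$, and that each monomial of $\rho_{i} = \sum_{t(\alpha)=i}\alpha\alpha^{*} - \sum_{h(\alpha)=i}\alpha^{*}\alpha$ has $\deg' = 2$, as does $v_{i}r_{i}$, so $\eta'_{i}$ is $\deg'$-homogeneous of degree $2$; likewise $r_{t(a)}a$ and $a\,r_{h(a)}$ both have $\deg' = 3$, so $\zeta'_{a}$ is homogeneous of degree $3$, and ${}^{v}\!\YM'$ is a $\deg'$-graded algebra. On the other side one checks from the explicit formulas of Definition~\ref{quiver Heisenberg algebras} that every monomial occurring in ${}^{v}\!\eta_{\alpha}$ and ${}^{v}\!\eta_{\alpha^{*}}$ is a product of exactly three arrows of $\overline{Q}$, so the relations ${}^{v}\!\eta_{a}$ are homogeneous of path length $3$ and the path length grading descends to ${}^{v}\!\YM$. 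Since $\phi$ carries each $\deg'$-homogeneous generator to a path-length-homogeneous element of the same degree ($\alpha, \alpha^{*}$ to degree $1$, and $r_{i}$ to $v_{i}^{-1}\rho_{i}$ of path length $2 = \deg' r_{i}$), the isomorphism $\bar\phi$ is an isomorphism of graded algebras.

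The argument has essentially no hard step: it is entirely a matter of verifying that the stated homomorphisms kill the defining relations and that their composites are identities. The one place requiring a little care is the sign-and-idempotent bookkeeping showing $\phi(\zeta'_{a}) = -[a, {}^{v}\!\varrho]$ — in particular that one needs the full element ${}^{v}\!\varrho$, not merely the individual pieces ${}^{v}\!\varrho_{i}$, to be central in ${}^{v}\!\YM$ — together with the observation that the relations ${}^{v}\!\eta_{a}$ are genuinely homogeneous of path length $3$, which is precisely what makes the path length grading on ${}^{v}\!\YM$ well defined in the first place.
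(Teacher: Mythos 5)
Your proof is correct: the paper leaves this verification to the reader, and your explicit pair of homomorphisms $\phi$ (sending $r_{i}\mapsto {}^{v}\!\varrho_{i}$) and $\psi$, together with the check that they kill the defining relations, are mutually inverse, and respect $\deg'$ versus path length, is exactly the intended direct argument. The only point needing care — that $\phi(\zeta'_{a})=-[a,{}^{v}\!\varrho]$ via the idempotent bookkeeping, and that the relations ${}^{v}\!\eta_{a}$ are path-length homogeneous of degree $3$ — is handled correctly.
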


\begin{proof}[Proof of Theorem \ref{202109131544}]
We only have to prove that if there exists an indecomposable module $M$ such that ${}^{v}\!\Euch(M) = 0$, 
then $\dim  \YM = \infty$. 

First assume that  ${}^{v}\!\Euch(S_{i}) = 0$ for some simple module $S_{i}$ corresponding to a vertex $ i \in Q_{0}$ 
or equivalently assume $v_{i} = 0$. 
We prove $r_{i}^{n} \neq 0$ for any $n \geq 1$ inside $\YM'$ of Lemma \ref{202109131603}.
Assume that $r_{i}^{n} = 0$ for some $n \geq 1$ inside $\YM'$. 
Then inside the path algebra $\kk \overline{Q}^{\circlearrowleft}$, the element $r_{i}^{n}$ equals 
a sum of elements of the forms (1) $p \eta'_{j} q$ for some $j \in Q_{0}$  or (2) $r \zeta'_{a} s$  for some $a \in \overline{Q}_{1}$. 
Moreover we may assume that these elements are homogeneous with respect to the grading $\deg'$. 
Therefore in the case of (1) we have $\deg' p + \deg' q= 2n -2$, in the case (2) we have $\deg' r + \deg's = 2n -3$. 
By the assumption we have $\eta'_{i} = \rho_{i}$. It follows from degree reasons that monomials in elements of the forms (1) or (2) 
cannot contain $n$  $r_{i}$'s as their factors. A contradiction.

Next we assume that  ${}^{v}\!\Euch(M) = 0$ for some indecomposable module $M$, 
but   ${}^{v}\!\Euch(S_{i}) \neq 0$ for each simple module $S_{i}$. 
Note that the second assumption implies that $v$ is sincere. 

We claim that for any $n \geq 1$, the object $\YYM_{n} \lotimes_{\Pa} M$ of $\Dbmod{\Pa}$ contains $M$ as its direct summand.
We use the induction on $n$. 
By the assumption ${}^{v}\!\Euch(M) = 0$. Thus the case $n =1$  follows from Theorem \ref{semi-universal Auslander-Reiten triangle}. 

We deal with the case $n \geq 2$. Assume that the claim is verified in the case $n -1$. 
Then it follows from the case $n =1$ that $ \YYM_{1} \lotimes_{\Pa} \YM_{n -1} \lotimes_{\Pa} M$ contains $M$ 
as its direct summand. 
On the other hand, we have the exact triangle $\sfV_{n, M}$ below in $\Dbmod{\Pa}$
\[
\PPi_{1} \lotimes_{\Pa} \YM_{n -2} \lotimes_{\Pa} M \to  \YM_{1} \lotimes_{\Pa}  \YM_{n -1} \lotimes_{\Pa} M \to 
\YM_{n } \lotimes_{\Pa} M \to 
\PPi_{1} \lotimes_{\Pa} \YM_{n -2} \lotimes_{\Pa} M[1].
\]
Thus we only have to show 
$\Hom_{\Dbmod{\Pa}}(\PPi_{1} \lotimes_{\Pa} \YM_{n -2} \lotimes_{\Pa} M, M )= 0$. 

Note that $\Hom_{\Dbmod{\Pa}}(\PPi_{n } \lotimes_{\Pa} M, M) = \Hom_{\Dbmod{\Pa}}(\nu_{1}^{-n}(M), M) = 0$ for $n \geq 1$ since $Q$ is Dynkin.
By Theorem \ref{exact triangle U2} 
we have the exact triangle $\sfU_{n-1, M}$ below in $\Dbmod{\Pa}$ 
\[
\PPi_{1} \lotimes_{\Pa} \YYM_{n -3} \lotimes_{\Pa} M \to \PPi_{1} \lotimes_{\Pa}  \YYM_{n -2} \lotimes_{\Pa} M \to \PPi_{n -1}\lotimes_{\Pa} M \to
\PPi_{1} \lotimes_{\Pa} \YYM_{n -3} \lotimes_{\Pa} M[1].
\] 
Using them, we can deduce the desired vanishing condition by induction on $n$. 
This finishes the proof of the claim.

Recall that $\YM_{n} = \tuH^{0}(\YYM_{n})$ and $\tuH^{> 0}(\YYM) = 0$.
Thus we have $\YM_{n} \otimes_{\Pa} M = \tuH^{0}(\YYM_{n} \lotimes_{\Pa} M ) \neq 0$.
It follows from the claim that $ \YM_{n} \neq 0$ for any $n \geq 0$. 
\end{proof}

\subsubsection{Support $\tau$-tilting modules of $\YM$}

\begin{theorem}
Let $Q$ be a Dynkin quiver. Assume that  the weight $v \in \kk Q_{0}$ is regular. 
Then there is a bijection between the following sets. 

\begin{enumerate}[(1)]
\item The Weyl group $W_{Q}$ of $Q$. 

\item The set $\textup{s}\tau\textup{-tilt}\Lambda(Q)$ of isomorphism classes of basic support $\tau$-tilting $\Lambda$-modules
\end{enumerate}

\end{theorem}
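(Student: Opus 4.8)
The plan is to reduce the claimed bijection to a known correspondence for the preprojective algebra $\Pi(Q)$ of Dynkin type. Recall that by Mizuno's theorem there is a bijection between the Weyl group $W_Q$ and the set $\textup{s}\tau\textup{-tilt}\,\Pi(Q)$ of basic support $\tau$-tilting $\Pi(Q)$-modules, compatible with the poset structures (weak order on $W_Q$, and the $\tau$-tilting order). So it suffices to exhibit a bijection between $\textup{s}\tau\textup{-tilt}\,\Lambda(Q)$ and $\textup{s}\tau\textup{-tilt}\,\Pi(Q)$, where $\Lambda(Q) = {}^{v}\!\Lambda(Q)$ is the quiver Heisenberg algebra. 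The key structural input is the exact sequence of $\Lambda$-bimodules
\[
\Lambda \xrightarrow{\ {}^{v}\!\varrho\ } \Lambda \xrightarrow{\ {}^{v}\!\pi\ } \Pi \to 0
\]
together with the fact, following from Lemma~\ref{202011242056}, that ${}^{v}\!\varrho$ is a central regular element of $\Lambda$ (regularity since $\Lambda \cong \Pi_\bullet \otimes_R \kk[z]$ is flat over $\kk[z]$, as in Lemma~\ref{202207111415}, once one checks the Dynkin analogue, which follows from finiteness of $\dim\Lambda$ by Theorem~\ref{202109131544}). Thus $\Pi = \Lambda/({}^{v}\!\varrho)$ is a quotient of $\Lambda$ by a central element lying in the radical.

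First I would invoke the general principle that support $\tau$-tilting theory is insensitive to quotients by central elements in the radical. Precisely: if $z$ is a central element of a finite-dimensional algebra $A$ with $z \in \rad A$, then the reduction functor $A\text{-mod} \to (A/zA)\text{-mod}$ induces a bijection $\textup{s}\tau\textup{-tilt}\,A \xrightarrow{\sim} \textup{s}\tau\textup{-tilt}\,(A/zA)$ that is a poset isomorphism. This can be seen via 2-term silting complexes: $\textup{s}\tau\textup{-tilt}\,A$ is in bijection with 2-term silting complexes in $\Kbproj{A}$, and tensoring with $A/zA$ over $A$ sends 2-term silting complexes to 2-term silting complexes; centrality and $z\in\rad A$ guarantee this is a bijection (one uses that $-\otimes_A A/zA$ induces an isomorphism on the relevant $\Hom$ and $\Ext^1$ groups between projectives, since $\Lambda$ is a flat $\kk[z]$-algebra and $z$ acts as a non-zero-divisor). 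Alternatively, and perhaps more cleanly, one cites the characterization of $\textup{s}\tau\textup{-tilt}$ via functorially finite torsion classes and observes that the lattices of torsion classes of $\Lambda$ and $\Pi$ agree because every $\Lambda$-module is annihilated by a power of ${}^{v}\!\varrho$ and the torsion-class structure only sees the top layer.

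Then the composition $W_Q \xrightarrow{\sim} \textup{s}\tau\textup{-tilt}\,\Pi(Q) \xleftarrow{\sim} \textup{s}\tau\textup{-tilt}\,\Lambda(Q)$ gives the desired bijection. I expect the main obstacle to be the first step: establishing rigorously that quotienting by the central radical element ${}^{v}\!\varrho$ induces a bijection on support $\tau$-tilting modules. While morally clear, this requires care — one must verify that $-\lotimes_\Lambda \Pi$ behaves well on 2-term presilting complexes, which in turn rests on $\Lambda$ being finitely generated and free (or at least flat and finitely presented) as a module over $\kk[{}^{v}\!\varrho]$, so that no higher Tor obstructions appear. The regularity of ${}^{v}\!\varrho$ established earlier in the paper (and the finiteness $\dim\Lambda(Q)<\infty$ from Corollary~\ref{202111191828}) is exactly what makes this go through; the Weyl-group side is then a black-box citation to Mizuno. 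A secondary, more bookkeeping task is to confirm that the bijection is compatible with the natural poset/lattice structures, which one wants if the statement is later upgraded to a lattice isomorphism, though for the bare bijection claimed here it is not strictly needed.
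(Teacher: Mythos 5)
Your proposal follows essentially the same route as the paper: Mizuno's bijection $W_{Q} \cong \textup{s}\tau\textup{-tilt}\,\Pi(Q)$ combined with a reduction from ${}^{v}\!\YM(Q)$ to its quotient $\Pi(Q)={}^{v}\!\YM(Q)/({}^{v}\!\varrho)$ by the central radical element ${}^{v}\!\varrho$. The ``general principle'' you try to establish by hand is exactly the cited reduction theorem of Eisele--Janssens--Raedschelders \cite[Theorem 4.1]{EJR}, which applies to any ideal generated by central elements in the radical, so the regularity/flatness and Tor considerations you worry about are not actually needed.
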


\begin{proof}
Mizuno \cite[Theorem 1.1]{Mizuno} established  the first bijection 
\[
W_{Q} \cong \textup{s}\tau\textup{-tilt}\Pi(Q) \cong \textup{s}\tau\textup{-tilt}\YM(Q).   
\]
Since $\Pi(Q) = \YM(Q)/(\varrho)$ and $\varrho \in \tuZ(\YM(Q)) \cap \rad (\YM(Q))$, 
the second bijection is obtained by  a result \cite[Theorem 4.1]{EJR} by Eisele-Janssens-Raedschelders. 
\end{proof}

In subsequent work we study $\tau$-tilting theory of $\YM(Q)$ and in particular give a description of support $\tau$-tilting modules 
corresponding to $w \in W_{Q}$.

\subsection{The cohomology algebras of a derived quiver Heisenberg algebras of Dynkin type}

We study the cohomology algebra $\tuH(\YYM)$ of $\YYM(Q)$ of a Dynkin quiver. 

\begin{proposition}\label{formula 1}
Assume that $v \in \kk^{\times} Q_{0}$ is regular. 

Then, the following assertions hold in $\sfD(\Pa^{\mre})$.  
\begin{enumerate}[(1)]
\item 
For $0 \leq n \leq h -2$, 
the $*$-degree $n$ part $\YYM(Q)_{n}$ is concentrated in $0$-th cohomological degree 
and $\YYM(Q)_{n}$ is isomorphic to $\YM(Q)_{n}$.

\item $\YYM_{h-1} = 0.$

\item 
For a non-negative integer $m$, we have 
\[
\YYM_{m + h}  \cong \YYM_{m}[2].
\]

\item 
For $a \geq 0$ and $0 \leq b \leq h-1$, 
we have 
\[ 
\YYM_{ah +b} \cong \YYM_{b}[2a].
\]
In particular, 
\[
\tuH^{c}(\YYM_{ah +b} ) =
\begin{cases}
\YM_{b} & c = -2a\\ 
 0 & c \neq -2a
 \end{cases}
\]
\end{enumerate}
\end{proposition}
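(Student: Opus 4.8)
The plan is to prove the four statements essentially as a single induction on the $*$-degree, using the exact triangle $\sfU$ of Theorem \ref{exact triangle U} (or rather its $*$-degree $n$ piece \eqref{202008141924}) together with the structural input about $\PPi$ for Dynkin quivers. First I would record the two facts I need about the derived preprojective algebra: its $*$-degree $n$ part is $\PPi_n = \PPa^{\vee}[1] \lotimes_{\Pa} \cdots \lotimes_{\Pa} \PPa^{\vee}[1]$ ($n$ factors), so that $\PPi_n \lotimes_{\Pa} - \cong \nu_1^{-n}$, and the Miyachi--Yekutieli identification (Theorem \ref{Miyachi-Yekutieli lemma}) $\nu_1^{-h} \cong [2]$, which gives $\PPi_{m+h} \cong \PPi_m[2]$ as complexes of bimodules and, together with Lemma \ref{basics of place number Dynkin case}, pins down the vanishing/nonvanishing pattern of $\PPi_n$ (it is never zero, concentrated in a single cohomological degree determined by $p$).

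For statement (1), since $v$ is regular it has property (I)${}_{M,n}$ for every $M$ and every $n$, so Theorem \ref{right approximation theorem}(3),(4) apply to all objects; in particular, running the argument with $M = P_i$ for $i \in Q_0$, Corollary \ref{universal Auslander-Reiten triangle corollary} shows $\YYM_1 e_i$ is concentrated in cohomological degree $0$, and then induction via the exact triangle $\YYM_{n-1} e_i \to \YYM_n e_i \to \PPi_n e_i \to \YYM_{n-1}e_i[1]$ keeps this concentration as long as $\PPi_n e_i$ sits in degree $0$, which by the $p$-function is exactly the range $1 \le n \le h-2$ (here $\PPi_n e_i \cong \nu_1^{-n}P_i$ lies in $\Pa\mod$ iff $n \le h-2$ by Lemma \ref{basics of place number Dynkin case}(3) and Lemma \ref{basics of place number preliminary 2}). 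Concentration in degree $0$ plus $\YM_n = \tuH^0(\YYM_n)$ (Lemma \ref{202006121937}, applied degreewise) gives $\YYM_n \cong \YM_n$. Statement (2) is then immediate: it is exactly Corollary \ref{vanishing corollary}, or directly Proposition \ref{202110241503} applied to $M = P_i$ for all $i$, giving $\YYM_{h-1}e_i = 0$ hence $\YYM_{h-1}=0$ in $\sfD(\Pa^{\mre})$.

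For statement (3) the idea is: both $\YYM_{m+h}$ and $\YYM_m[2]$ satisfy the same recursion in the derived category, coming from the triangle \eqref{202008141924} and the shift-compatible recursion for $\PPi$. More precisely, I would argue by induction on $m$. The base cases $m = -1, 0, \dots$ need care because of the vanishing at $h-1$: for $m = h-1$ we have $\YYM_{2h-1}$, and I expect to get $\YYM_{h}\cong \YYM_0[2] = \Pa[2]$ first, then build up. Concretely, splice the triangles: $\YYM_{m+h-1} \to \YYM_{m+h} \to \PPi_{m+h} \to$, and compare with $\YYM_{m-1}[2] \to \YYM_m[2] \to \PPi_m[2] \to$ using $\PPi_{m+h}\cong\PPi_m[2]$, $\brho_{m+h}$ corresponding to $\brho_m[2]$, and $\ttheta_{m+h}$ to $\ttheta_m[2]$ (this last compatibility is where Theorem \ref{202101081603} / the interaction of $\ttheta$ with the autoequivalence $[2] = \nu_1^{-h}$ enters: $\nu_1^{-h} \cdot \ttheta$ should again be $\ttheta$ up to the scalar $\Psi^{-h}(v)$, and regularity forces these scalars to behave). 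An induction then produces the isomorphism $\YYM_{m+h}\cong\YYM_m[2]$ compatibly. Statement (4) is a formal consequence: write $n = ah+b$ with $0 \le b \le h-1$ and iterate (3) $a$ times to get $\YYM_{ah+b}\cong\YYM_b[2a]$; the cohomology computation follows since by (1)--(2) the complex $\YYM_b$ is concentrated in degree $0$ (equal to $\YM_b$) for $0 \le b \le h-2$ and is $0$ for $b = h-1$, so $\YYM_b[2a]$ is concentrated in degree $-2a$ with value $\YM_b$.

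The main obstacle I anticipate is statement (3): establishing that the isomorphism $\PPi_{m+h}\cong\PPi_m[2]$ promotes to a compatible isomorphism of the whole triangle $\sfU$ in $*$-degree $m+h$ with the $[2]$-shift of the one in $*$-degree $m$ — i.e., that not only the objects but the maps $\brho$ and $\ttheta$ match up. Naively one only knows the Miyachi--Yekutieli equivalence on objects; tracking it through the tensor-algebra structure of $\PPi$ and through the bimodule morphism $\ttheta$ (which is rigidly determined by Corollary \ref{202102061807} and Theorem \ref{trace formula}) is the delicate point, and this is presumably why the statement is phrased as Theorem \ref{cohomology algebra structure theorem} elsewhere and only the bimodule-level consequence is claimed here. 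An alternative, possibly cleaner route for (3) is to use Theorem \ref{pre left approximation theorem}(3): for each indecomposable $M$, $\YYM_n \lotimes_{\Pa} M \cong \bigoplus_N N^{\dim \irr^n(M,N)}$, and the knitting combinatorics of the AR-quiver of $\Dbmod{\Pa}$ for Dynkin $Q$ is periodic with the shift $[2]$ after $h$ steps in the $\rad^n$-filtration; comparing multiplicities $\dim\irr^{m+h}(M,N)$ with $\dim\irr^m(M[-2],N)$ would give the isomorphism after evaluating on a progenerator. I would try the triangle-chasing argument first and fall back on the multiplicity argument if the map-compatibility becomes too painful.
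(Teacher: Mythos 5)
Your justification of (1) contains a false claim, and your treatment of (3) is both incomplete and, in its fallback form, proves a weaker statement than the one asserted. For (1): it is not true that $\PPi_{n}e_{i}\cong\nu_{1}^{-n}P_{i}$ lies in $\Pa\mod$ for all $n\leq h-2$. Already for the $A_{2}$ quiver $1\to 2$ one has $h=3$, $P_{1}=I_{2}$, and $\nu_{1}^{-1}P_{1}\cong P_{2}[1]$, which sits in cohomological degree $-1$ although $n=1=h-2$; the same phenomenon occurs whenever the $\nu_{1}^{-1}$-orbit of $P_{i}$ reaches an injective before step $h-2$. Since your induction needs $\PPi_{n}e_{i}$ concentrated in degree $0$ to conclude anything from the triangle $\YYM_{n-1}e_{i}\to\YYM_{n}e_{i}\to\PPi_{n}e_{i}\to$, it breaks down exactly at these vertices, so (1) is not established by your argument. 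The paper's route is shorter and avoids this: by Theorem \ref{pre left approximation theorem} (which applies since a regular weight has property (I')${}_{M,n}$ for all $M$), $\YYM_{n}\lotimes_{\Pa}P_{i}=\YYM_{n}e_{i}$ is the target of a minimal left $\rad^{n}$-approximation of $P_{i}$, so every indecomposable summand $N$ admits a nonzero morphism from $P_{i}$ and hence lies in $\Pa\mod$; concentration in degree $0$ and $\YYM_{n}\cong\tuH^{0}(\YYM_{n})=\YM_{n}$ follow at once.

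For (3) you explicitly stop at the decisive point (compatibility of $\brho$ and $\ttheta$ with the identification $\PPi_{m+h}\cong\PPi_{m}[2]$), and your fallback via multiplicities $\dim\irr^{m+h}(M,N)$ only produces isomorphisms $\YYM_{m+h}\lotimes_{\Pa}M\cong\YYM_{m}[2]\lotimes_{\Pa}M$, i.e.\ an isomorphism in $\sfD(\Pa)$ after evaluation on a progenerator; the proposition asserts an isomorphism in $\sfD(\Pa^{\mre})$, and one-sided isomorphism of bimodules does not give bimodule isomorphism. The paper sidesteps all map-matching by exploiting (2) together with the multiplicative structure: taking the $*$-degree $h$ part of the triangle $\sfU$, the vanishing $\YYM_{h-1}=0$ forces $\ppi_{h}\colon\YYM_{h}\to\PPi_{h}$ to be a quasi-isomorphism, and since $\ppi$ is a morphism of dg-algebras this is automatically an isomorphism over $\Pa^{\mre}$; combined with $\PPi_{h}\cong\Pa[2]$ (Miyachi--Yekutieli, a statement in the derived Picard group, hence already bimodule-level) this settles $m=0$. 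For $m>0$ one forms the analogue $\sfV'_{m,h}$ of the triangles $\sfV_{n}$, whose third term has $\YYM_{h-1}=0$ as a tensor factor, so the multiplication map $\zzeta_{m,h}\colon\YYM_{m}\otimes_{\Pa}\YYM_{h}\to\YYM_{m+h}$ is an isomorphism in $\sfD(\Pa^{\mre})$, giving $\YYM_{m+h}\cong\YYM_{m}\lotimes_{\Pa}\Pa[2]=\YYM_{m}[2]$. If you want to salvage your approach, replace the inductive step in (1) by the approximation argument above and replace the triangle-matching in (3) by this use of $\YYM_{h-1}=0$ and the algebra structure; (2) and (4) are fine as you state them.
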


\begin{proof}
(1) follows from Theorem \ref{pre left approximation theorem}. 

(2) follows from  Corollary \ref{vanishing corollary}.

(3) 
First we deal with the case $m =0$. 
Taking the $*$-degree $h$ part of the exact triangle $\sfU$ of Theorem \ref{exact triangle U}, 
we obtain an exact triangle below  in $\sfD(\Pa)$ 
\[
\sfU_{h}: \YYM_{h -1} \xrightarrow{\sfr_{\rho}}  \YYM_{h} \xrightarrow{\ppi_{h}} \PPi_{h} \to \YYM_{h -1}[1].  
\]
It follows from (2) that $\ppi_{h}: \YYM_{h} \to \PPi_{h}$ is a quasi-isomorphism. 
Since $\ppi$ is a morphism of dg-algebras over $\Pa$, 
we conclude that $\ppi_{h}: \YYM_{h} \to \PPi_{h}$ is a quasi-isomorphism over $\Pa^{\mre}$. 

Let $m> 0$ be a positive integer. 
Generalizing the construction of the exact triangle $\sfV_{n}$ given in Section \ref{202106020843},  
we can construct an exact triangle $\sfV_{m,h}$ in $\sfK(\Pa^{\mre})$ of the following form
\[
\sfV'_{m,h}: \YYM_{m -1} \otimes_{ \Pa} \PPi_{1} \otimes_{\Pa} \YYM_{ h-1} 
\xrightarrow{ \eeta^{*}_{m,h}}  
\YYM_{m}  \otimes_{\Pa} \YYM_{h}  \xrightarrow{\zzeta_{m,h}} \YYM_{ m+h} \to \YYM_{m -1} \otimes_{ \Pa} \PPi_{1} \otimes_{\Pa} \YYM_{ h-1}[1].
\]
Using  (2) and Theorem \ref{Miyachi-Yekutieli lemma}, 
we conclude that the second morphism $\zzeta_{m,h}$ gives the desired isomorphism $\YYM_{m}[2] \cong \YYM_{m +h}$.

(4) immediately follows from (1) and (3).
\end{proof}

We determine the algebra structure of the cohomology algebra $\tuH(\YYM)$. 
Note that $\tuH(\YYM)$ acquires the cohomological grading and the $*$-grading.

\begin{theorem}\label{cohomology algebra structure theorem}
Let $Q$ be a Dynkin quiver and $\YYM := \YYM(Q)$. 
Assume that the weight  $v \in \kk Q_{0}$ is regular.

We identify $\tuH^{0}(\YYM) $ with $\YM = \YM(Q)$. 
Then the cohomology algebra $\tuH(\YYM)$ has a central element $u \in \tuH^{-2}(\YYM_{h})$ of 
cohomological degree $-2$ and of $*$-degree $h$ 
which induces an isomorphism 
\[
\tuH(\YYM) \cong \YM[u]
\]
of algebras with cohomological degrees and $*$-gradings 
where the right hand side denotes the polynomial algebra in a single variable $u$. 
\end{theorem}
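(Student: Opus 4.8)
The key structural input is Proposition \ref{formula 1}, which tells us exactly what $\tuH(\YYM)$ looks like as a bigraded vector space: writing $n=ah+b$ with $0\le b\le h-1$, we have $\tuH^{c}(\YYM_{n})=\YM_{b}$ if $c=-2a$ and $0$ otherwise, and moreover $\YYM_{m+h}\cong\YYM_{m}[2]$ in $\sfD(\Pa^{\mre})$. So as a bigraded vector space $\tuH(\YYM)$ is exactly $\YM\otimes_{\kk}\kk[u]$ with $u$ placed in cohomological degree $-2$ and $*$-degree $h$; the whole content of the theorem is to upgrade this identification to an algebra isomorphism, which amounts to producing a single central class $u\in\tuH^{-2}(\YYM_{h})$ whose powers $u^{a}$ induce the isomorphisms $\YM_{b}=\tuH^{0}(\YYM_{b})\xrightarrow{\cdot u^{a}}\tuH^{-2a}(\YYM_{ah+b})$ from Proposition \ref{formula 1}(3)--(4).

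First I would construct the candidate $u$. Take the $*$-degree $h$ part of the exact triangle $\sfU$ (Theorem \ref{exact triangle U}): since $\YYM_{h-1}=0$ by Proposition \ref{formula 1}(2), the dg-algebra map $\ppi_{h}:\YYM_{h}\to\PPi_{h}$ is a quasi-isomorphism, and since $Q$ is Dynkin, $\PPi_{h}$ is concentrated in cohomological degree $-2$ (this is visible from Theorem \ref{Miyachi-Yekutieli lemma}: $\PPi_{h}\simeq\PPa^{\vee}[1]\lotimes\cdots$ iterated $h$ times, which computes $\nu_{1}^{-h}(\Pa)\cong\Pa[2]$ as a bimodule complex). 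In particular $\tuH^{-2}(\YYM_{h})\cong\tuH^{-2}(\PPi_{h})\cong\Pa$ as $\Pa$-bimodules, one-dimensional over the centre on each vertex. I would define $u$ to be the element of $\tuH^{-2}(\YYM_{h})$ corresponding under $\ppi_{h}$ to the canonical generator $1\in\tuH^{-2}(\PPi_{h})\cong\Pa$ (equivalently, to $\id$ under the identification $\PPi_{h}\cong\Pa[2]$). Centrality of $u$ in $\tuH(\YYM)$ should follow because $\varrho$ is cohomologically central (Lemma \ref{homotopy proposition}), hence so are the bimodule morphisms built from it; more precisely, $u$ is the image under $\ppi$ of the central idempotent-like class in $\tuH(\PPi)$, and $\PPi_{h}\cong\Pa[2]$ generates the centre of $\tuH(\PPi)$ over $\Pi$, so one transports centrality back along the algebra map $\ppi$. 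Alternatively, centrality in each $*$-degree can be checked by showing that multiplication by $u$ on $\YYM_{b}$ coincides with the isomorphism $\YYM_{b}[2]\cong\YYM_{b+h}$ of Proposition \ref{formula 1}(3), which is a bimodule isomorphism and hence commutes with left and right actions.

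The heart of the argument is then to verify that multiplication by $u^{a}$ realises the isomorphism $\tuH^{0}(\YYM_{b})\xrightarrow{\cong}\tuH^{-2a}(\YYM_{ah+b})$. For this I would use the exact triangles $\sfV'_{m,h}$ constructed in the proof of Proposition \ref{formula 1}(3), which give $\YYM_{m}\lotimes_{\Pa}\YYM_{h}\xrightarrow{\zzeta_{m,h}}\YYM_{m+h}$ together with the vanishing of the third term (coming again from $\YYM_{h-1}=0$ and Dynkinness), so that $\zzeta_{m,h}$ is the very quasi-isomorphism inducing $\YYM_{m}[2]\cong\YYM_{m+h}$. By construction the class $u$ is $[1\otimes u_{h}]$ where $u_{h}\in\YYM_{h}$ represents the generator, so on cohomology $\cdot u$ is precisely $\tuH(\zzeta_{m,h})\circ(-\otimes u_{h})$, which is the isomorphism in question; iterating gives the statement for $u^{a}$ by associativity of $\zzeta$. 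Finally, to conclude that the induced map $\YM[u]\to\tuH(\YYM)$ is an algebra isomorphism, one checks it is bigraded, a ring map (clear since $u$ is central and $\YM=\tuH^{0}$ is a subalgebra), surjective (every $\tuH^{-2a}(\YYM_{ah+b})$ is hit by $\YM_{b}\cdot u^{a}$), and injective (by the dimension count from Proposition \ref{formula 1}(4), which matches $\dim\tuH^{-2a}(\YYM_{ah+b})=\dim\YM_{b}=\dim(\YM[u])^{-2a}_{ah+b}$). The main obstacle I anticipate is the bookkeeping needed to prove centrality of $u$ rigorously and to check that $\cdot u$ is genuinely the multiplication-by-$u_{h}$ map followed by $\zzeta_{m,h}$ on the nose (not merely up to a scalar depending on $m$); getting the scalars to be $1$ uniformly is where the identification of $u$ via $\ppi$ with the canonical generator of $\tuH(\PPi)$ — rather than an arbitrary basis vector of $\tuH^{-2}(\YYM_{h})$ — does the work, since $\ppi$ is a strict dg-algebra homomorphism and the multiplicativity of $\zzeta$ is compatible with that of $\PPi$.
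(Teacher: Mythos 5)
Your construction of $u$ (via $\YYM_{h}\cong\Pa[2]$, equivalently via the quasi-isomorphism $\ppi_{h}$ and $\PPi_{h}\simeq\Pa[2]$), your use of the multiplication quasi-isomorphisms $\YYM_{m}\lotimes_{\Pa}\YYM_{h}\to\YYM_{m+h}$ to see that $\YM$ and $u$ generate, and the surjectivity/injectivity count from Proposition \ref{formula 1} all match the paper's argument. The genuine gap is centrality of $u$, which is exactly the point you flag as "bookkeeping" but which neither of your two sketched routes actually establishes. Transporting centrality back along $\ppi$ cannot work: $\ppi:\tuH(\YYM)\to\tuH(\PPi)$ is far from injective (its kernel contains $\varrho\,\tuH(\YYM)$), so knowing that the image of $u$ is central in $\tuH(\PPi)$ says nothing about $u$ commuting with elements of $\tuH(\YYM)$ that are invisible to $\ppi$. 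Your alternative — that multiplication by $u$ coincides with the bimodule isomorphism $\YYM_{b}[2]\cong\YYM_{b+h}$ and "hence commutes with left and right actions" — only proves that $u$ commutes with the $*$-degree-zero part $\Pa=\YM_{0}$, because the bimodule structure in question is over $\Pa$. It does not touch the generators $\alpha^{*}$ of $*$-degree $1$: both $\sfl_{u}$ and $\sfr_{u}$ can perfectly well be $\Pa$-bimodule isomorphisms in each $*$-degree without being equal to each other, and equality is precisely what centrality demands.

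The paper closes this gap with a separate argument that uses Dynkin-specific input you have not invoked. It sets $\phi:=\sfl_{u}^{-1}\sfr_{u}$, the unique map with $xu=u\phi(x)$, and shows $\phi=\id$ as follows: $\phi$ is an algebra automorphism fixing $u$ and fixing $\Pa$ (this last point is where the bimodule nature of $\YYM_{h}\cong\Pa[2]$ is used, and it is all that it gives); then $\phi$ restricts to a $\Pa$-bimodule automorphism of $\YM_{1}$, and since a Dynkin quiver has no multiple arrows this forces $\phi(\alpha^{*})=c_{\alpha}\alpha^{*}$ for scalars $c_{\alpha}$; finally, since $\varrho$ is central in $\tuH(\YYM)$ (Lemma \ref{homotopy proposition}) one has $\phi(\varrho)=\varrho$, and the linear independence of the elements $v_{t(\alpha)}^{-1}\alpha\alpha^{*}-v_{h(\alpha)}^{-1}\alpha^{*}\alpha$ in $\YM$ (Lemma \ref{202006221721}) forces every $c_{\alpha}=1$, hence $\phi=\id$ on the generators $\alpha,\alpha^{*}$ and so on all of $\YM$. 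Without some argument of this kind — in particular without using no-multiple-arrows and the independence lemma, or a substitute for them — your proof does not yield that $u$ commutes with $\alpha^{*}$, and the isomorphism $\tuH(\YYM)\cong\YM[u]$ as algebras is not established.
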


We point out an immediate consequence. 

\begin{corollary}\label{cohomology algebra structure corollary}
Let $Q$ be a Dynkin quiver. 
Then, for each $m \geq 0$, 
the $-2m$-th cohomology group $\tuH^{-2m}(\YYM(Q))$ is isomorphic to $\YM(Q)$ 
as a bimodule over $\YM(Q) = \tuH^{0}(\YYM(Q))$. 
\end{corollary}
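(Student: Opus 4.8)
\textbf{Plan of proof for Theorem \ref{cohomology algebra structure theorem}.}
The strategy is to produce the central element $u$ explicitly as a cohomology class represented by (a zero-cycle inside) $\YYM_{h}$, and then leverage the $*$-graded isomorphisms of Proposition \ref{formula 1} to identify $\tuH(\YYM)$ with the polynomial ring $\YM[u]$ degree by degree. First I would invoke Proposition \ref{formula 1}(3) with $m=0$: its proof shows that $\ppi_{h}:\YYM_{h}\to\PPi_{h}$ is a quasi-isomorphism over $\Pa^{\mre}$, and that $\PPi_{h}\cong\PPi_{0}[2]=\Pa[2]$ (using $\YYM_{h-1}=0$ and Theorem \ref{Miyachi-Yekutieli lemma}). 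Hence $\tuH(\YYM_{h})$ is concentrated in cohomological degree $-2$ and, as a $\Pa^{\mre}$-module, is isomorphic to $\Pa=\YM_{0}$. I would pick a distinguished class $u\in\tuH^{-2}(\YYM_{h})$ corresponding to $1\in\Pa$ under this identification; equivalently, $u=\ppi_{h}^{-1}(\text{the canonical generator of }\PPi_{h}\cong\Pa[2])$ at the level of cohomology. Because $\ppi:\YYM\to\PPi$ is a morphism of $*$-graded dg-algebras and $\varrho$ is cohomologically central in $\YYM$ by Lemma \ref{homotopy proposition}, one checks that $u$ is central in $\tuH(\YYM)$: multiplication by $u$ is computed through the (cohomology-level) multiplication maps $\zzeta_{m,h}$ appearing in the exact triangles $\sfV'_{m,h}$ of the proof of Proposition \ref{formula 1}(3), and centrality of $\varrho$ together with the explicit comparison with $\PPi$ forces $[x,u]=0$ for all $x$.

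Next I would show that multiplication by $u$ realizes exactly the shift isomorphisms $\YYM_{m}[2]\xrightarrow{\ \sim\ }\YYM_{m+h}$ of Proposition \ref{formula 1}(3)–(4) at the level of cohomology. Concretely, for $0\le b\le h-1$ and $a\ge0$ the map $u^{a}\cdot(-):\tuH^{0}(\YYM_{b})=\YM_{b}\to\tuH^{-2a}(\YYM_{ah+b})$ is an isomorphism of $\YM$-bimodules; this is precisely the iterated application of $\zzeta_{(\bullet),h}$, which Proposition \ref{formula 1}(3) identifies with the shift. Combining this over all $a,b$, the natural map of $*$-graded, cohomologically graded algebras
\[
\YM[u]\longrightarrow\tuH(\YYM),\qquad x\,u^{a}\mapsto x\cdot u^{a},
\]
(well-defined because $u$ is central) is bijective in every bidegree $(c,n)$: by Proposition \ref{formula 1}(4) the target in cohomological degree $c$ and $*$-degree $n$ is $\YM_{b}$ when $(c,n)=(-2a,ah+b)$ with $0\le b\le h-1$ and is $0$ otherwise, which matches the source. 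Finally I would note that the map is a ring homomorphism: the only nontrivial products to check are $\YM_{b}\cdot\YM_{b'}$ landing in the appropriate graded piece when $b+b'\ge h$, which reduces to the associativity compatibility between $\zzeta_{m,h}$ and the multiplication of $\YM=\tuH^{0}(\YYM)$ expressed through the triangles $\sfV'_{m,h}$ and $\sfU_{h}$.

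\textbf{The main obstacle.} The delicate point is \emph{centrality} of $u$ and, relatedly, the verification that the abstract isomorphisms of Proposition \ref{formula 1} are compatible with the algebra structure, i.e. that the candidate map $\YM[u]\to\tuH(\YYM)$ is multiplicative and not merely an isomorphism of bigraded bimodules. One must track carefully how the multiplication maps $\zzeta_{m,h}:\YYM_{m}\otimes_{\Pa}\YYM_{h}\to\YYM_{m+h}$ interact with $\zzeta$ in general $*$-degrees, using the exact triangles $\sfV'_{m,h}$ together with $\ppi_{h}$ being a quasi-isomorphism of dg-algebras; the cohomological centrality of $\varrho$ from Lemma \ref{homotopy proposition} is what ultimately makes $u$ central, because $u$ is, up to the comparison with $\PPi$, a power of $\varrho$ promoted across the quasi-isomorphism $\YYM_{h}\simeq\PPi_{h}$. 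I would organize this as: (i) construct $u$ and record its bidegree; (ii) prove $u$ central; (iii) prove $u^{a}\cdot(-)$ is the shift isomorphism; (iv) conclude bijectivity and multiplicativity bidegree-wise. Step (ii) is where essentially all the work lies, and it is where I expect the argument to require the most care with signs and with the identifications built in Section \ref{section: the derived quiver Heisenberg algebras}.

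\textbf{Proof of Corollary \ref{cohomology algebra structure corollary}.} Given Theorem \ref{cohomology algebra structure theorem}, for $Q$ Dynkin and any regular weight $v$ the isomorphism $\tuH(\YYM(Q))\cong\YM(Q)[u]$ of bigraded algebras restricts in cohomological degree $-2m$ to an isomorphism of $\YM(Q)$-bimodules $\tuH^{-2m}(\YYM(Q))\cong\YM(Q)\cdot u^{m}\cong\YM(Q)$, since $u$ is central (so left and right module structures agree with the standard ones) and $u^{m}$ is a free generator of that graded piece over $\YM(Q)=\tuH^{0}(\YYM(Q))$. For $Q$ Dynkin but $v$ not necessarily regular the statement still holds because one may deform: the locus of regular weights is Zariski dense (Example \ref{202102071418}), and, alternatively, the cohomology bimodules $\tuH^{-2m}(\YYM(Q))$ are computed from the flat family $\Pi_{\bullet}$ of Theorem \ref{2022071015211}; in any case, the corollary is only asserted as a consequence of the theorem, so under the theorem's hypothesis (regular $v$) the displayed argument suffices. \qed
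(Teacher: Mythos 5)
Your derivation of the corollary from Theorem \ref{cohomology algebra structure theorem} is correct and is exactly the paper's route: the paper records the corollary as an immediate consequence of the theorem, reading off $\tuH^{-2m}(\YYM)=\YM\,u^{m}\cong\YM$ as $\YM$-bimodules from the centrality of $u$ (your lengthy plan for the theorem itself is not needed, since the theorem may be quoted). One caution: your aside that the statement would persist for non-regular sincere weights by a Zariski-density/deformation argument is unjustified — cohomology need not behave well under specialization, and every input (Proposition \ref{formula 1}, Corollary \ref{vanishing corollary}) requires regularity — but since you ultimately fall back on the theorem's hypothesis, as the paper implicitly does, this does not affect the proof.
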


\begin{question}
Is there an explicit formula of a cycle $\tilde{u}$ of $\YYM(Q)$ 
whose cohomology class coincides with $u$?
\end{question}

To prove Theorem \ref{cohomology algebra structure theorem}, we need a preparation. 
\begin{lemma}\label{202006221721}
The subset $\{ v_{t(\alpha)}^{-1}\alpha \alpha^{*} - v_{h(\alpha)}^{-1}\alpha^{*} \alpha \in  \YM \mid \alpha \in Q_{1} \}$ of $\YM$ is linearly independent over $\kk$. 
\end{lemma}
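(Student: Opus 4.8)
The statement concerns the quiver Heisenberg algebra $\YM = {}^{v}\!\YM(Q)$ for a Dynkin quiver $Q$ with regular weight $v$, and asserts that the elements
\[
w_\alpha := v_{t(\alpha)}^{-1}\alpha\alpha^{*} - v_{h(\alpha)}^{-1}\alpha^{*}\alpha \in \YM_2, \qquad \alpha \in Q_1,
\]
are linearly independent over $\kk$. The natural strategy is to reduce this to a statement about the preprojective algebra $\Pi = \Pi(Q)$ or, better, to exploit the decomposition of $\YM$ as a $\kk Q$-module given by Theorem~\ref{description of QHA as kQ-modules}(1). First I would observe that each $w_\alpha$ is a homogeneous element of $*$-degree $2$, concentrated (via the vertex idempotents) at vertex $h(\alpha) = t(\alpha^{*})$: more precisely, writing $j = h(\alpha)$, we have $w_\alpha = v_{t(\alpha)}^{-1}\alpha\alpha^{*}e_j - v_j^{-1}e_j\alpha^{*}\alpha$, and in fact $w_\alpha = v_{t(\alpha)}^{-1}\alpha\alpha^{*} - v_j^{-1}\alpha^{*}\alpha$ lives in $e_j \YM_2 e_{t(\alpha)} \cdot(\text{something})$... actually one must be careful: $\alpha\alpha^{*}$ is a path $t(\alpha)\to h(\alpha)\to t(\alpha)$, so $w_\alpha \in e_{t(\alpha)}\YM_2 e_{t(\alpha)}$. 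So $w_\alpha$ lies in $e_i\YM_2 e_i$ where $i = t(\alpha)$. Grouping the arrows by their tail $i\in Q_0$, it suffices to show that for each fixed $i$, the set $\{w_\alpha \mid \alpha\in Q_1,\ t(\alpha) = i\}$ is linearly independent in $e_i\YM_2 e_i$, and then that these subspaces for distinct $i$ are contained in the distinct summands $e_i\YM e_i$, which are linearly independent automatically.

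The key step is therefore a local computation at a fixed vertex $i$. Here I would pass to the image of $w_\alpha$ under the canonical surjection ${}^{v}\!\pi\colon \YM \to \Pi$. Since ${}^{v}\!\pi$ sends $w_\alpha$ to $v_i^{-1}\alpha\alpha^{*} - v_{h(\alpha)}^{-1}\alpha^{*}\alpha$ (the same formula, now in $\Pi$), and since $\Pi$ is a quotient of $\YM$, linear independence in $\Pi$ implies linear independence in $\YM$. So the problem reduces to: \emph{for a Dynkin quiver $Q$ with $\kk$ arbitrary, the elements $v_i^{-1}\alpha\alpha^{*} - v_{h(\alpha)}^{-1}\alpha^{*}\alpha$ of $\Pi$, for $\alpha$ ranging over arrows with $t(\alpha) = i$, are linearly independent}. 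This I would attack via the description $\Pi e_i \cong \bigoplus_{n\ge 0}\tau_1^{-n}P_i$ from Theorem~\ref{202011062314}(1), equivalently $\Pi_2 e_i \cong \tau_1^{-2}P_i$: the space $e_i\Pi_2 e_i = e_i\tau_1^{-2}P_i = \Hom_{\kk Q}(P_i, \tau_1^{-2}P_i)$. Alternatively — and this may be cleaner — I would use the mesh relations directly: in $\Pi$, the mesh relation at $i$ reads $\rho_i = \sum_{\beta: t(\beta)=i}\beta\beta^{*} - \sum_{\beta: h(\beta)=i}\beta^{*}\beta = 0$, so the degree-$2$ paths of the form $\beta\beta^{*}$ with $t(\beta)=i$ satisfy exactly one linear relation among themselves modulo the $\beta^{*}\beta$ terms. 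One must check that there are no further relations: the span of $\{\alpha\alpha^{*} \mid t(\alpha)=i\}$ in $\kk\overline{Q}$ has dimension equal to the number of such arrows, and the relations imposed in passing to $\Pi$ are generated by $\rho_j$ for all $j$; a degree argument shows that only $\rho_i$ and $\rho_{h(\alpha)}$ (for the various $\alpha$) can contribute to degree-$2$ relations involving these paths, and a careful accounting shows the resulting relation space is spanned precisely in a way that still leaves the $w_\alpha$ independent. Concretely: if $\sum_\alpha c_\alpha w_\alpha = 0$ in $\Pi$, lift to $\kk\overline{Q}$ to get $\sum_\alpha c_\alpha(v_i^{-1}\alpha\alpha^{*} - v_{h(\alpha)}^{-1}\alpha^{*}\alpha) = \sum_j p_j\rho_j q_j$ with homogeneous $p_j, q_j$; matching the $\alpha\alpha^{*}$-components (paths of the shape $i \to \bullet \to i$) forces the combination to be a scalar multiple of $\rho_i$ restricted to those terms, hence $c_\alpha v_i^{-1} = \lambda$ for all $\alpha$ with $t(\alpha)=i$ and some scalar $\lambda$; then matching the $\alpha^{*}\alpha$-components (paths $i \to h(\alpha) \to i$, which for distinct $h(\alpha)$ values cannot interact, and whose only available relation is $\rho_{h(\alpha)}$) forces a compatible constraint; regularity of $v$ (which in particular ensures all the relevant sums $\sum_k v_k\dim(e_k M)$ are nonzero, e.g.\ for $M$ running over the indecomposables appearing in the socle/top structure of $\tau^{-2}P_i$) is used to conclude $\lambda = 0$ and hence all $c_\alpha = 0$.

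\textbf{Main obstacle.} The delicate point is the bookkeeping in the last paragraph: in $\kk\overline{Q}$ the degree-$2$ paths at vertex $i$ of the two shapes $\alpha\alpha^{*}$ and $\alpha^{*}\alpha$ are all distinct basis vectors, but when we impose the mesh relations we must identify exactly which linear combinations become zero, and for a general Dynkin quiver several arrows can share the same head. I expect the cleanest route is actually to avoid the path-combinatorial argument and instead invoke the universal Auslander–Reiten sequence (Theorem~\ref{universal AR-sequence theorem}) together with Theorem~\ref{description of QHA as kQ-modules}: the element $w_\alpha$ is, up to scalar, the image of $\alpha^{*}$ under a canonical map, and the $*$-degree-$2$ part ${}^{v}\!\YM_2 e_i \cong \bigoplus_{n}$ (indecomposables read off from the AR-quiver) has a controlled composition structure in which the $w_\alpha$ manifestly form part of a basis of a radical layer. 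So the expected proof in the paper probably realizes $\{w_\alpha\}$ inside a specific known direct summand and quotes the module-structure theorem; the obstacle is matching the explicit algebraic elements $w_\alpha$ with the abstract basis coming from that structure theorem, which requires tracing through the identification ${}^{v}\!\YM_2 \cong \cone(\hat\eta)$ (Observation~\ref{202006122126} and the subsequent lemma) and the morphism ${}^{v}\!\rrho$. I would first check whether the straightforward $\kk\overline{Q}$-computation (it is, after all, a finite explicit check once one fixes how the mesh relations act in degree $2$) already suffices, since the statement is used immediately afterward and its proof is likely short; only if that bogs down would I fall back on the structure-theorem argument.
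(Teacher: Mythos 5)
Your central reduction is wrong, and it is wrong in a way that kills the whole first approach: you claim that the images of $w_{\alpha}=v_{t(\alpha)}^{-1}\alpha\alpha^{*}-v_{h(\alpha)}^{-1}\alpha^{*}\alpha$ under the projection ${}^{v}\!\pi\colon \YM\to\Pi$ are linearly independent in $\Pi$, and then pull independence back. But the mesh relations are exactly relations among these length-two monomials, so the images are in general dependent and can even vanish. Already for $Q=A_{2}$ (one arrow $\alpha\colon 1\to 2$) one has $\rho_{1}=\alpha\alpha^{*}$ and $\rho_{2}=-\alpha^{*}\alpha$, so $\alpha\alpha^{*}=0=\alpha^{*}\alpha$ in $\Pi$ and the unique $w_{\alpha}$ maps to $0$; more generally every source or sink vertex kills monomials of this shape. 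Your fallback computation suffers from the same confusion of algebras: you lift a relation to $\kk\overline{Q}$ and try to write it as $\sum_{j}p_{j}\rho_{j}q_{j}$, i.e.\ you are imposing the \emph{preprojective} relations, whereas $\YM$ is $\kk\overline{Q}$ modulo the cubic commutators ${}^{v}\!\eta_{a}=[a,{}^{v}\!\varrho]$. (A small symptom of the same slip: $w_{\alpha}$ has $*$-degree $1$ and path-length $2$, not $*$-degree $2$, so it is not an element of $\YM_{2}$.)

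Working with the correct relations makes the statement essentially trivial, and this is the paper's proof: equip $\kk\overline{Q}$ with the bigrading $\bideg\alpha=(1,0)$, $\bideg\alpha^{*}=(0,1)$. Then ${}^{v}\!\eta_{\alpha}$ and ${}^{v}\!\eta_{\alpha^{*}}$ are homogeneous of bidegrees $(2,1)$ and $(1,2)$, so the ideal they generate has zero component in bidegree $(1,1)$, whence $\YM_{(1,1)}\cong\kk\overline{Q}_{(1,1)}$. In $\kk\overline{Q}_{(1,1)}$ the paths $\alpha\alpha^{*}$ and $\alpha^{*}\alpha$ ($\alpha\in Q_{1}$) are pairwise distinct basis elements, so the $w_{\alpha}$ are linearly independent there, hence in $\YM$. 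Note also that this argument needs neither the Dynkin hypothesis nor regularity of $v$, so your alternative plan of invoking Theorem \ref{description of QHA as kQ-modules} or the universal AR-sequence (which do require such hypotheses, and a nontrivial identification of the $w_{\alpha}$ inside an abstract decomposition) is both heavier and unnecessary.
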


\begin{proof}
We  equip $\overline{Q}$ with a bigrading 
by setting $\bideg \alpha :=(1, 0) \ \bideg \alpha^{*} =  (0,1)$. 
Then $\bideg  \eta_{\alpha} =( 2,1), \bideg \eta_{\alpha^{*}} = (1,2)$ 
and the bigrading descends to $\YM$. 
It follows from the bidegrees of relations that the bidegree $(1,1)$-component $\YM_{11}$ 
is isomorphic to $\kk \overline{Q}_{11}$. 
Since the  subset $\{ v_{t(\alpha)}^{-1}\alpha \alpha^{*} - v_{h(\alpha)}^{-1}\alpha^{*} \alpha \in \kk \overline{Q} \mid \alpha \in Q_{1} \}$ is 
a linear independent subset of $\kk \overline{Q}_{11}$, 
we deduce the desired conclusion. 
\end{proof}

\begin{proof}[Proof of Theorem \ref{cohomology algebra structure theorem}]
First note that by Proposition \ref{formula 1}(4), 
the cohomology algebra $\tuH(\YYM)$ is concentrated in non-positive even cohomological degrees. 

We fix an isomorphism $\YYM_{h} \cong A[2]$ of Proposition \ref{formula 1}(3) in $\sfD(\Pa^{\mre})$. 
Let $u \in \tuH^{-2}(\YYM_{h})$ be an element that corresponds to $1 \in A$ via $\YYM_{h} \cong A[2]$. 
We claim that $u$ and $\YM$ generates $\tuH(\YYM)$ as an algebra. 

Let $m \geq 0$. In the proof of Proposition \ref{formula 1}(3), 
it was shown that the multiplication morphisms $\YYM_{m} \lotimes_{\Pa} \YYM_{h} \to \YYM_{m +h}$ is an  isomorphism. 
It follows that the right multiplication  by $u$ gives 
 isomorphisms 
\[
\sfr_{u}: \tuH^{i}(\YYM_{m}) \to \tuH^{i -2}(\YYM_{m +h}), \ \ x \mapsto xu 
\]
for $i \in \ZZ$. 
For $0 \leq a < b$, we set $\YYM_{[a,b]} := \bigoplus_{i = a}^{b} \YYM_{i}$. 
Let $m \geq 0$. By Proposition \ref{formula 1}, we have $\tuH^{-2m}(\YYM) = \tuH^{-2m}(\YYM_{[mh, (m +1)h-1]})$ for $m \geq 0$. 
It follows from the above observation that 
the $m$-times iteration $\sfr_{u}^{m}$ gives 
an isomorphism 
\[
\sfr_{u}^{m}: \YM= \tuH^{0}(\YYM)  \to \tuH^{-2m}(\YYM_{[mh, (m +1)h -1]}) = \tuH^{-2m}(\YM), \ x \mapsto xu^{m}. 
\]
We finish the proof of claim. 

To prove theorem, we only have to show that $u$ is a central element. 
In the same way of the above argument, we can check that
the left multiplication  by $u$ gives 
 isomorphisms 
\[
\sfl_{u}: \tuH^{i}(\YYM_{m}) \to \tuH^{i -2}(\YYM_{m+h}), \ x \mapsto ux 
\]
for $i \in \ZZ$. 
We define an isomorphism $\phi: \tuH(\YYM) \to \tuH(\YYM)$  to be  $\phi:= \sfl_{u}^{-1} \sfr_{u}$. 
In other words, $\phi$ is such a map that sends an element $x \in \tuH^{i}(\YYM_{m})$ to 
the unique element $\phi(x) \in \tuH^{i}(\YYM_{m})$ which satisfies  
\[
xu = u \phi(x). 
\]
We only have to show that $\phi $ is the identity map on $\tuH(\YYM)$.

It is easy to check that $\phi$ is an automorphism  of the algebra $\tuH(\YYM)$. 
It is also clear that $\phi(u) =u$. 
Therefore, it is enough to show that  $\phi$ acts identically on $\YM$. 

Since the isomorphism $\YYM_{h} \cong \Pa[-2]$ is isomorphism in $\tuD(\Pa^{\mre})$ 
we have $pu = up$ for $p \in \Pa$. 
It follows that  $\phi$ is identity on $\YM_{0}= \Pa$. 

Now it is easy to see that $\phi$ induces an isomorphism $\YM_{1} \to \YM_{1}$ of $\Pa$-bimodules. 
Recall that the $\Pa$-bimodule $\YM_{1}$ is generated by opposite arrows $\alpha^{*}$ corresponds to $\alpha \in Q_{1}$. 
Since $Q$ has no multiple arrows, for each $\alpha\in Q_{1}$ there exists a  non-zero scalar $c_{\alpha} \in \kk$ 
such that $\phi(\alpha^{*} ) = c_{\alpha}\alpha^{*}$. 
Since the weighted mesh relation $\varrho \in \YM_{1}$ is a central element of $\tuH(\YYM)$ by Lemma \ref{homotopy proposition}, 
we have $\varrho u = u \varrho$ and hence $\phi(\varrho) = \varrho$. 
On the other hand applying $\phi$ to the definition of $\varrho_{i}$, we obtain 
\[
\phi(\varrho_{i}) = v_{i}^{-1} \phi\left(  \sum_{\alpha: t(\alpha) = i}  \alpha \alpha^{*} -\sum_{\alpha: h(\alpha) = i} \alpha^{*} \alpha\right) 
= v_{i}^{-1} \left(  \sum_{\alpha: t(\alpha) = i} c_{\alpha} \alpha \alpha^{*} -\sum_{\alpha: h(\alpha) = i} c_{\alpha} \alpha^{*} \alpha\right) 
\]
Therefore, we have the following equation in $\YM_{1}$
\[
0=\phi(\varrho) - \varrho=
 \sum_{\alpha \in Q_{1}} ( c_{\alpha}- 1) (v_{t(\alpha)}^{-1}\alpha \alpha^{*} - v_{h(\alpha)}^{-1}\alpha^{*} \alpha). 
\]
Thus by Lemma \ref{202006221721} we see that $c_{\alpha} =1$ for all $\alpha \in Q_{1}$ 
and $\phi(\alpha^{*} ) = \alpha^{*}$ for all $\alpha \in Q_{1}$. 

Since the algebra $\YM$ is generated by $\{ \alpha, \alpha^{*}\mid \alpha \in Q_{1} \}$, 
we conclude that $\phi$ is the identity map on $\YM$  as desired. 
\end{proof}

\subsubsection{The cohomology algebra with respect to the path grading}

We study the cohomology algebra $\tuH(\YYM)$ with respect to the path grading. 
We denote the grading with respect to the path length by $\pldeg$. 
By the definition, $\pldeg \alpha = 1, \pldeg \alpha^{*} = 1$ for $\alpha \in Q_{1}$. 
Therefore $\pldeg \eta_{\alpha} = 3, \pldeg \eta_{\alpha^{*}} = 3$ and $\YM$ has path length grading. 

We can extend the path-length grading $\pldeg\!$ to the derived quiver Heisenberg algebra $\YYM$ 
as shown in the table below. 
The aim is to prove the following theorem.

\begin{theorem}\label{cohomology algebra structure theorem path grading}
Let $Q$ be a Dynkin quiver and $\YYM := \YYM(Q)$. 
Assume that the weight  $v \in \kk Q_{0}$ is regular.

We identify $\tuH^{0}(\YYM) $ with $\YM = \YM(Q)$. 
Then the cohomology algebra $\tuH(\YYM)$ has a central element $u \in \tuH^{-2}(\YYM_{h})$ of 
cohomological degree $-2$ and of path length degree $2h$ 
which induces an isomorphism 
\[
\tuH(\YYM) \cong \YM[u].
\]
\end{theorem}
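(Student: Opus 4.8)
The statement to be proved, Theorem \ref{cohomology algebra structure theorem path grading}, is the path-length-graded refinement of Theorem \ref{cohomology algebra structure theorem}. Since the latter already produces a central element $u \in \tuH^{-2}(\YYM_{h})$ inducing an isomorphism $\tuH(\YYM) \cong \YM[u]$ as algebras carrying a cohomological grading and a $*$-grading, the plan is \emph{not} to redo the construction from scratch but to observe that the $*$-grading argument goes through verbatim once it is checked that every structural ingredient used there is also homogeneous for the path-length grading $\pldeg$. Concretely, I would first record that $\pldeg$ extends to $\YYM$ by declaring $\pldeg \alpha = \pldeg \alpha^{*} = 1$, $\pldeg \alpha^{\circledast} = 2$, $\pldeg \alpha^{\circ} = 3$, $\pldeg t_{i} = 4$ (so that the defining differentials in \eqref{the differentials of derived QHA} are homogeneous of path-length degree, matching $\pldeg \eta_{\alpha} = \pldeg \eta_{\alpha^{*}} = 3$ and $\pldeg d(t_i) = 4$), and note that the relation $\deg^{*} = $ (half the path-length degree minus an integer depending only on the cohomological degree) makes the two gradings simultaneously trackable; in particular $\pldeg$ on $\PPi$ agrees with the path-length grading via $\pldeg s_i = 2$, and $\ppi$, $\sfH$, $\rrho$, $\eeta^{*}$, $\zzeta$ are all $\pldeg$-homogeneous.

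\textbf{Key steps.} (1) Verify $\pldeg$-homogeneity of the differential of $\YYM$ and of the dg-algebra maps $\ppi \colon \YYM \to \PPi$ and of the homotopies $\sfH, \sfL, \sfM, \sfN, \dots$ appearing in Section \ref{section: the derived quiver Heisenberg algebras}, so that the exact triangles $\sfU$, $\sfV_n$, and hence the isomorphisms of Proposition \ref{formula 1} are isomorphisms of \emph{path-length-graded} complexes of $\Pa^{\mre}$-modules. The only nontrivial point is the shift: Proposition \ref{formula 1}(3) gives $\YYM_{m+h} \cong \YYM_m[2]$, and I must check this isomorphism raises the path-length degree by $2h$ — this follows because it is induced by the multiplication morphism $\zzeta_{m,h} \colon \YYM_m \lotimes_\Pa \YYM_h \to \YYM_{m+h}$ composed with the quasi-isomorphism $\ppi_h \colon \YYM_h \xrightarrow{\sim} \PPi_h$, and $\PPi_h$ sits in path-length degree $2h$ (it is a tensor product of $h$ copies of $\PPi_1 = \PPa^\vee[1]$, each of path-length degree $2$). (2) Run the argument of the proof of Theorem \ref{cohomology algebra structure theorem} again, tracking $\pldeg$ in place of $\deg^{*}$: the element $u$ corresponding to $1 \in \Pa$ under $\YYM_h \cong \Pa[-2]$ now lives in $\tuH^{-2}(\YYM_h)$ with $\pldeg u = 2h$, and the left/right multiplication isomorphisms $\sfr_u, \sfl_u$ are $\pldeg$-homogeneous of degree $2h$. (3) Re-prove centrality of $u$ exactly as before: the automorphism $\phi = \sfl_u^{-1}\sfr_u$ is $\pldeg$-homogeneous, fixes $\Pa = \YYM_0$ (since $\YYM_h \cong \Pa[-2]$ in $\sfD(\Pa^{\mre})$ forces $pu = up$), and then Lemma \ref{202006221721} together with centrality of $\varrho$ (Lemma \ref{homotopy proposition}) forces $\phi(\alpha^{*}) = \alpha^{*}$ for all $\alpha \in Q_1$, whence $\phi = \id$ on the generating set $\{\alpha, \alpha^{*}\}$ of $\YM$.

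\textbf{Main obstacle.} The substantive work is all in step (1): making sure that the chain of identifications in Section \ref{section: the derived quiver Heisenberg algebras} — in particular the quasi-isomorphisms $q = q_{\brho, \ppi_1, \sfM}$ and the construction of $\sfV'_{m,h}$ used in Proposition \ref{formula 1}(3) — respects $\pldeg$, and correctly bookkeeping the degree shift $+2h$ attached to the $[2]$ in $\YYM_{m+h} \cong \YYM_m[2]$. Once that bookkeeping is in place there is genuinely nothing new: the proof is the proof of Theorem \ref{cohomology algebra structure theorem} with the symbol $\deg^{*}$ replaced throughout by $\pldeg$ and the integer $h$ (the $*$-degree of $u$) replaced by $2h$ (its path-length degree). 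I would therefore present the proof as: "All the morphisms and homotopies constructed in Section \ref{section: the derived quiver Heisenberg algebras} are homogeneous for the path-length grading once it is extended to $\YYM$ as above, and $\PPi_n$ is concentrated in path-length degree $2n$; consequently the isomorphism $\YYM_{m+h} \cong \YYM_m[2]$ of Proposition \ref{formula 1}(3) raises path-length degree by $2h$. The element $u$ of Theorem \ref{cohomology algebra structure theorem} therefore has $\pldeg u = 2h$, and repeating the argument of that theorem verbatim with $\deg^{*}$ replaced by $\pldeg$ yields the claimed isomorphism $\tuH(\YYM) \cong \YM[u]$ of path-length-graded algebras."
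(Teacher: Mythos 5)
There is a genuine gap, and it sits exactly where you locate the ``substantive work'' of your step (1). Your argument needs the isomorphisms of Proposition \ref{formula 1} --- above all $\YYM_{m+h}\cong\YYM_{m}[2]$, and hence the isomorphism $\Pa\cong\tuH^{-2}(\YYM_{h})$ whose image of $1$ defines $u$ --- to be homogeneous for the path-length grading, and you justify the degree shift $+2h$ by asserting that $\PPi_{h}$ ``sits in path-length degree $2h$''. That assertion is false: $\PPi_{1}\cong(\Pa V^{*}\Pa\oplus \Pa\Pa[1],\dots)$ has coefficients in $\Pa$, which contains paths of every length, so $\PPi_{1}$ (and a fortiori $\PPi_{h}$) spreads over infinitely many path-length degrees; only the generators $\alpha^{*}$ and $s_{i}$ have fixed degrees $1$ and $2$. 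More fundamentally, the isomorphism $\PPi_{h}\cong \Pa[2]$ rests on Theorem \ref{Miyachi-Yekutieli lemma} ($\nu_{1}^{-h}\cong[2]$) and on Corollary \ref{vanishing corollary} ($\YYM_{h-1}=0$, $\YYM_{h-2}\cong\tuD(\Pa)$), i.e.\ on AR-theoretic and minimal-approximation arguments carried out in $\sfD(\Pa^{\mre})$; these track the $*$-grading but carry no path-length information whatsoever, so ``replacing $\deg^{*}$ by $\pldeg$ throughout'' is precisely the step that cannot be taken for granted --- the element $u$ of Theorem \ref{cohomology algebra structure theorem} has, a priori, no reason to be $\pldeg$-homogeneous. (A smaller slip: since $d(\alpha^{\circledast})=\eta_{\alpha}$ has $\pldeg=3$, you must set $\pldeg\alpha^{\circledast}=3$, not $2$; the correct bookkeeping is the bigrading with $\bideg\alpha=(1,0)$, $\bideg\alpha^{*}=(0,1)$, where $\deg^{*}$ is the second component and $\pldeg$ the total --- and the first component is \emph{not} a function of $\deg^{*}$ and the cohomological degree, so the two gradings are genuinely independent.)

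The paper closes this gap by a different mechanism, which you would need in some form. It does not try to make the derived-category isomorphism homogeneous; instead it exploits that $u$ is only determined by a choice of $\Pa^{\mre}$-isomorphism $\Pa\cong\tuH^{-2}(\YYM_{h})$, and shows such a choice can be made bigraded. Concretely, $\tuH^{-2}(\YYM)$ is a bigraded $\YM^{\mre}$-module whose underlying ungraded module is $\YM$ (Corollary \ref{cohomology algebra structure corollary}), which is indecomposable over $\YM^{\mre}$; by the (bi)graded rigidity theorems of Gordon--Green \cite{Gordon-Green}, its grading is unique up to shift, so $\tuH^{-2}(\YYM)\cong\YM((s,t))$ as bigraded modules. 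The shift is then pinned down to $(s,t)=(-h,-h)$ by a bidegree-support computation, which uses Proposition \ref{formula 1}, Corollary \ref{vanishing corollary}, and crucially the $Q\leftrightarrow Q^{\op}$ symmetry of Lemma \ref{202303171804} to control the \emph{first} (path) component of the support --- exactly the component your outline has no handle on. Restricting the resulting bigraded isomorphism to the $*$-degree $h$ part gives a homogeneous choice of $u$ with $\bideg u=(h,h)$, i.e.\ $\pldeg u=2h$, after which the argument of Theorem \ref{cohomology algebra structure theorem} applies unchanged. Without this rigidity-plus-support step (or an equivalent proof that $\Pa\cong\tuH^{-2}(\YYM_{h})$ can be chosen $\pldeg$-homogeneous), your proposal does not go through.
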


We use the bigrading introduced in the proof of Lemma \ref{202006221721}. 
Recall that we set $\bideg \alpha :=(1,0)$ and $\bideg \alpha^{*}:=(0,1)$ for $\alpha \in Q_{1}$. 
We note that the bidegree of other generators of the derived quiver Heisenberg algebra $\YYM$ are determined 
from the requirement that the differential $d_{\YYM}$ preserves the bigrading. 
We also note that if an element  $x \in \YYM$ is homogeneous with respect to the bigrading and $\bideg  x =(a,b)$, 
then it is also homogeneous with respect to the $*$-grading and path length grading and $\deg^{*} x= b, \pldeg x = a+b$. 
We give the table of degrees  of the generators of the derived quiver Heisenberg algebra $\YYM$:

\centerline{
\begin{tabular}{c|c|c|c|c|c|c}
& $e_{i}$ & $\alpha$ & $\alpha^{*}$ &  $\alpha^{\circledast}$ &$\alpha^{\circ} $ & $t_{i}$ \\ \hline
$\bideg $& $0$ & $(1,0)$  & $(0,1)$ & $(2,1)$  & $(1,2)$ & $(2,2)$ \\ \hline
$\pldeg$ & $0$ & $1$ & $1$ & $3$ & $3$ &  $4$ 
\end{tabular}}
\[
\begin{xymatrix}{ 
i \ar@(ld, lu)^{t_{i}} \ar@/_20pt/[rr]_{\alpha^{\circledast}} \ar@/^10pt/[rr]_{\alpha} && j  \ar@(rd,ru)_{t_{j}} \ar@/^10pt/[ll]_{\alpha^{*}} 
\ar@/_20pt/[ll]_{\alpha^{\circ}}
}\end{xymatrix}
\]

We denote the homogeneous component of $\YYM$ of the bidegree $(a,b)$ by $\YYM_{(a,b)}$. 
For a bigraded module $M$ and $a \in \ZZ$, we set $M_{(\bullet, a)} := \bigoplus_{b\in \ZZ}M_{(b,a)}$ and 
$M_{(a,\bullet)} := \bigoplus_{b \in \ZZ} M_{(a,b)}$. 
Note that since the second component of bigrading counts the $*$-degree, we have $\YYM_{(\bullet, b)} = \YYM_{b}$ for $b \in \ZZ$.

We need the following lemma that allows us to exchange the first and the second components of bidegrees of $\YYM$. 

\begin{lemma}\label{202303171804}
Let $Q$ be a quiver. 
There exists an isomorphism $f: \YYM(Q^{\op}) \xrightarrow{\cong} \YYM(Q)$ of dg-algebras 
which induces an isomorphism $f_{(a,b)}: \YYM(Q^{\op})_{(a,b)} \xrightarrow{\cong} \YYM(Q)_{(b,a)}$ of $\kk$-vector spaces for $(a,b) \in \ZZ^{\oplus 2}$. 
\end{lemma}

\begin{proof}
We denote the opposite arrow of $\alpha \in Q_{1}$ by $\alpha^{\op}$. 
Then it is straightforward to check that by setting a morphism $f: \YYM(Q^{\op}) \to \YYM(Q)$ of dg-algebras 
as below, we obtain a dg-algebra isomorphism having desired property:  
\[
\begin{split}
f(\alpha^{\op}) := \alpha^{*}, \ f(\alpha^{\op, *}) := \alpha, \ 
f(\alpha^{\op, \circ}) := -\alpha^{\circledast}, \ 
f(\alpha^{\op, \circledast}) := -\alpha^{\circ}, \ 
f(t_{i}^{\op}) := -t_{i}
\end{split}
\]
for $i \in Q_{0}$ and $\alpha \in Q_{1}$. 
\end{proof}

\begin{proof}[Proof of Theorem \ref{cohomology algebra structure theorem path grading}]

It is enough to show that 
the element $u \in \tuH^{-2}(\YYM_{h})$  in the proof of Theorem \ref{cohomology algebra structure theorem} 
can be chosen  as homogeneous with respect to the bidegree  with $\bideg u =(h,h)$. 
Recall that $u$ is given as the image of $1$ by an isomorphism $A \xrightarrow{\cong} \tuH^{-2}(\YYM_{h})$ of $A^{\mre}$-modules.

For simplicity we set $M:= \tuH^{-2}(\YM)$.  
We may regard $\YM^{\mre}$ as a bigraded algebra by setting $(\YM^{\mre})_{(a,b)}:= \bigoplus_{c+d =a, e+f=g}\YM_{(c,e)} \otimes \YM^{\op}_{(d,f)}$. 
The above isomorphism extends to an isomorphism $f: \YM \xrightarrow{\cong} M$ of (ungraded) $\YM^{\mre}$-modules. 
Since the algebra $\YM$ is indecomposable as an algebra (i.e., it does not have a non-trivial central idempotent element), 
it is indecomposable as $\YM^{\mre}$-modules. Hence by bigraded version of \cite[Theorem 3.2]{Gordon-Green}, 
 $\YM$ and $M$ are indecomposable as bigraded $\YM^{\mre}$-modules. 
Applying  bigraded version of \cite[Theorem 4.1]{Gordon-Green} to $f$, 
we obtain an isomorphism $g: \YM((s,t)) \xrightarrow{\cong} M$ as bigraded $\YM^{\mre}$-modules for some $(s,t) \in \ZZ^{\oplus 2}$. 

We claim that $(s,t) = (-h,-h)$. 
Note that by Lemma \ref{202303171804} and Proposition \ref{formula 1}, 
we have $M_{(a,b)} = 0$ unless $(a,b)$ belongs to the rectangle $[h, 2h-2]\times [h, 2h-2]$. 
On the other hand, we have $\YM_{(0,0)}= \kk Q_{0} \neq 0$. 
By Corollary \ref{vanishing corollary}, we have $ \YM_{(\bullet,h-2)} = \YM_{h-2} \cong \tuD(\Pa) \neq 0$. 
Thanks to Lemma \ref{202303171804}, the same corollary implies  $\YM_{(h-2, \bullet)} \neq 0$. 
Now it is straightforward to see $(s,t) =(-h,-h)$. 

Looking at the $(\bullet, h)$ component of $g$ we obtain an isomorphism 
$g_{(\bullet, h)}: A \xrightarrow{\cong} \tuH^{-2}(\YYM_{h})$ of $\Pa^{\mre}$-modules.  
Thus, if we  set $u$ to be the image of $1 \in A$ by $g_{(\bullet, h)}$, it is homogeneous with $\bideg u= (h,h)$ as desired. 
\end{proof}

\subsection{Symmetric property}

As was mentioned in the introduction, 
Etingof-Rains \cite{Etingof-Rains} proved that 
 the QHA $\YM(Q)$  is Frobenius if the weight $v \in \kk Q_{0}$ is regular and $\chara \kk = 0$.  
It was proved by Etingof-Latour-Rains \cite{ELR} 
that if $\chara \kk = 0$ and $v \in \kk Q_{0}$ is generic, then  the QHA $\YM(Q)$ is symmetric.

In subsequent work we prove the following statement. 
\begin{theorem}[{\cite{Herschend-Minamoto: tilting theory of QHA}}]\label{202111300947}
Let $Q$ be a Dynkin quiver. Assume that  the weight $v \in \kk Q_{0}$ is regular. 
Then $\YM(Q)$ is a symmetric algebra.
\end{theorem}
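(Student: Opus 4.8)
The plan is to deduce the symmetric property from the cohomological structure of the derived QHA established in Theorem \ref{cohomology algebra structure theorem}, combined with the $3$-Calabi-Yau property of $\YYM$ (Theorem \ref{202112122233}). The key observation is that for a regular weight $v$ over a Dynkin quiver the algebra $\YM = \YM(Q)$ is finite dimensional (Corollary \ref{202111191828}, Theorem \ref{202109131544}), so there is a genuine Nakayama automorphism to compute; and since $\YYM$ is a $3$-Calabi-Yau dg-algebra with $\tuH^{0}(\YYM) \cong \YM$, the Nakayama automorphism of $\YM$ should be detectable inside $\tuH(\YYM)$.

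First I would unwind the $3$-Calabi-Yau condition: the isomorphism $\YYM^{\vvee}[3](-2) \cong \YYM$ in $\sfD(\YYM^{\mre}\Gr)$ yields, upon applying $\RHom_{\YYM}(\Pa, -)$ and passing to cohomology, an identification of $\Pa$-bimodule type relating $\RHom_{\YYM}(\Pa, \YYM)$ with a shift of $\tuD(\Pa)$. Then I would take the $0$-th cohomology and use the Koszul-type resolution $\epsilon_{P}: P \to \Pa$ from Section \ref{2021121160751}, whose terms are $\YYM_{n} \otimes_{\Pa} \YYM(-n)$ for $n = 0,1,2$, to compute $\Ext^{*}_{\YYM}(\Pa, \YYM)$ explicitly. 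The upshot, using Proposition \ref{formula 1}(4) and the identification $\YYM_{h-2} \cong \tuD(\Pa)$ and $\YYM_{h-1} = 0$ (Corollary \ref{vanishing corollary}), is that the ``top'' of $\YM$ as a bimodule sits in $*$-degree $h-2$ and is a twist of $\tuD(\Pa)$. More precisely, the plan is to prove that $\YM$ is Frobenius with Nakayama automorphism induced by the bimodule isomorphism class of $\YYM_{h-2} \cong \tuD(\Pa)$ tensored suitably, and then to show that this automorphism is inner — in fact trivial up to the twist by the central element $u$ of Theorem \ref{cohomology algebra structure theorem}.

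The heart of the argument is the centrality of $u \in \tuH^{-2}(\YYM_{h})$: since $u$ is central and $*$-homogeneous of $*$-degree $h$, right multiplication by $u$ gives bimodule isomorphisms $\YYM_{m} \cong \YYM_{m+h}[2]$ that are \emph{bimodule} maps, not merely one-sided ones. This should force the Frobenius form on $\YM$ to be symmetric: the periodicity provided by $u$ identifies the socle of $\YM e_{i}$ with $(e_{i}\YM)$-socle in a two-sided compatible way, so the associated Nakayama automorphism $\phi$ (which in the proof of Theorem \ref{cohomology algebra structure theorem} was shown to act identically on $\YM$ because $\varrho$ is central) must be the identity. Concretely, I would argue that the Frobenius coform of $\YM$ arises from the projection $\YM \to \YM_{h-2} \cong \tuD(\Pa)$ composed with the canonical trace on $\tuD(\Pa)$ over $\Pa$, and that the two-sided $u$-periodicity together with Lemma \ref{202006221721}-type rigidity (the $\Pa$-bimodule $\YM_{1}$ is generated by the $\alpha^{*}$ with no nontrivial self-twists since $Q$ has no multiple arrows) pins down the coform as symmetric.

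The main obstacle I anticipate is making the passage from ``$\YYM$ is bimodule $3$-Calabi-Yau'' to ``$\YM = \tuH^{0}(\YYM)$ is a symmetric algebra'' fully rigorous: $3$-Calabi-Yau at the dg level does not automatically give symmetric at the level of $\tuH^{0}$ (it gives, in general, that $\tuH^{0}$ is twisted Calabi-Yau in some sense, or Frobenius with a possibly nontrivial Nakayama automorphism). The key input that should break the symmetry obstruction is precisely the \emph{centrality} of $u$ as an element of $\tuH(\YYM)$ together with the fact that $u$ is $*$-homogeneous, since this provides the Nakayama automorphism as literally a shift-by-$u$ which, being multiplication by a central element, is inner-trivial on $\YM$. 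I would therefore organize the proof around: (i) setting up the Koszul resolution and computing $\Ext^{*}_{\YYM}(\Pa,\YYM)$; (ii) extracting from the CY isomorphism a bimodule isomorphism $\YM \cong \tuD(\YM)$ twisted by some automorphism $\sigma$; (iii) identifying $\sigma$ with the automorphism $\phi$ of Theorem \ref{cohomology algebra structure theorem} via the $u$-periodicity and showing $\sigma = \id$. Step (iii) — controlling $\sigma$ on all of $\YM$, not just on generators — is where I expect to spend the most effort, and the no-multiple-arrows hypothesis (Dynkin) together with Lemma \ref{202006221721} will be essential there.
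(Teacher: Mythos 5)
You should first be aware that this paper does not contain a proof of this theorem at all: it is imported from the subsequent work \cite{Herschend-Minamoto: tilting theory of QHA}, and the present paper only records that the proof there rests on the cohomology algebra structure theorem (Theorem \ref{cohomology algebra structure theorem}) \emph{together with} the algebra ${}^{v}\!B(Q)$ of Section \ref{202209192000}, whose $3$-preprojective algebra is the quasi-Veronese ${}^{v}\!\YM(Q)^{[2]}$, i.e.\ the route goes through $2$-hereditary/$2$-representation-finite theory. Your plan shares one of these two ingredients (the centrality of $u$), but replaces the other by the bare $3$-Calabi-Yau property of $\YYM$ (Theorem \ref{202112122233}), and as written this leaves the essential steps unproved.

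The first gap is your step (ii). In the Dynkin case $\YYM$ is \emph{not} quasi-isomorphic to $\YM$: by Theorem \ref{cohomology algebra structure theorem} its cohomology is the infinite-dimensional algebra $\YM[u]$, so the bimodule isomorphism $\YYM^{\vvee}[3](-2)\cong\YYM$ neither truncates nor restricts to a duality $\tuD(\YM)\cong{}\YM$ twisted by an automorphism. In general, the zeroth cohomology of a $3$-Calabi-Yau Ginzburg dg-algebra with finite-dimensional $\tuH^{0}$ is only a $2$-CY-tilted algebra, hence Gorenstein of dimension at most one, and is typically not self-injective; so even the Frobenius property of $\YM$ over an arbitrary field and a regular weight (Theorem \ref{202111191454}(1) covers only $\chara\kk=0$) is part of what must be proved and is not delivered by computing $\RHom_{\YYM}(\Pa,\YYM)$ from the Koszul resolution. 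The second gap is your step (iii), which you yourself flag as the hard point but do not fill. Corollary \ref{vanishing corollary} gives $\YYM_{h-2}\cong\tuD(\Pa)$ only in $\sfD(\Pa)$, i.e.\ one-sidedly; as a bimodule the top $*$-graded piece could a priori carry a twist, and the preprojective algebra is the cautionary example: the analogous one-sided statements hold for $\Pi(Q)$, it is Frobenius, yet its Nakayama automorphism is nontrivial in general (e.g.\ type $A_{n}$, $n\geq 2$), so no argument of the shape ``the socle layer is $\tuD(\Pa)$, hence the form is symmetric'' can be correct without a genuinely two-sided input. Centrality of $u$ shows that left and right multiplication by $u$ coincide — that is exactly the statement $\phi=\mathrm{id}$ in the proof of Theorem \ref{cohomology algebra structure theorem} — but you give no identification of $\phi$ with the Nakayama automorphism of $\YM$, and indeed the paper's own Corollary \ref{202006221820} runs the logic in the opposite direction, \emph{using} the symmetric property as an input. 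Ruling out a nontrivial Nakayama twist over an arbitrary field is the actual content of the theorem, and it is precisely what remains missing; the intended proof supplies it via the algebra ${}^{v}\!B(Q)$ and the identification $\Pi_{3}({}^{v}\!B(Q))\cong{}^{v}\!\YM(Q)^{[2]}$, which does not appear in your outline.
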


Recall that
the right multiplication $\sfr_{\varrho}$ and the left multiplication $\sfl_{\varrho}$ of $\varrho$ on $\YM$ 
coincide to each other and 
their cokernel  is  the preprojective algebra $\Pi$. 
In the next proposition, we identify the kernel of $\sfr_{\varrho} = \sfl_{\varrho}$ with the $\kk$-dual $\tuD(\Pi)$ of $\Pi$.

\begin{corollary}\label{202006221820}
Let $Q$ be a Dynkin quiver. Assume that  the weight $v \in \kk Q_{0}$ is regular. 
We have the following exact sequence of bimodules over $\YM$. 
\[
0 \to \tuD(\Pi) \to \YM \xrightarrow{ \sfr_{\varrho}} \YM \to \Pi \to 0
\]
\end{corollary}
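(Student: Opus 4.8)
The statement to prove is the four-term exact sequence of $\YM$-bimodules
\[
0 \to \tuD(\Pi) \to \YM \xrightarrow{\ \sfr_{\varrho}\ } \YM \to \Pi \to 0 .
\]
The right half is already understood: we have the exact sequence \eqref{202009212139} of $\YM$-bimodules
$\YM \xrightarrow{\sfr_{\varrho}} \YM \xrightarrow{\pi} \Pi \to 0$, so $\operatorname{Cok}(\sfr_{\varrho}) \cong \Pi$ and the only remaining task is to identify $\Ker(\sfr_{\varrho})$ with $\tuD(\Pi)$ as a $\YM$-bimodule. The plan is to extract this from the homological structure of the derived QHA $\YYM$, which by Theorem~\ref{202112122233} is $3$-Calabi-Yau of Gorenstein parameter $2$, together with the cohomology computation Theorem~\ref{cohomology algebra structure theorem} and the exact triangle $\widehat{\sfU}$ of Theorem~\ref{202106071837}.

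First I would record that, since $Q$ is Dynkin and $v$ is regular, Proposition~\ref{202207111600}'s Dynkin analogue fails but Proposition~\ref{formula 1} applies: $\YYM_n$ is concentrated in cohomological degree $0$ for $0 \le n \le h-2$, $\YYM_{h-1} = 0$, and $\YYM$ has cohomology only in non-positive even degrees with $\tuH^0(\YYM) = \YM$. Next, consider the $\YYM^{\mre}$-exact triangle $\widehat{\sfU}\colon \YYM(-1) \xrightarrow{\widehat{\sfr_\varrho}} \YYM \xrightarrow{\ppi} \PPi \to \YYM(-1)[1]$. Its long exact cohomology sequence in each $*$-degree, combined with the fact that $\PPi$ is concentrated in cohomological degree $0$ and equal to $\Pi$ (as $Q$ is Dynkin the derived preprojective algebra $\PPi$ may still have higher cohomology, so here I would instead take the $*$-degree-$n$ pieces for $0 \le n \le h-2$ where everything is honest), gives exactly the non-derived sequence \eqref{202207131440} in those degrees. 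The kernel of $\sfr_\varrho$ is then the obstruction living in the ``wrap-around'' degrees $n \ge h-1$, and by Proposition~\ref{formula 1}(3) we have $\YYM_{h} \cong \YYM_0[2] = \YM[2]$. So the map $\sfr_\varrho\colon \YYM_{h-1} \to \YYM_h$ has source $0$ and the composite structure forces the kernel of $\sfr_\varrho$ at the level of $\tuH^0$ to be built from $\tuH^{-2}$-classes.

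The cleanest route: by Theorem~\ref{202109131544}, regularity gives $\dim \YM < \infty$, so by the Etingof-Rains result (Theorem~\ref{202111191530} analogue / Theorem~\ref{202111191454}(1) in char $0$, and Theorem~\ref{202111300947} in general) $\YM$ is Frobenius, even symmetric by Theorem~\ref{202111300947}. Apply the exact contravariant functor $\tuD = \Hom_\kk(-,\kk)$ to the right half $\YM \xrightarrow{\sfr_\varrho} \YM \xrightarrow{\pi} \Pi \to 0$: since $\tuD$ is exact it yields $0 \to \tuD(\Pi) \xrightarrow{\tuD(\pi)} \tuD(\YM) \xrightarrow{\tuD(\sfr_\varrho)} \tuD(\YM)$, an exact sequence of $\YM$-bimodules. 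Because $\YM$ is symmetric, there is a $\YM$-bimodule isomorphism $\tuD(\YM) \cong \YM$ under which $\tuD(\sfr_\varrho)$ corresponds to $\sfr_\varrho$ (symmetry makes left and right multiplication by any central element self-dual up to this iso; $\varrho$ is central by Lemma~\ref{homotopy proposition}). Transporting the sequence along this isomorphism gives $0 \to \tuD(\Pi) \to \YM \xrightarrow{\sfr_\varrho} \YM$, and splicing with the original $\YM \xrightarrow{\sfr_\varrho} \YM \xrightarrow{\pi} \Pi \to 0$ at the shared middle term yields the desired four-term sequence; exactness at the two middle spots is precisely $\operatorname{im}(\tuD(\Pi)\to\YM) = \Ker(\sfr_\varrho)$ and $\operatorname{im}(\sfr_\varrho) = \Ker(\pi)$, both of which are now automatic.

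The main obstacle is the identification of $\tuD(\sfr_\varrho)$ with $\sfr_\varrho$ as a map of $\YM$-bimodules under a fixed symmetric-algebra isomorphism $\tuD(\YM)\cong\YM$ — one must check that the Nakayama automorphism is trivial (which is what ``symmetric'' gives) \emph{and} that the central element $\varrho$ is fixed by it, so that multiplication by $\varrho$ is interchanged with its dual. This is where Theorem~\ref{202111300947} (symmetric, not merely Frobenius) is essential; with only the Frobenius property one would get $\tuD(\Pi)$ twisted by the Nakayama automorphism. Granting that, the rest is a routine diagram chase. I would present the argument in the order: (i) recall $\dim\YM<\infty$ and that $\YM$ is symmetric; (ii) dualize the canonical sequence \eqref{202009212139}; (iii) transport along $\tuD(\YM)\cong\YM$; (iv) splice.
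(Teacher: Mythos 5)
Your final argument is correct and is essentially the paper's own proof: the paper likewise invokes Theorem~\ref{202111300947} to get a bimodule isomorphism $\tuD(\YM)\cong\YM$ and identifies $\Ker\sfr_{\varrho}\cong\tuD(\Coker\sfl_{\varrho})\cong\tuD(\Pi)$, the only point needed beyond symmetry being that duality swaps left and right multiplication and that $\varrho$ is central, so no separate check on the Nakayama automorphism fixing $\varrho$ is required. The first two paragraphs (the derived QHA / Calabi--Yau detour) are not needed and can be dropped.
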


\begin{proof}
By Theorem \ref{202111300947} 
that there exists an isomorphism $\iota: \tuD(\YM) \xrightarrow{ \cong } \YM$ over $\YM^{\mre}$. 
Under this isomorphism, 
the right multiplication map  $\sfr_{\varrho}: \YM \to \YM$ by $\varrho$ 
 corresponds to $\tuD(\sfl_{\varrho}) : \tuD(\YM) \to \tuD(\YM)$. 
Thus, we have the following isomorphisms of bimodules over $\YM$.
\[
\Ker \sfr_{\varrho} \cong \Ker \tuD(\sfl_{\varrho} ) \cong \tuD( \Coker  \sfl_{\varrho}) \cong \tuD(\Pi). 
\]
We note that the kernel morphism $\tuD(\Pi) \to \YM$ is the composition $\iota \tuD(\pi)$. 
\end{proof}

\begin{remark}\label{202603182140}

As in the proof of Theorem \ref{cohomology algebra structure theorem path grading} any isomorphism $D(\YM) \xrightarrow{ \cong } \YM$ can be turned into a graded morphism either with respect to the $*$-grading or path-length grading. The largest degree for which $\YM$ has a non-zero homogeneous component is $h-2$ for the $*$-grading and $2h-4$ for the path-length grading. Denote degree shift by $n$ by $(n)$ for the $*$-grading and by $(n)_{\textup{pl}}$ for the path-length grading. Then we obtain $\tuD(\YM) \cong \YM(h -2)$ and $\tuD(\YM) \cong \YM(2h -4)_{\textup{pl}}$ as graded $\YM^{\mre}$-modules. Since $\deg^{*} \varrho = 1$ and $\pldeg \varrho = 2$ the exact sequence of Corollary \ref{202006221820} gives the following exact sequences of graded $\YM^{\mre}$-modules
\[
0 \to \tuD(\Pi)(-h+1) \to \YM(-1) \xrightarrow{ \sfr_{\varrho}} \YM \to \Pi \to 0,
\]
\[
0 \to \tuD(\Pi)(-2h+2)_{\textup{pl}} \to \YM(-2)_{\textup{pl}} \xrightarrow{ \sfr_{\varrho}} \YM \to \Pi \to 0.
\]
for the $*$-grading and path-length grading respectively.

\end{remark}

\subsubsection{}
Combining above corollary with Theorem \ref{cohomology algebra structure theorem}, 
we obtain a description of bimodule structure of the cohomology algebra  of the derived preprojective algebra $\PPi$ of Dynkin type.

\begin{corollary}
Let $Q$ be a Dynkin quiver. 
We have the following isomorphism of $\Pi = \tuH^{0}(\PPi)$-bimodules
\[
\tuH^{n}(\PPi) 
\cong 
\begin{cases} 
\Pi & n \textup{ is non-positive even}, \\
\tuD(\Pi) & n \textup{ is non-positive odd},\\
0 & n \textup{ is positive}. 
\end{cases}
\]
\end{corollary}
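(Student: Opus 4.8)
The plan is to combine the two structural facts already established: the exact sequence of $\YM(Q)$-bimodules
\[
0 \to \tuD(\Pi) \to \YM \xrightarrow{ \sfr_{\varrho}} \YM \to \Pi \to 0
\]
from Corollary \ref{202006221820} (valid for Dynkin $Q$ and a regular weight $v$), and the cohomology computation $\tuH(\YYM) \cong \YM[u]$ of Theorem \ref{cohomology algebra structure theorem}, together with the exact triangle $\widehat{\sfU}: \YYM(-1) \xrightarrow{ \widehat{\sfr_{\varrho}} } \YYM \xrightarrow{ \ppi } \PPi \to \YYM(-1)[1]$ of Theorem \ref{202106071837} living in $\sfD(\YYM^{\mre}\Gr)$, which upon applying the forgetful functor and restricting along $\Pa \to \YYM$ (or rather viewing it over $\Pi^{\mre}$ via $\tuH^0$) governs the bimodule structure of $\tuH(\PPi)$.

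First I would take the long exact cohomology sequence of the triangle $\widehat{\sfU}$ in $\sfD(\YYM^{\mre}\Gr)$. Since $\tuH^{>0}(\YYM) = 0$ and $\tuH^{>0}(\PPi) = 0$, and since $\tuH^{-2m}(\YYM) \cong \YM$ as $\YM^{\mre}$-modules for all $m \geq 0$ by Corollary \ref{cohomology algebra structure corollary} (all other cohomology of $\YYM$ vanishing by Theorem \ref{cohomology algebra structure theorem}), the long exact sequence breaks into pieces relating $\tuH^{n}(\YYM)$, $\tuH^{n}(\YYM)$ (shifted in $*$-degree, but as a $\YM$-bimodule still $\YM$) and $\tuH^{n}(\PPi)$. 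Concretely, for $n$ even and $\leq 0$ one gets a four-term exact sequence whose outer maps are the connecting morphisms $\tuH^{n}(\PPi) \to \tuH^{n+1}(\YYM) = 0$ forcing $\tuH^{\text{odd}}(\PPi)$ to be a cokernel and $\tuH^{\text{even}}(\PPi)$ a kernel; the middle map is induced by $\widehat{\sfr_{\varrho}}$, whose $\tuH^0$ is $\sfr_{\varrho}: \YM \to \YM$. The key point is that, because $\varrho$ is cohomologically central (Lemma \ref{homotopy proposition}) and multiplication by $u$ identifies $\tuH^{-2m}(\YYM)$ with $\tuH^{-2m-2}(\YYM)$ compatibly with the $\YM^{\mre}$-action, the map $\tuH^{n}(\widehat{\sfr_{\varrho}})$ is, under these identifications, literally the map $\sfr_{\varrho}: \YM \to \YM$ for every even $n \leq 0$. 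Its cokernel is $\Pi = \tuH^0(\PPi)$ and its kernel is $\tuD(\Pi)$ by Corollary \ref{202006221820}. One then reads off: $\tuH^{n}(\PPi)$ for $n$ non-positive even is the cokernel $\Pi$, and $\tuH^{n-1}(\PPi)$ (i.e. $n$ non-positive odd) is the kernel $\tuD(\Pi)$; positivity of $n$ gives $0$ since $\PPi$ has no positive cohomology, which also matches $\PPi$ being connective.

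The main obstacle I anticipate is bookkeeping of the $*$-grading and the shift-by-$u$ identifications so that the maps in the long exact sequence are genuinely the bimodule maps $\sfr_\varrho$, $\pi$, $\tuD(\pi)$ and not merely abstractly isomorphic to them; in particular one must check that the connecting morphism $\PPi \to \YYM(-1)[1]$ of $\widehat{\sfU}$ induces, on $\tuH^{\bullet}$, exactly the inclusion $\tuD(\Pi) \hookrightarrow \YM$ from Corollary \ref{202006221820} (up to the degree shift $(h-1)$ recorded in the Remark following that corollary). This is essentially a compatibility statement between the triangle $\widehat{\sfU}$ and the self-duality $\iota: \tuD(\YM) \xrightarrow{\cong} \YM$; I would verify it by applying $\RHom_{\YYM^{\mre}}(-,\YYM^{\mre})$ to $\widehat{\sfU}$, using that $\YYM$ is $3$-Calabi--Yau of Gorenstein parameter $2$ (Theorem \ref{202112122233}) to get $\widehat{\sfU}^{\vvee}[3](-2)$ back as a triangle of the same shape with the roles of $\sfr_\varrho$ and its dual swapped, and then comparing. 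Once that identification is in place the result is immediate. An alternative, perhaps cleaner, route is to avoid $\widehat{\sfU}$ entirely: apply $\YYM \lotimes_{\YYM} -$ is trivial, so instead apply $(-)\lotimes_{\Pa}^{\phantom{}} \Pi$ or use that $\PPi = \YYM/(\varrho)$ in the derived sense, i.e. $\PPi \cong \YYM \lotimes_{\YYM[\varrho]} \YYM$ — but I expect the long-exact-sequence argument above to be the most direct given what is already proved.
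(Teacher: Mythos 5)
Your proposal is correct and follows essentially the same route as the paper: take the long exact cohomology sequence of $\widehat{\sfU}$ in $\sfD(\YYM^{\mre})$, use $\tuH(\YYM)\cong\YM[u]$ to identify the induced map with $\sfr_{\varrho}\colon\YM\to\YM$, and read off kernel $\tuD(\Pi)$ and cokernel $\Pi$ from Corollary \ref{202006221820}. The "main obstacle" you anticipate is not needed: since $\tuH^{\mathrm{odd}}(\YYM)=0$, the four-term exact sequences already identify $\tuH^{-2n-1}(\PPi)$ with $\ker\bigl(\tuH^{-2n}(\sfr_{\varrho})\bigr)$ as a $\YM^{\mre}$-module, so no comparison of the connecting morphism with the specific inclusion $\tuD(\Pi)\hookrightarrow\YM$ (nor any Calabi--Yau duality argument) is required.
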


\begin{proof}
We use the exact triangle $\widehat{\sfU}: \YYM \xrightarrow{\hat{\sfr}_{\varrho}} \YYM \xrightarrow{\ppi} \PPi \to \YYM[1]$ 
given in Section \ref{a-infinity}. 
A point here is that it is an exact triangle in $\sfD(\YYM^{\mre})$. 
Taking the cohomology long exact sequence of $\widehat{\sfU}$, 
we obtain the following exact sequence 
\[
0\to \tuH^{ -2n -1}(\PPi) \to \tuH^{-2n} (\YYM) \xrightarrow{\tuH^{-2n}(\sfr_{\varrho})} \tuH^{ -2n}(\YYM) \xrightarrow{ \tuH^{-2n}(\ppi)} 
\tuH^{-2n}(\PPi) \to 0
\]
of $\YM^{\mre}$-modules.
Under the isomorphism $\tuH(\YYM) \cong \YM[u]$ of Theorem \ref{cohomology algebra structure theorem}, 
the map $\tuH^{-2n}(\sfr_{\varrho})$ corresponds to the multiplication map $\sfr_{\varrho}: \YM \to \YM$. 
Thus by Proposition \ref{202006221820}, we obtain isomorphisms 
$\tuH^{-2n}(\PPi) \cong \Pi$ and $\tuH^{-2n -1}(\PPi) \cong \tuD(\Pi)$ over $\YM^{\mre}$. 
\end{proof}

\section{The middle terms of AR-sequences starting from middle terms}\label{section: the middle term of the middle terms}

The aim of this section is to prove Theorem \ref{2020071920551}, 
from which Theorem \ref{202111181854} follows. 
As a corollary, we obtain a sufficient condition for Problem \ref{introduction:left approximation problem} (restated in Section \ref{202112011709}) in the case $n =2$ (Theorem \ref{202103021651}).

From now on  we assume the base field $\kk$ is algebraically closed. 
Thus,  in particular   we have   $\dim \ResEnd_{\Pa}(M) = 1$ 
for any  indecomposable object $M$ of $\Dbmod{\Pa}$.

\subsection{The middle terms of middle terms} 

The next result says that the morphism $\eeta^{*}_{2, M}: \PPi_{1} \lotimes_{\Pa} M \to \YYM_{1} \lotimes_{\Pa} \YYM_{1} \lotimes_{\Pa} M$ 
has AR-theoretic meaning. 

The precise statement is the following. 

\begin{theorem}\label{2020071920551}
Let $M \in \ind \Dbmod{\Pa}$ and $v \in \We_{Q}$. Assume that
${}^{v}\!\Euch(M) \neq 0, {}^{v}\!\Euch(\PPi_{1}\lotimes_{\Pa} M) \neq 0$ and   ${}^{v}\!\Euch(\YYM_{1} \lotimes_{\Pa} M) \neq 0$. 

Then 
there exists a morphism $\xi_{2, M}: \YYM_{1} \lotimes_{\Pa} \YYM_{1} \lotimes_{ \Pa} M \to \PPi_{1} \lotimes_{\Pa} M$ 
which satisfies   the following equations. 

\begin{enumerate}[(1)] 

\item 
$
\xi_{2, M} \eeta^{*}_{2, M}  = -\frac{{}^{v}\!\Euch(\YYM_{1} \lotimes_{\Pa} M ) }{{}^{v}\!\Euch(M)} \id_{\PPi_{1} \lotimes M}$.

\item  $\xi_{2, M} \rrho_{\YYM_{1} \lotimes M}= \ppi_{1, M}$.

\item $\xi_{2, M} {}_{\YYM_{1}} \rrho_{  M}= -\frac{{}^{v}\!\Euch(\PPi_{1} \lotimes_{\Pa} M ) }{{}^{v}\!\Euch(M)}  \ppi_{1, M}$. 

\end{enumerate}

Namely we have the following commutative  diagrams. 
\[
\begin{xymatrix}@C=40pt{ 
& \PPi_{1} \lotimes_{\Pa} M \ar[d]^{\eeta^{*}_{2,M} }  
\ar@/^20pt/[ddr]_{\cong}^{  -\frac{{}^{v}\!\Euch(\YYM_{1} \lotimes_{\Pa} M ) }{{}^{v}\!\Euch(M)}\id} 
& 
\\ 
\YYM_{1}\lotimes_{\Pa} M \ar@/_20pt/[rrd]_{\ppi_{1, M} } \ar[r]^{\rrho_{ \YYM_{1} \lotimes M}}
 &
 \YYM_{1} \lotimes_{\Pa} \YYM_{1} \lotimes_{\Pa} M \ar[dr]^{\xi_{2, M}}
 & \\
&& \PPi_{1} \lotimes_{\Pa} M,
}
\end{xymatrix}\]
\[
\begin{xymatrix}@C=40pt{ 
\YYM_{1}\lotimes_{\Pa} M \ar@/_20pt/[rrd]_{-\frac{{}^{v}\!\Euch(\PPi_{1} \lotimes_{\Pa} M ) }{{}^{v}\!\Euch(M)} \ppi_{1, M} } \ar[r]^{{}_{\YYM_{1}}\rrho_{ M}}
 &
 \YYM_{1} \lotimes_{\Pa} \YYM_{1} \lotimes_{\Pa} M \ar[dr]^{\xi_{2, M}}
 & \\
&& \PPi_{1} \lotimes_{\Pa} M.
}
\end{xymatrix}\]
\end{theorem}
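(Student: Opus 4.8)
The plan is to realize $\YYM_1 \lotimes_\Pa \YYM_1 \lotimes_\Pa M$ as the middle term of a direct sum of Auslander--Reiten triangles and then extract $\xi_{2,M}$ together with its defining equations from the octahedral structure already developed in Section~\ref{section: the derived quiver Heisenberg algebras}. Since $\kk$ is algebraically closed, $\dim\ResEnd_\Pa(K)=1$ for every indecomposable $K$, so all the hypotheses on weighted Euler characteristics translate, via Theorem~\ref{semi-universal Auslander-Reiten triangle}, into the statement that the triangles ${}^{v}\!\sfAR_{K}$ are genuine Auslander--Reiten triangles for the relevant objects $K\in\{M,\ \PPi_1\lotimes_\Pa M,\ \YYM_1\lotimes_\Pa M\}$. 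In particular, ${}^{v}\!\sfAR_{\YYM_1\lotimes M}$ exhibits $\YYM_1\lotimes_\Pa\YYM_1\lotimes_\Pa M$ as the middle term of the AR-triangle starting from $\YYM_1\lotimes_\Pa M$, with connecting morphism $\ppi_{1,\YYM_1\lotimes M}$ to $\nu_1^{-1}(\YYM_1\lotimes_\Pa M)$.

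First I would apply $-\lotimes_\Pa M$ to the exact triangle $\sfV_2$ of Section~\ref{202106020843}, which gives
\[
\PPi_1\lotimes_\Pa M \xrightarrow{\ \eeta^{*}_{2,M}\ } \YYM_1\lotimes_\Pa\YYM_1\lotimes_\Pa M \xrightarrow{\ \zzeta_{2,M}\ } \YYM_2\lotimes_\Pa M \to \PPi_1\lotimes_\Pa M[1],
\]
and combine it with the diagram of Corollary~\ref{202008091343} (for $n=2$), which places $\YYM_1\lotimes_\Pa\YYM_1\lotimes_\Pa M$ as the total object of a homotopy Cartesian square whose two exact rows are ${}^{v}\!\sfAR_{\YYM_1\lotimes M}$ and $\sfV_{2,M}$. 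Applying Theorem~\ref{right approximation theorem}(2) with $n=2$ (using the property $(I)_{M,2}$, which holds precisely under the stated Euler-characteristic hypotheses), $\eeta^{*}_{2,M}$ is a split monomorphism; let $\xi_{2,M}$ be a chosen retraction, normalized appropriately. The key computations of the three scalars are then: (1) $\xi_{2,M}\eeta^{*}_{2,M}$ is a scalar on the indecomposable-summand level, and the scalar is forced by additivity of the weighted trace along $\sfV_{2,M}$ together with Theorem~\ref{trace formula}; (2) $\xi_{2,M}\rrho_{\YYM_1\lotimes M}=\ppi_{1,M}$ comes from the commutativity of the right square in Corollary~\ref{202008091343} combined with Lemma~\ref{homotopy proposition} (centrality of $\varrho$ in cohomology); (3) $\xi_{2,M}\,{}_{\YYM_1}\rrho_M$ is computed using Lemma~\ref{homotopic lemma 5 right} (the $n=2$ case, ${}_{\YYM_1}\ppi_1\,\eeta^{*}_2=-\brho_{1,\PPi_1}$, i.e.\ ${}^{v}\!\varrho$) and Lemma~\ref{202001111736} relating ${}_{\YYM_1}\rrho-\rrho_{\YYM_1}$ to $\eeta^{*}_2\circ\ppi_1$.

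Concretely, for (1) I would note that $\zzeta_{2,M}$ is a split epimorphism with kernel the image of $\eeta^{*}_{2,M}$, so $\YYM_1\lotimes_\Pa\YYM_1\lotimes_\Pa M\cong (\PPi_1\lotimes_\Pa M)\oplus(\YYM_2\lotimes_\Pa M)$; the normalization is pinned down by pairing with $\id$ on each indecomposable summand of $\PPi_1\lotimes_\Pa M$ and invoking Theorem~\ref{trace formula} to identify the resulting scalar with $-{}^{v}\!\Euch(\YYM_1\lotimes_\Pa M)/{}^{v}\!\Euch(M)$ — here one uses that the weighted Euler characteristic is additive on the triangle ${}^{v}\!\sfAR_{M}$, so ${}^{v}\!\Euch(\YYM_1\lotimes_\Pa M)={}^{v}\!\Euch(M)+{}^{v}\!\Euch(\PPi_1\lotimes_\Pa M)$, and then the scalar in Proposition~\ref{202101131644}-style computations reorganizes into exactly the claimed ratio. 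For (2) and (3) the identities $\xi_{2,M}\rrho_{\YYM_1\lotimes M}=\ppi_{1,M}$ and $\xi_{2,M}\,{}_{\YYM_1}\rrho_M=-\tfrac{{}^{v}\!\Euch(\PPi_1\lotimes_\Pa M)}{{}^{v}\!\Euch(M)}\ppi_{1,M}$ follow by composing the commuting squares above; for (3) one additionally subtracts the two expressions $\rrho_{\YYM_1\lotimes M}$ and ${}_{\YYM_1}\rrho_M$ using Lemma~\ref{202001111736} to see their difference is $\eeta^{*}_{2,M}\circ\ppi_{1,M}$, whence $\xi_{2,M}$ applied to it yields $\bigl(-\tfrac{{}^{v}\!\Euch(\YYM_1\lotimes_\Pa M)}{{}^{v}\!\Euch(M)}+1\bigr)\ppi_{1,M}=-\tfrac{{}^{v}\!\Euch(\PPi_1\lotimes_\Pa M)}{{}^{v}\!\Euch(M)}\ppi_{1,M}$ after using additivity of ${}^{v}\!\Euch$ once more.

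I expect the main obstacle to be the bookkeeping of signs and normalizations: the homotopies $\sfL$, $\sfK$, $\sfM$, $\sfN$ entering Corollary~\ref{202008091343} and the explicit cochain-level formulas of Section~\ref{202105211751} carry Koszul signs, and getting the scalar in (1) to come out as $-{}^{v}\!\Euch(\YYM_1\lotimes_\Pa M)/{}^{v}\!\Euch(M)$ (rather than its negative or reciprocal) requires carefully tracking the normalization of $\xi_{2,M}$ against the pairing of Serre duality in Theorem~\ref{trace formula}. A secondary subtlety is that $\xi_{2,M}$ is only well-defined modulo $\rad$ a priori; one must check that the three equations (1)--(3) are simultaneously satisfiable by a single choice — but this is automatic once (1) fixes the component of $\xi_{2,M}$ on the $\PPi_1\lotimes_\Pa M$ summand and (2) is used to fix the component on the $\YYM_2\lotimes_\Pa M$ summand, after which (3) is a consequence rather than an independent constraint.
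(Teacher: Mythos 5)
Your outline names several of the right ingredients (the split monomorphism $\eeta^{*}_{2,M}$ from Theorem~\ref{right approximation theorem}, Lemma~\ref{202001111736} for relating ${}_{\YYM_{1}}\rrho_{M}-\rrho_{\YYM_{1}\lotimes M}$ to $\eeta^{*}_{2,M}\ppi_{1,M}$, and additivity of ${}^{v}\!\Euch$ along ${}^{v}\!\sfAR_{M}$ to deduce (3) from (1) and (2) — that last deduction is correct and is essentially the paper's route via Lemma~\ref{202104091845}). But there is a genuine gap at the core: you take ``a chosen retraction, normalized appropriately'' and assert that (1) with the precise scalar and (2) hold simultaneously, claiming (2) ``comes from the commutativity of the right square in Corollary~\ref{202008091343}'' and that the simultaneous solvability ``is automatic''. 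It is not. Writing $\YYM_{1}\lotimes_{\Pa}\YYM_{1}\lotimes_{\Pa}M\cong(\PPi_{1}\lotimes_{\Pa}M)\oplus(\YYM_{2}\lotimes_{\Pa}M)$ via a splitting $(p,i)$ of $\eeta^{*}_{2,M}$ and $\zzeta_{2,M}$, condition (1) fixes the component of $\xi_{2,M}$ on the first summand, and condition (2) then becomes an equation $(\xi_{2,M}i)\circ\brho_{2,M}=\ppi_{1,M}-c\,p\,\rrho_{\YYM_{1}\lotimes M}$ for the component on the second summand; its solvability requires the right-hand side to be annihilated by precomposition with $\ttheta_{2,M}$, and this hinges on the identity $\ppi_{1,M}\ttheta_{2,M}=\tfrac{{}^{v}\!\Euch(\YYM_{1}\lotimes_{\Pa}M)}{{}^{v}\!\Euch(M)}\,{}_{\PPi_{1}}\ttheta_{1,M}$, i.e.\ the $n=2$ case of Proposition~\ref{202012171832}. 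That identity is not ``additivity of the weighted trace along $\sfV_{2,M}$''; it is an application of Happel's criterion, combining $\isagl{\id,\ppi_{1,M}\ttheta_{2,M}}={}^{v}\!\Euch(\YYM_{1}\lotimes_{\Pa}M)$ (via Lemma~\ref{202012211305} and Theorem~\ref{trace formula}) with the vanishing of $\isagl{f,\ppi_{1,M}\ttheta_{2,M}}$ for radical $f$, which needs the minimal right $\rad$-approximation property. Nothing in your proposal produces either half of this.

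Even granting Proposition~\ref{202012171832}, one must still merge the triangle it yields (the homotopy Cartesian square \eqref{202012171836}, obtained via Neeman's lemma) with the triangle coming from the splitting of $\eeta^{*}_{2,M}$ (diagram \eqref{202012171855}); the paper does this with Lemma~\ref{2020071917401}, whose hypothesis $\Hom_{\Pa}(\YYM_{1}\lotimes_{\Pa}M,\PPi_{2}\lotimes_{\Pa}M[-1])=0$ (Lemma~\ref{202012171908}) is unavailable for the $A_{2}$ quiver, which therefore gets a separate direct argument using $\YYM_{2}=0$. Your proposal contains neither this comparison step nor any treatment of $A_{2}$. Finally, note that the fallback ``$\xi_{2,M}$ is only well-defined modulo $\rad$'' does not rescue the statement as written: Corollary~\ref{202102221805} only gives (1) in $\ResEnd$, whereas the theorem asserts the equality on the nose, and in the paper this is achieved precisely because $\xi_{2,M}$ is extracted from an explicit isomorphism of exact triangles rather than from an arbitrary normalized retraction.
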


The proof of Theorem~\ref{2020071920551} is given in Section~\ref{202204121618} after some preparation. First however, we point out several corollaries. 

\begin{corollary}\label{202102221805}
Let $M \in \ind \Dbmod{\Pa}$ and $v \in \We_{Q}$. Assume that
${}^{v}\!\Euch(M) \neq 0, {}^{v}\!\Euch(\PPi_{1} \lotimes_{\Pa} M ) \neq 0$ and   ${}^{v}\!\Euch(\YYM_{1} \lotimes_{\Pa} M) \neq 0$. 

If a morphism $\xi'_{2, M}: \YYM_{1} \lotimes_{\Pa} \YYM_{1} \lotimes_{ \Pa} M \to \PPi_{1} \lotimes_{\Pa} M$ 
satisfies the equation  $\xi'_{2, M} \rrho_{\YYM_{1} \lotimes M}= \ppi_{1, M}$, 
then 
the following equality holds in  $\ResEnd_{\Pa}(\PPi_{1} \lotimes_{\Pa} M )$:
\[
\xi'_{2, M} \eeta^{*}_{2, M}  =-\frac{{}^{v}\!\Euch(\YYM_{1} \lotimes_{\Pa} M ) }{{}^{v}\!\Euch(M)} \id_{\PPi_{1} \lotimes M}.
\]
Thus in particular, the endomorphism $\xi'_{2, M} \eeta^{*}_{2, M} $ is an automorphism of $\PPi_{1} \lotimes_{\Pa} M$. 
\end{corollary}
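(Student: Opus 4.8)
The idea is to compare $\xi'_{2,M}$ with the morphism $\xi_{2,M}$ produced by Theorem~\ref{2020071920551}, whose existence is guaranteed because $v\in\We_Q$ satisfies exactly the same three non-vanishing conditions on weighted Euler characteristics. Both morphisms satisfy $\xi_{2,M}\rrho_{\YYM_{1}\lotimes M}=\ppi_{1,M}=\xi'_{2,M}\rrho_{\YYM_{1}\lotimes M}$, the first equality by Theorem~\ref{2020071920551}(2) and the second by hypothesis, so the difference $\delta:=\xi'_{2,M}-\xi_{2,M}\colon\YYM_{1}\lotimes_\Pa\YYM_{1}\lotimes_\Pa M\to\PPi_{1}\lotimes_\Pa M$ satisfies $\delta\,\rrho_{\YYM_{1}\lotimes M}=0$. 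First I would record the exact triangle obtained by applying $-\lotimes_\Pa(\YYM_{1}\lotimes_\Pa M)$ to ${}^{v}\!\sfAR$, namely
\[
\YYM_{1}\lotimes_\Pa M\xrightarrow{\ \rrho_{\YYM_{1}\lotimes M}\ }\YYM_{1}\lotimes_\Pa\YYM_{1}\lotimes_\Pa M\xrightarrow{\ \ppi_{1,\YYM_{1}\lotimes M}\ }\PPi_{1}\lotimes_\Pa\YYM_{1}\lotimes_\Pa M\to\YYM_{1}\lotimes_\Pa M[1],
\]
and apply the cohomological functor $\Hom_{\Dbmod{\Pa}}(-,\PPi_{1}\lotimes_\Pa M)$ to it; exactness yields a morphism $\gamma\colon\PPi_{1}\lotimes_\Pa\YYM_{1}\lotimes_\Pa M\to\PPi_{1}\lotimes_\Pa M$ with $\delta=\gamma\circ\ppi_{1,\YYM_{1}\lotimes M}$.

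Next I would compute the composite $\ppi_{1,\YYM_{1}\lotimes M}\circ\eeta^{*}_{2,M}$. By functoriality of $-\lotimes_\Pa M$ this equals $(\ppi_{1,\YYM_{1}}\circ\eeta^{*}_{2})\lotimes_\Pa M$, and Lemma~\ref{homotopic lemma 5} with $n=2$ (together with $\brho_{1}=\rrho$) identifies $\ppi_{1,\YYM_{1}}\circ\eeta^{*}_{2}$ with ${}_{\PPi_{1}}\rrho=\PPi_{1}\lotimes_\Pa\rrho$. Hence $\ppi_{1,\YYM_{1}\lotimes M}\circ\eeta^{*}_{2,M}={}_{\PPi_{1}}\rrho_{M}=\nu_{1}^{-1}(\rrho_{M})$, where I use that $\PPi_{1}\lotimes_\Pa-$ is the autoequivalence $\nu_{1}^{-1}$. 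Since ${}^{v}\!\Euch(M)\neq0$, Theorem~\ref{semi-universal Auslander-Reiten triangle} shows ${}^{v}\!\sfAR_{M}$ is an Auslander--Reiten triangle, so by Lemma~\ref{202105171427} the morphism $\rrho_{M}$ is a minimal left $\rad$-approximation of $M$; in particular $\rrho_{M}\in\rad$, and an autoequivalence preserves the radical, so $\nu_{1}^{-1}(\rrho_{M})\in\rad$. As $\rad$ is an ideal, $\delta\,\eeta^{*}_{2,M}=\gamma\circ\nu_{1}^{-1}(\rrho_{M})\in\rad(\PPi_{1}\lotimes_\Pa M,\PPi_{1}\lotimes_\Pa M)$.

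It then follows that in $\ResEnd_\Pa(\PPi_{1}\lotimes_\Pa M)$ we have $\xi'_{2,M}\eeta^{*}_{2,M}=\xi_{2,M}\eeta^{*}_{2,M}$, and by Theorem~\ref{2020071920551}(1) the right-hand side equals $-\frac{{}^{v}\!\Euch(\YYM_{1}\lotimes_\Pa M)}{{}^{v}\!\Euch(M)}\id_{\PPi_{1}\lotimes M}$, which is the asserted identity. For the last sentence: $M$ is indecomposable and $\nu_{1}^{-1}$ is an autoequivalence, so $\PPi_{1}\lotimes_\Pa M$ is indecomposable and $\End_\Pa(\PPi_{1}\lotimes_\Pa M)$ is local; since ${}^{v}\!\Euch(\YYM_{1}\lotimes_\Pa M)\neq0$ and ${}^{v}\!\Euch(M)\neq0$ the scalar is a nonzero element of $\kk$, so $\xi'_{2,M}\eeta^{*}_{2,M}$ lies outside $\rad\End_\Pa(\PPi_{1}\lotimes_\Pa M)$ and is therefore a unit, i.e.\ an automorphism.

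\textbf{Main obstacle.} The only genuinely delicate point is the bookkeeping of tensor factors: one must check that $\ppi_{1,\YYM_{1}\lotimes M}$ acts on the outer (first) $\YYM_{1}$ factor, so that the left-hand knitting relation Lemma~\ref{homotopic lemma 5}, rather than Lemma~\ref{homotopic lemma 5 right}, is the one that applies to $\ppi_{1,\YYM_{1}}\circ\eeta^{*}_{2}$, and that the cone of $\rrho_{\YYM_{1}\lotimes M}$ is correctly identified with $\PPi_{1}\lotimes_\Pa\YYM_{1}\lotimes_\Pa M$ via the triangle ${}^{v}\!\sfAR\lotimes_\Pa(\YYM_{1}\lotimes_\Pa M)$. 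Once these identifications are pinned down, the remaining steps are formal.
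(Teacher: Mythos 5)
Your proof is correct. It follows the same skeleton as the paper's own argument: both compare $\xi'_{2,M}$ with the morphism $\xi_{2,M}$ of Theorem~\ref{2020071920551}, observe that the difference $\delta=\xi'_{2,M}-\xi_{2,M}$ kills $\rrho_{\YYM_{1}\lotimes M}$, conclude that $\xi'_{2,M}\eeta^{*}_{2,M}$ and $\xi_{2,M}\eeta^{*}_{2,M}$ agree in $\ResEnd_{\Pa}(\PPi_{1}\lotimes_{\Pa}M)$, and then invoke part (1) of the theorem. Where you diverge is in how the radical step is justified: the paper simply asserts that $\delta\rrho_{\YYM_{1}\lotimes M}=0$ forces $\delta\in\rad$, whereas you factor $\delta=\gamma\circ\ppi_{1,\YYM_{1}\lotimes M}$ through the triangle ${}^{v}\!\sfAR\lotimes_{\Pa}(\YYM_{1}\lotimes_{\Pa}M)$ and then use the knitting identity $\ppi_{1,\YYM_{1}}\eeta^{*}_{2}={}_{\PPi_{1}}\brho_{1}$ (Lemma~\ref{homotopic lemma 5} at $n=2$, correctly the left-handed version) to get $\delta\eeta^{*}_{2,M}=\gamma\circ\nu_{1}^{-1}(\rrho_{M})\in\rad$, since $\rrho_{M}\in\rad$ by ${}^{v}\!\Euch(M)\neq0$ and autoequivalences preserve the radical. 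Your route is a bit longer but arguably more airtight: it only needs radicality of $\rrho_{M}$, and does not require knowing that $\delta$ itself, or the cone morphism $\ppi_{1,\YYM_{1}\lotimes M}$, is radical — something that under the corollary's hypotheses (only the total weighted Euler characteristic of $\YYM_{1}\lotimes_{\Pa}M$ is assumed nonzero, not that of each indecomposable summand) is not immediate and is left implicit in the paper's one-line claim. The final step, deducing that $\xi'_{2,M}\eeta^{*}_{2,M}$ is an automorphism from the locality of $\End_{\Pa}(\PPi_{1}\lotimes_{\Pa}M)$ and the nonvanishing of the scalar, is also correct.
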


\begin{proof}
Let $\xi_{2,M}$ be the morphism obtained in Theorem \ref{2020071920551}. 
Since $(\xi'_{2, M} - \xi_{2, M}) \rrho_{\YYM_{1} \lotimes M} = 0$,  we see that $\xi'_{2, M} - \xi_{2, M}$ belongs to $\rad$. 
Hence $\xi'_{2, M} \eeta^{*}_{2, M} - \xi_{2, M} \eeta^{*}_{2, M}$ belongs to $\rad$. 
Thus we deduce the desired conclusions. 
\end{proof}

\begin{remark}
If we fix a generic weight $v \in \We_{Q}$,   then the fraction 
$-\frac{{}^{v}\!\Euch(\YYM_{1} \lotimes_{\Pa} M )}{{}^{v}\!\Euch(M)}$ depends on the indecomposable object $M$. 
It follows that 
for a generic  $v$, 
there exists  no  morphism $\xxi_{2}: \YYM_{1} \lotimes_{\Pa} \YYM_{1}\to \PPi_{1}$ in $\sfD(\Pa^{\mre})$
such that $\xxi_{2} \rrho_{\YYM_{1}} = \ppi_{1}$ and $\xi_{2, M} = \xxi_{2} \lotimes M$ for all $M$. 
Indeed, assume that such a morphism $\xxi_{2}: \YYM_{1} \lotimes_{\Pa} \YYM_{1}\to \PPi_{1}$ in $\sfD(\Pa^{\mre})$ exists. 
Let $f: M \to N$ be a non-zero morphism of indecomposable objects of $\Dbmod{\Pa}$. 
We set $g:= (\xxi_{2,M})(\eeta^{*}_{2, M}), \ h =(\xxi_{2, N})(\eeta^{*}_{2, N})$ and 
$x:= - \frac{ {}^{v}\!\Euch(\YYM_{1} \lotimes_{\Pa} M)}{{}^{v}\!\Euch(M)}
, \ y:= - \frac{ {}^{v}\!\Euch(\YYM_{1} \lotimes_{\Pa} N)}{{}^{v}\!\Euch(N)}$. 
Then  using \eqref{202111281249},  we obtain $(h -x)({}_{\PPi_{1}}f) = ({}_{\PPi_{1}}f)(g-x)$. 
By Corollary \ref{202102221805}, we have $(g -x)^{n}= 0$ for $n$ large enough 
and hence $(h -x)^{n}({}_{\PPi_{1}}f) = 0$. 
On the other hand, it follows from  Corollary \ref{202102221805} that if $x \neq y$ 
then $h-x$ is an automorphism of $N$. 
Thus we deduce $x = y$.

However if $v$ is an eigenvector of $\Psi$ with the eigenvalue $\lambda$, then 
\[
-\frac{{}^{v}\!\Euch(\YYM_{1} \lotimes_{\Pa} M )}{{}^{v}\!\Euch(M)}= -(1 + \lambda)
\]
and  it does  not depend on $M$. 
In this case, we prove that there exists  a morphism $\xxi :\YYM_{1} \lotimes_{\Pa} \YYM_{1}\to \PPi_{1}$ in $\sfD(\Pa^{\mre})$ that has the above properties in  Proposition \ref{202102231445}.
\end{remark}

We use the following corollary which holds  for  any not necessary indecomposable object $M$ of  $\Dbmod{\Pa}$. 
\begin{corollary}\label{2021022218051}
Let $M \in \Dbmod{\Pa}$ and $v \in \We_{Q}$. Assume that
${}^{v}\!\Euch(N) \neq 0, \ {}^{v}\!\Euch(\PPi_{1} \lotimes_{\Pa} N) \neq 0$ and   ${}^{v}\!\Euch(\YYM_{1} \lotimes_{\Pa} N) \neq 0$ for any indecomposable direct summand $N$ of $M$.

Then,  
there exists a morphism $\xi_{2, M}: \YYM_{1} \lotimes_{\Pa} \YYM_{1} \lotimes_{ \Pa} M \to \PPi_{1} \lotimes_{\Pa} M$ 
which satisfies   the following equations. 
\begin{enumerate}[(1)] 
\item 
$
\xi_{2, M} \eeta^{*}_{2, M}$ is an isomorphism.

\item  $\xi_{2, M} \rrho_{\YYM_{1} \lotimes M}= \ppi_{1, M}$.  
\end{enumerate}

Namely we have the following commutative  diagrams. 
\[
\begin{xymatrix}@C=40pt{ 
& \PPi_{1} \lotimes_{\Pa} M \ar[d]^{\eeta^{*}_{2,M} }  
\ar@/^20pt/[ddr]^{\cong}
& 
\\ 
\YYM_{1}\lotimes_{\Pa} M \ar@/_20pt/[rrd]_{\ppi_{1, M} } \ar[r]^{\rrho_{ \YYM_{1} \lotimes M}}
 &
 \YYM_{1} \lotimes_{\Pa} \YYM_{1} \lotimes_{\Pa} M \ar[dr]^{\xi_{2, M}}
 & \\
&& \PPi_{1} \lotimes_{\Pa} M
}
\end{xymatrix}\]
\end{corollary}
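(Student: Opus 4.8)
The plan is to reduce Corollary \ref{2021022218051} to the indecomposable case treated in Theorem \ref{2020071920551} by an additivity argument. Since $\Pa = \kk Q$ is finite-dimensional of finite global dimension, $\Dbmod{\Pa}$ is Krull--Schmidt, so I would first fix an indecomposable decomposition $M = \bigoplus_{i=1}^{r} M_{i}$. Each summand $M_{i}$ satisfies ${}^{v}\!\Euch(M_{i}) \neq 0$, ${}^{v}\!\Euch(\PPi_{1} \lotimes_{\Pa} M_{i}) \neq 0$ and ${}^{v}\!\Euch(\YYM_{1} \lotimes_{\Pa} M_{i}) \neq 0$ by hypothesis, so Theorem \ref{2020071920551} applies to $M_{i}$ and produces a morphism $\xi_{2, M_{i}} \colon \YYM_{1} \lotimes_{\Pa} \YYM_{1} \lotimes_{\Pa} M_{i} \to \PPi_{1} \lotimes_{\Pa} M_{i}$ with $\xi_{2, M_{i}} \eeta^{*}_{2, M_{i}} = -\tfrac{{}^{v}\!\Euch(\YYM_{1} \lotimes_{\Pa} M_{i})}{{}^{v}\!\Euch(M_{i})} \id_{\PPi_{1} \lotimes M_{i}}$ and $\xi_{2, M_{i}} \rrho_{\YYM_{1} \lotimes M_{i}} = \ppi_{1, M_{i}}$.

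Next I would set $\xi_{2, M} := \bigoplus_{i=1}^{r} \xi_{2, M_{i}}$, using the canonical identifications $\YYM_{1} \lotimes_{\Pa} \YYM_{1} \lotimes_{\Pa} M \cong \bigoplus_{i} \YYM_{1} \lotimes_{\Pa} \YYM_{1} \lotimes_{\Pa} M_{i}$ and $\PPi_{1} \lotimes_{\Pa} M \cong \bigoplus_{i} \PPi_{1} \lotimes_{\Pa} M_{i}$. The key observation is that the three structure morphisms $\eeta^{*}_{2, M} = \eeta^{*}_{2} \lotimes_{\Pa} M$, $\rrho_{\YYM_{1} \lotimes M} = \rrho \lotimes_{\Pa}(\YYM_{1} \lotimes_{\Pa} M)$ and $\ppi_{1, M} = \ppi_{1} \lotimes_{\Pa} M$ are all obtained by applying an additive functor of the form $- \lotimes_{\Pa} X$ (or $X \lotimes_{\Pa} -$) to a fixed morphism of $\Pa$-bimodules, hence are block-diagonal with respect to the above decompositions, with $i$-th blocks $\eeta^{*}_{2, M_{i}}$, $\rrho_{\YYM_{1} \lotimes M_{i}}$ and $\ppi_{1, M_{i}}$. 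Therefore $\xi_{2, M} \eeta^{*}_{2, M}$ and $\xi_{2, M} \rrho_{\YYM_{1} \lotimes M}$ are block-diagonal and can be computed blockwise.

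For the first assertion, the $i$-th block of $\xi_{2, M} \eeta^{*}_{2, M}$ is $\xi_{2, M_{i}} \eeta^{*}_{2, M_{i}} = -\tfrac{{}^{v}\!\Euch(\YYM_{1} \lotimes_{\Pa} M_{i})}{{}^{v}\!\Euch(M_{i})} \id_{\PPi_{1} \lotimes M_{i}}$, a nonzero scalar multiple of the identity (both Euler characteristics being nonzero by the assumption on the summands), hence an automorphism; consequently $\xi_{2, M} \eeta^{*}_{2, M}$ is an isomorphism. For the second, the $i$-th block of $\xi_{2, M} \rrho_{\YYM_{1} \lotimes M}$ is $\xi_{2, M_{i}} \rrho_{\YYM_{1} \lotimes M_{i}} = \ppi_{1, M_{i}}$, and these assemble to $\ppi_{1, M}$. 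This proves both equations.

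There is essentially no substantive obstacle: the whole content is the compatibility of the constructions of Theorem \ref{2020071920551} with the indecomposable decomposition, which is immediate from the functoriality of $- \lotimes_{\Pa} -$ together with the Krull--Schmidt property of $\Dbmod{\Pa}$. The only point that deserves a remark is that the scalars $-\tfrac{{}^{v}\!\Euch(\YYM_{1} \lotimes_{\Pa} M_{i})}{{}^{v}\!\Euch(M_{i})}$ are well defined and nonzero — which is exactly what the hypotheses guarantee — and that, since they may depend on $i$, the composite $\xi_{2, M} \eeta^{*}_{2, M}$ need not be a scalar multiple of the identity in general; it is nevertheless an isomorphism, which is all that is claimed.
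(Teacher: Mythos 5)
Your proof is correct and follows essentially the same route as the paper: the paper's own proof simply takes an indecomposable decomposition $M = \bigoplus_{i} N_{i}$, invokes Theorem \ref{2020071920551} on each summand, and declares that the induced morphism $\xi_{2,M}$ built from the $\xi_{2,N_{i}}$ has the desired properties. Your write-up merely spells out the block-diagonality of $\eeta^{*}_{2,M}$, $\rrho_{\YYM_{1}\lotimes M}$ and $\ppi_{1,M}$, and your closing remark that the composite need not be a scalar multiple of the identity (only an isomorphism) matches the remark the paper makes right after the corollary.
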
 

\begin{proof}
Let $M = \bigoplus_{i =1}^{p} N_{i}$ be an indecomposable decomposition. 
For each $i =1,2, \ldots, p$, we have the morphism $\xi_{2, N_{i}}$ of Theorem \ref{2020071920551}. 
Then the morphism $\xi_{2, M}: \YYM_{1}\lotimes_{\Pa} \YYM_{1} \lotimes_{\Pa} M \to \PPi_{1} \lotimes_{\Pa} M$ induced from $\xi_{2,N_{1}},\xi_{2,N_{2}}, \ldots, \xi_{2,N_{p}}$ 
has the desired properties.
\end{proof}

\begin{remark}
\begin{enumerate}[(1)]
\item 
If 
the number 
$\lambda := \frac{{}^{v}\!\Euch(\PPi_{1} \lotimes_{\Pa}N )}{{}^{v}\!\Euch(N)}$ is independent of the indecomposable direct summand $N$ of $M$, 
then in (1) of the above corollary, we have 
\[
\xi_{2, M} \eeta^{*}_{2, M} = -(1+ \lambda). 
\]

\item  
The correspondence $ M \mapsto \xi_{2, M}$ is functorial with respect to split epimorphisms and split monomorphisms. 
Namely, if $f: M\to N$ is a split epimorphism or  a split monomorphism in $\Dbmod{\Pa}$, 
then we have 
$(\xi_{2, N})({}_{\YYM_{1}\lotimes \YYM_{1}} f) =({}_{\PPi_{1}} f)(\xi_{2, M})$. 
\end{enumerate}
\end{remark}

\begin{remark}
Even if the base field $\kk$ is not an algebraically closed, 
we can show in the same way as the proof of Theorem \ref{2020071920551} 
that there exists a morphism $\xi_{2, M}: \YYM_{1} \lotimes_{\Pa} \YYM_{1} \lotimes_{\Pa} M \to \PPi_{1} \lotimes_{\Pa} M$ 
such that $\xi_{2, M} \rrho_{\YYM_{1} \lotimes M}= \ppi_{1, M}$ and 
that $\xi_{2,M} \eeta^{*}_{2, M}$ is an automorphism of $\PPi_{1} \lotimes_{\Pa} M$. 
\end{remark}

\subsection{Preparations}

\subsubsection{}
Recall that $\PPi_{n} = \PPi_{1} \lotimes_{\Pa} \PPi_{1} \lotimes_{\Pa} \cdots \lotimes_{\Pa} \PPi_{1}$. 
We consider the following  isomorphisms that exchange  the shift functor  $[-1]$. 
\[
(\PPi_{n} \lotimes_{\Pa} M)[-1] 
\cong (\PPi_{1}[-1]) \lotimes_{\Pa} \PPi_{n-1} \lotimes_{\Pa} M 
\cong \PPi_{n -1} \lotimes_{\Pa} (\PPi_{1}[-1]) \lotimes_{\Pa} M. 
\]
We note that thanks to Lemma \ref{202102171838}(2)  
the second isomorphism coincides with the isomorphism $(\gamma_{\PPi_{n -1}})_{M}$ 
 associated to the inverse of the Serre functor $\sfS^{-1} =\PPi_{1}[-1] \lotimes_{\Pa} -$.

Using these isomorphisms, we regard $\ttheta_{1, \PPi_{ n-1} \lotimes M}$ and ${}_{\PPi_{n-1}}\ttheta_{1, M}$ as morphisms 
from $(\PPi_{n} \lotimes_{\Pa} M)[-1] $ to $\PPi_{n -1} \lotimes_{\Pa} M$. 
\[
\begin{split}
&\ttheta_{1, \PPi_{ n-1} \lotimes M}:
(\PPi_{n} \lotimes_{\Pa} M)[-1] 
\cong (\PPi_{1}[-1]) \lotimes_{\Pa} \PPi_{n-1} \lotimes_{\Pa} M 
\to \PPi_{n -1} \lotimes_{\Pa} M, \\
&{}_{\PPi_{n-1}}\ttheta_{1, M}:
(\PPi_{n} \lotimes_{\Pa} M)[-1]
\cong \PPi_{n -1} \lotimes_{\Pa} (\PPi_{1}[-1]) \lotimes_{\Pa} M
\to \PPi_{n -1} \lotimes_{\Pa} M. 
\end{split}
\]

By Proposition \ref{202101131644} we obtain the following lemma.

\begin{lemma}\label{202103021524}
Let $M \in \ind \Dbmod{\Pa}$.  
Assume that ${}^{v}\!\Euch(M) \neq 0$. 
Then, 
we have the following equality of morphisms $\PPi_{n} \lotimes_{\Pa} M [-1] \to \PPi_{n -1} \lotimes_{\Pa} M $
\[
\ttheta_{1, \PPi_{ n-1} \lotimes M}= \frac{{}^{v}\!\Euch(\PPi_{n -1} \lotimes_{\Pa} M)}{  {}^{v}\!\Euch(M)} {}_{\PPi_{n-1}}\ttheta_{1,  M}.
\]
\end{lemma}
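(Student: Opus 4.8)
The plan is to reduce the statement of Lemma~\ref{202103021524} to the already-proven computation rule for the action of an autoequivalence on ${}^{v}\!\ttheta$, namely Proposition~\ref{202101131644}, applied to the two-sided tilting complex $T = \PPi_{n-1}$. First I would recall that $\PPi_{n-1} \lotimes_{\Pa} -$ is an exact autoequivalence of $\Dbmod{\Pa}$, being an iterated composite of the inverse Nakayama functor $\nu_1^{-1} \cong \PPi_1 \lotimes_{\Pa} -$; write $F := \PPi_{n-1} \lotimes_{\Pa} -$, so that $\underline{F} = \Phi^{-(n-1)}$ and $\underline{F}^{t} = \Psi^{n-1}$. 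Under the identification $\sfS^{-1} = \Pa^{\vvee} \lotimes_{\Pa} - = \PPi_1[-1] \lotimes_{\Pa} -$, the morphisms $\ttheta_{1, \PPi_{n-1}\lotimes M}$ and ${}_{\PPi_{n-1}}\ttheta_{1,M}$ that appear in the statement are (after the two natural shift-exchange isomorphisms displayed just before the lemma, the second of which is $\gamma_{F,M}$ by Lemma~\ref{202102171838}(2)) precisely the two composites appearing in the diagram of Proposition~\ref{202101131644}(2): one is ${}^{v}\!\ttheta_{F(M)}$ and the other is $F({}^{v}\!\ttheta_M) \circ \gamma_{F,M}$. So the content of the lemma is exactly that commutative triangle, with scalar $x = {}^{v}\!\Euch(F(M))/{}^{v}\!\Euch(M)$.

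The key steps, in order, are: (i) check the hypotheses of Proposition~\ref{202101131644}, which requires $M \in \ind\Dbmod{\Pa}$ with $\dim\ResEnd_{\Pa}(M) = 1$ (automatic here, or assumed in Section~\ref{section: the middle term of the middle terms} where $\kk$ is algebraically closed; but the lemma is stated in Section~\ref{section: right rad-n approximation}, so I would invoke instead that an indecomposable object in $\Dbmod{\Pa}$ for a hereditary $\Pa$ always has $\ResEnd \cong \kk$, cf. the remark after Theorem~\ref{semi-universal Auslander-Reiten triangle}) together with ${}^{v}\!\Euch(M) \neq 0$; (ii) identify $F = \PPi_{n-1}\lotimes_{\Pa}-$ as the autoequivalence and observe that the two shift-exchange isomorphisms preceding the lemma are the canonical one and $\gamma_{F,M}$ respectively; (iii) apply Proposition~\ref{202101131644}(2) to get $x\, F({}^{v}\!\ttheta_M)\gamma_{F,M} = {}^{v}\!\ttheta_{F(M)}$ with $x = {}^{v}\!\Euch(F(M))/{}^{v}\!\Euch(M)$; (iv) translate back: $F({}^{v}\!\ttheta_M)\gamma_{F,M}$ is, under the identifications, the morphism ${}_{\PPi_{n-1}}\ttheta_{1,M}$, while ${}^{v}\!\ttheta_{F(M)}$ is $\ttheta_{1,\PPi_{n-1}\lotimes M}$; (v) compute $x = {}^{v}\!\Euch(\PPi_{n-1}\lotimes_{\Pa} M)/{}^{v}\!\Euch(M)$ by definition of $F$, which gives exactly the claimed fraction.

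I expect the main obstacle to be the bookkeeping in step (iv): one must verify that the two abstract natural isomorphisms $\sfS^{-1}F \cong FF^{-1}\sfS^{-1} \cong F\sfS^{-1}F^{-1} \cong \cdots$ used in the definition of $F\cdot({}^{v}\!\ttheta)$ in Section~\ref{subsection: the action of ...}, when unwound along $T = \PPi_{n-1}$, really do coincide with the two elementary "move $[-1]$ past $\PPi_{n-1}$" isomorphisms written down just above the lemma statement. This is where Lemma~\ref{202102171838}(2) is essential — it says the naturality isomorphism $\gamma$ associated to $\sfS^{-1}$ and the tilting complex $\PPi_{n-1}$ is the obvious associativity/commutativity isomorphism of derived tensor products — so no sign or coherence surprises arise. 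Everything else (the $\Euch$ computation, checking $\underline{F}^t = \Psi^{n-1}$) is routine via equation~\eqref{202102061837} and \eqref{202111101530}; a concise write-up would say "apply Proposition~\ref{202101131644}(2) with $F = \PPi_{n-1}\lotimes_{\Pa}-$ and unwind the identifications, using Lemma~\ref{202102171838}(2)," and then record the value of $x$.
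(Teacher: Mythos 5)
Your proposal is exactly the paper's argument: the paper regards $\ttheta_{1,\PPi_{n-1}\lotimes M}$ and ${}_{\PPi_{n-1}}\ttheta_{1,M}$ as morphisms $(\PPi_{n}\lotimes_{\Pa}M)[-1]\to\PPi_{n-1}\lotimes_{\Pa}M$ via the two shift-exchange isomorphisms, identifies the second one with $(\gamma_{\PPi_{n-1}})_{M}$ by Lemma~\ref{202102171838}(2), and then deduces the formula directly from Proposition~\ref{202101131644}(2) with $F=\PPi_{n-1}\lotimes_{\Pa}-$, just as you describe. One small correction: the lemma is stated in Section~\ref{section: the middle term of the middle terms}, where $\kk$ is assumed algebraically closed, so $\dim\ResEnd_{\Pa}(M)=1$ is a standing hypothesis and your fallback claim that indecomposables over a hereditary algebra always have $\ResEnd\cong\kk$ (false over general base fields) is not needed.
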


Although we only use the case of $n=2$ of the following proposition to prove  Theorem \ref{2020071920551}, 
we provide a general statement  and proof.

\begin{proposition}\label{202012171832} 
Let $n \in N_{Q}^{n  \geq 2}$ and $M \in \ind \Dbmod{\Pa}$. 
Assume that $v$ has property (I)${}_{M, n-1}$  and  
$
{}^{v}\!\Euch(\YYM_{n -1}\lotimes_{\Pa} M) \neq 0. $ 
Then, we have
\[
\ppi_{n -1, M}\ttheta_{n, M} =
\frac{  {}^{v}\!\Euch(\YYM_{n -1} \lotimes_{\Pa} M)}{{}^{v}\!\Euch( M)}{}_{\PPi_{n-1}}\ttheta_{1,M}.
\]
In particular,  
the composition $\ppi_{n -1, M} \ttheta_{n, M}: \PPi_{n}\lotimes_{\Pa} M [-1] \to \PPi_{n -1} \lotimes_{\Pa} M$ is 
AR-coconnecting to $\PPi_{n -1} \lotimes_{\Pa} M$. 
\end{proposition}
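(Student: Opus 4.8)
The plan is to reduce the assertion to an identity between two linear functionals on $\End_{\Pa}(\sfL)$, where $\sfL:=\PPi_{n-1}\lotimes_{\Pa}M$. Set $\sfN:=\YYM_{n-1}\lotimes_{\Pa}M$ and $\psi:=\ppi_{n-1,M}\circ\ttheta_{n,M}\colon\sfS^{-1}(\sfL)\to\sfL$ (recall $\PPi_{n}\lotimes_{\Pa}M[-1]=\sfS^{-1}(\sfL)$). Since $F:=\PPi_{n-1}\lotimes_{\Pa}-=\nu_{1}^{-(n-1)}$ is a triangulated autoequivalence, $\sfL=F(M)$ is indecomposable, and as $\kk$ is algebraically closed $\End_{\Pa}(\sfL)$ is local with residue field $\kk$; by Serre duality $\isagl{-,+}$ identifies $\Hom_{\Pa}(\sfS^{-1}(\sfL),\sfL)$ with $\tuD\End_{\Pa}(\sfL)$. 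Hence it suffices to prove $\isagl{f,\psi}=\frac{{}^{v}\!\Euch(\sfN)}{{}^{v}\!\Euch(M)}\isagl{f,{}_{\PPi_{n-1}}\ttheta_{1,M}}$ for all $f\in\End_{\Pa}(\sfL)$; the displayed formula follows, and the ``AR-coconnecting'' assertion is read off at the end. (Note ${}^{v}\!\Euch(M)\neq0$ by property (I)${}_{M,n-1}$ in $*$-degree $0$, and ${}^{v}\!\Euch(\sfN)\neq0$ by hypothesis.)

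First I would identify the right-hand functional. By construction ${}_{\PPi_{n-1}}\ttheta_{1,M}$ is the image of $\ttheta_{1,M}={}^{v}\!\ttheta_{M}$ under $F$ (composed with the coherence isomorphism $\gamma_{F}$ relating $\sfS^{-1}F$ and $F\sfS^{-1}$), so the equivariance of the Serre pairing under exact autoequivalences (used already in the proof of Theorem \ref{202101081603}) together with the trace formula (Theorem \ref{trace formula}) give $\isagl{f,{}_{\PPi_{n-1}}\ttheta_{1,M}}=\isagl{F^{-1}(f),{}^{v}\!\ttheta_{M}}={}^{v}\!\Tr(F^{-1}(f))$. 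Thus the right-hand functional sends $f$ to $\frac{{}^{v}\!\Euch(\sfN)}{{}^{v}\!\Euch(M)}\,{}^{v}\!\Tr(F^{-1}(f))$; it equals ${}^{v}\!\Euch(\sfN)$ at $f=\id_{\sfL}$ and vanishes on $\rad\End_{\Pa}(\sfL)$, since an element of $\rad\End_{\Pa}(\sfL)$ and its image under $F^{-1}$ are nilpotent and the weighted trace of a nilpotent endomorphism is $0$ (Lemma \ref{202012091517}(3)).

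Next I would compute the left-hand functional. By Theorem \ref{right approximation theorem} in degree $n-1$ (licensed by property (I)${}_{M,n-1}$), $\ppi_{n-1,M}\colon\sfN\to\sfL$ is a minimal right $\rad^{n-1}$-approximation of $\sfL$, so $\ppi_{n-1,M}\in\rad^{n-1}(\sfN,\sfL)$. For $f\in\End_{\Pa}(\sfL)$, $f\circ\ppi_{n-1,M}\in\rad^{n-1}(\sfN,\sfL)$ factors as $f\circ\ppi_{n-1,M}=\ppi_{n-1,M}\circ h$ with $h\in\End_{\Pa}(\sfN)$, and then the adjunction properties of $\isagl{-,+}$, Lemma \ref{202012211305} in the form $\ttheta_{n,M}\circ\sfS^{-1}(\ppi_{n-1,M})=\ttheta_{1,\sfN}$ (the morphism ${}_{\PPi_{1}}\ppi_{n-1,M}[-1]$ there being $\sfS^{-1}(\ppi_{n-1,M})$ after the canonical identifications), and the trace formula give
\[
\isagl{f,\psi}=\isagl{f\circ\ppi_{n-1,M},\ttheta_{n,M}}=\isagl{h,\ttheta_{n,M}\circ\sfS^{-1}(\ppi_{n-1,M})}=\isagl{h,\ttheta_{1,\sfN}}={}^{v}\!\Tr(h).
\]
For $f=\id_{\sfL}$ one may take $h=\id_{\sfN}$, so $\isagl{\id_{\sfL},\psi}={}^{v}\!\Tr(\id_{\sfN})={}^{v}\!\Euch(\sfN)$. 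For $f\in\rad\End_{\Pa}(\sfL)$ one has $f\circ\ppi_{n-1,M}\in\rad^{n}(\sfN,\sfL)$; since the components of the minimal right $\rad^{n-1}$-approximation $\ppi_{n-1,M}$ form bases of the relevant spaces $\irr^{n-1}$ (the derived right-hand analogue of Theorem \ref{202008172145}(2)), the relation $\ppi_{n-1,M}\circ h=f\circ\ppi_{n-1,M}\equiv0\pmod{\rad^{n}}$ forces $h\in\rad\End_{\Pa}(\sfN)$, hence $h$ nilpotent and ${}^{v}\!\Tr(h)=0$. So the left-hand functional also takes value ${}^{v}\!\Euch(\sfN)$ at $\id_{\sfL}$ and vanishes on $\rad\End_{\Pa}(\sfL)$, whence it coincides with the right-hand one, proving the formula.

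Finally, the formula exhibits $\psi$ as the nonzero scalar $\frac{{}^{v}\!\Euch(\sfN)}{{}^{v}\!\Euch(M)}$ times ${}_{\PPi_{n-1}}\ttheta_{1,M}=F(\ttheta_{M})\gamma_{F}$. As $\ttheta_{M}={}^{v}\!\ttheta_{M}$ is AR-coconnecting to $M$ (Theorem \ref{semi-universal Auslander-Reiten triangle}(1), using ${}^{v}\!\Euch(M)\neq0$ and that $M$ is indecomposable) and $F=\nu_{1}^{-(n-1)}$ is a triangulated autoequivalence, $F(\ttheta_{M})\gamma_{F}$ — hence $\psi$ — is AR-coconnecting to $\sfL=\PPi_{n-1}\lotimes_{\Pa}M$, which gives the ``in particular''. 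I expect the main obstacle to be the last step of the third paragraph: turning $f\circ\ppi_{n-1,M}\in\rad^{n}$ into $h\in\rad\End_{\Pa}(\sfN)$ via the $\irr^{n-1}$-description of minimal right $\rad^{n-1}$-approximations (Theorem \ref{202008172145}); the remaining steps are essentially bookkeeping with the Serre pairing, Lemma \ref{202012211305}, and the trace formula.
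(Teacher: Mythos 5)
Your proposal is correct, and it relies on the same three pillars as the paper's own proof: the trace formula (Theorem \ref{trace formula}), the compatibility $\ttheta_{n,M}\,\sfS^{-1}(\ppi_{n-1,M})=\ttheta_{1,\YYM_{n-1}\lotimes M}$ from Lemma \ref{202012211305}, and the fact that $\ppi_{n-1,M}$ (equivalently ${}_{\PPi_{1}}\ppi_{n-1,M}$) is a minimal right $\rad^{n-1}$-approximation by Theorem \ref{right approximation theorem}, with the same caveat that the dual/derived versions of Lemma \ref{202105252216} and Theorem \ref{202008172145}(2) are what one really invokes. Where you differ is in the logical packaging. The paper first verifies Happel's criterion (Theorem \ref{Happel's criterion}) for $\ppi_{n-1,M}\ttheta_{n,M}$ — computing $\isagl{\id,-\,}={}^{v}\!\Euch(\YYM_{n-1}\lotimes_{\Pa}M)$ and killing radical $f$ by lifting $\sfS^{-1}(f\ppi_{n-1,M})$ through ${}_{\PPi_{1}}\ppi_{n-1,M}[-1]$ and using that $\ttheta_{1,\YYM_{n-1}\lotimes M}$ annihilates radical morphisms — and only then obtains the displayed scalar identity via the proportionality of AR-coconnecting morphisms (Lemma \ref{202102071323}) through the intermediate morphism $\ttheta_{1,\PPi_{n-1}\lotimes M}$, rewritten by Lemma \ref{202103021524}. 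You instead compare the two functionals $\isagl{-,\psi}$ and $\frac{{}^{v}\!\Euch(\YYM_{n-1}\lotimes M)}{{}^{v}\!\Euch(M)}\isagl{-,{}_{\PPi_{n-1}}\ttheta_{1,M}}$ on the local algebra $\End_{\Pa}(\PPi_{n-1}\lotimes_{\Pa}M)$, kill radical $f$ by factoring $f\ppi_{n-1,M}=\ppi_{n-1,M}h$ with $h$ radical and taking ${}^{v}\!\Tr(h)=0$, handle the right-hand side by the equivariance of the Serre pairing plus the trace formula (in effect re-deriving the content of Theorem \ref{202101081603}/Proposition \ref{202101131644}), and conclude by nondegeneracy; the AR-coconnecting assertion then comes last, by transporting $\ttheta_{M}$ along the autoequivalence $\PPi_{n-1}\lotimes_{\Pa}-$ and scaling. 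One concrete benefit of your ordering is that you never need $\ttheta_{1,\PPi_{n-1}\lotimes M}$ itself to be AR-coconnecting, i.e.\ you bypass the intermediate quantity ${}^{v}\!\Euch(\PPi_{n-1}\lotimes_{\Pa}M)$ that appears (and must be nonzero to divide by) in the paper's two-step deduction before it cancels; the paper's route, in turn, is slightly shorter because Lemma \ref{202102071323} and Lemma \ref{202103021524} are already available off the shelf.
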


\begin{proof}
We claim that $\ppi_{n -1, M} \ttheta_{n, M}$ is AR-coconnecting.  

We use  Happel's criterion (Theorem \ref{Happel's criterion}). 
We have the following equality
\begin{equation}\label{202102211751}
\begin{split} 
\isagl{\id_{\PPi_{n-1} \lotimes M}, \ppi_{n-1, M} \ttheta_{n ,M}  } 
& = \isagl{ \id_{\YYM_{ n-1} \lotimes M}, \ttheta_{n, M} (\PPi_{1}[-1] \lotimes \ppi_{n -1, M} )  } \\
& = \isagl{  \id_{\YYM_{ n-1} \lotimes M}, \ttheta_{1, \YYM_{ n-1} \lotimes M} } \\
& = {}^{v}\!\Euch(\YYM_{n -1} \lotimes_{\Pa} M) \neq 0 
\end{split}
\end{equation}
where the first equality follows from functoriality of Serre functor, 
 for the second equality we use Lemma \ref{202012211305} 
and for the third equality we use Theorem \ref{trace formula}.

Let $f$ be an endomorphism of $\PPi_{n -1}\lotimes_{\Pa} M$ belonging to $\rad$. 
Then we have the following commutative diagram 
\[
 \begin{xymatrix}@C=50pt{
 \sfS^{-1}(\YYM_{n-1} \lotimes_{\Pa}M) \ar[d]_{\sfS^{-1}(f \ppi_{ n-1, M})} \ar@{-->}[r]^{g}&
 \PPi_{1} \lotimes_{\Pa} \YYM_{ n-1} \lotimes_{\Pa} M[-1] 
 \ar[d]^{\PPi_{1} \lotimes \ppi_{n -1, M}[-1]}  \ar[r]^-{\ttheta_{1, \YYM_{n -1} \lotimes M}} & \YYM_{n -1}\lotimes_{\Pa} M\ar@{=}[d] \\
\sfS^{-1}(\PPi_{ n-1} \lotimes_{\Pa} M) \ar@{=}[r]&  \PPi_{n } \lotimes_{\Pa} M[-1]  \ar[r]_{\ttheta_{n, M}} & \YYM_{n -1} \lotimes_{\Pa} M.  
}\end{xymatrix} 
 \] 
By Theorem \ref{right approximation theorem}, the morphism 
$\PPi_{1} \lotimes \ppi_{n-1, M}[-1]$ is a minimal right $\rad^{n-1}$-approximation. 
Since $\sfS^{-1}(f\ppi_{n-1, M})$ belongs to $\rad^{n}$, there exists a morphism $g: \sfS^{-1}(\YYM_{n-1} \lotimes_{\Pa} M) \to \PPi_{1} \lotimes_{\Pa} \YYM_{n-1} \lotimes_{\Pa} M$ that completes the above commutative diagram. 
It follows from a version of Lemma \ref{202105252216}  that $g$ belongs to $\rad$. 
Therefore we have $\ttheta_{1, \YYM_{ n-1} \lotimes M}g = 0$ and consequently 
\[
\begin{split} 
\isagl{f, \ppi_{n-1, M} \ttheta_{n ,M} } 
& = \isagl{  \id_{\YYM_{ n-1} \lotimes M}, \ttheta_{n, M}\sfS^{-1}(f\ppi_{n -1, M}) } \\
& = \isagl{ \id_{\YYM_{ n-1} \lotimes M}, \ttheta_{n, M}(\PPi_{1}[-1] \lotimes \ppi_{n -1, M}) g    } \\
& = \isagl{ \id_{\YYM_{ n-1} \lotimes M},  \ttheta_{1, \YYM_{ n-1} \lotimes M}g } \\
& =  0.
\end{split}
\]
This completes the proof of the claim.

Using  Lemma \ref{202102071323} and the above calculation \eqref{202102211751}, 
we obtain the first equality of the equation below 
\[
\ppi_{n -1, M}\ttheta_{n, M} = \frac{  {}^{v}\!\Euch(\YYM_{n -1} \lotimes_{\Pa} M)}{{}^{v}\!\Euch(\PPi_{n -1} \lotimes_{\Pa} M)}\ttheta_{1, \PPi_{ n-1} \lotimes M}=
\frac{  {}^{v}\!\Euch(\YYM_{n -1} \lotimes_{\Pa} M)}{{}^{v}\!\Euch( M)}{}_{\PPi_{n-1}}\ttheta_{1,M}.
\]
Thanks to  Lemma \ref{202103021524}, 
we deduce the second equality and we obtain the desired conclusion. 
\end{proof}

\subsubsection{A lemma}

For the proof of Theorem \ref{2020071920551} we use  Lemma \ref{2020071917401} about homotopy Cartesian squares. 
The next lemma checks  the vanishing condition of this lemma. 

\begin{lemma}\label{202012171908}
Assume that $Q$ is not an $A_{2}$-quiver. 
Let $M \in \ind \Dbmod{\Pa}$. 
Then $\Hom_{\Pa}(\YYM_{1} \lotimes_{\Pa}M, \PPi_{2} \lotimes_{\Pa} M[-1]) = 0$.
\end{lemma}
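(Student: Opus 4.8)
We must show that, for $Q$ not of type $A_{2}$ and $M\in\ind\Dbmod{\Pa}$, one has
\[
\Hom_{\Pa}\bigl(\YYM_{1}\lotimes_{\Pa}M,\ \PPi_{2}\lotimes_{\Pa}M[-1]\bigr)=0.
\]
Here $\PPi_{2}\lotimes_{\Pa}M[-1]\cong\PPi_{1}\lotimes_{\Pa}\PPi_{1}[-1]\lotimes_{\Pa}M=\PPi_{1}\lotimes_{\Pa}\sfS^{-1}(M)=\nu_{1}^{-1}\sfS^{-1}(M)$, so the target is, up to the autoequivalences $\nu_{1}^{-1}$ and $\sfS^{-1}$ (which are $\nu_{1}^{-1}$ and $\PPi_{1}[-1]\lotimes-$), an indecomposable object shifted off $M$ in a controlled way.

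\textbf{Plan of attack.} The plan is to use the universal Auslander--Reiten triangle ${}^{v}\!\sfAR_{M}$ together with its defining exact triangle
\[
A \xrightarrow{\ \rrho\ } \YYM_{1}\xrightarrow{\ \ppi_{1}\ }\PPi_{1}\xrightarrow{-\ttheta[1]}A[1]
\]
to reduce the $\Hom$ from $\YYM_{1}\lotimes_{\Pa}M$ to $\Hom$ from $M$ and from $\PPi_{1}\lotimes_{\Pa}M$ into the target $\PPi_{2}\lotimes_{\Pa}M[-1]$. Applying $-\lotimes_{\Pa}M$ and then $\Hom_{\Pa}(-,\PPi_{2}\lotimes_{\Pa}M[-1])$ gives a long exact sequence in which the middle term is the group we want to kill, flanked by $\Hom_{\Pa}(\PPi_{1}\lotimes_{\Pa}M,\PPi_{2}\lotimes_{\Pa}M[-1])$ and $\Hom_{\Pa}(M,\PPi_{2}\lotimes_{\Pa}M[-1])$. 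So it suffices to show both of these outer groups vanish. Now $\PPi_{2}\lotimes_{\Pa}M[-1]\cong\nu_{1}^{-1}\PPi_{1}\lotimes_{\Pa}M[-1]\cong\nu_{1}^{-2}(M)[-1]$ on preprojective/regular-type summands, and more generally $\PPi_{1}\lotimes_{\Pa}-\cong\nu_{1}^{-1}$ as endofunctors of $\Dbmod{\Pa}$; hence both outer $\Hom$'s become $\Hom$'s between shifts of indecomposable $\kk Q$-modules, which are governed by the geometry of $\ZZ Q$.

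\textbf{The geometric input.} The key step, and the one where the hypothesis ``$Q\neq A_{2}$'' enters, is a place-number estimate. Writing $N\cong N'[p]$ with $N'\in\ind\kk Q$, recall $\PPi_{1}\lotimes_{\Pa}N=\nu_{1}^{-1}N$, so $\PPi_{2}\lotimes_{\Pa}M[-1]=\nu_{1}^{-2}(M)[-1]$. For $Q$ non-Dynkin, $\Hom_{\Pa}(X,Y[-1])=0$ whenever $X,Y$ are (shifts of) modules, because $\kk Q$ is hereditary and $\Hom$ vanishes in negative degrees for complexes concentrated in a single degree --- so one checks that the shift $[-1]$ here cannot be undone by $\nu_{1}^{-2}$ on a non-Dynkin quiver, and both outer groups vanish immediately. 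For $Q$ Dynkin with Coxeter number $h$, I would use Lemma \ref{basics of place number Dynkin case}: $p(\nu_{1}^{-2}M[-1])=p(M)-4-h$ and $p(\nu_{1}^{-1}M)=p(M)+2$, while a nonzero morphism $X\to Y$ between non-isomorphic indecomposables of $\Dbmod{\Pa}$ forces $p(X)<p(Y)$ (Lemma \ref{basics of place number preliminary 2}); combined with $p(M[1])=p(M)+h$ one checks $\Hom_{\Pa}(M,\nu_{1}^{-2}M[-1])=0$ and $\Hom_{\Pa}(\nu_{1}^{-1}M,\nu_{1}^{-2}M[-1])=0$ unless some place-number coincidence forces $X\cong Y$, which happens precisely when $h=2$, i.e.\ $Q=A_{2}$ (where $h=n+1=3$ actually, so one re-examines: the genuine obstruction is the case $h\le 3$, which after the $A_1$-triviality is exactly $A_{2}$). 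Thus excluding $A_{2}$ removes the only bad coincidence.

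\textbf{Expected main obstacle.} The bookkeeping of shifts and autoequivalences is the delicate part: one must be careful that $\PPi_{2}\lotimes_{\Pa}M[-1]$ is $\nu_{1}^{-1}(\PPi_{1}[-1]\lotimes_{\Pa}M)$ and identify this via $\gamma$ with $\nu_{1}^{-2}(M)[-1]$ (using Lemma \ref{202102171838}), and then track the place number of this object correctly against $h$. The other point requiring care is confirming that the two outer $\Hom$-groups are the \emph{only} contributions --- i.e.\ that $\Hom_{\Pa}(A[1]\lotimes_{\Pa}M,\PPi_{2}\lotimes_{\Pa}M[-1])$ does not reappear --- which follows because applying $\Hom_{\Pa}(-,\PPi_{2}\lotimes_{\Pa}M[-1])$ to the triangle $M\to\YYM_{1}\lotimes M\to\PPi_{1}\lotimes M\to M[1]$ yields exactly the three-term fragment
\[
\Hom(\PPi_{1}\lotimes M,\PPi_{2}\lotimes M[-1])\to\Hom(\YYM_{1}\lotimes M,\PPi_{2}\lotimes M[-1])\to\Hom(M,\PPi_{2}\lotimes M[-1]),
\]
so vanishing of the ends gives the result. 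I would also double-check the non-Dynkin case separately since there $N_{Q}=\NN$ and the place-number machinery is unavailable, but the hereditary vanishing $\Hom_{\Dbmod{\kk Q}}(\text{mod},\text{mod}[-1])=0$ handles it cleanly.
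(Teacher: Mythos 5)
Your reduction through the triangle $M\to\YYM_{1}\lotimes_{\Pa}M\to\PPi_{1}\lotimes_{\Pa}M\to M[1]$ needs \emph{both} outer groups $\Hom_{\Pa}(\PPi_{1}\lotimes_{\Pa}M,\PPi_{2}\lotimes_{\Pa}M[-1])$ and $\Hom_{\Pa}(M,\PPi_{2}\lotimes_{\Pa}M[-1])$ to vanish, and the second one does not vanish in general: it already fails in type $A_{3}$ (Coxeter number $h=4$), where the lemma is nevertheless true. First correct the arithmetic: since $\nu_{1}^{-1}$ raises the place number by $2$ and $[-1]$ lowers it by $h$, one has $p(\PPi_{2}\lotimes_{\Pa}M[-1])=p(M)+4-h$, not $p(M)-4-h$; your sign slip is what leads you to locate the only obstruction at $A_{2}$. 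For $h=4$ the target sits at place $p(M)$, so Lemma \ref{basics of place number preliminary 2} gives nothing for the outer term $\Hom_{\Pa}(M,\PPi_{2}\lotimes_{\Pa}M[-1])$, and indeed it is nonzero: by Serre duality and $\nu_{1}^{-h}\cong[2]$ one computes $\Hom_{\Pa}(M,\nu_{1}^{-2}(M)[-1])\cong\tuD\Hom_{\Pa}(M,\nu_{1}^{-1}(M))=\tuD\Hom_{\Pa}(M,\tau_{1}^{-1}M)$, which is nonzero for instance for $M=P_{2}$ over $1\to 2\to 3$ (the composite $P_{2}\twoheadrightarrow S_{2}\hookrightarrow\tau_{1}^{-1}P_{2}$). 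So the long exact sequence cannot conclude in type $A_{3}$, and your route genuinely breaks there rather than merely requiring bookkeeping.

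The repair is the paper's argument: do not pass to the end terms of the triangle at all, but bound the place of the source directly. Since $\YYM_{1}\lotimes_{\Pa}M$ is the middle term of the AR triangle starting at $M$, every indecomposable summand of it lies in $\cU_{p(M)+1}$ by Lemma \ref{202012141917}(2); comparing $p(M)+1$ with $p(M)+4-h$ kills the Hom for every $h>3$, leaving only $A_{1}$ (where $\YYM_{1}=0$) and the excluded $A_{2}$. The one extra unit of place carried by the middle term — exactly the information your reduction throws away — is what makes $h=4$ work. Two further small points: in the non-Dynkin case your conclusion is right but the reason is not that ``$[-1]$ cannot be undone by $\nu_{1}^{-2}$'': for $M$ a shift of an injective module, $\nu_{1}^{-2}(M)[-1]$ is again a shift of a module with no net shift against $M$, and the vanishing of $\Hom_{\Pa}(M,\nu_{1}^{-2}(M)[-1])$ then comes from the absence of nonzero maps from preinjective to preprojective modules; and throughout you should keep in mind that the identification $p(\YYM_{1}\lotimes_{\Pa}M)=p(M)+1$ rests on ${}^{v}\!\sfAR_{M}$ being an AR triangle, i.e.\ on the middle-term description of $\YYM_{1}\lotimes_{\Pa}M$, which is the key input of the paper's proof.
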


\begin{proof}
First we deal with the case where $Q$ is a non-Dynkin quiver. 
We may assume that $M \in \Pa \mod$. 
Since $\YYM_{1} \lotimes_{\Pa} M $ and $\PPi_{2} \lotimes_{\Pa} M$ also belong to $\Pa \mod$, 
the statement is clear.

Next we deal with the case where $Q$ is a Dynkin quiver. 
We choose a vertex $i_{0} \in Q_{0}$ and use the function $p: \ind \Dbmod{\Pa} \to \ZZ$ defined in Section 
\ref{202105252310}.  
Let $h$ be the Coxeter number of $Q$. 
Then we have $p(\YYM_{1} \lotimes_{\Pa} M) = p(M) +1$ and $p(\PPi_{2}\lotimes_{\Pa} M [-1] ) = p(M) + 4 -h$. 
Thus if $h > 3$, 
we have $p(\YYM_{1}\lotimes_{\Pa} M) > p(\PPi_{2} \lotimes_{\Pa}M[-1])$ 
and $\Hom_{\Pa}(\YYM_{1} \lotimes_{\Pa}M, \PPi_{2} \lotimes_{\Pa} M[-1]) =0$. 

We have $h \leq 3$ precisely when $Q$ is an $A_{1}$ or $A_{2}$ quiver. 
The $A_{1}$ case is clear.
\end{proof}

\subsection{Proof of Theorem \ref{2020071920551}}\label{202204121618}

\subsubsection{Proof of (1)(2) of Theorem \ref{2020071920551} the case where $Q$ is not an $A_{2}$-quiver}
First we deal with the case where  $Q$ is not an $A_{2}$-quiver. 
For simplicity we set $\epsilon':= \frac{{}^{v}\!\Euch( M){}_{\PPi_{1} }}{  {}^{v}\!\Euch(\YYM_{1} \lotimes_{\Pa} M)} $. 
By the case $n =2$ of  Proposition \ref{202012171832}, 
the left square of the diagram below commutative:
\begin{equation}\label{202012171836}
\begin{xymatrix}@C=40pt{
\PPi_{2} \lotimes_{\Pa} M [-1] \ar[r]^{\ttheta_{2, M}} \ar@{=}[d] &
\YYM_{1} \lotimes_{\Pa} M \ar[d]_{\epsilon' \ppi_{1 ,M} }  \ar[r]^{\brho_{2 ,M} }
 & \YYM_{2} \lotimes_{\Pa} M \ar@{-->}[d]^{\varpi_{2, M}} \ar[r]^{\ppi_{2, M}} & 
\PPi_{2} \lotimes_{\Pa} M  \ar@{=}[d]  \\
\PPi_{2} \lotimes_{\Pa} M [-1] \ar[r]_{{}_{\PPi_{1}}\ttheta_{1, M}}  & 
\PPi_{1} \lotimes_{ \Pa} M \ar[r]_{ {}_{ \PPi_{1} }\rrho_{M} } & 
\PPi_{1} \lotimes_{\Pa} \YYM_{1} \lotimes_{ \Pa} M \ar[r]_{ {}_{ \PPi_{1} } \ppi_{1, M} } & 
\PPi_{2 } \lotimes_{\Pa} M.
}\end{xymatrix}
\end{equation}
It follows from \cite[Lemma 1.4.3]{Neeman} that 
there exists a morphism $\varpi_{2, M}: \YYM_{2} \lotimes_{\Pa} M \to \PPi_{1} \lotimes_{\Pa} \YYM_{1} \lotimes_{\Pa} M$ 
that completes the commutative diagram  \eqref{202012171836} and 
 makes the middle square of \eqref{202012171836} homotopy cartesian 
 that is folded to  an exact triangle 
 \begin{equation}\label{202105241753}
 \YYM_{1} \lotimes_{\Pa} M \xrightarrow{ \tiny\begin{pmatrix}-\epsilon' \ppi_{1, M}\\ \brho_{2, M}\end{pmatrix}  } 
 (\PPi_{1} \lotimes_{\Pa} M )\oplus (\YYM_{2} \lotimes_{\Pa} M) \xrightarrow{ ({}_{\PPi_{1}} \rrho_{ M}, \varpi_{2,M})} 
 \PPi_{1} \lotimes_{\Pa} \YYM_{1} \lotimes_{\Pa}M \to .
 \end{equation}
whose connecting morphism $\PPi_{1} \lotimes_{\Pa} \YYM_{1} \lotimes_{\Pa}M \to  \YYM_{1} \lotimes_{\Pa} M[1]$
 is $-\ttheta_{2, M}[1] ({}_{\PPi_{1}} \ppi_{1, M}) = -\ttheta_{1, \YYM_{1}\lotimes M}[1]$. 
 We remark that  we  modify the definition of homotopy Cartesian square from \cite{Neeman} (see Remark \ref{homotopy Cartesian remark}). 
 According to this, the sign of the connecting morphism is different from that of \cite[Lemma 1.4.2]{Neeman}.

We consider the case $n =2$ of the diagram \eqref{202008141848}. 
Since $\eeta^{*}_{2, M}$ is a split monomorphism, 
there exist morphisms    
\[
p: \YYM_{1} \lotimes_{\Pa}\YYM_{1} \lotimes_{\Pa} M \to \PPi_{1} \lotimes_{\Pa} M, \ 
i: \YYM_{1} \lotimes_{\Pa} M \to \YYM_{1} \lotimes_{\Pa}\YYM_{1} \lotimes_{\Pa} M
\]
that satisfy the equations 
\begin{equation}\label{202207131533}
p\eeta^{*}_{2, M}  = \id, \zzeta_{2, M} i = \id, pi =0, \eeta^{*}_{2, M} p+ i \zzeta_{2, M} = \id. 
\end{equation}
It follows that there exists   the following isomorphism of exact triangles 
\begin{equation}\label{202012171855}
\begin{xymatrix}@C=60pt{
\YYM_{1} \lotimes_{\Pa} M \ar@{=}[d] \ar[r]^{\rrho_{\YYM_{1} \lotimes M} } & 
\YYM_{1} \lotimes_{\Pa} \YYM_{1} \lotimes_{\Pa} M \ar[d]^{ \tiny \begin{pmatrix} p \\ \zzeta_{2, M} \end{pmatrix} }_{\cong} \ar[r]^{\ppi_{1, \YYM_{1} \lotimes M} }& 
\PPi_{1} \lotimes_{\Pa} \YYM_{1} \lotimes_{\Pa} M \ar@{=}[d] \\
\YYM_{1} \lotimes_{\Pa} M  
\ar[r]_-{ \tiny \begin{pmatrix}- \alpha  \\ \brho_{2 , M}  \end{pmatrix}} &
 ( \PPi_{1}  \lotimes_{\Pa}M )  \oplus 
(\YYM_{2 } \lotimes_{\Pa} M)
\ar[r]_-{ ( {}_{ \PPi_{1} }\rrho_{M}, \ \beta ) } & 
\PPi_{1} \lotimes_{\Pa} \YYM_{1} \lotimes_{\Pa} M 
}\end{xymatrix}
\end{equation}
where we set $\alpha :=  -p\rrho_{\YYM_{1} \lotimes M}$ and $\beta:= (\ppi_{1, \YYM_{1} \lotimes M}) i$.

We check $ ({}_{ \PPi_{1} } \ppi_{1}) \beta = \ppi_{2, M}$ as follows:
\[
({}_{\PPi_{1} } \ppi_{1, M}) \beta = ({}_{ \PPi_{1} }\ppi_{1, M }) \ppi_{1, \YYM_{1}\lotimes M} i = \ppi_{2, M} \zzeta_{2, M} i = \ppi_{2, M}
\]
where the second equality follows from the commutativity of the middle square of \eqref{202008141848}.
Therefore, we have the following commutative diagram 
whose left square is homotopy cartesian
\begin{equation}\label{202012171906}
\begin{xymatrix}@C=40pt{
 &
\YYM_{1} \lotimes_{\Pa} M \ar[d]_{\alpha } \ar[r]^{\brho_{2 ,M} }
 & \YYM_{2} \lotimes_{\Pa} M \ar[d]^{\beta} \ar[r]^{\ppi_{2, M}} & 
\PPi_{2} \lotimes_{\Pa} M  \ar@{=}[d] \\
\PPi_{2} \lotimes_{\Pa} M [-1] \ar[r]_{ {}_{ \PPi_{1}}\ttheta_{1, M}}  & 
\PPi_{1} \lotimes_{ \Pa} M \ar[r]_{ {}_{ \PPi_{1} }\rrho_{1 ,M} } & 
\PPi_{1} \lotimes_{\Pa} \YYM_{1} \lotimes_{ \Pa} M \ar[r]_{ {}_{ \PPi_{1} }\ppi_{1, M} } & 
\PPi_{2 } \lotimes_{\Pa} M.
}\end{xymatrix}
\end{equation}

Thanks to Lemma \ref{202012171908}, we can apply Lemma \ref{2020071917401} to the homotopy cartesian squares 
\eqref{202012171836} and \eqref{202012171906}. 
There exists a morphism $s: \YYM_{2} \lotimes_{\Pa} M  \to \PPi_{1} \lotimes_{\Pa} M$ that gives an isomorphism of exact triangle as below:
\[ 
\begin{xymatrix}@C=60pt{
\YYM_{1} \lotimes_{\Pa} M \ar@{=}[d] \ar[r]^-{ \tiny \begin{pmatrix}- \alpha  \\ \brho_{2 , M}  \end{pmatrix}} & 
( \PPi_{1}  \lotimes_{\Pa}M )  \oplus 
(\YYM_{2 } \lotimes_{\Pa} M)
\ar[r]^-{ ( {}_{ \PPi_{1} }\rrho_{M}, \ \beta ) }  
\ar[d]^{\tiny \begin{pmatrix} \id & s \\ 0 & \id \end{pmatrix} } 
&
\PPi_{1} \lotimes_{\Pa} \YYM_{1} \lotimes_{\Pa} M \ar@{=}[d] \\
\YYM_{1} \lotimes_{\Pa} M  
\ar[r]_-{ \tiny \begin{pmatrix}- \epsilon' \ppi_{1, M}  \\ \brho_{2 , M}  \end{pmatrix}} &
 ( \PPi_{1}  \lotimes_{\Pa}M )  \oplus 
(\YYM_{2 } \lotimes_{\Pa} M)
\ar[r]_-{ (  {}_{\PPi_{1}} \brho_{1, M}, \ \varpi_{2, M}) } & 
\PPi_{1} \lotimes_{\Pa} \YYM_{1} \lotimes_{\Pa} M 
}\end{xymatrix}
\]
Compositing the  isomorphism  with \eqref{202012171855}, we obtain the following diagram 
\[ 
\begin{xymatrix}@C=60pt{
\YYM_{1} \lotimes_{\Pa} M \ar@{=}[d] \ar[r]^{\rrho_{\YYM_{1} \lotimes M} } & 
\YYM_{1} \lotimes_{\Pa} \YYM_{1} \lotimes_{\Pa} M 
\ar[d]^{\tiny \begin{pmatrix} p +s \zzeta_{2,M} \\ \zzeta_{2, M} \end{pmatrix} =: \phi} 
\ar[r]^{\ppi_{1, \YYM_{1} \lotimes M} } & 
\PPi_{1} \lotimes_{\Pa} \YYM_{1} \lotimes_{\Pa} M \ar@{=}[d]\\
\YYM_{1} \lotimes_{\Pa} M  
\ar[r]_-{ \tiny \begin{pmatrix}- \epsilon' \ppi_{1, M}  \\ \brho_{2 , M}  \end{pmatrix}} &
 ( \PPi_{1}  \lotimes_{\Pa}M )  \oplus 
(\YYM_{2 } \lotimes_{\Pa} M)
\ar[r]_-{ (  {}_{\PPi_{1}} \brho_{1, M}, \ \varpi_{2, M}) } & 
\PPi_{1} \lotimes_{\Pa} \YYM_{1} \lotimes_{\Pa} M 
}\end{xymatrix}
\]
Let $\pr_{1}=(\id, 0): ( \PPi_{1}  \lotimes_{\Pa}M )  \oplus 
(\YYM_{2 } \lotimes_{\Pa} M) \to \PPi_{1} \lotimes_{\Pa} M$ be the first projection. 
Now it is straight forward to check that
 the composition $\xi_{2,M} := -(\epsilon')^{-1} \pr_{1} \phi$ satisfies the desired equations (1) and (2).
 \[
\begin{xymatrix}@C=40pt{ 
& \PPi_{1} \lotimes_{\Pa} M \ar[d]^{\eeta^{*}_{2,  M} }  \ar@/^20pt/[ddr]^{-(\epsilon')^{-1} } 
& 
\\ 
\YYM_{1}\lotimes_{\Pa} M \ar@/_20pt/[rrd]_{\ppi_{1, M} } \ar[r]^{\rrho_{ \YYM_{1} \lotimes M}}
 &
 \YYM_{1} \lotimes_{\Pa} \YYM_{1} \lotimes_{\Pa} M \ar[dr]^{\xi_{2, M}}
 & \\
&& \PPi_{1} \lotimes_{\Pa} M
}
\end{xymatrix}.\]
\qed

\subsubsection{Proof of (1)(2) of Theorem \ref{2020071920551} the case where $Q$ is an $A_{2}$-quiver}

We assume that $Q$ is an $A_{2}$-quiver. 
In this case the Coxeter number $h =3$ and hence $\YYM_{2} = 0$. 
Thus the exact triangle \eqref{202105241753} is of the following form  
 \[ \YYM_{1} \lotimes_{\Pa} M \xrightarrow{ -\epsilon' \ppi_{1, M}  } 
 \PPi_{1} \lotimes_{\Pa} M  \xrightarrow{ {}_{\PPi_{1}} \rrho_{ M} } 
 \PPi_{1} \lotimes_{\Pa} \YYM_{1} \lotimes_{\Pa}M \xrightarrow{ -\ttheta_{\YYM_{1} \lotimes M}[1] } \YYM_{1} \lotimes_{\Pa} M[1].
 \]
 
It follows from $\YYM_{2} = 0$ that $\zzeta_{2, M} = 0$. 
Thus the equation \eqref{202207131533} implies that $\eeta^{*}_{2, M}$ is an isomorphism and 
the exact triangle in the bottom of \eqref{202012171855} is of the following form 
 \[ \YYM_{1} \lotimes_{\Pa} M \xrightarrow{ (\eeta_{2, M}^{*})^{-1} \rrho_{\YYM_{1} \lotimes M }   } 
 \PPi_{1} \lotimes_{\Pa} M  \xrightarrow{ {}_{\PPi_{1}} \rrho_{ M} } 
 \PPi_{1} \lotimes_{\Pa} \YYM_{1} \lotimes_{\Pa}M \xrightarrow{ -\ttheta_{\YYM_{1} \lotimes M}[1] } \YYM_{1} \lotimes_{\Pa} M[1].
 \]
Since $\End(\PPi_{1} \lotimes_{\Pa} M ) \cong \kk$ and ${}_{\PPi_{1}}\rrho_{M} \neq 0$, comparing two exact triangles  
we have $-\epsilon' \ppi_{1, M}= (\eeta_{2, M}^{*})^{-1} \rrho_{\YYM_{1} \lotimes M }$.
 Thus we conclude that $\xi_{2, M} := - (\epsilon')^{-1} (\eeta_{2, M}^{*})^{-1}$ has the desired property.
 \qed

\subsubsection{Proof of (3) of Theorem \ref{2020071920551} }
We keep  the condition of Theorem \ref{2020071920551}. 
We have already proved that existence of $\xi_{2, M}$ that satisfies the equation (1) and (2). 
We prove this morphism also satisfies the equation (3).

\begin{lemma}\label{202104091845}
We have 
\[
{}_{\YYM_{1}}\rrho_{M} = (\id + \eeta^{*}_{ 2, M} \xi_{2, M}) \rrho_{\YYM_{1} \lotimes M}.
\]
\end{lemma}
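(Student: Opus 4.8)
The plan is to derive Lemma~\ref{202104091845} from the already-established parts (1) and (2) of Theorem~\ref{2020071920551} together with the defining relation of the splitting idempotents for $\eeta^{*}_{2,M}$. Recall that since $\eeta^{*}_{2,M}$ is a split monomorphism (Theorem~\ref{right approximation theorem}(2)), there is a morphism $p\colon \YYM_{1}\lotimes_{\Pa}\YYM_{1}\lotimes_{\Pa}M\to\PPi_{1}\lotimes_{\Pa}M$ with $p\,\eeta^{*}_{2,M}=\id$, and by Corollary~\ref{2021022218051}(2) (or the construction in the proof of parts (1),(2)) we may in fact take $\xi_{2,M}$ itself to be built so that, modulo $\rad$, the endomorphism $\eeta^{*}_{2,M}\xi_{2,M}$ together with $\rrho_{\YYM_{1}\lotimes M}\circ(\text{something})$ gives a decomposition of $\id_{\YYM_{1}\lotimes\YYM_{1}\lotimes M}$. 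The key identity I want is a direct-sum decomposition
\[
\id_{\YYM_{1}\lotimes_{\Pa}\YYM_{1}\lotimes_{\Pa}M} = \eeta^{*}_{2,M}\,\xi_{2,M} + (\text{a morphism factoring through }\rrho_{\YYM_{1}\lotimes M}),
\]
which is exactly what the exact triangle $\sfV_{2}$ (the case $n=2$ of \eqref{20191202}, giving $\PPi_{1}\xrightarrow{\eeta^{*}_{2}}\YYM_{1}\otimes\YYM_{1}\xrightarrow{\zzeta_{2}}\YYM_{2}$) and the splitting from Theorem~\ref{right approximation theorem}(2) provide after tensoring with $M$.

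First I would set up the two complementary idempotents on $E:=\YYM_{1}\lotimes_{\Pa}\YYM_{1}\lotimes_{\Pa}M$: namely $e:=\eeta^{*}_{2,M}\circ\xi_{2,M}$ and check, using part (1) of Theorem~\ref{2020071920551} (which says $\xi_{2,M}\eeta^{*}_{2,M}$ is $-\tfrac{{}^{v}\!\Euch(\YYM_{1}\lotimes M)}{{}^{v}\!\Euch(M)}\id$, an isomorphism) that a suitable rescaling of $e$ is idempotent; more precisely I would instead normalize so that $\xi_{2,M}$ is genuinely a retraction of $\eeta^{*}_{2,M}$ after absorbing the scalar, or simply carry the scalar through. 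Then the complementary projector $\id-e$ kills the image of $\eeta^{*}_{2,M}$, hence by the exact-triangle property of $\sfV_{2,M}$ it factors through $\zzeta_{2,M}$, and I need to identify that factored piece as something landing through $\rrho_{\YYM_{1}\lotimes M}$. This is where Lemma~\ref{202001111736} enters: it gives ${}_{\YYM_{1}}\rrho_{M}-\rrho_{\YYM_{1}\lotimes M}=\eeta^{*}_{2,M}\circ\ppi_{1,M}$ in $\sfD(\Pa^{\mre})$, i.e. the difference of the two "multiplication by $\varrho$" morphisms $M\to E$ is precisely $\eeta^{*}_{2,M}$ composed with $\ppi_{1,M}$. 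Combining this with part~(2) ($\xi_{2,M}\rrho_{\YYM_{1}\lotimes M}=\ppi_{1,M}$) should collapse the whole computation to the claimed formula.

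Concretely, I would compute
\[
(\id+\eeta^{*}_{2,M}\xi_{2,M})\rrho_{\YYM_{1}\lotimes M}
= \rrho_{\YYM_{1}\lotimes M} + \eeta^{*}_{2,M}\,\xi_{2,M}\,\rrho_{\YYM_{1}\lotimes M}
= \rrho_{\YYM_{1}\lotimes M} + \eeta^{*}_{2,M}\,\ppi_{1,M},
\]
using part~(2) in the last step, and then invoke Lemma~\ref{202001111736} (tensored with $M$, i.e. the case $n=2$ of the diagram there, which is legitimate by the flatness/preprojectivity statements of Corollary~\ref{202207112325} when $Q$ is non-Dynkin, and holds in $\sfD(\Pa^{\mre})$ in general) to rewrite $\rrho_{\YYM_{1}\lotimes M}+\eeta^{*}_{2,M}\ppi_{1,M}$ as ${}_{\YYM_{1}}\rrho_{M}$. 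This would finish the proof. The main obstacle I anticipate is bookkeeping the normalization constants: Lemma~\ref{202001111736} is stated as an honest equality ${}_{\YYM_{1}}\rrho-\rrho_{\YYM_{1}}=\eeta^{*}_{2}\ppi_{1}$ with no scalar, whereas parts (1),(2) of Theorem~\ref{2020071920551} carry the Euler-characteristic fractions; I must make sure the particular $\xi_{2,M}$ produced in the proof of (1),(2) is the one for which part~(2) holds \emph{on the nose} (as stated) rather than merely up to a scalar, so that substituting it into the computation above produces exactly the coefficient $1$ in front of $\eeta^{*}_{2,M}\xi_{2,M}$ demanded by the Lemma. A secondary point to verify is that Lemma~\ref{202001111736} may be tensored with an arbitrary $M\in\Dbmod{\Pa}$ — this is immediate since the lemma is an identity of morphisms in $\sfD(\Pa^{\mre})$ and $-\lotimes_{\Pa}M$ is a functor.
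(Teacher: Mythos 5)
Your proposal is correct and is essentially the paper's own proof: the final displayed computation, using Theorem \ref{2020071920551}(2) to replace $\xi_{2,M}\rrho_{\YYM_{1}\lotimes M}$ by $\ppi_{1,M}$ and then Lemma \ref{202001111736} (equivalently Lemma \ref{202207131538}) tensored with $M$ to identify $\rrho_{\YYM_{1}\lotimes M}+\eeta^{*}_{2,M}\ppi_{1,M}$ with ${}_{\YYM_{1}}\rrho_{M}$, is exactly the argument in the paper. The preliminary discussion of idempotents and splittings is unnecessary, and your worry about normalization is moot since part (2) is stated on the nose, but neither point affects the validity of the proof.
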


\begin{proof}
We deduce the desired equality in the following way. 
\[
\begin{split} 
\eeta^{*}_{ 2, M} \xi_{2, M} \rrho_{\YYM_{1} \lotimes M} 
 & = \eeta^{*}_{2, M} \ppi_{1, M} \\
 & = {}_{\YYM_{1}}\rrho_{M} - \rrho_{\YYM_{1} \lotimes M} 
\end{split}
 \]
where for the first equality we use (1) and for the second we use Lemma \ref{202001111736} or Lemma \ref{202207131538}. 
\end{proof}

We set $\epsilon_{2, M} := - \epsilon' := - \frac{{}^{v}\!\Euch(M)}{  {}^{v}\!\Euch(\YYM_{1}\lotimes_{\Pa} M) }$. 
By (1), we have 
$\epsilon_{2, M}\xi_{2, M} \eeta^{*}_{2, M} = \id_{\PPi_{1} \lotimes M}$. 
In other words the morphisms  $\epsilon_{2, M} \xi_{2, M}: \YYM_{1} \lotimes_{\Pa} \YYM_{1} \lotimes_{\Pa} M \to \PPi_{1} \lotimes_{\Pa} M$ is a retraction  of $\eeta^{*}_{2, M}$
We denote by $\omega_{2, M}: \YYM_{2} \lotimes_{\Pa} M \to \YYM_{1} \lotimes_{\Pa} \YYM_{1} \lotimes_{\Pa} M$ the corresponding section of $\zzeta_{2, M}$ 
so that the morphisms below are inverse to each other 
\begin{equation}\label{2021040918451}
\begin{split}
& {\small \begin{pmatrix} \epsilon_{2, M} \xi_{2, M} \\ \zzeta_{2, M} \end{pmatrix}  }:
\YYM_{1} \lotimes_{\Pa} \YYM_{1} \lotimes_{\Pa} M \to 
(\PPi_{1} \lotimes_{\Pa}M ) \oplus (\YYM_{2} \lotimes_{\Pa} M), \\
&
(\eeta^{*}_{2, M}, \omega_{2,M} ): (\PPi_{1} \lotimes_{\Pa}M ) \oplus (\YYM_{2} \lotimes_{\Pa} M) 
\to \YYM_{1} \lotimes_{\Pa} \YYM_{1} \lotimes_{\Pa} M. 
\end{split}
\end{equation}
For  latter reference we note the equality  $\varpi_{2, M} = (\ppi_{1, \YYM_{1} \lotimes M} ) (\omega_{2, M})$.

Observe that if we identify those objects by using these isomorphisms, 
then the endomorphism $\eeta^{*}_{2, M} \xi_{2, M}$ of $\YYM_{1} \lotimes_{\Pa} \YYM_{1} \lotimes_{\Pa} M$ corresponds to 
the endomorphism 
${\small \begin{pmatrix} \epsilon_{2, M}^{-1}  \id& 0 \\ 0 & 0 \end{pmatrix}}$ of 
$(\PPi_{1} \lotimes_{\Pa}M ) \oplus (\YYM_{2} \lotimes_{\Pa} M)$. 
Hence, 
the endomorphism $\id + \eeta^{*}_{ 2, M} \xi_{2, M}$ corresponds to 
${\small \begin{pmatrix} ( 1+ \epsilon_{2, M}^{-1} )\id  & 0 \\ 0 & \id \end{pmatrix}}$. 
We have the following commutative diagram. 
\[
\begin{xymatrix}@C=60pt{
\YYM_{1} \lotimes_{\Pa} M \ar[r]^{\rrho_{\YYM_{1} \lotimes M} } \ar[dr]_{{}_{\YYM_{1}}\rrho_{ M}} & 
\YYM_{1} \lotimes_{\Pa} \YYM_{1} \lotimes_{\Pa} M \ar[r]^{\tiny \begin{pmatrix} \epsilon_{2, M} \xi_{2, M} \\ \zzeta_{2, M} \end{pmatrix}  } 
\ar[d]^{\id + \eeta^{*}_{ 2, M} \xi_{2, M}} & 
(\PPi_{1} \lotimes_{\Pa}M ) \oplus (\YYM_{2} \lotimes_{\Pa} M) \ar[d]^{\tiny \begin{pmatrix} ( 1+ \epsilon_{2, M}^{-1} )\id  & 0 \\ 0 & \id \end{pmatrix}}\\
& 
\YYM_{1} \lotimes_{\Pa} \YYM_{1} \lotimes_{\Pa} M \ar[dr]_{\xi_{2,M} } & 
(\PPi_{1} \lotimes_{\Pa}M ) \oplus (\YYM_{2} \lotimes_{\Pa} M) \ar[d]^{( \epsilon_{2, M}^{-1} \id , 0)} \ar[l]_{(\eeta^{*}_{2, M}, \omega_{2, M})} \\
&& \PPi_{1} \lotimes_{\Pa} M 
}\end{xymatrix} 
\]
Using this diagram, we complete the proof of (3) 
\[
(\xi_{2, M} )({}_{\YYM_{1}}\rrho_{M}) = ( 1 + \epsilon_{2, M}^{-1}) \xi_{2,M}\rrho_{\YYM_{1} \lotimes M} =  -\frac{{}^{v}\!\Euch(\PPi_{1} \lotimes_{\Pa} M )}{{}^{v}\!\Euch(M)}  \ppi_{1, M}. 
\]
\qed

\subsubsection{A corollary}

We have proved that 
the morphism $\rrho_{\YYM_{1} \lotimes M}: \YYM_{1} \lotimes_{\Pa} M \to \YYM_{1} \lotimes_{\Pa} \YYM_{1} \lotimes_{\Pa} M$ 
is a minimal left $\rad$-approximation under a certain condition. 
In the next corollary we show that  the morphism ${}_{\YYM_{1}} \rrho_{M} : \YYM_{1} \lotimes_{\Pa} M \to \YYM_{1} \lotimes_{\Pa} \YYM_{1} \lotimes_{\Pa} M$ that inserts $\varrho$ in the middle of $\YYM_{1}$ and $M$ also gives a minimal left $\rad$-approximation.

\begin{corollary}\label{202105251650}
Let $M \in \ind \Dbmod{\Pa}$ and $v \in \We_{Q}$. 
Assume that ${}^{v}\!\Euch(M) \neq 0$,  $ {}^{v}\!\Euch(\PPi_{1} \lotimes_{\Pa} M) \neq 0$ 
and moreover that ${}^{v}\!\Euch(N) \neq 0$ for any indecomposable direct summand $N$ of $\YYM_{1} \lotimes_{\Pa} M$.
Then, 
the morphism ${}_{\YYM_{1}} \rrho_{M} : \YYM_{1} \lotimes_{\Pa} M \to \YYM_{1} \lotimes_{\Pa} \YYM_{1} \lotimes_{\Pa} M$ 
is a left minimal $\rad$-approximation. 
\end{corollary}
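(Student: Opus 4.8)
The plan is to deduce Corollary~\ref{202105251650} from Theorem~\ref{2020071920551} (in the form of Corollary~\ref{2021022218051}) together with the universal Auslander--Reiten triangle (Theorem~\ref{universal Auslander-Reiten triangle}) and Lemma~\ref{202104091845}. The key observation is that ${}_{\YYM_{1}}\rrho_{M}$ differs from the known minimal left $\rad$-approximation $\rrho_{\YYM_{1}\lotimes M}$ by an \emph{automorphism} of the target, at least modulo terms controlled by $\eeta^{*}_{2,M}$.

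First I would check that the hypotheses of Corollary~\ref{2021022218051} are satisfied: we have ${}^{v}\!\Euch(M)\neq 0$ by assumption, ${}^{v}\!\Euch(N)\neq 0$ for every indecomposable summand $N$ of $\YYM_{1}\lotimes_{\Pa}M$ by assumption, and ${}^{v}\!\Euch(\PPi_{1}\lotimes_{\Pa}M)\neq 0$ by assumption; for the remaining condition ${}^{v}\!\Euch(\PPi_{1}\lotimes_{\Pa}N)\neq 0$ one notes that since $v$ has property $(I)_{M,2}$ in the relevant cases (or more simply, that $\PPi_{1}\lotimes_{\Pa}N \cong \nu_{1}^{-1}N$ is again a summand of a $\YYM_{\bullet}$-translate — this would need a small argument, or one simply strengthens the hypothesis to property $(I')_{M,2}$, which is implied by regularity). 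This gives the morphism $\xi_{2,M}\colon \YYM_{1}\lotimes_{\Pa}\YYM_{1}\lotimes_{\Pa}M \to \PPi_{1}\lotimes_{\Pa}M$ with $\xi_{2,M}\rrho_{\YYM_{1}\lotimes M}=\ppi_{1,M}$ and $\xi_{2,M}\eeta^{*}_{2,M}$ an isomorphism.

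Next, by Theorem~\ref{universal Auslander-Reiten triangle}, the morphism $\rrho_{\YYM_{1}\lotimes M}\colon \YYM_{1}\lotimes_{\Pa}M \to \YYM_{1}\lotimes_{\Pa}\YYM_{1}\lotimes_{\Pa}M$ is a minimal left $\rad$-approximation (it is the map $\rrho_{N}$ for $N=\YYM_{1}\lotimes_{\Pa}M$, whose indecomposable summands all have nonzero weighted Euler characteristic). So it suffices to produce an automorphism $\psi$ of $\YYM_{1}\lotimes_{\Pa}\YYM_{1}\lotimes_{\Pa}M$ with $\psi\,\rrho_{\YYM_{1}\lotimes M}={}_{\YYM_{1}}\rrho_{M}$; composing a minimal left approximation with an automorphism of the target yields another minimal left approximation. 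By Lemma~\ref{202104091845} we have exactly
\[
{}_{\YYM_{1}}\rrho_{M} = (\id + \eeta^{*}_{2,M}\,\xi_{2,M})\,\rrho_{\YYM_{1}\lotimes M},
\]
so I would set $\psi := \id + \eeta^{*}_{2,M}\,\xi_{2,M}$ and show it is an automorphism. Using the splitting \eqref{2021040918451} of $\zzeta_{2,M}$ and $\eeta^{*}_{2,M}$ (available since $\eeta^{*}_{2,M}$ is a split monomorphism by Theorem~\ref{right approximation theorem}(2)), the endomorphism $\eeta^{*}_{2,M}\xi_{2,M}$ becomes $\begin{pmatrix}\epsilon^{-1}\id & 0\\ 0 & 0\end{pmatrix}$ on $(\PPi_{1}\lotimes_{\Pa}M)\oplus(\YYM_{2}\lotimes_{\Pa}M)$ with $\epsilon = \xi_{2,M}\eeta^{*}_{2,M}$ an isomorphism, hence $\psi$ corresponds to $\begin{pmatrix}(1+\epsilon^{-1})\id & 0\\ 0 & \id\end{pmatrix}$, which is invertible provided $1+\epsilon^{-1}$ is invertible, i.e.\ $\epsilon \neq -1$.

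\textbf{The main obstacle} is exactly this invertibility of $\psi$: a priori $\epsilon = \xi_{2,M}\eeta^{*}_{2,M}$ could equal $-\id$, in which case $\id+\eeta^{*}_{2,M}\xi_{2,M}$ would be a projection rather than an automorphism. In the indecomposable case one has from Theorem~\ref{2020071920551}(1) the explicit value $\epsilon = -\dfrac{{}^{v}\!\Euch(\YYM_{1}\lotimes_{\Pa}M)}{{}^{v}\!\Euch(M)}\,\id$, so the requirement $\epsilon\neq -1$ becomes ${}^{v}\!\Euch(\YYM_{1}\lotimes_{\Pa}M)\neq {}^{v}\!\Euch(M)$; since $\YYM_{1}\lotimes_{\Pa}M$ is the middle term of an AR-triangle and has nonzero Euler characteristic while ${}^{v}\!\Euch$ is additive on triangles, ${}^{v}\!\Euch(\YYM_{1}\lotimes M) = {}^{v}\!\Euch(M)+{}^{v}\!\Euch(\PPi_{1}\lotimes M)$, so $\epsilon\neq -1$ is equivalent to ${}^{v}\!\Euch(\PPi_{1}\lotimes_{\Pa}M)\neq 0$ — precisely one of our hypotheses. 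For general $M$ (direct sum of indecomposables) one applies this summandwise, using the functoriality of $M\mapsto\xi_{2,M}$ with respect to split monos and split epis noted after Corollary~\ref{2021022218051}, to see that $\psi$ is block-upper-triangular with invertible diagonal blocks, hence an automorphism. This completes the argument; the only real bookkeeping is carrying the splitting \eqref{2021040918451} and the scalar identities through, which is routine.
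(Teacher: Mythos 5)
Your argument is essentially the paper's own proof: both use Lemma \ref{202104091845} to write ${}_{\YYM_{1}}\rrho_{M}=(\id+\eeta^{*}_{2,M}\xi_{2,M})\rrho_{\YYM_{1}\lotimes M}$, note that $\rrho_{\YYM_{1}\lotimes M}$ is a minimal left $\rad$-approximation because every indecomposable summand of $\YYM_{1}\lotimes_{\Pa}M$ has nonzero weighted Euler characteristic, and then show $\id+\eeta^{*}_{2,M}\xi_{2,M}$ is an automorphism via the block splitting \eqref{2021040918451} and the scalar identity $1+\epsilon_{2,M}^{-1}=-{}^{v}\!\Euch(\PPi_{1}\lotimes_{\Pa}M)/{}^{v}\!\Euch(M)\neq 0$. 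The only cosmetic differences are your closing excursion to non-indecomposable $M$ (unnecessary, since $M\in\ind\Dbmod{\Pa}$ in the statement) and your hedging over hypothesis-checking for $\xi_{2,M}$, which the paper handles by reading the corollary within the standing assumptions of Theorem \ref{2020071920551}.
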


\begin{proof} 
Since $1 + \epsilon_{2, M}^{-1} =- \frac{{}^{v}\!\Euch(\PPi_{1} \lotimes_{\Pa} M )}{{}^{v}\!\Euch(M)} \neq 0$, 
the endomorphism $\id + \eeta^{*}_{ 2, M} \xi_{2, M}$ is an automorphism of $\YYM_{1} \lotimes_{\Pa} \YYM_{1} \lotimes_{\Pa} M$.  
It follows from the second assumption that 
$\rrho_{\YYM_{1} \lotimes M}$ is a minimal left $\rad$-approximation of $\YYM_{1} \lotimes_{\Pa} M$. 
From the equality ${}_{\YYM_{1}}\rrho_{M} = (\id + \eeta^{*}_{ 2, M} \xi_{2, M}) \rrho_{\YYM_{1} \lotimes M}$ of   Lemma \ref{202104091845}, 
we conclude that 
the morphism ${}_{\YYM_{1}} \rrho_{M}$ 
is a left minimal $\rad$-approximation of $\YYM_{1} \lotimes_{\Pa} M$. 
\end{proof}

\subsection{Left $\rad^{2}$-approximation}

\begin{theorem}\label{202103021651}
Let $M \in \ind \Dbmod{\Pa}$ and $v \in \We_{Q}$. 
Assume that ${}^{v}\!\Euch(M) \neq 0$  and  ${}^{v}\!\Euch(\YYM_{1} \lotimes_{\Pa} M) \neq 0$. 
Then the morphism $\brho^{2}_{M} : M \to \YYM_{2} \lotimes_{\Pa} M $ is a minimal left $\rad^{2}$-approximation of $M$ 
whose cone is $\PPi_{1} \lotimes_{\Pa} \YYM_{1} \lotimes_{\Pa} M$. 
\[
M \xrightarrow{ \brho_{M}^{2} } \YYM_{2} \lotimes_{\Pa} M \to \PPi_{1} \lotimes_{\Pa} \YYM_{1} \lotimes_{\Pa} M. 
\]
\end{theorem}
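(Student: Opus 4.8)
The plan is to combine three ingredients that have already been established: the universal Auslander--Reiten triangle (Theorem~\ref{universal Auslander-Reiten triangle}), the machinery for constructing minimal left $\rad^{n+1}$-approximations from minimal left $\rad^n$-approximations in the derived category (Lemma~\ref{202105091801} and Corollary~\ref{2021050917551}), and the AR-theoretic identification of $\eeta^*_{2,M}$ given by Theorem~\ref{2020071920551}. First I would observe that by Theorem~\ref{universal Auslander-Reiten triangle}, applicable since ${}^{v}\!\Euch(M)\neq 0$, the morphism $\rrho_M : M \to \YYM_1\lotimes_{\Pa} M$ is a minimal left $\rad$-approximation of $M$, fitting into the AR-triangle ${}^{v}\!\sfAR_M$ with cone $\PPi_1\lotimes_{\Pa} M$ and cone morphism $\ppi_{1,M}$; moreover since ${}^{v}\!\Euch(\YYM_1\lotimes_{\Pa} M)\neq 0$ (applying the additivity of the weighted Euler characteristic along ${}^v\!\sfAR_M$ this is equivalent to ${}^{v}\!\Euch(M)+{}^{v}\!\Euch(\PPi_1\lotimes M)\neq 0$, but we just assume it directly), Theorem~\ref{universal Auslander-Reiten triangle} also tells us that $\rrho_{\YYM_1\lotimes M} : \YYM_1\lotimes_{\Pa} M \to \YYM_1\lotimes_{\Pa}\YYM_1\lotimes_{\Pa} M$ is a minimal left $\rad$-approximation of $\YYM_1\lotimes_{\Pa} M$, so by Lemma~\ref{2020121919190} (or rather its derived analogue, since the cone morphism $\ppi_{1,M}$ satisfies the $\rad$-fitting conditions by Theorem~\ref{right approximation theorem}(4) applied with $n=1$) the composition $\rrho_{\YYM_1\lotimes M}\circ\rrho_M = \brho^2_M$ would be a minimal left $\rad^2$-approximation \emph{provided} we can exhibit the relevant homotopy Cartesian square.

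The key step is to identify the cone of $\brho^2_M$ and to verify the fitting condition that powers Corollary~\ref{2021050917551}. Here I would invoke Corollary~\ref{202007181611} (with $n=2$), which gives a direct sum of AR-triangles
\[
\YYM_1\lotimes_{\Pa} M \to (\YYM_2\lotimes_{\Pa} M)\oplus(\PPi_1\lotimes_{\Pa} M) \to \PPi_1\lotimes_{\Pa}\YYM_1\lotimes_{\Pa} M \to (\YYM_1\lotimes_{\Pa} M)[1],
\]
whose first component from $\YYM_1\lotimes_{\Pa} M$ into $\YYM_2\lotimes_{\Pa} M$ is exactly $\brho_{2,M}$, with the other component being the morphism called $\alpha_{2,M}$ there. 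The point is that $\alpha_{2,M}$ and $\ppi_{1,M}$ are related: by the homotopy Cartesian diagram~\eqref{202012171906} constructed in the proof of Theorem~\ref{2020071920551}, the composition $\alpha_{2,M}\circ\rrho_M$ equals $\epsilon'\,\ppi_{1,M}$ up to the nonzero scalar $\epsilon' = {}^{v}\!\Euch(M)/{}^{v}\!\Euch(\YYM_1\lotimes_{\Pa} M)$, i.e. $\brho^2_M$ and the morphism $(-\alpha_{2,M},\brho_{2,M})^t\circ\rrho_M$ differ only in the harmless first coordinate by a unit. Feeding this into Corollary~\ref{2021050917551}(2)$\Leftrightarrow$(1) (with $n=1$, $L_n = \YYM_1\lotimes_{\Pa} M$, $C_n = \PPi_1\lotimes_{\Pa} M$, $\lambda'_n = \ppi_{1,M}$, $\breve\lambda = \rrho_{\YYM_1\lotimes M}$), and noting that $(-\ppi_{1,M},\breve\lambda)^t$ is a minimal left $\rad$-approximation of $\YYM_1\lotimes_{\Pa} M$ (it sits inside the very same AR-triangle direct sum up to the unit scalar), we conclude that $\brho^2_M = \breve\lambda\circ\rrho_M$ is a minimal left $\rad^2$-approximation of $M$.

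For the cone, the same homotopy Cartesian square~\eqref{202012171906} shows the cone of $\brho^2_M$ is $\PPi_1\lotimes_{\Pa}\YYM_1\lotimes_{\Pa} M$ (this is precisely the third object in the AR-triangle direct sum from Corollary~\ref{202007181611}(2), and the connecting morphism to $(\YYM_2\lotimes M)[1]$ restricts to what we want). Concretely, since $\brho_{2,M}\circ\rrho_M$ and the AR-triangle map agree up to a unit in the first coordinate, applying Corollary~\ref{2021050917551}(3) produces the commutative diagram whose middle row is an exact triangle $M \xrightarrow{\brho^2_M} \YYM_2\lotimes_{\Pa} M \to \PPi_1\lotimes_{\Pa}\YYM_1\lotimes_{\Pa} M \to M[1]$, which is exactly the claimed statement. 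The main obstacle I anticipate is bookkeeping the various nonzero scalars ($\epsilon'$, $-\tfrac{{}^v\Euch(\PPi_1\lotimes M)}{{}^v\Euch(M)}$, etc.) coming out of Theorem~\ref{2020071920551} and checking that each rescaling is by a \emph{unit}, hence does not affect minimality or the isomorphism type of the cone; the homotopy-Cartesian gluing (Lemma~\ref{2020071917401}) used in the proof of Theorem~\ref{2020071920551} already did the heavy lifting, so no genuinely new computation should be required, but one must be careful that the hypotheses ${}^v\Euch(M)\neq 0$ and ${}^v\Euch(\YYM_1\lotimes_{\Pa} M)\neq 0$ are exactly what is needed to make those scalars invertible while \emph{not} needing the stronger property $(I)_{M,2}$.
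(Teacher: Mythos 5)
Your overall strategy is the one the paper uses: extract the homotopy-Cartesian square relating $\brho_{2,M}$ and $\epsilon'\ppi_{1,M}$ from the proof of Theorem \ref{2020071920551} and feed it into Corollary \ref{2021050917551} to get simultaneously the minimality of $\brho^{2}_{M}$ and the identification of its cone with $\PPi_{1}\lotimes_{\Pa}\YYM_{1}\lotimes_{\Pa}M$. The gap is exactly the hypothesis bookkeeping you flag in your last sentence but do not resolve. Corollary \ref{202007181611} with $n=2$ assumes property (I)${}_{M,2}$, i.e.\ ${}^{v}\!\Euch(N)\neq 0$ for \emph{every} indecomposable direct summand $N$ of $\YYM_{1}\lotimes_{\Pa}M$, and diagram \eqref{202012171906} is constructed in the second half of the proof of Theorem \ref{2020071920551}, under that theorem's hypotheses, which include ${}^{v}\!\Euch(\PPi_{1}\lotimes_{\Pa}M)\neq 0$ and the split-monomorphism property of $\eeta^{*}_{2,M}$ coming from Theorem \ref{right approximation theorem}. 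None of this is granted here: the theorem assumes only the two total nonvanishings, and since ${}^{v}\!\Euch(\PPi_{1}\lotimes_{\Pa}M)={}^{v}\!\Euch(\YYM_{1}\lotimes_{\Pa}M)-{}^{v}\!\Euch(M)$ this quantity may vanish, and a single summand of $\YYM_{1}\lotimes_{\Pa}M$ may have vanishing ${}^{v}\!\Euch$ while the total is nonzero. For the same reason your opening deduction that ${}^{v}\!\Euch(\YYM_{1}\lotimes_{\Pa}M)\neq 0$ makes $\rrho_{\YYM_{1}\lotimes M}$ a minimal left $\rad$-approximation does not follow from Theorem \ref{universal Auslander-Reiten triangle}/\ref{semi-universal Auslander-Reiten triangle}, which is a summand-wise statement; and your appeal to Theorem \ref{right approximation theorem}(4) with $n=1$ concerns $\brho_{1,M}=\rrho_{M}$, not the cone morphism $\ppi_{1,M}$.

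The paper's proof deliberately uses only the first part of the proof of Theorem \ref{2020071920551}: the commutative diagram \eqref{202012171836} exists as soon as Proposition \ref{202012171832} applies with $n=2$, and that proposition requires exactly (I)${}_{M,1}$ (which for indecomposable $M$ is ${}^{v}\!\Euch(M)\neq 0$) together with ${}^{v}\!\Euch(\YYM_{1}\lotimes_{\Pa}M)\neq 0$; Neeman's Lemma 1.4.3 then produces $\varpi_{2,M}$ making the middle square homotopy Cartesian, with $\epsilon'={}^{v}\!\Euch(M)/{}^{v}\!\Euch(\YYM_{1}\lotimes_{\Pa}M)$ a unit, and the folded triangle \eqref{202105241753} together with Corollary \ref{2021050917551} finishes the argument. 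So the repair is to drop Corollary \ref{202007181611} and diagram \eqref{202012171906} altogether and argue from \eqref{202012171836} alone, as the paper does; as written, your proof only establishes the statement under the stronger assumptions (I)${}_{M,2}$ and ${}^{v}\!\Euch(\PPi_{1}\lotimes_{\Pa}M)\neq 0$.
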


\begin{proof}
By Theorem \ref{universal Auslander-Reiten triangle}  the morphism $\rrho_{M}: M \to \YYM_{1} \lotimes_{\Pa} M$ is a minimal left $\rad$-approximation of $M$. 
On the other hand,  
we established the following commutative diagram in \eqref{202012171836} 
where $\epsilon'$ is an automorphism  of $\PPi_{1} \lotimes_{\Pa} M$.  
\[ 
\begin{xymatrix}@C=40pt{
\YYM_{1} \lotimes_{\Pa} M \ar[d]_{\epsilon' \ppi_{1 ,M} }  \ar[r]^{\brho_{2 ,M} }
 & \YYM_{2} \lotimes_{\Pa} M \ar[d]^{\varpi_{2, M}} \\ 
\PPi_{1} \lotimes_{ \Pa} M \ar[r]_{\PPi_{1} \lotimes \rrho_{1 ,M} } & 
\PPi_{1} \lotimes_{\Pa} \YYM_{1} \lotimes_{ \Pa} M. 
}\end{xymatrix}\]
Since this square is homotopy Cartesian that is folded to a direct sum of AR-triangles, 
we conclude by Lemma \ref{2021050917551}  that the composition
 $\brho^{2}_{M} = (\brho_{2, M})(\rrho_{M})$ is a minimal left $\rad^{2}$-approximation of $M$ 
which has $\psi$ as a cone morphism. 
\end{proof} 

\subsubsection{Remark on the assumptions}

We provide an example that shows we need to assume that ${}^{v}\!\Euch(\YYM_{1} \lotimes_{\Pa} M) \neq 0$.

\begin{example}\label{202103051509}

Let $Q : 1 \to 2 \to 3$. 
Assume that the weight $v \in \kk Q_{0}$ is regular. 
 
Let  $M := P_{2}= \Pa e_{2}$ be the indecomposable projective  $\Pa$-module corresponding to the vertex $2$. 
Then we have $\YYM_{1} \lotimes_{\Pa} M \cong P_{3} \oplus S_{2}$ 
and 
\[
{}^{v}\!\Euch(\YYM_{1} \lotimes_{\Pa} M ) = v_{1} + 2v_{2} + v_{3}. 
\]
 
 As is  explained  in Example \ref{202103221959}, if $v_{1} + 2 v_{2} + v_{3} = 0$, then we can directly check that $\varrho_{2}^{2} = 0$. 
It follows that   the morphism $\brho^{2}_{M}: M \to \YYM_{2} \lotimes_{\Pa} M $  is zero morphism 
 and in particular does not give a minimal left $\rad^{2}$-approximation of $M$. 
Below we explain there is another way to see this. 

First we claim that 
$\ppi_{1, M} \ttheta_{2, M} =0$. 
Indeed, from  \eqref{202102211751} and the assumption,  we deduce   
\[
\isagl{\ppi_{1, M} \ttheta_{2, M} , \id_{\PPi_{1} \lotimes M} } ={}^{v}\!\Euch(\YYM_{1} \lotimes_{\Pa} M )=  0.
\]
Since $\Hom_{\Pa}(\PPi_{2} [-1] \lotimes_{\Pa} M, \PPi_{1} \lotimes_{\Pa} M ) \cong \tuD\Hom_{\Pa}(M, M)$ is one-dimensional over $\kk$, 
the above equation implies $\ppi_{1, M} \ttheta_{2, M} = 0$. 

By the octahedral axiom, we obtain the following commutative diagram whose right column is exact. 
\[
\begin{xymatrix}{
&&  M  \ar@{=}[rr] \ar[d]_{\rrho_{ M} }  &&  M  \ar[d]^{\brho_{M}^{2}}  && && \\
\PPi_{2}\lotimes_{\Pa} M [-1] \ar[rr]^{\ttheta_{2, M}} \ar@{=}[d] &&
\YYM_{1} \lotimes_{\Pa} M  \ar[d]_{ \ppi_{1, M} } \ar[rr]^{\brho_{2, M}} && 
\YYM_{2} \lotimes_{\Pa} M  \ar[d]^{ f  = { \tiny \begin{pmatrix} f_{1}  \\ f_{2}  \end{pmatrix}  }} 
\\
\PPi_{2} \lotimes_{\Pa} M[-1] \ar[rr]_{0}  
&& \PPi_{1 } \lotimes_{\Pa} M \ar[rr]_{ \tiny \begin{pmatrix} \id \\0 \end{pmatrix}  } \ar[d]
&&  ( \PPi_{1} \lotimes_{\Pa} M ) \oplus ( \PPi_{2} \lotimes_{\Pa} M ) \ar[d]
 \\ 
&& M [1] \ar@{=}[rr] &&  M [1] && && 
}\end{xymatrix}
\]
Using this diagram we can show that $f$ is a split monomorphism and hence $\brho^{2}_{M} =0$. 

\end{example}

\subsection{Remark on right versions }

We can obtain right versions of results obtained in this section by replacing $\Pa$ with $\Pa^{\op}$. 
We give few remarks on them  and their relationship to left versions, 
to avoid potential confusions for the formulas of Proposition \ref{202102231445}.

A right version of Theorem \ref{2020071920551} for an indecomposable object is the following:

Let $N \in \ind \Pa^{\op}$. 
Then 
there exists
 a morphism ${}_{N}\xi^{\textup{right}} : \ N \lotimes_{\Pa} \YYM_{1} \lotimes_{\Pa} \YYM_{1}  \to N \lotimes_{\Pa} \PPi_{1}$ 
such that $({}_{N}\xi^{\textup{right}} )({}_{N \lotimes \YYM_{1} } \rrho)  =- {}_{N}\ppi_{1}$ 
and  
we have 
\[
({}_{N}\xi^{\textup{right}})({}_{N} \eeta^{*})  = -\frac{{}^{v}\!\Euch(N \lotimes_{\Pa} \YYM_{1} ) }{{}^{v}\!\Euch(N)} \id_{N \lotimes \PPi_{1}}.
\]
Comparing 
$(\xi_{M})(\rrho_{\YYM_{1} \lotimes M}) = \ppi_{1, M}$ with 
$({}_{N}\xi^{\textup{right}} )({}_{N \lotimes \YYM_{1} } \rrho)  =- {}_{N}\ppi_{1}$, 
the signs are different.  
The difference   comes from the difference between Lemma \ref{homotopic lemma 5} and Lemma \ref{homotopic lemma 5 right}. 

We also point out the following difference.  
Assume that the weight $v \in \kk Q_{0}$ is an eigenvector of $\Psi = - C^{-1} C^{t}$ with the eigenvalue $\lambda$. 
Then it follows from Lemma \ref{202103021744} that 
\[
\frac{{}^{v}\!\Euch(\YYM_{1}\lotimes_{\Pa} M ) }{{}^{v}\!\Euch(M)} = 1 + \lambda, \ \frac{{}^{v}\!\Euch(N \lotimes_{\Pa} \YYM_{1} ) }{{}^{v}\!\Euch(N)} = 1+ \lambda^{-1}.
\]

\section{A sufficient criterion concerning $\rad^{n}$-approximations}\label{section: minimal left rad^n-approximations}

The aim of this Section \ref{section: minimal left rad^n-approximations} is to prove Theorem \ref{202104091259} 
that gives sufficient conditions that the morphism $\brho^{n}_{M}: M \to \YYM_{n} \lotimes_{\Pa} M$ is a minimal left $\rad^{n}$-approximation. 

\subsection{Main Theorem}

\subsubsection{The property (II)${}_{M,\lambda}$}

\begin{definition}
Let $M$ be an indecomposable  object of  $\Dbmod{\Pa}$ and $\lambda \in \kk \setminus \{ -1\}$. 
We denote by $\cC_{M}$ the connected component of the AR-quiver of $\Dbmod{\Pa}$ to which $M$ belongs. 

We say that $v \in \We_{Q}$ has the property (II)${}_{M, \lambda}$ if 
for any $ N \in \cC_{M}$ 
we have 
\[
{}^{v}\!\Euch(N) \neq 0, \ \ \frac{{}^{v}\!\Euch(\PPi_{1} \lotimes_{\Pa} N ) }{{}^{v}\!\Euch(N)} = \lambda.
\]
\end{definition}

We note that the equation above together with the assumption  $\lambda \neq -1$ implies 
\[
\frac{{}^{v}\!\Euch(\YYM_{1} \lotimes_{\Pa} N ) }{{}^{v}\!\Euch(N)} = 1 +\lambda \neq 0. 
\]

We point out  an immediate consequence. 
\begin{lemma}
Let $M \in \ind \Dbmod{\Pa}$ and $\lambda \in \kk \setminus \{ -1\}$. 
Assume that  $v \in \kk^{\times} Q_{0}$ has the property (II)${}_{M, \lambda}$. 
Then it has the property (I')${}_{M, n}$ for any 
$n \geq 1$. 
Moreover the assumption of Corollary \ref{202105251650} holds for any $L \in \cC_{M}$. 
\end{lemma}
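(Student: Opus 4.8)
The statement to prove is the little \emph{Lemma} (unnumbered in the excerpt, but immediately following Definition of property (II)${}_{M,\lambda}$): \emph{if $v \in \kk^{\times} Q_0$ has property (II)${}_{M,\lambda}$ with $\lambda \neq -1$, then $v$ has property (I')${}_{M,n}$ for all $n \geq 1$, and the assumption of Corollary \ref{202105251650} holds for any $L \in \cC_M$.}

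The plan is to unwind both target conditions into statements about $\Euvect$-nonvanishing on explicit finite lists of indecomposables, and then show each such indecomposable lies in the component $\cC_M$ (or $\cC_{M^{\lvvee}}$, via the duality dictionary of Section \ref{Remarks on the right versions}), so that property (II) applies directly. First I would recall that property (I')${}_{M,n}$ asks for ${}^{v}\!\Euch(N) \neq 0$ for all indecomposable summands $N$ of $\YYM_m \lotimes_{\Pa} M$ and of $M^{\lvvee} \lotimes_{\Pa} \YYM_m$, for $0 \leq m \leq n-1$. By Theorem \ref{pre left approximation theorem}(3) (and its right version), every indecomposable summand of $\YYM_m \lotimes_{\Pa} M$ is an indecomposable object of $\Dbmod{\Pa}$ appearing with multiplicity $\dim_{\ResEnd(N)}\irr^m(M,N)$; in particular such an $N$ receives a nonzero morphism of radical degree $m$ from $M$, hence $N \in \cC_M$ (it is connected to $M$ by a path in the AR-quiver). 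Then property (II)${}_{M,\lambda}$ gives ${}^{v}\!\Euch(N) \neq 0$ by definition. For the right-hand summands of $M^{\lvvee} \lotimes_{\Pa} \YYM_m$, I would use that $M^{\lvvee} \cong \tuD(\nu_1(M))[-1]$ lies in $\cC_{M^{\lvvee}}$, and that $\tuD$ is a duality $\Pa\mod \to \Pa^{\op}\mod$ sending a component to a component; combined with Lemma \ref{202103021753} (regularity is left–right symmetric) and the fact that the weighted Euler characteristic is $\tuD$-invariant (${}^{v}\!\Euch(\tuD(M)) = {}^{v}\!\Euch(M)$, noted in Section \ref{Remarks on the right versions}), one reduces the right version of the nonvanishing to the already-treated left statement. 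The only subtlety is bookkeeping: property (II)${}_{M,\lambda}$ is stated for $\cC_M$, so I must check the component $\cC_{M^{\lvvee}}$ in the opposite quiver corresponds, under $\tuD$ and $\nu_1$, to $\cC_M$ itself (both $\tuD$ and $\nu_1$ are equivalences/anti-equivalences preserving AR-structure), which is routine.

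For the second assertion, Corollary \ref{202105251650} needs, for a fixed $L \in \cC_M$: (i) ${}^{v}\!\Euch(L) \neq 0$, (ii) ${}^{v}\!\Euch(\PPi_1 \lotimes_{\Pa} L) \neq 0$, and (iii) ${}^{v}\!\Euch(N) \neq 0$ for every indecomposable summand $N$ of $\YYM_1 \lotimes_{\Pa} L$. Condition (i) is immediate from property (II), since $L \in \cC_M$. For (ii): $\PPi_1 \lotimes_{\Pa} L = \nu_1^{-1}(L)$ is indecomposable and lies in $\cC_M$ as well (applying the autoequivalence $\nu_1^{-1}$ preserves AR-components; moreover $\cC_M$ is $\nu_1^{\pm1}$-stable because it is the whole component and $\nu_1$ acts on it — in the Dynkin case $\nu_1^{-1}$ is an autoequivalence of $\Dbmod{\Pa}$ permuting components, and $\cC_M$ is the single component $\Dbmod{\Pa}$ has when we quotient appropriately; more simply, property (II) explicitly quantifies over all $N \in \cC_M$ and $\nu_1^{-1}(L) \in \cC_M$). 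Then property (II) gives ${}^{v}\!\Euch(\PPi_1 \lotimes_{\Pa} L) \neq 0$ directly, and in fact the stated ratio equals $\lambda \neq -1$, so it is nonzero. For (iii): each indecomposable summand $N$ of $\YYM_1 \lotimes_{\Pa} L$ sits, by Theorem \ref{universal Auslander-Reiten triangle}, in the middle term of the AR-triangle starting at $L$, hence is connected to $L$ by an arrow in the AR-quiver and thus $N \in \cC_M$; property (II) then yields ${}^{v}\!\Euch(N) \neq 0$. This completes both parts.

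The main obstacle, and the only place requiring genuine care rather than bookkeeping, is verifying that the component $\cC_M$ really is closed under the operations that arise — passing to indecomposable summands of $\YYM_m \lotimes_{\Pa} M$, applying $\nu_1^{\pm 1}$, and taking $\Pa$-duals into the opposite-quiver side — i.e. that all the indecomposables whose Euler characteristic we need to control genuinely belong to a component on which property (II) gives information. In the non-Dynkin case this is clean because $\YYM_m \lotimes_{\Pa} M$ has summands knit from $M$ by AR-sequences/triangles (Theorem \ref{right approximation theorem}, Corollary \ref{202007181611}); in the Dynkin case one additionally uses that $\Dbmod{\Pa}$ has a single AR-component (a single $\ZZ Q$-shaped quiver), so $\cC_M = \ind\Dbmod{\Pa}$ and every relevant indecomposable trivially lies in it, making property (II)${}_{M,\lambda}$ a global condition — at which point the argument is essentially immediate. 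I would organize the write-up to handle the two cases with this observation, keeping the duality reduction (using Lemma \ref{202103021753} and $\tuD$-invariance of ${}^{v}\!\Euch$) as a short separate paragraph.
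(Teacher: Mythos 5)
Your overall strategy --- every indecomposable whose weighted Euler characteristic must be controlled lies in $\cC_{M}$ (or, on the right-module side, in the component of $M^{\lvvee}$), after which property (II)${}_{M,\lambda}$ applies --- is exactly the intended ``immediate'' argument, and your second paragraph (verification of the hypotheses of Corollary \ref{202105251650} for $L \in \cC_{M}$ via $\nu_{1}^{-1}(L) \in \cC_{M}$ and Theorem \ref{universal Auslander-Reiten triangle}) is correct as written. One small slip there: the reason ${}^{v}\!\Euch(\PPi_{1}\lotimes_{\Pa}L) \neq 0$ is precisely that $\nu_{1}^{-1}(L)\in\cC_{M}$, not that ``the ratio equals $\lambda \neq -1$'' --- excluding $\lambda=-1$ does not by itself exclude $\lambda=0$ (though $\lambda\neq 0$ is in fact automatic, for the same component reason).

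The step you must repair is the first one: you invoke Theorem \ref{pre left approximation theorem}(3) to identify the indecomposable summands of $\YYM_{m}\lotimes_{\Pa}M$, but that theorem has property (I')${}_{M,n}$ as its hypothesis, i.e.\ exactly the statement being proved, so as written the argument is circular. The fix is the induction you only gesture at in your last paragraph: prove by induction on $m$ that every indecomposable summand of $\YYM_{m}\lotimes_{\Pa}M$ lies in $\cC_{M}$. The base case is $M\in\cC_{M}$; if the claim holds up to $m$, then property (I)${}_{M,m+1}$ holds by (II)${}_{M,\lambda}$, so Theorem \ref{right approximation theorem} and Corollary \ref{202007181611} apply and exhibit $\YYM_{m+1}\lotimes_{\Pa}M$ as a direct summand of the middle term of a direct sum of AR-triangles starting from $\YYM_{m}\lotimes_{\Pa}M$; its indecomposable summands are therefore joined by arrows of the AR-quiver to $\cC_{M}$ and hence lie in $\cC_{M}$. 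The same induction applied to $M^{\lvvee}$ over $\Pa^{\op}$ gives the right half of (I')${}_{M,n}$; note only that under $\tuD$ the component of $M^{\lvvee}$ corresponds to the component of $\tuD(M^{\lvvee})\cong \nu(M)=\nu_{1}(M)[1]$, i.e.\ to $\cC_{M}[1]$ rather than to $\cC_{M}$ in the non-Dynkin case --- harmless, since ${}^{v}\!\Euch$ only changes sign under shift. (Your alternative justification that $\irr^{m}(M,N)\neq 0$ forces $N\in\cC_{M}$ is also valid, but it requires the standard factorization through minimal left almost split morphisms, and in any case it cannot be extracted from Theorem \ref{pre left approximation theorem} without the induction above; in the Dynkin case, as you note, $\cC_{M}=\ind\Dbmod{\Pa}$ and everything is immediate.)
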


\begin{example}
Let $\Phi$ be the Coxeter matrix of $Q$ and set $\Psi := \Phi^{-t}$. 

Let $M \in \ind\Dbmod{\Pa}$. 
If $v \in \kk Q_{0}$ is an eigenvector of $\Psi $ with the eigenvalue $\lambda \neq -1$  
and has the property (I)${}_{M}$, 
then it has the property (II)${}_{M, \lambda}$ by \eqref{202111101530}.

Thus, 
if  a regular  (resp. semi-regular) weight $v \in \kk Q_{0}$ is an eigenvector of $\Psi$ with the eigenvalue $\lambda \neq -1$, 
then it has the property (II)${}_{M, \lambda}$ for all $M \in \ind \Dbmod{\Pa}$ (resp. $\ind\cP[\ZZ]$). 

\end{example}

In the proof of  Proposition \ref{202105241610}, we provide an example of  $v \in \kk Q_{0}$ which is not an eigenvector of $\Psi$, but has 
the property (II)${}_{M, 1}$ for some $M$.  

\subsubsection{}

For simplicity we set $V_{n}(t) := 1 + t + t^{2} + \cdots + t^{n-1}$. 

\begin{theorem}\label{202104091259}
Let $M$ be an indecomposable  object of  $\Dbmod{\Pa}$ and $ \lambda \in \kk\setminus \{ -1\}$. 
Assume that the weight $v \in \kk^{\times} Q_{0}$ has the property (II)${}_{M, \lambda}$. 
Then the following statement holds.

If  a natural number  $n \geq 2$ 
satisfies the condition: 
\[ 
V_{m}(\lambda )  \neq 0, \textup{ for } 1 \leq m \leq n, 
\]
then the morphism $\brho_{M}^{n}: M\to \YYM_{n} \lotimes_{\Pa} M$ 
is a minimal left $\rad^{n}$-approximation whose cone is isomorphic to $\PPi_{1}\lotimes_{\Pa} \YYM_{n -1} \lotimes_{\Pa}M$.
\[
M \xrightarrow{ \brho_{M}^{n}} \YYM_{n} \lotimes_{\Pa} M \xrightarrow{ \varpi_{n, M}} \PPi_{1} 
\lotimes_{\Pa} \YYM_{n -1} \lotimes_{\Pa} M \to .
\]
\end{theorem}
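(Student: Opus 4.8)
The plan is to proceed by induction on $n$, using the inductive construction of minimal left $\rad^{n}$-approximations from Lemma \ref{202105091801} (its derived-category version), combined with the structural results of Section \ref{section: right rad-n approximation} and the key ``middle term of the middle term'' description of Theorem \ref{2020071920551}. The base case $n=2$ is precisely Theorem \ref{202103021651}: since $v$ has property (II)${}_{M,\lambda}$ we have ${}^{v}\!\Euch(M) \neq 0$ and ${}^{v}\!\Euch(\YYM_{1}\lotimes_{\Pa} M) = (1+\lambda){}^{v}\!\Euch(M)\neq 0$, so $\brho_{M}^{2}$ is a minimal left $\rad^{2}$-approximation with cone $\PPi_{1}\lotimes_{\Pa}\YYM_{1}\lotimes_{\Pa} M$. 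Note $V_{2}(\lambda) = 1+\lambda\neq 0$ is automatic from $\lambda\neq -1$, so the hypotheses for $n=2$ reduce to property (II)${}_{M,\lambda}$.

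For the inductive step, suppose $\brho_{M}^{n-1}: M \to \YYM_{n-1}\lotimes_{\Pa}M$ is a minimal left $\rad^{n-1}$-approximation with cone morphism $\lambda'_{n-1}: \YYM_{n-1}\lotimes_{\Pa}M \to \PPi_{1}\lotimes_{\Pa}\YYM_{n-2}\lotimes_{\Pa}M$ (identified via the inductive hypothesis). First I would verify that $\lambda'_{n-1}$ satisfies the left $\rad$-fitting condition: by Corollary \ref{202007181611}(2), $\PPi_{1}\lotimes_{\Pa}\YYM_{n-2}\lotimes_{\Pa}M$ sits as a direct summand of the middle term $(\YYM_{n}\lotimes_{\Pa}M)\oplus(\PPi_{1}\lotimes_{\Pa}\YYM_{n-2}\lotimes_{\Pa}M)$ of a direct sum of AR-triangles starting from $\YYM_{n-1}\lotimes_{\Pa}M$, and property (II)${}_{M,\lambda}$ guarantees property (I)${}_{M,n}$ so Theorem \ref{right approximation theorem} applies; together with the right $\rad$-fitting part coming from Theorem \ref{kQ approximations theorem} this gives the fitting condition. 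Then by Lemma \ref{202105091801}(1), to show $\brho_{n,M}\circ\brho_{M}^{n-1} = \brho_{M}^{n}$ is a minimal left $\rad^{n}$-approximation it suffices to show that $(-\lambda'_{n-1},\breve{\lambda})^{t}: \YYM_{n-1}\lotimes_{\Pa}M \to (\PPi_{1}\lotimes_{\Pa}\YYM_{n-2}\lotimes_{\Pa}M)\oplus(\YYM_{n}\lotimes_{\Pa}M)$ with $\breve{\lambda} = \brho_{n,M}$ is a minimal left $\rad$-approximation, i.e.\ is (up to isomorphism) the middle map of a direct sum of AR-triangles starting from $\YYM_{n-1}\lotimes_{\Pa}M$. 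Comparing with the canonical AR-triangle middle map $\begin{pmatrix}\rho_{n,M}\\-\alpha_{n,M}\end{pmatrix}$ from Corollary \ref{202007181611}(2), and using that $\YYM_{n-1}\lotimes_{\Pa}M$ and its AR-translate are Krull--Schmidt objects with one-dimensional local endomorphism residues, this reduces to checking that the relevant ``scalar'' obstruction is nonzero.

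The crux is to identify that scalar obstruction. The point is that $\brho_{n,M} = \brho_{2,\,\YYM_{n-2}\lotimes M}\circ({}_{\YYM_{1}}\brho_{\,\YYM_{n-2}\lotimes M})$ or more precisely factors through the multiplication maps $\zzeta$, and Theorem \ref{2020071920551} (applied to $\YYM_{n-1}\lotimes_{\Pa}M$, whose indecomposable summands $N$ all lie in $\cC_{M}$ and hence satisfy ${}^{v}\!\Euch(\PPi_{1}\lotimes_{\Pa}N)/{}^{v}\!\Euch(N) = \lambda$) gives a retraction $\xi_{2,\,\YYM_{n-1}\lotimes M}$ of $\eeta^{*}_{2,\,\YYM_{n-1}\lotimes M}$ with $\xi\circ\rrho = \ppi_{1}$. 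Tracking the composites $\brho_{M}^{n} = \brho_{2}\circ\cdots$ through the diagram of Corollary \ref{202008091343} (or its $M$-tensored form \eqref{202008141848}) together with Lemma \ref{202104091845}, the accumulated scalar is exactly $V_{m}(\lambda) = 1+\lambda+\cdots+\lambda^{m-1}$ at stage $m$ — this is where the Corollary \ref{202102211730} identity ${}^{v}\!\Euch(\YYM_{n}\lotimes_{\Pa}M)/{}^{v}\!\Euch(M) = \sum_{i=0}^{n}\lambda^{i} = V_{n+1}(\lambda)$ enters. I expect the main obstacle to be the careful bookkeeping of these scalars: one must show that the composite $\brho_{M}^{n}$, when expressed in the AR-triangle coordinates at each stage, carries the factor $V_{m}(\lambda)$ and not some other polynomial in $\lambda$, and that the hypothesis $V_{m}(\lambda)\neq 0$ for all $1\le m\le n$ is exactly what is needed to keep each stage's obstruction invertible so that Lemma \ref{202105091801}(1) applies and the cone is $\PPi_{1}\lotimes_{\Pa}\YYM_{n-1}\lotimes_{\Pa}M$. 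Once the scalar is pinned down, the conclusion about the cone follows from the homotopy-Cartesian description in the proof of Theorem \ref{202103021651} applied inductively.
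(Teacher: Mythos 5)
Your overall skeleton (induction on $n$, base case Theorem \ref{202103021651}, then reducing the inductive step via Lemma \ref{202105091801}/Corollary \ref{2021050917551} to showing that $(-\lambda'_{n-1},\brho_{n,M})^{t}$ is the middle map of a direct sum of AR-triangles) is the same as the paper's, but the technical core is missing, and it is precisely the part where the hypothesis $V_{m}(\lambda)\neq 0$ for \emph{all} $1\le m\le n$ is used. You ``expect'' that the accumulated scalar obstruction at stage $m$ is $V_{m}(\lambda)$ and attribute it to the Euler-characteristic identity of Corollary \ref{202102211730}, but that identity only compares Euler characteristics; it does not compare the multiplication map $\brho_{n,M}$ with the AR middle map $\rrho_{\YYM_{n-1}\lotimes M}$. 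The actual comparison requires choosing, inductively, a \emph{compatible} splitting $\omega_{n-1,M}$ of $\zzeta_{n-1,M}$ (equivalently a retraction $\xi_{n-1,M}$ of $\eeta^{*}_{n-1,M}$) and proving the recursion
\[
\xi_{n,M}\,\eeta^{*}_{n,M} \;=\; -(1+\lambda)\,\id \;+\; \lambda\,\frac{V_{n-2}(\lambda)}{V_{n-1}(\lambda)}\,\id \;=\; -\frac{V_{n}(\lambda)}{V_{n-1}(\lambda)}\,\id
\]
in $\ResEnd_{\Pa}(\PPi_{1}\lotimes_{\Pa}\YYM_{n-2}\lotimes_{\Pa}M)$ (Proposition \ref{2021040913541} in the paper). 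The cross-term $\lambda V_{n-2}/V_{n-1}$ does not come from bookkeeping of Euler characteristics: it rests on the genuinely computational identity $({\xi}_{2,\YYM_{1}\lotimes L})({}_{\YYM_{1}}\eeta_{2,L}^{*})({}_{\YYM_{1}}\xi_{2,L})(\eeta^{*}_{2,\YYM_{1}\lotimes L}) = \lambda\,\id$ modulo radical (Lemma \ref{2021022718051}), which in turn needs the fact that $\YYM_{1}\lotimes_{\Pa}-$ preserves radical morphisms on the component $\cC_{M}$ (Lemma \ref{2021031218431}); both of these use property (II)${}_{M,\lambda}$ in an essential way. Note also that $\YYM_{n-1}\lotimes_{\Pa}M$ is decomposable, so ``the scalar obstruction'' is a priori an element of a product of matrix algebras; that it is a scalar multiple of the identity is exactly what the uniform eigenvalue condition buys, and one-dimensionality of $\ResEnd$ of the individual indecomposables is not enough.

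A second, smaller gap: you claim the left $\rad$-fitting condition for the cone morphism $\lambda'_{n-1}$ of $\brho_{M}^{n-1}$ ``follows from Corollary \ref{202007181611}(2) and Theorem \ref{right approximation theorem}.'' Corollary \ref{202007181611}(2) produces an AR-triangle starting at $\YYM_{n-1}\lotimes_{\Pa}M$ with middle map $(\brho_{n,M},-\alpha_{n,M})^{t}$ for \emph{some} morphism $\alpha_{n,M}$, but it does not identify $\alpha_{n,M}$ (up to automorphism) with the cone morphism of $\brho_{M}^{n-1}$; that identification is part of what the induction must establish (in the paper it is the commutative diagram \eqref{202105261807}, where the cone morphism is $\varpi_{n-1,M}=(-1)^{n-1}(\ppi_{1,\YYM_{n-2}\lotimes M})(\omega_{n-1,M})$ and the scalar $\epsilon_{n,M}=(\xi_{n,M}\eeta^{*}_{n,M})^{-1}$ intervenes). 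So both the fitting condition and the minimality come out of the same construction of $\omega_{n,M},\xi_{n,M},\epsilon_{n,M}$, which your proposal does not supply.
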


\subsection{Main Proposition} 
In this section we prove Proposition \ref{2021040913541} which is a key for the proof of  Theorem \ref{202104091259}

\subsubsection{Main proposition}

\begin{proposition}\label{2021040913541}
Let $M$ be an indecomposable  object of  $\Dbmod{\Pa}$ and $\lambda \in \kk\setminus \{ -1\}$. 
Assume that the weight $v \in \kk Q_{0}$ has the property (II)${}_{M, \lambda}$. 
 
If  a natural number  $n \geq 2$ 
satisfies the condition: 
\begin{equation}\label{2021030112531}
V_{m}(\lambda )  \neq 0, \textup{ for } 1 \leq m \leq n -1, 
\end{equation}
then 
there exists a morphism of $\omega_{n -1, M}: \YYM_{n -1} \lotimes_{\Pa} M  \to \YYM_{1} \lotimes_{\Pa} \YYM_{n -2} \lotimes_{\Pa }M $ in $\Dbmod{\Pa}$ 
that has the following properties 

\begin{enumerate}[(1)] 
\item $\zzeta_{n -1, M} \omega_{n -1, M} = \id_{\YYM_{n -1} \lotimes M }$. 

\item If we set $\xi_{n, M} := (\xi_{2, \YYM_{ n-2} \lotimes M })({}_{\YYM_{1}} \omega_{n -1, M})$, where
\[
\xi_{n, M} 
: \YYM_{1} \lotimes_{\Pa} \YYM_{n -1} \lotimes_{\Pa} M 
\xrightarrow{{}_{\YYM_{1}} \omega_{n -1, M} } 
\YYM_{1} \lotimes_{\Pa} \YYM_{1} \lotimes_{\Pa} \YYM_{n -2} \lotimes_{\Pa} M 
\xrightarrow{\xi_{2, \YYM_{n -2} \lotimes M } } 
\PPi_{1} \lotimes_{\Pa} \YYM_{n -2} \lotimes_{\Pa}M,
\]
then the following equality holds  in $\ResEnd_{\Pa } (\PPi_{1} \lotimes_{\Pa} \YYM_{n -2}\lotimes_{\Pa} M )$ 
\[
\xi_{n, M} \eeta^{*}_{n, M} =-  \frac{V_{n}(\lambda)}{V_{n-1}(\lambda) }.
\]
Therefore if $V_{n}(\lambda) \neq 0$, 
the composition $\xi_{n, M}\eeta^{*}_{n, M}$ 
is an automorphism of $\PPi_{1} \lotimes_{\Pa} \YYM_{n -2} \lotimes_{\Pa} M $.

\item We have 
\[
(\xi_{n, M})( \rrho_{\YYM_{ n-1} \otimes  M }) =(\ppi_{1, \YYM_{n-2} \lotimes M })(\omega_{n-1, M}) .
\]
\end{enumerate}
\end{proposition}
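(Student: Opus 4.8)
The statement is proved by induction on $n$, with the base case $n=2$ already contained in Theorem \ref{2020071920551}: there $\omega_{2,M}$ is the section of $\zzeta_{2,M}$ constructed from the retraction $\epsilon_{2,M}\xi_{2,M}$ of $\eeta^{*}_{2,M}$ (as in \eqref{2021040918451}), and the three required equalities reduce exactly to parts (1), (2), (3) of that theorem once one checks $V_2(\lambda)/V_1(\lambda) = 1+\lambda = {}^{v}\!\Euch(\YYM_1\lotimes_\Pa M)/{}^{v}\!\Euch(M)$, which holds because $v$ has the property (II)${}_{M,\lambda}$. So I would first isolate the base case, then set up the inductive step for $n\geq 3$ assuming the proposition for $n-1$ (applied to the object $M$ itself, noting that property (II)${}_{M,\lambda}$ is inherited by every object of $\cC_M$, in particular by $\YYM_{n-2}\lotimes_\Pa M$ since its indecomposable summands lie in $\cC_M$).

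\textbf{Construction of $\omega_{n-1,M}$.} The key point is to use the inductively constructed $\xi_{n,M}$ as a candidate \emph{retraction} of $\eeta^{*}_{n,M}$ and then define $\omega_{n-1,M}$ via the resulting splitting. Concretely: by Theorem \ref{right approximation theorem}(2) the morphism $\eeta^{*}_{n,M}$ is a split monomorphism, so $\YYM_1\lotimes_\Pa\YYM_{n-1}\lotimes_\Pa M\cong(\PPi_1\lotimes_\Pa\YYM_{n-2}\lotimes_\Pa M)\oplus(\YYM_n\lotimes_\Pa M)$ with $\zzeta_{n,M}$ the projection. I would form $\xi_{n,M}:=(\xi_{2,\YYM_{n-2}\lotimes M})({}_{\YYM_1}\omega_{n-1,M})$ exactly as stated in (2), where $\omega_{n-1,M}$ is the section from the inductive hypothesis, and then verify it has the two ``section/retraction'' properties (2) and (3). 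Property (2), the computation of $\xi_{n,M}\eeta^{*}_{n,M}$, is where the recursion $V_n(\lambda)/V_{n-1}(\lambda)$ enters: one expands $\eeta^{*}_{n,M}=({}_{\YYM_1}\zzeta_{n-1,\YYM_{n-2}\lotimes M})(\eeta^{*}_{2,\YYM_{n-2}\lotimes M})$ (wait — more precisely, $\eeta^{*}_{n,M} = ({}_{\YYM_1}\zzeta_{n-1})(\eeta^{*}_{2,\YYM_{n-2}\lotimes M})$), feeds it through $\xi_{n,M}$, and uses the $n=2$ identities of Theorem \ref{2020071920551}(1) and (3) together with the inductive hypothesis $\xi_{n-1,\cdots}\eeta^{*}_{n-1,\cdots}=-V_{n-1}(\lambda)/V_{n-2}(\lambda)$ and the equality $\zzeta_{n-1,M}\omega_{n-1,M}=\id$; the scalar that comes out should be $-(1+\lambda\cdot\tfrac{V_{n-1}(\lambda)-1}{V_{n-1}(\lambda)})$ or a similar expression that telescopes to $-V_n(\lambda)/V_{n-1}(\lambda)$. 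Once $\xi_{n,M}\eeta^{*}_{n,M}$ is invertible (using $V_n(\lambda)\neq0$, available when we apply the proposition with this $n$, but for the \emph{inductive construction} of $\omega_{n,M}$ we only need $V_{n-1}(\lambda)\neq0$, which is the hypothesis), $-(V_{n-1}(\lambda)/V_n(\lambda))\xi_{n,M}$ is a genuine retraction of $\eeta^{*}_{n,M}$ in $\ResEnd$, hence (lifting the corresponding idempotent, using the Krull–Schmidt property of $\Dbmod\Pa$) there is a section $\omega_{n,M}$ of $\zzeta_{n,M}$ complementary to it, and this is the object for the next stage; property (1), $\zzeta_{n-1,M}\omega_{n-1,M}=\id$, is then part of the definition of the splitting.

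\textbf{Verifying (3) and the role of $\rad$.} Property (3), $(\xi_{n,M})(\rrho_{\YYM_{n-1}\otimes M})=(\ppi_{1,\YYM_{n-2}\lotimes M})(\omega_{n-1,M})$, I would derive by applying Theorem \ref{2020071920551}(2) to the object $\YYM_{n-2}\lotimes_\Pa M$ — which gives $\xi_{2,\YYM_{n-2}\lotimes M}\,\rrho_{\YYM_1\lotimes\YYM_{n-2}\lotimes M}=\ppi_{1,\YYM_{n-2}\lotimes M}$ — combined with the naturality relation \eqref{202111281249} for $\rrho$ (which is a morphism of $\Pa^{\mre}$-complexes tensored appropriately) and property (1) for $\omega_{n-1,M}$: indeed $\rrho_{\YYM_{n-1}\otimes M}$ pushed through ${}_{\YYM_1}\omega_{n-1,M}$ equals $({}_{\YYM_1\lotimes\YYM_1}\omega_{n-1,M})\rrho_{\cdots}$ after using $\zzeta$-relations. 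The main obstacle — and the step that requires genuine care rather than bookkeeping — is part (2): keeping the $\ResEnd$-level identities consistent while chaining the two ``insertions'' of $\varrho$ (the one from $\rrho_{\YYM_1\lotimes\cdots}$ versus the one from ${}_{\YYM_1}\rrho_{\cdots}$, which differ by $\eeta^{*}_2\xi_2$ per Lemma \ref{202104091845}) and correctly accounting for the scalar bookkeeping so the quotient $V_n(\lambda)/V_{n-1}(\lambda)$ emerges; I expect this to use Lemma \ref{202104091845} (or its relative-over-$\YYM_{n-2}\lotimes M$ version), the equality $\varpi = \ppi\circ\omega$ noted after \eqref{2021040918451}, and Theorem \ref{2020071920551}(3) in an essential way, and the sign conventions coming from Lemma \ref{homotopic lemma 5} will need to be tracked throughout.
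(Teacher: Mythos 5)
Your overall architecture is the same as the paper's: induction on $n$, base case from Theorem \ref{2020071920551} (with the observation $V_2(\lambda)/V_1(\lambda)=1+\lambda$ coming from property (II)${}_{M,\lambda}$), construction of the section by splitting off the image of $\eeta^{*}$ via the invertibility of $\xi\,\eeta^{*}$ in $\ResEnd$, part (1) holding by construction, and part (3) following from the exchange law together with Theorem \ref{2020071920551}(2) applied to $\YYM_{n-2}\lotimes_{\Pa}M$. Two bookkeeping points should be fixed: the inductive hypothesis supplies $\omega_{n-2,M}$, not $\omega_{n-1,M}$; the section $\omega_{n-1,M}$ is constructed \emph{at step $n$} from the splitting $\eeta^{*}_{n-1,M}\epsilon_{n-1,M}$, $\xi_{n-1,M}$, and this is exactly where the hypothesis $V_{n-1}(\lambda)\neq 0$ is consumed (your parenthetical about which $V_m\neq 0$ is needed for which $\omega$ conflates the indices, and in the base case the relevant section is $\omega_{1,M}=\id$, not $\omega_{2,M}$).

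The genuine gap is in part (2), which you yourself flag as the hard step but do not carry out. After inserting the splitting identity $\omega_{n-1,M}\zzeta_{n-1,M}=\id-\eeta^{*}_{n-1,M}\epsilon_{n-1,M}\xi_{n-1,M}$, the computation reduces to evaluating, in $\ResEnd$, the six-fold composite $(\xi_{2,\YYM_{n-2}\lotimes M})({}_{\YYM_{1}}\eeta^{*}_{n-1,M})({}_{\YYM_{1}}\epsilon_{n-1,M})({}_{\YYM_{1}}\xi_{n-1,M})(\eeta^{*}_{2,\YYM_{n-2}\lotimes M})$, and this is \emph{not} a formal consequence of Theorem \ref{2020071920551}(1),(3), Lemma \ref{202104091845} and the inductive scalar. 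In the paper it rests on three further ingredients you do not supply: the key identity of Lemma \ref{2021022718051}, namely $(\xi_{2,\YYM_{1}\lotimes L})({}_{\YYM_{1}}\eeta^{*}_{2,L})({}_{\YYM_{1}}\xi_{2,L})(\eeta^{*}_{2,\YYM_{1}\lotimes L})=\lambda\,\id$ in $\ResEnd$ (proved via the relations of Lemmas \ref{homotopic lemma 5}, \ref{202001111736}, \ref{homotopic lemma 5 right} and a factoring-through-the-cone radical argument); Lemma \ref{2021031218431}, that $\YYM_{1}\lotimes_{\Pa}-$ preserves radical morphisms, without which the $\ResEnd$-scalar ${}_{\YYM_{1}}\epsilon_{n-1,M}=-V_{n-2}(\lambda)/V_{n-1}(\lambda)$ is not justified after tensoring; and Lemma \ref{2021022716261} to compose scalars modulo the radical (see Claim \ref{2021022718581}). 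The correct value of the composite is $-\lambda V_{n-2}(\lambda)/V_{n-1}(\lambda)$, giving $\xi_{n,M}\eeta^{*}_{n,M}=-(1+\lambda)+\lambda V_{n-2}(\lambda)/V_{n-1}(\lambda)=-V_{n}(\lambda)/V_{n-1}(\lambda)$; your guessed recursion $-(1+\lambda\tfrac{V_{n-1}(\lambda)-1}{V_{n-1}(\lambda)})$ agrees with this only for $n=3$, which confirms the telescoping was not actually verified. Until you prove an analogue of Lemma \ref{2021022718051} (this is where the eigenvalue $\lambda$ genuinely enters, via the trace/Euler-characteristic input behind Theorem \ref{2020071920551}) and the radical-preservation statement, part (2) of the induction does not close.
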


We prove Proposition~\ref{2021040913541} in Section~\ref{proof of Proposition} after some preparation.

\subsubsection{}

We use the following easy observation. 

\begin{lemma}\label{2021022716261}
Let $f: X \to Y, g: Y \to Y, h : Y \to X$ be morphisms in $\Dbmod{\Pa}$. 
Assume we have 
$g= a\id_{Y}$ in $\ResEnd(Y)$ and $hf= b \id_{X}$ in $\ResEnd(X)$ for some $a, b\in \kk$. 
Then we have $hgf = ab\id_{X}$ in $\ResEnd(X)$. 
\end{lemma}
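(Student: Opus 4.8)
The statement to prove is Lemma~\ref{2021022716261}: for morphisms $f\colon X\to Y$, $g\colon Y\to Y$, $h\colon Y\to X$ in $\Dbmod{\Pa}$ with $g = a\,\id_Y$ in $\ResEnd(Y)$ and $hf = b\,\id_X$ in $\ResEnd(X)$, we have $hgf = ab\,\id_X$ in $\ResEnd(X)$.

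The plan is to work entirely in the (semisimple) residue endomorphism algebras. First I would record that since $Q$ is assumed acyclic and $\kk$ is taken algebraically closed in this section, every object $X$ here is indecomposable or a finite direct sum of indecomposables, and in the indecomposable case $\ResEnd(X)\cong\kk$. But the cleanest route avoids a case analysis: write $g = a\,\id_Y + g_0$ with $g_0\in\rad\End(Y)$, and $hf = b\,\id_X + r$ with $r\in\rad\End(X)$. Then expand
\[
hgf = h(a\,\id_Y + g_0)f = a\,hf + h g_0 f = ab\,\id_X + ar + h g_0 f.
\]
The first key step is to observe $ar\in\rad\End(X)$ since $\rad\End(X)$ is an ideal. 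The second, and only substantive, step is to check that $h g_0 f\in\rad\End(X)$: this follows because the radical bifunctor $\rad(-,+)$ is an ideal in the $\kk$-linear category $\Dbmod{\Pa}$ (as recalled in Section~\ref{section: rad^{n} approximation}, the radicals form a $\kk$-linear additive subfunctor of $\Hom$, closed under pre- and post-composition by arbitrary morphisms), so $g_0\in\rad(Y,Y)$ forces $h g_0 f\in\rad(X,X)=\rad\End(X)$. Hence $hgf - ab\,\id_X = ar + h g_0 f\in\rad\End(X)$, which is exactly the assertion $hgf = ab\,\id_X$ in $\ResEnd(X)$.

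I expect no real obstacle here; the lemma is a formal consequence of the ideal property of the radical. The only point requiring mild care is making sure the two hypotheses are interpreted in the right residue algebras ($\ResEnd(Y)$ for $g$, $\ResEnd(X)$ for $hf$) and that $hf$ being a scalar modulo radical does not need $h$ or $f$ individually to be nice — the decomposition $hf = b\,\id_X + r$ is used as a black box. One could also note the statement holds verbatim when $X$, $Y$ are arbitrary (not necessarily indecomposable) objects, since the argument uses only that $\rad\End(X)$ is an ideal of $\End(X)$; this generality is what the later applications (to $X = \PPi_1\lotimes_\Pa \YYM_{n-2}\lotimes_\Pa M$ and similar non-indecomposable objects) require. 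I would therefore present the proof in two short lines: expand $hgf$ using the two decompositions, and invoke that $\rad$ is an ideal to absorb all but the $ab\,\id_X$ term.
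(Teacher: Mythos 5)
Your proof is correct and rests on exactly the same point as the paper's: that the radical morphisms form an ideal of the $\kk$-linear category $\Dbmod{\Pa}$. The paper simply packages this by passing to the ideal quotient category (where $g=a\,\id_Y$ and $hf=b\,\id_X$ become genuine equalities and the claim is immediate by $\kk$-linearity), whereas you carry out the equivalent explicit expansion $hgf = ab\,\id_X + ar + hg_0f$ and absorb the error terms into the radical.
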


\begin{proof}
Let $\mathcal{C}$ be the ideal quotient of the category $\Dbmod{\Pa}$ by the ideal of radical morphisms. Then $\ResEnd(X)$ is the endomorphism algebra of $X$ in $\mathcal{C}$ and the claim follows as $\mathcal{C}$ is a $\kk$-category.
\end{proof}

\subsubsection{}

\begin{lemma}\label{2021022718051}
Let $M$ be an indecomposable  object of  $\Dbmod{\Pa}$ and $\lambda \in \kk$. 
Assume that the weight  $v \in \kk Q_{0}$ has the property (II)${}_{M, \lambda}$. 
Let $n \geq 2$. We set $L := \YYM_{n} \lotimes_{\Pa} M$. 
Then, we have the following equality in $\ResEnd_{\Pa}(\PPi_{1} \lotimes_{\Pa} \YYM_{1}\lotimes_{\Pa} L)$
\[
 ({\xi}_{2, \YYM_{1}\lotimes L})({}_{\YYM_{1}}\eeta_{2, L}^{*})({}_{\YYM_{1}}\xi_{2, L})(\eeta^{*}_{2, \YYM_{1} \lotimes L })  = \lambda \id_{\PPi_{1} \lotimes_{\Pa} \YYM_{1}\lotimes_{\Pa} L}
\]
where $\xi_{2, \YYM_{1} \lotimes L}$ and $\xi_{2, L}$ are the morphisms obtained in Corollary \ref{2021022218051}. 
\end{lemma}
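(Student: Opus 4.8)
The strategy is to compute the composition
\[
({\xi}_{2, \YYM_{1}\lotimes L})({}_{\YYM_{1}}\eeta_{2, L}^{*})({}_{\YYM_{1}}\xi_{2, L})(\eeta^{*}_{2, \YYM_{1} \lotimes L })
\]
modulo radical by breaking it into two pieces that we already understand. The middle factor $({}_{\YYM_{1}}\eeta_{2, L}^{*})({}_{\YYM_{1}}\xi_{2, L}) = {}_{\YYM_{1}}(\eeta^{*}_{2,L}\xi_{2,L})$ is $\YYM_{1}\lotimes_{\Pa}-$ applied to the endomorphism $\eeta^{*}_{2,L}\xi_{2,L}$ of $\YYM_{1}\lotimes_{\Pa}\YYM_{1}\lotimes_{\Pa}L$. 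By Theorem~\ref{2020071920551}(1) (applied to $L$, which is legitimate since $L = \YYM_n\lotimes_{\Pa}M$ has all indecomposable summands in $\cC_M$ by Theorem~\ref{pre left approximation theorem}(3), and property (II)${}_{M,\lambda}$ gives the needed non-vanishing of weighted Euler characteristics) together with Corollary~\ref{202102221805}, the composition $\xi_{2,L}\eeta^{*}_{2,L}$ equals $-\frac{{}^{v}\!\Euch(\YYM_1\lotimes L)}{{}^{v}\!\Euch(L)}\id = -(1+\lambda)\id$ in $\ResEnd_{\Pa}(\PPi_1\lotimes_{\Pa}L)$. But here we have $\eeta^{*}_{2,L}\xi_{2,L}$ in the opposite order; this is the idempotent-up-to-scalar governing the splitting, and using the section/retraction description from the proof of Theorem~\ref{2020071920551}(3) (equations \eqref{2021040918451}), $\eeta^{*}_{2,L}\xi_{2,L}$ corresponds, under the isomorphism $\YYM_1\lotimes\YYM_1\lotimes L \cong (\PPi_1\lotimes L)\oplus(\YYM_2\lotimes L)$, to $\begin{pmatrix}-(1+\lambda)^{-1}\cdot(-(1+\lambda))\id & 0\\ 0&0\end{pmatrix}$—wait, more carefully: $\epsilon_{2,L}\xi_{2,L}$ is the retraction with $\epsilon_{2,L}\xi_{2,L}\eeta^{*}_{2,L}=\id$, where $\epsilon_{2,L} = -\frac{{}^{v}\!\Euch(L)}{{}^{v}\!\Euch(\YYM_1\lotimes L)} = -(1+\lambda)^{-1}$, so $\eeta^{*}_{2,L}\xi_{2,L} = \epsilon_{2,L}^{-1}\cdot(\eeta^{*}_{2,L}\epsilon_{2,L}\xi_{2,L})$ is the projection onto the $\PPi_1\lotimes L$ summand scaled by $\epsilon_{2,L}^{-1} = -(1+\lambda)$.

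\textbf{Key steps.} First I would record that $\eeta^{*}_{2,L}\xi_{2,L}$ is, in $\ResEnd$, conjugate to the endomorphism $-(1+\lambda)\cdot\mathrm{pr}$ where $\mathrm{pr}$ is the projection onto the $\eeta^{*}_{2,L}$-summand (this follows formally from \eqref{202207131533}/\eqref{2021040918451} and the computation of $\xi_{2,L}\eeta^{*}_{2,L}$). Second, I would apply $\YYM_1\lotimes_{\Pa}-$ throughout; since $\YYM_1\lotimes_{\Pa}-$ is additive it carries this splitting to the corresponding splitting of $\YYM_1\lotimes_{\Pa}\YYM_1\lotimes_{\Pa}\YYM_1\lotimes_{\Pa}L$, and carries $\mathrm{pr}$ to the projection onto ${}_{\YYM_1}\eeta^{*}_{2,L}$. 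Third, I would pre-compose with $\eeta^{*}_{2,\YYM_1\lotimes L}$ and post-compose with $\xi_{2,\YYM_1\lotimes L}$: by Theorem~\ref{2020071920551}(1) applied to the object $\YYM_1\lotimes_{\Pa}L$ (again all summands lie in $\cC_M$, and the weighted Euler characteristics are nonzero and in the right ratio by (II)${}_{M,\lambda}$), we have $\xi_{2,\YYM_1\lotimes L}\,\eeta^{*}_{2,\YYM_1\lotimes L} = -(1+\lambda)\id$ in $\ResEnd(\PPi_1\lotimes\YYM_1\lotimes L)$. The subtle point is that we are composing $\xi_{2,\YYM_1\lotimes L}\circ {}_{\YYM_1}(\eeta^{*}_{2,L}\xi_{2,L})\circ\eeta^{*}_{2,\YYM_1\lotimes L}$, i.e.\ the outer two maps sandwich the scaled projection; I would use Lemma~\ref{2021022716261} to reduce this to: (outer composition, which is $-(1+\lambda)\id$) times (the scalar by which the sandwiched projection acts after the outer maps "see" it). The cleanest route is to observe that $\eeta^{*}_{2,\YYM_1\lotimes L}$ lands in the subobject on which ${}_{\YYM_1}\mathrm{pr}$ acts by a computable scalar. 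In fact, by the three commutativities in Theorem~\ref{2020071920551} (parts (1),(2),(3)) applied to $L$, one gets ${}_{\YYM_1}(\eeta^{*}_{2,L}\xi_{2,L})$ composed with the relevant maps, and the net scalar works out to $\lambda$ rather than $-(1+\lambda)^2$ because the projection kills the "$1$" part; tracking this carefully is where the argument lives.

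\textbf{Main obstacle.} The real work is bookkeeping: identifying precisely which summand of $\YYM_1\lotimes\YYM_1\lotimes\YYM_1\lotimes L$ the map $\eeta^{*}_{2,\YYM_1\lotimes L}$ targets, and how ${}_{\YYM_1}(\eeta^{*}_{2,L}\xi_{2,L})$ — the inner scaled projection — acts on that image, so that after applying $\xi_{2,\YYM_1\lotimes L}$ we land back on $\PPi_1\lotimes\YYM_1\lotimes L$ with the correct scalar. I expect the computation to mirror the proof of Theorem~\ref{2020071920551}(3): one uses Lemma~\ref{202104091845} (the relation ${}_{\YYM_1}\rrho = (\id + \eeta^{*}\xi)\rrho$) applied with $L$ in place of $M$, together with Theorem~\ref{2020071920551}(2),(3), to show that $({}_{\YYM_1}\eeta^{*}_{2,L})({}_{\YYM_1}\xi_{2,L})\eeta^{*}_{2,\YYM_1\lotimes L}$ equals $-(1+\lambda)$ times ${}_{\YYM_1}(\text{something})$ minus the identity-like term, and then the final application of $\xi_{2,\YYM_1\lotimes L}$ using its defining relation with $\rrho$ and $\eeta^{*}$ collapses everything to $\lambda\id$. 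The only genuine subtlety is sign-and-scalar tracking, so I would set up the equalities in $\ResEnd$ (a $\kk$-linear category, so all endomorphisms of indecomposables are scalars) and verify the scalar by testing against $\id$, using the Serre-duality pairing identities and Theorem~\ref{trace formula} exactly as in the earlier proofs. No new conceptual ingredient beyond Theorem~\ref{2020071920551}, Corollary~\ref{202102221805}, Lemma~\ref{202104091845}, and Lemma~\ref{2021022716261} should be needed.
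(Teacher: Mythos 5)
Your reduction of the middle factor to $-(1+\lambda)$ times an honest idempotent is fine (with $\xi_{2,L}$ as in Corollary~\ref{2021022218051} the relation $\xi_{2,L}\eeta^{*}_{2,L}=-(1+\lambda)\id$ holds on the nose under (II)$_{M,\lambda}$, and all summands of $L$ and of $\YYM_{1}\lotimes_{\Pa}L$ do lie in $\cC_{M}$), but this step is only a reformulation: the whole content of the lemma sits in the two ``mismatched'' composites $A:=\xi_{2,\YYM_{1}\lotimes L}\circ({}_{\YYM_{1}}\eeta^{*}_{2,L})$ and $B:=({}_{\YYM_{1}}\xi_{2,L})\circ\eeta^{*}_{2,\YYM_{1}\lotimes L}$, which compare the splitting of $\YYM_{1}\lotimes_{\Pa}\YYM_{1}\lotimes_{\Pa}(-)$ in the first two factors with the one in the last two. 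These cannot be evaluated by additivity of $\YYM_{1}\lotimes_{\Pa}-$ or by the idempotent picture; they require the left/right interplay identities ${}_{\PPi_{1}}\rrho_{L}=(\ppi_{1,\YYM_{1}\lotimes L})(\eeta^{*}_{2,L})$ (Lemma~\ref{homotopic lemma 5} with $n=2$), $\eeta^{*}_{2}\ppi_{1}={}_{\YYM_{1}}\rrho-\rrho_{\YYM_{1}}$ (Lemma~\ref{202001111736}/\ref{202207131538}) and $({}_{\YYM_{1}}\ppi_{1})\eeta^{*}_{2}=-\rrho_{\PPi_{1}}$ (Lemma~\ref{homotopic lemma 5 right}); it is the last of these that produces the ``$-1$'' which, added to the ``$+(1+\lambda)$'' coming from $\xi_{2,L}\eeta^{*}_{2,L}$, gives $\lambda$. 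Your heuristic ``the projection kills the $1$ part'' and the citation of Lemma~\ref{202104091845} and Theorem~\ref{2020071920551}(2)(3) gesture at this but do not carry it out, and Lemma~\ref{202104091845} alone does not supply the right-hand identity $({}_{\YYM_{1}}\ppi_{1,L})\eeta^{*}_{2,L}=-\rrho_{\PPi_{1}\lotimes L}$, so the decisive scalar computation is still missing.

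The second, more serious gap is the mechanism for landing in $\ResEnd_{\Pa}(\PPi_{1}\lotimes_{\Pa}\YYM_{1}\lotimes_{\Pa}L)$. Since $\xi_{2,-}$ is only determined modulo $\rad$ (Corollary~\ref{202102221805}), one can at best pin the composite down after composing with suitable test morphisms; the paper does exactly this, proving that the composite agrees with $\lambda\,{}_{\PPi_{1}}\rrho_{L}$ after precomposition with ${}_{\PPi_{1}}\rrho_{L}$ and then concluding, as in \eqref{2021031218361}, that the difference factors through the cone morphism ${}_{\PPi_{1}}\ppi_{1,L}$, which is radical. Your proposed substitute --- ``testing against $\id$'' via the Serre pairing and Theorem~\ref{trace formula} --- does not suffice here: $\PPi_{1}\lotimes_{\Pa}\YYM_{1}\lotimes_{\Pa}L$ is decomposable, and trace data against the $\ttheta$-morphisms only constrains the diagonal scalars on indecomposable summands, not the components between distinct isomorphic summands, so it cannot by itself show the difference from $\lambda\id$ lies in the radical. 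Without either the $\rrho$-test-plus-radical-cone argument or some equivalent, the proof does not close; so as written the proposal has a genuine gap at both the computation of the scalar and the passage to $\ResEnd$.
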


\begin{proof} 
First we claim 

\begin{claim}\label{202102231643}
\begin{enumerate}[(1)]
\item
We have the following equality 
in  $ 
\Hom_{\Pa^{\mre}}(\PPi_{1}\lotimes_{\Pa} L, \YYM_{1}\lotimes_{\Pa} \PPi_{1} \lotimes_{\Pa} L)$
\[
({}_{\YYM_{1}}\xi_{2, L})(\eeta^{*}_{2, \YYM_{1}\lotimes L })( {}_{\PPi_{1}} \rrho_{L})
  =\lambda \rrho_{\PPi_{1} \lotimes L}.  
\]

\item 
We have the following equality in 
$\Hom_{\Pa^{\mre}}(\PPi_{1} \lotimes_{\Pa} L, \YYM_{1}\lotimes_{\Pa} \PPi_{1} \lotimes_{\Pa} L)$
\[
( \xi_{2, \YYM_{1}\lotimes L  } )({}_{\YYM_{1}}\eeta^{*}_{2, L})( {\rrho}_{\PPi_{1}\lotimes L } ) 
=  {}_{\PPi_{1}}\rrho_{L}. 
\]

\end{enumerate} 
\end{claim}

\begin{proof}[Proof of Claim]
We note that since we are assuming (II)${}_{M, \lambda}$, it follows from 
the remark after Corollary \ref{2021022218051}, that $\xi_{2, L} \eeta^{*}_{2, L} = - (1+ \lambda) \id_{\PPi_{1}\lotimes L} $ and 
$\xi_{2, \PPi_{1} \lotimes L} \eeta^{*}_{2,\PPi_{1} \lotimes  L} = - (1+ \lambda) \id_{\PPi_{1}\lotimes \PPi_{1} \lotimes L} $.

(1) We deduce the desired equality in the following way
\[
\begin{split}
({}_{\YYM_{1}}\xi_{2, L})(\eeta^{*}_{2, \YYM_{1}\lotimes L })( {}_{\PPi_{1}} \rrho_{L}) & 
= ({}_{\YYM_{1}}\xi_{2, L})(\eeta^{*}_{2, \YYM_{1}\lotimes L })( \ppi_{1, \YYM_{1} \lotimes L})(\eeta^{*}_{2, L} ) \\
&= ({}_{\YYM_{1}}\xi_{2, L})( {}_{\YYM_{1}} \rrho_{ \YYM_{1} \lotimes  L} - \rrho_{ \YYM_{1} \lotimes  \YYM_{1}\lotimes L} )(\eeta^{*}_{2, L} ) \\
&= ({}_{\YYM_{1}}\xi_{2, L})( {}_{\YYM_{1}} \rrho_{ \YYM_{1} \lotimes  L} )(\eeta^{*}_{2, L} )- ({}_{\YYM_{1}}\xi_{2, L})(  \rrho_{ \YYM_{1} \lotimes  \YYM_{1}\lotimes L} )(\eeta^{*}_{2, L} ) \\
&= ({}_{\YYM_{1}}\ppi_{1, L})(\eeta^{*}_{2, L} )- 
(\rrho_{\PPi_{1} \lotimes L}) (\xi_{2, L}) (\eeta^{*}_{2, L} ) \\
& = -\rrho_{\PPi_{1} \lotimes L}  + (1 + \lambda) \rrho_{\PPi_{1} \lotimes L}\\ 
& = \lambda  \rrho_{\PPi_{1} \lotimes L}
\end{split}
\]
where 
for the first equality we use Lemma \ref{homotopic lemma 5} or Lemma \ref{202008091343ns}, 
for the second we use Lemma \ref{202001111736} or Lemma \ref{202207131538}, 
for the fourth we use Theorem \ref{2020071920551}(2) and the exchange law mentioned in  \eqref{202111281249}, 
 for the fifth we use Lemma \ref{homotopic lemma 5 right} or Lemma \ref{202207131538} and Theorem \ref{2020071920551}(1).

The above computation is summarized in the following diagram: 
\[
\begin{xymatrix}@C=80pt@R=30pt{ 
\PPi_{1} \lotimes_{\Pa} L
\ar[rr]^{{}_{\PPi_{1}} \rrho_{L}} \ar[dr]_{\eeta^{*}_{2, L}}
\ar@/_40pt/[rrdd]_{-\rrho_{\PPi_{1}\lotimes_{\Pa} L } - \rrho  (\xi_{2, L} \eeta^{*}_{2, L}) \hspace{30pt}  }
& & \PPi_{1}\lotimes_{\Pa} \YYM_{1} \lotimes_{\Pa} L \ar[d]^{\eeta^{*}_{2, \YYM_{1} \lotimes_{\Pa} L} } \\ 
&\YYM_{1} \lotimes_{ \Pa} \YYM_{1}  \lotimes_{\Pa} L 
\ar[ur]^{\ppi_{1, \YYM_{1}\lotimes_{\Pa} L } } 
\ar[r]_{{}_{\YYM_{1}}\rrho_{\YYM_{1} \lotimes_{\Pa} L } - \rrho_{\YYM_{1} \lotimes \YYM_{1} \lotimes_{\Pa} L} } 
\ar[dr]_{{}_{\YYM_{1}}\ppi_{1, L } -\rrho_{\PPi_{1} \lotimes L} \xi_{2, L }} & 
\YYM_{1} \lotimes_{\Pa} \YYM_{1} \lotimes_{\Pa} \YYM_{1} \lotimes_{\Pa} L \ar[d]^{{}_{\YYM_{1}}\xi_{2, L}} \\
& & \YYM_{1} \lotimes_{\Pa} \PPi_{1} \lotimes_{\Pa} L
}\end{xymatrix} 
\]

(2) is proved in a similar way to (1) which is  summarized in the diagram below. 
\[\begin{xymatrix}@C=80pt@R=30pt{ 
\PPi_{1} \lotimes_{\Pa} L  
\ar[rr]^{ {\rrho}_{\PPi_{1} \lotimes L } } \ar[dr]_{\eeta^{*}_{2, L}}
\ar@/_40pt/[rrdd]_{ - (\xi_{2, \YYM_{1} \lotimes L} \eeta^{*}_{2, \YYM_{1} \lotimes L}) ({}_{\PPi_{1}}\rrho_{L}  )-  \lambda {}_{\PPi_{1}}\rrho_{L} \hspace{50pt}  }
& & \YYM_{1}\lotimes_{\Pa} \PPi_{1} \lotimes_{\Pa} L  \ar[d]^{{}_{\YYM_{1}} \eeta^{*}_{2, L} } \\ 
&\YYM_{1} \lotimes_{ \Pa} \YYM_{1} \lotimes_{\Pa} L  
\ar[ur]^{- {}_{\YYM_{1}} \ppi_{1, L} } 
\ar[r]_{ - {}_{\YYM_{1} \lotimes \YYM_{1} }\rrho_{L} +{}_{\YYM_{1}} \rrho_{\YYM_{1} \lotimes_{\Pa} L }  } 
\ar[dr]_{ - ( \xi_{2,\YYM_{1} \lotimes L } )({}_{\YYM_{1} \lotimes \YYM_{1}} \rrho_{L})  -\lambda \ppi_{1, \YYM_{1} \lotimes L } }& 
\YYM_{1} \lotimes_{\Pa} \YYM_{1} \lotimes_{\Pa} \YYM_{1} \lotimes_{\Pa} L  \ar[d]^{ \xi_{2,\YYM_{1}\lotimes L  } } \\
& & \PPi_{1} \lotimes_{\Pa} \YYM_{1}\lotimes_{\Pa} L   
}\end{xymatrix} 
\]
\end{proof}

Using the claim it is immediate to check the equality 
\begin{equation}\label{2021031218361}
\left(  ({\xi}_{2, \YYM_{1}\lotimes L})({}_{\YYM_{1}}\eeta_{2, L}^{*})({}_{\YYM_{1}}\xi_{2, L})(\eeta^{*}_{2, \YYM_{1} \lotimes L }) - 
\lambda\id_{\PPi_{1} \lotimes \YYM_{1} \lotimes L}\right) 
({}_{\PPi_{1} }\rrho_{L})= 0
\end{equation}
in $\Hom_{\Pa}(\PPi_{1} \lotimes_{\Pa} L , \PPi_{1} \lotimes_{\Pa} \YYM_{1} \lotimes_{\Pa} L)$.  
It follows  that 
the morphism 
\[
 ({\xi}_{2, \YYM_{1}\lotimes L})({}_{\YYM_{1}}\eeta_{2, L}^{*})({}_{\YYM_{1}}\xi_{2, L})(\eeta^{*}_{2, \YYM_{1} \lotimes L }) - \lambda\id_{\PPi_{1} \lotimes \YYM_{1} \lotimes L} 
\]
factors through the cone  morphism ${}_{\PPi_{1}} \ppi_{L}$ of ${}_{\PPi_{1}} \rrho_{L}$, which belongs to $\rad$.  
Thus we conclude that it
belongs to $\rad_{\Pa}( \PPi_{1} \lotimes_{\Pa} \YYM_{1} \lotimes_{\Pa} L, \PPi_{1} \lotimes_{\Pa} \YYM_{1} \lotimes_{\Pa} L) = 
\rad\End_{\Pa}(\PPi_{1} \lotimes_{\Pa} \YYM_{1} \lotimes_{\Pa} L)$ as desired. 
\end{proof}

\subsubsection{} 

We prove that the functor $\YYM_{1} \lotimes_{\Pa}- $ preserves radical morphisms. 
\begin{lemma}\label{2021031218431}
Assume  that $v \in \We_{Q}$ has the property (II)${}_{M, \lambda}$. 
Let $L ,N$  be objects of $\add\cC_{M}$ i.e., direct sums of indecomposable objects which belonging to the same connected components of AR-quiver of $\Dbmod{\Pa}$ with $M$.  
Then the following assertions hold. 

\begin{enumerate}[(1)]

\item 
If a morphism  $f: L \to N$ belongs to $\rad(L, N)$, 
then ${}_{\YYM_{1}}f $ belongs to $\rad_{\Pa}(\YYM_{1} \lotimes_{\Pa}L , \YYM_{1} \lotimes_{\Pa} N)$. 

\item 
The algebra homomorphism $\End_{\Pa}(L) \to \End_{\Pa}(\YYM_{1}\lotimes_{\Pa}L)$ associated to the functor $\YYM_{1} \lotimes -$ 
preserves the radicals and hence induces an algebra homomorphism 
$\ResEnd_{\Pa}(L) \to \ResEnd_{\Pa}(\YYM_{1}\lotimes_{\Pa} L)$. 
\end{enumerate}

\end{lemma}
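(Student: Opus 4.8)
The statement to be proved is Lemma \ref{2021031218431}: under hypothesis (II)${}_{M,\lambda}$, the functor $\YYM_{1}\lotimes_{\Pa}-$ preserves radical morphisms between objects of $\add\cC_{M}$, and consequently induces a well-defined homomorphism $\ResEnd_{\Pa}(L)\to\ResEnd_{\Pa}(\YYM_{1}\lotimes_{\Pa}L)$. The plan is to reduce everything to indecomposable objects and to exploit that, on the level of residue endomorphism rings, all the structure maps we have built ($\rrho$, $\ppi_{1}$, $\eeta^{*}_{2}$, $\xi_{2}$) behave like scalars dictated by weighted Euler characteristics, as established in Theorem \ref{2020071920551}, Corollary \ref{202102221805} and Lemma \ref{2021022718051}.

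First I would observe that (2) follows formally from (1): the homomorphism $\End_{\Pa}(L)\to\End_{\Pa}(\YYM_{1}\lotimes_{\Pa}L)$ sends an idempotent-complete decomposition of $\id_L$ to one of $\id_{\YYM_1\lotimes L}$, and by (1) it carries $\rad\End_{\Pa}(L)$ into $\rad\End_{\Pa}(\YYM_{1}\lotimes_{\Pa}L)$, hence descends to the residue rings. For (1), since both sides are additive in $L$ and $N$, and since $\rad$ is computed componentwise for direct sums, it suffices to treat $L=N'$, $N=N''$ indecomposable and $f\in\rad(N',N'')$. There are two cases. If $N'\not\cong N''$, then every morphism $N'\to N''$ is radical, so I must show $\YYM_{1}\lotimes_{\Pa}f$ is radical whenever $N',N''$ are non-isomorphic indecomposables in $\cC_M$; equivalently that $\YYM_{1}\lotimes_{\Pa}-$ does not create a split mono/epi between the indecomposable summands of $\YYM_1\lotimes N'$ and $\YYM_1\lotimes N''$. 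If $N'\cong N''$, then $\rad\End_{\Pa}(N')$ consists of the nilpotent endomorphisms (as $\kk$ is algebraically closed and $\ResEnd=\kk$), and I need $\YYM_1\lotimes_{\Pa}f$ to land in $\rad\End_{\Pa}(\YYM_1\lotimes N')$.

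The key mechanism is the following. Given $N\in\cC_M$, form $L:=\YYM_1\lotimes_{\Pa}N$ and consider the composite endomorphism of $\PPi_1\lotimes_{\Pa}L$ appearing in Lemma \ref{2021022718051}: using the morphisms $\xi_{2,L}$, $\eeta^*_{2,L}$, ${}_{\YYM_1}\eeta^*_{2,L}$, $\xi_{2,\YYM_1\lotimes L}$ one gets an explicit scalar $\lambda$ modulo radical. More to the point, Theorem \ref{2020071920551}(1),(3) and Corollary \ref{202102221805} say that under (II)${}_{M,\lambda}$ the endomorphisms $\xi_{2,N}\eeta^*_{2,N}$ of $\PPi_1\lotimes_{\Pa}N$ equal $-(1+\lambda)\id$ modulo radical for \emph{every} $N\in\cC_M$, with the \emph{same} scalar. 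I would then argue: for a morphism $f:N'\to N''$ of indecomposables in $\cC_M$, naturality of $\eeta^*_2$ and $\xi_2$ (via the interchange identity \eqref{202111281249}) gives $({}_{\PPi_1}f)\circ(\xi_{2,N'}\eeta^*_{2,N'})=(\xi_{2,N''}\eeta^*_{2,N''})\circ({}_{\PPi_1}f)$; since both composites equal $-(1+\lambda)({}_{\PPi_1}f)$ modulo radical and $1+\lambda\neq0$, the morphism ${}_{\PPi_1}f$ — hence, peeling off one $\PPi_1$ using that $\PPi_1\lotimes_{\Pa}-$ is an autoequivalence ($\nu_1^{-1}$) — reflects radicality: $f$ radical forces $\YYM_1\lotimes_{\Pa}f$ radical, because $\YYM_1\lotimes f$ sits in a morphism of exact triangles linking $\rrho$, $\ppi_1$ and $\eeta^*_2$ (Lemma \ref{202001111736}, Lemma \ref{homotopic lemma 5}) whose two outer terms ($N'$-part and $\PPi_1\lotimes N'$-part versions of $f$) are radical, and radicality of the outer terms of a commutative square forces radicality of the middle by the two-out-of-three behaviour of $\rad$ together with left/right $\rad$-fitting of $\rrho$ and $\ppi_{1}$ (Theorem \ref{universal Auslander-Reiten triangle}, Lemma \ref{202105252216}).

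The main obstacle will be the last reduction: cleanly deducing radicality of $\YYM_1\lotimes_{\Pa}f$ from radicality of ${}_{N}f$ and ${}_{\PPi_1}f$ without circularity, since $\xi_{2}$ itself was only constructed using (II)${}_{M,\lambda}$ and is only canonical modulo radical. I expect to handle this by running the short exact-triangle argument of \eqref{universal AR-sequence ns}/\eqref{202008141924} tensored with $f$: the triangle $N\xrightarrow{\rrho}\YYM_1\lotimes N\xrightarrow{\ppi_1}\PPi_1\lotimes N$ is, after applying $-\lotimes f$, a morphism of triangles whose first and third vertical maps (${}_{N}f$ and ${}_{\PPi_1}f$) are radical; since $\rrho_N$ is a minimal left $\rad$-approximation and $\ppi_{1,N}$ a minimal right $\rad$-approximation (Theorem \ref{universal Auslander-Reiten triangle}), any endomorphism-or-morphism of $\YYM_1\lotimes N$ that becomes $0$ modulo radical after pre-composition with $\rrho_N$ and post-composition with $\ppi_{1,N}$ (up to the fitting correction) is itself radical — this is exactly the content of Lemma \ref{202105171508} applied on both sides. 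Assembling these pieces yields (1), and (2) follows as noted.
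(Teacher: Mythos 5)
Your reduction of (2) to (1) and the reduction to indecomposables are fine, and the observation that $\PPi_{1}\lotimes_{\Pa}-$ preserves radicality is correct (it is an autoequivalence), but the decisive step — deducing that $\YYM_{1}\lotimes_{\Pa}f$ is radical — has a genuine gap. You claim that in the morphism of triangles obtained by tensoring $N\xrightarrow{\rrho}\YYM_{1}\lotimes_{\Pa}N\xrightarrow{\ppi_{1}}\PPi_{1}\lotimes_{\Pa}N$ with $f$, radicality of the outer vertical maps forces radicality of the middle one, "by the two-out-of-three behaviour of $\rad$" and Lemma \ref{202105171508}. Neither assertion holds: radical morphisms have no two-out-of-three property in a morphism of triangles (take $0\to Y\xrightarrow{\id}Y\to 0[1]$ with middle map $\id_{Y}$ and zero outer maps), and Lemma \ref{202105171508} is a criterion for recognizing split \emph{epimorphisms}, not radical morphisms. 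Worse, your stated test — that a morphism $g:\YYM_{1}\lotimes_{\Pa}N'\to\YYM_{1}\lotimes_{\Pa}N''$ "becomes $0$ modulo radical after pre-composition with $\rrho_{N'}$ and post-composition with $\ppi_{1,N''}$" — is vacuous: $\rrho_{N'}$ and $\ppi_{1,N''}$ are themselves radical, so \emph{every} $g$ passes this test. The long preparatory discussion of $\xi_{2}\eeta^{*}_{2}\equiv-(1+\lambda)\id$ and its naturality, besides skirting the circularity you yourself flag ($\xi_{2}$ is only canonical modulo $\rad$), never feeds into the conclusion, so the argument does not close.

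For comparison, the paper's proof is two lines and avoids all of this: since $v$ has (II)${}_{M,\lambda}$, $\rrho_{L}$ is a minimal left $\rad$-approximation, so a radical $f:L\to N$ factors as $f=f'\rrho_{L}$; applying $\YYM_{1}\lotimes_{\Pa}-$ gives ${}_{\YYM_{1}}f=({}_{\YYM_{1}}f')({}_{\YYM_{1}}\rrho_{L})$, and ${}_{\YYM_{1}}\rrho_{L}$ is radical by Corollary \ref{202105251650} (which is exactly where the hypothesis $1+\lambda\neq0$, i.e.\ Theorem \ref{2020071920551}, is used). If you want to keep your triangle-based route, it can be repaired, but with a different lemma: from $(\YYM_{1}\lotimes_{\Pa}f)\rrho_{N'}=\rrho_{N''}f\in\rad^{2}$ and the fact that $\rrho_{N'}$ is a minimal left $\rad$-approximation, the contrapositive of Lemma \ref{202105252216} (with $n=1$) forces $\YYM_{1}\lotimes_{\Pa}f\in\rad$; note this version needs only the nonvanishing of ${}^{v}\!\Euch$ on the component, not the constancy of $\lambda$. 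As written, however, the proposal does not prove the lemma.
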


\begin{proof}
(1) 
By Theorem \ref{semi-universal Auslander-Reiten triangle},  
 $L \xrightarrow{ \rrho_{L}} \YYM_{1} \lotimes_{\Pa} L$ is a minimal left $\rad$-approximation. 
Thus, a radical  morphism  $f: L \to N$ is factored as $L \xrightarrow{ \rrho_{L}} \YYM_{1} \lotimes_{\Pa} L \xrightarrow{f'} N$ for some $f'$. 
It follows from Corollary \ref{202105251650} that the morphism ${}_{\YYM_{1}} \rrho_{L}$ belongs to $\rad$. 
Therefore, we conclude that ${}_{\YYM_{1}}f =({}_{\YYM_{1}}f' ) ( {}_{\YYM_{1}}\rrho_{L} )$ is  a radical morphism as desired. 

(2) follows from  (1).
\end{proof}

\subsubsection{Proof of Proposition  \ref{2021040913541} }\label{proof of Proposition}

We proceed by induction on $n$.
First assume $n =2$. 
Note that since $V_{1}(t) =1$, the condition \eqref{2021030112531} is always satisfied. 
Since $\zzeta_{1, M} = \id_{\YYM_{1} \lotimes M }$ and $-\frac{V_{2}(\lambda)}{V_{1}(\lambda)} = -(1 +\lambda)$, 
it follows from Theorem  \ref{2020071920551} that  
$\oomega_{1, M } := \id_{\YYM_{1} \lotimes M}$ satisfies the desired properties.

Next assume $n \geq 3$. 
By induction hypothesis and 
setting   
\[
\xi_{n -1, M} := 
(\xi_{2, \YYM_{n -3} \lotimes M })({}_{ \YYM_{1} } \omega_{n -2, M })
: \YYM_{1} \lotimes_{\Pa} \YYM_{n -2} \lotimes M  \to
 \YYM_{1} \lotimes_{\Pa}\YYM_{1} \lotimes_{\Pa} \YYM_{n -3} \lotimes M  
\to \PPi_{1} \lotimes_{\Pa} \YYM_{ n-3} \lotimes M. 
\]
we obtain the following  equation in $\ResEnd_{\Pa}(\PPi_{1} \lotimes_{\Pa} \YYM_{n -3} \lotimes M )$. 
\[
\xi_{ n-1, M } \eeta^{*}_{ n-1, M } =- \frac{V_{ n-1}(\lambda)}{V_{n-2}(\lambda)}. 
\]
By assumption it is a non-zero scalar. Therefore the endomorphism $\xi_{ n-1, M} \eeta^{*}_{ n-1, M} $ is an automorphism of $\PPi_{1} \lotimes_{\Pa} \YYM_{ n-3} \lotimes_{\Pa} M$. 
We set $\epsilon_{n -1, M}:= (\xi_{ n-1, M} \eeta^{*}_{ n-1, M})^{-1}$. 
Note that we have 
the following  equation in $\ResEnd_{\Pa}(\PPi_{1} \lotimes_{\Pa} \YYM_{n -3}\lotimes_{\Pa} M )$. 
\begin{equation}\label{2021031219001}
\epsilon_{n -1, M} =- \frac{V_{ n-2}(\lambda)}{V_{n-1}(\lambda)}. 
\end{equation}
We define $\omega_{n -1, M}: \YYM_{n -1}\lotimes_{\Pa} M \to \YYM_{1} \lotimes_{\Pa} \YYM_{n -2}\lotimes_{\Pa} M $ to be the section of $\zzeta_{ n-1, M}$ 
corresponding to  the splitting  
\[\eeta^{*}_{n -1, M } \epsilon_{n -1, M } : \PPi_{1} \lotimes_{\Pa} \YYM_{n -3}\lotimes_{\Pa} M \rightleftarrows \YYM_{1} \lotimes_{\Pa}\YYM_{n -2}\lotimes_{\Pa} M : \xi_{ n-1, M }.\] 
Thus, statement (1) in Proposition \ref{2021040913541} automatically holds.

Applying Corollary \ref{2021022218051} and the remark after it, we have a morphism \[
\xi_{2, \YYM_{n -2}\lotimes M}: \YYM_{1} \lotimes_{\Pa} \YYM_{1} \lotimes_{\Pa} \YYM_{ n-2} \lotimes_{\Pa} M 
\to \PPi_{1} \lotimes_{\Pa} \YYM_{ n-2} \lotimes_{\Pa} M 
\] 
such that 
$\xi_{2, \YYM_{n -2} \lotimes M} \eeta_{2, \YYM_{n -2} \lotimes M}= - (1+\lambda) \id_{\PPi_{1} \lotimes_{\Pa} \YYM_{ n-2} \lotimes_{\Pa} M }$ 
and 
that 
the diagram 
below is commutative 
\begin{equation}\label{202104091842}
\begin{xymatrix}@C=60pt{
\YYM_{1} \lotimes_{\Pa} \YYM_{1} \lotimes_{\Pa} \YYM_{1} \lotimes_{\Pa} \YYM_{n -3}\lotimes_{\Pa} M 
 \ar[r]^{{}_{\YYM_{1} \lotimes \YYM_{1}} \zzeta_{n -2, M}} \ar[d]_{\xi_{2, \YYM_{1} \lotimes \YYM_{n -3} \lotimes M} } & 
\YYM_{1} \lotimes_{\Pa} \YYM_{1} \lotimes_{\Pa} \YYM_{n -2 } \lotimes_{\Pa} M\ar[d]^{\xi_{2, \YYM_{n -2}\lotimes M }} \\
\PPi_{1} \lotimes_{\Pa} \YYM_{1} \lotimes_{\Pa} \YYM_{n -3} \lotimes_{\Pa} M \ar[r]_{{}_{\PPi_{1}} \zzeta_{n -2, M}} & 
\PPi_{1} \lotimes_{ \Pa} \YYM_{n -2}.\lotimes_{\Pa} M.
}\end{xymatrix} 
\end{equation}

We consider  the following  diagram. 
\[
\begin{xymatrix}@C=30pt{ 
& \PPi_{1} \lotimes_{\Pa} \YYM_{ n-2 } \lotimes_{\Pa} M 
\ar@<5pt>[d]^{ \eeta^{*}_{2, \YYM_{n -2} \lotimes M } } 
\ar@/^10pt/[dr]^{\eeta^{*}_{n, M } }  & \\
\YYM_{1} \lotimes_{\Pa} \PPi_{1} \lotimes_{\Pa} \YYM_{ n-3} \lotimes_{\Pa} M 
\ar@<5pt>[r]^{{}_{\YYM_{1}} \eeta^{*}_{n-1, M } }
\ar@(l,u)^{ {}_{\YYM_{1}} \epsilon_{n-1, M} }
& 
\YYM_{1} \lotimes_{\Pa} \YYM_{1} \lotimes_{\Pa} \YYM_{n -2} \lotimes_{\Pa} M 
\ar@<5pt>[r]^{{}_{\YYM_{1}}\zzeta_{n -1, M} }
 \ar@<5pt>[l]^{{}_{\YYM_{1} } \xi_{n -1, M}}
\ar@<5pt>[u]^{\xi_{2, \YYM_{n-2} \lotimes M } }
&
\YYM_{1} \lotimes_{\Pa} \YYM_{n-1} \lotimes_{\Pa} M  \ar@<5pt>[l]^{{}_{\YYM_{1}}\omega_{ n-1, M} }
}\end{xymatrix} 
\]

We have 
\begin{equation}\label{2021030114411}
\begin{split}
\xi_{n, M} \eeta^{*}_{n, M } 
& = 
(\xi_{2, \YYM_{n -2} \lotimes M } ) ({}_{\YYM_{1}} \omega_{n -1, M})({}_{\YYM_{1}} \zzeta_{n -1, M }) ( \eeta^{*}_{\YYM_{n -2, M} }) \\
& = (\xi_{2, \YYM_{n -2} \lotimes M } ) \left( \id_{\YYM_{1} \lotimes \YYM_{1} \lotimes \YYM_{n -2}\lotimes M } - ({}_{\YYM_{1}}\eeta^{*}_{n -1, M })({}_{\YYM_{1}} \epsilon_{ n-1, M })
({}_{\YYM_{1}}  \xi_{ n-1, M}) \right)( \eeta^{*}_{\YYM_{n -2}\lotimes M } ) \\
& = (\xi_{2, \YYM_{n -2} \lotimes M } ) ( \eeta^{*}_{\YYM_{n -2}\lotimes M })
- (\xi_{2, \YYM_{n -2} \lotimes M } )  ({}_{\YYM_{1}}\eeta^{*}_{n -1, M })({}_{\YYM_{1}} \epsilon_{ n-1, M })({}_{\YYM_{1}}  \xi_{ n-1, M})
( \eeta^{*}_{\YYM_{n -2} \lotimes M }) \\
& = -(1+ \lambda)\id - (\xi_{2, \YYM_{n -2} \lotimes M } )  ({}_{\YYM_{1}}\eeta^{*}_{n -1, M })( {}_{\YYM_{1}}\epsilon_{ n-1, M })({}_{\YYM_{1}}  \xi_{ n-1, M})
( \eeta^{*}_{\YYM_{n -2} \lotimes M })
\end{split}
\end{equation}
where for the first equality we unwind the definitions of $\xi_{n, M}$ and $\eeta^{*}_{n, M}$, 
for the second we use the splitting identity $\omega_{n -1, M}\zzeta_{n -1, M} = \id_{\YYM_{1} \lotimes \YYM_{n -2}} - \eeta^{*}_{n -1, M}\epsilon_{ n-1, M} \xi_{ n-1, M }$.

To proceed we need to compute the value of the final term in \eqref{2021030114411} when passing to
$\ResEnd_{\Pa^{\mre}}(\PPi_{1} \otimes_{\Pa} \YYM_{n -2})$. 

\begin{claim}\label{2021022718581}
We have the following equality in $\ResEnd_{\Pa}(\PPi_{1} \lotimes_{\Pa} \YYM_{ n-2}\lotimes_{\Pa} M)$. 
\[
(\xi_{2, \YYM_{n -2} \lotimes M }) ( {}_{\YYM_{1}} \eeta^{*}_{n-1, M})( {}_{\YYM_{1}} \epsilon_{n -1, M })(  {}_{\YYM_{1}} \xi_{n-1, M})(\eeta^{*}_{\YYM_{n -2} \lotimes M}) = 
 - \lambda\frac{ V_{n-2}(\lambda) }{V_{n-1}(\lambda) }\id. 
\]
\end{claim}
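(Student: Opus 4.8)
<br>

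The plan is to reduce Claim~\ref{2021022718581} to the inductive hypothesis (part (2) of Proposition~\ref{2021040913541} at level $n-1$) together with Lemma~\ref{2021022718051}. First I would rewrite the left-hand side by re-associating the composition so that the four middle factors
\[
(\xi_{2, \YYM_{n -2} \lotimes M }) ({}_{\YYM_{1}} \eeta^{*}_{n-1, M})( {}_{\YYM_{1}} \epsilon_{n -1, M })(  {}_{\YYM_{1}} \xi_{n-1, M})(\eeta^{*}_{\YYM_{n -2} \lotimes M})
\]
are grouped into a recognizable pattern. Unwinding $\xi_{n-1,M} = (\xi_{2, \YYM_{n-3}\lotimes M})({}_{\YYM_1}\omega_{n-2,M})$ and $\omega_{n-1,M}$ via the splitting identity $\omega_{n-1,M}\zzeta_{n-1,M} = \id - \eeta^{*}_{n-1,M}\epsilon_{n-1,M}\xi_{n-1,M}$, I expect the expression to collapse, after applying the commutative square~\eqref{202104091842} and the fact that $\zzeta_{n-2,M}\omega_{n-2,M} = \id$, into an expression built out of the ``level $n-1$'' data $\xi_{2,\YYM_1\lotimes \YYM_{n-3}\lotimes M}$, $\eeta^{*}_{2,\YYM_1\lotimes\YYM_{n-3}\lotimes M}$, ${}_{\YYM_1}\xi_{2,\YYM_{n-3}\lotimes M}$, ${}_{\YYM_1}\eeta^{*}_{2,\YYM_{n-3}\lotimes M}$ — i.e.\ exactly the combination appearing in Lemma~\ref{2021022718051} with $L := \YYM_{n-3}\lotimes_{\Pa} M$. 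That lemma (whose hypotheses are satisfied since $v$ has property (II)${}_{M,\lambda}$ and $\YYM_{n-3}\lotimes M$ lies in $\add\cC_M$) evaluates that combination to $\lambda\,\id$ in $\ResEnd_\Pa(\PPi_1\lotimes_\Pa \YYM_1\lotimes_\Pa L)$. Along the way I would invoke Lemma~\ref{2021031218431} repeatedly to ensure that the scalar identities, which hold in various $\ResEnd$-quotients, can be pushed through the functor $\YYM_1\lotimes_\Pa-$ and combined using Lemma~\ref{2021022716261}.

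The bookkeeping step is then to track the scalar factors: the inductive hypothesis gives $\xi_{n-1,M}\eeta^{*}_{n-1,M} = -V_{n-1}(\lambda)/V_{n-2}(\lambda)$ and hence, by \eqref{2021031219001}, $\epsilon_{n-1,M} = -V_{n-2}(\lambda)/V_{n-1}(\lambda)$; the ${}_{\YYM_1}(-)$ of this scalar is the same scalar by Lemma~\ref{2021031218431}(2); and Lemma~\ref{2021022718051} contributes the factor $\lambda$. Multiplying these, together with the sign bookkeeping coming from $\xi_{2,\YYM_{n-2}\lotimes M}\eeta^{*}_{2,\YYM_{n-2}\lotimes M} = -(1+\lambda)$, should yield the claimed value $-\lambda\,V_{n-2}(\lambda)/V_{n-1}(\lambda)$. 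Feeding Claim~\ref{2021022718581} back into \eqref{2021030114411} gives
\[
\xi_{n,M}\eeta^{*}_{n,M} = -(1+\lambda) + \lambda\frac{V_{n-2}(\lambda)}{V_{n-1}(\lambda)} = -\frac{(1+\lambda)V_{n-1}(\lambda) - \lambda V_{n-2}(\lambda)}{V_{n-1}(\lambda)} = -\frac{V_n(\lambda)}{V_{n-1}(\lambda)}
\]
using the identity $(1+\lambda)V_{n-1}(\lambda) - \lambda V_{n-2}(\lambda) = V_n(\lambda)$, which is elementary. This establishes part (2). For part (3), I would compute $(\xi_{n,M})(\rrho_{\YYM_{n-1}\otimes M})$ directly by unwinding $\xi_{n,M} = (\xi_{2,\YYM_{n-2}\lotimes M})({}_{\YYM_1}\omega_{n-1,M})$ and using the exchange law~\eqref{202111281249} together with Theorem~\ref{2020071920551}(2) (applied to $\YYM_{n-2}\lotimes M$), exactly as in the $n=2$ case; the naturality of $\rrho$ moves it past $\omega_{n-1,M}$, and Theorem~\ref{2020071920551}(2) converts $\xi_{2,\YYM_{n-2}\lotimes M}\circ\rrho$ into $\ppi_{1,\YYM_{n-2}\lotimes M}$.

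The main obstacle I anticipate is the first step: the careful re-association and simplification of the five-fold composition in Claim~\ref{2021022718581} so that it matches the pattern of Lemma~\ref{2021022718051}. This requires writing out the definitions of $\omega_{n-1,M}$, $\xi_{n-1,M}$, $\eeta^{*}_{n-1,M}$ (all built recursively via $\zzeta$, $\eeta^{*}_2$, $\xi_2$ and earlier $\omega$'s), applying the splitting relations at level $n-1$, and repeatedly using the compatibility squares like~\eqref{202104091842} together with $\zzeta\omega = \id$ to cancel intermediate factors — all while being scrupulous about which identities hold only modulo radicals and hence need Lemma~\ref{2021031218431} and Lemma~\ref{2021022716261} to be combined legitimately. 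The diagrammatic manipulations are conceptually routine but lengthy, and the sign conventions (particularly the discrepancy between the left and right versions of $\eeta^{*}$ and $\ppi_1$ noted in the ``Remark on right versions'' subsection) are the place where errors are most likely to creep in.
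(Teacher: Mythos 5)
Your plan is essentially the paper's own proof of the Claim: rewrite the first two factors through the square \eqref{202104091842} and the last two through the exchange law, so that the five-fold composite becomes $({}_{\PPi_{1}}\zzeta_{n-2,M})\,\Upsilon\,({}_{\PPi_{1}}\omega_{n-2,M})$, where $\Upsilon$ is exactly the combination of Lemma \ref{2021022718051} for $L=\YYM_{n-3}\lotimes_{\Pa}M$ with ${}_{\YYM_{1}}\epsilon_{n-1,M}$ inserted in the middle; the scalars are then combined in $\ResEnd$ via Lemmas \ref{2021031218431} and \ref{2021022716261} together with the inductive value \eqref{2021031219001}, and the outer conjugation contributes nothing since $({}_{\PPi_{1}}\zzeta_{n-2,M})({}_{\PPi_{1}}\omega_{n-2,M})=\id$. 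Two bookkeeping slips in your write-up should be corrected. First, the composite in the Claim contains no $\omega_{n-1,M}$, so the splitting identity $\omega_{n-1,M}\zzeta_{n-1,M}=\id-\eeta^{*}_{n-1,M}\epsilon_{n-1,M}\xi_{n-1,M}$ is not used here; it enters only the outer computation \eqref{2021030114411}. Second, the relation $\xi_{2,\YYM_{n-2}\lotimes M}\,\eeta^{*}_{2,\YYM_{n-2}\lotimes M}=-(1+\lambda)$ plays no role in evaluating the Claim either: the only scalars that occur are $\lambda$ from Lemma \ref{2021022718051} and $-V_{n-2}(\lambda)/V_{n-1}(\lambda)$ from ${}_{\YYM_{1}}\epsilon_{n-1,M}$, whose product is already the asserted value, and multiplying in an extra factor $-(1+\lambda)$ would give the wrong scalar (that factor is the other summand appearing in \eqref{2021030114411}, not part of the Claim). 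With those corrections your argument coincides with the paper's.
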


\begin{proof}
First we compute the composition of the first two morphisms
\[
\begin{split}
(\xi_{2, \YYM_{n -2} \lotimes M}) ( {}_{\YYM_{1}} \eeta^{*}_{n-1, M}) 
&= 
(\xi_{2, \YYM_{n -2} \lotimes M }) ( {}_{\YYM_{1} \lotimes_{\Pa} \YYM_{1} }\zzeta_{n -2, M} )({}_{\YYM_{1}} \eeta^{*}_{2, \YYM_{n -3} \lotimes M })
 \\
&= 
 ( {}_{\PPi_{1} }\zzeta_{n -2, M} )
 (\xi_{2, \YYM_{1} \lotimes_{\Pa} \YYM_{n-3}\lotimes_{\Pa} M })
 ({}_{\YYM_{1}} \eeta^{*}_{2, \YYM_{n -3} \lotimes M})
\end{split} 
\]
where for the first equality we unwind the definition of ${}_{\YYM_{1}} \eeta^{*}_{n-1, M}$ 
for the second we use the commutativity \eqref{202104091842}.

Next we compute the composition of the last two morphisms 
\[
\begin{split}
(  {}_{\YYM_{1}} \xi_{n-1, M})(\eeta^{*}_{\YYM_{n -2} \lotimes M }) 
&= (  {}_{\YYM_{1}} \xi_{2, \YYM_{n -3} \lotimes M })({}_{\YYM_{1}\lotimes \YYM_{1} } \omega_{ n-2, M} )(\eeta^{*}_{\YYM_{n -2}  \lotimes M }) \\
&= (  {}_{\YYM_{1}} \xi_{2, \YYM_{n -3}   \lotimes M  }) (\eeta^{*}_{\YYM_{1} \lotimes_{\Pa} \YYM_{n -3} \lotimes M   } ) ({}_{\PPi_{1} } \omega_{ n-2, M} )\\
\end{split} 
\]
where for the first equality we unwind the definition of $\xi_{n-1, M}$
for the second we use the exchange law 
\[
\begin{xymatrix}@C=60pt{
\PPi_{1} \lotimes_{\Pa} \YYM_{n -2}  \lotimes_{\Pa} M 
\ar[r]^{\eeta^{*}_{2, \YYM_{n -2}  \lotimes M  } } \ar[d]_{ {}_{\PPi_{1}} \omega_{n -2, M} } & 
\YYM_{1} \lotimes_{\Pa} \YYM_{1} \lotimes_{\Pa} \YYM_{n -2} \lotimes_{\Pa} M  \ar[d]^{{}_{\YYM_{1}\lotimes \YYM_{1} } \omega_{n -2, M}} \\
\PPi_{1} \lotimes \YYM_{1} \lotimes \YYM_{ n-3}  \lotimes_{\Pa}  M \ar[r]_{\eeta^{*}_{2, \YYM_{1} \lotimes \YYM_{n -3}  \lotimes M } } & 
\YYM_{1} \lotimes_{\Pa} \YYM_{1} \lotimes_{\Pa} \YYM_{1} \lotimes_{\Pa} \YYM_{n -3}  \lotimes_{\Pa} M . 
}\end{xymatrix}
\]

Combining these results, we obtain 
\begin{equation}\label{2021022719311}
\begin{split} 
(\xi_{2, \YYM_{n -2} \lotimes M }) 
( {}_{\YYM_{1}} \eeta^{*}_{n-1, M})( {}_{\YYM_{1}} \epsilon_{n -1, M })(  {}_{\YYM_{1}} \xi_{n-1, M})(\eeta^{*}_{\YYM_{n -2} \lotimes M})= 
 ( {}_{\PPi_{1} }\zzeta_{n -2, M} )\Upsilon  ({}_{\PPi_{1} } \omega_{ n-2, M} )
\end{split}
\end{equation} 
where we set 
\[
\Upsilon :=  \left( (\xi_{2, \YYM_{1} \lotimes_{\Pa} \YYM_{n-3}\lotimes_{\Pa} M })
 ({}_{\YYM_{1}} \eeta^{*}_{2, \YYM_{n -3} \lotimes M})\right)
 ( {}_{\YYM_{1}} \epsilon_{n -1, M })
\left(   (  {}_{\YYM_{1}} \xi_{2, \YYM_{n -3}   \lotimes M  }) (\eeta^{*}_{\YYM_{1} \lotimes_{\Pa} \YYM_{n -3} \lotimes M   } ) \right). 
\]

By  Lemma \ref{2021031218431}  and \eqref{2021031219001}, we have 
\[
 {}_{\YYM_{1}} \epsilon_{n -1} =- \frac{V_{n -2}(\lambda)}{V_{n -1}(\lambda)}\id \] 
 in $\ResEnd_{\Pa}(\YYM_{1} \lotimes_{\Pa} \PPi_{1} \lotimes_{\Pa} \YYM_{n -3})$. 
Applying Lemma \ref{2021022718051} 
to $L = \YYM_{n -3} \lotimes_{\Pa} M$,  
we have the equality 
\[
\left( (\xi_{2, \YYM_{1} \lotimes_{\Pa} \YYM_{n-3}\lotimes_{\Pa} M })
 ({}_{\YYM_{1}} \eeta^{*}_{2, \YYM_{n -3} \lotimes M})\right)
\left(   (  {}_{\YYM_{1}} \xi_{2, \YYM_{n -3}   \lotimes M  }) (\eeta^{*}_{\YYM_{1} \lotimes_{\Pa} \YYM_{n -3} \lotimes M   } ) \right)   = \lambda\id
\]
 in $\ResEnd_{\Pa}(\PPi_{1} \lotimes_{\Pa} \YYM_{1} \lotimes_{\Pa} \YYM_{n -3}\lotimes M)$. 
Thus by  Lemma \ref{2021022716261} 
we obtain the equality 
\[
\Upsilon 
=- \lambda  \frac{V_{n -2}(\lambda)}{V_{n -1}(\lambda)}\id
\]
in $\ResEnd_{\Pa}(\PPi_{1} \lotimes_{\Pa} \YYM_{1} \lotimes_{\Pa} \YYM_{n -3}\lotimes_{\Pa} M)$. 

Since $({}_{\PPi_{1}} \zzeta_{ n-2, M})({}_{\PPi_{1}}\omega_{n-2, M}) = \id_{\PPi_{1} \lotimes \YYM_{n -3}}$, 
applying Lemma \ref{2021022716261} to \eqref{2021022719311} 
we come to the desired equality in $\ResEnd_{\Pa}(\PPi_{1} \lotimes_{\Pa} \YYM_{n -2})$
\[
\begin{split} 
&(\xi_{2, \YYM_{n -2} \lotimes M }) 
( {}_{\YYM_{1}} \eeta^{*}_{n-1, M})( {}_{\YYM_{1}} \epsilon_{n -1, M })(  {}_{\YYM_{1}} \xi_{n-1, M})(\eeta^{*}_{\YYM_{n -2} \lotimes M}) \\
 &= 
 ( {}_{\PPi_{1} }\zzeta_{n -2, M} )\Upsilon ({}_{\PPi_{1} } \omega_{ n-2, M})\\
& =- \lambda  \frac{V_{n -2}(\lambda)}{V_{n -1}(\lambda)}\id. 
\end{split}
\]
\end{proof}

We continue the equation \eqref{2021030114411}
by using Claim  \ref{2021022718581},  and  deduce the desired equality in $\ResEnd_{\Pa}(\PPi_{1} \lotimes_{\Pa} \YYM_{n -2} )$ as follows :
\[
\begin{split}
\xi_{n, M } \eeta^{*}_{n, M} 
&  =-(1+\lambda)\id -(\xi_{2, \YYM_{n -2} \lotimes M }) 
( {}_{\YYM_{1}} \eeta^{*}_{n-1, M})( {}_{\YYM_{1}} \epsilon_{n -1, M })(  {}_{\YYM_{1}} \xi_{n-1, M})(\eeta^{*}_{\YYM_{n -2} \lotimes M}) \\
&=-( 1+ \lambda)\id + \lambda  \frac{ V_{n -2}(\lambda) }{V_{n -1}(\lambda)}\id \\
&=-\frac{ (1 + \lambda) V_{n -1}(\lambda) - \lambda V_{n -2}(\lambda) }{ V_{n -1}(\lambda) }\id  \\
&=-\frac{ V_{n}(\lambda)}{ V_{n -1}(\lambda) }\id. 
\end{split} 
\]
This proves that (2) of Proposition \ref{2021040913541} holds.

Since $\xi_{n, M} = (\xi_{2, \YYM_{n -2 } \lotimes M})({}_{\YYM_{1}} \omega_{n-1, M})$ by definition, 
the equation of (3)  is deduced from the following commutative diagram 
\[
\begin{xymatrix}@C=80pt{
\YYM_{n -1}\lotimes_{\Pa} M  \ar[r]^{\rrho_{\YYM_{n-1}\lotimes M} } \ar[d]_{\omega_{n -1, M}} &
 \YYM_{1} \lotimes_{\Pa} \YYM_{n -1}\lotimes_{\Pa} M \ar[d]^{{}_{\YYM_{1}} \omega_{n -1, M} } \\
\YYM_{1} \lotimes_{\Pa} \YYM_{n -2} \lotimes_{\Pa} M \ar[r]^{\rrho_{\YYM_{1} \lotimes \YYM_{n-2} \lotimes M} } \ar[dr]_{\ppi_{1, \YYM_{n-2} \lotimes M}} &
\YYM_{1} \lotimes_{\Pa} \YYM_{1} \lotimes_{\Pa} \YYM_{n-2} \lotimes_{\Pa} M \ar[d]^{\xi_{2, \YYM_{n-2} \lotimes M}} \\
& \PPi_{1} \lotimes_{\Pa} \YYM_{n-2} \lotimes_{\Pa} M
}\end{xymatrix} 
\]
where the commutativity of the square is deduced from the exchange law \eqref{202111281249}.

\qed

\subsection{Proof of Theorem \ref{202104091259} }

We proceed a proof of Theorem \ref{202104091259}.

We use the induction on $n\geq 2$. 
The case $n =2$ follows from Theorem \ref{202103021651}. 
As was pointed out after  \eqref{2021040918451}, 
if we set $\varpi_{2, M} := (\ppi_{1, \YYM_{1} \lotimes M})(\omega_{2, M})$, 
then it is a cone morphism of $\brho_{M}^{2}: M \to \YYM_{2}\lotimes_{\Pa} M$.

We deal with the case $n \geq 3$. 
For $m = 2, \ldots, n$, we inductively  construct the morphisms 
\[
 \xi_{m, M} =(\xi_{2, \YYM_{m -2} \lotimes M } )({}_{\YYM_{1}}\omega_{m -1, M} ), \  \epsilon_{m, M}:= (\xi_{m, M} \eeta^{*}_{m, M})^{-1}, \omega_{m, M}
 \]  as in the proof of Proposition  \ref{2021040913541}. 
Assume that we have proved that the morphism $\brho^{n-1}_{M}: M \to \YYM_{n -1} \lotimes_{\Pa} M$ is a minimal left $\rad^{n-1}$-approximation 
and the morphism $$\varpi_{n-1, M} :=( -1)^{n -1}(\ppi_{1, \YYM_{n-2} \lotimes M })(\omega_{n-1, M})$$ is a cone morphisms of $\brho_{M}^{n -1}$.

Then by Proposition \ref{2021040913541} 
we have the following diagram whose row is a direct sum of AR-triangles 
and column gives a splitting of the middle term $\YYM_{1}\lotimes_{\Pa} \YYM_{n-1}\lotimes_{\Pa} M$.
\[
\begin{xymatrix}@C=60pt{
& \PPi_{1} \lotimes_{\Pa} \YYM_{n -2} \lotimes_{\Pa} M \ar@<-5pt>[d]_{ ( \eeta^{*}_{n, M})(\epsilon_{n, M}) } &  &\\ 
\YYM_{n-1} \lotimes_{\Pa} M  \ar[r]^{\rrho_{\YYM_{n-1} \lotimes M } } & 
\YYM_{1} \lotimes_{\Pa} \YYM_{n-1} \lotimes_{\Pa} M  
\ar[r]^{ \ppi_{1, \YYM_{n-1} \lotimes M } } \ar@<-5pt>[u]_{\xi_{n, M}} 
\ar@<-5pt>[d]_{\zzeta_{n, M} } & \PPi_{1} \lotimes_{\Pa} \YYM_{n-1}\lotimes_{\Pa} M \\ 
 &  \YYM_{n}\lotimes_{\Pa} M   \ar@<-5pt>[u]_{\omega_{n, M}} &&
}\end{xymatrix}
\]

Recall that 
 $( \ppi_{1, \YYM_{n-1} \lotimes M }) (\eeta^{*}_{n, M})(\epsilon_{n, M}) = {}_{\PPi_{1}}\brho_{ n-1, M} \epsilon_{n, M}$ 
and $\zzeta_{n, M} \rrho_{\YYM_{n-1} \lotimes M} = \brho_{n, M}$. 
By Proposition \ref{2021040913541}(3), we have $
\xi_{n, M} \rrho_{\YYM_{ n-1} \otimes  M } =(\ppi_{1, \YYM_{n-2} \lotimes M })(\omega_{n-1, M}) = ( -1)^{n -1} \varpi_{n -1, M}$.
Thus, 
setting $\varpi_{n, M} := ( -1)^{n} (\ppi_{1, \YYM_{n-1} \lotimes M })(\omega_{n, M})$, we obtain 
the following commutative  diagram  
\begin{equation}\label{202105261807}
\begin{xymatrix}@C=60pt{
& \PPi_{1} \lotimes_{\Pa} \YYM_{n -2} \lotimes_{\Pa} M \ar@<5pt>[d]^{ ( \eeta^{*}_{n, M})(\epsilon_{n, M}) } \ar@/^5pt/[dr]^{({}_{\PPi_{1}} \rrho_{\YYM_{n -1} \lotimes M})(\epsilon_{n, M})} &  &\\ 
\YYM_{n-1} \lotimes_{\Pa} M  \ar[r]^{\rrho_{\YYM_{n-1} \lotimes M } } 
\ar@/^5pt/[ur]^{(-1)^{n -1} \varpi_{n-1, M} }  \ar@/_5pt/[dr]_{\brho_{n, M}} 
& 
\YYM_{1} \lotimes_{\Pa} \YYM_{n-1} \lotimes_{\Pa} M  
\ar[r]^{ \ppi_{1, \YYM_{n-1} \lotimes M } } \ar@<5pt>[u]^{\xi_{n, M}} 
\ar@<-5pt>[d]_{\zzeta_{n, M} } & \PPi_{1} \lotimes_{\Pa} \YYM_{n-1}\lotimes_{\Pa} M \\ 
 &  \YYM_{n}\lotimes_{\Pa} M   \ar@<-5pt>[u]_{\omega_{n, M}} \ar@/_5pt/[ur]_{( -1)^{n}\varpi_{n, M} } &&
}\end{xymatrix}
\end{equation}
Hence we obtain the following homotopy Cartesian square whose totalization is a direct sum of AR-triangles
\begin{equation}\label{202105261808}
\begin{xymatrix}@C=60pt{
\YYM_{n-1} \lotimes_{\Pa} M  \ar[d]_{ \varpi_{n -1, M}  } \ar[r]^{\brho_{n, M }} 
& \YYM_{n}\lotimes_{\Pa} M  \ar[d]^{\varpi_{n, M}} \\
\PPi_{1} \lotimes_{\Pa} \YYM_{n -2} \lotimes_{\Pa} M 
\ar[r]_{ ( {}_{\PPi_{1}} \brho_{n -1, M} )(\epsilon_{n ,M })  } & \PPi_{1} \lotimes_{\Pa} \YYM_{n-1} \lotimes_{\Pa} M.  
}\end{xymatrix} 
\end{equation}

Recall that $\brho_{M}^{n} = \brho_{n, M} \brho_{M}^{n -1}$ 
and $\brho_{M}^{n-1}$ is a minimal left $\rad^{n -1}$-approximation by the induction hypothesis. 
Thanks to  Proposition \ref{2021050917551} we come to  the desired conclusion that  
the morphism $\brho^{n}_{M}: M \to \YYM_{n } \lotimes_{ \Pa} M$ is a minimal left $\rad^{n}$-approximation of $M$ in $\Dbmod{\Pa}$ 
and  we have an exact triangle 
\[
M \xrightarrow{ \brho^{n}_{M} } \YYM_{n} \lotimes_{\Pa} M   \xrightarrow{ \varpi_{n, M}} \PPi_{1} \lotimes_{\Pa} \YYM_{n -1} \lotimes_{\Pa} M  \to. 
\]
\qed

We note we have a left ladder of the following form in which we omit  indexes for simplicity. 
\begin{equation}\label{202105261824}
{\tiny
\begin{xymatrix}@C=28pt{
M \ar[r]^{\brho \hspace{10pt} } & \YYM_{1} \lotimes_{\Pa} M \ar[r]^{\brho} \ar[d]_{\ppi} & 
\YYM_{2} \lotimes_{\Pa} M \ar[r]^{\brho } \ar[d]_{\varpi} &  \hspace{16pt} \cdots \hspace{-16pt}  &\ar[r] &
\YYM_{n-1} \lotimes_{\Pa} M  \ar[d]_{\varpi  } \ar[r]^{\brho} 
& \YYM_{n}\lotimes_{\Pa} M  \ar[d]^{ \varpi} \\
& \PPi_{1} \lotimes_{\Pa} M \ar[r]_-{{}_{\PPi_{1}} \!\brho \epsilon } & \PPi_{1} \lotimes_{\Pa} \YYM_{1} \lotimes_{\Pa} M \ar[r]_-{{}_{\PPi_{1}} \!\brho \epsilon }  &
\hspace{16pt} \cdots \hspace{-16pt} &\ar[r] 
& 
\PPi_{1} \lotimes_{\Pa} \YYM_{n -2} \lotimes_{\Pa} M 
\ar[r]_{  {}_{\PPi_{1}} \!\brho \epsilon } & \PPi_{1} \lotimes_{\Pa} \YYM_{n-1} \lotimes_{\Pa} M.  
}\end{xymatrix}} 
\end{equation}

\section{A deformation argument}\label{section: deformation}

The aim of this section is to prove Theorem \ref{generic theorem}, 
which says that 
for  fixed $M \in \ind\Dbmod{\Pa}$ and $n \in N_{Q}$, 
the locus of $v\in \kk Q_{0}$ where  the multiplication ${}^{v}\!\brho_{M}^{n}$ is a minimal left $\rad^{n}$-approximation, 
contains a Zariski open set. Thus from now on we include weights $v \in \kk Q_{0}$ in our notation again.

\subsection{Statement of the main theorem}

Let $M \in \ind \Dbmod{\Pa}$ and $n \in N_{Q}$.  
Then we define  a subset $\cI_{M, n}$ to be the subset of $\We_{Q}$ that consists of points $v$ 
that has the property (I)${}_{M, n}$. 
Note that the subspace $\cI_{M, n}$ is defined by  finite number of  linear equations inside $\kk Q_{0}$ and is  a Zariski open subset of $\kk Q_{0}$.

We point out the following two properties of $\cI_{M, n}$. 

\begin{lemma}\label{202103151408} 

\begin{enumerate}[(1)]
\item If  the weight $v$ belongs to $\cI_{M, n}$, then the morphism ${}^{v}\!\brho_{M}^{n}: M \to {}^{v}\!\YYM_{n} \lotimes_{\Pa} M$ belongs to $\rad^{n}(M, {}^{v}\!\YYM_{n }\lotimes_{\Pa} M)$. 
\item The isomorphism class of ${}^{v}\!\YYM_{n} \lotimes_{\Pa} M$  is independent of  $v \in \cI_{n ,M}$.  
\end{enumerate}
\end{lemma}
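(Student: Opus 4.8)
The plan is to prove both statements by unwinding the definitions of ${}^{v}\!\brho_{M}^{n}$ and ${}^{v}\!\YYM_{n}\lotimes_{\Pa}M$ through the inductive structure coming from Theorem \ref{right approximation theorem} and Corollary \ref{202007181611}, combined with a routine dimension-counting observation.

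For part (1), recall that ${}^{v}\!\brho_{M}^{n}$ is by definition the composition of the morphisms ${}^{v}\!\rrho_{M}: M \to {}^{v}\!\YYM_{1}\lotimes_{\Pa}M$ and ${}^{v}\!\brho_{m, M}: {}^{v}\!\YYM_{m-1}\lotimes_{\Pa}M \to {}^{v}\!\YYM_{m}\lotimes_{\Pa}M$ for $m = 2, \ldots, n$. By Theorem \ref{universal Auslander-Reiten triangle}, since $v$ is sincere (regularity or the property (I)${}_{M,n}$ ensures ${}^{v}\!\Euch(M)\neq 0$), the morphism ${}^{v}\!\rrho_{M}$ is a minimal left $\rad$-approximation and in particular belongs to $\rad(M, {}^{v}\!\YYM_{1}\lotimes_{\Pa}M)$. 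For the later maps, Theorem \ref{right approximation theorem}(4) tells us that under the property (I)${}_{M, n}$ the morphism ${}^{v}\!\brho_{m, M}$ satisfies the left $\rad$-fitting condition for $m \in N_{Q}^{\geq 1}$, $m \le n$; in particular it is a direct summand of a minimal left $\rad$-approximation of ${}^{v}\!\YYM_{m-1}\lotimes_{\Pa}M$, hence belongs to $\rad$. (Here one must check that the hypothesis (I)${}_{M,n}$ supplies exactly the nonvanishing of weighted Euler characteristics required to invoke Theorem \ref{right approximation theorem} at every stage $m\le n$; this is immediate since the indecomposable summands of ${}^{v}\!\YYM_{m-1}\lotimes_{\Pa}M$ for $m\le n$ all lie in $\ind\add\{{}^{v}\!\YYM_{\ell}\lotimes_{\Pa}M \mid 0\le \ell \le n-1\}$ by Theorem \ref{pre left approximation theorem}(3) or Corollary \ref{202007181611}.) A composition of $n$ radical morphisms lies in $\rad^{n}$ by definition, which gives the claim. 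I expect the only subtlety to be bookkeeping: one should spell out which subobjects of $N_{Q}$ the intermediate indices land in, so that Theorem \ref{right approximation theorem} applies at each step, but this is entirely formal.

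For part (2), I would argue as follows. By Theorem \ref{pre left approximation theorem}(3) (whose hypothesis (I')${}_{M,n}$ follows from (I)${}_{M,n}$ together with its right version, or more simply: the statement we actually need only involves the property (I), and the relevant right-module nonvanishing is automatically part of the same finite linear locus — alternatively one can appeal directly to Theorem \ref{right approximation theorem} applied to the right module $M^{\lvvee}$), there is an isomorphism
\[
{}^{v}\!\YYM_{n}\lotimes_{\Pa}M \cong \bigoplus_{N \in \ind\Dbmod{\Pa}} N^{\oplus d_{N}}, \qquad d_{N} := \dim_{\ResEnd(N)}\irr^{n}(M, N),
\]
and the right-hand side does not involve $v$ at all. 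Since $\Dbmod{\Pa}$ is Krull--Schmidt, the multiplicities $d_{N}$ determine the isomorphism class, so ${}^{v}\!\YYM_{n}\lotimes_{\Pa}M$ has the same isomorphism class for every $v \in \cI_{M, n}$. The one point requiring care is that the cited theorems are stated under regularity or under (I')${}_{M,n}$; I would therefore either (a) check that the property (I)${}_{M,n}$, together with its right-module analogue, is precisely what is used in the proof of Theorem \ref{pre left approximation theorem}(3) and that the locus $\cI_{M,n}$ can be taken to include this right condition (it is again cut out by finitely many linear equations), or (b) invoke instead the right version of Theorem \ref{right approximation theorem} for $M^{\lvvee}$ and dualize via Theorem \ref{pre left approximation theorem}(1). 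Either way the description of ${}^{v}\!\YYM_{n}\lotimes_{\Pa}M$ is manifestly independent of $v$.

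The main obstacle, such as it is, is purely organizational: making sure that the single linear-algebra condition ``$v$ has property (I)${}_{M,n}$'' is strong enough to feed all the inductive steps of Theorems \ref{right approximation theorem} and \ref{pre left approximation theorem} that are invoked — in particular that the intermediate objects ${}^{v}\!\YYM_{m}\lotimes_{\Pa}M$ for $m < n$ have only indecomposable summands whose weighted Euler characteristic is controlled by (I)${}_{M,n}$. There is no genuinely hard mathematical content here; the lemma is a clean corollary of the structural results already established, and the deformation-theoretic work (showing $\cI_{M,n}$ is Zariski open and nonempty under suitable hypotheses) is carried out separately in the results that follow.
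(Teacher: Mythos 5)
Your part (1) is fine and is essentially the paper's argument: under (I)${}_{M,n}$ each factor ${}^{v}\!\brho_{m,M}$ (resp.\ ${}^{v}\!\rrho_{M}$) is a radical morphism, so the $n$-fold composition lies in $\rad^{n}$.

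Part (2), however, has a genuine gap as written. Your main route goes through Theorem \ref{pre left approximation theorem}(3), whose hypothesis is the property (I')${}_{M,n}$, i.e.\ (I)${}_{M,n}$ \emph{together with} the nonvanishing of ${}^{v}\!\Euch$ on the indecomposable summands of the right modules $M^{\lvvee}\lotimes_{\Pa}{}^{v}\!\YYM_{m}$. That extra right-module condition is not implied by membership in $\cI_{M,n}$, which by definition is cut out only by the left condition (I)${}_{M,n}$. Neither of your proposed repairs closes this: option (a), enlarging $\cI_{M,n}$ to include the right condition, changes the statement of the lemma and its later use (in Theorem \ref{generic theorem} the open set $\cL_{M,n}$ lives inside the locus of (I)${}_{M,n}$ as defined); option (b), applying the right version of Theorem \ref{right approximation theorem} to $M^{\lvvee}$ and dualizing via Theorem \ref{pre left approximation theorem}(1), again requires exactly the right-module nonvanishing you are trying to avoid. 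The fix is both simpler and is what the paper does: under (I)${}_{M,n}$ alone, Theorem \ref{right approximation theorem}(1) says that ${}^{v}\!\ppi_{n,M}\colon {}^{v}\!\YYM_{n}\lotimes_{\Pa}M \to \PPi_{n}\lotimes_{\Pa}M$ is a \emph{minimal right} $\rad^{n}$-approximation of $\PPi_{n}\lotimes_{\Pa}M$; since $\PPi_{n}\lotimes_{\Pa}M$ does not depend on $v$ and minimal approximations are unique up to isomorphism, the isomorphism class of ${}^{v}\!\YYM_{n}\lotimes_{\Pa}M$ is independent of $v\in\cI_{M,n}$. No left approximation statement, and hence no condition beyond (I)${}_{M,n}$, is needed.
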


\begin{proof}
(1) By Theorem \ref{semi-universal Auslander-Reiten triangle}, 
$\brho_{m, M}: {}^{v}\!\YYM_{ m-1} \lotimes_{\Pa} M \to {}^{v}\!\YYM_{m} \lotimes_{\Pa}M$ belongs to $\rad$ for $m =1,2, \ldots, n$.
Therefore, the composition $\brho_{M}^{n}=\brho_{n, M}\brho_{n-1, M} \cdots \brho_{1, M}$ belongs to $\rad^{n}$. 

(2) By  Theorem \ref{right approximation theorem}, 
if  the weight  $v$ belongs to $\cI_{M, n}$, then ${}^{v}\!\YYM_{n}\lotimes_{\Pa} M$ gives a minimal right $\rad^{n}$-approximation of $\PPi_{n} \lotimes_{\Pa} M$.  
Thus, if $u,v \in \cI_{n ,M}$, then ${}^{u} \!\YYM_{n } \lotimes_{\Pa} M \cong {}^{v}\!\YYM_{n } \lotimes_{\Pa} M$. 
\end{proof}

\begin{theorem}\label{generic theorem}
Let $M \in \ind \Dbmod{\Pa}$ and $n \in N_{Q}$. 
We set 
\[
\cL_{M, n} := \{ v \in \cI_{M, n} \mid {}^{v}\!\brho_{M}^{n} \textup{ is a minimal left $\rad^{n}$-approximation of }M \}.  
\] 
Then  $\cL_{M, n}$ is  a (possibly empty) Zariski open set of $\kk Q_{0}$. 
\end{theorem}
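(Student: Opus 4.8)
The goal is to show that the subset $\cL_{M, n} \subset \cI_{M, n}$ where ${}^{v}\!\brho_{M}^{n}$ is a minimal left $\rad^{n}$-approximation is Zariski open inside the already-open set $\cI_{M, n}$. The strategy is to interpret the condition ``${}^{v}\!\brho_{M}^{n}$ is a minimal left $\rad^{n}$-approximation'' as an \emph{open} condition on matrix entries, exploiting the fact that by Lemma \ref{202103151408}(2) the target object $L := {}^{v}\!\YYM_{n} \lotimes_{\Pa} M$ has an isomorphism class that is constant on $\cI_{M, n}$, and that by Theorem \ref{pre left approximation theorem} a minimal left $\rad^{n}$-approximation of $M$ exists and has a target isomorphic to this same $L$ (once $v$ is regular; on $\cI_{M, n}$ one uses property (I)${}_{M, n}$ together with the description via $\irr^{n}$). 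So for every $v \in \cI_{M, n}$ the morphism ${}^{v}\!\brho_{M}^{n} \colon M \to L$ lands in $\rad^{n}(M, L)$ (Lemma \ref{202103151408}(1)), and the question is purely: for which $v$ is this particular element of $\rad^{n}(M, L)$ a minimal left $\rad^{n}$-approximation?

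First I would set up the algebro-geometric framework. The assignment $v \mapsto {}^{v}\!\YYM_{n}$ is visibly polynomial in $v$ (the differentials and multiplications defining ${}^{v}\!\YYM$ depend polynomially, indeed linearly up to inverting the $v_i$, on $v$; on $\cI_{M,n} \subset \We_Q$ the relevant denominators are units, and one can clear them). Hence, fixing once and for all a finite complex $P$ of projectives with $P \simeq M$, the complex ${}^{v}\!\YYM_{n} \lotimes_{\Pa} P$ and the chain map ${}^{v}\!\brho_{P}^{n} \colon P \to {}^{v}\!\YYM_{n} \lotimes_{\Pa} P$ are given by matrices whose entries are regular functions on $\cI_{M, n}$. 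Since the isomorphism type of $\tuH^{\bullet}({}^{v}\!\YYM_{n} \lotimes_{\Pa} M) \cong L$ is constant, after possibly shrinking to a smaller Zariski-open cover one may trivialize the cohomology, so that the induced map on $\Hom_{\sfD(\Pa)}(M, L)$-data — more precisely, the image of ${}^{v}\!\brho_{P}^{n}$ in the finite-dimensional space $\Hom_{\sfD(\Pa)}(M, L)$ computed via $P$ — is a regular function $\cI_{M, n} \to \Hom_{\sfD(\Pa)}(M, L)$, call it $v \mapsto \beta_v$, with $\beta_v \in \rad^{n}(M, L)$ always.

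Next I would translate minimality of $\rad^{n}$-approximation into a rank/non-vanishing condition. By the $\rad^{n}$-approximation criterion (the derived-category analogue of Theorem \ref{202008172145}; see also Theorem \ref{pre left approximation theorem}(3)), a morphism $\beta \colon M \to L$ in $\rad^{n}(M, L)$ is a minimal left $\rad^{n}$-approximation if and only if, for each indecomposable summand $K$ of $L$ with multiplicity $d_K = \dim_{\ResEnd(K)} \irr^{n}(M, K)$, the components of $\beta$ into the $K$-isotypic part, read as an element of $\Hom(M, K)^{d_K}$ modulo $\rad^{n+1}$, form a $\ResEnd(K)$-basis of $\irr^{n}(M, K)$ — equivalently, the induced $d_K \times d_K$ matrix over $\ResEnd(K)$ (a division ring, here a field since $\kk$ may be taken algebraically closed, or in general a skew field of finite dimension) is invertible. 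Invertibility of a matrix with entries depending regularly on $v$ is the non-vanishing of a determinant, hence a Zariski-open condition; taking the intersection over the finitely many indecomposable summands $K$ of $L$ (finitely many because $L$ is a fixed object), $\cL_{M,n}$ is a finite intersection of Zariski-open subsets of $\cI_{M, n}$, therefore Zariski-open in $\kk Q_0$. One must also check that ``being in $\rad^{n}$ but not $\rad^{n+1}$'' for the individual components is automatically handled: the matrix-invertibility condition already forces the components to be linearly independent in $\irr^{n}$, and the complementary inclusion (every indecomposable receiving a nonzero $\irr^{n}$-morphism from $M$ occurs in $L$) is guaranteed for all $v \in \cI_{M,n}$ since $L$ has the fixed isomorphism type of a genuine minimal left $\rad^n$-approximation target.

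\textbf{The main obstacle.} The delicate point is \emph{regularity (in the algebraic-geometry sense) of the relevant data as a function of $v$} — specifically, producing an honest regular map $v \mapsto \beta_v \in \Hom_{\sfD(\Pa)}(M, L)$ and regular maps into the matrix spaces over $\ResEnd(K)$, given that these live in the derived category and a priori require choosing quasi-isomorphisms and splittings that could vary non-regularly. I would handle this by working entirely at the level of the fixed projective complex $P$: ${}^{v}\!\YYM_{n} \lotimes_{\Pa} P$ is an honest complex with polynomially-varying differential, and ``having a fixed isomorphism type in cohomology'' lets one, over a Zariski-open cover of $\cI_{M,n}$, choose a polynomially-varying homotopy equivalence ${}^{v}\!\YYM_{n} \lotimes_{\Pa} P \xrightarrow{\sim} Q$ to a fixed complex $Q$ with $Q \simeq L$ (this uses upper-semicontinuity of ranks of the differentials together with constancy of the cohomology dimensions to trivialize the complex up to homotopy over the open locus — a standard but slightly technical cohomology-and-base-change type argument). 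Passing the chain map ${}^{v}\!\brho_{P}^{n}$ through this trivialization and then composing with fixed split idempotents cutting out the summands $K^{d_K}$ of $L$ yields the desired regular families of matrices, and the determinant non-vanishing conditions then cut out $\cL_{M,n}$ as an open subvariety. Since $\cI_{M, n}$ is itself Zariski-open in $\kk Q_0$, so is $\cL_{M, n}$, completing the proof.
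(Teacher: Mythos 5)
Your overall route is the same deformation argument the paper uses: realize $v \mapsto {}^{v}\!\brho_{M}^{n}$ as an algebraically varying family over the weight space, use the constancy of the isomorphism class of $L={}^{v}\!\YYM_{n}\lotimes_{\Pa}M$ on $\cI_{M,n}$ (Lemma \ref{202103151408}) to turn it, Zariski-locally near a point of $\cL_{M,n}$, into a regular map into the fixed finite-dimensional space $\rad^{n}(M,L)$, and then pull back the open locus of minimal left $\rad^{n}$-approximations; your determinant criterion over $\ResEnd(K)$ is just an explicit form of the openness of that locus, which the paper obtains from Theorem \ref{202008172145}. Two small points in this part: Theorem \ref{pre left approximation theorem} requires property (I')${}_{M,n}$ rather than (I)${}_{M,n}$, but this is harmless since, as in the paper, one may assume $\cL_{M,n}\neq\emptyset$ and use a point of it to identify $L$ with the minimal left approximation target; and over a non-closed field invertibility over the division ring $\ResEnd(K)$ is still an open condition (e.g. via the determinant of left multiplication viewed as a $\kk$-linear map).

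The genuine gap is the step you dismiss as ``a standard but slightly technical cohomology-and-base-change type argument''. Producing an algebraically varying homotopy equivalence from the family ${}^{v}\!\YYM_{n}\otimes_{\Pa}P$ to a constant complex over a Zariski neighborhood is precisely the nontrivial content of the paper's key Lemma \ref{202103141418}, and the mechanism you propose --- upper semicontinuity of the ranks of the differentials together with constancy of the fiberwise cohomology dimensions --- does not by itself deliver it. Knowing that every fiber is isomorphic in the derived category to the fixed object $L$ gives no map of families; one must lift the isomorphism at the chosen point to a neighborhood, and for that one needs that $\Hom$ from the constant family into the varying family commutes with specialization. Over $SQ$ (which, unlike $\kk Q$, is not hereditary) this surjectivity of the base-change map is exactly what has to be proved: the paper does it in Claim \ref{202103141429} by showing the relevant $\Ext^{1}$ has constant fiber dimension, hence is locally free (Lemma \ref{202103141428}), so the fiberwise isomorphism lifts and spreads out. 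Note also that the paper sidesteps the complex-level difficulties entirely by reducing to $M$ an indecomposable module and proving the rigidity statement for modules over $RQ$; working, as you propose, with the family of complexes up to homotopy equivalence would require an analogous Hom/Ext base-change argument in the homotopy category of $SQ$-complexes, which is more work, not less. So either prove a complex-level analogue of Lemma \ref{202103141418} or reduce to modules as the paper does; as written, the trivialization step is asserted rather than established.
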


\begin{remark}
The above theorem says that the set $\cL_{M, n}$ of  weights $v \in \We_{Q}$ that satisfy the two conditions
(1)  the morphism ${}^{v}\!\brho_{M}^{n}$ is a minimal left $\rad^{n}$-approximation and 
(2) $v$ has the property (I)${}_{M, n}$, 
is open subset of $\kk Q_{0}$. 
We do not know if the first condition solely  provides an open subset. 
\end{remark}

\subsection{A criterion for locally triviality}

We recall a well-known criteria of locally freeness of modules over a commutative ring. 
For a commutative algebra $R$, we denote by $\maxSpec R$ the set of maximal ideals of $R$. 

\begin{lemma}[{e.g., \cite[II. Exercise 5.8]{Hartshorne}}]\label{202103141428}
Let $R$ be a commutative integral Noetherian algebra of finite type.
Then  an $R$-module $M$ is locally free
if and only if the function $\maxSpec R \to \NN, \frkm \mapsto \dim_{\kappa(\frkm)}M \otimes_{R} \kappa(\frkm)$
is constant.
\end{lemma}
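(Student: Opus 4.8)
The plan is to prove both implications by standard commutative-algebra arguments, using throughout that $M$ is finitely generated (which is the case in the intended application, where $M = \Pi_{\lambda,\leq n}$ is even finite-dimensional over $\kk$; I would state this hypothesis explicitly at the outset, since both generic freeness and the Nakayama steps below rely on it). Let $K$ denote the fraction field of $R$ and put $d := \dim_K(M\otimes_R K) < \infty$. The one ambient fact to record first is that $\maxSpec R$ is connected: $\Spec R$ is irreducible, hence connected, since $R$ is a domain, and because $R$ is a Jacobson ring ($R$ being a $\kk$-algebra of finite type) the set $\maxSpec R$ meets every nonempty locally closed subset of $\Spec R$; a standard argument then lifts any disconnection of $\maxSpec R$ to one of $\Spec R$, which is impossible.

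For the ``only if'' direction I would assume $M$ locally free. For $\frkm \in \maxSpec R$ the localization $M_\frkm$ is then free over $R_\frkm$ of some rank $r(\frkm)$, and Nakayama's lemma identifies $r(\frkm)$ with $\dim_{\kappa(\frkm)}(M\otimes_R\kappa(\frkm))$. The key point is that this rank function is locally constant on $\maxSpec R$: if $M_\frkm$ is free of rank $r$, a lift of a basis gives a map $R^r \to M$ whose kernel and cokernel are finitely generated and vanish after localizing at $\frkm$, hence vanish on a principal open neighbourhood $D(f)$ of $\frkm$, so $M$ is free of rank $r$ there. Since $\maxSpec R$ is connected, a locally constant function on it is constant.

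For the ``if'' direction I would assume the fiber-dimension function is constant, say equal to $n$. By generic freeness there is a nonzero $f\in R$ with $M_f$ free over $R_f$; its rank is $\dim_K(M\otimes_R K) = d$, and since $R$ is Jacobson the open set $D(f)$ contains a maximal ideal, at which the fiber dimension is simultaneously $d$ and $n$, so $d=n$. Now fix any $\frkm\in\maxSpec R$: lifting a $\kappa(\frkm)$-basis of $M\otimes_R\kappa(\frkm)$ to elements $m_1,\dots,m_n\in M_\frkm$ gives, by Nakayama, a surjection $\phi\colon R_\frkm^n \twoheadrightarrow M_\frkm$; set $N := \ker\phi$. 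Applying the flat base change $-\otimes_{R_\frkm}K$ to $0\to N\to R_\frkm^n\to M_\frkm\to 0$ and using $\dim_K(M_\frkm\otimes_{R_\frkm}K) = d = n$, we obtain $N\otimes_{R_\frkm}K = 0$. But $N$ is a submodule of the free module $R_\frkm^n$ over the domain $R_\frkm$, hence torsion-free, so $N = 0$; thus $M_\frkm\cong R_\frkm^n$ is free, and $M$ is locally free (of rank $n$).

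The main, and essentially only, obstacle is the bookkeeping between closed points and arbitrary primes: the statement is phrased entirely over $\maxSpec R$, so one must lean on the Jacobson property of finite-type $\kk$-algebras together with the irreducibility of $\Spec R$, both to deduce connectedness of $\maxSpec R$ and to know that the generic-freeness locus $D(f)$ meets $\maxSpec R$. The remaining ingredients — Nakayama's lemma, generic freeness over a Noetherian domain, and torsion-freeness of a submodule of a free module over a domain — are entirely routine.
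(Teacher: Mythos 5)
Your proof is correct, but note that the paper does not prove this lemma at all: it is quoted as a standard fact from Hartshorne, Exercise II.5.8, so there is no in-paper argument to match, and your write-up is a genuine self-contained proof. Two remarks. First, Hartshorne's exercise is formulated for coherent sheaves on an integral Noetherian scheme and works with all scheme points (upper semicontinuity of the fiber rank plus reducedness), whereas the statement here is phrased purely over $\maxSpec R$; you correctly identified that the extra ingredient needed to stay on closed points is the Jacobson property of finite-type $\kk$-algebras (density of closed points, used both for connectedness of $\maxSpec R$ and to find a closed point in the generic-freeness locus $D(f)$). You are also right to insist on adding finite generation of $M$ to the hypotheses: without it the statement fails (e.g.\ $M=\operatorname{Frac}(R)$ has all closed fibers zero but is not locally free), and it does hold in every application the paper makes of the lemma, where the modules in question are finitely generated (indeed finite-dimensional over $\kk$ or finitely generated over $RQ$). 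Second, a small simplification: in the ``only if'' direction you do not need local constancy plus connectedness of $\maxSpec R$; since $R$ is a domain, for a finitely generated locally free $M$ the rank of $M_{\frkm}$ equals $\dim_{K}(M\otimes_{R}K)$ for every $\frkm$ (localize further at $(0)$), so the fiber function is constant outright. Your ``if'' direction --- generic freeness to identify the constant value with the generic rank $d$ via a closed point of $D(f)$, then Nakayama plus torsion-freeness of the kernel inside $R_{\frkm}^{n}$ to conclude $M_{\frkm}\cong R_{\frkm}^{n}$ --- is the standard argument and is complete.
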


We establish a similar statement for a module $\cM$ over $RQ= \kk Q \otimes_{\kk} R$. 
Namely if all the fibers $\cM \otimes_{R} \kappa(\frkm)$ at $\frkm \in \maxSpec R$ are isomorphic to each other as $\kappa(\frkm) Q$-modules, 
then $\cM$ is locally trivial. 

\begin{lemma}\label{202103141418}
Let $R$ be a commutative integral Noetherian algebra of finite type 
and $\cM$ a finitely generated $RQ$-module. 
Assume that there exists a finitely generated $\kk Q$-module $M$ such that 
$\cM \otimes_{R} \kappa(\frkm) \cong M \otimes \kappa(\frkm)$ as $\kappa(\frkm) Q$-modules for all $\frkm \in \maxSpec R$. 
 Then, for each $\frkm \in \maxSpec R$ there exists $f \in R \setminus \frkm$ such that 
 $\cM \otimes_{R} R_{f} \cong M \otimes R_{f}$ as $R_{f}Q$-modules.
\end{lemma}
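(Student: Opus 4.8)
The plan is to prove the statement by a Nakayama-lemma type argument combined with the classical fact that lifting an isomorphism from a residue field to a suitable localization is possible once one knows the two modules are "pointwise isomorphic". First I would fix $\frkm \in \maxSpec R$ and work locally at $\frkm$; after replacing $R$ by its localization $R_{\frkm}$ we may assume $R$ is a Noetherian local integral domain with maximal ideal $\frkm$ and residue field $\kk' := \kappa(\frkm)$. Note that $\kk'$ is a finite extension of $\kk$ since $R$ is of finite type over $\kk$; the hypothesis gives an isomorphism $\cM \otimes_R \kk' \cong M \otimes_{\kk} \kk'$ of $\kk' Q$-modules.

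Next I would choose a finite presentation of $M$ over $\kk Q$ and lift it: write $M$ as the cokernel of a matrix $\phi$ over $\kk Q$ between free $\kk Q$-modules, and view $\phi$ as a matrix over $RQ$ via $\kk Q \hookrightarrow RQ$, giving an $RQ$-module $M \otimes_{\kk} R$ with a canonical surjection from a free $RQ$-module. The key step is to construct an $RQ$-homomorphism $\psi \colon M \otimes_\kk R \to \cM$ whose reduction modulo $\frkm$ is the given isomorphism $M \otimes_\kk \kk' \xrightarrow{\sim} \cM \otimes_R \kk'$. This is possible because $\Hom_{RQ}(M \otimes_\kk R, \cM)$ is a finitely generated $R$-module (here one uses that $M$ is finitely presented over $\kk Q$, so $M \otimes_\kk R$ is finitely presented over $RQ$, and that $\cM$ is finitely generated), and the natural map $\Hom_{RQ}(M\otimes_\kk R, \cM) \otimes_R \kk' \to \Hom_{\kk' Q}(M \otimes_\kk \kk', \cM \otimes_R \kk')$ is surjective (again by finite presentation). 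So any $R$-lift of a preimage of the isomorphism gives the desired $\psi$.

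Then I would show $\psi$ is an isomorphism after a further localization. For the cokernel: $\Coker \psi$ is a finitely generated $R$-module with $\Coker\psi \otimes_R \kk' = 0$ (since $\psi$ reduces to a surjection), so by Nakayama $(\Coker\psi)_{\frkm} = 0$, hence there is $f_1 \in R \setminus \frkm$ with $\Coker\psi \otimes_R R_{f_1} = 0$, i.e. $\psi$ becomes surjective over $R_{f_1}Q$. For the kernel: over $R_{f_1}$ we now have a short exact sequence $0 \to \cK \to (M\otimes R)_{f_1} \to \cM_{f_1} \to 0$ of $R_{f_1}Q$-modules; since $\cM_{f_1}$ is finitely presented over $R_{f_1}Q$ (because, after shrinking, $\cM$ itself is finitely presented as in Lemma~\ref{202103141428}-type considerations — or more directly: $\cM_{f_1}$ is a quotient of the finitely presented $(M\otimes R)_{f_1}$ by $\cK$, and one argues $\cK$ is finitely generated), $\cK$ is a finitely generated $R_{f_1}Q$-module. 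Reducing the sequence mod $\frkm$ and comparing $\kk'$-dimensions with the given isomorphism $\cM \otimes_R \kk' \cong M \otimes \kk'$ shows $\cK \otimes_{R_{f_1}} \kk' = 0$, so by Nakayama again there is $f \in R \setminus \frkm$ with $\cK \otimes R_f = 0$. Then $\psi \otimes R_f$ is an isomorphism of $R_f Q$-modules, which is the assertion.

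The main obstacle I anticipate is the bookkeeping around finite presentation: one needs $\cM$ to be finitely presented (not merely finitely generated) over $RQ$ after a suitable localization in order to (i) make the $\Hom$-reduction map surjective and (ii) conclude that the kernel $\cK$ is finitely generated so that Nakayama applies. This is a standard fact — a finitely generated module over a Noetherian ring is coherent, and $RQ$ is Noetherian since $R$ is Noetherian and $\kk Q$ is finite-dimensional — so it is not a serious difficulty, but it is the place where one must be careful rather than wave hands. Once finite presentation is in hand, the rest is the routine Nakayama/lifting mechanism exactly as in the proof of Lemma~\ref{202103141428}.
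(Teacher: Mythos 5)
Your overall strategy (lift the fiber isomorphism to a morphism $\psi\colon M\otimes R\to \cM$, then spread out by Nakayama) is the same as the paper's, but two of your key justifications fail, and for the same underlying reason: both steps quietly require more than the hypothesis at the single point $\frkm$. First, the surjectivity of the reduction map $\Hom_{RQ}(M\otimes_{\kk}R,\cM)\otimes_{R}\kappa(\frkm)\to \Hom_{\kappa(\frkm)Q}(M\otimes\kappa(\frkm),\cM\otimes_{R}\kappa(\frkm))$ is \emph{not} a consequence of finite presentation: finite presentation only makes $\Hom$ commute with \emph{flat} base change, and $\kappa(\frkm)$ is not flat over $R$. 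Proving this surjectivity is exactly where the paper spends its effort: it takes the hereditary presentation $0\to P_{1}\to P_{0}\to M\to 0$, and uses the hypothesis at \emph{all} maximal ideals together with Lemma \ref{202103141428} to show that $\Ext^{1}_{RQ}(M\otimes R,\cM)$, $\cM$, and the kernel of the connecting map are locally free over $R$, whence the base-change map on $\Hom$ is an isomorphism. Second, in your kernel step, applying $-\otimes_{R}\kappa(\frkm)$ to $0\to\cK\to (M\otimes R)_{f_{1}}\to\cM_{f_{1}}\to 0$ is only right exact; your dimension count shows the map $\cK\otimes\kappa(\frkm)\to (M\otimes R)\otimes\kappa(\frkm)$ is zero, not that $\cK\otimes\kappa(\frkm)=0$. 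To conclude the latter you need $\Tor_{1}^{R}(\cM,\kappa(\frkm))=0$, i.e.\ $\cM$ flat (locally free) at $\frkm$ --- which again comes from the constant-fiber-dimension hypothesis at all maximal ideals via Lemma \ref{202103141428}, and is also how the paper finishes.

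That these are genuine gaps and not bookkeeping is shown by the fact that your argument, which uses the hypothesis only at the fixed $\frkm$, would prove a false statement: take $Q$ a single vertex, $R=\kk[t]$, $M=\kk$, $\cM=R/(t)$, $\frkm=(t)$. Then $\cM\otimes\kappa(\frkm)\cong M\otimes\kappa(\frkm)$, the lift $\psi\colon R\to R/(t)$ exists and is surjective, but $\cK=(t)$ has $\cK\otimes\kappa(\frkm)\cong\kk\neq 0$, and indeed $\cM\otimes_{R}R_{f}\not\cong M\otimes R_{f}$ for any $f\notin\frkm$. So any correct proof must exploit the fiber condition at all maximal ideals (local freeness of $\cM$ and of the relevant $\Ext$-groups over the finite-type domain $R$), which is precisely the content of the paper's Claim and its use of Lemma \ref{202103141428}; your proposal never invokes these, and the integrality/finite-type hypotheses go unused, which is a symptom of the missing ingredient.
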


\begin{proof}

\begin{claim}\label{202103141429}
For any $\frkm \in \maxSpec R$, the canonical morphism below is an isomorphism.
\begin{equation}\label{202103141430}
\Hom_{RQ}(M \otimes R, \cM) \otimes_{R} \kappa(\frkm) \to \Hom_{\kappa(\frkm) Q}(M \otimes \kappa(\frkm), \cM \otimes_{R} \kappa(\frkm)). 
\end{equation}
\end{claim}

\begin{proof}[Proof of Claim \ref{202103141429}]
Let $\mathsf{Res}: 0 \to P_{1} \xrightarrow{ f} P_{0} \to M \to 0$ be a projective resolution of $M$ over $\kk Q$. 
Applying $- \otimes R$, we obtain a projective resolution 
$\mathsf{Res} \otimes R: 0 \to P_{1} \otimes R \xrightarrow{ f \otimes R} P_{0} \otimes R \to M \otimes R \to 0$ 
of the $RQ$-module $M \otimes R$. 
Taking the long cohomology sequence of $\Hom_{R Q}(\mathsf{Res} \otimes R, \cM)$, we obtain 
the exact sequence 
\begin{equation}\label{202107021835}
0 \to \Hom_{RQ} ( M \otimes R, \cM ) \to\Hom_{RQ}( P_{0} \otimes R, \cM )\xrightarrow{f^{*}} \Hom_{RQ}(P_{1} \otimes R, \cM) 
\xrightarrow{ \delta } \Ext^{1}_{RQ}(M \otimes R, \cM) \to 0.
\end{equation}
where 
we set $f^{*} := \Hom_{RQ}(f \otimes R, \cM)$ 
and $\delta$ is the connecting morphism.

Let $\frkm \in \maxSpec R$.
By right exactness of the functor $-\otimes_{R} \kappa(\frkm)$,  the upper row of the following diagram is exact
{\tiny\[
\begin{xymatrix}{ 
\Hom_{RQ}( P_{0} \otimes R, \cM )\otimes_{R} \kappa(\frkm) \ar[r]^{f^{*} \otimes\kappa(\frkm)}  \ar[d]_{\cong} & 
\Hom_{RQ}(P_{1} \otimes R, \cM)\otimes_{R} \kappa(\frkm)  \ar[r]^{\delta \otimes \kappa(\frkm)} \ar[d]^{\cong} &
\Ext^{1}_{RQ}(M \otimes R, \cM) \otimes_{R} \kappa(\frkm)  \ar[d] \ar[r] & 0 \\
\Hom_{\kappa(\frkm) Q}( P_{0}\otimes\kappa(\frkm) , \cM\otimes_{R} \kappa(\frkm)  )\ar[r]  & 
\Hom_{\kappa(\frkm) Q}(P_{1}\otimes\kappa(\frkm), \cM\otimes_{R} \kappa(\frkm) ) \ar[r] & 
\Ext^{1}_{\kappa(\frkm) Q}(M\otimes\kappa(\frkm), \cM\otimes_{R} \kappa(\frkm) ) \ar[r] & 0. 
}\end{xymatrix} 
\] }
where the bottom row is a part of the cohomology long exact sequence of $\Hom_{\kappa(\frkm) Q}(\mathsf{Res}\otimes\kappa(\frkm), \cM \otimes_{R} \kappa(\frkm))$. 
Observe that the left and the middle vertical arrows are isomorphisms, 
and hence so is the right vertical  arrow. 
By the assumption $\Ext^{1}_{\kappa(\frkm) Q}(M\otimes\kappa(\frkm), \cM\otimes_{R} \kappa(\frkm) )$ is isomorphic to $\Ext^{1}_{\kappa(\frkm) Q}(M\otimes\kappa(\frkm), M\otimes\kappa(\frkm))\cong\Ext^{1}_{\kk Q}(M, M)\otimes\kappa(\frkm)$ for all $\frkm \in\maxSpec R$. 
Hence by Lemma \ref{202103141428}, the $R$-module $\Ext^{1}_{RQ}(M \otimes R, \cM)$ is locally free. 

It also follows from Lemma \ref{202103141428} and the assumption $\cM \otimes_{R} \kappa(\frkm) \cong M\otimes\kappa(\frkm)$ that 
$\cM$ is locally free as an $R$-module. 
Since the $R$-module $\Hom_{RQ}(P_{1} \otimes R, \cM)$ is a direct summand of a direct sum of copies of $\cM$ as an $R$-module, 
it is  locally free as an $R$-module. 
It follows that   $\ker \delta$ is locally free as an $R$-module.  
Therefore applying $-\otimes_{R} \kappa(\frkm)$ to the exact sequence \eqref{202107021835}, 
we see that  the upper row of the following diagram is exact 
{\tiny\[
\begin{xymatrix}{ 
0 \to \Hom_{RQ}(M \otimes R, \cM) \otimes_{R}\kappa(\frkm) \ar[r] \ar[d] & 
\Hom_{RQ}( P_{0} \otimes R, \cM )\otimes_{R} \kappa(\frkm) \ar[r]^{f^{*} \otimes\kappa(\frkm)}  \ar[d]_{\cong} & 
\Hom_{RQ}(P_{1} \otimes R, \cM)\otimes_{R} \kappa(\frkm)   \ar[d]^{\cong} &
 \\
 0 \to \Hom_{\kappa(\frkm)Q}(M \otimes \kappa(\frkm), \cM \otimes_{R}\kappa(\frkm) )  \ar[r] &
\Hom_{\kappa(\frkm) Q}( P_{0}\otimes \kappa(\frkm) , \cM\otimes_{R} \kappa(\frkm)  )\ar[r]  & 
\Hom_{\kappa(\frkm) Q}(P_{1}\otimes \kappa(\frkm), \cM\otimes_{R} \kappa(\frkm) )  & 
}\end{xymatrix} 
\]}
where the bottom row is a part of the cohomology long exact sequence of $\Hom_{\kappa(\frkm) Q}(\mathsf{Res}\otimes \kappa(\frkm), \cM \otimes_{R} \kappa(\frkm))$. 
Since the middle and the right vertical arrows are isomorphisms, we conclude that 
the left vertical arrow which is the canonical morphism \eqref{202103141430} is an isomorphism. 
\end{proof}

It follows from Claim \ref{202103141429} and the assumption $\cM \otimes_{R} \kappa(\frkm) \cong M \otimes \kappa(\frkm) Q$ that 
there exists $\phi \in \Hom_{RQ}(M \otimes R, \cM)$
such that $\phi \otimes_{R} \kappa(\frkm)$ is an isomorphism. 
Since $M \otimes R$ and $\cM$ are locally free as $R$-modules, 
there exists $f \in R \setminus \frkm$ such that $\phi \otimes R_{f}$ is an isomorphism of $R_{f}$-modules 
and is an isomorphism of $R_{f}Q$-modules. 
\end{proof}

\subsection{Proof of Theorem \ref{generic theorem}}
We only deal with the case that $Q$ is a Dynkin quiver, so that $\We_{Q} = \kk^{\times} Q_{0}$. 
The non-Dynkin case is proved in a similar way by using ${}^{v}\!\YM$ given in   \eqref{202404181706}.

We may assume that $M $ is indecomposable  and belongs to $\Pa \mod$.

Let $T: = \kk[x_{1}^{\pm 1}, x_{2}^{\pm 1}, \ldots, x_{r}^{\pm 1}]$ be the coordinate algebra of $\kk^{\times} Q_{0}$.
We denote by ${}^{T}\!\YYM$ the dg-algebra obtained from the formula \eqref{the differentials of derived QHA} by replacing $\kk$ with $T$ and $v_{i}^{-1}$ with $x_{i}^{-1}$ for all $i \in Q_{0}$. 
We define a morphism  ${}^{T}\!\brho_{M}^{n} : M \to {}^{T}\! \YYM_{n} \otimes_{\Pa} M$ in $\sfC(T Q)$ by replacing $v_{i}$ in the definition of ${}^{v}\!\brho_{n}^{M}$ 
with $x_{i}$. 
The complex ${}^{T}\! \YYM_{n} \otimes_{\Pa} M$ computes the derived tensor product ${}^{T}\! \YYM_{n} \lotimes_{\Pa} M$. 
We denote the induced morphism  ${}^{T}\!\brho_{M}^{n} : M \to {}^{T}\! \YYM_{n} \lotimes_{\Pa} M$ in $\sfD(T Q)$ by the same symbol.

Let $R$  be the coordinate ring of $\cI_{M, n}$. 
Since $R$ is a localization of $T$ it satisfies the assumption of Lemma \ref{202103141418}. 
We may assume that $\cL_{M, n}$ is not empty. 
Let $v \in \cL_{M, n}$ and $\frkm$ the corresponding maximal ideal of $R$. 
 For simplicity we set $M' := {}^{v}\!\YYM_{n} \lotimes_{\Pa} M$. 
Thanks to Lemma \ref{202103151408}, we can apply  Lemma \ref{202103141418} and  deduce that 
there exists $f \in R\setminus \frkm$ such that 
if we set $S := R_{f}$ and ${}^{S}\!\YYM := {}^{T}\!\YYM \otimes_{T} S$, 
then there exists an isomorphism  $\phi: {}^{S}\! \YYM_{n } \lotimes_{\Pa} M \to M' \otimes S$
of $SQ$-modules. 
The isomorphism $\phi$ induces an isomorphism 
\[
\phi_{*}: \Hom_{SQ}(M\otimes S, {}^{S}\! \YYM_{n } \lotimes_{\Pa}M) \to \Hom_{S Q}(M\otimes S, M'\otimes S) \to \Hom_{\Pa}(M, M') \otimes S.\]
We set ${}^{S}\!\brho_{M}^{n}: = {}^{T}\! \brho_{M}^{n} \otimes_{T} S$. 
We claim that the image $\phi_{*}({}^{S}\! \brho^{n}_{M})$ belongs to $\rad^{n}_{\Pa}(M, M') \otimes S$. 

First observe that the morphism $\phi_{*}$ is compatible with specializations. 
Namely for $u \in \maxSpec S$, the following diagram is commutative 
\[
\begin{xymatrix}@C=60pt{ 
\Hom_{SQ}(M, {}^{S}\! \YYM_{n } \lotimes_{\Pa}M) \ar[r]^{\phi_{*}} \ar[d] &  \Hom_{\Pa}(M, M') \otimes S \ar[d] \\
\Hom_{\Pa}(M, {}^{u}\! \YYM_{n } \lotimes_{\Pa}M) \ar[r] &  \Hom_{\Pa}(M, M') \otimes \kappa(u) 
}\end{xymatrix}
\]
where  the bottom arrow is the post-composition by $\phi \otimes_{S} \kappa(u)$ 
and the vertical arrows are the specialization maps. 

On the other hand, we have observed in Lemma \ref{202103151408} that ${}^{u}\!\brho^{n}_{M}$ belongs to $\rad^{n}_{\Pa}(M, {}^{u}\!\YYM_{n} \lotimes_{\Pa} M)$. 
It follows that $\phi_{*}({}^{S}\!\brho^{n}_{M}) $ belongs to $\rad^{n}(M, M')\otimes \kappa(u)$ after the  specialization to the point $u$. 
Thus we see that $\phi_{*}({}^{S}\!\brho^{n}_{M}) $ belongs to $\rad^{n}_{\Pa}(M, M')\otimes S$ as desired. 

Let $\psi: \Spec S \to \rad^{n}(M, M')$ be the morphism of schemes induced  from  the element  $\phi_{*}({}^{S}\!\brho^{n}_{M}) \in  \rad^{n}_{\Pa}(M, M') \otimes S$.
\[
\psi: \Spec S \to \rad^{n}(M, M'), \ u \mapsto {}^{u}\!\brho^{n}_{M}. 
\]
By Theorem  \ref{202008172145} the subspace $\mlap^{n}(M,M') \subset \rad^{n}(M ,M)$ of minimal left $\rad^{n}$-approximations  is open. 
Thus  the inverse image $\psi^{-1}(\mlap^{n}(M, M'))$ is an open neighborhood of  $v$ in $ \kk Q_{0}$ which is contained in $\cL_{n ,M}$. 
This shows that $\cL_{M, n}$ is an open subset of $\kk Q_{0}$ as desired. 
\qed

\section{Minimal left $\rad^{n}$-approximations in the case $\chara \kk = 0$}\label{section: check the case char k is 0}

The aim of this section is to establish the desired result in the case $\chara \kk = 0$  that the morphism ${}^{v}\!\varrho_{M}^{n}: M \to {}^{v}\!\YYM_{n} \lotimes_{\Pa} M$ is a minimal left $\rad^{n}$-approximation of $M \in \ind \Dbmod{\Pa}$ for a generic sincere weight $v \in \kk^{\times}Q_{0}$.

\subsection{Statements}

\begin{theorem}\label{202105241547}
Assume $\chara \kk = 0$. 
Let $Q$ be a  quiver, 
$M \in \ind \Dbmod{\Pa}$ and $n\in N_{Q}$.  

Then for a generic sincere weight $v \in \kk^{\times} Q_{0}$ 
the morphism 
\[
{}^{v}\!\varrho_{M}^{n}: M \to {}^{v}\!\YYM_{n} \lotimes_{\Pa} M
\]
is a minimal left $\rad^{n}$-approximation. 
\end{theorem}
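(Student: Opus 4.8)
The plan is to combine the deformation result of Section~\ref{section: deformation} with the sufficient criterion of Theorem~\ref{202104091259}. Fix $M\in\ind\Dbmod{\Pa}$ and $n\in N_Q$. By Theorem~\ref{generic theorem} the set $\cL_{M,n}\subseteq\kk Q_0$ is Zariski open, and since $\chara\kk=0$ the field $\kk$ is infinite; hence it suffices to exhibit a single sincere weight lying in $\cL_{M,n}$, and one is even allowed to do this after base change to $\bar\kk$, since a nonempty Zariski open subset of $\AA^r$ defined over $\kk$ meets $\kk^\times Q_0$ in a dense set of $\kk$-points. By Theorem~\ref{202104091259} such a weight is obtained once we produce $\lambda\in\kk\setminus\{-1\}$ and a sincere weight $v$ with property (II)$_{M,\lambda}$ and with $V_m(\lambda)\neq0$ for $1\le m\le n$. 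Here $V_m(\lambda)=\frac{\lambda^m-1}{\lambda-1}$ for $\lambda\neq1$ while $V_m(1)=m\neq0$ in characteristic $0$, so the condition on $\lambda$ just says that $\lambda$ is not a root of unity of order $m$ for any $2\le m\le n$.

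As recorded in the Example following the definition of property (II), it is enough that $v$ be an eigenvector of $\Psi=\Phi^{-t}$ with eigenvalue $\lambda$ which has property (I)$_M$, i.e.\ ${}^{v}\!\Euch(N)\neq0$ for every $N$ in the AR-component $\cC_M$; when the classes $\Euvect(N)$ ($N\in\cC_M$) fail to span $K_0(\Pa)\otimes\kk$ the eigenvector condition may even be relaxed to an orthogonality condition modulo that span. Since $\Euvect(S_i)=e_i$ is a root, ${}^{v}\!\Euch(S_i)=v_i$, so property (I)$_M$ automatically forces $v$ to be sincere whenever $\cC_M$ meets all the $S_i$. The remaining work is the explicit construction of such an eigenvector, carried out by cases on the representation type of $Q$. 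For $Q$ Dynkin (excluding $A_1$, where $N_Q=\{0\}$ and the statement is trivial) one takes $v$ to be a regular eigenvector of the Coxeter transformation on the weight space for the regular eigenvalue $\lambda=e^{2\pi i/h}$, a primitive $h$-th root of unity; such $v$ exists by the classical theory of regular elements of reflection groups, and regularity of $v$ means ${}^{v}\!\Euch(\alpha)\neq0$ for every root $\alpha$, so $v$ is regular (hence sincere) and has property (I)$_M$ as $\cC_M=\ind\Dbmod{\Pa}$; finally $1\le m\le n\le h-2<h$ gives $\lambda^m\neq1$, and $\lambda\neq-1$ since $h\ge3$.

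For $Q$ wild the Coxeter transformation has spectral radius $\rho>1$ with a strictly positive Perron--Frobenius eigenvector of $\Phi^t$; the corresponding eigenvector $v$ of $\Psi$ has real eigenvalue $\lambda=\rho^{-1}\in(0,1)$, is strictly positive — hence sincere, and regular because every indecomposable has strictly positive dimension vector — and satisfies $\lambda\neq\pm1$ and $\lambda^m\neq1$ for all $m\ge1$, so $V_m(\lambda)\neq0$ for every $m$. For $Q$ tame one distinguishes two situations. If $M$ is regular, $\cC_M$ is a tube whose occurring dimension vectors span a $\Phi$-stable sublattice on which $\Phi$ has finite order, so that ${}^{v}\!\Euch(\tau_1^{-1}N)={}^{v}\!\Euch(N)$ for a generic sincere $v$; thus a generic sincere $v$ has property (II)$_{M,1}$, and $V_m(1)=m\neq0$. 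If $M$ is preprojective or preinjective one uses $\lambda=1$, whose $\Psi$-eigenspace is spanned by the defect vector: then ${}^{v}\!\Euch(N)=\partial(N)\neq0$ for every $N\in\cC_M$ and $V_m(1)=m\neq0$, so it only remains to arrange sincerity.

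This last point is where the main obstacle lies: for a tame $Q$ admitting a regular simple module the defect vector is not sincere, and the other eigenvalues of $\Psi$ are roots of unity (so they violate $V_m(\lambda)\neq0$ once $n$ is large), so for $M$ preprojective/preinjective over such a $Q$ the bare eigenvector recipe must be refined — one has to use the finer structure of the Coxeter action on $K_0(\Pa)$, or treat this subcase by a direct variant of the computation in Section~\ref{section: minimal left rad^n-approximations}. Once a suitable $v$ has been exhibited in every case, Theorem~\ref{202104091259} yields $v\in\cL_{M,n}$, and then Theorem~\ref{generic theorem} together with $\kk$ being infinite upgrades this to the asserted genericity over $\kk$, which in turn specializes to Corollary~\ref{202105241546} for $Q$ Dynkin.
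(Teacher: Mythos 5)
Your overall route is the paper's: reduce, via the openness of $\cL_{M,n}$ (Theorem \ref{generic theorem}) and the density of sincere weights, to exhibiting one witness weight satisfying the criterion of Theorem \ref{202104091259}, and then construct eigenvector-type weights case by case. Your Dynkin and wild cases are essentially Proposition \ref{eigenvector theorem}(1) and (3) (for the wild case, replace the appeal to Perron--Frobenius, which does not literally apply since the Coxeter matrix is not nonnegative, by the results of de la Pe\~na--Takane and Takane that the paper cites). The extended Dynkin case, however, contains two genuine gaps. For $M$ regular, your claim that a \emph{generic} sincere $v$ satisfies ${}^{v}\!\Euch(\tau_1^{-1}N)={}^{v}\!\Euch(N)$ for all $N$ in the tube is false whenever the tube has rank $p\ge 2$: with quasi-simples $L_1,\dots,L_p$ the condition is equivalent to $v$ being orthogonal to the $(p-1)$-dimensional space spanned by the differences $\Euvect(L_{i+1})-\Euvect(L_i)$, a proper linear condition, so it holds only on a proper subspace. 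What is true, and what is actually needed, is \emph{existence}: by Lemma \ref{202105071824} the vectors $\Euvect(L_i)$ are linearly independent, so by Lemma \ref{202105241859} one may choose $v$ with all ${}^{v}\!\Euch(L_i)$ equal to a fixed $x\neq 0$; then every ${}^{v}\!\Euch(N)$ for $N\in\cC_M$ is a positive multiple of $x$ and $v$ has property (II)${}_{M,1}$ (Proposition \ref{202105241610}). Your argument omits exactly this linear-independence input.

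Second, the subcase you yourself flag as ``the main obstacle'' --- $M$ preprojective or preinjective over an extended Dynkin quiver --- is left unresolved, and no refinement of the Coxeter analysis is needed: the missing observation is that the witness weight need not be sincere. For non-Dynkin $Q$ the quiver Heisenberg algebra and the whole approximation machinery are developed for arbitrary $v\in\We_Q=\kk Q_0$ (Section \ref{202207111353}; Proposition \ref{2021040913541} and Theorem \ref{202103021651} hold in this generality), so the semi-regular eigenvector $v=C^{-1}\delta$ of $\Psi$ with eigenvalue $1$ from Proposition \ref{eigenvector theorem}(2) --- which, as you note, fails to be sincere exactly when $Q$ has regular simples --- is a legitimate witness: every object of $\cC_M$ is a shift of a preprojective (resp.\ preinjective) module, so property (II)${}_{M,1}$ holds, and $V_m(1)=m\neq 0$ in characteristic $0$. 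This shows $\cL_{M,n}\neq\emptyset$ inside $\kk Q_0$, and Theorem \ref{generic theorem} together with the density of $\kk^{\times}Q_0$ then gives the statement for generic sincere weights, exactly as in your opening reduction. Since you invoke the criterion only for sincere weights, this is a genuine missing step rather than a presentational one.
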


In the case  $Q$ is Dynkin, since $\Pa$ is of finite representation type, it is immediate to deduce the following corollary. 

\begin{corollary}\label{202105241546}
Assume $\chara \kk = 0$. 
Let $Q$ be a Dynkin quiver with the Coxeter number $h$. 
Then for a generic sincere weight $v \in \kk^{\times} Q_{0}$
the following assertion holds: 

For $M \in \ind\Dbmod{\Pa}$ and $n = 1,2, \ldots, h -2$, 
the morphism 
\[
{}^{v}\!\varrho_{M}^{n}: M \to {}^{v}\!\YYM_{n} \lotimes_{\Pa} M
\]
is a minimal left $\rad^{n}$-approximation. 
\end{corollary}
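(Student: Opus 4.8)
The statement is an immediate consequence of Theorem~\ref{202105241547} once one observes that the Dynkin hypothesis makes $\Pa = \kk Q$ of finite representation type and, more importantly, makes the derived category $\Dbmod{\Pa}$ have only finitely many indecomposables up to isomorphism. Indeed, by Gabriel's theorem there are finitely many indecomposable $\Pa$-modules, and since every indecomposable object of $\Dbmod{\Pa}$ is a cohomological shift of an indecomposable $\Pa$-module, the set $\ind\sfD^{\mrb}(\Pa\mod)/{\cong}$ is still finite. The plan is therefore to reduce to a \emph{finite} union of genericity conditions, one for each relevant pair $(M,n)$, and to invoke the fact that a finite intersection of non-empty Zariski open subsets of the irreducible affine space $\kk Q_{0}$ is again non-empty and Zariski open.

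\textbf{Key steps, in order.} First I would fix the finite index set: recall $N_{Q} = \{0,1,\dots,h-2\}$ from Definition~\ref{definition: NQ}, and note that the cases $n=0$ (where ${}^{v}\!\varrho_{M}^{0} = \id_{M}$ is trivially a minimal left $\rad^{0}$-approximation) and $n=h-1,h,\dots$ (where a minimal left $\rad^{n}$-approximation is the zero morphism $M\to 0$ by Theorem~\ref{kQ approximations theorem add}, and ${}^{v}\!\YYM_{n}\lotimes_{\Pa}M = 0$ for regular $v$ by Theorem~\ref{right approximation theorem}(3) combined with Corollary~\ref{vanishing corollary}) are automatic or vacuous. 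So the genuine content concerns $n\in\{1,2,\dots,h-2\}$ and $M$ ranging over the finite set $\ind\sfD^{\mrb}(\Pa\mod)$. Second, for each such pair $(M,n)$, Theorem~\ref{202105241547} provides a non-empty Zariski open subset $\cU_{M,n}\subseteq\kk^{\times}Q_{0}$ over which ${}^{v}\!\varrho_{M}^{n}$ is a minimal left $\rad^{n}$-approximation; here I would appeal to the more precise formulation via $\cL_{M,n}$ in Theorem~\ref{generic theorem}, which already packages the "(possibly empty) Zariski open" structure, the non-emptiness being exactly the content of Theorem~\ref{202105241547}. Third, I would take the finite intersection $\cU := \bigcap_{n=1}^{h-2}\bigcap_{M\in\ind\sfD^{\mrb}(\Pa\mod)}\cU_{M,n}$; since $\kk Q_{0}\cong\kk^{r}$ is irreducible, each $\cU_{M,n}$ is dense, hence the finite intersection $\cU$ is again a non-empty Zariski open subset, and every $v\in\cU$ has the desired property simultaneously for all $M$ and all $n\in\{1,\dots,h-2\}$, and trivially for the remaining values of $n$ by the first step.

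\textbf{Main obstacle.} There is essentially no hard work here: the corollary is a pure finiteness-plus-genericity bookkeeping argument sitting on top of Theorem~\ref{202105241547}. The only point requiring a moment of care is making sure the reduction respects cohomological shifts—one must check that if ${}^{v}\!\varrho_{M}^{n}$ is a minimal left $\rad^{n}$-approximation then so is ${}^{v}\!\varrho_{M[p]}^{n}$ for every $p\in\ZZ$, which follows because $[p]$ is a triangulated autoequivalence of $\Dbmod{\Pa}$ commuting with the functor ${}^{v}\!\YYM_{n}\lotimes_{\Pa}-$ (the derived QHA bimodule ${}^{v}\!\YYM_{n}$ acts on the left, and $[p]$ on the right), so that minimal left $\rad^{n}$-approximations are preserved; hence one only needs the genericity condition for $M$ running over the $\#\ind\kk Q$ shift-orbit representatives rather than all of $\ind\sfD^{\mrb}(\Pa\mod)$. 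Once this is noted, the argument is complete.
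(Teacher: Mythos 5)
Your proposal is correct and follows essentially the same route as the paper, which deduces the corollary from Theorem~\ref{202105241547} precisely because $\kk Q$ is of finite representation type, so only finitely many genericity conditions (each a non-empty Zariski open subset of $\kk Q_{0}$, via Theorem~\ref{generic theorem}) need to be intersected. The only slip is your early assertion that $\ind\Dbmod{\Pa}$ is finite up to isomorphism—it is only finite up to cohomological shift—but the shift-invariance observation you make at the end is exactly what repairs this, so the argument stands.
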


By Theorem \ref{generic theorem}, 
to prove Theorem \ref{202105241547},  
it is enough to establish existence of a sincere weight $v \in \kk^{\times }Q_{0}$ that has property (I)${}_{M, n}$ and 
is such that the morphism ${}^{v}\!\brho^{n}_{M}$ is a minimal left $\rad^{n}$-approximations for fixed $M ,n$.
Thanks to Theorem \ref{202104091259}, this problem is reduced to establish 
existence of a  weight $v \in \We_{Q}$ that has the property (II)${}_{M, \lambda}$ 
for some $\lambda\in \kk\setminus \{-1\}$ such that $V_{m}(\lambda) \neq 0$ for $m= 2,3,\ldots, n$. 
We solve this problem in the following two propositions. The first one covers all the cases except when
$Q$ is extended Dynkin and $M$ is a shift of a regular module. These cases are dealt with in the second proposition.

\begin{proposition}\label{eigenvector theorem} 
Let $Q$ be a finite acyclic quiver. 
Then, the following holds. 

\begin{enumerate}[(1)] 

\item 
If $Q$ is  a Dynkin  quiver with the Coxeter number $h$. 
Then, there exists a regular weight  $v \in \kk Q_{0}$ which is an eigenvector of  
$\Psi$ whose eigenvalue $\lambda$ is a $h$-th root of unity.

\item 
Assume that  $Q$ is an extended Dynkin quiver.  
Then  there exists a semi-regular weight  $v \in \kk Q_{0}$  which is an eigenvector of  
$\Psi$ whose eigenvalue is $1$. Moreover there exists no regular weight  $v \in \kk Q_{0}$ which is an eigenvector of $\Psi$. 

\item 
If $Q$ is wild, then 
 exists a regular weight $v \in \kk Q_{0}$ which is an eigenvector of  
$\Psi$ whose eigenvalue $\lambda$ is not a root of unity different from $1$. 

\end{enumerate}

\end{proposition}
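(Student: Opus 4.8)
The plan is to analyze the spectral structure of the Coxeter transformation $\Phi$ (equivalently $\Psi = \Phi^{-t}$) in each of the three cases, using the classical theory of Coxeter transformations of quivers. Throughout I would recall that $\Psi = \Phi^{-t} = -C^{-1}C^{t}$ where $C$ is the Cartan matrix, and that the eigenvectors of $\Psi$ are the transposes of the eigenvectors of $\Phi^{-1}$, hence (up to the involution $C \mapsto C^{t}$, i.e. passing to $Q^{\op}$) governed by the same spectral data as $\Phi$. The key general fact I would invoke is that regularity of a weight $v$ is a finite (in the Dynkin case) resp.\ countable (in the general case) collection of nonvanishing linear conditions ${}^{v}\!\Euch(M) = v^{t}\dimvect(M) \neq 0$ on the roots/dimension vectors of indecomposables, so the strategy is to exhibit an eigenvector lying in the complement of the corresponding hyperplanes.

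For part (1), $Q$ Dynkin: the Coxeter transformation $\Phi$ of a Dynkin quiver has finite order $h$ (this is essentially Theorem~\ref{Miyachi-Yekutieli lemma} at the level of $K_0$, since $\nu_1^{-h} \cong [2]$ acts as $(-1)^2 = 1$ on the Grothendieck group — more precisely $\Phi^h = \id$), hence is diagonalizable over $\overline{\kk}$ (or over $\kk$ after adjoining a primitive $h$-th root of unity, which we may do as the claim is about genericity) with eigenvalues among the $h$-th roots of unity. The same holds for $\Psi = \Phi^{-t}$. I would pick any eigenvalue $\lambda$ and a nonzero eigenvector $w$ of $\Psi$; the task is to show $w$ can be chosen regular. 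Since there are only finitely many indecomposable $\Pa$-modules (Gabriel), regularity is the condition $w \notin \bigcup_{M} H_M$ where $H_M = \{x : x^{t}\dimvect(M) = 0\}$ is a finite union of hyperplanes. It therefore suffices to check that no eigenspace of $\Psi$ is entirely contained in $\bigcup_M H_M$; since the eigenspaces are linear subspaces and $\kk$ is infinite, this reduces to checking that no eigenspace is contained in a single $H_M$, i.e.\ that each eigenspace contains a vector $w$ with $w^{t}\dimvect(M) \neq 0$ for all $M$. The cleanest route is to note that the sum of all eigenspaces is all of $\kk Q_0$, that $\kk Q_0$ itself is not contained in any $H_M$ (as $\dimvect(M) \neq 0$), and to use a Vandermonde/genericity argument: a generic vector in $\kk Q_0$ is regular, and we then project it onto eigenspaces; but more carefully, I would instead argue directly that for a well-chosen eigenvalue (e.g.\ a primitive $h$-th root $\zeta$, whose eigenspace for the Coxeter transformation is classically one-dimensional and spanned by a vector with a known, fully sincere and "generic-looking" coordinate pattern, e.g.\ related to the exponents/the Perron–Frobenius-type eigenvector after an appropriate rescaling) the eigenspace contains a regular vector. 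The real content here is identifying concretely an eigenvector that avoids all root hyperplanes; I expect to cite known descriptions of Coxeter eigenvectors (e.g.\ via the bipartite Coxeter element and $\cos$-type coordinates indexed by a bipartition of $Q_0$) and then verify non-orthogonality to every positive root by a dimension count or a direct inequality.

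For part (2), $Q$ extended Dynkin: here the Coxeter transformation $\Phi$ is no longer of finite order; its characteristic polynomial is $(t-1)^2$ times a product of cyclotomic factors (McKay/Dynkin correspondence), and $1$ is an eigenvalue with a $2$-dimensional generalized eigenspace, the honest (geometric) eigenspace for eigenvalue $1$ being one-dimensional and spanned by the radical vector $\delta$ — the positive imaginary root, which is sincere. For $\Psi = \Phi^{-t}$ the eigenvalue-$1$ eigenspace is likewise spanned by a sincere vector $\delta'$ (the null root of $Q^{\op}$, which equals that of $Q$). The task is to check $\delta'$ is semi-regular, i.e.\ ${}^{\delta'}\!\Euch(M) \neq 0$ for every indecomposable preprojective or preinjective $M$: this is exactly the statement that the Euler/Tits form pairing of the null root with a real root $\alpha$ is nonzero, equivalently $\langle \delta, \alpha \rangle \neq 0$, which is standard — $\langle \delta, - \rangle$ is the defect function and it is nonzero precisely on the non-regular (preprojective and preinjective) indecomposables, and zero on regulars. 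This gives both the positive assertion and explains why regularity cannot be achieved by an eigenvector: any eigenvector of $\Psi$ with eigenvalue $\neq 1$ is orthogonal to $\delta$ under an invariant pairing and hence lies in a union of the hyperplanes $H_\alpha$ coming from regular roots (more concretely, one shows such an eigenvector has zero defect, and then exploiting that the regular roots are not cut out by finitely many conditions one produces a regular $\alpha$ with ${}^{v}\!\Euch$ of it vanishing); and the eigenvalue-$1$ eigenspace is exactly $\kk\delta'$, which kills all regular roots. I would make the "no regular eigenvector" part precise by: if $v$ is a $\Psi$-eigenvector with eigenvalue $\lambda$, then for any regular indecomposable $M$ we have ${}^{v}\!\Euch(\nu_1 M) = {}^{\Psi(v)}\!\Euch(M) = \lambda\, {}^{v}\!\Euch(M)$; but regular components are periodic under $\tau_1 = \nu_1$ (on $\Pa\mod$), and summing ${}^{v}\!\Euch$ over a $\tau_1$-orbit (a full tube section) and comparing with the null root relation forces ${}^{v}\!\Euch$ to vanish on some regular module unless $\lambda = 1$, and when $\lambda = 1$ the eigenvector is proportional to $\delta'$, which is orthogonal to every regular root.

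For part (3), $Q$ wild: the Coxeter transformation has a real eigenvalue $\lambda > 1$ (the spectral radius, $>1$ since $Q$ is neither Dynkin nor extended Dynkin) with a one-dimensional eigenspace spanned by a strictly positive (Perron–Frobenius) vector — this is classical (e.g.\ Ringel, A'Campo). The same applies to $\Psi = \Phi^{-t}$: its spectral radius eigenvalue $\lambda > 1$ has a strictly positive eigenvector $v$. Since $\QQ \subset \kk$ (we may assume $\chara\kk=0$ for the application, though the statement as phrased allows any field — in positive characteristic $\lambda$ need not lie in $\kk$, so I would either restrict to $\QQ \subset \kk$ or pass to a finite extension and note genericity is preserved; I would flag this) and $v$ can be taken with positive real (indeed, after scaling, positive rational-approximable) coordinates, Example~\ref{202102071418}(1) shows that a strictly positive weight is regular. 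Finally $\lambda$ is transcendental or at least not a root of unity since $\lambda > 1$ is real, so $\lambda$ is certainly not a root of unity other than — well, it is not a root of unity at all, and in particular $\lambda \neq 1$, giving $\lambda \in \kk \setminus \{-1\}$ and $V_m(\lambda) = \frac{\lambda^m - 1}{\lambda - 1} \neq 0$ for all $m \geq 1$. The main obstacle across all three parts is part (1): for Dynkin quivers one genuinely needs to exhibit a regular eigenvector, and the eigenspaces for eigenvalues $\lambda$ with $\lambda \neq 1$ that are roots of unity may a priori be contained in root hyperplanes; handling $E_7, E_8$ etc.\ may require either a uniform argument (showing the primitive-$h$-th-root eigenvector is always regular via its explicit $\cos$-coordinate description and a uniform non-orthogonality estimate against positive roots) or a short case check. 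I expect the $\cos$-coordinate eigenvector to work uniformly, but verifying non-orthogonality to all positive roots is the delicate point.
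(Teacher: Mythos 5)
Your part (2) is essentially sound and close to the paper's argument (the paper gets $\lambda=1$ even more directly, by pairing against a single quasi-simple $R$ with $\nu_1^{-1}R\cong R$ and then using that the eigenvalue-$1$ eigenspace is $\kk\,C^{-1}\delta$ with $\eragl{\delta,\delta}=0$). The problems are in parts (3) and (1). In part (3) your key claim is false: the spectral-radius eigenvector of $\Psi$ need not be strictly positive. Positivity holds for $\Phi$ acting on dimension vectors (Ringel, de la Pe\~na--Takane), but $\Psi=C^{-1}\Phi C$, and conjugating by $C^{-1}$ destroys positivity. Concretely, for the $3$-Kronecker quiver one computes, with the paper's conventions, $C=\left(\begin{smallmatrix}1&3\\0&1\end{smallmatrix}\right)$ and $\Psi=-C^{-1}C^{t}=\left(\begin{smallmatrix}8&3\\-3&-1\end{smallmatrix}\right)$; its eigenvalues are $(7\pm3\sqrt{5})/2$ and the corresponding eigenvectors are proportional to $(3,\lambda-8)^{t}$, which always have coordinates of opposite signs. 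So there is no positive eigenvector of $\Psi$ and your regularity argument (and with it the treatment of regular indecomposables, which your sketch never addresses otherwise) collapses. What is actually needed is exactly the result the paper cites from Takane: a $\Phi$-eigenvector $y^{-}$ for the spectral radius with $\eragl{y^{-},\Euvect(M)}\neq 0$ for \emph{every} indecomposable $M$; then $v:=C^{-1}y^{-}$ works because ${}^{v}\!\Euch(M)=\eragl{Cv,\Euvect(M)}$ and $Cv$ is a $\Phi$-eigenvector precisely when $v$ is a $\Psi$-eigenvector. (Also, $\lambda$ is an algebraic integer, not transcendental; it is simply a real number $>1$, hence not a root of unity.)

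In part (1) the heart of the statement is the existence of a \emph{regular} eigenvector, and this is exactly the step you leave open ("I expect the $\cos$-coordinate eigenvector to work\dots the delicate point"). Note moreover that the eigenspace of $\Psi$ for a primitive $h$-th root of unity is one-dimensional, so no genericity-within-the-eigenspace argument is available: you must verify regularity of one specific vector. Your sketch could in principle be closed by invoking Springer's theory of regular elements (a Coxeter element has a $\zeta$-eigenvector avoiding all reflection hyperplanes) together with the identity $(w,\alpha)=(1-\mu^{-1})\eragl{w,\alpha}$ for a $\Phi$-eigenvector $w$ with eigenvalue $\mu\neq 1$, which converts orthogonality for the symmetrized form into nonvanishing of the Euler pairing and hence of ${}^{v}\!\Euch$ with $v=C^{-1}w$; but as written the proof is simply not there, and this is the main content of (1). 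The paper avoids this machinery altogether: it sets $\zeta=\exp(2\pi\sqrt{-1}/h)$, $\xi_{n}=(\zeta^{n+1}-1)/(\zeta-1)$, fixes a vertex $i$ and takes $v=\sum_{n=0}^{h-2}\xi_{n}\Psi^{n}C^{-1}\Euvect(P_{i})$; the eigenvector property follows from $\sum_{n=0}^{h-1}\Psi^{n}=0$, and regularity follows because ${}^{v}\!\Euch(P_{j})=\sum_{n}\xi_{n}\Euch(e_{i}\PPi_{n}e_{j})$ is a sum of complex numbers lying in an open half-plane which is not identically zero since $\Pi e_{j}$ is sincere. Because of the unproved step in (1) and the false positivity claim in (3), the proposal does not establish the proposition as it stands.
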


\begin{proposition}\label{202105241610} 
Let $Q$ be an extended Dynkin quiver and $M\in \Dbmod{\Pa}$ a shift of a regular module. 
Then there exists a sincere weight $v \in \kk^{\times} Q_{0}$ that has the property (II)${}_{M, 1}$. 
\end{proposition}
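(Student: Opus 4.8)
Let $Q$ be an extended Dynkin quiver, so $\kk Q$ is tame and has a preprojective component $\cP$, a preinjective component, and a $\PP^1$-family $\{\cT_\lambda\}$ of regular (tube) components. Fix $M\in\Dbmod{\Pa}$ a shift of a regular module; we may assume $M\in\cT_\mu$ for some $\mu$. The component $\cC_M$ containing $M$ is a stable tube, say of rank $r_\mu\ge 1$. We want a sincere weight $v\in\kk^\times Q_0$ with ${}^{v}\!\Euch(N)\neq 0$ for all $N\in\cC_M$ and ${}^{v}\!\Euch(\PPi_1\lotimes_\Pa N)={}^{v}\!\Euch(N)$ for all $N\in\cC_M$; i.e.\ $v$ has property (II)${}_{M,1}$. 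By \eqref{202111101530} the second condition reads ${}^{\Psi(v)}\Euch(N)={}^{v}\!\Euch(N)$, i.e.\ ${}^{(\Psi-\id)(v)}\Euch(N)=0$ for all $N\in\cC_M$.

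\textbf{Step 1: reduce to finitely many linear conditions on $v$.} The key structural fact is that on a tube, the function $N\mapsto\dimvect(N)$ (hence $N\mapsto{}^{w}\!\Euch(N)$ for any $w$) is controlled by the quasi-length: writing $\delta$ for the minimal positive imaginary root, a module $N$ of quasi-length $\ell$ at quasi-socle position $i$ has $\dimvect(N)=\dimvect(N_{i,1})+\dimvect(N_{i+1,1})+\cdots$ telescoping along the wing, and every $\dimvect(N)$ lies in $\ZZ\delta + \{\dimvect(N_{i,1})\}$, a finitely generated set modulo $\delta$. Since $\PP i \Euch$ of every $N\in\cC_M$ is a nonnegative integer combination of the (finitely many) dimension vectors $\dimvect(N_{i,1})$ of the quasi-simples in $\cC_M$ plus an integer multiple of $\delta$, and since $\Psi\delta=\delta$ (as $\delta$ spans the radical of the Euler form, $\Psi$ fixes it), the infinitely many equations ${}^{(\Psi-\id)(v)}\Euch(N)=0$, $N\in\cC_M$, collapse to the finitely many equations $(\Psi-\id)(v)\perp \dimvect(N_{i,1})$ for the quasi-simples $N_{i,1}$ of $\cC_M$ (here $\perp$ is w.r.t.\ the standard pairing). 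Similarly the non-vanishing conditions ${}^{v}\!\Euch(N)\neq 0$ reduce to finitely many: the quasi-simples have dimension vectors summing to $\delta$, so once ${}^{v}\!\Euch$ is nonzero and of constant sign on the quasi-simples, and ${}^{v}\!\Euch(\delta\text{-dimensional modules})={}^{v}\!\Euch_\delta(v)\neq 0$, monotonicity along wings gives ${}^{v}\!\Euch(N)\neq 0$ for all $N\in\cC_M$.

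\textbf{Step 2: produce $v$.} Take $v$ in the kernel of the linear map $w\mapsto((\Psi-\id)^t w \text{ paired against } \dimvect(N_{i,1}))_i$ — equivalently, a $w$ with $(\Psi-\id)(w)$ in the orthogonal complement of the span of the quasi-simple dimension vectors of $\cC_M$. Since $\Psi^t=\Psi^{-1}$ (as $\Psi=\Phi^{-t}$ and $\Phi$ is the Coxeter matrix), and since the span of the $\dimvect(N_{i,1})$ contains $\delta$ (which $\Psi$ fixes), this is a nontrivial linear condition cutting out a subspace $W\subset\kk Q_0$ of positive dimension (the image of $\Psi-\id$ has corank at least $1$ coming from $\delta$; intersecting with the finitely many hyperplanes still leaves positive dimension by a dimension count using $\chara\kk=0$ or just $|\kk|$ large — actually we only need one point). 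On the dense open subset of $W$ where all the finitely many non-vanishing conditions ${}^{v}\!\Euch(N_{i,1})\neq 0$, ${}^{v}\!\Euch_\delta(v)\neq 0$, $v_i\neq 0$ hold, pick $v$; this set is nonempty because $W\not\subset$ (finite union of proper hyperplanes) as $\chara\kk=0$ makes $\kk$ infinite and $W$ is a nonzero subspace. Such $v$ is sincere and has property (II)${}_{M,1}$ by Step 1.

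\textbf{Main obstacle.} The real work is Step 1: showing rigorously that the infinitely many conditions indexed by $N\in\cC_M$ reduce to finitely many, which requires a clean description of $\{\dimvect(N):N\in\cC_M\}$ and of the action of $\Psi$ on it. The cleanest route is to invoke the standard structure theory of regular components over tame hereditary algebras (e.g.\ \cite{Simson-Skowronski: II} Ch.~X--XI, which is already cited in the excerpt): $\cC_M$ is a stable tube, its quasi-simples $N_{1,1},\dots,N_{r_\mu,1}$ satisfy $\tau_1 N_{i,1}\cong N_{i-1,1}$ and $\sum_i\dimvect(N_{i,1})=\delta$, each $N\in\cC_M$ has $\dimvect(N)=\sum_{k=0}^{\ell-1}\dimvect(N_{i+k,1})$ for its quasi-length $\ell$ and quasi-socle $N_{i,1}$, and $\Psi=\underline{\nu_1}^{-t}$ acts by $\underline{\nu_1}=\Phi$ so on $K_0$ it permutes regular components and on dimension vectors within $\cC_M$ it is given by the $\tau_1^{-1}$-action, which on quasi-simples is the cyclic shift and fixes $\delta$. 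Once this is set up, both the finiteness of the linear conditions and the monotonicity (hence finiteness) of the non-vanishing conditions follow, and the proof concludes as above. One should also remark that the argument is uniform in the choice of tube $\mu$ and handles rank-$1$ tubes (where $r_\mu=1$, $\dimvect(N_{1,1})=\delta$, and everything trivializes) as a special case.
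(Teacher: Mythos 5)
Your Step 1 is sound up to the point where it matters: the conditions $(\Psi-\id)(v)\perp\Euvect(N_{i,1})$ say precisely that ${}^{v}\!\Euch(\tau_1^{-1}N_{i,1})={}^{v}\!\Euch(N_{i,1})$, and since $\tau_1^{-1}$ permutes the quasi-simples of the tube in a single cycle, your subspace $W$ is exactly the set of weights on which all quasi-simples take one common value $x$ --- which is how the paper sets things up. But you then dispose of the non-vanishing conditions by a ``constant sign / monotonicity along wings'' argument. This is not meaningful over an arbitrary field of characteristic zero (e.g.\ $\kk=\CC$), and, more to the point, your Step 2 never arranges it: choosing $v$ generically in $W$ so that the finitely many functionals ${}^{v}\!\Euch(N_{i,1})$, ${}^{v}\!\Euch$ of the $\delta$-dimensional modules, and the coordinates $v_i$ are nonzero does not make the quasi-simple values have a common sign, so by your own reduction you have not shown ${}^{v}\!\Euch(N)\neq0$ for all $N\in\cC_M$. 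The repair is the observation you almost made: on $W$ every $N$ of quasi-length $\ell$ satisfies ${}^{v}\!\Euch(N)=\ell x$, which is nonzero once $x\neq0$ because this part of the paper works in characteristic zero, and the ratio condition in (II)${}_{M,1}$ is automatic since $\tau_1^{-1}$ preserves quasi-length; this is exactly the computation in the paper's proof.

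The more serious gap is the existence of $v\in W$ with $x\neq0$ (and sincere). You justify it by ``a dimension count'': $W$ has positive dimension, hence is not contained in a finite union of proper hyperplanes. But the relevant hyperplanes need not be proper after intersecting with $W$: if the dimension vectors $\Euvect(L_1),\dots,\Euvect(L_p)$ of the quasi-simples were linearly dependent, the constraint $x_1=\cdots=x_p$ could force $x=0$ identically on $W$ (a relation $\Euvect(L_1)=\Euvect(L_2)+\Euvect(L_3)$ gives $x=2x$ on $W$), so positive-dimensionality of $W$ alone proves nothing. This is where the paper does the real work: Lemma \ref{202105071824} shows that the quasi-simple dimension vectors of a tube are linearly independent (proved by inspecting the tables in \cite{Simson-Skowronski: II} and reducing the general orientation to those cases by APR-tilting; the paper explicitly records that no direct conceptual proof is known), and then Lemma \ref{202105241859} produces $v$ with all $x_i$ equal to a prescribed nonzero value. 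Without this linear-independence input your argument does not close; likewise your sincerity condition $v_i\neq0$ presupposes $W\not\subseteq\{v_i=0\}$, which needs a word. Incidentally, the identity $\Psi^t=\Psi^{-1}$ you invoke is false in general (the Coxeter matrix is not orthogonal), though nothing essential in your sketch rests on it.
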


\subsection{A proof of Proposition \ref{eigenvector theorem}. Dynkin case}

We give a proof of Proposition \ref{eigenvector theorem} for a Dynkin quiver $Q$. First we extend our ground field $\kk$ to a field $\kk'$ such that $\CC \subset \kk'$. 

Let $Q$ be a Dynkin quiver with the Coxeter number $h > 2$. 
We set $\zeta := \exp\left(\frac{2\pi\sqrt{-1}}{h}\right)$ and $\xi_{n} := \frac{\zeta^{ n+1 } - 1}{\zeta - 1}$ for $n =0,1, \ldots, h-2$. 
We note that $\xi_{0} = 1, \xi_{h -2} = - \zeta^{-1}$ and $1 -\xi_{h -2}^{-1} \xi_{n -1} = \xi_{n}$. 
We fix  a vertex $i \in Q_{0}$ and  set $w_{n}  := \Psi^{n} C^{-1} \Euvect(P_{i})$. 
We prove that 
$v:= \sum_{n =0}^{h -2} \xi_{n} w_{n} $ is a regular weight 
which  is an eigenvector of $\Psi$ with the eigenvalue $\zeta^{-1}$.

 First we prove $\Psi(v) = \zeta^{-1} v$.  
The order of the Coxeter matrix $\Phi$ is $h$. 
Moreover $\Phi$  does not have $1$ as its eigenvalue by \cite{Dlab-Ringel: Indecomposable}.
Therefore, $\sum_{n = 0}^{h -1} \Psi^{n} =0$. 
Thus, we have 
$
\Psi ( w_{n}) = w_{n+1} 
$
for $n =0, 1, \ldots, h-3$ and 
$\Psi(w_{h -2} ) = - \sum_{n =0}^{h -2}w_{n}$. 
It follows that 
\[
\begin{split}
\Psi(v) & = \sum_{n =0}^{h -3} \xi_{n}w_{n +1} -\xi_{ h-2} \sum_{n =0}^{h -2} w_{n} \\
 & = - \xi_{h -2} \left( w_{0} +  \sum_{n =1}^{h -2} ( 1- \xi_{h -2}^{-1} \xi_{n-1})w_{n}\right) \\
& = \zeta^{-1} v.
 \end{split}
 \]
 
 We claim that ${}^{v}\!\Euch(\nu_{1}^{-1}(M)) = \zeta^{-1} {}^{v}\!\Euch(M)$ for all $M \in \Dbmod{A}$. 
 Indeed, we can deduce it by a straightforward calculation as below 
 \[
 \begin{split}
 {}^{v}\!\Euch(\nu_{1}^{-1}(M)) & = v^{t} \Euvect(\nu_{1}^{-1}(M))  = v^{t} \Phi^{-1}\Euvect(M) \\ 
 & = (\Psi v)^{t} \Euvect(M) = \zeta^{-1} v^{t} \Euvect(M) = \zeta^{-1} {}^{v}\!\Euch(M).  
 \end{split}
 \]
 
Next we prove that $v$ is regular.  
Recall that 
 an indecomposable module $M \in \ind Q$ belongs to the $\nu_{1}^{-1}$ orbit of some indecomposable projective module $P_{j}$. 
Thus by the above claim,  
it is enough to show that ${}^{v}\!\Euch(P_{j}) \neq 0$ for each vertex $j \in Q_{0}$. 

We take a vertex $j \in Q_{0}$.
We claim ${}^{v}\!\Euch(P_{j}) = \sum_{n =0}^{h -2} \xi_{n} \Euch(e_{i} \PPi_{n} e_{j})$. 
First we check 
\[
\begin{split}
\Euvect(P_{j})^{t} w_{n} & = \Euvect(P_{j})^{t}  (\Phi^{ -n})^{t}C^{-1} \Euvect(P_{i})\\
& = \left( \Phi^{-n} \Euvect(P_{j})\right)^{t}C^{-1}  \Euvect(P_{i})\\
 & = \Euvect(\nu_{1}^{-n} P_{j})^{t}C^{-1}  \Euvect(P_{i})\\
 & =\Euch(\RHom_{\kk Q}(P_{i}, \nu_{1}^{-n} P_{j})) \\
 & = \Euch(e_{i} \PPi_{n} e_{j} ).  
 \end{split} 
\]
Thus we have 
\[
{}^{v}\!\Euch(P_{j}) = \Euvect(P_{j})^{t}v = \sum_{n =0}^{h -2} \xi_{n} \Euch(e_{i} \PPi_{n} e_{j}). 
\]
Let  $m \in \{ 0, 1, \ldots, h-2 \}$ be the integer that 
such that $\PPi_{n} e_{j}= \nu_{1}^{-n}(P_{j})$ concentrated in cohomological degree $0$ for $n= 0,1, \ldots, m$ and 
$\PPi_{n} e_{j}= \nu_{1}^{-n}(P_{j})$ concentrated in cohomological degree $-1$ for $n= m +1, \ldots, h$. 
Then  $\Euch(e_{i}\PPi_{n} e_{j} ) \geq 0$  for $n = 0, \ldots, m$ 
and $\Euch(e_{i} \PPi_{n} e_{j} ) \leq 0$ for $n = m +1, \ldots, h-2$. 

Let $\arg: \CC^{\times} \to (-\pi, \pi]$ be the principal branch of the argument function.
It is easy to check that 
\[
 0 = \arg (\xi_{0}) < \arg (\xi_{1}) < \cdots < \arg (\xi_{h -2}) = \pi- \frac{ 2\pi}{h} < \pi. 
\]
Then  we have 
\[
\arg( -\xi_{m +1} )  < \arg(- \xi_{m +2} ) < \cdots < \arg(- \xi_{h -2})  < \arg(\xi_{0}) < \cdots < \arg( \xi_{m})
\]
and $\arg( \xi_{m} ) - \arg( -\xi_{m+ 1}) < \pi$. 
Therefore, ${}^{v}\!\Euch(P_{j})$ is given as a sum of complex numbers  belonging to  the half plane 
$\{ z \in \CC^{\times} \mid 0\leq \arg(z)-\arg( -\xi_{m+1})< \pi \}\sqcup\{ 0\}$ 
 as below
\[
{}^{v}\!\Euch(P_{j}) = \sum_{n =0}^{m } \xi_{n} \Euch(e_{i} \PPi_{n} e_{j}) +\sum_{n =m +1}^{h -2} ( -\xi_{n})\left( - \Euch(e_{i} \PPi_{n} e_{j})\right).  
\]
By Lemma \ref{202103301920} below,  $\Pi e_{j}$ is sincere. Therefore there exists $n$ such that $\Euch(e_{i} \Pi_{n} e_{j} ) > 0$. 
Thus we conclude ${}^{v}\!\Euch(P_{j}) \neq 0$  as desired. 
It follows that $v \neq 0$ and hence $v$ is an eigenvector of $\Psi$. 

Finally note that the coefficients of $v$ are in the algebraic closure of $\QQ$ and so $v \in \kk Q_{0}$.
 \qed

\begin{lemma}\label{202103301920}
Let $Q$ be a Dynkin quiver and $i \in Q_{0}$. 
Then the projective module $\Pi(Q)e_{i}$ is sincere.
\end{lemma}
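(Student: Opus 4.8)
The plan is to reduce the statement to a combinatorial assertion about the Auslander--Reiten quiver of $\Pa = \kk Q$ and then verify that assertion. By definition, $\Pi(Q)e_{i}$ being sincere means $e_{j}\Pi(Q)e_{i}\neq 0$ for every vertex $j\in Q_{0}$. By Theorem \ref{202011062314}(1) we have $\Pi(Q)e_{i}\cong\bigoplus_{n\geq 0}\tau_{1}^{-n}P_{i}$ as $\Pa$-modules, so $\dim e_{j}\Pi(Q)e_{i}=\sum_{n\geq 0}\dim\Hom_{\Pa}(P_{j},\tau_{1}^{-n}P_{i})$. Thus the claim is equivalent to: for every $j\in Q_{0}$ there is some $n\geq 0$ with $\Hom_{\Pa}(P_{j},\tau_{1}^{-n}P_{i})\neq 0$, i.e. the $\tau_{1}^{-1}$-orbit $\{\tau_{1}^{-n}P_{i}\mid n\geq 0\}$ of $P_{i}$ contains an indecomposable module having $S_{j}$ as a composition factor.

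First I would record the elementary facts that drive the combinatorics. Since $Q$ is Dynkin, every indecomposable $\Pa$-module lies in the $\tau_{1}^{-1}$-orbit of a unique indecomposable projective, so these orbits partition $\ind Q$; moreover the orbit of $P_{i}$ is a finite chain $P_{i},\tau_{1}^{-1}P_{i},\dots,\tau_{1}^{-m}P_{i}$ whose last term is an indecomposable injective module (the next term being $0$). Using the place function $p$ of Section \ref{202105252310} together with Lemma \ref{202012141917} and Lemma \ref{basics of place number Dynkin case}, this chain occupies the consecutive positions $p(P_{i}),p(P_{i})+2,\dots$ up to a value of the form $p(P_{i})+h-2$; that is, it is a diagonal segment of the mesh quiver $\ZZ Q$ running from the projective section to the injective section.

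Next I would prove the covering statement. The module $P_{i}$ supports exactly the vertices reachable from $i$ in $Q$; applying $\tau_{1}^{-1}$ repeatedly slides the dimension vector along the segment, and since the segment reaches all the way to an injective at position $p(P_{i})+h-2$ and $Q$ is connected, the union of the supports $\supp\dimvect(\tau_{1}^{-n}P_{i})$ over $0\leq n\leq m$ sweeps across all of $Q_{0}$. The cleanest implementation is a knitting argument carried out on the shape of $\ZZ\Delta$ for $\Delta\in\{A_{n},D_{n},E_{6},E_{7},E_{8}\}$; the $A_{n}$ case is the transparent model, where the orbit of $P_{i}$ is $\{M_{[i-s,\,n-s]}\mid 0\leq s\leq i-1\}$ and the union of the consecutive intervals $[i-s,n-s]$ is $[1,n]$.

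The main obstacle is this last step: turning ``the orbit of $P_{i}$ stretches from the projective section to the injective section'' into ``its supports cover $Q_{0}$'' in a uniform way, rather than by an ad hoc case analysis over the Dynkin types. (One might instead hope to exploit that $\Pi(Q)$ is a connected self-injective algebra with quiver the double $\overline{Q}$, but self-injectivity together with strong connectedness of the quiver is not by itself enough, so genuine input from the representation theory of $\Pa$ seems unavoidable.) Everything else --- the reduction through Theorem \ref{202011062314} and the bookkeeping with the place function --- is routine.
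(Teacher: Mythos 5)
Your reduction is fine: by Theorem \ref{202011062314}(1), sincerity of $\Pi(Q)e_{i}$ is equivalent to the statement that for every $j\in Q_{0}$ some module in the $\tau_{1}^{-1}$-orbit $\{\tau_{1}^{-n}P_{i}\mid n\geq 0\}$ has $j$ in its support. But after this reduction the covering claim \emph{is} the lemma, and you do not prove it. The ``sliding/sweeping'' sentence (the orbit stretches from the projective to the injective slice, $Q$ is connected, hence the supports cover $Q_{0}$) is a heuristic, not an argument: connectedness plus the endpoints of the orbit segment do not by themselves control which vertices appear in the intermediate dimension vectors, and you yourself flag this as the unresolved obstacle. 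What you actually supply is the type $A_{n}$ computation plus a promise of a knitting case analysis for $D_{n}$, $E_{6}$, $E_{7}$, $E_{8}$ that is never carried out. As written the proof therefore has a genuine gap at its only nontrivial step (your observation that self-injectivity of $\Pi(Q)$ plus connectedness of $\overline{Q}$ is insufficient is correct, but it only confirms that real input is still missing).

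For comparison, the paper's proof avoids AR theory entirely and is two lines: $\Pi(Q)$ depends only on the underlying graph $|Q|$, and for any orientation $\Omega$ the path algebra $\kk(|Q|,\Omega)$ sits inside $\Pi(Q)$ as the $*$-degree $0$ subalgebra. Since $|Q|$ is a tree, one may choose $\Omega$ so that the unique reduced walk from $j$ to $i$ becomes a directed path in $Q':=(|Q|,\Omega)$; this path is a nonzero element of $e_{j}\kk Q'e_{i}\subseteq e_{j}\Pi(Q)e_{i}$, which is exactly the required sincerity. If you want to salvage your route, you would either have to complete the knitting analysis type by type, or find a uniform hammock-style argument showing that the hammock from $P_{j}$ to $I_{j}$ meets the $\tau_{1}^{-1}$-orbit of $P_{i}$ --- both of which are substantially more work than the reorientation trick.
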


\begin{proof}
Let $j \in Q_{0}$. 
Then the underlying graph $|Q|$ admits an orientation $\Omega$ such that 
in the quiver $Q' := (|Q|, \Omega)$ has a path from $j $ to $i$.
Therefore,  $e_{j} \kk Q' e_{i} \neq 0$. 
Since 
$\kk Q'$ is a (ungraded) subalgebra  of $\Pi(Q)$, we conclude $e_{j} \Pi(Q) e_{i} \neq 0$ as desired.
\end{proof}

\subsection{A proof of Proposition \ref{eigenvector theorem}  non-Dynkin case}
 
Recall that 
$\Phi := - C^{t} C^{-1}$, $\Psi := \Phi^{-t}= -C^{-1}C^{t}$. 
It follows that  $\Psi  = C^{-1} \Phi C$. 
Therefore an element $ v \in \kk Q_{0}$ is an eigenvector of $\Psi$ with the eigenvalue $\lambda$ 
if and only if $w: = C v$ is an eigenvector of $\Phi$ with the eigenvalue $\lambda$. 
Recall that the Euler-Ringel form  $\eragl{-,+}$  is a bilinear form on $\kk Q_{0}$ which is defined to be 
$\eragl{v, w} := w^{t}C^{-1} v$, which  satisfies the formula $\eragl{\Euvect(M), \Euvect(N)} = \dim\Hom_{\Pa}(M, N) - \dim\Ext_{\Pa}^{1}(M, N)$ for $M, N \in \Pa \mod$. 
It follows that for $u, v \in \kk Q_{0}$, we have $u^{t} v= u^{t} C^{-1} (Cv) = \eragl{Cv, u}$. 
Thus in particular ${}^{v}\!\Euch(M) = \eragl{C v, \Euvect(M)}$ for $M \in \Dbmod{A}$. 

Using this observation, we deduce the non-Dynkin case of Theorem \ref{eigenvector theorem} from results by 
Dlab-Ringel \cite{Dlab-Ringel: Indecomposable},  de la Pena-Takane \cite{Pena-Takane} and Takane \cite{Takane}. Note that by \cite{Takane} we can choose the eigenvalue $\lambda$ to be the spectral radius of $\Phi$ and in particular not a root of unity different from $1$.

\subsubsection{Extended-Dynkin case}
 
 Let $Q$ be an extended Dynkin quiver.

 First we prove that $\Psi$ has a semi-regular eigenvector with the eigenvalue $1$. 
Let $R$ be an indecomposable quasi-simple regular module such that $\nu_{1}^{-1} (R) = R$. 
We set $\delta := \Euvect(R)$ and $v:= C^{-1}\delta$. 
Then for $M \in A \mod$, we have ${}^{v}\!\Euch(M) = \eragl{ \delta, \Euvect(M) }$.
It follows that $v$ is semi-regular.

 Next we prove that $\Psi$ does not have a regular eigenvector. 
Assume that $\Psi$ has a regular eigenvector $u$ with the eigenvalue $\lambda$.  
Let $R$ be an indecomposable quasi-simple regular module such that $\nu_{1}^{-1} (R) = R$. 
From the following equation we see that $\lambda = 1$. 
\[
\lambda \left(  {}^{u}\!\Euch(R) \right)= {}^{\Psi(u) }\!\Euch(R) = {}^{u}\!\Euch(\nu_{1}^{-1}(R)) = {}^{u}\! \Euch(R). 
\]
 By \cite{Dlab-Ringel: Indecomposable}, the eigenspace of $\Phi$ corresponding to the eigenvalue $1$ is one-dimensional 
 and generated by $\delta= \Euvect( R)$. 
 It follows from the equation $\Psi = C^{-1} \Phi C$ that 
 there exists $c \in \kk\setminus\{0\} $ such that $u = cC^{-1} \delta$. 
 Now we reach a contradiction by
 \[
 {}^{u}\!\Euch(R) =\Euvect(R)^{t} u = c \delta^{t} C^{-1} \delta = c \eragl{\delta, \delta } = 0. 
 \]
 \qed

 \subsubsection{Wild case}
 
 Let $Q$ be a wild quiver. 
 By Takane \cite[Theorem 1.4, Theorem 2.1]{Takane},
  $\Phi$ has an eigenvector $y^{-}$ such that $\eragl{y^{-}, \Euvect(M)} \neq 0$ for all $M \in \ind Q$. 
  Thus the element $v:= C^{-1} y^{-}$ is a regular eigenvector of $\Psi$. 
 \qed

 \subsection{Proof of Proposition \ref{202105241610}}

 We deal with the remaining case that $Q$ is extended Dynkin and $M$ is a shifted of a regular module. 
 We may assume that $M$ is a regular module. 
 Let $\cC$ be the connected component of the AR-quiver of $\Pa \mod$ to which $M$ belong. 
 We take (an isomorphism class of ) all the quasi-simple regular modules  $L_{1}, L_{2}, \ldots, L_{p}$ in $\cC$. In other words $\cC$ looks as follows:
 \[
\begin{xymatrix}@R=4pt@C=8pt{
&  \ar[dr] & & \ar[dr] &&&\cdots &\ar[dr]& \\
\ar[ur] \ar[dr] & & \ar[ur] \ar[dr] & & \ar[ur] \ar[dr] & &  \cdots & &\\
& \ar[ur] \ar[dr] & & \ar[ur] \ar[dr] & & & \cdots  & \ar[ur] \ar[dr] & \\
L_{p} \ar[ur] && L_{1} \ar[ur] && L_{2} \ar[ur] && \cdots && L_{p}
}\end{xymatrix}
\]

\begin{lemma}\label{202105071824}
 The dimension vectors $\Euvect(L_{1}), \Euvect(L_{2}), \ldots, \Euvect(L_{p})$ are linearly independent in $\kk Q_{0}$. 
\end{lemma}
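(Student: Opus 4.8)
The statement to prove is that for an extended Dynkin quiver $Q$, the dimension vectors $\Euvect(L_{1}),\dots,\Euvect(L_{p})$ of the quasi-simple regular modules in a single regular component $\cC$ are linearly independent in $\kk Q_{0}$. The first thing I would do is recall the structure of the regular part of $\Pa\mod$ for an extended Dynkin quiver: the regular modules form a tube category, each connected component $\cC$ is a homogeneous tube (rank $1$) or a non-homogeneous tube of some rank $p$, and the AR-translate $\tau_{1}$ permutes the $p$ quasi-simple regulars of $\cC$ cyclically, so that $\nu_{1}^{-p}L_{i}\cong\tau_{1}^{-p}L_{i}\cong L_{i}$. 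The key numerical input is the behaviour of the Euler--Ringel form $\eragl{-,+}$ on these classes: by the standard theory (e.g.\ Dlab--Ringel \cite{Dlab-Ringel: Indecomposable}, Ringel), for quasi-simple regular modules in a tube one has $\eragl{\Euvect(L_{i}),\Euvect(L_{j})}=\dim\Hom_{\Pa}(L_{i},L_{j})-\dim\Ext^{1}_{\Pa}(L_{i},L_{j})$, and these Hom- and Ext-spaces are explicitly known: $\Hom_{\Pa}(L_{i},L_{j})=0$ unless $i=j$ (in which case it is $\kk$), and $\Ext^{1}_{\Pa}(L_{i},L_{j})\neq 0$ precisely when $L_{j}=\tau_{1}L_{i}$, i.e.\ $j\equiv i-1$ in the cyclic ordering, where it is again $1$-dimensional. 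Hence the Gram matrix of $\eragl{-,+}$ restricted to $\langle\Euvect(L_{1}),\dots,\Euvect(L_{p})\rangle$, computed on the basis $\{\Euvect(L_{i})\}$, is (up to reindexing) $I_{p}-P$ where $P$ is the cyclic permutation matrix.

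The core of the argument is then a rank computation. The matrix $I_{p}-P$ has determinant $1-(-1)^{p+1}\cdot$(product around the cycle)$=0$ when we use the ungraded Euler form, so the Gram matrix is \emph{singular}, which at first looks unhelpful. The fix is that I do not need the Gram matrix itself to be nonsingular; I need the vectors $\Euvect(L_{i})$ themselves to be linearly independent. So instead I would pair the $\Euvect(L_{i})$ with enough \emph{other} classes. Concretely: pick a projective (or any) module $N$ with $\Hom_{\Pa}(N,L_{i})$ of controlled dimension, or better, use the standard fact that the classes of the $p$ quasi-simple regulars of a tube of rank $p$ together with the classes of, say, a projective and an injective module span a subspace of $K_{0}(\Pa)\cong\ZZ Q_{0}$ of the expected dimension. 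Alternatively, and more cleanly, I would invoke the well-known description of $K_{0}$ of the regular part: the Grothendieck group $\ZZ Q_{0}$ has rank $\#Q_{0}=n+1$ (for $\widehat{A}_{n},\widehat{D}_{n},\dots$), and a classical result says that the quasi-simple regulars in the various tubes, together with a slice of preprojectives/preinjectives, give a basis; in particular within a single tube of rank $p$ the $p$ quasi-simple classes are linearly independent because the sum $\sum_{i}\Euvect(L_{i})$ equals the null root $\delta$ (the unique positive radical vector, up to scalar), and no proper sub-sum of them is radical, while the partial sums $\Euvect(L_{1}),\Euvect(L_{1})+\Euvect(L_{2}),\dots$ are the dimension vectors of the indecomposable regulars of regular length $<p$, which are pairwise distinct nonzero vectors. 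Distinctness of these partial sums, plus the fact that they are dimension vectors of pairwise non-isomorphic indecomposables with strictly increasing total dimension, forces linear independence of the increments $\Euvect(L_{i})$.

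Let me make the last point precise as the main line of the proof. Set $d_{0}:=0$ and $d_{k}:=\Euvect(L_{1})+\cdots+\Euvect(L_{k})$ for $k=1,\dots,p$; by the structure of the tube, $d_{k}$ for $1\le k\le p-1$ is the dimension vector of the uniserial regular module with regular socle $L_{1}$ and regular length $k$, and $d_{p}=\delta$ is the null root. Suppose $\sum_{i=1}^{p}c_{i}\Euvect(L_{i})=0$. Since $\nu_{1}^{-1}=\tau_{1}^{-1}$ permutes the $L_{i}$ cyclically and the $\Euvect(L_{i})$ accordingly, the relation is preserved under the cyclic shift $c_{i}\mapsto c_{i+1}$; averaging (here I would use $\chara\kk$ is not an obstruction because I only use that the $c_{i}$ can be taken rational, or I argue directly) shows $\sum c_{i}=0$ forces, together with the fact that the $d_{k}$, $1\le k\le p-1$, are genuine dimension vectors of non-isomorphic indecomposables hence $\QQ$-linearly independent from each other and from $\delta$ only up to the single relation $d_{p}=\delta$, that all $c_{i}$ are equal; and equal nonzero $c_{i}$ would give $c\,\delta=0$, impossible. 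Hence $c_{i}=0$ for all $i$. \textbf{The main obstacle} I anticipate is pinning down exactly which ``enough other classes'' or which uniserial-module argument is cleanest and is genuinely available from what the paper cites (it references Dlab--Ringel, de la Pe\~na--Takane, Takane, and Simson--Skowro\'nski); I would check whether the paper's Section \ref{section: check the AN case} or the cited Simson--Skowro\'nski volumes already contain the statement that the quasi-simple classes of a tube are linearly independent, in which case the proof reduces to a one-line citation, and otherwise I would write out the partial-sum/null-root argument above in full. A secondary point to be careful about is the homogeneous case $p=1$, where the statement is trivial ($\Euvect(L_{1})=\delta\neq 0$), and the case where $M$ itself has regular length $>1$, which does not affect the statement since $L_{1},\dots,L_{p}$ are by definition the quasi-simple ones in $\cC$.
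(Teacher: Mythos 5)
Your intended route is genuinely different from the paper's: the paper proves Lemma \ref{202105071824} by checking the dimension vectors of the quasi-simple regular modules against the explicit tables in Simson--Skowro\'nski for one chosen orientation of each extended Dynkin type, and then reduces the general orientation to that case by APR-tilting; you aim instead for a structural argument inside a single tube. However, as written your main line has a genuine gap. The decisive step --- that distinctness of the partial sums $d_{k}=\Euvect(L_{1})+\cdots+\Euvect(L_{k})$, ``plus the fact that they are dimension vectors of pairwise non-isomorphic indecomposables with strictly increasing total dimension, forces linear independence of the increments'' --- is not a valid inference: non-isomorphic indecomposables can have linearly dependent, even equal, dimension vectors (all quasi-simples in the homogeneous tubes have dimension vector $\delta$, and regular indecomposables of quasi-length $p$ and $2p$ have dimension vectors $\delta$ and $2\delta$). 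And when you then assert that the $d_{k}$ are ``$\QQ$-linearly independent from each other and from $\delta$ only up to the single relation $d_{p}=\delta$'', that assertion is equivalent to the lemma itself, since the $d_{k}$ are exactly the partial sums of the $\Euvect(L_{i})$; so the argument is circular at the point where it has to do the work. The averaging step also silently divides by $p$, which is harmless only in characteristic $0$ or prime to $p$, but this is moot because the step it feeds into is the circular one.

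The irony is that you already had a complete proof in your first paragraph and abandoned it too early. The singularity of the Gram matrix $I_{p}-P$ is not an obstacle: suppose $\sum_{i=1}^{p}c_{i}\Euvect(L_{i})=0$ in $\kk Q_{0}$. Pairing with each $\Euvect(L_{j})$ via the Euler--Ringel form and using $\Hom_{\Pa}(L_{i},L_{j})=\delta_{ij}\kk$ together with $\Ext^{1}_{\Pa}(L_{i},L_{j})\neq 0$ exactly when $L_{j}\cong\tau_{1}L_{i}$ (one-dimensional in that case, with $\kk$ algebraically closed as the paper assumes from Section \ref{section: the middle term of the middle terms} on), the relation gives $c_{j}-c_{j'}=0$ for every $j$ and its cyclic neighbour $j'$, hence all $c_{i}$ are equal to a common value $c$. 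Then $c\,\delta=c\sum_{i}\Euvect(L_{i})=0$, and since $\delta$ has entry $1$ at the extending vertex it is nonzero in $\kk Q_{0}$ over any field, so $c=0$. This closes the gap, yields the lemma without any case-by-case table check, and is arguably cleaner than the paper's own proof; but the partial-sum paragraph as you wrote it should be deleted rather than repaired, since it cannot be made non-circular in its current form.
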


\begin{proof}
We can check the statement for extended Dynkin quivers with particular orientations from 
the table given in \cite[XIII 2.4, 2.6, 2.12, 2.16, 2.20]{Simson-Skowronski: II}. 
Using the APR-tilting equivalences, we can reduce the general case to this case. 
\end{proof}

\begin{question}
Is there a direct proof which does not rely on any  explicit description of regular modules on the mouth?

Does the same statement holds true for  $n$-tame algebras in  the sense of Herschend-Iyama-Oppermann \cite{HIO}?
\end{question}

\begin{lemma}\label{202105241859}
Let $u_{1}, u_{2}, \ldots, u_{p} \in \kk^{r}$ be linear independent elements 
and $x \in \kk^{\times}$. 
Then there exists $v \in \kk^{r}$ such that ${}^{t} u_{i} v = x$ for all $i =1,2, \ldots, p$. 
\end{lemma}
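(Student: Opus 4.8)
The statement is pure linear algebra, so the plan is short. First I would package the data into a single matrix: let $U \in \kk^{p \times r}$ be the matrix whose $i$-th row is the transpose ${}^t u_i$ of $u_i$. The hypothesis that $u_1, \dots, u_p$ are linearly independent in $\kk^r$ says precisely that the rows of $U$ are linearly independent, i.e.\ $\operatorname{rank} U = p$. The system of equations ${}^t u_i v = x$ for $i = 1, \dots, p$ is then just the matrix equation $U v = x \mathbf{1}_p$, where $\mathbf{1}_p = (1, 1, \dots, 1)^t \in \kk^p$.

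Next I would invoke surjectivity: a $p \times r$ matrix of rank $p$ induces a surjective linear map $\kk^r \to \kk^p$ (its image is a subspace of dimension $\operatorname{rank} U = p$, hence all of $\kk^p$). Therefore the vector $x \mathbf{1}_p \in \kk^p$ lies in the image of this map, and any preimage $v \in \kk^r$ satisfies $U v = x \mathbf{1}_p$, that is, ${}^t u_i v = x$ for all $i = 1, \dots, p$, as required. (Concretely, one may complete $u_1, \dots, u_p$ to a basis $u_1, \dots, u_r$ of $\kk^r$, let $w_1, \dots, w_r$ be the dual basis, and set $v := x(w_1 + \cdots + w_p)$; then ${}^t u_i v = x\,{}^t u_i (w_1 + \cdots + w_p) = x$ for $1 \le i \le p$.)

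There is essentially no obstacle here — the only thing to be careful about is that nothing about $x$ being nonzero is actually needed for existence (the argument works for any scalar on the right-hand side); the hypothesis $x \in \kk^\times$ is merely inherited from the context in which this lemma is applied. So the proof is just the two steps above, and I would write it out in a couple of lines.
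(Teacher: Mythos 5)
Your proof is correct and is essentially the same argument as the paper's: the paper also observes that the linear map $\kk^{r} \to \kk^{p}$, $w \mapsto ({}^{t}u_{1}w, \ldots, {}^{t}u_{p}w)^{t}$, is surjective by linear independence of the $u_{i}$, so the constant vector $(x,\ldots,x)^{t}$ has a preimage. Your additional remark about the explicit dual-basis construction and the irrelevance of $x \neq 0$ is fine but not needed.
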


\begin{proof} 
Let $\xi := (x, x, \ldots, x)^{t} \in \kk^{p}$. 
Since the map $F: \kk^{r} \to \kk^{p}, F(w) := ({}^{t}u_{1}w, \ldots, {}^{t}u_{p}w)^{t}$ is linear and surjective, 
the inverse image $F^{-1}(\xi)$ is non-empty. 
\end{proof}

It follows from Lemma \ref{202105071824} and Lemma \ref{202105241859} that 
 there exists $v \in \kk Q_{0}$ such that 
all the weighted Euler characteristics ${}^{v}\!\Euch(L_{1}), {}^{v}\!\Euch(L_{2}), \ldots, {}^{v}\!\Euch(L_{p})$ have the same non-zero  value. 
If we set $x := {}^{v}\!\Euch(L_{1})=  {}^{v}\!\Euch(L_{2}) = \cdots = {}^{v}\!\Euch(L_{p})$, then 
the numbers ${}^{v}\!\Euch(N)$ for $N \in \cC$ are given in the diagram below 
\[ 
\begin{xymatrix}@R=4pt@C=8pt{
& 4x \ar[dr] & & 4x \ar[dr] &&&\cdots & 4x \ar[dr]& \\
3x \ar[ur] \ar[dr] & & 3x \ar[ur] \ar[dr] & & 3x \ar[ur] \ar[dr] & &  \cdots & &3x\\
& 2x \ar[ur] \ar[dr] & & 2x\ar[ur] \ar[dr] & & & \cdots  & 2x \ar[ur] \ar[dr] &  \\
x \ar[ur] && x \ar[ur] && x \ar[ur] && \cdots && x
}\end{xymatrix}
\]

We have ${}^{v}\!\Euch(\PPi_{1} \lotimes_{\Pa} N ) ={}^{v}\!\Euch(N)$ and 
consequently,   
 \[
 \frac{{}^{v}\!\Euch(\PPi_{1} \lotimes_{\Pa} N) }{{}^{v}\!\Euch(N) } = 1
 \]
 for any indecomposable modules $N$ belonging to $\cC$.
 
 Thus we see that for any indecomposable module $K$ belonging to $\cC$ 
 the  weight  $v \in \kk Q_{0}$ has the property (II)${}_{K, 1}$.

 \bigskip 
 Now we have completed the proofs of Proposition \ref{eigenvector theorem} and Proposition \ref{202105241610}.
 Hence we have completed the proofs of Theorem \ref{202105241546} and Theorem \ref{202105241547} as well.

\section{QHA of Dynkin type $A_{N}$}\label{section: check the AN case}
We study the case where $Q$ is  an $A_{N}$-quiver and the base field $\kk$ is of arbitrary characteristic.

\begin{theorem}\label{202106081239}
Let $N \geq1$ and  $Q$  an $A_{N}$-quiver.
Assume that $\kk$ has a primitive $(N+1)$-th root of unity. 
Then for a generic $v \in \kk^{\times} Q_{0}$, 
the morphism 
\[
{}^{v}\!\varrho_{M}^{n} :M \to {}^{v}\!\YYM_{n} \lotimes_{\Pa} M
\]
is a minimal left $\rad^{n}$-approximation 
for all $M \in \Dbmod{\Pa}$ and $n = 1,2, \ldots, N$.
\end{theorem}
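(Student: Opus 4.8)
The plan is to exhibit a single explicit sincere weight that verifies all the asserted approximation properties, and then to invoke the openness result Theorem~\ref{generic theorem} to upgrade this to a statement about generic weights. I would first reduce to the linearly oriented quiver $Q\colon 1\to 2\to\cdots\to N$; for this orientation $h=N+1$, $N_{Q}=\{0,1,\dots,N-1\}$, and the Cartan matrix is $C_{ij}=[i\le j]$, so a direct computation gives that $\Psi=-C^{-1}C^{t}$ acts on column vectors by $(\Psi w)_{i}=w_{i+1}$ for $i<N$ and $(\Psi w)_{N}=-(w_{1}+\cdots+w_{N})$. (A general orientation is handled in the same way with the corresponding Cartan matrix, or one reduces to the linear case along the reflection/APR-tilting equivalences used in the proof of Lemma~\ref{202105071824}.) Let $\zeta\in\kk$ be a primitive $(N+1)$-th root of unity. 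Since $1+\zeta+\cdots+\zeta^{N}=0$, the vector $v_{0}:=(1,\zeta,\zeta^{2},\dots,\zeta^{N-1})\in\kk^{\times}Q_{0}$ is an eigenvector of $\Psi$ with eigenvalue $\zeta$. It is moreover \emph{regular}: the indecomposable $\kk Q$-modules are the interval modules $M[a,b]$ with $1\le a\le b\le N$ and $\Euvect(M[a,b])=\mathbf{e}_{a}+\cdots+\mathbf{e}_{b}$, whence ${}^{v_{0}}\!\Euch(M[a,b])=\zeta^{a-1}+\cdots+\zeta^{b-1}=\zeta^{a-1}\tfrac{\zeta^{b-a+1}-1}{\zeta-1}\neq 0$ because $1\le b-a+1\le N<N+1$. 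As $v_{0}$ is a regular eigenvector of $\Psi$ with eigenvalue $\zeta\neq-1$ (for $N\ge 2$), equation \eqref{202111101530} together with $\Psi(v_{0})=\zeta v_{0}$ shows that $v_{0}$ has the property $(\mathrm{II})_{M,\zeta}$ for every indecomposable $M\in\Dbmod{\Pa}$ (the AR-quiver of $\Dbmod{\Pa}$ being connected).

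Next I would feed $v_{0}$ into the $\rad^{n}$-approximation criteria. Since $\zeta$ has order $N+1$, one has $V_{m}(\zeta)=1+\zeta+\cdots+\zeta^{m-1}=\tfrac{\zeta^{m}-1}{\zeta-1}\neq 0$ for every $m$ with $1\le m\le N$. Hence, for each indecomposable $M$ and each $n$ with $2\le n\le N$, Theorem~\ref{202104091259} (applied with $\lambda=\zeta$) shows that ${}^{v_{0}}\!\varrho_{M}^{n}\colon M\to{}^{v_{0}}\!\YYM_{n}\lotimes_{\Pa}M$ is a minimal left $\rad^{n}$-approximation; for $n=1$ the same holds by the universal Auslander--Reiten triangle (Theorem~\ref{universal Auslander-Reiten triangle}), using again that $v_{0}$ is regular. (For $n=N=h-1$ this is consistent with Corollary~\ref{vanishing corollary} and Theorem~\ref{kQ approximations theorem add}: one has ${}^{v_{0}}\!\YYM_{N}\lotimes_{\Pa}M=0$, so ${}^{v_{0}}\!\varrho_{M}^{N}$ is the zero morphism $M\to 0$, which is indeed the minimal left $\rad^{N}$-approximation.) The case $N=1$ is trivial, since $\YYM_{1}(A_{1})=0$.

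To pass from $v_{0}$ to a generic weight I would use Theorem~\ref{generic theorem}. Because $\kk A_{N}$ is representation-finite, there are only finitely many indecomposable objects of $\Dbmod{\Pa}$ up to cohomological shift, and both the formation of ${}^{v}\!\varrho_{M}^{n}$ and the notion of minimal approximation commute with shifts and direct sums, so it suffices to treat these finitely many $M$. For each such $M$ and each $n$ with $2\le n\le N-1$ (note $n\in N_{Q}$ in this range) the set $\cL_{M,n}$ is a Zariski open subset of $\kk Q_{0}$ by Theorem~\ref{generic theorem}, and it contains $v_{0}$ because property $(\mathrm{II})_{M,\zeta}$ implies property $(\mathrm{I})_{M,n}$, hence $v_{0}\in\cI_{M,n}$. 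Likewise the locus of regular weights is a non-empty Zariski open subset of $\kk Q_{0}$ (the complement of finitely many hyperplanes) containing $v_{0}$, and on it the cases $n=1$ and $n=N$ hold by the previous paragraph. Therefore $U:=(\text{regular locus})\cap\bigcap_{[M]}\bigcap_{2\le n\le N-1}\cL_{M,n}$ is a non-empty Zariski open subset of $\kk Q_{0}$ (a finite intersection of non-empty Zariski opens all containing $v_{0}$), and for every $v\in U$ the morphism ${}^{v}\!\varrho_{M}^{n}$ is a minimal left $\rad^{n}$-approximation for all $M\in\Dbmod{\Pa}$ and all $n=1,\dots,N$.

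The main obstacle, by contrast with the characteristic-zero result Theorem~\ref{202105241547}, is that the complex-analytic argument used in the proof of Proposition~\ref{eigenvector theorem}(1) to establish regularity of the eigenvector is no longer available; the whole point of restricting to type $A_{N}$ and assuming the existence of a primitive $(N+1)$-th root of unity is that one can then write down the eigenvector $v_{0}=(1,\zeta,\dots,\zeta^{N-1})$ explicitly and verify its regularity by the elementary geometric-sum computation above, which is valid in any characteristic. One minor technical point is that Theorem~\ref{202104091259} is stated under the standing hypothesis that $\kk$ is algebraically closed; this is harmless here, since every indecomposable $\kk A_{N}$-module and every shift of it is a brick over an arbitrary field, so the only consequence of that hypothesis actually used --- namely $\dim\ResEnd_{\Pa}(M)=1$ for all indecomposable $M$ --- remains in force, and in any event one may base change to $\overline{\kk}$ because $v_{0}$ and the equations cutting out $U$ are already defined over $\kk$.
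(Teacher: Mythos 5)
Your proposal is correct and follows essentially the same route as the paper's proof: reduce to the linearly oriented $A_{N}$-quiver via APR-tilting, exhibit the explicit regular eigenvector of $\Psi$ built from a primitive $(N+1)$-th root of unity (yours, $(1,\zeta,\dots,\zeta^{N-1})$, is a scalar multiple of the paper's $(\lambda^{N-1},\dots,\lambda,1)$ with $\lambda=\zeta^{-1}$), verify regularity by the geometric-sum computation on interval modules, check $V_{m}(\zeta)\neq 0$, and then combine the criterion of Theorem \ref{202104091259} with the openness result Theorem \ref{generic theorem}. Your extra remarks (the trivial $n=1$ and $n=N=h-1$ cases, the finite intersection of open loci, and the base-change/brick observation concerning algebraic closedness) only make explicit steps the paper leaves implicit.
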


\begin{proof} 
By Theorem \ref{generic theorem} and Theorem \ref{202105241547},  
it is enough to show that 
the matrix $\Psi= \Phi^{-t}$ has a regular eigenvector $v\in \kk Q_{0}$.  
whose eigenvalue $\lambda$ satisfies 
\[
V_{m}(\lambda) \neq 0 \ \textup{ for } m = 2, \ldots, N-1. 
\]

Using APR-tilting equivalences, we may assume that $Q$ is   a directed $A_{N}$-quiver $Q$. 
\[
Q: 1 \to 2 \to 3 \to \cdots \to N.
\]
It is straight forward to check 
\[
\Psi=
\begin{pmatrix} 
-1 & -1 & -1& \cdots & -1 & -1 \\
1 & 0 & 0 & \cdots & 0 & 0\\ 
0 & 1 & 0 & \cdots & 0 & 0\\ 
0 & 0 & 1 & \cdots & 0 & 0\\ 
\vdots & \vdots & \vdots & \ddots &\vdots & \vdots \\
0 & 0 & 0 & \cdots & 1 & 0\\ 
\end{pmatrix}
\]
and the eigenpolynomial $F_{\Psi}(t)$ is 
\[
F_{\Psi}(t) = t^{N}+ t^{N-1} + \cdots +  t + 1 = V_{N +1}(t). 
\]

Let $\lambda$  be  a primitive $(N+1)$-th  root unity, then it is a root   of $F_{\Phi}(t)$. 
Then 
the vector  $v := (\lambda^{N-1}, \lambda^{N-2}, \ldots,   \lambda, 1)^{t} \in \kk Q_{0}$ 
is an eigenvector of $\Psi$  belonging to the eigenvalue $\lambda$.  

Recall that  isomorphism classes of indecomposable modules over $\Pa$ is 
parameterized by pairs  $(i,j)$ of integers such that $1 \leq i \leq j \leq N$.  
Namely, for such a pair $(i,j)$, 
there exists a unique indecomposable module $M_{(i,j)}$ such that 
$\dim e_{k} M= 1 \ ( i \leq k \leq j), \ \dim e_{k}M = 0 \ $(otherwise) 
and these modules form a set of representatives of $\ind Q$. 
Since 
\[
{}^{v}\!\Euch(M_{(i,j)} ) = \lambda^{N-i} + \lambda^{N -i-1} + \cdots + \lambda^{N -j},   
\]
we conclude that ${}^{v}\!\Euch(M_{(i,j)}) \neq 0$ for all $(i,j)$. 
\end{proof}

\section{Bimodule versions and the universal ladder}\label{section: the universal ladder}
The aim of this section is to prove Theorem \ref{202105261457} which is a bimodule version of Theorem \ref{202104091259}. Throughout the section we again suppress the weight in our notation and write $\YYM = {}^{v}\!\YYM$.

\subsection{}
First we establish a bimodule version of Theorem \ref{2020071920551}.

\begin{proposition}\label{202102231445} 
If the weight $v \in \kk^{\times} Q_{0}$  is a regular eigenvector of $\Psi= \Phi^{-t}$ with the eigenvalue $\lambda$. 
Then, there exists a morphism $\xxi_{2}: \YYM_{1} \lotimes_{\Pa} \YYM_{1} \to \PPi_{1}$ in $\sfD(\Pa^{\mre})$ 
that satisfies the following conditions:
\begin{enumerate}[(1)]
\item 
We have the following equalities in $\Hom_{\Pa^{\mre}} (\YYM_{1}, \PPi_{1})$ 
\[
\xxi_{2} (\rrho_{\YYM_{1}}) = \ppi_{1}, \ \xxi_{2} ({}_{\YYM_{1}}\rrho) = -\lambda \ppi_{1}.
\]

\item 
We have the following equality in $\End_{\Pa^{\mre}}(\PPi_{1}) \cong \kk$ 
\[
\xxi_{2} \eeta^{*}_{2}= - (1 + \lambda) \id_{\PPi_{1}}.
\]

\item 
If $\lambda \neq -1$, then the composition $\xxi_{2} \eeta^{*}_{2}$ is an automorphism of $\PPi_{1}$ in $\sfD(\Pa^{\mre})$. 

\end{enumerate} 
\end{proposition}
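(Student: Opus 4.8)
The statement is the bimodule analogue of Theorem~\ref{2020071920551}. Since the weight $v$ is a regular eigenvector of $\Psi$ with eigenvalue $\lambda$, every indecomposable $M\in\ind\Dbmod{\Pa}$ (in particular all simple modules $S_i$) satisfies ${}^{v}\!\Euch(M)\neq 0$, ${}^{v}\!\Euch(\PPi_1\lotimes_\Pa M)=\lambda\,{}^{v}\!\Euch(M)\neq 0$ and ${}^{v}\!\Euch(\YYM_1\lotimes_\Pa M)=(1+\lambda)\,{}^{v}\!\Euch(M)$, by \eqref{202111101530}. So Theorem~\ref{2020071920551} applies to every indecomposable $M$, producing morphisms $\xi_{2,M}\colon \YYM_1\lotimes_\Pa\YYM_1\lotimes_\Pa M\to \PPi_1\lotimes_\Pa M$ with $\xi_{2,M}\rrho_{\YYM_1\lotimes M}=\ppi_{1,M}$, $\xi_{2,M}\eeta^*_{2,M}=-(1+\lambda)\id$, and $\xi_{2,M}\,{}_{\YYM_1}\rrho_M=-\lambda\,\ppi_{1,M}$ — the key point being that the scalars $-(1+\lambda)$ and $-\lambda$ are now \emph{independent of $M$} because $v$ is an eigenvector. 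The remark following Corollary~\ref{202102221805} already flags that in precisely this eigenvector situation a bimodule-level $\xxi_2$ should exist; the task is to construct it.

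\textbf{Plan of construction.} First I would pass to the homotopy category: choose a projectively cofibrant (over $\Pa^{\mre}$) model of $\YYM_1\lotimes_\Pa\YYM_1$, so that $\Hom_{\sfD(\Pa^{\mre})}(\YYM_1\lotimes_\Pa\YYM_1,\PPi_1)$ is computed by actual cochain maps. The morphism $\xxi_2$ we seek is forced on the $*$-degree level to satisfy $\xxi_2\rrho_{\YYM_1}=\ppi_1$ in $\Hom_{\Pa^{\mre}}(\YYM_1,\PPi_1)$; since $\zzeta_2\colon\YYM_1\lotimes_\Pa\YYM_1\to\YYM_2$ is the cone morphism of $\eeta^*_2$ and $\rrho_{\YYM_1}$ fits (via Lemma~\ref{pre universal Auslander-Reiten triangle lemma}/Corollary~\ref{2021010816032}) into the exact triangle ${}^{v}\!\sfAR\lotimes\YYM_1$, the obstruction to extending $\ppi_1$ along $\rrho_{\YYM_1}$ to a map on $\YYM_1\lotimes_\Pa\YYM_1$ lies in $\Hom_{\sfD(\Pa^{\mre})}(\PPi_1\lotimes_\Pa\YYM_1,\PPi_1[1])$, equivalently — using $\PPi_1\cong\Pa^{\vvee}[1]$ and that $\PPi_1\lotimes_\Pa-$ is (a shift of) the inverse Serre functor — in a $\tuHom$ group that one computes to vanish, because $\Pa$ is hereditary. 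Actually the cleanest route avoids even that: apply the bimodule octahedral machinery of Lemma~\ref{octahedral axiom} and Corollary~\ref{2021010816032} directly to the exact triangle ${}^{v}\!\sfc'_{\YYM_1}\colon \YYM_1\lotimes_\Pa\YYM_1\to\YYM_1\lotimes_\Pa\YYM_1$ together with $\sfV_2$ and $\mathsf{AR}$, to produce $\xxi_2$ as an explicit composite of cone/co-cone morphisms and the identification $\YYM_2\cong\cone(\eeta^*_2)$. Then (2) and (3) are checked by applying $-\lotimes_\Pa S_i$ for each vertex $i$: by Theorem~\ref{2020071920551}(1) and uniqueness of $\xi_{2,S_i}$ modulo $\rad$ (Corollary~\ref{202102221805}), $\xxi_2\eeta^*_2$ specializes at $S_i$ to $-(1+\lambda)\id_{\PPi_1\lotimes S_i}$; since $\End_{\Pa^{\mre}}(\PPi_1)\cong\kk$ and a bimodule endomorphism that is $-(1+\lambda)$ on every $S_i$ must be the scalar $-(1+\lambda)$, (2) follows, and (3) is immediate when $\lambda\neq-1$. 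The second equality in (1), $\xxi_2\,{}_{\YYM_1}\rrho=-\lambda\,\ppi_1$, follows the same way from Theorem~\ref{2020071920551}(3) plus Lemma~\ref{202207131538} (the identity ${}_{\YYM_1}\rrho-\rrho_{\YYM_1}=\eeta^*_2\ppi_1$), combined with (2): $\xxi_2\,{}_{\YYM_1}\rrho=\xxi_2\rrho_{\YYM_1}+\xxi_2\eeta^*_2\ppi_1=\ppi_1-(1+\lambda)\ppi_1=-\lambda\ppi_1$.

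\textbf{Main obstacle.} The delicate point is promoting the \emph{fibrewise} data $\{\xi_{2,M}\}$ to a single morphism in $\sfD(\Pa^{\mre})$: the remark after Corollary~\ref{202102221805} shows this fails for a generic weight precisely because the scalars depend on $M$, so the construction genuinely uses the eigenvector hypothesis, and I must not try to simply "assemble" $\xxi_2$ from the $\xi_{2,M}$ — it has to be built intrinsically over $\Pa^{\mre}$. The cleanest way to sidestep obstruction-group computations is the octahedral-axiom construction sketched above: combining the isomorphism of triangles in Corollary~\ref{2021010816032} (with $T=\YYM_1$, so $\underline{\YYM_1}^t$ acts with eigenvalue $\lambda$ on $v$) with the triangle $\sfV_2\colon\PPi_1\xrightarrow{\eeta^*_2}\YYM_1\lotimes_\Pa\YYM_1\xrightarrow{\zzeta_2}\YYM_2$ and the splitting of $\eeta^*_2$ after tensoring with any $M$ with ${}^{v}\!\Euch(M)\neq0$ (Theorem~\ref{right approximation theorem}(2), which here holds universally since $v$ is regular). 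Concretely: because $v$ is regular, $\eeta^*_2\lotimes_\Pa M$ is a split monomorphism for every $M$, forcing $\eeta^*_2$ itself to be a split monomorphism of complexes of $\Pa^{\mre}$-modules; choose a retraction $p\colon\YYM_1\lotimes_\Pa\YYM_1\to\PPi_1$ with $p\eeta^*_2=\id_{\PPi_1}$, and set $\xxi_2:=c\cdot p$ where $c$ is the scalar in $\End_{\Pa^{\mre}}(\PPi_1)\cong\kk$ determined by requiring $\xxi_2\rrho_{\YYM_1}=\ppi_1$ — one verifies $\xxi_2\rrho_{\YYM_1}$ is already a non-zero scalar multiple of $\ppi_1$ by specializing at a simple module, and rescales accordingly. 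With $\xxi_2$ in hand the three claims are the vertex-by-vertex verification described above. I expect the only real work is bookkeeping the signs coming from $\cone$ and $[1]$ in \eqref{202102222229} and Lemma~\ref{octahedral axiom}, exactly as in the proof of Theorem~\ref{2020071920551}.
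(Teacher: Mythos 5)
There is a genuine gap, and it sits exactly at the point you yourself flag as the main obstacle: the construction of $\xxi_{2}$ as a morphism in $\sfD(\Pa^{\mre})$. Your concrete proposal is to observe that $\eeta^{*}_{2}\lotimes_{\Pa}M$ is a split monomorphism for every $M$ (regularity of $v$) and to conclude that ``$\eeta^{*}_{2}$ itself is a split monomorphism of complexes of $\Pa^{\mre}$-modules'', then rescale a retraction. This inference is not valid: fibrewise splitting after $-\lotimes_{\Pa}M$ does not control the class of the triangle $\sfV_{2}$ in $\Hom_{\sfD(\Pa^{\mre})}(\YYM_{2},\PPi_{1}[1])$, and a bimodule extension can split on every fibre without splitting over $\Pa^{\mre}$. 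Worse, the bimodule splitting of $\eeta^{*}_{2}$ is (for $\lambda\neq-1$) essentially equivalent to assertion (3) of the proposition, so this route assumes what is to be proved. Your fallback — that the obstruction to extending $\ppi_{1}$ along $\rrho_{\YYM_{1}}$ vanishes ``because $\Pa$ is hereditary'' — is also unsupported: the relevant group is a Hom group over $\Pa^{\mre}$ (of global dimension $2$), namely $\Hom_{\sfD(\Pa^{\mre})}(\PPi_{1}\lotimes_{\Pa}\YYM_{1},\PPi_{1}[1])$ up to shift, and no vanishing argument is given; the octahedral variant is only sketched and never actually produces the map. Even granting a retraction $p$, the claim that $p\,\rrho_{\YYM_{1}}$ is a scalar multiple of $\ppi_{1}$ inside $\Hom_{\Pa^{\mre}}(\YYM_{1},\PPi_{1})$ cannot be checked ``by specializing at a simple module'', since that Hom space is not one-dimensional and the specialization maps are not injective in any way you establish.

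What the paper actually does at this point is a duality construction, and the eigenvector hypothesis enters precisely there: $\xxi_{2}$ is defined as $\lambda^{-1}\,\sfcan\circ\RHom_{\Pa^{\op}}(\eeta^{*},\PPi_{2})\circ\sfiso^{-1}$, where $\sfiso\colon\RHom_{\Pa^{\op}}(\YYM_{1}\lotimes_{\Pa}\YYM_{1},\PPi_{2})\to\YYM_{1}\lotimes_{\Pa}\YYM_{1}$ is assembled from the bimodule isomorphisms ${}^{v}\!\sfe'\colon\RHom_{\Pa^{\op}}(\YYM_{1},\PPi_{1})\cong\YYM_{1}$ (Corollary \ref{2021022307502}) and ${}^{v}\!\sfc'_{\PPi_{1}}\colon\YYM_{1}\lotimes_{\Pa}\PPi_{1}\cong\PPi_{1}\lotimes_{\Pa}\YYM_{1}$ (Corollary \ref{20210108160321}), both of which exist only because $v$ is an eigenvector of $\Psi$. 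Assertion (1) is then verified by dualizing the commutative triangles built from $\eeta^{*}$, $\rrho_{\PPi_{1}}$, ${}_{\YYM_{1}}\ppi_{1}$, $\ppi_{1,\YYM_{1}}$ and ${}_{\PPi_{1}}\rrho$ (Lemmas \ref{202001111736}, \ref{homotopic lemma 5 right}), and (2) is obtained by tensoring with the projectives $\Pa e_{i}$ (not the simples), invoking Corollary \ref{202102221805} and Corollary \ref{202102211730} to see that $\xxi_{2}\eeta^{*}_{2}+(1+\lambda)\id_{\PPi_{1}}$ becomes radical over $\Pa$, and then Lemma \ref{202103040935} together with $\End_{\Pa^{\mre}}(\PPi_{1})\cong\kk$ to conclude it is zero. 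Two of your reductions are sound and would shorten that verification once the construction is in place: deducing the second identity of (1) from the first identity, (2), and Lemma \ref{202207131538}, and pinning down the scalar in (2) from $\End_{\Pa^{\mre}}(\PPi_{1})\cong\kk$ plus a single fibrewise computation. But without a legitimate construction of $\xxi_{2}$ over $\Pa^{\mre}$ satisfying $\xxi_{2}\rrho_{\YYM_{1}}=\ppi_{1}$, the argument does not get off the ground.
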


We need a preparation. 
Let $X$ be an object of $\Dbmod{\Pa^{\mre}}$. 
We provide a lemma that compares the endomorphism algebra $\End_{\Pa^{\mre}}(X)$ over $\Pa^{\mre}$ and 
the endomorphism algebra $\End_{\Pa}(X)$ over $\Pa$.  
Note that the forgetful functor $\sfD(\Pa^{\mre}) \to \sfD(\Pa)$ is not fully faithful. 

\begin{lemma}\label{202103040935}
Let $X \in \Dbmod{\Pa^{\mre}}$ be such an object  that  $\tuH^{< 0}(\RHom_{\Pa}(X, X)) = 0$. 
 Then the canonical map $i: \End_{\Pa^{\mre}}(X) \to \End_{\Pa}(X)$ is injective. 
 Moreover we have $\rad \End_{\Pa}(X) \cap \End_{\Pa^{\mre}}(X) \subset \rad\End_{\Pa^{\mre}}(X)$. 
\end{lemma}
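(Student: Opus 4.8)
\textbf{Proof plan for Lemma \ref{202103040935}.}

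The plan is to argue directly from the definition of the radical of a morphism together with the hypothesis $\tuH^{<0}(\RHom_{\Pa}(X,X))=0$, which says that $\End_{\Pa}(X)=\Hom_{\Dbmod{\Pa}}(X,X)$ coincides with the degree-$0$ part of $\RHom_{\Pa}(X,X)$ and that there are no negative self-extensions. First I would recall the standard fact that for the forgetful functor $U:\sfD(\Pa^{\mre})\to\sfD(\Pa)$ the counit of the adjunction with $\RHom_{\Pa^{\mre}}(\Pa^{\mre},-)$ (or, concretely, the canonical $\Pa^{\mre}$-linear map coming from restriction of scalars along $\Pa\to\Pa^{\mre}$, $r\mapsto r\otimes 1$) induces the canonical algebra homomorphism $i:\End_{\Pa^{\mre}}(X)\to\End_{\Pa}(X)$. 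Injectivity of $i$ is then a cohomological computation: there is a spectral sequence (or, more elementarily, a decomposition using the bar resolution of $\Pa$ over $\Pa^{\mre}$ as in Section \ref{Section:trace formula}) relating $\RHom_{\Pa^{\mre}}(X,X)$ and $\RHom_{\Pa}(X,X)$, and since $\RHom_{\Pa}(X,X)$ is concentrated in non-negative cohomological degrees with $i$ identified with the inclusion of a sub-term corresponding to the $0$-th column, no cancellation can kill a nonzero class in $\tuH^{0}(\RHom_{\Pa^{\mre}}(X,X))=\End_{\Pa^{\mre}}(X)$. I would phrase this cleanly by writing $\RHom_{\Pa^{\mre}}(X,X)\cong\RHom_{\Pa}(X\lotimes_{\Pa^{\mre}}\PPa,X)$ using the bar/Koszul resolution $\PPa\xrightarrow{\sim}\Pa$ of $\Pa$ over $\Pa^{\mre}$ from Section \ref{section: the universal Auslander-Reiten triangle}, so that $i$ becomes the map induced by $\mu:\PPa\to\Pa$; since $\PPa$ is concentrated in cohomological degrees $-1,0$ and $\RHom_{\Pa}(X,X)$ vanishes below degree $0$, the long exact sequence gives injectivity of $i$ on $\tuH^0$.

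The second statement, $\rad\End_{\Pa}(X)\cap\End_{\Pa^{\mre}}(X)\subset\rad\End_{\Pa^{\mre}}(X)$, I would deduce from the first by a general ring-theoretic observation: if $A\hookrightarrow B$ is an injective homomorphism of finite-dimensional $\kk$-algebras, it is not true in general that $\rad B\cap A\subset\rad A$, so one must use the specific structure here. The key point is that the category $\Dbmod{\Pa^{\mre}}$ is Krull--Schmidt and that the forgetful functor $U$ reflects isomorphisms; hence a morphism $f\in\End_{\Pa^{\mre}}(X)$ is an isomorphism in $\sfD(\Pa^{\mre})$ if and only if $U(f)$ is an isomorphism in $\sfD(\Pa)$. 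Now suppose $f\in\rad\End_{\Pa}(X)\cap\End_{\Pa^{\mre}}(X)$. If $f\notin\rad\End_{\Pa^{\mre}}(X)$, then, writing $X=\bigoplus X_i$ into $\Pa^{\mre}$-indecomposables, some diagonal-type component of $f$ with respect to this decomposition would be non-nilpotent, hence (using that $\End_{\Pa^{\mre}}(X_i)$ is local and $i$ injective on each $\End_{\Pa^{\mre}}(X_i)$) $f$ would have an invertible ``diagonal block'' over $\Pa^{\mre}$; composing with the projections/inclusions over $\Pa^{\mre}$, which map under $U$ to morphisms over $\Pa$, one produces from $f$ an isomorphism over $\Pa^{\mre}$ between some $\Pa^{\mre}$-direct summands, hence under $U$ an isomorphism over $\Pa$, contradicting $f\in\rad\End_{\Pa}(X)$ (which forbids any such diagonal-block invertibility). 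I would make this precise by invoking the characterization: for a Krull--Schmidt category, $f\in\End(X)$ lies in $\rad\End(X)$ iff no composite $X\xrightarrow{\iota_i}X\xrightarrow{f}X\xrightarrow{\pi_j}X$ with $X_i\cong X_j$ indecomposable is an isomorphism, and observe that the $\Pa^{\mre}$-indecomposable decomposition of $X$ is sent by $U$ to a (possibly finer) direct-sum decomposition over $\Pa$, so an $\End_{\Pa^{\mre}}$-radical obstruction is witnessed already over $\Pa$.

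The main obstacle I anticipate is making the last paragraph airtight: the decomposition of $X$ over $\Pa^{\mre}$ need not coincide with its decomposition over $\Pa$, and an $\Pa^{\mre}$-indecomposable summand $X_i$ may become decomposable after applying $U$, so one cannot naively transport idempotents. The correct route is to use that $\End_{\Pa^{\mre}}(X_i)$ is local with residue field a division algebra, that $i:\End_{\Pa^{\mre}}(X_i)\to\End_{\Pa}(X_i)$ is a unital injective ring homomorphism, and that therefore a unit of $\End_{\Pa^{\mre}}(X_i)$ maps to a unit of $\End_{\Pa}(X_i)$ (units map to units under any unital ring homomorphism), hence to an automorphism of $X_i$ in $\sfD(\Pa)$; combined with the Krull--Schmidt radical criterion applied over $\Pa$ this is enough. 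I would organize the write-up so that the hypothesis $\tuH^{<0}(\RHom_\Pa(X,X))=0$ is used exactly twice: once to get injectivity of $i$ on $\tuH^0$ (via the $\PPa$-resolution), and once implicitly to ensure $\End_{\Pa}(X)$ is an ordinary finite-dimensional algebra on which the radical behaves as expected. With these ingredients the proof should be short, essentially two displayed applications of the long exact sequence plus the elementary ``units go to units'' argument.
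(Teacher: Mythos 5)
Your proof of the injectivity statement is essentially the paper's: both arguments reduce to the two-term projective bimodule resolution of $\Pa$ (the paper applies $\RHom_{\Pa^{\mre}}(-,\RHom_{\Pa}(X,X))$ to $0\to \Pa V\Pa \to \Pa\Pa\to\Pa\to 0$, you use the cone $\PPa$ of the same map together with $\mu:\PPa\to\Pa$), and both kill the only possibly offending term using $\tuH^{-1}(\RHom_{\Pa}(X,X))=0$. One small slip in your formulation: the identification you want is $\RHom_{\Pa^{\mre}}(X,X)\cong\RHom_{\Pa^{\mre}}(\Pa,\RHom_{\Pa}(X,X))$, equivalently you should tensor the resolution with $X$ over $\Pa$, not over $\Pa^{\mre}$; this is notational and does not affect the argument. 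For the radical inclusion you take a genuinely different route. The paper's proof is one line: since $i$ is injective and $\rad\End_{\Pa}(X)$ is nilpotent, the two-sided ideal $i^{-1}(\rad\End_{\Pa}(X))$ of $\End_{\Pa^{\mre}}(X)$ is nilpotent, hence contained in $\rad\End_{\Pa^{\mre}}(X)$ by the characterization of the radical as the largest nilpotent (left) ideal. Your argument via the Krull--Schmidt decomposition of $X$ in $\Dbmod{\Pa^{\mre}}$ is correct once streamlined: if $f\notin\rad\End_{\Pa^{\mre}}(X)$, some component $\pi_j f\iota_i$ between isomorphic $\Pa^{\mre}$-indecomposable summands $X_i\cong X_j$ is an isomorphism; applying the forgetful functor (which preserves isomorphisms) and the ideal property of the radical bifunctor in $\Dbmod{\Pa}$, this isomorphism lies in $\rad(X_i,X_j)$ computed in $\Dbmod{\Pa}$ with $X_i\neq 0$, which is impossible. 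The detour through non-nilpotent diagonal components and ``units go to units'' is unnecessary, and in fact your route uses neither the injectivity of $i$ nor nilpotency of the radical, so it establishes $i^{-1}(\rad\End_{\Pa}(X))\subseteq\rad\End_{\Pa^{\mre}}(X)$ with no cohomological hypothesis at all; what the paper's approach buys is brevity, while yours makes the second assertion independent of the first.
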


\begin{proof}
Recall that $\Hom_{\Pa^{\mre}}(X, X) \cong \RHom_{\Pa^{\mre}}(\Pa, \RHom_{\Pa}(X, X))$. 
Applying the functor $$\RHom_{\Pa^{\mre}}(-, \RHom_{\Pa}(X, X))$$ to the exact sequence $0\to \Pa V\Pa \to \Pa^{\mre} \to \Pa \to 0$, 
 we obtain  the  exact sequence
\[
\Hom_{\Pa^{\mre}}(\Pa V \Pa,  \RHom_{\Pa}(X, X) [-1]) \to \Hom_{\Pa^{\mre}}(X, X) \xrightarrow{ i} \Hom_{\Pa}(X, X). 
\]
Since the left term vanishes by the assumption, we conclude that $i$ is injective. 

The second statement follows from the fact that inside a finite dimensional algebra $R$ the radical $\rad R$  is characterized as 
the largest nilpotent left ideal (see \cite[Theorem 4.12]{Lam}). 
\end{proof}

\begin{proof}[Proof of Proposition \ref{202102231445}]
The idea of  the proof is to take the $\PPi_{2}$-dual $\RHom_{\Pa^{\op}}(\eeta^{*}, \PPi_{2})$ of $\eeta^{*}$.
To complete this idea, we need to fix  two isomorphisms $\sfiso$ and $\sfcan$. 

First
we fix isomorphisms 
\[
\sfc'_{\PPi_{1}} : \YYM_{1} \lotimes_{\Pa} \PPi_{1} \to \PPi_{1} \lotimes_{\Pa} \YYM_{1}, \ \ 
\sfe': \RHom_{\Pa^{\op}}(\YYM_{1}, \PPi_{1}) \cong \PPi_{1} \lotimes_{\Pa} \YYM_{1}^{\rvvee} \to \YYM_{1}
\]
in $\sfD(\Pa^{\mre})$ obtained in Corollary \ref{2021010816032} and Corollary \ref{2021022307502}. 
We define an isomorphism 
$\sfiso: \RHom_{\Pa^{\op}}(\YYM_{1} \lotimes_{\Pa} \YYM_{1}, \PPi_{1} ) \to \YYM_{1} \lotimes_{\Pa} \YYM_{1}$ 
to be the composition
\[
\begin{split}
\RHom_{\Pa^{\op}}(\YYM_{1} \lotimes_{\Pa} \YYM_{1}, \PPi_{1} )& \xrightarrow{\cong }
\PPi_{1} \lotimes_{\Pa}  \RHom_{\Pa^{\op}}( \YYM_{1}, \PPi_{1} ) \lotimes_{\Pa}  ( \YYM_{1})^{\rvvee} \\
&\xrightarrow{ {}_{\PPi_{1}}\sfe'_{ \YYM_{1}^{\rvvee} } } 
\PPi_{1}\lotimes_{\Pa}  \YYM_{1} \lotimes_{\Pa} (\YYM_{1})^{\rvvee} \\
&\xrightarrow{(\sfc'_{\PPi_{1}} )^{-1}_{(\YYM_{1})^{\rvvee}  }} 
\YYM_{1} \lotimes_{\Pa} \PPi_{1}\lotimes_{\Pa} (\YYM_{1})^{\rvvee} \\
&\xrightarrow{ {}_{  \YYM_{1}}  \sfe' } 
 \YYM_{1} \lotimes_{\Pa} \YYM_{1}.\end{split}
\]
Let $\sfcan: \RHom_{\Pa^{\op}}(\PPi_{1}, \PPi_{2}) \to \PPi_{1}$  be the canonical isomorphism. 
We prove the composition  $\xxi_{2} := \lambda^{-1} \sfcan \RHom_{\Pa^{\op}}(\eeta^{*}, \PPi_{1}) \sfiso^{-1}: \YYM_{1} \lotimes_{\Pa} \YYM_{1} \to \PPi_{1}$
has the desired properties.
\[
\xxi_{2} : \YYM_{1} \lotimes_{\Pa} \YYM_{1} \xrightarrow{ \sfiso^{-1}} \RHom_{\Pa^{\op}}(\YYM_{1} \lotimes_{\Pa} \YYM_{1}, \PPi_{2}) 
\xrightarrow{ \RHom_{\Pa^{\op}}(\eeta^{*}, \PPi_{1}) } 
\RHom_{\Pa^{\op}}(\PPi_{1}, \PPi_{2}) \xrightarrow{ \lambda^{-1}  \sfcan} \PPi_{1}. 
\]

Applying  $\RHom_{\Pa^{\op}}(-, \PPi_{2})$ to the morphism 
${}_{ \YYM_{1}}\ppi_{1}:  \YYM_{1} \lotimes_{\Pa} \YYM_{1} \to \YYM_{1} \lotimes_{\Pa} \PPi_{1}$, we obtain the following commutative diagram 
\begin{equation}\label{2021022216151}
\begin{xymatrix}@C=60pt{
 (   \YYM_{1} \lotimes_{\Pa} \PPi_{1}, \PPi_{2} ) 
\ar[r]^{({}_{  \YYM_{1}} \ppi_{1}, \PPi_{2} )} \ar[d] & 
 (   \YYM_{1} \lotimes_{\Pa} \YYM_{1}, \PPi_{2} ) \ar[d] \\
\PPi_{1} \lotimes_{\Pa}  ( \PPi_{1}, \PPi_{1} )   \lotimes_{\Pa} ( \YYM_{1})^{\rvvee} 
\ar[r]^{ {}_{\PPi_{1}} (\ppi_{1}, \PPi_{2} )_{ ( \YYM_{1})^{\rvvee} } } \ar[d] & 
\PPi_{1} \lotimes_{\Pa}  ( \YYM_{1}, \PPi_{1} ) \lotimes_{\Pa}  ( \YYM_{1})^{\rvvee} 
\ar[d]^{{}_{\PPi_{1}}\sfe'_{ \YYM_{1}^{\rvvee} } }
 \\
\PPi_{1}\lotimes_{\Pa} (\YYM_{1})^{\rvvee}\ar@{=}[d]
\ar[r]^{ {}_{\PPi_{1} }  \rrho_{ (\YYM_{1})^{\rvvee}  } }
&
\PPi_{1}\lotimes_{\Pa}  \YYM_{1} \lotimes_{\Pa} (\YYM_{1})^{\rvvee} 
\ar[d]^{(\sfc'_{\PPi_{1}} )^{-1}_{(\YYM_{1})^{\rvvee}  }} \\ 
\PPi_{1}\lotimes_{\Pa} (\YYM_{1})^{\rvvee}\ar[d]_{\sfe' }
\ar[r]^{   \rrho_{ \PPi_{1} \lotimes (\YYM_{1})^{\rvvee} } }
&
  \YYM_{1} \lotimes_{\Pa} \PPi_{1}\lotimes_{\Pa} (\YYM_{1})^{\rvvee} 
\ar[d]^{ {}_{  \YYM_{1}}  \sfe'} \\
 \YYM_{1}\ar[r]_{ \rrho_{\YYM_{1}}} & 
\YYM_{1} \lotimes_{\Pa}  \YYM_{1}
}\end{xymatrix}
\end{equation}
where 
we use the abbreviation  $(-, +) := \RHom_{\Pa^{\op}}(-, +)$. 
We note that the right column is $\sfiso$. 

On the other hand, applying $\RHom_{\Pa^{\op}}(- , \PPi_{2})$ 
to the morphism $\rrho_{\PPi_{1}} :  \PPi_{1} \to  \YYM_{1} \lotimes_{\Pa} \PPi_{1}$, 
we obtain the following commutative diagram. 
\[
\begin{xymatrix}@C=60pt{
\RHom_{\Pa^{\op}}( \YYM_{1} \lotimes_{\Pa} \PPi_{1}, \PPi_{2} ) 
\ar[r]^{\RHom_{\Pa^{\op}}(\rrho_{\PPi_{1}} , \PPi_{2})}\ar[d] 
& 
\RHom_{\Pa^{\op}}(  \PPi_{1}, \PPi_{2} ) \ar[d] \\ 
\RHom_{\Pa^{\op}}(  \YYM_{1} , \PPi_{1} ) \ar[r]^{\RHom_{\Pa^{\op}}(\rrho , \PPi_{1})}\ar[d]^{\sfe'} & 
\RHom_{\Pa^{\op}}(\Pa, \PPi_{1} ) \ar[d] \\
\YYM_{1}\ar[r]_{ - \lambda\ppi_{ 1 }} &
\PPi_{1}.
}\end{xymatrix} 
\]
Observe that the right column coincides with that of the diagram \eqref{2021022216151}.

Thus applying $\RHom_{\Pa^{\op}}(- , \PPi_{2})$ to the commutative diagram 
\[
\begin{xymatrix}@C=60pt{\PPi_{1} \ar[d]_{ \eeta^{*}} \ar[dr]^{-\rrho_{\PPi_{1}}} & \\ 
 \YYM_{1} \lotimes_{\Pa} \YYM_{1} \ar[r]_{  {}_{\YYM_{1} } \ppi_{1} } & 
 \YYM_{1} \lotimes_{\Pa} \PPi_{1}, 
}\end{xymatrix}
\]
we obtain the following commutative diagram 
\[
\begin{xymatrix}@C=60pt{\YYM_{1} \ar[r]^{\rrho_{\YYM_{1} }} \ar[dr]_{\ppi_{1} }& 
\YYM_{1} \lotimes_{\Pa} \YYM_{1}  \ar[d]^{\xxi_{2}}.\\
& \PPi_{1}.
}\end{xymatrix}
\]
This proves the first equality of (1).

Applying  $\RHom_{\Pa^{\op}}(-, \PPi_{2})$ to the morphism 
$\ppi_{1,\YYM_{1} }:  \YYM_{1} \lotimes_{\Pa} \YYM_{1} \to \PPi_{1} \lotimes_{\Pa} \YYM_{1}$, 
we obtain the following commutative diagram 
\begin{equation}\label{2021022216152}
\begin{xymatrix}@C=60pt{
 (   \PPi_{1} \lotimes_{\Pa} \YYM_{1}, \PPi_{2} ) 
\ar[r]^{(\ppi_{1,\YYM_{1} }, \PPi_{2} )} \ar[d] & 
 (   \YYM_{1} \lotimes_{\Pa} \YYM_{1}, \PPi_{2} ) \ar[d]
  \\
\PPi_{1} \lotimes_{\Pa}  ( \YYM_{1}, \PPi_{1} )   \lotimes_{\Pa} ( \PPi_{1})^{\rvvee} 
\ar[r]^{ {}_{\PPi_{1} \lotimes  (\YYM_{1}, \PPi_{2} )} \ppi_{1}^{\rvvee} } \ar[d]_{{}_{\PPi_{1}} \sfe'_{\PPi_{1}^{\rvvee} } } & 
\PPi_{1} \lotimes_{\Pa}  ( \YYM_{1}, \PPi_{1} ) \lotimes_{\Pa}  ( \YYM_{1})^{\rvvee} 
\ar[d]^{{}_{\PPi_{1}}\sfe'_{ \YYM_{1}^{\rvvee} } }
 \\
\PPi_{1}\lotimes_{\Pa} \YYM_{1}  \lotimes_{\Pa} \PPi_{1}^{\rvvee} \ar[d]_{ (\sfc'_{\PPi_{1}})^{-1}_{\PPi_{1}^{\rvvee} }}
\ar[r]^{ {}_{\PPi_{1}  \lotimes \YYM_{1} }  \ppi^{\rvvee}  } 
&
\PPi_{1}\lotimes_{\Pa}  \YYM_{1} \lotimes_{\Pa} (\YYM_{1})^{\rvvee} 
\ar[d]^{(\sfc'_{\PPi_{1}} )^{-1}_{(\YYM_{1})^{\rvvee}  }} 
\\ 
\YYM_{1} \lotimes_{\Pa} \PPi_{1}\lotimes_{\Pa} (\PPi_{1})^{\rvvee}\ar[d] 
\ar[r]^{ {}_{\YYM_{1}  \lotimes \PPi_{1} }  \ppi^{\rvvee}  } 
&
  \YYM_{1} \lotimes_{\Pa} \PPi_{1}\lotimes_{\Pa} (\YYM_{1})^{\rvvee} 
\ar[d]^{ {}_{  \YYM_{1}}  \sfe'} \\
 \YYM_{1}\ar[r]_{ {}_{\YYM_{1}} \rrho} & 
\YYM_{1} \lotimes_{\Pa}  \YYM_{1}
}\end{xymatrix}
\end{equation}
where 
we use the abbreviation  $(-, +) := \RHom_{\Pa^{\op}}(-, +)$. 
We note that the right column is $\sfiso$.

On the other hand, applying $\RHom_{\Pa^{\op}}(- , \PPi_{2})$ 
to the morphism ${}_{\PPi_{1}} \rrho :  \PPi_{1} \to  \PPi_{1} \lotimes_{\Pa} \YYM_{1}$, 
we obtain the following commutative diagram. 
\[ 
\begin{xymatrix}@C=60pt{
 (   \PPi_{1} \lotimes_{\Pa} \YYM_{1}, \PPi_{2} ) 
\ar[r]^{( {}_{\PPi_{1}} \rrho, \PPi_{2} )} \ar[d] & 
 (   \PPi_{1}, \PPi_{2} ) \ar[d]
  \\
\PPi_{1} \lotimes_{\Pa}  ( \YYM_{1}, \PPi_{1} )   \lotimes_{\Pa} ( \PPi_{1})^{\rvvee} 
\ar[r]^{ {}_{\PPi_{1}} (\rrho, \PPi_{1})_{\PPi_{1}^{\rvvee}}  } 
\ar[d]_{{}_{\PPi_{1}} \sfe'_{\PPi_{1}^{\rvvee} } } & 
\PPi_{1} \lotimes_{\Pa}  ( \Pa, \PPi_{1} ) \lotimes_{\Pa}  ( \PPi_{1})^{\rvvee} 
\ar[d]
 \\
\PPi_{1}\lotimes_{\Pa} \YYM_{1}  \lotimes_{\Pa} \PPi_{1}^{\rvvee} \ar[d]_{ (\sfc'_{\PPi_{1}})^{-1}_{\PPi_{1}^{\rvvee} }}
\ar[r]^{ - \lambda {}_{\PPi_{1}}\ppi_{1, \PPi_{1}^{\rvvee} } } 
&
\PPi_{1}\lotimes_{\Pa}  \PPi_{1} \lotimes_{\Pa} (\PPi_{1})^{\rvvee} 
\ar@{=}[d]
\\ 
\YYM_{1} \lotimes_{\Pa} \PPi_{1}\lotimes_{\Pa} (\PPi_{1})^{\rvvee}\ar[d] 
\ar[r]^{ -  \lambda^{2} \ppi_{1, \PPi_{1} \lotimes \PPi_{1}^{\rvvee} } } 
&
  \PPi_{1} \lotimes_{\Pa} \PPi_{1}\lotimes_{\Pa} (\PPi_{1})^{\rvvee} 
\ar[d] \\
 \YYM_{1}\ar[r]_{ - \lambda^{2} \ppi_{1}  } & 
\PPi_{1}.
}\end{xymatrix}
\]
The commutativity of third square is proved in Corollary \ref{20210108160321}.
Therefore, the right column is $\sfcan$.

Thus applying $\RHom_{\Pa^{\op}}(- , \PPi_{2})$ to the commutative diagram 
\[
\begin{xymatrix}@C=60pt{\PPi_{1} \ar[d]_{ \eeta^{*}} \ar[dr]^{{}_{\PPi_{1}} \rrho } & \\ 
 \YYM_{1} \lotimes_{\Pa} \YYM_{1} \ar[r]_{   \ppi_{1, \YYM_{1}} } & 
 \PPi_{1} \lotimes_{\Pa} \YYM_{1}, 
}\end{xymatrix}
\]
we obtain the following commutative diagram 
\[
\begin{xymatrix}@C=60pt{\YYM_{1} \ar[r]^{{}_{\YYM_{1}} \rrho } \ar[dr]_{-\lambda \ppi_{1} }& 
\YYM_{1} \lotimes_{\Pa} \YYM_{1}  \ar[d]^{\xxi_{2}}.\\
& \PPi_{1}.
}\end{xymatrix}
\]
This proves the second equality of (1).

(2) 
Let $i \in Q_{0}$. 
By (1), we have $\xxi_{2,\Pa e_{i}} \rrho_{\YYM_{1} e_{i}} = \ppi_{1, \Pa e_{i}}$. 
It follows from 
Corollary \ref{202102221805} and Corollary \ref{202102211730} that 
the endomorphism 
$( \xxi_{2} \eeta^{*}_{2} + (1 +\lambda) \id_{\PPi_{1}}) \lotimes \Pa e_{i} = 
\xxi_{2, \Pa e_{i}} \eeta^{*}_{2, \Pa e_{i}} + (1 +\lambda) \id_{\PPi_{1}e_{i}}$ of $\PPi_{1}e_{i}$ belongs to  in $\rad \End_{\Pa}(\PPi_{1}e_{i})$. 
Therefore, 
the endomorphism 
$\xxi_{2} \eeta^{*}_{2} + (1 +\lambda) \id_{\PPi_{1}}$ of $\PPi_{1}$ in $\sfD(\Pa)$ belongs to  in $\rad \End_{\Pa}(\PPi_{1})$. 

Since $\xxi_{2} \eeta^{*}_{2} + (1 +\lambda) \id_{\PPi_{1}}$ is an endomorphism of $\PPi_{1}$ in $\sfD(\Pa^{\mre})$, 
it follows from by Lemma \ref{202103040935} that 
the endomorphism $\xxi_{2} \eeta^{*}_{2} + (1 +\lambda) \id_{\PPi_{1}}$ belongs to $\rad \End_{\Pa^{\mre}}(\PPi_{1})$.

Observe that $\End_{\Pa^{\mre}}(\PPi_{1}) \cong \End_{\Pa^{\mre}}(\Pa) = (\textup{the center of } A) = \kk$ and $\rad \End_{\Pa^{\mre}}(\PPi_{1}) = 0$. 
Therefore we conclude that $\xxi_{2} \eeta^{*}_{2} = - (1 +\lambda) \id_{\PPi_{1}}$ as desired.

(3) is a consequence of (2). 
\end{proof} 

\subsection{}

\begin{theorem}\label{202105261457} 
Assume that a sincere weight $v \in \kk^{\times} Q_{0}$  
is a regular (resp. semi-regular) eigenvector of $\Psi$ with the eigenvalue $\lambda$. 

Assume moreover that there is   a natural number  $n \geq 2$ 
satisfies the condition: 
\[ 
V_{m}(\lambda )  \neq 0, \textup{ for } 1 \leq m \leq n.
\]
Then 
there exists a morphism of 
$\oomega_{n -1}: \YYM_{n -1}  \to \YYM_{1} \lotimes_{\Pa} \YYM_{n -2} $ in $\Dbmod{\Pa^{\mre}}$ 
that has the following properties 

\begin{enumerate}[(1)] 
\item $\zzeta_{n -1} \oomega_{n -1} = \id_{\YYM_{n -1}}$. 

\item If we set $\xxi_{n, M} := (\xxi_{2, \YYM_{ n-2}})({}_{\YYM_{1}} \oomega_{n -1})$
\[
\xxi_{n} 
: \YYM_{1} \lotimes_{\Pa} \YYM_{n -1} 
\xrightarrow{{}_{\YYM_{1}} \oomega_{n -1} } 
\YYM_{1} \lotimes_{\Pa} \YYM_{1} \lotimes_{\Pa} \YYM_{n -2}  
\xrightarrow{\xxi_{2, \YYM_{n -2}  } } 
\PPi_{1} \lotimes_{\Pa} \YYM_{n -2},
\]
then the following equality holds  in $\ResEnd_{\Pa^{\mre} } (\PPi_{1} \lotimes_{\Pa} \YYM_{n -2})$ 
\[
\xxi_{n} \eeta^{*}_{n} =-  \frac{V_{n}(\lambda)}{V_{n-1}(\lambda) } \neq 0.
\]
Therefore it is an automorphism of $\PPi_{1} \lotimes_{\Pa} \YYM_{n -2}$ in $\Dbmod{\Pa^{\mre}}$. 

\item We have $
(\xxi_{n})( \rrho_{\YYM_{ n-1}}) =(\ppi_{1, \YYM_{n-2}})(\oomega_{n-1})$. 

\item Setting $\eepsilon_{m} := (\xxi_{m} \eeta^{*}_{m} )^{-1}, \  \varppi_{m}:= ( -1)^{n} (\ppi_{1, \YYM_{m-2}})(\oomega_{m-1})$ for $m = 2,3, \ldots, n $, 
we obtain  the commutative diagram below for $m=2,3,\ldots, n$ 
\[
\begin{xymatrix}@C=60pt{
& \PPi_{1} \lotimes_{\Pa} \YYM_{m -2} 
\ar@<5pt>[d]^{ ( \eeta^{*}_{m})(\eepsilon_{m}) } \ar@/^5pt/[dr]^{({}_{\PPi_{1}} \rrho_{\YYM_{m -1} })(\eepsilon_{m})} &  &\\ 
\YYM_{m-1}  \ar[r]^{\rrho_{\YYM_{m-1} } } 
\ar@/^5pt/[ur]^{(-1)^{n -1} \varppi_{m-1} }  \ar@/_5pt/[dr]_{\brho_{m}} 
& 
\YYM_{1} \lotimes_{\Pa} \YYM_{m-1}   
\ar[r]^{ \ppi_{1, \YYM_{n-1}  } } \ar@<5pt>[u]^{\xxi_{m}} 
\ar@<-5pt>[d]_{\zzeta_{m} } & \PPi_{1} \lotimes_{\Pa} \YYM_{m-1} \\ 
 &  \YYM_{m}   \ar@<-5pt>[u]_{\oomega_{m}} \ar@/_5pt/[ur]_{( -1)^{m}\varppi_{m} } &&
}\end{xymatrix}
\] 
Hence we obtain the following homotopy Cartesian diagram that is folded to a direct sum of AR-triangles
\[
\begin{xymatrix}@C=60pt{
\YYM_{m-1}   \ar[d]_{ \varppi_{m-1}  } \ar[r]^{\brho_{m}} 
& \YYM_{m} \ar[d]^{\varppi_{m}} \\
\PPi_{1} \lotimes_{\Pa} \YYM_{m -2} 
\ar[r]_{ ( {}_{\PPi_{1}} \brho_{m -1} )(\eepsilon_{m })  } & \PPi_{1} \lotimes_{\Pa} \YYM_{m-1}.  
}\end{xymatrix} 
\]
Applying $- \lotimes_{\Pa} M$ where $M$ is an object of $\Dbmod{\Pa}$ (resp. $\cU_{\Pa}[\ZZ]$) 
to the above diagrams, 
we obtain the diagram 
\eqref{202105261807} and \eqref{202105261808}. 

\item Consequently, 
we have an exact triangle 
\[
\Pa \xrightarrow{ \brho^{n} } \YYM_{n} \xrightarrow{ \varppi_{n}} \PPi_{1} \lotimes_{\Pa} \YYM_{n-1} \to 
\]
in $\Dbmod{\Pa^{\mre}}$ 
such that 
applying  $- \lotimes_{\Pa} M$ where  $M$ is an object of $\Dbmod{\Pa}$ (resp. $\cU_{\Pa}[\ZZ]$), 
we obtain an exact triangle 
\[
M \xrightarrow{ \brho^{n}_{M} } \YYM_{n} \lotimes_{\Pa} M \xrightarrow{ \varppi_{n, M}} \PPi_{1} \lotimes_{\Pa} \YYM_{n-1} \lotimes_{\Pa} M \to 
\]
in $\Dbmod{\Pa}$ 
the first arrow $\brho^{n}_{M}$  of which is a minimal left $\rad^{n}$-approximation of $M$.

\end{enumerate}

\end{theorem}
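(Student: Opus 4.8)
\textbf{Proof proposal for Theorem \ref{202105261457}.}

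The plan is to mirror exactly the structure of the proof of Theorem \ref{202104091259} (and its supporting Proposition \ref{2021040913541}), but to carry out every construction one level up, i.e. inside $\sfD(\Pa^{\mre})$ rather than in $\sfD(\Pa)$, and then to recover the module-level statements by applying $-\lotimes_{\Pa} M$. The key inputs that make this lift possible are: the bimodule version of the $\rad^{n}$-approximation machinery (Theorem \ref{right approximation theorem}, which holds over $\Pa^{\mre}$ because the relevant exact triangles $\sfU$, $\sfV$ live in $\sfD(\Pa^{\mre})$), the bimodule version of the ``middle of the middle'' morphism $\xxi_{2}$ constructed in Proposition \ref{202102231445} (this is where the eigenvector hypothesis on $v$ enters — it is what forces $\xxi_{2}\eeta^{*}_{2} = -(1+\lambda)\id_{\PPi_{1}}$ as an honest bimodule identity, using Lemma \ref{202103040935} to descend from $\End_{\Pa}$ to $\End_{\Pa^{\mre}}\cong\kk$), and the bimodule exchange laws and compatibility diagrams of Corollary \ref{2021010816032}, Corollary \ref{20210108160321}, Lemma \ref{2021022307502}, together with the bimodule versions of Lemma \ref{homotopic lemma 5}, Lemma \ref{202001111736}, Lemma \ref{homotopic lemma 5 right} (available in the non-Dynkin non-sincere setting as Lemma \ref{202207131538}, and in general from the constructions of Section \ref{section: the derived quiver Heisenberg algebras}).

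First I would establish, by induction on $n$, the bimodule version of Proposition \ref{2021040913541}: the existence of a section $\oomega_{n-1}:\YYM_{n-1}\to\YYM_{1}\lotimes_{\Pa}\YYM_{n-2}$ of $\zzeta_{n-1}$ in $\sfD(\Pa^{\mre})$ with $\zzeta_{n-1}\oomega_{n-1}=\id$, and the identity $\xxi_{n}\eeta^{*}_{n} = -V_{n}(\lambda)/V_{n-1}(\lambda)$ in $\ResEnd_{\Pa^{\mre}}(\PPi_{1}\lotimes_{\Pa}\YYM_{n-2})$. The base case $n=2$ is Proposition \ref{202102231445} itself (with $\oomega_{1}=\id_{\YYM_{1}}$). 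For the inductive step one reproduces verbatim the chain of computations \eqref{2021030114411}, Claim \ref{2021022718581}, Claim \ref{202102231643}, replacing $M$-indexed morphisms by their bimodule counterparts; the scalar bookkeeping is identical because the eigenvector hypothesis makes ${}^{v}\!\Euch(\PPi_{1}\lotimes_{\Pa}N)/{}^{v}\!\Euch(N)=\lambda$ uniformly, which is exactly property (II) that Proposition \ref{2021040913541} needs. The only genuinely new point is that at each stage one must know the relevant endomorphism algebra over $\Pa^{\mre}$ has trivial radical quotient on the objects involved — for $\PPi_{1}\lotimes_{\Pa}\YYM_{n-2}$ one argues via Lemma \ref{202103040935} (checking $\tuH^{<0}\RHom_{\Pa}$ vanishes, which follows from $\YYM_{n-2}$ being concentrated in non-positive degrees and the approximation theorems) combined with the already-established module-level identity of Proposition \ref{2021040913541}, Corollary \ref{202102221805}, and Corollary \ref{202102211730}. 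This gives parts (1), (2), (3).

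Part (4) is then formal: given $\oomega_{m}$ with the stated properties, the commutative diagram and the homotopy Cartesian square are obtained exactly as \eqref{202105261807} and \eqref{202105261808} were obtained in the proof of Theorem \ref{202104091259}, now living in $\sfD(\Pa^{\mre})$; since the two rows of the homotopy Cartesian square become a direct sum of AR-triangles after applying $-\lotimes_{\Pa}M$ (by Theorem \ref{universal Auslander-Reiten triangle} applied to the summands of $\YYM_{m-1}\lotimes_{\Pa}M$), the square folds to a direct sum of AR-triangles at the module level. For part (5) one splices these homotopy Cartesian squares for $m=2,\dots,n$ to get the exact triangle $\Pa\xrightarrow{\brho^{n}}\YYM_{n}\xrightarrow{\varppi_{n}}\PPi_{1}\lotimes_{\Pa}\YYM_{n-1}\to$ in $\sfD(\Pa^{\mre})$ (using octahedral axiom / $\sfU$-type triangles to identify $\brho^{n}$ as the composite $\brho_{n}\cdots\brho_{1}$), and then $-\lotimes_{\Pa}M$ yields the module-level triangle; that $\brho^{n}_{M}$ is a minimal left $\rad^{n}$-approximation follows from Proposition \ref{2021050917551} applied inductively, since $\brho^{n}_{M}=\brho_{n,M}\circ\brho^{n-1}_{M}$, the cone morphism is $\varppi_{n,M}$, and the composite satisfies the hypotheses of that proposition via the homotopy Cartesian square of part (4).

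The main obstacle I expect is part (2) of the induction — specifically, verifying that the scalar identity $\xxi_{n}\eeta^{*}_{n} = -V_{n}(\lambda)/V_{n-1}(\lambda)$ holds in $\ResEnd_{\Pa^{\mre}}$ and not merely modulo $\rad\End_{\Pa}$. At the module level (Proposition \ref{2021040913541}) one works in $\ResEnd_{\Pa}$, where the endomorphism ring of an indecomposable is a division ring and one can divide freely; over $\Pa^{\mre}$ the objects $\PPi_{1}\lotimes_{\Pa}\YYM_{n-2}$ are far from indecomposable, so Lemma \ref{2021022716261}-style cancellation arguments must be recast. The resolution is to pass to the module level first (where Proposition \ref{2021040913541} already pins down the scalar for every indecomposable summand of $\PPi_{1}\lotimes_{\Pa}\YYM_{n-2}\lotimes_{\Pa}M$ with the uniform value $-V_{n}(\lambda)/V_{n-1}(\lambda)$ thanks to the eigenvector hypothesis), conclude that $\xxi_{n}\eeta^{*}_{n}+\tfrac{V_{n}(\lambda)}{V_{n-1}(\lambda)}\id$ is a $\rad$-morphism in $\sfD(\Pa)$ after applying $-\lotimes_{\Pa}\Pa e_{i}$ for all $i$, hence a $\rad$-morphism of $\PPi_{1}\lotimes_{\Pa}\YYM_{n-2}$ in $\sfD(\Pa)$, and then invoke Lemma \ref{202103040935} to push this into $\rad\End_{\Pa^{\mre}}(\PPi_{1}\lotimes_{\Pa}\YYM_{n-2})$; finally one uses the computation of $\End_{\Pa^{\mre}}(\PPi_{1}\lotimes_{\Pa}\YYM_{n-2})$ — which should reduce to the center of a path algebra and thus be semisimple with no radical, making the identity exact. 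Getting this descent argument clean, and checking the cohomological vanishing hypothesis of Lemma \ref{202103040935} at each stage, is the delicate part; everything else is bookkeeping that transcribes the existing Dynkin-case proofs one categorical level up.
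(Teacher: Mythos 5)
Your proposal is correct and follows exactly the route the paper intends: the paper's own proof of Theorem \ref{202105261457} simply says it is done ``in the same way'' as Theorem \ref{202104091259}, using Proposition \ref{202102231445} as the base case and Lemma \ref{202103040935} to descend the radical computations from $\End_{\Pa}$ to $\End_{\Pa^{\mre}}$, which is precisely your lift of Proposition \ref{2021040913541} to $\sfD(\Pa^{\mre})$. One remark: your final step, asserting that $\End_{\Pa^{\mre}}(\PPi_{1}\lotimes_{\Pa}\YYM_{n-2})$ is radical-free so as to upgrade the scalar identity to an exact one, is both unnecessary --- the theorem only claims the identity in $\ResEnd_{\Pa^{\mre}}$, and nonvanishing of the scalar modulo the radical already gives the invertibility needed for $\eepsilon_{m}$ and $\oomega_{m}$ --- and unlikely to be true for $n\geq 3$; that exactness was available only in the base case because $\End_{\Pa^{\mre}}(\PPi_{1})\cong\kk$.
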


We can prove Theorem \ref{202105261457} in the same way of Theorem \ref{202104091259} by using 
Proposition \ref{202102231445} and Lemma \ref{202103040935}. 
Hence we leave details to the readers.

We note we have a left ladder in $\Dbmod{\Pa^{\mre}}$ of the following form  
such that if we apply $ -\lotimes M$ then we obtain the diagram \eqref{202105261824}.  
{ \tiny 
\[ 
\begin{xymatrix}@C=30pt{
\Pa \ar[r]^{\brho } & \YYM_{1} \ar[r]^{\brho} \ar[d]_{\ppi} & 
\YYM_{2}  \ar[r]^{\brho } \ar[d]_{\varpi} &  \hspace{20pt} \cdots \hspace{-20pt}  &\ar[r] &
\YYM_{n-1}   \ar[d]_{\varpi  } \ar[r]^{\brho} 
& \YYM_{n}  \ar[d]^{ \varpi} \\
& \PPi_{1}  \ar[r]_-{{}_{\PPi_{1}} \brho \epsilon } & \PPi_{1} \lotimes_{\Pa} \YYM_{1}  \ar[r]_-{{}_{\PPi_{1}} \brho \epsilon }  &
\hspace{20pt} \cdots \hspace{-20pt} &\ar[r] 
& 
\PPi_{1} \lotimes_{\Pa} \YYM_{n -2}  
\ar[r]_{  {}_{\PPi_{1}} \brho \epsilon } & \PPi_{1} \lotimes_{\Pa} \YYM_{n-1} .  
}\end{xymatrix} 
\]}

\appendix

\renewcommand{\theequation}{\Alph{section}-\arabic{equation}}

\section{Homotopy Cartesian square}\label{section: homotopy Cartesian square}

Let $\sfD$ be a triangulated category. 
We recall from \cite[Definition 1.4.1]{Neeman} that 
a commutative diagram 
\[
\begin{xymatrix}{ 
W \ar[r]^{k} \ar[d]_{w} & V \ar[d]^{v} \\
X \ar[r]_{f} & Y
}\end{xymatrix}\]
is called \emph{homotopy cartesian} if 
there exists a morphism $c: Y \to W[1]$ that makes the following exact triangle 
\[
W \xrightarrow{(-w, k)^{t} } X \oplus V \xrightarrow{ (f, v) } Y \xrightarrow{ c  } \Sigma W.  
\]

\begin{remark}\label{homotopy Cartesian remark}
We remark that the definition is  modified from that given in \cite{Neeman}. 
In op cit., the morphism $(w, -k)^{t}$ is used in place of $(-w, k)^{t}$. 
But it is easy to check that these two definitions  are equivalent. 
\end{remark}

In other words, if we have a diagram 
\[
\begin{xymatrix}@C=60pt{
& X\ar@<-5pt>[d]_{i} &  &\\ 
W \ar[r]^{a} & U \ar[r]^{b} \ar@<-5pt>[u]_{p} \ar@<-5pt>[d]_{q} & Y \ar[r]^{c} & \Sigma W \\
&  V \ar@<-5pt>[u]_{j} &&
}\end{xymatrix}
\] 
whose row is an exact triangle and the column gives a splitting of 
of $U$, i.e., we have $pi= \id_{X}, qj  = \id_{V}, ip + j q = \id_{U}$, 
then the following diagram is homotopy Cartesian
\[
\begin{xymatrix}{ 
W \ar[r]^{qa} \ar[d]_{-pa} & V \ar[d]^{bj} \\
X \ar[r]_{bi} & Y.
}\end{xymatrix}\]

We use the following lemma that compare two homotopy Cartesian square.

\begin{lemma}\label{2020071917401}
Let $Z[-1] \xrightarrow{h} X \xrightarrow{f} Y \xrightarrow{ g} Z $ be an exact triangle 
and $k: W \to V$ a morphism in $\sfD$. 
Assume two commutative squares are given
\[
\begin{xymatrix}{ 
W \ar[r]^{k} \ar[d]_{w_{i}} & V \ar[d]^{v_{i}} \\
X \ar[r]_{f} & Y
}\end{xymatrix} \ \ ( i =1,2).
\]
Assume moreover that $gv_{1} =g v_{2}$ and 
$\Hom_{\sfD}(W, Z[-1]) = 0$. 
Then there exists 
a morphism $s: V \to X$ such that
the automorphism $\begin{pmatrix} \id  & s \\ 0 & \id \end{pmatrix}$ of $V \oplus X$ completes the following commutative diagram. 
\[
\begin{xymatrix}@C=40pt{
W \ar[r]^{\begin{pmatrix} -w_{1} \\ k  \end{pmatrix} }\ar@{=}[d] & X \oplus V  \ar[r]^{(f , v_{1})} \ar[d]_{\cong}^{\tiny \begin{pmatrix} \id  & s \\ 0 & \id \end{pmatrix}} & Y \ar@{=}[d] \\
W \ar[r]_{\begin{pmatrix}-w_{2} \\ k  \end{pmatrix}  }& X \oplus V  \ar[r]_{(f, v_{2})} & Y  
}\end{xymatrix}
\]
(We note that if the squares are homotopy Cartesian then, the both rows are exact triangles.)
\end{lemma}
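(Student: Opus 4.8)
The plan is a direct diagram chase in the triangulated category $\sfD$, using only the long exact sequences obtained by applying $\Hom_{\sfD}(W,-)$ and $\Hom_{\sfD}(V,-)$ to the exact triangle $Z[-1] \xrightarrow{h} X \xrightarrow{f} Y \xrightarrow{g} Z$. Unwinding the asserted commutative diagram, the task is precisely to produce a morphism $s\colon V \to X$ satisfying the two identities $fs = v_1 - v_2$ and $sk = w_1 - w_2$: indeed $\begin{pmatrix} \id & s \\ 0 & \id\end{pmatrix}\begin{pmatrix}-w_1\\ k\end{pmatrix} = \begin{pmatrix}-w_1 + sk \\ k\end{pmatrix}$ and $(f,v_2)\begin{pmatrix}\id & s\\ 0 & \id\end{pmatrix} = (f,\, fs + v_2)$, so the two square identities in the statement are equivalent to $sk = w_1 - w_2$ and $fs = v_1 - v_2$ respectively, and the matrix is visibly an automorphism of $X \oplus V$.

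First I would construct $s$ by fulfilling only the second identity. Applying $\Hom_{\sfD}(V,-)$ to the triangle gives an exact sequence $\Hom_{\sfD}(V,X) \xrightarrow{f_*} \Hom_{\sfD}(V,Y) \xrightarrow{g_*} \Hom_{\sfD}(V,Z)$; since $g(v_1 - v_2) = gv_1 - gv_2 = 0$ by hypothesis, $v_1 - v_2$ lies in the image of $f_*$, so there is $s\colon V\to X$ with $fs = v_1 - v_2$. Next I would verify that this same $s$ automatically satisfies $sk = w_1 - w_2$, and this is the step where the vanishing hypothesis $\Hom_{\sfD}(W, Z[-1]) = 0$ is used. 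Put $\delta := sk - (w_1 - w_2)\colon W \to X$. Composing with $f$ and using commutativity of the two given squares, i.e. $fw_i = v_i k$, one gets $f\delta = (fs)k - (fw_1 - fw_2) = (v_1 - v_2)k - (v_1 k - v_2 k) = 0$. Hence $\delta$ lies in the kernel of $f_*\colon \Hom_{\sfD}(W,X) \to \Hom_{\sfD}(W,Y)$, which by exactness of the sequence from the triangle equals the image of $h_*\colon \Hom_{\sfD}(W, Z[-1]) \to \Hom_{\sfD}(W,X)$; since $\Hom_{\sfD}(W,Z[-1]) = 0$ this image is zero, so $\delta = 0$, as wanted.

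With $s$ in hand, the matrix computations above give exactly the asserted commutative diagram, and the parenthetical remark follows because the constructed isomorphism $\begin{pmatrix}\id & s\\ 0 & \id\end{pmatrix}$ identifies the diagram $W \xrightarrow{(-w_1,k)^t} X\oplus V \xrightarrow{(f,v_1)} Y$ with $W \xrightarrow{(-w_2,k)^t} X\oplus V \xrightarrow{(f,v_2)} Y$, so one is (completes to) an exact triangle as soon as the other is. I do not expect a genuine obstacle here; the only care needed is with the signs in the $2\times 2$ matrices and with picking the correct rotation of the triangle when invoking exactness. The whole point of the argument is that the hypothesis $\Hom_{\sfD}(W,Z[-1]) = 0$ forces the unconstrained choice of $s$ to work with no further correction.
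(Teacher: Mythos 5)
Your proof is correct and follows essentially the same route as the paper: lift $v_1-v_2$ through $f$ using $g(v_1-v_2)=0$ to get $s$ with $fs=v_1-v_2$, then use $fw_i=v_ik$ to see that $f(sk-(w_1-w_2))=0$, and conclude $sk=w_1-w_2$ from $\Hom_{\sfD}(W,Z[-1])=0$ via the long exact sequence. The matrix verification and the parenthetical remark are handled the same way as in the paper, so there is nothing to add.
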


\begin{proof}
We have $g(v_{1} -v_{2} ) = 0$. 
Thus there exists a morphism $s: V \to X$ such that $v_{1} - v_{2} = fs$ or equivalently $v_{1} =v_{2} + fs$. 
\[
\begin{xymatrix}@C=50pt{
 & W \ar[d]_{w_{i}} \ar[r]^{k} & V \ar@{-->}[dl]_{s}  \ar[d]^{v_{i}} \ar[r]^{gv_{1} = gv_{2} } & Z \ar@{=}[d] \\
Z[-1] \ar[r]_{h} & X \ar[r]_{f} & Y \ar[r]_{g} & Z
}
\end{xymatrix}
\]
We have $fw_{1} = v_{1} k = ( v_{2} + fs)k=v_{2}k + fsk$ and hence $v_{2} k = fw_{1} - fsk =f(w_{1} -sk)$. 
On the other hand, we have $v_{2}k = fw_{2}$. 
Consequently we see  $f(w_{1} -sk -w_{2} ) = 0$. 
It follows from the assumption $\Hom_{\sfD}(W, Z[-1]) =0$ that 
$w_{1} -sk -w_{2}=0$ 
and hence $-w_{2} = -w_{1} +sk$. 
Now it is straightforward to check that 
the automorphism $ \begin{pmatrix} \id & s \\ 0 & \id \end{pmatrix}$ of $X \oplus V$ satisfies the desired property.
\end{proof}

\section{Inverse of Serre functors and Happel's criterion }\label{section: Happel's criterion}

In this section, we fix notations for Serre duality and recall Happel's criterion for AR-triangles.

Let $R$ be a finite dimensional algebra of finite global dimension. 
We set $\sfD := \Dbmod{R}$.

\subsection{Inverse of  Serre functors}\label{202106091104}

\subsubsection{Exact functors}\label{Section: Exact functors}

To fix a notation,  
we recall that an exact functor $F: \sfD \to \sfD'$ is a pair $(F, \sigma_{F})$ of 
a  functor $F: \sfD \to \sfD'$ between triangulated categories and a natural isomorphism  
$\sigma_{F} : F \Sigma_{\sfD} \to \Sigma_{\sfD'} F $ that satisfies certain conditions. 

We note that the pair $(\Sigma_{\sfD}, - \id_{\Sigma^{2}})$ is an exact autoequivalence of $\sfD$, 
i.e., $\sigma_{\Sigma_{\sfD}} = - \id_{\Sigma^{2}}$.

\subsubsection{Inverse of  Serre functors}\label{Section: Serre functor}

Let $\sfS$ be the Serre functor of $\sfD$. 
By $\isagl{-,+}$ we denote the pairing for Serre duality. 
Namely it is a non-degenerate pairing 
\[
\isagl{-, +} : \Hom_{\sfD}(Y, X) \otimes \Hom_{\sfD}(\sfS^{-1}(X), Y) \to \kk 
\]
for $X, Y \in  \sfD$. 
It follows from functoriality of the pairing that 
for  morphisms given in the diagram below 
\[
\sfS^{-1}(Z) \xrightarrow{ \sfS^{-1}(h)} \sfS^{-1}(X) \xrightarrow{ f} Y \xrightarrow{ g} Z \xrightarrow{h} X,  
\]
we have the following equality 
\begin{equation}\label{202012091955}
\isagl{h, gf} = \isagl{hg, f} = \isagl{g, f\sfS^{-1}(h)}. 
\end{equation}

In particular,  we have  
\begin{equation}\label{202102071308}
\isagl{f, g} = \isagl{\sfS^{-1}(f), \sfS^{-1}(g) }
\end{equation}
for all  $f: Y \to X, \ g : \sfS^{-1} (X) \to Y$.

Let $F$ be an exact autoequivalence of $\sfD$. 
Then there exists a natural isomorphism $\gamma_{F}: \sfS^{-1} F \to F \sfS^{-1}$ that makes the following commutative diagram. 
\[
\begin{xymatrix}@C=50pt{ 
\Hom_{\sfD}(Y, X) \otimes \Hom_{\sfD}(\sfS^{-1}(X), Y)  \ar[r]^-{\isagl{-,+}}  \ar[d]_{F_{Y, X} \otimes F_{\sfS^{-1}(X), Y} }
& \kk\\
\Hom_{\sfD}(F(Y)   F(X) ) \otimes \Hom_{\sfD}(F\sfS^{-1}(X), F(Y)) \ar[d]_{\id_{\Hom(F(Y) , F(X) )} \otimes \Hom(\gamma_{F,X}, F(Y))} &\\
\Hom_{\sfD}(F(Y), F(X)) \otimes \Hom_{\sfD}(\sfS^{-1}F(X), F(Y) )  \ar@/^-30pt/[ruu]_-{\isagl{-,+}}
}\end{xymatrix}
\]
In other words, we have 
\[
\isagl{f, g} = \isagl{F(f), F(g)\gamma_{F, X} }
\]
for all  $f: Y \to X, \ g : \sfS^{-1} (X) \to Y$. 
Comparing it with \eqref{202102071308}, 
we see that 
\begin{equation}\label{202102071309} 
\gamma_{\sfS^{-1} } = \id_{\sfS^{-1}}.  
\end{equation}
It is also easy to check that 
for exact autoequivalences $F, G$ we have 
\[
\gamma_{FG}= F(\gamma_{G}) (\gamma_{F})_{G}: \sfS^{-1}FG \xrightarrow{ (\gamma_{F})_{G} } F\sfS^{-1}G \xrightarrow{ F(\gamma_{G}) } FG\sfS^{-1}. 
\]

It is shown by van den Bergh \cite[Appendix]{Bocklandt} that 
the pair $(\sfS^{-1}, - \gamma_{\Sigma})$ is an exact autoequivalence of $\sfD$. 
Therefore we set 
\[
\sigma_{\sfS^{-1} }:  = - \gamma_{\Sigma}: \sfS^{-1} \Sigma \to \Sigma \sfS^{-1}. 
\]

\subsubsection{A lemma}

We provide a technical lemma.

For simplicity we set $\sfP := \nu_{1}^{-1} := \Sigma \sfS^{-1}$. 
Since there are two natural isomorphism $\gamma_{-}$ and $\sigma_{-}$, 
 it is necessary to care about the way that we exchange, for example, 
 $\sfP^{n}$ with $\Sigma^{-1} $.

For an exact endofunctor  $F$  of $\sfD$,  
we denote by $\sigma'_{ F} :    \Sigma^{-1}F \to F \Sigma^{-1}  $ 
the natural isomorphism induced from $\sigma_{F} : F \Sigma \to \Sigma F$. 
\[
\sigma'_{F}:  \Sigma^{-1}F \xrightarrow{\Sigma^{-1} F(\beta)} 
\Sigma^{-1} F \Sigma \Sigma^{-1} 
\xrightarrow{ \Sigma^{-1} ((\sigma_{F})_{\Sigma^{-1}})} 
\Sigma^{-1}\Sigma   F\Sigma^{-1} \xrightarrow{\alpha_{F\Sigma^{-1}} } F\Sigma^{-1}
\]
where  we denote the canonical natural isomorphisms by  $\alpha: \Sigma^{-1} \Sigma \to \id_{\sfD}, 
\ \beta: \id_{\sfD} \to \Sigma \Sigma^{-1} $.
 
 We note that $\sigma'_{\Sigma} $ may be  identified with $- 1: \id_{\sfD} \to \id_{\sfD}$.

\begin{lemma}\label{202102171838}

\begin{enumerate}[(1)]

\item 
Let $n \geq 1$ be a positive integer. Then the following composition equals to $\id_{ \sfP^{n}}$.
\[
\mathsf{comp}^{(n)}:\sfP^{n} = \Sigma \sfS^{-1} \sfP^{n -1} \xrightarrow{ \Sigma (\gamma_{\sfP^{n -1} }) } \Sigma \sfP^{n -1} \sfS^{-1} 
\xrightarrow{ (\sigma^{-1}_{\sfP^{n -1}})_{\sfS^{-1}} } \sfP^{ n-1} \Sigma \sfS^{-1} =\sfP^{n}.
\]

\item 
Let $n \geq 1$ be a positive integer. 
Then the following diagram is commutative 
\[
\begin{xymatrix}{
\Sigma^{-1}\sfP^{n} \ar[r]^{ (\sigma'_{\sfP^{n -1} } )_{\sfP} } 
\ar@{=}[d] 
& 
\sfP^{n -1} \Sigma^{-1} \sfP \ar@{=}[r] & 
\sfP^{n -1}\Sigma^{-1} \Sigma \sfS^{-1} \ar[d]^{ \sfP^{n -1} (\alpha_{\sfS^{-1} }) }
\\ 
\Sigma^{-1}\Sigma \sfS^{-1} \sfP^{n -1} \ar[r]_{ \alpha_{\sfS^{-1} \sfP^{n -1} } } 
& 
\sfS^{-1} \sfP^{n -1} \ar[r]_{\gamma_{\sfP^{n -1}}} 
& 
\sfP^{n -1} \sfS^{-1} 
}\end{xymatrix} 
\]

\end{enumerate}
\end{lemma}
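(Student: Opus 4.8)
The final statement to prove is Lemma~\ref{202102171838}, which is a pair of purely formal naturality identities about the interaction between the two natural isomorphisms $\gamma_{-}$ and $\sigma_{-}$ (equivalently $\sigma'_{-}$) attached to an exact autoequivalence, specialized to the functors $\sfP = \nu_{1}^{-1} = \Sigma \sfS^{-1}$.

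\textbf{Overall approach.} The plan is to reduce everything to the two structural facts recalled just above: (i) for exact autoequivalences $F, G$ one has $\gamma_{FG} = F(\gamma_{G})\,(\gamma_{F})_{G}$, together with $\gamma_{\Sigma^{-1}} = \sigma'_{\Sigma}$-type compatibilities and $\gamma_{\sfS^{-1}} = \id$; and (ii) the definition $\sigma_{\sfS^{-1}} = -\gamma_{\Sigma}$, which links the two families of natural transformations. Both statements in the lemma are identities between natural isomorphisms of composites of $\Sigma^{\pm 1}$, $\sfS^{-1}$, and their iterates, so the correct strategy is an induction on $n$, checking the base case $n=1$ by directly unwinding definitions (in which several arrows become identities or canonical unit/counit isomorphisms $\alpha, \beta$), and then propagating along the cocycle formula for $\gamma$.

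\textbf{Key steps, in order.} First I would set up notation carefully, writing $\sfP = \Sigma\sfS^{-1}$ and recording: $\gamma_{\sfS^{-1}} = \id_{\sfS^{-1}}$ \eqref{202102071309}; the cocycle identity $\gamma_{FG} = F(\gamma_{G})(\gamma_{F})_{G}$; $\sigma_{\sfS^{-1}} = -\gamma_{\Sigma}$; and the fact that $\sigma_{\Sigma}$ (hence $\sigma'_{\Sigma}$) is $\mp\id$. Second, for part (1), I would verify the case $n=1$: there $\mathsf{comp}^{(1)}$ is the composite $\sfP = \Sigma\sfS^{-1} \xrightarrow{\Sigma(\gamma_{\id})} \Sigma\sfS^{-1} \xrightarrow{(\sigma^{-1}_{\id})_{\sfS^{-1}}} \Sigma\sfS^{-1}$, and since $\gamma_{\id} = \id$ and $\sigma_{\id} = \id$ this is $\id_{\sfP}$. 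Third, for the inductive step of (1), I would write $\sfP^{n} = \sfP \cdot \sfP^{n-1}$ and expand $\gamma_{\sfP^{n-1}}$ further as a composite coming from $\sfP^{n-1} = \sfP^{n-2}\cdot\sfP$ via the cocycle formula, then check that the extra factor $\Sigma(\gamma_{\sfP^{n-1}})$ followed by $(\sigma^{-1}_{\sfP^{n-1}})_{\sfS^{-1}}$ telescopes against the inductively-known $\mathsf{comp}^{(n-1)}$; the key input is compatibility of $\sigma$ with composition, i.e. $\sigma_{FG} = (\sigma_{F})_{G}\circ F(\sigma_{G})$ (in the appropriate order), which must be stated and used. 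Fourth, for part (2), I would again do $n=1$ first: both sides collapse, via $\gamma_{\id}=\id$ and the triangle identities for $\alpha, \beta$, to the canonical isomorphism $\Sigma^{-1}\Sigma\sfS^{-1} \xrightarrow{\alpha} \sfS^{-1}$ — so it suffices to chase the naturality square of $\alpha$ against $\sfP = \Sigma\sfS^{-1}$. Fifth, the inductive step of (2): rewrite $\sigma'_{\sfP^{n-1}}$ using the cocycle/compatibility formulas splitting off one copy of $\sfP$, and reduce to the $n=1$ diagram tensored on the right by $\sfP^{n-2}$ together with the inductive hypothesis; this is a diagram chase with no new ideas, only bookkeeping of where $\alpha$ and $\beta$ are inserted.

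\textbf{Main obstacle.} The genuine difficulty here is not conceptual but notational: keeping track of the precise placement of the canonical unit/counit isomorphisms $\alpha\colon \Sigma^{-1}\Sigma \to \id$ and $\beta\colon \id \to \Sigma\Sigma^{-1}$, of the whiskering subscripts on $\gamma$ and $\sigma$, and of signs coming from $\sigma_{\Sigma} = -\gamma_{\Sigma}$ and from the exact structure on $\Sigma^{-1}$. A sign error or a misplaced $\alpha$ propagates through the induction. To contain this I would fix once and for all the convention that $\sfP$-powers are parenthesized left-to-right, suppress the canonical associators, and prove the two compatibility lemmas for $\gamma$ and $\sigma$ with respect to composition of functors \emph{separately} and cleanly before starting the induction, so that both parts of Lemma~\ref{202102171838} become short telescoping arguments. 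The payoff is that both identities are then formal consequences of the cocycle conditions and $\gamma_{\sfS^{-1}} = \id$, with no appeal to the specific category $\Dbmod{\Pa}$ at all.
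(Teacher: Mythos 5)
Your proposal is correct in substance and rests on exactly the same inputs as the paper's proof: the cocycle formula $\gamma_{FG}=F(\gamma_{G})(\gamma_{F})_{G}$, the analogous compatibility of $\sigma$ with composition, and the three special values $\gamma_{\sfS^{-1}}=\id$, $\sigma_{\Sigma}=-\id$, $\sigma_{\sfS^{-1}}=-\gamma_{\Sigma}$. The organizational differences are minor but worth noting. For (1), the paper does not start the telescoping at $n=1$: it isolates the case $n=2$ as the crux, writing $\mathsf{comp}^{(2)}$ explicitly as the four-fold composite $\Sigma(\sigma^{-1}_{\sfS^{-1}})\circ(\sigma^{-1}_{\Sigma})_{\sfS^{-2}}\circ\Sigma^{2}(\gamma_{\sfS^{-1}})\circ\Sigma(\gamma_{\Sigma})_{\sfS^{-1}}$ and cancelling the two signs coming from $\sigma_{\Sigma}=-\id$ and $\sigma_{\sfS^{-1}}=-\gamma_{\Sigma}$, and then inducts via the single identity $\mathsf{comp}^{(n)}=\sfP^{n-2}(\mathsf{comp}^{(2)})\circ\mathsf{comp}^{(n-1)}$; in your scheme that same $n=2$ cancellation is still the only nontrivial content of the inductive step, so you should make it explicit rather than leave it inside the word ``telescopes''. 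For (2), the paper does not induct at all: it writes one commutative diagram in which the left column is the identity and the bottom square commutes by part (1), so (2) is a direct corollary of (1) plus naturality of $\alpha$, $\beta$ and $\sigma$; your separate induction for (2) also works formally, but it redoes bookkeeping that the paper's route avoids. Either way the argument is purely formal and never uses the specific category $\Dbmod{\Pa}$, as you observe.
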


\begin{proof}
(1) The case $n =1$ is clear. 
The case $ n= 2$ is proved in the following computation:
\[
\begin{split} 
\mathsf{comp}^{(2)} 
&= \Sigma(\sigma^{-1}_{\sfS^{-1}}) \circ  (\sigma^{-1}_{\Sigma})_{\sfS^{-2}} \circ  \Sigma^{2}(\gamma_{\sfS^{-1}}) \circ \Sigma(\gamma_{\Sigma})_{\sfS^{-1}}\\
&=- \Sigma(\sigma^{-1}_{\sfS^{-1}}) \circ \Sigma(\gamma_{\Sigma})_{\sfS^{-1}}\\
& = \id_{P^{2}}
\end{split} 
\]
where for the second equality we use $\sigma_{\Sigma}= -\id, \ \gamma_{\sfS^{-1}} = \id$.

The case $n \geq 3$ is shown by induction on $n$ using  the  equality
$\mathsf{comp}^{(n)} = \sfP^{n -2}(\mathsf{comp}^{(2)} ) \circ \mathsf{comp}^{(n-1)}.$

(2) can be checked by using  the following commutative diagram
\[
\begin{xymatrix}@C=60pt{
\Sigma^{-1} \sfP^{n} \ar[d]_{ \Sigma^{-1}\sfP^{ n-1}(\beta)_{\sfP} } 
\ar[r]^{ (\sigma'_{\sfP^{n -1} })_{\sfP} } 
&
 \sfP^{n -1}\Sigma^{-1} \sfP \ar[d]^{ \alpha^{-1}_{\sfP^{n -1}\Sigma^{-1} \sfP }} 
 \\ 
\Sigma^{-1} \sfP^{n -1} \Sigma \Sigma^{-1} \Sigma \sfS^{-1}
 \ar[r]_{ \Sigma^{-1}(\sigma_{\sfP^{n -1}})_{\Sigma^{-1}\Sigma\sfS^{-1}}} 
\ar[d]_{ \Sigma^{-1}\sfP^{n-1} \Sigma (\alpha)_{\sfS^{-1}} } 
& 
\Sigma^{-1} \Sigma \sfP^{n -1} \Sigma^{-1} \Sigma \sfS^{-1}
\ar[d]^{\Sigma^{-1} \Sigma\sfP^{n-1}  (\alpha)_{\sfS^{-1}}} 
\\
\Sigma^{-1}  \sfP^{n -1}  \Sigma \sfS^{-1} \ar@{=}[d]\ar[r]_{ \Sigma^{-1}(\sigma_{\sfP^{n -1}})_{\sfS^{-1}}} &
\Sigma^{-1} \Sigma \sfP^{n -1}\sfS^{-1} \ar@{=}[d] \\
\Sigma^{-1}\Sigma \sfS^{-1} \sfP^{n -1} \ar[r]_{ \Sigma^{-1}\Sigma( \gamma_{\sfP^{n-1}}) } & 
\Sigma^{-1}\Sigma \sfP^{n -1} \sfS^{-1}  
}\end{xymatrix} 
\]
where the bottom square is commutative by (1) and the left column is the identity morphism.
\end{proof}

\subsection{Happel's criterion}\label{subsection: Happel's criterion}

\subsubsection{For indecomposable objects }
Let  $X \in \ind  \sfD$.  
A morphism $s: \sfS^{-1}(X) \to X$ is called \emph{Auslander-Reiten (AR)-coconnecting} 
if it is a coconnecting morphism of  an AR-triangle starting from $X$. 
In other words, we have an AR-triangle of the following form:
\[
X \to Y \to \sfS^{-1}(X) [1]  \xrightarrow{ \ -s[1] \ } X[1].
\]

We note that if $s, t$ are AR-coconnecting to $X$, 
then there exist automorphisms $\phi, \psi$ of $X$ such that 
$t = \phi s, t= s\sfS^{-1}(\psi)$.

We recall Happel's criterion for AR-coconnecting morphisms. 

\begin{theorem}[{\cite[p37]{Happel Book}}]\label{Happel's criterion}
Let $X \in \ind \sfD$. 
Then a morphism $s: \sfS^{-1}(X) \to X$ is  an AR-coconnecting morphism  
if the following equations hold
\[
\isagl{\id_{X}, s} \neq 0, \ 
\isagl{f, s} = 0
\]
where  $f \in \rad \End_{\sfD}(X)$. 
The converse holds if $\dim \ResEnd_{\sfD}(X) = 1$. 
\end{theorem}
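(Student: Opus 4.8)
The plan is to deduce both implications from the existence of Auslander–Reiten triangles in $\sfD = \Dbmod{R}$ (Happel \cite{Happel Book}) together with the functoriality \eqref{202012091955} of the Serre pairing. First I would fix an AR-triangle starting from the indecomposable $X$; after rotation it reads
\[
\sfS^{-1}(X) \xrightarrow{s_{0}} X \xrightarrow{u_{0}} Y_{0} \xrightarrow{v_{0}} \sfS^{-1}(X)[1],
\]
so that $s_{0}$ is AR-coconnecting, its coconnecting morphism $-s_{0}[1]$ is nonzero, and hence $s_{0}\neq 0$. I would also use that $\End_{\sfD}(X)$ is local with residue division algebra $D := \ResEnd_{\sfD}(X)$, that $\rad\End_{\sfD}(X)$ is therefore a maximal left ideal, and that a non-section endomorphism of $X$ is precisely an element of $\rad\End_{\sfD}(X)$.

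The first and main step is to show that $\isagl{-, s_{0}}$ vanishes exactly on $\rad\End_{\sfD}(X)$ and is nonzero at $\id_{X}$. For this I would apply $\Hom_{\sfD}(-, X)$ to the rotated triangle to get the exact sequence
\[
\Hom_{\sfD}(Y_{0}, X) \xrightarrow{(u_{0})^{*}} \End_{\sfD}(X) \xrightarrow{(s_{0})^{*}} \Hom_{\sfD}(\sfS^{-1}(X), X).
\]
Since $u_{0}$ is left almost split, $\im(u_{0})^{*}$ is a left ideal of $\End_{\sfD}(X)$ containing every non-section endomorphism of $X$, hence containing $\rad\End_{\sfD}(X)$, but not $\id_{X}$ (otherwise $u_{0}$ would be a section); by maximality it equals $\rad\End_{\sfD}(X)$, so exactness gives $\{g : g s_{0} = 0\} = \rad\End_{\sfD}(X)$. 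Using \eqref{202012091955} in the form $\isagl{a, g s_{0}} = \isagl{a g, s_{0}}$, the vanishing $g s_{0} = 0$ is equivalent to $\isagl{-, s_{0}}$ vanishing on the left ideal $\End_{\sfD}(X)\, g$; applied to $g \in \rad\End_{\sfD}(X)$ this yields $\isagl{-, s_{0}}|_{\rad\End_{\sfD}(X)} = 0$, while $\isagl{\id_{X}, s_{0}} = 0$ would force $\isagl{-, s_{0}}$ to vanish on all of $\End_{\sfD}(X) = \End_{\sfD}(X)\cdot\id_{X}$, contradicting $s_{0}\neq 0$ via nondegeneracy.

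For the forward implication, let $s$ satisfy $\isagl{\id_{X}, s}\neq 0$ and $\isagl{-, s}|_{\rad\End_{\sfD}(X)} = 0$; then both $\isagl{-, s}$ and $\isagl{-, s_{0}}$ descend to functionals on $D$. By \eqref{202012091955}, post-composing $s_{0}$ by $\phi\in\Aut_{\sfD}(X)$ replaces $\isagl{-, s_{0}}$ by $a\mapsto\isagl{a\phi, s_{0}}$, i.e. by $\isagl{-, s_{0}}$ precomposed with right multiplication by $\overline{\phi}\in D^{\times}$. Since $D$ is a division algebra and $\isagl{-, s_{0}}\neq 0$, the $\kk$-linear map sending $d\in D$ to the functional $\bar a\mapsto\isagl{-, s_{0}}(\bar a\, d)$ is injective, hence bijective; so $\isagl{-, s}$ matches $\isagl{-, s_{0}}(-\cdot d)$ for a unique $d\in D$, and $\isagl{\id_{X}, s}\neq 0$ forces $d$ invertible. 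Lifting $d$ to some $\phi\in\Aut_{\sfD}(X)$ gives $\isagl{-, s} = \isagl{-, \phi s_{0}}$, hence $s = \phi s_{0}$ by nondegeneracy, and $s$ is AR-coconnecting because the chosen AR-triangle is isomorphic, via $\phi$ at the $X$-vertex, to $\sfS^{-1}(X)\xrightarrow{s} X\to Y_{0}\to\sfS^{-1}(X)[1]$. For the converse I would add the hypothesis $\dim_{\kk}D = 1$, so that $\End_{\sfD}(X) = \kk\id_{X}\oplus\rad\End_{\sfD}(X)$: an AR-coconnecting $s$ equals $\phi s_{0}$ for some $\phi\in\Aut_{\sfD}(X)$, hence $\isagl{-, s}$ again kills $\rad\End_{\sfD}(X)$ and equals a nonzero scalar multiple of $\isagl{-, s_{0}}$ on the one-dimensional complement $\kk\id_{X}$, so it is nonzero at $\id_{X}$.

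The hard part will be the first step, and within it the identification $\{g\in\End_{\sfD}(X) : g s_{0} = 0\} = \rad\End_{\sfD}(X)$: this is exactly where the almost-split property is used, and the delicate point is that it must produce the full radical, not merely a proper sub-ideal — which is what makes $\isagl{-, s_{0}}$ vanish on $\rad\End_{\sfD}(X)$ rather than only on $\rad^{2}\End_{\sfD}(X)$. A secondary issue is that the hypothesis $\dim_{\kk}\ResEnd_{\sfD}(X) = 1$ is genuinely needed only for the converse: without it, composing $s_{0}$ with an automorphism whose residue lies in the kernel of $\isagl{-, s_{0}}|_{D}$ yields an AR-coconnecting $s$ with $\isagl{\id_{X}, s} = 0$, so the write-up should make clear why the forward implication nonetheless holds for arbitrary $\kk$.
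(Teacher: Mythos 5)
Your overall strategy is sound, and since the paper does not prove this statement but only cites Happel, the only meaningful comparison is with the standard argument — which is essentially what you carry out: applying $\Hom_{\sfD}(-,X)$ to the rotated AR-triangle and using the left almost split property together with locality of $\End_{\sfD}(X)$ correctly identifies $\{g\in\End_{\sfD}(X) : gs_{0}=0\}$ with $\rad\End_{\sfD}(X)$, the functoriality \eqref{202012091955} then makes $\isagl{-,s_{0}}$ vanish on the radical, and comparing the two induced functionals on $D=\ResEnd_{\sfD}(X)$ via right multiplication by a unit of $D$ is the right way to produce $\phi\in\Aut_{\sfD}(X)$ with $s=\phi s_{0}$, hence an isomorphism of triangles and the forward implication; the converse under $\dim_{\kk}D=1$ also goes through as you sketch, using the uniqueness of AR-triangles.

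The one genuine flaw is localized in your ``first and main step'': the assertion that $\isagl{\id_{X},s_{0}}\neq 0$ (and that $\isagl{-,s_{0}}$ vanishes \emph{exactly} on the radical) cannot be proved without assuming $\dim_{\kk}D=1$, and the justification you give is a non sequitur. What you actually established is that $\isagl{-,s_{0}}$ vanishes on the left ideal $\End_{\sfD}(X)g$ if and only if $gs_{0}=0$; knowing only that the functional vanishes at the single element $\id_{X}$ does not allow you to conclude that it vanishes on $\End_{\sfD}(X)\cdot\id_{X}=\End_{\sfD}(X)$. Indeed, when $\dim_{\kk}D\geq 2$ the induced nonzero functional on $D$ may kill $\overline{\id_{X}}$ — exactly the phenomenon you describe in your closing paragraph — and since $s_{0}$ is only determined up to composition with $\Aut_{\sfD}(X)$, nothing distinguishes your chosen $s_{0}$ from such a twist. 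Fortunately the error does not propagate: the forward implication only uses that $\isagl{-,s_{0}}$ kills $\rad\End_{\sfD}(X)$ and is nonzero as a functional (which follows from $s_{0}\neq 0$ and nondegeneracy), and in the converse the hypothesis $\dim\ResEnd_{\sfD}(X)=1$ gives $\End_{\sfD}(X)=\kk\,\id_{X}\oplus\rad\End_{\sfD}(X)$, so there $\isagl{\id_{X},s_{0}}\neq 0$ really does follow. The fix is simply to weaken the step-1 claim to ``$\isagl{-,s_{0}}$ vanishes on the radical and is nonzero as a linear form'' and to invoke $\isagl{\id_{X},s_{0}}\neq 0$ only in the converse, where the residue field hypothesis is in force.
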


Let $X \in \ind \sfD$. 
If $\dim \ResEnd_{\sfD}(X) = 1$, then the subspace of $\Hom_{\sfD}(\sfS^{-1}(X), X)$ formed by all AR-coconnecting morphisms is one dimensional over $\kk$.  
Thus we have the following lemma. 

\begin{lemma}\label{202102071323}
Let $X$ be an  indecomposable object of $X$ such that $\dim \ResEnd_{\sfD}(X) = 1$. 
Then, the following holds. 
\begin{enumerate}[(1)] 
\item Two AR-coconnecting morphisms $s,t$ to $X$ are proportional over $\kk$ to each other. 
More precisely we have 
\[
t = \frac{\isagl{\id_{X}, t}}{\isagl{\id_{X}, s}} s. 
\]

\item An element $s \in \Hom_{\sfD}(\sfS^{-1}(X), X)$ is $0$ if and only if it satisfies 
\[
\isagl{\id_{X}, s } = 0, \ \ \isagl{f, s} = 0 \textup{ for } f \in \rad\End_{\sfD}(X). 
\]
\end{enumerate}
\end{lemma}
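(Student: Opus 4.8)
\textbf{Plan for the proof of Lemma \ref{202102071323}.}
The statement concerns an indecomposable object $X$ with $\dim\ResEnd_{\sfD}(X)=1$, and both parts are elementary consequences of Happel's criterion (Theorem \ref{Happel's criterion}) together with non-degeneracy of the Serre pairing. First I would record the key structural fact: when $\dim\ResEnd_{\sfD}(X)=1$, the endomorphism ring $\End_{\sfD}(X)$ is local with residue field $\kk$, so every endomorphism is uniquely $c\cdot\id_X + r$ with $c\in\kk$ and $r\in\rad\End_{\sfD}(X)$. Dually, $\Hom_{\sfD}(\sfS^{-1}(X),X)$ is a module over $\ResEnd_{\sfD}(X)\cong\kk$ via pre- and post-composition, so the functional $s\mapsto\isagl{-,s}$ restricted to $\End_{\sfD}(X)$ is determined by its value on $\id_X$ plus its restriction to $\rad\End_{\sfD}(X)$.

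For part (1), I would argue as follows. Let $s$ be a fixed AR-coconnecting morphism; by Theorem \ref{Happel's criterion}, $\isagl{\id_X,s}\neq 0$ and $\isagl{f,s}=0$ for $f\in\rad\End_{\sfD}(X)$. Given a second AR-coconnecting morphism $t$, the same criterion gives $\isagl{\id_X,t}\neq 0$ and $\isagl{f,t}=0$ for $f\in\rad\End_{\sfD}(X)$. Set $c:=\isagl{\id_X,t}/\isagl{\id_X,s}$ and consider $t - cs$. Then $\isagl{\id_X, t-cs}=0$ and $\isagl{f,t-cs}=0$ for all $f\in\rad\End_{\sfD}(X)$. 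Since $\{\id_X\}\cup\rad\End_{\sfD}(X)$ spans $\End_{\sfD}(X)$, we get $\isagl{g,t-cs}=0$ for every $g\in\End_{\sfD}(X)=\Hom_{\sfD}(X,X)$. By non-degeneracy of the Serre pairing $\Hom_{\sfD}(X,X)\otimes\Hom_{\sfD}(\sfS^{-1}(X),X)\to\kk$, this forces $t-cs=0$, i.e. $t = cs$ as claimed. This is precisely statement (2) of the lemma applied to $t - cs$, so part (1) really reduces to part (2).

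For part (2): the "only if" direction is trivial, since $s=0$ makes every pairing vanish. For "if", suppose $\isagl{\id_X,s}=0$ and $\isagl{f,s}=0$ for all $f\in\rad\End_{\sfD}(X)$; again spanning $\{\id_X\}\cup\rad\End_{\sfD}(X)$ over $\kk$ gives $\isagl{g,s}=0$ for every $g\in\End_{\sfD}(X)$, and non-degeneracy of Serre duality yields $s=0$. I would present part (2) first and then deduce part (1) from it to avoid repetition. There is no serious obstacle here — the only point requiring a moment's care is the reduction "$\isagl{g,s}=0$ on the spanning set $\Rightarrow$ on all of $\End_{\sfD}(X)$ $\Rightarrow$ $s=0$", which uses bilinearity of the pairing and the hypothesis $\dim\ResEnd_{\sfD}(X)=1$ (so that $\End_{\sfD}(X)=\kk\id_X\oplus\rad\End_{\sfD}(X)$ as $\kk$-vector spaces). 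Everything else is bookkeeping, so I expect the whole argument to be a few lines.
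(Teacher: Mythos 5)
Your proof is correct and is essentially the argument the paper intends: the paper gives no detailed proof, merely noting that the lemma follows from Happel's criterion and the one-dimensionality of the space of AR-coconnecting morphisms, and your argument via the decomposition $\End_{\sfD}(X)=\kk\,\id_X\oplus\rad\End_{\sfD}(X)$ together with non-degeneracy of the Serre pairing is exactly that reasoning spelled out (including correctly invoking the converse direction of Happel's criterion, which is available since $\dim\ResEnd_{\sfD}(X)=1$).
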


\subsection{Happel's criterion for not necessarily indecomposable objects}\label{202306211510}
The results given in Section \ref{202306211510} and \ref{2023062115101} are not used in the main body of the paper, but may be of independent interest.

We deal with the case where $X$ is not necessarily indecomposable. From now on we assume that $\dim \ResEnd_{\sfD}(X) = 1$ for all indecomposable $X$. Notice that this is satisfied if the base field $\kk$ is algebraically closed. 

\begin{definition}\label{202106040919} 
Let  $X \in \Dbmod{R}$ an object.  
A morphism $s: \sfS^{-1} (X) \to X$ is called \emph{AR-coconnecting} 
if there exists an indecomposable decomposition $\phi: X \xrightarrow{\cong} \bigoplus_{i =1}^{n} X_{i}$ 
and AR-coconnecting morphisms $s_{i}: \sfS^{-1}(X_{i}) \to X_{i}$ for all $i = 1,2, \ldots, n$ 
satisfying that 
the composition 
$\phi s \sfS^{-1}(\phi^{-1}): \sfS^{-1}(\bigoplus_{i =1}^{n} X_{i}) \to \bigoplus_{i =1}^{n} X_{i}$ 
is equal to $\bigoplus_{i =1}^{n} s_{i}$. 
\end{definition}

In other words, for an AR-coconnecting morphism $s: \sfS^{-1} (X) \to X$ there is a direct sum of AR-triangles 
\[
\sfS^{-1}(X) \xrightarrow{ s} X \to Y \to \sfS^{-1}(X) [1].
\]
We provide a Happel's criterion  for a not necessarily indecomposable object $X \in \Dbmod{R}$.

\begin{proposition}\label{202008081702}
Let $X \in \Dbmod{R}$. 
Then for  a morphism $s: \sfS^{-1} (X) \to X$ the following conditions are equivalent:

\begin{enumerate}[(1)] 

\item The morphism $s$ AR-coconnecting. 

\item The following conditions hold. 
\begin{enumerate}[(a)]
\item There exists a complete set $\{ e_{1}, e_{2}, \ldots, e_{n} \}$ of orthogonal primitive idempotent elements 
of $\End_{R}(X)$ 
such that 
$\isagl{e_{a},  s} \neq 0$ for all $a =1,2, \ldots, n$. 

\item $\isagl{ f, s} = 0$ for all $f\in \rad\End(X)$. 

\end{enumerate}
\end{enumerate}
\end{proposition}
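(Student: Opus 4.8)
The plan is to deduce this from the indecomposable case, Theorem \ref{Happel's criterion} and Lemma \ref{202102071323}, together with an additivity argument for the Serre pairing over direct sum decompositions. First I would fix the general setup: given $X \in \Dbmod{R}$, choose a complete set $\{e_1,\dots,e_n\}$ of orthogonal primitive idempotents in $\End_R(X)$, giving an indecomposable decomposition $X \cong \bigoplus_{i=1}^n X_i$ with $X_i := e_i X$ (so that $\dim\ResEnd_R(X_i)=1$ by our standing hypothesis). Under this decomposition, any morphism $s\colon \sfS^{-1}(X)\to X$ is represented by a matrix $(s_{ij})$ with $s_{ij}\colon \sfS^{-1}(X_j)\to X_i$, and the pairing decomposes as $\isagl{e_i, s} = \isagl{\id_{X_i}, s_{ii}}$ by functoriality of Serre duality \eqref{202012091955} applied to the composites $\sfS^{-1}(X_i)\hookrightarrow \sfS^{-1}(X)\xrightarrow{s} X \twoheadrightarrow X_i$, where the inclusion/projection are the ones attached to $e_i$. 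Similarly, for $f\in\rad\End_R(X)$ written as $(f_{ij})$ one computes $\isagl{f, s} = \sum_{i,j}\isagl{f_{ij}, s_{ji}}$.

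The direction (1) $\Rightarrow$ (2) is the easy half: if $s$ is AR-coconnecting, then after conjugating by a suitable automorphism we may assume $s = \bigoplus_i s_i$ with each $s_i$ AR-coconnecting to $X_i$; then $\isagl{e_i,s} = \isagl{\id_{X_i}, s_i}\neq 0$ by Theorem \ref{Happel's criterion}, giving (a), and for any $f\in\rad\End_R(X)$ every component $f_{ji}\colon X_i\to X_j$ lies in $\rad(X_i,X_j)$, so $\isagl{f_{ji},s_i} = 0$ — for $i=j$ this is Theorem \ref{Happel's criterion}, and for $i\neq j$ one reduces to the indecomposable case by composing with a section/retraction, or argues directly that $\isagl{f_{ji},s_i}=\isagl{f_{ji}g, s_i'}$ vanishes because $f_{ji}$ factors through radical morphisms between indecomposables in the same component (or: the pairing $\Hom(X_i,X_j)\otimes\Hom(\sfS^{-1}X_i,X_i)\to\kk$ restricted to $\rad(X_i,X_j)$ against an AR-coconnecting morphism is zero, which follows from the AR-property of the triangle defining $s_i$). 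Summing, $\isagl{f,s}=0$, giving (b).

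For (2) $\Rightarrow$ (1), I would first use (a) to produce, via Lemma \ref{202102071323}(1), for each $i$ an AR-coconnecting morphism $\bar s_i\colon \sfS^{-1}(X_i)\to X_i$; the goal is to show that $s$ is conjugate to $\bigoplus_i \bar s_i$ by an automorphism of $X$ compatible with the chosen decomposition. The diagonal blocks: fix $i$; the endomorphism $e_i f e_i$ of $X$ with $f$ arbitrary radical gives $\isagl{f', s_{ii}}=0$ for all $f'\in\rad\End_R(X_i)$ by (b), while $\isagl{\id_{X_i},s_{ii}}\neq 0$ by (a); hence by Lemma \ref{202102071323}(1) (or Happel's criterion directly) $s_{ii}$ is a nonzero scalar multiple of $\bar s_i$, in particular AR-coconnecting. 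The off-diagonal blocks $s_{ij}$ ($i\neq j$) must be absorbed: here I would argue that after rescaling the $\bar s_i$ we may assume $s_{ii}=\bar s_i$, and then seek automorphisms $u = \id + (\text{strictly off-diagonal nilpotent})$ and $w$ on $\sfS^{-1}(X)$ (again of the form $\gamma(\id + \text{off-diagonal})$) such that $u\, s\, w^{-1} = \bigoplus_i \bar s_i$; solving for the components is a triangular linear system since each off-diagonal $s_{ij}$ can be killed by adding a term $\bar s_i\circ(\text{morphism }X_i\to X_j)$ or $(\text{morphism }X_i\to X_j)\circ\bar s_j$, using that AR-coconnecting morphisms between distinct indecomposables in a direct sum can always be made strictly block-diagonal by the defining property of AR-triangles (any morphism $\sfS^{-1}(X_j)\to X_i$ factors as $\bar s_i$ followed by something, or through the middle term). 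I expect the main obstacle to be precisely this last step: showing cleanly that (b) forces the off-diagonal blocks to be "inessential," i.e. removable by conjugation, rather than merely computing their pairings — this is the content that makes the disjoint union of AR-triangles canonical, and I would likely organize it as a short lemma that $\Hom_{\sfD}(\sfS^{-1}(X_j), X_i)$ for $i\neq j$ is "cancelled" against the AR-coconnecting $\bar s_i$ in the appropriate double-coset sense, using the lifting property of AR-triangles along non-split-epi and non-split-mono morphisms (as in the proofs surrounding Theorem \ref{kQ approximations theorem}).
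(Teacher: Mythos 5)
Your direction (1) $\Rightarrow$ (2), and your treatment of the diagonal blocks in (2) $\Rightarrow$ (1), are correct and coincide with the paper's computation: with $p_a,i_a$ the projection and injection attached to $e_a$ and $s_a:=p_a\, s\,\sfS^{-1}(i_a)$, functoriality of the pairing gives $\isagl{f,s_a}=\isagl{i_afp_a,s}$, so (a), (b) and Theorem \ref{Happel's criterion} show each $s_a$ is AR-coconnecting for $X_a$. The genuine gap is exactly the step you flag yourself: you never prove that the off-diagonal blocks can be removed, and the mechanism you sketch (absorb $s_{ij}$ by terms $\bar s_i\circ(\cdot)$ or $(\cdot)\circ\bar s_j$ and solve a triangular system) cannot be completed from (a) and (b) alone. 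What (b) really gives is: $s_{ij}=0$ whenever $X_i\not\cong X_j$ (all of $\Hom(X_j,X_i)$ is radical and the component pairing is nondegenerate), while for $X_i\cong X_j$ it only pins $s_{ij}$ down to a scalar multiple of an isomorphism composed with $\bar s_j$, and these scalars are invisible to (a) and (b). To see the obstruction concretely, take $X=Z\oplus Z$ with $Z$ indecomposable and $t$ AR-coconnecting for $Z$; since $\rad\End_R(Z)\,t=0=t\,\sfS^{-1}(\rad\End_R(Z))$ (pair against everything and use Theorem \ref{Happel's criterion} and nondegeneracy), a morphism satisfying (b) is just a scalar matrix $C$ times $t$, an automorphism acts through its image in $GL_2(\kk)$, and the AR-coconnecting morphisms of Definition \ref{202106040919} correspond exactly to $C$ similar to an invertible diagonal matrix (the diagonal entries being forced by Lemma \ref{202102071323}). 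Condition (a) only says that some conjugate of $C$ has nonzero diagonal entries. So $C=\left(\begin{smallmatrix}1&1\\0&1\end{smallmatrix}\right)$, i.e.\ $s=\left(\begin{smallmatrix}t&t\\0&t\end{smallmatrix}\right)$, satisfies (a),(b) but is not diagonalizable, so no single conjugation as in Definition \ref{202106040919} works; and $C=\left(\begin{smallmatrix}1&1\\1&1\end{smallmatrix}\right)$ satisfies (a),(b) but is singular, so the triangle over $s$ is isomorphic to the one over $\mathrm{diag}(t,0)$ and is not even a direct sum of AR-triangles up to isomorphism. Hence, once $X$ has isomorphic indecomposable summands, your last step is not merely unfinished: it cannot be finished with the stated hypotheses, and a repair must strengthen (a) (e.g.\ to nondegeneracy of the induced functional $f\mapsto\isagl{f,s}$ on $\End_R(X)/\rad\End_R(X)$), or restrict to multiplicity-free $X$, possibly also relaxing the single-$\phi$ conjugation in the definition.

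For comparison, the paper's proof of (2) $\Rightarrow$ (1) does no conjugation at all: it sets $s_a:=p_a\,s\,\sfS^{-1}(i_a)$, asserts that ``it is immediate to check'' the identity $s=\phi^{-1}(\bigoplus_a s_a)\sfS^{-1}(\phi)$, i.e.\ that $s$ is already block-diagonal for the given idempotents, and then runs the same Happel computation you run. That identity amounts to the vanishing of $e_a\,s\,\sfS^{-1}(e_b)$ for $a\neq b$, which is precisely where your attempt stalls and which, by the examples above, does not follow from (a),(b) in the presence of repeated summands. So your instinct about where the real content lies is sound, and your write-up is more candid than the paper at this point, but as a proof it is incomplete, and the completion has to come from stronger hypotheses rather than from a cleverer triangular conjugation.
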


\begin{proof}
The implication  (1) $\Rightarrow$ (2) is clear. We prove (2) $\Rightarrow$ (1). 
 For $a =1,2, \ldots, n$ we denote by $p_{a}: X \to X_{a}$ the projection induced from $e_{a}$ and 
by $i_{a}: X_{a} \to X$   the injection induced from $e_{a}$. 
The morphism $\phi:= (p_{1}, p_{2}, \ldots, p_{n})^{t}: X \to \bigoplus_{a=1}^{n}X_{a}$. 
is an isomorphism and $\phi^{-1} = (i_{1}, i_{2}, \ldots, i_{n})$. 
We set $s_{a} := p_{a} s \sfS^{-1}(i_{a})$ for $a =1,2, \ldots, n$. 
Then it is immediate to check the equality $s = \phi^{-1} (\bigoplus_{a=1}^{n} s_{i}) \sfS^{-1}(\phi)$. 
Thus it only remains to show that $s_{a}$ is AR-coconnecting to $X_{a}$. 

We use Happel's criterion for indecomposable objects. 
For $f \in \End_{R}(X_{a})$ we have 
\[
\isagl{f, s_{a}} = \isagl{i_{a} f p_{a}, s} \neq 0.
\]
It follows from the assumption (a) that 
$\isagl{\id_{X_{a}}, s_{a}} = \isagl{e_{a}, s} \neq 0$. 
On the other hand, 
if $f \in \rad\End_{R}(X_{a})$, then $i_{a} f p_{a} \in \rad\End_{R}(X)$. 
Thus it follows from the assumption (b) 
that 
$\isagl{f, s_{a}} = \isagl{i_{a} f p_{a}, s}= 0$. 
\end{proof}

We point out the following property of AR-coconnecting morphisms. 

\begin{lemma}\label{202106041023}
Let $X \in \sfD$ and $s, t$ be AR-coconnecting to $X$. 
Then there exist automorphisms $\phi, \psi$ of $X$ such that 
$t= \phi s, \ t = s\sfS^{-1}(\psi)$. 
\end{lemma}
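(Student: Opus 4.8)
The plan is to reduce the statement to the fact that both AR-coconnecting morphisms sit inside exact triangles whose outer maps are, on the one hand, a minimal left $\rad$-approximation of $X$ and, on the other hand, a minimal right $\rad$-approximation of $\sfS^{-1}(X)[1]$, and then to exploit the uniqueness of such (co)approximations together with the axioms of triangulated categories. The two conclusions $t=\phi s$ and $t=s\sfS^{-1}(\psi)$ will come out of the right-approximation side and the left-approximation side respectively.

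First I would unwind Definition \ref{202106040919}: since $s$ is AR-coconnecting there is an indecomposable decomposition $\phi_{s}\colon X\xrightarrow{\cong}\bigoplus_{i}X_{i}$ and AR-triangles $X_{i}\to Y_{i}\to \sfS^{-1}(X_{i})[1]\xrightarrow{-s_{i}[1]}X_{i}[1]$ whose direct sum, transported along $\phi_{s}$, yields an exact triangle
\[
\sfS^{-1}(X)\xrightarrow{\ s\ }X\xrightarrow{\ u\ }Y\xrightarrow{\ w\ }\sfS^{-1}(X)[1].
\]
Here $u$ is a direct sum of minimal left almost split morphisms, hence a minimal left $\rad$-approximation of $X$ (by Lemma \ref{202105171427} and the argument in the proof of Lemma \ref{202007132105}, together with its derived analogue), while $w$ is a direct sum of minimal right almost split morphisms, hence a minimal right $\rad$-approximation of $\sfS^{-1}(X)[1]$ (the right/dual versions of the same statements). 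The same applies to $t$, giving a triangle $\sfS^{-1}(X)\xrightarrow{t}X\xrightarrow{u'}Y'\xrightarrow{w'}\sfS^{-1}(X)[1]$ with $u'$ a minimal left $\rad$-approximation of $X$ and $w'$ a minimal right $\rad$-approximation of $\sfS^{-1}(X)[1]$. That transport along isomorphisms and direct sums preserve these properties is routine and I would only remark on it.

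To obtain $t=\phi s$, I would use uniqueness of minimal right $\rad$-approximations (the right version of Lemma \ref{202007132105}) to get an isomorphism $\beta\colon Y\to Y'$ with $w'\beta=w$. Rotating both triangles into the form $X\xrightarrow{u}Y\xrightarrow{w}\sfS^{-1}(X)[1]\xrightarrow{-s[1]}X[1]$ and its $t$-counterpart, the pair $(\beta,\id_{\sfS^{-1}(X)[1]})$ is a compatible morphism of their middle two terms, so by the axioms of triangulated categories it completes to a morphism of triangles $(\phi,\beta,\id,\phi[1])$, which is an isomorphism because $\beta$ and $\id$ are; commutativity of the last square gives $\phi[1]\circ(-s[1])=-t[1]$, i.e. $(\phi s)[1]=t[1]$, and faithfulness of $[1]$ yields $t=\phi s$ with $\phi\in\Aut(X)$. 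Symmetrically, to obtain $t=s\sfS^{-1}(\psi)$ I would use uniqueness of minimal left $\rad$-approximations (Lemma \ref{202007132105}) to get an isomorphism $\gamma\colon Y\to Y'$ with $\gamma u=u'$, complete the compatible pair $(\id_{X},\gamma)$ to an isomorphism of the same two triangles, and read off the last square as $s[1]=t[1]\circ c$ for an isomorphism $c\colon\sfS^{-1}(X)[1]\to\sfS^{-1}(X)[1]$; desuspending gives $s=t\cdot c[-1]$, and since $\sfS^{-1}$ is an equivalence, $(c[-1])^{-1}=\sfS^{-1}(\psi)$ for some $\psi\in\Aut(X)$, whence $t=s\sfS^{-1}(\psi)$.

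The only genuinely delicate point is the bookkeeping: one must be careful that the minimal \emph{right} $\rad$-approximation side produces post-composition by $\Aut(X)$ (the statement $t=\phi s$) while the minimal \emph{left} $\rad$-approximation side produces pre-composition by $\sfS^{-1}(\Aut(X))$ (the statement $t=s\sfS^{-1}(\psi)$), and that suspensions and the sign $-s[1]$ are tracked correctly when converting an isomorphism of triangles into an identity of morphisms. Everything else — the passage from a possibly decomposable $X$ to a direct sum of AR-triangles, and the completion of a partial morphism of triangles to an isomorphism — is standard, and I would dispatch it quickly.
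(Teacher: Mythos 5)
Your proof is correct. The paper states this lemma without proof, and your argument — realizing $s$ and $t$ as the co-connecting maps of direct sums of AR-triangles whose middle maps are minimal left/right almost split (hence minimal left/right $\rad$-approximations, unique up to isomorphism), then completing the resulting commuting square via TR3 and using the two-out-of-three property for triangle morphisms, with the signs in $-s[1]$, $-t[1]$ cancelling, and finally using that $\sfS^{-1}$ is an equivalence to write the automorphism of $\sfS^{-1}(X)$ as $\sfS^{-1}(\psi)$ — is precisely the standard argument the paper implicitly relies on.
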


\subsection{Happel's criterion for left $\rad^{n}$-approximations}\label{2023062115101}

\begin{theorem}\label{202106041030}
Let $n \geq 2$ be a positive integer, $M \in \sfD$ 
and $\lambda_{n -1}: M \to L_{n-1}$ a minimal left $\rad^{n -1}$-approximation, which fits an exact triangle 
$M \xrightarrow{ \lambda_{n-1}} L_{n -1} \xrightarrow{ \lambda'_{n -1} } C \xrightarrow{\lambda''_{n -1}} M[1]$. 
Assume that  $\lambda'_{n -1}: L_{n -1} \to C_{n -1}$ satisfies  the left $\rad$-fitting condition. 
Then for a morphism $s: \sfS^{-1}(L_{n -1} ) \to M$  the following conditions are equivalent. 
\begin{enumerate}[(1)] 
\item A cone morphism $t: M \to C$ of $s$ is a minimal $\rad^{n}$-approximation. 

\item The following conditions are satisfied. 
\begin{enumerate}[(a)]
\item There exists a complete set $\{e_{a}\}$ of orthogonal primitive idempotent elements of $\End(L_{n -1})$ 
such that $\isagl{ e_{a} \lambda_{n -1}, s} \neq 0$. 

\item We have $\isagl{ f \lambda_{ n-1}, s} = 0$ for all $ f \in \rad\End_{R}(L_{n -1})$. 
\end{enumerate}

\end{enumerate}

\end{theorem}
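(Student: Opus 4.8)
The plan is to reduce both conditions to the single statement that $\lambda_{n-1}s\colon\sfS^{-1}(L_{n-1})\to L_{n-1}$ is AR-coconnecting to $L_{n-1}$, and then to feed this into the machinery of Corollary~\ref{2021050917551}. First I would reformulate~(2). By the functoriality of Serre duality — the chain of equalities in~\eqref{202012091955} applied to $\sfS^{-1}(L_{n-1})\xrightarrow{s}M\xrightarrow{\lambda_{n-1}}L_{n-1}\xrightarrow{g}L_{n-1}$ — one gets $\isagl{g\lambda_{n-1},s}=\isagl{g,\lambda_{n-1}s}$ for every $g\in\End_R(L_{n-1})$. Hence conditions (2)(a) and (2)(b) say exactly that $\isagl{e_a,\lambda_{n-1}s}\ne0$ for a complete set $\{e_a\}$ of orthogonal primitive idempotents of $\End_R(L_{n-1})$ and that $\isagl{f,\lambda_{n-1}s}=0$ for all $f\in\rad\End_R(L_{n-1})$; by Proposition~\ref{202008081702} this is equivalent to $\lambda_{n-1}s$ being AR-coconnecting to $L_{n-1}$, i.e.\ to the existence of a direct sum of AR-triangles $\sfS^{-1}(L_{n-1})\xrightarrow{\lambda_{n-1}s}L_{n-1}\xrightarrow{\mu}E\to\nu_1^{-1}(L_{n-1})$, in which case $\mu$ is a minimal left $\rad$-approximation of $L_{n-1}$ by Lemma~\ref{202105171427}.

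Next I would show that this last statement is equivalent to~(1). For (1)$\Rightarrow$: a cone morphism $t$ of $s$ that is a minimal left $\rad^n$-approximation lies in $\rad^n\subseteq\rad^{n-1}$, hence factors through the minimal left $\rad^{n-1}$-approximation $\lambda_{n-1}$ as $t=\breve{\lambda}\lambda_{n-1}$; applying Corollary~\ref{2021050917551} with $n$ replaced by $n-1$ (here the left $\rad$-fitting hypothesis on $\lambda'_{n-1}$ is used) produces a direct sum of AR-triangles whose third term is $\cone(t)\cong\sfS^{-1}(L_{n-1})[1]=\nu_1^{-1}(L_{n-1})$, and the commutative diagram in the proof of Lemma~\ref{202105091801}(1) identifies its connecting datum as $\lambda_{n-1}$ composed with the co-cone morphism of $t$, which is $s$ up to sign; so $\lambda_{n-1}s$ is AR-coconnecting. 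Conversely, given the direct sum of AR-triangles of the first paragraph, $\mu$ is a minimal left $\rad$-approximation of $L_{n-1}$, so by its uniqueness (Lemma~\ref{202007132105}) together with the left $\rad$-fitting property of $\lambda'_{n-1}$ we may write $E\cong C_{n-1}\oplus L$ with $\mu\cong(-\lambda'_{n-1},\breve{\lambda}')^t$; Lemma~\ref{202105091801}(1) then shows $\breve{\lambda}'\lambda_{n-1}\colon M\to L$ is a minimal left $\rad^n$-approximation with $\cone(\breve{\lambda}'\lambda_{n-1})\cong\nu_1^{-1}(L_{n-1})$ whose co-cone morphism $a$ satisfies $\lambda_{n-1}a=\lambda_{n-1}s$, and after arranging $a=s$ (modifying $\breve{\lambda}'$ and the chosen isomorphism $\cone(\breve{\lambda}'\lambda_{n-1})\cong\nu_1^{-1}(L_{n-1})$ if needed) one concludes $\breve{\lambda}'\lambda_{n-1}$ is a cone morphism of $s$; since being a minimal left $\rad^n$-approximation is shared by all cone morphisms of $s$ (they differ by an isomorphism on the target), (1) follows.

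The hard part will be the very last identification: showing that the minimal left $\rad^n$-approximation $\breve{\lambda}'\lambda_{n-1}$ manufactured from $\mu$ can be realized as an honest cone morphism of $s$ — equivalently, that the ambiguity $a-s$, which is annihilated by $\lambda_{n-1}$ and hence factors through the co-cone morphism $C_{n-1}[-1]\to M$ of $\lambda_{n-1}$, does not change the cone. This, together with the systematic sign- and octahedron-bookkeeping needed to make the connecting-morphism identifications rigorous in both directions, I expect to be dispatched by the homotopy-Cartesian-square manipulations of Lemma~\ref{2020071917401} and the octahedral diagram of Lemma~\ref{octahedral axiom}, in the same spirit as the proofs of Lemma~\ref{202105091801} and Corollary~\ref{2021050917551}.
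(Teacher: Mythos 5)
Your reduction of (2) to the single condition that $\lambda_{n-1}s$ is AR-coconnecting to $L_{n-1}$ (via \eqref{202012091955} and Proposition \ref{202008081702}) is exactly the paper's first move, and your sketch of (1)$\Rightarrow$(2) via Lemma \ref{202105091801} is in the same spirit as the paper's citation. The problem is the step you yourself flag in (2)$\Rightarrow$(1). From the direct sum of AR-triangles over $\lambda_{n-1}s$ you manufacture, via Lemma \ref{202105091801}(1), \emph{some} minimal left $\rad^{n}$-approximation $\breve{\lambda}'\lambda_{n-1}\colon M\to L$ whose co-connecting morphism $a$ satisfies only $\lambda_{n-1}a=\lambda_{n-1}s$; but the claim to be proved is that the cone morphism of the \emph{given} $s$ is a minimal left $\rad^{n}$-approximation. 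The difference $a-s$ is killed by $\lambda_{n-1}$, hence factors through $C[-1]\to M$, and there is no general principle forcing $s$ and $a$ to have isomorphic cones compatibly with $M$ --- that they do is essentially the assertion being proved, not bookkeeping. Moreover the tool you invoke for this, Lemma \ref{2020071917401}, needs a vanishing hypothesis of the form $\Hom(W,Z[-1])=0$ (compare its use in Theorem \ref{2020071920551}, where Lemma \ref{202012171908} supplies exactly such a vanishing), and nothing like $\Hom_{\sfD}(\sfS^{-1}(L_{n-1}),C[-1])=0$ is available here. So as written the implication (2)$\Rightarrow$(1) is not established.

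The paper closes this gap by never comparing co-connecting morphisms: it applies the octahedral axiom at the outset to the composable pair $s\colon\sfS^{-1}(L_{n-1})\to M$ and $\lambda_{n-1}\colon M\to L_{n-1}$. This produces the triangle $\sfS^{-1}(L_{n-1})\xrightarrow{\lambda_{n-1}s}L_{n-1}\to N\to\nu_{1}^{-1}(L_{n-1})$ \emph{together with} an exact triangle $C[-1]\to\cone(s)\to N\to C$ tying the cone of the given $s$ to the middle term $N$, and with the composite $L_{n-1}\to N\to C$ equal to $\lambda'_{n-1}$. Condition (2) makes the first of these a direct sum of AR-triangles, so $L_{n-1}\to N$ is a minimal left $\rad$-approximation; the left $\rad$-fitting hypothesis on $\lambda'_{n-1}$ and Lemma \ref{202105171508} then force $N\to C$ to be a split epimorphism, hence $\cone(s)\to N$ is a split monomorphism, and Lemma \ref{202012191919} (applied with $n-1$ in place of $n$) gives directly that $M\to\cone(s)$ is a minimal left $\rad^{n}$-approximation. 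If you replace your final matching step by this octahedral argument (which also renders the connecting-morphism identification in your (1)$\Rightarrow$(2) direction automatic, since the third row of the octahedron is literally the triangle over $\lambda_{n-1}s$), the rest of your outline goes through.
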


\begin{proof}
The implication (1) $\Rightarrow$ (2) follows from Lemma \ref{202105091801} and Proposition \ref{202008081702}. 
We prove the implication (2) $\Rightarrow$ (1). 
By the octahedral axiom, we obtain the following diagram 
\[
\begin{xymatrix}{ 
& C[-1] \ar@{=}[r] \ar[d] & C[-1] \ar[d]& \\
\sfS^{-1}(L_{n -1}) \ar@{=}[d] \ar[r]^{s} & M  \ar[d]_{\lambda_{n -1}} \ar[r] &\cone(s)  \ar[r] \ar[d] & \nu_{1}^{-1}(L_{n -1}) \ar@{=}[d] \\
\sfS^{-1}(L_{n -1})  \ar[r]_{\lambda_{n -1}s}  & L_{n -1} \ar[d]_{\lambda'_{n -1}} \ar[r] & N \ar[r] \ar[d] & \nu_{1}^{-1}(L_{n -1}) \\ 
& C \ar@{=}[r]  & C  & 
}\end{xymatrix} 
\]
whose middle lows and columns are exact. 
It follows from Proposition \ref{202008081702} that 
the condition  (a) and (b) implies that the composition $\lambda_{n -1} s$ is AR-coconnecting to $L_{n -1}$. 
 Thus the third row is a direct sum of AR-triangles. 
Since we assume that the morphism $\lambda'_{n -1}$ satisfies the left $\rad$-fitting condition, 
the morphism $N \to C_{n-1}$ is an split-epimorphism by  Lemma \ref{202105171508}. 
Thus, the morphism $\cone(s) \to N$ is a split-monomorphism. 
Thus by Lemma \ref{202008081702} we conclude that the morphism $M \to \cone(s)$ is a minimal left $\rad^{n}$-approximation of $M$. 
\end{proof}

\section{Natural isomorphisms}\label{section: natural isomorphisms}

In this section, $A$ denotes a finite-dimensional algebra with 
$\gldim A^{\mre} < \infty$ (or more generally, a proper and smooth dg-algebra).
We collect natural isomorphisms used in the main body of the paper. 
For the reader's convenience, we give proofs by direct computations. 
More formal arguments can be found in \cite{Minamoto Mukai}.

\subsection{}
 First note that for $M, N \in \Dbmod{A}$, we have a natural isomorphism 
 \begin{equation}\label{202106091014}
\tuD( \tuD(N) \lotimes_{\Pa} M ) \cong \RHom_{\Pa}(M, \tuD\tuD(N) ) \cong \RHom_{\Pa}(M, N).  
 \end{equation}
 
 Recall that 
  for $X \in \Dbmod{\Pa^{\mre}}$, 
 we set $X^{\vvee} := \RHom_{\Pa^{\mre}}(X, \Pa^{\mre})$. 
 Replacing $M$ with $X^{\vvee} \lotimes_{\Pa} M $ in \eqref{202106091014} and taking $\tuD$, we obtain the following natural isomorphism
\begin{equation}\label{202106091015}
\tuD(N) \lotimes_{\Pa}X^{\vvee} \lotimes_{\Pa} M 
\cong \tuD\RHom_{\Pa}(X^{\vvee}\lotimes_{\Pa} M ,N).
\end{equation}

On the other hand, 
we have the following natural  isomorphism
\begin{equation}\label{202106020808}
\begin{split}
\tuD(N) \lotimes_{\Pa}X^{\vvee} \lotimes_{\Pa} M
& \cong (\tuD(N) \otimes_{\kk} M) \lotimes_{\Pa^{\mre}} \RHom_{\Pa^{\mre}}(X, \Pa^{\mre}) \\ 
&\cong \RHom_{\Pa^{\mre}}(X, M \otimes_{\kk} \tuD(N)) \\ 
&\cong \RHom_{\Pa^{\mre}}(X, \Hom_{\kk}(N, M)) \\ 
&\cong \RHom_{\Pa }(X\lotimes_{\Pa}N , M). \\
\end{split} 
\end{equation}

Combining \eqref{202106091015} and \eqref{202106020808} in the case $X = A$, we obtain 
a natural isomorphism 
\begin{equation}\label{202106020807}
\tuD\RHom_{\Pa}(\Pa^{\vvee} \lotimes_{\Pa} M ,N) \cong \RHom_{\Pa }(N , M).  
\end{equation}
which shows that the functor $A^{\vvee} \lotimes_{\Pa} -$ is the inverse of a Serre functor.

Recall that  $X^{\lvvee}:= \RHom_{\Pa}(X, \Pa), \ X^{\rvvee} := \RHom_{\Pa^{\op}}(X, A^{\op})$.
Setting  $M = N =A$ in  \eqref{202106020808}, we obtain an isomorphism below in $\Dbmod{\Pa^{\mre}}$. 
\[
\tuD(A) \lotimes_{\Pa} X^{\vvee} \cong X^{\lvvee}.
\]
Repeating the same argument with right modules, 
we obtain an isomorphism $X^{\vvee} \lotimes_{\Pa} \tuD(A) \cong X^{\rvvee}$ 
in $\Dbmod{\Pa^{\mre}}$.

We remark that in the case where $M, N \in \Dbmod{\Pa^{\mre}}$, 
the above isomorphisms are isomorphisms in $\Dbmod{\Pa^{\mre}}$.

\subsection{} 

We write $\sfS^{-1} := \Pa^{\vvee} \lotimes_{\Pa} -$, since, as is explained above,  it is the inverse of a Serre functor. 

Let  $T \in \Dbmod{\Pa^{\mre}}$ be  a two-sided tilting complex over $A$. 
We denote by  $F: = T \lotimes_{\Pa}-$ the associated autoequivalence  of $\Dbmod{\Pa}$. 
Then, there exists a natural isomorphism $\gamma_{F}: \sfS^{-1} F \to F\sfS^{-1}$ 
induced from the defining property of a Serre functor (see Section \ref{202106091104}). 
Since $\sfS^{-1} F = \Pa^{\vvee} \lotimes_{\Pa} T \lotimes_{\Pa} -$ and 
$F \sfS^{-1} = T\lotimes_{\Pa} \Pa^{\vvee} \lotimes_{\Pa} -$,
it is natural to expect that the natural isomorphism $\gamma_{F}$ is induced from 
an isomorphism in $\Dbmod{\Pa^{\mre}}$. 
 In the following lemma, we prove that  it is the case.

\begin{lemma}[{\cite[Corollary 3.7]{Minamoto Mukai}}]\label{202106091106} 
There exists an isomorphism $\gamma_{T} : \Pa^{\vvee} \lotimes_{\Pa} T \to T \lotimes_{\Pa} \Pa^{\vvee}$ 
in $\Dbmod{\Pa^{\mre}}$ 
such that $\gamma_{F} = \gamma_{T}\lotimes_{\Pa} -$. 
\end{lemma}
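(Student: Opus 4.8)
The plan is the following. Since $\sfS^{-1} = \Pa^{\vvee}\lotimes_{\Pa}-$ and $F = T\lotimes_{\Pa}-$, the natural isomorphism $\gamma_{F}\colon\sfS^{-1}F\to F\sfS^{-1}$ is a natural isomorphism between the two exact endofunctors $(\Pa^{\vvee}\lotimes_{\Pa}T)\lotimes_{\Pa}-$ and $(T\lotimes_{\Pa}\Pa^{\vvee})\lotimes_{\Pa}-$ of $\Dbmod{\Pa}$, and both $\Pa^{\vvee}\lotimes_{\Pa}T$ and $T\lotimes_{\Pa}\Pa^{\vvee}$ lie in $\Dbmod{\Pa^{\mre}}$. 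Because $\Pa$ is homologically smooth and proper, $\Pa$ is a compact generator of $\sfD(\Pa)$ and the functor $X\mapsto X\lotimes_{\Pa}-$, from $\sfD(\Pa^{\mre})$ to the category of colimit-preserving exact endofunctors of $\sfD(\Pa)$, is fully faithful (the Eilenberg--Watts theorem for dg-algebras); moreover $\gamma_{F}$, originally defined on $\Dbmod{\Pa}=\operatorname{perf}(\Pa)$, extends uniquely to a natural transformation of the cocontinuous extensions, since $\operatorname{perf}(\Pa)$ generates $\sfD(\Pa)$ under colimits. Hence $\gamma_{F}$ comes from a unique morphism $\gamma_{T}\colon\Pa^{\vvee}\lotimes_{\Pa}T\to T\lotimes_{\Pa}\Pa^{\vvee}$ in $\sfD(\Pa^{\mre})$, which then lies in $\Dbmod{\Pa^{\mre}}$, is an isomorphism because $\gamma_{F}$ is, and satisfies $\gamma_{T}\lotimes_{\Pa}- = \gamma_{F}$ by construction. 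This already proves the lemma.

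As the rest of this appendix is written with explicit bimodule complexes, I would also record a concrete construction. Using that $\Pa$ and $\tuD(\Pa)$ are perfect over $\Pa$ on both sides, that $T$ is perfect over $\Pa$ on both sides, and that $\Pa^{\vvee}$ is invertible in $(\Dbmod{\Pa^{\mre}},\lotimes_{\Pa})$ with inverse $\tuD(\Pa)$, one produces, keeping track of the outer bimodule structures throughout, a chain of isomorphisms of bimodule complexes
\[
\Pa^{\vvee}\lotimes_{\Pa}T\ \cong\ \bigl((\Pa^{\vvee}\lotimes_{\Pa}T)^{\lvvee}\bigr)^{\lvvee}\ \cong\ \tuD(T)^{\lvvee}\ \cong\ \tuD\bigl(\tuD(T)\bigr)\lotimes_{\Pa}\Pa^{\vvee}\ \cong\ T\lotimes_{\Pa}\Pa^{\vvee},
\]
where the first isomorphism is reflexivity of perfect complexes, the second comes from \eqref{202106020807} with $N=\Pa$ and $M=T$ (a bimodule isomorphism by the remark accompanying it, after applying $\tuD$ and then $(-)^{\lvvee}$), the third is the standard identification $\RHom_{\Pa}(X,\Pa)\cong\tuD(X)\lotimes_{\Pa}\Pa^{\vvee}$ — obtained from $\tuD(X)\cong\RHom_{\Pa}(X,\tuD(\Pa))\cong\RHom_{\Pa}(X,\Pa)\lotimes_{\Pa}\tuD(\Pa)$ for perfect $X$ together with $\tuD(\Pa)^{-1}\cong\Pa^{\vvee}$ — applied to $X=\tuD(T)$, and the last uses $\tuD\tuD\cong\operatorname{id}$. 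Invariantly, this step is the known fact that $\Pa^{\vvee}$ is central, up to isomorphism, in the derived Picard group $\DPic(\Pa)$.

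Call the resulting bimodule isomorphism $\beta\colon\Pa^{\vvee}\lotimes_{\Pa}T\to T\lotimes_{\Pa}\Pa^{\vvee}$. To normalise it so that $\beta\lotimes_{\Pa}-=\gamma_{F}$: both $\beta\lotimes_{\Pa}-$ and $\gamma_{F}$ are natural isomorphisms $\sfS^{-1}F\Rightarrow F\sfS^{-1}$, so $\gamma_{F}\circ(\beta\lotimes_{\Pa}-)^{-1}$ is a natural automorphism of the autoequivalence $\sfS^{-1}F$ of $\Dbmod{\Pa}$, hence of $\operatorname{id}_{\Dbmod{\Pa}}$; such an automorphism is multiplication by a unit $u$ of $Z(\Pa)=\End(\operatorname{id}_{\Dbmod{\Pa}})$. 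Then $\gamma_{T}:=u\cdot\beta$ satisfies $\gamma_{T}\lotimes_{\Pa}-=u\cdot(\beta\lotimes_{\Pa}-)=\gamma_{F}$, as required. The main obstacle is precisely the fully-faithfulness input — equivalently, the centrality of the Nakayama bimodule in $\DPic(\Pa)$, or, in the explicit approach, the verification that the displayed chain consists of isomorphisms of genuine $\Pa$-bimodule complexes rather than merely of left modules; this is the only non-formal point, since once $\beta$ is in hand the matching with $\gamma_{F}$ is a short scalar argument.
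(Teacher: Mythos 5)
Your first argument does not go through as stated. The natural isomorphism $\gamma_{F}$ is, a priori, only a natural transformation of \emph{triangulated} functors on $\Dbmod{\Pa}$ (it is defined by the Serre-duality pairing), and the Eilenberg--Watts statement you invoke --- that $X\mapsto X\lotimes_{\Pa}-$ is fully faithful into exact colimit-preserving endofunctors --- is not available at this level: To\"en's theorem identifies $\sfD(\Pa^{\mre})$ with \emph{quasi-functors}, not with triangulated functors and their natural transformations, and a triangulated natural transformation need neither lift to a bimodule morphism nor do so uniquely. (The paper's own Lemma \ref{202103040935} already illustrates the mismatch: the comparison map $\End_{\Pa^{\mre}}(X)\to\End_{\Pa}(X)$ is only shown to be injective under a vanishing hypothesis.) This lifting problem is precisely the non-formal content of the lemma, so it cannot be dispatched by appealing to fully-faithfulness.

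Your explicit construction is much closer to what is actually needed, but it still has two gaps. First, the reflexivity step $\Pa^{\vvee}\lotimes_{\Pa}T\cong\bigl((\Pa^{\vvee}\lotimes_{\Pa}T)^{\lvvee}\bigr)^{\lvvee}$ uses the double \emph{left} dual, whereas the canonical biduality is $X\cong(X^{\lvvee})^{\rvvee}$; for the invertible bimodules at hand one can repair this (e.g.\ via $X^{\lvvee}\cong X^{-1}$), but then the whole chain is essentially Yekutieli's centrality of the dualizing complex in $\DPic(\Pa)$ and that identification must be argued, not assumed. Second, and more seriously, your normalisation only pins $\gamma_{T}$ down up to a natural automorphism of $\operatorname{id}_{\Dbmod{\Pa}}$, and the claim that every such automorphism is multiplication by a unit of $Z(\Pa)$ is unproven in the generality of this appendix (proper homologically smooth dg-algebras); without it you do not obtain $\gamma_{T}\lotimes_{\Pa}-=\gamma_{F}$ on the nose, which is what Theorem \ref{202101081603} and Proposition \ref{202101131644} later use. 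The paper avoids both issues: it builds $\gamma_{T}$ from a Rickard inverse $T'$ of $T$, an isomorphism $\delta\colon T'\lotimes_{\Pa}\Pa^{\vvee}\lotimes_{\Pa}T\to\Pa^{\vvee}$ obtained from bimodule-level Serre duality, and then verifies by an explicit commutative diagram that the induced transformation satisfies the pairing identity characterising $\gamma_{F}$, so exact equality comes out of the construction rather than from a posteriori rescaling.
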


\begin{proof}
By Rickerd \cite{Rickard}, 
there exists an object  $T' \in \Dbmod{\Pa^{\mre}}$  such that the endofunctor $T'\lotimes -$ of $\Dbmod{\Pa}$ is a quasi-inverse of $T\lotimes -$

We define an isomorphism 
$
\delta: T' \lotimes_{\Pa} \Pa^{\vvee} \lotimes_{\Pa} T \xrightarrow{ \cong } \Pa^{\vvee}
$ in $\Dbmod{\Pa^{\mre}}$ to be the following composition of isomorphisms 
\begin{equation}\label{202106091125} 
\begin{split}
 T' \lotimes_{\Pa} \Pa^{\vvee} \lotimes_{\Pa} T &\cong 
\tuD\RHom_{\Pa}(T' \lotimes_{\Pa} \Pa^{\vvee} \lotimes_{\Pa} T, \tuD(\Pa) ) \\
& \cong \tuD\RHom_{\Pa}(\Pa^{\vvee} \lotimes_{\Pa} T,  T \lotimes_{\Pa} \tuD(\Pa) ) \\
& \cong  \RHom_{\Pa}( T \lotimes_{\Pa} \tuD(\Pa), T ) \\
& \cong  \RHom_{\Pa}( \tuD(\Pa), \Pa) \\
& \cong \tuD\RHom_{\Pa}( \Pa^{\vvee}, \tuD(\Pa)) \\ & 
\cong \Pa^{\vvee}
\end{split}
\end{equation}
where  the third  isomorphism  and the fifth isomorphism are 
Serre dualities.

There exists an isomorphism $\epsilon: A \to  T\lotimes_{\Pa} T'$ in $\Dbmod{\Pa^{\mre}}$. 
We define an isomorphism $\gamma_{T}$ to  be $\gamma_{T} := ({}_{T}\delta)(\epsilon_{\Pa^{\vvee}\lotimes T})$. 
\[
\gamma_{T}: \Pa^{\vvee}\lotimes_{\Pa} T  \xrightarrow{ \ \epsilon_{\Pa^{\vvee}\lotimes T} \ } 
T \lotimes_{\Pa} T' \lotimes_{\Pa} \Pa^{\vvee} \lotimes_{\Pa}  T \xrightarrow{ {}_{T} \delta} T \lotimes_{\Pa} \Pa^{\vvee}  
\]

Let $M, N \in \Dbmod{\Pa}$. 
By Serre duality we have $\tuD(N) \cong (\Pa^{\vvee} \lotimes_{\Pa}N)^{\lvvee}$. 
Applying $\tuD(N) \lotimes - \lotimes M = ( \Pa^{\vvee} \lotimes_{\Pa} N)^{\lvvee} \lotimes_{\Pa} - \lotimes_{\Pa} M$ to \eqref{202106091125},  
we obtain the right square of the following commutative diagram
\[
\begin{xymatrix}{ 
\tuD(\Pa^{\vvee}\lotimes_{\Pa} T \lotimes_{\Pa} M, T\lotimes_{\Pa} N) 
\ar@{=}[r] 
\ar[d]_{\tuD(\epsilon_{\Pa^{\vvee} \lotimes T \lotimes M}^{*})} &
\tuD(\Pa^{\vvee}\lotimes_{\Pa} T \lotimes_{\Pa} M, T\lotimes_{\Pa} N) 
\ar[r]^-{\cong} \ar[d]_{\tuD(\mathsf{adj})}^-{\cong} & 
(T\lotimes_{\Pa} N, T\lotimes_{\Pa} M) \\
\tuD(T \lotimes_{\Pa} T' \lotimes_{\Pa} \Pa^{\vvee}\lotimes_{\Pa} T \lotimes_{\Pa} M, T\lotimes_{\Pa} N) 
\ar[r]^-{T} \ar[d]_{\tuD({}_{T}\delta_{ M}^{*})}
&
\tuD(T' \lotimes_{\Pa} \Pa^{\vvee}\lotimes_{\Pa} T \lotimes_{\Pa} M,  N) 
\ar[d]_{\tuD(\delta_{ M}^{*})} & \\
 \tuD(T \lotimes_{\Pa} \Pa^{\vvee}\lotimes_{\Pa} M, T\lotimes_{\Pa} N) \ar[r]_-{T} & 
 \tuD(\Pa^{\vvee}\lotimes_{\Pa} M,  N)  \ar[r] &
 (N, M) \ar[uu]_{T}
}\end{xymatrix}
\]
where we use the abbreviation $(-,+) := \RHom_{\Pa}(-, +)$, 
the arrows labeled by $T$ are induced from the functor $T\lotimes_{\Pa}-$ 
and $\mathsf{adj}$ denotes the adjoint isomorphism. 

Since the left column is induced from $\gamma_{T}$, 
this proves that $\gamma_{T}$ has the desired property.
\end{proof}

 \end{document}